\newtheorem{theorem}{Theorem}[section]
\theoremstyle{plain}  
\newtheorem{thm}{Theorem}[section]
\newtheorem{lem}[thm]{Lemma}
\newtheorem{prop}[thm]{Proposition}
\newtheorem{cor}[thm]{Corollary}
\theoremstyle{definition}  
\newtheorem{defn}[thm]{Definition}
\newtheorem{ex}[thm]{Example}
\theoremstyle{remark}  
\newtheorem{rem}[thm]{Remark}
\newcommand{\setof}[1]{\left\{ {#1}\right\}}
\newcommand{\setdef}[2]{\left\{{#1}\,\left|\,{#2}\right.\right\}}
\newcommand{\bg}{{\bf g}}
\newcommand{\bq}{{\bf q}}
\newcommand{\bu}{{\bf u}}
\newcommand{\bv}{{\bf v}}
\newcommand{\bw}{{\bf w}}
\newcommand{\bE}{{\bf E}}
\newcommand{\bG}{{\bf G}}
\newcommand{\bL}{{\bf L}}
\newcommand{\F}{{\mathbb{F}}}
\newcommand{\I}{{\mathbb{I}}}
\newcommand{\N}{{\mathbb{N}}}
\renewcommand{\P}{{\mathbb{P}}}
\newcommand{\R}{{\mathbb{R}}}
\newcommand{\Z}{{\mathbb{Z}}}
\newcommand{\bzero}{{\bf 0}}
\newcommand{\bone}{{\bf 1}}
\newcommand{\cB}{{\mathcal B}}
\newcommand{\cD}{\mathcal{D}}
\newcommand{\cF}{{\mathcal F}}
\newcommand{\cG}{{\mathcal G}}
\newcommand{\cH}{{\mathcal H}}
\newcommand{\cI}{{\mathcal I}}
\newcommand{\cK}{{\mathcal K}}
\newcommand{\cM}{{\mathcal M}}
\newcommand{\cN}{{\mathcal N}}
\newcommand{\cQ}{{\mathcal Q}}
\newcommand{\cS}{{\mathcal S}}
\newcommand{\cT}{{\mathcal T}}
\newcommand{\cU}{{\mathcal U}}
\newcommand{\cV}{{\mathcal V}}
\newcommand{\cW}{{\mathcal W}}
\newcommand{\cX}{{\mathcal X}}
\newcommand{\cY}{{\mathcal Y}}
\newcommand{\cZ}{{\mathcal Z}}
\newcommand{\sA}{{\mathsf A}}
\newcommand{\sI}{{\mathsf I}}
\newcommand{\sJ}{{\mathsf J}}
\newcommand{\sK}{{\mathsf K}}
\newcommand{\sL}{{\mathsf L}}
\newcommand{\sM}{{\mathsf M}}
\newcommand{\sN}{{\mathsf N}}
\newcommand{\sO}{{\mathsf O}}
\newcommand{\sP}{{\mathsf P}}
\newcommand{\sQ}{{\mathsf Q}}
\newcommand{\sR}{{\mathsf R}}
\newcommand{\sT}{{\mathsf T}}
\newcommand{\poeZ}{\leq_\Z}
\newcommand{\poeN}{\leq_\N}
\newcommand{\mvmap}{\rightrightarrows}
\newcommand{\sAtt}{{\mathsf{ Att}}}
\newcommand{\sInvset}{{\mathsf{ Invset}}}
\newcommand{\sMG}{{\mathsf{ MG}}}
\newcommand{\sSub}{{\mathsf{ Sub}}}
\newcommand{\sABlock}{{\mathsf{ ABlock}}}
\newcommand{\darrow}{\leftrightarrow}
\newcommand{\Inv}{\mathop{\mathrm{Inv}}\nolimits}
\newcommand{\Int}{\mathop{\mathrm{int}}\nolimits} 
\newcommand{\cl}{\mathop{\mathrm{cl}}\nolimits}
\newcommand{\bdy}{\mathop{\mathrm{bd}}\nolimits}
\newcommand{\support}{\mathop{\rm supp}}
\newcommand{\id}{\mathop{\mathrm{id}}\nolimits}
\DeclareMathOperator{\sgn}{sgn}
\definecolor{gray85}{gray}{0.85} 
\definecolor{gray8}{gray}{0.8} 
\definecolor{gray7}{gray}{0.7} 
\definecolor{gray6}{gray}{0.6} 
\definecolor{gray5}{gray}{0.5} 
\definecolor{gray4}{gray}{0.4} 
\definecolor{gray35}{gray}{0.35} 
\colorlet{grading21}{blue!100!white}
\colorlet{grading20}{blue!80!white}
\colorlet{grading19}{orange!80!white}
\colorlet{grading18}{blue!60!white}
\colorlet{grading17}{orange!60!white}
\colorlet{grading16}{green!60!white}
\colorlet{grading15}{blue!40!white}
\colorlet{grading14}{orange!40!white}
\colorlet{grading13}{green!40!white}
\colorlet{grading12}{red!40!white}
\colorlet{grading11}{blue!20!white}
\colorlet{grading10}{orange!20!white}
\colorlet{grading9}{green!20!white}
\colorlet{grading8}{red!20!white}
\colorlet{grading7}{teal!80!white}
\colorlet{grading6}{teal!60!white}
\colorlet{grading5}{yellow!80!white}
\colorlet{grading4}{teal!40!white}
\colorlet{grading3}{yellow!40!white}
\colorlet{grading2}{teal!20!white}
\colorlet{grading1}{yellow!20!white}
\newcommand{\blup}{{b}}
\newcommand{\btor}{\bar{b}}
\newcommand{\dec}{\hat{\xi}}
\newcommand{\Ex}{\mathrm{Ex}}
\newcommand{\ctoy}{\mathrm{sb}}
\newcommand{\proj}[1]{\mathrm{proj}_{#1}}
\newcommand{\cross}{\mathrm{Cross}}
\newcommand{\sstarr}{\mathrm{star}_\cX}
\newcommand{\posetF}[1]{\leq_{\bar{\cF}_{#1}}}
\newcommand{\Janus}{\mathcal{X}_\mathtt{J}}
\newcommand{\precJanus}{\prec_\mathtt{J}}
\newcommand{\gradJanus}{\pi_\mathtt{J}}
\newcommand{\mCellX}{\mathfrak{mc}_\cX}
\newcommand{\mSuppX}{\mathfrak{ms}_\cX}
\newcommand{\Spineb}{\mathfrak{sp}_b}
\newcommand{\SpineJ}{\mathfrak{sp}_{\mathtt{J}}}
\newcommand{\mTile}{\mathfrak{mt}_{\mathtt{J}}}
\newcommand{\Add}{\mathfrak{Add}_{\mathtt{J}}}
\newcommand{\Rem}{\mathfrak{Rem}_{\mathtt{J}}}
\newcommand{\Pl}{\mathfrak{F2}}
\newcommand{\mBn}[1]{B_2(\bar{S}(\blup({#1})))\cap\Janus^{(N)}}
\newcommand{\FibTri}{\mathfrak{F3}}
\newcommand{\mFiber}{\mathfrak{mF2}}
\newcommand{\mAdd}{\mathfrak{mAdd}_{\mathtt{J}}}
\newcommand{\mRem}{\mathfrak{mRem}_{\mathtt{J}}}
\newcommand{\gab}{\mathrm{GB}}
\newcommand{\Rec}{\mathrm{Rect}}
\newcommand{\direc}{\mathrm{dir}}
\newcommand{\activeset}{\mathrm{Act}}
\newcommand{\rmap}[1]{o_{#1}}
\newcommand{\rook}{\Phi}
\newcommand{\cycleof}{\mathrm{Orb}}
\newcommand{\lap}{\bL}
\newcommand{\bbarI}{{\bar{\bar{I}}}}
\newcommand{\bbarbI}{{\bar{\bar{\I}}}}
\newcommand{\bbdy}{{\mathrm{bdy}}}
\newcommand{\minD}{{\mathrm{minD}_\cX}}
\newcommand{\maxD}{{\mathrm{maxD}_\cX}}
\newcommand{\PosDriftCells}{{\mathrm{d}_{+}}}
\newcommand{\NegDriftCells}{{\mathrm{d}_{-}}}
\newcommand{\PosNegDriftCells}{{{\mathrm{d}_{\pm}}}}
\newcommand{\PosIndecisiveDrifts}{{\mathrm{d}_{+}^{-1}}}
\newcommand{\NegIndecisiveDrifts}{{\mathrm{d}_{-}^{-1}}}
\newcommand{\PDcells}{\mathrm{PD}}
\newcommand{\TP}{{\mathrm{TP}}}
\newcommand{\Geo}{{\mathrm{Geo}}}
\newcommand{\Nul}{{\mathrm{Nul}}}
\newcommand{\recG}{{\bf RecG}}
\newcommand{\mdpt}{{\mathrm{Mid}}}
\newcommand{\sMR}{{\mathsf{MR}}}
\newcommand{\SCC}{{\mathsf{SCC}}}
\newcommand{\RC}{{\mathsf{RC}}}
\newcommand{\GRC}{{\mathsf{GRC}}}
\newcommand{\CRC}{{\mathsf{CRC}}}
\newcommand{\Dec}{{\mathsf{Back}}}
\newcommand{\Back}{{\mathsf{Back}}}
\newcommand{\Stable}{{\mathcal{SC}}}
\newcommand{\Top}{{\mathrm{Top}}}
\newcommand{\trans}{\mathrm{Trans}}
\newcommand{\defcell}[4]{\left[ \left(\begin{smallmatrix}
        #1 \\ #2
    \end{smallmatrix}\right),\left(\begin{smallmatrix}
        #3 \\ #4
    \end{smallmatrix}\right)\right]}
\newcommand{\defcellb}[6]{\left[ \left(\begin{smallmatrix}
        #1 \\ #2 \\ #3
    \end{smallmatrix}\right),\left(\begin{smallmatrix}
        #4 \\ #5 \\ #6
    \end{smallmatrix}\right)\right]}  
\newcommand{\defcellc}[8]{\left[ \left(\begin{smallmatrix}
        #1 \\ #2 \\ #3 \\ 0 \\ 0
    \end{smallmatrix}\right),\left(\begin{smallmatrix}
        #4 \\ #5 \\ #6 \\ #7 \\ #8
    \end{smallmatrix}\right)\right]}      
\DeclareMathOperator{\source}{Source}
\DeclareMathOperator{\target}{Target}
\newtheorem{lemma}[theorem]{Lemma}
\numberwithin{section}{chapter}
\numberwithin{equation}{chapter}
\begin{document}

\frontmatter

\title{Global Dynamics of Ordinary Differential Equations: Wall Labelings, Conley Complexes, and Ramp Systems}

\author{Marcio Gameiro}
\address{Department of Mathematics, Rutgers University, Piscataway, NJ, 08854, USA}
\curraddr{}
\email{gameiro@math.rutgers.edu}
\thanks{}

\author{Tom\'{a}\v{s} Gedeon}
\address{Department of Mathematical Sciences, Montana State University, Bozeman, MT, 59717, USA}
\curraddr{}
\email{tgedeon@montana.edu}
\thanks{}

\author{Hiroshi Kokubu}
\address{Department of Mathematics, Kyoto University, Kyoto 606-8502, Japan}
\curraddr{}
\email{kokubu.hiroshi.3u@kyoto-u.ac.jp}
\thanks{}

\author{Konstantin Mischaikow}
\address{Department of Mathematics, Rutgers University, Piscataway, NJ, 08854, USA}
\curraddr{}
\email{mischaik@math.rutgers.edu}
\thanks{}

\author{Hiroe Oka}
\address{Ryukoku University, Seta, Otsu 520-2123, Japan}
\curraddr{}
\email{hiroecho@gmail.com}
\thanks{}

\author{Bernardo Rivas}
\address{Department of Mathematics, Rutgers University, Piscataway, NJ, 08854, USA}
\curraddr{}
\email{bad162@math.rutgers.edu}
\thanks{}

\author{Ewerton Vieira}
\address{Department of Mathematics, Rutgers University, Piscataway, NJ, 08854, USA}
\curraddr{}
\email{er691@math.rutgers.edu}
\thanks{}

\author{\vspace{0.6cm} With an appendix by Daniel Gameiro}
\address{Department of Electrical and Computer Engineering, Rutgers University, Piscataway, NJ, 08854, USA}
\curraddr{}
\email{dtg62@scarletmail.rutgers.edu}
\thanks{}


\begin{abstract}
We introduce a combinatorial topological framework for characterizing the global dynamics of ordinary differential equations (ODEs). The approach is motivated by the study of gene regulatory networks, which are often modeled by ODEs that are not explicitly derived from first principles. 

The proposed method involves constructing a combinatorial model from a set of parameters and then embedding the model into a continuous setting in such a way that the algebraic topological invariants are preserved. In this manuscript, we build upon the software Dynamic Signatures Generated by Regulatory Networks (DSGRN), a software package that is used to explore the dynamics generated by a regulatory network. By extending its functionalities, we deduce the global dynamical information of the ODE and extract information regarding equilibria, periodic orbits, connecting orbits and bifurcations. 

We validate our results through algebraic topological tools and analytical bounds, and the effectiveness of this framework is demonstrated through several examples and possible future directions. 
\end{abstract}

\maketitle

\tableofcontents

\mainmatter
\counterwithout{figure}{chapter}
\counterwithout{table}{chapter}

\part{Overview}
\label{part:I}

\chapter{Introduction}
\label{sec:intro}
    
This manuscript presents \emph{initial} steps towards the following two seemingly disparate goals:
\begin{description}
    \item[Goal 1] 
    Given  an ordinary differential equation (ODE) defined by a specific family of vector fields,
    \begin{equation}
    \label{eq:generalODE}
        \dot{x} = f(x,\lambda),\quad x\in \R^N, \lambda\in \R^M,
    \end{equation}
    \emph{provide an algorithm that outputs rigorous characterizations of the global dynamics for explicit open sets of parameters.
    } 
    \item[Goal 2] \emph{Consider a regulatory network, e.g., a diagram such as that shown in Figure~\ref{fig:RN}, the dynamics of which is modeled by a set of  ordinary differential equation.
    Provide a transparent means of characterizing the global dynamics over all of parameter space.}
\end{description}
A feature common to both goals is that they are impossible to achieve using traditional interpretations of what is meant by solving a differential equation.
At the risk of oversimplification, a standard numerical approach begins with an explicit nonlinearity and requires computing all trajectories over all specified parameter values, i.e., a continuum of computations.
The qualitative perspective of dynamical systems with its focus on invariant sets and conjugacy classes is also, in general, not computable \cite{foreman:rudolph:weiss}.
From the perspective of systems biology, the dynamics associated with the regulatory network of Figure~\ref{fig:RN} can be viewed as describing concentrations of mRNA or proteins associated with the nodes of the network.
However, the regulatory network does not provide an explicit ODE model derived from first principles, and thus, neither numerical nor qualitative techniques are immediately applicable.

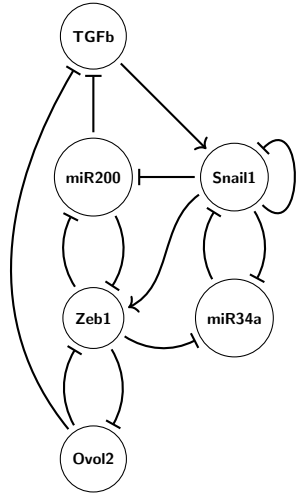
\begin{figure*}[!htb]
\centering
\begin{tikzpicture}
[main node/.style={circle,fill=white!20,draw,font=\sffamily\tiny\bfseries},scale=2.5]
\node[main node] (1) at (0,0) {Ovol2};
\node[main node] (2) at (0.75,0.75) {miR34a};
\node[main node] (3) at (0,0.75) {Zeb1};
\node[main node] (4) at (0,1.5) {miR200};
\node[main node] (5) at (0.75,1.5) {Snail1};
\node[main node] (6) at (0,2.25) {TGFb};
\path[thick]
(1) edge[-|, shorten <= 2pt, shorten >= 2pt, bend left] (3)
(1) edge[-|, shorten <= 2pt, shorten >= 2pt, bend left, out=40] (6)
(2) edge[-|, shorten <= 2pt, shorten >= 2pt, bend left] (5)
(3) edge[-|, shorten <= 2pt, shorten >= 2pt, bend left] (1)
(3) edge[-|, shorten <= 2pt, shorten >= 2pt, bend right] (2)
(3) edge[-|, shorten <= 2pt, shorten >= 2pt, bend left] (4)
(4) edge[-|, shorten <= 2pt, shorten >= 2pt, bend left] (3)
(4) edge[-|, shorten <= 2pt, shorten >= 2pt] (6)
(5) edge[-|, shorten <= 2pt, shorten >= 2pt, bend left] (2)
(5) edge[->, shorten <= 2pt, shorten >= 2pt, bend left, out=-20] (3)
(5) edge[-|, shorten <= 2pt, shorten >= 2pt] (4)
(5) edge[ -|, loop, shorten <= 2pt, shorten >= 2pt, distance=10pt, thick, out=-45, in=45] (5) 
(6) edge[->, shorten <= 2pt, shorten >= 2pt] (5);
\end{tikzpicture}
\caption{Regulatory network for reversible epithelial-to-mesenchymal transition as proposed by \cite[Figure 2]{hong}. Pointed edges represent activation while dull edges represent repression.}
\label{fig:RN}
\end{figure*}

To achieve  {\bf Goals 1} and {\bf 2} we propose an alternative paradigm for characterizing dynamics based on order theory and algebraic topology.
Figure~\ref{fig:2D_ramp_ODE_intro_dynamics} (associated with {\bf Goal 2}) and Figure~\ref{fig:phase_portraits_intro1} (associated with {\bf Goal 1})  are meant to provide with minimal technicalities a  high level  intuition into the type of results our approach provides and to suggest why achievement of {\bf Goal 1} and {\bf Goal 2} are closely related.

The information carried by Figure~\ref{fig:2D_ramp_ODE_intro_dynamics} takes the form of combinatorics and homological algebra.
In particular it provides no topological, geometric or analytic information. 
This is to be expected since it is generated from the regulatory network shown in Figure~\ref{fig:network_parameter_regions_intro}(A), which is essentially a combinatorial object.
Part~\ref{part:II} of this monograph details the mathematical theory and algorithms that allow us to pass from the input data of Figure~\ref{fig:network_parameter_regions_intro}(A) to the combinatorial/homological information of Figure~\ref{fig:2D_ramp_ODE_intro_dynamics}.

\begin{figure}[!htpb]
\begin{subfigure}{0.32\textwidth}
\centering
\includegraphics[width=\linewidth]{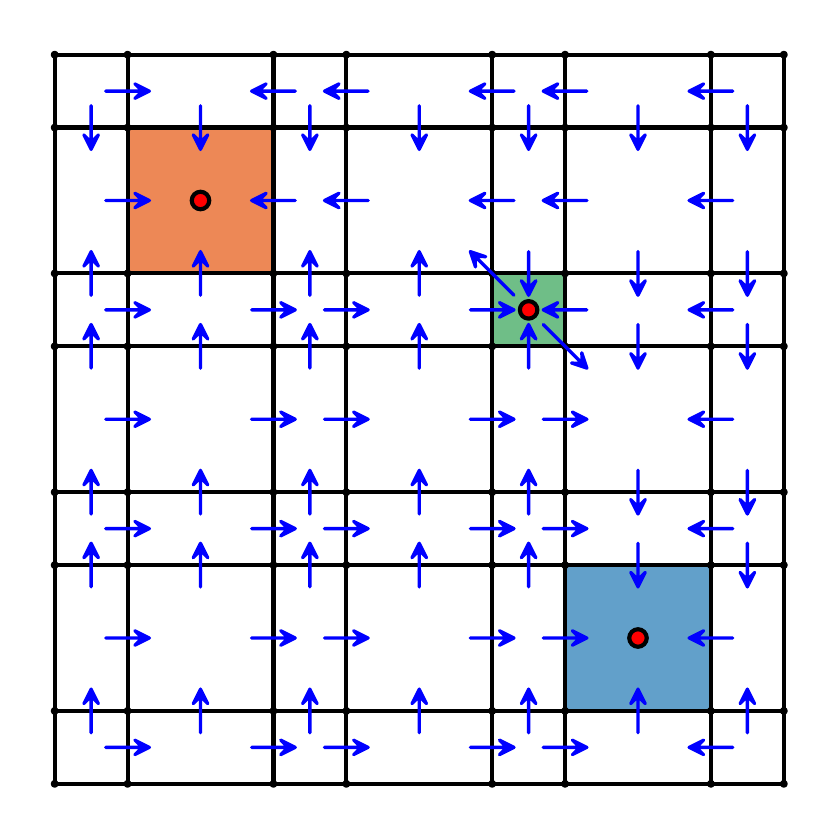}
\caption{Combinatorial model for parameter node $974$.}
\label{fig:2D_ramp_ODE_intro_dynamics_a}
\end{subfigure}
\begin{subfigure}{0.32\textwidth}
\centering
\includegraphics[width=\linewidth]{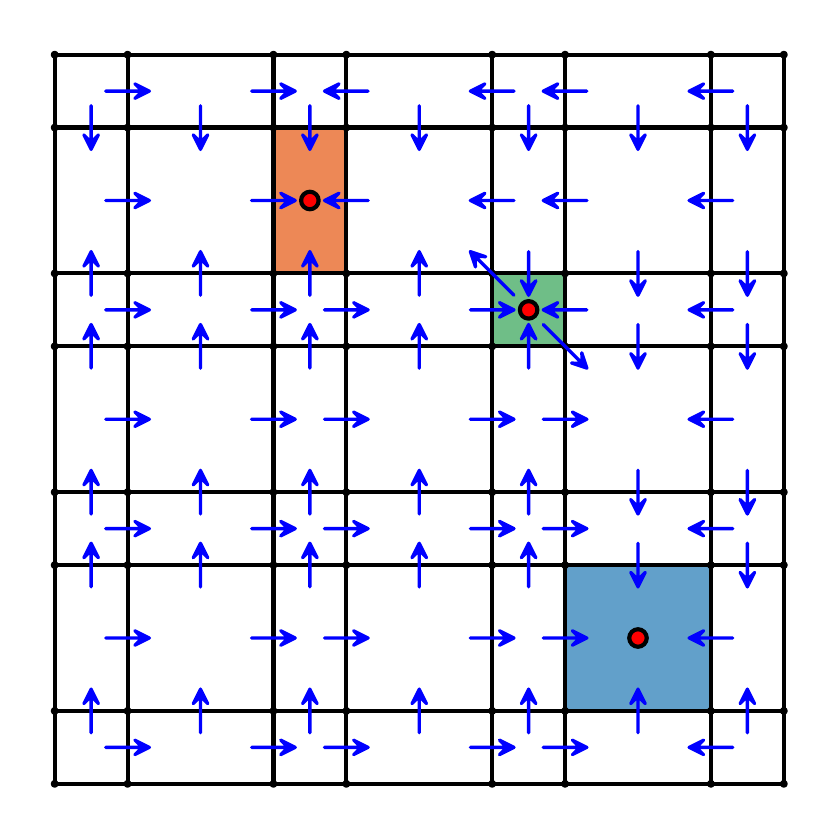}
\caption{Combinatorial model for node $975$.}
\label{fig:2D_ramp_ODE_intro_dynamics_b}
\end{subfigure}
\begin{subfigure}{0.32\textwidth}
\centering
\includegraphics[width=\linewidth]{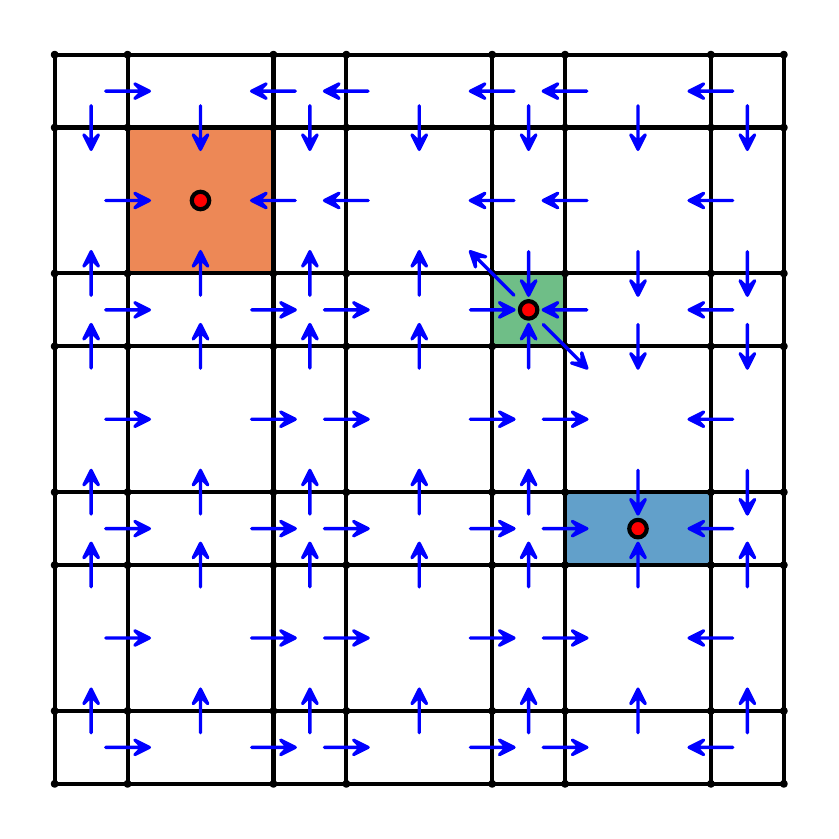}
\caption{Combinatorial model for node $1054$.}
\label{fig:2D_ramp_ODE_intro_dynamics_c}
\end{subfigure}

\begin{subfigure}{0.32\textwidth}
\centering
\includegraphics[width=\linewidth]{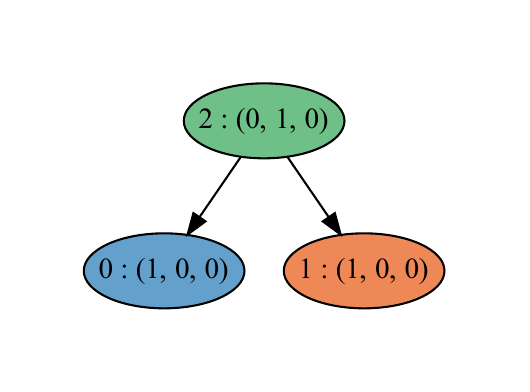}
\caption{Morse Graph for parameter node $974$.}
\label{fig:2D_ramp_ODE_intro_dynamics_d}
\end{subfigure}
\begin{subfigure}{0.32\textwidth}
\centering
\includegraphics[width=\linewidth]{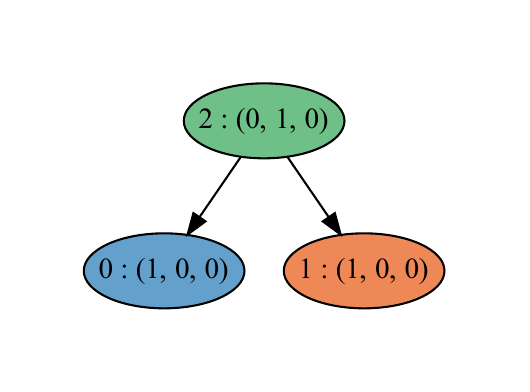}
\caption{Morse Graph for parameter node $975$.}
\label{fig:2D_ramp_ODE_intro_dynamics_e}
\end{subfigure}
\begin{subfigure}{0.32\textwidth}
\centering
\includegraphics[width=\linewidth]{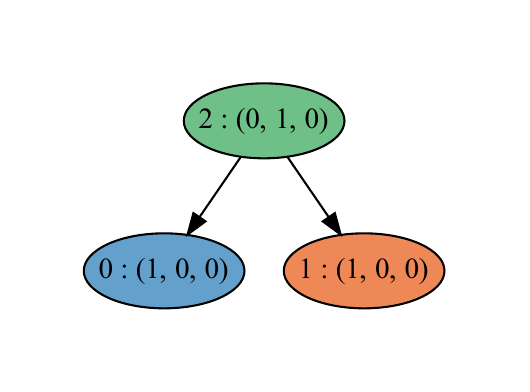}
\caption{Morse Graph for parameter node $1054$.}
\label{fig:2D_ramp_ODE_intro_dynamics_f}
\end{subfigure}
\caption{Combinatorial models  for the indicated parameter nodes (A)-(C), and the corresponding Morse graphs (D)-(F).
The combinatorial models are generated by the regulatory network shown in Figure~\ref{fig:network_parameter_regions_intro}(A).
This network generates exactly 1,600 combinatorial models of which three are shown.
}
\label{fig:2D_ramp_ODE_intro_dynamics}
\end{figure}

Figure~\ref{fig:phase_portraits_intro1} shows ``corresponding'' phase portraits that are generated by the following system of two-dimensional ODEs 
\begin{equation}
\label{eq:2DexampleODE}
\begin{aligned}
\dot{x}_1 & = - \gamma_1 x_1 + r_{1,1}(x_1) r_{1,2}(x_2) \\
\dot{x}_2 & = - \gamma_2 x_2 + r_{2,1}(x_1) r_{2,2}(x_2),
\end{aligned}
\end{equation}
where
\begin{equation}
\label{eq:neg_pos_ramp}
r_{i, j}(x) =
\begin{cases}
\nu_{i, j, 1}, & \text{if}~ x < \theta_{i, j} - h_{i, j} \\
L_{i, j} (x), & \text{if}~ \theta_{i, j} - h_{i, j} \leq x \leq \theta_{i, j} + h_{i, j} \\
\nu_{i, j, 2}, & \text{if}~ x > \theta_{i, j} + h_{i, j}
\end{cases}
\end{equation}
and 
$L_{i, j} (x) = \dfrac{\nu_{i, j, 2} - \nu_{i, j, 1}}{2 h_{i, j}}(x - \theta_{i, j}) + \dfrac{\nu_{i, j, 1} + \nu_{i, j, 2}}{2}$, which is just a linear interpolation between $\nu_{i, j, 1}$ and $\nu_{i, j, 2}$,
at the sets of parameter values given in Table~\ref{tab:parameters_intro1}.
System \eqref{eq:2DexampleODE} is a special case of a \emph{ramp system} (see Chapter~\ref{sec:ramp}).
Part~\ref{part:III} of this monograph explains how ramp systems are associated with regulatory networks, and in particular, why \eqref{eq:2DexampleODE} is associated with the regulatory network of Figure~\ref{fig:network_parameter_regions_intro}(A).
Furthermore, it provides an explicit mapping from the cubical cell complexes of Figure~\ref{fig:2D_ramp_ODE_intro_dynamics} (A)-(C) to the phase portraits of Figure~\ref{fig:phase_portraits_intro1}, and it provides analytic proofs that the combinatorial/homological information of Figure~\ref{fig:2D_ramp_ODE_intro_dynamics} provide valid information concerning the dynamics of \eqref{eq:2DexampleODE}.

\begin{table}[!htpb]
\centering
\renewcommand{\arraystretch}{1.2}
\setlength{\tabcolsep}{12pt}
\begin{tabular}{@{}lll@{}}
\toprule
Parameter set $1$ & Parameter set $2$ & Parameter set $3$ \\
\midrule
$
\begin{aligned}
& \nu_{1, 1, 1} = 3.7, \ \nu_{1, 1, 2} = 1.4 \\
& \nu_{1, 2, 1} = 10.7, \ \nu_{1, 2, 2} = 0.1 \\
& \nu_{2, 1, 1} = 9.2, \ \nu_{2, 1, 2} = 0.2 \\
& \nu_{2, 2, 1} = 6.2, \ \nu_{2, 2, 2} = 1.4 \\
& \theta_{1, 1} = 6.4, \ \theta_{1, 2} = 5.6 \\
& \theta_{2, 1} = 11.1, \ \theta_{2, 2} = 1.8 \\
& h_{1, 1} = 0.3, \ h_{1, 2} = 0.35 \\
& h_{2, 1} = 0.6, \ h_{2, 2} = 0.3 \\
& \gamma_1 = 1, \ \gamma_2 = 1
\end{aligned}
$
&
$
\begin{aligned}
& \nu_{1, 1, 1} = 6.5, \ \nu_{1, 1, 2} = 4.2 \\
& \nu_{1, 2, 1} = 7.6, \ \nu_{1, 2, 2} = 1.7 \\
& \nu_{2, 1, 1} = 4.7, \ \nu_{2, 1, 2} = 0.6 \\
& \nu_{2, 2, 1} = 5.2, \ \nu_{2, 2, 2} = 2.6 \\
& \theta_{1, 1} = 10.3, \ \theta_{1, 2} = 8.5 \\
& \theta_{2, 1} = 28.2, \ \theta_{2, 2} = 5.7 \\
& h_{1, 1} = 0.5, \ h_{1, 2} = 0.5 \\
& h_{2, 1} = 0.5, \ h_{2, 2} = 0.5 \\
& \gamma_1 = 1, \ \gamma_2 = 1
\end{aligned}
$
&
$
\begin{aligned}
& \nu_{1, 1, 1} = 7.2, \ \nu_{1, 1, 2} = 5 \\
& \nu_{1, 2, 1} = 3, \ \nu_{1, 2, 2} = 0.4 \\
& \nu_{2, 1, 1} = 5.5, \ \nu_{2, 1, 2} = 1 \\
& \nu_{2, 2, 1} = 9, \ \nu_{2, 2, 2} = 5.5 \\
& \theta_{1, 1} = 3.6, \ \theta_{1, 2} = 13 \\
& \theta_{2, 1} = 13.5, \ \theta_{2, 2} = 8.1 \\
& h_{1, 1} = 0.5, \ h_{1, 2} = 0.6 \\
& h_{2, 1} = 0.5, \ h_{2, 2} = 0.5 \\
& \gamma_1 = 1, \ \gamma_2 = 1
\end{aligned}
$
\\
\bottomrule
\end{tabular}
\caption{Three sets of parameter values for system \eqref{eq:2DexampleODE}.}
\label{tab:parameters_intro1}
\end{table}

\begin{figure}[!htpb]
\centering
\includegraphics[height=0.31\linewidth]{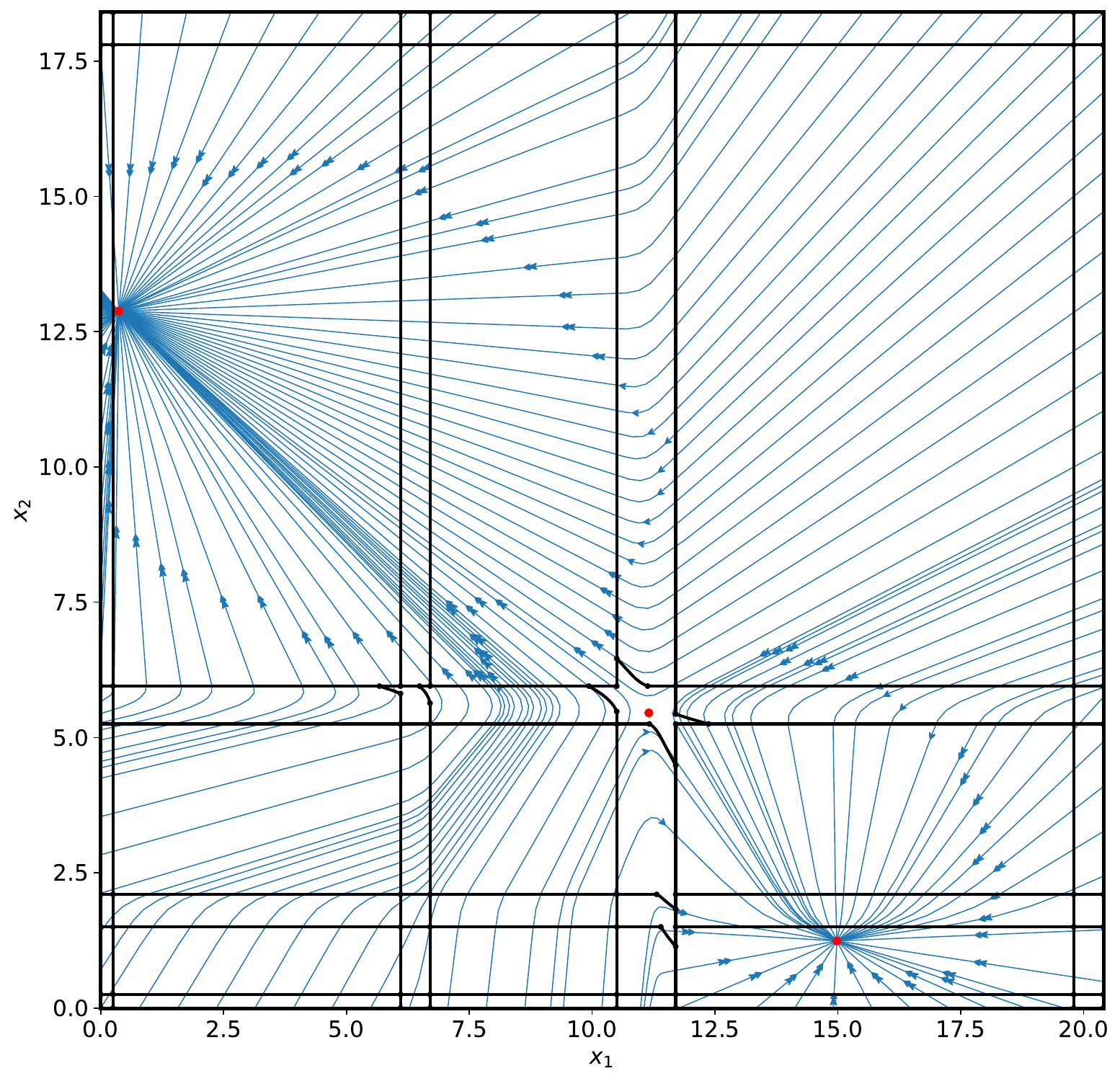}
\includegraphics[height=0.31\linewidth]{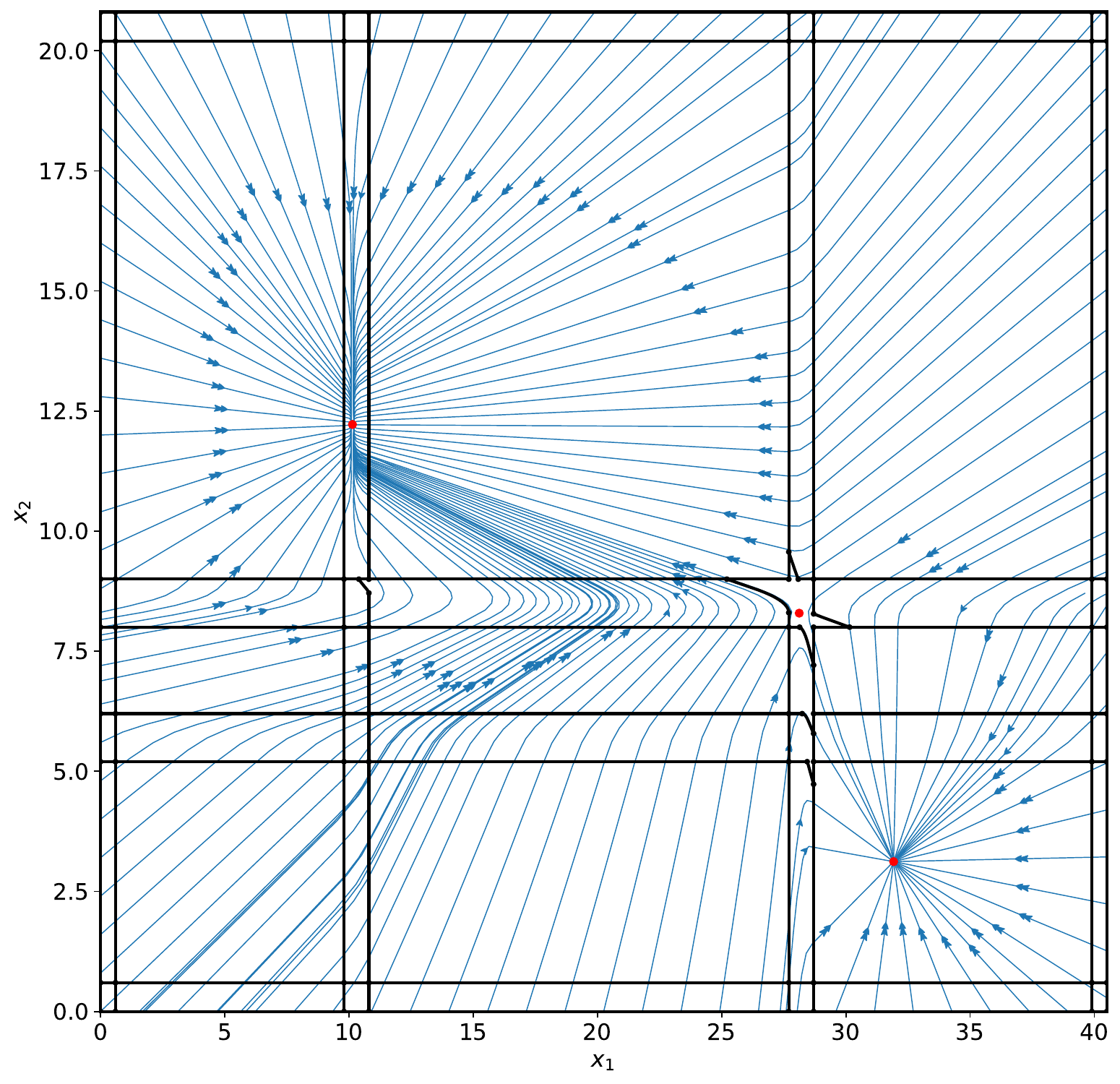}
\includegraphics[height=0.31\linewidth]{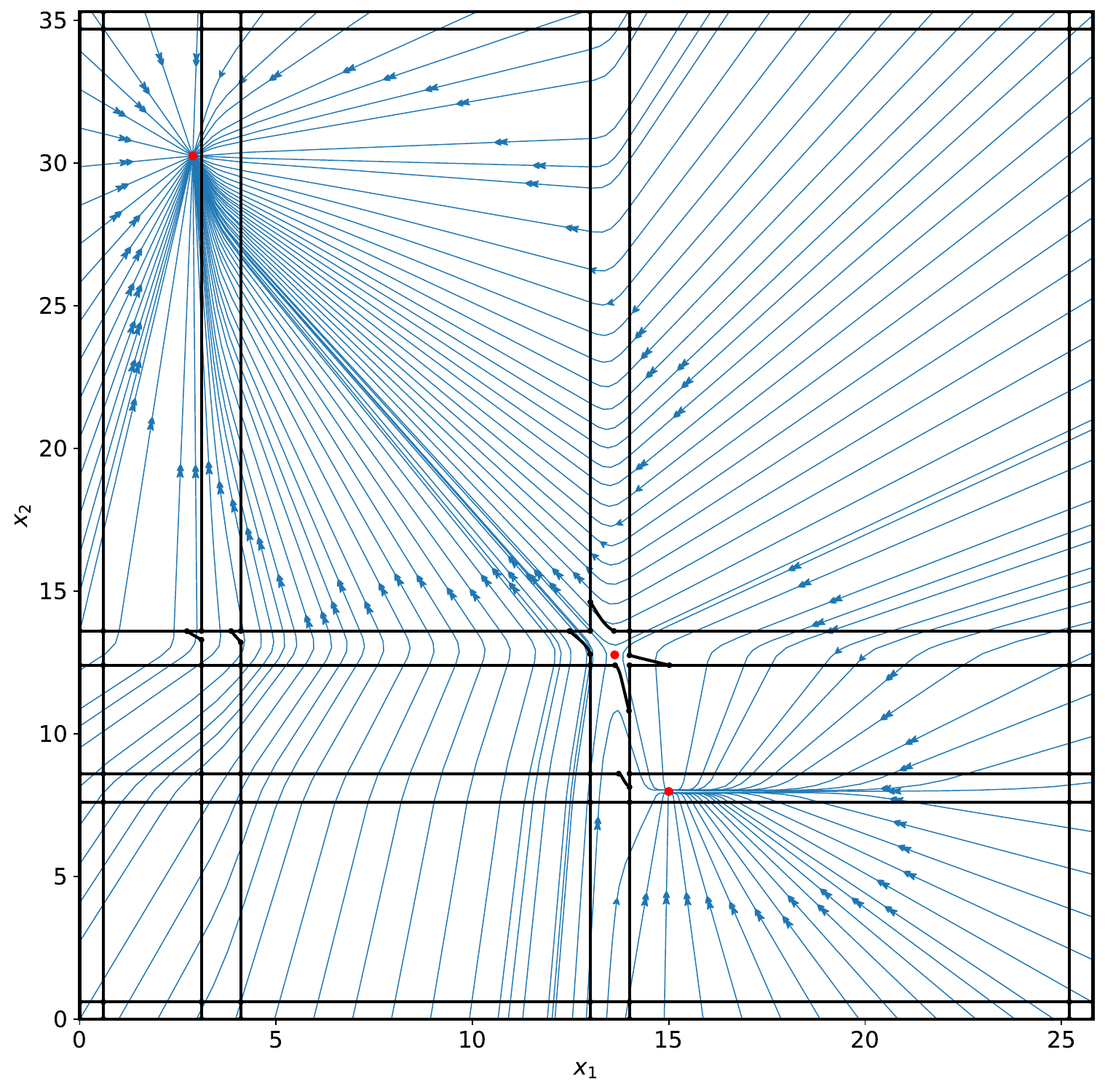}
\caption{Phase portraits and geometric realizations for system for system \eqref{eq:2DexampleODE} at parameter values given in Table~\ref{tab:parameters_intro1}.}
\label{fig:phase_portraits_intro1}
\end{figure}

\begin{figure}[!htpb]
\begin{subfigure}{0.25\textwidth}
\centering
\begin{tikzpicture}
[main node/.style={circle,fill=white!20,draw},scale=2.5]
\node[main node] (1) at (0,0) {1};
\node[main node] (2) at (0,0.75) {2};
\path[thick]
(1) edge[-|, shorten <= 2pt, shorten >= 2pt, bend left] (2)
(1) edge[-|, loop left, distance=10pt, shorten <= 2pt, shorten >= 2pt, thick, in=300, out=240] (1)
(2) edge[-|, shorten <= 2pt, shorten >= 2pt, bend left] (1)
(2) edge[-|, loop right, distance=10pt, shorten <= 2pt, shorten >= 2pt, thick, in=120, out=60] (2);
\end{tikzpicture}
\caption{DSGRN Network.}
\label{fig:network_2_intro}
\end{subfigure}
\begin{subfigure}{0.7\textwidth}
\centering
\renewcommand{\arraystretch}{1.2}
\begin{tabular}{@{}ll@{}}
\toprule
Parameter node $974$ &
$
\begin{aligned}
& x_1 : p_0, p_1 < \gamma_1 \theta_{1,1} < \gamma_1 \theta_{2,1} < p_2, p_3 \\
& x_2 : p_0, p_2 < \gamma_2 \theta_{2,2} < \gamma_2 \theta_{2,1} < p_1, p_3 \\
\end{aligned}
$
\\
\midrule
Parameter node $975$ &
$
\begin{aligned}
& x_1 : p_0 < \gamma_1 \theta_{1,1} < p_1 < \gamma_1 \theta_{2,1} < p_2, p_3 \\
& x_2 : p_0, p_2 < \gamma_2 \theta_{2,2} < \gamma_2 \theta_{2,1} < p_1, p_3 \\
\end{aligned}
$
\\
\midrule
Parameter node $1054$ &
$
\begin{aligned}
& x_1 : p_0, p_1 < \gamma_1 \theta_{1,1} < \gamma_1 \theta_{2,1} < p_2, p_3 \\
& x_2 : p_0 < \gamma_2 \theta_{2,2} < p_2 < \gamma_2 \theta_{2,1} < p_1, p_3 \\
\end{aligned}
$
\\
\bottomrule
\end{tabular}
\caption{Inequalities defining parameter regions $974$, $975$, and $1054$.}
\label{fig:parameter_regions_intro}
\end{subfigure}
\caption{DSGRN Network and inequalities defining parameter regions in the parameter graph of the network, where the \emph{input polynomials} $p_j$ are given by $p_0 = \ell_{1,1} \ell_{1,2}$, $p_1 = u_{1,1} \ell_{1,2}$, $p_2 = \ell_{1,1} u_{1,2}$, and $p_3 = u_{1,1} u_{1,2}$ for node $x_1$ and $p_0 = \ell_{2,1} \ell_{2,2}$, $p_1 = u_{2,1} \ell_{2,2}$, $p_2 = \ell_{2,1} u_{2,2}$, and $p_3 = u_{2,1} u_{2,2}$ for node $x_2$.}
\label{fig:network_parameter_regions_intro}
\end{figure}

We hasten to add that most of our results are dimension independent and we consider explicit higher dimensional examples in Chapter~\ref{sec:examples}.

Because it may appear to be
counter intuitive, we re-iterate that the results for system \eqref{eq:2DexampleODE} shown in Figure~\ref{fig:phase_portraits_intro1} are obtained from our analysis of the dynamics of the regulatory network shown in Figure~\ref{fig:network_parameter_regions_intro} (A).
This is in line with our perspective that trying to directly analyze the global dynamics of an ODE is too difficult.
Instead we propose the following pipeline.

\begin{description}
    \item[Step 1] Consider a simpler problem that can be represented using a combinatorial model.
    \item[Step 2] Use the combinatorial model to 
    \begin{description}
        \item[a] identify the global structure of the dynamics, and
        \item[b] compute algebraic topological invariants that imply the existence of dynamics.
    \end{description}
    \item[Step 3] Using analysis embed the combinatorial model into a continuous setting in such a way that the algebraic topological invariants are preserved.
    \item[Step 4] 
    Check that the algebraic topological invariants associated with the embedding constructed in step 3 are still valid for the original ODE of interest.
\end{description}

With regard to {\bf Step 1}, observe that the network of Figure~\ref{fig:network_parameter_regions_intro} (A) provides a simple caricature of the interactions between variables in the ODE \eqref{eq:2DexampleODE}.
In particular, the directed edges $1 \dashv 1 $ and $2 \dashv 1$ are associated with the fact that $\dot{x}_1$ is monotone decreasing as a function of $x_1$  and $x_2$, respectively. Similarly, $1 \dashv 2$ and $2 \dashv 2$ indicates that $\dot{x}_2$ is monotone decreasing as a function of $x_1$ and $x_2$, respectively.

The model ODE \eqref{eq:2DexampleODE} is clearly parameterized, thus the associated combinatorial model should also be parameter dependent.
The parameterization of the combinatorial model is based on the assumption that each node $n$ (associated to a real variable $x_n$) has a decay rate $\gamma_n >0$, that there is a minimal $\ell_{n,m}>0$ and maximal $u_{n,m}$ effect of $x_m$ on the growth rate of $x_n$, and a threshold $\theta_{n,m}$ at which the nonlinear interactions of these parameters potentially change the sign of the rate of change of $x_n$.\footnote{We allow for more complex expressions of the growth rate than just a minimum and maximum, which is why the expression for $r_{n,m}$ in \eqref{eq:neg_pos_ramp} involves parameters labeled $\nu_{n,m,k}$ as opposed to $\ell_{n,m}$ and $u_{n,m}$.}

The software, Dynamic Signatures Generated by Regulatory Networks (DSGRN) \cite{DSGRN}, takes a regulatory network and produces an explicit subdivision of parameter space where each element of the subdivision is given by an explicit semi-algebraic set \cite{cummins:gedeon:harker:mischaikow:mok,kepley:mischaikow:zhang}.
For the simple network of Figure~\ref{fig:network_parameter_regions_intro} (A) DSGRN produces a subdivision of the parameter space $(0,\infty)^{14}$ with exactly $1600$ regions.
This information is encoded in the \emph{parameter graph}.
Each node of the parameter graph represents a region.
An edge between nodes implies that the corresponding regions share a codimension 1 hypersurface.
The explicit representations of the regions associated with the nodes of the parameter graph used to produce the results of Figure~\ref{fig:2D_ramp_ODE_intro_dynamics} are given in Figure~\ref{fig:network_parameter_regions_intro} (B).

\begin{rem}
The parameter $\nu$ for a ramp system derived from a DSGRN network is defined in terms of the DSGRN parameters $\ell$ and $u$ as follows (see Section~\ref{sec:DSGRN}): For an edge $m \to n$ of the regulatory network we define $\nu_{n, m, 1} = \ell_{n, m}$ and $\nu_{n, m, 2} = u_{n, m}$, and for an edge $m \dashv n$ we define $\nu_{n, m, 1} = u_{n, m}$ and $\nu_{n, m, 2} = \ell_{n, m}$.
DSGRN represents inequalities of the type given in Figure~\ref{fig:network_parameter_regions_intro}(B), such as $p_0 < \gamma_1 \theta_{1,1} < p_1 < \gamma_1 \theta_{2,1} < p_2, p_3$ and $p_0, p_2 < \gamma_2 \theta_{2,2} < \gamma_2 \theta_{2,1} < p_1, p_3$ by $(p_0, \gamma_1 \theta_{1,1}, p_1, \gamma_1 \theta_{2,1}, p_2, p_3)$ and $(p_0, p_2, \gamma_2 \theta_{2,2}, \gamma_2 \theta_{2,1}, p_1, p_3)$, respectively.
\end{rem}

In this paper we define four families of combinatorial models ($\cF_0$, $\cF_1$, $\cF_2$, and $\cF_3$) that take the form of combinatorial multivalued maps or equivalently directed graphs.\footnote{In the context of Boolean models these multivalued maps are often call state transition graphs.}
The states  or vertices are the cells of an abstract cubical complex $\cX$ that is determined by the number of thresholds $\theta_{n, m}$.
Of fundamental importance is the fact that a unique multivalued map is assigned to each node in the parameter graph, thus given any regulatory network there are only finitely many associated combinatorial models. 
Figures~\ref{fig:2D_ramp_ODE_intro_dynamics} (A), (B), and (C) provide \emph{pictorial} representations of the multivalued maps for three nodes of the parameter graph of the network in in Figure~\ref{fig:network_parameter_regions_intro} (A).
We emphasize pictorial, since at this stage in the pipeline we are still working with purely combinatorial structures.

We now turn to {\bf Step 2}.
Our immediate goal is a compact characterization of the dynamics associated with the multivalued map $\cF$.
The characterization of global structure of the dynamics takes the form of an acyclic directed graph or, equivalently, a partially ordered set (poset) called a \emph{Morse graph}.
The nodes of the Morse graph capture the existence of recurrent components (nontrivial strongly connected components) in the multivalued map and the directed edges indicate reachability between recurrent components.
Figures~\ref{fig:2D_ramp_ODE_intro_dynamics} (D), (E), and (F) show the Morse graphs for the multivalued maps of Figures~\ref{fig:2D_ramp_ODE_intro_dynamics} (A), (B), and (C), respectively.

Our long term goal is to use this data to deduce information about the dynamics of an ODE.
Note that in the language of nonlinear dynamics the nodes are suggestive of recurrent dynamics and the edges indicate gradient-like dynamics.
As presented the Morse graph is a purely combinatorial object, and thus, in general, there is no certainty that this data translates into meaningful information about the dynamics.
To transfer the combinatorial data to statements about continuous dynamics we turn to algebraic topology.

Our fundamental tool is a homological invariant called the \emph{Conley index} \cite{conley:cbms}.
As described later we use the Conley index to identify nontrivial dynamics.
For the moment, it is sufficient to remark that the Conley index takes the form of a sequence of homology groups.\footnote{Throughout this paper we use field coefficients and therefore the homology groups are vector spaces.}
To access the Conley indices for a fixed combinatorial model $\cF$ on a cubial cell complex $\cX$ we compute a new chain complex, called a \emph{Conley complex}. 
While the Morse graph of $\cF$ and a Conley complex of $\cF$ carry different information, they are closely related.
In particular, each node $p$ of the Morse graph has an associated Conley index $CH_*(p)$.
The chains of the Conley complex consist of the direct sum of the Conley indices of the nodes of the Morse graph.
The boundary of the Conley complex allows one to compute additional Conley indices.
With regard to our example, this information is embedded in Figures~\ref{fig:2D_ramp_ODE_intro_dynamics} (D), (E), and (F), where the calculations were done using $\Z_2$ coefficients. The triple associated with each Morse node $p$ are the first three Betti numbers of the associated homology groups, i.e., $(\beta_0, \beta_1, \beta_2)$. For these examples (this is not true in general) each edge in the Morse graph corresponds to a nontrivial entry in the boundary operator of the Conley complex.

At this point we claim success with respect to {\bf Goal 2}.
Given a regulatory network, DSGRN produces a finite parameter graph and  
associates to each node of the paramater graph a multivalued map.
From the multivalued map we can compute the Morse graph and an associated Conley complex.
Therefore, for each regulatory network, application of {\bf Step 1} and {\bf Step 2} produces a finite collection of characterizations of dynamics. Furthermore, with the exclusion of a finite set of hypersurfaces, the characterizations cover all parameter values and the computations are very efficient. To compute the dynamics for all $1600$ nodes of the parameter graph of the network in Figure~\ref{fig:network_parameter_regions_intro} takes about $8$ seconds on a laptop using a single core.

Admittedly the characterization of dynamics is given in terms of a poset (the Morse graph) and homological expressions.
However, as is made clear in the next step there are well defined classical dynamical interpretations of this order structure and homology in the context of ODEs.
This is an optimal result in the sense that no specific ODE has been selected at this point, and yet we are providing information that can be given classical interpretations about the dynamics of ODEs.

We now consider {\bf Step 3}.
While {\bf Step 1} and {\bf Step 2} are purely combinatorial/algebraic, {\bf Step 3} is purely analytic.
We are mapping the abstract cubical complex on which $\cF_i$ is defined into $\R^N$ ($N$ is the number of variables) in such a way that certain transversality conditions with respect to the vector field of an ODE are satisfied.
Success in {\bf Step 3} implies that the user of our techniques will achieve {\bf Goal 1} by performing the computations associated with {\bf Step 1} and {\bf Step 2}; \emph{there is no need to do any numerical computations}.
Ideally, a user would declare the family of ODEs of interest, perform the combinatorial/algebraic computations, and understand the global dynamics of the ODE.

We are far from achieving this ideal.
However, as a first step in this direction, we introduce the family of ramp systems (see Section~\ref{sec:ramp}),
\begin{equation}
\label{eq:rampODEintro}
\dot{x}_n  = -\gamma_n x_n + E_n(x;\nu,\theta,h), \quad n = 1, \ldots, N
\end{equation}
where $\gamma$, $\nu$, and $\theta$ are the parameters introduced in the first step of the pipeline (the parameters $\ell$ and $u$ are replaced by $\nu$ since we need more freedom to express how one node impacts the rate of production of another node) and $h$ is an additional parameter.

As is shown in Part~\ref{part:III} of this manuscript for these ODEs we have succeeded in {\bf Step 3}; given appropriate parameter values $\gamma$, $\nu$, $\theta$, and $h$ we provide a proof that the combinatorial/algebraic information of {\bf Step 1} and {\bf Step 2} characterizes the dynamics of the ramp system.
We admittedly designed the nonlinearities of the ramp systems to simplify the necessary analysis.
However, we encourage the reader, as they are studying the details of Part~\ref{part:III}, to keep in mind that local monotonicity and identification of equilibria are the key elements of the proofs.
We expand on this comment in Chapter~\ref{sec:futurework}.

To carry out the analysis we choose an admissible fixed parameter $(\gamma, \nu, \theta, h)$ for the ramp system under the assumption that $(\gamma, \nu, \theta)$ belongs to a region $R(k)$ associated with a node $k$ of the parameter graph. From the original cell complex $\cX$ we construct an abstract cell complex, called the \emph{Janus complex}. We embed the Janus complex into phase space to obtain a regular CW-decomposition of phase space such that along boundaries of appropriate cells the vector field for the ramp system is transverse. Up to homotopy the constructed regular CW-decomposition is valid for all parameter values in $R(k)$. The constraint for which we produce explicit bounds is on the set of $h$ for which the transversality is guaranteed. These bounds depend explicitly on $(\gamma, \nu, \theta)$. 

Using the transversality conditions of the CW-decomposition we obtain three fundamental consequences.
\begin{description}
\item[R1] The  Morse graph for the combinatorial model is a Morse decomposition for the ramp system  \cite{kalies:mischaikow:vandervorst:18}.
\item[R2] For each node $p$ in the Morse graph, the computed homology group $CH_*(p)$ is the homological Conley index \cite{conley:cbms, salamon, mischaikow:mrozek:02} for the associated Morse set for the ramp system.
\item[R3] A boundary operator for the Conley complex of the combinatorial model is a connection matrix \cite{franzosa:89} for the Morse decomposition of the ramp system.
\end{description}

Recall that the Conley index can be used to provide information about the existence and structure of invariant sets \cite{conley:cbms, mischaikow:mrozek:02}.
The most fundamental result is that if $CH_*(p)\neq 0$, then the associated invariant set is non-empty.
The connection matrix can be used to identify heteroclinic orbits \cite{franzosa:89,fiedler:mischaikow,hattori:mischaikow}, 
suggest the existence of global bifurcations \cite{mccord:mischaikow:92,kokubu:mischaikow:oka}, and more generally the global structure of invariant sets \cite{mccord:mischaikow:96,mischaikow:95,mccord, gedeon:mischaikow:95}.

\begin{rem}
\label{rem:vandenberg:lessard}
Significant advances are being made in the use of rigorous validation or computer assisted proofs for the analysis of differential equations \cite{vandenberg:lessard}.
These methods typically involve the validation of approximate solutions that have been identified using traditional numerical methods.
While the work presented here fits into this framework -- given a nonlinear equation the computer is used to prove the existence of particular solutions -- it is fundamentally different in that our method simultaneously identifies the solutions of interest.
\end{rem}

{\bf Step 4} of the pipeline is to transfer the information about the dynamics of the ramp system to the original ODE of interest. 
If the original ODE of interest is a ramp system, then this step is not necessary.
With an eye towards more general systems,  Morse decompositions and Conley indices are stable with respect to perturbation \cite{conley:cbms}.
Thus, any information about the ramp system is immediately applicable to any ODE sufficiently close in the $C^0$ topology to the ramp system.
While the analytic form of a ramp system is special, adopting a more qualitative perspective allows us to focus on the fact that ramp systems are built out of piecewise monotone functions, e.g., \eqref{eq:neg_pos_ramp}.
In particular,  if an ODE mimics the piecewise monotone structure of a ramp system sufficiently well, then our combinatorial/algebraic characterization of the dynamics of the ramp system should apply to the ODE.
However, achieving {\bf Goal 1} requires a stronger result; given an ODE of interest we want to choose a ramp system sufficiently close so that the transversality results mentioned in the discussion of {\bf Step 3} are applicable to the ODE.

We do not have rigorous a priori bounds for choosing such a ramp system.
However, experimentally it appears that tight bounds are not necessary \cite{kepley:mischaikow:queirolo} and thus a reasonable guess may, in many cases, be sufficient.
Consider the ODE system
\begin{equation}
\label{eq:Hill_2D_example_ODE}
\begin{aligned}
\dot{x}_1 & = - \gamma_1 x_1 + H_{1, 1}^{-}(x_1) H_{1, 2}^{-}(x_2) \\
\dot{x}_2 & = -\gamma_2 x_2 +  H_{2, 1}^{-}(x_1) H_{2, 2}^{-}(x_2),
\end{aligned}
\end{equation}
where
\begin{equation}
\label{eq:Hillrepressor}
H_{i, j}^{-}(x) = \nu_{i, j, 2} + (\nu_{i, j, 1} - \nu_{i, j, 2}) \frac{\theta_{i, j}^s}{x^s + \theta_{i, j}^s},
\end{equation}
with parameter values given by the parameter set $1$ in Table~\ref{tab:parameters_intro1} and $s = 10$ and the ramp system whose parameter value lies in the region given by the parameter node $974$ in Figure~\ref{fig:network_parameter_regions_intro}(B).

Our embedding of the Janus complex into phase space explicitly depends on particular perturbations of the ramp system.
We can use this information to numerically identify the above mentioned boundaries of appropriate cells and then numerically check that the vector field for the ODE of interest is transverse to these boundary regions.
In particular, piecewise linear approximation of the numerical images of the embeddings for the ramp systems chosen to study the ODE \eqref{eq:Hill_2D_example_ODE} at parameter values given in Table~\ref{tab:parameters_intro1} and $s = 10$ are shown in Figure~\ref{fig:phase_portrait_Hill_intro}. Numerically, these piecewise linear curves are transverse to the vector field of the ODE \eqref{eq:Hill_2D_example_ODE}, and therefore, the conclusions {\bf R1}, {\bf R2}, and {\bf R3} are valid for the ODE \eqref{eq:Hill_2D_example_ODE}.

\begin{figure}[!htpb]
\centering
\includegraphics[width=0.9\linewidth]{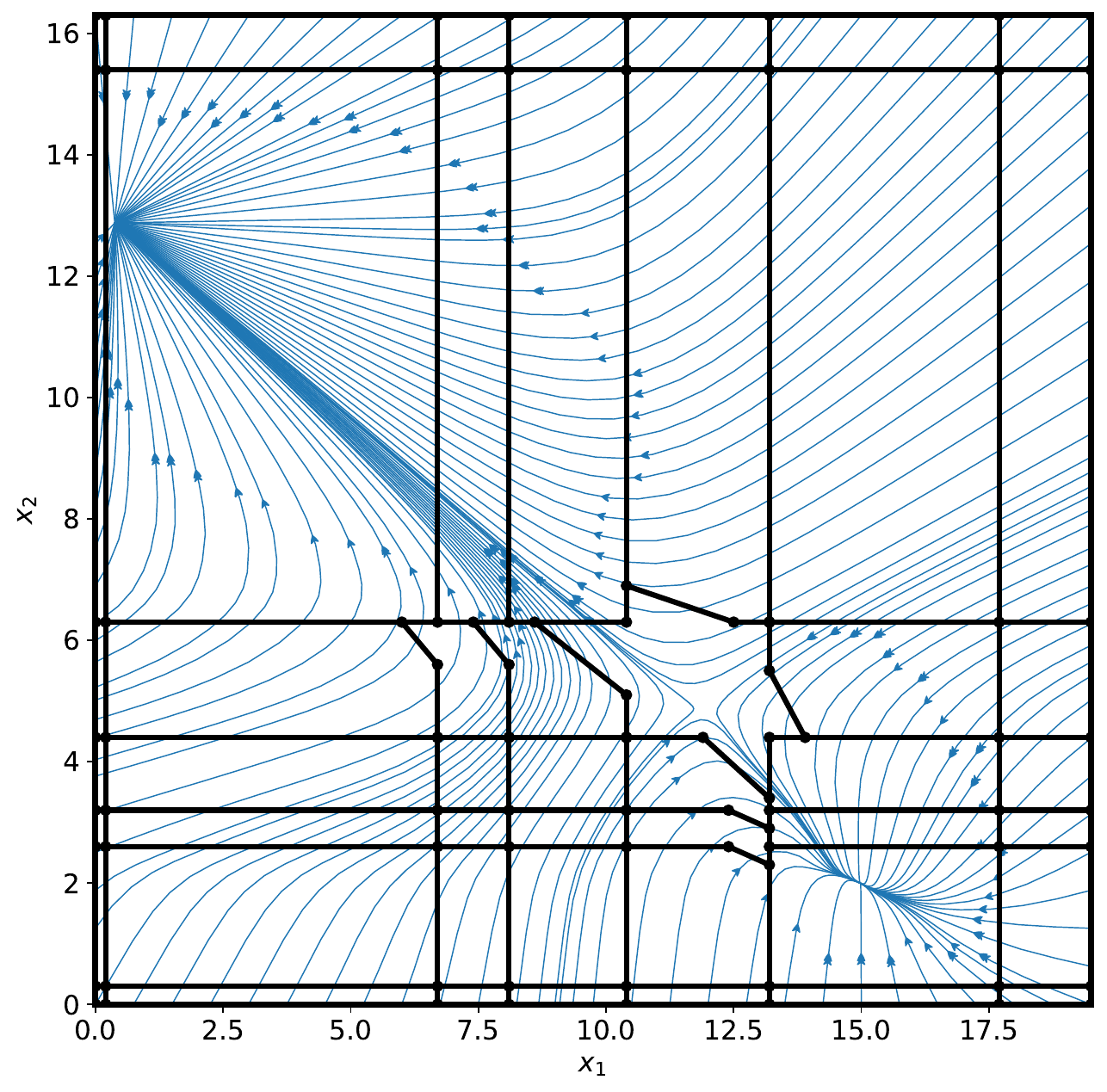}
\caption{Phase portrait and geometric realization for system \eqref{eq:Hill_2D_example_ODE} at parameter values given by the parameter set $1$ in Table~\ref{tab:parameters_intro1} and $s = 10$.}
\label{fig:phase_portrait_Hill_intro}
\end{figure}

Because it is outside of the scope of this effort we do not seek to rigorously validate the numerical check of transversality along the curves. 
However, modulo this step, there are a variety of results concerning the dynamics of the ODE \eqref{eq:Hill_2D_example_ODE} at the parameter values of Table~\ref{tab:parameters_intro1} that follow from the combinatorial/homological computations and the transversality analysis of this paper. Developing methods to efficiently represent the boundaries of the appropriate cells where to check transversality for a given ODE system are beyond the scope of this paper and is left for future work (see Chapter~\ref{sec:futurework}). We note however that we provide explicit representations of the manifolds where transversality is valid for ramp system. The analytical representations of the manifolds given in this paper can be used to construct (piecewise linear) approximations of these manifolds where transversality needs to be checked for a given non-ramp ODE system.

Let $\varphi\colon \R\times [0,\infty)^2 \to [0,\infty)^2$ denote a flow associated with \eqref{eq:2DexampleODE} at parameter values given in Table~\ref{tab:parameters_intro1}.
The following statements are true (justification is provided in Section~\ref{sec:examples}).
\begin{enumerate}
    \item There exists a global attractor $K \subset \prod_{n = 1}^{2}{ [0, \gab_n(\gamma, \nu, \theta, h)] }$ for $\varphi$ (see Proposition~\ref{prop:globalAttractor}).
    \item There exists a semi-conjugacy of $\varphi\colon \R\times K\to K$ onto the dynamics of $\dot{x} = x(1-x^2)$ restricted to $[-1,1]\subset \R$, i.e., a continuous surjective function $\psi\colon K\to [-1,1]$ such that trajectories of $\varphi$ are mapped in a time preserving manner onto trajectories of $\dot{x} = x(1-x^2)$.
    \item The invariant sets $\psi^{-1}(1)$, $\psi^{-1}(-1)$, and $\psi^{-1}(0)$ contain fixed points.
\end{enumerate}

The two-dimensional ODE \eqref{eq:2DexampleODE} was chosen for simplicity of exposition and is used throughout the paper.
Nevertheless, it is important to point out that as presented \eqref{eq:2DexampleODE} is an 18-parameter system.
Via the parameter graph we know that 1,600 such computations identifies the dynamics of \eqref{eq:2DexampleODE} for essentially all parameter values except $h_{i,j}$ (this is discussed in Chapter~\ref{sec:ramp}).
For these parameter values, we provide upper bounds (see Chapter~\ref{sec:ramp}).

Again we remind the reader  that most of our analysis is dimension independent and higher dimensional examples are discussed in Chapter~\ref{sec:examples}.
For the moment, in an attempt to whet the reader's appetite we consider two three-dimensional examples to indicate the complexity of the dynamics that can be extracted.

\begin{ex}
\label{ex:saddle_saddle_3D_intro}
Consider the system
\begin{align}
\label{eq:ramp_system_intro_1}
\dot{x}_1 & = -\gamma_1 x_1 + r_{1,1}(x_1) + r_{1,2}(x_2) + r_{1,3}(x_3) \nonumber \\
\dot{x}_2 & = -\gamma_2 x_2 + r_{2,1}(x_1) \left( r_{2,2}(x_2) + r_{2,3}(x_3) \right) \\
\dot{x}_3 & = -\gamma_3 x_3 + r_{3,2}(x_2) \left( r_{3,1}(x_1) + r_{3,3}(x_3) \right) \nonumber,
\end{align}
where $r_{i, j}$ is given by \eqref{eq:neg_pos_ramp}.

This is a specific choice of the aforementioned ramp system \eqref{eq:rampODEintro}.
We discuss the dynamics of this ODE in greater detail in Example~\ref{ex:saddle_saddle_3D_example} using the concepts and techniques developed in the paper.

For the moment, we focus on simplicity of exposition with the goal of providing an explicit realization of some of the discussion of this introduction.
Thus we fix the parameter values as presented in Table~\ref{tab:parameters_ramp_system_intro_2}.
Let $\varphi$ be the flow for \eqref{eq:ramp_system_intro_1} restricted to the positive orthant $[0,\infty)^3$.
By Proposition~\ref{prop:globalAttractor}, $\varphi$ has a global attractor $K$. 
A Morse graph and associated Conley indices  are presented in Figure~\ref{fig:mg_3d_example_intro_1}. There are multiple admissible connection matrices.
\begin{table}[!htpb]
\centering
\renewcommand{\arraystretch}{1.2}
\begin{tabular}{@{}llllll@{}}
\toprule
$\nu_{1, 1, 1} = 1.01$  & $\nu_{1, 1, 2} = 4.0$   & $\nu_{1, 2, 1} = 1.0$  & $\nu_{1, 2, 2} = 4.0$   & $\nu_{1, 3, 1} = 1.0$  & $\nu_{1, 3, 2} = 2.0$   \\
$\nu_{2, 1, 1} = 0.875$ & $\nu_{2, 1, 2} = 0.797$ & $\nu_{2, 2, 1} = 0.22$ & $\nu_{2, 2, 2} = 0.875$ & $\nu_{2, 3, 1} = 0.44$ & $\nu_{2, 3, 2} = 0.875$ \\
$\nu_{3, 1, 1} = 0.76$  & $\nu_{3, 1, 2} = 1.0$   & $\nu_{3, 2, 1} = 1.0$  & $\nu_{3, 2, 2} = 0.85$  & $\nu_{3, 3, 1} = 0.5$  & $\nu_{3, 3, 2} = 1.0$   \\
$\theta_{1, 1} = 6.5$   & $\theta_{1, 2} = 1.497$ & $\theta_{1, 3} = 1.87$ & $\theta_{2, 1} = 8.0$   & $\theta_{2, 2} = 1.0$  & $\theta_{2, 3} = 1.16$  \\
$\theta_{3, 1} = 3.5$   & $\theta_{3, 2} = 1.46$  & $\theta_{3, 3} = 1.61$ & $\gamma_1 = \gamma_2 = 1$ &   $\gamma_3 = 1.2$   & $h_{i,j} = 0.1$         \\
\bottomrule
\end{tabular}
\caption{Parameters values for \eqref{eq:ramp_system_intro_1}.}
\label{tab:parameters_ramp_system_intro_2}
\end{table}

\begin{figure*}[!htb]
\centering
\includegraphics[width=0.98\textwidth]{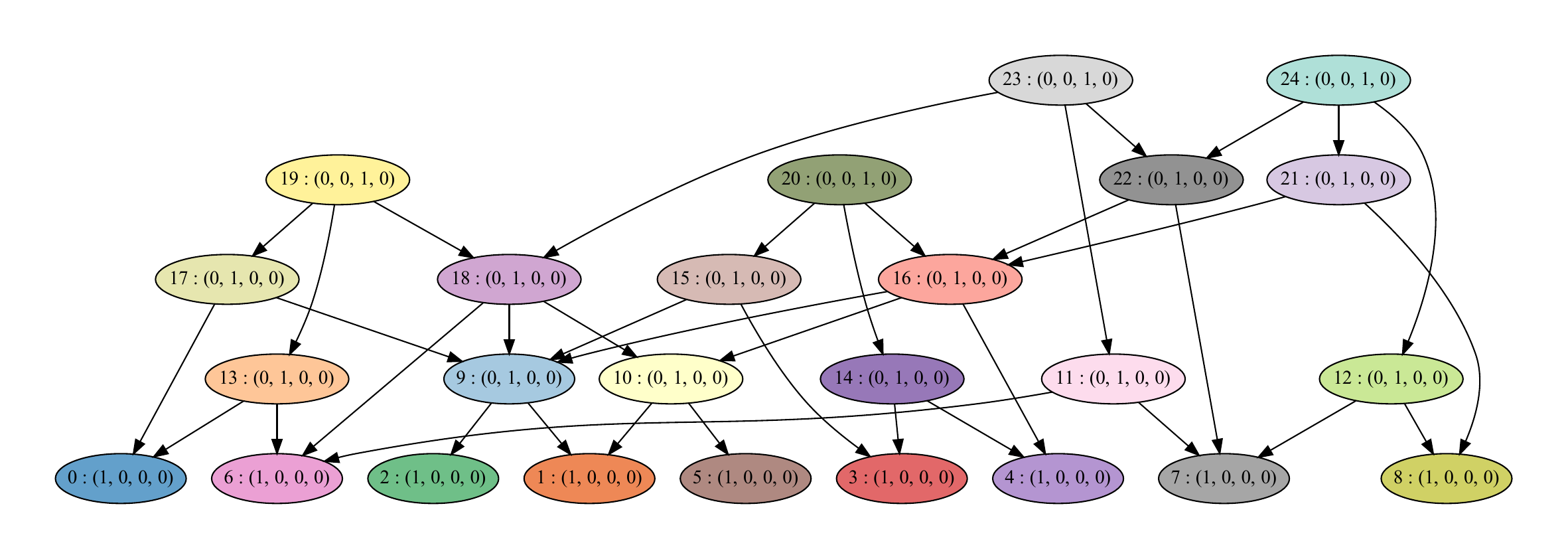}
\caption{Morse graph for the ODE system \eqref{eq:ramp_system_intro_1} with the parameter values in Table~\ref{tab:parameters_ramp_system_intro_2}. The Morse graph corresponds to parameter node $52,718,681,992$ of the regulatory network in Figure~\ref{fig:3d_example_1RN}.}
\label{fig:mg_3d_example_intro_1}
\end{figure*}

An application of Theorem~\ref{thm:dynamics} implies that the Morse graph is a Morse decomposition for $\varphi$ restricted to $K$.
The Conley index of each node in the Morse graph is nonzero, thus associated with each node $p$ is a nontrivial invariant set that we denote by $M(p)$.
Therefore, the Morse decomposition is, in fact, a Morse representation for $\varphi$ restricted to $K$ \cite{kalies:mischaikow:vandervorst:18}.
Furthermore the Conley index of each node is the index of a hyperbolic fixed point.
In particular, nodes labeled $(1,0,0,0)$, $(0,1,0,0)$, $(0,0,1,0)$, and $(0,0,0,1)$, have Conley indices of hyperbolic fixed points with unstable manifolds of dimension $0$, $1$, $2$, and $3$, respectively.
It is not true that this suffices to conclude that the Morse sets are hyperbolic fixed points,
however, for the purpose of this discussion we encourage the reader to make this assumption.\footnote{The reader that refuses this suggestion must, in the discussion that follows, replace $\lim_{t\to \infty}$ and $\lim_{t\to -\infty}$ by omega and alpha limits.} 
We will discuss this subtlety further in Example~\ref{ex:saddle_saddle_3D_example}.

The fact that the Morse graph describes a Morse representation leads to the following result.
Let $x\in [0,\infty)^3$.
Then $\lim_{t\to \infty}x(t) = M(p)$ for some $p\in \setof{0, 1, \ldots, 24}$.
Since $K\subset [0,\infty)^3$ this holds for $x\in K$.
However, in addition, if $x\in K$, then $\lim_{t\to -\infty}x(t) = M(q)$ and $p\leq q$ under the partial order described by the Morse graph.
Furthermore, $p=q$ if and only if $x\in M(p)$.
Thus, the partial order of the Morse graph provides considerable restrictions on the existence of possible connecting orbits.

The connection matrices can be used to deduce the existence of connecting orbits.
We return to this question in Example~\ref{ex:saddle_saddle_3D_example}.
For the moment we remark that with the exceptions of 
\begin{equation}
\label{eq:saddle-saddle}
22 \to 16,\ 21 \to 16,\ 16 \to 10,\ \text{and}~ 16 \to 9,   
\end{equation}
each arrow $q\to p$ in the Morse graph indicates the existence of a trajectory satisfying $\lim_{t\to \infty}x(t) = M(p)$ and $\lim_{t\to -\infty}x(t) = M(q)$.

Continuing with the assumption that the Morse sets are hyperbolic fixed points, the arrows of \eqref{eq:saddle-saddle} suggest the existence of heteroclinic orbits between fixed points all of which have unstable manifolds of dimension $1$.
Given that the exact choice of the parameter values given by Table~\ref{tab:parameters_ramp_system_intro_2} was essentially chosen at random it is highly unlikely that these heteroclinic orbits exist for \eqref{eq:ramp_system_intro_1} at these parameter values.

Nevertheless, we believe that this information from the Morse graph is extremely valuable.
A more detailed justification for this claim is presented in Example~\ref{ex:saddle_saddle_3D_example}, for the moment recall the previous discussion concerning {\bf Step 1}.
Our computations are based on the regulatory network $RN$ associated with \eqref{eq:ramp_system_intro_1} (see Figure~\ref{fig:3d_example_1RN})
not on the ODEs directly.
The parameter graph for $RN$ has 87,280,405,632 parameter nodes.
Our computation is based on parameter node 52,718,681,992 (to understand why this particular parameter node was selected see Section~\ref{sec:ramp2rook}) and, in particular, these computations are valid for an explicit open set $U\subset (0,\infty)^{39}$ of parameter values (see Chapter~\ref{sec:ramp}).
Our claim is that codimension-one hypersurfaces of bifurcations associated with saddle to saddle connections indicated by \eqref{eq:saddle-saddle} occur within the region $U$.

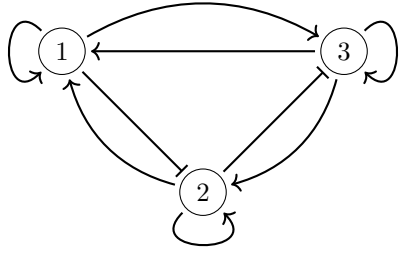
\begin{figure*}[!htb]
\centering
\begin{tikzpicture}
[main node/.style={circle,fill=white!20,draw}, scale=2.5]
\node[main node] (1) at (-0.75,0.75) {1};
\node[main node] (2) at (0,0) {2};
\node[main node] (3) at (0.75,0.75) {3};
\path[thick]
(1) edge[->, loop, shorten <= 2pt, shorten >= 2pt, distance=10pt, thick, out=135, in=225] (1) 
(1) edge[-|, shorten <= 2pt, shorten >= 2pt] (2)
(1) edge[->, shorten <= 2pt, shorten >= 2pt, bend left] (3)
(2) edge[->, shorten <= 2pt, shorten >= 2pt, bend left] (1)
(2) edge[->, loop, shorten <= 2pt, shorten >= 2pt, distance=10pt, thick, out=225, in=-45] (2) 
(2) edge[-|, shorten <= 2pt, shorten >= 2pt,] (3)
(3) edge[->, shorten <= 2pt, shorten >= 2pt] (1)
(3) edge[->, shorten <= 2pt, shorten >= 2pt, bend left] (2)
(3) edge[->, loop, shorten <= 2pt, shorten >= 2pt, distance=10pt, thick, out=45, in=-45] (3) ;
\end{tikzpicture}
\caption{Regulatory network $RN$ for ramp system \eqref{eq:ramp_system_intro_1}.
This network generates exactly 87,280,405,632 combinatorial models.}
\label{fig:3d_example_1RN}
\end{figure*}
\end{ex}

We include the following example to emphasize that our approach can identify the existence of nontrivial recurrent dynamics, in this case periodic orbits.

\begin{ex}
\label{ex:introPeriodic}
Consider the ramp system
\begin{align}
\label{eq:ramp_system_intro_periodic}
\dot{x}_1 & = -\gamma_1 x_1 + r_{1,1}(x_1) r_{1,2}(x_2) r_{1,3}(x_3) \nonumber \\
\dot{x}_2 & = -\gamma_2 x_2 + r_{2,1}(x_1) r_{2,2}(x_2) \\
\dot{x}_3 & = -\gamma_3 x_3 + r_{3,2}(x_2) r_{3,3}(x_3) \nonumber,
\end{align}
where $r_{i, j}$ is given by \eqref{eq:neg_pos_ramp}, with the parameter values in Table~\ref{tab:parameters_ramp_system_intro_periodic}.
Let $\varphi$ be the flow for \eqref{eq:ramp_system_intro_periodic} restricted to the positive orthant $[0,\infty)^3$.
By Proposition~\ref{prop:globalAttractor}, $\varphi$ has a global attractor $K$.
A Morse graph and associated Conley indices for this system is presented in Figure~\ref{fig:mg_3d_example_intro_periodic}. 

\begin{table}[!htpb]
\centering
\renewcommand{\arraystretch}{1.2}
\begin{tabular}{@{}llllll@{}}
\toprule
$\nu_{1, 1, 1} = 1.80$ & $\nu_{1, 1, 2} = 8.56$  & $\nu_{1, 2, 1} = 13.07$ & $\nu_{1, 2, 2} = 3.25$ & $\nu_{1, 3, 1} = 20.10$ & $\nu_{1, 3, 2} = 1.07$  \\
$\nu_{2, 1, 1} = 2.44$ & $\nu_{2, 1, 2} = 0.84$  & $\nu_{2, 2, 1} = 0.16$  & $\nu_{2, 2, 2} = 6.10$ & $\nu_{3, 2, 1} = 2.39$  & $\nu_{3, 2, 2} = 1.36$  \\
$\nu_{3, 3, 1} = 0.05$ & $\nu_{3, 3, 2} = 5.03$  & $\theta_{1, 1} = 27.17$ & $\theta_{1, 2} = 2.26$ & $\theta_{1, 3} = 11.73$ & $\theta_{2, 1} = 39.10$ \\
$\theta_{2, 2} = 1.25$ & $\theta_{3, 2} = 10.47$ & $\theta_{3, 3} = 6.70$  & $\gamma_1 = 1$ & $\gamma_2 = \gamma_3 = 0.5$ & $h_{i,j} = 0.5$ \\
\bottomrule
\end{tabular}
\caption{Parameters values for \eqref{eq:ramp_system_intro_periodic}.}
\label{tab:parameters_ramp_system_intro_periodic}
\end{table}

\begin{figure*}[!htb]
\centering
\includegraphics[width=0.95\textwidth]{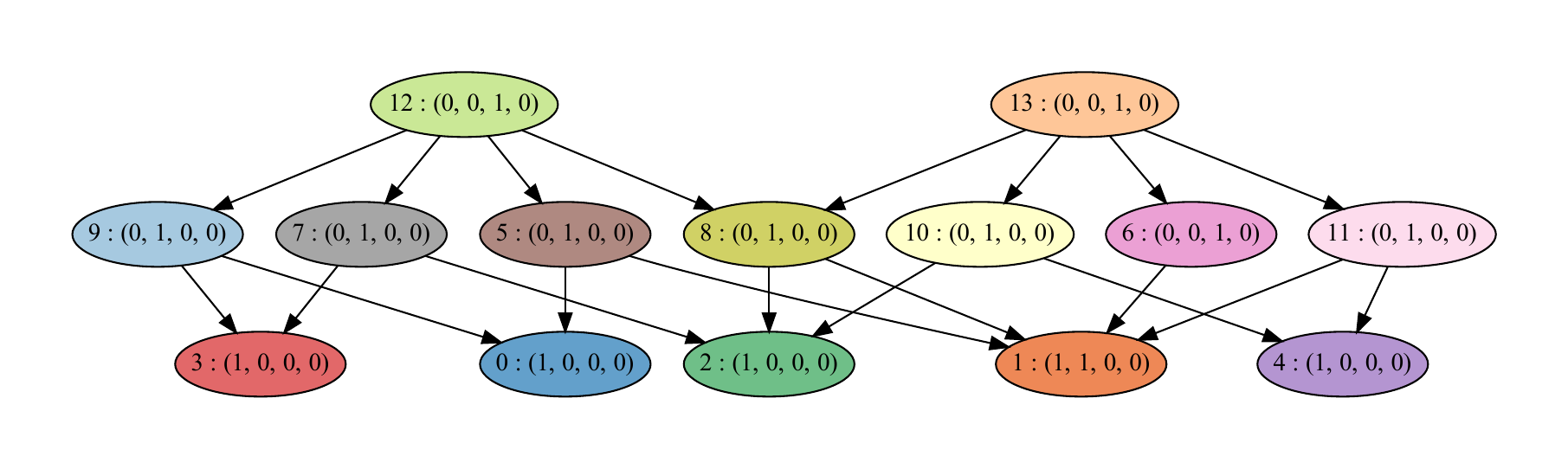}
\caption{Morse graph for the ODE system \eqref{eq:ramp_system_intro_periodic} with the parameter values in Table~\ref{tab:parameters_ramp_system_intro_periodic}. The Morse graph corresponds to parameter node $2,472,287$.}
\label{fig:mg_3d_example_intro_periodic}
\end{figure*}

An application of Theorem~\ref{thm:dynamics} implies that the Morse graph is a Morse decomposition for $\varphi$ restricted to the global attractor $K$.
The fact that the Conley index of each node in the Morse graph is nonzero, implies that the Morse decomposition is a Morse  representation.

As in Example~\ref{ex:saddle_saddle_3D_intro} we encourage the reader to assume that each Morse set $M(p)$ with Conley index 
\[
CH_k(p;\Z_2) \cong
\begin{cases}
\Z_2 & \text{if $k=d$} \\
0 & \text{otherwise}
\end{cases}
\]
is a hyperbolic fixed point with unstable manifold of dimension $d$.
Under this assumption each $q\to p$ arrow implies the existence of a heteroclinic orbit satisfying $\lim_{t\to \infty}x(t) = M(p)$ and $\lim_{t\to -\infty}x(t) = M(q)$.

There is an exception to this discussion and that is node $p = 1$ for which the Conley index is
\[
CH_k(p;\Z_2) \cong
\begin{cases}
\Z_2 & \text{if $k=0,1$} \\
0 & \text{otherwise.}
\end{cases}
\]
This is the Conley index of a stable hyperbolic periodic orbit.
As explained in Example~\ref{ex:periodicOrbit3}, the machinery developed in this manuscript allows us to identify a region of phase space that contains the Morse set $M(1)$ and furthermore to conclude that $M(1)$ contains a periodic orbit of \eqref{eq:ramp_system_intro_periodic}.
Given this information, it is relatively straightforward to obtain a numerical representation of such a periodic orbit as indicated in Figure~\ref{fig:periodicOrbit}

\begin{figure}[!htb]
\centering
\includegraphics[width=0.85\textwidth]{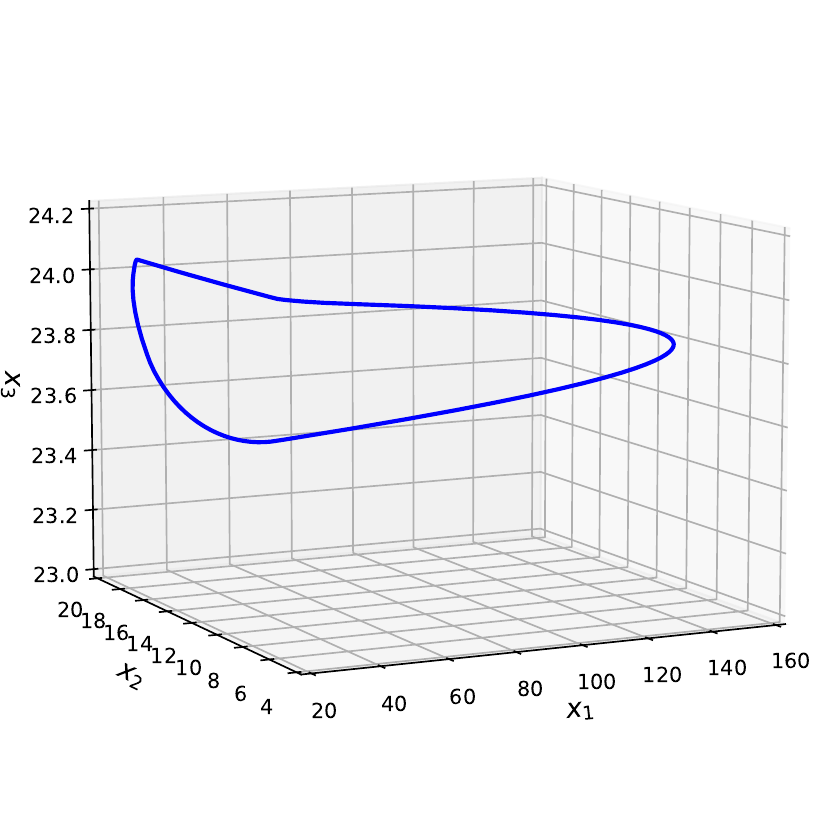}
\caption{Periodic orbit that belongs to Morse set $M(1)$ for \eqref{eq:ramp_system_intro_periodic} at the parameter values given in Table~\ref{tab:parameters_ramp_system_intro_periodic}.}
\label{fig:periodicOrbit}
\end{figure}
\end{ex}

In this section we have attempted to provide a high level introduction to the novel perspective of the analysis of systems of ODEs being presented in this monograph.
We have stated our goals of efficiently capturing the global dynamics of ODEs and provided examples of the types of results that can be determined.
We have claimed that this is done using combinatorial and algebraic topological techniques, but have not provide any substantial insight into the details and how the combinatorial/algebraic results can be related to the continuous dynamics of ODEs.
We attempt to rectify this in the next two chapters.

An implementation of the methods presented in this monograph is included with the DSGRN software \cite{DSGRN_Rook_Field}. A Jupyter notebook with commented code to reproduce the examples presented in Chapters~\ref{sec:intro} and \ref{sec:examples} is available at \cite{Rook_Field_Paper_Repo}.

\chapter{Prelude}
\label{sec:prelude}

The introduction presents the goals of the monograph.
Ideally, the description of the pipeline by which these goals are to be achieved makes clear on a nontechnical level the mathematical challenges that need to be addressed.
We hasten to add -- and this is discussed explicitly in Chapter~\ref{sec:futurework} -- that we do not claim to have fully resolved the challenges.
These challenges are nontrivial and involve addressing multiple technical mathematical issues.
With this in mind we use this prelude to discuss how this work relates to classical dynamics, provide an outline of the organization of this paper  on a slightly more technical level, and use this to indicate how the steps of the pipeline are achieved. 
Precise statements of our results require the language of the machinery we develop in this monograph and thus are presented in Chapter~\ref{sec:examples}.
 
{\bf Step 4} of the pipeline involves translating  the combinatorial/homological information derived during the earlier steps into the language of flows, i.e., solutions to ODEs.
To do this we take advantage of two fundamental contributions by C. Conley \cite{conley:cbms}: a decomposition of invariant sets into gradient-like and recurrent dynamics, and an algebraic topological invariant, called the \emph{Conley index}, from which existence and structure of the invariant sets can be recovered.

We begin with the decomposition, but present it in two stages.
The first describes the global dynamics in the language of invariant sets.
This is an existential framework, we presume to know the objects of interest.
The second describes the global dynamics using objects that can be computed.
There is a mapping from the computable objects to invariant sets, but typically there may be invariant objects that are not revealed by the computations and computed objects may correspond to empty invariant sets.
We use the Conley index to guide our understanding of this correlation.

Let $\varphi\colon \R\times X\to X$ be a flow on a compact metric space $X$.
A set $S\subset X$ is \emph{invariant} if $\varphi(\R,S) = S$.
A set $A\subset X$ is an \emph{attractor} if there exists a compact set $K\subset X$ such that 
\[
A = \omega(K,\varphi) := \bigcap_{t\geq 0}\cl(\varphi((t,\infty),K)) \subset \Int(K)
\]
where $\cl$ and $\Int$ denote closure and interior, respectively.
The set of all attractors of $\varphi$ is denoted by $\sAtt(\varphi)$ and forms a bounded distributive lattice \cite{kalies:mischaikow:vandervorst:14}.
Conley's decomposition theorem \cite{conley:cbms} states that
\[
CR(\varphi) := \bigcap_{A\in\sAtt(\varphi)} A\cup \setof{x\in X\mid \omega(x,\varphi)\cap A = \emptyset}
\]
is an invariant set, called the \emph{chain recurrent set}, and there exists a continuous function $V\colon X \to [0,1]$, called a \emph{Lyapunov function} satisfying
\[
\begin{cases}
V(\varphi(x,t)) = V(x) &\text{if $x\in CR(\varphi)$,} \\
V(\varphi(x,t)) < V(x) &\text{otherwise,}
\end{cases}\quad\text{for all $t>0$.}
\]
Furthermore, for any Lyapunov function $W$ if $W(\varphi(y,t))= W(y)$, for all $y\in Y\subset X$, then $CR(\varphi)\subset Y$.
Loosely stated, all recurrent dynamics of $\varphi$ occurs in $CR(\varphi)$ and  $\varphi$ exhibits gradient-like dynamics off of $CR(\varphi)$.

Since $\sAtt(\varphi)$ may contain countably infinitely many elements \cite{conley:cbms}, we cannot in general expect to be able to compute $CR(\varphi)$.
With this in mind let $\sA$ be a finite sublattice of $\sAtt(\varphi)$ containing both the minimal attractor $\emptyset$ and the maximal attractor $\omega(X,\varphi)$.
This gives rise to a \emph{Morse representation} $(\sM\sR(\sA),\leq_\varphi)$ \cite{kalies:mischaikow:vandervorst:18}.
The elements of $\sM\sR(\sA)$ are called called \emph{Morse sets}. 
Morse sets are non-empty invariant sets and can be defined in terms of  components of 
\[
\bigcap_{A\in\sA} A\cup \setof{x\in X\mid \omega(x,\varphi)\cap A = \emptyset}.
\]
The partial order $\leq_\varphi$ is derived from the flow. In particular, given distinct Morse sets $M$ and $M'$, set $M <_\varphi M'$  if there exists $x\in X$ such that $\omega(x,\varphi)\subset M$ and $\alpha(x,\varphi)\subset M'$, and extend the relation transitively.
Observe that the partial order $\leq_\varphi$ provides explicit information about the gradient-like structure of the dynamics; the dynamics moves from higher ordered invariant sets to lower ordered invariant sets.

In general we cannot expect to be able to compute $(\sM\sR(\sA),\leq_\varphi)$.
Morse sets need not be  stable with respect to perturbations.
The simplest example is a degenerate fixed point associated with  a saddle-node bifurcation; under arbitrarily small perturbations the Morse set may vanish.
With this in mind, we turn to computable objects.

\begin{defn}
\label{defn:attractingblock}
A compact set $K\subset X$ is an \emph{attracting block} for a flow $\varphi$ if $\varphi(t,K)\subset \Int(K)$ for all $t>0$.
\end{defn}

The set of attracting blocks $\sABlock(\varphi)$ is a bounded distributive lattice with a partial order relation given by set inclusion \cite{kalies:mischaikow:vandervorst:14}.
Furthermore,
\begin{align*}
\omega\colon \sABlock(\varphi) &\to \sAtt(\varphi) \\
K & \mapsto \omega(K)
\end{align*}
is a bounded lattice epimorphism.
Since typically $\sABlock(\varphi)$ is uncountable, it is clear that $\omega$ is  not a monomorphism.

Let $\sK$ be a finite sublattice of $\sABlock(\varphi)$ that contains $\emptyset$ and $X$.
Recall that a \emph{join irreducible element} of $\sK$ is an attracting block that has a unique immediate predecessor with respect to the partial order of inclusion.
The set of all join irreducible elements of $\sK$ is denoted by $\sJ^\vee(\sK)$ and is a poset with partial order given by inclusion.

Given $K\in \sJ^\vee(\sK)$, let $K^< \in \sK$ denote its unique \emph{immediate predecessor}, i.e., $K^< \subset K$ with $K^< \neq K$ and if there exists $K'\in \sK$ such that $K^< \subset K' \subset K$, then $K' = K^<$ or $K' = K$.
 
The \emph{Morse tiling} associated with $\sK$ is given by
\[
\sM\sT(\sK) := \setof{\cl(K\setminus K^<)\subset X\mid K\in \sJ^\vee(\sK)}.
\]

Observe that $\sA : = \omega(\sK)$ is a finite lattice of attractors that contains $\emptyset$ and $\omega(X,\varphi)$.
As such $(\sM\sR(\sA),\leq_\varphi)$ is well defined.

The poset $\sJ^\vee(\sK)$ is called a \emph{Morse decomposition} for $\varphi$ since there exists an order preserving embedding $\pi\colon \sM\sR(\sA) \to \sJ^\vee(\sK)$.
For further details see \cite{kalies:mischaikow:vandervorst:18}.

Our claim, which we state in the form of the following objective, is that attracting blocks and Morse decompositions are computable.

\begin{description}
\item[O1] Compute a \emph{Morse decomposition} of $\varphi$, i.e., an order-embedding $\pi\colon \sM\sR\hookrightarrow \sJ^\vee(\sK)$.
\end{description}

Understanding the structure of the recurrent dynamics of a differential equation is of considerable interest.
While a Morse decomposition does not directly address this challenge, it does provide information about the location of recurrent dynamics; Morse sets capture recurrent dynamics and Morse sets are subsets of Morse tiles.
Therefore, the first question we can ask is the following: given a Morse tile $\cl(K\setminus K^<)$, is 
\[
\Inv(\cl(K\setminus K^<),\varphi) := \setof{x\in \cl(K\setminus K^<)\mid \varphi(\R,x)\subset \cl(K\setminus K^<) } \neq \emptyset?
\]

To answer this we employ the (homological) Conley index \cite{conley:cbms, salamon, mischaikow:mrozek:02}.
For the purposes of this paper it is sufficient to define the Conley index as follows. 
Let $K_i\in \sABlock(\varphi)$, $i=0,1$, and assume that $K_0\subset K_1$.
The \emph{Conley index} of $S = \Inv(K_1\setminus K_0,\varphi)$ is given by
\[
CH_*(S) := H_*(K_1,K_0).
\]
The most fundamental result is the following \cite{conley:cbms}.
\begin{thm}
\label{thm:CH}
    Let $S$ be an isolated invariant set. If $CH_*(S) \neq 0$, then $S\neq \emptyset$.
\end{thm}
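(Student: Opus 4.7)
My plan is to prove the contrapositive: assuming $S = \Inv(K_1 \setminus K_0, \varphi) = \emptyset$ for attracting blocks $K_0 \subset K_1$ realizing the index, show that $H_*(K_1, K_0) = 0$. The idea is to construct a strong deformation retraction of $K_1$ onto $K_0$ by flowing each point forward until it enters $K_0$.

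The central object is the hitting time
\[
\tau(x) := \inf \setof{t \geq 0 \mid \varphi(t, x) \in K_0 }, \qquad x \in K_1,
\]
and the proof reduces to showing that $\tau$ is finite and continuous on $K_1$. For finiteness, since $K_1$ is an attracting block, it is compact and positively invariant, so $\omega(x, \varphi)$ is a nonempty compact invariant subset of $K_1$. If $\omega(x, \varphi) \cap K_0$ were empty, then $\omega(x, \varphi)$ would be a nonempty invariant set sitting inside $K_1 \setminus K_0$, contradicting $S = \emptyset$. So pick $y \in \omega(x, \varphi) \cap K_0$; the attracting-block property $\varphi(1, K_0) \subset \Int K_0$ together with continuity of the flow produces a neighborhood $V$ of $y$ with $\varphi(1, V) \subset \Int K_0$. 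Because $\varphi(t_n, x) \to y$ along some sequence, eventually $\varphi(t_n + 1, x) \in K_0$, which forces $\tau(x) < \infty$. Continuity of $\tau$ is a standard consequence: upper semi-continuity follows from the openness of $\Int K_0$ and continuous dependence on initial conditions, and lower semi-continuity follows from the closedness of $K_0$ together with continuity of $\varphi$.

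With $\tau$ finite and continuous, the map $H(x, s) := \varphi(s \, \tau(x), x)$ is a continuous homotopy from $\id_{K_1}$ to a retraction of $K_1$ onto $K_0$, fixing $K_0$ pointwise since $\tau$ vanishes there. Hence $K_0$ is a strong deformation retract of $K_1$ and $CH_*(S) = H_*(K_1, K_0) = 0$. The hard part is the finiteness step: this is precisely where the absence of invariant dynamics in $K_1 \setminus K_0$ must interact with the attracting-block property of $K_0$, and where the content of the theorem really lies. Once $\tau$ is known to be finite, its continuity and the homotopy construction are essentially formal.
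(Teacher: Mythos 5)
The paper does not include a proof of this statement; it cites it to Conley's CBMS monograph as the most fundamental property of the index. Your argument is a correct independent proof and is, in essence, the standard one: you are reproving the classical lemma that when $K_0 \subset K_1$ are attracting blocks and $\Inv(K_1 \setminus K_0, \varphi) = \emptyset$, the flow-defined first-hitting time $\tau$ induces a strong deformation retraction of $K_1$ onto $K_0$, whence $H_*(K_1, K_0) = 0$. Your finiteness step is the crux and it is handled correctly: $K_1$ compact and positively invariant gives nonempty $\omega(x) \subset K_1$; if $\omega(x)$ missed $K_0$ entirely it would be a nonempty invariant set in $K_1 \setminus K_0$, contradicting $S = \emptyset$; and the attracting-block property $\varphi(1, K_0) \subset \Int K_0$ converts the limit-point information into an actual finite entry time. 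The semicontinuity arguments for $\tau$ are also right, provided one notes, as you implicitly do, that $\tau(x_0)$ is actually attained (because $K_0$ is closed), so $\varphi(t, x_0) \notin K_0$ on $[0, \tau(x_0))$ is a genuine statement rather than one about the infimum only; this is what makes the compactness and positive-distance argument for lower semicontinuity go through. The one degenerate case worth a sentence is $K_0 = \emptyset$: there $\tau$ is never finite, but then $\Inv(K_1, \varphi) \neq \emptyset$ whenever $K_1 \neq \emptyset$ (it contains $\omega(x)$), so the hypothesis $S = \emptyset$ forces $K_1 = \emptyset$ as well and the conclusion is trivial. With that noted, your proof is complete and matches the classical route in spirit.
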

The converse need not be true.
More refined theorems lead to the existence of equilibria \cite{mccord:89}, periodic orbits \cite{mccord:mischaikow:mrozek}, and heteroclinic orbits \cite{conley:cbms}.

Given a Morse decomposition, the Morse graph and the associated Conley indices can be organized as a chain complex.
Let $\sK$ be a finite sublattice of $\sABlock(\varphi)$ that contains $\emptyset$ and $X$.
In a misuse of notation, given $K\in \sJ^\vee(\sK)$, set 
\[
CH_*(K;\F) = CH_*(\Inv(\cl(K\setminus K^<),\varphi);\F) = H_*(K,K^<;\F).
\]
Furthermore, assume that $\F$ is a field.
Franzosa \cite{franzosa:89} proved that there exists a boundary operator $\Delta$, called a \emph{connection matrix}, that acts on chains that are given as a direct sum of the Conley indices of the isolated invariant sets of the Morse tiling, i.e.
\begin{equation}
\label{eq:DeltaPrelude}
\Delta \colon \bigoplus_{K\in\sJ^\vee(\sK)} CH_*(K;\F) \to \bigoplus_{K\in\sJ^\vee(\sK)} CH_*(K;\F)
\end{equation}
from which Conley indices of other invariant sets can be recovered.

Even a cursory discussion of the connection matrix goes beyond the scope of this manuscript.
For our purposes it is sufficient to note that a connection matrix (connection matrices need not be unique) satisfies the following properties.
\begin{itemize}
    \item If $\Delta(K',K) \colon CH_*(K;\F) \to CH_*(K';\F)$ is not the trivial map, then $K' \subset K$. 
    \item It is a boundary operator, i.e.\  $\Delta(K',K)\left( CH_k(K);\F \right)\subset CH_{k-1}(K';\F)$ and $\Delta\circ \Delta = 0$.
\end{itemize}
As indicated in Chapter~\ref{sec:intro} the connection matrix can be used to characterize  global dynamics.
For specific examples the reader is referred to \cite{reineck, mccord:mischaikow:92, mccord:mischaikow:96, mischaikow:95, gedeon:mischaikow:95, kokubu:mischaikow:oka, kokubu:mischaikow:nishiura:oka:takaishi, maier-paape:mischaikow:wanner:07, day:hiraoka:mischaikow:ogawa}.
We make use of these techniques in Chapter~\ref{sec:examples}.

The fact that this algebraic topological data provides information about the recurrent and global dynamics of the flow leads us to our second objective.
\begin{description}
\item[O2] Given a Morse decomposition of $\varphi$ compute the set of connection matrices.
\end{description}

Observe that as described a Morse decomposition provides a combinatorial representation of the gradient-like dynamics and the associated connection matrices provide information about  potential recurrent dynamics and connecting orbits   for \eqref{eq:generalODE} at a single parameter value.
Thus, our third objective is as follows.
\begin{description}
\item[O3] Provide an explicit open region $\Omega$ of parameter space for which the Morse decomposition is valid.
\end{description}

We declare 
\begin{quote}
\emph{
    Given an ODE \eqref{eq:generalODE}, if  objectives {\bf O1} - {\bf O3} are achieved, then {\bf Goal 1} is satisfied.
    }
\end{quote}

A priori we cannot assume to know the finite sublattice of attractors $\sA$ upon which our description of {\bf O1}-{\bf O3} is based, thus the appropriate interpretation of our declaration is that given a level of resolution we will identify a Morse decomposition and connection matrix.
As is shown in  \cite{kalies:mischaikow:vandervorst:15} given any finite lattice of attractors $\sA$ there exists a discretization of phase space and a combinatorial description of the dynamics that takes the form of a finite lattice of attracting blocks that allows for the identification of $\sA$ and the associated Conley index information.
However, from a practical perspective this is an existential result.

A key contribution of this paper is to provide a practical method for carrying out the computations that produces an interesting sublattice $\sA$.
As discussed below this is done using combinatorial techniques.
At least conceptually this is compatible with the finite poset conditions of {\bf O1}.
However, the flow $\varphi$ that appears explicitly or implicitly in {\bf O1}-{\bf O3} is the flow associated with \eqref{eq:generalODE}.
Thus, an essential step to achieving the objectives is to pass from the combinatorial results to results about differential equations without explicit knowledge of the associated flow.
In Chapter~\ref{sec:geometrizationCellComplex} we provide an abstract framework in which this can be done.

The methods presented in Part~\ref{part:II}
are purely combinatorial/algebraic in nature; they involve no analysis or topology.
It is in this part that we carry out {\bf Step 1} and {\bf Step 2} of the pipeline.
The combinatorial model is developed in three stages in Chapters~\ref{sec:RookFields} and \ref{sec:rules}.
\begin{enumerate}
    \item We introduce the concept of a \emph{wall-labeling} on an abstract cubical complex $\cX$. 
    This provides, with certain constraints, signed pairings between top dimensional cells of $\cX$ and their co-dimension one faces. 
    The wall labeling provides \emph{all} the data used in the combinatorial calculations.
    \item We re-interpret the wall labeling as a \emph{rook field}.
    Rook fields can be viewed as a combinatorial vector field that is assigned to each cell of the cubical complex $\cX$ and whose coordinates assumes values $-1$, $0$, or $1$.
\item We use the rook field to define four families $\cF_i\colon \cX \mvmap \cX$, $i=0,\ldots, 3$ of combinatorial models that take the form of \emph{multivalued maps}, i.e., for each $\xi\in\cX$, $\cF_i(\xi)$ is a non-empty set of cells in $\cX$. 
The most naive model $\cF_0$ does nothing more than ensure that the dynamics we are modeling can be represented by a flow.
The models $\cF_1$, $\cF_2$, and $\cF_3$ provide potentially ever greater refinement of the dynamics.
\end{enumerate}
For the sake of clarity of exposition we provide minimal motivation for the conditions imposed on the models $\cF_1$, $\cF_2$, and $\cF_3$ when they are introduced in Chapter~\ref{sec:rules}.
For the moment it is sufficient to remark that they are chosen to allow us to identify hypersurfaces that are transverse to the vector field of the ODE of interest.
This is made clear in Chapters~\ref{sec:R1Dynamics}, \ref{sec:R2Dynamics}, and \ref{sec:R3Dynamics}.

In Chapter~\ref{sec:lattice} we address {\bf Step 2} of the pipeline. 
In particular, in Section~\ref{sec:CCalgorithm} we define the \emph{Morse graph} of the combinatorial model $\cF_i$. 
This the combinatorial equivalent of the above mentioned Morse decompositions and provides the means by which we aim to complete {\bf O1} and {\bf Step 2a}.
Furthermore, we make use of the work of \cite{harker:mischaikow:spendlove,harker:templates:21} to compute \emph{Conley complexes} for $\cF_i$, i.e., chain complexes of the form
\begin{equation}
    \label{eq:CCPrelude}
\Delta \colon \bigoplus_{\cM\in\sMG(\cF_i)} CH_*(\cM;\F) \to CH_*(\cM;\F),
\end{equation}
and thereby aim to complete {\bf O2}.

There is a significant gap between the definitions provided in Chapters~\ref{sec:RookFields} and \ref{sec:rules} and the structures and homological conditions required to perform the computations discussed in Section~\ref{sec:CCalgorithm}.
Most of Chapter~\ref{sec:lattice} is dedicated towards resolving this gap.
In Section~\ref{sec:compInvset+} we discuss the computation of 
the lattices of \emph{forward invariant sets} $\sInvset^+(\cF_i) := \setof{\cN\subset\cX\mid \cF_i(\cN)\subset \cN}$ and an associated order structure called a \emph{D-grading}.
This is used to identify a lattice of cell complexes from which connection matrices are derived.
From the homological perspective we require \emph{admissible gradings} that are discussed in Section~\ref{sec:AdmissbleD-grading}.
As a consequence we need to show that a D-grading leads to an admissible grading. 
We are not able to do this using the cell complex  $\cX$.
Thus, in Section~\ref{sec:blowup_complex} we introduce the \emph{blow-up complex} $\cX_b$ and transfer the D-grading on $\cX$ to a D-grading on $\cX_b$.

The theorems of Section~\ref{sec:Dgradblowup} are fundamental in that they demonstrate that the D-gradings on $\cX_b$ generated by our combinatorial models $\cF_i$ are admissible gradings, i.e., they can be used to determine connection matrices. Furthermore, we show that these admissible gradings give rise to AB-lattices, the importance of which is made clear in Chapter~\ref{sec:geometrizationCellComplex}.
Having identified that our constructions lead to admissible gradings, in Section~\ref{sec:ConleyComplex} we discuss Conley complexes, which sets us up for the computations discussed in Section~\ref{sec:CCalgorithm}.

{\bf Goals 1} and {\bf 2} and objectives {\bf O1} - {\bf O3} are statements about solutions to differential equations.
However, as we again emphasize the definitions and results of Part~\ref{part:II} only involve combinatorics and homological algebra.
There is no topology, let alone differential equations.
The focus of Part~\ref{part:III} is to demonstrate that for an interesting class of ODEs these goals and objectives are realized by the computations of Section~\ref{sec:CCalgorithm}. 

In Chapter~\ref{sec:ramp} we discuss ramp systems and their associated wall labelings.
Ramp systems, a special class of ODEs examples of which are given in Chapter~\ref{sec:intro}, are defined in Section~\ref{sec:DefinitionRampSystems}.
In Section~\ref{sec:ramp2rook} we make precise how to go from ramp system parameters to a wall labeling. 
Thus, given a wall labeling we can apply the content of Part~\ref{part:II} to compute $\cF_i$ and obtain the results described in Section~\ref{sec:CCalgorithm}.
However, this does not necessarily imply that the Morse graph and Conley complex results  apply to the ramp system ODE.
Our analysis requires us to make additional restrictions on the parameter values; the restrictions depend on the choice of combinatorial model $\cF_i$ and are presented in Section~\ref{sec:ramp2rook}.
 
While every ramp system can be associated with a wall labeling, in Section~\ref{sec:wall-ramp-property} we make clear that there are wall labelings that cannot be realized by ramp systems.
Though discussed in greater detail in Section~\ref{sec:alternativeNonlinearities} this suggests that the techniques presented in this monograph can be applied to a much broad class of ODEs.

Recall that  {\bf Goal 2} involves going from a regulatory network to a characterization of the global dynamics of associated ODEs.
Our software DSGRN, discussed briefly in Chapter~\ref{sec:intro}, is introduced to a greater extent in Section~\ref{sec:DSGRN}.
Salient points for the current discussion are as follows.
First, given a regulatory network $RN$, the choice of node in the associated parameter graph, and a choice of $i=1,2,3$, the DSGRN software produces a combinatorial model $\cF_i$ from which a Morse graph $\sMG(\cF_i)$ and a Conley complex is computed.
Second, the DSGRN parameters are a subset of ramp system parameters that can easily be embedded in the parameter regions described in Section~\ref{sec:ramp2rook}.
Third, the current implementation DSGRN can only be identified with a  special class of ramp systems.
However this special class is sufficiently rich that the computations for most of the examples discussed in Chapter~\ref{sec:examples} are done using DSGRN.\footnote{We provide an example in Chapter~\ref{sec:examples} of a ramp system for which DSGRN cannot be directly applied.} 

Having introduced ramp systems the remainder of Part~\ref{part:III} is dedicated to proving that {\bf Goals 1} and {\bf 2} and objectives {\bf O1} - {\bf O3} are achieved.
While our strategy is to provide essentially independent proofs of these results for $\cF_i$, $i=0,1,2,3$, the unifying idea is to take the abstract cell complex $\cX_b$ and produce an associated CW complex that we refer to as a \emph{geometrization} with the property that boundaries of specific cells are transverse to the vector field of the ramp system.
The theoretical justification for this approach is provided in Chapter~\ref{sec:geometrizationCellComplex} where it is made clear (see Remark~\ref{rem:O3}) that if our approach is successfully employed, then weak version of objective {\bf O3} is satisfied.

The most primitive geometrization, called a \emph{rectangular geometrization}, is defined in Chapter~\ref{sec:geometrization01}.
In  Chapter~\ref{sec:R0} we demonstrate how given a ramp system and the combinatorial model $\cF_0$ this rectangular geometrization can be used to prove that the Morse graph $\sMG(\cF_0)$ defines a Morse decomposition, to determine the connection matrix, and to conclude the existence of a fixed point for the flow.
This chapter is included for pedagogical purposes since the results obtained using $\cF_0$ are trivial.
However, the essential ideas of how the combinatorial information from Part~\ref{part:II} carries over to information about the dynamics of ramp systems is present in this example.

As suggested above the combinatorial models $\cF_1$, $\cF_2$, and $\cF_3$ capture different behaviors of vector fields.
Roughly speaking $\cF_1$ identifies dynamics that is dominated by the vector field being transverse to the coordinate axes, $\cF_2$ organizes dynamics associated with the vector field being tangent to the coordinate axes, and $\cF_3$ captures dynamics near equilibria.

In Chapter~\ref{sec:R1Dynamics} we consider the combinatorial model $\cF_1$ and show that rectangular geometrizations can employed. 
As is indicated in Part~\ref{part:II} the combinatorial dynamics associated with $\cF_1$ can be nontrivial.
This implies that $\cF_1$ can be used to extract useful information about dynamics of ramp systems

Chapters~\ref{sec:janusComplex} - \ref{sec:R2Dynamics} focus on the combinatorial model $\cF_2$.
The dynamics captured by $\cF_2$ requires a more subtle geometric realization than that which can be captured by the  rectangular geometrization. 
Thus, in Chapter~\ref{sec:janusComplex} we introduce the \emph{Janus complex} $\Janus$, which is a refinement of the blow-up complex $\cX_b$.

The D-grading on $\cX_b$ naturally extends to a D-grading on $\Janus$, but to construct the geometrization we need to introduce a modified D-grading. 
This is the content of Chapter~\ref{sec:P2-grading}.
In particular, the construction of the modified D-grading is done in Sections~\ref{sec:2notation} - \ref{sec:pi2}.
The thesis of this monograph is that the computations done in Section~\ref{sec:CCalgorithm} are sufficient to characterize the dynamics of ramp systems.
Thus, rather than performing new computations using the modified D-grading on $\Janus$ we prove in Section~\ref{sec:pi2Equivalent} that the  Morse graph and Conley complexes in the modified setting are the same as those of obtained using the D-grading on $\cX$ and $\cX_b$.

Chapters~\ref{sec:janusComplex} and \ref{sec:P2-grading} provide refined and modified combinatorial structures.
To use these structures to construct the desired geometrization is a question of analysis which
is presented in Chapter~\ref{sec:R2Dynamics}. 
The analytic subtleties arise from the fact that we need to track the behavior of the flow as it passes through nullclines of the vector field.

We conclude Part~\ref{part:III} with Chapters~\ref{sec:P3-Grading} and \ref{sec:R3Dynamics} that address geometrization in the context of $\cF_3$.
Our construction of $\cF_3$ is restricted to two and three dimensional systems.
The possibility of extending this work to higher dimensional systems is discussed in Section~\ref{sec:equilibria}.
As in the case of $\cF_2$ there are two issues that need to be addressed. 
The first, done in Chapter~\ref{sec:P3-Grading}, is the combinatorial work, the appropriate modification of the D-grading using the Janus complex $\Janus$, and the second, done in Chapter~\ref{sec:R3Dynamics} is using analysis to construct the desired geometrization.

The results of Chapters~\ref{sec:R1Dynamics}, \ref{sec:R2Dynamics}, and \ref{sec:R3Dynamics} imply that in the context of ramp systems  {\bf Goals 1} and {\bf 2} and objectives {\bf O1} - {\bf O3} are met for combinatorial models $\cF_1$, $\cF_2$, and $\cF_3$, respectively.

Part~\ref{part:IV} of the manuscript has two chapters.
Chapter~\ref{sec:examples} consists of examples meant to demonstrate the power of our techniques.
In particular, we combine the computations of Section~\ref{sec:CCalgorithm} with classical results that allow us to use Conley index information to draw conclusions about the structure of the dynamics and the existence of bifurcations.

In Chapter~\ref{sec:futurework} we focus on three directions of future work that we believe will lead to improvements of the results presented here.
In Section~\ref{sec:equilibria} we indicate why we believe that the restriction of $\cF_3$ to two and three dimensional systems can be lifted.
Finally, in Section~\ref{sec:alternativeNonlinearities} we discuss the challenges of moving to ODEs that are not ramp systems.
Using a simple example  we show how a system that does not have the form of a ramp system can be transformed to a ramp system, but we also highlight that care needs to be taken to insure that an appropriate ramp system model is chosen.

Finally, we finish the manuscript with an appendix that describes software that allows the user to visualize the combinatorial constructions of Part~\ref{part:II} in dimensions two and three.

\chapter{Geometrization of a Cell Complex}
\label{sec:geometrizationCellComplex}

As is discussed in the previous chapters the mathematical flavors of Part~\ref{part:II} and Part~\ref{part:III} are quite distinct.
The language and techniques employed in  Part~\ref{part:II} come from combinatorics, order theory, and homological algebra.
This is also where the descriptions of the computations we perform are presented.
The focus of  Part~\ref{part:III} is on ODEs, geometry and analysis.
The philosophy behind this partition is that it is via Part~\ref{part:II} that we obtain the characterization of the global dynamics of an ODE, and the purpose of Part~\ref{part:III} is to identify which ODEs have been solved.
The goal of this chapter is to present the mathematics that ties these two distinct parts of the manuscript together.
The key idea is that of a flow transverse geometric realization (Part~\ref{part:III}) of a lattice ordered chain complex (Part~\ref{part:II}).
Thus, we begin by recalling a few elementary definitions from order theory and algebraic topology.
We then discuss regular CW decompositions and conclude with results about the structure of flows.

\section{Posets and Lattices}
\label{sec:posetsLattices}

We begin with a few classical definitions.
\begin{defn}
\label{defn:poset}
A \emph{partially ordered set} (poset) $(\sP,\leq)$ consists of a set $\sP$ with a reflexive, antisymetric,  transitive relation $\leq$. Given $p, q \in \sP$ we denote $p < q$ to indicate that $p \leq q$ and $p \neq q$.
\end{defn}

\begin{defn}
\label{defn:meet_join}
Let $(\sP, \leq)$ be a poset and let $p, q\in \sP$. If $\{ p, q \}$ have a least upper bound it is called the \emph{join of $p$ and $q$} and is denoted by $p \vee q$, that is,
\[
p \vee q := \inf \setof{r\in\sP\mid p\leq r\ \text{and}\ q \leq r},
\]
if the $\inf$ exists. If $\{ p, q \}$ have a greatest lower bound it is called the \emph{meet of $p$ and $q$} and is denoted by $p \wedge q$, that is,
\[
p \wedge q := \sup \setof{r\in\sP\mid r\leq p\ \text{and}\ r \leq q},
\]
if the $\sup$ exists. 
\end{defn}
The meet and join may not exist for a given pair $p, q\in \sP$, but if they exist they are unique.

\begin{defn}
\label{defn:lattice}
A \emph{lattice} is a poset in which every pair of elements $p,q\in \sP$ has a (unique) join $p \vee q$ and a (unique) meet $p \wedge q$. 
\end{defn}

Throughout most of this monograph we focus our attention to finite distributive lattices, i.e., lattices with finitely many elements for which the operations $\vee$ and $\wedge$ are distributive.

\begin{ex}
\label{ex:integerlattice}
Let $\N = \setof{0, 1, 2, \cdots}$ and $N$ be a positive integer.
The partial order $\poeN$ on $\N^N$ is defined by 
\[
x \poeN y\quad\text{if and only if}\quad x_n\leq y_n\ \text{for all $n=1,\ldots, N$.}
\]
We leave it to the reader to check that if $z = x\vee_\N y$, then $z_n = \max\{x_n,y_n\}$ for all $n=1,\ldots, N$, and if $z = x\wedge_\N y$, then $z_n = \min\{x_n,y_n\}$ for all $n=1,\ldots, N$.
Thus, $\poeN$ defines a lattice on $\N^N$.
\end{ex}

We employ two functorial relations between posets and lattices that we define below. 

\begin{defn}
    \label{defn:downset}
Let $(\sP,\leq)$ denote a finite poset and let $p\in \sP$. 
The \emph{downset} of $p$ is
\[
\sO(p) := \setdef{q\in\sP}{q\leq p}.
\]
The collection of all downsets in $\sP$ generates a finite distributive lattice denoted by $\sO(\sP)$ where the operations are $\wedge = \cap$ and $\vee = \cup$.
\end{defn}

\begin{defn}
    \label{defn:joinirreducible}
Let $\sL$ denote a finite distributive lattice.   
An element $p\in \sL$ is \emph{join-irreducible} if it has a unique immediate predecessor or equivalently, if $p=q\vee r$ implies that $p = q$ or $p=r$.
The collection of all join irreducible elements of $\sL$ is denoted by $\sJ^\vee(\sL)$. Since $\sJ^\vee(\sL)\subset \sL$, $\sJ^\vee(\sL)$ is a poset.
\end{defn}

A finite lattice always has a unique minimal element $\bzero$ and a unique maximal element $\bone$ and hence is a \emph{bounded lattice}.
We denote our lattices as $\bzero\bone$-lattices.
Recall that if $\sL$ and $\sL'$ are two $\bzero\bone$-lattices, then a $\bzero\bone$-lattice morphism $f\colon \sL\to\sL'$ satisfies
\[
f(L_0\vee L_1) = f(L_0)\vee f(L_1)\quad\text{and}\quad f(L_0\wedge L_1) = f(L_0)\wedge f(L_1)
\]
and preserves the minimal and maximal elements, i.e.,
\[
f(\bzero) = \bzero' \quad\text{and}\quad f(\bone) = \bone' .
\]

The following theorem (see \cite[Theorem 5.19]{davey:priestley}) relates finite distributive lattices with finite posets.

\begin{thm}[Birkhoff]
\label{thm:birkhoff}
Let $\sP$ and $\sQ$ be finite posets and let $\sL = \sO(\sP)$
and $\sK = \sO(\sQ)$.
Given a $\bzero\bone$-homorphism $f\colon \sL \to \sK$, there is an associated
order-preserving map $\phi_f\colon \sQ \to \sP$ defined by
\[
\phi_f(q) := \min\setdef{ p\in \sP}{q\in\sO(p)}\quad \text{for all $q\in \sQ$.}
\]

Given an order-preserving map $\phi\colon \sQ \to \sP$, there is an associated $\bzero\bone$-homorphism $f_\phi\colon \sL \to \sK$  defined by
\[
f_\phi(a) := \phi^{-1}(a)\quad\text{for all $a \in \sL$.}
\]
Equivalently,
\[
\phi(y) \in a\quad\text{if and only if}\quad \text{$y \in f_\phi(a)$ for all $a\in\sL$, $y\in\sQ$.}
\]
The maps $f\mapsto \phi_f$ and $\phi\mapsto f_\phi$ establish a one-to-one correspondence between $\bzero\bone$-homorphisms from $\sL$ to $\sK$ and order-preserving
maps from $\sQ$ to $\sP$.
Further,
\begin{enumerate}
    \item[(i)] $f$ is one-to one if and only if $\phi_f$ is onto,
    \item[(ii)] $f$ is onto if and only if $\phi_f$ is an order-embedding.
\end{enumerate}
\end{thm}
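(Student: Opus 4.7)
The plan is to verify in turn that both constructions are well-defined, that they are mutual inverses, and finally to establish the correspondences between injectivity/surjectivity properties. For well-definedness of $\phi_f$, I would interpret the defining set as $\{p \in \sP : q \in f(\sO(p))\}$ (the stated version must use $f$, as otherwise it does not involve $f$), and show the minimum exists. Non-emptiness follows from $f(\sO(\bone_\sP)) = f(\sL_{\bone}) = \sK_{\bone} = \sQ$, and closure of the set under $\wedge$ comes from the fact that $f$ preserves meets: if $q \in f(\sO(p_1))$ and $q \in f(\sO(p_2))$, then $q \in f(\sO(p_1)) \cap f(\sO(p_2)) = f(\sO(p_1) \cap \sO(p_2)) = f(\sO(p_1 \wedge p_2))$. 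Finiteness then produces the minimum. Order-preservation of $\phi_f$ follows because each $f(\sO(p)) \in \sK$ is a downset, so $q_1 \leq q_2$ and $q_2 \in f(\sO(p))$ imply $q_1 \in f(\sO(p))$, so the defining set for $\phi_f(q_1)$ contains that for $\phi_f(q_2)$, giving $\phi_f(q_1) \leq \phi_f(q_2)$.

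Next I would check that $f_\phi$ is a $\bzero\bone$-homomorphism. The fact that $\phi^{-1}(a)$ is a downset in $\sQ$ whenever $a$ is a downset in $\sP$ uses order-preservation of $\phi$: if $q' \leq q$ and $\phi(q) \in a$, then $\phi(q') \leq \phi(q) \in a$, so $\phi(q') \in a$. The preservation of $\vee = \cup$, $\wedge = \cap$, $\bzero = \emptyset$, and $\bone$ is then immediate from the corresponding set-theoretic identities for preimages.

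The bijectivity amounts to the two verifications $f_{\phi_f} = f$ and $\phi_{f_\phi} = \phi$. For the first, fix $a \in \sL$, and show $q \in f(a)$ iff $\phi_f(q) \in a$; the ``only if'' direction uses that $a = \bigcup_{p \in a}\sO(p)$ so $q \in f(a) = \bigcup_{p \in a} f(\sO(p))$ gives some $p \in a$ with $q \in f(\sO(p))$, and then $\phi_f(q) \leq p$ together with $a$ being a downset gives $\phi_f(q) \in a$. Conversely, if $\phi_f(q) \in a$, then $q \in f(\sO(\phi_f(q))) \subset f(a)$. For the second identity, $\phi_{f_\phi}(q)$ is the minimum $p$ with $q \in \phi^{-1}(\sO(p))$, i.e.\ with $\phi(q) \leq p$; the minimum such $p$ is clearly $\phi(q)$ itself.

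Finally, for (i) and (ii), I would argue both directions. Surjectivity of $\phi_f$ is equivalent to distinctness of the values $f(\sO(p))$, which in turn is equivalent to injectivity of $f$ since every element of $\sL$ is uniquely a union of $\sO(p)$'s. For (ii), $\phi_f$ being an order-embedding means $\phi_f(q_1) \leq \phi_f(q_2) \Leftrightarrow q_1 \leq q_2$, which translates via the correspondence into the statement that $f$ reflects the partial order on generators, which is equivalent to surjectivity of $f$ using Birkhoff's representation. The main obstacle, I anticipate, is bookkeeping the translation between downsets and their generating elements in parts (i) and (ii); the existence and meet-closure argument for $\phi_f$ is routine once the reading of the definition is clarified, but the equivalences in (i) and (ii) require carefully unwinding that join-irreducibles in $\sO(\sP)$ are exactly the principal downsets $\sO(p)$.
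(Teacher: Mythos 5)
The paper does not actually prove Theorem~\ref{thm:birkhoff}; it cites \cite[Theorem 5.19]{davey:priestley} and moves on, so there is no in-house argument to compare against. Your construction of the correspondence is the standard one and is largely sound. You were right to correct the display to $\phi_f(q) = \min\{p \in \sP : q \in f(\sO(p))\}$, since the printed formula neither references $f$ nor type-checks ($q \in \sQ$ while $\sO(p) \subseteq \sP$); the ``equivalently'' clause in the theorem confirms your reading. One care point on well-definedness: $\sP$ is only assumed to be a finite poset, so $p_1 \wedge p_2$ need not exist in $\sP$ and ``closure under $\wedge$'' is not quite the right phrase. What you actually have is $\sO(p_1) \cap \sO(p_2) = \bigcup\{\sO(p) : p \leq p_1,\ p \leq p_2\}$, a finite join in $\sL$, so $f(\sO(p_1)) \cap f(\sO(p_2)) = \bigcup_{p \leq p_1, p_2} f(\sO(p))$; this produces some $p$ in the set below both $p_1, p_2$, and finiteness then yields the unique minimum. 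Your verification that $f_\phi$ is a $\bzero\bone$-homomorphism and that $f_{\phi_f} = f$ and $\phi_{f_\phi} = \phi$ is correct.

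The genuine gap is in your sketch of (i) and (ii). The asserted chain — ``surjectivity of $\phi_f$ is equivalent to distinctness of the values $f(\sO(p))$, which in turn is equivalent to injectivity of $f$'' — is false on both links. Take $\sP = \{a, b\}$ incomparable and $\sQ = \{c < d\}$, and let $f \colon \sO(\sP) \to \sO(\sQ)$ be the unique $\bzero\bone$-homomorphism with $f(\{a\}) = \emptyset$ and $f(\{b\}) = \{c, d\}$ (this is forced by preservation of $\vee$ and $\wedge$). Here $f(\sO(a)) \neq f(\sO(b))$, yet $f$ collapses $\emptyset$ and $\{a\}$ and is not injective, and $\phi_f$ sends both $c$ and $d$ to $b$, so it is not onto. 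The correct route to (i) is to exploit the identity $q \in f(a) \Leftrightarrow \phi_f(q) \in a$ that you have already established: if $\phi_f$ is onto and $f(a) = f(b)$, then for each $p \in a$ choose $q$ with $\phi_f(q) = p$, whence $q \in f(a) = f(b)$ and so $p \in b$; symmetry gives $a = b$. Conversely, if $f$ is one-to-one, the downsets $\sO(p)$ and $\sO(p)\setminus\{p\}$ have distinct images, and any $q \in f(\sO(p)) \setminus f(\sO(p)\setminus\{p\})$ satisfies $\phi_f(q) = p$. Your treatment of (ii) — ``$f$ reflects the partial order on generators, which is equivalent to surjectivity'' — is not an argument: the phrase is undefined and the inference unjustified. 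The clean proof again uses the same identity, showing that $\phi_f(q_1) \leq \phi_f(q_2)$ forces $q_1 \leq q_2$ by producing $a \in \sL$ with $f(a) = \sO(q_2)$, and conversely that an order-embedding $\phi_f$ hits every $\sO(q)$ under $\phi_f^{-1}$ by taking $a$ to be the downset generated by $\phi_f(\sO(q))$.
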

An important consequence of Theorem~\ref{thm:birkhoff} is that if $\sL$ is a finite distributive lattice and $\sP$ is a finite poset, then
\begin{enumerate}
    \item[(i)] $\sL \cong \sO(\sJ^\vee(\sL))$ as lattices, and
    \item[(ii)] $\sP \cong \sJ^\vee(\sO(\sP))$ as posets.
\end{enumerate}

\section{Cell Complexes}

We now discuss cell complexes and their associated chain complexes (for a more general discussion see \cite{lefschetz}). 

\begin{defn}
\label{defn:cell_complex}
A {\em cell complex} $\cZ = (\cZ,\preceq, \dim, \kappa)$ is a finite partially ordered set with two functions:
the {\em dimension} function
\[
\dim\colon \cZ\to \N,
\]
and the {\em incidence} function
\[
\kappa\colon \cZ\times\cZ \to \F
\]
where throughout this monograph we assume $\F$ is a field.
These functions  satisfy the following three conditions:
\begin{enumerate}
\item[(i)]  if $\zeta \prec \zeta'$, then $\dim \zeta < \dim \zeta'$;
\item[(ii)] for each $\zeta,\zeta' \in \cZ$, $\kappa(\zeta',\zeta) \neq 0\text{ implies } \zeta \prec \zeta' \text{ and } \dim \zeta' = \dim \zeta + 1 $; and
\item[(iii)] for each $\zeta, \zeta'' \in \cZ$, the sum 
\begin{equation}
\label{eq:incidence}
\sum_{\zeta'\in\cZ} \kappa(\zeta'',\zeta')\kappa(\zeta',\zeta)=0.
\end{equation}
\end{enumerate}

An element $\zeta \in \cZ$ is  a {\em cell} of {\em dimension} $k$ if $\dim \zeta = k$ and we denote the set of $k$-dimensional cells by
\[
\cZ^{(k)} := \setof{\zeta\in\cZ\mid \dim \zeta = k}.
\]
\end{defn}

As is made clear below, it is useful to consider a finite distributive $\bzero\bone$-lattice structure on a cell complex $\cZ$.

\begin{defn}
A cell complex $\cZ = (\cZ,\preceq, \dim, \kappa)$ is \emph{uniform} of dimension $N$ if there exists a positive integer $N$ such that
\begin{enumerate}
    \item if $\zeta\in\cZ$ is maximal with respect to $\preceq$, then $\dim(\zeta) = N$, and
    \item if $\zeta \in \cZ^{(N-1)}$, then there exist at most two cells $\mu_i\in \cZ^{(N)}$ such that $\zeta \preceq \mu_i$.
\end{enumerate}
Furthermore, if $\zeta\in\cZ$ is minimal with respect to $\preceq$, then $\dim(\zeta) = 0$.
\end{defn}

As indicated in the previous chapter we use the language of posets to describe the gradient-like behavior of dynamics.
To emphasize the difference between topology, which through a geometric realization is associated with a chain complex, and dynamics we make use of the following notation.

\begin{rem}
\label{rem:cellularlattice}
By definition a cell complex $\cZ$ is a poset $(\cZ,\preceq)$.
The lattice of downsets $\sO(\cZ)$ is a $\bzero\bone$-lattice whose elements are subsets of $\cZ$, the partial order is inclusion, and the lattice operations are $\wedge = \cap$ and $\vee = \cup$.
\end{rem}

\begin{defn}
\label{defn:closure}
Let $\zeta\in \cZ$. 
The \emph{closure} of $\zeta$ is $\cl(\zeta) := \sO(\zeta)$, i.e., the downset of $\zeta$.
Given $\cU\subset \cZ$, the  \emph{closure} of $\cU$ is $\cl(\cU):= \bigcup _{\zeta\in \cU}\sO(\zeta)\in \sO(\cZ)$ and $\cU$ is \emph{closed} if $\cU = \sO(\cU)$.    
\end{defn}

Recall \cite{lefschetz} that if $\cN\subset \cZ$ is closed, then $\cN$ is a cell complex.

\begin{defn}
\label{def:boundary_prime}
Let $\cZ$ be a uniform cell complex of dimension $N$.
The \emph{boundary} of $\cZ$ is 
\[
\bbdy(\cZ) := \cl\left( \setdef{ \zeta\in \cZ^{(N-1)}}{ \text{there exists a unique $\mu\in \cZ^{(N)}$ such that $\zeta\prec\mu$}} \right)
\]
\end{defn}

\begin{defn}
\label{defn:kchain}
Given a cell complex $\cZ$ and a field $\F$, the {\em $k$-chains} of $\cZ$ over $\F$ is the  vector space  $C_k(\cZ;\F)$ with basis $\cZ^{(k)}$, i.e.
\[
C_k(\cZ;\F) := \setdef{\sum_{\zeta_i\in\cZ^{(k)}}a_i\zeta_i}{a_i\in\F}.
\]
The associated {\em boundary operator} is the collection of linear maps $\partial_k\colon C_k(\cZ)\to C_{k-1}(\cZ)$
defined by
\[
\partial_k \zeta = \sum_{\zeta'\in C_{k-1}(\cZ)}\kappa(\zeta,\zeta')\zeta'.
\]
\end{defn}

\begin{defn}
\label{defn:subchainlattice}
Given a cell complex $\cZ$, the \emph{lattice of subchain complexes} over the field $\F$ is given by 
\[
\sSub(C_*(\cZ);\F) = \setof{C_*(\cN;\F) \mid \cN^{\text{closed}}\subset \cZ }
\]
where 
\[
C_*(\cN_0;\F) \wedge C_*(\cN_0;\F) := C_*(\cN_0\cap \cN_1;\F)
\]
and
\[
C_*(\cN_0;\F) \vee C_*(\cN_0;\F) := C_*(\cN_0\cup \cN_1;\F).
\]
\end{defn}
Observe that $C_*(\emptyset;\F)$ and $C_*(\cZ;\F)$ are minimal and maximal elements of $\sSub(C_*(\cZ);\F)$, and thus $\sSub(C_*(\cZ);\F)$ is a $\bzero\bone$-lattice.

\begin{defn}
\label{defn:ABlattice}  
Let $\cZ$ be a uniform cell complex of dimension $N$. 
A sublattice $\sN$ of $\sO(\cZ)$ is an \emph{AB-lattice}\footnote{As is indicated in the next section, using an appropriate geometrization an AB-lattice defines a lattice of attracting blocks.} if the following conditions are satisfied.
\begin{enumerate}
\item $\emptyset,\cZ\in\sN$.
\item If $\cN\in \sN\setminus\setof{\emptyset}$, then $\cN$ is a uniform cell complex of dimension $N$.    
\end{enumerate}
\end{defn}

The following result follows directly from the definitions of an \emph{AB-lattice} sublattice of $\cZ$ and the lattice of subchain complexes.

\begin{prop}
    \label{prop:sublatticeC(X)}
Let $\cZ$ be a uniform cell complex of dimension $N$.
Let $\sN$ be an AB-sublattice of $\sO(\cZ)$.
Then, 
\begin{align*}
    \iota\colon \sN &\to \sSub(C_*(\cZ);\F)  \\
    \cN &\mapsto \iota(\cN) := C_*(\cN;\F)
\end{align*}
is a $\bzero\bone$-lattice monomorphism.
\end{prop}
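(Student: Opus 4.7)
The plan is to verify the three defining properties of a $\bzero\bone$-lattice monomorphism in turn: well-definedness (with range in $\sSub(C_*(\cZ);\F)$), preservation of the lattice operations together with the extremal elements, and injectivity. All three follow almost tautologically once the relationship between downsets of $(\cZ,\preceq)$ and subchain complexes of $C_*(\cZ;\F)$ is unwound.

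First I would check well-definedness. Because $\sN\subset \sO(\cZ)$, every $\cN\in\sN$ is a downset and hence closed in the sense of Definition~\ref{defn:closure}. The key observation is that, by condition (ii) of Definition~\ref{defn:cell_complex}, $\kappa(\zeta,\zeta')\neq 0$ forces $\zeta'\prec\zeta$; consequently the boundary operator $\partial$ of Definition~\ref{defn:kchain} sends $C_*(\cN;\F)$ into itself whenever $\cN$ is closed. Thus $C_*(\cN;\F)$ is a genuine subchain complex of $C_*(\cZ;\F)$, i.e., an element of $\sSub(C_*(\cZ);\F)$. Note that the extra uniform-dimension condition in the definition of an AB-lattice plays no role here; it is the downset property that does all the work.

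Next I would verify preservation of structure. The lattice operations on $\sO(\cZ)$ are $\cap$ and $\cup$ (Remark~\ref{rem:cellularlattice}), while those on $\sSub(C_*(\cZ);\F)$ are defined in Definition~\ref{defn:subchainlattice} directly in terms of $\cap$ and $\cup$ of the underlying closed subsets. Hence $\iota(\cN_0\wedge\cN_1)=C_*(\cN_0\cap\cN_1;\F)=\iota(\cN_0)\wedge\iota(\cN_1)$ and analogously for $\vee$. Preservation of $\bzero$ and $\bone$ uses that $\emptyset,\cZ\in\sN$ by condition (1) of Definition~\ref{defn:ABlattice}: one has $\iota(\emptyset)=C_*(\emptyset;\F)$, the trivial subchain complex, and $\iota(\cZ)=C_*(\cZ;\F)$ itself.

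Finally, injectivity is immediate from the fact that $C_k(\cN;\F)$ has $\cN^{(k)}$ as a basis. If $\cN_0\neq\cN_1$, pick without loss of generality $\zeta\in\cN_0\setminus\cN_1$ and set $k:=\dim\zeta$; then $\zeta$ is a basis element of $C_k(\cN_0;\F)$ but does not appear in $C_k(\cN_1;\F)$, so $\iota(\cN_0)\neq\iota(\cN_1)$. I do not anticipate any genuine obstacle: the entire argument is a matter of reading off the definitions, the only step with any content being the verification that closedness of $\cN$ combined with condition (ii) of Definition~\ref{defn:cell_complex} suffices to make $C_*(\cN;\F)$ stable under $\partial$.
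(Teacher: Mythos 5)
Your proof is correct and takes exactly the route the paper indicates — the paper gives no proof, simply asserting that the result "follows directly from the definitions of an AB-lattice sublattice of $\cZ$ and the lattice of subchain complexes," and your argument is an accurate unwinding of those definitions. Your observation that the uniform-dimension condition in Definition~\ref{defn:ABlattice} is irrelevant here (only the downset property is used) is a correct and sensible remark.
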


\section{Geometrization}

We now introduce the idea of geometrization.

\begin{defn}
\label{defn:geometrization}
Let $\cZ$ be a uniform cell complex of dimension $N$.
For $n\geq 1$ let $B^n$ be homeomorphic to the closed unit ball in $\R^n$ and set $B^0$ to be a single point.
A \emph{geometrization} of $\cZ$ in $\R^N$ consists of a collection of functions 
\[
\bG :=\setof{g_\zeta \colon B^n \to \R^N\mid \zeta \in \cZ^{(n)},\ n=0, \ldots, N}
\]
that satisfies the following conditions:
\begin{enumerate}
    \item[(i)] each $g_\zeta$ is a homeomorphism onto its image $\bg(\zeta) := g_\zeta(B^n)$;
    \item[(ii)] if $\zeta_0\neq \zeta_1$ and $\dim(\zeta_i)=n_i$, then $g_{\zeta_0}(\Int(B^{n_0})) \cap g_{\zeta_1}(\Int(B^{n_1})) = \emptyset$; and
    \item[(iii)] for any $\zeta'\in \cZ^{(n)}
    $\begin{equation*}
    \bigcup_{\zeta\prec\zeta'}\bg(\zeta) = g_{\zeta'}(\bdy(B^n))
    \end{equation*}
    where $\bdy(B^n)$ denotes the topological boundary of $B^n$.
\end{enumerate}
The set $\bg(\zeta)\subset \R^N$ is called the \emph{geometric realization} of $\zeta\in\cZ$ under $\bG$ and 
\[
X:= \bigcup_{\zeta\in \cZ}\bg(\zeta) \subset \R^N
\]
denotes the geometric realization of $\cZ$.
\end{defn}

We now relate geometrizations with vector fields.
Let $\cZ$ be a uniform cell complex of dimension $N$ and let $\bG$ be a geometrization  of $\cZ$ in $\R^N$.
Consider $\zeta \in \bbdy(\cZ)$ such that $\dim(\zeta) = N-1$.
By definition, this implies that there exists a unique $\mu\in \cZ^{(N)}$ such that $\zeta\prec \mu$.
Assume that $\bg(\zeta)\subset \R^N$ is a smooth $(N-1)$-dimensional manifold with boundary.
Then, at $x\in g_\zeta(\Int(B^{N-1}))$ there are two choices of normal vector to $\bg(\zeta)$.
We denote the normal vector pointing inward to $g_\mu(B^N)$ by  $z_\zeta(x)$ and refer to it as the \emph{inward pointing normal vector}.

\begin{defn}
\label{defn:aligned}
Let $\cZ$ be a uniform cell complex of dimension $N$, and let $\bG$ be a geometrization of $\cZ$ in $\R^N$. 
Consider a Lipschitz continuous vector field $f \colon X \to \R^N$ generating a flow $\varphi$ via $\dot{x} = f(x)$. 
Let $\cN \subset \cZ$ be a uniform subcomplex of dimension $N$, and let $\zeta \in \bdy(\cN)$ with $\dim(\zeta) = N-1$. 

The flow $\varphi$ is said to be \emph{inward-pointing} at $x \in g_\zeta(B^{N-1})$ if there exists $t > 0$ such that $\varphi((0,t),x) \subset \Int(\bg(\cN))$. 
If $\varphi$ is inward-pointing at every $x \in g_\zeta(B^{N-1})$, we say that $\varphi$ is inward-pointing at $\zeta$. 
Finally, if $\varphi$ is inward-pointing at every $\zeta \in \bdy(\cN)^{(N-1)}$, then we say that \emph{$\bG$ is aligned with the vector field $f$ over $\cN$}.
\end{defn}

\begin{thm}
\label{thm:inward}
Let $\cZ$ be a uniform cell complex of dimension $N$, and let $\bG$ be a geometrization of $\cZ$ in $\R^N$. 
Consider a Lipschitz continuous vector field $f \colon X \to \R^N$ generating a flow $\varphi$ via $\dot{x} = f(x)$. 
Let $\cN \subset \cZ$ be a uniform subcomplex of dimension $N$ and define $L =\bg\left(\cN\right) \subset X$.

The following statements hold:
\begin{enumerate}
    \item Let $\zeta \in \bdy(\cN)^{(N-1)}$ and $x \in g_\zeta(\Int(B^{N-1}))$. If $\langle f(x), z_\zeta(x) \rangle > 0$, then $\varphi$ is inward-pointing at $x$.
    \item If $\varphi$ is inward-pointing at $x$ for all $x \in g_\zeta(\Int(B^{N-1}))$ and all $\zeta \in \bdy(\cN)^{(N-1)}$, then $\varphi(t, L) \subset L$ for all $t > 0$.
\end{enumerate}
\end{thm}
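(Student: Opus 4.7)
The plan is a first-order expansion: define the scalar function
\[
h(s) := \langle \varphi(s,x)-x,\, z_\zeta(x)\rangle, \quad s\geq 0.
\]
Lipschitz continuity of $f$ gives $h\in C^1$ near $0$ with $h(0)=0$ and $h'(0)=\langle f(x),z_\zeta(x)\rangle > 0$, so $h(s)>0$ for $s\in(0,\epsilon)$ for some $\epsilon>0$. Using that $\bg(\zeta)$ is a smooth $(N-1)$-manifold near $x$, I would choose a tubular neighborhood $U$ of $x$ in $\R^N$ such that $\bg(\zeta)\cap U$ is the zero set of a smooth function $\phi$ whose gradient at $x$ is parallel to $z_\zeta(x)$ and whose positive set $\{\phi>0\}\cap U$ is contained in $g_\mu(\Int(B^N))$, where $\mu$ is the unique $N$-cell with $\zeta\prec\mu$. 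Shrinking $\epsilon$ so that $\varphi([0,\epsilon],x)\subset U$, the inequality $h(s)>0$ translates into $\phi(\varphi(s,x))>0$, hence $\varphi(s,x)\in g_\mu(\Int(B^N))$. By invariance of domain the latter is open in $\R^N$ and lies inside $L$, so $\varphi((0,\epsilon),x)\subset \Int(L)$, which is precisely the inward-pointing condition at $x$.

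\textbf{Plan for Part 2.} I would argue by contradiction. Suppose there exists $t_1>0$ with $\varphi(t_1,x_0)\notin L$, and set
\[
T := \sup\{t\in [0,t_1]\mid \varphi([0,t],x_0)\subset L\}.
\]
Since $L$ is a finite union of compact images and hence closed, continuity of $\varphi$ gives $y:=\varphi(T,x_0)\in L$; by the definition of $T$ every right neighborhood of $T$ contains a time $s$ with $\varphi(s,x_0)\notin L$, which forces $y\in\partial L$. Let $\zeta\in\cN$ be the unique cell with $y\in g_\zeta(\Int(B^{\dim\zeta}))$. If $\dim\zeta=N$, invariance of domain makes $y$ an interior point of $L$, which is impossible. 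If $\dim\zeta=N-1$, then $\zeta\in\bdy(\cN)^{(N-1)}$ (otherwise $\zeta$ would lie below two $N$-cells of $\cN$ whose images cover a neighborhood of $y$, so $y$ would be interior to $L$); Part 1 then produces $\epsilon>0$ with $\varphi((0,\epsilon),y)\subset \Int(L)$, i.e.\ $\varphi((T,T+\epsilon),x_0)\subset L$, contradicting the choice of $T$.

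\textbf{Main obstacle: the case $\dim\zeta<N-1$.} The idea is that strata of codimension $\geq 2$ do not locally separate $\R^N$, so any continuous trajectory exiting $L$ near $y$ must first cross the image of an $(N-1)$-cell $\zeta^*\in\bdy(\cN)^{(N-1)}$ with $\zeta\preceq\zeta^*$. Concretely, pick a sequence $s_n\downarrow 0$ with $\varphi(T+s_n,x_0)\notin L$; the continuous path $r\mapsto\varphi(T+r,x_0)$ on $[0,s_n]$ joins $y\in L$ to an exterior point, so it must meet $\bigcup_{\zeta^*\in\bdy(\cN)^{(N-1)}}\bg(\zeta^*)$ at some time $\tau_n\in(0,s_n)$. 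Since only finitely many $(N-1)$-faces of $\bdy(\cN)$ are incident to $\zeta$, after passing to a subsequence the crossing points $p_n=\varphi(T+\tau_n,x_0)$ all lie in the relative interior of a single face $\zeta^*$ with $\zeta\preceq \zeta^*$, and $p_n\to y$. Part 1 applied at each $p_n$ yields a tubular neighborhood in which the trajectory cannot cross $\bg(\zeta^*)$ outward, contradicting $\varphi(T+s_n,x_0)\notin L$ for $s_n$ sufficiently close to $\tau_n$.

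The hard part will be making the crossing step rigorous, since \emph{a priori} the accumulation points of the $p_n$ could themselves lie in lower-dimensional strata. I would handle this by an induction on $\dim\zeta$ using Part 1 as the base case, or, alternatively, by verifying Nagumo's subtangentiality condition at every boundary point directly: continuity of $f$ propagates the strict inward inequalities on the $(N-1)$-faces to non-strict inequalities at any boundary point $y$ against each limiting inward normal, which together with the local cone structure furnished by the geometrization is enough to conclude forward invariance.
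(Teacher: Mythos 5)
Your proposal follows essentially the same route as the paper. For Part~1, the paper simply cites a classical transversality result (Lee, Introduction to Smooth Manifolds, Thm.~9.24), whereas you spell out the standard first-order argument; the substance is the same. For Part~2, the paper also argues by contradiction via a first-exit point: it normalizes so that $x\in\partial L$ and $\inf\{s>0\,:\,\varphi(s,x)\notin L\}=0$ (your $T$ and $y=\varphi(T,x_0)$), observes that $x$ cannot lie in any $g_\zeta(\Int(B^{N-1}))$ with $\zeta\in\bbdy(\cN)^{(N-1)}$ — otherwise Part~1 would force a strictly positive dwell time in $\Int(L)$ — and so $x$ sits on a stratum of codimension $\geq 2$ in the closure of some $\bg(\zeta)$. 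The paper then disposes of that case by continuity with respect to initial conditions: since exit times accumulate at $0$, one can push the initial condition slightly off the corner into $g_\zeta(\Int(B^{N-1}))$, where the inward-pointing hypothesis (Part~1) forbids the trajectory from leaving $L$. This is exactly the same \emph{codimension-reduction} idea as your ``crossing'' plan, but realized by perturbing the starting point rather than by locating a crossing on the trajectory itself — which sidesteps the accumulation-of-crossings concern you flag. Your alternative suggestions (induction on $\dim\zeta$ with Part~1 as base case, or checking Nagumo's subtangentiality cone at each boundary point) would also work, but note that Nagumo only yields the non-strict inequality $\langle f(y),z\rangle\geq 0$ at corners by passing to the limit from the adjacent $(N-1)$-faces; to conclude from that you still need the local cone structure around corner points supplied by the geometrization, which is essentially what the paper's continuity argument packages more compactly. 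In short: same strategy, slightly different machinery for the codimension-$\geq 2$ difficulty, which you correctly identified as the crux.
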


\begin{proof}
The item (1) follows from classical results [Theorem 9.24, \cite{Lee2012}]. To prove (2), suppose for the sake of contradiction that there exists $x \in L$ and $t>0$ such that $\varphi(t,x) \notin L$. Without loss of generality, assume that $x \in \bdy(L)$ and $$\inf\setdef{s>0}{\varphi(s,x)\notin L}=0.$$ By assumption, $x \notin g_\zeta(\Int(B^{N-1})$ for all $\zeta \in \bdy(\cN)^{(N-1)}$. Since $\cN$ is uniform of dimension $N$, there exist $\zeta \in \bdy(\cN)^{(N-1)}$ such that $x \in \bg(\zeta)$. By continuity with respect to initial conditions, for any $t>0$ there exists a neighborhood $V_t$ of $x$ and $y \in V_t \cap g_\zeta(\Int(B^{N-1})) \neq \emptyset$ such that $\varphi(t,y) \notin L$, which contradicts (1). 
\end{proof}

In light of Theorem~\ref{thm:inward}, to conclude that $\bG$ is aligned with $f$ over $\cN$, it is sufficient to verify that trajectories starting at the boundary of $\bg(\zeta)$ for $\zeta \in \bdy(\cN)^{(N-1)}$ do not stay on the boundary. That is equivalent to verifying that $\varphi$ is inward-pointing. In the event that $f$ is tangent to $\bg(\zeta)$, the condition may be verified explicitly. This leads us to the following corollary. 

\begin{cor}
\label{cor:inward}
Let $\cZ$ be a uniform cell complex of dimension $N$, and let $\bG$ be a geometrization of $\cZ$ in $\R^N$. 
Consider a Lipschitz continuous vector field $f \colon X \to \R^N$ generating a flow $\varphi$ via $\dot{x} = f(x)$. 
Let $\cN \subset \cZ$ be a uniform subcomplex of dimension $N$.

If, for all $\zeta \in \bdy(\cN)^{(N-1)}$, the following conditions hold:
\begin{enumerate}
    \item $\langle f(x), z_\zeta(x) \rangle > 0$ for all $x \in g_\zeta(\Int(B^{N-1}))$, and
    \item $\varphi$ is inward-pointing at every $x \in g_\zeta(\bdy(B^{N-1}))$,
\end{enumerate}
then $\bG$ is aligned with $f$ over $\cN$.
\end{cor}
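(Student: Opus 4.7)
The plan is to unpack Definition~\ref{defn:aligned} and observe that the two hypotheses together cover the full closed ball image $g_\zeta(B^{N-1})$ for each codimension-one boundary cell. By definition, $\bG$ is aligned with $f$ over $\cN$ exactly when $\varphi$ is inward-pointing at every $\zeta \in \bdy(\cN)^{(N-1)}$, which in turn means $\varphi$ is inward-pointing at every point $x \in g_\zeta(B^{N-1})$, including the boundary of the closed unit ball.

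First, I would fix an arbitrary $\zeta \in \bdy(\cN)^{(N-1)}$ and use the disjoint decomposition $B^{N-1} = \Int(B^{N-1}) \cup \bdy(B^{N-1})$ to split the verification into two cases. On the interior piece $g_\zeta(\Int(B^{N-1}))$, hypothesis (1) provides the strict positivity $\langle f(x), z_\zeta(x) \rangle > 0$, and Theorem~\ref{thm:inward}(1) immediately converts this into the inward-pointing property at each such $x$. On the remaining piece $g_\zeta(\bdy(B^{N-1}))$, hypothesis (2) directly asserts inward-pointing at every point.

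Combining the two cases, $\varphi$ is inward-pointing at every $x \in g_\zeta(B^{N-1})$, so $\varphi$ is inward-pointing at $\zeta$ in the sense of Definition~\ref{defn:aligned}. Since $\zeta \in \bdy(\cN)^{(N-1)}$ was arbitrary, $\bG$ is aligned with $f$ over $\cN$, which is the desired conclusion.

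There is no real obstacle here: the corollary is a repackaging of Theorem~\ref{thm:inward}(1) together with the definition of alignment. The point of hypothesis (2) is simply to plug the gap on $g_\zeta(\bdy(B^{N-1}))$, where the inner-product criterion may fail — typically because $f$ is tangent to $\bg(\zeta)$ at a lower-dimensional face. The Lipschitz assumption on $f$ enters only implicitly, through its invocation in Theorem~\ref{thm:inward}; no further analytic input is required.
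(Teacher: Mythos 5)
Your proof is correct and is exactly the argument the paper intends: the paper gives no formal proof, only the remark preceding the corollary, which says the same thing — Theorem~\ref{thm:inward}(1) handles $g_\zeta(\Int(B^{N-1}))$ via the strict inner-product condition, and hypothesis (2) fills in $g_\zeta(\bdy(B^{N-1}))$, so that $\varphi$ is inward-pointing on all of $g_\zeta(B^{N-1})$, which is precisely Definition~\ref{defn:aligned}. Nothing further is needed.
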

We summarize the results in the following theorem. 
\begin{thm}
\label{thm:LipAttBlock}
Let $f$ be a Lipschitz continuous vector field on $\R^N$.
Let $\varphi$ be the flow generated by $\dot{x} = f(x)$.
Let $\cZ$ be a uniform cell complex of dimension $N$. 
Let $\cN\subset\cZ$ be a uniform sub-complex of dimension $N$.
Let $\bG$ be a geometrization of $\cZ$ in $\R^N$ that is aligned with $f$ over $\cN$.
Then, $L \coloneqq \bigcup_{\zeta\in\cN}\bg(\zeta)$ is an attracting block for $\varphi$.
\end{thm}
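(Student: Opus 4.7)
The plan is to verify the two properties of Definition~\ref{defn:attractingblock}: $L$ is compact, and $\varphi(t,L)\subset\Int(L)$ for every $t>0$. Compactness is immediate because $L=\bigcup_{\zeta\in\cN}\bg(\zeta)$ is a finite union of continuous images $g_\zeta(B^{\dim\zeta})$ of compact balls. The alignment hypothesis places us exactly in the setting of Theorem~\ref{thm:inward}(2), which directly supplies the weaker invariance $\varphi(t,L)\subset L$ for all $t>0$.

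The real work is upgrading this set-wise invariance to invariance into the open core $\Int(L)$. The key observation I would exploit is that since $f$ is Lipschitz, the time-$t$ map $\varphi_t:=\varphi(t,\cdot)$ is a homeomorphism of $\R^N$. I would first handle interior points: for $y\in\Int(L)$, the image $\varphi_t(\Int(L))$ is open in $\R^N$ as the homeomorphic image of an open set, and it is contained in $\varphi_t(L)\subset L$ by the weaker invariance; since every open subset of $L$ must lie in $\Int(L)$, we conclude $\varphi(t,y)\in\Int(L)$. In effect, an orbit that ever lies in the open core cannot touch $\bdy(L)$ at any strictly positive time.

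For $x\in\bdy(L)$ I would reduce to the previous case via alignment. By Definition~\ref{def:boundary_prime} together with property (iii) of Definition~\ref{defn:geometrization}, every point of $\bdy(L)$ lies in $g_\zeta(B^{N-1})$ for some codimension-one boundary cell $\zeta\in\bdy(\cN)^{(N-1)}$: any lower-dimensional cell of $\bdy(\cN)$ is a face of a codimension-one boundary cell, and its geometric realization sits inside $g_\zeta(\bdy(B^{N-1}))$. Alignment then provides some $\epsilon>0$ with $\varphi((0,\epsilon),x)\subset\Int(L)$. Given $t>0$, choose $s\in(0,\min\{t,\epsilon\})$, so that $\varphi(s,x)\in\Int(L)$ and $t-s>0$; the interior case applied to $\varphi(s,x)$ with time $t-s$ then yields
\[
\varphi(t,x)=\varphi_{t-s}\bigl(\varphi(s,x)\bigr)\in\Int(L).
\]

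The main obstacle is conceptual rather than technical: one must recognize that forward invariance of the closed set $L$ under the self-homeomorphism $\varphi_t$ automatically propagates to forward invariance of the open set $\Int(L)$, without any further analytic input. Once this is in hand, the boundary case is a short bookkeeping reduction using inward-pointing, and compactness together with Theorem~\ref{thm:inward}(2) provide everything else.
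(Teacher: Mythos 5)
Your proof is correct. The paper in fact supplies no explicit proof of this theorem, presenting it only as a ``summary'' of Theorem~\ref{thm:inward} and Corollary~\ref{cor:inward}; what you have done is correctly identify that Theorem~\ref{thm:inward}(2) only yields $\varphi(t,L)\subset L$, and supply the missing upgrade to $\varphi(t,L)\subset\Int(L)$. Your argument is the natural one: the Lipschitz hypothesis makes each $\varphi_t$ a self-homeomorphism of $\R^N$, so $\varphi_t(\Int(L))$ is open, and being an open subset of $L$ it lies in $\Int(L)$; boundary points are then handled by noting that alignment is inward-pointing on all of $g_\zeta(B^{N-1})$ (not merely on $g_\zeta(\Int(B^{N-1}))$, which is all Theorem~\ref{thm:inward}(2) demands), so they enter $\Int(L)$ after an arbitrarily short time, after which the interior case finishes. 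This is precisely the argument the authors leave implicit, and you have articulated the one genuinely nontrivial step.
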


\begin{rem}
\label{rem:O3}
Assume that a geometrization $\bG$ is aligned with a vector field $f$ over $\cN$.
Condition (2) of Definition~\ref{defn:aligned} implies that there exists $\epsilon_0>0$ such that if $f_0\colon X\to \R^N$ is a Lipschitz continuous vector field and $\sup_{x\in X}\|f(x) -f_0(x)\| < \epsilon$, then $\bG$ is aligned with a vector field $f_0$ over $\cN$.
Therefore, the fact that we use geometrizations that are aligned with a vector field to transfer information from the combinatorial setting to that of flows implies that a weak form of objective {\bf O3} as described in Chapter~\ref{sec:prelude} is satisfied, e.g., we know that the results are true for an open set of parameter values, but we have not explicitly stated what that set is.
\end{rem}

\begin{cor}
\label{cor:existenceAttractorLattice}
Let $\cZ$ be cell complex that is uniform of dimension $N$.
Let $\sL$ be a finite distributive $\bzero\bone$-lattice.
Let $\lambda\colon \sL \to \sO(\cZ)$ be a $\bzero\bone$-lattice monomorphism such that $\sN = \lambda(\sL)$ is an AB-lattice sublattice of $\sO(\cZ)$.
Let $\bG$ be a geometrization of $\cZ$ in $\R^N$ such that  $\bG$ is aligned with $f$ for all  $\cN\in \sN$.
Then,
\[
\setdef{\bigcup_{\zeta\in\lambda(L)}\bg(\zeta)}{L\in\sL}
\]
is a lattice of attracting blocks for $\varphi$.
\end{cor}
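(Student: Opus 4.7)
The plan is to invoke Theorem~\ref{thm:LipAttBlock} element-by-element on the image of $\lambda$, and then verify that the resulting collection of sets inherits the lattice structure of $\sL$ inside $\sABlock(\varphi)$.

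First, fix $L\in\sL$ and set $\cN := \lambda(L)\in\sN$. Since $\sN$ is an AB-sublattice of $\sO(\cZ)$, either $\cN=\emptyset$ (handled trivially, since $\emptyset\in\sABlock(\varphi)$) or $\cN$ is a uniform subcomplex of $\cZ$ of dimension $N$. In the latter case, the hypothesis that $\bG$ is aligned with $f$ over every $\cN\in\sN$ allows me to apply Theorem~\ref{thm:LipAttBlock} directly to conclude that $L_\cN := \bigcup_{\zeta\in\cN}\bg(\zeta)$ is an attracting block for $\varphi$. Thus every element of the proposed collection is in $\sABlock(\varphi)$.

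Next, I would define $\Lambda\colon\sL\to\sABlock(\varphi)$ by $\Lambda(L):=\bigcup_{\zeta\in\lambda(L)}\bg(\zeta)$ and show that $\Lambda$ preserves meets and joins with respect to inclusion. Since $\lambda$ is a $\bzero\bone$-lattice monomorphism, it suffices to verify that
\[
\bigcup_{\zeta\in\cN_0\cup\cN_1}\bg(\zeta) \;=\; \Bigl(\bigcup_{\zeta\in\cN_0}\bg(\zeta)\Bigr)\cup\Bigl(\bigcup_{\zeta\in\cN_1}\bg(\zeta)\Bigr)
\]
(which is immediate) and that
\[
\bigcup_{\zeta\in\cN_0\cap\cN_1}\bg(\zeta) \;=\; \Bigl(\bigcup_{\zeta\in\cN_0}\bg(\zeta)\Bigr)\cap\Bigl(\bigcup_{\zeta\in\cN_1}\bg(\zeta)\Bigr)
\]
for closed subcomplexes $\cN_0,\cN_1$. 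The inclusion $\subseteq$ is trivial; for $\supseteq$ I use the key geometrization property that the open images $g_\zeta(\Int(B^{\dim\zeta}))$ partition $X$. Given $x$ in the right-hand side, there is a unique cell $\zeta^\star$ with $x\in g_{\zeta^\star}(\Int(B^{\dim\zeta^\star}))$, and property (iii) of Definition~\ref{defn:geometrization} forces $\zeta^\star$ to lie in the closure of any cell whose geometric realization contains $x$. Since $\cN_0$ and $\cN_1$ are closed, $\zeta^\star\in\cN_0\cap\cN_1$, giving the reverse inclusion.

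Once $\Lambda$ is shown to be a $\bzero\bone$-lattice morphism into $\sABlock(\varphi)$, its image is a sublattice of $\sABlock(\varphi)$ and hence a lattice of attracting blocks, which is exactly the desired conclusion. The only step that requires genuine care is the intersection identity above; everything else either is combinatorial (coming from $\lambda$ being a lattice morphism and $\sN$ being an AB-lattice) or is a direct citation of Theorem~\ref{thm:LipAttBlock}. I do not expect any analytic difficulty beyond what is already encapsulated in that theorem, precisely because the alignment hypothesis has been assumed to hold uniformly over the entire image $\sN$.
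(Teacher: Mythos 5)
Your proposal is correct and follows the approach the paper intends (the corollary is stated without proof, as an immediate consequence of Theorem~\ref{thm:LipAttBlock}). You correctly apply the theorem element-by-element to get that each $\bigcup_{\zeta\in\lambda(L)}\bg(\zeta)$ is an attracting block, and your verification that geometric realization preserves intersections of closed subcomplexes --- via the unique open cell $g_{\zeta^\star}(\Int(B^{\dim\zeta^\star}))$ containing a given $x$ and the fact that elements of $\sO(\cZ)$ are downsets --- is exactly the detail needed to see that the image is a sublattice of $\sABlock(\varphi)$ rather than merely a subset.
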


In the context of this monograph it is hard to overstate the importance of Corollary~\ref{cor:existenceAttractorLattice}; it is the conduit by which combinatorial/homological information is translated into information about continuous dynamical systems.
To emphasize this we rephrase the hypothesis of Corollary~\ref{cor:existenceAttractorLattice} and draw statements about nonlinear dynamics.

\begin{thm}
\label{thm:dynamics}
Fix a cell complex $\cZ$ that is uniform of dimension $N$, a $\bzero\bone$-lattice monomorphism $\lambda\colon \sL \to \sO(\cZ)$ such that $\sN = \lambda(\sL)$ is an AB-lattice sublattice of $\sO(\cZ)$, and a geometrization $\bG$ of $\cZ$ in $\R^N$.
Consider $\dot{x} = f(x)$, where $x\in\R^N$ and $f$ is Lipschitz continuous over $X:= \lambda(\bone)$.  
If $\bG$ is aligned with $f$ for all  $\cN\in \sN$, then the following statements are true.
\begin{enumerate}
\item Let $\varphi$ denote the flow associated with $\dot{x} = f(x)$. 
Then, $X$ is a trapping region for $\varphi$, i.e., $\varphi\colon [0,\infty)\times X \to X$.
\item $(\sJ^\vee(\sL),\leq)$ is a Morse decomposition of $\varphi$ restricted to $X$.
\item For $p\in \sJ^\vee(\sL)$ set 
    \[
    M(p) := \Inv\left(\bg(\lambda(p))\setminus \bg(\lambda(p^<)) ;\varphi \right).
    \]
    Then, 
    \[
    \sQ := \setdef{p\in \sJ^\vee(\sL)}{M(p)\neq \emptyset} 
    \]
    with ordering $\leq$ is a Morse representation of $\varphi$.
\item For $p\in \sJ^\vee(\sL)$, the homology Conley index of $M(p)$ is
    \[
    CH_*(p) = H_*(\bg(\lambda(p)), \bg(\lambda(p^<))).
    \]
    If $p\in \sJ^\vee(\sL)\setminus \sQ$, then $CH_*(p)=0$.
\end{enumerate}
\end{thm}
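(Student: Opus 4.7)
The plan is to deduce each of the four assertions from Theorem~\ref{thm:LipAttBlock}, Corollary~\ref{cor:existenceAttractorLattice}, Theorem~\ref{thm:CH}, and the Morse-decomposition machinery recalled in Chapter~\ref{sec:prelude}. Because the theorem essentially packages the consequences of these results, the proof is largely a bookkeeping exercise.

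For (1), I observe that $\lambda$ preserves the top element, so $\lambda(\bone)=\cZ$ is uniform of dimension $N$ by the AB-lattice hypothesis. Alignment of $\bG$ with $f$ over $\cZ$ together with Theorem~\ref{thm:LipAttBlock} then gives that $X$ is an attracting block for $\varphi$, hence a forward trapping region, which is (1).

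For (2) and (3), I apply Corollary~\ref{cor:existenceAttractorLattice} to obtain the sublattice
\[
\sK := \setdef{\bigcup_{\zeta\in\lambda(L)}\bg(\zeta)}{L\in\sL}\subset \sABlock(\varphi).
\]
The map $L\mapsto \bigcup_{\zeta\in\lambda(L)}\bg(\zeta)$ is a $\bzero\bone$-lattice isomorphism $\sL\to\sK$: surjectivity is by construction, while injectivity uses that $\lambda$ is a monomorphism together with the disjointness of the open cells $g_\zeta(\Int(B^n))$ from Definition~\ref{defn:geometrization}(ii), which lets one recover a downset in $\sO(\cZ)$ from its geometric realization. This isomorphism restricts to an order isomorphism $\sJ^\vee(\sL)\cong\sJ^\vee(\sK)$, and the Prelude then identifies $(\sJ^\vee(\sK),\subset)$ with a Morse decomposition of $\varphi$ through the order-preserving embedding $\pi\colon \sM\sR(\omega(\sK))\to\sJ^\vee(\sK)$, yielding (2). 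Under this identification each $M(p)$ is the maximal invariant set in the Morse tile $\cl(\bg(\lambda(p))\setminus\bg(\lambda(p^<)))$, so $\sQ$ is precisely the image of $\pi$ and inherits the Morse representation structure, proving (3).

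For (4), the attracting blocks $K_0 := \bg(\lambda(p^<))\subset K_1:=\bg(\lambda(p))$ both lie in $\sK$ and satisfy $M(p) = \Inv(K_1\setminus K_0,\varphi)$, so the Conley index definition from Chapter~\ref{sec:prelude} yields $CH_*(p) = H_*(\bg(\lambda(p)),\bg(\lambda(p^<)))$ directly. When $p\notin\sQ$ the set $M(p)$ is empty and trivially an isolated invariant set, so the contrapositive of Theorem~\ref{thm:CH} forces $CH_*(p)=0$. The main subtlety I expect is the careful verification of the $\sL\cong\sK$ isomorphism, since establishing injectivity of $L\mapsto\bigcup_{\zeta\in\lambda(L)}\bg(\zeta)$ genuinely uses the open-cell disjointness from Definition~\ref{defn:geometrization}(ii); once this step is in hand, all remaining claims follow formally from the machinery already developed.
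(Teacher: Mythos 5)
Your proposal is correct and follows essentially the same route the paper intends: item (1) via Theorem~\ref{thm:LipAttBlock}, items (2) and (3) via Corollary~\ref{cor:existenceAttractorLattice} and the Morse-representation machinery of the Prelude (the paper simply cites \cite{kalies:mischaikow:vandervorst:18} at this step), and item (4) by recognizing that $(\bg(\lambda(p)),\bg(\lambda(p^<)))$ is an index pair of attracting blocks, with the contrapositive of Theorem~\ref{thm:CH} handling the trivial case. Your explicit verification that $L\mapsto\bigcup_{\zeta\in\lambda(L)}\bg(\zeta)$ is a $\bzero\bone$-lattice isomorphism onto $\sK\subset\sABlock(\varphi)$ — using the monomorphism hypothesis together with open-cell disjointness from Definition~\ref{defn:geometrization}(ii) — is a detail the paper omits but correctly identified as the load-bearing step.
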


No proof of Theorem~\ref{thm:dynamics} is necessary as it is, for the most part, a restatement of classical results. 
Theorem~\ref{thm:dynamics} item (1) follows from Theorem~\ref{thm:LipAttBlock}.
Theorem~\ref{thm:dynamics} item (2) and Theorem~\ref{thm:dynamics} item (3) follow from \cite{kalies:mischaikow:vandervorst:18}.
Theorem~\ref{thm:dynamics} item (4) follows from the realization that $(\bg(\lambda(p)), \bg(\lambda(p^<)))$ is an index pair.

\begin{rem}
\label{rem:Fconnectionmatrix}
The fundamental theorem of Franzosa \cite[Theorem 3.8]{franzosa:89} states that given a Morse decomposition there exists a connection matrix.
Franzosa's theorem is constructive and 
Theorem~\ref{thm:dynamics} items (1) and (4) provide the necessary input.
This implies that Theorem~\ref{thm:dynamics} determines a set of connection matrices for $\varphi$ given the Morse decomposition $(\sJ^\vee(\sL),\leq)$.
For this monograph we apply the algorithm  for computing connection matrices  presented in \cite{harker:mischaikow:spendlove} and implemented in \cite{harker:templates:21} using  input derived from $(\sJ^\vee(\sL),\leq)$.
\end{rem}

\part{Combinatorial/Homological Dynamics}
\label{part:II}

\chapter{Wall Labelings and Rook Fields}
\label{sec:RookFields}

As indicated in Chapter~\ref{sec:prelude} our combinatorial representation and homological characterization is based on abstract cubical complexes.
These are described in Section~\ref{section:cubical}.
Wall labelings and rook fields, defined in Section~\ref{sec:rook}, are used to translate regulatory networks to combinatorial vector fields defined on cubical complexes.
In Section~\ref{sec:rookproperties} we identify essential characteristics of rook fields.

\section{Cubical Complexes}
\label{section:cubical}

For $n=1,\ldots, N$, let $K(n)$ be a positive integer, set $I_n := \setof{0,\ldots, K(n)+1} \subset \Z$ and define 
\begin{equation}
\label{eq:II}
\I := \prod_{n=1}^N I_n\subset \Z^N.
\end{equation}
Throughout this manuscript
\[
\bv\in \I \quad \text{and} \quad \bq,\bw\in \setof{0,1}^N.
\]
Set
\[
\dim(\bw) = \sum_{n=1}^N \bw_n.
\]
Particular elements of interest in $\setof{0,1}^N$ are ${\bf 1}=(1,\ldots, 1)$,  ${\bf 0}=(0,\ldots, 0)$, ${\bf 1}^{(n)}=(1,\ldots,1,0,1\ldots 1)$ where  the $n$-th coordinate is $0$,  and ${\bf 0}^{(n)} = {\bf 1}-{\bf 1}^{(n)}$.

\begin{defn}
\label{defn:Xcomplex}
The $N$-dimensional \emph{cubical cell complex} generated by  $\I$ is denoted by $\cX(\I) = (\cX,\preceq,\dim,\kappa)$ and defined as follows. The set of cells is
defined by
\[
\cX = \setof{ \xi = [\bv,\bw] \mid \bv\in \I, \bw\in \setof{0,1}^N, \text{ and } \bv + \bw \in \I}.
\]
The \emph{dimension} of a cell is given by
\[
\dim([\bv,\bw]) = \dim(\bw)
\]
and we set $\cX^{(n)} := \setof{\xi\in\cX \mid \dim(\xi) = n}$.

The partial order $\preceq$, called the \emph{face relation}, indicates which cells are faces of other cells and is defined as
\begin{equation}\label{eq:is_a_face}
    [\bv,\bw]\preceq[\bv',\bw']\ \text{if and only if $\bv = \bv'+\bq$ and $\bw +\bq \poeN \bw'$}. 
\end{equation}

The \emph{incidence number} $\kappa$ is defined by
\[
\kappa\left( \left[\bv,\bw \right], \left[ \bv,\bw - {\bf 0}^{(i)}\right] \right) = -1
\quad
\text{and}
\quad
\kappa\left( \left[\bv,\bw \right], \left[ \bv +{\bf 0}^{(i)} ,\bw - {\bf 0}^{(i)}\right] \right) =1
\]
under the assumption that $\bw_i = 1$, and $\kappa(\xi, \xi') = 0$ for all other pairs $(\xi, \xi') \in \cX \times \cX$. 
\end{defn}

\begin{ex}
\label{ex:cubicalcomplex}
As emphasized in Chapter~\ref{sec:prelude} the cubical complex $\cX(\I)$ is a purely combinatorial object. 
However, for the purposes of providing intuition it is useful to use diagrams to organize this combinatorial data. 
In particular, consider $N=2$ and assume $K(n)=2$ for $n=1,2$. 
Then $\I = \prod_{n=1}^2 \setof{0,1,2,3}$.
The elements of $\cX$ can be visualized as in Figure~\ref{fig:position_vectors}.    
\end{ex}

We leave it to the reader to check that if $\xi = [\bv,\bw]\in \bbdy(\cX)$ (see Definition~\ref{def:boundary_prime}), then there exists $n$ such that $\bv_n=0$ or $\bv_n = K(n)+1$, and $\bw_n =0$. 

\begin{defn}
    \label{defn:topstar}
    Given a $N$-dimensional cubical cell complex $\cX(\I)$, we refer to a cell $\mu \in \cX^{(N)}$ as a \emph{top cell}. The \emph{top star} of $\xi \in \cX$  is given by
    \begin{equation}
        \label{eq:topstar}
        \Top_\cX(\xi) := \setof{\mu\in \cX^{(N)}\mid \xi\preceq \mu}.
    \end{equation}
\end{defn}
\begin{defn}
    \label{defn:JeJi}
    Let $\cX(\I)$ be a $N$-dimensional cubical cell complex. The \emph{essential} and \emph{inessential} directions of $\xi = [\bv,\bw] \in \cX$ are given by
    \begin{equation}
    \label{eq:JeJi}
    J_e(\xi) = J_e([\bv,\bw]) \coloneqq \setof{n\mid \bw_n=1} \text{ and } J_i(\xi) = J_i([\bv,\bw]) \coloneqq \setof{n\mid \bw_n=0},
    \end{equation}
    respectively.
\end{defn}

The following proposition follows directly from the definition of essential and inessential directions.

\begin{prop}
\label{prop:JtauJsigma}
If $\xi \preceq \xi'$, then 
\begin{enumerate}
\item[(i)] $J_e(\xi)\subseteq J_e(\xi')$,
\item[(ii)] $J_i(\xi')\subseteq J_i(\xi)$, and
\item[(iii)] $J_i(\xi)\setminus J_i(\xi') = J_i(\xi)\cap J_e(\xi')$.
\end{enumerate}
\end{prop}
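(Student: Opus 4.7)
The proposition is a direct unpacking of the definition of the face relation, so the plan is to reduce everything to componentwise inequalities on the labels $\bw, \bw'$. Write $\xi = [\bv,\bw]$ and $\xi' = [\bv',\bw']$. By \eqref{eq:is_a_face}, $\xi \preceq \xi'$ gives a $\bq \in \{0,1\}^N$ with $\bv = \bv' + \bq$ and $\bw + \bq \poeN \bw'$. Since $\bq \poeN \bw + \bq \poeN \bw'$, I obtain the single key inequality $\bw \poeN \bw'$, i.e.\ $\bw_n \leq \bw'_n$ for every $n$.

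For (i), I would argue: if $n \in J_e(\xi)$ then $\bw_n = 1$, and $\bw_n \leq \bw'_n \leq 1$ forces $\bw'_n = 1$, so $n \in J_e(\xi')$. For (ii), I would observe that $J_i(\xi)$ and $J_e(\xi)$ partition $\{1,\ldots,N\}$ (and similarly for $\xi'$), so taking complements in (i) immediately yields $J_i(\xi') \subseteq J_i(\xi)$. For (iii), this is a purely set-theoretic rewriting: $J_i(\xi)\setminus J_i(\xi') = J_i(\xi)\cap J_i(\xi')^c = J_i(\xi)\cap J_e(\xi')$, where the last equality again uses that $J_e$ is the complement of $J_i$.

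There is no substantive obstacle here; the only point that requires a half-sentence of care is noting that $\bq \in \{0,1\}^N$ has nonnegative entries so that $\bw + \bq \poeN \bw'$ implies $\bw \poeN \bw'$. Everything else is a one-line consequence of the componentwise inequality and the tautological identity $A \setminus B = A \cap B^c$.
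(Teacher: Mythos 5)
Your argument is correct, and it spells out exactly the definitional unwinding that the paper leaves implicit (the paper simply asserts the proposition ``follows directly from the definition of essential and inessential directions'' and gives no written proof). The key observation that $\bq \geq \bzero$ upgrades $\bw + \bq \poeN \bw'$ to $\bw \poeN \bw'$, after which (i)--(iii) are one-liners, is precisely the intended route.
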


\begin{defn}
\label{defn:Ex}
Let $\xi \preceq \xi'$. 
The \emph{extensions of $\xi$ in $\xi'$} are defined to be
\begin{equation}
    \label{eq:Ex}
    \Ex(\xi,\xi') := J_i(\xi)\cap J_e(\xi') .
\end{equation}
\end{defn}

\begin{defn}
\label{defn:rpvector}
The \emph{relative position vector} of 
faces in $\cX$ is given by the function
$
p\colon \setof{(\xi,\xi')\in \cX\times \cX \mid \xi \preceq\xi'} \to \setof{0,\pm 1}^N
$
defined componentwise by
\[
p_n(\xi,\xi') = p_n([\bv,\bw],[\bv',\bw']) := (-1)^{\bv_n-\bv'_n}\left(\bw_n-\bw'_n \right)
\]
\end{defn}

\begin{figure}
\centering
    \begin{tikzpicture}[scale=0.25]
    \fill[gray!10!white] (0,0) rectangle (24,24);
    \foreach \i in {0,...,3}{
        \draw(8*\i,-1) node{$\i$}; 
        \draw(-1,8*\i) node{$\i$}; 
       }
    \draw[step=8cm,black, thick] (0,0) grid (24,24);
    \foreach \i in {0,...,3}{
      \foreach \j in {0,...,3}{
        \draw[black, fill=black] (8*\i,8*\j) circle (2ex);
      }
    }
    \foreach \i in {0,...,2}{
      \draw[blue, thick] (4+8*\i,1) node{$\downarrow$}; 
      \draw[blue, thick] (4+8*\i,23) node{$\uparrow$}; 
      \draw[blue, thick] (1, 4+8*\i) node{$\leftarrow$}; 
      \draw[blue, thick] (23.0,4+8*\i) node{$\rightarrow$}; 
    }
    \foreach \i in {0,...,2}{
        \draw[blue, thick] (4+8*\i,6.9) node{$\uparrow$};
        \draw[blue, thick] (4+8*\i,14.7) node{$\uparrow$};
        \draw[blue, thick] (4+8*\i,16.9) node{$\downarrow$};
        \draw[blue, thick] (4+8*\i,8.9) node{$\downarrow$};
      \draw[blue, thick] (7.0,4+8*\i) node{$\rightarrow$};
      \draw[blue, thick] (9,4+8*\i) node{$\leftarrow$};
      \draw[blue, thick] (15.0,4+8*\i) node{$\rightarrow$};
      \draw[blue, thick] (17,4+8*\i) node{$\leftarrow$};
    }
     \foreach \i in {0,...,2}{
      \draw[blue, thick] (1,7+8*\i) node{$\nwarrow$};
        \draw[blue, thick] (9,7+8*\i) node{$\nwarrow$};
        \draw[blue, thick] (17,7+8*\i) node{$\nwarrow$};
        \draw[blue, thick] (7,7+8*\i) node{$\nearrow$};
        \draw[blue, thick] (15,7+8*\i) node{$\nearrow$};
        \draw[blue, thick] (23,7+8*\i) node{$\nearrow$};
        \draw[blue, thick] (1,1+8*\i) node{$\swarrow$};
        \draw[blue, thick] (9,1+8*\i) node{$\swarrow$};
        \draw[blue, thick] (17,1+8*\i) node{$\swarrow$};
        \draw[blue, thick] (7,1+8*\i) node{$\searrow$};
        \draw[blue, thick] (15,1+8*\i) node{$\searrow$};
        \draw[blue, thick] (23,1+8*\i) node{$\searrow$};
    }
    \end{tikzpicture}
\caption{
A visualization of the abstract cubical cell complex $\cX(\I)$ where $\I = \setof{0,1,2,3}^2$. 
The vertices and two-dimensional cells of $\cX$ can be expressed as $\defcell{n_1}{n_2}{0}{0}$, for  $n_i\in \setof{0,1,2,3}$, and $\defcell{n_1}{n_2}{1}{1}$, for  $n_i\in \setof{0,1,2}$, respectively.
The edges that are visualized as horizontal and vertical segments  take the form $\defcell{n_1}{n_2}{1}{0}$, where $n_1\in \setof{0,1,2}$ and $n_2\in \setof{0,1,2,3}$,  and $\defcell{n_1}{n_2}{0}{1}$, where $n_1\in \setof{0,1,2,3}$ and $n_2\in \setof{0,1,2}$, respectively.
The blue arrows provide a visualization of the relative position vectors $p(\xi,\mu)$ for $\mu\in \cX^{(2)}$ (see Definition~\ref{defn:rpvector}). 
}
\label{fig:position_vectors}
\end{figure}

We remind the reader that $\cX$ is an abstract cubical cell complex, and hence, the position vector is a discrete function with no geometric meaning.
However, for the purposes of developing intuition with respect to its use in the sections that follow the reader may find it useful to recognize that the relative position vector ``points''  from the higher dimensional cell to the lower dimensional cell as indicated in Figure~\ref{fig:position_vectors}. 

We leave to the reader to check the proof the following proposition.
\begin{prop}\label{prop:position_trio}
    If $\xi\preceq_\cX\xi'\preceq_\cX\xi''$ then $p_n(\xi,\xi')=p_n(\xi,\xi'')$ for all $n\in\Ex(\xi,\xi')$. 
\end{prop}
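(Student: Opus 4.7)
The plan is to unwind the face relation for $\xi'\preceq\xi''$ along the direction $n$ and observe that on the essential directions of $\xi'$ both the position index $\bv$ and the dimension indicator $\bw$ are frozen as we pass from $\xi'$ to $\xi''$. Since $\Ex(\xi,\xi')\subseteq J_e(\xi')$ and also $\Ex(\xi,\xi')\subseteq J_i(\xi)$, this will force $p_n(\xi,\xi')$ and $p_n(\xi,\xi'')$ to have the same sign factor and the same $\bw$-difference, proving equality.

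More concretely, write $\xi=[\bv,\bw]$, $\xi'=[\bv',\bw']$, and $\xi''=[\bv'',\bw'']$. By \eqref{eq:is_a_face} applied to $\xi'\preceq\xi''$, there exists $\bq'\in\{0,1\}^N$ such that $\bv'=\bv''+\bq'$ and $\bw'+\bq'\poeN\bw''$. Fix $n\in\Ex(\xi,\xi')=J_i(\xi)\cap J_e(\xi')$. The essentiality condition $n\in J_e(\xi')$ gives $\bw'_n=1$, and substituting into $\bw'_n+\bq'_n\le\bw''_n\le 1$ forces $\bq'_n=0$ and $\bw''_n=1$. Hence $\bv''_n=\bv'_n$ and $\bw''_n=\bw'_n=1$. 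The inessentiality condition $n\in J_i(\xi)$ gives $\bw_n=0$.

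With these identities in hand, the computation is immediate:
\begin{align*}
p_n(\xi,\xi') &= (-1)^{\bv_n-\bv'_n}(\bw_n-\bw'_n) = -(-1)^{\bv_n-\bv'_n},\\
p_n(\xi,\xi'') &= (-1)^{\bv_n-\bv''_n}(\bw_n-\bw''_n) = -(-1)^{\bv_n-\bv'_n},
\end{align*}
so the two agree.

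The verification is essentially a bookkeeping exercise; the only point one must not overlook is that membership in $J_e(\xi')$ rigidly propagates to $\xi''$ (both the $\bv$-coordinate and the $\bw$-indicator are preserved in that direction), which is what I expect to be the only substantive step. Proposition~\ref{prop:JtauJsigma}(i) guarantees $n\in J_e(\xi'')$ as well, but the sharper statement $\bv'_n=\bv''_n$ comes directly from the defining relation \eqref{eq:is_a_face} rather than from that proposition.
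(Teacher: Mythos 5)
Your proof is correct and is exactly the direct unwinding of the definitions that the paper leaves to the reader: applying \eqref{eq:is_a_face} to $\xi'\preceq\xi''$ to get $\bq'_n=0$ (hence $\bv'_n=\bv''_n$ and $\bw'_n=\bw''_n=1$) from $n\in J_e(\xi')$, pairing that with $\bw_n=0$ from $n\in J_i(\xi)$, and reading off equality from the formula for $p_n$. No gaps.
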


Given two elements $\xi$ and $\xi'$ in the poset $(\cX, \preceq)$ their meet exists if and only if they have a common face and their join exists if and only if they have a common co-face. The following proposition provides explicit formulas for the meet and joint of $\xi$ and $\xi'$ when they exist.

\begin{prop}
\label{prop:MeetJoinFormula}
Let $\cX(\I) = (\cX,\preceq,\dim,\kappa)$ be a cubical cell complex and consider any two cells $\xi =[\bv,\bw] \in \cX$ and $\xi'=[\bv',\bw'] \in \cX$.
\begin{enumerate}
    \item The join of $\xi$ and $\xi'$ exists if and only if $[\bv \wedge_\N \bv',\bzero] \preceq \xi,\xi'$. In that case, it is given by 
    \[
       \xi \vee_\preceq \xi' = [\bv \wedge_\N \bv',(\bv+\bw)\vee_\N(\bv'+\bw')-(\bv\wedge_\N\bv')].
    \]
    \item The meet of $\xi$ and $\xi'$ exists if and only if $\xi,\xi'\preceq [\bv\vee_\N\bv',\bone]$. In that case, it is given by 
    \[
       \xi \wedge_\preceq \xi' = [\bv\vee_\N\bv',(\bv+\bw)\wedge_\N(\bv'+\bw')-(\bv\vee_\N\bv')]. 
    \]
\end{enumerate}
\end{prop}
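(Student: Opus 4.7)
The plan is to translate the face relation into a geometric containment and then reduce both assertions to coordinate-wise verifications. Associate to each cell $[\bv,\bw]\in\cX$ the axis-aligned box $R([\bv,\bw]) := \prod_{n=1}^N [\bv_n,\bv_n+\bw_n]\subset\R^N$. A direct inspection of Definition~\ref{defn:Xcomplex} shows that $[\bv,\bw] \preceq [\bv',\bw']$ holds precisely when $R([\bv,\bw]) \subseteq R([\bv',\bw'])$, so that $(\cX,\preceq)$ sits inside the poset of axis-aligned boxes ordered by inclusion. Under this translation the join in $\cX$ corresponds to the smallest cell whose box contains both $R(\xi)$ and $R(\xi')$, and the meet corresponds to the largest cell whose box is contained in both.

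For part (1), any common coface of $\xi$ and $\xi'$ must correspond to a box containing $R(\xi) \cup R(\xi')$. The minimal such box has lower corner $\bv \wedge_\N \bv'$ and upper corner $(\bv+\bw)\vee_\N(\bv'+\bw')$, yielding the candidate cell in the displayed formula. For this candidate to actually be an element of $\cX$, its $\bw$-part must lie in $\{0,1\}^N$; a coordinate-wise inspection shows this is equivalent to the stated hypothesis, namely that the vertex $[\bv\wedge_\N\bv',\bzero]$ is a common face of $\xi$ and $\xi'$. Minimality of the candidate among all cofaces then follows immediately from the characterization of $\preceq$ by box inclusion together with Proposition~\ref{prop:JtauJsigma}.

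Part (2) is handled by a dual argument. The largest box contained in $R(\xi) \cap R(\xi')$ has lower corner $\bv \vee_\N \bv'$ and upper corner $(\bv+\bw)\wedge_\N(\bv'+\bw')$. The resulting $\bw$-part is componentwise non-negative precisely when the two boxes overlap in every coordinate, which is equivalent to the existence of a common top-dimensional coface, namely $[\bv\vee_\N\bv',\bone]\in\cX$ with $\xi,\xi'\preceq[\bv\vee_\N\bv',\bone]$. Maximality of the candidate meet is then again immediate from the box-inclusion description of $\preceq$.

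The main technical care needed is bookkeeping across the two directions of each biconditional: verifying that membership of the candidate's $\bw$-part in $\{0,1\}^N$ is equivalent to the stated face or coface existence condition. This ultimately reduces in each coordinate to comparing the four integers $\bv_n$, $\bv_n+\bw_n$, $\bv'_n$, $\bv'_n+\bw'_n$ and checking that the resulting gap is at most one. The geometric box reformulation at the outset is designed precisely to make this bookkeeping transparent and avoid sign errors when producing the witnessing $\bq\in\{0,1\}^N$ vectors demanded by the definition of $\preceq$; without it, the asymmetric convention that a face has a \emph{larger} $\bv$ than its coface is easy to trip over.
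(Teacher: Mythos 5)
Your box-inclusion reformulation of the face relation $\preceq$ is a clean alternative to the paper's approach of directly manipulating the witnessing vectors $\bq \in \setof{0,1}^N$; the paper substitutes $\bv'' = \bv + \bq = \bv' + \bq'$ and tracks the resulting vector inequalities, whereas you abstract this to box containment and argue geometrically. The translation is legitimate (box containment already bounds the coordinate differences by one), and your derivations of the formulas for $\xi \vee_\preceq \xi'$ and $\xi \wedge_\preceq \xi'$ are correct.

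The gap is in the existence conditions, where you write ``a coordinate-wise inspection shows this is equivalent to the stated hypothesis'' without performing that inspection; and the inspection does not in fact go through. Unwinding Definition~\ref{defn:Xcomplex}, $[\bv\wedge_\N\bv',\bzero]\preceq\xi$ requires a $\bq\in\setof{0,1}^N$ with $\bv\wedge_\N\bv'=\bv+\bq$; since $\bv\wedge_\N\bv'\poeN\bv$ coordinatewise, this forces $\bq=\bzero$ and hence $\bv\poeN\bv'$, and the symmetric requirement for $\xi'$ then forces $\bv=\bv'$. That is strictly stronger than the condition that the candidate's $\bw$-part lie in $\setof{0,1}^N$. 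For a concrete failure, take one-dimensional adjacent vertices $\xi=[(1),(0)]$ and $\xi'=[(2),(0)]$: the connecting edge $[(1),(1)]$ is a common coface, so a join does exist and your formula correctly produces it, yet $[(1),(0)]\not\preceq[(2),(0)]$ because $1=2+\bq$ has no solution $\bq\in\setof{0,1}$. What the promised coordinate-wise inspection would actually reveal is that having the $\bw$-part in $\setof{0,1}^N$ corresponds to the common-coface condition $\xi,\xi'\preceq[\bv\wedge_\N\bv',\bone]$, and the analogous condition for part~(2) is $[\bv\vee_\N\bv',\bzero]\preceq\xi,\xi'$. Working this out is precisely the ``bookkeeping across the two directions of each biconditional'' that you flag at the end as the main technical care needed; by skipping it, you have neither established the stated equivalence nor noticed that it clashes with the proposition as literally written.
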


\begin{proof}
We prove (1) and leave (2) to the reader.

Let $\sigma = [\bv'', \bw''] \in \cX$ be the join of $\xi$ and $\xi'$. By Definition~\ref{defn:meet_join}, $\sigma \preceq \xi, \xi'$, meaning there exist $\bq, \bq' \in \{0,1\}^N$ such that 
    \begin{align*}
        \bv'' & = \bv + \bq, & \bw'' + \bq & \leq_\N \bw, \\ 
        \bv'' & = \bv' + \bq', & \bw'' + \bq' & \leq_\N \bw'.
    \end{align*}
    Since $\bv'' = \bv + \bq = \bv' + \bq'$, we have $\bv - \bv' = \bq' - \bq$, implying $|\bv_n - \bv'_n| \leq 1$ for each $n \in \{1, \ldots, N\}$. Hence, $\bv \wedge_\N \bv' - \bv \in \{0,1\}^N$ and 
    \[
        \bv \wedge_\N \bv' = \bv + (\bv \wedge_\N \bv' - \bv).
    \]
    If we show that $\bv \wedge_\N \bv' - \bv \leq_\N \bw$, then $[\bv \wedge_\N \bv', \bzero] \preceq \xi$. Assume for contradiction that there exists an index $n$ such that 
    \[
        (\bv \wedge_\N \bv' - \bv)_n > \bw_n.
    \]
    Since $(\bv \wedge_\N \bv' - \bv)_n$ and $\bw_n$ are in $\{0,1\}$, it follows that 
    \[
        (\bv \wedge_\N \bv' - \bv)_n = 1 \quad \text{and} \quad \bw_n = 0.
    \]
    Given $\bw_n = 0$, we have $\bq_n = 0$, so $\bv''_n = \bv_n$. Thus, $\bv_n = \bv'_n + \bq'_n$, implying $\bv'_n \leq \bv_n$ and $(\bv \wedge_\N \bv')_n = \bv_n$. This contradicts $(\bv \wedge_\N \bv' - \bv)_n = 0$. The same reasoning implies that $[\bv\wedge_\N\bv',\bzero]$ is a face of $\xi'$. 

    To prove the converse, assume $[\bv \wedge_\N \bv', \bzero] \preceq \xi, \xi'$, implying 
    \[
        S = \{ \xi'' \in \cX : \xi'' \preceq \xi, \xi'' \preceq \xi' \} \neq \emptyset.
    \]
    Consider $\bw''$ that maximizes the dimension of $\sigma = [\bv \wedge_\N \bv', \bw''] \in S$. Then there exist $\bq, \bq' \in \{0,1\}^N$ such that 
    \begin{align*}
        \bv \wedge_\N \bv' & = \bv + \bq, & \bw'' + \bq & \leq_\N \bw, \\ 
        \bv \wedge_\N \bv' & = \bv' + \bq', & \bw'' + \bq' & \leq_\N \bw'.
    \end{align*}
    Since $\bw'' \leq_\N \bw - \bq$ and $\bw'' \leq_\N \bw' - \bq'$, we have 
    \[
        \bw'' \leq (\bw - \bq) \vee_\N (\bw' - \bq').
    \]
    The dimension of $\bw''$ (see Definition~\ref{defn:Xcomplex}) is maximized when 
    \begin{align*}
        \bw'' & = (\bw - \bq) \vee_\N (\bw' - \bq') \\
              & = (\bv + \bw - \bv \wedge_\N \bv') \vee_\N (\bv' + \bw' - \bv \wedge_\N \bv') \\
              & = (\bv + \bw) \vee_\N (\bv' + \bw') - \bv \wedge_\N \bv'.
    \end{align*}
    To show that the $\sigma$ provided is the join of $\xi$ and $\xi'$, consider $\alpha = [\bv'', \bw''] \in S$. Then there exist $\bq, \bq' \in \{0,1\}^N$ such that 
    \begin{align*}
        \bv'' & = \bv + \bq, & \bw'' + \bq & \leq_\N \bw, \\ 
        \bv'' & = \bv' + \bq', & \bw'' + \bq' & \leq_\N \bw'.
    \end{align*}
    Thus, $\bv'' = \bv \wedge_\N \bv' + \bq \vee_\N \bq'$, and we need to prove that
    \[
        \bw'' + \bq \vee_\N \bq' \leq_\N (\bv + \bw) \vee_\N (\bv' + \bw') - \bv''.
    \]
    Observe that $\bv + \bw - \bv'' = \bw - \bq \geq \bw''$ and similarly, $\bw' - \bq' \geq \bw''$, so 
    \[
        (\bv + \bw) \vee_\N (\bv' + \bw') - \bv'' = (\bv + \bw) \vee_\N (\bv' + \bw') - (\bv \wedge_\N\bv') - \bq\vee_\N\bq' \geq \bw'',
    \]
    implying $\alpha \preceq \sigma$.
\end{proof}

Recall that the extension of a cell in another is given by Definition~\ref{defn:Ex}. 
\begin{prop}
    \label{prop:sameposition-meetjoin}
    Let $\xi,\xi' \in \cX$ be incomparable cells, i.e., neither $\xi \preceq \xi'$ nor $\xi' \preceq \xi$. If $\xi \vee_\preceq \xi'$ and $\xi \wedge_\preceq \xi'$ are well-defined, then
    \[
    p_n(\xi',\xi \vee_\preceq \xi') = p_n(\xi \wedge_\preceq \xi',\xi), \quad \text{for all}\ n \in \Ex(\xi',\xi \vee_\preceq \xi').
    \]
\end{prop}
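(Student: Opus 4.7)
The plan is to substitute the explicit formulas for $\xi \vee_\preceq \xi'$ and $\xi \wedge_\preceq \xi'$ given by Proposition~\ref{prop:MeetJoinFormula} into both sides of the asserted identity and verify the equality by direct algebra. With $\xi = [\bv,\bw]$, $\xi' = [\bv',\bw']$, $\xi\vee_\preceq\xi' = [\bv^\vee,\bw^\vee]$ and $\xi\wedge_\preceq\xi' = [\bv^\wedge,\bw^\wedge]$, those formulas read coordinatewise as $\bv^\vee_n = \min(\bv_n,\bv'_n)$, $\bv^\wedge_n = \max(\bv_n,\bv'_n)$, $\bw^\vee_n = \max(\bv_n + \bw_n, \bv'_n + \bw'_n) - \min(\bv_n,\bv'_n)$, and $\bw^\wedge_n = \min(\bv_n + \bw_n, \bv'_n + \bw'_n) - \max(\bv_n,\bv'_n)$.

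The argument then reduces to two elementary observations, each an instance of the identity $\max(a,b) + \min(a,b) = a + b$. First, applying this to the pair $(\bv_n,\bv'_n)$ gives $\bv^\vee_n + \bv^\wedge_n = \bv_n + \bv'_n$, which rearranges to
\[
\bv'_n - \bv^\vee_n \;=\; \bv^\wedge_n - \bv_n.
\]
Hence the two sign factors $(-1)^{\bv'_n - \bv^\vee_n}$ and $(-1)^{\bv^\wedge_n - \bv_n}$ appearing in $p_n(\xi', \xi\vee_\preceq\xi')$ and $p_n(\xi\wedge_\preceq\xi',\xi)$ coincide. Second, applying the same identity to $(\bv_n + \bw_n, \bv'_n + \bw'_n)$ and subtracting $\bv^\vee_n + \bv^\wedge_n = \bv_n + \bv'_n$ yields $\bw^\vee_n + \bw^\wedge_n = \bw_n + \bw'_n$, i.e.,
\[
\bw'_n - \bw^\vee_n \;=\; \bw^\wedge_n - \bw_n.
\]
Multiplying the two matching factors then gives $p_n(\xi', \xi\vee_\preceq\xi') = p_n(\xi\wedge_\preceq\xi',\xi)$.

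A perhaps surprising by-product is that the identity in fact holds for every coordinate $n$, not merely for $n \in \Ex(\xi', \xi\vee_\preceq\xi')$; neither incomparability nor the extension condition enters the algebra at all. The sole role of the well-definedness of the meet and join in the hypothesis is to license the use of Proposition~\ref{prop:MeetJoinFormula}. There is therefore no genuine obstacle to overcome: the proof is essentially a two-line computation resting on $\max + \min = \mathrm{sum}$ applied once at the level of positions and once at the level of widths.
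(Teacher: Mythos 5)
Your proof is correct, and while it starts from the same place as the paper's---substitute the explicit meet/join formulas from Proposition~\ref{prop:MeetJoinFormula} into Definition~\ref{defn:rpvector} and compute---it organizes the computation differently and in a cleaner way. The paper argues multiplicatively: it reduces to showing the product $p_n(\xi',\xi\vee_\preceq\xi')\cdot p_n(\xi\wedge_\preceq\xi',\xi)=1$ for $n\in\Ex(\xi',\xi\vee_\preceq\xi')$, uses the restriction to $\Ex$ to pin the two width-difference factors at $-1$, and then shows the resulting exponent $(\bv_n'-\bv_n^\vee)+(\bv_n^\wedge-\bv_n)$ is even. You argue factor-wise: $\max+\min=\mathrm{sum}$ applied to $(\bv_n,\bv_n')$ gives $\bv_n'-\bv_n^\vee=\bv_n^\wedge-\bv_n$, and applied to $(\bv_n+\bw_n,\bv_n'+\bw_n')$ (after subtracting the first identity) gives $\bw_n'-\bw_n^\vee=\bw_n^\wedge-\bw_n$, so the sign exponent and the width-difference factor each coincide separately. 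This buys two things the paper's version does not: you obtain equality of the position vectors at \emph{every} coordinate $n$, not only for $n\in\Ex(\xi',\xi\vee_\preceq\xi')$, and the parity that the paper needs falls out as an immediate consequence of the $\max+\min$ identity rather than as the last line of a product manipulation. (The restriction to $\Ex$ in the paper is not decorative, though: the ``product $=1$'' reduction only works because for $n\in\Ex$ both width factors are guaranteed nonzero. Your coordinate-wise argument sidesteps that issue entirely.)
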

\begin{proof}
    Given $\xi=[\bv,\bw]$ and $\xi'=[\bv',\bw']$, it is enough to show that
    \[
        p_n(\xi',\xi \vee_\preceq \xi') \cdot p_n(\xi \wedge_\preceq \xi',\xi) = 1, \quad \forall n \in \Ex(\xi',\xi \vee_\preceq \xi').
    \]
    Let $n \in \Ex(\xi',\xi \vee_\preceq \xi')$ and denote the vertices of $\xi \wedge_\preceq \xi'$ and $\xi \vee_\preceq \xi'$ respectively by
    \[
        \bv_n^\wedge  = \min\{\bv_n,\bv_n'\}, \quad
        \bv_n^\vee  = \max\{\bv_n,\bv_n'\}, 
    \]
    as in Proposition~\ref{prop:MeetJoinFormula}. 
    
    Then, by Definition~\ref{defn:rpvector} and Proposition~\ref{prop:JtauJsigma}, 
    \[
        p_n(\xi',\xi \vee_\preceq \xi') =  (-1)^{\bv_n' - \bv_n^\vee (0-1)} = (-1)^{\bv_n' - \bv_n^\vee+1}.
    \]
    Similarly, since $\Ex(\xi',\xi \vee_\preceq \xi') \subseteq \Ex(\xi \wedge_\preceq \xi',\xi)$ by Proposition~\ref{prop:MeetJoinFormula}, it follows that
    \[
    p_n(\xi \wedge_\preceq \xi',\xi) =  (-1)^{\bv_n^\wedge - \bv_n} (0-1) = (-1)^{\bv_n^\wedge - \bv_n+1}.
    \]
    Thus,
    \[
         p_n(\xi',\xi \vee_\preceq \xi') \cdot p_n(\xi \wedge_\preceq \xi',\xi) = (-1)^{(\bv_n' - \bv_n^\vee+(\bv_n^\wedge - \bv_n)} = (-1)^{c(n)},
    \]
    where $c(n):=(\bv_n' - \bv_n^\wedge)+(\bv_n^\vee - \bv_n)$.
    Again, by Proposition~\ref{prop:MeetJoinFormula}, $c(n)=0$ or $c(n)=2 ( \bv'-\bv)$, and the result follows.
\end{proof}

\section{Wall Labeling and Rook Fields}
\label{sec:rook}

Our characterization of combinatorial dynamics on an $N$-dimensional cubical cell complex $\cX$ is based on information organized using top cells $\cX^{(N)}$. 

\begin{defn}
\label{defn:wall}
Given an $N$-dimensional cubical cell complex $\cX$ the set of \emph{top pairs} is given by
\[
\TP(\cX) : = \setof{(\xi, \mu) \in \cX \times \cX^{(N)} \mid \xi \preceq \mu}.
\]
Of particular interest is the subcollection where $\xi \in \cX^{(N-1)}$.
The set of \emph{walls} of $\cX$ is 
\[
W(\cX) := \setof{(\xi, \mu) \in \cX^{(N-1)} \times \cX^{(N)} \mid \xi \prec \mu}.
\]
A wall $(\xi, \mu)\in W(\cX)$ is called an \emph{$n$-wall} if $J_i(\xi) = \setof{n}$.
\end{defn}

In a slight misuse of language we say that $\xi\in\cX^{(N-1)}$ is a wall of $\mu\in\cX^{(N)}$ to indicate that $(\xi, \mu)$ is a wall.

A top cell $\mu = [\bv, {\bf 1}] \in \cX^{(N)}$ has two $n$-walls
for each $n$,
\[
\mu^-_n = [\bv, {\bf 1}^{(n)}]\quad \text{and}\quad \mu^+_n = [\bv + {\bf 0}^{(n)}, {\bf 1}^{(n)}],
\] 
called the \emph{left} and \emph{right $n$-walls} of $\mu$, respectively.
Two distinct top cells $\mu,\mu'\in\cX^{(N)}$ are \emph{$n$-adjacent} if $\mu$ and $\mu'$ share an \emph{$n$-wall} $\xi$, i.e., $(\xi, \mu)$ and $(\xi, \mu')$ are $n$-walls.

Let $\xi_0 \in\cX$. 
The \emph{walls} of $\xi_0\in\cX$ are
\begin{equation*}
    W(\xi_0) := \setof{(\xi, \mu) \in W(\cX) \mid  \xi_0 \preceq \xi}.
\end{equation*}
We leave the proof of the following proposition to the reader.
\begin{prop}
\label{prop:mu*-general}
Consider an $N$-dimensional cubical complex $\cX$. Let $\xi \in \cX$ with $\dim(\xi) < N$. 
Fix $\mu \in \cX$ such that $\xi \prec \mu$.
Then for each $n\in \Ex(\xi,\mu)$ there exists a unique face $\mu_n^*(\xi,\mu) \in \cX$ of $\mu$ such that
\[
\xi \preceq \mu_n^*(\xi,\mu) \prec \mu \text{ and } \Ex(\mu_n^*(\xi,\mu),\mu)=\setof{n}.
\]
\end{prop}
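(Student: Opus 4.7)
The plan is to write $\xi = [\bv, \bw]$ and $\mu = [\bv', \bw']$, and to determine the candidate cell $\mu_n^* = [\bv^*, \bw^*]$ by forcing the required conditions coordinate by coordinate. First I would pin down $\bw^*$: since $\mu_n^* \preceq \mu$ gives $J_e(\mu_n^*) \subseteq J_e(\mu)$ by Proposition~\ref{prop:JtauJsigma}(i), the requirement $\Ex(\mu_n^*, \mu) = J_i(\mu_n^*) \cap J_e(\mu) = \setof{n}$ is equivalent to $J_e(\mu_n^*) = J_e(\mu) \setminus \setof{n}$. This determines $\bw^*$ uniquely: $\bw^*_n = 0$ (well-defined because $n \in J_e(\mu)$ forces $\bw'_n = 1$), and $\bw^*_i = \bw'_i$ for $i \neq n$.

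Next I would determine $\bv^*$. The face relation $\mu_n^* \preceq \mu$ from \eqref{eq:is_a_face} requires $\bv^* = \bv' + \bq'$ for some $\bq' \in \setof{0,1}^N$ with $\bw^* + \bq' \poeN \bw'$. Comparing the two sides coordinate by coordinate shows $\bq'_i = 0$ for every $i \neq n$ (for $i \in J_e(\mu)\setminus\setof{n}$ both $\bw^*_i$ and $\bw'_i$ equal $1$; for $i \in J_i(\mu)$ both equal $0$), so only $\bq'_n \in \setof{0,1}$ is free, leaving at most two candidates for $\bv^*$. The remaining condition $\xi \preceq \mu_n^*$ translates to $\bv = \bv^* + \bq''$ with $\bw + \bq'' \poeN \bw^*$ for some $\bq'' \in \setof{0,1}^N$. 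Since $n \in J_i(\xi)$ gives $\bw_n = 0$ and $\bw^*_n = 0$, the inequality at coordinate $n$ forces $\bq''_n = 0$, hence $\bv^*_n = \bv_n$. Writing $\bv = \bv' + \bq$ from $\xi \preceq \mu$, this amounts to $\bq'_n = \bq_n$, singling out a unique candidate.

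Finally I would verify that the cell just constructed satisfies all the stated properties: checking the remaining coordinates of $\bw + \bq'' \poeN \bw^*$ reduces to the inequalities already known from $\xi \preceq \mu$ (since $\bw^*_i = \bw'_i$ and $\bq''_i = \bq_i$ for $i \neq n$), and $\dim(\mu_n^*) = \dim(\mu) - 1$ guarantees $\mu_n^* \prec \mu$. Uniqueness follows because every coordinate of $\bw^*$ and $\bv^*$ was forced by the hypotheses. I do not anticipate any substantive obstacle; the argument is essentially a careful bookkeeping exercise with the face relation of Definition~\ref{defn:Xcomplex}, the essential/inessential direction sets of Definition~\ref{defn:JeJi}, and the extension set of Definition~\ref{defn:Ex}.
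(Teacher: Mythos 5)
Your proof is correct and complete. The paper leaves this proposition as an exercise (``We leave the proof of the following proposition to the reader''), so there is no printed proof to compare against, but your coordinate-by-coordinate argument is the canonical way to do it: you force $\bw^*$ from $\Ex(\mu_n^*,\mu)=\setof{n}$ via Proposition~\ref{prop:JtauJsigma}(i), then force $\bv^*$ by showing $\bq'_i=0$ for $i\neq n$ from $\bw^*+\bq'\poeN\bw'$ and $\bq'_n=\bq_n$ from $\xi\preceq\mu_n^*$ in the $n$-th coordinate (using $n\in J_i(\xi)$ so $\bw_n=\bw^*_n=0$). Every entry being forced gives uniqueness, and the final verification that $\xi\preceq\mu_n^*\prec\mu$ and $\Ex(\mu_n^*,\mu)=\setof{n}$ hold is exactly the bookkeeping you describe.
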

If $\mu \in \Top_\cX(\xi)$, then Proposition~\ref{prop:mu*-general} can be reformulated as follows.
\begin{prop}
\label{prop:mu*}
Consider an $N$-dimensional cubical complex $\cX$. Let $\xi \in \cX$ with $\dim(\xi) < N$. 
Fix $\mu \in \Top_\cX(\xi)$. 
Then for each $n\in J_i(\xi)$ there exists a unique $n$-wall $\mu_n^*(\xi,\mu)$ of $\mu$ such that
\[
\xi \preceq \mu_n^*(\xi,\mu) \prec \mu.
\]
\end{prop}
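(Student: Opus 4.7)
The plan is to derive Proposition~\ref{prop:mu*} directly from Proposition~\ref{prop:mu*-general} by specializing to the case where $\mu$ is top-dimensional. The crucial observation is that when $\mu \in \Top_\cX(\xi)$ we have $\dim(\mu) = N$, hence $J_e(\mu) = \{1, \ldots, N\}$, and therefore
\[
\Ex(\xi,\mu) = J_i(\xi) \cap J_e(\mu) = J_i(\xi).
\]
This identifies the indexing set of Proposition~\ref{prop:mu*-general} with $J_i(\xi)$, which is exactly what we want.

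Next, for each $n \in J_i(\xi)$, I would invoke Proposition~\ref{prop:mu*-general} to produce a unique face $\mu_n^*(\xi,\mu)$ of $\mu$ satisfying $\xi \preceq \mu_n^*(\xi,\mu) \prec \mu$ and $\Ex(\mu_n^*(\xi,\mu),\mu) = \{n\}$. To check that $\mu_n^*(\xi,\mu)$ is in fact an $n$-wall of $\mu$, I would use that $J_e(\mu) = \{1,\ldots, N\}$ once more to get
\[
\{n\} = \Ex(\mu_n^*(\xi,\mu),\mu) = J_i(\mu_n^*(\xi,\mu)) \cap J_e(\mu) = J_i(\mu_n^*(\xi,\mu)),
\]
so $\mu_n^*(\xi,\mu)$ has exactly one inessential direction, namely $n$, which forces $\dim(\mu_n^*(\xi,\mu)) = N-1$. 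By Definition~\ref{defn:wall} this makes $(\mu_n^*(\xi,\mu),\mu)$ an $n$-wall.

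Finally, for uniqueness among \emph{$n$-walls} of $\mu$ containing $\xi$, I would note that any such candidate $\xi'$ automatically satisfies $\xi \preceq \xi' \prec \mu$ together with $J_i(\xi') = \{n\}$, hence $\Ex(\xi',\mu) = \{n\}$; the uniqueness clause of Proposition~\ref{prop:mu*-general} then forces $\xi' = \mu_n^*(\xi,\mu)$.

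There is no real obstacle here, since Proposition~\ref{prop:mu*-general} carries all the combinatorial weight. Had the underlying Proposition~\ref{prop:mu*-general} needed a proof as well, the main work would have been the direct construction of $\mu_n^*$: writing $\xi = [\bv,\bw]$ and $\mu = [\bv',\bw']$ with $\bv = \bv' + \bq$ and $\bw + \bq \leq_\N \bw'$, one checks that exactly one of $[\bv', \bw' - \bzero^{(n)}]$ (when $\bq_n=0$) or $[\bv'+\bzero^{(n)}, \bw'-\bzero^{(n)}]$ (when $\bq_n=1$) lies between $\xi$ and $\mu$ and has $\Ex(\cdot,\mu)=\{n\}$, with both existence and uniqueness following from the bookkeeping on the $n$-th coordinate of the face relation.
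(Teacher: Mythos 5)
Your proof is correct and follows exactly the paper's route: the paper presents Proposition~\ref{prop:mu*} as a direct reformulation of Proposition~\ref{prop:mu*-general} under the specialization $\mu\in\Top_\cX(\xi)$, which is precisely the observation ($J_e(\mu)=\{1,\ldots,N\}$, so $\Ex(\xi,\mu)=J_i(\xi)$ and $\Ex(\mu_n^*,\mu)=J_i(\mu_n^*)=\{n\}$) that you make explicit.
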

\begin{lem}
\label{lem:nwallJe}
Consider an $n$-wall $(\mu_n,\mu) \in W(\xi)$.
Let $\xi\in\cX$.
If $n\in J_e(\xi)$, then $\xi \not\preceq \mu_n$.
\end{lem}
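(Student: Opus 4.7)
The plan is to prove the contrapositive: assume $\xi \preceq \mu_n$ and deduce that $n \notin J_e(\xi)$. The essential observation is that being an $n$-wall pins down the inessential direction of $\mu_n$ to be exactly $\{n\}$.

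First, I would unpack the hypothesis. Since $(\mu_n,\mu)$ is an $n$-wall, Definition~\ref{defn:wall} gives $J_i(\mu_n) = \{n\}$, so that
\[
J_e(\mu_n) = \{1,\ldots,N\} \setminus \{n\}.
\]
Next, I would invoke Proposition~\ref{prop:JtauJsigma}(i): if $\xi \preceq \mu_n$, then $J_e(\xi) \subseteq J_e(\mu_n)$. Combining these two facts yields $n \notin J_e(\xi)$. Taking the contrapositive gives the claim: $n \in J_e(\xi)$ forces $\xi \not\preceq \mu_n$.

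There is no real obstacle here; the lemma is essentially a direct consequence of how essential directions propagate under the face relation, together with the defining property of an $n$-wall. The only subtlety worth noting is a matter of bookkeeping: one must be careful to distinguish $J_e$ and $J_i$, and to remember that $\dim(\mu_n) = N-1$ with the unique missing essential direction being $n$ itself. Once these definitions are correctly applied, the conclusion is immediate and the argument is at most two lines.
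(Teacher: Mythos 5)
Your proof is correct and follows essentially the same route as the paper's: both rest on the defining property $J_i(\mu_n)=\{n\}$ of an $n$-wall together with Proposition~\ref{prop:JtauJsigma}(i). The only difference is that you phrase it as a contrapositive while the paper phrases it as a proof by contradiction, which is an inessential distinction.
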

\begin{proof}
By Proposition~\ref{prop:JtauJsigma}, if $\xi\preceq \mu_n$, then $J_e(\xi)\subseteq J_e(\mu_n)$.
But, by assumption $n\in J_e(\xi)$, and by definition $n\in J_i(\mu_n)$, a contradiction.
\end{proof}
We recall that $\Ex(\xi,\xi')=J_i(\xi)\cap J_e(\xi')$ by \eqref{eq:Ex}.  
\begin{prop}
\label{prop:equal_position}
Let $\xi,\xi'\in \cX$ be such that $\xi\preceq\xi'$, and $\Ex(\xi,\xi')=\setof{n}$. 
If $(\mu_n,\mu) \in W(\xi)$ is a $n$-wall of $\xi$ with $\mu \in \Top_\cX(\xi')$, then $p_n(\xi,\xi')=p_n(\mu_n,\mu)$.
\end{prop}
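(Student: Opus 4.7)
My plan is to prove this by direct computation from Definition~\ref{defn:rpvector}, keeping careful track of which face relation constrains which coordinate. Write $\xi = [\bv, \bw]$, $\xi' = [\bv', \bw']$, $\mu_n = [\bar{\bv}, \bar{\bw}]$, and $\mu = [\bv^\mu, \bone]$. The hypothesis $\Ex(\xi, \xi') = \{n\}$ yields $\bw_n = 0$ and $\bw'_n = 1$; the fact that $(\mu_n, \mu)$ is an $n$-wall yields $\bar{\bw}_n = 0$. Consequently, both $p_n(\xi, \xi')$ and $p_n(\mu_n, \mu)$ pick up the same factor $\bw_n - \bw'_n = \bar{\bw}_n - 1 = -1$, so the claimed identity reduces to the parity equality $(-1)^{\bv_n - \bv'_n} = (-1)^{\bar{\bv}_n - \bv^\mu_n}$.

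The second step is to pin down the two relevant vertex components using the face relations at our disposal, namely $\xi \preceq \xi'$, $\xi \preceq \mu_n$, $\mu_n \prec \mu$, and $\xi' \preceq \mu$. From $\xi \preceq \mu_n$ combined with $\bw_n = \bar{\bw}_n = 0$, the constraint $\bw + \bq \leq_\N \bar{\bw}$ in \eqref{eq:is_a_face} forces the $n$-th component of the connecting $\bq$-vector to vanish, so $\bv_n = \bar{\bv}_n$. Similarly, from $\xi' \preceq \mu$ combined with $\bw'_n = 1$, the constraint $\bw' + \bq' \leq_\N \bone$ forces $(\bq')_n = 0$, giving $\bv'_n = \bv^\mu_n$. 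Subtracting yields $\bv_n - \bv'_n = \bar{\bv}_n - \bv^\mu_n$, from which the equality of parities (hence the conclusion) is immediate.

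I do not expect a genuine obstacle here; the argument is essentially bookkeeping. The uniform mechanism is that whenever \eqref{eq:is_a_face} is applied to a pair of cells whose $\bw$-coordinates at some index are either both $0$ or equal to $1$ at the larger cell, the corresponding component of the connecting $\bq$-vector is forced to be $0$. The only care needed is to pick, for each of the two $\bv$-components to be identified, the correct face relation among the four available; once that is done, the result follows without any further calculation.
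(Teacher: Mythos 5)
Your proof is correct, and it takes a genuinely different route from the paper's. The paper first disposes of the case $\xi' = \mu$ by inspection, then invokes Lemma~\ref{lem:nwallJe} to conclude that $\xi'$ and $\mu_n$ are incomparable, and finally applies the meet/join machinery of Proposition~\ref{prop:sameposition-meetjoin} (and implicitly Proposition~\ref{prop:MeetJoinFormula}) with the identifications $\xi' \vee_\preceq \mu_n = \mu$ and $\xi' \wedge_\preceq \mu_n = \xi$. You instead work directly with Definition~\ref{defn:rpvector} and the face relation \eqref{eq:is_a_face}: observing that the $\bw$-factors $\bw_n - \bw'_n$ and $\bar{\bw}_n - \bone_n$ both equal $-1$, you reduce the claim to a parity equality between $\bv_n - \bv'_n$ and $\bar{\bv}_n - \bv^\mu_n$, and then pin both differences down by choosing, for each, the one face relation ($\xi \preceq \mu_n$, $\xi' \preceq \mu$) at which the $n$-th $\bq$-component must vanish — in fact you get equality as integers, not just mod $2$. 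Your argument is more elementary (no need to form meets and joins or establish incomparability), handles the $\xi' = \mu$ case uniformly rather than separately, and makes the underlying mechanism transparent; what the paper's approach buys is that the same Proposition~\ref{prop:sameposition-meetjoin} is reusable in other places where the meet/join structure is the natural data.
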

\begin{proof}  
We leave it to the reader to check that if $\xi' = \mu$, then the result follows directly from Definition~\ref{defn:rpvector}.
By Lemma~\ref{lem:nwallJe}, $\xi' \not\preceq \mu_n$.
This in turn implies that $\xi'$ and $\mu_n$ are not comparable, and hence by Proposition~\ref{prop:sameposition-meetjoin},
\[ 
p_n(\mu_n,\xi' \vee_\preceq \mu_n) = p_n(\xi' \wedge_\preceq \mu_n,\xi'), \quad \text{for all}\ n \in \Ex(\mu_n,\xi' \vee_\preceq \mu_n),
\]
where $\xi' \vee_\preceq \mu_n = \mu$, $\xi' \wedge_\preceq \mu_n = \xi$, and $\Ex(\mu_n,\mu)=\setof{n}$. Thus, 
\[
p_n(\xi,\xi')=p_n(\mu_n,\mu). 
\]
\end{proof}

\begin{defn}
\label{def:wall_labeling}
Let $\cX$ be an $N$ dimensional cubical complex.
A function $\omega \colon W(\cX) \to \setof{\pm 1}$ is a \emph{wall labeling} if for each $\sigma \in \cX^{(0)}$ there exists a map
\[
\tilde{o}_{\sigma}\colon \setof{1, \ldots, N} \to \setof{1,\ldots, N},
\]
called a \emph{local inducement map}, satisfying the following two conditions.
\begin{enumerate}
\item[(i)] Let $\mu, \mu'\in \Top_\cX(\sigma)$ be $n$-adjacent.
If $k \neq n$ and $k \neq \tilde{o}_\sigma(n)$, then
\[
\omega(\mu^-_k, \mu) = \omega(\mu'^-_k, \mu') \quad \text{and} \quad \omega(\mu^+_k, \mu) = \omega(\mu'^+_k, \mu').
\]
\item[(ii)] Let $(\xi, \mu)$ and $(\xi, \mu') \in W(\sigma)$ be $n$-walls. If $n \neq \tilde{o}_\sigma(n)$, then
\[
\omega(\xi, \mu) = \omega(\xi, \mu').
\]
\end{enumerate}
\end{defn}

\begin{rem}
For the purposes of this manuscript the importance and minimality of the concept of a wall labeling cannot be over emphasized.
Our characterization of dynamics of ODEs is derived exclusively from the data of the wall labeling.
Furthermore, modulo the constraints, a wall labeling is a simple tabular data structure -- 
given a cubical complex each wall is assigned either $1$ or $-1$ -- whose size is the order of $2N$ times the number of top cells in $\cX$.    
\end{rem}

Figure~\ref{fig:wall_labeling}(A) provides an example of a wall labeling $\omega\colon \cX \to \setof{\pm 1}$ on the cubical complex $\cX(\I)$ where $\I = \setof{0,1,2,3}^2$. While computations are performed using this algebraic information, the vector representation of $\omega$ shown in Figure~\ref{fig:wall_labeling}(B) may provide greater intuition as to how $\omega$ can be related to dynamics. 

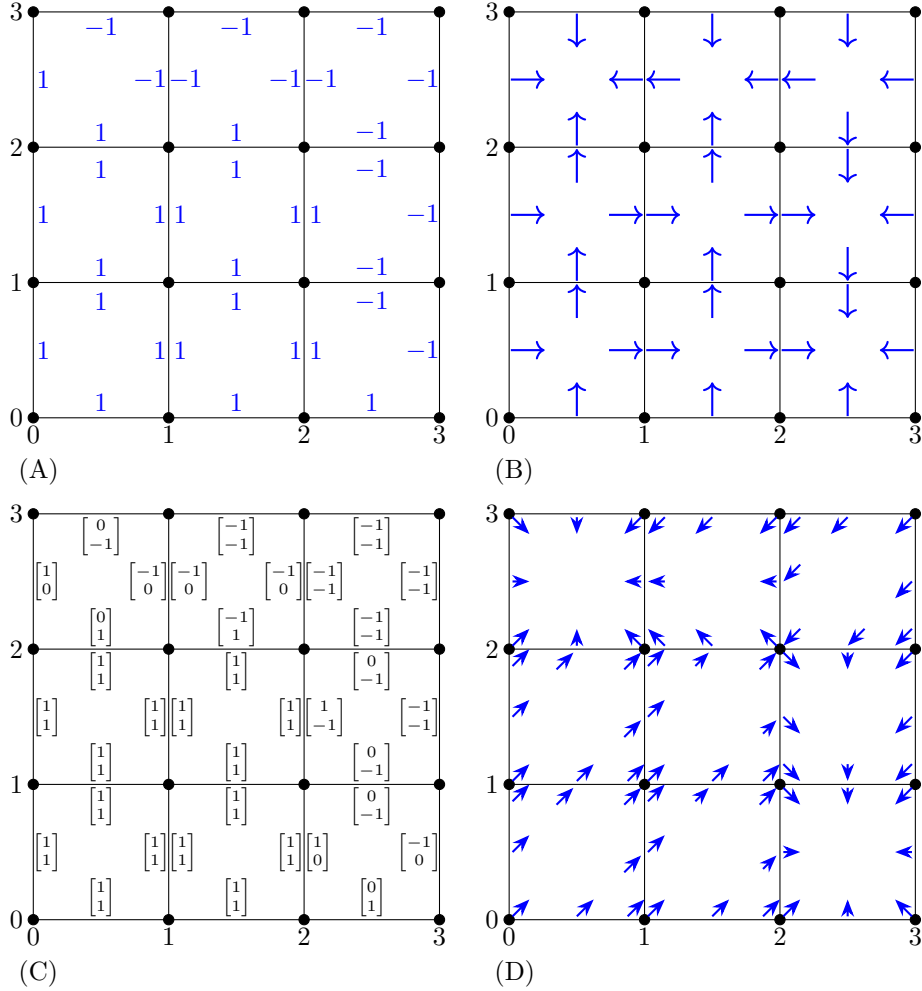
\begin{figure}
\begin{picture}(400,400)(0,-10)

\put(10,180){(A)}
\put(0,190){
\begin{tikzpicture}[scale=0.225]
\foreach \i in {0,...,3}{
    \draw(8*\i,-1) node{$\i$}; 
    \draw(-1,8*\i) node{$\i$}; 
   }
\draw[step=8cm,black] (0,0) grid (24,24);
\foreach \i in {0,...,3}{
  \foreach \j in {0,...,3}{
    \draw[black, fill=black] (8*\i,8*\j) circle (2ex);
  }
}
\foreach \i in {0,...,2}{
  \draw[blue, thick] (4+8*\i,0.9) node{$1$}; 
  \draw[blue, thick] (4+8*\i,23.1) node{$-1$}; 
  \draw[blue, thick] (0.6, 4+8*\i) node{$1$}; 
  \draw[blue, thick] (23.0,4+8*\i) node{$-1$}; 
}
\foreach \i in {0,...,1}{
  \draw[blue, thick] (4+8*\i,6.9) node{$1$};
  \draw[blue, thick] (4+8*\i,8.9) node{$1$};
  \draw[blue, thick] (4+8*\i,14.7) node{$1$};
  \draw[blue, thick] (4+8*\i,16.9) node{$1$};
  \draw[blue, thick] (7.5,4+8*\i) node{$1$};
  \draw[blue, thick] (8.7,4+8*\i) node{$1$};
  \draw[blue, thick] (15.5,4+8*\i) node{$1$};
  \draw[blue, thick] (16.7,4+8*\i) node{$1$};
}
\foreach \i in {2}{
  \draw[blue, thick] (4+8*\i,6.9) node{$-1$};
  \draw[blue, thick] (4+8*\i,8.9) node{$-1$};
  \draw[blue, thick] (4+8*\i,14.7) node{$-1$};
  \draw[blue, thick] (4+8*\i,16.9) node{$-1$};
  \draw[blue, thick] (6.9,4+8*\i) node{$-1$};
  \draw[blue, thick] (9.0,4+8*\i) node{$-1$};
  \draw[blue, thick] (14.9,4+8*\i) node{$-1$};
  \draw[blue, thick] (17.0,4+8*\i) node{$-1$};
}
\end{tikzpicture}
}
\put(10,-10){(C)}
\put(0,0){
\begin{tikzpicture}[scale=0.225]

\foreach \i in {0,...,3}{
    \draw(8*\i,-1) node{$\i$}; 
    \draw(-1,8*\i) node{$\i$}; 
   } 
\draw[step=8cm,black] (0,0) grid (24,24);
\foreach \i in {0,...,3}{
  \foreach \j in {0,...,3}{
    \draw[black, fill=black] (8*\i,8*\j) circle (2ex);
  }
}
\foreach \i in {0,...,1}{
  \foreach \j in {0,...,1}{
    \draw[thick] (8*\i + 0.8, 8*\j + 4) node{\tiny{$\begin{bmatrix} 1 \\ 1 \end{bmatrix}$}};
    \draw[thick] (8*\i + 7.2, 8*\j + 4) node{\tiny{$\begin{bmatrix} 1 \\ 1 \end{bmatrix}$}};
    \draw[thick] (8*\i + 4, 8*\j + 1.3) node{\tiny{$\begin{bmatrix} 1 \\ 1 \end{bmatrix}$}};
    \draw[thick] (8*\i + 4, 8*\j + 6.7) node{\tiny{$\begin{bmatrix} 1 \\ 1 \end{bmatrix}$}};
  }
}
\def \j {2}
\def \i {0}
\draw[thick] (\i * 8 + 0.8, \j * 8 + 4) node{\tiny{$\begin{bmatrix} 1 \\ 0 \end{bmatrix}$}};
\draw[thick] (\i * 8 + 7.2 - 0.4, \j * 8 + 4) node{\tiny{$\begin{bmatrix} -1 \\ 0 \end{bmatrix}$}};
\draw[thick] (\i * 8 + 4, \j * 8 + 1.3) node{\tiny{$\begin{bmatrix} 0 \\ 1 \end{bmatrix}$}};
\draw[thick] (\i * 8 + 4, \j * 8 + 6.7) node{\tiny{$\begin{bmatrix} 0 \\ -1 \end{bmatrix}$}};
\def \i {1}
\draw[thick] (\i * 8 + 0.8 + 0.4, \j * 8 + 4) node{\tiny{$\begin{bmatrix} -1 \\ 0 \end{bmatrix}$}};
\draw[thick] (\i * 8 + 7.2 - 0.4, \j * 8 + 4) node{\tiny{$\begin{bmatrix} -1 \\ 0 \end{bmatrix}$}};
\draw[thick] (\i * 8 + 4, \j * 8 + 1.3) node{\tiny{$\begin{bmatrix} -1 \\ 1 \end{bmatrix}$}};
\draw[thick] (\i * 8 + 4, \j * 8 + 6.7) node{\tiny{$\begin{bmatrix} -1 \\ -1 \end{bmatrix}$}};
\def \i {2}
\draw[thick] (\i * 8 + 0.8 + 0.4, \j * 8 + 4) node{\tiny{$\begin{bmatrix} -1 \\ -1 \end{bmatrix}$}};
\draw[thick] (\i * 8 + 7.2 - 0.4, \j * 8 + 4) node{\tiny{$\begin{bmatrix} -1 \\ -1 \end{bmatrix}$}};
\draw[thick] (\i * 8 + 4, \j * 8 + 1.3) node{\tiny{$\begin{bmatrix} -1 \\ -1 \end{bmatrix}$}};
\draw[thick] (\i * 8 + 4, \j * 8 + 6.7) node{\tiny{$\begin{bmatrix} -1 \\ -1 \end{bmatrix}$}};
\def \i {2}
\def \j {1}
\draw[thick] (\i * 8 + 0.8 + 0.4, \j * 8 + 4) node{\tiny{$\begin{bmatrix} 1 \\ -1 \end{bmatrix}$}};
\draw[thick] (\i * 8 + 7.2 - 0.4, \j * 8 + 4) node{\tiny{$\begin{bmatrix} -1 \\ -1 \end{bmatrix}$}};
\draw[thick] (\i * 8 + 4, \j * 8 + 1.3) node{\tiny{$\begin{bmatrix} 0 \\ -1 \end{bmatrix}$}};
\draw[thick] (\i * 8 + 4, \j * 8 + 6.7) node{\tiny{$\begin{bmatrix} 0 \\ -1 \end{bmatrix}$}};
\def \j {0}
\draw[thick] (\i * 8 + 0.8, \j * 8 + 4) node{\tiny{$\begin{bmatrix} 1 \\ 0 \end{bmatrix}$}};
\draw[thick] (\i * 8 + 7.2 - 0.4, \j * 8 + 4) node{\tiny{$\begin{bmatrix} -1 \\ 0 \end{bmatrix}$}};
\draw[thick] (\i * 8 + 4, \j * 8 + 1.3) node{\tiny{$\begin{bmatrix} 0 \\ 1 \end{bmatrix}$}};
\draw[thick] (\i * 8 + 4, \j * 8 + 6.7) node{\tiny{$\begin{bmatrix} 0 \\ -1 \end{bmatrix}$}};
\end{tikzpicture}
}

\put(190,180){(B)}
\put(180,190){
\begin{tikzpicture}[scale=0.225]
\draw[step=8cm,black] (0,0) grid (24,24);

\foreach \i in {0,...,3}{
    \foreach \j in {0,...,3}{
    \draw[black, fill=black] (8*\i,8*\j) circle (2ex);
    }
}

\foreach \i in {0,...,3}{
    \draw(8*\i,-1) node{$\i$}; 
    \draw(-1,8*\i) node{$\i$}; 
}

\foreach \i in {0,...,2}{
    \draw[->, blue, thick] (4+8*\i,0.1) -- (4+8*\i,2.1); 
    \draw[->, blue, thick] (4+8*\i,23.9) -- (4+8*\i,21.9); 
    \draw[->, blue, thick] (0.1, 4+8*\i) -- (2.1,4+8*\i); 
    \draw[->, blue, thick] (23.9,4+8*\i) -- (21.9,4+8*\i); 
}
\foreach \i in {0,...,1}{
    \draw[->, blue, thick] (4+8*\i,5.9) -- (4+8*\i,7.9); 
    \draw[->, blue, thick] (4+8*\i,8.1) -- (4+8*\i,10.1);
    \draw[->, blue, thick] (4+8*\i,13.9) -- (4+8*\i,15.9); 
    \draw[->, blue, thick] (4+8*\i,16.1) -- (4+8*\i,18.1);  
    \draw[->, blue, thick] (5.9,4+8*\i) -- (7.9,4+8*\i); 
    \draw[->, blue, thick] (8.1,4+8*\i) -- (10.1,4+8*\i);
    \draw[->, blue, thick] (13.9,4+8*\i) -- (15.9,4+8*\i); 
    \draw[->, blue, thick] (16.1,4+8*\i) -- (18.1,4+8*\i);  
}
\foreach \i in {2}{
    \draw[->, blue, thick] (4+8*\i,7.9) -- (4+8*\i,5.9); 
    \draw[->, blue, thick] (4+8*\i,10.1) -- (4+8*\i,8.1);
    \draw[->, blue, thick] (4+8*\i,18.1) -- (4+8*\i,16.1); 
    \draw[->, blue, thick] (4+8*\i,15.9) -- (4+8*\i,13.9);  
    \draw[->, blue, thick] (7.9,4+8*\i) -- (5.9,4+8*\i); 
    \draw[->, blue, thick] (10.1,4+8*\i) -- (8.1,4+8*\i);
    \draw[->, blue, thick] (18.1,4+8*\i) -- (16.1,4+8*\i); 
    \draw[->, blue, thick] (15.9,4+8*\i) -- (13.9,4+8*\i);  
}
\end{tikzpicture}
}

\put(190,-10){(D)}
\put(180,0)
{
\begin{tikzpicture}[scale=0.225]
\draw[step=8cm,black] (0,0) grid (24,24);

\foreach \i in {0,...,3}{
    \foreach \j in {0,...,3}{
    \draw[black, fill=black] (8*\i,8*\j) circle (2ex);
    }
}

\foreach \i in {0,...,3}{
    \draw(8*\i,-1) node{$\i$}; 
    \draw(-1,8*\i) node{$\i$}; 
}
    \draw[-Stealth,blue,thick] (0.2,4) -- ++ (1,1); 
    \draw[-Stealth,blue,thick] (4,0.2) -- ++ (1,1); 
    \draw[-Stealth,blue,thick] (6.8,2.8) -- ++ (1,1); 
    \draw[-Stealth,blue,thick] (2.8,6.8) -- ++ (1,1); 

    \draw[-Stealth,blue,thick] (0.2,0.2) -- ++ (1,1);
    \draw[-Stealth,blue,thick] (0.2,7) -- ++ (1,1);
    \draw[-Stealth,blue,thick] (7,0.2) -- ++ (1,1);
    \draw[-Stealth,blue,thick] (6.8,6.8) -- ++ (1,1);
    \draw[-Stealth,blue,thick] (0.2,12) -- ++ (1,1); 
    \draw[-Stealth,blue,thick] (4,8.2) -- ++ (1,1); 
    \draw[-Stealth,blue,thick] (6.8,10.8) -- ++ (1,1); 
    \draw[-Stealth,blue,thick] (2.8,14.8) -- ++ (1,1); 

    \draw[-Stealth,blue,thick] (0.2,8.2) -- ++ (1,1);
    \draw[-Stealth,blue,thick] (0.2,15) -- ++ (1,1);
    \draw[-Stealth,blue,thick] (7,8.2) -- ++ (1,1);
    \draw[-Stealth,blue,thick] (6.8,14.8) -- ++ (1,1);
    \draw[-Stealth,blue,thick] (0.2,20) -- ++ (1,0); 
    \draw[-Stealth,blue,thick] (4,16.2) -- ++ (0,1); 
    \draw[-Stealth,blue,thick] (7.8,20) -- ++ (-1,0); 
    \draw[-Stealth,blue,thick] (4,23.8) -- ++ (0,-1);

    \draw[-Stealth,blue,thick] (0.2,16.2) -- ++ (1,1);
    \draw[-Stealth,blue,thick] (0.2,23.8) -- ++ (1,-1);
    \draw[-Stealth,blue,thick] (7.8,16.2) -- ++ (-1,1);
    \draw[-Stealth,blue,thick] (7.8,23.8) -- ++ (-1,-1);
    \draw[-Stealth,blue,thick] (8.2,4) -- ++ (1,1); 
    \draw[-Stealth,blue,thick] (12,0.2) -- ++ (1,1); 
    \draw[-Stealth,blue,thick] (15,3) -- ++ (0.8,0.8); 
    \draw[-Stealth,blue,thick] (11,7) -- ++ (0.8,0.8); 

    \draw[-Stealth,blue,thick] (8.2,0.2) -- ++ (1,1);
    \draw[-Stealth,blue,thick] (8.2,7) -- ++ (1,1);
    \draw[-Stealth,blue,thick] (15,0.2) -- ++ (1,1);
    \draw[-Stealth,blue,thick] (14.8,6.8) -- ++ (1,1);
    \draw[-Stealth,blue,thick] (8.2,12) -- ++ (1,1); 
    \draw[-Stealth,blue,thick] (12,8.2) -- ++ (1,1); 
    \draw[-Stealth,blue,thick] (15,11) -- ++ (0.8,0.8); 
    \draw[-Stealth,blue,thick] (11,15) -- ++ (0.8,0.8); 

    \draw[-Stealth,blue,thick] (8.2,8.2) -- ++ (1,1);
    \draw[-Stealth,blue,thick] (8.2,15) -- ++ (1,1);
    \draw[-Stealth,blue,thick] (15,8.2) -- ++ (1,1);
    \draw[-Stealth,blue,thick] (14.8,14.8) -- ++ (1,1);
    \draw[-Stealth,blue,thick] (12,16.2) -- ++ (-1,1); 
    \draw[-Stealth,blue,thick] (9.2,20) -- ++ (-1,0); 
    \draw[-Stealth,blue,thick] (12,23.8) -- ++ (-1,-1); 
    \draw[-Stealth,blue,thick] (15.8,20) -- ++ (-1,0);

    \draw[-Stealth,blue,thick] (9.2,16.2) -- ++ (-1,1);
    \draw[-Stealth,blue,thick] (9.2,23.8) -- ++ (-1,-1);
    \draw[-Stealth,blue,thick] (15.8,16.2) -- ++ (-1,1);
    \draw[-Stealth,blue,thick] (15.8,23.8) -- ++ (-1,-1);
    \draw[-Stealth,blue,thick] (20,0.2) -- ++ (0,1); 
    \draw[-Stealth,blue,thick] (16.2,4) -- ++ (1,0); 
    \draw[-Stealth,blue,thick] (20,7.8) -- ++ (0,-1); 
    \draw[-Stealth,blue,thick] (23.8,4) -- ++ (-1,0); 

    \draw[-Stealth,blue,thick] (16.2,0.2) -- ++ (1,1);
    \draw[-Stealth,blue,thick] (16.2,7.8) -- ++ (1,-1);
    \draw[-Stealth,blue,thick] (23.8,0.2) -- ++ (-1,1);
    \draw[-Stealth,blue,thick] (23.8,7.8) -- ++ (-1,-1);
    \draw[-Stealth,blue,thick] (20,9.2) -- ++ (0,-1); 
    \draw[-Stealth,blue,thick] (16.2,12) -- ++ (1,-1); 
    \draw[-Stealth,blue,thick] (20,15.8) -- ++ (0,-1); 
    \draw[-Stealth,blue,thick] (23.8,12) -- ++ (-1,-1); 

    \draw[-Stealth,blue,thick] (16.2,9.2) -- ++ (1,-1);
    \draw[-Stealth,blue,thick] (16.2,15.8) -- ++ (1,-1);
    \draw[-Stealth,blue,thick] (23.8,9.2) -- ++ (-1,-1);
    \draw[-Stealth,blue,thick] (23.8,15.8) -- ++ (-1,-1);    
    \draw[-Stealth,blue,thick] (21,17.2) -- ++ (-1,-1); 
    \draw[-Stealth,blue,thick] (17.2,21) -- ++ (-1,-1); 
    \draw[-Stealth,blue,thick] (20,23.8) -- ++ (-1,-1); 
    \draw[-Stealth,blue,thick] (23.8,20) -- ++ (-1,-1);

    \draw[-Stealth,blue,thick] (17.2,17.2) -- ++ (-1,-1);
    \draw[-Stealth,blue,thick] (17.2,23.8) -- ++ (-1,-1);
    \draw[-Stealth,blue,thick] (23.8,17.2) -- ++ (-1,-1);
    \draw[-Stealth,blue,thick] (23.8,23.8) -- ++ (-1,-1);
\end{tikzpicture}
}
\end{picture}
\caption{All four figures are geometric representations of $\cX(\I)$ where $\I = \prod_{n=1}^2\setof{0,1,2,3}$, i.e., $K(n)=2$ for $n=1,2$.
(A) Visual representation of a wall labeling $\omega\colon W(\cX) \to \setof{\pm 1}$.
(B) A vector representation of the wall labeling $\omega$.
(C) Rook field restricted to the walls, $\rook \colon W(\cX) \to \setof{0,\pm 1}^2$,  associated with wall labeling $\omega$.
(D) A vector representation of the rook field $\rook\colon TP(\cX) \to \setof{0,\pm 1}^N$.
}
\label{fig:wall_labeling}
\end{figure}

As the following example shows, to have a wall labeling it must be the case that for each pair of $n$-adjacent top cells, only the labels corresponding to the $n$-wall and the $\tilde{o}_\sigma(n)$-wall are allowed to change.

\begin{ex}
An example of a function $\omega\colon W(\cX)\to \setof{\pm 1}$ that is not a wall labeling is shown in Figure~\ref{fig:notWallLabeling}.
In particular, there is no local inducement map $\tilde{o}_{\sigma}\colon \setof{1,2} \to \setof{1,2}$ at the vertex $\sigma = \defcell{1}{1}{0}{0}$.
We prove this by contradiction.

Consider $\mu,\mu'\in \Top_\cX(\sigma)$ given by $\mu = \defcell{0}{1}{1}{1}$ and $\mu' = \defcell{1}{1}{1}{1}$.
Observe that $\mu$ and $\mu'$ are $1$-adjacent.

Assume that $\tilde{o}_{\sigma}(1) = 2$.
Let $\xi = \defcell{1}{1}{0}{1}$.
Then $(\xi,\mu), (\xi,\mu')\in W(\sigma)$ are $1$-walls.    
By Definition~\ref{def:wall_labeling}~(ii), if $\tilde{o}_{\sigma}$ is a local inducement map, then
\[
1 = \omega (\xi, \mu) = \omega (\xi, \mu') = -1,
\]
which is a contradiction. 

Therefore, if a local inducement map exists it must be the case that $\tilde{o}_{\sigma}(1) = 1$.
Definition~\ref{def:wall_labeling}~(i) can be applied with $n=1$ and $k=2$, which again leads to a contradiction.
\end{ex}

\begin{figure}
\centering
\begin{tikzpicture}[scale=0.3]
    \draw[step=8cm,black] (0,0) grid (16,16);
    \foreach \i in {0,...,2}{
        \foreach \j in {0,...,2}{
        \draw[black, fill=black] (8*\i,8*\j) circle (2ex);
        }
    }
    \foreach \i in {0,...,2}{
        \draw(-1,8*\i ) node{$\i$}; 
    }
    \foreach \i in {0,...,2}{
        \draw(8*\i,-1.1) node{$\i$}; 
    }
    \foreach \i in {0,...,1}{
      \draw[blue, thick] (4+8*\i,0.9) node{$1$}; 
      \draw[blue, thick] (4+8*\i,15.1) node{$-1$}; 
      \draw[blue, thick] (0.6, 4+8*\i) node{$1$}; 
      \draw[blue, thick] (15.0,4+8*\i) node{$-1$}; 
    }
    \foreach \i in {0,...,1}{
      \draw[blue, thick] (4+8*\i,6.9) node{$1$};
      \draw[blue, thick] (7,4+8*\i) node{$1$};
    }
        \draw[blue, thick] (4,8.9) node{$1$};
      \draw[blue, thick] (12,8.9) node{$-1$};
       \draw[blue, thick] (9,4) node{$1$};
       \draw[blue, thick] (9,12) node{$-1$};
\end{tikzpicture}
\caption{
An example of a function $\omega\colon W(\cX)\to \setof{\pm 1}$ that is not a wall labeling.}
\label{fig:notWallLabeling}
\end{figure}

\begin{defn}
\label{defn:dissipativewall}
Let $\cX$ be an $N$-dimensional cubical complex. A wall labeling $\omega \colon W(\cX) \to \setof{-1, 1}$ is \emph{strongly dissipative} if for every wall 
$(\xi,\mu) \in W(\cX)$ with $\xi \in \bbdy(\cX)$ (see Definition~\ref{def:boundary_prime}) we have
\[
\omega(\xi,\mu) = -p(\xi,\mu).
\]
\end{defn}
It is left to the reader to check that the wall labeling shown in Figure~\ref{fig:wall_labeling}(A) is strongly dissipative.

Given a wall labeling on an $N$-dimensional cubical complex $\cX$ we define a discrete vector field as follows.

\begin{defn}
\label{def:rookfield}
Let $\omega \colon W(\cX)\to \setof{\pm 1}$ be a wall labeling on an $N$-dimensional cubical complex $\cX$.
The associated \emph{rook field} $\rook\colon TP(\cX) \to \setof{0,\pm 1}^N$ is defined as follows. Given an $n$-wall, $(\xi, \mu) \in W(\cX)$, set
\begin{equation}
\label{eq:rookwalls}
\rook_k(\xi, \mu) :=
\begin{cases}
\omega(\xi, \mu), & \text{if}~ k = n \\
\omega(\mu^-_k, \mu), & \text{if}~ k \neq n ~\text{and}~ \omega(\mu^-_k, \mu) = \omega(\mu^+_k, \mu) \\
0, & \text{if}~ k \neq n ~\text{and}~ \omega(\mu^-_k, \mu) \neq \omega(\mu^+_k, \mu).
\end{cases}
\end{equation}
For $(\xi, \mu) \in TP(\cX) \setminus W(\cX)$, define
\begin{equation}
\label{eq:PhiSigmaKappa}
\rook_n(\xi, \mu) :=
\begin{cases}
\rook_n(\mu^-_n, \mu), & \text{if}~ n \in J_e(\xi) ~\text{and}~ \rook_n(\mu^-_n, \mu) = \rook_n(\mu^+_n, \mu) \\
0, & \text{if}~ n \in J_e(\xi) ~\text{and}~ \rook_n(\mu^-_n, \mu) \neq \rook_n(\mu^+_n, \mu) \\
\rook_n(\mu_n^*(\xi, \mu),\mu), & \text{if}~ n \in J_i(\xi) 
\end{cases}
\end{equation}
where $J_e(\xi)$ are the essential directions of $\xi$, $J_i(\xi)$ are the inessential directions of $\xi$ and $\mu_n^*$ is the $n$-wall given by Proposition~\ref{prop:mu*}.
\end{defn}

We state the following corollary of Proposition~\ref{prop:mu*} for future reference.

\begin{cor}
\label{cor:mu*}
Consider an $N$-dimensional cubical complex $\cX$. Let $\xi \preceq \xi' \prec \mu \in \cX$ with $\dim(\mu) = N$ and let $n \in J_i(\xi')$.
Then,
\[
\rook_n(\xi,\mu) = \rook_n(\xi',\mu) = \omega(\mu_n^*(\xi,\mu),\mu).
\]
\end{cor}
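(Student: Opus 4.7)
The plan is to reduce everything to the third case of \eqref{eq:PhiSigmaKappa} and then appeal to the uniqueness statement in Proposition~\ref{prop:mu*}.

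First, I would observe that since $n \in J_i(\xi')$, the third clause of the definition in \eqref{eq:PhiSigmaKappa} applies directly to the pair $(\xi',\mu)$, giving
\[
\rook_n(\xi',\mu) = \rook_n(\mu_n^*(\xi',\mu),\mu).
\]
Since $\mu_n^*(\xi',\mu)$ is by construction an $n$-wall of $\mu$, the defining clause \eqref{eq:rookwalls} (with $k=n$) identifies $\rook_n(\mu_n^*(\xi',\mu),\mu) = \omega(\mu_n^*(\xi',\mu),\mu)$.

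Next I would transfer the same reasoning to $(\xi,\mu)$. The key combinatorial step is to check that $n \in J_i(\xi)$: this follows immediately from $\xi \preceq \xi'$ together with Proposition~\ref{prop:JtauJsigma}(ii), which yields $J_i(\xi') \subseteq J_i(\xi)$. Hence the third clause of \eqref{eq:PhiSigmaKappa} again applies, and
\[
\rook_n(\xi,\mu) = \omega(\mu_n^*(\xi,\mu),\mu).
\]

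Finally, I need to reconcile the two $n$-walls $\mu_n^*(\xi,\mu)$ and $\mu_n^*(\xi',\mu)$. By definition $\mu_n^*(\xi',\mu)$ is an $n$-wall of $\mu$ satisfying $\xi' \preceq \mu_n^*(\xi',\mu)$; combining this with $\xi \preceq \xi'$ yields $\xi \preceq \mu_n^*(\xi',\mu) \prec \mu$. Since Proposition~\ref{prop:mu*} asserts the \emph{uniqueness} of an $n$-wall of $\mu$ containing $\xi$ as a face, we conclude $\mu_n^*(\xi,\mu) = \mu_n^*(\xi',\mu)$, which closes the chain of equalities. No step here looks like a genuine obstacle; the only mild subtlety is remembering to use Proposition~\ref{prop:JtauJsigma}(ii) to promote $n \in J_i(\xi')$ to $n \in J_i(\xi)$ before invoking the correct branch of \eqref{eq:PhiSigmaKappa}.
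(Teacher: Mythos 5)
Your proof is correct and is the natural route signaled by the paper's labeling of this as a corollary of Proposition~\ref{prop:mu*}: unwind $\rook_n$ via the inessential-direction branch of the definition, promote $n\in J_i(\xi')$ to $n\in J_i(\xi)$ by Proposition~\ref{prop:JtauJsigma}, and merge the two choices of $\mu_n^*$ by the uniqueness assertion of Proposition~\ref{prop:mu*}. The one pedantic point is that when $\xi'\in\cX^{(N-1)}$ the pair $(\xi',\mu)$ is already an $n$-wall, so \eqref{eq:PhiSigmaKappa} does not apply and one should appeal to the $k=n$ clause of \eqref{eq:rookwalls} directly; but in that case $\mu_n^*(\xi',\mu)=\xi'$, the claimed identity is immediate, and the rest of your argument is unaffected.
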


Figure~\ref{fig:wall_labeling}(C) shows a rook field restricted to the walls, i.e., $\rook\colon W(\cX)\to \setof{0,\pm 1}^N$ and Figure~\ref{fig:wall_labeling}(D) provides a vector representation of the associated rook field $\rook\colon TP(\cX) \to \setof{0,\pm 1}^2$.

\begin{rem}
Figure~\ref{fig:wall_labeling}(d) suggests a phase plane diagram for a $2$-dimensional ODE.
Hopefully, this encourages the reader to believe that rook fields can be related to continuous dynamics.
However, conceptually it is misleading to think of Figure~\ref{fig:wall_labeling}(D) as providing, in any traditional sense, an approximation of a continuous vector field.
In this Part we  use the rook field to characterize dynamics in terms of Morse graphs and Conley complexes.
It is only in Part~\ref{part:III} that we identify a class of differential equations for which the combinatorial/homological classification is valid. 

As is made clear via Definition~\ref{def:rookfield}, Figure~\ref{fig:wall_labeling}(D) contains no more information than that provided by Figure~\ref{fig:wall_labeling}(A); it is just a re-organization of the tabular information of the wall labeling.
As such, the rook field is merely a convenient organization of the data provided by the wall labeling.
\end{rem}

\begin{lem}
\label{rem:o_sigmas_agree}
Let $\cX$ be an $N$-dimensional cubical complex.
Let $\mu, \mu' \in \cX^{(N)}$ be $n$-adjacent with shared wall $\xi$.
Consider vertices $\sigma,\sigma'\in \cX^{(0)}$ such that $\sigma, \sigma' \preceq \xi$. 
If $\rook_k(\xi, \mu) \neq \rook_k(\xi, \mu')$, then $\tilde{o}_\sigma(n) = \tilde{o}_{\sigma'}(n) = k$. 
\end{lem}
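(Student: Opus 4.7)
The plan is to unpack Definition~\ref{def:rookfield} and apply the contrapositives of conditions (i) and (ii) in Definition~\ref{def:wall_labeling}. Observe first that since $\sigma,\sigma' \preceq \xi$ and $\xi \prec \mu,\mu'$, both top cells lie in $\Top_\cX(\sigma)\cap \Top_\cX(\sigma')$, so the two wall-labeling conditions are available at each of the two vertices. I would treat $\sigma$ explicitly; the same argument applies verbatim to $\sigma'$, giving $\tilde{o}_{\sigma'}(n)=k$.

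I would split on whether $k=n$. If $k=n$, then by \eqref{eq:rookwalls} one has $\rook_n(\xi,\mu)=\omega(\xi,\mu)$ and $\rook_n(\xi,\mu')=\omega(\xi,\mu')$. The contrapositive of Definition~\ref{def:wall_labeling}(ii) applied to the $n$-walls $(\xi,\mu),(\xi,\mu')\in W(\sigma)$ states that $\omega(\xi,\mu)\neq \omega(\xi,\mu')$ forces $n=\tilde{o}_\sigma(n)$. Thus $\tilde{o}_\sigma(n)=n=k$.

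If $k\neq n$, I would argue by contradiction: suppose $k\neq \tilde{o}_\sigma(n)$. Then Definition~\ref{def:wall_labeling}(i) applied to the $n$-adjacent pair $\mu,\mu'\in \Top_\cX(\sigma)$ yields
\[
\omega(\mu^-_k,\mu)=\omega(\mu'^-_k,\mu')\quad\text{and}\quad \omega(\mu^+_k,\mu)=\omega(\mu'^+_k,\mu').
\]
Feeding these identifications into the three branches of \eqref{eq:rookwalls} for the $k$-th coordinate (with $k\neq n$), the comparison $\omega(\mu^-_k,\mu)=\omega(\mu^+_k,\mu)$ is transported to $\omega(\mu'^-_k,\mu')=\omega(\mu'^+_k,\mu')$, so the same branch is selected on both sides and the two expressions take the same value. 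Hence $\rook_k(\xi,\mu)=\rook_k(\xi,\mu')$, contradicting the hypothesis. Thus $k=\tilde{o}_\sigma(n)$.

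The argument contains no genuine obstacle; the only point that demands care is the bookkeeping in the $k\neq n$ case, where one must verify that the pairwise equality of wall labels between $\mu$ and $\mu'$ really does force the \emph{same} of the three branches of \eqref{eq:rookwalls} to be selected on both sides. Once this is observed, the conclusion is immediate.
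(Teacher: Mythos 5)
Your proof is correct and follows the same two-case structure as the paper's: split on $k=n$ versus $k\neq n$, and apply the contrapositives of Definition~\ref{def:wall_labeling}(ii) and (i), respectively. In the $k\neq n$ case your argument is actually slightly tighter. The paper asserts ``without loss of generality'' that one of the two rook-field values is $0$ and the other is $\omega(\mu^-_k,\mu)$; this overlooks the subcase where both values are nonzero and of opposite sign (both pairs of $k$-walls agree internally, but the common values differ between $\mu$ and $\mu'$), which would require a separate sentence to dispatch. Your phrasing via the contrapositive of Definition~\ref{def:wall_labeling}(i) --- observing that the pairwise equalities $\omega(\mu^-_k,\mu)=\omega(\mu'^-_k,\mu')$ and $\omega(\mu^+_k,\mu)=\omega(\mu'^+_k,\mu')$ force the same branch of \eqref{eq:rookwalls} to be selected for $\mu$ and for $\mu'$, and then force the selected values to coincide --- covers all subcases uniformly, so no case split is needed at that step.
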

\begin{proof}
Since $(\xi, \mu)$ and $(\xi, \mu')$ are $n$-walls, it follows from Definition~\ref{def:rookfield} that
$\rook_k(\xi, \mu)$ and $\rook_k(\xi, \mu')$ are given by \eqref{eq:rookwalls}. There are two cases to consider. 

If $k = n$, then
\[
\rook_k(\xi,\mu)=\omega(\xi,\mu)
\quad\text{and}\quad  \rook_k(\xi,\mu')=\omega(\xi,\mu').
\]
Hence, $\rook_k(\xi, \mu) \neq \rook_k(\xi, \mu')$ is equivalent to $\omega(\xi,\mu)\neq \omega(\xi,\mu')$.
Thus, by Definition~\ref{def:wall_labeling}~(ii) it follows that $\tilde{o}_\sigma(n) = n$ for any $\sigma \in \cX^{(0)}$ such that $\sigma \preceq \xi$.

Now assume that $k \neq n$.
Since $\rook_k(\xi, \mu) \neq \rook_k(\xi, \mu')$ from \eqref{eq:rookwalls} we can assume, without loss of generality, that $\rook_k(\xi, \mu) = \omega(\mu^-_k, \mu)$ and $\rook_k(\xi, \mu') = 0$. This implies that $\omega(\mu^-_k, \mu) = \omega(\mu^+_k, \mu)$ and $\omega(\mu'^-_k, \mu') \neq \omega(\mu'^+_k, \mu')$. 
Hence,  by Definition~\ref{def:wall_labeling}(i) $\tilde{o}_\sigma(n) = k$ for any $\sigma \in \cX^{(0)}$ such that $\sigma \preceq \xi$.
\end{proof}

\begin{defn}
\label{def:active_regulation}
Let $\xi \in \cX$ and $k \in \setof{1,\ldots,N}$. A direction $n \in J_i(\xi)$ \emph{actively regulates} $k$ at $\xi$ if there exist $n$-walls $(\xi_n,\mu), (\xi_n,\mu') \in W(\xi)$ such that
\[
\rook_k(\xi_n, \mu) \neq \rook_k(\xi_n, \mu').
\]
Define
\[
\activeset(\xi) := \setof{n\in J_i(\xi)\mid \text{$n$ actively regulates $k$ for some $k\in \setof{1,\dots,N}$}}.
\]
\end{defn}

The proof of the following proposition follows directly from the definition of active regulation and is left to the reader.
\begin{prop}
\label{prop:act_regulation_faces}
Let $\xi \preceq \xi' \in \cX$. If $n$ actively regulates $k$ at $\xi'$, then $n$ actively regulates $k$ at $\xi$.
\end{prop}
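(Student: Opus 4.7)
The plan is to observe that active regulation is a monotonic property along the face relation, and that this monotonicity follows from two essentially immediate set-theoretic observations. Since the definition of active regulation requires only that $n \in J_i(\xi)$ and the existence of two $n$-walls in $W(\xi)$ on which $\rook_k$ disagrees, I aim to show both the direction condition and the wall condition pass from $\xi'$ down to $\xi$.

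First, I would verify that $n \in J_i(\xi)$ whenever $n \in J_i(\xi')$. This is a direct appeal to Proposition~\ref{prop:JtauJsigma}(ii), which gives the containment $J_i(\xi') \subseteq J_i(\xi)$ from the hypothesis $\xi \preceq \xi'$. So the ``inessential direction'' requirement in Definition~\ref{def:active_regulation} is inherited for free.

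Second, I would prove the containment $W(\xi') \subseteq W(\xi)$ straight from the definition of $W(\cdot)$: any wall $(\eta, \mu) \in W(\xi')$ satisfies $\xi' \preceq \eta$ by definition, and combining this with $\xi \preceq \xi'$ via transitivity of $\preceq$ gives $\xi \preceq \eta$, hence $(\eta, \mu) \in W(\xi)$. In particular the two $n$-walls $(\xi'_n, \mu)$ and $(\xi'_n, \mu')$ that witness active regulation of $k$ at $\xi'$ automatically lie in $W(\xi)$; being $n$-walls is an intrinsic property of the pair $(\xi'_n, \mu)$ (it depends only on $J_i(\xi'_n) = \{n\}$), so they remain $n$-walls when viewed inside $W(\xi)$.

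Finally, I would conclude by pointing out that the rook-field values $\rook_k(\xi'_n, \mu)$ and $\rook_k(\xi'_n, \mu')$ are unchanged — they are features of the arguments themselves and not of the ambient cell — so the inequality $\rook_k(\xi'_n, \mu) \neq \rook_k(\xi'_n, \mu')$ persists and witnesses that $n$ actively regulates $k$ at $\xi$. There is really no main obstacle here: the proposition essentially records that Definition~\ref{def:active_regulation} is phrased in terms of data that is monotone along $\preceq$, and the proof is a transitivity argument combined with Proposition~\ref{prop:JtauJsigma}(ii).
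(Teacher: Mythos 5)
Your proof is correct and matches the approach the paper clearly intends (the paper itself declares that the proposition ``follows directly from the definition of active regulation'' and leaves it to the reader). The two observations you isolate — $J_i(\xi') \subseteq J_i(\xi)$ from Proposition~\ref{prop:JtauJsigma}(ii) and $W(\xi') \subseteq W(\xi)$ by transitivity of $\preceq$ — together with the fact that being an $n$-wall and the value $\rook_k(\cdot,\cdot)$ are intrinsic to the pair, are exactly what is needed, and you handle the shared $(N-1)$-cell $\xi'_n$ correctly (the same pair of walls serves as the witness at $\xi$).
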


\begin{ex}
\label{ex:activeRegulation}
Consider the rook field $\rook$ shown in Figure~\ref{fig:wall_labeling}. Let $\xi = \defcell{1}{2}{0}{0}$. Then $n = 2$ actively regulates $k = 1$ at $\xi$ since for the following $2$-walls
\[
\left(\defcell{0}{2}{1}{0},\defcell{0}{2}{1}{1}\right)
\quad \text{and} \quad 
\left(\defcell{0}{2}{1}{0},\defcell{0}{1}{1}{1}\right)
\]
we have that $\rook_1$ disagrees. It is left to the reader to check that $\activeset(\xi) = \setof{2}$.
\end{ex}

\begin{ex}
\label{ex:Actempty}
Consider the rook field $\rook$ shown in Figure~\ref{fig:wall_labeling}. Let $\xi = \defcell{1}{1}{0}{0}$.
Then, $\rook_k(\xi,\mu) = 1$ for $k = 0, 1$ and for all $\mu\in \Top_\cX(\xi)$. Therefore $\activeset(\xi)=\emptyset$.
\end{ex}

Notice that given $\xi \in \cX$ and $n \in \activeset(\xi)$ it follows from Lemma~\ref{rem:o_sigmas_agree} that $\tilde{o}_\sigma(n) = \tilde{o}_{\sigma'}(n)$ for any $\sigma, \sigma' \in \cX^{(0)}$ with $\sigma, \sigma' \preceq \xi$. Therefore, the following map is well-defined.

\begin{defn}
\label{def:regulationmap}
Given $\xi \in \cX$, we define the \emph{regulation map} $\rmap{\xi}$ at $\xi$ as $\tilde{o}_\sigma$ restricted to $\activeset(\xi)$, that is,
\begin{align*}
\rmap{\xi} : \activeset(\xi) & \to \setof{1,\dots,N} \\
n & \mapsto \tilde{o}_\sigma(n)
\end{align*}
for any vertex $\sigma \in \cX^{(0)}$ with $\sigma \preceq \xi$.
\end{defn}

\section{Properties of rook fields}
\label{sec:rookproperties}

As is clear from \eqref{eq:PhiSigmaKappa}, for a fixed $\xi \in \cX$ the values of $ \rook_n(\xi, \mu)$ may depend on $\mu \in \Top_\cX(\xi)$. 
To keep track of these values we use the following notation:

\begin{equation}
\label{eq:Rn}
R_n(\xi) := \setof{\rook_n(\xi ,\mu) \mid \mu \in \Top_\cX(\xi)}.
\end{equation}

We leave it to the reader to check the following proposition.
\begin{prop}\label{prop:face_R_n}
If $\xi,\xi'\in\cX$ and $\xi\preceq\xi'$, then $R_n(\xi')\subset R_n(\xi)$ for all $n\in\setof{1, \ldots, N}$.
\end{prop}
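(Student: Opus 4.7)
The plan is to fix an arbitrary $\mu' \in \Top_\cX(\xi')$ and realize the value $\rook_n(\xi', \mu')$ inside $R_n(\xi)$. Because $\xi \preceq \xi' \preceq \mu'$ automatically puts $\mu'$ in $\Top_\cX(\xi)$, the natural first attempt is to take $\mu = \mu'$ and compare $\rook_n(\xi, \mu')$ with $\rook_n(\xi', \mu')$ directly.

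Using Proposition~\ref{prop:JtauJsigma}, which gives $J_e(\xi) \subseteq J_e(\xi')$ and $J_i(\xi') \subseteq J_i(\xi)$, I would split the comparison along the disjoint decomposition $\setof{1,\ldots,N} = J_e(\xi) \cup \Ex(\xi,\xi') \cup J_i(\xi')$. If $n \in J_e(\xi)$, then $n \in J_e(\xi')$ as well, and both $\rook_n(\xi, \mu')$ and $\rook_n(\xi', \mu')$ are computed from clauses~1--2 of \eqref{eq:PhiSigmaKappa} using the same data $\rook_n(\mu'^-_n, \mu')$ and $\rook_n(\mu'^+_n, \mu')$; they therefore coincide. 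If $n \in J_i(\xi')$, then $n \in J_i(\xi)$, and Corollary~\ref{cor:mu*} applied along $\xi \preceq \xi' \prec \mu'$ yields $\rook_n(\xi, \mu') = \rook_n(\xi', \mu') = \omega(\mu^*_n(\xi, \mu'), \mu')$.

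The substantive case is $n \in \Ex(\xi,\xi')$, so $n \in J_i(\xi)$ while $n \in J_e(\xi')$. Then $\rook_n(\xi, \mu') = \omega(\mu^*_n(\xi, \mu'), \mu') \in \setof{\pm 1}$, whereas $\rook_n(\xi',\mu')$ is either $\omega(\mu'^-_n, \mu') = \omega(\mu'^+_n, \mu')$ (the agreeing sub-case) or $0$ (the disagreeing sub-case). The agreeing sub-case is immediate once one observes, via Proposition~\ref{prop:mu*}, that $\mu^*_n(\xi, \mu')$ is forced to be either $\mu'^-_n$ or $\mu'^+_n$, so $\rook_n(\xi, \mu')$ equals the common value.

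The main obstacle is the disagreeing sub-case: $\rook_n(\xi', \mu') = 0$, while every $\mu \in \Top_\cX(\xi)$ yields $\rook_n(\xi, \mu) = \omega(\mu^*_n(\xi, \mu), \mu) \in \setof{\pm 1}$ through clause~3 of \eqref{eq:PhiSigmaKappa}, so the value $0$ cannot be witnessed by this third clause alone. Closing the argument therefore requires the wall-labeling axioms (Definition~\ref{def:wall_labeling}) to rule out, under the standing hypothesis $\xi \preceq \xi'$ with $n \in \Ex(\xi,\xi')$, the disagreement $\omega(\mu'^-_n, \mu') \neq \omega(\mu'^+_n, \mu')$. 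The natural tools are the local inducement map $\tilde{o}_\sigma$ at some vertex $\sigma \preceq \xi$ together with Lemma~\ref{rem:o_sigmas_agree}, which should force the two $n$-walls of $\mu'$ to carry the same label and hence push us back into the agreeing sub-case. Making this exclusion rigorous is where the proof becomes delicate and is the step I expect to absorb the bulk of the effort.
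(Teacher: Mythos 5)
Your decomposition into $n \in J_e(\xi)$, $n \in J_i(\xi')$, and $n \in \Ex(\xi,\xi')$ is the right one, and your handling of the first two cases and of the agreeing sub-case of the third is correct. But the step you flag as the ``obstacle'' cannot be closed: Definition~\ref{def:wall_labeling} only constrains $\omega$ across walls of $n$-adjacent top cells (axiom~(i)) or across the two top cells sharing a wall (axiom~(ii)); it imposes no constraint whatsoever between the two opposite $n$-walls $\mu'^-_n$, $\mu'^+_n$ of a single top cell $\mu'$. In fact $\omega(\mu'^-_n,\mu') \neq \omega(\mu'^+_n,\mu')$ is precisely the phenomenon that produces opaque directions and equilibrium cells, and whenever it occurs with $n \in \Ex(\xi,\xi')$ one gets $0 \in R_n(\xi')$ while $R_n(\xi) \subseteq \{\pm 1\}$ by Proposition~\ref{prop:Direction-Decomposition}, item~(ii), since $n \in J_i(\xi)$. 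So the disagreeing sub-case is not a gap you need to patch; it is a genuine counterexample.

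Concretely, in the wall labeling of Figure~\ref{fig:wall_labeling}, take $\xi = \defcell{2}{0}{0}{0}$ and $\xi' = \defcell{2}{0}{1}{0}$, so $\xi \preceq \xi'$ and $\Ex(\xi,\xi') = \{1\}$. The only top cell above $\xi'$ is $\mu' = \defcell{2}{0}{1}{1}$, whose left and right $1$-walls carry labels $+1$ and $-1$, so $\rook_1(\xi',\mu') = 0$ and $R_1(\xi') = \{0\}$, whereas a short computation over $\Top_\cX(\xi) = \setof{\defcell{1}{0}{1}{1},\defcell{2}{0}{1}{1}}$ gives $R_1(\xi) = \{1\}$; thus $R_1(\xi') \not\subset R_1(\xi)$. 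The containment \emph{does} hold for $n \in J_e(\xi)$ and for $n \in J_i(\xi')$, which are exactly the cases you settled; and the single place the manuscript invokes this proposition, inside the proof of Proposition~\ref{prop:back_subset}, applies it with $n \in J_i(\xi'_1)$, where your Case~2 (equivalently, a direct appeal to Corollary~\ref{cor:mu*}) already suffices. Your argument therefore proves all of the statement that is true and all of it that is used; do not spend effort trying to rescue the case $n \in \Ex(\xi,\xi')$.
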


\begin{defn}
\label{def:namedirections}
Let $\xi\in \cX$.
The  \emph{gradient}, \emph{neutral}, and \emph{opaque directions} of  $\xi$ are
\begin{align*}
G(\xi) &:= \setof{n\mid R_n(\xi)=\setof{1}\text{ or }  R_n(\xi)=\setof{-1}}, \\
N(\xi) & := \setof{n\mid 0 \in R_n(\xi)},\ \text{and} \\
O(\xi) &:= \setof{n\mid R_n(\xi) = \setof{\pm 1}},
\end{align*}
respectively. 
\end{defn}
 
\begin{prop}
\label{prop:Direction-Decomposition}
Given $\xi\in \cX$, the sets $G(\xi)$, $N(\xi)$, and  $O(\xi)$ are mutually disjoint.
Furthermore,
\begin{enumerate}
\item[(i)] $G(\xi) \cup N(\xi)\cup O(\xi) = \setof{1,\ldots, N}$, and
\item[(ii)] $J_i(\xi) \subseteq O(\xi) \cup G(\xi)$.
\end{enumerate}
\end{prop}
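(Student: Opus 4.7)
The plan is to unpack the three definitions in Definition~\ref{def:namedirections} and verify disjointness case-by-case, then reduce both (i) and (ii) to purely elementary observations about the set $R_n(\xi)$.

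For mutual disjointness, I would argue directly from the definitions. If $n \in G(\xi)$, then $R_n(\xi)$ is either $\{1\}$ or $\{-1\}$, so it is a singleton with $0 \notin R_n(\xi)$; this immediately rules out $n \in N(\xi)$ and $n \in O(\xi)$. Similarly, $N(\xi)$ is characterized by $0 \in R_n(\xi)$, whereas $O(\xi)$ requires $R_n(\xi) = \{-1, 1\}$, which excludes $0$. So the three sets are pairwise disjoint with no serious work required.

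For part (i), I would note that $\rook_n(\xi,\mu) \in \{-1, 0, 1\}$ by Definition~\ref{def:rookfield}, and that $\Top_\cX(\xi) \neq \emptyset$, so $R_n(\xi)$ is a non-empty subset of $\{-1, 0, 1\}$. Enumerating the seven possibilities: $\{1\}$ and $\{-1\}$ lie in $G(\xi)$, the set $\{-1, 1\}$ lies in $O(\xi)$, and each of $\{0\}$, $\{0, 1\}$, $\{-1, 0\}$, $\{-1, 0, 1\}$ contains $0$ and thus lies in $N(\xi)$. Hence every $n \in \{1, \ldots, N\}$ belongs to $G(\xi) \cup N(\xi) \cup O(\xi)$.

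For part (ii), the key input is Corollary~\ref{cor:mu*}: for $n \in J_i(\xi)$ and any $\mu \in \Top_\cX(\xi)$, one has $\rook_n(\xi,\mu) = \omega(\mu_n^*(\xi,\mu),\mu)$. Since the wall labeling $\omega$ takes values in $\{\pm 1\}$ by Definition~\ref{def:wall_labeling}, this forces $\rook_n(\xi, \mu) \in \{-1, 1\}$ for every $\mu \in \Top_\cX(\xi)$, so $0 \notin R_n(\xi)$, i.e., $n \notin N(\xi)$. Combined with (i), this yields $n \in G(\xi) \cup O(\xi)$. The only mild subtlety is making sure one invokes Corollary~\ref{cor:mu*} in the form covering arbitrary $\xi$ rather than only $\xi \in \cX^{(N-1)}$; but the corollary is stated for any $\xi' \prec \mu$ with $n \in J_i(\xi')$, which is precisely our setting. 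Neither (i) nor (ii) poses a real obstacle — everything follows from a direct reading of the definitions plus one appeal to Corollary~\ref{cor:mu*}.
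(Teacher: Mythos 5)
Your proof is correct and follows essentially the same approach as the paper's. The only cosmetic difference is in item (ii): the paper reads directly from Definition~\ref{def:rookfield} that $\rook_n(\xi,\mu)=0$ forces $n\in J_e(\xi)$, concluding $J_i(\xi)\cap N(\xi)=\emptyset$, whereas you reach the equivalent contrapositive --- $n\in J_i(\xi)$ forces $\rook_n(\xi,\mu)\in\{\pm 1\}$ --- by routing through Corollary~\ref{cor:mu*}; both rest on the same observation about the rook field definition.
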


\begin{proof}
The given sets are clearly mutually disjoint. To see that $G(\xi) \cup N(\xi)\cup O(\xi) = \setof{1,\ldots, N}$ observe that for any $n \in \setof{1,\dots,N}$, there are at most four possibilities for $R_n(\xi)$.
If $0 \in R_n(\xi)$, then $n\in N(\xi)$.
If $R_n(\xi)=\{\pm 1\}$, then $n\in O(\xi)$.
Finally, if $R_n(\xi) = \{1\}$ or $R_n(\xi)=\{-1\}$, then $n\in G(\xi)$. 

To prove the second claim, recall that by Definition~\ref{def:rookfield}, if $\rook_n(\xi, \mu) = 0$ then $n \in J_e(\xi)$. Thus $J_i(\xi) \cap N(\xi) = \emptyset$ and so by item (i) it follows that $J_i(\xi) \subseteq  O(\xi) \cup G(\xi)$.
\end{proof}
Note that Proposition~\ref{prop:Direction-Decomposition} does not exclude that a gradient direction might be essential or inessential. To address that, we introduce the following notation.
\begin{defn}
\label{defn:gradient-inessential-essential}
Let $\xi \in \cX$. The \emph{gradient essential} and \emph{gradient inessential directions} of $\xi$ are
\[
G_e(\xi) \coloneqq G(\xi) \cap J_e(\xi) \quad \text{and} \quad G_i(\xi) \coloneqq G(\xi) \cap J_i(\xi),
\]
respectively. Similarly, the \emph{opaque essential} and \emph{opaque inessential directions} of $\xi$ are respectively
\[
O_e(\xi) \coloneqq O(\xi) \cap J_e(\xi) \quad \text{and} \quad O_i(\xi) \coloneqq O(\xi) \cap J_i(\xi).
\]
\end{defn}
\begin{lem}
\label{lem:gradientsubset}
Let $\xi \preceq \xi'$. If $n \in J_e(\xi) \cup J_i(\xi')$ and $n \in G(\xi)$, then $n \in G(\xi')$.
\end{lem}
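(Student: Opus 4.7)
The plan is to prove the stronger claim that $\rook_n(\xi,\mu) = \rook_n(\xi',\mu)$ for every $\mu \in \Top_\cX(\xi')$. Granting this, since $\Top_\cX(\xi') \subseteq \Top_\cX(\xi)$ and $\Top_\cX(\xi')$ is nonempty, one obtains $R_n(\xi') \subseteq R_n(\xi)$; as $R_n(\xi) = \{c\}$ for some $c \in \{\pm 1\}$ by hypothesis $n \in G(\xi)$, this forces $R_n(\xi') = \{c\}$, and hence $n \in G(\xi')$ by Definition~\ref{def:namedirections}.

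To establish the coincidence $\rook_n(\xi,\mu) = \rook_n(\xi',\mu)$, I will split on the disjuncts of the hypothesis. First, if $n \in J_e(\xi)$, then Proposition~\ref{prop:JtauJsigma}(i) gives $n \in J_e(\xi')$, and the first two clauses of \eqref{eq:PhiSigmaKappa} show that $\rook_n(\xi,\mu)$ and $\rook_n(\xi',\mu)$ are both governed by the same rule, namely equal to $\rook_n(\mu_n^-,\mu)$ when $\rook_n(\mu_n^-,\mu) = \rook_n(\mu_n^+,\mu)$ and to $0$ otherwise; neither value depends on the face through which we probe $\mu$. Second, if $n \in J_i(\xi')$, then Proposition~\ref{prop:JtauJsigma}(ii) gives $n \in J_i(\xi)$ as well. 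For any $\mu \in \Top_\cX(\xi')$, the equality $\xi' = \mu$ would force $J_i(\xi') = J_i(\mu) = \emptyset$, contradicting $n \in J_i(\xi')$; hence $\xi \preceq \xi' \prec \mu$, and Corollary~\ref{cor:mu*} directly delivers $\rook_n(\xi,\mu) = \rook_n(\xi',\mu) = \omega(\mu_n^*(\xi,\mu),\mu)$.

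I do not anticipate any serious obstacle; the result is essentially a bookkeeping consequence of how \eqref{eq:PhiSigmaKappa} is structured together with the monotonicity of $J_e$ and $J_i$ under the face relation. The conceptual point worth emphasizing is that the hypothesis $n \in J_e(\xi) \cup J_i(\xi')$ is precisely the complement of the extension condition $n \in J_i(\xi) \cap J_e(\xi') = \Ex(\xi,\xi')$ of Definition~\ref{defn:Ex}, and the extension directions are exactly those along which the wall-formula for $\rook_n(\xi,\cdot)$ gets replaced by the agreement-test formula at $\xi'$, producing the only mechanism by which the two values could disagree. Excluding these directions in the hypothesis is exactly what makes the argument go through.
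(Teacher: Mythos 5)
Your proof is correct and follows essentially the same two-case structure as the paper's: split on $n \in J_e(\xi)$ versus $n \in J_i(\xi')$, and in the second case invoke Corollary~\ref{cor:mu*}. The one refinement is that in the first case you observe that the agreement-test formula in \eqref{eq:PhiSigmaKappa} depends only on $\mu$ and not on the probing face, so $\rook_n(\xi,\mu) = \rook_n(\xi',\mu)$ holds unconditionally and the gradient hypothesis is only used at the very end to conclude $R_n(\xi') = R_n(\xi) = \{c\}$; the paper instead uses $n \in G(\xi)$ directly at that stage to force $\rook_n(\mu_n^-,\mu) = \rook_n(\mu_n^+,\mu) = r_n$, reaching the same conclusion by a slightly more hypothesis-dependent route.
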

\begin{proof}
By definition, $n \in G(\xi)$ implies that $R_n(\xi) = \setof{r_n}$ with $r_n\neq 0$. Hence we have
\[
\rook_n(\xi, \mu) = r_n, \quad \forall \mu \in \Top_\cX(\xi).
\]

If $n \in J_e(\xi)$, Definition~\ref{def:rookfield} implies that
\[
\rook_n(\mu^-_n, \mu) = \rook_n(\mu^+_n, \mu) = \rook_n(\xi, \mu) = r_n, \quad \forall \mu \in \Top_\cX(\xi).
\]
Since $\Top_\cX(\xi') \subseteq \Top_\cX(\xi)$, the equality holds when restricted to $\Top_\cX(\xi')$, i.e., 
\[
\rook_n(\mu^-_n, \mu) = \rook_n(\mu^+_n, \mu) = r_n, \quad \forall \mu \in \Top_\cX(\xi'),
\]
thus $\rook_n(\xi', \mu) = r_n$ for all $\mu \in \Top_\cX(\xi')$. This implies that $R_n(\xi') = \{ r_n \}$ and therefore that $n \in G(\xi')$.

If $n \in J_i(\xi')$, Corollary~\ref{cor:mu*} implies that
\[
\rook_n(\xi', \mu) = \rook_n(\xi, \mu), \quad \forall \mu \in \Top_\cX(\xi').
\]
Hence $\rook_n(\xi', \mu) = r_n$ for all $\mu \in \Top_\cX(\xi')$, which imply that $n \in G(\xi')$.
\end{proof}

\begin{prop}
\label{prop:GradPsiIndependent}
Let $\xi \preceq \xi'$. 
If $n \in G(\xi')$,
then $\rook_n(\xi', \mu)$ and $\rook_n(\xi, \mu)$ are independent of $\mu \in \Top_\cX(\xi')$.
\end{prop}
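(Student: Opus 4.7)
The plan is to observe that the first half of the conclusion is immediate from the definitions: by the hypothesis $n\in G(\xi')$ we have $R_n(\xi') = \setof{r_n}$ for some $r_n\in \setof{\pm 1}$, and hence $\rook_n(\xi',\mu)=r_n$ for every $\mu\in \Top_\cX(\xi')$. The content of the proposition therefore reduces to establishing the same equality at the smaller cell $\xi$. I would do this by a clean case analysis on the status of the direction $n$ at the cells $\xi$ and $\xi'$, using Definition~\ref{def:rookfield}, Corollary~\ref{cor:mu*}, and Proposition~\ref{prop:JtauJsigma} (in particular the inclusion $J_i(\xi')\subseteq J_i(\xi)$).

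First I would dispose of the easy case $n\in J_i(\xi')$. Then $n\in J_i(\xi)$ as well, so Corollary~\ref{cor:mu*} applied to the chain $\xi\preceq \xi'\prec \mu$ gives $\rook_n(\xi,\mu)=\rook_n(\xi',\mu)=r_n$, independent of $\mu\in \Top_\cX(\xi')$.

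The remaining case is $n\in J_e(\xi')$. Here I would exploit the fact that, for each $\mu\in \Top_\cX(\xi')$, the value $\rook_n(\xi',\mu)$ is computed by the first two bullets of \eqref{eq:PhiSigmaKappa}: it equals the common value $\omega(\mu_n^-,\mu)=\omega(\mu_n^+,\mu)$ if they agree, and $0$ otherwise. Since the conclusion $\rook_n(\xi',\mu)=r_n\neq 0$ rules out disagreement, I obtain
\[
\omega(\mu_n^-,\mu)=\omega(\mu_n^+,\mu)=r_n\quad\text{for every }\mu\in \Top_\cX(\xi').
\]
From this identity the desired equality at $\xi$ follows directly in both subcases: if $n\in J_e(\xi)$, applying the same first bullet of \eqref{eq:PhiSigmaKappa} to $(\xi,\mu)$ yields $\rook_n(\xi,\mu)=r_n$; if instead $n\in J_i(\xi)$ (so $n\in \Ex(\xi,\xi')$), then the third bullet gives $\rook_n(\xi,\mu)=\omega(\mu_n^*(\xi,\mu),\mu)$, and since $\mu_n^*(\xi,\mu)$ is one of the two $n$-walls of $\mu$, this value is again $r_n$.

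I do not expect a serious obstacle here; the only subtlety is to keep straight, in the case $n\in J_i(\xi)\cap J_e(\xi')$, that $\mu_n^*(\xi,\mu)$ may depend on $\mu$ but its $\omega$-value does not, which is exactly what the $\rook_n(\xi',\mu)=r_n\neq 0$ hypothesis forces via the first branch of \eqref{eq:PhiSigmaKappa}. Assembling the three cases yields a uniform value $r_n$ for $\rook_n(\xi,\mu)$ across all $\mu\in \Top_\cX(\xi')$, completing the proof.
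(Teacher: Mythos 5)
Your proof is correct and follows essentially the same route as the paper's: both dispatch the first half immediately from the definition of $G(\xi')$, handle $n\in J_i(\xi')$ via Corollary~\ref{cor:mu*}, and for $n\in J_e(\xi')$ extract $\omega(\mu_n^-,\mu)=\omega(\mu_n^+,\mu)$ from the nonvanishing of $\rook_n(\xi',\mu)$ and then split on $n\in J_i(\xi)$ versus $n\in J_e(\xi)$. Your explicit tracking of the common value $r_n$ is a cosmetic variation on the paper's equation~\eqref{eq:same_Phi}, not a different argument.
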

 
\begin{proof}
By definition, $n \in G(\xi')$ implies that $R_n(\xi')$ is a singleton set. 
Hence, by \eqref{eq:Rn} it follows that $\rook_n(\xi', \mu)$ is independent of $\mu \in \Top_\cX(\xi')$. 
Therefore, it is sufficient to show that $\rook_n(\xi, \mu) = \rook_n(\xi', \mu)$ for all $\mu \in \Top_\cX(\xi')$.
By Corollary~\ref{cor:mu*} this is true if $n \in J_i(\xi')$.

So assume that $n \in J_e(\xi')$. Since $n \in G(\xi')$ it follows from Definition~\ref{def:rookfield} that
\begin{equation}
\label{eq:same_Phi}
\rook_n(\xi', \mu) = \rook_n(\mu^-_n, \mu) = \rook_n(\mu^+_n, \mu).
\end{equation}
We have two cases to consider for $\rook_n(\xi, \mu)$. If $n \in J_i(\xi)$, then Definition~\ref{def:rookfield} implies that
\[
\rook_n(\xi, \mu) = \rook_n(\mu^*_n, \mu),
\]
where $\mu^*_n \in \setof{\mu^-_n, \mu^+_n}$ is the $n$-wall given by Proposition~\ref{prop:mu*}. Thus it follows from \eqref{eq:same_Phi} that $\rook_n(\xi, \mu) = \rook_n(\xi', \mu)$ for all $\mu \in \Top_\cX(\xi')$.

In contrast,
if $n \in J_e(\xi)$ it follows from Definition~\ref{def:rookfield} and \eqref{eq:same_Phi} that
\[
\rook_n(\xi, \mu) = \rook_n(\mu^-_n, \mu) = \rook_n(\xi', \mu),
\]
for all $\mu \in \Top_\cX(\xi')$.
\end{proof}

When focusing on directions that are actively regulated, it is of particular interest to understand when the regulation map has a cyclic behavior. 

\begin{defn}
\label{def:acyclic-direction}
Let $\xi \in \cX$. A direction $n$ is \emph{cyclic} at $\xi$ if $n \in \activeset(\xi)$ and there exists an integer $k \geq 1$ such that $\rmap \xi ^i(n) \in \activeset(\xi)$ for all $1 \leq i \leq k$ and
\[
\rmap\xi^k(n) = n.
\]
In this case we define the \emph{forward orbit} of $n$ as the set given by the iterates of $n$ by the map $\rmap\xi$, that is,
\begin{equation}
\label{orbitS}
\cycleof_\xi(n) = \setdef{\rmap\xi^i(n)}{i = 0, 1, \dots, \ell }.
\end{equation}
The minimal such $\ell$ is called the \emph{length} of the cycle of $n$. If the length of the cycle of $n$ is one, then we say that $n$ is \emph{fixed}.
\end{defn}

\begin{ex}
\label{ex:cyclingcells}
Returning to Figure~\ref{fig:wall_labeling}(D) let $\xi = \defcell{2}{2}{0}{0}$ and observe that for the following $1$-walls and $2$-walls of $\xi$ we have
\begin{align*}
\rook_2\left( \defcell{2}{2}{0}{1},\defcell{1}{2}{1}{1}\right) & \neq \rook_2\left( \defcell{2}{2}{0}{1},\defcell{2}{2}{1}{1}\right) \\
\rook_1\left( \defcell{2}{2}{1}{0},\defcell{2}{1}{1}{1}\right) & \neq \rook_1\left( \defcell{2}{2}{1}{0},\defcell{2}{2}{1}{1}\right). 
\end{align*}
Thus, $\activeset(\xi)=\setof{1,2}$ and $o_\xi(1)=2$ and $o_\xi(2)=1$. Finally,  $\cycleof_\xi(n) =\setof{1,2}$ and the length of the cycle is $2$.
\end{ex}

\begin{prop}
\label{prop:wallcycle}
Let $\mu, \mu' \in \cX^{(N)}$ be $k$-adjacent with shared $k$-wall $\xi$. 
If $k \in \rmap\xi(\activeset(\xi))$, then $k$ is fixed.  
\end{prop}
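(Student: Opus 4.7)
The plan is a direct unpacking of definitions, and I do not anticipate any significant obstacle. First I would invoke Definition~\ref{defn:wall} to note that, because $\xi$ is the $k$-wall shared by the $k$-adjacent top cells $\mu$ and $\mu'$, we have $\dim(\xi) = N-1$ and $J_i(\xi) = \{k\}$. The hypothesis that two distinct top cells $\mu, \mu'$ share $\xi$ is only used to guarantee that $\xi$ is genuinely a wall in $W(\cX)$ (so that $W(\xi)$ contains walls over $\xi$ with two distinct top cofaces, as required by Definition~\ref{def:active_regulation}).

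Next I would use Definition~\ref{def:active_regulation}, which stipulates that $\activeset(\xi) \subseteq J_i(\xi)$. Combined with $J_i(\xi) = \{k\}$, this leaves only two possibilities: $\activeset(\xi) = \emptyset$ or $\activeset(\xi) = \{k\}$. The hypothesis $k \in \rmap\xi(\activeset(\xi))$ rules out the empty case, since the image of $\rmap\xi$ restricted to the empty set is empty. Therefore $\activeset(\xi) = \{k\}$. As $\rmap\xi$ then has domain the singleton $\{k\}$, the only element of its image is $\rmap\xi(k)$, and the hypothesis that $k$ lies in this image forces $\rmap\xi(k) = k$.

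Finally, I would appeal to Definition~\ref{def:acyclic-direction}: with $k \in \activeset(\xi)$ and $\rmap\xi(k) = k$, the forward orbit $\cycleof_\xi(k)$ has length one, so $k$ is fixed. The entire argument reduces to the single observation that a $k$-wall has exactly one inessential direction, namely $k$ itself, so any actively regulating direction at $\xi$, and hence any element of the image of $\rmap\xi$ at $\xi$, must already equal $k$.
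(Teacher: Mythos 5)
Your proof is correct and follows the same route as the paper's: observe that $\activeset(\xi)\subset J_i(\xi)=\{k\}$ since $\xi$ is a $k$-wall, so the hypothesis $k\in\rmap\xi(\activeset(\xi))$ forces $\activeset(\xi)=\{k\}$ and $\rmap\xi(k)=k$. The paper compresses this into two lines; you have merely unpacked the definitional content (including correctly noting that the hypothesis rules out $\activeset(\xi)=\emptyset$), so there is nothing to add.
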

\begin{proof}
By definition $\activeset(\xi)\subset J_i(\xi) = \setof{k}$.
By assumption $k \in \rmap\xi(\activeset(\xi))$, which implies that $\xi$ is fixed.
\end{proof}
We now turn our attention to special types of cells.
\begin{defn}
\label{defn:opaque}
A cell $\xi \in \cX \setminus \cX^{(N)}$ is \emph{opaque} if $J_i(\xi) \subseteq O(\xi)$. 
\end{defn}

\begin{prop}
\label{prop:opaque}
If $\xi$ is opaque, then 
\begin{enumerate}
\item[(i)] the map $\rmap \xi \colon \activeset(\xi) \subseteq J_i(\xi) \to J_i(\xi)$ is a bijection,
\item[(ii)] $J_i(\xi)=\activeset(\xi)= O(\xi)$, and
\item[(iii)] $J_e(\xi) = G(\xi)\cup N_0(\xi)$ where
\[
N_0(\xi) = \setof{n\mid R_n(\xi)=\setof{0}}\subset N(\xi).
\]
\end{enumerate}
\end{prop}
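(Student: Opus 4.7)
The plan is to prove item (ii) first, since both (i) and (iii) follow from it with modest bookkeeping.

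The central tool is a \emph{chain argument}: if a direction $n$ satisfies $R_n(\xi) = \setof{\pm 1}$, choose $\mu_{\pm} \in \Top_\cX(\xi)$ with $\rook_n(\xi,\mu_{\pm}) = \pm 1$ and join them by a sequence of pairwise adjacent top cells inside the sub-cube $\Top_\cX(\xi)$; the adjacency directions all lie in $J_i(\xi)$, and somewhere along the chain there is a $k_i$-adjacent pair $\nu_i, \nu_{i+1}$ whose shared wall $\xi_{k_i}$ satisfies $\xi \preceq \xi_{k_i}$ and across which $\rook_n$ changes sign.

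For (ii), since $\activeset(\xi) \subseteq J_i(\xi) \subseteq O(\xi)$ holds automatically (by Definition~\ref{def:active_regulation} and opacity), it suffices to prove $J_i(\xi) \subseteq \activeset(\xi)$ and $O(\xi) \subseteq J_i(\xi)$. For the first inclusion, fix $n \in J_i(\xi)$; since $n \in O(\xi)$, either some $n$-wall $(\xi_n,\mu),(\xi_n,\mu') \in W(\xi)$ has $\omega(\xi_n,\mu) \neq \omega(\xi_n,\mu')$ — in which case $n$ actively regulates itself at $\xi$ with $\rmap{\xi}(n) = n$ — or the chain argument, combined with Corollary~\ref{cor:mu*}, formula~\eqref{eq:PhiSigmaKappa}, and Definition~\ref{def:wall_labeling}(i), locates a direction $k \in J_i(\xi)$ with $\tilde{o}_\sigma(k) = n$ that actively regulates $n$ at $\xi$. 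Either way, $n \in \rmap\xi(\activeset(\xi))$, so $J_i(\xi) \subseteq \rmap\xi(\activeset(\xi))$. Combined with $\activeset(\xi) \subseteq J_i(\xi)$ and the finiteness of these sets, a pigeonhole count forces $\activeset(\xi) = J_i(\xi)$, $\rmap\xi(\activeset(\xi)) = J_i(\xi)$, and bijectivity of $\rmap{\xi}$ on $J_i(\xi)$ — which already delivers (i).

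For $O(\xi) \subseteq J_i(\xi)$, suppose for contradiction that $n \in O(\xi) \cap J_e(\xi)$. Because $0 \notin R_n(\xi)$ and $n \in J_e(\xi)$, the essential-direction case of formula~\eqref{eq:PhiSigmaKappa} forces $\omega(\mu^-_n,\mu) = \omega(\mu^+_n,\mu)$ for every $\mu \in \Top_\cX(\xi)$. Running the chain argument for $n$ and applying Definition~\ref{def:wall_labeling}(i) and Lemma~\ref{rem:o_sigmas_agree} produces $k \in J_i(\xi) \cap \activeset(\xi)$ with $\rmap{\xi}(k) = n \in J_e(\xi)$, contradicting the equality $\rmap{\xi}(\activeset(\xi)) = J_i(\xi)$ obtained in the previous step. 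Hence $O(\xi) = J_i(\xi)$, completing (ii). Item (iii) now follows: Proposition~\ref{prop:Direction-Decomposition} together with (ii) gives $J_e(\xi) = G(\xi) \cup N(\xi)$, and if some $n \in J_e(\xi) \cap N(\xi)$ had $\pm 1 \in R_n(\xi)$ in addition to $0$, one further invocation of the chain argument would yield an active regulator in $\activeset(\xi)$ mapping to the essential direction $n$, again contradicting (i). Therefore $R_n(\xi) = \setof{0}$, i.e., $n \in N_0(\xi)$.

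The principal obstacle throughout is the meticulous case analysis inside the chain argument — in particular, separating the subcases where the direction of interest coincides with or differs from the chain's adjacency direction $k_i$, and tracking how $\omega$-labels propagate across adjacent top cells — for which the key technical inputs are Corollary~\ref{cor:mu*} (to equate rook values across inessential faces), formula~\eqref{eq:PhiSigmaKappa} (for the essential-direction rook-field formula), Lemma~\ref{rem:o_sigmas_agree} (identifying the inducement value from a flip), and both clauses of Definition~\ref{def:wall_labeling} (constraining when $\omega$-labels may change).
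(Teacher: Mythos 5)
Your argument is essentially the paper's proof with a slightly reordered presentation: the paper establishes item (i) directly by contradiction (if $k \in J_i(\xi)$ were missing from $\rmap\xi(\activeset(\xi))$, then $R_k(\xi)$ would be a singleton, contradicting opacity), then derives (ii) and (iii) from it, whereas you reverse the packaging and extract (i) as a by-product inside the proof of (ii), but both hinge on the identical key observation that $R_k(\xi)=\{\pm 1\}$ forces $k\in\rmap\xi(\activeset(\xi))$, followed by the same pigeonhole count and the same flip-finding contradiction for (iii). Your ``chain argument'' is a more explicit unpacking of a step the paper asserts in one clause, so the proposal is correct and amounts to the paper's route.
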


\begin{proof}
We first prove (i) by contradiction. Assume that $\rmap\xi$ is not surjective onto $J_i(\xi)$, that is, assume that there exists $k \in J_i(\xi)$ such that $k \not\in \rmap\xi(\activeset(\xi))$. This implies that 
\[
R_k(\xi) = \setof{\rook_k(\xi,\mu) \mid \mu\in \Top_\cX(\xi)} \neq \setof{\pm 1},
\]
since for $k \in J_i(\xi)$ and $k \not\in \rmap\xi(\activeset(\xi))$ we must have $\rook_k(\xi,\mu) = \rook_k(\xi,\mu')$ for all $\mu, \mu' \in \Top_\cX(\xi)$. Since $\xi$ is opaque we have that $k \in J_i(\xi) \subseteq O(\xi)$. Now $R_k(\xi) \neq \setof{\pm 1}$ with $k \in O(\xi)$ is a contradiction. Thus $\rmap\xi$ is surjective. Since $\activeset(\xi) \subseteq J_i(\xi)$ is finite, the map $\rmap\xi \colon \activeset(\xi)\subseteq J_i(\xi) \to J_i(\xi)$ is a bijection.

To prove (ii), note that $J_i(\xi)=\activeset(\xi)$ follows from (i). Additionally, $J_i(\xi) \subseteq O(\xi)$ follows from opaqueness. To prove $O(\xi) \subseteq J_i(\xi)$, let $k\in O(\xi)$. By the definition of opaque direction, 
\[
R_k(\xi) = \setof{\rook_k(\xi,\mu)\mid \mu\in  \Top_\cX(\xi)} = \setof{\pm 1}.
\] 
By the definition of $\rook$, there exists $\mu,\mu'\in \Top_\cX(\xi)$ such that $\rook_k(\xi,\mu)\neq\rook_k(\xi,\mu')$. So there exists $n \in \activeset(\xi)$ such that $\rmap\xi(n)=k$.  Thus, $k \in \rmap\xi(\activeset(\xi))$ and hence $k \in J_i(\xi)$. Therefore, $O(\xi)=J_i(\xi)$. 

We prove (iii) by contradiction. By (ii) and Proposition~\ref{prop:Direction-Decomposition}, $J_e(\xi) = G(\xi) \cup N(\xi)$. Assume there exists $m \in N(\xi) \subset J_e(\xi)$ such that $\setof{0,1} \subset R_m(\xi)$ or $\setof{0,-1} \subset R_m(\xi)$. In either case, there exists $\mu,\mu'\in \Top_\cX(\xi)$ such that $\rook_m(\xi,\mu) \neq \rook_m(\xi,\mu')$. This implies that there exists $n\in \activeset(\xi)$ such that $\rmap\xi(n)=m$, which contradicts (i). Therefore
$N(\xi) = N_0(\xi)$ which proves (iii).
\end{proof}

\begin{defn}
\label{def:eqcell}
A cell $\xi \in \cX$ is an \emph{equilibrium cell} if $G(\xi) = \emptyset$. A cell $\xi \in \cX$ is \emph{regular} if it is not an equilibrium cell.
\end{defn}

\begin{prop}
\label{prop:eqcell-is-opaque}
Let $\xi \in \cX$. If $\xi$ is an equilibrium cell and $\xi \not\in \cX^{(N)}$, then $\xi$ is opaque.
\end{prop}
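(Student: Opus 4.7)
The proof is essentially immediate once we invoke the structural decomposition of directions given in Proposition~\ref{prop:Direction-Decomposition}. The plan is the following.

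First I would note that since $\xi \notin \cX^{(N)}$, the notion of opacity applies to $\xi$ per Definition~\ref{defn:opaque}, so the statement is well-posed. Next, I would recall Proposition~\ref{prop:Direction-Decomposition}(ii), which gives the key inclusion
\[
J_i(\xi) \subseteq O(\xi) \cup G(\xi).
\]
By hypothesis $\xi$ is an equilibrium cell, so by Definition~\ref{def:eqcell} we have $G(\xi) = \emptyset$. Substituting this into the above inclusion yields $J_i(\xi) \subseteq O(\xi)$, which is precisely the defining condition for $\xi$ to be opaque (Definition~\ref{defn:opaque}).

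There is no real obstacle here; the hypothesis $\xi \notin \cX^{(N)}$ is only needed so that opacity is defined (top cells have $J_i(\xi) = \emptyset$, so the condition would hold vacuously, but opacity is not defined for them by convention). The proof is a single application of the disjoint direction decomposition combined with the vanishing of the gradient direction set.
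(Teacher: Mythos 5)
Your proof is correct and follows essentially the same route as the paper's: both reduce to the inclusion $J_i(\xi)\subseteq O(\xi)\cup G(\xi)$ and kill $G(\xi)$ via the equilibrium hypothesis. The only difference is that you cite Proposition~\ref{prop:Direction-Decomposition}(ii) directly, while the paper re-derives that inclusion inline from Definition~\ref{def:rookfield}.
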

\begin{proof}
Let $n \in J_i(\xi)$. It follows from Definition~\ref{def:rookfield} that $\rook_n(\xi, \mu) \in \setof{\pm 1}$ for all $\mu \in  \Top_\cX(\xi)$. Hence $n \in G(\xi) \cup O(\xi) = O(\xi)$.
\end{proof}

\begin{prop}
Let $\xi\in\cX$. If $\xi$ is opaque and $J_e(\xi)=N_0(\xi)$, then $\xi$ is an equilibrium cell.
\end{prop}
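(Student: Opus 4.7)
The plan is to use the two structural consequences of opaqueness provided by Proposition~\ref{prop:opaque} together with the disjointness of $G(\xi)$, $N(\xi)$, and $O(\xi)$ from Proposition~\ref{prop:Direction-Decomposition}. Since $\setof{1,\ldots,N}$ is partitioned into the essential and inessential directions $J_e(\xi)$ and $J_i(\xi)$, I would split the verification of $G(\xi) = \emptyset$ into these two pieces and show that each is empty.

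For the inessential piece, opaqueness gives $J_i(\xi) = O(\xi)$ by Proposition~\ref{prop:opaque}~(ii), and since $G(\xi) \cap O(\xi) = \emptyset$ by the mutual disjointness in Proposition~\ref{prop:Direction-Decomposition}, we immediately obtain $G(\xi) \cap J_i(\xi) = \emptyset$. For the essential piece, the hypothesis $J_e(\xi) = N_0(\xi)$ together with the inclusion $N_0(\xi) \subseteq N(\xi)$ (recorded inside Proposition~\ref{prop:opaque}~(iii)) and the disjointness $G(\xi) \cap N(\xi) = \emptyset$ yields $G(\xi) \cap J_e(\xi) = \emptyset$. Combining the two vanishing intersections with the partition $J_e(\xi) \sqcup J_i(\xi) = \setof{1,\ldots,N}$ forces $G(\xi) = \emptyset$, which is exactly the definition of an equilibrium cell.

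There is no real obstacle here; the statement is essentially a partial converse to Proposition~\ref{prop:eqcell-is-opaque}, strengthened by the extra hypothesis $J_e(\xi) = N_0(\xi)$ that rules out the essential gradient directions which Proposition~\ref{prop:opaque}~(iii) in general permits. Thus the entire argument reduces to a short chase through the partition relations that have already been established.
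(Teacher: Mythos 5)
Your proof is correct and is essentially the same as the paper's: you reduce to $G(\xi) = \emptyset$ by showing $G(\xi)$ misses both $J_e(\xi) = N_0(\xi) \subseteq N(\xi)$ and $J_i(\xi) = O(\xi)$, invoking the disjointness from Proposition~\ref{prop:Direction-Decomposition}. The paper compresses this by applying Proposition~\ref{prop:opaque}(iii) directly, writing $N_0(\xi) = J_e(\xi) = G(\xi)\cup N_0(\xi)$ and reading off $G(\xi)=\emptyset$, but the underlying facts used are identical.
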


\begin{proof}
Since $\xi$ opaque, by Proposition~\ref{prop:opaque} it follows that
\[
N_0(\xi) = J_e(\xi) = G(\xi)\cup N_0(\xi)
\]
which, by Proposition~\ref{prop:Direction-Decomposition}, implies that $G(\xi) = \emptyset$.
\end{proof}

\begin{defn}
\label{def:exit_face}
Let $\xi\prec \xi'\in\cX$.
\begin{enumerate}
\item[(i)] $\xi$ is an \emph{exit face} of $\xi'$ if $\rook_n(\xi,\mu) = p_n(\xi,\xi')$ for all $n\in \Ex(\xi,\xi')$ and all $\mu \in \Top_\cX(\xi')$.
The set of exit faces of $\xi'$ is denoted by $E^-(\xi')$.
\item[(ii)]  $\xi$ is an \emph{entrance face} of $\xi'$ if $\rook_n(\xi,\mu) = -p_n(\xi,\xi')$ for all $n\in \Ex(\xi,\xi')$ and all $\mu \in \Top_\cX(\xi')$.
The set of entrance faces of $\xi'$ is denoted by $E^+(\xi')$.
\end{enumerate}
\end{defn}

The following result is an immediate consequence of the definition of an exit or entrance face.
\begin{lem}
\label{lem:exitwall}
    If $(\xi,\mu)\in W(\cX)$, then either $\xi\in E^+(\mu)$ or $\xi\in E^-(\mu)$.
\end{lem}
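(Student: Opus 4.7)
The plan is to unpack the definitions and reduce the statement to the simple observation that both $\rook_n(\xi,\mu)$ and the relative position $p_n(\xi,\mu)$ are nonzero scalars in $\{\pm 1\}$ when $(\xi,\mu)$ is a wall, so exactly one of the two possibilities of Definition~\ref{def:exit_face} must hold.

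First, I would isolate the index set over which the conditions of Definition~\ref{def:exit_face} need to be checked. Since $(\xi,\mu)\in W(\cX)$ by definition means $\xi\in\cX^{(N-1)}$ and $\mu\in\cX^{(N)}$ with $\xi\prec\mu$, we have $J_e(\mu)=\setof{1,\ldots,N}$ and $J_i(\xi)=\setof{n}$ for the unique $n$ such that $\xi$ is an $n$-wall of $\mu$. By \eqref{eq:Ex} this gives $\Ex(\xi,\mu)=J_i(\xi)\cap J_e(\mu)=\setof{n}$. Moreover $\Top_\cX(\mu)=\setof{\mu}$ since $\mu$ is already top-dimensional, so the universal quantifier ``for all $\mu'\in\Top_\cX(\mu)$'' in Definition~\ref{def:exit_face} collapses to a single condition at $\mu$. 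In short, being an exit face (resp.\ entrance face) of $\mu$ amounts to the single equality $\rook_n(\xi,\mu)=p_n(\xi,\mu)$ (resp.\ $\rook_n(\xi,\mu)=-p_n(\xi,\mu)$).

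Next, I would verify that both sides of this equality take values in $\setof{\pm 1}$. From Definition~\ref{def:rookfield}, applied to the $n$-wall $(\xi,\mu)$ with $k=n$ in the first branch of \eqref{eq:rookwalls}, we obtain $\rook_n(\xi,\mu)=\omega(\xi,\mu)\in\setof{\pm 1}$. For the position vector, Definition~\ref{defn:rpvector} gives $p_n(\xi,\mu)=(-1)^{\bv_n-\bv'_n}(\bw_n-\bw'_n)$; since $\xi=[\bv,\bw]$ has $\bw_n=0$ (as $n\in J_i(\xi)$) and $\mu=[\bv',\bw']$ has $\bw'_n=1$ (as $n\in J_e(\mu)$), this evaluates to $\pm 1$.

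With both quantities nonzero and in $\setof{\pm 1}$, the dichotomy $\rook_n(\xi,\mu)=p_n(\xi,\mu)$ or $\rook_n(\xi,\mu)=-p_n(\xi,\mu)$ is exhaustive, yielding $\xi\in E^-(\mu)$ or $\xi\in E^+(\mu)$ respectively. There is no real obstacle here; the content is entirely in unwinding the definitions and recognizing that the codimension-one wall case forces $|\Ex(\xi,\mu)|=1$, which removes the usual need to coordinate multiple directions simultaneously.
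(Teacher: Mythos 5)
Your proof is correct and follows the same line of reasoning the paper treats as immediate: reduce the exit/entrance dichotomy at a wall to a single equality in direction $n$, observe $\rook_n(\xi,\mu)=\omega(\xi,\mu)\in\setof{\pm 1}$ and $p_n(\xi,\mu)\in\setof{\pm 1}$, and conclude by exhaustion. The paper simply states the lemma as an immediate consequence of Definition~\ref{def:exit_face} without writing out these steps, so your unpacking is the intended (and only needed) argument.
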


\begin{ex}
\label{ex:exitface}
The wall labeling shown in Figure~\ref{fig:exitset} is obtained by restricting the complex on which the wall labeling in Figure~\ref{fig:wall_labeling} is defined.
Let $\xi = \defcell{1}{2}{0}{0}$ and
$\hat{\xi} = \defcell{1}{1}{0}{1}$.
Then $\xi\prec \xi'$ and $\Ex(\xi,\xi') = \setof{2}$.    
Note that $\Top_\cX(\xi') = \{ \hat{\xi}_0',\hat{\xi}_1' \}$.
Since $2\in J_i(\xi)$, by \eqref{eq:PhiSigmaKappa} and \eqref{eq:rookwalls}
\[
\rook_2 (\xi ,\hat{\xi}_1' ) = \rook_2 (\xi',\hat{\xi}_1' ) =  1 = p_2(\xi,\xi').
\]
Similarly, 
\[
\rook_2 (\xi ,\hat{\xi}_0' ) = \rook_2 (\xi',\hat{\xi}_0' ) = 1 = p_2(\xi,\xi').
\]
Thus, $\xi\in E^-(\xi')$.

A similar argument shows that $\xi\in E^+\left(\tau\right)$.

In contrast consider $\xi_1' = \defcell{1}{2}{1}{0}$.
Then $\xi\prec \xi_1'$, $\Ex(\xi,\xi_1') = \setof{1}$, and $\Top_\cX(\xi_1') = \{ \mu_1,\hat{\xi}_1' \}$. Since $1\in J_i(\xi)$, by \eqref{eq:PhiSigmaKappa} and \eqref{eq:rookwalls}
\[
\rook_1 (\xi, \hat{\xi}_1') = \rook_1 (\xi_1', \hat{\xi}_1') = 1 = -p_1(\xi,\xi_1')
\]
while
\[
\rook_1 (\xi, \mu_1) = \rook_1 (\xi_1', \mu_1) =  -1 = p_1(\xi,\xi_1').
\]
Thus, $\xi\not\in E^\pm(\xi_1')$.
\end{ex}

\begin{figure}[h]
\centering
    \begin{tikzpicture}[scale=0.3]
    \draw[step=8cm,black] (0,0) grid (16,16);
    
    \foreach \i in {0,...,2}{
        \foreach \j in {0,...,2}{
        \draw[black, fill=black] (8*\i,8*\j) circle (2ex);
        }
    }
    
    \foreach \i in {1,...,3}{
        \draw(-1,8*\i -8) node{$\i$}; 
    }
    \foreach \i in {0,...,2}{
        \draw(8*\i,-1) node{$\i$}; 
    }
    
    \foreach \i in {0,...,1}{
        \draw[->, blue, thick] (4+8*\i,0.1) -- (4+8*\i,2.1); 
        \draw[->, blue, thick] (4+8*\i,15.9) -- (4+8*\i,13.9); 
    }
    \foreach \i in {1,...,2}{
        \draw[->, blue, thick] (0.1, 4+8*\i -8) -- (2.1,4+8*\i -8); 
    }
    \foreach \i in {0,...,1}{
        \draw[->, blue, thick] (4+8*\i,5.9) -- (4+8*\i,7.9); 
        \draw[->, blue, thick] (4+8*\i,8.1) -- (4+8*\i,10.1);
    }
        \draw[->, blue, thick] (8.1,4) -- (10.1,4);
        \draw[->, blue, thick] (5.9,4) -- (7.9,4);
        \draw[->, blue, thick] (13.9,4) -- (15.9,4); 
    \foreach \i in {2}{
        \draw[->, blue, thick] (7.9,4+8*\i -8) -- (5.9,4+8*\i -8); 
         \draw[->, blue, thick] (10.1,4+8*\i -8) -- (8.1,4+8*\i -8);
         \draw[->, blue, thick] (15.9,4+8*\i -8) -- (13.9,4+8*\i -8);  
    }
    
    \draw(4,12) node{$\mu_0$};
    \draw(12,12)  node{$\mu_1$};
    \draw(4,4) node{$\hat\xi_0'$};
    \draw(12,4)  node{$\hat\xi_1'$};
    \draw(5.5,7) node{$\xi'_0$};
    \draw(8.5,14)  node{$\tau$};
    \draw(8.5,2)  node{$\hat\xi$};
    \draw(13.5,7)  node{$\xi'_1$};
    \draw(9,9)  node{$\xi$};
    \end{tikzpicture}
\caption{A subcomplex and associated wall labeling from Figure~\ref{fig:wall_labeling} (B).
Observe that $\xi = \defcell{1}{2}{0}{0}$ belongs to the exit and entrance set of $\hat{\xi} = \defcell{1}{1}{0}{1}$ and $\tau = \defcell{1}{2}{0}{1}$, respectively.
However, $\xi$ is not in the exit or entrance sets of $\xi_0'=\defcell{0}{2}{1}{0}$ nor $\xi_1'=\defcell{2}{2}{1}{0}$.}
\label{fig:exitset}
\end{figure}
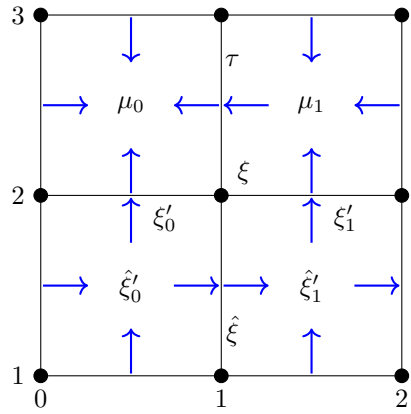

\chapter{Combinatorial dynamics induced by wall labelings}
\label{sec:rules}

To simplify the exposition, \emph{for the remainder of this manuscript we only consider wall labelings that are strongly dissipative} (see Definition~\ref{defn:dissipativewall}).
While this assumption is not necessary, without it there are numerous subtle technicalities that need to be dealt with.
For example, consider Theorem~\ref{thm:dynamics}.
Without the assumption of strongly dissipative, there may not exist a geometrization $\bG$ such that $\lambda(\bone)$ is aligned with $f$ and hence $X=\lambda(\bone)$ need not be a trapping region.
This in turn implies that we need to understand the behavior of trajectories that pass through the boundary of $X$. 
While, this can be done for specific examples we do not provide a general algorithm.

Throughout this chapter we assume that we are given an $N$-dimensional cubical cell complex $\cX = \cX(\I)$, a fixed strongly dissipative  wall labeling $\omega\colon W(\cX)\to \setof{\pm 1} $, and the derived rook field $\rook \colon TP(\cX)\to \setof{0,\pm1}^N$.
We use this information to define a hierarchy of \emph{multivalued maps} $\cF\colon \cX\mvmap \cX$ satisfying 
\begin{equation}
\label{eq:mvmapdefn}
    \emptyset \neq \cF(\xi)\subset \cX,\quad\text{for all $\xi\in\cX$,}
\end{equation}
that provide progressively finer models of the associated dynamics associated with $\rook$.

\begin{defn}\label{def:refinement}
A \emph{refinement} of a multivalued map  $\cF\colon \cX\mvmap \cX$ is any multivalued map  $\cF' \colon \cX\mvmap \cX$ such that 
$\emptyset \neq \cF'(\xi) \subseteq \cF(\xi)$ for all $\xi \in \cX$. 
\end{defn}

The defining condition \eqref{eq:mvmapdefn} of a multivalued map is extremely weak.
For the purposes of this paper we want to restrict our attention to multivalued maps that are reasonable models for ODEs.
This leads to the following definition.

\begin{defn}
\label{def:Rule0}    
The \emph{trivial multivalued map} is given by 
\[
\cF_0 \colon \cX\mvmap \cX,
\]
where $\xi \in \cF_0(\xi')$ and $\xi' \in \cF_0(\xi)$ whenever $\xi \preceq \xi'$ and $\dim(\xi')-\dim(\xi) \leq 1$. 
\end{defn}

The following notation is convenient.
\begin{defn}
\label{def:darrow}
Given a multivalued map $\cF \colon \cX \mvmap \cX$ we say that there is a \emph{double edge} between $\xi,\xi'\in\cX$ if $\xi \in \cF(\xi')$ and $\xi' \in \cF (\xi)$.
We denote the existence of such a double edge by $\xi \darrow_\cF \xi'$ with the convention that $\xi\preceq \xi'$.
\end{defn}

Observe that from the trivial combinatorial multivalued map $\cF_0$, if $\xi \preceq \xi'$ and $\dim(\xi')-\dim(\xi)\leq 1$,
then $\xi \darrow_{\cF_0} \xi'$. 
In Chapter~\ref{sec:R0} it is shown that $\cF_0$ can be used to identify that the ODEs considered in this paper (see Chapter~\ref{sec:ramp}) have a global attractor in the positive orthant and that this global attractor contains a fixed point.
While this is a correct statement, it provides limited information about the structure of the dynamics on the attractor. 
In the applications the presence  of a double edge $\xi \darrow_{\cF} \xi'$ allows for the existence of a recurrent set for the ODE with trajectories that oscillate between regions identified by $\xi$ and $\xi'$.
In the  ODEs described in Chapter~\ref{sec:ramp} this is not the case.
Therefore, the challenge is to modify $\cF_0$ while still correctly capturing the dynamics of the ODE. 
Thus, the remainder of this chapter focuses on the following problem: given a face relation $\xi \preceq \xi'$, when do we insist that either $\xi \in \cF(\xi')$ or $\xi' \in \cF(\xi)$, but not both? 

Sections~\ref{sec:local-conditions} and \ref{sec:quasi-local} provides conditions that allow for the removal of  edges, thus resolving the ambiguity of some double edges.  
Section~\ref{sec:cycle-condition}  resolves some double edges, but also add new edges. 
We provide minimal justification in this Chapter for the imposed conditions other than to assert that they are tied to the dynamics of the ODEs of interest.
However, the exact relationship between vector fields and our combinatorial representation is subtle.
Thus, for the sake of clarity, the focus of this Chapter is on the combinatorics.
The justification of these conditions is presented in Part~\ref{part:III} in the context of ramp systems.

\section{Local conditions}
\label{sec:local-conditions}

In this section, we impose a set of conditions that allows us to remove some edges from the map $\cF_0$ to construct a map $\cF_1$. 
This new map allows for richer information about the dynamics of the ODEs and is obtained by two conditions.
The first, given by Definition~\ref{def:Rule1.1}, says that if a cell has a gradient direction, then it should not contain any recurrent dynamics of the ODE. 
The second, given by Definition~\ref{def:Rule1.2}, is an expanded version of the same observation, and is applied to pairs of cells that are adjacent with respect to the face relation and whose dimensions differ by one.

\begin{defn}
\label{def:Rule1.1}
    Define $\cF_{1.1}\colon \cX\mvmap\cX$ to be the maximal combinatorial multivalued map that is a refinement of $\cF_0$ and satisfies Condition 1.1.
    \begin{description}
        \item[Condition 1.1]  Let $\xi\in\cX$. If $G(\xi) \neq \emptyset$ (see \ref{def:namedirections}), then  $\xi \notin \cF_{1.1}(\xi)$.
    \end{description}
\end{defn}

\begin{defn}
\label{def:Rule1.2}
Define $\cF_{1.2} \colon \cX\mvmap \cX$ to be the maximal combinatorial multivalued map that is a refinement of $\cF_0$ and satisfies Condition 1.2.

\begin{description}
\item[Condition 1.2] Let $\xi\prec\xi'\in\cX$ and assume $\dim(\xi') - \dim(\xi)=1$.
Recall from Definition~\ref{def:exit_face} that $E^-(\xi')$ and $E^+(\xi')$ denote the exit and entrance faces of $\xi'$, respectively.
\begin{enumerate}
    \item[(i)] If $\xi\in E^-(\xi')$,  then $\xi'\not\in \cF_{1.2}(\xi)$.
    \item[(ii)] If $\xi \in E^+(\xi')$,  then $\xi\not\in \cF_{1.2}(\xi')$.
\end{enumerate}
\end{description}
\end{defn}

\begin{prop}
\label{prop:arrowExists}
If $\xi \prec \xi'$ and $\dim(\xi')-\dim(\xi) = 1$, then $\xi \in \cF_{1.2}(\xi')$ or $\xi' \in \cF_{1.2}(\xi)$.
\end{prop}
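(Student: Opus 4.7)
The strategy is to argue by contradiction. Since $\xi \prec \xi'$ with $\dim(\xi') - \dim(\xi) = 1$, Definition~\ref{def:Rule0} gives $\xi \in \cF_0(\xi')$ and $\xi' \in \cF_0(\xi)$. Because $\cF_{1.2}$ is the maximal refinement of $\cF_0$ satisfying Condition 1.2, the only way either edge can be absent from $\cF_{1.2}$ is via Condition 1.2(i)--(ii). So if both edges are absent, I must have $\xi \in E^-(\xi')$ \emph{and} $\xi \in E^+(\xi')$ simultaneously. The plan is to show that this simultaneous membership forces an impossible value of $p_n(\xi,\xi')$ on the relevant coordinate.

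The key combinatorial step is to determine $\Ex(\xi,\xi') = J_i(\xi)\cap J_e(\xi')$. Writing $\xi = [\bv,\bw]$ and $\xi' = [\bv',\bw']$, the face relation \eqref{eq:is_a_face} forces $\bv = \bv' + \bq$ with $\bq\in\{0,1\}^N$ and $\bw + \bq \leq_\N \bw'$. A direct count of the contributions to $\dim(\xi')-\dim(\xi) = \sum_m(\bw'_m - \bw_m) \geq \dim(\bq)$ shows that when the dimension difference equals one, there is exactly one coordinate $n$ with $\bw_n = 0$ and $\bw'_n = 1$, while $\bw_m = \bw'_m$ for all $m\neq n$. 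Thus $\Ex(\xi,\xi') = \{n\}$, and this singleton is what both exit/entrance conditions act on.

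With Definition~\ref{def:exit_face} applied at this unique $n$, membership in $E^-(\xi')$ gives $\rook_n(\xi,\mu) = p_n(\xi,\xi')$ for every $\mu\in\Top_\cX(\xi')$, while membership in $E^+(\xi')$ gives $\rook_n(\xi,\mu) = -p_n(\xi,\xi')$ for the same $\mu$. Combining these yields $p_n(\xi,\xi') = -p_n(\xi,\xi')$, hence $p_n(\xi,\xi')=0$. On the other hand, Definition~\ref{defn:rpvector} gives $p_n(\xi,\xi') = (-1)^{\bv_n - \bv'_n}(\bw_n - \bw'_n) = \pm(-1)^{\bv_n-\bv'_n} \neq 0$, since $\bw_n - \bw'_n = -1$. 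This contradiction completes the argument.

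The main obstacle here is actually just verifying carefully that $\Ex(\xi,\xi')$ is a singleton under the dimension-difference-one hypothesis; once that is in hand, the proof reduces to a one-line clash between the exit and entrance conditions via $\pm p_n(\xi,\xi')$. No appeal to the wall-labeling axioms or to $\rook$ beyond its values at the single coordinate $n$ is needed.
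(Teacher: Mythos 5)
Your proof is correct and rests on the same key observation as the paper's: since $\cF_{1.2}$ is the maximal refinement of $\cF_0$ obeying Condition~1.2, an edge can only vanish via $E^-$ or $E^+$, and these two sets are mutually exclusive because $p_n(\xi,\xi') = \pm 1 \neq 0$ on the single coordinate $n \in \Ex(\xi,\xi')$. The paper phrases this as a direct contrapositive chain rather than a contradiction and omits the (easy) check that $\Ex(\xi,\xi')$ is a singleton, but the substance is identical.
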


\begin{proof}
If $\xi \darrow_{\cF_{1.2}}\xi'$, then we are done, and if not then Condition 1.2 is satisfied.
The contrapositive of Condition 1.2 implies that if $\xi \not\in \cF_{1.2}(\xi')$, then  $\xi \in E^+(\xi')$, and if $\xi' \not\in \cF_{1.2}(\xi)$, then $\xi \in \cF_{1.2}(\xi')$.
\end{proof}

\begin{defn}
    \label{def:Rule1}
    Define $\cF_1\colon \cX\mvmap \cX$ by 
    \begin{equation}
    \label{eq:defnF1}
        \cF_1(\xi) := \cF_{1.1}(\xi)\cap \cF_{1.2}(\xi).
    \end{equation}
\end{defn}

Proposition~\ref{prop:F1-welldefined}, whose proof makes use of the following lemma, shows that $\cF_1$ satisfies $\eqref{eq:mvmapdefn}$. 
\begin{lem}
    \label{lem:gradient-implies-non-empty}
    Let $\xi \in \cX$. If $G(\xi) \neq \emptyset$, then there exists $\xi' \in \cX$ with $|\dim(\xi')-\dim(\xi)|=1$ such that $\xi' \in E^-(\xi)$ or $\xi \in E^+(\xi')$. 
\end{lem}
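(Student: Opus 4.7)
The plan is to choose any $n \in G(\xi)$, write $R_n(\xi) = \{r_n\}$ with $r_n \in \{\pm 1\}$, and exhibit the required $\xi'$ by a case split on whether $n \in G_e(\xi) := G(\xi) \cap J_e(\xi)$ or $n \in G_i(\xi) := G(\xi) \cap J_i(\xi)$; these two cases exhaust $G(\xi)$ since $J_e(\xi)$ and $J_i(\xi)$ partition $\{1,\ldots,N\}$.

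In the essential case $n \in G_e(\xi)$, I would write $\xi = [\bv,\bw]$, so $\bw_n = 1$ and hence $\bv_n \le K(n)$. The two codimension-one faces of $\xi$ in direction $n$, namely $\xi^- := [\bv, \bw - \bzero^{(n)}]$ and $\xi^+ := [\bv + \bzero^{(n)}, \bw - \bzero^{(n)}]$, both lie in $\cX$ and satisfy $p_n(\xi^-,\xi) = -1$ and $p_n(\xi^+,\xi) = 1$ by Definition~\ref{defn:rpvector}. I take $\xi'$ to be the face with $p_n(\xi',\xi) = r_n$; note $\Ex(\xi',\xi) = \{n\}$. Applying Proposition~\ref{prop:GradPsiIndependent} to $\xi' \preceq \xi$ (with $n \in G(\xi)$) gives $\rook_n(\xi',\mu) = \rook_n(\xi,\mu) = r_n = p_n(\xi',\xi)$ for every $\mu \in \Top_\cX(\xi)$, which is precisely the defining condition for $\xi' \in E^-(\xi)$.

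In the inessential case $n \in G_i(\xi)$ we have $\bw_n = 0$. The candidate codimension-one cofaces of $\xi$ in direction $n$ are $[\bv, \bw + \bzero^{(n)}]$ (in $\cX$ exactly when $\bv_n \le K(n)$) and $[\bv - \bzero^{(n)}, \bw + \bzero^{(n)}]$ (in $\cX$ exactly when $\bv_n \ge 1$), with $p_n(\xi,\cdot)$ equal to $-1$ and $+1$ respectively. I want $\xi'$ to be the coface with $p_n(\xi,\xi') = -r_n$; once it exists one has $\Ex(\xi,\xi') = \{n\}$, and the inclusion $\Top_\cX(\xi') \subseteq \Top_\cX(\xi)$ together with $R_n(\xi) = \{r_n\}$ yields $\rook_n(\xi,\mu) = r_n = -p_n(\xi,\xi')$ for every $\mu \in \Top_\cX(\xi')$, which is the defining condition for $\xi \in E^+(\xi')$.

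The hard part is confirming that the desired coface in the inessential case actually lies in $\cX$; the only obstructions are $r_n = +1$ with $\bv_n = K(n)+1$, or $r_n = -1$ with $\bv_n = 0$. Both place $\xi$ in $\bbdy(\cX)$, and I rule them out via strong dissipativity. Fix any $\mu \in \Top_\cX(\xi)$; a short computation shows that the forced coordinates of $\mu$ make the unique $n$-wall $\mu_n^*(\xi,\mu)$ of $\mu$ containing $\xi$ equal to $[\bv, \bone - \bzero^{(n)}]$, which is itself a boundary $(N-1)$-cell (its only top coface in $\cX$ is $\mu$), with $p_n(\mu_n^*,\mu) = +1$ in the first obstruction and $-1$ in the second. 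Strong dissipativity (Definition~\ref{defn:dissipativewall}) then forces $\omega(\mu_n^*,\mu) = -p_n(\mu_n^*,\mu)$, while Corollary~\ref{cor:mu*} (applicable since $n \in J_i(\xi)$) identifies $\rook_n(\xi,\mu) = \omega(\mu_n^*(\xi,\mu),\mu) = -r_n$, contradicting $R_n(\xi) = \{r_n\}$. This eliminates the problematic configurations and closes the argument.
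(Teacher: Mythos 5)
Your proof is correct and follows essentially the same route as the paper's: split on whether $n$ is essential or inessential in $\xi$, and exhibit the appropriate face or coface picked out by the sign $r_n$. The cell $\xi'$ you construct in each case is identical to the paper's choice. What you add on top, and what the paper glosses over, is two things.

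First, in the essential case you invoke Proposition~\ref{prop:GradPsiIndependent} to justify $\rook_n(\xi',\mu) = \rook_n(\xi,\mu) = r_n$; the paper simply says it ``is left to the reader to verify,'' so this is a genuine tightening rather than a deviation. Second, and more substantively, in the inessential case you correctly observe that the coface $\xi' = [\bv - \tfrac{1-r_n}{2}\bzero^{(n)}, \bw + \bzero^{(n)}]$ must be shown to lie in $\cX$ before the computation can proceed, and you rule out the two obstructive configurations ($r_n = +1$ with $\bv_n = K(n)+1$, or $r_n = -1$ with $\bv_n = 0$) via strong dissipativity. This is exactly the content of Lemma~\ref{lem:GradBoundary} (which the paper does not cite here and in fact proves only later, in Section~\ref{sec:compInvset+}), and it does close a tacit gap in the published argument: without the standing strongly-dissipative hypothesis, the coface could fail to exist, and the paper never checks this.

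One small imprecision worth flagging: in the boundary subargument you assert that $\mu_n^*(\xi,\mu)$ equals $[\bv,\bone - \bzero^{(n)}]$. That identity holds in the $n$-th coordinate, but in a coordinate $m \neq n$ with $\bw_m = 0$ the position entry of $\mu_n^*(\xi,\mu)$ is $\bv_m - \bq_m$ for whatever $\bq_m \in \{0,1\}$ defines the chosen $\mu \in \Top_\cX(\xi)$, so the wall is not literally $[\bv,\bone^{(n)}]$ in general (indeed $[\bv,\bone^{(n)}]$ need not even be a cell of $\cX$). Fortunately this does not matter: the only properties you use are that $\mu_n^*(\xi,\mu)$ is a boundary $(N-1)$-cell and that $p_n(\mu_n^*,\mu)$ has the sign you claim, and both of these hold. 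With that phrasing tidied, the proof is complete.
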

\begin{proof}
For $\xi = [\bv,\bw]$, we describe $\xi'=[\bv',\bw'] \in \cX$ with $\dim(\xi')=\dim(\xi)\pm 1$ such that $\xi'\in E^\pm(\xi)$. Let $n \in G(\xi)$. Then by \eqref{eq:Rn} and Proposition~\ref{prop:Direction-Decomposition}, 
\[
    R_n(\xi) = \setdef{ \rook_n(\xi,\mu) }{ \mu \in \Top_\cX(\xi) } = \setof{r_n}\quad\text{where $r_n =\pm 1$.}
\]
If $n \in J_i(\xi)$, then define $\xi' \coloneqq \left[\bv-\frac{1-r_n}{2}\bzero^{(n)},\bw+\bzero^{(n)} \right]$ so that $\xi \prec \xi'$ with $\Ex(\xi,\xi')=\setof{n}$. For any $\mu \in \Top_\cX(\xi')$, 
\begin{align*}
p_n(\xi,\xi') & =(-1)^{(\bv_n-\frac{1+r_n}{2})-\bv_n}((\bw_n+1)-\bw_n) = (-1)^{-\frac{1+r_n}{2}} \\ 
& = -r_n = -\rook_n(\xi',\mu) = -\rook_n(\xi,\mu)
\end{align*}
where the last equality follows from Corollary~\ref{cor:mu*}.
Thus, by Definition~\ref{def:exit_face} $\xi\in E^+(\xi')$.

Similarly, if $n \in J_e(\xi)$, then define $\xi' \coloneqq \left[\bv+\frac{1+r_n}{2}\bzero^{(n)},\bw-\bzero^{(n)} \right] \prec \xi$. We leave it to the reader verify that $\xi' \in E^-(\xi)$.
\end{proof}

\begin{prop}
    \label{prop:F1-welldefined}
    If $\cF_1 \colon \cX \mvmap \cX$ is given by \eqref{eq:defnF1}, then $\cF_1(\xi) \neq \emptyset$ for all $\xi \in \cX$.
\end{prop}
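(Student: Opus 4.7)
The plan is to split the analysis on whether $G(\xi)$ is empty and show in each case that $\cF_1(\xi)$ contains at least one cell.

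First I would handle the case $G(\xi) = \emptyset$. Since $\cF_0$ allows self-loops ($\xi \in \cF_0(\xi)$), and Condition 1.1 only forbids self-loops when $G(\xi) \neq \emptyset$, we have $\xi \in \cF_{1.1}(\xi)$. Condition 1.2 is stated only for pairs $\xi \prec \xi'$ with strict dimension gap one, so it imposes no constraint on the self-loop at $\xi$; hence $\xi \in \cF_{1.2}(\xi)$. Therefore $\xi \in \cF_1(\xi)$ in this case, and we are done.

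Next I would handle $G(\xi) \neq \emptyset$. Here Condition 1.1 forbids $\xi \in \cF_{1.1}(\xi)$, so we must exhibit some $\xi' \neq \xi$ lying in $\cF_1(\xi)$. Invoke Lemma~\ref{lem:gradient-implies-non-empty} to produce $\xi' \in \cX$ with $|\dim(\xi') - \dim(\xi)| = 1$ and either $\xi' \in E^-(\xi)$ or $\xi \in E^+(\xi')$. I would treat the two subcases uniformly by translating each into the form required by Condition 1.2 (lower-dimensional cell first). If $\xi' \in E^-(\xi)$, then in the notation of Definition~\ref{def:Rule1.2} with $\xi'$ as the lower-dimensional cell and $\xi$ as the higher-dimensional one, part (i) yields $\xi \notin \cF_{1.2}(\xi')$, so Proposition~\ref{prop:arrowExists} forces $\xi' \in \cF_{1.2}(\xi)$. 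If $\xi \in E^+(\xi')$, then part (ii) gives $\xi \notin \cF_{1.2}(\xi')$, and Proposition~\ref{prop:arrowExists} again yields $\xi' \in \cF_{1.2}(\xi)$.

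It remains to check $\xi' \in \cF_{1.1}(\xi)$. Because $\xi' \neq \xi$ and Condition 1.1 only removes the self-loop at a cell with nonempty gradient, $\cF_{1.1}$ agrees with $\cF_0$ off the diagonal. Since $\xi \preceq \xi'$ (or $\xi' \preceq \xi$) with $|\dim(\xi') - \dim(\xi)| = 1$, we have $\xi' \in \cF_0(\xi)$, hence $\xi' \in \cF_{1.1}(\xi)$. Combined with the previous paragraph, $\xi' \in \cF_1(\xi)$, finishing the proof.

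The argument is essentially bookkeeping once Lemma~\ref{lem:gradient-implies-non-empty} and Proposition~\ref{prop:arrowExists} are in place; the only step that needs care is orienting the face relation correctly when invoking Condition 1.2 so that the right contrapositive is applied to obtain the surviving edge from $\xi$ to $\xi'$ (rather than the reverse edge, which may well have been deleted).
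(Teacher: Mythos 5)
Your proof is correct and follows the same outline as the paper's: the $G(\xi)=\emptyset$ case keeps the self-loop, and the $G(\xi)\neq\emptyset$ case invokes Lemma~\ref{lem:gradient-implies-non-empty} to produce a cell $\xi'$ that survives into $\cF_1(\xi)$. You merely spell out in more detail (via Proposition~\ref{prop:arrowExists} and the observation that $\cF_{1.1}$ agrees with $\cF_0$ off the diagonal) what the paper compresses into the single sentence ``In either case, $\xi'\in\cF_1(\xi)$.''
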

\begin{proof}
    Notice that $\xi \in \cF_{1.2}(\xi)$ for any $\xi \in \cX$, so if $G(\xi) = \emptyset$, then $\xi \in \cF_{1.1}(\xi)$ too. Thus, $\xi \in \cF_1(\xi)$ and $\cF_1(\xi) \neq \emptyset$.

    If $G(\xi)\neq\emptyset$, then by Lemma~\ref{lem:gradient-implies-non-empty} there exists $\xi' \in \cX$, $|\dim(\xi')-\dim(\xi)|=1$, such that $\xi' \in E^-(\xi)$ or $\xi \in E^+(\xi')$. In either case, $\xi'\in \cF_{1}(\xi)$, so $\cF_1(\xi)\neq\emptyset$.
\end{proof}

\begin{ex}
\label{ex:F1}
Recall the wall labeling and rook field of Figure~\ref{fig:wall_labeling}.    
The associated directed graph $\cF_1\colon \cX \mvmap \cX$ is shown in Figure~\ref{fig:F1mvmap}.
To emphasize that $\cF_1$ is a directed graph, we represent the cell complex $\cX$ via a collection of vertices; one vertex for each cell. 
It is left to the reader to check that $G(\xi)\neq \emptyset$ for every cell $\xi$ except $\defcell{0}{2}{1}{1}$, $\defcell{2}{0}{1}{1}$, and $\defcell{2}{2}{0}{0}$.
Thus, by Condition 1.1 only these latter three cells have self edges in $\cF_{1.2}$.    
We indicate the existence of a self edge by drawing the vertex in blue (see Figure~\ref{fig:F1mvmap}).
A vertex without a self edge is drawn in black.

The double edges $\xi \darrow_{\cF_1}\xi'$ that remain in Figure~\ref{fig:F1mvmap} when $\xi \neq \xi'$ are all of the form $\xi\in \cX^{(0)}$ and $\xi'\in \cX^{(1)}$.
It is left to the reader to check that these are exactly the cases where $\xi\not\in E^\pm(\xi')$, i.e., when $\cF_{1.2}$ cannot be applied.
\end{ex}

\begin{figure}
\begin{picture}(400,210)(0,0)

\put(10,0){(A)}
\put(0,15){
\begin{tikzpicture}[scale=0.225]
\draw[step=8cm,black] (0,0) grid (24,24);

\foreach \i in {0,...,3}{
    \foreach \j in {0,...,3}{
    \draw[black, fill=black] (8*\i,8*\j) circle (2ex);
    }
}

\foreach \i in {0,...,3}{
    \draw(8*\i,-1) node{$\i$}; 
    \draw(-1,8*\i) node{$\i$}; 
}

\foreach \i in {0,...,2}{
    \draw[->, blue, thick] (4+8*\i,0.1) -- (4+8*\i,2.1); 
    \draw[->, blue, thick] (4+8*\i,23.9) -- (4+8*\i,21.9); 
    \draw[->, blue, thick] (0.1, 4+8*\i) -- (2.1,4+8*\i); 
    \draw[->, blue, thick] (23.9,4+8*\i) -- (21.9,4+8*\i); 
}
\foreach \i in {0,...,1}{
    \draw[->, blue, thick] (4+8*\i,5.9) -- (4+8*\i,7.9); 
    \draw[->, blue, thick] (4+8*\i,8.1) -- (4+8*\i,10.1);
    \draw[->, blue, thick] (4+8*\i,13.9) -- (4+8*\i,15.9); 
    \draw[->, blue, thick] (4+8*\i,16.1) -- (4+8*\i,18.1);  
    \draw[->, blue, thick] (5.9,4+8*\i) -- (7.9,4+8*\i); 
    \draw[->, blue, thick] (8.1,4+8*\i) -- (10.1,4+8*\i);
    \draw[->, blue, thick] (13.9,4+8*\i) -- (15.9,4+8*\i); 
    \draw[->, blue, thick] (16.1,4+8*\i) -- (18.1,4+8*\i);  
}
\foreach \i in {2}{
    \draw[->, blue, thick] (4+8*\i,7.9) -- (4+8*\i,5.9); 
    \draw[->, blue, thick] (4+8*\i,10.1) -- (4+8*\i,8.1);
    \draw[->, blue, thick] (4+8*\i,18.1) -- (4+8*\i,16.1); 
    \draw[->, blue, thick] (4+8*\i,15.9) -- (4+8*\i,13.9);  
    \draw[->, blue, thick] (7.9,4+8*\i) -- (5.9,4+8*\i); 
    \draw[->, blue, thick] (10.1,4+8*\i) -- (8.1,4+8*\i);
    \draw[->, blue, thick] (18.1,4+8*\i) -- (16.1,4+8*\i); 
    \draw[->, blue, thick] (15.9,4+8*\i) -- (13.9,4+8*\i);  
}
\end{tikzpicture}
}

\put(190,0){(B)}
\put(180,10){
    \begin{tikzpicture}[scale=0.225]
    \foreach \i in {0,...,3}{
        \draw(8*\i,-1.8) node{$\i$}; 
        \draw(-1.8,8*\i) node{$\i$}; 
       }
    \draw[step=8cm, black, ultra thin] (0,0) grid (24,24);
    \foreach \i in {0,...,3}{
      \foreach \j in {0,...,3}{
        \draw[black, fill=red, fill opacity=0.4] (8*\i,8*\j) circle (5ex);
      }
    }
    \draw[step=8cm,black, ultra thin] (0,0) grid (24,24);
    \foreach \i in {0,...,6}{
      \foreach \j in {0,...,6}{
        \draw[black, fill=black] (4*\i,4*\j) circle (2ex);
      }
    }
        \draw[blue, fill=blue] (4,20) circle (3ex);
        \draw[blue, fill=blue] (20,4) circle (3ex);
        \draw[blue, fill=blue] (16,16) circle (3ex);
    \foreach \i in {0,...,6}{
        \draw[->, blue, very thick] (1, 4*\i) -- (3, 4*\i);  
    }
    \foreach \i in {0,...,3}{
        \draw[->, blue, very thick] (5, 4*\i) -- (7, 4*\i);  
        \draw[->, blue, very thick] (9, 4*\i) -- (11, 4*\i);  
        \draw[->, blue, very thick] (13, 4*\i) -- (15, 4*\i);  
        \draw[->, blue, very thick] (17, 4*\i) -- (19, 4*\i);  
     }
    \foreach \i in {0,...,6}{
        \draw[->, blue, very thick] (23, 4*\i) -- (21, 4*\i);  
    }
    \foreach \i in {1,...,4}{
        \draw[->, blue, very thick] (3+4*\i,24) -- (1+4*\i,24);  
        \draw[->, blue, very thick] (3+4*\i,20) -- (1+4*\i,20);  
    }
    \foreach \i in {0,...,6}{
        \draw[->, blue, very thick] (4*\i,23) -- (4*\i, 21);  
    }
    \foreach \i in {1,...,4}{
        \draw[->, blue, very thick] (20, 3+4*\i) -- (20,1+4*\i);  
        \draw[->, blue, very thick] (24, 3+4*\i) -- (24,1+4*\i);  
    }
    \foreach \i in {0,...,6}{
        \draw[->, blue, very thick] (4*\i,1) -- (4*\i, 3);  
    }
    \foreach \i in {0,...,3}{
        \draw[->, blue, very thick] (4*\i,5) -- (4*\i, 7);  
        \draw[->, blue, very thick] (4*\i,9) -- (4*\i, 11);  
        \draw[->, blue, very thick] (4*\i,13) -- (4*\i, 15);  
        \draw[->, blue, very thick] (4*\i,17) -- (4*\i, 19);  
    }
    \foreach \i in {1,...,4}{
    \draw[<->, blue, very thick] (3+4*\i,16) -- (1+4*\i,16);  
    }
    \foreach \i in {1,...,4}{
    \draw[<->, blue, very thick] (16, 3+4*\i) -- (16,1+4*\i);  
    }
    \end{tikzpicture}
}
\end{picture}
\caption{
(A) The vector representation of the wall labeling $\omega$ of Figure~\ref{fig:wall_labeling}.
(B)
Directed graph associated with $\cF_1 \colon \cX \mvmap \cX$ given wall labeling in (A). Each cell in $\cX$ (vertices, edges, and faces) is represented by a black or blue dot and are the nodes of the directed graph. The vertices of the cell complex $\cX$ are circled in red. Every black vertex is associated to a cell that has a gradient direction and hence by $\cF_{1.1}$ has no self-edge.
Large blue dots, corresponding to the cells
$\defcell{0}{2}{1}{1}$, $\defcell{2}{0}{1}{1}$, and $\defcell{2}{2}{0}{0}$
of $\cX$,
indicate vertices for which there are self-edges.
The remaining edges are removed by $\cF_{1.2}$.}
\label{fig:F1mvmap}
\end{figure}

\begin{prop}
    \label{prop:Rule1-DArrow-Implies-Opaque} 
    Let $\xi,\xi' \in \cX$ such that $\xi \prec \xi'$. If $\xi \darrow_{\cF_1} \xi'$, then $\Ex(\xi,\xi')=\setof{n}\subseteq O(\xi)$. 
\end{prop}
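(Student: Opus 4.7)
The plan is to unpack the hypothesis $\xi \darrow_{\cF_1} \xi'$ and show that the face $\xi$ must take both $n$-rook values $\pm 1$, forcing $R_n(\xi) = \setof{\pm 1}$. First, since $\xi \darrow_{\cF_1} \xi'$ requires in particular that $\xi \in \cF_0(\xi')$ and $\xi' \in \cF_0(\xi)$, the hypothesis $\xi \prec \xi'$ combined with Definition~\ref{def:Rule0} forces $\dim(\xi') - \dim(\xi) = 1$. Consequently $\Ex(\xi,\xi') = J_i(\xi)\cap J_e(\xi')$ is a singleton $\setof{n}$, and in particular $n\in J_i(\xi)$.

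Next, I use Condition 1.2. Since $\xi \in \cF_1(\xi') \subseteq \cF_{1.2}(\xi')$, the contrapositive of Definition~\ref{def:Rule1.2}(ii) gives $\xi \notin E^+(\xi')$. Similarly, $\xi' \in \cF_1(\xi) \subseteq \cF_{1.2}(\xi)$ together with the contrapositive of Definition~\ref{def:Rule1.2}(i) yields $\xi \notin E^-(\xi')$. Negating the quantifiers in Definition~\ref{def:exit_face} (applied to the unique direction $n\in \Ex(\xi,\xi')$), there exist $\mu_1,\mu_2 \in \Top_\cX(\xi')$ such that
\[
\rook_n(\xi,\mu_1) \neq p_n(\xi,\xi') \quad \text{and} \quad \rook_n(\xi,\mu_2) \neq -p_n(\xi,\xi').
\]

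The key step is now to rule out the value $0$ for $\rook_n(\xi,\mu_i)$. Because $n\in J_i(\xi)$, Corollary~\ref{cor:mu*} applies and gives
\[
\rook_n(\xi,\mu) = \omega(\mu_n^*(\xi,\mu),\mu) \in \setof{\pm 1}
\]
for every $\mu \in \Top_\cX(\xi)$, and in particular for every $\mu \in \Top_\cX(\xi') \subseteq \Top_\cX(\xi)$. Since $p_n(\xi,\xi') \in \setof{\pm 1}$, the two inequalities above force $\rook_n(\xi,\mu_1) = -p_n(\xi,\xi')$ and $\rook_n(\xi,\mu_2) = p_n(\xi,\xi')$. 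Thus $\setof{\pm 1} \subseteq R_n(\xi) \subseteq \setof{\pm 1}$, so $R_n(\xi) = \setof{\pm 1}$ and hence $n\in O(\xi)$ by Definition~\ref{def:namedirections}, completing the proof.

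I do not anticipate a serious obstacle: the argument is a direct unwinding of Definitions~\ref{def:Rule1.1}--\ref{def:Rule1}, the exit/entrance face Definition~\ref{def:exit_face}, and Corollary~\ref{cor:mu*}. The only subtle point is making sure the $\mu_i$ witnessing $\xi \notin E^\pm(\xi')$ really lie in $\Top_\cX(\xi)$ (so that Corollary~\ref{cor:mu*} applies), but this is immediate from $\xi \prec \xi'$ via the inclusion $\Top_\cX(\xi') \subseteq \Top_\cX(\xi)$.
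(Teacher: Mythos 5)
Your proof is correct and follows essentially the same route as the paper's: both unwind Condition 1.2 to show that $n\in\Ex(\xi,\xi')$ cannot be a gradient direction and pin that down using the fact that $\rook_n(\xi,\mu)\in\setof{\pm 1}$ whenever $n\in J_i(\xi)$. The only cosmetic difference is that you argue directly that $R_n(\xi)=\setof{\pm1}$, whereas the paper argues by contradiction from $n\in G(\xi)$ via Proposition~\ref{prop:Direction-Decomposition}.
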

\begin{proof}
    Definition~\ref{def:Rule0} states that for $\xi,\xi' \in \cX$, $\xi \prec \xi'$, there is a double edge $\xi \darrow_{\cF_0}\xi'$ under $\cF_0$ if and only if $Ex(\xi,\xi')=\setof{n}$. Note that $n \in J_i(\xi)$, so by Proposition~\ref{prop:Direction-Decomposition} it follows that $n \in O(\xi)$ or $n \in G(\xi)$. Suppose for the sake of contradiction that $n \in G(\xi)$, so either $R_n(\xi)=\setof{1}$ or $R_n(\xi)=\setof{-1}$. Then either $\xi \in E^+(\xi')$ or $\xi \in E^-(\xi)$, and thus either $\xi' \notin \cF_1(\xi)$ or $\xi \notin \cF_1(\xi')$, contradicting the hypothesis.
\end{proof}

\section{Quasi-local conditions}
\label{sec:quasi-local}

Typically, e.g., Figure~\ref{fig:F1mvmap}, the directed graph $\cF_1$ contains double edges $\xi \darrow_{\cF_1} \xi'$.
As indicated in the introduction to this chapter, if $\cF_1$ were the optimal multivalued map that could characterize the dynamics, then this suggests the existence of recurrent dynamics between the geometric realizations of $\xi$ and $\xi'$.
With this in mind, consider $\xi = \defcell{1}{2}{0}{0}
$ and
    $\xi' = \defcell{1}{2}{1}{0}
$ in Figure~\ref{fig:F1mvmap}.
Observe that $\setof{2} \in G(\xi)\cap G(\xi')$.
This implies that there is a consistent nontrivial gradient direction associated within these cells that should prevent a recurrent set.
The objective of this section is to identify this condition and propose a modification to $\cF_1$ that replaces the double edge with a single edge. 

Recall from Definition~\ref{defn:Ex} that $\Ex(\xi,\xi')$ denotes the extension of $\xi$ in $\xi'$, that is, the directions that are inessential in $\xi$ but essential in $\xi'$. 

\begin{defn}
\label{defn:GO-pair}
Let $(\xi,\xi') \in \cX \times \cX$ be such that $\xi \prec \xi'$. We say that $(\xi,\xi')$ has a \emph{GO-pair} $(n_g,n_o) \in \setof{1,\ldots,N} \times \setof{1,\ldots,N}$ if $\Ex(\xi,\xi')=\setof{n_o}$ and if there exists $n_g \in G(\xi')$ and $n_g$-adjacent top cells $\mu,\mu' \in \Top_\cX(\xi')$ such that
\[
\rook_{n_o}(\xi,\mu) \neq \rook_{n_o}(\xi,\mu').
\]
\end{defn}

\begin{ex}
\label{ex:indecisive_drift}
Figure~\ref{fig:indecisive_drift_2} depicts the wall labeling from Figure~\ref{fig:wall_labeling}(b) restricted to cells with vertices in $\setof{1,2,3}\times \setof{0,1,2}$ as in Example~\ref{ex:exitface}. 
Observe that $G(\xi)=\activeset(\xi)=\setof{2}$ and $\rmap\xi(2)=1 \in O(\xi)$ as in Example~\ref{ex:cyclingcells}. So there is $2 \in G_i(\xi_k')$, $k=0,1$, that actively regulates $1 \in O(\xi)$ with $\Ex(\xi,\xi_k')=\setof{1}$, therefore $(\xi,\xi_k')$, $k=0,1$, both have a GO-pair $(2,1)$.  
\end{ex}

\begin{figure}[h]
    \begin{picture}(400,140)
    \put(0,0){
    \begin{tikzpicture}[scale=0.25]
    \draw[step=8cm,black] (0,0) grid (16,16);
    
    \foreach \i in {0,...,2}{
        \foreach \j in {0,...,2}{
        \draw[black, fill=black] (8*\i,8*\j) circle (2ex);
        }
    }
    
    \foreach \i in {1,...,3}{
        \draw(-1,8*\i -8) node{$\i$}; 
    }
    \foreach \i in {0,...,2}{
        \draw(8*\i,-1) node{$\i$}; 
    }
    
    \foreach \i in {0,...,1}{
        \draw[->, blue, thick] (4+8*\i,0.1) -- (4+8*\i,2.1); 
        \draw[->, blue, thick] (4+8*\i,15.9) -- (4+8*\i,13.9); 
    }
    \foreach \i in {1,...,2}{
        \draw[->, blue, thick] (0.1, 4+8*\i -8) -- (2.1,4+8*\i -8); 
    }
    \foreach \i in {0,...,1}{
        \draw[->, blue, thick] (4+8*\i,5.9) -- (4+8*\i,7.9); 
        \draw[->, blue, thick] (4+8*\i,8.1) -- (4+8*\i,10.1);
    }
        \draw[->, blue, thick] (8.1,4) -- (10.1,4);
        \draw[->, blue, thick] (5.9,4) -- (7.9,4);
        \draw[->, blue, thick] (13.9,4) -- (15.9,4); 
    \foreach \i in {2}{
        \draw[->, blue, thick] (7.9,4+8*\i -8) -- (5.9,4+8*\i -8); 
         \draw[->, blue, thick] (10.1,4+8*\i -8) -- (8.1,4+8*\i -8);
         \draw[->, blue, thick] (15.9,4+8*\i -8) -- (13.9,4+8*\i -8);  
    }
    
    \draw(4,12) node{$\mu_0$};
    \draw(12,12)  node{$\mu_1$};
    \draw(4,4) node{$\hat\xi_0'$};
    \draw(12,4)  node{$\hat\xi_1'$};
    \draw(5.5,7) node{$\xi'_0$};
    \draw(8.5,14)  node{$\tau$};
    \draw(8.5,2)  node{$\hat\xi$};
    \draw(13.5,7)  node{$\xi'_1$};
    \draw(9,9)  node{$\xi$};
    \end{tikzpicture}
    }
    \end{picture}
    \caption{A subcomplex and associated wall labeling from Figure~\ref{fig:wall_labeling}(b).
    The pairs $(\xi,\xi'_0) = \left( \defcell{1}{2}{0}{0},\defcell{0}{2}{1}{0} \right)$ and $(\xi,\xi'_1) =\left( \defcell{1}{2}{0}{0},\defcell{1}{2}{1}{0} \right)$ have GO-pair $(2,1)$ and exhibit indecisive drift. There are unique back walls given by $(\hat{\xi},\hat{\xi}_0')=\left( \defcell{1}{1}{0}{1}, \defcell{0}{1}{1}{1} \right)$ and $(\hat{\xi},\hat{\xi}_1')=\left( \defcell{1}{1}{0}{1},\defcell{1}{1}{1}{1} \right)$.}
    \label{fig:indecisive_drift_2}
\end{figure}
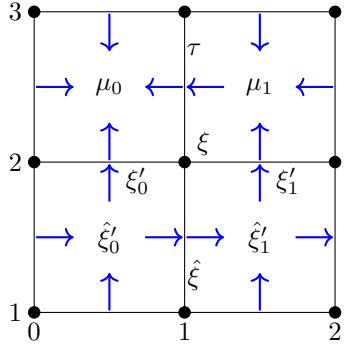

\begin{prop}
\label{prop:GO-pair-properties}
If $(\xi,\xi') \in \cX \times \cX$ has a GO-pair $(n_g,n_o)$, then
\begin{enumerate}
\item $\xi\darrow_{\cF_1}\xi'$,
\item $n_o \in O(\xi)$,
\item $n_g \in G_i(\xi')$,
\item $n_g \neq n_o$,
\item $n_g$ actively regulates $n_o$ at $\xi'$,
\item $n_g$ actively regulates $n_o$ at $\xi$.
\end{enumerate}
\end{prop}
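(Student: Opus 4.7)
The plan is to treat items 2--4 and 6 as direct consequences of the definitions, item 1 as a verification against Conditions 1.1 and 1.2, and item 5 as the combinatorial core of the proposition. First I would dispatch item 2: since $n_o \in \Ex(\xi,\xi') \subseteq J_i(\xi)$, Proposition~\ref{prop:Direction-Decomposition}(ii) gives $n_o \in O(\xi) \cup G(\xi)$, and the GO-pair hypothesis $\rook_{n_o}(\xi,\mu)\neq\rook_{n_o}(\xi,\mu')$ immediately rules out $n_o \in G(\xi)$, hence $n_o \in O(\xi)$. For item 3, I am given $n_g \in G(\xi')$; if instead $n_g \in J_e(\xi')$, then every $\mu \in \Top_\cX(\xi')$ would share the same $n_g$-coordinate in its base vertex, precluding $n_g$-adjacency, so $n_g \in J_i(\xi')$ and item 3 follows. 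Item 4 is then immediate, since $n_o \in J_e(\xi')$ (from $\Ex(\xi,\xi')=\{n_o\}$) while $n_g \in J_i(\xi')$.

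For item 1, both $\xi \in \cF_0(\xi')$ and $\xi' \in \cF_0(\xi)$ by Definition~\ref{def:Rule0} since $\dim(\xi')-\dim(\xi)=1$, and Condition 1.1 leaves this double edge untouched because $\xi \neq \xi'$. The point is that $\xi$ is neither an exit nor an entrance face of $\xi'$: the GO-pair hypothesis says $\rook_{n_o}(\xi,\cdot)$ is non-constant on $\Top_\cX(\xi')$, whereas being in $E^\pm(\xi')$ would require this quantity to equal the fixed value $\pm p_{n_o}(\xi,\xi') \in \{\pm 1\}$ for every $\mu$. Hence Condition 1.2 removes neither half of the double edge, so $\xi \darrow_{\cF_1} \xi'$.

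For item 5, let $\zeta$ denote the shared $n_g$-wall of $\mu$ and $\mu'$. Since $n_g \in J_i(\xi')$, Proposition~\ref{prop:mu*} guarantees $\xi' \preceq \zeta$, so $(\zeta,\mu),(\zeta,\mu') \in W(\xi')$ are the candidate $n_g$-walls. Unfolding Definition~\ref{def:rookfield}, the hypothesis $\rook_{n_o}(\xi,\mu)\neq\rook_{n_o}(\xi,\mu')$ translates into $\omega(\mu^*_{n_o}(\xi,\mu),\mu)\neq\omega(\mu^*_{n_o}(\xi,\mu'),\mu')$, an inequality between the $\omega$-labels on the ``same-side'' $n_o$-walls of $\mu$ and $\mu'$; combining this with Definition~\ref{def:wall_labeling}(i) applied at a vertex $\sigma \preceq \zeta$ forces $\tilde{o}_\sigma(n_g)=n_o$, and a short case analysis on the two possible values of each $\rook_{n_o}(\zeta,\cdot)$ yields the required inequality $\rook_{n_o}(\zeta,\mu)\neq\rook_{n_o}(\zeta,\mu')$. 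Item 6 then follows immediately from Proposition~\ref{prop:act_regulation_faces}, since $\xi \preceq \xi'$.

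The main obstacle will be item 5, and specifically handling the degenerate scenario in which $\omega(\mu^-_{n_o},\mu)\neq\omega(\mu^+_{n_o},\mu)$ for \emph{both} top cells, which collapses $\rook_{n_o}(\zeta,\cdot)$ to $0$ on both sides. The argument must exploit the additional constraint $\omega(\zeta,\mu)=\omega(\zeta,\mu')$, which is forced by $n_g \in G(\xi')$ through $\rook_{n_g}(\xi',\mu)=\omega(\zeta,\mu)$ (Corollary~\ref{cor:mu*}), together with the wall-labeling axioms for $n_g$-adjacent top cells, to exclude this degenerate configuration.
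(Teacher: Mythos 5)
Items 1--4 and 6 in your proposal are correct, if occasionally routed a bit differently from the paper (the paper derives item~2 from item~1 via Proposition~\ref{prop:Rule1-DArrow-Implies-Opaque}, and item~3 from $\xi'\preceq\xi_{n_g}$ and Proposition~\ref{prop:JtauJsigma}; your variants are sound).

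There is, however, a genuine gap in item~5, and you have already half-spotted it: the degenerate configuration where $\omega(\mu^-_{n_o},\mu)\neq\omega(\mu^+_{n_o},\mu)$ and $\omega(\mu'^-_{n_o},\mu')\neq\omega(\mu'^+_{n_o},\mu')$ cannot be excluded by the mechanism you propose. The constraint $\omega(\zeta,\mu)=\omega(\zeta,\mu')$ is automatic from Definition~\ref{def:wall_labeling}(ii) as soon as $\tilde{o}_\sigma(n_g)\neq n_g$ --- which you have already established, since $\tilde{o}_\sigma(n_g)=n_o\neq n_g$. Re-deriving it from $n_g\in G(\xi')$ therefore adds nothing, and in any event it constrains only the label on the shared $n_g$-wall $\zeta$; it places no restriction on the $n_o$-walls $\mu^{\pm}_{n_o}$, $\mu'^{\pm}_{n_o}$, which is exactly where the degeneracy lives.

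What actually kills the degeneracy is monotonicity of $R_n$ under the face relation. Since $n_o\in J_e(\xi')$ and $n_o\in J_e(\zeta)$, the defining formulas in \eqref{eq:rookwalls} and \eqref{eq:PhiSigmaKappa} coincide, so $\rook_{n_o}(\zeta,\mu)=\rook_{n_o}(\xi',\mu)$. Your argument for item~2 already gives $R_{n_o}(\xi)=\setof{\pm 1}$ (every $\rook_{n_o}(\xi,\nu)=\omega(\mu^*_{n_o}(\xi,\nu),\nu)\in\setof{\pm 1}$, and both signs occur by hypothesis), and Proposition~\ref{prop:face_R_n} then forces $R_{n_o}(\xi')\subseteq\setof{\pm 1}$. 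Hence $\rook_{n_o}(\zeta,\mu)=\rook_{n_o}(\xi',\mu)\neq 0$, which precludes the degenerate case, forces $\omega(\mu^-_{n_o},\mu)=\omega(\mu^+_{n_o},\mu)$, and yields $\rook_{n_o}(\zeta,\mu)=\rook_{n_o}(\xi,\mu)\neq\rook_{n_o}(\xi,\mu')=\rook_{n_o}(\zeta,\mu')$, so $(\zeta,\mu)$, $(\zeta,\mu')$ witness active regulation. The paper's proof simply asserts $\rook_{n_o}(\xi_{n_g},\mu)=\rook_{n_o}(\xi,\mu)$ as a direct consequence of Definition~\ref{def:rookfield}; your instinct that there is something to verify here is right, but the tool you need is Proposition~\ref{prop:face_R_n}, not the wall-labeling axioms.
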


\begin{proof}
Assume that $(\xi,\xi') \in \cX \times \cX$ has a GO-pair. By Definition~\ref{defn:GO-pair}, that means that $\xi \preceq \xi'$ and $\Ex(\xi,\xi')=\setof{n_o}$, hence $\xi \preceq \xi'$ and $\dim(\xi')-\dim(\xi)=1$. Therefore, $\xi\darrow_{\cF_0}\xi'$. 

To verify that $\xi\darrow_{\cF_1}\xi'$, we need to check that Conditions 1.1 and 1.2 of Definitions~\ref{def:Rule1.1} and \ref{def:Rule1.2} do not apply. Note that Condition 1.1 only applies when $\xi=\xi'$, which is not the case, so we're left with Condition 1.2.

We prove that Condition 1.2 does not apply by contradiction. Suppose for the sake of contradiction that it does. Then either $\xi \notin \cF_{1.2}(\xi')$ or $\xi' \notin \cF_{1.2}(\xi)$, i.e., $\xi$ is either an entrance or exit face of $\xi'$. That means that for every top cell $\mu \in \Top_{\cX}(\xi')$,
$\rook_{n_o}(\xi,\mu) = p_{n_o}(\xi,\xi')$,
or for every top cell $\mu \in \Top_{\cX}(\xi')$, 
$\rook_{n_o}(\xi,\mu) = -p_{n_o}(\xi,\xi')$.
In either case, $\rook_{n_o}(\xi,\mu)=\rook_{n_o}(\xi,\mu')$ for any $\mu,\mu' \in \Top_\cX(\xi')$. In particular, $\rook_{n_o}(\xi,\mu)=\rook_{n_o}(\xi,\mu')$ for any $n_g$-adjacent top cells $\mu,\mu' \in \Top_\cX(\xi')$, which contradicts the assumption that $(\xi,\xi')$ has a GO-pair. Therefore, Condition 1.2 does not apply and $\xi \in \cF_{i}(\xi')$ and $\xi' \in \cF_{i}(\xi)$ for both $i=1.1,1.2$. Thus, $\xi \in \cF_1(\xi')$ and $\xi' \in \cF_1(\xi)$, which yields $\xi\darrow_{\cF_1}\xi'$ as desired in (1). 

Given that $\xi\darrow_{\cF_1}\xi'$, (2) follows from Proposition~\ref{prop:Rule1-DArrow-Implies-Opaque} since $\setof{n_o}=\Ex(\xi,\xi')\subseteq O(\xi)$ and $\setof{n_o}=\Ex(\xi,\xi')\subseteq O(\xi')$.

To prove (3), note that $n_g \in G(\xi')$ by Definition~\ref{defn:GO-pair}, so it remains to show that $n_g \in J_i(\xi')$. Let $\mu,\mu'\in\Top_\cX(\xi')$ be the $n_g$-adjacent top cells that satisfy $\rook_{n_o}(\xi,\mu)\neq \rook_{n_o}(\xi,\mu')$. If $\xi_{n_g} \in \cX^{(N-1)}$ is such that $\xi' \preceq \mu$ and $\xi_{n_g} \preceq \mu'$, then $\xi'\preceq \xi_{n_g}$. Therefore, by Proposition~\ref{prop:JtauJsigma}, $\setof{n_g}=J_i(\xi_{n_g})\subseteq J_i(\xi')$, so $n_g \in G(\xi')\cap J_i(\xi') = G_i(\xi)$.

Note that (4) follows directly from (3), since we have $n_g \in J_i(\xi')$ and $n_o \in J_e(\xi')$.

Observe that by Proposition~\ref{prop:mu*}, $\mu_{n_g}^*(\xi,\mu) = \xi_{n_g} = \mu_{n_g}^*(\xi,\mu)$, so the Definition~\ref{def:rookfield} of $\rook$ yields the equalities in
\[
\rook_{n_o}(\xi_{n_g},\mu) = \rook_{n_o}(\xi,\mu) \neq \rook_{n_o}(\xi,\mu') = \rook_{n_o}(\xi_{n_g},\mu').
\]
Thus, $(\xi_{n_g},\mu),(\xi_{n_g},\mu') \in W(\xi')$ are $n_g$-walls that differ at the $n_o$-entry of $\rook$ which, by Definition~\ref{def:active_regulation}, means that $n_g$ actively regulates $n_o$.

Finally, note that (6) follows directly from (5) and Proposition~\ref{prop:act_regulation_faces}.
\end{proof}

\begin{defn}
\label{defn:indecisive}
Let $(\xi,\xi') \in \cX \times \cX$ be such that $\xi \prec \xi'$. We say that $(\xi,\xi')$ exhibits \emph{indecisive drift} if $(\xi,\xi')$ has a GO-pair $(n_g,n_o)$ and $\rmap\xi^{-1}(n) = \setof{n}$ for all $n \in O_i(\xi) \setminus \setof{n_o}$.

Given a rook field $\rook$ we denote the set of pairs $(\xi,\xi')$ that exhibit indecisive drift by $\cD(\rook)$.
\end{defn}

\begin{prop}
\label{prop:unique_ext_GO_pairs}
Assume $(\xi_0, \xi_0'), (\xi_1, \xi_1') \in \cD(\rook)$.
If $\xi_0 \preceq \xi_1$ or $\xi_0' \preceq \xi_1'$, then $\Ex(\xi_0, \xi_0') = \Ex(\xi_1, \xi_1')$.
\end{prop}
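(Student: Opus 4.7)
The plan is a proof by contradiction. Let $\setof{n_{o,i}} = \Ex(\xi_i,\xi_i')$ and let $(n_{g,i}, n_{o,i})$ be the associated GO-pair for $i=0,1$. Assume $n_{o,0} \neq n_{o,1}$; the target contradiction is $n_{g,1} = n_{o,1}$, which is forbidden by Proposition~\ref{prop:GO-pair-properties}(4). In both cases I will establish two facts about $\xi_0$:
\begin{enumerate}
\item[(i)] $\rmap_{\xi_0}(n_{g,1}) = n_{o,1}$;
\item[(ii)] $n_{o,1} \in O_i(\xi_0) \setminus \setof{n_{o,0}}$.
\end{enumerate}
The indecisive drift condition on $(\xi_0, \xi_0')$ will then force $\rmap_{\xi_0}^{-1}(n_{o,1}) = \setof{n_{o,1}}$, so $n_{g,1} = n_{o,1}$, delivering the contradiction.

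For Case 1 ($\xi_0 \preceq \xi_1$), fact (i) follows directly from Proposition~\ref{prop:GO-pair-properties}(6) combined with Proposition~\ref{prop:act_regulation_faces}. For fact (ii) I chain $n_{o,1} \in J_i(\xi_1) \subseteq J_i(\xi_0)$ via Proposition~\ref{prop:JtauJsigma}(ii), and then sandwich
\[
\setof{\pm 1} \;=\; R_{n_{o,1}}(\xi_1) \;\subseteq\; R_{n_{o,1}}(\xi_0) \;\subseteq\; \setof{\pm 1}.
\]
The left equality is Proposition~\ref{prop:GO-pair-properties}(2), the first inclusion is Proposition~\ref{prop:face_R_n}, and the right inclusion follows from inessentiality of $n_{o,1}$ at $\xi_0$ together with Definition~\ref{def:rookfield}, which forces $\rook_{n_{o,1}}(\xi_0,\mu) \in \setof{\pm 1}$ for every $\mu \in \Top_\cX(\xi_0)$. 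Thus $n_{o,1} \in O_i(\xi_0)$, and since by assumption $n_{o,1} \neq n_{o,0}$, the indecisive drift hypothesis applies.

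Case 2 ($\xi_0' \preceq \xi_1'$) follows the same blueprint along the chain $\xi_0 \preceq \xi_0' \preceq \xi_1'$. For (i), start from active regulation of $n_{o,1}$ by $n_{g,1}$ at $\xi_1'$ (Proposition~\ref{prop:GO-pair-properties}(5)) and apply Proposition~\ref{prop:act_regulation_faces} twice to reach $\xi_0$. For (ii) I would first show $n_{o,1} \in O_i(\xi_0')$ and then descend to $O_i(\xi_0)$ exactly as in Case 1; the membership at $\xi_0'$ is obtained by tracking $R_{n_{o,1}}$ through $R_{n_{o,1}}(\xi_1') \subseteq R_{n_{o,1}}(\xi_0')$ (Proposition~\ref{prop:face_R_n} applied to $\xi_0' \preceq \xi_1'$) together with Corollary~\ref{cor:mu*}, which transfers the opacity $R_{n_{o,1}}(\xi_1) = \setof{\pm 1}$ through $\xi_1'$ to $\xi_0'$. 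The main obstacle—and the delicate step—is verifying that $n_{o,1}$ must be inessential at $\xi_0'$: this requires exploiting that the GO-pair for $(\xi_1, \xi_1')$ demands variation of $\rook_{n_{o,1}}(\xi_1,\mu)$ across $n_{g,1}$-adjacent top cells of $\xi_1'$, which via Definition~\ref{def:rookfield} is incompatible with $n_{o,1}$ being essential at $\xi_0'$ (for in that case the common $n_{g,1}$-wall of any two such top cells, which is an $n_{o,1}$-essential cell above $\xi_0'$, would force the corresponding $\rook_{n_{o,1}}$ values to coincide via the walls-agree rule). Once this inessentiality is secured, the $R$-sandwich argument from Case 1 closes the proof.
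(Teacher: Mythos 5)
Your overall strategy is identical to the paper's: establish $\rmap_{\xi_0}(n_{g,1}) = n_{o,1}$, then invoke the uniqueness clause of indecisive drift at $(\xi_0,\xi_0')$ to force $n_{g,1} = n_{o,1}$, contradicting Proposition~\ref{prop:GO-pair-properties}(4). Your step (i) matches the paper in both cases, and you go one step further than the paper by attempting to verify (ii) $n_{o,1} \in O_i(\xi_0)\setminus\setof{n_{o,0}}$ explicitly — a hypothesis that the indecisive-drift clause genuinely needs, but which the paper's own proof leaves tacit. Your Case~1 argument for (ii) is correct: $n_{o,1}\in J_i(\xi_1)\subseteq J_i(\xi_0)$, then the $R$-sandwich using Proposition~\ref{prop:GO-pair-properties}(2), Proposition~\ref{prop:face_R_n}, and the observation that inessential directions produce rook values in $\setof{\pm 1}$.

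Your Case~2 argument for (ii), however, does not close as written. The inessentiality you need is $n_{o,1}\in J_i(\xi_0')$, and you propose to derive a contradiction from assuming $n_{o,1}\in J_e(\xi_0')$ via the ``walls-agree rule.'' The difficulty: the common $n_{g,1}$-wall $\xi_w$ of the two $n_{g,1}$-adjacent top cells is an $(N-1)$-cell with $J_i(\xi_w)=\setof{n_{g,1}}$, so $n_{o,1}\in J_e(\xi_w)$ \emph{unconditionally} — this has nothing to do with whether $n_{o,1}$ is essential at $\xi_0'$, so the ``in that case'' clause is not a hypothesis you can contradict. Moreover, Definition~\ref{def:wall_labeling}(i) only forces $\rook_{n_{o,1}}(\xi_w,\mu)=\rook_{n_{o,1}}(\xi_w,\mu')$ when $n_{o,1}\neq\tilde{o}_\sigma(n_{g,1})$, and the whole point of the GO-pair is that $\tilde{o}_\sigma(n_{g,1})=n_{o,1}$, so the walls-agree rule is precisely inapplicable here. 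Thus the argument you offer establishes nothing about $n_{o,1}$'s essentiality at $\xi_0'$, and step~(ii) in Case~2 remains unproved. (It is worth noting that the paper's own proof never verifies $n_{o,1}\in O_i(\xi_0)$ for Case~2 either, so you have correctly put your finger on a genuinely delicate point; but the fix you propose is not valid as stated.)
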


\begin{proof}
Let $(n_g, n_o)$ and $(n'_g, n'_o)$ be GO-pairs for $(\xi_0, \xi_0')$ and $(\xi_1, \xi_1')$, respectively. It follows from Proposition~\ref{prop:GO-pair-properties} that $n_g$ actively regulates $n_o$ at $\xi_0$ and $n'_g$ actively regulates $n'_o$ at $\xi_1$, that is, $\rmap{\xi_0}(n_g) = n_o$ and $\rmap{\xi_1}(n'_g) = n'_o$.

We will show that if $\xi_0 \preceq \xi_1$ or $\xi_0' \preceq \xi_1'$, then $\rmap{\xi_0}(n'_g) = n'_o$. The result then follows, since this implies that $\rmap{\xi_0}^{-1}(n_o) \neq \setof{n_o}$ and $\rmap{\xi_0}^{-1}(n'_o) \neq \setof{n'_o}$, 
which is contradiction with the uniqueness condition in Definition~\ref{defn:indecisive} if $n_o \neq n'_o$. Therefore we must have $n_o = n'_o$ and hence
\[
\Ex(\xi_0, \xi_0') = \setof{n_o} = \setof{n'_o} = \Ex(\xi_1, \xi_1').
\]
Assume that $\xi_0 \preceq \xi_1$.  
Since $\rmap{\xi_1}(n'_g) = n'_o$ by  Proposition~\ref{prop:act_regulation_faces} $\rmap{\xi_0}(n'_g) = n'_o$ as desired. 

Now suppose that $\xi_0' \preceq \xi_1'$. Since $(n'_g, n'_o)$ is a GO-pair for $(\xi_1, \xi_1')$, Proposition~\ref{prop:GO-pair-properties} implies that $n'_g$ actively regulates $n'_o$ at $\xi_1'$. It follows from Definition~\ref{def:active_regulation} that there exist $n'_g$-walls $(\xi_{n'_g}, \mu), (\xi_{n'_g}, \mu') \in W(\xi_1') \subseteq W(\xi_0')$ such that $\rook_{n'_o}(\xi_{n'_g}, \mu) \neq \rook_{n'_o}(\xi_{n'_g}, \mu')$. This implies that $(\xi_{n'_g}, \mu), (\xi_{n'_g}, \mu') \in W(\xi_0') \subseteq W(\xi_0)$ are $n'_g$-walls of $\xi_0$ with $\rook_{n'_o}(\xi_{n'_g}, \mu) \neq \rook_{n'_o}(\xi_{n'_g}, \mu')$. Since we also have that $n'_g \in J_i(\xi_1') = J_i(\xi_0') \subset J_i(\xi_0)$ it follows that $n'_g$ actively regulates $n'_o$ at $\xi_0$, that is, $\rmap{\xi_0}(n'_g) = n'_o$ as desired.
\end{proof}

\begin{prop}
\label{prop:not_L_GO_pairs}
There are no triples $\xi \prec \xi' \prec \xi''$ in $\cX$ such that $(\xi,\xi')$ and $(\xi',\xi'')$ exhibit indecisive drift. 
\end{prop}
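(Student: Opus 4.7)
My plan is a short proof by contradiction that leans entirely on Proposition~\ref{prop:unique_ext_GO_pairs}. The observation is that indecisive drift forces the extension direction of each pair to coincide when the pairs are nested, but the nesting $\xi \prec \xi' \prec \xi''$ would require a single index to be simultaneously essential and inessential in $\xi'$, which is impossible by Definition~\ref{defn:JeJi}.

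More concretely, first I would suppose for contradiction that such a triple $\xi \prec \xi' \prec \xi''$ exists with $(\xi,\xi'),(\xi',\xi'') \in \cD(\rook)$. By Definition~\ref{defn:indecisive} each pair admits a GO-pair, so by Definition~\ref{defn:GO-pair} the extensions are singletons, say $\Ex(\xi,\xi') = \setof{n_o}$ and $\Ex(\xi',\xi'') = \setof{n_o'}$.

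Next, since $\xi \preceq \xi'$, Proposition~\ref{prop:unique_ext_GO_pairs} applies to the pairs $(\xi_0,\xi_0') = (\xi,\xi')$ and $(\xi_1,\xi_1') = (\xi',\xi'')$ (using the first alternative $\xi_0 \preceq \xi_1$), giving
\[
\Ex(\xi,\xi') = \Ex(\xi',\xi''),
\]
so $n_o = n_o'$. From Definition~\ref{defn:Ex}, the first equality yields $n_o \in J_i(\xi) \cap J_e(\xi')$, hence $n_o \in J_e(\xi')$, while the second yields $n_o = n_o' \in J_i(\xi') \cap J_e(\xi'')$, hence $n_o \in J_i(\xi')$. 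This contradicts $J_e(\xi') \cap J_i(\xi') = \emptyset$, completing the proof.

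There is no real obstacle: all the work has been concentrated in the earlier Proposition~\ref{prop:unique_ext_GO_pairs}, whose uniqueness statement is precisely what rules out chains of GO-pair compatible drifts. The only care needed is to verify that the hypothesis $\xi_0 \preceq \xi_1$ of Proposition~\ref{prop:unique_ext_GO_pairs} is applicable here, which is immediate from $\xi \prec \xi'$.
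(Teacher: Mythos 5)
Your proof is correct, and it takes a cleaner route than the paper's own. The paper does not invoke Proposition~\ref{prop:unique_ext_GO_pairs}; instead it reconstructs that proposition's key step from scratch: it first observes that $n_o \in J_e(\xi')$ while $n_o' \in J_i(\xi')$, so $n_o \neq n_o'$, then argues via Propositions~\ref{prop:GO-pair-properties} and \ref{prop:act_regulation_faces} that $n_g'$ actively regulates $n_o'$ at $\xi$, so $n_g' \in \rmap{\xi}^{-1}(n_o')$, which with the uniqueness clause of Definition~\ref{defn:indecisive} forces $n_g' = n_o'$ and contradicts Proposition~\ref{prop:GO-pair-properties}(4). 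You run the same two facts in the opposite order: you cite Proposition~\ref{prop:unique_ext_GO_pairs} (whose proof is precisely that active-regulation-plus-uniqueness argument) to get $n_o = n_o'$, and then obtain a contradiction from $J_e(\xi') \cap J_i(\xi') = \emptyset$. Mathematically the two proofs rest on the identical ingredients — the uniqueness clause of indecisive drift and the $J_e/J_i$ dichotomy at $\xi'$ — and differ only in which one supplies the assumed equality and which supplies the contradiction; your version is shorter because it delegates the real work to the already-established proposition rather than inlining it. One minor point: your aside that both alternatives $\xi_0 \preceq \xi_1$ and $\xi_0' \preceq \xi_1'$ hold simultaneously is true but unnecessary, since the hypothesis of Proposition~\ref{prop:unique_ext_GO_pairs} is a disjunction.
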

\begin{proof}
Assume that $(\xi,\xi'),(\xi',\xi'')\in\cD(\rook)$ with GO-pairs $(n_g, n_o)$ and $(n'_g, n'_o)$, respectively.
By definition of a GO-pair (Definition~\ref{defn:GO-pair}) $n_o\in \Ex(\xi,\xi')$, and hence, $n_o \in J_e(\xi')$.
Similarly, $n_o'\in J_i(\xi')$. 
Thus, $n_o \neq n_o'$. 

By Proposition~\ref{prop:GO-pair-properties},  $n_g$ actively regulates $n_o$ at $\xi_0$ and $n'_g$ actively regulates $n'_o$ at $\xi'$. 
By Proposition~\ref{prop:act_regulation_faces}, $n'_g$ actively regulates $n'_o$ at $\xi$ since $\xi \preceq \xi'$, hence 
$n_g'\in \rmap{\xi}^{-1}(n_o')$. 

In contrast, the assumption that $(\xi,\xi')\in\cD(\rook)$ and $n_o' \in O_i(\xi)\setminus\setof{n_o}$, implies that $\rmap\xi^{-1}(n_o')=\setof{n_o'}$. Thus, $n_g' = n_o'$, which contradicts the fact that $(n_g',n_o')$ is a GO-pair. 
\end{proof}

Recall that $\xi\leftrightarrow_{\cF_1}\xi'$ for a pair $(\xi,\xi')$ that exhibits indecisive drift. In order to identify which edge should be removed from $\xi\leftrightarrow_{\cF_1}\xi'$ we draw information from walls that are in backwards direction with respect to the gradient directions of $\xi'$. We begin by defining the set of back walls for a generic pair $(\xi,\xi')$, and then we prove that for a pair $(\xi,\xi')$ that exhibits indecisive drift, the wall labeling information contained in the back walls are independent of the choice of a particular back wall. 

\begin{defn}
\label{defn:back-walls}
Let $(\xi,\xi')\in\cX\times\cX$ such that $\xi\prec\xi'$ and  $\Ex(\xi,\xi')=\setof{n_o}$. If $\xi=[\bv,\bw]$ and $\xi'=[\bv',\bw']$, the set of \emph{back walls} of $(\xi,\xi')$ is defined by
\begin{equation*}
\Back(\xi,\xi') \coloneqq \setdef{ \left( \left[ \bv-\hat{\bv}, \bone^{(n_o)} \right],\left[ \bv'-\hat{\bv}, \bone \right] \right)}{\hat{\bv} = \sum_{n \in J_i(\xi')} \frac{1+r_n}{2} \bzero^{(n)},\ r_n \in R_n(\xi)}.
\end{equation*}
\end{defn}

Two relevant observations that validate Definition~\ref{defn:back-walls} are as follows.
First, by Proposition~\ref{prop:Direction-Decomposition}(ii), $r_n=\pm 1$, and hence $\hat{\bv}_n$ is an integer.
Second if $(\dec,\dec')$ is a back wall for $(\xi,\xi')$, then $(\dec,\dec') \in W(\xi)$, i.e., it is a wall of $\xi$.
\begin{rem}\label{rem:same_position}
Let $(\xi,\xi') \in \cD(\rook)$ and assume that $\Ex(\xi,\xi')=\setof{n_o}$. 
If $(\dec_0,\dec'_0), (\dec_1,\dec'_1)\in \Back(\xi,\xi')$, 
then $p_{n_o}(\dec_0,\dec'_0) = p_{n_o}(\dec_1,\dec'_1)$.
\end{rem}
\begin{ex}
\label{ex:backpairs}
Returning to Example~\ref{ex:indecisive_drift} and Figure~\ref{fig:indecisive_drift_2} consider the pairs $(\xi,\xi'_0) = \left( \defcell{1}{2}{0}{0},\defcell{0}{2}{1}{0} \right)$ and $(\xi,\xi'_1) = \left( \defcell{1}{2}{0}{0}, \defcell{1}{2}{1}{0} \right)$ that have GO-pair $(2,1)$. 
Note that $2 \in G(\xi)$, $1 \in O(\xi)$ and $1 \notin \activeset(\xi)$, so $(\xi,\xi'_0)$ and $(\xi,\xi'_1)$ exhibit indecisive drift. Moreover, $J_i(\xi_k')=\setof{2}$ and $R_{2}(\xi_k') = \setof{1}$ so $\hat{\bv} = \left(\begin{smallmatrix}
    0 \\ 1
\end{smallmatrix}\right)$. Thus, 
\[
    \hat{\xi} = [\bv-\hat{\bv},1^{(1)}] = \defcell{1}{2-1}{0}{1} = \defcell{1}{1}{0}{1}
\]
Similarly, $\hat{\xi}'_0=\setof{\defcell{0}{1}{1}{1}}$ and $\hat{\xi}'_1=\setof{\defcell{1}{1}{1}{1}}$.
Therefore the back walls for $(\xi,\xi'_0)$ and $(\xi,\xi'_1)$ are
\[
\Back(\xi,\xi'_0) = \setof{\left( \defcell{1}{1}{0}{1}, \defcell{0}{1}{1}{1} \right)}
        \ \ \text{and}\ \ 
\Back(\xi,\xi'_1) = \setof{\left( \defcell{1}{1}{0}{1},\defcell{1}{1}{1}{1} \right)}, 
\]
\end{ex}

\begin{prop}\label{prop:back_subset}
Let $(\xi_0, \xi_0'),(\xi_1, \xi_1')\in\cX\times\cX$ satisfy $\xi_0\preceq_\cX \xi_0'\preceq_\cX \xi_1'$, $\xi_0\preceq_\cX\xi_1\preceq_\cX \xi_1'$, and $\Ex(\xi_0, \xi_0')= \Ex(\xi_1, \xi_1') = \setof{n_o}$.
If $\Ex(\xi_0', \xi_1')\subset O_i(\xi_0')$, then 
\[
\Back(\xi_1, \xi_1')\subset \Back(\xi_0, \xi_0').
\]
\end{prop}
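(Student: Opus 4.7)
The plan is to take an arbitrary element $(\dec,\dec') \in \Back(\xi_1,\xi_1')$ and exhibit it explicitly as an element of $\Back(\xi_0,\xi_0')$. Writing $\xi_j = [\bv_j, \bw_j]$ and $\xi_j' = [\bv_j', \bw_j']$ for $j = 0,1$, Definition~\ref{defn:back-walls} says that a typical back wall of $(\xi_1,\xi_1')$ has the form
\[
(\dec,\dec') = \left( [\bv_1 - \hat{\bv}, \bone^{(n_o)}],\ [\bv_1' - \hat{\bv}, \bone]\right),\qquad \hat{\bv} = \sum_{n \in J_i(\xi_1')} \tfrac{1+r_n}{2}\bzero^{(n)},
\]
with $r_n \in R_n(\xi_1)$. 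The goal is to produce $\hat{\bv}''$ supported on $J_i(\xi_0')$, with coefficients $r_n'' \in R_n(\xi_0)$, such that $\bv_1 - \hat{\bv} = \bv_0 - \hat{\bv}''$ and $\bv_1' - \hat{\bv} = \bv_0' - \hat{\bv}''$.

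The first step is the position identity $\bv_0 - \bv_1 = \bv_0' - \bv_1'$. The conditions $\Ex(\xi_j,\xi_j') = \{n_o\}$ together with the fact that $\dim\xi_j'-\dim\xi_j = 1$ force $\bw_j' = \bw_j + \bzero^{(n_o)}$, so the face relation $\xi_j \preceq \xi_j'$ restricts the difference vector $\bv_j - \bv_j'$ to $\{\bzero, \bzero^{(n_o)}\}$, i.e.\ $\bv_j - \bv_j' = \alpha_j\bzero^{(n_o)}$ with $\alpha_j \in \{0,1\}$. Reading the $n_o$-coordinate of the relations $\xi_0 \preceq \xi_1$ and $\xi_0' \preceq \xi_1'$ (where $(\bw_j)_{n_o}=0$ and $(\bw_j')_{n_o}=1$) forces the corresponding $\bq$-vectors to vanish at index $n_o$, so $(\bv_0)_{n_o} = (\bv_1)_{n_o}$ and $(\bv_0')_{n_o} = (\bv_1')_{n_o}$. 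Evaluating $\alpha_j\bzero^{(n_o)}$ at the $n_o$-th coordinate then yields $\alpha_0 = \alpha_1$, and so $\bv_0 - \bv_1 = \bv_0'-\bv_1'$; call this common vector $\bq$.

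Next, decompose $\bq$ using $J_i(\xi_0') = J_i(\xi_1') \sqcup \Ex(\xi_0',\xi_1')$ (by Proposition~\ref{prop:JtauJsigma} and the definition of $\Ex$). The constraints from $\xi_0' \preceq \xi_1'$ show that $\bq$ is supported on $J_i(\xi_0')$; moreover, on $J_i(\xi_1')$ both $\bw_0'$ and $\bw_1'$ vanish, forcing $q_n = 0$ there, while on $\Ex(\xi_0',\xi_1')$ the coordinate $q_n \in \{0,1\}$ is unconstrained. Define
\[
\hat{\bv}'' := \hat{\bv} + \bq = \sum_{n\in J_i(\xi_1')}\tfrac{1+r_n}{2}\bzero^{(n)} + \sum_{n \in \Ex(\xi_0',\xi_1')} q_n\bzero^{(n)}.
\]
Reading off coefficients with respect to the basis $\{\bzero^{(n)} : n \in J_i(\xi_0')\}$ gives $r_n'' = r_n$ for $n \in J_i(\xi_1')$ and $r_n'' = 2q_n - 1 \in \{\pm 1\}$ for $n \in \Ex(\xi_0',\xi_1')$. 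Membership $r_n'' \in R_n(\xi_0)$ holds in the first case since $r_n \in R_n(\xi_1) \subseteq R_n(\xi_0)$ by Proposition~\ref{prop:face_R_n}.

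The main obstacle, and the reason the hypothesis $\Ex(\xi_0',\xi_1') \subseteq O_i(\xi_0')$ enters, is handling the coefficients on $\Ex(\xi_0',\xi_1')$: we need \emph{both} values $r_n'' = \pm 1$ to be admissible in $R_n(\xi_0)$ so that the choice $r_n'' = 2q_n - 1$ is legal regardless of $q_n \in \{0,1\}$. The hypothesis gives $R_n(\xi_0') = \{\pm 1\}$ for $n \in \Ex(\xi_0',\xi_1')$, and applying Proposition~\ref{prop:face_R_n} once more to $\xi_0 \preceq \xi_0'$ yields $\{\pm 1\} = R_n(\xi_0') \subseteq R_n(\xi_0)$, which closes the argument. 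The identity $\hat{\bv}'' = \hat{\bv} + \bq$ then directly gives $(\dec,\dec') = ([\bv_0 - \hat{\bv}'', \bone^{(n_o)}], [\bv_0' - \hat{\bv}'', \bone]) \in \Back(\xi_0,\xi_0')$, completing the inclusion.
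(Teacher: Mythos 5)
Your proof is correct and follows essentially the same route as the paper's: decompose $J_i(\xi_0') = J_i(\xi_1') \sqcup \Ex(\xi_0',\xi_1')$, keep the original coefficients $r_n$ on $J_i(\xi_1')$ (using $R_n(\xi_1)\subseteq R_n(\xi_0)$ via Proposition~\ref{prop:face_R_n}), and absorb the extra coefficients on $\Ex(\xi_0',\xi_1')$ using the opaqueness hypothesis to guarantee $R_n(\xi_0)=\{\pm 1\}$. Your derivation of the common vector $\bq=\bv_0-\bv_1=\bv_0'-\bv_1'$ and the single definition $\hat\bv''=\hat\bv+\bq$ is a somewhat cleaner way to package the coefficient choice that the paper makes piecewise and in part leaves to the reader, but it is the same argument.
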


\begin{proof}
Let $\xi_0=[\bv,\bw]$, $\xi'_0=[\bv',\bw']$, $\xi_1=[\bu,\tilde\bw]$ and $\xi'_1=[\bu',\tilde\bw']$. 
By definition 
\[
\left( \left[ \bu-\hat\bu, \bone^{(n_o)} \right],\left[ \bu'-\hat\bu, \bone \right] \right)\in \Back(\xi_1,\xi_1')
\]
and 
\[
\left( \left[ \bv-\hat\bv, \bone^{(n_o)} \right],\left[ \bv'-\hat\bv, \bone \right] \right)\in \Back(\xi_0,\xi_0')
\]
if and only if 
\[
\hat\bu = \sum_{n \in J_i(\xi'_1)} \frac{1+\bar{r}_n}{2} \bzero^{(n)}
\]
and
\begin{equation}\label{eq:v_hat}
\hat\bv = \sum_{n \in J_i(\xi'_0)} \frac{1+r_n}{2} \bzero^{(n)} =\sum_{n \in J_i(\xi'_1)} \frac{1+r_n}{2} \bzero^{(n)} + \sum_{n \in \Ex(\xi'_0,\xi'_1)} \frac{1+r_n}{2} \bzero^{(n)}
\end{equation}
where $r_n\in R_n(\xi'_0)$ and $\bar{r}_n\in R_n(\xi'_1)$, and the last equation arises from the assumption that $\xi'_0\preceq_\cX \xi'_1$ and Proposition~\ref{prop:JtauJsigma}.

By hypothesis $\Ex(\xi'_0,\xi'_1)\subset O_i(\xi'_0)\subset O_i(\xi_0)$, where the second containment follows from the assumption that $\xi_0\prec\xi_0'$.

For each $n \in J_i(\xi'_0)$ we will select $r_n\in R(\xi_0)$
such that \[\hat{\bv} = \sum_{n \in J_i(\xi'_0)} \frac{1+r_n}{2} \bzero^{(n)}\] satisfies
\[\left[ \bu-\hat\bu, \bone^{(n_o)} \right] = \left[ \bv-\hat{\bv}, \bone^{(n_o)} \right] 
    \text{ and }
    \left[ \bu'-\hat\bu, \bone \right] = \left[ \bv'-\hat{\bv}, \bone \right].
    \]
This implies that 
    \[\left( \left[ \bu-\hat\bu, \bone^{(n_o)} \right],\left[ \bu'-\hat\bu, \bone \right] \right)\in \Back(\xi_0,\xi_0').\]
    Therefore, $\Back(\xi_1, \xi_1')\subset \Back(\xi_0, \xi_0')$.
Now, we show how to choose the desired $r_n\in R(\xi_0)$.

Recall that $\Ex(\xi'_0,\xi'_1)\subset O_i(\xi'_0)\subset O_i(\xi_0)$,  then $R_n(\xi_0)=\setof{\pm1}$ for any $n\in \Ex(\xi'_0,\xi'_1)$. Since $\xi_0'\preceq_\cX \xi_1'$ implies that $|\bu'_n-\bv'_n |\leq 1$, we can choose $r_n\in R_n(\xi_0)=\setof{\pm1}$ for any $n\in \Ex(\xi'_0,\xi'_1)$ such that
    \[\bu' = \bv' - \sum_{n \in \Ex(\xi'_0,\xi'_1)} \frac{1+r_n}{2} \bzero^{(n)}.\]
    By hypothesis $\Ex(\xi_0,\xi_1)=\Ex(\xi_0',\xi_1')$, we leave it to the reader to check that the same choice of $r_n\in\setof{\pm1}$ for any $n\in \Ex(\xi'_0,\xi'_1)$ provides that  
    \[\bu = \bv - \sum_{n \in \Ex(\xi_0,\xi_1)} \frac{1+r_n}{2} \bzero^{(n)}.\]

    It remains to choose $r_n\in R_n(\xi_0)$ in \eqref{eq:v_hat} such that $\hat\bv_n=\hat\bu_n$ for any $n\in J_i(\xi'_1)$. Notice that, it is sufficient to show that $R_n(\xi_1)\subset R_n(\xi_0)$ for any $n\in J_i(\xi'_1)$. Indeed, since $\xi_0\preceq_\cX \xi_1$, by Proposition~\ref{prop:face_R_n}, it follows that $R_n(\xi_1)\subset R_n(\xi_0)$ for all $n\in J_i(\xi'_1)$.
\end{proof}

\begin{ex}
\label{ex:3D_Dec_pairs} 
We consider GO-pairs, indecisive drift, and back walls in the context of a rook field $\rook$ defined on a three dimensional cell complex $\cX=\cX(\I)$ with $\I=\prod_{n=1}^3 \setof{0,\ldots,K(n)}$.
Let $\xi = [\bv,\bzero]$ where $\bv = (\bv_1,\bv_2,\bv_3)$ and $0 < \bv_n < K(n)$ for each $n=1,2,3$.
For all $\mu\in\Top_\cX(\xi)$ and $n=1,2,3$, set $\rook_n(\xi,\mu) = -1$, \emph{except} for 
\[
\rook_2\left( \xi, \defcellb{v_1}{v_2}{v_3}{1}{1}{1} \right) =
\rook_2\left( \xi, \defcellb{v_1}{v_2-1}{v_3}{1}{1}{1} \right) = 1.
\]
Let 
\[
\xi_0'  = \defcellb{v_1}{v_2-1}{v_3}{0}{1}{0}\quad\text{and}
\quad\xi_1'  = \defcellb{v_1}{v_2}{v_3}{0}{1}{0}
\]
and observe that $\xi \prec \xi_i'$, $i=0,1$. 

We claim that $(\xi,\xi_i')$ has a GO-pair $(n_g,n_o)=(1,2)$.
Observe that $\Ex(\xi,\xi_i') = \setof{2}$.
Since $R_1(\xi) = R_3(\xi) = \setof{-1}$ and $R_2(\xi) = \setof{\pm 1}$, it follows that $G(\xi)= \setof{1,3}$ and $O(\xi) = \setof{2}$.
Let 
\[
\mu_0 = \defcellb{v_1}{v_2-1}{v_3}{1}{1}{1}   \quad\text{and}\quad
\mu_0' = \defcellb{v_1-1}{v_2-1}{v_3}{1}{1}{1}  
\]
and
\[
\mu_1 = \defcellb{v_1}{v_2}{v_3}{1}{1}{1}   \quad\text{and}\quad
\mu_1' = \defcellb{v_1-1}{v_2}{v_3}{1}{1}{1} . 
\]
Then, for $i=0$ or $i=1$, $\mu_i$ and $\mu_i'$ are $1$-adjacent top cells and $\rook_2(\xi,\mu_i)\neq \rook_2(\xi,\mu_i')$.
Therefore, $(1,2)$ is a GO-pair.

We further claim that $(\xi,\xi_i')$ exhibits indecisive drift.
Observe that $\left(O(\xi) \cap J_i(\xi)\right)\setminus \setof{2}  = \emptyset$, thus it is sufficient that $(\xi,\xi_i')$ has a GO-pair.

To determine the set of back walls of $(\xi,\xi_i')$ we note that $J_i(\xi_i') = \setof{1,3}$.
Thus, for $i=0,2$, there is an unique $\hat{\bv} \in \setof{0,1}^3$ given by
\[
\hat{\bv} = \frac{1+r_1}{2}\bzero^{(1)} + \frac{1+r_3}{2}\bzero^{(3)} = \bzero
\]
since $R_1(\xi) = R_3(\xi) = \setof{-1}$.
Thus, by definition the set of back walls of $(\xi,\xi_0')$ and $(\xi,\xi_1')$ are
\[
\Dec(\xi,\xi_0') = \setof{\left( \defcellb{v_1}{v_2}{v_3}{1}{0}{1} ,\defcellb{v_1}{v_2-1}{v_3}{1}{1}{1} \right)}
\]
and
\[
\Dec(\xi,\xi_1') = \setof{\left( \defcellb{v_1}{v_2}{v_3}{1}{0}{1} ,\defcellb{v_1}{v_2}{v_3}{1}{1}{1} \right)}.
\]
\end{ex}
The following results shows that $\rook_{n_o}(\dec,\dec')$ is independent of the choice of back wall for a pair $(\xi, \xi')$ that exhibits indecisive drift. First, we address the structure of $\Back(\xi,\xi')$ when $(\xi,\xi')\in\cD(\rook)$.
\begin{lemma}
    \label{lem:back_wall_structure}
    Let $(\xi,\xi') \in \cX \times \cX$ be a pair that satisfies $\xi \prec \xi'$ and let $\Ex(\xi,\xi')=\setof{n_o}$. 
    
    If $G=(V,E)$ is the graph given by $$V=\setdef{ \dec' \in \cX^{(N)}}{\exists \dec \in \cX^{(N-1)} \text{ such that } (\dec,\dec') \in \Back(\xi,\xi')}$$
    and $(\dec'_0,\dec'_1) \in E$ if and only if $\dec'_0$ and $\dec'_1$ are $n$-adjacent, then $G$ is connected. 
\end{lemma}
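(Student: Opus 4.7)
The plan is to show that the vertex set $V$ is in bijection with the vertices of a hypercube graph, whose edges are precisely the $n$-adjacencies, and then invoke the well-known connectivity of hypercubes.

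First I would analyze the structure of $V$. Write $\xi = [\bv, \bw]$ and $\xi' = [\bv', \bw']$. By the definition of $\Back(\xi,\xi')$, each element of $V$ has the form $[\bv' - \hat{\bv}, \bone]$ where
\[
\hat{\bv} = \sum_{n \in J_i(\xi')} \frac{1 + r_n}{2}\bzero^{(n)},\qquad r_n \in R_n(\xi).
\]
Since $\Ex(\xi,\xi') = \{n_o\}$, we have $J_i(\xi') \subset J_i(\xi)$. By Proposition~\ref{prop:Direction-Decomposition}(ii), $J_i(\xi) \subseteq O(\xi) \cup G(\xi)$, so for every $n \in J_i(\xi')$ either $n \in G(\xi)$ (in which case $R_n(\xi)$ is the singleton $\{1\}$ or $\{-1\}$, forcing $\hat{\bv}_n$ to a fixed value in $\{0,1\}$) or $n \in O(\xi)$ (in which case $R_n(\xi) = \{\pm 1\}$ and $\hat{\bv}_n$ may be freely chosen in $\{0,1\}$). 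In particular $\hat{\bv}_n \in \{0,1\}$ always, which justifies the definition.

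Next, let $D := J_i(\xi') \cap O(\xi)$ and let $\hat{\bv}^{\mathrm{fix}}$ denote the contribution to $\hat{\bv}$ coming from coordinates in $J_i(\xi') \cap G(\xi)$, which is independent of the choice of back wall. Then the map $\hat{\bv} \mapsto [\bv' - \hat{\bv},\bone]$ gives a bijection
\[
\Psi\colon \{0,1\}^{D} \longrightarrow V,\qquad \hat{\bv}^{\mathrm{free}} \mapsto \bigl[\bv' - \hat{\bv}^{\mathrm{fix}} - \hat{\bv}^{\mathrm{free}},\bone\bigr],
\]
because distinct choices of $\hat{\bv}^{\mathrm{free}}$ produce top cells whose $\bv$-components differ. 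Two vertices $\dec'_0 = [\bv' - \hat{\bv}_0, \bone]$ and $\dec'_1 = [\bv' - \hat{\bv}_1,\bone]$ of $V$ are $n$-adjacent precisely when their $\bv$-components differ by $\bzero^{(n)}$ in a single coordinate $n$, i.e., when $\hat{\bv}_0$ and $\hat{\bv}_1$ differ in exactly the $n$-th entry; by the previous paragraph this coordinate must lie in $D$. Hence under $\Psi$ the graph $G$ is isomorphic to the hypercube graph $Q_{|D|}$ on $\{0,1\}^{D}$ with edges between vectors differing in exactly one coordinate.

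Finally, I would note that the hypercube graph $Q_k$ is connected for every $k \geq 0$: given any two vertices $\hat{\bv}_0, \hat{\bv}_1 \in \{0,1\}^D$, flipping the entries on which they disagree one at a time produces a path in $Q_{|D|}$ joining them. Transferring this path through $\Psi$ yields a path in $G$, so $G$ is connected. I do not anticipate a significant obstacle; the only mild subtlety is confirming that $\hat{\bv}_n$ is genuinely integer-valued for all $n \in J_i(\xi')$, which is handled by the appeal to Proposition~\ref{prop:Direction-Decomposition}(ii) above.
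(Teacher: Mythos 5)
Your argument matches the paper's: both identify $V$ with the vertices of a hypercube (parametrized by the coordinates of $\hat{\bv}$ that admit two choices) and note that the edge relation is exactly single-coordinate flips, so connectivity follows from connectivity of the hypercube graph. One small remark: you take the set of free coordinates to be $D = J_i(\xi')\cap O(\xi)$, which is the literal reading of Definition~\ref{defn:back-walls} (where $r_n \in R_n(\xi)$), whereas the paper's proof parametrizes by $O_i(\xi') = J_i(\xi')\cap O(\xi')$ and splits off $G_i(\xi')$ with $s_n = \rook_n(\xi',\cdot)$; since $R_n(\xi')\subseteq R_n(\xi)$ these can in principle differ, and your reading is the one that tracks the stated definition, though the connectivity conclusion is unaffected in either case.
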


\begin{proof}
We show that there exists a graph isomorphism between the $k$-dimensional hypercube graph $Q_{k}$ (the graph formed from the vertices and edges of a $k$-dimensional hypercube) and $G$, where $k$ is the number of inessential opaque directions of $\xi'$, i.e., $k = \#(O_i(\xi'))$. Note that if $O_i(\xi')=\emptyset$, then $\Back(\xi,\xi')$ consists of a single element. Otherwise, assume that $O_i(\xi')=\setof{n_1,\ldots,n_k}$.

Let $(r_1, \ldots, r_k) \in \setof{-1,1}^k$ denote the vertices of $Q_k$. Note that $f \colon \setof{-1,1}^k \to V$ given by 
\[
f(r_1,\ldots,r_n) = \left[ \bv'- \left( \sum_{n \in G_i(\xi')} \frac{1+s_n}{2} \bzero^{(n)} + \sum_{i=1}^k \frac{1+r_i}{2} \bzero^{(n_i)} \right), \bone \right],
\]
where $\xi'=[\bv',\bw']$ and $s_n = \rook_n(\xi',\cdot)$, is a bijection. Moreover, given any edge $e=((r_1,\ldots,r_k),(r'_1,\ldots,r'_k))$ in the hypercube graph, there exists an unique $r_\ell$ that differs between vertices. Note that $f(r_1,\ldots,r_k)$ and $f(r'_1,\ldots,r'_k)$ are $n_\ell$-adjacent with shared wall given by 
\[
\left[ \bv'-\left( \sum_{n \in G_i(\xi')} \frac{1+s_n}{2} \bzero^{(n)} + \sum_{\substack{i=1 \\ i \neq \ell}}^k \frac{1+r_i}{2} \bzero^{(n_i)} \right), \bone^{(n_\ell)} \right]. 
\]
\end{proof}

\begin{prop}
\label{prop:back_wall_well_defined}
Assume $(\xi, \xi')\in\cD(\rook)$ and let $\Ex(\xi,\xi')=\setof{n_o}$. 
If $(\dec_0,\dec'_0),(\dec_1,\dec'_1)\in \Dec(\xi,\xi')$, then 
\[
    \rook_{n_o} (\dec_0,\dec'_0) = \rook_{n_o} (\dec_1,\dec'_1). 
\]
\end{prop}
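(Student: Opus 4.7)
Our strategy is to combine Lemma~\ref{lem:back_wall_structure} with a single application of Definition~\ref{def:wall_labeling}(i). By that lemma the graph on back-wall top cells is connected under $n$-adjacency, so it suffices to verify the equality for two back walls $(\dec_0,\dec'_0),(\dec_1,\dec'_1)\in\Dec(\xi,\xi')$ whose top cells $\dec'_0,\dec'_1$ are $n_\ell$-adjacent for some $n_\ell\in O_i(\xi')$, sharing an $n_\ell$-wall $\eta$; the general statement then follows by induction along an edge-path in the graph. Because each $(\dec_i,\dec'_i)$ is an $n_o$-wall, \eqref{eq:rookwalls} gives $\rook_{n_o}(\dec_i,\dec'_i)=\omega(\dec_i,\dec'_i)$, so the task reduces to $\omega(\dec_0,\dec'_0)=\omega(\dec_1,\dec'_1)$.

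Three bookkeeping facts prepare the application of the wall-labeling axiom. First, a direct check from Definition~\ref{defn:back-walls} shows $\xi\preceq \dec_i$: writing $\xi=[\bv,\bw]$, the constraint $\bw+\hat\bv_i\leq \bone^{(n_o)}$ holds termwise because $\hat\bv_i$ is supported in $J_i(\xi')\subseteq J_i(\xi)\setminus\setof{n_o}$. Second, since the offset between the first coordinates of $\dec_i$ and $\dec'_i$ equals $\bv-\bv'$ independently of $i$, both $\dec_0$ and $\dec_1$ sit on the same $n_o$-side of their respective top cells, i.e.\ they are simultaneously the left, or simultaneously the right, $n_o$-walls. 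Third, Proposition~\ref{prop:MeetJoinFormula}(2) identifies $\dec'_0\wedge_\preceq\dec'_1$ with $\eta$, so any vertex $\sigma\preceq\xi$ automatically satisfies $\sigma\preceq\dec'_0\wedge_\preceq\dec'_1=\eta$.

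We then apply Definition~\ref{def:wall_labeling}(i) to the $n_\ell$-adjacent pair $\dec'_0,\dec'_1\in\Top_\cX(\sigma)$ with $k=n_o$. The precondition $n_o\notin\setof{n_\ell,\tilde o_\sigma(n_\ell)}$ is handled in two steps. First, $n_o\neq n_\ell$ since $n_o\in J_e(\xi')$ while $n_\ell\in J_i(\xi')$. Second, to see $\tilde o_\sigma(n_\ell)\neq n_o$, Propositions~\ref{prop:face_R_n} and \ref{prop:Direction-Decomposition}(ii) give $R_{n_\ell}(\xi)=\setof{\pm 1}$, placing $n_\ell$ in $O_i(\xi)\setminus\setof{n_o}$; the indecisive-drift hypothesis then forces $n_\ell\in\activeset(\xi)$ with $\rmap{\xi}(n_\ell)=n_\ell$, and Definition~\ref{def:regulationmap} yields $\tilde o_\sigma(n_\ell)=\rmap{\xi}(n_\ell)=n_\ell\neq n_o$. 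The axiom now delivers $\omega(\dec_0,\dec'_0)=\omega(\dec_1,\dec'_1)$, completing the argument. The step I expect to require the most care is the meet identification $\dec'_0\wedge_\preceq\dec'_1=\eta$ together with the verification that every vertex below $\xi$ is also a vertex of $\eta$; the rest is a careful cross-check among $J_e$, $J_i$, $O_i$, $\activeset$, and the regulation map from Chapter~\ref{sec:RookFields}.
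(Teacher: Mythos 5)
Your proof is correct and follows essentially the same path as the paper's: both reduce to $n_\ell$-adjacent back walls via Lemma~\ref{lem:back_wall_structure}, and both invoke the indecisive-drift hypothesis (that $\rmap\xi^{-1}(n_\ell)=\setof{n_\ell}$ for $n_\ell\in O_i(\xi)\setminus\setof{n_o}$) to conclude that the local inducement map cannot send the adjacency direction to $n_o$. You phrase the wall-labeling consequence directly and positively via Definition~\ref{def:wall_labeling}(i), whereas the paper packages the same fact as a contradiction through $\rmap\xi$, so the two arguments are logically equivalent.
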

\begin{proof}
If $\Dec(\xi,\xi')=\setof{(\dec,\dec')}$ there is nothing to prove, so assume that $\Dec(\xi,\xi')$ has at least two different elements, and let $(\dec_0,\dec'_0),(\dec_1,\dec'_1)\in \Dec(\xi,\xi')$ be distinct pairs. Assume, for the sake of contradiction, that
\[
    \rook_{n_o}(\dec_0,\dec_0') \neq \rook_{n_o}(\dec_1,\dec_1'). 
\]
If $\dec'_0$ and $\dec'_1$ are $n$-adjacent, then $\rmap\xi(n)=n_o$. Note that $n \in O_i(\xi')$, so that by Definition~\ref{defn:indecisive}, either $\rmap\xi(n)=n$ or $n=n_o$. The former contradicts $\rmap\xi(n)=n_o$ while the latter contradicts $n_o \in J_e(\xi')$. Therefore, any pair whose top-cells are adjacent satisfy $\rook_{n_o}(\dec_0,\dec_0') =\rook_{n_o}(\dec_1,\dec_1')$. Given the connected structure of $\Back(\xi,\xi')$ described in Lemma~\ref{lem:back_wall_structure}, the result holds for any two pairs. 
\end{proof}
Now, we describe conditions under which we refine $\cF_1$ to obtain $\cF_2$.
\begin{defn}\label{defn:F2}
    We define $\cF_{2.1} \colon \cX \mvmap \cX$ to be the maximal refinement of the trivial map $\cF_0$ that satisfies the following condition.
    \begin{description}
        \item[Condition 2.1] Suppose that $(\xi,\xi')\in \cD(\rook)$ and let $(\dec,\dec')\in\Dec(\xi,\xi')$.
        \begin{itemize}
            \item If $\dec\in E^-(\dec')$, then $\xi' \not\in \cF_{2.1}(\xi)$.
            \item If $\dec\in E^+(\dec')$, then $\xi \not\in \cF_{2.1}(\xi')$.
        \end{itemize}
    \end{description}
\end{defn}
\begin{rem}
Recall from Definition~\ref{def:exit_face} that $\dec\in E^\pm(\dec')$ is determined by $\rook_{n_o}(\dec,\dec')$ and $p_{n_o}(\dec,\dec')$. 
Thus, by Remark~\ref{rem:same_position} and Proposition~\ref{prop:back_wall_well_defined},  Definition~\ref{defn:F2} is independent of the choice of a back wall.
\end{rem}
\begin{defn}
    \label{def:Rule2}
    Define $\cF_2 \colon \cX \mvmap \cX$ by
    \begin{equation}
        \label{eq:defnF2}
        \cF_2(\xi) \coloneqq \cF_1(\xi) \cap \cF_{2.1}(\xi).
    \end{equation}
\end{defn}
\begin{prop}
    \label{prop:F2-arrow-exists}
    For any pair $\xi,\xi' \in \cX$ such that $\xi \prec \xi'$ and $\dim(\xi')=\dim(\xi)+1$, either $\xi' \in \cF_2(\xi)$ or $\xi \in \cF_2(\xi)$.
\end{prop}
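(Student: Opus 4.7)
The plan is to split into two cases depending on whether $(\xi,\xi')\in\cD(\rook)$, using the factorization $\cF_2 = \cF_1 \cap \cF_{2.1}$ and observing that $\cF_{2.1}$ is designed so that the removal of edges only occurs for pairs that exhibit indecisive drift.

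First, suppose $(\xi,\xi')\notin\cD(\rook)$. Since Condition~2.1 only removes an edge between two cells $\xi_0$ and $\xi_0'$ when $(\xi_0,\xi_0')\in\cD(\rook)$, the maximality of $\cF_{2.1}$ among refinements of $\cF_0$ satisfying Condition~2.1 implies that both $\xi'\in\cF_{2.1}(\xi)$ and $\xi\in\cF_{2.1}(\xi')$. Because $\xi\neq\xi'$, Condition~1.1 does not affect the edges between $\xi$ and $\xi'$, so these edges coincide in $\cF_1$ and $\cF_{1.2}$. Proposition~\ref{prop:arrowExists} gives that either $\xi\in\cF_{1.2}(\xi')$ or $\xi'\in\cF_{1.2}(\xi)$, hence the same holds for $\cF_1$, and consequently for $\cF_2$.

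Next, suppose $(\xi,\xi')\in\cD(\rook)$. By Proposition~\ref{prop:GO-pair-properties}(1) we have $\xi\darrow_{\cF_1}\xi'$, i.e.\ both $\xi\in\cF_1(\xi')$ and $\xi'\in\cF_1(\xi)$. Pick any back wall $(\dec,\dec')\in\Dec(\xi,\xi')$, let $\{n_o\}=\Ex(\xi,\xi')=\Ex(\dec,\dec')$, and note that $\rook_{n_o}(\dec,\dec')\in\{\pm1\}$ and $p_{n_o}(\dec,\dec')\in\{\pm1\}$. Hence exactly one of $\dec\in E^-(\dec')$ and $\dec\in E^+(\dec')$ holds (Lemma~\ref{lem:exitwall}), and by Remark~\ref{rem:same_position} together with Proposition~\ref{prop:back_wall_well_defined} this dichotomy is independent of the choice of back wall. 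Condition~2.1 therefore forces the removal of precisely one of the two edges: if $\dec\in E^-(\dec')$ then $\xi'\notin\cF_{2.1}(\xi)$ but $\xi\in\cF_{2.1}(\xi')$, giving $\xi\in\cF_2(\xi')$; if $\dec\in E^+(\dec')$ then symmetrically $\xi'\in\cF_2(\xi)$.

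The argument has no deep obstacle; the only care needed is in verifying that no pair other than $(\xi,\xi')$ itself can trigger Condition~2.1 to delete an edge between $\xi$ and $\xi'$, and that the two clauses of Condition~2.1 are mutually exclusive along the chosen back wall. The first is immediate from the form of Condition~2.1, which only affects edges internal to the pair $(\xi_0,\xi_0')\in\cD(\rook)$ it is applied to, and the second follows from Lemma~\ref{lem:exitwall} combined with the well-definedness of $\rook_{n_o}(\dec,\dec')$ and $p_{n_o}(\dec,\dec')$ across back walls.
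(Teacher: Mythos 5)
Your proof is correct and it follows the same overall strategy as the paper (split on whether $(\xi,\xi')$ exhibits indecisive drift, and for the indecisive case analyze the back wall to show Condition~2.1 drops at most one edge). The internal organization of the indecisive-drift case differs slightly. The paper argues: \emph{if} $\cF_1$ drops an edge, say $\xi'\notin\cF_1(\xi)$, then $\xi\in E^-(\xi')$ propagates via Corollary~\ref{cor:mu*} to $\dec\in E^-(\dec')$, so $\cF_{2.1}$ drops the same edge; and symmetrically for the other edge; so the removals never compound. You instead invoke Proposition~\ref{prop:GO-pair-properties}(1) at the outset to record that $\cF_1$ in fact keeps \emph{both} edges when $(\xi,\xi')\in\cD(\rook)$, reducing the task to showing $\cF_{2.1}$ removes exactly one edge, which follows from Lemma~\ref{lem:exitwall} and mutual exclusivity of $\dec\in E^\pm(\dec')$. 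Your route is a bit cleaner because it sidesteps the two sub-cases that Proposition~\ref{prop:GO-pair-properties}(1) reveals to be vacuous, at the small cost of relying on that additional proposition. Both proofs are sound; yours has the extra virtue of being explicit that in the non-indecisive case $\cF_2$ agrees with $\cF_1$, which the paper leaves implicit in its opening ``it is enough to show'' sentence.
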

\begin{proof}
    It is enough to show that $\cF_1$ given by \eqref{eq:defnF1} and $\cF_{2.1}$ given by \eqref{eq:defnF2} do not remove both edges when $(\xi,\xi')$ exhibit indecisive drift. Let $\Ex(\xi,\xi')=\setof{n_o}$. 
    
    Suppose that $\xi' \notin \cF_1(\xi)$. By Condition 1.2, that means $\xi \in E^-(\xi')$, hence $\rook_{n_o}(\xi,\mu) = p_{n_o}(\xi,\xi')$
    for all $\mu \in \Top_\cX(\xi')$. In particular, $\rook_{n_o}(\xi,\dec')=p_{n_o}(\xi,\xi')$. Thus, by Corollary~\ref{cor:mu*},
    \[
    \rook_{n_o}(\dec,\dec')=p_{n_o}(\xi,\xi')=p_{n_o}(\dec,\dec'),
    \]
    so $\dec \in E^-(\dec')$ and Condition 2.1 only requires $\xi' \notin \cF_{2.1}(\xi)$, hence  $\xi \in \cF_1(\xi')$.
    
    Similarly, if $\xi \notin \cF_1(\xi')$, then $\dec \in E^+(\dec')$ and $\xi \notin \cF_{2.1}(\xi')$, hence  $\xi' \in \cF_1(\xi)$.

    Thus, $\cF_{2.1}$ either agrees with $\cF_1$ or refines double edges of $\cF_1$ when $(\xi,\xi')$ exhibit indecisive drift, so $\xi \in \cF_2(\xi')$ or $\xi' \in \cF_2(\xi)$.
\end{proof}
In light of Proposition~\ref{prop:F1-welldefined}, the same argument shows the following. 
\begin{cor}
    \label{cor:F2-well-defined}
    If $\cF_2 \colon \cX \mvmap \cX$ is given by \eqref{eq:defnF2}, then $\cF_2(\xi) \neq \emptyset$ for all $\xi \in \cX$. 
\end{cor}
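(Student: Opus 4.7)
The plan is to mirror the proof of Proposition~\ref{prop:F1-welldefined}, splitting on whether $G(\xi) = \emptyset$, with the added step of verifying that the witness produced there survives the further refinement imposed by $\cF_{2.1}$. The corollary is explicitly advertised as following from ``the same argument,'' so the bulk of the work is already done; what remains is to check compatibility with Condition~2.1.

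When $G(\xi) = \emptyset$, the argument of Proposition~\ref{prop:F1-welldefined} gives $\xi \in \cF_1(\xi)$. To see that $\xi \in \cF_{2.1}(\xi)$ as well, I would note that Condition~2.1 (Definition~\ref{defn:F2}) only removes edges between pairs $(\xi, \xi')$ with $\xi \prec \xi'$ \emph{strictly}, since indecisive drift is defined only for strictly comparable pairs (Definitions~\ref{defn:GO-pair} and \ref{defn:indecisive}). Hence no self-edge is ever removed by $\cF_{2.1}$, and $\xi \in \cF_2(\xi)$.

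When $G(\xi) \neq \emptyset$, Lemma~\ref{lem:gradient-implies-non-empty} produces $\xi'$ with $|\dim(\xi') - \dim(\xi)| = 1$ such that either $\xi' \prec \xi$ with $\xi' \in E^-(\xi)$, or $\xi \prec \xi'$ with $\xi \in E^+(\xi')$; in either case the proof of Proposition~\ref{prop:F1-welldefined} already yields $\xi' \in \cF_1(\xi)$. It therefore suffices to check $\xi' \in \cF_{2.1}(\xi)$. The key observation is that in both configurations, writing $\setof{n_o}$ for the singleton $\Ex(\cdot,\cdot)$ coming from the dimension difference, the value of $\rook_{n_o}$ at the smaller cell of the pair is constant over the top star of the larger cell (equal to $+p_{n_o}$ in the exit case and to $-p_{n_o}$ in the entrance case, by Definition~\ref{def:exit_face}). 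By Definition~\ref{defn:GO-pair}, this constancy rules out the existence of any $n_g$-adjacent $\mu, \mu'$ on which $\rook_{n_o}$ disagrees, so no GO-pair exists for the relevant pair, the pair is not in $\cD(\rook)$, and Condition~2.1 is vacuous on it. Thus $\xi' \in \cF_{2.1}(\xi)$, and combined with $\xi' \in \cF_1(\xi)$ we conclude $\xi' \in \cF_2(\xi)$.

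The argument is essentially a bookkeeping verification. The only subtle point to keep straight is the direction of the face relation when applying Conditions~1.2 and 2.1 in the two symmetric sub-cases produced by Lemma~\ref{lem:gradient-implies-non-empty} (one with $\xi' \prec \xi$, one with $\xi \prec \xi'$), but the structure of the argument is identical in both. I do not anticipate any serious obstacle.
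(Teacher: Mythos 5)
Your proof is correct and follows exactly the route the paper intends: case-split on whether $G(\xi) = \emptyset$, reuse the witness from Lemma~\ref{lem:gradient-implies-non-empty}, and observe that Condition~2.1 cannot strip the surviving edge because indecisive drift (via Proposition~\ref{prop:GO-pair-properties} or, as you argue more directly, the constancy of $\rook_{n_o}$ over the top star forced by the $E^{\pm}$ definitions) requires a double edge in $\cF_1$. The paper simply compresses this into ``the same argument'' immediately after Proposition~\ref{prop:F2-arrow-exists}, and your write-up is a faithful expansion of that remark.
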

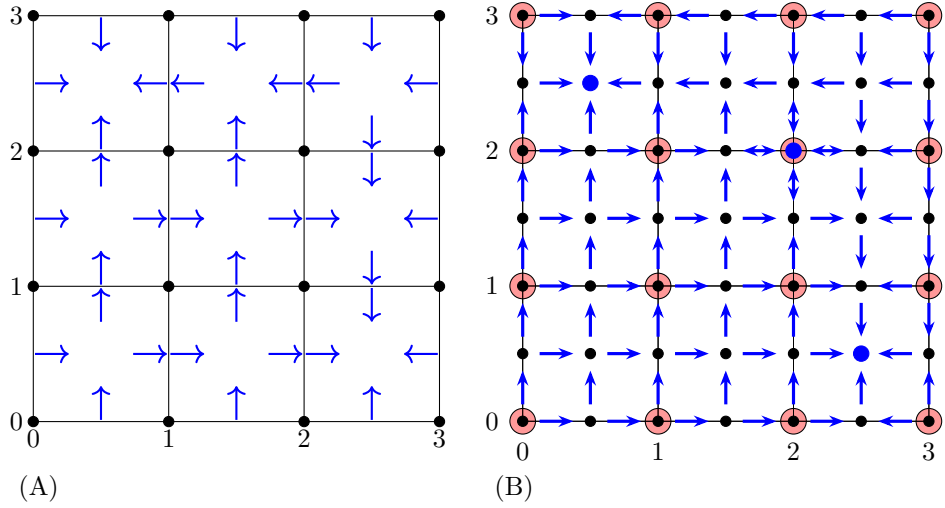
\begin{figure}
\begin{picture}(400,210)(0,0)
\put(10,0){(A)}
\put(0,15){
\begin{tikzpicture}[scale=0.225]
\draw[step=8cm,black] (0,0) grid (24,24);

\foreach \i in {0,...,3}{
    \foreach \j in {0,...,3}{
    \draw[black, fill=black] (8*\i,8*\j) circle (2ex);
    }
}

\foreach \i in {0,...,3}{
    \draw(8*\i,-1) node{$\i$}; 
    \draw(-1,8*\i) node{$\i$}; 
}

\foreach \i in {0,...,2}{
    \draw[->, blue, thick] (4+8*\i,0.1) -- (4+8*\i,2.1); 
    \draw[->, blue, thick] (4+8*\i,23.9) -- (4+8*\i,21.9); 
    \draw[->, blue, thick] (0.1, 4+8*\i) -- (2.1,4+8*\i); 
    \draw[->, blue, thick] (23.9,4+8*\i) -- (21.9,4+8*\i); 
}
\foreach \i in {0,...,1}{
    \draw[->, blue, thick] (4+8*\i,5.9) -- (4+8*\i,7.9); 
    \draw[->, blue, thick] (4+8*\i,8.1) -- (4+8*\i,10.1);
    \draw[->, blue, thick] (4+8*\i,13.9) -- (4+8*\i,15.9); 
    \draw[->, blue, thick] (4+8*\i,16.1) -- (4+8*\i,18.1);  
    \draw[->, blue, thick] (5.9,4+8*\i) -- (7.9,4+8*\i); 
    \draw[->, blue, thick] (8.1,4+8*\i) -- (10.1,4+8*\i);
    \draw[->, blue, thick] (13.9,4+8*\i) -- (15.9,4+8*\i); 
    \draw[->, blue, thick] (16.1,4+8*\i) -- (18.1,4+8*\i);  
}
\foreach \i in {2}{
    \draw[->, blue, thick] (4+8*\i,7.9) -- (4+8*\i,5.9); 
    \draw[->, blue, thick] (4+8*\i,10.1) -- (4+8*\i,8.1);
    \draw[->, blue, thick] (4+8*\i,18.1) -- (4+8*\i,16.1); 
    \draw[->, blue, thick] (4+8*\i,15.9) -- (4+8*\i,13.9);  
    \draw[->, blue, thick] (7.9,4+8*\i) -- (5.9,4+8*\i); 
    \draw[->, blue, thick] (10.1,4+8*\i) -- (8.1,4+8*\i);
    \draw[->, blue, thick] (18.1,4+8*\i) -- (16.1,4+8*\i); 
    \draw[->, blue, thick] (15.9,4+8*\i) -- (13.9,4+8*\i);  
}
\end{tikzpicture}
}
\put(190,0){(B)}
\put(180,10){
\begin{tikzpicture}[scale=0.225]
\foreach \i in {0,...,3}{
    \draw(8*\i,-1.8) node{$\i$}; 
    \draw(-1.8,8*\i) node{$\i$}; 
   }
\draw[step=8cm, black, ultra thin] (0,0) grid (24,24);
\foreach \i in {0,...,3}{
  \foreach \j in {0,...,3}{
    \draw[black, fill=red, fill opacity=0.4] (8*\i,8*\j) circle (5ex);
  }
}
\draw[step=8cm,black, ultra thin] (0,0) grid (24,24);
\foreach \i in {0,...,6}{
  \foreach \j in {0,...,6}{
    \draw[black, fill=black] (4*\i,4*\j) circle (2ex);
  }
}
    \draw[blue, fill=blue] (4,20) circle (3ex);
    \draw[blue, fill=blue] (20,4) circle (3ex);
    \draw[blue, fill=blue] (16,16) circle (3ex);
\foreach \i in {0,...,6}{
    \draw[-{Stealth[length=2mm]}, blue, very thick] (1, 4*\i) -- (3, 4*\i);  
}
\foreach \i in {0,...,3}{
    \draw[-{Stealth[length=2mm]}, blue, very thick] (13, 4*\i) -- (15, 4*\i);  
    \draw[-{Stealth[length=2mm]}, blue, very thick] (17, 4*\i) -- (19, 4*\i);  
 }
\foreach \i in {0,...,4}{
    \draw[-{Stealth[length=2mm]}, blue, very thick] (5, 4*\i) -- (7, 4*\i);  
    \draw[-{Stealth[length=2mm]}, blue, very thick] (9, 4*\i) -- (11, 4*\i);  
 }
\foreach \i in {0,...,6}{
    \draw[-{Stealth[length=2mm]}, blue, very thick] (23, 4*\i) -- (21, 4*\i);  
}
\foreach \i in {1,...,4}{
\draw[-{Stealth[length=2mm]}, blue, very thick] (3+4*\i,24) -- (1+4*\i,24);  
\draw[-{Stealth[length=2mm]}, blue, very thick] (3+4*\i,20) -- (1+4*\i,20);  
}

\foreach \i in {0,...,6}{
    \draw[-{Stealth[length=2mm]}, blue, very thick] (4*\i,23) -- (4*\i, 21);  
}
\foreach \i in {1,...,4}{
\draw[-{Stealth[length=2mm]}, blue, very thick] (20, 3+4*\i) -- (20,1+4*\i);  
\draw[-{Stealth[length=2mm]}, blue, very thick] (24, 3+4*\i) -- (24,1+4*\i);  
}

\foreach \i in {0,...,6}{
    \draw[-{Stealth[length=2mm]}, blue, very thick] (4*\i,1) -- (4*\i, 3);  
}
\foreach \i in {0,...,3}{
    \draw[-{Stealth[length=2mm]}, blue, very thick] (4*\i,13) -- (4*\i, 15);  
    \draw[-{Stealth[length=2mm]}, blue, very thick] (4*\i,17) -- (4*\i, 19);  
}
\foreach \i in {0,...,4}{
    \draw[-{Stealth[length=2mm]}, blue, very thick] (4*\i,5) -- (4*\i, 7);  
    \draw[-{Stealth[length=2mm]}, blue, very thick] (4*\i,9) -- (4*\i, 11);  
}

\foreach \i in {3,...,4}{
\draw[{Stealth[length=2mm]}-{Stealth[length=2mm]}, blue, very thick] (3+4*\i,16) -- (1+4*\i,16);  
}

\foreach \i in {3,...,4}{
\draw[{Stealth[length=2mm]}-{Stealth[length=2mm]}, blue, very thick] (16, 3+4*\i) -- (16,1+4*\i);  
}
\end{tikzpicture}
}
\end{picture}
\caption{ 
(A) The vector representation of the wall labeling $\omega$ of Figure~\ref{fig:wall_labeling}.
(B) Directed graph associated with $\cF_2\colon \cX\mvmap \cX$ 
 given wall labeling in (A).
Each cell is denoted by a vertex: a vertex within a square denotes a two-cell, a vertex on an edge denotes an edge, and the zero-cells are denoted by the orange disks.
The large blue disks indicate the existence of a self edge.}
\label{fig:F2mvmap}
\end{figure}
\begin{ex}
    \label{ex:F2}
    Consider the wall labeling given in Figure~\ref{fig:wall_labeling}. Notice that $\cF_1 : \cX \mvmap \cX$ described in Figure~\ref{fig:F1mvmap} has double edges between cells 
    \[
         \defcell{0}{2}{1}{0} 
         \darrow_{\cF_1} 
         \defcell{1}{2}{0}{0} 
         \darrow_{\cF_1} 
         \defcell{1}{2}{1}{0} 
         \darrow_{\cF_1} 
         \defcell{2}{2}{0}{0} 
         \darrow_{\cF_1} 
         \defcell{2}{2}{1}{0} 
    \]
    and
    \[
        \defcell{2}{0}{0}{1}
        \darrow_{\cF_1}
        \defcell{2}{1}{0}{0}
        \darrow_{\cF_1}
        \defcell{2}{1}{0}{1}
        \darrow_{\cF_1}
        \defcell{2}{2}{0}{0}
        \darrow_{\cF_1}
        \defcell{2}{2}{0}{1}.
    \]
    Except at pairs $(\xi,\xi')$ that involve $\defcell{2}{2}{0}{0}$, all others exhibit indecisive drift. 
    
Let $\xi = \defcell{1}{2}{0}{0}$ and $\xi_0'=\defcell{0}{2}{1}{0}$ as in Example~\ref{ex:backpairs} where 
\[
\Back(\xi,\xi'_0) = \setof{(\dec,\dec'_0)}= \setof{(\defcell{1}{1}{0}{1},\defcell{0}{1}{1}{1})}. 
\]
Note that $\rook_1(\dec,\dec'_0)=1=p_1(\dec,\dec'_0)$, hence $\dec \in E^-(\dec'_0)$. 
Thus, by Condition 2.1, $\xi \notin \cF_{2.1}(\xi')$. 
\end{ex}

\begin{prop}
\label{prop:c-tranverse}
Let $\xi\in\cX$. If there is no $\xi' \in \cX$ such that $(\xi,\xi')\in\cD(\rook)$ or $(\xi',\xi)\in\cD(\rook)$, then $\cF_2(\xi) = \cF_1(\xi)$ and $\cF_2^{-1}(\xi) = \cF_1^{-1}(\xi)$.
\end{prop}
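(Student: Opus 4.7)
The plan is to show that $\cF_{2.1}(\xi) \supseteq \cF_0(\xi)$ and $\cF_{2.1}^{-1}(\xi) \supseteq \cF_0^{-1}(\xi)$ under the given hypothesis. Combined with the defining relation $\cF_2(\xi) = \cF_1(\xi) \cap \cF_{2.1}(\xi)$ from Definition~\ref{def:Rule2} and the fact that $\cF_1$ is itself a refinement of $\cF_0$, this will immediately give $\cF_2(\xi) = \cF_1(\xi)$ and, by a symmetric argument, $\cF_2^{-1}(\xi) = \cF_1^{-1}(\xi)$.

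First, I would recall from Definition~\ref{defn:F2} that $\cF_{2.1}$ is the maximal refinement of $\cF_0$ satisfying Condition 2.1, so the only mechanism by which $\cF_{2.1}$ differs from $\cF_0$ is through Condition 2.1. Inspecting the statement of that condition, every removal it mandates is indexed by a pair $(\alpha, \beta) \in \cD(\rook)$ and results either in $\beta \not\in \cF_{2.1}(\alpha)$ or in $\alpha \not\in \cF_{2.1}(\beta)$. Therefore, for a cell $\eta \in \cF_0(\xi)$ to be excluded from $\cF_{2.1}(\xi)$, at least one of the pairs $(\xi, \eta)$ or $(\eta, \xi)$ must belong to $\cD(\rook)$; symmetrically, for $\eta$ to be excluded from $\cF_{2.1}^{-1}(\xi)$, at least one of the pairs $(\eta, \xi)$ or $(\xi, \eta)$ must belong to $\cD(\rook)$.

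The hypothesis rules out both possibilities: there is no $\xi' \in \cX$ with $(\xi, \xi') \in \cD(\rook)$ or $(\xi', \xi) \in \cD(\rook)$. Consequently, no edge incident to $\xi$ (in either direction) is removed by Condition 2.1, so $\cF_{2.1}(\xi) = \cF_0(\xi)$ and $\cF_{2.1}^{-1}(\xi) = \cF_0^{-1}(\xi)$. Since $\cF_1(\xi) \subseteq \cF_0(\xi) = \cF_{2.1}(\xi)$, we conclude $\cF_2(\xi) = \cF_1(\xi) \cap \cF_{2.1}(\xi) = \cF_1(\xi)$, and analogously $\cF_2^{-1}(\xi) = \cF_1^{-1}(\xi)$.

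There is no real obstacle in this argument; the only care needed is to verify that the removal rules of Condition 2.1 are truly local to the two cells of an indecisive-drift pair and do not propagate to other cells. This is immediate from the phrasing, as Condition 2.1 only forbids membership of $\beta$ in $\cF_{2.1}(\alpha)$ or of $\alpha$ in $\cF_{2.1}(\beta)$ for the pair $(\alpha,\beta) \in \cD(\rook)$ under consideration.
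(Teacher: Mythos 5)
Your proof is correct and follows essentially the same approach as the paper: the paper simply notes that by the definition of $\cF_2 = \cF_1 \cap \cF_{2.1}$, the only way $\cF_2$ can differ from $\cF_1$ at $\xi$ is if Condition~2.1 removes an edge incident to $\xi$, and this can only happen if $\xi$ participates in a pair in $\cD(\rook)$, which the hypothesis rules out. Your write-up spells out the same reasoning in more detail (tracking $\cF_{2.1}$ vs.\ $\cF_0$ explicitly in both the forward and preimage directions), but the underlying argument is identical.
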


\begin{proof}
By Definition~\ref{defn:F2} $\cF_2(\xi) = \cF_1(\xi)$ and $\cF_2^{-1}(\xi) = \cF_1^{-1}(\xi)$ unless Condition 2.1 is satisfied for some $(\xi,\xi')\in\cD(\rook)$ or $(\xi',\xi)\in\cD(\rook)$.
By hypothesis, this is not the case.
\end{proof}

\section{Resolving cycles of local inducement maps}
\label{sec:cycle-condition}
Our goal is to produce $\cF_3\colon \cX\mvmap \cX$ by analyzing cases where $\cF_2$ did not refine $\cF_1$. This is done by removing double arrows where possible or adding arrows wherever necessary. 

We observe that in Definition~\ref{defn:indecisive}, the multivalued map $\cF_2$ hasn't addressed cases in which the direction in $\Ex(\xi,\xi')=\setof{n_o}$ actively regulates a different opaque direction $n_o' \in O(\xi)$. So we turn our attention to a particular subcase, when $\rmap\xi \colon \activeset(\xi)\to\activeset(\xi)$ is a bijection and there is an opaque direction $n_o \in O(\xi)$ that is cyclic at $\xi$ with length of the cycle $k \geq 2$ (see Definition~\ref{def:acyclic-direction}).
\begin{defn}
    Let $S$ be a finite set and $\sigma : S \to S$ a bijection. A \emph{cycle decomposition} for $\sigma$ is an expression of $\sigma$ as the product of disjoint cycles $\sigma_i = (a_{i_1} \, a_{i_2} \, \ldots \ a_{i_{n_i}})$ where $ \sigma_i : S \to S$ satisfies $\sigma_i(a_{i_k})=a_{i_{k+1}}$ and $\sigma_i(x)=x$ for any $x \in S\setminus S_i$ with $S_i \coloneqq \setof{a_{i_1},\ldots,a_{i_{n_i}}}$, and $S_i \cap S_j = \emptyset$ whenever $i\neq j$. 
\end{defn}

\begin{prop}
\label{prop:face_cycle}  
Let $\xi\preceq_\cX\tau\in\cX$.
Let $\sigma$ be cycle for $\rmap{\tau}$.
Then, $\sigma$ is a cycle for $\rmap{\xi}$.
\end{prop}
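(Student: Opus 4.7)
The plan is to reduce the proposition to Proposition~\ref{prop:act_regulation_faces}, which says that active regulation passes to faces, together with the observation that both $\rmap{\xi}$ and $\rmap{\tau}$ are defined in terms of local inducement maps $\tilde{o}_v$ at vertices $v$ below $\xi$ and $\tau$ respectively, and every vertex $v \preceq \xi$ automatically satisfies $v \preceq \tau$.

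First, I would write the given cycle explicitly as $\sigma = (n_1 \; n_2 \; \cdots \; n_k)$, meaning $n_i \in \activeset(\tau)$ and $\rmap{\tau}(n_i) = n_{i+1}$ (indices mod $k$) for each $i = 1,\dots,k$. By Definition~\ref{def:active_regulation} and Definition~\ref{def:regulationmap}, this is exactly the statement that $n_i$ actively regulates $n_{i+1}$ at $\tau$ for every $i$.

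Next, applying Proposition~\ref{prop:act_regulation_faces} with the pair $\xi \preceq \tau$ gives that $n_i$ actively regulates $n_{i+1}$ at $\xi$ for every $i$. In particular, $n_i \in \activeset(\xi)$. To conclude that $\rmap{\xi}(n_i) = n_{i+1}$, I would unwind Definition~\ref{def:regulationmap}: for any vertex $v \in \cX^{(0)}$ with $v \preceq \xi$, we have $v \preceq \tau$, and hence
\[
\rmap{\xi}(n_i) = \tilde{o}_v(n_i) = \rmap{\tau}(n_i) = n_{i+1}.
\]
Thus the orbit of $n_1$ under $\rmap{\xi}$ stays inside $\activeset(\xi)$, returns to $n_1$ after $k$ iterations, and acts exactly as $\sigma$ on $\setof{n_1,\dots,n_k}$. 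By Definition~\ref{def:acyclic-direction} this is a cycle for $\rmap{\xi}$ of the same length and shape as $\sigma$.

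There is essentially no serious obstacle here; the only point that needs care is verifying that the local inducement map used to define $\rmap{\xi}(n_i)$ is well-defined and agrees with the one used for $\rmap{\tau}(n_i)$. Once $n_i$ is known to be in $\activeset(\xi)$, Lemma~\ref{rem:o_sigmas_agree} (as invoked in the definition of $\rmap{\xi}$) guarantees the value $\tilde{o}_v(n_i)$ is independent of the choice of vertex $v \preceq \xi$, and picking any such $v$ (which automatically satisfies $v \preceq \tau$) yields the same value as $\rmap{\tau}(n_i)$. This final compatibility check is the only nontrivial bookkeeping step in the argument.
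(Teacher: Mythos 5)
Your proof is correct and takes essentially the same route as the paper's: the paper's (very terse) argument simply notes $\activeset(\tau)\subset\activeset(\xi)$ and that the restrictions of the two regulation maps to $S_\sigma$ coincide, which is exactly what you establish via Proposition~\ref{prop:act_regulation_faces} and the compatibility of the local inducement maps $\tilde{o}_v$. Your version fills in the bookkeeping the paper leaves implicit, but there is no difference in substance.
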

\begin{proof}
Notice that $\activeset(\tau) \subset \activeset(\xi)$, so if $\rmap\tau|_{S_\sigma} = \sigma$, then $\rmap\xi|_{S_\sigma} = \sigma$. That is, $\sigma$ is a cycle in $\rmap\xi$.
\end{proof}

\begin{defn}
\label{defn:partially_opaque}
A cell $\xi \in \cX \setminus \cX^{(N)}$ is called \emph{semi-opaque} if $\rmap \xi \colon \activeset(\xi) \to \activeset(\xi)$ is a bijection.
\end{defn}
Note that every opaque cell is semi-opaque by Proposition~\ref{prop:opaque}.                                                                

\begin{defn}
    \label{defn:lap-number}
    Let $\xi=[\bv,\bw] \in \cX$ be a semi-opaque cell.
    For each $n \in J_i(\xi)$, define $\kappa_n^- \in \cX^{(N-1)}$ and $\kappa \in \cX^{(N)}$ by
    \[
        \kappa_n^- = [\bv,\bone^{(n)}], \quad \kappa = [\bv,\bone]. 
    \]
    Let $$\rmap\xi = \sigma_{\xi,1} \sigma_{\xi,2} \ldots \sigma_{\xi,r}$$
    be the decomposition of the permutation $\rmap\xi$ into disjoint cycles. 
    For each cycle $\sigma=(n_1 \ \ldots \ n_k)$ of length $k$, define the support of the cycle by $S_\sigma \coloneqq \setof{n_1,\ldots,n_k} \subset \activeset(\xi)$ and assign a non-negative integer $\lap_{\xi,\sigma}\colon \Top_\cX(\xi) \to \mathbb{N}$ to all top cells of $\xi$ by 
    \[
        \lap_{\xi,\sigma}(\mu) \coloneqq \# \setdef{ n \in S_\sigma } 
        {\rook_{\sigma(n)}(\kappa_{\sigma(n)}^-,\kappa) \cdot p_{n}(\xi,\mu) \cdot p_{\sigma(n)}(\xi,\mu) < 0}.
    \]
\end{defn}

\begin{ex}\label{ex:lap_number_2and3D}
    If $\sigma = (n)$, then $S_\sigma = \setof{n}$ and 
    \begin{align*}
        \lap_{\xi,\sigma}(\mu) = \begin{cases} 
        0 & \text{if } \omega(\kappa_n^-,\kappa) > 0\\ 
        1 & \text{if } \omega(\kappa_n^-,\kappa) < 0
        \end{cases}
    \end{align*}
    If $\sigma = (n_1 \ n_2)$, then $S_\sigma=\setof{n_1,n_2}$ and 
    \begin{align*}
        \lap_{\xi,\sigma}(\mu) = \begin{cases} 
        0 & \text{if } \omega(\kappa_{n}^-,\kappa)p_{n_1}(\xi,\mu)p_{n_2}(\xi,\mu)>0 \text{ for } n=n_1, n_2 \\
        1 & \text{if } \omega(\kappa_{n_1}^-,\kappa)\omega(\kappa_{n_2}^-,\kappa)<0\\
        2 & \text{if } \omega(\kappa_{n}^-,\kappa)p_{n_1}(\xi,\mu)p_{n_2}(\xi,\mu)<0 \text{ for } n=n_1, n_2\\
        \end{cases}
    \end{align*}
    If $\sigma = (n_1 \ n_2 \ n_3)$, then $S_\sigma=\setof{n_1,n_2,n_3}$ and $\lap_{\xi,\sigma}(\mu)$ simply counts how many $-1$'s are in
    \[
        \setof{ \omega(\kappa_{n_2}^-,\kappa)p_{n_1}(\xi,\mu)p_{n_2}(\xi,\mu), \omega(\kappa_{n_3}^-,\kappa)p_{n_2}(\xi,\mu)p_{n_3}(\xi,\mu), \omega(\kappa_{n_1}^-,\kappa)p_{n_3}(\xi,\mu)p_{n_1}(\xi,\mu)}.
    \]
    In particular, $\lap_{\xi,\sigma}(\kappa)=\#\setdef{i\in\setof{1,\ldots,k}}{\omega(\kappa_{n_i}^-,\kappa) = -1}$, where $k$ is the length of the cycle $\sigma$.
\end{ex}
    Note that by Proposition~\ref{prop:opaque}, every equilibrium cell $\xi \in \cX$ has a bijection $\rmap\xi\colon \activeset(\xi) \to \activeset(\xi)$. Moreover, note that if $n \in S_\sigma$ for some cycle $\sigma$, then $\cycleof_\xi(n)=S_\sigma$.

\begin{rem}
    \label{rem:lap-number}
    The definition of $\lap_{\xi,\sigma}$ is motivated by the definition of an integer-valued Lyapunov function, namely zero number or lap number, for cyclic feedback systems of ODEs \cite{mallet-paret:smith, gedeon:memoirs}. 
    This number characterizes the structure of unstable manifolds of an equilibrium in such systems. 
    By foreshadowing their existence, we identify the position of the manifolds in the combinatorial setting. This is further discussed in Chapter~\ref{sec:futurework}.
\end{rem}
\begin{defn}
    \label{defn:Rule3.1}
    We define $\cF_{3.1} : \cX \mvmap \cX$ to be the maximal refinement of $\cF_0$ that satisfies the following condition.
    \begin{description}
        \item[Condition 3.1] Let $\xi \in \cX$ be a semi-opaque cell.
        If $\sigma$ is a $k$-cycle in the cycle decomposition of $\rmap\xi$ with $k\geq 2$, then for any $\xi' \in \cX$, $\xi \prec \xi'$, such that $\Ex(\xi,\xi')\subseteq S_\sigma$, $\xi'\notin \cF_{3.1}(\xi)$. 
    \end{description}
\end{defn}
\begin{defn}
\label{defn:Rule3.2}
Let $\xi \in \cX$ be a semi-opaque cell. Consider the cycle decomposition of $\rmap{\xi} = \sigma_1 \ldots \sigma_r$. 
For each cycle $\sigma$ with length $| \sigma | \geq 2$, set
\[
\mathcal{N}(\xi,\sigma) = \setdef{\xi' \in \cX}{\xi\prec\xi', \Ex(\xi,\xi')\subseteq S_\sigma, \dim(\xi')-\dim(\xi)\geq 2}
\]
and
\begin{equation}
\label{defn:set_cells_Uns&Stable}
\cU(\xi,\sigma) = \setdef{ \xi'\in \mathcal{N}(\xi,\sigma)}{\lap_{\xi,\sigma}(\mu) < \frac{| \sigma |}{2},\ \forall \mu \in \Top_\cX(\xi')}.
\end{equation}

Define the set of \emph{unstable cells} of $\xi$ by
\begin{equation}
\label{defn:unstable_cells}
\cU(\xi) = \bigcup_{\stackrel{i = 1}{|\sigma_i| \geq 2}}^r \cU(\xi,\sigma_i),
\end{equation}
that is, the union is taken over all cycles $\sigma$ of length $k \geq 2$ in the cycle decomposition of $\rmap{\xi}$. 
If $\rmap\xi$ is not a bijection, let $\cU(\xi)=\emptyset$.
\end{defn}

\begin{defn}
    \label{defn:Rule3}
    We define $\cF_{3} \colon \cX \mvmap \cX$ by
    \[
        \cF_3(\xi) = \left( \cF_{2}(\xi) \cap \cF_{3.1}(\xi) \right)\cup \cU(\xi).
    \]
\end{defn}
\begin{prop}
    \label{prop:F3-welldefined}
    For any $\xi \in \cX$, $\cF_3(\xi) \neq \emptyset$. 
\end{prop}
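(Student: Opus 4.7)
The plan is to stratify by the structure of $\rmap\xi$ and $G(\xi)$, using the observation that $\cF_{3.1}$ is the only ingredient beyond $\cF_2$ that can remove edges, and it only acts on strict cofaces $\xi \prec \xi'$ satisfying $\Ex(\xi,\xi') \subseteq S_\sigma$ for some cycle $\sigma$ of length $\geq 2$ of $\rmap\xi$ (and only when $\xi$ is semi-opaque).

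First, I would dispose of the easy cases. If $\xi$ is not semi-opaque, or if $\rmap\xi$ has no cycle of length $\geq 2$, then Condition 3.1 is vacuous, so $\cF_{3.1}(\xi) = \cF_0(\xi) \supseteq \cF_2(\xi)$ and the claim reduces to Corollary~\ref{cor:F2-well-defined}. If instead $\xi$ is semi-opaque with a cycle of length $\geq 2$ but $G(\xi) = \emptyset$, then the self-loop $\xi \in \cF_0(\xi)$ survives every restriction: Condition 1.1 is vacuous, while Conditions 1.2, 2.1 and 3.1 only remove edges between distinct cells related by $\prec$. Hence $\xi \in \cF_3(\xi)$.

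Next, assume $G(\xi) \neq \emptyset$ and $\xi$ is semi-opaque with a cycle of length $\geq 2$. Pick $n^* \in G(\xi)$ and apply Lemma~\ref{lem:gradient-implies-non-empty} to get $\xi'$ with $|\dim(\xi')-\dim(\xi)| = 1$ such that either $\xi' \in E^-(\xi)$ (when $n^* \in J_e(\xi)$) or $\xi \in E^+(\xi')$ (when $n^* \in J_i(\xi)$). In both cases the relevant rook entry is constant over the top cells of the larger cell, so no GO-pair is present for the pair in question; thus Condition 2.1 leaves the edge alone and $\xi' \in \cF_2(\xi)$. If $\xi' \prec \xi$, then Condition 3.1 cannot touch $\xi'$ and we are done. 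If $\xi \prec \xi'$, then $\Ex(\xi,\xi') = \setof{n^*}$; when there exists a choice of $n^* \in G(\xi)$ outside every $S_\sigma$ with $|\sigma| \geq 2$ (e.g., any $n^* \in G_e(\xi)$, since $S_\sigma \subseteq \activeset(\xi) \subseteq J_i(\xi)$), this $\xi'$ also lies in $\cF_{3.1}(\xi)$.

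The remaining case, where $G_e(\xi) = \emptyset$ and every $n \in G_i(\xi)$ lies in some $S_\sigma$ with $|\sigma|\geq 2$, is where one must appeal to $\cU(\xi)$ rather than $\cF_2(\xi)\cap\cF_{3.1}(\xi)$. For a witnessing cycle $\sigma$ I would construct a candidate $\xi' \in \mathcal{N}(\xi,\sigma)$ with $\dim(\xi') = \dim(\xi)+|\sigma|$ by extending $\xi$ in every direction of $S_\sigma$, with the sign of each extension dictated by the inner wall labels $\omega(\kappa_n^-,\kappa)$ so that, on each top cell $\mu \in \Top_\cX(\xi')$ and each $n \in S_\sigma$, the product $\omega(\kappa_{\sigma(n)}^-,\kappa)\,p_n(\xi,\mu)\,p_{\sigma(n)}(\xi,\mu)$ is positive; this forces $\lap_{\xi,\sigma}(\mu) = 0 < |\sigma|/2$ for all such $\mu$, giving $\xi' \in \cU(\xi,\sigma) \subseteq \cU(\xi)$. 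The main obstacle is this last step, because the sign pattern of the inner wall labels is not free: the strong-dissipativity hypothesis (Definition~\ref{defn:dissipativewall}) and the rigidity imposed by the local inducement maps of Definition~\ref{def:wall_labeling} constrain which combinations are compatible with $n \in G(\xi) \cap S_\sigma$, and one must use these constraints to rule out the parity-obstructed sign patterns (of the type illustrated for a $2$-cycle in Example~\ref{ex:lap_number_2and3D}, where $\omega(\kappa_{n_1}^-,\kappa)\omega(\kappa_{n_2}^-,\kappa) < 0$ forces $\lap = 1$ everywhere) that would make $\cU(\xi,\sigma)$ empty.
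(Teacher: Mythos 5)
Your proposal is incomplete in the last case, and you say so yourself, so it does not yet prove the proposition. The paper closes the whole argument in two lines by never entering that case. For $G(\xi)\neq\emptyset$ it takes $n\in G(\xi)$, applies Lemma~\ref{lem:gradient-implies-non-empty} to get $\xi'$ with $\Ex(\xi,\xi')=\{n\}\subseteq G(\xi)$, and then simply asserts that Condition 3.1 cannot delete this $\xi'$ because $\Ex(\xi,\xi')$ lies in $G(\xi)$ and not $O(\xi)$. That conclusion requires the structural fact that every cycle support $S_\sigma$ with $|\sigma|\geq 2$ is contained in $O(\xi)$ --- equivalently, that a gradient direction can never belong to a nontrivial $\rmap\xi$-cycle --- because Condition 3.1 only fires when $\Ex(\xi,\xi')\subseteq S_\sigma$. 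Granted that fact, your ``remaining case'' ($G_e(\xi)=\emptyset$ and every $n\in G_i(\xi)$ sits in some $S_\sigma$) is vacuous, the $\cU(\xi)$ construction is unnecessary, and your argument collapses to the paper's. You did prove a weak form of this (any $n^*\in G_e(\xi)$ avoids $S_\sigma$ since $S_\sigma\subseteq\activeset(\xi)\subseteq J_i(\xi)$), but did not extend it to gradient \emph{inessential} directions, and that is precisely the step the paper's proof relies on.

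A caution worth stating explicitly: the inclusion $S_\sigma\subseteq O(\xi)$ for $|\sigma|\geq 2$ is never formulated or proved in the paper. Proposition~\ref{prop:opaque} gives $\activeset(\xi)=O(\xi)$ only for \emph{opaque} cells, but Condition 3.1 acts on all semi-opaque cells, including ones with $G(\xi)\neq\emptyset$. Your discussion of the negative-feedback parity pattern in Example~\ref{ex:lap_number_2and3D} (where $\lap_{\xi,\sigma}\equiv 1$ and so $\cU(\xi,\sigma)=\emptyset$) is exactly where this would bite: if a gradient inessential direction could sit inside a $2$-cycle, then Condition 1.1 kills the self-edge, Condition 3.1 kills every coface with $\Ex\subseteq S_\sigma$, and $\cU(\xi)$ gives nothing back. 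So the proposition really hinges on establishing $S_\sigma\subseteq O(\xi)$ (the route the paper takes implicitly), not on filling in the $\cU(\xi)$ argument you sketch --- as you yourself noticed, the latter can only succeed when the signature of the cycle is positive. Concretely: prove that $m\neq n$, $m\in S_\sigma$, and $\rmap\xi(m)=n$ together force $R_n(\xi)=\{\pm 1\}$, and your remaining case disappears.
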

\begin{proof}
    If $G(\xi)\neq\emptyset$, then by Lemma~\ref{lem:gradient-implies-non-empty} there exists $\xi'\in\cF_2(\xi)$. Note that $\xi'\in\cF_{3.1}(\xi)$ is unchanged since $\Ex(\xi,\xi')$ is contained in $G(\xi)$ and not $O(\xi)$.

    If $G(\xi)=\emptyset$, then by Proposition~\ref{defn:opaque}, $\rmap{\xi}$ is a bijection, and $\xi \in \cF_2(\xi)$ and $\xi \in \cF_{3.1}(\xi)$ since only $\cF_{1.2}$ removes self-edges. 
    Thus, $\cF_3(\xi)\neq \emptyset$. 
\end{proof}

\begin{thm}
\label{thm:2dDone}
    Let $\omega\colon W(\cX) \to \setof{\pm 1}$ be a wall labeling on an abstract two-dimensional cubical complex. 
    Then, $\cF_3\colon \cX\mvmap \cX$ has no double edges.
\end{thm}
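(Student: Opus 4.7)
My plan is to argue by contradiction: suppose $\xi\darrow_{\cF_3}\xi'$ for some $\xi\prec\xi'$ in $\cX$ (the self-loops that survive at equilibrium cells do not fall under the notion of double edge we need to eliminate). Since $\cU(\xi)$ only contains cells of dimension at least $\dim(\xi)+2$ by Definition~\ref{defn:Rule3.2}, a double edge at a pair with $\dim(\xi')-\dim(\xi)=1$ survives in $\cF_3$ only if it already survives in $\cF_2\cap\cF_{3.1}$; thus it is enough to show that one of $\cF_1$, $\cF_{2.1}$, or $\cF_{3.1}$ kills at least one of the two arrows. In dimension two the only relevant cases are $(\dim\xi,\dim\xi')=(1,2)$ and $(0,1)$.

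The edge-to-top case $(1,2)$ is immediate: $(\xi,\xi')\in W(\cX)$, so Lemma~\ref{lem:exitwall} yields $\xi\in E^+(\xi')\cup E^-(\xi')$, and Condition~1.2 of Definition~\ref{def:Rule1.2} already removes one of the two arrows at the level of $\cF_1$.

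The main obstacle is the vertex-to-edge case $(0,1)$. Write $\Ex(\xi,\xi')=\setof{n_o}$ and $J_i(\xi')=\setof{n_g^*}$, so that $n_g^*\neq n_o$. When $\xi'$ is interior, $\Top_\cX(\xi')=\setof{\mu^-,\mu^+}$ is a pair of $n_g^*$-adjacent top cells sharing the wall $\xi'$; the boundary case $|\Top_\cX(\xi')|=1$ is easier, because a single-valued $\rook_{n_o}(\xi,\cdot)$ forces $\xi\in E^+(\xi')\cup E^-(\xi')$ and Condition~1.2 applies. The key combinatorial step I would establish is that $\xi\darrow_{\cF_1}\xi'$ forces $(\xi,\xi')$ to carry the GO-pair $(n_g^*,n_o)$ of Definition~\ref{defn:GO-pair}. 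Indeed, Condition~1.2 rewrites $\xi\darrow_{\cF_1}\xi'$ as $\rook_{n_o}(\xi,\mu^-)\neq\rook_{n_o}(\xi,\mu^+)$; the contrapositive of Definition~\ref{def:wall_labeling}(i) with $n=n_g^*$ and $k=n_o$ forces $\tilde{o}_\xi(n_g^*)=n_o$; Definition~\ref{def:wall_labeling}(ii) then gives $\omega(\xi',\mu^+)=\omega(\xi',\mu^-)$, i.e.\ $n_g^*\in G(\xi')$; and Proposition~\ref{prop:Rule1-DArrow-Implies-Opaque} supplies $n_o\in O(\xi)$. These are exactly the GO-pair axioms.

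To close the argument I would split on $O_i(\xi)\subseteq J_i(\xi)=\setof{n_g^*,n_o}$. If $O_i(\xi)=\setof{n_o}$, the uniqueness clause of Definition~\ref{defn:indecisive} is vacuous, so $(\xi,\xi')\in\cD(\rook)$; for any back wall $(\dec,\dec')\in\Back(\xi,\xi')$, Lemma~\ref{lem:exitwall} yields $\dec\in E^+(\dec')\cup E^-(\dec')$, and Condition~2.1 of Definition~\ref{defn:F2} removes one arrow in $\cF_{2.1}$. If instead $O_i(\xi)=\setof{n_g^*,n_o}$, then $\xi$ is opaque; Proposition~\ref{prop:opaque} makes $\rmap{\xi}$ a bijection of $\setof{n_g^*,n_o}$, and since $\rmap{\xi}(n_g^*)=n_o\neq n_g^*$ this bijection is the transposition, producing the single $2$-cycle $\sigma=(n_g^*\ n_o)$ with $S_\sigma=\setof{n_g^*,n_o}$. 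Because $\Ex(\xi,\xi')=\setof{n_o}\subseteq S_\sigma$ and $|\sigma|=2$, Condition~3.1 of Definition~\ref{defn:Rule3.1} deletes $\xi'$ from $\cF_{3.1}(\xi)$, while the dimension gap precludes $\xi'\in\cU(\xi)$. Either way one of the two arrows is absent, contradicting $\xi\darrow_{\cF_3}\xi'$.
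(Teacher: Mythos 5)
Your proof is correct and follows essentially the same route as the paper: both reduce to arrows between cells of adjacent dimensions, dispatch the edge-to-top case with Condition~1.2 via Lemma~\ref{lem:exitwall}, and split the vertex-to-edge case between Condition~2.1 (indecisive drift holds, so $\cF_{2.1}$ removes an arrow) and Condition~3.1 (the regulation map is a $2$-cycle, so $\cF_{3.1}$ removes an arrow). Your explicit derivation that a surviving double edge forces the GO-pair $(n_g^*,n_o)$ --- and therefore that Definition~\ref{defn:indecisive} actually applies when $O_i(\xi)=\setof{n_o}$ --- spells out a step that the paper's proof leaves implicit, but the underlying case structure is the same.
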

\begin{proof}
    We verify that each pair $(\xi,\xi')\in \cX \times \cX$ with $\xi \prec \xi'$ and $\dim(\xi')=\dim(\xi)+1$ does not have a double edge under $\cF_3$. That is enough since $\cF_{3.2}$ only adds edges in the direction $\xi \rightarrow_{\cF_{3.2}} \xi'$ when $\dim(\xi')=\dim(\xi)+2$. 

    If $\dim(\xi)=1$, then $(\xi,\xi') \in W(\xi)$ so $\xi \in E^\pm(\xi')$ and either $\xi \notin \cF_3(\xi')$ or $\xi' \notin \cF_3(\xi)$. 

    If $\dim(\xi)=0$, let $\Ex(\xi,\xi')=\setof{n}$. If there is no double edge under $\cF_1$, we are done. If $\xi \darrow_{\cF_1} \xi'$, then by Lemma~\ref{prop:Rule1-DArrow-Implies-Opaque}, $n \in O(\xi)$. If $\rmap\xi(n)=n$ or $\rmap\xi(n) \notin \activeset(\xi)$, then $\cF_2$ removes the double edge. Otherwise,  $\rmap\xi(n)\neq n$ and $\rmap\xi(n) \in \activeset(\xi) \subseteq (\xi)$ and, since $N=2$, $n$ is cyclic of length $k=2$, in particular, $\rmap{\xi} :\setof{1,2}\to\setof{1,2}$ is a bijection. Thus, $\xi'\notin \cF_{3.1}(\xi)$ removes the double edge. 

    Therefore, $\cF_3 : \cX \mvmap \cX$ has no double edge. 
\end{proof}

\chapter{Lattice structures}
\label{sec:lattice}

Chapters~\ref{sec:RookFields} and \ref{sec:rules} provide algorithms to go from wall labelings to four combinatorial models, $\cF_i\colon \cX\mvmap\cX$ for $i\in\setof{0,1,2,3}$.
The goal of this Chapter is to demonstrate that these models can be used to derive a comibinatorial and homological characterization of global dynamics.

To achieve this goal requires some preparatory remarks and constructions.
We begin in Section~\ref{sec:compInvset+} by defining the D-grading (meant to suggest dynamics grading) of a combinatorial map $\cF$ and as is made clear this partial order is intimately tied to the lattice of forward invariant sets $\sInvset^+(\cF)$.
We also discuss restrictions on $\sInvset^+(\cF)$ due to the standing assumption that we are working with strongly dissipative wall labelings.

Though D-gradings capture the direction of non-recurrent dynamics, it is not clear that they provide a correct codification of the desired homological invariants.
Thus, in Section~\ref{sec:AdmissbleD-grading}, we introduce the notion of an admissible grading.
Furthermore,  the abstract cubical cell complex $\cX$ on which the combinatorial model $\cF$ is defined does not appear to be rich enough to express the desired algebraic topological information.
Hence, in Section~\ref{sec:blowup_complex} we define an associated blow-up complex $\cX_b$.

Section~\ref{sec:Dgradblowup} provides one of the major results of this monograph: the D-gradings defined from the combinatorial models $\cF_i\colon \cX\mvmap\cX$ for $i\in\setof{0,1,2,3}$ give rise to admissible gradings on $\cX_b$.
As a consequence we can compute the desired homological invariants.

These desired homological invariants, e.g., the Conley index and the Conley complex, are presented in Section~\ref{sec:ConleyComplex}. 
Section~\ref{sec:CCalgorithm} concludes this chapter with a discussion the desired computations and some simple examples.

\section{Computing $\sInvset^+(\cF)$}
\label{sec:compInvset+}

Let $\cZ$ denote a finite set.
Let $\cF\colon \cZ\mvmap \cZ$ denote a multivalued map, i.e., for every $\zeta\in\cZ$, $\cF(\zeta)$ is a non-empty subset of $\cZ$.
Equivalently, $\cF$ denotes a directed graph with vertices $\cZ$,  directed edges $\zeta_0\to\zeta_1$ if and only if $\zeta_1\in\cF(\zeta_0)$, and for every $\zeta_0\in\cZ$ there exists a vertex $\zeta_1$ for which there is an edge $\zeta_0\to\zeta_1$.
\emph{We use both concepts interchangeably.}
\begin{defn}
    \label{defn:actualscc}
    A \emph{path of length $k$} from a vertex $\zeta \in \cZ$ to a vertex $\zeta' \in \cZ$ is a sequence of vertices $(\zeta_0,\ldots,\zeta_k) \in \cZ^{k+1}$ such that $\zeta_{i+1} \in \cF(\zeta_i)$, $\zeta_0=\zeta$ and $\zeta_k=\zeta'$. 
    A \emph{strongly connected component} of a directed graph is a maximal set of vertices $\cY \subset \cZ$ such that for every pair $\zeta,\zeta' \in \cY$ there are paths in $\cY$ from $\zeta$ to $\zeta'$ and from $\zeta'$ to $\zeta$. 
\end{defn}
\begin{defn}
\label{defn:Dgrading}
Recall that the vertices $\cZ$ of a directed graph $\cF$ can be partitioned into the collection of strongly connected components $\SCC(\cF)$ \cite{cormen:leiserson:rivest:stein}.
We encode this partition by means of a quotient map $\pi \colon \cZ \to \SCC(\cF)$ that we refer to as the \emph{D-grading} for $\cF$.
A strongly connected component that contains at least one edge is called a \emph{recurrent component}.
\end{defn}

There are linear time algorithms for computing $\SCC(\cF)$ \cite{tarjan}.
\begin{defn}
\label{defn:SCC}
The \emph{weak condensation graph} of $\cF$ is the multivalued map $\bar{\cF} \colon \SCC(\cF) \mvmap \SCC(\cF)$ defined by
$\alpha_1 \in \bar{\cF}(\alpha_0)$ if and only if there exist $\zeta_0 \in \pi^{-1}(\alpha_0)$ and $\zeta_1 \in \pi^{-1}(\alpha_1)$ such that $\zeta_1 \in \cF(\zeta_0)$.
\end{defn}

\begin{ex}
\label{ex:trivialSCC}
Consider the multivalued maps $\cF_i\colon \cX\mvmap \cX$, where $i=0,1.1$ as in Definition~\ref{def:Rule0} and \ref{def:Rule1.1}.
Observe that $\xi \darrow_{\cF_i} \xi'$ for every $\xi\prec\xi'$ with $\dim(\xi')=\dim(\xi)+1$ and therefore there exists a single strongly connected component $\alpha\in \SCC(\cF_i)$.
Furthermore, since there exists at least one pair $\xi, \xi' \in \pi^{-1}(\alpha)$ such that $\xi \darrow_{\cF_i} \xi'$ it follows that $\alpha \in \bar{\cF}_i (\alpha)$, i.e., $\alpha$ is a recurrent component.
\end{ex}

Note that $\bar{\cF} \colon \SCC(\cF) \mvmap \SCC(\cF)$ is just the \emph{condensation graph} of $\cF$ \cite[Definition 1.1]{Fagnani2018} with self-edges added to recurrent components. 
Since the condensation graph of $\cF$  is a directed acyclic graph it imposes a partial order on $\SCC(\cF)$. 
Therefore, $\bar{\cF}$ defines the same partial order, namely $(\SCC(\cF), \leq_{\bar{\cF}})$ is a poset where the partial order is given by $\alpha' \leq_{\bar{\cF}} \alpha$ if and only if there exists $n\geq 0$ such that $\alpha'\in \bar{\cF}^n(\alpha)$. 

A set $\cN\subseteq \cZ$ is \emph{forward invariant} under $\cF$ if $\cF(\cN)\subseteq \cN$.
The collection of forward invariant sets is denoted by $\sInvset^+(\cF)$ and as is shown in \cite{kalies:mischaikow:vandervorst:14} is a distributive lattice under the operations of intersection $\cap$ and union $\cup$.
Observe that $\sInvset^+(\cF)$ is a $\bzero\bone$-sublattice of $\sO(\cZ)$, the downsets of $\cZ$.

\begin{prop}
\label{prop:Invset+}
Let $\cF \colon \cZ \mvmap \cZ$ be a multivalued map and let $\bar{\cF} \colon \SCC(\cF) \mvmap \SCC(\cF)$ be the
weak condensation graph of $\cF$.
Then, 
\[
\sInvset^+(\cF)\cong \sInvset^+(\bar{\cF}) = \sO(\SCC(\cF),\leq_{\bar{\cF}}).
\]
\end{prop}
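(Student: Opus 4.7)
The plan is to establish the two assertions separately: first the lattice isomorphism $\sInvset^+(\cF)\cong \sInvset^+(\bar{\cF})$, and second the equality $\sInvset^+(\bar{\cF})=\sO(\SCC(\cF),\leq_{\bar{\cF}})$.

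For the isomorphism, the key structural lemma I would prove first is that every forward invariant set is a union of strongly connected components. Concretely, if $\cN\in\sInvset^+(\cF)$ and $\zeta\in\cN\cap\pi^{-1}(\alpha)$ for some $\alpha\in\SCC(\cF)$, then $\pi^{-1}(\alpha)\subseteq\cN$. The proof is immediate from Definition~\ref{defn:actualscc}: for any $\zeta'\in\pi^{-1}(\alpha)$ there is a $\cF$-path $(\zeta=\zeta_0,\zeta_1,\ldots,\zeta_k=\zeta')$ inside $\pi^{-1}(\alpha)$, and forward invariance $\cF(\cN)\subseteq\cN$ applied inductively along this path forces each $\zeta_i\in\cN$.

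Granted this lemma, I would define
\[
\Phi\colon \sInvset^+(\cF)\to\sO(\SCC(\cF)),\qquad \Phi(\cN):=\pi(\cN),
\]
with inverse $\bar{\cN}\mapsto\pi^{-1}(\bar{\cN})$. The lemma guarantees that $\pi^{-1}(\pi(\cN))=\cN$, so $\Phi$ is a bijection onto its image. To see that the image lies in $\sInvset^+(\bar{\cF})$: if $\alpha\in\pi(\cN)$ and $\alpha'\in\bar{\cF}(\alpha)$, then by Definition~\ref{defn:SCC} there exist $\zeta\in\pi^{-1}(\alpha)\subseteq\cN$ and $\zeta'\in\pi^{-1}(\alpha')$ with $\zeta'\in\cF(\zeta)\subseteq\cN$, whence $\alpha'\in\pi(\cN)$. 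The same computation, read in the opposite direction, shows $\pi^{-1}(\bar{\cN})\in\sInvset^+(\cF)$ when $\bar{\cN}\in\sInvset^+(\bar{\cF})$, because $\pi^{-1}$ distributes over unions. Finally, since both $\cN_i$ are saturated unions of SCC-fibers,
\[
\pi(\cN_1\cap\cN_2)=\pi(\cN_1)\cap\pi(\cN_2),\qquad \pi(\cN_1\cup\cN_2)=\pi(\cN_1)\cup\pi(\cN_2),
\]
so $\Phi$ is a $\bzero\bone$-lattice isomorphism.

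For the equality $\sInvset^+(\bar{\cF})=\sO(\SCC(\cF),\leq_{\bar{\cF}})$, I would argue by a direct unpacking of the partial order. Suppose $\bar{\cN}\in\sInvset^+(\bar{\cF})$, and let $\alpha\in\bar{\cN}$ and $\alpha'\leq_{\bar{\cF}}\alpha$. By definition there exists $n\geq 0$ with $\alpha'\in\bar{\cF}^n(\alpha)$; a trivial induction on $n$ using $\bar{\cF}(\bar{\cN})\subseteq\bar{\cN}$ shows $\alpha'\in\bar{\cN}$, so $\bar{\cN}$ is a downset. Conversely, if $\bar{\cN}$ is a downset and $\alpha\in\bar{\cN}$, then every $\alpha'\in\bar{\cF}(\alpha)$ satisfies $\alpha'\leq_{\bar{\cF}}\alpha$ (take $n=1$), hence $\alpha'\in\bar{\cN}$, giving $\bar{\cF}(\bar{\cN})\subseteq\bar{\cN}$.

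The only point requiring any care is the first lemma, where one must be precise about which direction of the strongly-connectedness guarantees containment; the rest is formal manipulation of partial orders and preimages. I do not anticipate any genuine obstacle, since both statements are essentially recognizing that collapsing to the condensation graph identifies forward invariant sets with down-closed sets in the induced partial order.
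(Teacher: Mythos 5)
Your proof is correct and follows essentially the same route as the paper: the lattice isomorphism is induced by the quotient map $\pi$, and $\sInvset^+(\bar\cF)$ is identified with downsets of $(\SCC(\cF),\leq_{\bar\cF})$. The one step you make explicit that the paper treats implicitly is the saturation lemma --- that a forward invariant set meeting an SCC-fiber contains the whole fiber --- which the paper silently relies on when it asserts the existence of an edge-witness $\zeta_0\in\cN$ with $\pi(\zeta_0)=\alpha_0$, so isolating it as a standalone lemma is a worthwhile gain in precision.
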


\begin{proof}
We prove that the quotient map  $\pi\colon \cZ \to \SCC(\cF)$ induces an lattice isomorphism from $\sInvset^+(\cF)$ to $\sInvset^+(\bar{\cF})$.

We begin by showing that if $\cN\in \sInvset^+(\cF)$, then $\pi(\cN)\in \sInvset^+(\bar{\cF})$.
Let $\alpha_0\in\pi(\cN)$ and let $\alpha_1 \in \bar{\cF}(\alpha_0)$.
By Definition~\ref{defn:SCC} $\alpha_1 \in \bar{\cF}(\alpha_0)$ implies that there exists $\zeta_0\in\cN$ and $\zeta_1\in \pi^{-1}(\alpha_1)$ such that 
$\zeta_1\in\cF(\zeta_0)$.
Consider any $\zeta_0\in\cN$ such that $\pi(\zeta_0)=\alpha_0$.
By definition, $\cF(\cN)\subseteq \cN$ and hence if $\zeta_1\in\cF(\zeta_0)$, then $\zeta_1\in \cN$.
Thus, $\alpha_1=\pi(\zeta_1) \in \pi(\cN)$.
Therefore, $\bar{\cF}(\pi(\cN))\subseteq \pi(\cN)$.

We now show that if $\bar{\cN}\in \sInvset^+(\bar{\cF})$, then $\cN :=\pi^{-1}(\bar{\cN})\in \sInvset^+(\cF)$.
The proof is by contradiction.
Let $\zeta_0 \in \cN$, $\zeta_1\in \cF(\zeta_0)$ and assume that $\zeta_1\not\in \cN$. 
This implies that $\alpha_1 =\pi(\zeta_1) \not\in \pi(\cN)$.
Let $\alpha_0 = \pi(\zeta_0)\in \bar{\cN}$.
By Definition~\ref{defn:SCC},  $\alpha_1\in \bar{\cF}(\alpha_0)\subseteq \bar{\cN}$; a contradiction.

We leave it to the reader to check that $\pi$ induces a lattice morphism, i.e., that
\[
\pi(\cN_0\cup\cN_1) = \pi(\cN_0)\cup \pi(\cN_1)\quad\text{and}\quad
\pi(\cN_0\cap\cN_1) = \pi(\cN_0)\cap c(\cN_1).
\]
\end{proof}
The standing assumption that we are working with strongly dissipative wall labelings impacts the structure of $\sInvset^+(\cF_i)$, $i=1,2,3$, and hence, restricts the combinatorial/homological dynamics.
\begin{defn}
\label{defn:commonGradient}
Let $\cF\colon \cX\mvmap \cX$ be a multivalued map on an $N$ dimensional cubical complex.
A subset $\cS\subset \cX$ has a \emph{common gradient direction} $n \in \setof{1,\ldots, N}$ if $n\in G(\xi)$ for all $\xi\in \cS$.
\end{defn}

\begin{thm}
\label{thm:InvsetStronglyDiss}
Consider the multivalued map $\cF_i\colon \cX\mvmap \cX$, $i=1,2,3$, on an $N$-dimensional cubical complex $\cX$.
If $\cS \cap \bbdy(\cX)$ is a recurrent component under $\cF_i$, then $\cS \subset \bbdy(\cX)$, and furthermore, $\cS$ has at least one common gradient direction.
\end{thm}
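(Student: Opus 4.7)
The plan is to read the hypothesis as ``$\cS$ is a recurrent component of $\cF_i$ with $\cS\cap\bbdy(\cX)\neq\emptyset$'' and then combine two ingredients: (i) every cell in $\bbdy(\cX)$ carries an inward-pointing gradient direction, forced by strong dissipativity; and (ii) for each such direction $n$ and sign $r\in\setof{\pm 1}$, the set of cells pinned to the corresponding boundary face is backward-invariant under $\cF_i$. Strong connectivity will then drag all of $\cS$ into a single such set.

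For (i), let $\xi=[\bv,\bw]\in\bbdy(\cX)$. Unpacking Definition~\ref{def:boundary_prime} and the face relation produces an index $n$ with $\bw_n=0$ and $\bv_n\in\setof{0,K(n)+1}$. For every $\mu\in\Top_\cX(\xi)$ the wall $\mu_n^*(\xi,\mu)$ of Proposition~\ref{prop:mu*} shares the $n$-coordinate of $\xi$ and has $\bw_n=0$, so it lies in $\bbdy(\cX)$; strong dissipativity then forces $\omega(\mu_n^*(\xi,\mu),\mu)$ to equal a single value $r\in\setof{\pm 1}$ ($+1$ if $\bv_n=0$, $-1$ if $\bv_n=K(n)+1$), and by Corollary~\ref{cor:mu*} the same value equals $\rook_n(\xi,\mu)$ for every $\mu$. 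Hence $R_n(\xi)=\setof{r}$ and $n\in G(\xi)$.

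For (ii), fix $n$ and $r$, let $v^*=0$ if $r=+1$ and $v^*=K(n)+1$ otherwise, and set
\[
\bbdy_n^r(\cX):=\setdef{[\bv,\bw]\in\cX}{\bv_n=v^*\text{ and }\bw_n=0}\subseteq\bbdy(\cX).
\]
I claim that if $\zeta'\in\bbdy_n^r(\cX)$ and $\zeta'\in\cF_i(\zeta)$ for some $i\in\setof{1,2,3}$, then $\zeta\in\bbdy_n^r(\cX)$. Every $\cF_i$-arrow is either a $\cF_0$-arrow (comparable cells, dimension gap at most one) or, for $i=3$, an upward arrow $\zeta\prec\zeta'$ added by $\cU(\xi)$. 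A direct inspection of $\preceq$ against $\bv_{\zeta',n}=v^*$ and $\bw_{\zeta',n}=0$ handles every case except $\zeta'\prec\zeta$ with $n\in\Ex(\zeta',\zeta)$, in which $\bw_{\zeta,n}=1$ and $\zeta$ genuinely escapes the slab; this is the main obstacle. In that case step~(i) applied to $\zeta'$ gives $\rook_n(\zeta',\mu)=r$ for every $\mu\in\Top_\cX(\zeta)\subseteq\Top_\cX(\zeta')$, while a short calculation from Definition~\ref{defn:rpvector} yields $p_n(\zeta',\zeta)=-r$ (the only choice of $\bq_n$ keeping $\bv_\zeta\in\I$). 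Hence $\zeta'\in E^+(\zeta)$ by Definition~\ref{def:exit_face}, and Condition~1.2(ii) of Definition~\ref{def:Rule1.2} removes the arrow $\zeta\to\zeta'$ from $\cF_{1.2}$, and thus from $\cF_1,\cF_2$, and from $\cF_3$ (the unstable-cell additions in $\cU(\xi)$ are upward edges and never reinstate a downward one). This contradicts $\zeta'\in\cF_i(\zeta)$.

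To conclude, pick $\xi_0\in\cS\cap\bbdy(\cX)$ and apply step~(i) to obtain $n$ and $r$ with $\xi_0\in\bbdy_n^r(\cX)$. For any $\zeta\in\cS$, strong connectivity gives a $\cF_i$-path $\zeta\to\cdots\to\xi_0$ inside $\cS$; iterating step~(ii) along this path places $\zeta$ in $\bbdy_n^r(\cX)$. Therefore $\cS\subseteq\bbdy_n^r(\cX)\subseteq\bbdy(\cX)$, and step~(i) applied cell-by-cell shows that $n$ is a common gradient direction for $\cS$.
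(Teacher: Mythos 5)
Your proof is correct and takes essentially the same route as the paper's proof, which combines Lemma~\ref{lem:GradBoundary} (boundary cells carry a gradient direction) with Lemma~\ref{lem:F1bbdyX} (no $\cF_1$-arrows into the boundary from outside) and a path argument inside the recurrent component. Your version is slightly more careful: you pin down backward-invariance of the single slab $\bbdy_n^r(\cX)$, which is what the path argument actually requires (and is what the proof of Lemma~\ref{lem:F1bbdyX} really establishes despite its weaker stated hypothesis), and you explicitly verify that the $\cU$-edges added in $\cF_3$ point upward and so cannot re-enter the slab—details the paper leaves implicit.
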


An important consequence  of Theorem~\ref{thm:InvsetStronglyDiss} is that if $\cS$ is a recurrent component contained in $\bbdy(\cX)$, then it is not associated with recurrent dynamics of the associated ODEs introduced in Chapter~\ref{sec:ramp}.
The justification for this claim is delayed until Theorem~\ref{thm:regular_cell}.

Our proof of Theorem~\ref{thm:InvsetStronglyDiss} involves a more careful analysis of 
$\cF_i$ for $i=1,2,3$, and thus we introduce the following notation.

Given $n \in \setof{1,\ldots,N}$, we define the left and right $n$-boundaries of $\cX$, respectively, by 
\begin{equation}
    \label{eq:nbbdy}
    \begin{aligned}
        \cX_n^- & = \setdef{ [\bv,\bw ] \in \cX}{\bv_n=0, \bw_n=0},\\
        \cX_n^+ & = \setdef{ [\bv,\bw] \in \cX }{\bv_{n}=K(n)+1, \bw_n=0}.
    \end{aligned}
\end{equation}
Note that $\bbdy(\cX) = \bigcup_{n=1}^N (\cX_n^- \cup \cX_n^+)$. 
\begin{lem}
    \label{lem:GradBoundary}
    For each $n \in \setof{1,\ldots,N}$, and for each $\xi\in \cX_n^\pm,$ $n \in G(\xi)$.
\end{lem}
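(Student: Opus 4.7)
The plan is to show directly that if $\xi = [\bv,\bw] \in \cX_n^\pm$ then the coordinate $\rook_n(\xi,\mu)$ takes the same nonzero value for every $\mu \in \Top_\cX(\xi)$, whence $R_n(\xi) = \setof{\pm 1}$ and $n \in G(\xi)$. The starting observation is that in both $\cX_n^-$ and $\cX_n^+$ one has $\bw_n = 0$, so $n \in J_i(\xi)$. Hence the third case of \eqref{eq:PhiSigmaKappa} in Definition~\ref{def:rookfield} applies and yields
\[
\rook_n(\xi,\mu) \;=\; \omega\bigl(\mu_n^*(\xi,\mu),\mu\bigr),
\]
where $\mu_n^*(\xi,\mu)$ is the unique $n$-wall of $\mu$ having $\xi$ as a face, guaranteed by Proposition~\ref{prop:mu*}.

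Next I would identify $\mu_n^*(\xi,\mu)$ explicitly in each case by a short face-relation computation. Write $\mu = [\bv_\mu,\bone]$. For $\xi \in \cX_n^-$, $\bv_n = 0$ forces $(\bv_\mu)_n = 0$ in the face relation \eqref{eq:is_a_face}; the right $n$-wall $\mu_n^+ = [\bv_\mu + \bzero^{(n)}, \bone^{(n)}]$ would require a vertex with $n$-coordinate $-1 \notin I_n$, so only the left $n$-wall $\mu_n^- = [\bv_\mu,\bone^{(n)}]$ is admissible. Symmetrically, for $\xi \in \cX_n^+$ one checks that $\mu_n^*(\xi,\mu) = \mu_n^+$ is the only option (now because $(\bv_\mu)_n = K(n)$). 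Crucially, the identification is independent of the choice of $\mu \in \Top_\cX(\xi)$.

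The final step invokes strong dissipativity (Definition~\ref{defn:dissipativewall}): since $\mu_n^*(\xi,\mu) \in \cX_n^\pm \subset \bbdy(\cX)$, one has $\omega(\mu_n^*(\xi,\mu),\mu) = -p_n(\mu_n^*(\xi,\mu),\mu)$. A direct use of Definition~\ref{defn:rpvector} gives $p_n(\mu_n^-,\mu) = -1$ and $p_n(\mu_n^+,\mu) = +1$, so $\rook_n(\xi,\mu) = +1$ uniformly on $\cX_n^-$ and $\rook_n(\xi,\mu) = -1$ uniformly on $\cX_n^+$. This establishes $n \in G(\xi)$ in both cases.

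There is no substantive obstacle; the proof is essentially bookkeeping. The only step where care is needed is the identification of $\mu_n^*(\xi,\mu)$: one must verify both that $n \in J_i(\xi)$ (so the inessential clause of \eqref{eq:PhiSigmaKappa} applies rather than the essential one, which could a priori produce a $0$) and that the would-be alternative $n$-wall is ruled out by $\bv$ lying on the extremal slice of $\I$. Once these two points are secured, strong dissipativity does the rest.
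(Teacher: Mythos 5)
Your proof is correct and follows the same route as the paper: reduce $\rook_n(\xi,\mu)$ to the wall label $\omega(\mu_n^*(\xi,\mu),\mu)$ via $n\in J_i(\xi)$ and Proposition~\ref{prop:mu*}, identify $\mu_n^*$ explicitly on each boundary, and apply strong dissipativity (the paper's own proof is terser and merely asserts the value $1$, so you have in fact supplied the missing details). One small slip in your intermediate computation: for $\xi\in\cX_n^-$ the reason $\xi\not\preceq\mu_n^+$ is that $\mu_n^+$ sits at $n$-coordinate $(\bv_\mu)_n+1=1>0=\bv_n$, not that some vertex would lie at $n$-coordinate $-1$; the conclusion $\mu_n^*(\xi,\mu)=\mu_n^-$ and the remainder of your argument are nonetheless correct.
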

\begin{proof}
    We prove for $\cX_n^-$, since the case $\cX_n^+$ is analogous. 
    
    Let $\xi \in \cX_n^-$. To prove that $n \in G(\xi)$, we show that $$R_n(\xi)=\setdef{\rook_{n}(\xi,\mu)}{\mu\in\Top_\cX(\xi)}=\setof{1}.$$
    Let $\mu \in \Top_{\cX}(\xi)$. By definition of $\cX_n^-$, $n\in J_i(\xi)$, so by Proposition~\ref{cor:mu*},
    \[
        \rook_n(\xi,\mu) = \omega(\mu_n^{*}(\xi,\mu),\mu) = 1. 
    \]
    Thus, $R_n(\xi)=\setof{1}$ and $n \in G(\xi)$. Note that our choice of $\xi$ was arbitrary, so $n$ is a common gradient direction of $\cX_n^-$. 
\end{proof}
\begin{lem}
    \label{lem:F1bbdyX}
    If $\xi' \in \cX\setminus \bbdy(\cX)$, then $ \cF_1(\xi')\cap \bbdy(\cX)=\emptyset$.
\end{lem}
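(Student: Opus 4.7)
The plan is to prove the contrapositive by assuming $\xi \in \cF_1(\xi') \cap \bbdy(\cX)$ with $\xi' \notin \bbdy(\cX)$ and deriving a contradiction. Since $\cF_1 \subseteq \cF_0$, there are three cases for the pair $(\xi',\xi)$: (a) $\xi=\xi'$, which is immediately ruled out; (b) $\xi' \prec \xi$ with $\dim(\xi)=\dim(\xi')+1$, which is also ruled out because $\bbdy(\cX)$ is a closed downset (by Definition~\ref{def:boundary_prime}), so $\xi \in \bbdy(\cX)$ would force $\xi' \in \cl(\xi) \subseteq \bbdy(\cX)$; and (c) $\xi \prec \xi'$ with $\dim(\xi')=\dim(\xi)+1$, which is the only remaining case and the one I would focus on.

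In case (c), since $\xi \in \bbdy(\cX) = \bigcup_n (\cX_n^- \cup \cX_n^+)$, I would assume without loss of generality that $\xi \in \cX_n^-$ for some direction $n$ (the $\cX_n^+$ subcase is symmetric). Writing $\xi = [\bv,\bw]$ with $\bv_n = 0$, $\bw_n = 0$, and $\xi' = [\bv',\bw']$ with $\Ex(\xi,\xi') = \setof{k}$, I would observe: if $k \neq n$, then the $n$-th coordinate of $\xi'$ is unchanged, so $\bv'_n=0$ and $\bw'_n=0$, placing $\xi' \in \cX_n^- \subseteq \bbdy(\cX)$ and contradicting the hypothesis. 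Thus $k=n$.

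The main computation is then to verify $\xi \in E^+(\xi')$ and appeal to Condition 1.2(ii). A direct calculation from Definition~\ref{defn:rpvector} gives $p_n(\xi,\xi') = -1$. On the rook-field side, since $n \in J_i(\xi)$, Corollary~\ref{cor:mu*} yields $\rook_n(\xi,\mu) = \omega(\mu_n^*(\xi,\mu),\mu)$ for every $\mu \in \Top_\cX(\xi')$. Because $\xi \in \cX_n^-$ and the $n$-wall $\mu_n^*(\xi,\mu)$ containing $\xi$ must also lie in $\cX_n^-$, I can invoke the standing strong-dissipativity assumption (Definition~\ref{defn:dissipativewall}), which gives $\omega(\mu_n^*,\mu) = -p_n(\mu_n^*,\mu)$. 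A quick incidence computation shows $p_n(\mu_n^*,\mu) = -1$, so $\rook_n(\xi,\mu) = 1 = -p_n(\xi,\xi')$. By Definition~\ref{def:exit_face}, this means $\xi \in E^+(\xi')$.

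Applying Condition 1.2(ii) of Definition~\ref{def:Rule1.2} then yields $\xi \notin \cF_{1.2}(\xi')$, and hence $\xi \notin \cF_1(\xi')$, contradicting the initial assumption. The main potential obstacle is purely bookkeeping: carefully tracking the signs of the relative position vector in both the $\cX_n^-$ and $\cX_n^+$ subcases to confirm that strong dissipativity always forces $\xi$ to be an entrance face (not an exit face) of $\xi'$; once this sign calculation is in hand, the rest follows mechanically from the definitions already established.
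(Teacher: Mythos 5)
Your proof is correct and follows essentially the same path as the paper's: it reduces to $\xi\in\cX_n^-$, shows $\Ex(\xi,\xi')=\{n\}$, and uses strong dissipativity to conclude $\xi\in E^+(\xi')$ so that Condition 1.2(ii) removes the edge $\xi'\to\xi$ (the paper short-circuits your cases (a) and (b) implicitly via $\bv'_n\leq\bv_n$, while you invoke closedness of $\bbdy(\cX)$; same effect). As a small aside, your sign $p_n(\xi,\xi')=-1$ is the correct one; the paper's displayed claim $p_n(\xi,\xi')=1=\rook_n(\xi,\mu)$ is a typo (it would make $\xi$ an exit rather than an entrance face of $\xi'$), although the paper's final conclusion $\xi\in E^+(\xi')$ agrees with yours.
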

\begin{proof}
    It is enough to show that $\cX_n^\pm \cap \cF_1(\xi')$ for each $n = 1,\ldots,N$. As in Lemma~\ref{lem:GradBoundary}, we prove for the left $n$-boundary $\cX_n^-$. 

    First, observe that $\cF_1(\xi') \subseteq \cF_0(\xi')$. So if $\cX_n^- \cap \cF_1(\xi')\neq\emptyset$, then there must exist $\xi \in \cX_n^-$ such that $\xi \darrow_{\cF_0} \xi'$. 

    Let $\xi'\in \cX\setminus\bbdy(\cX) \subset \cX\setminus\cX_n^-$ and assume that $\xi\darrow_{\cF_0}\xi'$ for some $\xi \in \cX_n^-$. Writing $\xi'=[\bv',\bw']$ yields that either $\bv'_n>0$ or $\bw'_n=1$ by \eqref{eq:nbbdy}. Note that $\bv'_n>\bv_n=0$ is not possible due to \eqref{eq:is_a_face}, so $\bv'_n=\bv_n=0$ and $\bw_n'=1$. That is, $n \in J_e(\xi')$. Since $\xi \darrow_{\cF_0} \xi'$, it follows that $\Ex(\xi,\xi')=\setof{n}$ by Definition~\ref{def:Rule0}. 

    Finally, we show that $\xi \in E^+(\xi)$. Note that by Definition~\ref{defn:rpvector} and Lemma~\ref{lem:GradBoundary},
    \[
        p_n(\xi,\xi') = 1 = \rook_n(\xi,\mu),
    \]
    for every $\mu \in \Top_\cX(\xi') \subseteq \Top_\cX(\xi)$. Therefore, $\xi \in E^+(\xi)$ and $\xi \notin \cF_1(\xi')$. 
\end{proof}
\begin{proof}[Proof of Theorem~\ref{thm:InvsetStronglyDiss}]
    We prove it by contradiction. Suppose that $\cS \cap \bbdy(\cX)$ is a recurrent component and assume for the sake of contradiction that $\cS \not\subset \bbdy(\cX)$. Let $\xi \in \bbdy(\cX)$ and $\xi' \in \cX\setminus \bbdy(\cX)$ be elements such that there exists a path $(\zeta_0,\ldots,\zeta_k)$ from $\xi'$ to $\xi$. Note that $\xi \in \cX_n^-$ or $\xi \in \cX_n^+$ for some $n$. 
    Let be $i$ is the largest index with $\zeta_i \notin \cX_n^-$. Clearly, $0 \leq i < k$ since $\zeta_0 =\xi' \notin \bbdy(\cX)$ and $\zeta_k = \xi \in \cX_n^\pm$. Then $\zeta_{i+1} \in \cF_1(\zeta_i)$, which contradicts Lemma~\ref{lem:F1bbdyX}.
\end{proof}

\begin{ex}
Figure~\ref{fig:PosetF1}(A) shows a portion of the weak condensation graph $\bar{\cF}_1 \colon \SCC(\cF_1) \mvmap \SCC(\cF_1)$ for $\cF_1$ of Figure~\ref{fig:F1mvmap}.
For the sake of simplicity the boundary elements of $\cX(\I)$, $\I = \setof{0,1,2,3}^2$ are not shown.
By Theorem~\ref{thm:InvsetStronglyDiss}, if $\cS$ is a recurrent component in $\bbdy(\cX)$, then $\cS$ has a common gradient direction, and as discussed Section~\ref{sec:CCalgorithm} (see Definition~\ref{defn:MG} and the preceding discussion) this implies that $\cS$ is not of interest with regard to applications to ODEs.
Figure~\ref{fig:PosetF1}(B) shows the Hasse diagram for the corresponding subposet of $(\SCC(\cF_1),\leq_{\bar{\cF}_1})$.
In this figure the vertices of $\SCC(\cF_1)$ are indicated by the cells that belong to the pre-images of the D-grading $\pi\colon \cX \to \SCC(\cF_1)$.

Figure~\ref{fig:PosetF2} provides the same information as Figure~\ref{fig:PosetF1} but for $\cF_2$ of Figure~\ref{fig:F2mvmap}.
The fact that  $\cF_2$ has fewer double edges than $\cF_1$ leads to more strongly connected components (potentially fewer recurrent components) and to significant changes in the multivalued maps.
The refinement that results in  $\bar{\cF}_2 \colon \SCC(\cF_2) \mvmap \SCC(\cF_2)$ and $(\SCC(\cF_2),\posetF{2})$ allows for a more detailed understanding of the dynamics of ODEs modeled using  the wall labeling of Figure~\ref{fig:wall_labeling}.
\end{ex}

\begin{figure}
\begin{picture}(350,320)(0,0)

\put(110,0){(B)}
\put(100,10){
\begin{tikzpicture}[scale=0.2,>= stealth,->,shorten >=2pt,looseness=.5,auto,ampersand replacement=\&]
    \matrix [matrix of math nodes,
           column sep={1cm,between origins},
           row sep={1.0cm,between origins},
           nodes={rectangle}]
  {
|(11)| \defcell{0}{0}{1}{1} \& |(12)| \& |(13)| \& |(14)| \& |(15)|
\\
|(21)| \defcell{1}{0}{0}{1} \&  |(22)| \&   |(23)| \defcell{0}{1}{1}{0} \& |(24)| \& |(25)| 
\\
|(31)| \defcell{1}{0}{1}{1} \&   |(32)|  \&   |(33)| \defcell{1}{1}{0}{0} \& |(34)| \& |(35)| \defcell{0}{1}{1}{1} 
\\    
|(41)| \defcell{1}{1}{1}{0} \&   |(42)|  \&   |(43)| \defcell{1}{1}{0}{1} \& |(44)| \& |(45)|  
\\     
|(51)| \defcell{1}{1}{1}{1} \&   |(52)|  \&   |(53)| \& |(54)| \& |(55)| 
\defcell{2}{2}{1}{1}
     \\ 
     \\
     |(61)| \&   |(62)|  \&   |(63)|  
      \setof{ \begin{array}{c}
    \defcell{2}{0}{0}{1}, 
    \defcell{2}{1}{0}{0}, 
    \defcell{2}{1}{0}{1}, \\
    \defcell{0}{2}{1}{0}, 
    \defcell{1}{2}{0}{0}, 
    \defcell{1}{2}{1}{0}, \\
    \defcell{2}{2}{0}{0}, 
    \defcell{2}{2}{0}{1},
    \defcell{2}{2}{1}{0}
     \end{array}} \& |(64)|  \& |(65)| 
     \\ 
     \\
     |(71)| \defcell{2}{1}{1}{1} \&   |(72)| \&   |(73)| \& |(74)|  \& |(75)| \defcell{1}{2}{1}{1} 
        \\ 
      |(81)| \defcell{2}{1}{1}{0} \&   |(82)| \&   |(83)| \& |(84)| \& |(85)| \defcell{1}{2}{0}{1} 
        \\ 
      |(91)| \&   |(92)| \defcell{2}{1}{1}{0} \&   |(93)| \& |(94)| \defcell{1}{2}{0}{1} \& |(95)|
      \\ 
  };
    \draw (11) -- (21);
    \draw (11) -- (23);
    \draw (21) -- (31);
    \draw (21) -- (33);
    \draw (23) -- (33);
    \draw (23) -- (35);
    \draw (31) -- (41);
    \draw (33) -- (41);
    \draw (33) -- (43);
    \draw (35) -- (43);
    \draw (41) -- (51);
    \draw (43) -- (51);
    \draw (51) -- (63);
    \draw (55) -- (63);  
    \draw (63) -- (71);
    \draw (63) -- (75);
    \draw (71) -- (81);
    \draw (75) -- (85);
    \draw (81) -- (92);
    \draw (85) -- (94);
 
\end{tikzpicture}
}

\put(10,0){(A)}
\put(0,10){
\begin{tikzpicture}[scale=0.2,>=stealth,->,shorten >=2pt,looseness=.5,auto,ampersand replacement=\&]
    \matrix [matrix of math nodes,
           column sep={1cm,between origins},
           row sep={1cm,between origins},
           nodes={rectangle
           }]
  {
|(11)| \bullet  \& |(13)| \& |(15)|   \\
      |(21)| \bullet \&   |(23)| \bullet  \& |(25)|  \\
    |(31)| \bullet   \&   |(33)|\bullet  \& |(35)|\bullet    \\    
|(41)| \bullet   \&   |(43)|  \bullet  \& |(45)|    \\     
|(51)| \bullet   \&   |(53)|  \& |(55)| \bullet
         \\ \\
     |(61)|   \&   |(63)|  
       \bullet \& |(65)|  
     \\ \\
     |(71)|    \bullet \&   |(73)| \& |(75)| \bullet
       \\ 
      |(81)|   \bullet \&   |(83)| \& |(85)| \bullet
         \\ 
      |(91)|    \bullet \&   |(93)|  \& |(95)|   \bullet
        \\ 
  };
  \begin{scope}[]
    \draw (11) -- (21);
    \draw (11) -- (23);
    \draw (21) -- (31);
    \draw (21) -- (33);
    \draw (23) -- (33);
    \draw (23) -- (35);
    \draw (33) -- (41);
    \draw (33) -- (43);
    \draw (31) -- (41);
    \draw (31) -- (63);
    \draw (35) -- (43);
    \draw (35) -- (63);
    \draw (41) -- (51);
    \draw (41) -- (63);
    \draw (43) -- (63);
    \draw (43) -- (51);
    \draw (51) -- (63);
    \draw (55) -- (63);  
    \draw (63) to [loop right] (63);
    \draw (63) -- (71);
    \draw (63) -- (75);
    \draw (63) -- (81);
    \draw (63) -- (85);
    \draw (63) -- (91);
    \draw (63) -- (95);
    \draw (71) -- (81);
    \draw (75) -- (85);
    \draw (81) -- (91);
    \draw (85) -- (95);
    \draw (91) to [loop right] (91);
    \draw (95) to [loop left] (95);
    \end{scope}
\end{tikzpicture}}
\end{picture}
\caption{(A) The weak condensation graph $\bar{\cF}_1 \colon \SCC(\cF_1) \mvmap \SCC(\cF_1)$. (B) Hasse diagram for the poset  $(\SCC(\cF_1),\leq_{\bar{\cF}_1})$ that  indicates the pre-images of $\pi\colon \cX \to \SCC(\cF_1)$.}
\label{fig:PosetF1}
\end{figure}

\begin{figure}
\begin{picture}(370,320)(0,0)
\put(140,0){(B)}
\put(130,10){
\begin{tikzpicture}[scale=0.2,>=stealth,->,shorten >=2pt,looseness=.5,auto,ampersand replacement=\&]
    \matrix [matrix of math nodes,
           column sep={1cm,between origins},
           row sep={1.0cm,between origins},
           nodes={rectangle
           }]
  {
     |(11)| \defcell{0}{0}{1}{1} \& |(12)| \& |(13)| \& |(14)| \& |(15)|\& |(16)| \& |(17)| 
     \\
      |(21)|  \defcell{1}{0}{0}{1} \&  |(22)| \&   |(23)| \defcell{0}{1}{1}{0} \& |(24)| \& |(25)| 
     \\
    |(31)| \defcell{1}{0}{1}{1} \&   |(32)|  \&   |(33)|\defcell{1}{1}{0}{0} \& |(34)| \& |(35)|\defcell{0}{1}{1}{1} 
     \\    
|(41)| \defcell{2}{0}{0}{1} \&   |(42)|  \&   |(43)|\defcell{1}{1}{1}{0} \& |(44)| \& |(45)|  \defcell{1}{1}{0}{1}    \& |(46)| \& |(47)| \defcell{0}{2}{1}{0}
     \\     
|(51)| \defcell{2}{1}{0}{0}   \&   |(52)| \&   |(53)|  \defcell{1}{1}{1}{1} \& |(54)| \& |(55)|  \defcell{1}{2}{0}{0} 
          \& |(56)| \& |(57)| \defcell{2}{2}{1}{1}
     \\ 
|(61)|   \&  |(62)|
     \\
|(71)| \&   |(72)|  \&   |(73)|  
      \setof{ \begin{array}{c}
     \defcell{2}{1}{0}{1}, 
     \defcell{1}{2}{1}{0}, \\
     \defcell{2}{2}{0}{0}, 
     \defcell{2}{2}{0}{1},
     \defcell{2}{2}{1}{0}
     \end{array}} \& |(74)|  \& |(75)| 
     \\ 
     \\
     |(81)|    \defcell{2}{1}{1}{1} \&   |(82)| \&   |(83)| \& |(84)|  \& |(85)|\defcell{1}{2}{1}{1}  
        \\ 
      |(91)|     \defcell{2}{1}{1}{0} \&   |(92)|\&   |(93)| \& |(94)| \& |(95)| \defcell{1}{2}{0}{1} 
        \\ 
      |(101)| \&   |(102)|  \defcell{2}{1}{1}{0} \&   |(103)| \& |(104)|    \defcell{1}{2}{0}{1} \& |(105)|
      \\ 
  };
  \begin{scope}[]
    \draw (11) -- (21);
    \draw (11) -- (23);
    \draw (21) -- (31);
    \draw (21) -- (33);
    \draw (23) -- (33);
    \draw (23) -- (35);
    \draw (31) -- (41);
    \draw (31) -- (43);
    \draw (33) -- (43);
    \draw (33) -- (45);
    \draw (35) -- (45);
    \draw (35) -- (47);
    \draw (41) -- (51);
    \draw (43) -- (51);
    \draw (43) -- (53);
    \draw (45) -- (53);
    \draw (45) -- (55);
    \draw (47) -- (55);
    \draw (51) -- (73);
    \draw (53) -- (73);
    \draw (55) -- (73);
    \draw (57) -- (73);
    \draw (73) -- (81);
    \draw (73) -- (85);
    \draw (81) -- (91);
    \draw (85) -- (95);
    \draw (91) -- (102);
    \draw (95) -- (104);
    \end{scope}

\end{tikzpicture}
}

\put(10,0){(A)}
\put(0,10){
\begin{tikzpicture}[scale=0.2,>=stealth,->,shorten >=2pt,looseness=.5,auto,ampersand replacement=\&]
    \matrix [matrix of math nodes,
           column sep={1cm,between origins},
           row sep={1cm,between origins},
           nodes={rectangle
           }]
  {
|(11)| \bullet  \& |(13)| \& |(15)|  \& |(17)| \\
      |(21)| \bullet \&   |(23)| \bullet  \& |(25)|  \\
    |(31)| \bullet   \&   |(33)|\bullet  \& |(35)|\bullet    \\    
|(41)| \bullet   \&   |(43)|  \bullet  \& |(45)| \bullet  \& |(47)| \bullet    \\     
|(51)| \bullet   \&   |(53)| \bullet \& |(55)| \bullet  \& |(57)| \bullet
         \\ \\
     |(61)|   \&   |(63)|  
       \bullet \& |(65)|  
     \\ \\
     |(71)|    \bullet \&   |(73)| \& |(75)| \bullet
       \\ 
      |(81)|   \bullet \&   |(83)| \& |(85)| \bullet
         \\ 
      |(91)|    \bullet \&   |(93)|  \& |(95)|   \bullet
        \\ 
  };
  \begin{scope}[]
    \draw (11) -- (21);
    \draw (11) -- (23);
    \draw (21) -- (31);
    \draw (21) -- (33);
    \draw (23) -- (33);
    \draw (23) -- (35);
    \draw (31) -- (41);
    \draw (31) -- (43);
    \draw (33) -- (43);
    \draw (33) -- (45);
    \draw (35) -- (45);
    \draw (35) -- (47);
    \draw (41) -- (51);
    \draw (41) to [bend right] (91);
    \draw (43) -- (51);
    \draw (43) -- (53);
    \draw (45) -- (53);
    \draw (45) -- (55);
    \draw (47) -- (55);
    \draw (47) to [bend left] (95);
    \draw (51) -- (63);
    \draw (51) to [bend right] (81);
    \draw (55) -- (63);  
    \draw (55) to [bend left] (85);
    \draw (57) -- (63);
    \draw (63) to [loop right] (63);
    \draw (63) -- (71);
    \draw (63) -- (75);
    \draw (63) -- (81);
    \draw (63) -- (85);
    \draw (63) -- (91);
    \draw (63) -- (95);
    \draw (71) -- (81);
    \draw (75) -- (85);
    \draw (81) -- (91);
    \draw (85) -- (95);
    \draw (91) to [loop right] (91);
    \draw (95) to [loop left] (95);
    \end{scope}
\end{tikzpicture}
}

\end{picture}
\caption{(A) The weak condensation graph $\bar{\cF}_2 \colon \SCC(\cF_2) \mvmap \SCC(\cF_2)$. (B) Hasse diagram for the poset  $(\SCC(\cF_2),\posetF{2})$ that  indicates the pre-images of $\pi\colon \cX \to \SCC(\cF_2)$.
}
\label{fig:PosetF2}
\end{figure}

\section{Admissible Grading for Cell Complexes}
\label{sec:AdmissbleD-grading}

For clarity, the discussion in Section~\ref{sec:compInvset+} is presented on the level of combinatorial multivalued maps on finite sets.
The multivalued maps are essential for the definition of a D-grading. 
Our goal is to use algebraic topology to characterize dynamics. Therefore, we will eventually restrict to finite sets that form cell complexes and make use of D-gradings defined on these complexes.
However, the structures discussed in this section are only dependent upon the cell complex structure and are independent of the existence of a multivalued map.

\begin{defn}
\label{defn:admissibleGrading}
Let $(\cZ,\preceq)$ be a cubical cell complex and $(\sP,\leq)$ be a finite poset.
A surjective function $\pi\colon \cZ \to \sP$ is an \emph{admissible grading} on $(\cZ, \preceq)$ if 
\[
\pi(\zeta) = \min\setdef{\pi(\mu)}{\mu\in \Top_{\cZ}(\zeta)},
\]
where $\Top_\cZ(\zeta)$ are the top-dimensional cells of $\cZ$ that are cofaces of $\zeta$ (see Definition~\ref{defn:topstar}.)
\end{defn}
Note that Definition~\ref{defn:admissibleGrading} provides uniqueness of the minimum element in $\Top_\cZ(\zeta)$ for each $\zeta \in \cZ$.
\begin{prop}
\label{prop:gradingorder}
Let $\pi\colon (\cZ,\preceq)\to (\sP, \leq)$ be an admissible grading from  a cubical cell complex  $(\cZ,\preceq)$ to a poset $(\sP,\leq)$. 
Then, $\pi$ is an order preserving epimorphism and $\pi(\cZ^{(N)}) = \sP$.
\end{prop}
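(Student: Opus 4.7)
The proposition has three claims: $\pi$ is order preserving, $\pi$ is surjective, and the top cells alone already surject onto $\sP$. Surjectivity of $\pi$ itself is part of the hypothesis in Definition~\ref{defn:admissibleGrading}, so nothing remains to prove there. The plan is therefore to establish the order-preservation claim and then use it (together with the defining formula for $\pi$) to get the stronger statement about $\cZ^{(N)}$.

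For order preservation, the key observation is that the top-star is order-reversing: if $\zeta \preceq \zeta'$ in $\cZ$, then every top cell $\mu$ with $\zeta' \preceq \mu$ also satisfies $\zeta \preceq \mu$ by transitivity of $\preceq$, so $\Top_\cZ(\zeta') \subseteq \Top_\cZ(\zeta)$. The admissibility formula
\[
\pi(\zeta) = \min\setdef{\pi(\mu)}{\mu\in \Top_{\cZ}(\zeta)}
\]
then forces $\pi(\zeta) \leq \pi(\zeta')$, since the minimum of $\pi$ over a larger set is no larger than the minimum over a subset. I would phrase this carefully to note that the fact that these minima exist (and are unique) is built into the definition; we are not proving existence but using it.

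For the identity $\pi(\cZ^{(N)}) = \sP$, I would fix an arbitrary $p \in \sP$ and, using surjectivity of $\pi$, pick $\zeta \in \cZ$ with $\pi(\zeta) = p$. Applying the defining formula again, $p = \pi(\zeta)$ equals $\pi(\mu^*)$ for the top cell $\mu^* \in \Top_\cZ(\zeta) \subseteq \cZ^{(N)}$ that realizes the minimum. Hence $p \in \pi(\cZ^{(N)})$, which gives the desired set equality (the reverse inclusion is trivial).

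The argument is essentially a two-line unwrapping of the definition, so there is no real obstacle; the only thing to be careful about is to record once that $\Top_\cZ$ is order-reversing on the face poset $(\cZ,\preceq)$, since both parts rely on this together with the realization of $\pi(\zeta)$ by an actual top cell in $\Top_\cZ(\zeta)$.
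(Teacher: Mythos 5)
Your proof is correct and follows essentially the same route as the paper: both arguments hinge on the inclusion $\Top_\cZ(\zeta') \subseteq \Top_\cZ(\zeta)$ when $\zeta \preceq \zeta'$, use the defining minimum formula to get order preservation, and derive $\pi(\cZ^{(N)}) = \sP$ from the fact that $\pi(\zeta)$ is realized by a top cell in $\Top_\cZ(\zeta)$. The only cosmetic difference is that you unwind the inclusion $\pi(\cZ) \subseteq \pi(\cZ^{(N)})$ explicitly (via the minimizing top cell $\mu^*$) where the paper simply asserts it.
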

\begin{proof}
We first prove that $\pi$ is order-preserving.
If $\zeta \preceq \zeta'$ then $\Top_{\cZ}(\zeta')\subset \Top_{\cZ}(\zeta)$. 
Hence, by definition of $\pi$, we have that $\pi(\zeta)\leq_{\bar{\cF}} \pi(\zeta')$.
Observe that $\pi(\cZ) \subseteq \pi(\cZ^{(N)})$, and by Definition~\ref{defn:admissibleGrading}, $\pi(\cZ) = \sP$. Therefore, it follows that $\pi(\cZ^{(N)}) = \sP$.
\end{proof}

Applying 
Theorem~\ref{thm:birkhoff}
we obtain the following corollary.
\begin{cor}
    \label{cor:birkhoff} 
    If $\pi\colon (\cZ,\preceq)\to (\sP,\leq)$ is an  admissible grading from a cubical cell complex $(\cZ,\preceq)$ to a poset $(\sP,\leq)$, then
    \begin{align*}
        \sO(\pi) \colon \sO(\sP) & \to \sO(\cZ) \\
        U & \mapsto \cU \coloneqq \pi^{-1}(U)
    \end{align*}
    is a lattice monomorphism.
\end{cor}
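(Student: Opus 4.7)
The plan is to obtain the corollary as a direct application of Birkhoff's Theorem (Theorem~\ref{thm:birkhoff}) combined with the structural information about $\pi$ established in Proposition~\ref{prop:gradingorder}. The setup matches Birkhoff's theorem essentially verbatim: take $\sQ = (\cZ,\preceq)$ and identify $(\sP,\leq)$ with itself, so that $\sL = \sO(\sP)$ and $\sK = \sO(\cZ)$. Proposition~\ref{prop:gradingorder} tells us that $\pi$ is an order-preserving map between the underlying posets, which is exactly the hypothesis needed to invoke the construction $\phi \mapsto f_\phi$.

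With these identifications, I would point out that the map $f_\pi \colon \sO(\sP) \to \sO(\cZ)$ provided by Theorem~\ref{thm:birkhoff} is given by $f_\pi(U) = \pi^{-1}(U)$, which is precisely the definition of $\sO(\pi)$. In particular, $\pi^{-1}(U)$ is automatically a downset in $\cZ$ because $\pi$ is order-preserving: if $\zeta \preceq \zeta'$ with $\zeta' \in \pi^{-1}(U)$, then $\pi(\zeta) \leq \pi(\zeta') \in U$ forces $\zeta \in \pi^{-1}(U)$ since $U$ is a downset. Thus $\sO(\pi)$ is a well-defined $\bzero\bone$-lattice homomorphism.

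To upgrade this to a monomorphism I would apply part (i) of Theorem~\ref{thm:birkhoff}, which asserts that $f_\phi$ is one-to-one if and only if $\phi$ is onto. By Definition~\ref{defn:admissibleGrading}, $\pi$ is surjective, so $\sO(\pi)$ is injective. Combining the two observations yields that $\sO(\pi)$ is a $\bzero\bone$-lattice monomorphism, as claimed.

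There is no substantive obstacle here; the only mild care needed is matching the notational conventions of Theorem~\ref{thm:birkhoff} (in which the map goes $\sQ \to \sP$ and the induced lattice map goes $\sO(\sP) \to \sO(\sQ)$) to the present situation where $\pi$ plays the role of $\phi$ with $\sQ=\cZ$. Once that identification is made, the proof is an immediate citation.
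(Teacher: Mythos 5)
Your proof is correct and follows precisely the route the paper intends: it introduces the corollary with the phrase ``Applying Theorem~\ref{thm:birkhoff} we obtain the following corollary,'' relying on Proposition~\ref{prop:gradingorder} to supply the order-preserving surjection and on Theorem~\ref{thm:birkhoff}(i) to convert surjectivity of $\pi$ into injectivity of $\sO(\pi)$. You have simply spelled out the identifications ($\sQ=\cZ$, $\phi=\pi$, $f_\phi=\sO(\pi)$) that the paper leaves implicit.
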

As indicated in Proposition~\ref{prop:gradingorder} if  $\pi\colon (\cZ,\preceq)\to (\sP,\leq)$ is an  admissible grading, then $\pi(\cZ^{(N)}) = \sP$.
In what follows we consider the possibility of extending a function $\pi\colon \cZ^{(N)}\to \sP$ to an admissible grading.
\begin{defn}\label{defn:extendable}
Let $(\cZ, \preceq)$ be a cubical cell complex and $(\sP, \leq)$ a finite poset. A function $\pi\colon \cZ^{(N)} \to \sP$ is called an \emph{extendable grading} if there exists an extension $\widehat{\pi}\colon \cZ \to \sP$ such that $\widehat{\pi}$ is an admissible grading. We refer to $\widehat{\pi}$ as the extension of $\pi$, which is unique by Definition~\ref{defn:admissibleGrading}.
\end{defn}
As the following example shows, not all functions $\pi\colon \cZ^{(N)}\to\sP$ are extendable gradings.
\begin{ex}
\label{ex:nonuniqueness-grading}
Consider the one-dimensional cubical complex $\cZ$ with cells $\cZ=\setof{[(0),(0)], [(1),(0)], [(2),(0)], [(0),(1)], [(1),(1)]}$ (see Figure~\ref{fig:nonuniqueness-grading}(A)).
Let $\sP = \setof{p_0,p_1}$ with no relation (see Figure~\ref{fig:nonuniqueness-grading}(B)).
Define $\pi \colon \cZ^{(1)}\to\sP$ by 
$\pi([(0),(1)])=p_0$ and $\pi([(1),(1)])=p_1$.
There is no extension of $\pi$ that is an admissible grading.
Let $\zeta = [(1),(0)]$.
If we set $\pi(\zeta) = p_0$, then $\pi$ is not admissible with respect to the pair $[(1),(0)] \prec [(1),(1)]$
and if we set $\pi(\zeta) = p_1$, then $\pi$ is not admissible with respect to the pair $[(1),(0)] \prec [(0),(1)]$.
\end{ex}

\begin{figure}[h]
    \centering
    \begin{subfigure}{0.45\textwidth}
        \centering
        \begin{tikzpicture}
            \node[fill=black, circle, inner sep=1.5pt] (A) at (0,0) {};
            \draw (A) -- ++(2,0) node[midway, above] {$p_0$};
            \node[fill=black, circle, inner sep=1.5pt, label=below:{$\zeta$}] (B) at (2,0) {};
            \draw (B) -- ++(2,0) node[midway, above] {$p_1$};
            \node[fill=black, circle, inner sep=1.5pt] (C) at (4,0) {};
        \end{tikzpicture}
        \caption{Cubical complex $\cX$ with grading $\pi$.}
    \end{subfigure}
    \hfill
    \begin{subfigure}{0.45\textwidth}
        \centering
        \begin{tikzpicture}
            \node[circle, draw, minimum size=0.7cm] (p0) at (0,0) {$p_0$};
            \node[circle, draw, minimum size=0.7cm] (p1) at (2,0) {$p_1$};
        \end{tikzpicture}
        \caption{Poset $\sP$.}
    \end{subfigure}
    \caption{Example~\ref{ex:nonuniqueness-grading} illustrated.}
    \label{fig:nonuniqueness-grading}
\end{figure}

We conclude this section by addressing the following question.
If $\pi\colon \cZ^{(N)}\to\sP$ is an extendable grading of $\cZ$ onto $\sP$, what modifications of $\pi$ lead to a different extendable grading of $\cZ$ onto $\sP$?
\begin{prop}\label{prop:min_extendable}
Let $\pi\colon\cZ^{(N)} \to \sP$ be an extendable grading from a finite cell complex $(\cZ,\preceq)$ to a poset $(\sP,\leq)$.
Consider $\cV\subset \cZ^{(N)}$ and a function $\pi'\colon\cZ^{(N)} \to \sP$ that satisfies the following conditions.
\begin{enumerate}
    \item Restricted to $\cZ^{(N)}\setminus \cV$, $\pi = \pi'$.
    \item $\pi'(\cV) = p_\cV \in\sP$
    \item If $\mu\in\cV$,  $\zeta\in\cZ^{(N)}$ and $\cl(\mu)\cap\cl(\zeta)\neq\emptyset$, then $p_\cV\leq\pi(\zeta)$.
\end{enumerate}
Then, $\pi'$ is an extendable grading.
\end{prop}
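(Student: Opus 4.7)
The strategy is to construct the extension of $\pi'$ directly by mimicking the admissibility formula from Definition~\ref{defn:admissibleGrading}. Specifically, I would define $\widehat{\pi}'\colon \cZ \to \sP$ by
\[
\widehat{\pi}'(\zeta) := \min\setof{\pi'(\mu) \mid \mu \in \Top_\cZ(\zeta)}
\]
for each $\zeta \in \cZ$, and verify that this minimum exists uniquely in $\sP$, agrees with $\pi'$ on top cells, and hence is admissible by construction. Since $\Top_\cZ(\mu) = \setof{\mu}$ for $\mu \in \cZ^{(N)}$, the extension property $\widehat{\pi}'|_{\cZ^{(N)}} = \pi'$ is immediate once the minimum is defined; the entire proof therefore reduces to showing the unique minimum exists for every $\zeta \in \cZ$.

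To establish well-definedness I would fix an arbitrary $\zeta \in \cZ$ and split into two cases according to whether $\Top_\cZ(\zeta)$ meets $\cV$. If $\Top_\cZ(\zeta) \cap \cV = \emptyset$, then condition (1) gives $\pi'|_{\Top_\cZ(\zeta)} = \pi|_{\Top_\cZ(\zeta)}$, so $\setof{\pi'(\mu)\mid \mu \in \Top_\cZ(\zeta)} = \setof{\pi(\mu)\mid \mu \in \Top_\cZ(\zeta)}$, and this set has a unique minimum, namely $\widehat{\pi}(\zeta)$, by the extendability of $\pi$.

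If $\Top_\cZ(\zeta) \cap \cV \neq \emptyset$, pick any $\mu_0 \in \Top_\cZ(\zeta) \cap \cV$. For every other $\mu \in \Top_\cZ(\zeta)$, the cell $\zeta$ itself lies in $\cl(\mu_0) \cap \cl(\mu)$, so this intersection is nonempty. Condition (3) then yields $p_\cV \leq \pi(\mu)$ when $\mu \in \Top_\cZ(\zeta) \setminus \cV$, while $\pi'(\mu) = p_\cV$ by condition (2) when $\mu \in \Top_\cZ(\zeta) \cap \cV$. Combining the two, $p_\cV$ is a lower bound for $\setof{\pi'(\mu)\mid \mu \in \Top_\cZ(\zeta)}$ that is attained by $\mu_0$, and is therefore its unique minimum in $\sP$.

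In both cases $\widehat{\pi}'(\zeta)$ is a uniquely defined element of $\sP$, and the admissibility condition holds by construction. Surjectivity onto the target $\pi'(\cZ^{(N)})$ follows from the extension property. The main obstacle is the case distinction above: because $\sP$ is only a poset, one must genuinely exhibit a minimum rather than a mere infimum, and it is precisely condition (3), combined with the fact that a common face of two top cells forces their closures to meet, that guarantees $p_\cV$ dominates all competing values at $\zeta$. Without that hypothesis the minimum could fail to exist and $\pi'$ would not extend admissibly, as Example~\ref{ex:nonuniqueness-grading} already illustrates in a related setting.
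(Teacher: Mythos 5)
Your proof is correct and follows essentially the same route as the paper's: the same two-case split on whether $\Top_\cZ(\zeta)$ meets $\cV$, with condition (1) handling the disjoint case via the extendability of $\pi$, and conditions (2) and (3) handling the other case by observing that $\zeta$ itself witnesses $\cl(\mu_0)\cap\cl(\mu)\neq\emptyset$ so that $p_\cV$ dominates all competing values. The paper's argument is just a terser version of yours.
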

\begin{proof}
Let $\zeta\in\cZ$.

If $\Top_{\cZ}(\zeta)\cap \cV = \emptyset$, then (1) yields equality of the restricted map, so
\[
\pi'(\zeta) \coloneq \min\setdef{\pi'(\mu)}{\mu\in \Top_{\cZ}(\zeta)} = \min\setdef{\pi(\mu)}{\mu\in \Top_{\cZ}(\zeta)}.
\]

If $\Top_{\cZ}(\zeta)\cap \cV \neq \emptyset$, then 
\[
\pi'(\zeta) \coloneq  \min\setdef{\pi'(\mu)}{\mu\in \Top_{\cZ}(\zeta)} =  \min\left\{ \setof{p_\cV} \cup \setdef{\pi(\mu)}{\mu\in \Top_{\cZ}(\zeta)\backslash \cV} \right\} = p_\cV
\]
where the first equality follows from (2) and last equality follows from (3).
In either case $\pi'(\zeta)$ is the unique minimum over $\setdef{\pi'(\mu)}{\mu\in \Top_{\cZ}(\zeta)}$.
\end{proof}

\begin{thm}
\label{thm:extension}
Let $\pi\colon\cZ^{(N)} \to \sP$ be an extendable grading from a finite cell complex $(\cZ,\preceq)$ to a poset $(\sP,\leq)$.
Consider $\cV\subset \cZ^{(N)}$ and a function $\pi'\colon\cZ^{(N)} \to \sP$ that satisfies the following conditions.
\begin{enumerate}
\item Restricted to $\cZ^{(N)}\setminus \cV$, $\pi = \pi'$.
\item Let $\mu\in\cZ^{(N)}$ and $\mu'\in\cV$ such that  $\pi'(\mu)$ and $\pi'(\mu')$ are not comparable, then $\cl(\mu)\cap\cl(\mu')=\emptyset$.
\item Let $\mu,\mu'\in\cZ^{(N)}$ and $\mu''\in\cV$ such that  $\pi(\mu),\pi(\mu')\leq\pi'(\mu'')$, and $\pi(\mu)$ and $\pi(\mu')$ are not comparable. Then \[\cl(\mu)\cap\cl(\mu')\cap\cl(\mu'')=\emptyset.\]
\end{enumerate}
Then, $\pi'$ is an extendable grading.
\end{thm}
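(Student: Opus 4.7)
The plan is to construct the extension $\widehat{\pi'}\colon \cZ \to \sP$ directly via the formula $\widehat{\pi'}(\zeta) := \min \pi'(\Top_\cZ(\zeta))$ required by Definition~\ref{defn:admissibleGrading}, and to verify that this minimum exists and is unique for every $\zeta \in \cZ$. Fix $\zeta \in \cZ$ and set $T := \Top_\cZ(\zeta)$. If $T \cap \cV = \emptyset$, condition (1) gives $\pi'(T) = \pi(T)$, and the claim follows from the hypothesis that $\pi$ is extendable. So assume $T \cap \cV \neq \emptyset$, which is the substantive case, and proceed in three steps.

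First I will establish that $\pi'(T \cap \cV)$ is a chain in $\sP$. For any $\mu_1, \mu_2 \in T \cap \cV$, the relations $\zeta \preceq \mu_1$ and $\zeta \preceq \mu_2$ give $\zeta \in \cl(\mu_1) \cap \cl(\mu_2) \neq \emptyset$. Condition (2), applied with $\mu = \mu_1$ and $\mu' = \mu_2 \in \cV$, forces $\pi'(\mu_1)$ and $\pi'(\mu_2)$ to be comparable, so $\pi'(T \cap \cV)$ is totally ordered and admits a unique minimum $q$. Fix some $\mu_0 \in T \cap \cV$ with $\pi'(\mu_0) = q$.

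Next I will show that every element of $\pi'(T)$ is comparable with $q$. For an arbitrary $\mu \in T$, the intersection $\cl(\mu) \cap \cl(\mu_0)$ contains $\zeta$, so condition (2) with $\mu' = \mu_0 \in \cV$ yields comparability of $\pi'(\mu)$ and $q$. Set $A := \{p \in \pi'(T) : p \leq q\}$; this is nonempty since $q \in A$, and by the comparability just established, any minimum of $\pi'(T)$ must lie in $A$.

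Finally I will show that $A$ is a chain, which yields the unique minimum to be taken as $\widehat{\pi'}(\zeta)$. Take $p_1, p_2 \in A$; if either equals $q$ they are clearly comparable, so assume $p_1, p_2 < q$. Every element of $\pi'(T \cap \cV)$ is at least $q$, so $p_i$ must come from $\pi(T \setminus \cV)$, and I may choose representatives $\mu_i \in T \setminus \cV$ with $\pi(\mu_i) = p_i$. Both satisfy $\pi(\mu_i) \leq q = \pi'(\mu_0)$, so if $p_1$ and $p_2$ were incomparable, condition (3) would give $\cl(\mu_1) \cap \cl(\mu_2) \cap \cl(\mu_0) = \emptyset$; but $\zeta$ lies in each of these closures, a contradiction. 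The principal subtlety is precisely this last step: condition (3) is tailored to rule out exactly the scenario in which reassigning values on $\cV$ brings previously non-minimal elements of $\pi(T \setminus \cV)$ into competition for the minimum of $\pi'(T)$, and it is what guarantees those new competitors remain mutually comparable along any shared coface.
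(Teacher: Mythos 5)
Your proof is correct but follows a genuinely different route from the paper's. The paper argues by contradiction: it supposes $\pi'(\Top_\cZ(\zeta))$ has two incomparable minimal values, represented by $\mu_1,\mu_2$, and splits into cases. When some $\mu_i\in\cV$, condition (2) applied to $\zeta\in\cl(\mu_1)\cap\cl(\mu_2)$ gives the contradiction; when both $\mu_i\notin\cV$, the paper invokes extendability of $\pi$ to produce a top cell $\mu$ with $\pi(\mu)\leq\pi(\mu_i)$, shows $\mu$ must lie in $\cV$, and feeds $\mu''=\mu$ into condition (3). You instead build the minimum constructively: condition (2) forces $\pi'(\Top_\cZ(\zeta)\cap\cV)$ to be a chain with minimum $q$ achieved at some $\mu_0\in\cV$; condition (2) again gives comparability of everything in $\pi'(\Top_\cZ(\zeta))$ with $q$; and condition (3), applied with $\mu''=\mu_0$, forces the set $A=\{p\in\pi'(\Top_\cZ(\zeta)):p\leq q\}$ to be a chain. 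Both proofs hinge on (3), but at different elements of $\cV$: the paper manufactures its $\cV$-element via extendability of $\pi$, while you use the natural minimizer $\mu_0$ already sitting in $\Top_\cZ(\zeta)\cap\cV$. One payoff of your approach is that it only invokes extendability of $\pi$ in the degenerate case $\Top_\cZ(\zeta)\cap\cV=\emptyset$; conditions (1)--(3) alone handle the substantive case, so you have in fact proved a mild strengthening. One cosmetic gap: at the end you should state explicitly that any $p\in\pi'(\Top_\cZ(\zeta))\setminus A$ satisfies $p>q\geq\min A$ by the comparability established in your second step, so that $\min A$ is genuinely the least element of all of $\pi'(\Top_\cZ(\zeta))$ and not merely of $A$.
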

\begin{proof}
We provide a proof by contradiction.
So assume that $\pi'$ is not extendable.
This implies that there exists $\zeta\in\cZ$ such that $\min\setdef{\pi'(\eta)}{\eta\in\Top_\cZ(\zeta)}$ is not unique.
Let \begin{equation}\label{eq:min_zeta}
M \coloneqq \setdef{\mu_i\in \Top_\cZ(\zeta)}{\pi'(\mu_i)\in \min\setdef{\pi'(\eta)}{\eta\in\Top_\cZ(\zeta)}, i=1,\ldots,I}.
\end{equation}
By assumption, $I \geq 2$ and without loss of generality we can assume that $\pi'(\mu_i)$ and $\pi'(\mu_j)$ are not comparable.
Indeed, since the elements of $M$ are defined as minima with respect to $\leq$, if $\pi'(\mu_{i}), \pi'(\mu_{j})\in M$ are comparable then $\pi'(\mu_{i}) = \pi'(\mu_{j})$.

Assume that there exists $i\in\setof{1,\ldots,I}$ such that $\mu_i\in\cV$. 
Since $\pi'(\mu_1)$ and $\pi'(\mu_2)$ are not comparable, $\pi'(\mu_i) \neq \pi'(\mu_1)$ or $\pi'(\mu_i) \neq \pi'(\mu_2)$.
Without loss of generality, assume $\pi'(\mu_i) \neq \pi'(\mu_1)$, so that $\pi'(\mu_i)$ and $\pi'(\mu_1)$ are not comparable.
However, $\zeta\in\cl(\mu_i)\cap\cl(\mu_1)$, which contradicts assumption (2).

Thus, for the remainder of the proof we can assume that $\mu_i\not\in\cV$, for all $i\in\setof{1,\ldots,I}$. 
By assumption (1) $\pi'(\mu_i) = \pi(\mu_i)$ for all $i = 1,\ldots, I$. 
Since $\pi$ is assumed to be extendable, there exists $\mu\in \Top_\cZ(\zeta)$ such that 
$
\pi(\mu) = \min\setdef{\pi(\eta)}{\eta\in\Top_\cZ(\zeta)}
$
and hence 
\begin{equation}
    \label{eq:pimu}
    \pi(\mu)\leq \pi(\mu_i)=\pi'(\mu_i)\quad \text{for all $i = 1,\ldots, I$.}
\end{equation}

We claim that 
\begin{equation}
\label{eq:mu_i}
\mu \not\in M \text{ and }
\pi(\mu) \neq \pi'(\mu_i)=\pi(\mu_i) \text{ for all } i\in\setof{1,\ldots,I}.
\end{equation} 
To prove this claim, we note that if there exists $j\in\setof{1,\ldots,I}$ such that $\mu=\mu_j$ or $\pi(\mu)=\pi(\mu_j)$, 
then
\[
\pi'(\mu_j)=\pi(\mu_j)=\pi(\mu)\leq \pi(\mu_i)=\pi'(\mu_i) \text{ for all } i\in\setof{1,\ldots,I},
\]
where the inequality follows from \eqref{eq:pimu}.
Hence, $\pi'(\mu_j)$ is the unique minimal element in $\pi(M)$, a contradiction with the assumption that $\min\setdef{\pi'(\eta)}{\eta\in\Top_\cZ(\zeta)}$ is not unique.

We use this claim to prove that $\mu\in\cV$.
Again we use a proof by contradiction.
If $\mu \not\in \cV$, then $\mu \in \cZ^{(N)}\setminus \cV$, and hence, by (1) $\pi(\mu) = \pi'(\mu)$.
In particular, by \eqref{eq:pimu}
\[
\pi'(\mu)=\pi(\mu)\leq \pi(\mu_i)=\pi'(\mu_i),
\]
for all $i = 1,\ldots, I$. 
Thus, there exists $j\in\setof{1,\ldots,I}$ such that $\pi'(\mu_j)=\pi'(\mu)$, a contradiction with \eqref{eq:mu_i}.
Therefore, $\mu\in\cV$.

Referring back to the claim, $\mu\not\in M$.
Thus, $\pi(\mu_1)=\pi'(\mu_1) < \pi'(\mu)$ and $\pi(\mu_2)=\pi'(\mu_2) < \pi'(\mu)$.
However, $\mu\in\cV$, $\pi(\mu_1),\pi(\mu_2) < \pi'(\mu)$, $\pi(\mu_1)$ and $\pi(\mu_2)$ are not comparable, and 
\[
\zeta\in \cl(\mu_j)\cap\cl(\mu_k)\cap\cl(\mu).
\]
This contradicts (3).
\end{proof}

\section{The Blow-up Complex}
\label{sec:blowup_complex}

The multivalued maps $\cF_i\colon \cX\mvmap \cX$, $i=0,1,2,3$, defined in Chapter~\ref{sec:rules} provide sufficient information for the computation of the associated Morse graphs $\sMG(\cF_i)$.
However, to the best of our knowledge the cell complex $\cX$ of Defintion~\ref{defn:Xcomplex} is not rich enough to express the algebraic topology associated with Conley indices. 
Thus we introduce the following cubical complex.

\begin{defn}
\label{defn:Xbcomplex}
Consider a cubical cell complex $\cX(\I) = (\cX,\preceq,\dim,\kappa)$ where $I_n = \setof{0,\ldots, K(n)+1}$, for $n=1,\ldots,N$.
Set $\bar{K}(n) = 2(K(n)+1)$.
The \emph{blowup cubical complex} $\cX(\bar{\I}) = (\cX_b,\preceq_b,\dim,\kappa)$ is the $N$-dimensional cubical cell complex generated by $\bar{\I} = \prod_{n=1}^N \bar{I}_n$ where
\[
\bar{I}_n := \setof{0,\ldots, \bar{K}(n) + 1},\quad n=1,\ldots, N.
\]
The \emph{blowup map} is the bijection $\blup \colon \cX  \to  \cX_b^{(N)}$ given by
\begin{equation}
\label{eq:defnb}
    \blup([\bv,\bw ])  := [2\bv + \bw,{\bf 1}].
\end{equation}
\end{defn}

\begin{rem}
\label{rem:xihat}
The blowup map is a bijection from cells of the cubical cell complex $\cX(\I)$ to top dimensional cells of the blowup complex $\cX(\bar{\I})$.
The inverse of $\blup$ is given by
\[
\blup^{-1}([\bar{\bv},\bone]) = \left[\frac{1}{2}(\bar{\bv}- (\bar{\bv}\mod 2)), \bar{\bv}\mod 2 \right].
\]
where $\bar{\bv}\mod 2\in\setof{0,1}^N$ is obtained by evaluating each coordinate of $\bv$ mod $2$.
\end{rem}

1We use the blowup map $\blup\colon \cX \to \cX_b^{(N)}$ to transport the lattice of forward invariant sets $\sInvset^+(\cF)$ and the D-grading $\pi\colon \cX\to \SCC(\cF)$ 
to structures defined in terms of $\cX_b$. 
This requires a technical understanding of properties of $\blup$ that is developed in the remainder of this section.

\begin{ex}
\label{ex:simplexXb}
Recall Example~\ref{ex:cubicalcomplex} where $N=2$, $K(1)=K(2)=2$, and hence $\I = \setof{0,1,2,3}^2$. 
The cubical complex $\cX(\I)$ is visualized in Figure~\ref{fig:position_vectors}.
This same visualization is presented in Figure~\ref{fig:Xb}(A) in blue.
For the associated blowup complex $\bar{K}(1)=\bar{K}(2) = 6$, and hence $\bar{\I} = \setof{0,\ldots, 7}^2$.
The blowup complex $\cX_b(\I)$ is shown in black in Figure~\ref{fig:Xb}(A).
\end{ex}

\begin{figure}
\centering
\begin{picture}(200,200)(0,-10)
\begin{tikzpicture}[scale=0.175]
\foreach \i in {0,...,7}{
    \draw(4*\i -2,-4) node{$\i$}; 
    \draw(-4,4*\i -2) node{$\i$}; 
   }
\fill[gray!10!white] (-2,-2) rectangle (26,26);
\draw[step=4cm,black,thick,xshift=-2cm,yshift=-2cm] (0,0) grid (28,28);
\foreach \i in {0,...,7}{
  \foreach \j in {0,...,7}{
    \draw[black, fill=black] (-2+4*\i,-2+4*\j) circle (2ex);
  }
}   
\draw[step=8cm,blue] (0,0) grid (24,24);
\foreach \i in {0,...,3}{
    \draw(8*\i,-4) node{\footnotesize$\color{blue}\i$}; 
    \draw(-4,8*\i) node{\footnotesize$\color{blue}\i$}; 
   }
\foreach \i in {0,...,3}{
  \foreach \j in {0,...,3}{
    \draw[black, fill=blue] (8*\i,8*\j) circle (1ex);
  }
}
\end{tikzpicture}
\end{picture}
\caption{
Blue represents the 2-dimensional cell complex $\cX(\I)$ where $\I = \setof{0,1,2,3}^2$ along with numbering of vertices (see Figure~\ref{fig:position_vectors}). 
Black represents the associated cell complex $\cX_b(\I)$ along with number of vertices as given by \eqref{eq:defnb}.
}
\label{fig:Xb}
\end{figure}
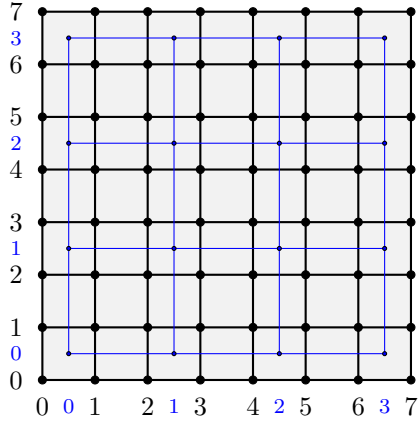

We leave the proof of the following lemma to the reader.
\begin{lem}
    \label{lem:codimension1face}
Consider top dimensional cells $\zeta_i\in \cX_b^{(N)}$, $i=0,1$.
Assume there exists $\zeta\in \cX_b^{(N-1)}$ such that $\zeta\prec_b \zeta_i$, $i=0,1$.
Then,
\[
|\dim(\blup^{-1}(\zeta_1))-\dim(\blup^{-1}(\zeta_0))| =1.
\]
\end{lem}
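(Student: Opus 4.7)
The plan is to unpack the definition of $\blup^{-1}$ given in Remark~\ref{rem:xihat} and combine it with the face relation \eqref{eq:is_a_face} restricted to top cells of $\cX_b$ that share a codimension-one face. Writing $\zeta = [\bar{\bu}, \bar{\bw}]$ with $\dim(\bar{\bw}) = N-1$, there is a unique $n \in \{1, \dots, N\}$ with $\bar{\bw} = \bone^{(n)}$. First I would show that the hypothesis $\zeta \prec_b \zeta_i = [\bar{\bv}_i, \bone]$ for $i=0,1$ together with \eqref{eq:is_a_face} forces $\bar{\bv}_i \in \{\bar{\bu}, \bar{\bu} - \bzero^{(n)}\}$; indeed the witness $\bq_i \in \{0,1\}^N$ must satisfy $\bar{\bw} + \bq_i \leq_\N \bone$, which pins $\bq_i$ to $\{0, \bzero^{(n)}\}$. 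Since $\zeta_0 \neq \zeta_1$ (otherwise the conclusion is vacuous, but in any case $\bar{\bv}_0 = \bar{\bv}_1$ gives difference $0$ which is excluded by the condition that two distinct top cells share $\zeta$), the two cases must occur for different $i$, so $\bar{\bv}_1 - \bar{\bv}_0 = \pm \bzero^{(n)}$.

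Next I would apply the inversion formula from Remark~\ref{rem:xihat}, writing $\blup^{-1}(\zeta_i) = [\bv_i, \bw_i]$ with $\bw_i = \bar{\bv}_i \bmod 2$, so that $\dim(\blup^{-1}(\zeta_i)) = \sum_{k=1}^N (\bw_i)_k$. For $k \neq n$ the equality $(\bar{\bv}_0)_k = (\bar{\bv}_1)_k$ yields $(\bw_0)_k = (\bw_1)_k$, while for $k = n$ we have $(\bar{\bv}_1)_n = (\bar{\bv}_0)_n \pm 1$, which flips the parity and thus gives $(\bw_1)_n = 1 - (\bw_0)_n$. Subtracting the dimension sums therefore gives
\[
\dim(\blup^{-1}(\zeta_1)) - \dim(\blup^{-1}(\zeta_0)) = (\bw_1)_n - (\bw_0)_n \in \{-1, +1\},
\]
which yields the desired conclusion.

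There is no real obstacle here; the entire argument is a bookkeeping exercise translating \eqref{eq:is_a_face} into coordinate data and then tracking the parity change under $\bar{\bv} \mapsto \bar{\bv} \bmod 2$. The only mild subtlety worth flagging explicitly in the write-up is ruling out $\bq_0 = \bq_1$ so that we really land in the two distinct translates of $\bar{\bu}$; this is immediate from $\zeta_0 \neq \zeta_1$ combined with the fact that $\bar{\bv}_i$ determines $\zeta_i$.
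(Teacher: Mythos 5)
Your proof is correct and complete. The paper explicitly leaves this lemma to the reader, so there is no paper argument to compare against, but the route you take — using \eqref{eq:is_a_face} to pin $\bq_i \in \{\bzero, \bzero^{(n)}\}$, hence $\bar{\bv}_1 - \bar{\bv}_0 = \pm\bzero^{(n)}$, and then tracking the single parity flip under $\bar{\bv} \mapsto \bar{\bv} \bmod 2$ via Remark~\ref{rem:xihat} — is precisely the direct bookkeeping the authors had in mind, and your explicit note that $\zeta_0 \neq \zeta_1$ must be assumed is a reasonable clarification of the statement.
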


Recall that, for any $\xi\in\cX_b$,  $\Top_{\cX_b}(\xi)=\setdef{\mu\in \cX_b^{(N)}}{\xi\preceq_b\mu}$. 

\begin{ex}
\label{ex:T_of_vw}
Consider the blowup complex $\cX_b(\I)$ in  Example~\ref{ex:simplexXb} shown in black in Figure~\ref{fig:Xb}(A). Let $[\bv,\bw]=\left[ \left(\begin{smallmatrix}
        1 \\ 2
    \end{smallmatrix}\right),\left(\begin{smallmatrix}
        0 \\ 0
     \end{smallmatrix}\right)\right]\in\cX_b$, 
then 
\[
\Top_{\cX_b}(
\left[ \left(\begin{smallmatrix}
        1 \\ 2
    \end{smallmatrix}\right),\left(\begin{smallmatrix}
        0 \\ 0
     \end{smallmatrix}\right)\right]
) = 
\setof{
\left[ \left(\begin{smallmatrix}
        1 \\ 2
    \end{smallmatrix}\right),\left(\begin{smallmatrix}
        1 \\ 1
     \end{smallmatrix}\right)\right],
\left[ \left(\begin{smallmatrix}
        0 \\ 2
    \end{smallmatrix}\right),\left(\begin{smallmatrix}
        1 \\ 1
     \end{smallmatrix}\right)\right],
\left[ \left(\begin{smallmatrix}
        1 \\ 1
    \end{smallmatrix}\right),\left(\begin{smallmatrix}
        1 \\ 1
     \end{smallmatrix}\right)\right],
\left[ \left(\begin{smallmatrix}
        0 \\ 1
    \end{smallmatrix}\right),\left(\begin{smallmatrix}
        1 \\ 1
     \end{smallmatrix}\right)\right]
}.
\]
Notice that, $[\bv',\bw']\in \Top_{\cX_b}(
\left[ \left(\begin{smallmatrix}
        1 \\ 2
    \end{smallmatrix}\right),\left(\begin{smallmatrix}
        0 \\ 0
     \end{smallmatrix}\right)\right]
)$ is obtained by taking the position $\bv=
\left(\begin{smallmatrix}
        1 \\ 2
    \end{smallmatrix}\right)
$ and subtracting from it one element of
$\setof{
\left(\begin{smallmatrix}
        0 \\ 0
    \end{smallmatrix}\right),
\left(\begin{smallmatrix}
        1 \\ 0
    \end{smallmatrix}\right),
\left(\begin{smallmatrix}
        0 \\ 1
    \end{smallmatrix}\right),
\left(\begin{smallmatrix}
        1 \\ 1
    \end{smallmatrix}\right)
}$,
and fixing $\bw'=\left(\begin{smallmatrix}
        1 \\ 1
     \end{smallmatrix}\right)$.
\end{ex}

In the spirit of Example \ref{ex:T_of_vw}, we leave it to the reader to check the following lemma. It is simply a reformulation of the face relation in Definition~\ref{defn:Xcomplex}. 

\begin{lem}
\label{lem:top-cell-formula}
    Let $[\bar\bv,\bar\bw]\in\cX_b$, then
    \begin{equation}\label{eq:T_of_zeta}
    \Top_{\cX_b}([\bar\bv,\bar\bw]) = \setdef{ [\bar\bv-\bq,\bone]}{\bq \in\setof{0,1}^N,\bar\bw +\bq \poeZ \bone}. 
    \end{equation}
\end{lem}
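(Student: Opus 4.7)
The plan is to unpack the face relation of Definition~\ref{defn:Xcomplex}, specialized to the case where the coface is top-dimensional, which forces its indicator vector to be $\bone$. Since this is essentially a direct translation of the defining condition \eqref{eq:is_a_face} into the form required by \eqref{eq:T_of_zeta}, the proof should be short and contain no substantive obstacle.

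First, I would establish the forward inclusion. Given $\mu\in\Top_{\cX_b}([\bar\bv,\bar\bw])$, the requirement $\dim(\mu)=N$ immediately forces $\mu=[\bar\bv',\bone]$ for some $\bar\bv'\in\bar\I$. Then $[\bar\bv,\bar\bw]\preceq_b[\bar\bv',\bone]$ expands via \eqref{eq:is_a_face} to the existence of $\bq\in\setof{0,1}^N$ with $\bar\bv=\bar\bv'+\bq$ and $\bar\bw+\bq\poeN\bone$. Solving for $\bar\bv'=\bar\bv-\bq$ exhibits $\mu$ as an element of the right-hand side of \eqref{eq:T_of_zeta}. For the reverse inclusion, I would fix an arbitrary $\bq\in\setof{0,1}^N$ with $\bar\bw+\bq\poeN\bone$ and check directly that the same $\bq$ witnesses $[\bar\bv,\bar\bw]\preceq_b[\bar\bv-\bq,\bone]$ via \eqref{eq:is_a_face}, placing $[\bar\bv-\bq,\bone]$ in $\Top_{\cX_b}([\bar\bv,\bar\bw])$.

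The only bookkeeping point, which I would flag but not belabor, is that the formal symbol $[\bar\bv-\bq,\bone]$ on the right-hand side of \eqref{eq:T_of_zeta} must denote an actual cell of $\cX_b$, i.e., satisfy $(\bar\bv-\bq)+\bone\in\bar\I$. Going through the cases $(\bar\bw_n,\bq_n)\in\setof{0,1}^2$, one sees that the only borderline configuration arises when $\bar\bv_n=\bar K(n)+1$, $\bar\bw_n=0$, and $\bq_n=0$; since the hypothesis $[\bar\bv,\bar\bw]\in\cX_b$ gives $\bar\bv_n+\bar\bw_n\leq \bar K(n)+1$ coordinatewise, all remaining cases are fine. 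Adopting the standard convention that the enumeration in \eqref{eq:T_of_zeta} discards formal expressions lying outside $\bar\I$ absorbs this corner and completes the argument; no step presents a real obstacle.
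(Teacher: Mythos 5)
Your argument is the direct reformulation of the face relation that the paper intends; the paper itself leaves this lemma to the reader, calling it ``simply a reformulation of the face relation in Definition~\ref{defn:Xcomplex},'' and your two-inclusion proof is correct. The one flaw is in your bookkeeping: for $[\bar{\bv}-\bq,\bone]$ to be a cell of $\cX_b$ you need \emph{both} $\bar{\bv}-\bq\in\bar{\I}$ \emph{and} $(\bar{\bv}-\bq)+\bone\in\bar{\I}$, but you only check the second, and consequently you assert a unique borderline case at the right-hand boundary ($\bar{\bv}_n=\bar{K}(n)+1$, $\bar{\bw}_n=0$, $\bq_n=0$) while overlooking the symmetric left-hand one ($\bar{\bv}_n=0$, $\bar{\bw}_n=0$, $\bq_n=1$, which makes $\bar{\bv}_n-\bq_n=-1\notin\bar{I}_n$). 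Both corners are absorbed by the same convention of discarding formal expressions outside $\bar{\I}$, so the conclusion stands unchanged, but the claim that there is only one borderline configuration is false as stated.
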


Recall that the essential and inessential directions of a cell $[\bv,\bw]\in\cX$ are the sets 
\[
J_e([\bv,\bw]) = \setof{n\mid \bw_n=1} \quad \text{and} \quad  J_i([\bv,\bw]) = \setof{n\mid \bw_n=0},
\]
respectively.
The next lemma is a straightforward consequence of Lemma~\ref{lem:top-cell-formula}. 

\begin{lem}
\label{lem:q_property}
Let $[\bar\bv,\bar\bw]\in\cX_b$ and let $\bq\in\setof{0,1}^N$. Then, $[\bar\bv-\bq,\bone]\in\Top_{\cX_b}([\bar\bv,\bar\bw])$, if and only if,
$\bq_n=0 \text{ for all } n\in J_e([\bar\bv,\bar\bw])$.
\end{lem}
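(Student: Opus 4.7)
The plan is to reduce everything to the characterization of $\Top_{\cX_b}([\bar\bv,\bar\bw])$ supplied by the preceding Lemma~\ref{lem:top-cell-formula}. That lemma already tells us that
\[
[\bar\bv-\bq,\bone]\in\Top_{\cX_b}([\bar\bv,\bar\bw])
\quad\Longleftrightarrow\quad
\bar\bw+\bq\poeZ\bone,
\]
so the task is merely to rewrite the componentwise inequality $\bar\bw_n+\bq_n\leq 1$ (valid for every $n\in\setof{1,\ldots,N}$) in terms of the partition $\setof{1,\ldots,N}=J_e([\bar\bv,\bar\bw])\cup J_i([\bar\bv,\bar\bw])$.

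Next I will split into the two cases dictated by the definition of $J_e$ and $J_i$. For $n\in J_e([\bar\bv,\bar\bw])$ we have $\bar\bw_n=1$, so $\bar\bw_n+\bq_n\leq 1$ forces $\bq_n=0$; conversely, once $\bq_n=0$ on $J_e$, the constraint on these coordinates is automatic. For $n\in J_i([\bar\bv,\bar\bw])$ we have $\bar\bw_n=0$, so $\bar\bw_n+\bq_n=\bq_n\in\setof{0,1}$ always satisfies the inequality and imposes no restriction on $\bq_n$. Combining these two observations yields the claimed equivalence.

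There is no real obstacle here: the statement is a direct corollary of Lemma~\ref{lem:top-cell-formula} together with the definition of essential directions (Definition~\ref{defn:JeJi}), and the only thing to be careful about is making sure that the condition on $J_e$ alone is both necessary and sufficient, which is immediate from the observation that the constraint is vacuous on $J_i$.
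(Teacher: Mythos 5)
Your proposal is correct and follows exactly the route the paper intends: the paper gives no explicit proof, merely noting that the lemma is a straightforward consequence of Lemma~\ref{lem:top-cell-formula}, and your case split on $J_e$ versus $J_i$ via Definition~\ref{defn:JeJi} is precisely the intended unpacking of the condition $\bar\bw+\bq\poeZ\bone$.
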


Explicit calculations using the definition of $\blup$ lead to the following result.
\begin{lem}
    \label{lem:dimensionblup}
Let $[\bar{\bv},\bone]\in \cX_b^{(N)}$ and let $[\bv,\bw] = \blup^{-1}([\bar{\bv},\bone])$.
Then,
\begin{equation}\label{eq:b_inverse_v}
    2\bv_n + \bw_n = \bar{\bv}_n,
\end{equation}
\begin{equation}\label{eq:b_inverse_w}
    \bw_n = \begin{cases}
        1, &\text{if $\bar{\bv}_n$ is odd},\\
        0, &\text{if $\bar{\bv}_n$ is even},
    \end{cases}
\end{equation}
and
\begin{equation}\label{eq:b_inverse_dim}
    \dim([\bv,\bw]) = \# \setdef{n}{\bar{\bv}_n \equiv 1 \pmod{2} }.
\end{equation}
\end{lem}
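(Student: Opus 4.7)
The plan is to derive all three equations directly from the explicit formula for the inverse blowup map recorded in Remark~\ref{rem:xihat}, namely
\[
\blup^{-1}([\bar{\bv},\bone]) = \left[\tfrac{1}{2}(\bar{\bv}-(\bar{\bv}\bmod 2)),\ \bar{\bv}\bmod 2\right].
\]
First I would set $\bv := \tfrac{1}{2}(\bar{\bv}-(\bar{\bv}\bmod 2))$ and $\bw := \bar{\bv}\bmod 2$ as given by this formula, and verify that these are well-defined integer vectors: each coordinate of $\bar{\bv}\bmod 2$ lies in $\{0,1\}$, so $\bw_n\in\{0,1\}$ as required by Definition~\ref{defn:Xcomplex}, and $\bar{\bv}_n-(\bar{\bv}_n\bmod 2)$ is always even, so $\bv_n\in\Z$. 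I would also confirm $\bv+\bw\in\I$ from $\bar{\bv}\in\bar{\I}$ for completeness, so that $[\bv,\bw]\in\cX$.

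Next I would establish equation~\eqref{eq:b_inverse_w}: by construction $\bw_n = \bar{\bv}_n\bmod 2$, which equals $1$ precisely when $\bar{\bv}_n$ is odd and $0$ when $\bar{\bv}_n$ is even. Then equation~\eqref{eq:b_inverse_v} is immediate:
\[
2\bv_n + \bw_n \;=\; \bigl(\bar{\bv}_n-(\bar{\bv}_n\bmod 2)\bigr) + (\bar{\bv}_n\bmod 2) \;=\; \bar{\bv}_n.
\]

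Finally for equation~\eqref{eq:b_inverse_dim}, I would invoke Definition~\ref{defn:Xcomplex}, which states $\dim([\bv,\bw])=\dim(\bw)=\sum_{n=1}^N \bw_n$. Since each $\bw_n\in\{0,1\}$, this sum counts exactly those indices $n$ for which $\bw_n=1$, and by \eqref{eq:b_inverse_w} that set equals $\{n \mid \bar{\bv}_n\equiv 1 \pmod{2}\}$, yielding \eqref{eq:b_inverse_dim}.

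There is no substantive obstacle here; the lemma is a direct unpacking of Remark~\ref{rem:xihat} and Definition~\ref{defn:Xcomplex}. The only thing to be mildly careful about is the order of derivation — one should establish \eqref{eq:b_inverse_w} before \eqref{eq:b_inverse_v} so that the identification $\bw_n=\bar{\bv}_n\bmod 2$ can be substituted cleanly, and then \eqref{eq:b_inverse_dim} follows as an immediate corollary of \eqref{eq:b_inverse_w} together with the definition of $\dim$.
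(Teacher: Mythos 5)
Your proof is correct and is exactly the ``explicit calculation'' the paper alludes to without spelling out: the paper gives no formal proof of Lemma~\ref{lem:dimensionblup}, only the remark that it follows from the definition of $\blup$, and your derivation from Remark~\ref{rem:xihat} (or equivalently directly from $\blup([\bv,\bw])=[2\bv+\bw,\bone]$) supplies precisely that. The ordering you chose (establishing \eqref{eq:b_inverse_w} before \eqref{eq:b_inverse_v}) is a harmless reversal of the most natural order from the forward definition, and everything checks out.
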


\begin{lem}
\label{lem:q_face}
Fix $[\bar{\bv},\bar{\bw}]\in\cX_b$ and $k\in J_i([\bar{\bv},\bar{\bw}])$.
Let $\bq\in\setof{0,1}^N$ be such that $[\bar{\bv}-\bq -\bzero^{(k)},\bone], [\bar{\bv}-\bq,\bone]\in\Top_{\cX_b}([\bar{\bv},\bar{\bw}])$.

If $\bar{\bv}_k-\bq_k -1$ is even, then
\[
\blup^{-1}([\bar{\bv}-\bq -\bzero^{(k)},\bone])\prec_\cX \blup^{-1}([\bar{\bv}-\bq,\bone]).
\]

If $\bar{\bv}_k-\bq_k -1$ is odd, then
\[
\blup^{-1}([\bar\bv-\bq ,\bone])\prec_\cX \blup^{-1}([\bar\bv-\bq-\bzero^{(k)},\bone]),
\]
\end{lem}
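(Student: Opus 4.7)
The plan is to expand both top cells through the explicit inverse of the blowup map given in Lemma~\ref{lem:dimensionblup}, and then to verify the face relation by direct comparison using Definition~\ref{defn:Xcomplex}. Writing $[\bv,\bw] := \blup^{-1}([\bar{\bv}-\bq,\bone])$ and $[\bv',\bw'] := \blup^{-1}([\bar{\bv}-\bq-\bzero^{(k)},\bone])$, formulas~\eqref{eq:b_inverse_v} and \eqref{eq:b_inverse_w} give $\bw_n = (\bar{\bv}_n-\bq_n) \bmod 2$ and $2\bv_n + \bw_n = \bar{\bv}_n - \bq_n$, with analogous relations for $\bw'$, $\bv'$ using $\bar{\bv}-\bq-\bzero^{(k)}$. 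Since $\bzero^{(k)}$ is zero outside coordinate $k$, we immediately read off $\bw_n = \bw'_n$ and $\bv_n = \bv'_n$ for every $n \neq k$, so the entire question reduces to what happens in the $k$-th coordinate, where the two position vectors differ by $1$.

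First I would handle the case $\bar{\bv}_k - \bq_k - 1$ even, so that $\bar{\bv}_k - \bq_k$ is odd. Then $\bw_k = 1$ and $\bw'_k = 0$, and solving $2\bv_k + 1 = \bar{\bv}_k - \bq_k$ and $2\bv'_k + 0 = \bar{\bv}_k - \bq_k - 1$ yields $\bv_k = \bv'_k = (\bar{\bv}_k-\bq_k-1)/2$. Hence $\bv = \bv'$, and taking $\tilde{\bq} = \bzero$ in Definition~\ref{defn:Xcomplex} shows that $\bv' = \bv + \tilde{\bq}$ while $\bw' + \tilde{\bq} = \bw' \poeN \bw$ (strict inequality in coordinate $k$). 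Therefore $[\bv',\bw'] \prec_\cX [\bv,\bw]$, which is the claimed face relation.

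Next I would handle the case $\bar{\bv}_k - \bq_k - 1$ odd, so that $\bar{\bv}_k - \bq_k$ is even. Now $\bw_k = 0$, $\bw'_k = 1$, and the corresponding equations force $\bv_k = (\bar{\bv}_k - \bq_k)/2$ and $\bv'_k = (\bar{\bv}_k - \bq_k - 2)/2 = \bv_k - 1$. Choosing $\tilde{\bq} = \bzero^{(k)}$ in Definition~\ref{defn:Xcomplex} gives $\bv = \bv' + \tilde{\bq}$ and $\bw + \tilde{\bq} = \bw'$, with strict difference in coordinate $k$, so $[\bv,\bw] \prec_\cX [\bv',\bw']$, matching the second conclusion.

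There is no real obstacle here; the lemma is essentially a parity bookkeeping result about the bijection $\blup$. The only thing to be careful about is orienting the face relation correctly: the parity of $\bar{\bv}_k - \bq_k - 1$ determines which of $\bw_k, \bw'_k$ equals $1$, and the cell with $\bw_\ast$ having the extra essential direction is the one that contains the other as a codimension-one face. Once this is observed, the choice of $\tilde{\bq}$ ($\bzero$ versus $\bzero^{(k)}$) in each case is forced by the requirement $\bw_\ast + \tilde{\bq} \poeN \bw_{\ast\ast}$, and the verification becomes routine.
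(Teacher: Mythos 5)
Your proof is correct and follows essentially the same route as the paper's: expand both top cells via the explicit formulas \eqref{eq:b_inverse_v}--\eqref{eq:b_inverse_w} for $\blup^{-1}$, observe that the two resulting cells agree in every coordinate except $k$, and then read off the face relation of Definition~\ref{defn:Xcomplex} from the parity of $\bar\bv_k - \bq_k$. One small remark worth keeping: you correctly compute that in the even case the position vectors coincide ($\bv = \bv'$, so the witness $\tilde\bq$ in Definition~\ref{defn:Xcomplex} is $\bzero$, not $\bzero^{(k)}$); the paper's proof at this step states ``$\bv'=\bv''+\bzero^{(k)}$,'' which is a slip (the positions are in fact equal), though its conclusion is unaffected since $\bw' \poeN \bw''$ still holds with $\bq = \bzero$.
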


\begin{proof}
Let $[\bv',\bw']=\blup^{-1}([\bar\bv-\bq-\bzero^{(k)},\bone])$ and $[\bv'',\bw'']=\blup^{-1}([\bar\bv-\bq ,\bone])$. We want to prove that either $[\bv',\bw']\prec_\cX [\bv'',\bw'']$ or $[\bv'',\bw'']\prec_\cX [\bv',\bw']$.

If $\bar{\bv}_k-\bq_k-1$ is even, then $\bar{\bv}_k-\bq_k-1=2m$, $m\in\N$, and $\bar{\bv}_k-\bq_k = 2m + 1$ is odd. 
Hence, by construction $\bv'_n=\bv''_n,$ for all $n\in\setof{1,\ldots, N}\backslash\setof{k}$. 
If $\bar{\bv}_k-\bq_k-1$ is even, then $\bar{\bv}_k-\bq_k$ is odd, $k \in J_i([\bv',\bw'])$ and $k \in J_e([\bv'',\bw''])$ by Lemma~\ref{lem:dimensionblup}. Thus, $\bw'_k = 0$ and $\bw''_k=1$, that is, $\bw'+\bzero^{(k)}=\bw''$. Together with the fact that $\bv'=\bv''+\bzero^{(k)}$, Definition~\ref{defn:Xcomplex} yields $[\bv',\bw']\preceq_{\cX}[\bv'',\bw'']$. 

If $\bv_k-\bq_k -1$
is odd, then it follows analogously that $[\bv'',\bw'']\prec_\cX [\bv',\bw']$.
\end{proof}

\begin{prop}
\label{prop:lowest_highest}
Let $\zeta = [\bar{\bv},\bar{\bw}]\in\cX_b$.
There exists a unique lowest dimensional cell $\minD(\zeta)$ and a unique highest dimensional cell $\maxD(\zeta)$  in $\blup^{-1}\left(\Top_{\cX_b}(\zeta) \right)\subset \cX$.
Furthermore, $\minD(\zeta)= \blup^{-1}([\bar{\bv} -\bq,\bone])$  where
\[
\bq_n = \begin{cases}
    0 & \text{if $\bar{\bw}_n=1$,} \\
    \bar\bv_n \pmod{2} & \text{if $\bar{\bw}_n=0$,}
\end{cases}
\]
and $\maxD(\zeta)= \blup^{-1}([\bar{\bv} -\bq,\bone])$
where
\[
\bq_n = \begin{cases}
    0 & \text{if $\bar{\bw}_n=1$,} \\
    1 - \bar\bv_n \pmod{2} & \text{if $\bar{\bw}_n=0$.}
\end{cases}
\]
\end{prop}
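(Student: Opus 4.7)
The plan is to reduce the problem to a coordinate-by-coordinate analysis using the two preceding lemmas. By Lemma~\ref{lem:top-cell-formula}, every element of $\Top_{\cX_b}(\zeta)$ has the form $[\bar{\bv}-\bq,\bone]$ for some $\bq\in\{0,1\}^N$ with $\bar{\bw}+\bq\leq_\Z\bone$, i.e.\ $\bq_n=0$ whenever $\bar{\bw}_n=1$ and $\bq_n\in\{0,1\}$ whenever $\bar{\bw}_n=0$. By Lemma~\ref{lem:dimensionblup}, equation \eqref{eq:b_inverse_dim},
\[
\dim\bigl(\blup^{-1}([\bar{\bv}-\bq,\bone])\bigr)
\;=\;
\#\{n:(\bar{\bv}-\bq)_n\equiv 1\pmod 2\}.
\]
Thus minimizing (resp.\ maximizing) the dimension is equivalent to minimizing (resp.\ maximizing) the number of odd coordinates of $\bar{\bv}-\bq$ subject to the constraint on $\bq$.

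Next I would split the indices into $J_e(\zeta)=\{n:\bar{\bw}_n=1\}$ and $J_i(\zeta)=\{n:\bar{\bw}_n=0\}$. On $J_e(\zeta)$ the choice $\bq_n=0$ is forced, so $(\bar{\bv}-\bq)_n=\bar{\bv}_n$ and its parity is fixed and contributes a constant $C:=\#\{n\in J_e(\zeta):\bar{\bv}_n\text{ odd}\}$ to the dimension for every element of $\Top_{\cX_b}(\zeta)$. On $J_i(\zeta)$, $\bq_n\in\{0,1\}$ is free, and since $\bar{\bv}_n-\bq_n\bmod 2$ is a bijection of $\{0,1\}$ onto itself, choosing the parity of $(\bar{\bv}-\bq)_n$ uniquely determines $\bq_n$: the even choice is $\bq_n\equiv\bar{\bv}_n\pmod 2$ and the odd choice is $\bq_n\equiv 1-\bar{\bv}_n\pmod 2$.

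It follows that the dimension is minimized precisely when every $n\in J_i(\zeta)$ contributes $0$, namely when $\bq_n=\bar{\bv}_n\pmod 2$ for all such $n$; this yields the stated formula for $\minD(\zeta)$ and attains dimension $C$. Symmetrically, the dimension is maximized precisely when every $n\in J_i(\zeta)$ contributes $1$, which forces $\bq_n=1-\bar{\bv}_n\pmod 2$ and gives the stated formula for $\maxD(\zeta)$ with dimension $C+|J_i(\zeta)|$. In each case the choice of $\bq$ is unique coordinate-by-coordinate, so the extremal cells themselves are unique.

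There is no real obstacle here: the argument is a direct parity computation once the constraint on $\bq$ is unpacked. The only point requiring a modicum of care is to note that the parities on $J_e(\zeta)$ are beyond our control and contribute a fixed count to every $[\bar{\bv}-\bq,\bone]\in\Top_{\cX_b}(\zeta)$, so the optimization genuinely decouples across coordinates and reduces to $|J_i(\zeta)|$ independent binary choices.
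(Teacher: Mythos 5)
Your argument is correct and is essentially identical to the paper's: both invoke Lemma~\ref{lem:top-cell-formula} to parametrize $\Top_{\cX_b}(\zeta)$ by $\bq$, use \eqref{eq:b_inverse_dim} to rewrite the dimension as a parity count, observe that the contribution from $J_e(\zeta)$ is fixed, and then optimize the free $J_i(\zeta)$ coordinates one at a time, with uniqueness following coordinate-by-coordinate. No gaps; this matches the paper's proof in substance.
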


\begin{proof}
By Lemma~\ref{lem:top-cell-formula}, $[\bar\bv-\bq,\bone] \in \Top_{\cX_b}([\bar\bv,\bar\bw])$ if and only if $\bar\bw+\bq \leq_{\Z} \bone$, i.e., $\bq_n = 0$ whenever $\bar\bw_n=1$. 
Thus, by Equation \eqref{eq:b_inverse_dim} the dimension of $b^{-1}([\bar\bv-\bq,\bone])$ is given by
\begin{equation}
\label{eq:dimensionpreimage}
\# \setdef{n}{\bar\bv_n-\bq_n \equiv 1 \pmod{2},\ \bar\bw_n=0}+\# \setdef{n}{\bar\bv_n \equiv 1 \pmod{2},\ \bar\bw_n=1}    
\end{equation}
where the second term in the sum is fixed.
Thus, the unique means of minimizing or maximizing \eqref{eq:dimensionpreimage} is to set $\bq_n = \bar\bv_n \pmod{2}$ or $\bq_n = 1-\bar\bv_n \pmod{2}$ when $\bar\bw_n=0$, respectively.
\end{proof}

Recall from \eqref{eq:is_a_face} that 
\begin{equation}\label{eq:is_a_face_two}
    [\bv,\bw]\preceq[\bv',\bw']\ \text{if and only if $\bv = \bv'+\bu$ and $\bw +\bu \poeZ \bw'$}, 
\end{equation}
where $\bu\in\setof{0,1}^N$.

\begin{lem}
    \label{lem:orderedcells}
Let $\zeta\in\cX_b$. Then $\xi\in \blup^{-1}(\Top_{\cX_b}(\zeta)) \subset \cX$ if and only if
\[
\minD(\zeta) \preceq_\cX \xi \preceq_\cX \maxD(\zeta).
\]
\end{lem}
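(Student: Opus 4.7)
The plan is to translate both face relations into coordinate arithmetic via Lemma~\ref{lem:dimensionblup} and Proposition~\ref{prop:lowest_highest}. Writing $\zeta = [\bar{\bv}, \bar{\bw}]$, let $\bq^{\min}, \bq^{\max} \in \setof{0,1}^N$ be the vectors from Proposition~\ref{prop:lowest_highest}, so that $\minD(\zeta) = \blup^{-1}([\bar{\bv} - \bq^{\min}, \bone])$ and $\maxD(\zeta) = \blup^{-1}([\bar{\bv} - \bq^{\max}, \bone])$. By Lemmas~\ref{lem:top-cell-formula} and~\ref{lem:q_property}, every element of $\blup^{-1}(\Top_{\cX_b}(\zeta))$ has the form $\blup^{-1}([\bar{\bv} - \bq, \bone])$ for some $\bq \in \setof{0,1}^N$ with $\bq_n = 0$ on $J_e(\zeta)$, and the vectors $\bq^{\min}$ and $\bq^{\max}$ agree outside $J_i(\zeta)$ but disagree at every coordinate of $J_i(\zeta)$.

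For the forward direction I will argue by a chain of single-coordinate flips. Fix an admissible $\bq$ and choose any total ordering of the indices in $J_i(\zeta)$ at which $\bq$ differs from $\bq^{\min}$; flip them one at a time to obtain a sequence $\bq^{\min} = \bq^{(0)}, \bq^{(1)}, \ldots, \bq^{(M)} = \bq$ in which consecutive vectors differ at a single index $k$. Since $\bq^{\min}_k$ equals $\bar{\bv}_k \bmod 2$ by Proposition~\ref{prop:lowest_highest}, a short case split on whether $\bq^{\min}_k = 0$ or $\bq^{\min}_k = 1$ determines which of the two orientations of Lemma~\ref{lem:q_face} to apply; in either orientation the parity hypothesis is satisfied, and the conclusion yields
\[
\blup^{-1}([\bar{\bv} - \bq^{(i)}, \bone]) \prec_\cX \blup^{-1}([\bar{\bv} - \bq^{(i+1)}, \bone]).
\]
Transitivity then gives $\minD(\zeta) \preceq_\cX \xi$, and an analogous chain from $\bq$ to $\bq^{\max}$ gives $\xi \preceq_\cX \maxD(\zeta)$.

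For the backward direction, suppose $\minD(\zeta) \preceq_\cX \xi \preceq_\cX \maxD(\zeta)$ with $\xi = [\bv, \bw]$, and set $\bq \coloneqq \bar{\bv} - 2\bv - \bw$. I will verify coordinatewise that $\bq \in \setof{0,1}^N$ and that $\bq_n = 0$ whenever $\bar{\bw}_n = 1$, whence Lemma~\ref{lem:top-cell-formula} gives $\blup(\xi) = [\bar{\bv} - \bq, \bone] \in \Top_{\cX_b}(\zeta)$ as required. The two hypothesized face relations produce $\bu^1, \bu^2 \in \setof{0,1}^N$ with
\[
\bv_{\min} = \bv + \bu^1,\quad \bw_{\min} + \bu^1 \leq_\N \bw, \quad \bv = \bv_{\max} + \bu^2, \quad \bw + \bu^2 \leq_\N \bw_{\max}.
\]
At coordinates with $\bar{\bw}_n = 1$, the identities $(\bv_{\min})_n = (\bv_{\max})_n$ and $(\bw_{\min})_n = (\bw_{\max})_n$ force $\bu^1_n = \bu^2_n = 0$ and collapse the above to $\bv_n = (\bv_{\min})_n$, $\bw_n = (\bw_{\min})_n$, hence $\bq_n = 0$. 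At coordinates with $\bar{\bw}_n = 0$, a four-way case split indexed by the parity of $\bar{\bv}_n$ and the value of $\bw_n$ pins down $\bu^1_n$, $\bu^2_n$, and $\bv_n$ uniquely, and each of the four cases yields $\bq_n \in \setof{0,1}$ by direct substitution.

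The main technical obstacle is keeping the parity bookkeeping consistent across both directions: no individual case is deep, but the parity of $\bar{\bv}_n$ interacts both with the sign conventions of Lemma~\ref{lem:q_face} in the forward direction and with the interplay between the two inequalities in the backward direction. Setting up the parity dictionary from Proposition~\ref{prop:lowest_highest} once at the outset and invoking it throughout should keep the final write-up clean and self-contained.
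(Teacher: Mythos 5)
Your proof is correct, and the forward direction takes a genuinely different route than the paper's. The paper establishes $\xi \preceq_\cX \maxD(\zeta)$ (and, by a symmetric argument, $\minD(\zeta) \preceq_\cX \xi$) by directly constructing the witness $\bu \in \setof{0,1}^N$ from the coordinate equations of $\blup^{-1}$, splitting on the value of $\bw^{\maxD(\zeta)}_n - \bw^\xi_n$; it then leaves the converse implication entirely to the reader. You instead reduce both inequalities to iterated applications of Lemma~\ref{lem:q_face}: by flipping one bit of $\bq$ at a time (always within $J_i(\zeta)$), each step produces a single covering relation whose orientation is determined uniformly by the parity dictionary of Proposition~\ref{prop:lowest_highest}, and transitivity does the rest. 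This is attractive because it isolates the parity bookkeeping in one reusable lemma rather than redoing it inside the construction of $\bu$, and it treats $\minD$ and $\maxD$ fully symmetrically, since every flip is away from $\bq^{\min}_k$ and toward $\bq^{\max}_k$ regardless of starting or ending position. Your backward direction supplies the proof the paper omits; the case analysis you outline (using $\bu^1 + \bu^2 = 0$ on $J_e(\zeta)$ and the four-way split on $J_i(\zeta)$) does pin down $\bq_n$ and verify admissibility, though it is worth being careful that at even $\bar\bv_n$ one actually deduces $\bu^1_n + \bu^2_n = 1$ and $\bw_n = \bu^1_n$, while at odd $\bar\bv_n$ one gets $\bu^1_n = \bu^2_n = 0$ with $\bw_n$ unconstrained but $\bq_n = 1 - \bw_n$ still landing in $\setof{0,1}$; make sure those two subcases survive into the final write-up.
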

\begin{proof}
Let $\zeta=[\bar\bv,\bar\bw]$, $\xi=[\bv^\xi,\bw^\xi]\in \cX$ and 
\[\maxD(\zeta)=[\bv^{\maxD(\zeta)},\bw^{\maxD(\zeta)}].\]
If $\xi\in \blup^{-1}(\Top_{\cX_b}(\zeta))$,
then by \eqref{eq:is_a_face_two} it is enough to prove that there exists $\bu\in\setof{0,1}^N$ such that 
\begin{equation}\label{eq:the_u}
    \bv^\xi = \bv^{\maxD(\zeta)}+\bu \quad\text{and}\quad \bw^\xi +\bu \poeZ \bw^{\maxD(\zeta)}.
\end{equation}

We begin by showing that $\bw^\xi\poeZ \bw^{\maxD(\zeta)}$. In fact, fix $n$ and assume that $\bw^{\maxD(\zeta)}_n=1$. Since $\bw_n^\xi\in\setof{0,1}$, then $\bw^\xi_n\leq1=\bw^{\maxD(\zeta)}_n$. Now, assume that $\bw^{\maxD(\zeta)}_n=0$, by \eqref{eq:b_inverse_w} it follows that $\bar\bv_n$ is even, i.e., there exists $a\in\N$ such that $\bar\bv_n=2a$. Using \eqref{eq:b_inverse_v} we have that $2\bv^\xi+\bw_n^\xi=\bar\bv_n=2a$, thus $\bw_n^\xi=0$. Therefore, $\bw^\xi \poeZ \bw^{\maxD(\zeta)}$.

Now we have the necessary ingredients to find $\bu$ that satisfies \eqref{eq:the_u}.

Let $\bq^\xi,\bq^{\maxD(\zeta)}\in\setof{0,1}^N$ such that $\blup(\xi)=[\bar\bv - \bq^\xi, \bone]$ and $\blup(\maxD(\zeta))=[\bar\bv - \bq^{\maxD(\zeta)}, \bone]$. By \eqref{eq:b_inverse_v}, we have that
    \begin{align}
        2\bv_n^{\maxD(\zeta)} + \bw_n^{\maxD(\zeta)} &= \bar\bv_n - \bq_n^{\maxD(\zeta)} \label{eq:orderedcells_one}\\
        2\bv_n^\xi + \bw_n^\xi &= \bar\bv_n - \bq_n^\xi. \label{eq:orderedcells_two}       
    \end{align}
Subtracting \eqref{eq:orderedcells_two} from \eqref{eq:orderedcells_one}, it follows that 
    \begin{equation}\label{eq:orderedcells_three}
    2(\bv^{\maxD(\zeta)}_n-\bv^\xi_n) + (\bw_n^{\maxD(\zeta)}-\bw_n^\xi) = \bq_n^\xi   - \bq_n^{\maxD(\zeta)} \in\setof{0,\pm 1}.
    \end{equation}   

Assume that $(\bw^{\maxD(\zeta)}_n-\bw^\xi_n)=0$, then by  \eqref{eq:orderedcells_three}$ (\bv^{\maxD(\zeta)}_n-\bv^\xi_n)=0$. 
Notice that \eqref{eq:is_a_face_two} is satisfied for $\bu=\bzero$, thus $\xi\preceq \maxD(\zeta)$.

Assume that $(\bw^{\maxD(\zeta)}_n-\bw^\xi_n)=1$, then  by \eqref{eq:orderedcells_three} either $(\bv^{\maxD(\zeta)}_n-\bv^\xi_n)=-1$ or $(\bv^{\maxD(\zeta)}_n-\bv^\xi_n)=0$. In both cases, \eqref{eq:is_a_face_two} is satisfied by selecting $\bu_n=1$ for $(\bv^{\maxD(\zeta)}_n-\bv^\xi_n)=-1$ or $\bu_n=0$ for $(\bv^{\maxD(\zeta)}_n-\bv^\xi_n)=0$. Hence $\xi\preceq \maxD(\zeta)$.

Therefore, $\xi\preceq \maxD(\zeta)$. Analogously, we can prove that $\minD(\zeta)\preceq \xi$.

We leave to the reader to prove the sufficient condition.
\end{proof}

\begin{rem}
\label{rem:cells_between}
Let $\zeta = [\bar\bv, \bar\bw] \in \cX_b$ and let $N' = \dim(\maxD(\zeta))$. For any $[\bar\bv - \bq,\bone], [\bar\bv - \bq',\bone] \in \Top_{\cX_b}(\zeta)$, define $\bu = \bq - \bq'$ and let $U = \{ j \in \{1, \ldots, N'\} \mid \bu_j \neq 0 \}$. Then,
\[
\left[ \bar\bv - \bq + \sum_{i \in U'} \bu_{j_i} \bzero^{(i)}, \bone \right] \in \Top_{\cX_b}(\zeta)
\]
for any subset $U' \subset U$.
\end{rem}

Recall that $p$ is the relative position vector (see Definition~\ref{defn:rpvector}). 
Given $\zeta\in\cX_b$ the following lemma relates the position vector of cells in $\blup^{-1}\left(\Top_{\cX_b}(\zeta) \right)$ with the lowest and highest dimensional cells in $\blup^{-1}\left(\Top_{\cX_b}(\zeta) \right)$.

\begin{lem}\label{lem:sigma_sigma_prime_position}
Let $\zeta\in\cX_b$ and let $\alpha,\beta\in \blup^{-1}(\Top_{\cX_b}(\zeta)) \subset \cX$ such that $\alpha\prec_\cX \beta$.
If $n\in\Ex(\alpha,\beta)=J_i(\alpha)\cap J_e(\beta)$, then 
\[
p_n(\minD(\zeta), \alpha)=p_n(\beta, \maxD(\zeta))=0
\]
and
\[
p_n(\minD(\zeta), \beta)=p_n(\minD(\zeta),\maxD(\zeta))=p_n(\alpha, \maxD(\zeta)),
\]
which is non-zero. 
\end{lem}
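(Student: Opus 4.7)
The plan is to reduce all three identities to the position-vector formula in Definition~\ref{defn:rpvector}, using the essential/inessential bookkeeping from Proposition~\ref{prop:JtauJsigma} along the chain $\minD(\zeta)\preceq_\cX\alpha\prec_\cX\beta\preceq_\cX\maxD(\zeta)$ supplied by Lemma~\ref{lem:orderedcells}. To keep notation light, write $\minD(\zeta)=[\bv^\minD,\bw^\minD]$ and $\maxD(\zeta)=[\bv^\maxD,\bw^\maxD]$, and use analogous superscripts $\alpha,\beta$ for the coordinates of $\alpha,\beta$. The hypothesis $n\in\Ex(\alpha,\beta)$ gives $\bw_n^\alpha=0$ and $\bw_n^\beta=1$; Proposition~\ref{prop:JtauJsigma} then propagates these to $\bw_n^\minD=0$ (via $J_i(\alpha)\subseteq J_i(\minD(\zeta))$) and $\bw_n^\maxD=1$ (via $J_e(\beta)\subseteq J_e(\maxD(\zeta))$). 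The two vanishing identities $p_n(\minD(\zeta),\alpha)=0$ and $p_n(\beta,\maxD(\zeta))=0$ then follow at once from Definition~\ref{defn:rpvector} because each $\bw_n$-difference vanishes.

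For the remaining three-way identity, the key step is to upgrade the face relations to the coordinate-wise equalities $\bv_n^\minD=\bv_n^\alpha$ and $\bv_n^\beta=\bv_n^\maxD$. Unpacking $\minD(\zeta)\preceq\alpha$ via Definition~\ref{defn:Xcomplex}, there exists $\bu\in\setof{0,1}^N$ with $\bv^\minD=\bv^\alpha+\bu$ and $\bw^\minD+\bu\poeN\bw^\alpha$; at coordinate $n$ the latter reads $0+\bu_n\leq 0$, forcing $\bu_n=0$ and hence $\bv_n^\minD=\bv_n^\alpha$. Dually, $\beta\preceq\maxD(\zeta)$ yields $\bu'$ with $\bw^\beta+\bu'\poeN\bw^\maxD$, so coordinate $n$ gives $1+\bu'_n\leq 1$, forcing $\bu'_n=0$ and $\bv_n^\beta=\bv_n^\maxD$.

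Substituting these coordinate equalities into Definition~\ref{defn:rpvector} yields
\[
p_n(\minD(\zeta),\beta)=-(-1)^{\bv_n^\minD-\bv_n^\beta}=-(-1)^{\bv_n^\minD-\bv_n^\maxD}=p_n(\minD(\zeta),\maxD(\zeta))
\]
and
\[
p_n(\alpha,\maxD(\zeta))=-(-1)^{\bv_n^\alpha-\bv_n^\maxD}=-(-1)^{\bv_n^\minD-\bv_n^\maxD}=p_n(\minD(\zeta),\maxD(\zeta)),
\]
whose common value is $\pm 1\neq 0$ because the $\bw_n$-difference is $-1$. As a shortcut, the first chain is also immediate from Proposition~\ref{prop:position_trio} applied to $\minD(\zeta)\preceq\beta\preceq\maxD(\zeta)$, since $n\in J_i(\minD(\zeta))\cap J_e(\beta)=\Ex(\minD(\zeta),\beta)$. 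The whole argument is essentially bookkeeping; the only step that deserves care is extracting $\bu_n=0$ at the single coordinate $n$ from the componentwise inequality in Definition~\ref{defn:Xcomplex}, and I do not anticipate any deeper obstacle.
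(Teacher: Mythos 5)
Your proof is correct and takes essentially the same approach as the paper's: establish the chain $\minD(\zeta)\preceq_\cX\alpha\prec_\cX\beta\preceq_\cX\maxD(\zeta)$ via Lemma~\ref{lem:orderedcells}, propagate the $\bw_n$-values along the chain, and compute $p_n$ from Definition~\ref{defn:rpvector}. You are somewhat more careful than the paper's one-line appeal to Proposition~\ref{prop:position_trio}: that proposition fixes the lower cell and so yields $p_n(\minD(\zeta),\beta)=p_n(\minD(\zeta),\maxD(\zeta))$ directly, but the companion equality with $p_n(\alpha,\maxD(\zeta))$ also needs the coordinate equality $\bv_n^\alpha=\bv_n^\minD$ that you extract from the face relation.
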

\begin{proof}
Since $\minD(\zeta)$ and $\maxD(\zeta)$ are the unique lowest and highest dimensional cells in $\blup^{-1}(\Top_{\cX_b}(\zeta))$, by Lemma~\ref{lem:orderedcells} \[
\minD(\zeta)\preceq_\cX\alpha\prec_\cX\beta\preceq_\cX\maxD(\zeta).
\]
Thus, $n\in\Ex(\minD(\zeta),\maxD(\zeta))$.

Consider
$\alpha=[\bv^\alpha, \bw^\alpha]$, $\beta=[\bv^\beta, \bw^\beta]$, $\minD(\zeta)=[\bv^{\minD(\zeta)}, \bw^{\minD(\zeta)}]$ and 
$\maxD(\zeta)=[\bv^{\maxD(\zeta)}, \bw^{\maxD(\zeta)}]$.
Notice that, $\alpha\prec_\cX\beta$ and $n\in\Ex(\alpha,\beta)$ implies that $\bw^\alpha_n=0$ and $\bw^\beta_n=1$. Similarly, $\minD(\zeta)\prec_\cX\maxD(\zeta)$ and $n\in\Ex(\alpha,\beta)$ implies that $\bw^{\minD(\zeta)}_n=0$ and $\bw^{\maxD(\zeta)}_n=1$.

The formula for the position vector is a consequence of Proposition~\ref{prop:position_trio}.
\end{proof}    

\begin{lem}\label{lem:alpha_less_beta}
    Fix $[\bar\bv,\bar{\bw}]\in\cX_b$ and $n\in J_i([\bar\bv,\bar{\bw}])$.
Let $\bq\in\setof{0,1}^N$ be such that $[\bar\bv-\bq,\bone], [\bar\bv-\bq +\bzero^{(n)},\bone]\in\Top_{\cX_b}([\bar\bv,\bar{\bw}])$. 

If $\alpha = \blup^{-1}([\bar\bv-\bq,\bone])$ and $\beta = \blup^{-1}([\bar\bv-\bq+\bzero^{(n)},\bone])$, then
\[
p_n(\alpha, \beta)= \begin{cases}
    -1 & \text{if } \alpha\preceq_\cX \beta \\ 
    1 & \text{if } \beta \preceq_\cX \alpha.
\end{cases}
\]

\end{lem}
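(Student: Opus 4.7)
The plan is to reduce the lemma to an explicit coordinate computation by combining Lemma~\ref{lem:q_face} with the inversion formulas from Lemma~\ref{lem:dimensionblup}. First, I would reindex by setting $\tilde{\bq} := \bq - \bzero^{(n)}$, so that $\beta = \blup^{-1}([\bar{\bv}-\tilde{\bq},\bone])$ and $\alpha = \blup^{-1}([\bar{\bv}-\tilde{\bq}-\bzero^{(n)},\bone])$. Lemma~\ref{lem:q_face} applied with this $\tilde{\bq}$ then expresses the face relation between $\alpha$ and $\beta$ in terms of the parity of $\bar{\bv}_n - \tilde{\bq}_n - 1 = \bar{\bv}_n - \bq_n$: even parity yields $\alpha \preceq_\cX \beta$ and odd parity yields $\beta \preceq_\cX \alpha$. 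This cleanly splits the proof into the two cases of the conclusion.

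Next I would unpack the coordinates. Writing $\alpha = [\bv^\alpha,\bw^\alpha]$ and $\beta = [\bv^\beta,\bw^\beta]$, formulas~\eqref{eq:b_inverse_v} and \eqref{eq:b_inverse_w} give $2\bv^\alpha_n + \bw^\alpha_n = \bar{\bv}_n - \bq_n$ and $2\bv^\beta_n + \bw^\beta_n = \bar{\bv}_n - \bq_n + 1$, with $\bw^\alpha_n$ (respectively $\bw^\beta_n$) determined by the parity of the right-hand side. Setting $\bar{\bv}_n - \bq_n = 2a$ in the even case produces $(\bv^\alpha_n,\bw^\alpha_n) = (a,0)$ and $(\bv^\beta_n,\bw^\beta_n) = (a,1)$, while setting $\bar{\bv}_n - \bq_n = 2a+1$ in the odd case produces $(\bv^\alpha_n,\bw^\alpha_n) = (a,1)$ and $(\bv^\beta_n,\bw^\beta_n) = (a+1,0)$.

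Finally, I would substitute these values into the defining expression $p_n(\xi,\xi') = (-1)^{\bv_n - \bv'_n}(\bw_n - \bw'_n)$ from Definition~\ref{defn:rpvector}, taking the ordered pair dictated by the face relation established in the first step. In the even case, $p_n(\alpha,\beta) = (-1)^{0}(0-1) = -1$. In the odd case, $p_n(\beta,\alpha) = (-1)^{(a+1)-a}(0-1) = 1$. Both outcomes match the stated formula, completing the proof.

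There is no serious obstacle here; the entire argument is a bookkeeping exercise. The only care that is required is keeping the reindexing $\tilde{\bq} = \bq - \bzero^{(n)}$ consistent with the parity hypothesis in Lemma~\ref{lem:q_face}, and being attentive to which of $\alpha,\beta$ is the face of the other so that $p_n$ is evaluated on an ordered pair in the domain prescribed by Definition~\ref{defn:rpvector}.
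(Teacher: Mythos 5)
Your proposal is correct and follows essentially the same route as the paper: apply the coordinate inversion formulas of Lemma~\ref{lem:dimensionblup} to unpack the $n$\textsuperscript{th} coordinate of $\alpha$ and $\beta$, split on the parity of $\bar{\bv}_n - \bq_n$, and read off $p_n$ from Definition~\ref{defn:rpvector}. One small point worth making explicit in your reindexing step: the hypothesis that both $[\bar{\bv}-\bq,\bone]$ and $[\bar{\bv}-\bq+\bzero^{(n)},\bone]$ lie in $\Top_{\cX_b}([\bar{\bv},\bar{\bw}])$ already forces $\bq_n = 1$ (so that $\tilde{\bq} = \bq - \bzero^{(n)}$ remains in $\setof{0,1}^N$), and this is what makes the appeal to Lemma~\ref{lem:q_face} and the identity $\bar{\bv}_n - \tilde{\bq}_n - 1 = \bar{\bv}_n - \bq_n$ legitimate; otherwise the argument is a clean bookkeeping check, and your care in evaluating $p_n$ on the ordered pair prescribed by the face relation is exactly the right reading of Definition~\ref{defn:rpvector}.
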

\begin{proof}
Denote $[\bv, \bw]= \blup^{-1}([\bar\bv-\bq,\bone])$ and $[\bv', \bw']=\blup^{-1}([\bar\bv-\bq+\bzero^{(n)},\bone]$.\\

Assume that $[\bv, \bw]\prec_\cX [\bv', \bw']$. 
Then, \eqref{eq:b_inverse_w} implies that $\bw_n=0$ and $\bw'_n=1$. By \eqref{eq:b_inverse_v}, it follows that
\[
2\bv_n + \bw_n = \bar\bv_n -1 \quad \text{and} \quad 2\bv_n' + \bw_n' = \bar{\bv}_n.
\]
Hence,
\[
2\bv_n + 0 = \bar{\bv}_n -1 \quad \text{and} \quad 2\bv_n' + 1 = \bar{\bv}_n,
\]
thus $\bv_n=\bv_n'$.\\
From the formula for the relative position vector (Definition~\ref{defn:rpvector}), we have that
\[
 p_n([\bv,\bw],[\bv',\bw']) = (-1)^{\bv_n-\bv'_n}\left(\bw_n-\bw'_n \right) =  (-1)^{0}\left(0-1 \right) = -1.
\]
Assume that $[\bv', \bw']\prec_\cX [\bv, \bw]$. Analogously, it follows that 
\[
    p_n([\bv',\bw'],[\bv,\bw]) = 1.
\]
Notice that flipping the roles of $[\bv,\bw]$ and $[\bv',\bw']$ in \eqref{defn:rpvector} yields
$p_n([\bv,\bw], [\bv',\bw']) = -1$.
\end{proof}

\section{Extendable D-gradings and AB-lattices in the Blow-up Complex}
\label{sec:Dgradblowup}
Recall that given a cubical cell complex $\cX=\cX(\I)$ as in Definition~\ref{defn:Xcomplex} and a wall labeling $\omega \colon W(\cX) \to \setof{\pm 1}^N$, we defined a rook field $\rook : \TP(\cX)\to \setof{0,\pm1}^N$ and multivalued maps $\cF_i \colon \cX \mvmap \cX$, $i=0,1,2,3$ (see Definitions~\ref{def:Rule0}--\ref{defn:Rule3}). Each $\cF_i$ decomposes $\cX$ into strongly connected components. We refer to the quotient map $\pi : \cX \to \SCC(\cF_i)$ as the D-grading for $\cF_i$ (see Definition~\ref{defn:Dgrading}). 

The primary aim of this section is to prove the following theorem.
\begin{thm}
\label{thm:singlevalue}
Consider $\cF_i \colon \cX \mvmap \cX$ with $i \in \{0,1,2\}$ and D-grading $\pi \colon \cX \to \SCC(\cF_i)$. Define the map $\pi_b \colon \Top_{\cX_b} \to \SCC(\cF_i)$ by
\[
\pi_b(\zeta) = \pi(b^{-1}(\zeta)),
\]
where $b \colon \cX \to \cX^{(N)}_b$ is the blowup map. 
Then $\pi_b$ is an extendable grading. Furthermore, if $N \leq 3$, the same holds for $i=3$.
\end{thm}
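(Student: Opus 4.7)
The plan is to show that for every $\zeta \in \cX_b$, the image $\pi_b(\Top_{\cX_b}(\zeta)) = \pi(I(\zeta))$, where $I(\zeta) := b^{-1}(\Top_{\cX_b}(\zeta))$, admits a unique minimum in the poset $(\SCC(\cF_i), \leq_{\bar{\cF}_i})$. Once this is established for every $\zeta$, Definition~\ref{defn:extendable} provides the desired admissible extension $\widehat{\pi}_b$. By Lemma~\ref{lem:orderedcells} and Proposition~\ref{prop:lowest_highest}, $I(\zeta)$ is the closed order interval $[\minD(\zeta), \maxD(\zeta)]$ in $(\cX, \preceq_\cX)$, and it inherits a Boolean lattice structure indexed by the free directions $F(\zeta) := J_i(\minD(\zeta)) \cap J_e(\maxD(\zeta))$.

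The case $i = 0$ will serve as a warmup. By Definition~\ref{def:Rule0}, every pair $\xi \prec_\cX \xi'$ in $I(\zeta)$ with $\dim(\xi') - \dim(\xi) = 1$ carries a double edge in $\cF_0$; since any two cells of $I(\zeta)$ are linked by a chain of such covers through the Boolean factorization, the whole interval lies in a single strongly connected component of $\cF_0$, and $\pi$ is constant on $I(\zeta)$. For $i \in \{1, 2\}$, the central step will be a per-direction orientation lemma: for each $n \in F(\zeta)$, every pair $\xi \prec_\cX \xi'$ inside $I(\zeta)$ with $\Ex(\xi,\xi') = \{n\}$ receives the same orientation (entrance, exit, or double) under $\cF_i$. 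This will follow because the rook field values used in Conditions~1.2 and 2.1 depend only on the walls of cells in $\Top_{\cX_b}(\zeta)$, while Proposition~\ref{prop:equal_position}, Corollary~\ref{cor:mu*}, and Lemma~\ref{lem:sigma_sigma_prime_position} certify that the relevant wall data and relative position vectors are constant across $I(\zeta)$. Granted this lemma, I will define a distinguished cell $\xi^{\star}(\zeta) \in I(\zeta)$ by making, direction by direction, the downstream choice, and then induct on $\#F(\zeta)$ using the Boolean factorization of $I(\zeta)$ to conclude that every $\xi \in I(\zeta)$ admits a path in $\cF_i$ to $\xi^{\star}(\zeta)$; the uniqueness of $\pi(\xi^{\star}(\zeta))$ as the minimum of $\pi(I(\zeta))$ follows.

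For $i = 3$ with $N \leq 3$, two new complications arise: Condition~3.1 deletes face edges whenever $\rmap{\xi}$ has cycles of length $\geq 2$, and the unstable cells $\cU(\xi)$ of Definition~\ref{defn:unstable_cells} inject non-cover edges into $\cF_3$. I will first handle $N = 2$ directly via Theorem~\ref{thm:2dDone}: since $\cF_3$ has no double edges there, the per-direction argument of the preceding paragraph produces the unique minimum with only cosmetic changes. For $N = 3$, I will partition $F(\zeta)$ according to the cycle decomposition of $\rmap{\minD(\zeta)}$, pulled back to elements of $I(\zeta)$ by Proposition~\ref{prop:face_cycle}; fixed directions will be handled exactly as in the $\cF_2$ argument, while for each two- or three-cycle $\sigma$ I will use the lap-number inequality $\lap_{\xi,\sigma}(\mu) < |\sigma|/2$ of Definition~\ref{defn:Rule3.2} together with the explicit enumeration of $\cU(\xi,\sigma)$ to show that the cover edges removed by Condition~3.1 are exactly compensated by the unstable-cell edges, producing a single sink inside that cycle. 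The main obstacle, and the source of the dimension restriction $N \leq 3$, will be orchestrating these per-cycle sinks with the fixed-direction sink inside one Boolean interval without creating a second minimum in $\pi(I(\zeta))$; in higher dimensions interacting cycles (cf.\ Section~\ref{sec:equilibria}) would require a finer construction of $\cF_3$ than the one considered here.
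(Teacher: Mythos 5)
Your overall plan is the same as the paper's: exhibit a distinguished cell $\gamma \in I(\zeta) = \blup^{-1}(\Top_{\cX_b}(\zeta))$ and show $\pi(\gamma)$ is the unique minimum, using the Boolean‐interval structure $I(\zeta) = [\minD(\zeta),\maxD(\zeta)]$ from Proposition~\ref{prop:lowest_highest} and Lemma~\ref{lem:orderedcells}. For $i=0$ and $i=1$ your argument goes through and matches the paper essentially unchanged (for $i=1$ your ``per-direction orientation'' claim is precisely Lemma~\ref{lem:F_1-leq-geq}, which rests on Corollary~\ref{cor:mu*}).

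The gap is in the $\cF_2$ case, where your central ``per-direction orientation lemma'' is asserted but not proved, and --- more seriously --- is supported by the wrong reason. You write that the rook-field values in Conditions~1.2 and 2.1 ``depend only on the walls of cells in $\Top_{\cX_b}(\zeta)$.'' For Condition~2.1 that is false: the relevant data are back walls $(\dec,\dec')\in\Dec(\xi,\xi')$ from Definition~\ref{defn:back-walls}, which lie on top cells that are typically \emph{not} in $\Top_{\cX_b}(\zeta)$, and which move with the pair $(\xi,\xi')$. Moreover, for a fixed direction $n_o\in F(\zeta)$, some pairs $(\xi,\xi')\in I(\zeta)\times I(\zeta)$ with $\Ex(\xi,\xi')=\{n_o\}$ may exhibit indecisive drift while others do not (so some get oriented by back walls, others by $\cF_1$ data), and even among those that do, the back-wall rook values need not agree: the nesting in Proposition~\ref{prop:back_subset} requires the hypothesis $\Ex(\xi_0',\xi_1')\subset O_i(\xi_0')$, which can fail along the Boolean interval. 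This is exactly why the paper's proof does not use a uniform-orientation lemma; instead it builds the target cell $\gamma$ using the sign $\delta_n$, which is defined to flip according to whether the chosen back wall $(\hat\xi_n,\hat\xi_n')$ lies in $\Back(\alpha^n,\maxD(\zeta))$, and then exploits the ordering conditions labelled A) and B) in that proof --- consequences of Propositions~\ref{prop:unique_ext_GO_pairs} and~\ref{prop:not_L_GO_pairs} --- to arrange that indecisive-drift directions are consumed first and never consecutively. Your ``direction by direction downstream choice'' followed by ``induct on $\#F(\zeta)$'' never confronts this ordering constraint, so even if your orientation lemma were true, the chain of inequalities might not compile. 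Finally, the $\cF_3$ step (``removed cover edges are exactly compensated by the unstable-cell edges, producing a single sink inside that cycle'') is stated at a level that elides the two concrete ingredients the paper actually uses --- Lemma~\ref{lem:F_3-minimal} giving $\pi(\alpha)\posetF{3}\pi(\xi)\posetF{3}\pi(\beta)$ for $\alpha\in\cU(\xi)$, $\beta\in\Stable(\xi)$, and Lemma~\ref{lem:F_3-q-same_grading} (proved for $N\le 3$ by explicit enumeration) showing all unstable cells in $I(\zeta)$ share a grading --- and without these the ``single sink'' claim is unsupported.
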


We discuss the relevance of Theorem~\ref{thm:singlevalue}  before turning to its proof.
We begin with a definition.

\begin{defn}
Consider $\cF_i \colon \cX \mvmap \cX$ with $i \in \{0,1,2,3\}$ and D-grading $\pi \colon \cX \to \SCC(\cF_i)$.
The \emph{D-grading} $\pi_b\colon (\cX_b,\preceq_b) \to (\SCC(\cF_i),\posetF{i})$ is defined to be the extension of $\pi_b$ from Theorem~\ref{thm:singlevalue}.
\end{defn}

The importance of Corollary~\ref{cor:birkhoff} is the following theorem.
\begin{thm}
\label{thm:NFABlattice}
Consider the multivalued map $\cF_i\colon \cX\mvmap \cX$, $i\in\setof{0,1,2,3}$ with
D-grading $\pi_b\colon (\cX_b,\preceq_b)\to (\SCC(\cF_i),\posetF{i})$.
Then 
\[
\sN(\cF_i,\pi_b) \coloneqq \sO(\pi_b)(\sO(\SCC(\cF_i))) \subset \sO(\cX_b)
\]
is an AB-lattice (see Definition~\ref{defn:ABlattice}).
\end{thm}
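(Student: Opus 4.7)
The plan is to verify the three defining conditions of an AB-lattice from Definition~\ref{defn:ABlattice}: that $\sN(\cF_i,\pi_b)$ is a sublattice of $\sO(\cX_b)$ containing both $\emptyset$ and $\cX_b$, and that each non-empty element is a uniform cell complex of dimension $N$.

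First, by Corollary~\ref{cor:birkhoff}, the map $\sO(\pi_b)\colon \sO(\SCC(\cF_i)) \to \sO(\cX_b)$ is a $\bzero\bone$-lattice monomorphism, so its image $\sN(\cF_i,\pi_b)$ is a sublattice of $\sO(\cX_b)$. Because $\sO(\pi_b)(\emptyset)=\emptyset$ and $\sO(\pi_b)(\SCC(\cF_i))=\cX_b$, the minimum and maximum of $\sO(\cX_b)$ both lie in $\sN(\cF_i,\pi_b)$. This takes care of condition (1).

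For condition (2), fix a non-empty $\cN = \pi_b^{-1}(U) \in \sN(\cF_i,\pi_b)$, where $U \in \sO(\SCC(\cF_i))$ is a non-empty downset. Since $\cN$ is a downset of $\cX_b$, it is closed under the face relation $\preceq_b$ and is therefore itself a cell complex. Because closure under $\preceq_b$ forces every face of a cell in $\cN$ to lie in $\cN$, in particular the $0$-dimensional faces of any cell of $\cN$ are contained in $\cN$, so every minimal cell of $\cN$ is a vertex. Moreover, since $\cX_b$ is a cubical cell complex, any $(N-1)$-dimensional cell admits at most two top cofaces in $\cX_b$, and hence at most two in $\cN$.

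The crux is to show that maximal cells of $\cN$ have dimension $N$; this is where the admissibility of the D-grading $\pi_b$, furnished by Theorem~\ref{thm:singlevalue}, enters. By Definition~\ref{defn:admissibleGrading}, for each $\zeta \in \cX_b$ there exists $\mu^* \in \Top_{\cX_b}(\zeta)$ realizing the minimum, so that $\pi_b(\mu^*) = \pi_b(\zeta)$. If $\zeta \in \cN$, then $\pi_b(\zeta) \in U$, hence $\pi_b(\mu^*) \in U$ and $\mu^* \in \cN$. Thus every cell of $\cN$ has a top-dimensional coface in $\cN$, forcing any maximal cell of $\cN$ to coincide with such a $\mu^*$ and hence to have dimension $N$. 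This completes the verification that $\cN$ is uniform of dimension $N$. The only non-routine ingredient is the admissibility of $\pi_b$, which was the substantive content of Theorem~\ref{thm:singlevalue}; once available, the AB-lattice axioms follow by direct bookkeeping.
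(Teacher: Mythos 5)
Your proof is correct and follows essentially the same route as the paper's: Birkhoff's correspondence (Corollary~\ref{cor:birkhoff}) supplies $\emptyset,\cX_b\in\sN(\cF_i,\pi_b)$ and the lattice structure, while admissibility of $\pi_b$ (from Theorem~\ref{thm:singlevalue}) is the force that gives each cell of $\cN$ a top-dimensional coface with the same grading value, pinning maximal cells at dimension $N$. The paper reaches this via a decomposition of $\cN$ into the downsets $\pi_b^{-1}(\sO(p))$ and notes that each is anchored by $\blup(\pi^{-1}(p))\subset\cX_b^{(N)}$; you argue directly on $\cN$, and you also spell out the routine verification of the remaining uniformity axioms (minimal cells are vertices, codimension-one cells have at most two top cofaces) that the paper leaves implicit — a mild improvement in explicitness, not a different method.
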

\begin{proof}
Observe that $\emptyset,\SCC(\cF_i)\in \sO(\SCC(\cF_i))$, so 
\[
    \sO(\pi_b)(\emptyset) = \pi_b^{-1}(\emptyset) = \emptyset, \quad \sO(\pi_b)(\SCC(\cF_i)) = \pi_b^{-1}(\SCC(\cF_i)) = \cX_b,
\]
hence $\emptyset, \cX_b\in \sN(\cF_i,\pi_b)$. 
Assume that $\cK \in \sN(\cF_i,\pi_b)\setminus \emptyset$. 
That is, a non-empty $\cK\in \sO(\SCC(\cF_i))$.
By definition of $\sO(\SCC(\cF_i))$, 
\[
\cK = \bigcup_{p\in K} \sO(p)
\]
for some $K \subset \SCC(\cF_i)$.
Since 
$\blup(\pi^{-1}(p))\subseteq\cX_b^{(N)}$, 
$\sO(p)$ is a uniform cell complex of dimension $N$ and hence $\cK$ is a uniform cell complex of dimension $N$.
\end{proof}

Recall from the discussion in Chapter~\ref{sec:geometrizationCellComplex} that Corollary~\ref{cor:existenceAttractorLattice} is of fundamental importance as it provides the conduit by which combinatorial/homological information is translated into information about continuous dynamics.
The hypothesis of Corollary~\ref{cor:existenceAttractorLattice} requires the existence of an AB-lattice; $\sN(\cF_i,\pi_b)$ is  the AB-lattice used in applications.

The following proposition addresses the values of D-grading $\pi_b$ on the neighboring cells along the boundary of an element in the AB-lattice. This information is essential for determining the direction in which the continuous dynamics (vector field) should point, as discussed in Definition~\ref{defn:aligned} and Theorem~\ref{thm:LipAttBlock}.

\begin{prop}
    \label{lem:boundaryABlattice}
Consider $\cF_i\colon \cX\mvmap \cX$ with $i\in\setof{0,1,2,3}$ and D-grading $\pi\colon \cX \to \SCC(\cF_i)$.
Let $\cN \in \sN(\cF_i,\pi_b)$ (see Theorem~\ref{thm:NFABlattice}). 
Given $\zeta\in \bbdy(\cN)^{(N-1)}$, let $\sigma_0\in \cN^{(N)}$ be such that 
$\zeta\preceq_b \sigma_0$.
If there exists a distinct $\sigma_1\in \cX_b^{(N)}$ such that 
$\zeta\preceq_b \sigma_1$, then 
$\pi_b(\sigma_0) <_{\bar\cF_i} \pi_b(\sigma_1)$.
\end{prop}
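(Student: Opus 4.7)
The plan is to exploit two pieces of structure simultaneously: the AB-lattice description of $\cN$ via a downset of $(\SCC(\cF_i),\posetF{i})$, and the uniqueness of the minimum in the admissible grading $\pi_b$ established in Theorem~\ref{thm:singlevalue}. By construction of $\sN(\cF_i,\pi_b)$ in Theorem~\ref{thm:NFABlattice}, there exists a downset $U\subset \SCC(\cF_i)$ with $\cN=\pi_b^{-1}(U)$. The assumption $\sigma_0\in\cN^{(N)}$ gives $\pi_b(\sigma_0)\in U$, so the first step is to pin down the status of $\sigma_1$: since $\zeta\in\bbdy(\cN)^{(N-1)}$, by Definition~\ref{def:boundary_prime} the cell $\zeta$ has a \emph{unique} top-dimensional coface inside $\cN$, and this must be $\sigma_0$; hence the distinct coface $\sigma_1$ cannot lie in $\cN$, which means $\pi_b(\sigma_1)\notin U$.

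Next I would invoke admissibility. Since $\zeta\prec_b\sigma_0$ and $\zeta\prec_b\sigma_1$ are two top-dimensional cofaces of $\zeta$ in $\cX_b$, and a codimension-one face in $\cX_b$ has at most two top-dimensional cofaces, we have $\Top_{\cX_b}(\zeta)=\setof{\sigma_0,\sigma_1}$. Definition~\ref{defn:admissibleGrading} together with the remark following it guarantee that
\[
\pi_b(\zeta)=\min\setof{\pi_b(\sigma_0),\pi_b(\sigma_1)}
\]
and that this minimum exists uniquely as an element of the poset $(\SCC(\cF_i),\posetF{i})$. In particular, $\pi_b(\sigma_0)$ and $\pi_b(\sigma_1)$ are comparable.

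Finally I finish by ruling out the wrong comparison. Suppose toward contradiction that $\pi_b(\sigma_1)\posetF{i}\pi_b(\sigma_0)$. Since $U$ is a downset and $\pi_b(\sigma_0)\in U$, this would force $\pi_b(\sigma_1)\in U$, i.e.\ $\sigma_1\in\cN$, contradicting the previous paragraph. Therefore $\pi_b(\sigma_0)\posetF{i}\pi_b(\sigma_1)$. Equality is impossible because $\pi_b(\sigma_0)\in U$ while $\pi_b(\sigma_1)\notin U$. This yields the strict inequality $\pi_b(\sigma_0)<_{\bar\cF_i}\pi_b(\sigma_1)$.

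There is no real obstacle here; the proof is essentially an exercise in chasing downset membership through the admissible grading. The only subtlety worth double-checking is that a codimension-one cell in the cubical complex $\cX_b$ indeed has at most two top-dimensional cofaces, so that $\Top_{\cX_b}(\zeta)$ is exhausted by $\setof{\sigma_0,\sigma_1}$ and the minimum in the admissibility condition is taken over exactly these two elements; this is immediate from Definition~\ref{defn:Xbcomplex}.
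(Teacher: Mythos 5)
Your proof is correct and follows essentially the same route as the paper's: identify $\cN$ as the preimage of a downset, use Definition~\ref{def:boundary_prime} to conclude $\sigma_1\notin\cN$, invoke admissibility of $\pi_b$ to get comparability of $\pi_b(\sigma_0)$ and $\pi_b(\sigma_1)$, and then rule out $\pi_b(\sigma_1)\posetF{i}\pi_b(\sigma_0)$ via the downset property. The only cosmetic difference is that the paper writes $\cN$ as a union of principal downsets $\bigcup_{p\in P}\sO(p)$ rather than a single downset $U$, which changes nothing in the argument.
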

\begin{proof}

Notice that $\cN \in \sN(\cF_i,\pi_b)$ can be decomposed as a union of down-sets 
\[
\cN=\bigcup_{p\in P}\sO(\pi_b)(\sO(p)),
\]
where $P\subset \SCC(\cF_i)$. Hence
    \begin{equation}\label{eq:N_iff_leq_p}
        \xi\in\cN \text{ if and only if } \pi_b(\xi) \leq_{\bar{\cF_i}} p \text{ for some } p\in P.
    \end{equation}
Given $\zeta\in \bbdy(\cN)^{(N-1)}$, by Definition~\ref{def:boundary_prime}, there exists a unique $\sigma_0 \in \cX_b^{(N)}$ such that $\zeta\preceq_b \sigma_0$ and $\sigma_0\in\cN$. Thus, if there exists a distinct $\sigma_1\in \cX_b^{(N)}$ such that $\zeta\preceq_b \sigma_1$, then $\sigma_1\notin \cN$.
    Note that,
    $\Top_{\cX_b}(\zeta) = \setof{\sigma_0, \sigma_1}$ then
    \begin{eqnarray*}
        \pi_b(\zeta) &=& \min\setdef{\pi(\xi)}{\xi\in b^{-1}(\Top_{\cX_b}(\zeta))} \\ &=& \min\setof{\pi(\blup^{-1}(\sigma_0)), \pi(\blup^{-1}(\sigma_1))} \\ 
        &=& \min\setof{\pi_b(\sigma_0), \pi_b(\sigma_1)}.
    \end{eqnarray*}
    Recall that, $\pi$ being a D-grading implies that  there exists a unique minimum element in $\min\setdef{\pi(\xi)}{\xi\in b^{-1}(\Top_{\cX_b}(\zeta))}$. Hence $\pi_b(\sigma_0)$ and $\pi_b(\sigma_1)$ are comparable, i.e., either $\pi_b(\sigma_0) <_{\bar{\cF_i}} \pi_b(\sigma_1)$ or $\pi_b(\sigma_1) \leq_{\bar{\cF_i}} \pi_b(\sigma_0)$. 
    
    Assume that $\pi_b(\sigma_1) \leq_{\bar{\cF_i}} \pi_b(\sigma_0)$. By \eqref{eq:N_iff_leq_p}, it follows that,
    for some $p\in P$,
    \[\pi_b(\sigma_1) \leq_{\bar{\cF_i}} \pi_b(\sigma_0) \leq_{\bar{\cF_i}} p.\]
    Consequently, by \eqref{eq:N_iff_leq_p}, $\sigma_1 \in \cN$, a contradiction. Therefore, $\pi_b(\sigma_0) <_{\bar{\cF_i}} \pi_b(\sigma_1)$.
\end{proof}

We provide separate proofs of Theorem~\ref{thm:singlevalue} for $i=0$, 1, 2, and 3.

\begin{proof}[Proof of Theorem~\ref{thm:singlevalue} for $i=0$]
By Example~\ref{ex:trivialSCC}, $\SCC(\cF_0)$ consists of a single element.
Thus, $\pi_b \colon \Top_{\cX_b}(\cX_b) \to \SCC(\cF_0)$ takes on a single value and therefore is an extendable grading.    
\end{proof}

Recall that for any $\zeta \in \cX_b$, $\minD(\zeta)$ and $\maxD(\zeta)$ denote the cells of minimum and maximum dimension  in $\blup^{-1}(\Top_\cX(\zeta))$, respectively.

\begin{lem}\label{lem:F_1-leq-geq}
Consider $\cF_1\colon \cX\mvmap \cX$ with D-grading $\pi\colon \cX \to \SCC(\cF_1)$. 
Fix $\zeta=[\bar{\bv},\bar{\bw}]\in\cX_b$ and $n\in J_i([\bar{\bv},\bar{\bw}])$. 
Let $\mu\in\Top_\cX(\maxD(\zeta))$.
Assume $[\bar{\bv}-\bq,\bone], [\bar{\bv}-\bq + \bzero^{(n)},\bone]\in \Top_{\cX_b}(\zeta)$.

If $\rook_n(\minD(\zeta),\mu)=-1$, then
\[
\pi\left(\blup^{-1}\left([\bar{\bv}-\bq,\bone]\right)\right)\posetF{1} \pi\left(\blup^{-1}([\bar{\bv}-\bq + \bzero^{(n)},\bone])\right).
\]

If $\rook_n(\minD(\zeta),\mu)=1$, then
\[
 \pi\left(\blup^{-1}([\bar{\bv}-\bq + \bzero^{(n)},\bone])\right) \leq_{\bar\cF_1} \pi\left(\blup^{-1}([\bar{\bv}-\bq,\bone])\right).
\]
\end{lem}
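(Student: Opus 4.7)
Set $\alpha:=\blup^{-1}([\bar{\bv}-\bq,\bone])$ and $\beta:=\blup^{-1}([\bar{\bv}-\bq+\bzero^{(n)},\bone])$. The plan is to show that, in either sign case, the hypothesis forces a specific directed edge between $\alpha$ and $\beta$ in $\cF_1$, which yields the claimed inequality on D-gradings via the definition of $\posetF{1}$ on the weak condensation graph of $\cF_1$. First I apply Lemma~\ref{lem:q_face} to split into two subcases according to the parity of $\bar{\bv}_n-\bq_n-1$: either $\alpha\prec_\cX\beta$ or $\beta\prec_\cX\alpha$, with dimensions differing by one and with single-element extension set $\{n\}$. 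Write $\gamma$ for the smaller and $\gamma'$ for the larger; Lemma~\ref{lem:alpha_less_beta} then records the sign of the relative position in direction $n$ between $\gamma$ and $\gamma'$ in each subcase.

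Second, I use Lemma~\ref{lem:orderedcells} to obtain the chain $\minD(\zeta)\preceq_\cX\gamma\prec_\cX\gamma'\preceq_\cX\maxD(\zeta)$ and exploit that $n\in J_i(\gamma)$. Corollary~\ref{cor:mu*} then gives
\[
\rook_n(\minD(\zeta),\mu')=\rook_n(\gamma,\mu')=\omega\bigl(\mu_n^*(\gamma,\mu'),\mu'\bigr)
\]
for every $\mu'\in\Top_\cX(\gamma')\subseteq\Top_\cX(\gamma)$. Reading the hypothesis $\rook_n(\minD(\zeta),\mu)=\pm 1$ universally over $\mu\in\Top_\cX(\maxD(\zeta))\subseteq\Top_\cX(\gamma')$ (as we may, since the conclusion does not depend on $\mu$), this identity yields $\rook_n(\gamma,\mu)=\pm 1$ on that smaller set of top cells.

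Third, and this is the heart of the argument, I extend the identity $\rook_n(\gamma,\mu')=\pm 1$ from $\Top_\cX(\maxD(\zeta))$ to all of $\Top_\cX(\gamma')$. Any two top cells in $\Top_\cX(\gamma')$ are connected by a sequence of $k$-adjacencies with $k\in J_i(\gamma')\setminus\{n\}$, and along each such adjacency the wall $\mu_n^*(\gamma,\cdot)$ and the wall labeling $\omega$ evaluated on it are constrained by the local inducement map property of Definition~\ref{def:wall_labeling}. I plan to combine this with the blowup combinatorics of Section~\ref{sec:blowup_complex} to show that $\omega(\mu_n^*(\gamma,\mu'),\mu')$ is invariant across these adjacencies. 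Once the extension is in hand, Definition~\ref{def:exit_face} identifies $\gamma$ as an exit face of $\gamma'$ when the sign is $-1$ (matching the sign of $p_n(\gamma,\gamma')$ from the first step) and as an entrance face when the sign is $+1$. Condition 1.2 of Definition~\ref{def:Rule1.2} then removes exactly one of the directed edges between $\gamma$ and $\gamma'$ in $\cF_{1.2}$, and hence in $\cF_1$ (since $\cF_{1.1}$ affects only self-edges), while Proposition~\ref{prop:arrowExists} guarantees the opposite edge survives. The surviving edge is a length-one path in $\cF_1$ that, by the definition of $\posetF{1}$ on the weak condensation graph, produces the corresponding inequality between $\pi(\alpha)$ and $\pi(\beta)$, yielding both parts of the lemma once we return to the labeling $(\alpha,\beta)$ in each subcase.

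The hard part is the third step: the precise propagation of the rook-field identity from $\Top_\cX(\maxD(\zeta))$ to $\Top_\cX(\gamma')$. In particular, care is required for adjacency directions $k\in J_i(\gamma')$ at a shared vertex $\sigma$ with $k\in\activeset(\sigma)$ and $\tilde{o}_\sigma(k)=n$, where the wall labeling is permitted to change across the adjacency; I expect such configurations to be incompatible with the requirement that both $[\bar{\bv}-\bq,\bone]$ and $[\bar{\bv}-\bq+\bzero^{(n)},\bone]$ lie in $\Top_{\cX_b}(\zeta)$, so that the sign of $\rook_n(\gamma,\cdot)$ is in fact preserved.
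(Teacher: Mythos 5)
Your first two steps are exactly the paper's moves: Lemma~\ref{lem:q_face} gives the face relation between $\alpha:=\blup^{-1}([\bar{\bv}-\bq,\bone])$ and $\beta:=\blup^{-1}([\bar{\bv}-\bq+\bzero^{(n)},\bone])$, Lemma~\ref{lem:alpha_less_beta} records $p_n(\alpha,\beta)$, Lemma~\ref{lem:orderedcells} yields $\minD(\zeta)\preceq_\cX\alpha\prec_\cX\beta\preceq_\cX\maxD(\zeta)\preceq_\cX\mu$, and Corollary~\ref{cor:mu*} carries the rook value from $\minD(\zeta)$ to $\alpha$ at the fixed $\mu$.

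The third step is where your proposal diverges, and it contains both a logical misdirection and a gap you will not be able to close. You aim to \emph{establish} membership $\alpha\in E^-(\beta)$ or $\alpha\in E^+(\beta)$ (which by Definition~\ref{def:exit_face} requires the sign of $\rook_n(\alpha,\cdot)$ to be constant over \emph{every} top cell in $\Top_\cX(\beta)$), and then to invoke Condition 1.2 plus Proposition~\ref{prop:arrowExists} to identify the surviving edge. But the lemma only needs \emph{non-}membership, and that requires a single witness. By Definition~\ref{def:Rule1.2}, $\cF_{1.2}$ is the \emph{maximal} refinement of $\cF_0$ satisfying Condition 1.2; hence for $\alpha\prec\beta$ with $\dim(\beta)-\dim(\alpha)=1$, one has $\alpha\in\cF_{1.2}(\beta)$ as soon as $\alpha\notin E^+(\beta)$, and $\beta\in\cF_{1.2}(\alpha)$ as soon as $\alpha\notin E^-(\beta)$. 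Your Step 2 already produces a top cell $\mu\in\Top_\cX(\beta)$ at which $\rook_n(\alpha,\mu)$ has a sign incompatible with being an entrance face (respectively, incompatible with being an exit face), and that single counterexample finishes the argument: it puts $\alpha$ in $\cF_{1.2}(\beta)$ (respectively $\beta$ in $\cF_{1.2}(\alpha)$), hence in $\cF_1$ since $\cF_{1.1}$ only affects self-edges, and the desired inequality on $\SCC(\cF_1)$ follows. No extension, no appeal to Proposition~\ref{prop:arrowExists}.

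The extension you hope to prove is also false in general. Nothing in the hypothesis forces $\rook_n(\minD(\zeta),\cdot)$ to be constant over $\Top_\cX(\maxD(\zeta))$; $n$ may be an opaque direction of $\alpha$, and the fact that both $[\bar{\bv}-\bq,\bone]$ and $[\bar{\bv}-\bq+\bzero^{(n)},\bone]$ lie in $\Top_{\cX_b}(\zeta)$ does not exclude both signs occurring. When both signs do occur, each of the two clauses of the lemma applies and the correct conclusion is $\pi(\alpha)=_{\bar{\cF}_1}\pi(\beta)$, consistent with $\alpha\darrow_{\cF_1}\beta$ in that case; your proof, as written, would simply halt when the extension fails. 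Replace the extension argument with the direct non-membership conclusion and the proof closes cleanly.
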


\begin{proof}
Set $\alpha=\blup^{-1}([\bar\bv-\bq,\bone])$ and $\beta=\blup^{-1}([\bar\bv-\bq + \bzero^{(n)},\bone])$.
By Lemma~\ref{lem:q_face}, either $\alpha\prec_\cX\beta$ or $\beta\prec_\cX\alpha$. 

We begin by assuming that $\alpha\prec_\cX\beta$. Then, Lemma~\ref{lem:alpha_less_beta} implies that $p_n(\alpha, \beta)=-1$. Furthermore, since $\minD(\zeta)$ and $\maxD(\zeta)$ are the unique lowest and highest dimensional cells in $\blup^{-1}(\Top_{\cX_b}(\zeta))$, by Lemma~\ref{lem:orderedcells} $\minD(\zeta)\preceq_\cX\alpha\prec_\cX\beta\preceq_\cX\maxD(\zeta)$.

Let $\mu\in\Top_\cX(\maxD(\zeta))$.
Then, $\mu$ is a top cell such that $\maxD(\zeta)\preceq_\cX\mu$, and thus, $\minD(\zeta)\preceq_\cX\alpha\prec_\cX\beta\preceq_\cX\maxD(\zeta)\preceq_\cX\mu$. 

Now assume that $\rook_n(\minD(\zeta),\mu)=-1$. 
Given that $\alpha\prec\mu$, then by Corollary~\ref{cor:mu*}
\[
    \rook_n(\alpha, \mu) = \rook_n(\minD(\zeta), \mu) = -1.
\]
Since $p_n(\alpha,\beta)=-1$, then $p_n(\alpha,\beta)=\rook_n(\alpha, \mu)$.    

As a consequence  $\alpha$ is not an entrance face of $\beta$,
and hence, $\beta\rightarrow_{\cF_1} \alpha$, that is, $\alpha \in \cF_1(\beta)$. 
Therefore, $\pi\left(\alpha\right)\posetF{1} \pi\left(\beta\right)$.

Now assume that $\rook_n(\minD(\zeta),\mu)=1$. 
Given that $\alpha\prec\mu$, then
\[
    \rook_n(\alpha, \mu) = \rook_n(\minD(\zeta), \mu) = 1.
\]
Since $p_n(\alpha,\beta)=-1$, then $p_n(\alpha,\beta)=-\rook_n(\alpha, \mu)$.
Hence,  $\alpha$ is not an exit face of $\beta$, and thus, $\alpha\rightarrow_{\cF_1} \beta$. 
Therefore, $\pi(\beta)\posetF{1}\pi(\alpha)$.

The result follows analogously, assuming that $\beta\prec_\cX\alpha$.
\end{proof}

\begin{proof}[Proof of Theorem~\ref{thm:singlevalue} for $i=1$]
The strategy of the proof is as follows.
Given $\zeta=[\bar\bv,\bar\bw]\in\cX_b$ we construct a cell $\gamma\in\blup^{-1}(\Top_{\cX_b}(\zeta))$ such that 
\[
\pi(\gamma) = \min\setdef{\pi(\xi)}{\xi \in \blup^{-1}(\Top_{\cX_b}(\zeta))}
\]
and show that $\pi(\gamma)$ is unique.

To construct $\gamma$ consider $\mu\in\Top_\cX(\maxD(\zeta))$. Choose $\bq\in\setof{0,1}^N$ such that
\[ 
\bq_n =
\begin{cases}
    \frac{1-\rook_n(\minD(\zeta),\mu)}{2}, & \text{if $n\in J_i([\bar\bv,\bar\bw]),$}\\
    0, & \text{otherwise.}
\end{cases}
\]
and set $\gamma = \blup^{-1}([\bar\bv -\bq,\bone])$. 
Note that, by definition, if $n\in J_e([\bar\bv,\bar\bw])$ then $\bq_n =0$.
Therefore, by Lemma~\ref{lem:q_property}, it follows that $\gamma\in \blup^{-1}(\Top_{\cX_b}(\zeta))$.

Consider $\gamma' \in \blup^{-1}(\Top_{\cX_b}(\zeta))$.
To complete the proof it suffices to show that $\pi(\gamma)\posetF{1}\pi(\gamma')$.
By Lemma~\ref{lem:top-cell-formula} there exists $\bq'\in\setof{0,1}^N$ such that $\gamma'=\blup^{-1}([\bar\bv -\bq',\bone])$.
Set $\bu=\bq-\bq'$ and $\setof{j_1, \ldots, j_\ell} = \setdef{j\in\setof{1, \ldots, N}}{\bu_j\neq 0}$. 

Notice that, if $\bq_n=1$ then 
$\rook_n(\minD(\zeta),\mu)=-1$. Thus, by Lemma~\ref{lem:F_1-leq-geq}, it follows that $\pi(\blup^{-1}([\bar\bv -\bq'',\bone])) \posetF{1} \pi(\blup^{-1}([\bar\bv -\bq''+\bzero^{(n)},\bone]))$ for any $\bq''\in\setof{0,1}^N$ such that $[\bar\bv-\bq'',\bone], [\bar\bv-\bq'' +\bzero^{(n)},\bone]\in \Top_{\cX_b}(\zeta)$. 

Hence, 
\begin{align*}
    \pi(\gamma) = \pi\left(\blup^{-1}\left([\bar\bv -\bq,\bone]\right)\right) &\posetF{1} \pi\left(\blup^{-1}\left([\bar\bv -\bq + \bu_{j_1}\bzero^{(j_1)},\bone]\right)\right)\\
    &\posetF{1} \pi\left(\blup^{-1}\left([\bar\bv -\bq + \bu_{j_1}\bzero^{(j_1)}+\bu_{j_2}\bzero^{(j_2)},\bone]\right)\right)\\
    & \vdots \\
    &\posetF{1} \pi\left(\blup^{-1}\left([\bar\bv -\bq + \sum_{i=1}^\ell\bu_{j_i}\bzero^{(j_i)},\bone]\right)\right)\\
    &= \pi\left(\blup^{-1}\left([\bar\bv -\bq + \bu,\bone]\right)\right)\\
    &= \pi\left(\blup^{-1}\left([\bar\bv -\bq',\bone]\right)\right) = \pi(\gamma').
\end{align*}
As a consequence $\pi(\gamma)\posetF{1} \pi(\gamma')$, for all $\gamma'\in\blup^{-1}(\Top_{\cX_b}(\zeta))$. Therefore $\pi(\gamma)$ is the unique minimal element in $\pi(\blup^{-1}(\Top_{\cX_b}(\zeta)))$.
\end{proof}

The proof Theorem~\ref{thm:singlevalue} for $i=2$ is more subtle, and thus, we introduce the following definition and lemmas.

\begin{defn}\label{defn:adrift}
Let $\zeta=[\bar\bv, \bar\bw]\in \cX_b$. 
We call $k\in J_i(\zeta)$ a \emph{indecisive drift direction} in $\Top_{\cX_b}(\zeta)$ if there exists $\bq\in\setof{0,1}^N$ with $[\bar\bv-\bq ,\bone], [\bar\bv-\bq+\bzero^{(k)},\bone]\in \Top_{\cX_b}(\zeta)$ such that the pairs
\[
\left(\blup^{-1}\left([\bar\bv-\bq,\bone]\right),\blup^{-1}([\bar\bv-\bq+\bzero^{(k)},\bone])\right)
\] 
or 
\[
\left(\blup^{-1}([\bar\bv-\bq+\bzero^{(k)},\bone]),\blup^{-1}\left([\bar\bv-\bq,\bone]\right)\right)
\] 
exhibit indecisive drift (see Definition~\ref{defn:indecisive}).
\end{defn}
The following lemma is the version of Lemma~\ref{lem:F_1-leq-geq} for $\cF_2$. 
\begin{lem}\label{lem:F_2-leq-geq}
Consider $\cF_2\colon \cX\mvmap \cX$ with D-grading $\pi\colon \cX \to \SCC(\cF_2)$. 
Let $\zeta=[\bar{\bv},\bar{\bw}]\in\cX_b$ and assume that $k\in J_i(\zeta)$ is not an indecisive drift direction in $M\subset\Top_{\cX_b}(\zeta)$. 
Let $\mu\in\Top_\cX(\maxD(\zeta))$.
Assume $[\bar{\bv}-\bq,\bone], [\bar{\bv}-\bq+\bzero^{(k)},\bone]\in M$.

If $\rook_n(\minD(\zeta),\mu)=-1$, then
\[
\pi\left(\blup^{-1}\left([\bar{\bv}-\bq,\bone]\right)\right)\posetF{2} \pi\left(\blup^{-1}([\bar{\bv}-\bq+\bzero^{(k)},\bone])\right).
\]

If $\rook_n(\minD(\zeta),\mu)=1$, then
\[
\pi\left(\blup^{-1}([\bar{\bv}-\bq+\bzero^{(k)},\bone])\right) \posetF{2}
\pi\left(\blup^{-1}\left([\bar{\bv}-\bq,\bone]\right)\right).
\]
\end{lem}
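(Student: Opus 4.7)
The plan is to imitate the proof of Lemma~\ref{lem:F_1-leq-geq} and then show that the additional Condition~2.1 used to pass from $\cF_1$ to $\cF_2$ does not remove the edge we produce. Set $\alpha=\blup^{-1}([\bar{\bv}-\bq,\bone])$ and $\beta=\blup^{-1}([\bar{\bv}-\bq+\bzero^{(k)},\bone])$. By Lemma~\ref{lem:q_face}, $\alpha$ and $\beta$ are comparable in $\cX$ with dimensions differing by one, and by Lemma~\ref{lem:alpha_less_beta} the relative position vector $p_k$ evaluates to $-1$ from the smaller cell to the larger. Since $\minD(\zeta)\preceq_\cX\alpha\prec_\cX\maxD(\zeta)\preceq_\cX\mu$ (and similarly for $\beta$) with $k\in J_i(\minD(\zeta))$, Corollary~\ref{cor:mu*} yields $\rook_k(\alpha,\mu)=\rook_k(\beta,\mu)=\rook_k(\minD(\zeta),\mu)$.

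First I would repeat the case analysis of Lemma~\ref{lem:F_1-leq-geq} verbatim to produce the appropriate $\cF_1$-edge. Concretely, assuming $\alpha\prec_\cX\beta$ (the other case is symmetric), if $\rook_k(\minD(\zeta),\mu)=-1$ then $p_k(\alpha,\beta)=\rook_k(\alpha,\mu)$, so $\alpha$ is not an entrance face of $\beta$; hence $\alpha\in\cF_1(\beta)$, giving $\pi(\alpha)\posetF{1}\pi(\beta)$. If instead $\rook_k(\minD(\zeta),\mu)=1$, then $p_k(\alpha,\beta)=-\rook_k(\alpha,\mu)$, so $\alpha$ is not an exit face of $\beta$, yielding $\beta\in\cF_1(\alpha)$ and the reverse inequality.

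The next step is to lift the conclusion from $\cF_1$ to $\cF_2$. By Definition~\ref{def:Rule2}, $\cF_2(\xi) = \cF_1(\xi)\cap\cF_{2.1}(\xi)$, and by Definition~\ref{defn:F2} the refinement $\cF_{2.1}$ only deletes an edge between two cells $\xi$ and $\xi'$ when $(\xi,\xi')\in\cD(\rook)$ or $(\xi',\xi)\in\cD(\rook)$, i.e., when the pair exhibits indecisive drift. Since $[\bar{\bv}-\bq,\bone]$ and $[\bar{\bv}-\bq+\bzero^{(k)},\bone]$ both lie in $M$ and $k$ is not an indecisive drift direction in $M$, Definition~\ref{defn:adrift} implies that neither $(\alpha,\beta)$ nor $(\beta,\alpha)$ belongs to $\cD(\rook)$. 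Therefore Condition~2.1 is vacuous for this pair, so the $\cF_1$-edge constructed above survives in $\cF_2$.

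Passing to the D-grading then yields $\pi(\alpha)\posetF{2}\pi(\beta)$ in the first case and $\pi(\beta)\posetF{2}\pi(\alpha)$ in the second, which is the statement. The main subtlety I anticipate is not in the dynamics but in the bookkeeping: one must unpack the ``not an indecisive drift direction in $M$'' hypothesis of Definition~\ref{defn:adrift} and match it to the specific $\bq$ realized by our choice of $\alpha$ and $\beta$; once this is done, the argument reduces to the $\cF_1$ case plus a direct appeal to Definition~\ref{defn:F2}.
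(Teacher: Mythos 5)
Your proposal is correct and follows essentially the same route as the paper: first produce the $\cF_1$-edge between $\alpha$ and $\beta$ via the argument of Lemma~\ref{lem:F_1-leq-geq}, then observe that since $k$ is not an indecisive drift direction in $M$, neither $(\alpha,\beta)$ nor $(\beta,\alpha)$ exhibits indecisive drift, so Condition~2.1 does not delete this edge and the inequality passes to $\posetF{2}$. The one minor note is that you only need $\rook_k(\alpha,\mu)=\rook_k(\minD(\zeta),\mu)$ (for the smaller cell, where $k$ remains inessential) together with $p_k(\alpha,\beta)$; the analogous equality for $\beta$ is not needed and does not quite follow from Corollary~\ref{cor:mu*} when $k\in J_e(\beta)$.
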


\begin{proof}
Since, $k$ is not an indecisive drift direction, neither 
 \[
    \left(\blup^{-1}\left([\bar\bv-\bq,\bone \right), \blup^{-1}([\bar\bv-\bq+\bzero^{(k)},\bone])\right)
\]
nor 
\[
    \left(\blup^{-1}\left([\bar\bv-\bq+\bzero^{(k)},\bone]\right)\right), \blup^{-1}\left([\bar\bv-\bq,\bone])\right),
\]
exhibit indecisive drift.
Hence,  by Definition~\ref{defn:F2}, 
\[
\cF_2\left(\blup^{-1}([\bar\bv-\bq,\bone])\right)=\cF_1\left(\blup^{-1}([\bar\bv-\bq,\bone])\right)
\] 
and 
\[
\cF_2\left(\blup^{-1}([\bar\bv-\bq+\bzero^{(k)},\bone])\right)=\cF_1\left(\blup^{-1}([\bar\bv-\bq+\bzero^{(k)},\bone])\right),
\] 
for all $\bq\in\setof{0,1}^N$ with $[\bar\bv-\bq,\bone], [\bar\bv-\bq+\bzero^{(k)},\bone]\in M$.
Therefore, the result follows by Lemma~\ref{lem:F_1-leq-geq}.
\end{proof}

\begin{proof}[Proof of Theorem~\ref{thm:singlevalue} for $i=2$]
The overall strategy of the proof is the same as for the case $i=1$; given $\zeta=[\bar\bv,\bar\bw]\in\cX_b$, construct $\gamma\in\cX$ such that 
\[
\pi(\gamma) = \min\setdef{\pi(\xi)}{\xi \in \blup^{-1}(\Top_{\cX_b}(\zeta))}
\]
and show that $\pi(\gamma)$ is unique.
However, the construction of $\gamma$ is more involved and makes use of the following setup.

As a preliminary step, note that if there are no indecisive drift directions in $\Top_{\cX_b}(\zeta)$, {\bf Condition 2.1} does not apply. Consequently, $\cF_2$ coincides with $\cF_1$, allowing us to apply the proof for $i=1$.

Thus, we can assume that there exists an indecisive drift direction $n$ in $\Top_{\cX_b}(\zeta)$. 
Let $(\xi_n,\xi_n')$ be a pair in $\blup^{-1}(\Top_{\cX_b}(\zeta))$ that exhibits indecisive drift. 

Let $\alpha^n\in\blup^{-1}(\Top_{\cX_b}(\zeta))$ such that
\[
\Ex(\alpha^n,\maxD(\zeta))=\setof{n}.
\]
Note that Lemma~\ref{lem:orderedcells} guarantees that $\alpha^n$ exist. 
Now, recall that $\Back(\xi_n,\xi_n')$ is given in Definition~\ref{defn:back-walls} and choose 
$(\hat\xi_n, \hat\xi_n')\in \Back(\xi_n,\xi_n')$. Let
\[ 
\delta_n =
\begin{cases}
    \rook_n(\hat\xi_n, \hat\xi_n'), &\text{if $(\hat\xi_n, \hat\xi_n')\in\Back(\alpha^n,\maxD(\zeta))$,}\\
    -\rook_n(\hat\xi_n, \hat\xi_n'), &\text{otherwise.}
\end{cases}
\]

Consider $\mu\in\Top_\cX(\maxD(\zeta))$ and choose $\bq\in\setof{0,1}^N$ such that
\[ 
\bq_n =
\begin{cases}
    (1-\delta_n)/2, &\text{if $n$ is an indecisive drift direction},\\
    \frac{1-\rook_n(\minD(\zeta),\mu)}{2}, &\text{if $n$ is not an indecisive drift direction} \\
    0, &\text{otherwise.}
\end{cases}
\]

Finally,  set $\gamma \coloneqq \blup^{-1}([\bar\bv -\bq,\bone])$. By Lemma~\ref{lem:q_property}, it follows that $\gamma\in \blup^{-1}(\Top_{\cX_b}(\zeta))$, since $\bq_n=0$ if  $n\in J_e([\bar\bv,\bar\bw])$.

It remains to show that $\pi(\gamma)=\min\setdef{\pi(\xi)}{\xi \in \blup^{-1}(\Top_{\cX_b}(\zeta))}$. 
Let $\gamma' \in \blup^{-1}(\Top_{\cX_b}(\zeta))$.
By Lemma~\ref{lem:top-cell-formula} there exists $\bq'\in\setof{0,1}^N$ such that $\gamma'=\blup^{-1}([\bar\bv -\bq',\bone])$.

Set $\bu=\bq-\bq'$ and let $U=\setdef{j\in\setof{1, \ldots, N}}{\bu_j\neq 0}$. Note that, by 
Remark~\ref{rem:cells_between},
    \[[\bar\bv -\bq + \sum_{i\in U' }\bu_{j_i}\bzero^{(i)},\bone]\in \Top_{\cX_b}(\zeta)\]
for any $U'\subset U$.
Observe that Proposition~\ref{prop:unique_ext_GO_pairs} and Proposition~\ref{prop:not_L_GO_pairs} allow us to order the set $U = \setof{j_1, \ldots, j_\ell}$ which has the following properties:
\begin{enumerate}
    \item[A)] Let 
    \[
    \eta_0 \coloneqq [\bar\bv -\bq,\bone]  \quad \text{and} \quad
    \eta_k \coloneqq [\bar\bv -\bq + \sum_{i=1}^{k}\bu_{j_i}\bzero^{(j_i)},\bone],
    \] 
    if $k\in\setof{1, \dots, \ell-1}$ then $(\eta_{k-1},\eta_{k})$ is not a pair that exhibit indecisive drift. 
    
    \item[B)] Let $k\in \setof{1, \ldots, \ell}$ and $k'\in \setof{1, \ldots, {\ell-1}}$ such that $k$ is indecisive drift direction and $k'$ is not an indecisive drift direction, then $k\leq k'$.
\end{enumerate}

It suffices to show that $\pi_b(\eta_{k-1})\posetF{2} \pi_b(\eta_{k})$ for all $k\in\setof{1, \dots, \ell}$, as this leads to conclude that  
\begin{align*}
    \pi(\gamma) = \pi\left(\blup^{-1}\left([\bar\bv -\bq,\bone]\right)\right) &= \pi_b(\eta_0) \\
    &\posetF{2} \pi_b(\eta_1)\\
    &\ \vdots\\ 
    &\posetF{2} \pi_b(\eta_{\ell}) \\
    &= \pi_b\left([\bar\bv -\bq + \sum_{i=1}^\ell\bu_{j_i}\bzero^{(j_i)},\bone]\right)\\
    &= \pi_b\left([\bar\bv -\bq + \bu,\bone]\right) = \pi_b\left([\bar\bv -\bq',\bone]\right) = \pi(\gamma').
\end{align*}
Hence $\pi(\gamma)\posetF{2} \pi(\gamma')$, for all $\gamma'\in\blup^{-1}(\Top_{\cX_b}(\zeta))$. Therefore $\pi(\gamma)$ is the unique minimal element in $\pi(\blup^{-1}(\Top_{\cX_b}(\zeta)))$.

Now we turn our focus to proving that $\pi_b(\eta_{k-1})\posetF{2} \pi_b(\eta_{k})$, for any $k\in\setof{1, \dots, \ell}$. 

We begin by assuming that $k$ is not an indecisive drift direction.
If $\rook_k(\minD(\zeta),\mu)=-1$, then $\bq_k=1$. 
Since $u_k\neq 0$ then $\bq'_k\neq 1$, which implies that $\bq'_k = 0$, and thus $u_k=1$. Hence, 
\begin{equation}\label{eq:u=1_n_ks}
    \eta_{k-1} = [\bar\bv -\bq + \sum_{i=1}^{k-1}\bu_{j_i}\bzero^{(j_i)},\bone]
    \quad \text{and} \quad 
    \eta_{k} = [\bar\bv -\bq + \sum_{i=1}^{k-1}\bu_{j_i}\bzero^{(j_i)} + \bzero^{(k)},\bone].
\end{equation}
It follows from Lemma~\ref{lem:F_2-leq-geq} that 
$\pi_b(\eta_{k-1})\posetF{2} \pi_b(\eta_{k})$. Similarly, if $\rook_k(\minD(\zeta),\mu)=1$ then $\bq_k=0$, thus $u_k=-1$. Hence, by Lemma~\ref{lem:F_2-leq-geq}, $\pi_b(\eta_{k-1})\posetF{2} \pi_b(\eta_{k})$.

For the remainder of the proof we assume that $k$ is an indecisive drift direction. 
We break this proof into two cases, $(\hat\xi_k, \hat\xi_k')\in\Back(\alpha^k,\maxD(\zeta))$ and $(\hat\xi_k, \hat\xi_k')\notin\Back(\alpha^k,\maxD(\zeta))$.

Assume $(\hat\xi_k, \hat\xi_k')\in\Back(\alpha^k,\maxD(\zeta))$.
    \begin{enumerate}
        \item If $\rook_k(\hat\xi_k, \hat\xi_k')=-1$ 
            then $\bq_k=1$, thus $u_k=1$. Hence, by \eqref{eq:u=1_n_ks} and Lemma~\ref{lem:alpha_less_beta}, it follows that
            \[
            p_k(\blup^{-1}(\eta_{k-1}),\blup^{-1}(\eta_{k})) = -1.\]
            Hence,
            \[p_k(\blup^{-1}(\eta_{k-1}),\blup^{-1}(\eta_{k})) = -1= \rook_k(\hat\xi_k, \hat\xi_k')\\
            = \rook_k(\blup^{-1}(\eta_{k-1}), \hat\xi_k') 
            \]

            \begin{enumerate}
                \item  If $(\blup^{-1}(\eta_{k-1}),\blup^{-1}(\eta_{k}))$ or $(\blup^{-1}(\eta_{k}), \blup^{-1}(\eta_{k-1}))$ is not a pair that exhibits indecisive drift, then
                $\blup^{-1}(\eta_{k}) \rightarrow_{\cF_2} \blup^{-1}(\eta_{k-1})$. Hence $\pi_b(\eta_{k-1})\posetF{2} \pi_b(\eta_{k})$.
                
                \item If $(\blup^{-1}(\eta_{k-1}),\blup^{-1}(\eta_{k}))$ or $(\blup^{-1}(\eta_{k}), \blup^{-1}(\eta_{k-1}))$ is a pair that exhibits indecisive drift then {\bf Condition 2.1} implies that $\blup^{-1}(\eta_{k}) \rightarrow_{\cF_2} \blup^{-1}(\eta_{k-1})$. Hence $\pi_b(\eta_{k-1})\posetF{2} \pi_b(\eta_{k})$.
            \end{enumerate}

        \item If $\rook_k(\hat\xi_k, \hat\xi_k')=1$, then $\bq_k=0$, thus $u_k=-1$. Similarly to item (1), it follows that $\pi_b(\eta_{k-1})\posetF{2} \pi_b(\eta_{k})$.
    \end{enumerate}

Assume $(\hat\xi_k, \hat\xi_k')\notin\Back(\alpha^k,\maxD(\zeta))$.
Then, 
    there exists $(\hat\mu, \hat\mu')\in\Back(\alpha^k,\maxD(\zeta))$ such that $\rook_k(\hat\xi_k, \hat\xi_k') = - \rook_k(\hat\mu, \hat\mu')$. 
    
    \begin{enumerate}
        \item If $\rook_k(\hat\xi_k, \hat\xi_k')=1$, 
            then $\bq_k=1$ and $u_k=1$.  Hence, by \eqref{eq:u=1_n_ks} and Lemma~\ref{lem:alpha_less_beta}, it follows that
            \[
            p_k(\blup^{-1}(\eta_{k-1}),\blup^{-1}(\eta_{k})) = -1.\]
            Since 
            \[\rook_k(\blup^{-1}(\eta_{k-1}), \hat\mu')  = \rook_k(\hat\mu, \hat\mu')
            =-\rook_k(\hat\xi_k, \hat\xi_k') = -1,
            \]
            then
            \[
            p_k(\blup^{-1}(\eta_{k-1}),\blup^{-1}(\eta_{k})) = \rook_k(\blup^{-1}(\eta_{k-1}), \hat\mu').\]
            \begin{enumerate}
                \item  If $(\blup^{-1}(\eta_{k-1}),\blup^{-1}(\eta_{k}))$ or $(\blup^{-1}(\eta_{k}), \blup^{-1}(\eta_{k-1}))$ is not a pair that exhibits indecisive drift, then $\blup^{-1}(\eta_{k}) \rightarrow_{\cF_2} \blup^{-1}(\eta_{k-1})$. Hence $\pi_b(\eta_{k-1})\posetF{2} \pi_b(\eta_{k})$.

                \item If $(\blup^{-1}(\eta_{k-1}),\blup^{-1}(\eta_{k}))$ or $(\blup^{-1}(\eta_{k}), \blup^{-1}(\eta_{k-1}))$ is a pair that exhibits indecisive drift then, by property A) of the $n_k$'s, it follows that $k=\ell$. Let $(n_g,\ell)$ be an associated GO-pair for $(\blup^{-1}(\eta_{\ell-1}),\blup^{-1}(\eta_\ell))$, , see Definition~\ref{defn:GO-pair}. 
                By the definition of GO-pairs, $\ell$ does not  actively regulates $\ell$, then $n_g\neq \ell$.
                By property B) of $\eta_k$'s, it follows that $n_g\notin\setof{1, \ldots, \ell-1}$. Then $n_g\notin\setof{1, \ldots, \ell}$, which implies that $\bu_{n_g}=0$, see definition of $\bu$, thus $\bq_{n_g}=\bq'_{n_g}$. 
                
                By the assumption that $(\hat\xi_\ell, \hat\xi_\ell')\notin\Back(\alpha^\ell,\maxD(\zeta))$, it follows that
                $\rook_{n_g}(\minD(\zeta), \maxD(\zeta))=-p_{n_g}(\minD(\zeta), \maxD(\zeta))$.\\ Let $[\bq'',\bw'']=\maxD(\zeta).$ Thus, by definition of $\bq$: \\
                If $p_{n_g}(\minD(\zeta), \maxD(\zeta))=1$, then  $\bq_{n_g}''=1$ and $\bq_{n_g}=1$; 
                \\
                If $p_{n_g}(\minD(\zeta), \maxD(\zeta))=-1$, then $\bq_{n_g}''=0$ and $\bq_{n_g}=0$. Hence, 
                \[p_{n_g}(\blup^{-1}([\bar\bv-\bq, \bone]), \maxD(\zeta))=0,\] which implies that $n_g\in J_e(\blup^{-1}([\bar\bv-\bq, \bone]))$. As a consequence $n_g\in J_e(\eta_\ell)$, which yields a contradiction given that $(n_g,\ell)$ is a GO-pair for $(\blup^{-1}(\eta_{\ell-1}), \blup^{-1}(\eta_{\ell}))$, i.e., $n_g\in J_i(\eta_\ell)$.
            \end{enumerate}
        \item If $\rook_k(\hat\xi_k, \hat\xi_k')=-1$, then similarly to item (1), it follows that $\pi_b(\eta_{k-1})\posetF{2} \pi_b(\eta_{k})$.
    \end{enumerate}
\end{proof}

For $i=3$ the new element in the proof of Theorem~\ref{thm:singlevalue} arises from cycles of the local inducement map.
We begin by stating a version of Lemma~\ref{lem:F_1-leq-geq} for $\cF_3$.
\begin{lem}\label{lem:F_3-leq-geq}
Consider $\cF_3\colon \cX\mvmap \cX$ with D-grading $\pi\colon \cX \to \SCC(\cF_3)$. 
Let $\zeta=[\bar{\bv},\bar{\bw}]\in\cX_b$ and assume that $k\in J_i(\zeta)$ is not an indecisive drift direction in $\Top_{\cX_b}(\zeta)$ and $k$ is not in an image of a cycle of any cell in $\blup^{-1}(\Top_{\cX_b}(\zeta))$.
Let $\mu\in\Top_\cX(\maxD(\zeta))$ and assume $[\bar{\bv}-\bq,\bone], [\bar{\bv}-\bq+\bzero^{(k)},\bone]\in \Top_{\cX_b}(\zeta)$.

If $\rook_k(\minD,\mu)=-1$, then
\[
\pi\left(\blup^{-1}\left([\bar{\bv}-\bq,\bone]\right)\right)\posetF{3} \pi\left(\blup^{-1}([\bar{\bv}-\bq+\bzero^{(k)},\bone])\right).
\]

If $\rook_k(\minD,\mu)=1$, then
\[
\pi\left(\blup^{-1}([\bar{\bv}-\bq+\bzero^{(k)},\bone])\right) \posetF{3} \pi\left(\blup^{-1}\left([\bar{\bv}-\bq,\bone]\right)\right).
\]
\end{lem}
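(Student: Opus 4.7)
The proof will mirror the structures of Lemmas~\ref{lem:F_1-leq-geq} and \ref{lem:F_2-leq-geq}, adding one verification step to handle the new ingredient in $\cF_3$, namely Condition 3.1 (Definition~\ref{defn:Rule3.1}). Set $\alpha \coloneqq \blup^{-1}([\bar\bv - \bq, \bone])$ and $\beta \coloneqq \blup^{-1}([\bar\bv - \bq + \bzero^{(k)}, \bone])$. By Lemma~\ref{lem:q_face}, $\alpha$ and $\beta$ are comparable in $\cX$ with $\Ex(\alpha,\beta) = \{k\}$ (when $\alpha \prec_\cX \beta$) or $\Ex(\beta,\alpha) = \{k\}$ (when $\beta \prec_\cX \alpha$). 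Treating the two orientations symmetrically, and the two sign cases for $\rook_k(\minD(\zeta),\mu)$ as in the proof of Lemma~\ref{lem:F_1-leq-geq}, we obtain that the required $\cF_0$-edge between $\alpha$ and $\beta$ is not eliminated by Condition 1.1 or Condition 1.2, so it survives in $\cF_1$.

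Next, because $k$ is by hypothesis not an indecisive drift direction in $\Top_{\cX_b}(\zeta)$, the pair $(\alpha,\beta)$ (resp.\ $(\beta,\alpha)$) is not in $\cD(\rook)$. Hence Condition 2.1 does not apply to this pair, so by Definition~\ref{def:Rule2} the $\cF_1$-edge persists in $\cF_2$. This is exactly the same reduction used in the proof of Lemma~\ref{lem:F_2-leq-geq}.

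It remains to show that the edge also survives the passage from $\cF_2$ to $\cF_3$. Since $\cF_3(\xi) = (\cF_2(\xi) \cap \cF_{3.1}(\xi)) \cup \cU(\xi)$, it suffices to verify that Condition 3.1 does not remove the edge. Suppose, toward contradiction, that Condition 3.1 does apply; then there is a semi-opaque cell $\xi \in \{\alpha,\beta\}$, a cycle $\sigma$ in the cycle decomposition of $\rmap\xi$ with $|\sigma| \geq 2$, and a face relation $\xi \prec \xi'$ (with $\xi'$ the larger of $\alpha,\beta$) such that $\Ex(\xi,\xi') \subseteq S_\sigma$. Because $\Ex(\xi,\xi') = \{k\}$, this forces $k \in S_\sigma$. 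But $\xi \in \{\alpha,\beta\} \subseteq \blup^{-1}(\Top_{\cX_b}(\zeta))$ and $\sigma$ is a nontrivial cycle in $\rmap\xi$, so $k$ belongs to the image of a cycle of some cell in $\blup^{-1}(\Top_{\cX_b}(\zeta))$, contradicting the hypothesis. Hence Condition 3.1 does not apply, and the $\cF_2$-edge persists in $\cF_3$, yielding $\pi(\alpha) \posetF{3} \pi(\beta)$ or $\pi(\beta) \posetF{3} \pi(\alpha)$ as required by the two sign cases of $\rook_k(\minD(\zeta),\mu)$.

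The only delicate point is interpreting the hypothesis that $k$ is not in the image of a cycle of any cell in $\blup^{-1}(\Top_{\cX_b}(\zeta))$: it must rule out $k \in S_\sigma$ for every nontrivial cycle $\sigma$ of $\rmap\xi$ at every such $\xi$, which is precisely the content of the last paragraph. Aside from being careful with the two directions of $\prec_\cX$ between $\alpha$ and $\beta$, the proof is a direct assembly of the previous two lemmas together with a single application of the contrapositive of Definition~\ref{defn:Rule3.1}.
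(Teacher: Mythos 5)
Your proof is correct and uses essentially the same strategy as the paper's: reduce to Lemma~\ref{lem:F_1-leq-geq} by showing that the hypotheses on $k$ prevent Conditions 2.1 and 3.1 from removing the relevant edge. The paper dispatches this by simply asserting that under the hypotheses $\cF_3$ agrees with $\cF_1$ on the two cells $\blup^{-1}([\bar\bv-\bq,\bone])$ and $\blup^{-1}([\bar\bv-\bq+\bzero^{(k)},\bone])$ and then invoking Lemma~\ref{lem:F_1-leq-geq}; your proposal is more careful in that it identifies exactly which edge is at stake (the one with $\Ex=\setof{k}$) and argues via the contrapositive of Condition~3.1 and Definition~\ref{defn:indecisive} that this specific edge survives both refinements, rather than claiming a wholesale equality of multivalued maps on those cells. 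Your care is a small improvement: strictly speaking the paper's statement that $\cF_3 = \cF_1$ on those cells could be questioned (there might be other outgoing edges, in directions other than $k$, that are removed by Condition 2.1 or 3.1), but only the $k$-direction edge matters, and that is exactly what you verify.
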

\begin{proof}
By hypothesis $k$ is not an indecisive drift direction in $\Top_{\cX_b}(\zeta)$ and is not in an image of a cycle of any cell in $\blup^{-1}(\Top_{\cX_b}(\zeta))$, hence
\[
\cF_3\left(\blup^{-1}([\bar{\bv}-\bq+\bzero^{(k)},\bone])\right)=\cF_1\left(\blup^{-1}([\bv-\bq+\bzero^{(k)},\bone])\right)
\] 
and 
\[
\cF_3\left(\blup^{-1}([\bar\bv-\bq,\bone])\right)=\cF_1\left(\blup^{-1}([\bar\bv-\bq,\bone])\right),
\] 
for all $\bq\in\setof{0,1}^N$ such that $[\bar{\bv}-\bq+\bzero^{(k)},\bone], [\bv-\bq,\bone]\in M$.
Therefore, the result follows by Lemma~\ref{lem:F_1-leq-geq}.
\end{proof}

Recall from \eqref{defn:unstable_cells} that the set of unstable cells about a semi-opaque cell $\xi\in\cX$ is 
denoted by $\cU(\xi)$.
We define the \emph{stable cells} about $\xi$ by 
\begin{equation}
\label{eq:stable_cells}
\Stable(\xi) = \bigcup_{\sigma} \setdef{\xi' \in \cX}{\xi\prec\xi', \Ex(\xi,\xi')\subseteq S_\sigma}\backslash\cU(\xi),
\end{equation}
where $S_\sigma$ is the support of the cycle $\sigma$ defined in \eqref{defn:lap-number} and the union is taken over all cycles $\sigma$ of length $k\geq 2$ in the cycle decomposition of $\rmap{\xi}$.

\begin{lem}\label{lem:F_3-minimal}
Consider $\cF_3\colon \cX\mvmap \cX$  with D-grading $\pi\colon \cX \to \SCC(\cF_3)$. 
If $\xi\in\cX$ is a semi-opaque cell, then 
\[
\pi(\alpha) \posetF{3} \pi(\xi) \posetF{3} \pi(\beta),
\]
for any $\alpha\in\cU(\xi)$ and $\beta\in\Stable(\xi)$.
\end{lem}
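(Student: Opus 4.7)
The plan is to handle the two inequalities in the conclusion separately. For $\alpha \in \cU(\xi)$, the inequality $\pi(\alpha) \posetF{3} \pi(\xi)$ is immediate from the construction of $\cF_3$: by Definition~\ref{defn:Rule3} one has $\cU(\xi) \subseteq \cF_3(\xi)$, so the edge $\xi \to \alpha$ is present in the directed graph underlying $\cF_3$ and hence $\pi(\alpha)$ sits at or below $\pi(\xi)$ in $\posetF{3}$.

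For $\pi(\xi) \posetF{3} \pi(\beta)$ with $\beta \in \Stable(\xi)$, I will exhibit a $\cF_3$-path from $\beta$ down to $\xi$. First observe that Condition 3.1 only removes \emph{outgoing} edges from a semi-opaque cell to a strictly larger coface, and the augmentation $\cU$ in $\cF_3 = (\cF_2 \cap \cF_{3.1}) \cup \cU$ likewise only adds outgoing edges from semi-opaque cells to cofaces; neither affects any edge pointing from a higher-dimensional cell $\eta$ to a face $\eta' \prec \eta$. So for such a pair, $\eta' \in \cF_3(\eta)$ if and only if $\eta' \in \cF_2(\eta)$. I construct the path by descending one dimension at a time, $\beta = \beta^{(0)} \succ \beta^{(1)} \succ \cdots \succ \beta^{(d)} = \xi$ with $d = \dim(\beta)-\dim(\xi)$, at each step removing a single direction $n' \in \Ex(\beta^{(k+1)},\beta^{(k)}) \subseteq S_\sigma$. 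By Proposition~\ref{prop:JtauJsigma} we have $\Ex(\xi,\beta^{(k+1)}) \subseteq \Ex(\xi,\beta^{(k)}) \subseteq S_\sigma$, so each intermediate cell lies in $\Stable(\xi) \cup \setof{\xi}$: disjointness of cycle supports rules out membership in $\cU(\xi,\sigma')$ for $\sigma' \neq \sigma$, while the inclusion $\Top_\cX(\beta^{(k)}) \subseteq \Top_\cX(\beta^{(k+1)})$ transports any lap-number witness for $\beta \notin \cU(\xi,\sigma)$ to each $\beta^{(k+1)}$.

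The crux at each step is verifying $\beta^{(k+1)} \in \cF_2(\beta^{(k)})$, which reduces to checking $\beta^{(k+1)} \notin E^+(\beta^{(k)})$ (so that Condition 1.2 is not triggered) and that no back-wall configuration activates Condition 2.1. Since $n' \in S_\sigma$ lies in a cycle of length $\geq 2$, the cyclic predecessor $n'_g = \rmap{\xi}^{-1}(n') \in S_\sigma \setminus \setof{n'}$ actively regulates $n'$ at $\xi$; by Proposition~\ref{prop:act_regulation_faces} this active regulation persists at $\beta^{(k+1)}$, forcing $\rook_{n'}$ to take both signs across suitable top cells of $\beta^{(k+1)}$ and, combined with the stability hypothesis, on some top cell of $\beta^{(k)}$ with the sign needed to rule out $\beta^{(k+1)} \in E^+(\beta^{(k)})$.

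The main obstacle is precisely this final verification: combining the lap-number stability condition characterizing $\beta \notin \cU(\xi)$ with the cyclic regulation at $\xi$ to certify, for a carefully chosen descent order of indices along $\sigma$, that both the exit-face and back-wall obstructions vanish at every step. A suitable ordering of the elements of $S_\sigma \cap \Ex(\xi,\beta)$, analogous to the index ordering used in the proof of Theorem~\ref{thm:singlevalue} for $\cF_2$, should keep the chain inside $\Stable(\xi)$ and ensure the rook-field compatibility required at each dimension drop.
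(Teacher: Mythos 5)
Your first half is correct and coincides with the paper's proof: since $\cU(\xi)\subseteq\cF_3(\xi)$ by Definition~\ref{defn:Rule3}, the edge $\xi\to\alpha$ is present, giving $\pi(\alpha)\posetF{3}\pi(\xi)$.

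The second half has a genuine gap that you yourself identify as ``the main obstacle,'' and I do not believe your strategy will close it. You are trying to exhibit a strictly dimension-decreasing $\cF_3$-path $\beta=\beta^{(0)}\succ\beta^{(1)}\succ\cdots\succ\xi$ and verify $\beta^{(k+1)}\in\cF_2(\beta^{(k)})$ at every step. But $\pi(\xi)\posetF{3}\pi(\beta)$ only asserts that \emph{some} directed $\cF_3$-path from $\beta$ reaches $\pi(\xi)$; it need not descend monotonically in dimension, and indeed nothing in the hypotheses guarantees that a monotonically descending path exists. Moreover, the step you use to control the crux is stated backwards: Proposition~\ref{prop:act_regulation_faces} transfers active regulation from a cell $\xi'$ to its \emph{faces} $\xi\preceq\xi'$, not to its cofaces. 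You invoke it to push active regulation at $\xi$ up to $\beta^{(k+1)}\succ\xi$, but active regulation genuinely can fail at cofaces because $\activeset(\beta^{(k+1)})\subseteq\activeset(\xi)$ may be strictly smaller and the witnessing walls may disappear. So the claim that $\rook_{n'}$ must take both signs on $\Top_\cX(\beta^{(k+1)})$ is unsupported.

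The paper avoids the path construction entirely. It argues by induction on $\dim\beta-\dim\xi$ and, in the inductive step, by contradiction: assuming $\pi(\beta)<_{\bar{\cF}_3}\pi(\xi)$, it introduces the subcomplex $\cB=\setdef{\tau}{\xi\prec\tau\preceq\beta,\ \Ex(\xi,\tau)\subset S_\sigma}$, shows $\cB\subset\Stable(\xi)$ (so the inductive hypothesis gives $\pi(\xi)\posetF{3}\pi(\tau)$ for $\tau\in\cB\setminus\setof{\beta}$), shows Condition~3.1 never fires inside $\cB$ so that $\cF_3^\cB=\cF_2^\cB$, and then observes that the resulting order relations on $\cB$ force the $\cF_2$ D-grading on the blow-up of $\cB$ to have two incomparable minima over the top star of a boundary cell--contradicting the extendability established in Theorem~\ref{thm:singlevalue}. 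This is a structural contradiction argument, not a reachability argument, and it is what makes the inductive step go through without needing any direct path from $\beta$ to $\xi$.
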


\begin{proof}
By Definition~\ref{defn:Rule3}, it follows that $\alpha\in\cF_3(\xi)$ for any $\alpha\in\cU(\xi)$. Hence, $\pi(\alpha) \posetF{3} \pi(\xi)$.

We use a proof by induction on the difference of dimension of $\beta$ and $\xi$ to show that $\pi(\xi)\posetF{3} \pi(\beta)$ for any $\beta\in\Stable(\xi)$.
So assume that $\pi(\xi)\posetF{3} \pi(\beta)$ for all $\beta\in\Stable(\xi)$ such that $1\leq \dim\beta - \dim\xi\leq k$.

We begin by considering the base case, $k=1$.
Since $\xi$ is a semi-opaque cell, $\beta \leftrightarrow_{\cF_2} \xi$.
The assumption that $\beta \in \Stable(\xi)$ implies that {\bf Condition 3.1} is satisfied and hence, $\beta\not\in \cF_3(\xi)$ but $\xi\in\cF_3(\beta)$.

To perform the induction step assume $\dim\beta - \dim\xi = k+1$. 
We provide a proof by contradiction and thus assume that $\pi(\beta) <_{\cF_3} \pi(\xi)$. 
By \eqref{eq:stable_cells}, there exists a cycle $\sigma$ of the regulation map $o_\xi$ such that $\Ex(\xi,\beta)\subset S_\sigma$. 
We  use the set
\[
\cB\coloneqq \setdef{\tau\in\cX}{\xi\prec\tau\preceq\beta, \Ex(\xi,\tau)\subset S_\sigma}
\]
to obtain a contradiction with Theorem~\ref{thm:singlevalue} for $\cF_2$. 

We begin by showing that $\pi(\xi)\posetF{3} \pi(\tau)$, for all $\tau\in \cB\setminus\setof{\beta}$.
By assumption $\beta\in\Stable(\xi)$ and hence $\beta\notin \cU(\xi)$.
By Definition~\ref{defn:Rule3.2} there exists $\mu\in\Top_\cX(\beta)$ such that $\lap_{\xi,\sigma}(\mu) \geq \frac{| \sigma |}{2}$. 
Observe that for all $\tau\in \cB$, $\tau\preceq\mu$, and hence $\tau\not\in\cU(\xi)$.
As a consequence $\cB\subset \Stable(\xi)$.
Since, $\dim\beta - \dim\xi = k+1$, if  $\tau\in \cB\setminus\setof{\beta}$, then $\dim\tau - \dim\xi \leq k$.
Hence, by the induction hypothesis, $\pi(\xi)\posetF{3} \pi(\tau)$, for all $\tau\in \cB\setminus\setof{\beta}$.

For $i=2,3$ define
\begin{align*}
    \cF^{\cB}_i \colon \cB & \mvmap \cB \\
    \eta & \mapsto \cF_i(\eta)\cap \cB.
\end{align*}
We claim that 
\begin{equation}\label{eq:F3_F2}
    \cF^{\cB}_{3} = \cF^{\cB}_{2}.
\end{equation}
Observe that by Definition~\ref{defn:Rule3.1} it is sufficient to show that {\bf Condition 3.1} is not satisfied for any pairs of cell $\tau \leq_\cX \tau'\in\cB$.
We are assuming that the cycle $\sigma$ cannot be decomposed. 
By definition if $\tau \in \cB$, then $\xi \prec_\cX \tau$, and hence, by Proposition~\ref{prop:face_cycle} if the regulation map $o_\tau$ has a cycle it must be $S_{\sigma}$.
Let $n\in\Ex(\xi,\tau)\subset S_\sigma$ such that $n\in J_e(\tau)$.
By Definition~\ref{def:active_regulation} $n\in \activeset(\tau)$.
Thus $\sigma$ is not a cycle of $o_\tau$, and hence {\bf Condition 3.1} cannot be applied.

Recall that we are assuming that $\pi(\beta) <_{\bar\cF_3} \pi(\xi)$ and $\pi(\xi)\posetF{3} \pi(\tau)$, for all $\tau\in \cB\setminus \setof{\beta}$.

Thus $\pi(\beta) <_{\bar\cF_3} \pi(\tau)$, for any $\tau\in \cB\setminus \setof{\beta}$.
Let $\tau'\in\cB$ such that $\dim(\tau')-\dim(\xi)=1$.
By hypothesis $\xi$ is a semi-opaque cell, thus $\tau' \leftrightarrow_{\cF_2}\xi$, i.e., $\pi'(\tau') =_{\bar\cF_{2}} \pi'(\xi)$, where $\pi'\colon \cX \to \SCC(\cF_2)$. 

By \eqref{eq:F3_F2} and $\pi(\beta) <_{\bar\cF_3} \pi(\tau')$, it follows that  $\pi'(\beta) <_{\bar\cF^{\cB}_2} \pi'(\tau')$. Moreover, $\pi'(\tau')\leq_{\bar\cF^{\cB}_2} \pi'(\tau)$, for any $\tau\in \cB\setminus \setof{\beta}$. 
Hence,  $\pi'(\beta) \neq \pi'(\tau')$ and 
\[
\pi'(\beta), \pi'(\tau')\in\min\setdef{\pi'_b(\mu')}{\mu'\in \Top_{\cl(\blup(\cB))}(\zeta)},
\]
for any $\zeta\in \cl(\blup(\beta))\cap \cl(\blup(\tau'))$. This contradicts Theorem~\ref{thm:singlevalue} for $\cF^{\cB}_2$, since $\pi_b'$ is not an extendable grading for $\cF^{\cB}_2$, as it does not satisfied Definition~\ref{defn:admissibleGrading}.
\end{proof}

\begin{figure}
    \centering
\begin{tikzpicture}[scale=4]
    \coordinate (A) at (0,0,0);
    \coordinate (B) at (1,0,0);
    \coordinate (C) at (1,1,0);
    \coordinate (D) at (0,1,0);
    \coordinate (E) at (0,0,1);
    \coordinate (F) at (1,0,1);
    \coordinate (G) at (1,1,1);
    \coordinate (H) at (0,1,1);

    \draw[thick, dashed, blue] (A) -- (B);
    \draw[thick] (B) -- (C);
    \draw[thick] (C) -- (D);
    \draw[thick, dashed, blue] (D) -- (A);

    \draw[thick] (E) -- (F);
    \draw[thick, blue] (F) -- (G);
    \draw[thick, blue] (G) -- (H);
    \draw[thick] (H) -- (E);

    \draw[thick, dashed] (A) -- (E);
    \draw[thick, blue] (B) -- (F);
    \draw[thick] (C) -- (G);
    \draw[thick, blue] (D) -- (H);
    
    \node at (A) [below right] {$\mu_{\bone^{(1)}}$};
    \node at (B) [below right] {$\mu_{\bzero^{(3)}}$};
    \node at (C) [above right] {$\mu_{\bzero}$};
    \node at (D) [above left] {$\mu_{\bzero^{(2)}}$};
    \node at (E) [below left] {$\mu_{\bone}$};
    \node at (F) [below right] {$\mu_{\bone^{(2)}}$};
    \node at (G) [below right] {$\mu_{\bzero^{(1)}}$};
    \node at (H) [above left] {$\mu_{\bone^{(3)}}$};

    \draw[->, thick, red] (A) -- ($(A) + (-0.1,0,0)$); 
    \draw[->, thick, red] (A) -- ($(A) + (0,0.1,0)$); 
    \draw[->, thick, red] (A) -- ($(A) + (0,0,-0.2)$); 
    
    \draw[->, thick, red] (B) -- ($(B) + (-0.1,0,0)$); 
    \draw[->, thick, red] (B) -- ($(B) + (0,-0.1,0)$); 
    \draw[->, thick, red] (B) -- ($(B) + (0,0,-0.2)$); 
    
    \draw[->, thick, red] (C) -- ($(C) + (-0.1,0,0)$); 
    \draw[->, thick, red] (C) -- ($(C) + (0,-0.1,0)$); 
    \draw[->, thick, red] (C) -- ($(C) + (0,0,0.2)$); 
    
    \draw[->, thick, red] (D) -- ($(D) + (-0.1,0,0)$); 
    \draw[->, thick, red] (D) -- ($(D) + (0,0.1,0)$); 
    \draw[->, thick, red] (D) -- ($(D) + (0,0,0.2)$); 
    
    \draw[->, thick, red] (E) -- ($(E) + (0.1,0,0)$); 
    \draw[->, thick, red] (E) -- ($(E) + (0,0.1,0)$); 
    \draw[->, thick, red] (E) -- ($(E) + (0,0,-0.2)$); 
    
    \draw[->, thick, red] (F) -- ($(F) + (0.1,0,0)$); 
    \draw[->, thick, red] (F) -- ($(F) + (0,-0.1,0)$); 
    \draw[->, thick, red] (F) -- ($(F) + (0,0,-0.2)$); 
    
    \draw[->, thick, red] (G) -- ($(G) + (0.1,0,0)$); 
    \draw[->, thick, red] (G) -- ($(G) + (0,-0.1,0)$); 
    \draw[->, thick, red] (G) -- ($(G) + (0,0,0.2)$); 
    
    \draw[->, thick, red] (H) -- ($(H) + (0.1,0,0)$); 
    \draw[->, thick, red] (H) -- ($(H) + (0,0.1,0)$); 
    \draw[->, thick, red] (H) -- ($(H) + (0,0,0.2)$); 

    \coordinate (AB_mid) at ($(A)!0.5!(B)$);
    \coordinate (BC_mid) at ($(B)!0.5!(C)$);
    \coordinate (CD_mid) at ($(C)!0.5!(D)$);
    \coordinate (DA_mid) at ($(D)!0.5!(A)$);
    \coordinate (EF_mid) at ($(E)!0.5!(F)$);
    \coordinate (FG_mid) at ($(F)!0.5!(G)$);
    \coordinate (GH_mid) at ($(G)!0.5!(H)$);
    \coordinate (HE_mid) at ($(H)!0.5!(E)$);
    \coordinate (AE_mid) at ($(A)!0.5!(E)$);
    \coordinate (BF_mid) at ($(B)!0.5!(F)$);
    \coordinate (CG_mid) at ($(C)!0.5!(G)$);
    \coordinate (DH_mid) at ($(D)!0.5!(H)$);

    \node at (AB_mid) [below, blue] {$\tau_{31}$};
    \node at (DA_mid) [left, blue] {$\tau_{21}$};
    \node at (FG_mid) [right, blue] {$\tau_{12}$};
    \node at (GH_mid) [above, blue] {$\tau_{13}$};
    \node at (BF_mid) [right, blue] {$\tau_{32}$};
    \node at (DH_mid) [left, blue] {$\tau_{23}$};
\end{tikzpicture}
    \caption{The cube represents the zero dimensional cell $\xi$ in the dual complex of $\cX$. The top dimensional cell of $\xi$, $\Top_\cX(\xi)=\setdef{\mu_\bu\in\cX^{(3)}}{\bu\in\setof{0,1}^3}$, are represented by vertices in the dual complex of $\cX$. The wall labelling of each top cell in $\Top_\cX(\xi)$ are represented by vectors in red. The edges of the cube are the dual of the two dimensional cells and the edges 
    in blue are the dual of two dimensional cells in the unstable cell of $\xi$,  $\cU(\xi)$.}
    \label{fig:wall_labelling_repressilator}
\end{figure}
\begin{lem}\label{lem:F_3-q-same_grading} 
Fix $N=2$ or $3$. Consider $\cF_3\colon \cX\mvmap \cX$  with D-grading $\pi\colon \cX \to \SCC(\cF_3)$.

    Let $\zeta\in\cX_b$ and  $\xi\in\blup^{-1}(\Top_{\cX_b}(\zeta))$ be a semi-opaque cell. Then for all $\tau,\tau'\in\cU(\xi)\cap \blup^{-1}(\Top_{\cX_b}(\zeta))$,
    \[
    \pi(\tau)=\pi(\tau').
    \]
\end{lem}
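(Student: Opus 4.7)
The plan is to show that any two cells $\tau, \tau' \in \cU(\xi) \cap \blup^{-1}(\Top_{\cX_b}(\zeta))$ lie in a common strongly connected component of the digraph associated with $\cF_3$, which immediately gives $\pi(\tau) = \pi(\tau')$ since $\pi$ is the SCC quotient map by Definition~\ref{defn:Dgrading}. As a first reduction, semi-opaqueness of $\xi$ forces $\rmap\xi$ to be a bijection on $\activeset(\xi) \subseteq J_i(\xi)$, and the hypothesis $|\activeset(\xi)| \leq N \leq 3$ allows at most one cycle of length $\geq 2$ in the cycle decomposition of $\rmap\xi$. Hence $\cU(\xi) = \cU(\xi, \sigma)$ for a unique long cycle $\sigma$, and the analysis splits into the cases $|\sigma| = 2$ (with $N = 2$ or $3$) and $|\sigma| = 3$ (with $N = 3$).

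In each case I will construct an explicit \emph{cycle of reachability} among the unstable cells. For $|\sigma| = 2$ the condition $\lap_{\xi,\sigma}(\mu) = 0$ from Example~\ref{ex:lap_number_2and3D} places the unstable cells in a diagonally opposite configuration about $\xi$, and bidirectional paths between them are built by combining the upward edges $\xi \to \tau$ supplied by $\cU(\xi) \subseteq \cF_3(\xi)$ with downward edges $\tau \to \eta \to \xi$ passing through the codimension-one faces $\eta$ with $\xi \prec \eta \prec \tau$; these downward edges correspond exactly to the GO-pair/back-wall configurations analyzed in Section~\ref{sec:quasi-local}. For $|\sigma| = 3$ the six unstable 2-cells form a hexagonal cycle around $\xi$, where consecutive unstable cells $\tau_i, \tau_{i+1}$ share a common top cell $\mu_i \in \cX^{(N)}$, and I will construct $\cF_3$-paths of the form $\tau_i \to \mu_i \to \tau_{i+1}$ that close into a cycle through all six unstable cells.

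The key technical step is verifying that the proposed edges survive the refinement conditions 1.1, 1.2, 2.1, and 3.1 defining $\cF_3$. Condition 3.1 only removes upward edges from $\xi$ whose extension lies in $S_\sigma$, which are precisely the edges that $\cU(\xi)$ reintroduces by Definition~\ref{defn:Rule3}. Conditions 1.2 and 2.1 act on codimension-one pairs, and I will translate the unstable-cell inequality $\lap_{\xi,\sigma}(\mu) < |\sigma|/2$ into explicit sign conditions on $\omega(\mu_n^*(\tau,\mu),\mu) = \rook_n(\tau,\mu)$ relative to $p_n(\tau,\mu)$ that rule out both the exit-face obstruction in Definition~\ref{def:Rule1.2}(ii) and the back-wall obstruction in Definition~\ref{defn:F2}. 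The main obstacle will be organizing this bookkeeping in the $|\sigma| = 3$ case, where three active directions interact simultaneously. In addition, I must argue that restricting to $\blup^{-1}(\Top_{\cX_b}(\zeta))$ does not sever the cycle into disconnected arcs; this should follow from Lemma~\ref{lem:orderedcells}, since any $\tau \in \cU(\xi) \cap \blup^{-1}(\Top_{\cX_b}(\zeta))$ satisfies $\minD(\zeta) \preceq \xi \preceq \tau \preceq \maxD(\zeta)$, confining the entire cycle of reachability to a single combinatorial neighborhood of $\xi$ compatible with $\zeta$.
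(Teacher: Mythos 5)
Your reachability-cycle strategy is the right one for the negative-feedback $3$-cycle case, and there it does match the paper: the paper exhibits the same alternating closed loop $\mu_{\bzero^{(1)}} \to \tau_{12} \to \mu_{\bone^{(2)}} \to \tau_{32} \to \cdots$ through the six ``equatorial'' top cells and the six $2$-cells of $\cU(\xi)$, showing they all lie in one strongly connected component. But your proposal does not split on the signature $\delta_\sigma$ of the cycle (Definition~\ref{defn:feedback}), and in the positive-feedback cases your whole strategy fails. When $\delta_\sigma = 1$ (for either $|\sigma|=2$ or $|\sigma|=3$), $\cU(\xi)$ consists of exactly two diagonally opposite top cells of $\Top_\cX(\xi)$ and \emph{no} intermediate cells of lower dimension (for $|\sigma|=3$, compute $\lap_{\xi,\sigma}(\mu_{\bzero^{(i)}}) = 2 > 3/2$, so the $2$-cells are excluded). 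These two corners sit at the two ends of a one-dimensional unstable manifold of a saddle and are generically in \emph{different} strongly connected components — indeed, the ``downward'' edges $\tau \to \eta$ you hope to use are killed by Condition~1.2(ii), since for a corner $\tau \in \cU(\xi)$ every wall $\eta$ with $\xi \prec \eta \prec \tau$ is an entrance face of $\tau$. So no argument of the form ``$\tau$ and $\tau'$ lie in a common SCC'' can close the positive-feedback case; the statement $\pi(\tau)=\pi(\tau')$ for \emph{all} $\tau,\tau' \in \cU(\xi)$ is simply false there.

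What rescues the lemma in the positive-feedback case is precisely the hypothesis you treated as a nuisance: the restriction to $\blup^{-1}(\Top_{\cX_b}(\zeta))$. By Lemma~\ref{lem:orderedcells}, $\blup^{-1}(\Top_{\cX_b}(\zeta))$ is the face interval $[\minD(\zeta),\maxD(\zeta)]$ in $\cX$, and the top cells of such an interval are never two diagonally opposite corners of the full star $\Top_\cX(\xi)$. Hence $\#\bigl(\cU(\xi)\cap\blup^{-1}(\Top_{\cX_b}(\zeta))\bigr)\le 1$, and the conclusion holds vacuously. You wrote that you ``must argue that restricting to $\blup^{-1}(\Top_{\cX_b}(\zeta))$ does not sever the cycle into disconnected arcs'' — but in the positive-feedback case there is no cycle to sever, and the restriction is doing all the work. (A further minor omission: for a negative-feedback $2$-cycle, $\lap_{\xi,\sigma}\equiv 1\not<1$ by Example~\ref{ex:lap_number_2and3D}, so $\cU(\xi)=\emptyset$ and that subcase is vacuous; your case list should at least note this.) To repair your proposal, split on $\delta_\sigma$: run your SCC-cycle argument only when $\delta_\sigma=-1$, and for $\delta_\sigma=1$ replace it by the counting argument above.
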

\begin{proof}
The most direct proof of this lemma involves a cases by cases analysis.
We fully describe the cases for $N=3$ and leave the simpler cases for $N=2$ to the reader.
    
Consider $N=3$ and $\xi\in\cX^{(0)}$ a semi-opaque cell. Recall that Example~\ref{ex:lap_number_2and3D} describes all possible cases, and due to symmetry, they can be reduced to two:
\begin{enumerate}
    \item $n_1=1$, $n_2=2$, $n_3=3$ and  $w(\kappa_{1}^-,\kappa)=w(\kappa_{2}^-,\kappa)=w(\kappa_{3}^-,\kappa)=-1$, where $\kappa_n^-=[\bv, \bone^{(n)}]$ and $\kappa=[\bv, \bone]$;
    \item $n_1=1$, $n_2=2$, $n_3=3$ and $w(\kappa_{1}^-,\kappa)=w(\kappa_{2}^-,\kappa)=w(\kappa_{3}^-,\kappa)=1$.
\end{enumerate}
Notice that $o_\xi=\sigma=(1 \ 2 \ 3)$ is a cycle of length $k=3$ for both cases. We begin with case (1): 
From Definition~\ref{defn:lap-number}, it follows that

\[\lap_{\xi,\sigma}(\kappa)=\#\setdef{i\in\setof{1,\ldots,3}}{w(\kappa_{n_i}^-,\kappa) = -1}=3> \frac{3}{2},\] 
hence $\kappa\notin \cU(\xi)$,
see Figure~\ref{fig:wall_labelling_repressilator}.
     
 Define $\mu_\bu\coloneq [\bv -\bu, \bone]$, where $\bu\in\setof{0,1}^3$. With the notation above,  
 \[\Top_\cX(\xi)=\setdef{\mu_\bu\in\cX^{(3)}}{\bu\in\setof{0,1}^3},\] 
 and $p(\xi,\mu_\bu)=2\bu-\bone$, see Figure~\ref{fig:wall_labelling_repressilator}.

Recall that 
$\lap_{\xi,\sigma}(\mu_\bu)$ (see Definition~\ref{defn:Rule3}) counts how many times $-1$'s are in
     \[
     \setof{-(2\bu_1-1)(2\bu_2-1),-(2\bu_2-1)(2\bu_3-1),-(2\bu_3-1)(2\bu_1-1)}.
     \]
     Hence, 
     $\lap_{\xi,\sigma}(\mu_\bu)=1<\frac{3}{2}$ for \[\bu\in\setof{\bzero^{(1)},\bzero^{(2)},\bzero^{(3)},\bone^{(1)},\bone^{(2)},\bone^{(3)}}\] 
     and 
    $\lap_{\xi,\sigma}(\mu_\bu)=3>\frac{3}{2}$ for $\bu\in\setof{\bzero,\bone}$. As a consequence, the opposite top cells $[\bv-\bone, \bone]$ and $\kappa=[\bv-\bzero,\bone]$ are the only top cells not in $\cU(\xi)$. Hence, $\cU(\xi)$ does not have one dimensional cells and two dimensional cells that are faces of $[\bv-\bone, \bone]$ and $\kappa=[\bv-\bzero,\bone]$. As a consequence, we can obtain the two dimensional cells in $\cU(\xi)$ as follows:
    
    Let $\tau_{ij}\in\cX^{(2)}$ such that $\tau_{ij}\prec \mu_{\bzero^{(i)}}$ and $\tau_{ij}\prec \mu_{\bone^{(j)}}$, where $i=1,2,3$ and 
    $j\in\setof{i\pm 1}$ with $j\leq3$. Thus, 
    \[
    \tau_{ij}=[\bv-\bzero^{(i)}\vee_\Z\bone^{(j)}, \bzero^{(i)}\vee_\Z\bone^{(j)}].
    \]
    
    Since $o_\xi(i)=\sigma(i)\neq i$ for all $i=\setof{1,2,3}$, then
    \[
    \mu_{\bzero^{(i)}}\rightarrow_{\cF_3}\tau_{i\sigma(i)}, \quad
    \mu_{\bone^{(i)}}\rightarrow_{\cF_3}\tau_{\sigma(i)i}, \quad 
    \tau_{\sigma(i)i}\rightarrow_{\cF_3} \mu_{\bzero^{(\sigma(i))}} \quad \text{and}\quad 
    \tau_{i\sigma(i)}\rightarrow_{\cF_3} \mu_{\bone^{(\sigma(i))}},
    \]
    see Figure~\ref{fig:wall_labelling_repressilator}. Hence,
    \begin{align*}
        \mu_{\bzero^{(1)}}\rightarrow_{\cF_3}\tau_{12}\rightarrow_{\cF_3}\mu_{\bone^{(2)}}\rightarrow_{\cF_3}\tau_{32}\rightarrow_{\cF_3}\mu_{\bzero^{(3)}}\rightarrow_{\cF_3}\tau_{31}\rightarrow_{\cF_3}\mu_{\bone^{(1)}}
        \\
        \mu_{\bone^{(1)}}\rightarrow_{\cF_3}\tau_{21}\rightarrow_{\cF_3}\mu_{\bzero^{(2)}}\rightarrow_{\cF_3}\tau_{23}\rightarrow_{\cF_3}\mu_{\bone^{(3)}}\rightarrow_{\cF_3}\tau_{23}\rightarrow_{\cF_3}\mu_{\bzero^{(1)}}
    \end{align*}
    is a closed loop. Therefore $\pi(\alpha)=\pi(\alpha')$ for any $\alpha,\alpha\in\cU(\xi)$.

    For case (2), where $w(\kappa_{1}^-,\kappa)=w(\kappa_{2}^-,\kappa)=w(\kappa_{2}^-,\kappa)=1$, analogous computations show that  $\cU(\xi)=\setof{[\bv-\bone, \bone], \kappa=[\bv-\bzero,\bone]}$. Hence \[\# \left(\cU(\xi)\cap \blup^{-1}(\Top_{\cX_b}(\zeta))\right)\leq 1,\]
    thus the result follows.
\end{proof}

\begin{proof}[Proof of Theorem~\ref{thm:singlevalue} for $i=3$]
Recall that for $i=3$ we only consider $N=2$ or $3$.
Let $\zeta=[\bar\bv,\bar\bw]\in \cX_b$. 
We break the proof into two cases: (i)  $\blup^{-1}(\Top_{\cX_b}(\zeta))$ does not have a semi-opaque cell with a $2$-cycle or a $3$-cycle and (ii) it does.

(i) In this case {\bf Condition 3.1} does not apply and hence $\left.\cF_3\right|_{\blup^{-1}(\Top_{\cX_b}(\zeta))}=\left.\cF_2\right|_{\blup^{-1}(\Top_{\cX_b}(\zeta))}$.
Thus, the proof for $i=2$ applies.

(ii)  Let $\xi\in\blup^{-1}(\Top_{\cX_b}(\zeta))$ with an $\ell$-cycle $\sigma$, where $\ell =2$ or $3$.
We further break the proof into two cases: $\ell=3$ and $\ell=2$.

$(\boldsymbol{\ell = 3})$
We are assuming that $o_\xi=\sigma$ is a $\ell=3$ cycle.
This implies that $S_\sigma = \setof{1,2,3}$ and hence $N=3$.
Thus, by \eqref{eq:stable_cells}, we have that 
$\setdef{\tau\in\cX}{\xi\preceq_\cX \tau}=\cU(\xi)\cup\setof{\xi}\cup\Stable(\xi)$ is a disjoint union.
Hence, $\blup^{-1}(\Top_{\cX_b}(\zeta))$ is a disjoint union of unstable, semi-opaque, and stable cells.
Assume that $\blup^{-1}(\Top_{\cX_b}(\zeta)) \cap \cU(\xi) = \emptyset$, then  Lemma~\ref{lem:F_3-minimal} guarantees that $\pi(\xi)$ is the minimal element in $\pi(\blup^{-1}(\Top_{\cX_b}(\zeta)))$, since there are no unstable cells in $\blup^{-1}(\Top_{\cX_b}(\zeta))$.

Now assume that $\blup^{-1}(\Top_{\cX_b}(\zeta)) \cap \cU(\xi) \neq \emptyset$. 
Choose a cell $\gamma \in \blup^{-1}(\Top_{\cX_b}(\zeta)) \cap \cU(\xi)$. It follows from Lemma~\ref{lem:F_3-minimal} that $\pi(\gamma) \posetF{3} \pi(\eta)$ for any $\eta \in \blup^{-1}(\Top_{\cX_b}(\zeta))\setminus \cU(\xi)$. 
Lemma~\ref{lem:F_3-q-same_grading} implies that all elements of $\cU(\xi)$ have the same grading in $\blup^{-1}(\Top_{\cX_b}(\zeta))$. 
Hence, $\pi(\gamma)$ is the unique minimal element in $\pi(\blup^{-1}(\Top_{\cX_b}(\zeta)))$.

$(\boldsymbol{\ell = 2})$ For this case, observe that there exists $n'\in\setof{1,2,3}$ such that $n'\notin \sigma$. 
Notice that $n'$ is not an indecisive drift direction in $\Top_{\cX_b}(\zeta)$ since $o_{\xi}=\sigma$ and $n'\notin \sigma$, i.e., either no element in $o_{\xi}$ acts on $n'$ or $n'$ acts to itself. 
We can use $n'$ to find a subset of $\blup^{-1}(\Top_{\cX_b}(\zeta))$ that can realize the minimal element of $\blup^{-1}(\Top_{\cX_b}(\zeta))$ as follows.

Assume that $n'\notin J_i(\zeta)$. Then, for any $\eta,\eta'\in\blup^{-1}(\Top_{\cX_b}(\zeta))$ it follows that $\Ex(\eta,\eta')\subset \sigma$. Hence, we can follow the same argument done for $(\boldsymbol{\ell = 3})$. 

Now assume that $n'\in J_i(\zeta)$.
Let $\mu\in\Top_\cX(\maxD(\zeta))$. Choose $\bq'\in\setof{0,1}^N$ such that
\[ \bq'_n =
\begin{cases}
    1, &\text{if $n=n'$ and } \text{$p_n(\minD(\zeta),\maxD(\zeta)) = \rook_n(\minD(\zeta),\mu)$},\\
    0, &\text{otherwise.}
\end{cases}
\]
and define
\[
M\coloneq\setdef{[\bar\bv-\bq, \bone]\in \Top_{\cX_b}(\zeta)}{\bq_{n'}=\bq'_{n'}} \ \text{and} \  M^{\star}\coloneq\setdef{[\bar\bv-\bq, \bone]\in \Top_{\cX_b}(\zeta)}{\bq_{n'} = 1- \bq'_{n'}} 
\]
Clearly $M\cap M^{\star}=\emptyset$ and $\Top_{\cX_b}(\zeta) = M\cup M^\star$. Furthermore, for any $[\bar\bv-\bq, \bone]\in M$ we can define the corresponding $[\bar\bv-\bq^{\star}, \bone]\in M^{\star}$ by setting 
\[
   \bq^{\star}_n = \begin{cases}
       \bq_n, & \text{if}~ n\neq n',\\
       1-\bq_n, & \text{if}~ n = n'.\\
   \end{cases}
\]
By Lemma~\ref{lem:F_3-leq-geq}, $\pi(\blup^{-1}([\bar\bv-\bq, \bone]))\posetF{3} \pi(\blup^{-1}([\bar\bv-\bq^\star, \bone]))$ for any $[\bar\bv-\bq, \bone]\in M$. Hence, the minimal element of $\pi\left(\blup^{-1}(\Top_{\cX_b}(\zeta))\right)$ is in $\pi(\blup^{-1}(M))$. 

Now we can follow an analogous argument done for $(\boldsymbol{\ell = 3})$ by changing $\blup^{-1}(\Top_{\cX_b}(\zeta))$ to $M$. 

\end{proof}

\section{Conley complex}
\label{sec:ConleyComplex}

Weak condensation graphs and D-gradings play a central role in our characterization of the global dynamics by identifying regions potentially associated with recurrent dynamics and providing information about the direction of nonrecurrent dynamics. Rigorous identification of the recurrent and nonrecurrent dynamics for the ODE is done using the Conley index. Thus, we want to perform homological algebraic calculations that preserve this grading. With this in mind, we begin with an abstract discussion.

\begin{defn}
\label{defn:gradedcellcomplex}
Let $\cZ = (\cZ,\preceq_\cZ,\dim,\kappa)$ be a cubical cell complex and let $(\sP,\leq)$ be a finite poset. 
The cubical cell complex $\cZ$ is \emph{$P$-graded} if there exists an order preserving epimorphism $\lambda\colon (\cZ,\preceq_\cZ) \to (\sP,\leq)$.
\end{defn}

Since $\cZ$ forms a basis for the chain complex $C_*(\cZ;\F)$, we have that 
\[
C_*(\cZ;\F) = \bigoplus_{p\in\sP}C_*(\lambda^{-1}(p);\F).
\]

We remark that  $\lambda^{-1}(p)$ need not be a closed subcomplex of $\cZ$.
However, as the following argument makes clear we claim that we can associate a chain complex with $C_*(\lambda^{-1}(p);\F)$. 
To begin the argument we recall the discussion leading to Corollary~\ref{cor:birkhoff}, which in this context gives rise to a lattice monomorphism
\begin{equation}
    \label{eq:latticemono}
    \begin{aligned}
        \sO(\lambda) \colon \sO(\sP) & \to \sO(\cZ) \\
    U & \mapsto \cU :=\lambda^{-1}(U).
    \end{aligned}
\end{equation}
Identifying $\cU\in \sO(\cZ)$ with $C_*(\cU;\F)$ allows us to set
\[
\sO(\lambda)(\sO(p)) = C_*(\lambda^{-1}(\sO(p));\F), \quad p\in \sP.
\]
By Theorem~\ref{thm:birkhoff}, $\sO(p)\in \sJ^\vee(\sO(\sP))$.
Thus, there exists a unique immediate predecessor $\sO(p)^< \in \sO(\sP)$.
We leave it to the reader to check that $\zeta\in \lambda^{-1}(p)$ if and only if $\zeta\in \sO(p)\setminus \sO(p)^<$.
Therefore,
\begin{equation}
    \label{eq:C(p)}
    C_*(\lambda^{-1}(p);\F) \cong C_*(\lambda^{-1}(\sO(p)), \lambda^{-1}(\sO(p)^<);\F).
\end{equation}
\begin{defn}
    \label{defn:PgradedLinMap}
    Let $(\cZ_0,\preceq_0)$ and $(\cZ_1,\preceq_1)$ be $\sP$-graded cubical cell complexes with gradings $\lambda_i\colon \cZ_i\to \sP$ for $i=0,1$.
    A chain map  $\rook\colon C_*(\cZ_0;\F) \to C_*(\cZ_1;\F)$ is \emph{$\sP$-graded} if
    \[
    \rook_k\colon C_k(\lambda_0^{-1}(p);\F)\to \bigoplus_{q\leq p}C_{k}(\lambda_1^{-1}(q);\F)
    \]
    for all $p\in\sP$.    
\end{defn}
The next two definitions allow us to consider classes of $\sP$-graded chain complexes.
\begin{defn}
\label{defn:PgradedHomotopy}
Let $(\cZ_0,\preceq_0)$ and $(\cZ_1,\preceq_1)$ be $\sP$-graded cubical cell complexes with gradings $\lambda_i\colon \cZ_i\to \sP$ for $i=0,1$.
Let $\rook,\psi\colon C_*(\cZ_0;\F) \to C_*(\cZ_1;\F)$ be $\sP$-graded chain maps.
A chain homotopy $D \colon C_*(\cZ_0;\F) \to C_{*+1}(\cZ_1;\F)$ is \emph{$\sP$-graded} if 
\[
D \colon C_*(\lambda_0^{-1}(p);\F) \to \bigoplus_{q\leq p}C_{*+1}(\lambda_1^{-1}(q);\F), \quad \forall p \in \sP.
\]
If a $\sP$-graded chain homotopy exists, then $\rook$ and $\psi$ are \emph{$\sP$-graded homotopic} which is denoted by $\rook\sim_\sP \psi$.
\end{defn}
\begin{defn}
\label{defn:PgradedHomotopic}
Let $(\cZ_0,\preceq_0)$ and $(\cZ_1,\preceq_1)$ be $\sP$-graded cubical cell complexes.
A  $\sP$-graded chain map $\rook\colon C_*(\cZ_0;\F) \to C_*(\cZ_1;\F)$ is a \emph{$\sP$-graded chain equivalence} if there is a $\sP$-graded chain map $\psi\colon C_*(\cZ_1;\F) \to C_*(\cZ_0;\F)$ such that
\[
    \psi\circ\rook \sim_\sP \id\quad\text{and}\quad \rook\circ\psi \sim_\sP \id.
\]
In this case $C_*(\cZ_0;\F)$ and $C_*(\cZ_1;\F)$ are said to be \emph{$\sP$-graded chain equivalent}.
\end{defn}

\begin{defn}
\label{defn:conleycomplex}
Let $C_*(\cZ;\F)$ be a $\sP$-graded complex.
A $\sP$-graded chain complex $C_*(\cV;\F)$, with $\sP$-grading  $\lambda_\cV\colon (\cV,\preceq_{\cV})\to (\sP,\leq)$ and boundary operator
\[
\Delta\colon \bigoplus_{p\in\sP}C_*(\lambda_\cV^{-1}(p);\F) \to \bigoplus_{p\in\sP}C_{*-1}(\lambda_\cV^{-1}(p);\F)
\]
is a \emph{Conley complex} for $C_*(\cZ;\F)$ if
\begin{enumerate}
    \item $C_*(\cV;\F)$ is $\sP$-graded chain equivalent to $C_*(\cZ;\F)$, and 
    \item the boundary operator $\Delta$ is \emph{strictly order preserving}, that is
\begin{equation}
\label{eq:strictorderpreserving}
    \Delta\colon C_*(\lambda_\cV^{-1}(p);\F) \to \bigoplus_{q\prec_\cV p}C_{*-1}(\lambda_\cV^{-1}(q);\F).
\end{equation}
\end{enumerate}
The boundary operator $\Delta$ is called a \emph{connection matrix} for $C_*(\cZ;\F)$.
\end{defn}

Observe that \eqref{eq:strictorderpreserving} implies that 
\[
0 = \Delta_{p,p} \colon C_*(\lambda_\cV^{-1}(p);\F)\to C_{*-1}(\lambda_\cV^{-1}(p);\F)\quad\text{for all $p\in\sP$.}
\]
and therefore, by \eqref{eq:C(p)}, 
\[
C_*(\lambda_\cV^{-1}(p);\F) = H_*(\lambda_\cV^{-1}(p);\F) \cong H_*(\lambda^{-1}(\sO(p)), \lambda^{-1}(\sO(p)^<);\F).
\]

Recall that $I\subset \sP$ is \emph{convex} if given $p,q,r\in \sP$ such that $p,r\in I$ and $p < q < r$, then $q\in \sI$.
We leave it to the reader to check that if $I$ is convex, then $\sO(I)\in\sJ^\vee(\sO(\sP))$.

\begin{defn}
    \label{defn:HomologicalAlgebraConleyIndex}
Let $\cZ$ be a  $\sP$-graded cubical cell complex with grading $\lambda\colon (\cZ,\preceq_\cZ) \to (\sP,\leq)$.
Let $I\subset \sP$ be convex.
We define the \emph{Conley index} of $I$ to be
\[
CH_*(I) := H_*(\lambda^{-1}(\sO(I)), \lambda^{-1}(\sO(I)^<);\F).
\]
In particular, given $p\in\sP$
\[
CH_*(p) \cong H_*(\lambda^{-1}(\sO(p)), \lambda^{-1}(\sO(p)^<);\F).
\]
\end{defn}

\begin{rem}
\label{rem:conleycomplexConleyindex}
Consider a $\sP$-graded chain complex $C_*(\cZ;\F)$.
The fundamental result of \cite{harker:mischaikow:spendlove} is that there is an algorithm that produces an associated Conley complex $C_n(\cV;\F)$, with $\sP$-grading  $\lambda_\cV\colon (\cV,\preceq_{\cV})\to (\sP,\leq)$.
Using the Conley index notation,  we can write
\[
C_n(\cV;\F) \cong \bigoplus_{p\in\sP} CH_n(p;\F).
\]
Therefore, up to isomorphism, the chains of $C_n(\cV;\F)$ are uniquely determined.
However, as  Example~\ref{ex:nonuniquenessDelta} demonstrates the boundary operator, i.e., the connection matrix, need not be uniquely determined.
\end{rem}

The next two examples are meant to provide intuition into Conley complexes.

\begin{ex}
\label{ex:bistableP}
Consider the poset $\sP = \setof{0,0',1}$ with $0< 1$ and $0'<1$.
Figure~\ref{fig:simpleCC}(A) shows a two dimensional cubical complex $\cZ$.  
Let $\lambda\colon \cZ \to \sP$ be the $\sP$-grading of $\cZ$ where the values of the 2-dimensional cells of $\cZ$ are as indicated and the lower dimensional cells assume the minimal value of their top cells, i.e.,
\[
\lambda(\zeta) = \min\setdef{\lambda(\mu)}{\zeta\prec\mu\in \cZ^{(2)}}.
\]
Since $\lambda^{-1}(0)$ and $\lambda^{-1}(0')$ are the closed two cells indicated by $0$ and $0'$ in Figure~\ref{fig:simpleCC}(A),
\[
CH_k(0;\F)\cong CH_k(0';\F)  \cong \begin{cases}
    \F & \text{if $k=0$,}\\
    0 & \text{otherwise.}
\end{cases}
\]
The Conley index of $1$ is 
\[
CH_k(1;\F) \cong H_k(\cZ,\lambda^{-1}(0)\cup \lambda^{-1}(0');\F)  \cong\begin{cases}
    \F & \text{if $k=1$,}\\
    0 & \text{otherwise.}
\end{cases}
\]
The chains for the Conley complex are
\begin{equation}
\label{eq:reducedcomplex1}    
\bigoplus_{p\in \setof{0,0,1}}CH_*(p;\F),
\end{equation}
and thus, the nontrivial part of the connection matrix, 
\[
\Delta_1 \colon CH_1(1;\F) \to CH_0(0;\F)\oplus CH_0(0';\F),
\]
takes the form $\left[\begin{smallmatrix}
    1 \\ 1
\end{smallmatrix}\right]$ as we need to be able to recover the Conley index of $I$ for any convex subset of $\sP$ from the Conley complex.
\end{ex}

\begin{figure}[h]
    \centering
    \begin{subfigure}{0.4\textwidth}
         \centering
         \begin{tikzpicture}[scale=0.125]
            \fill[red!0!white!90!blue] (0,0) rectangle (8,8);
            \fill[red!0!white!70!blue] (8,0) rectangle (16,8);
            \fill[red!5!white!90!blue] (16,0) rectangle (24,8);
            \draw[step=8cm,black] (0,0) grid (24,8);
            \foreach \i in {0,...,3}{
                \foreach \j in {0,...,1}{
                    \draw[black, fill=black] (8*\i,8*\j) circle (2ex);
                }
            }
            \draw(4,4) node{$0$};
            \draw(12,4) node{$1$};
            \draw(20,4) node{$0'$};
            \end{tikzpicture}
            \caption{}
     \end{subfigure}
     \begin{subfigure}{0.4\textwidth}
        \centering
            \begin{tikzpicture}[scale=0.2,>=stealth,->,shorten      >=2pt,looseness=.5,auto,ampersand replacement=\&]
                \matrix [matrix of math nodes,
                column sep={1cm,between origins},
                row sep={1cm,between origins},
                nodes={rectangle}]
                {    
                |(41)| 1   \&   |(43)|    \\     
                |(51)| 0   \&   |(53)| 0'   \\ 
                };
                \begin{scope}[]
                    \draw (41) -- (51);
                    \draw (41) -- (53);
                \end{scope}
    \end{tikzpicture}
    \caption{}
    \end{subfigure}
    \caption{(A) Two-dimensional cubical complex with top-dimensional cells labeled by a $\sP$-grading given in Example~\ref{ex:bistableP}. (B) Hasse diagram for poset $\sP$.}
    \label{fig:simpleCC}
\end{figure}

\begin{ex}
\label{ex:nonuniquenessDelta}
Consider the two dimensional cubical complex $\cZ$ shown in Figure~\ref{fig:nonunique} and poset $\sP = \setof{0,0',0'',1,2,3,3',4,4'}$ with integer ordering and primes indicating incomparability.
Let $\lambda\colon \cZ \to \sP$ be the $\sP$-grading of $\cZ$ where the values of the 2-dimensional cells of $\cZ$ are as indicated in Figure~\ref{fig:nonunique} and the lower dimensional cells assume the minimal value of their top cells, i.e.,
\[
\lambda(\zeta) = \min\setdef{\lambda(\mu)}{\zeta\prec\mu\in \cZ^{(2)}}.
\]
It is left to the reader to check that
\[
CH_k(0;\F)\cong CH_k(0';\F) \cong CH_k(0'';\F)\cong \begin{cases}
    \F & \text{if $k=0$,}\\
    0 & \text{otherwise;}
\end{cases}
\]
\[
CH_k(1;\F)\cong CH_k(2;\F) \cong  \begin{cases}
    \F & \text{if $k=1$},\\
    0 & \text{otherwise;}
\end{cases}
\]
and
\[
H_*(3;\F)= H_*(3';\F) = H_*(4;\F)= H_*(4';\F) =0.
\]
Therefore, the chains for the Conley complex are
\begin{equation}
\label{eq:reducedcomplex2}    
\bigoplus_{p\in \setof{0,0,0'',1,2}}CH_*(p;\F) = \bigoplus_{p\in \setof{0,0,0'',1,2,3,3',4,4'}}CH_*(p;\F)
\end{equation}
The only nontrivial portion of the connection matrix is $\Delta_1$ which maps
\[
 CH_1(1;\F)\oplus CH_1(2;\F)\to CH_0(0;\F) \oplus CH_0(0';\F) \oplus CH_0(0'';\F). 
\]
It is left to the reader to check that there are two possible connection matrices
\[
\Delta_1 = \begin{pmatrix}
    1 & 1 \\
    1 & 0 \\
    0 & 1 \\
\end{pmatrix}
\quad\text{and}\quad
\Delta_1' = \begin{pmatrix}
    1 & 0 \\
    1 & 1 \\
    0 & 1 \\
\end{pmatrix}.
\]
It is easy to check that with both connection matrices the Conley complex recovers the same homology as that of $C_*(\cZ;\F)$ and that these are the only possible connection matrices.
What is more subtle is to check that both Conley complexes are $\sP$-chain equivalent to $C_*(\cZ;\F)$.
\end{ex}

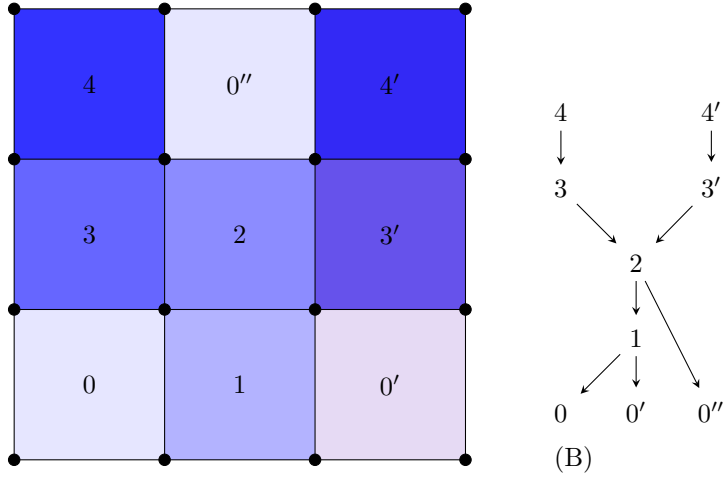
\begin{figure}
\begin{picture}(400,200)

\put(0,0){
\begin{tikzpicture}[scale=0.25]

\fill[red!0!white!20!blue] (0,16) rectangle (8,24);
\fill[red!0!white!90!blue] (8,16) rectangle (16,24);
\fill[red!20!white!20!blue] (16,16) rectangle (24,24);

\fill[red!0!white!40!blue] (0,8) rectangle (8,16);
\fill[red!0!white!55!blue] (8,8) rectangle (16,16);
\fill[red!20!white!40!blue] (16,8) rectangle (24,16);

\fill[red!0!white!90!blue] (0,0) rectangle (8,8);
\fill[red!0!white!70!blue] (8,0) rectangle (16,8);
\fill[red!5!white!90!blue] (16,0) rectangle (24,8);
\draw[step=8cm,black] (0,0) grid (24,24);
\foreach \i in {0,...,3}{
  \foreach \j in {0,...,3}{
    \draw[black, fill=black] (8*\i,8*\j) circle (2ex);
  }
}
\draw(4,4) node{$0$};
\draw(12,4) node{$1$};
\draw(20,4) node{$0'$};
\draw(4,12) node{$3$};
\draw(12,12) node{$2$};
\draw(20,12) node{$3'$};
\draw(4,20) node{$4$};
\draw(12,20) node{$0''$};
\draw(20,20) node{$4'$};

\end{tikzpicture}
}
\put(210,0){(B)}
\put(200,10){
\begin{tikzpicture}[scale=0.2,>=stealth,->,shorten >=2pt,looseness=.5,auto,ampersand replacement=\&]
    \matrix [matrix of math nodes,
           column sep={1cm,between origins},
           row sep={1cm,between origins},
           nodes={rectangle
           }]
  {
|(11)| 4  \& |(13)| \& |(15)| 4'  \& |(17)| \\
|(21)| 3 \&   |(23)|   \& |(25)| 3' \\
|(31)|    \&   |(33)| 2 \& |(35)|    \\    
|(41)|    \&   |(43)| 1   \& |(45)|   \& |(47)|    \\     
|(51)| 0   \&   |(53)| 0' \& |(55)| 0''  \& |(57)|  \\ 
  };
  \begin{scope}[]
    \draw (11) -- (21);
    \draw (15) -- (25);
    \draw (21) -- (33);
    \draw (25) -- (33);
    \draw (33) -- (43);
    \draw (33) -- (55);
    \draw (43) -- (51);
    \draw (43) -- (53);
    \end{scope}
\end{tikzpicture}
}

\end{picture}
\caption{(A) Two-dimensional cubical complex with top dimensional cells labeled by a $\sP$-grading given in Example~\ref{ex:nonuniquenessDelta}.
(B) Hasse diagram for poset $\sP$.
}
\label{fig:nonunique}
\end{figure}

There are two important observations to be made from Example~\ref{ex:nonuniquenessDelta}.
The first, as noted in Remark~\ref{rem:conleycomplexConleyindex}, is that connection matrices need not be unique.
The second, which is demonstrated by \eqref{eq:reducedcomplex2}, is that $CH_*(p;\F)=0$ for many $p\in \sP$.
As a consequence, in many instances  the `size' of the connection matrix is much smaller than the number of elements of $\sP$.
We return to these observations in the next section.

\section{From wall labelings to Morse graphs and Conley complexes}
\label{sec:CCalgorithm}

Let $\cX$ be an abstract cubical complex and let $\omega\colon W(\cX) \to \setof{\pm 1}$ be a fixed strongly dissipative wall labeling. 
Via Chapter~\ref{sec:rules} we construct four possible multivalued maps $\cF_i \colon \cX\mvmap \cX$, $i=0,1,2,3$.
As described in Section~\ref{sec:compInvset+}  we compute the weak condensation graph $\bar{\cF}_i\colon \SCC(\cF_i)\mvmap \SCC(\cF_i)$ and the  D-grading $\pi_b\colon (\cX_b,\preceq_b) \to (\SCC(\cF_i),\preceq_{\cF_i})$.
By Proposition~\ref{prop:gradingorder}, $\pi_b$ is an order preserving epimorphism.
Hence, by Definition~\ref{defn:gradedcellcomplex} $\cX_b$ is a $\SCC(\cF_i)$-graded cubical cell complex.
By Remark~\ref{rem:conleycomplexConleyindex}, we have the existence of a Conley complex that takes the form
\begin{equation}
\label{eq:CComplex_SCC}
\Delta \colon \bigoplus_{p\in\SCC(\cF_i)}CH_*\left(p;\F\right) \to \bigoplus_{p\in\SCC(\cF_i)}CH_{*-1}\left(p;\F\right).
\end{equation}

The goal of the above mentioned constructions is to provide rigorous information about the global structure of the dynamics of ODEs.
To foreshadow how these constructions are employed we note that, though the order relations associated with  the weak condensation graph $\bar{\cF}_i$ and the homological invariants encoded in the Conley complex are intertwined,  the weak condensation graph and the Conley complex also contain complementary information.
We use $\bar{\cF}_i$ to identify regions of phase space where recurrence may occur and to identify the direction of nonrecurrence, and we use the Conley complex to provide guarantees about the existence of recurrent and gradient-like dynamics.
In particular, a nontrivial Conley index implies the existence of recurrent dynamics \cite{conley:cbms}, but a priori we can derive no conclusion from a trivial Conley index.
In this section we suggest how to combine the Conley complex and the condensation graph information.

First note that, as is indicated in Example~\ref{ex:nonuniquenessDelta}, it is possible that $CH_*(p;\F) = 0$, in which case \eqref{eq:CComplex_SCC} can be simplified. 

We denote the set of recurrent components whose Conley index is non-trivial by
\[
\CRC(\cF_i) : = \setdef{p \in \SCC(\cF_i)}{CH_*(p;\F) \neq 0}.
\]
The Conley complex simplifies to
\begin{equation}
\label{eq:CComplex}
\Delta \colon \bigoplus_{p \in \CRC(\cF_i)} CH_*\left(p; \F \right) \to \bigoplus_{p \in \CRC(\cF_i)} CH_{*-1}\left(p; \F \right).
\end{equation}

As indicated above, starting with a wall labeling we  provide an efficient algorithm for identifying the condensation graph $\bar{\cF}_i$ and a Conley complex.
This algorithm is efficient.
In Section~\ref{sec:ramp} we discuss how the DSGRN software translates regulatory networks into large collections of well organized wall labelings. 
Thus we are capable of producing tremendous numbers of condensation graphs and Conley complexes.
The challenge is to have enough a priori intuition about condensation graphs and Conley complexes in order to interpret the computational results in the context of ODEs.

One application of the connection matrix is to identify the existence of heteroclinic orbits for ODEs. In order to describe the results that imply the existence of such orbits assume that $\cF_i\colon \cX\mvmap \cX$ is a combinatorial model for an ODE with flow $\varphi$ and that $\bG$ is an appropriate geometrization of $\cX_b$ (Definition~\ref{defn:rectGeoXb}). In the context of this work, a relevant result (stated below) gives conditions on $p, q \in \SCC(\cF_i)$ and $\Delta_{q,p} \colon CH_*(p;\F) \to CH_*(q;\F)$ under which there exists a connecting orbit for the ODE from $\sInvset(\bg(b(\pi^{-1}(p))),\varphi)$ to $\sInvset(\bg(b(\pi^{-1}(q))),\varphi)$. The conditions of the aforementioned result are given in terms of order-theoretic information on $p, q \in \SCC(\cF_i)$ and algebraic information on $\Delta_{q,p}$. Thus, in principle, better control on the partial order and the connection matrix may lead to stronger results concerning the dynamics of the ODE.
With this in mind we discuss the representation of the partial order obtained from $\SCC(\cF_i)$.

As indicated above, once \eqref{eq:CComplex_SCC} has been computed we simplify the Conley complex taking into account that $CH_*(p;\F) = 0$ if $p \not\in \CRC(\cF_i)$. Ideally we would like to know if $CH_*(p;\F)$ is trivial or not a priori to the computation. In general this is not possible, but a partial result is as follows. 
Define 
\[
\GRC(\cF_i) : = \setdef{p\in\SCC(\cF_i)}{ \bigcap_{\xi\in \pi^{-1}(p)}G(\xi)\neq \emptyset}.
\]
In words, $\GRC(\cF_i)$ consist of the recurrent components $p$ for which there exists a direction $n$ such that $n\in G(\xi)$ for all $\xi\in\pi^{-1}(p)$, or equivalently, all elements share a common gradient direction.

\begin{prop}
    \label{prop:CH=0}
Consider a multivalued map $\cF_i \colon \cX\mvmap \cX$, $i=0,1,2,3$, as in Chapter~\ref{sec:rules}.
If $p\in \GRC(\cF_i)$, then $CH_*(p)=0$.
\end{prop}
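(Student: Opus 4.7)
The plan is to establish $CH_*(p;\F) = H_*(\pi_b^{-1}(\sO(p)), \pi_b^{-1}(\sO(p)^<); \F) = 0$ by exhibiting the common gradient direction as a discrete combinatorial flow that contracts the relative chain complex. Write $\cK = \pi_b^{-1}(\sO(p))$ and $\cK^< = \pi_b^{-1}(\sO(p)^<)$.

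First, I would fix any $n \in \bigcap_{\xi \in \pi^{-1}(p)} G(\xi)$ and argue that the sign $r_n(\xi) \in \setof{\pm 1}$ with $R_n(\xi) = \setof{r_n(\xi)}$ is constant across $\pi^{-1}(p)$. Every arrow of $\cF_i$, $i \in \setof{0,1,2,3}$, lies between face-comparable cells --- in particular, the additional $\cU(\xi)$-arrows of $\cF_3$ satisfy $\xi \prec \xi'$ by Definition~\ref{defn:Rule3.2} --- so any two cells of the strongly connected component $\pi^{-1}(p)$ can be joined by a chain of face relations with all intermediate members still in $\pi^{-1}(p)$. Propagating the gradient value along this chain via Proposition~\ref{prop:GradPsiIndependent} yields a uniform $r_n \in \setof{\pm 1}$ throughout $\pi^{-1}(p)$.

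Next I would extract a monotonicity statement for $\pi_b$. Using Lemmas~\ref{lem:F_1-leq-geq}, \ref{lem:F_2-leq-geq}, and \ref{lem:F_3-leq-geq}, whenever a top cell $[\bar\bv, \bone] \in \pi_b^{-1}(p)$ and its $r_n$-shift $[\bar\bv + r_n \bzero^{(n)}, \bone]$ both lie in $\cX_b$, the shifted cell has grading $\leq_{\bar\cF_i} p$. Iterating the shift from any top cell of $\pi_b^{-1}(p)$, Theorem~\ref{thm:InvsetStronglyDiss} combined with strong dissipativity prevents any $r_n$-trajectory from being trapped inside $\pi_b^{-1}(p)$ forever, so in finitely many steps it lands in $\pi_b^{-1}(\sO(p)^<)$. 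With this ``flow time'' function on top cells in hand, I would build a discrete Morse matching on $\cK \setminus \cK^<$ by pairing each cell $\zeta = [\bar\bv, \bar\bw]$ with $\bar\bw_n = 0$ to its $n$-coface $[\bar\bv, \bar\bw + \bzero^{(n)}]$ whenever both belong to $\cK$. The sign consistency ensures that when $\zeta$ lies in $\cK \setminus \cK^<$ the paired coface also remains in $\cK$; cells whose natural partner already sits in $\cK^<$ are handled by collapsing them directly. Since every cell of $\cK \setminus \cK^<$ is then matched, the associated Morse complex is empty and $CH_*(p;\F) = 0$.

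The principal obstacle is verifying acyclicity of the matching and treating the boundary of the ``slab'' $\pi_b^{-1}(p)$ in direction $n$. A would-be cycle in the modified Hasse diagram corresponds to a closed sequence of face/coface moves with a net translation in direction $\bzero^{(n)}$, which the iterated monotonicity rules out: within the single stratum $\pi_b^{-1}(p)$ the grading is preserved, while the finiteness of the $r_n$-flow time forces an eventual strict decrease. At the boundary, top cells whose $r_n$-shift already exits to $\cK^<$ require the matching rule to be adjusted so that the corresponding pair sits entirely in $\cK^<$ and contributes nothing to the relative complex. Should the Morse-theoretic bookkeeping become unwieldy, an alternative is to write down an explicit prism-style chain homotopy $D \colon C_*(\cK;\F) \to C_{*+1}(\cK;\F)$ using the $r_n$-shift directly and verify $\partial D + D \partial \equiv \id \pmod{C_*(\cK^<;\F)}$; this bypasses the combinatorial acyclicity check at the cost of careful sign management at faces where $n$ becomes essential.
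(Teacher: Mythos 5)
Your proposal takes a genuinely different route from the paper's. The paper itself flags two possible proofs of this proposition: a ``direct'' combinatorial one, which it declines to write out, and an ``indirect'' analytic one, which is what it actually uses. The indirect proof is short: Theorem~\ref{thm:regular_cell} shows that for any ramp system satisfying the standing hypotheses, the common gradient direction of $\pi^{-1}(p)$ acts as a strict Lyapunov function on $\bg(\blup(\pi^{-1}(p)))$, so $\sInvset(\bg(\blup(\pi^{-1}(p))),\varphi) = \emptyset$; the Conley index of the empty set is trivial, and since $CH_*(p)$ is by construction the Conley index of this invariant set for such a realization, $CH_*(p) = 0$. The price of that argument is that it routes a purely combinatorial statement through an analytic realization; the benefit is that it is two lines once the transversality results of Part~III are in place. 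You instead attack the relative homology $H_*(\pi_b^{-1}(\sO(p)), \pi_b^{-1}(\sO(p)^<);\F)$ directly with a discrete-Morse / prism-homotopy argument built from the common gradient direction. That is exactly the ``direct proof via the arguments of Theorem~\ref{thm:singlevalue}'' that the authors mention but omit, so the approach is legitimate and arguably more self-contained (it requires no ODE, no geometrization, no $h \in \cH_i$). Your opening step --- propagating the constant sign $r_n$ across the SCC via Proposition~\ref{prop:GradPsiIndependent}, using that every $\cF_i$-edge joins face-comparable cells (including the $\cU(\xi)$ edges of $\cF_3$) --- is correct and is a necessary precondition that the paper's indirect proof also implicitly relies on.

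That said, two points in your sketch need repair before it is a proof. First, your matching rule pairs $\zeta = [\bar\bv,\bar\bw]$ with $\bar\bw_n = 0$ to the fixed coface $[\bar\bv, \bar\bw + \bzero^{(n)}]$ \emph{regardless of the sign $r_n$}. A matching that contracts the relative complex in the direction of a combinatorial flow must choose which of the two $n$-cofaces to pair based on $r_n$; a fixed choice cannot be compatible with an arbitrary admissible grading. Second, you address the case where the partner lies in $\cK^<$, but not the case where the partner lies \emph{outside} $\cK$ entirely (i.e.\ its $\pi_b$-value is strictly above $p$), which does occur on the ``upstream'' boundary of $\pi_b^{-1}(p)$ and at the cubical boundary of $\cX_b$. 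Both issues disappear if the matching direction is taken to be the one dictated by $-r_n$ and the boundary slab is handled by the prism-homotopy alternative you mention; you are right that the latter sidesteps the acyclicity bookkeeping, and in this setting it is the cleaner choice.
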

\begin{rem}
    \label{rem:CH=0}
In general the converse of Proposition~\ref{prop:CH=0} is not true.
\end{rem}

A direct proof of Proposition~\ref{prop:CH=0}, which we do not present, makes use of the arguments of the proof of Theorem~\ref{thm:singlevalue}.
An indirect proof, which is sufficient for this manuscript, arises from Theorem~\ref{thm:regular_cell}, which guarantees that if $\cF_i\colon \cX\mvmap \cX$ is a combinatorial model for an ODE with flow $\varphi$, $\bG$ is an appropriate geometrization of $\cX_b$, and $p\in \GRC(\cF_i)$, then 
\[
\sInvset(\bg(b(\pi^{-1}(p))),\varphi) = \emptyset.
\]
However,  the Conley index of the empty set is trivial \cite{conley:cbms}, and hence $CH_*(p)=0$.

In summary, from both the order-theoretic and algebraic perspectives, the elements of $\GRC(\cF_i)$ are negligible. This motivates the following definition.

\begin{defn}
\label{defn:MG}
The \emph{Morse graph} for a multivalued map $\cF_i\colon\cX\mvmap \cX$, $i=0,1,2,3$, constructed as in Chapter~\ref{sec:rules} is the poset
\[
\sMG(\cF_i) := \SCC(\cF_i)\setminus \GRC(\cF_i) 
\]
with partial order $\preceq_{\cF_i}$.
\end{defn}

Note that Proposition~\ref{prop:CH=0} implies that $\CRC(\cF_i)$ is a subposet of $\sMG(\cF_i)$. We can now state the following result (we provide a more explicit discussion in Chapter~\ref{sec:examples}).
\begin{thm}
Let $\cF_i \colon \cX \mvmap \cX$ be a combinatorial model for an ODE with flow $\varphi$, and let $\bG$ be a suitable geometrization of $\cX_b$. Consider $p, q \in \CRC(\cF_i)$, and assume that $q$ is an \emph{immediate predecessor} of $p$ in $\sMG(\cF_i)$. That is, assume $q < p$ and that if $r \in \sMG(\cF_i)$ satisfies $q \leq r \leq p$, then $r = q$ or $r = p$.

If the map $\Delta_{q, p} \colon CH_*(p; \F) \to CH_*(q; \F)$ is nonzero, then there exists a connecting orbit for the ODE from $\sInvset(\bg(b(\pi^{-1}(p))), \varphi)$ to $\sInvset(\bg(b(\pi^{-1}(q))), \varphi)$. Specifically, there exists an initial condition $x$ such that
\[
\alpha(x,\varphi)\subset \sInvset(\bg(b(\pi^{-1}(p))),\varphi)\quad\text{and}\quad\omega(x,\varphi)\subset
\sInvset(\bg(b(\pi^{-1}(q))),\varphi).
\]
\end{thm}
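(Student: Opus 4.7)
The plan is to translate the hypothesis, which is framed in terms of the combinatorial/homological objects of Part~\ref{part:II}, into Conley-theoretic data for the flow $\varphi$ via Theorem~\ref{thm:dynamics}, and then invoke Franzosa's classical connecting orbit theorem \cite{franzosa:89}. First I would invoke the setup guaranteed by the geometrization: taking $\sL = \sO(\SCC(\cF_i))$ and the lattice monomorphism $\lambda = \sO(\pi_b)\colon \sL\to \sO(\cX_b)$, Theorem~\ref{thm:NFABlattice} gives that $\sN(\cF_i,\pi_b)$ is an AB-lattice, and the suitability of $\bG$ is to be understood as $\bG$ being aligned with the vector field on each element of $\sN(\cF_i,\pi_b)$. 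Theorem~\ref{thm:dynamics} then yields that $X:=\bg(\cX_b)$ is a trapping region, that $(\sJ^\vee(\sL),\leq)\cong (\SCC(\cF_i),\preceq_{\cF_i})$ parametrizes a Morse decomposition of $\varphi|_X$ with Morse sets
\[
    M(r) := \Inv\bigl(\bg(b(\pi^{-1}(r)))\setminus \bg(b(\pi^{-1}(r)^<));\varphi\bigr),\quad r\in \SCC(\cF_i),
\]
and that $CH_*(M(r))$ coincides with the homologically defined $CH_*(r;\F)$ of Definition~\ref{defn:HomologicalAlgebraConleyIndex}.

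The second step is to show that the hypothesis that $q$ is an immediate predecessor of $p$ in $\sMG(\cF_i)$ is exactly the right adjacency hypothesis for Franzosa's theorem applied to the Morse representation $\sQ$ of nonempty Morse sets. By Proposition~\ref{prop:CH=0}, any $r\in\GRC(\cF_i) = \SCC(\cF_i)\setminus\sMG(\cF_i)$ has $CH_*(r;\F)=0$, and Theorem~\ref{thm:dynamics}~(4) then gives $M(r) = \emptyset$, so $r\notin \sQ$. Consequently, if $r\in \SCC(\cF_i)$ with $q <_{\cF_i} r <_{\cF_i} p$ satisfies $M(r)\neq\emptyset$, then $r\in\sMG(\cF_i)$, contradicting the immediate-predecessor hypothesis in $\sMG(\cF_i)$. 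Thus $q$ is an immediate predecessor of $p$ in $\sQ$, which is the condition needed for Franzosa's connecting orbit theorem on the adjacent pair $(q,p)$.

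The third step is to apply Franzosa's theorem. By Remark~\ref{rem:Fconnectionmatrix}, the boundary operator $\Delta$ of the Conley complex computed in Part~\ref{part:II} is a connection matrix for the flow's Morse decomposition. The classical connecting orbit theorem \cite{franzosa:89} states that for an adjacent pair $(q,p)$ in a Morse decomposition, nontriviality of $\Delta_{q,p}\colon CH_*(p)\to CH_*(q)$ implies that the set of connecting orbits
\[
    \setdef{x\in X}{\alpha(x,\varphi)\subset M(p),\ \omega(x,\varphi)\subset M(q)}
\]
is nonempty. Since $M(p)\subset\sInvset(\bg(b(\pi^{-1}(p))),\varphi)$ and likewise for $q$, the conclusion of the theorem follows.

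The main obstacle is the second step: making precise that the reduced poset $\sMG(\cF_i)$ is the appropriate proxy for the Morse representation $\sQ$, and that the combinatorial notion of immediate predecessor survives under the passage from $\SCC(\cF_i)$ to $\sMG(\cF_i)$ to $\sQ$. The subtlety is that Franzosa's theorem, as classically stated, concerns adjacency in the Morse decomposition poset on nonempty Morse sets, whereas the connection matrix $\Delta$ we compute is indexed by $\SCC(\cF_i)$; the chain that $\CRC(\cF_i)\subset \sMG(\cF_i)\subset\SCC(\cF_i)$, together with Proposition~\ref{prop:CH=0} and Theorem~\ref{thm:dynamics}~(4), lets the immediate-predecessor hypothesis in $\sMG(\cF_i)$ be promoted to adjacency in $\sQ$, at which point the classical theorem applies verbatim.
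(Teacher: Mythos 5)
Your proposal is essentially correct and matches the approach the paper takes: the paper itself states this theorem without a formal proof, deferring to the discussion in Chapter~\ref{sec:examples} (Example~\ref{ex:saddlesaddlebif}), where the authors invoke Theorem~\ref{thm:dynamics} to transfer everything to a Morse decomposition/representation for the flow and then cite \cite[Proposition~5.3]{franzosa:89}. Your second step---promoting immediate adjacency in $\sMG(\cF_i)$ to adjacency among the nonempty Morse sets by observing that $r\in\GRC(\cF_i)$ forces $M(r)=\emptyset$ via Theorem~\ref{thm:regular_cell}---is exactly the reduction needed; one small refinement worth noting is that for an adjacent pair $\{q,p\}$ the entry $\Delta_{q,p}$ is the attractor--repeller connecting homomorphism and hence is the same for every connection matrix, which reconciles the theorem's ``the map $\Delta_{q,p}$ is nonzero'' with the ``for all connection matrices'' hypothesis quoted in Example~\ref{ex:saddlesaddlebif}.
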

To compute a Conley complex we use \cite[Algorithm 6.8]{harker:templates:21} that takes as input a D-graded chain complex.
Applying \cite[Algorithm 6.8]{harker:templates:21} to  $\cX_b$ with $\SCC(\cF_i)$ grading $\pi_b$ we obtain a  Conley complex
\[
\Delta \colon \bigoplus_{p \in \CRC(\cF_i)} CH_*\left(p; \F \right) \to \bigoplus_{p \in \CRC(\cF_i)} CH_{*-1}\left(p; \F \right).
\]

While the Conley complex need not be unique the
chains of the Conley complex are unique up to isomorphism.
This implies that if
\[
\Delta' \colon \bigoplus_{p \in \CRC(\cF_i)} CH_*\left(p; \F \right) \to \bigoplus_{p \in \CRC(\cF_i)} CH_{*-1}\left(p; \F \right)
\]
is another Conley complex, then they are related by an order-preserving chain isomorphism 
\[
\psi\colon \bigoplus_{p \in \CRC(\cF_i)} CH_*\left(p; \F \right) \to \bigoplus_{p \in \CRC(\cF_i)} CH_{*-1}\left(p; \F \right),
\]
i.e., $\Delta' = \psi^{-1} \circ \Delta \circ \psi$.
For the purposes of this monograph, we are working with $\Z_2$ coefficients. 
Thus it is easy to list all possible $\psi$,  compute all possible conjugacies of $\Delta$, and thereby identify all Conley complexes.

\begin{ex}
\label{ex:connection_matrix_running_ex}
Consider the wall labeling in Figure~\ref{fig:wall_labeling}(A). Figure~\ref{fig:example_1_2d_results}(A) shows the edges of the directed graph associated with $\cF_3 \colon \cX\mvmap \cX$ superimposed on $\cX_b$. Marked in color are the 2-dimensional cells associated with $\setdef{\pi_b^{-1}(p)}{p\in \RC(\cF_3)}$. The Morse graph $\sMG(\cF_3)$ is shown in Figure~\ref{fig:example_1_2d_results}(B) with a corresponding color coding. The computations are done using $\F=\Z_2$. The Betti numbers for the Conley indices $CH_*(p;\Z_2)$ are given as vectors $(\beta_0,\beta_1,\beta_2)$.
For each $p\in \sMG(\cF_3)$, $CH_*(p;\Z_2)\neq 0$.

There is a unique Conley complex for $\cF_3$. The chains are 
\[
CH_0(0;\Z_2)\oplus CH_0(0';\Z_2) \oplus CH_1(1;\Z_2)
\]
and the nontrivial portion of the connection matrix is given by 
\[
\Delta_1 =
\begin{bmatrix}
1 \\
1
\end{bmatrix}.
\]
\end{ex}

\begin{figure*}[!htb]
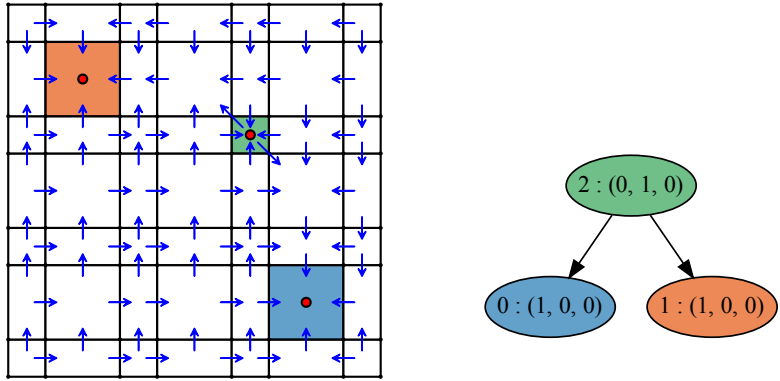

\centering
\includegraphics[width=0.45\textwidth]{figures/stg_2D_example_intro_1.pdf}
\includegraphics[width=0.45\textwidth]{figures/mg_2D_example_intro_1.pdf}
\caption{Blowup complex $\cX_b$ (left) with the color-coded Morse sets for the Morse graph (right) of $\cF_3$ computed from the wall labeling in Figure~\ref{fig:wall_labeling} (A).}
\label{fig:example_1_2d_results}
\end{figure*}

\begin{ex}
\label{ex:multiple_connection_matrices_ex}
Consider the wall labeling on $\cX(\I)$ for $\I=\setof{0,1,2,3}^2$ shown in Figure~\ref{fig:ex2sec6}(A).
The associated multivalued map $\cF_3\colon \cX\mvmap \cX$  superimposed on $\cX_b$ is shown in Figure~\ref{fig:ex2sec6}(B).
The colored cells correspond to subset of $\cX_b$ that are preimages of $\sMG(\cF_3)$, i.e., $\setdef{\pi_b^{-1}(p)}{p\in \sMG(\cF_3)}$. 

For each $p\in \sMG(\cF_3)$, the Conley index $CH_*(p)$ is nonzero (the associated Hasse diagram is shown in Figure~\ref{fig:ex2sec6}(C)) and the chains for the Conley complex take the form
\[
\bigoplus_{p=0}^8 CH_*(p;\Z_2).
\]
The Conley indices are recorded in the Morse graph and thus the only nontrival portions of the connection matrices are
\[
\Delta_2\colon CH_2(8;\Z_2) \to\bigoplus_{p=4}^7 CH_1(p;\Z_2)
\]
and
\[
\Delta_1\colon \bigoplus_{p=4}^7 CH_1(p;\Z_2) \to \bigoplus_{p=0}^3 CH_0(p;\Z_2).
\]
However, there are two possible connection matrices
\begin{equation}
\label{eq:pairDelta}
\Delta_2 = \begin{bmatrix}
    1\\ {\color{blue}1}\\ 1\\ 1
\end{bmatrix}
\ ,\  
\Delta_1 = \begin{bmatrix}
    1 & 0 & 0 & 1\\ {\color{blue}1} & 1 & 0 & 0\\ {\color{blue}0} & 1 & 1 & 0\\ 0 & 0 & 1 & 1
\end{bmatrix}
\quad\text{and}\quad
\Delta'_2 = \begin{bmatrix}
    1\\ {\color{blue}0}\\ 1\\ 1
\end{bmatrix}
\ , \ 
\Delta'_1 = \begin{bmatrix}
    1 & 0 & 0 & 1\\ {\color{blue}0} & 1 & 0 & 0\\ {\color{blue}1} & 1 & 1 & 0\\ 0 & 0 & 1 & 1
\end{bmatrix}    
\end{equation}
where the distinctions are highlighted in blue. 
This lack of uniqueness is associated with the fact that the poset structure for the Morse graph has element $7$ above element $4$, but both have the same Conley index.
We discuss the implications of this non-uniqueness in the context of ODEs in Section~\ref{sec:examples}.
\end{ex}

\begin{figure}[ht]
    \centering
    \begin{subfigure}[t]{0.32\textwidth}
        \centering
        \includegraphics[scale=0.3]{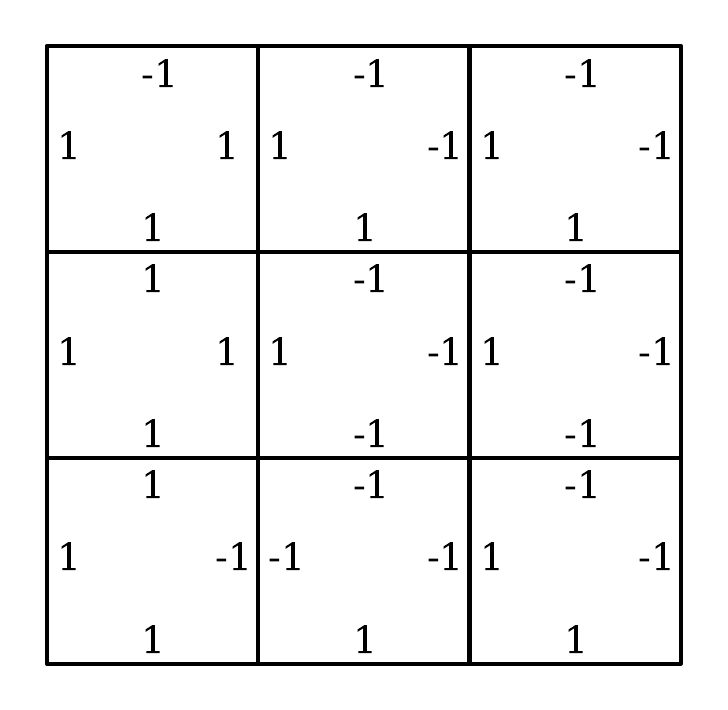}
        \caption{$\omega : W(\cX) \to \setof{\pm 1}^2$}
    \end{subfigure}
    \hfill
    \begin{subfigure}[t]{0.32\textwidth}
        \centering
        \includegraphics[scale=0.25]{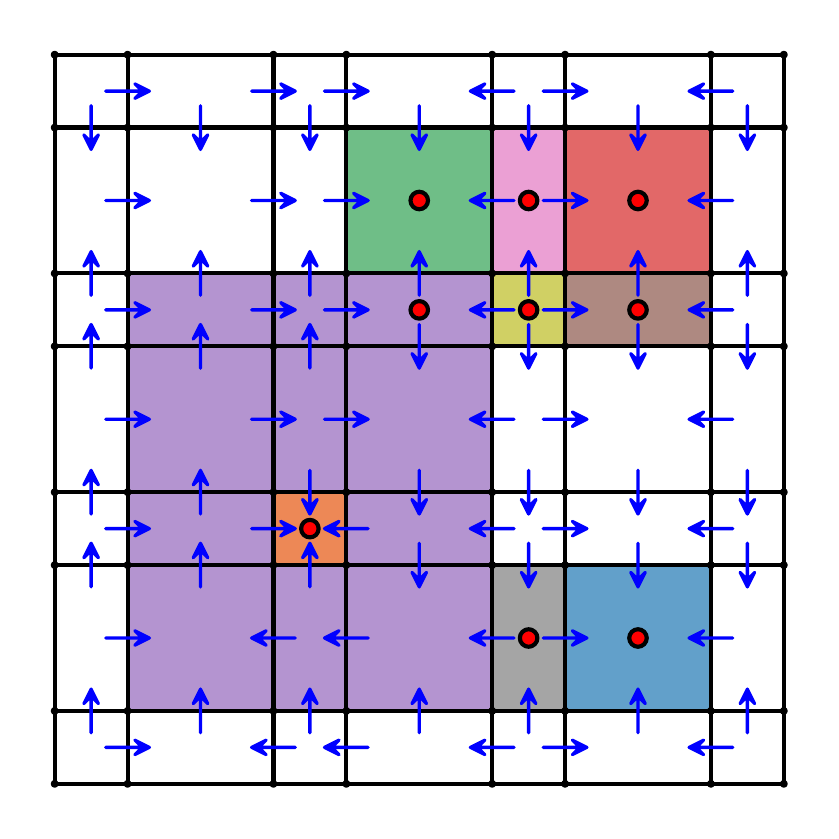}
        \caption{$\cF_3$ in $\cX_b$}
    \end{subfigure}
    \hfill
    \begin{subfigure}[t]{0.32\textwidth}
        \centering
        \includegraphics[scale=0.3]{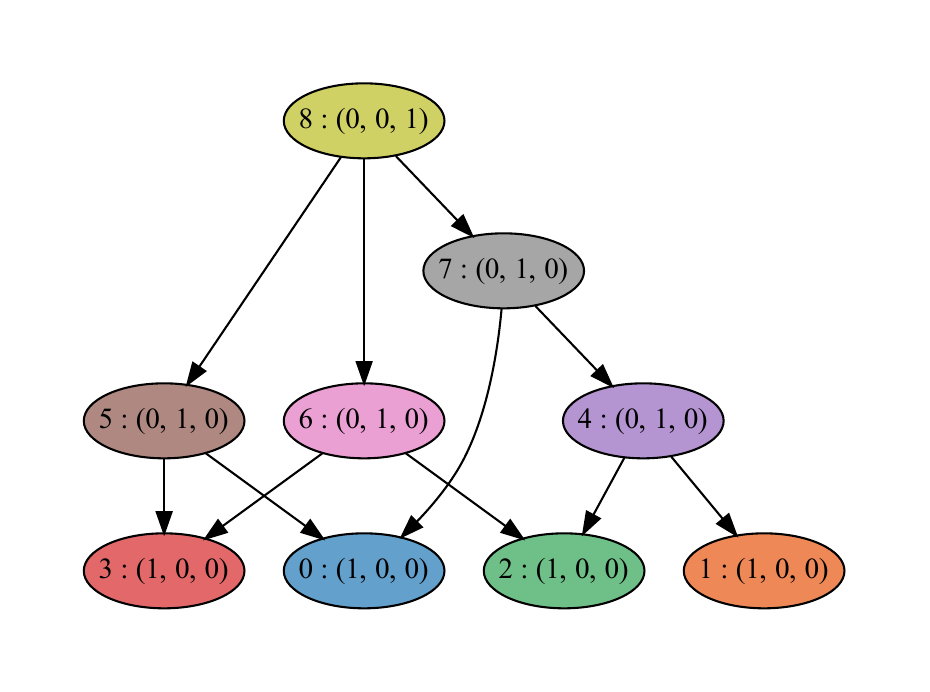}
        \caption{$\sMG(\cF_3)$}
    \end{subfigure}
    \caption{(A) Wall labeling. (B) Directed graph $\cF_3$ imposed on interior of $\cX_b$. Colored cells correspond to elements of $\setdef{\pi_b^{-1}(p)}{p\in \sMG(\cF_3)}$. (C) Hasse diagram of the Morse graph $\sMG(\cF_3)$ with corresponding color coding. The Betti numbers for the Conley index of each node in the Morse graph are denoted by $(\beta_0,\beta_1,\beta_2)$.}
    \label{fig:ex2sec6}
\end{figure}

\part{Continuous Dynamics}
\label{part:III}
\chapter{Ramp systems and DSGRN}
\label{sec:ramp}

We begin the third part of this manuscript; identifying differential equations to which the combinatorial/homological computations are applicable. In Section~\ref{sec:DefinitionRampSystems} we introduce a multiparameter system of ODEs called ramp systems. To tie ramp systems to the previously mentioned computations we need to be able to identify a wall labeling; this is done in Section~\ref{sec:ramp2rook}. 
To emphasize that the constructions of Part~\ref{part:II} can be extended beyond ramp systems in Section~\ref{sec:wall-ramp-property} we show that not every wall labeling can be achieved via a ramp system.
Finally, in Section~\ref{sec:DSGRN} we introduced DSGRN and show how to generate ramp systems from regulatory networks.

\section{Definition of Ramp Systems}
\label{sec:DefinitionRampSystems}

As a first step we introduce ramp functions and ramp nonlinearities.
\begin{defn}
\label{defn:rampfunction}
Let $J$ be a positive integer.
A \emph{ramp function} of type $J$ is a continuous function $r\colon \R\to \R$ of the form
\[
r(x;\nu,\theta,h) = \begin{cases}
\nu_0, & \text{if}~ x \leq \theta_1 - h_1 \\
\nu_j, & \text{if}~ \theta_j + h_j \leq x \leq \theta_{j+1} - h_{j+1} ~\text{for}~ j=1, \ldots, J-1 \\
\nu_J, & \text{if}~ x \geq \theta_J + h_J
\end{cases}
\]
that interpolates linearly. 
We refer to $\theta_j$ as a \emph{threshold} for the variable $x$.
See Figure~\ref{fig:rampfunction} for a ramp function of type 4.
\end{defn}

In this manuscript we restrict our attention to ramp systems that are defined on $[0,\infty)$ and take strictly positive values. Thus, a ramp function $r$ of type $J$ depends upon $3J+1$ parameters $(\nu,\theta,h)\in (0,\infty)^{3J+1}$ that, to avoid degeneracies, must satisfy the following constraints:
\begin{equation}
\label{eq:nu}
\nu \in \Lambda_{\nu} := \setdef{\nu = (\nu_0,\ldots,\nu_J) }{\nu_j\neq \nu_{j+1}} 
\end{equation}
\begin{equation}
\label{eq:theta}
\theta \in \Lambda_{\theta} := \setdef{ \theta = (\theta_1,\ldots,\theta_J)  }{ \ \theta_j<\theta_{j+1} }
\end{equation}
and
\begin{equation}
\label{eq:h}
h \in \Lambda_h := \setdef{ h = (h_1,\ldots,h_J) }{\theta_1 - h_1 > 0, \, \theta_j + \ h_j < \theta_{j+1} - h_{j+1} }.
\end{equation}
We set
\begin{equation}
    \label{eq:LambdaR}
    \Lambda(r) := \setof{(\nu,\theta,h)}\subset (0,\infty)^{3J+1}
\end{equation}
that satisfy \eqref{eq:nu}, \eqref{eq:theta}, and \eqref{eq:h}.

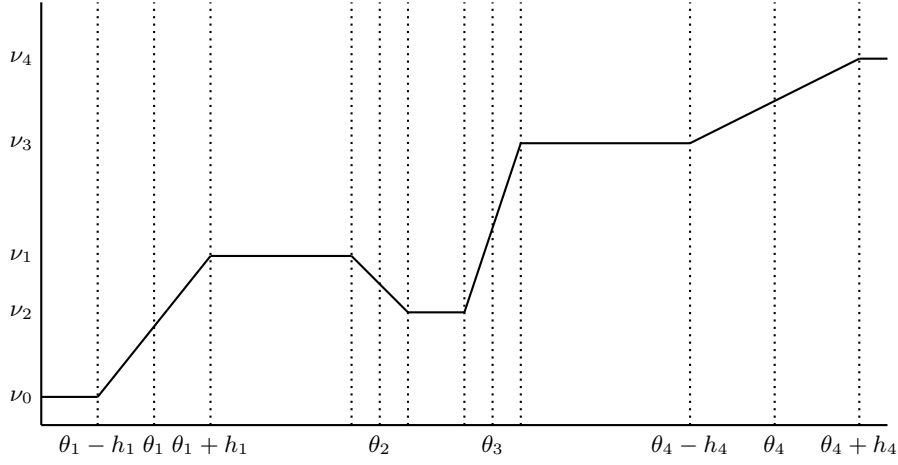
\begin{figure}
\begin{center}
\begin{tikzpicture}
[scale=0.375]

\draw[thick] (0,0) -- (30,0);
\draw[thick] (0,0) -- (0,15);

\draw[dotted, thick] (4,0) -- (4,15);
\draw(4,-0.7) node{\small{$\theta_1$}};
\draw[dotted, thick] (12,0) -- (12,15);
\draw(12,-0.7) node{\small{$\theta_2$}};
\draw[dotted, thick] (16,0) -- (16,15);
\draw(16,-0.7) node{\small{$\theta_3$}};
\draw[dotted, thick] (26,0) -- (26,15);
\draw(26,-0.7) node{\small{$\theta_4$}};
\draw[dotted, thick] (2,0) -- (2,15);

\draw(2,-0.7) node{\small{$\theta_1 - h_1$}};
\draw[dotted, thick] (6,0) -- (6,15);
\draw(6,-0.7) node{\small{$\theta_1 + h_1$}};
\draw[dotted, thick] (11,0) -- (11,15);
\draw[dotted, thick] (13,0) -- (13,15);
\draw[dotted, thick] (15,0) -- (15,15);
\draw[dotted, thick] (17,0) -- (17,15);
\draw[dotted, thick] (23,0) -- (23,15);
\draw(23,-0.7) node{\small{$\theta_4 - h_4$}};
\draw[dotted, thick] (29,0) -- (29,15);
\draw(29,-0.7) node{\small{$\theta_4 + h_4$}};

\draw[thick] (0,1) -- (2,1);
\draw[thick] (2,1) -- (6,6);
\draw[thick] (6,6) -- (11,6);
\draw[thick] (11,6) -- (13,4);
\draw[thick] (13,4) -- (15,4);
\draw[thick] (15,4) -- (17,10);
\draw[thick] (17,10) -- (23,10);
\draw[thick] (23,10) -- (29,13);
\draw[thick] (29,13) -- (30,13);

\draw (-0.7,1) node{\small{$\nu_0$}};
\draw (-0.7,6) node{\small{$\nu_1$}};
\draw (-0.7,4) node{\small{$\nu_2$}};
\draw (-0.7,10) node{\small{$\nu_3$}};
\draw (-0.7,13) node{\small{$\nu_4$}};
\end{tikzpicture}
\end{center}
\caption{Ramp function $r(x)$ of type $J=4$ and parameters $(\nu,\theta,h)$.}
\label{fig:rampfunction}
\end{figure}

The ramp nonlinearities $E_n$ are defined to be sums and/or products of ramp functions that take the form of interaction functions defined below.

\begin{defn}
\label{def:interaction_function}
A polynomial $f$ in $K$ variables $z = (z_1, \ldots, z_K)$ is called a \emph{type I interaction function} of order $K$ if it has the form
\[
f(z) := \prod_{j=1}^q f_j(z)
\]
where each factor has the form 
\[
f_j(z) = \sum_{i\in I_j}z_i
\]
and it is called a \emph{type II interaction function} of order $K$ if it has the form
\[
f(z) := \sum_{j=1}^q f_j(z)
\]
where each term has the form 
\[
f_j(z) = \prod_{i\in I_j}z_i
\]
and the indexing sets $\setof{I_j \mid 1 \leq j\leq q}$ form a partition of $\setof{1, \ldots, K}$.
\end{defn}

\begin{defn}
\label{defn:rampnonlinearity}
A \emph{type I} or \emph{type II ramp nonlinearity} is a continuous function from $[0,\infty)^K$ to $[0,\infty)$ of the form
\[
f(r_1(x_1),\ldots,r_K(x_K)) 
\]
where $r_\ell$ is a ramp function of type $J_\ell$, for $\ell = 1, \ldots, K$, and $f$ is a type I or type II interaction function of order $K$.
\end{defn}

\begin{defn}
\label{defn:ramp_system}
An $N$-dimensional \emph{ramp system} takes the form
\[
\dot{x} = -\Gamma x + E(x; \nu, \theta, h),
\]
where
\begin{equation}
\label{eq:rampODE}
\dot{x}_n  = -\gamma_n x_n + E_n(x), \quad n = 1, \ldots, N
\end{equation}
with $x_n \geq 0$, $\gamma = (\gamma_1,\ldots,\gamma_N)\in (0,\infty)^N$, and  $E_n$ is a type I or type II ramp nonlinearity.
\end{defn}

In general, $E_n$ depends on a subset of the variables $(x_1,\ldots, x_N)$ that we call the \emph{source coordinates} of $E_n$ and denote by $\source(n)$. 
Conversely, we keep track of the ramp nonlinearities impacted by a particular variable and hence define the \emph{target coordinates} of $x_n$ to be
\[
\target(n) := \setdef{ m }{ n \in \source(m) }.
\]

Fix a set of source coordinates $\source(n) = \setof{ m_\ell \mid \ell = 1,\ldots, K(n) }$ and an interaction function $f_n$, the subscript notation for the associated ramp functions takes the form
\[
E_n \left( x_{m_1},\ldots,x_{m_{K(n)}} \right) = f_n\left(r_{n,m_1}(x_{m_1}),\ldots ,r_{n,m_{K(n)}}(x_{m_{K(n)}})  \right).
\]

\begin{ex}
Examples of ramp nonlinearities using the above notation are
\begin{equation*}
E_n(x_1,x_3) = r_{n,1}(x_1) + r_{n,3}(x_3), \quad E_n(x_1,x_3) = r_{n,1}(x_1) r_{n,3}(x_3),
\end{equation*}
and
\[
\quad E_n(x_1,x_2,x_3) = r_{n,1}(x_1) \left( r_{n,2}(x_2) + r_{n,3}(x_3) \right)
\]
with source coordinates $\setof{1,3}$, $\setof{1,3}$, and $\setof{1,2,3}$, respectively.
\end{ex}

Given a ramp function $r_{n,m}$ of type $J_{n,m}$ the associated parameters  are denoted by 
\[
(\nu_{n,m},\theta_{n,m},h_{n,m}) \in \Lambda(r_{n,m})\subset [0,\infty)^{3J_{n,m}+1}
\]
where
\[
\nu_{n,m} = (\nu_{n,m,0},\ldots,\nu_{n,m,J_{n,m}}), \quad
\theta_{n,m} = (\theta_{n,m,1},\ldots,\theta_{n,m,J_{n,m}})
\]
and
\[
h_{n,m} = (h_{n,m,1},\ldots,h_{n,m,J_{n,m}}). 
\]
Using the notation of \eqref{eq:LambdaR}, the parameter space of $E_n$ is
\[
\Lambda(E_n)  = \prod_{m_{\ell} \in \source(n)} \Lambda(r_{n,m_{\ell}}).
\]
The set of parameters associated with an $N$-dimensional ramp system is $(\gamma, \nu, \theta, h) \in (0,\infty)^N \times \Lambda(E)$, where
\[
\Lambda(E) = \prod_{n=1}^N \Lambda(E_n).
\]

To conclude this section we turn to the dynamics of ramp systems.
In particular, we define an attracting block and prove that it contains the global attractor of the associated ramp system.

Fix a parameter $(\gamma, \nu, \theta, h) \in (0,\infty)^N \times \Lambda(E)$ and recall that, for each $n = 1, \ldots, N$, the ramp nonlinearity $E_n$ is a function $E_n \colon [0,\infty)^{K(n)} \to [0,\infty)$ of the form
\[
E_n \left( x_{m_1},\ldots,x_{m_{K(n)}} \right) = f_n\left(r_{n,m_1}(x_{m_1}),\ldots ,r_{n,m_{K(n)}}(x_{m_{K(n)}})  \right),
\]
where $f_n$ is an interaction function, $r_{n, m_{\ell}} \colon [0, \infty) \to [0, \infty)$ is a ramp function of type $J_{n, m_{\ell}}$, and $\source(n) = \setof{ m_\ell \mid \ell = 1,\ldots, K(n) }$.
Notice that $r_{n, m_{\ell}}$ is bounded by
\[
M^r_{n, m_{\ell}} = M^r_{n, m_{\ell}}(\nu) := \max_{x \in [0, \infty)}{ r_{n, m_{\ell}}(x) } = \max{ \setof{ \nu_{n, m_{\ell}, 0}, \ldots, \nu_{n, m_{\ell}, J_{n, m_{\ell}}} } },
\]
which implies that $E_n$ is bounded by
\[
M^E_n = M^E_n (\nu) := \max_{x \in [0,\infty)^{K(n)}}{E_n(x)} = f_n \left( M^r_{n, m_1}, \ldots, M^r_{n, m_{K(n)}}  \right).
\]
Define
\[
Gb_n = Gb_n (\gamma,\nu) := \frac{M^E_n (\nu)}{\gamma_n}.
\]
Observe that
\begin{equation}
\label{eq:dotx<0}
\text{if $x_n > Gb_n (\gamma,\nu)$, then $\dot{x}_n < 0$.}    
\end{equation}

We define $B\subset [0,\infty)^N$  such that the ramp nonlinearities $E_n$ are constant on $[0,\infty)^N\setminus B$.
For this purpose let $\widetilde{m}_n$ and $\widetilde{j}_n$ be such that
\[
\theta_{\widetilde{m}_n, n, \widetilde{j}_n} = \max{ \setof{ \theta_{m,n,j} \mid m \in \target(n), \, j \in J_{m,n} } }
\]
and define
\begin{equation}
\label{eq:GAB}
\gab_n = \gab_n(\gamma, \nu, \theta, h) := \max{ \setof{Gb_n (\gamma,\nu), \, \theta_{\widetilde{m}_n, n, \widetilde{j}_n} + h_{\widetilde{m}_n, n, \widetilde{j}_n}} } + 1
\end{equation}
where the $1$ is added in order to have $\gab_n(\gamma, \nu, \theta, h) > Gb_n (\gamma,\nu)$ and $\gab_n(\gamma, \nu, \theta, h) > \theta_{\widetilde{m}_n, n, \widetilde{j}_n} + h_{\widetilde{m}_n, n, \widetilde{j}_n}$. 
Define
\[
B := \prod_{n = 1}^{N}{ [0, \gab_n(\gamma, \nu, \theta, h)] }
\]
We have the following results.

\begin{lem}
\label{lem:global-bound}
Consider a ramp system $\dot{x} = -\Gamma x + E(x; \nu, \theta, h)$ for a fixed parameter $(\gamma, \nu, \theta, h) \in (0,\infty)^N \times \Lambda(E)$. 
If $x_n \geq \gab_n(\gamma, \nu, \theta, h)$, then $\dot{x}_n < 0$. Furthermore, if $x \in [0,\infty)^N \setminus B$, then $E(x; \nu, \theta, h)$ is constant.
\end{lem}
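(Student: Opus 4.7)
The plan is to handle the two assertions separately, exploiting the definition of $\gab_n$ in \eqref{eq:GAB} in each case.

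For the first assertion, the argument is essentially a restatement of \eqref{eq:dotx<0}. From \eqref{eq:GAB} we have $\gab_n \geq Gb_n + 1 > Gb_n = M^E_n/\gamma_n$. Hence if $x_n \geq \gab_n$, then $\gamma_n x_n \geq \gamma_n \gab_n > M^E_n$. Since every ramp function $r_{n,m_\ell}$ is bounded by $M^r_{n,m_\ell}$, the interaction function $f_n$ being a polynomial that is monotone in nonnegative arguments gives the pointwise bound $E_n(x) \leq M^E_n$. Combining,
\[
\dot{x}_n = -\gamma_n x_n + E_n(x) \leq -\gamma_n\gab_n + M^E_n < 0.
\]

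For the second assertion, I would first clarify the meaning of ``constant.'' Since $E_m$ genuinely depends on the remaining coordinates within $B$, the claim is naturally read as: at any $x \in [0,\infty)^N\setminus B$ with $x_{n_0} > \gab_{n_0}$, the value $E(x)$ does not change if $x_{n_0}$ is varied upward, i.e., $E$ is locally constant in each coordinate direction $n_0$ along which $x$ exceeds $\gab_{n_0}$. To prove this, fix such an $n_0$. By \eqref{eq:GAB},
\[
x_{n_0} > \gab_{n_0} > \theta_{\widetilde m_{n_0},n_0,\widetilde j_{n_0}} + h_{\widetilde m_{n_0},n_0,\widetilde j_{n_0}},
\]
and by the maximality in the definition of $\widetilde m_{n_0},\widetilde j_{n_0}$, this inequality in fact holds with $\theta_{m,n_0,j} + h_{m,n_0,j}$ on the right for every $m\in\target(n_0)$ and every threshold index $j\in\{1,\ldots,J_{m,n_0}\}$. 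By Definition~\ref{defn:rampfunction}, this puts $x_{n_0}$ in the rightmost plateau of each ramp function $r_{m,n_0}$, so that $r_{m,n_0}(x_{n_0}) = \nu_{m,n_0,J_{m,n_0}}$. The variable $x_{n_0}$ enters $E_m$ only through $r_{m,n_0}$ when $m\in\target(n_0)$ and does not enter at all when $m\notin\target(n_0)$, so each component $E_m(x)$ is independent of $x_{n_0}$ on this slab.

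I do not expect a serious technical obstacle here: once the two inequalities encoded in \eqref{eq:GAB} are unpacked, both conclusions are immediate. The only delicate point is the phrasing of the second assertion, where one must be careful that ``constant'' refers to independence in each saturated coordinate direction (equivalently, $E(x) = E(\pi_B(x))$ for the coordinate-wise clamp $\pi_B$ onto $B$), not global constancy on $[0,\infty)^N\setminus B$.
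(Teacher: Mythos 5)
Your treatment of the first assertion matches the paper's own argument essentially step for step: from \eqref{eq:GAB} one has $\gab_n > Gb_n = M^E_n/\gamma_n$, hence $\gamma_n x_n > M^E_n \geq E_n(x)$ for $x_n \geq \gab_n$, so $\dot{x}_n < 0$; this is just \eqref{eq:dotx<0} unpacked. The pointwise bound $E_n(x)\leq M^E_n$ holds because each $r_{n,m_\ell}$ is a positive bounded function and the interaction function $f_n$ is a sum/product of sums/products of nonnegative arguments, as you note.

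For the second assertion, your analysis is actually more careful than the paper's own proof. The paper asserts that $x\in[0,\infty)^N\setminus B$ implies $x_n > \theta_{\widetilde m_n,n,\widetilde j_n} + h_{\widetilde m_n,n,\widetilde j_n}$ \emph{for every} $n$, which is simply not true: the complement of the box $B=\prod_n[0,\gab_n]$ within the orthant is not $\prod_n(\gab_n,\infty)$, and a point in $[0,\infty)^N\setminus B$ can have many coordinates well inside their active ranges. You have correctly diagnosed that, under the literal reading, $E$ is not globally constant on $[0,\infty)^N\setminus B$. The precise and sufficient form of the conclusion is the one you state: whenever $x_{n_0}>\gab_{n_0}$, the coordinate $x_{n_0}$ has passed every output threshold $\theta_{m,n_0,j}+h_{m,n_0,j}$ (by maximality of $\widetilde m_{n_0},\widetilde j_{n_0}$), so every $r_{m,n_0}$ is saturated and $E$ is independent of $x_{n_0}$ on that slab; equivalently, $E(x)=E(\pi_B(x))$ for the coordinate-wise clamp onto $B$. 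That refined statement is what is actually needed downstream, and your proof of it is correct; the paper's argument, as literally written, has a gap at exactly the point you flagged.
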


\begin{proof}
If $x_n \geq \gab_n(\gamma, \nu, \theta, h)$, then 
\[
x_n > Gb_n (\gamma,\nu) = \frac{ \max_{x \in [0,\infty)^{K(n)}}{E_n(x)} }{\gamma_n},
\]
which, by \eqref{eq:dotx<0}, implies that $\dot{x}_n < 0$. 
Additionally, if $x \in [0,\infty)^N \setminus B$, then  $x_n > \theta_{\widetilde{m}_n, n, \widetilde{j}_n} + h_{\widetilde{m}_n, n, \widetilde{j}_n}$ for each $n = 1, \ldots, N$, which implies that $E_n$ is constant. Therefore, $E$ is constant on $[0,\infty)^N \setminus B$.
\end{proof}

\begin{prop}
\label{prop:globalAttractor}
A ramp system $\dot{x} = -\Gamma x + E(x; \nu, \theta, h)$ where $(\gamma, \nu, \theta, h) \in (0,\infty)^N \times \Lambda(E)$ has a compact global attractor $K \subset B$. Furthermore, $B$ is an attracting block and $\omega(B) = K$.
\end{prop}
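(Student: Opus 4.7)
The plan is to verify the two parts of the statement in sequence: first that $B$ is an attracting block in the sense of Definition~\ref{defn:attractingblock}, and then that $K=\omega(B)$ is the global attractor of the flow $\varphi$ restricted to the state space $X=[0,\infty)^N$.

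\emph{Step 1: $[0,\infty)^N$ is positively invariant.} Since each ramp function takes strictly positive values, each interaction function (of type I or II) takes strictly positive values on $[0,\infty)^{K(n)}$, so $E_n(x)>0$ for all $x\in [0,\infty)^N$. On the face $\{x_n=0\}$, $\dot x_n=E_n(x)>0$, so trajectories cannot exit through the coordinate hyperplanes. By the Nagumo invariance criterion (or a direct comparison argument applied to $\dot x_n\geq -\gamma_n x_n$ away from the face and $\dot x_n>0$ on it), $[0,\infty)^N$ is forward invariant.

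\emph{Step 2: $B$ is an attracting block.} The topological boundary of $B$ relative to $X=[0,\infty)^N$ consists of the faces $\{x_n=\gab_n\}\cap B$ for $n=1,\dots,N$. Lemma~\ref{lem:global-bound} gives $\dot x_n<0$ strictly on each such face (because $\gab_n>Gb_n$). Combined with Step 1, the vector field is strictly inward-pointing on $\partial_X B$. By the same argument as Theorem~\ref{thm:inward}, this yields $\varphi(t,B)\subset \Int_X(B)$ for every $t>0$, which is exactly the attracting block condition.

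\emph{Step 3: Absorption.} For any $x\in[0,\infty)^N$ and each coordinate $n$, the scalar ODE satisfies the differential inequality $\dot x_n\leq -\gamma_n x_n+M^E_n$, whose solution obeys
\[
x_n(t)\leq e^{-\gamma_n t}x_n(0)+Gb_n\bigl(1-e^{-\gamma_n t}\bigr).
\]
Since $\gab_n>Gb_n$, for every bounded set of initial conditions there exists $T\geq 0$ such that $\varphi(T,x)\in \Int_X(B)$. Hence every trajectory eventually enters $B$, and in particular every compact subset of $[0,\infty)^N$ is absorbed by $B$ in finite time.

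\emph{Step 4: Global attractor.} Define $K:=\omega(B,\varphi)=\bigcap_{t\geq 0}\cl\bigl(\varphi([t,\infty),B)\bigr)$. By Step 2, $K$ is a nonempty, compact, invariant subset of $\Int_X(B)\subset B$ (standard attracting block theory, as recalled in Chapter~\ref{sec:prelude}). Given any compact $K_0\subset[0,\infty)^N$, Step 3 provides $T\geq 0$ with $\varphi(T,K_0)\subset B$, whence $\omega(K_0,\varphi)\subset\omega(B,\varphi)=K$. Thus $K$ attracts every compact subset of the state space and is therefore the global attractor, completing the proof.

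The routine parts are the comparison argument in Step 3 and the standard deduction in Step 4; the only step requiring care is Step 2, where one must check that the inward-pointing condition on the ``top'' faces $\{x_n=\gab_n\}$ combines with positive invariance of $[0,\infty)^N$ from Step 1 to give the strict inclusion $\varphi(t,B)\subset\Int_X(B)$ with respect to the relative topology of $X$, rather than the ambient topology of $\R^N$.
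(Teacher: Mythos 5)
Your proof is correct and follows essentially the same route as the paper: show that $B$ is an attracting block via the boundary transversality from Lemma~\ref{lem:global-bound}, and show that every trajectory eventually enters $B$ so that $K=\omega(B)$ attracts all compacta. One place where you are actually a bit more careful than the paper: the paper's absorption argument asserts that if $x\in\cl([0,\infty)^N\setminus B)$ then $\dot{x}_n<0$ for \emph{all} $n$, which is not quite right — only the coordinates with $x_n\geq\gab_n$ need satisfy this, and other coordinates can be increasing. Your Step~3 avoids this by running the scalar comparison $x_n(t)\leq e^{-\gamma_n t}x_n(0)+Gb_n(1-e^{-\gamma_n t})$ coordinatewise, which gives a clean uniform absorption estimate over bounded sets and is the standard way to make this rigorous. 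The two proofs are otherwise the same; your explicit Step~1 (positive invariance of the orthant) is implicit in the paper's boundary check $\dot{x}_n>0$ on $\{x_n=0\}$.
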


\begin{proof}
To prove the existence of a global attractor it is sufficient to prove that the flow $\varphi$ for the ramp system is point dissipative \cite{raugel}, i.e., that  for every $x\in [0,\infty)^N$ there exists a time $\tau(x)$ such that $\varphi([\tau(x),\infty),x) \subset B$.

By \eqref{eq:dotx<0}, if $x_n \geq \gab_n(\gamma, \nu, \theta, h)$ then $\dot{x}_n < 0$.
Thus, if $x\in \cl([0,\infty)^N \setminus B)$, then $\dot{x}_n <0$ for all $n=1,\ldots, N$, and hence, there exists $\tau(x) > 0$ such that $\varphi(\tau(x),x) \in B$.

We now show that $B$ is an attracting block. 
If $x \in \bdy(B)$, then there exists $n\in\setof{1, \ldots, N}$ such that $x_n=0$ or $x_n = \gab(\gamma, \nu, \theta, h)$.
Furthermore, if $x_n = 0$, then $\dot{x}_n > 0$ and if $x_n = \gab(\gamma, \nu, \theta, h)$, then $\dot{x}_n < 0$.
Thus, the vector field is transverse to the boundary of $B$ and points towards the interior of $B$. 
Therefore, $B$ is an attracting block.

By the definition of an attracting block, if $x \in B$, then $\varphi([0, \infty),x) \subset B$, thus every $x \in [0,\infty)^N$ is point dissipative.
\end{proof}

\section{From Ramp Systems to Wall Labeling}
\label{sec:ramp2rook}

The goal of this section is as follows. 
Given a ramp system define an associated cubical complex $\cX$ and  wall labeling $\omega \colon W(\cX) \to \setof{\pm 1}$.
A necessary first step is to identify the set of parameters $(\gamma, \nu, \theta, h) \in (0,\infty)^N \times \Lambda(E)$ for which a wall labeling can be constructed.
As a consequence of Proposition~\ref{prop:wall-labeling-const} the combinatorial/algebraic topological computations described in Section~\ref{sec:lattice} are dependent on the parameters $(\gamma, \nu, \theta)$.
However, the applicability of these computations to the ramp system of interest  depends on $h$.
With this in mind we provide explicit non-empty sets $\cH_i(\gamma, \nu, \theta)$ of parameters $h$ for which, in later sections, we validate Theorem~\ref{thm:dynamics}.

Throughout this section we consider an $N$-dimensional ramp system $\dot{x} = -\Gamma x + E(x;  \nu, \theta, h)$ defined on the parameter space $(0,\infty)^N \times \Lambda(E)$. Given a parameter $(\gamma, \nu, \theta, h) \in (0,\infty)^N \times \Lambda(E)$ we denote the set of all indices $(m, j)$ of $\theta_{m, n, j}$ corresponding to the ramp functions $r_{m,n}(x_n)$ for $m \in \target(n)$ by
\[
\mathcal{J}(n) = \setof{ (m, j) \mid m \in \target(n), \, j \in J_{m,n} }.
\]

We refer to $\theta_{m, n, j}$ as an \emph{output threshold} for the variable $x_n$ and as an \emph{input threshold} for the variable $x_m$. Denote by
\[
\Theta(n) = \setof{ \theta_{m,n,j} \mid (m, j) \in \mathcal{J}(n) }
\]
the set of the output thresholds for the variable $x_n$.
\begin{rem}
\label{rem:K(n)}
The number of indices in $\mathcal{J}(n)$, which is equal the number of output thresholds for the variable $x_n$, is independent of the parameters of the ramp system and is given by
\begin{equation}
\label{eq:num_out_thresholds}
K(n) := \sum_{m \in \target(n)} {J_{m, n}},
\end{equation}
where $J_{m, n}$ is the type of the ramp function $r_{m, n}$.    
\end{rem}
\begin{defn}
\label{defn:rampcellcomplex}
Let $\dot{x} = -\Gamma x + E(x; \nu, \theta, h)$ be an $N$-dimensional ramp system with output thresholds $\Theta(n)$, $n=1,\ldots,N$.
The \emph{ramp induced} cubical cell complex (see Definition~\ref{defn:Xcomplex}) is 
\[
\cX := \cX(\I) = (\cX, \preceq, \dim, \kappa)
\]
where
\[
\I = \prod_{n=1}^N \setof{0, \ldots, K(n)+1}
\]
and  $K(n)$ is as in \eqref{eq:num_out_thresholds}.
\end{defn}

\begin{defn}
\label{defn:admissiblehtheta}
Let $(\nu, \theta, h) \in \Lambda(E)$. We say that the parameter $(\theta,h)$ is \emph{admissible} if the following two conditions are satisfied for each $n = 1, \ldots, N$.
\begin{enumerate}
\item[(i)] If $(m, j), (m', j') \in \mathcal{J}(n)$ and $(m, j) \neq (m', j')$, then
\[
\theta_{m, n, j} \neq \theta_{m', n, j'}.
\]
\item[(ii)] Let $K(n)$ be the number of indices in $\mathcal{J}(n)$ and consider an indexing
\begin{equation}
    \label{eq:Jn}
    \mathcal{J}(n) = \setof{ (m_k, j_k) \mid k = 1, \ldots, K(n) }
\end{equation}
such that
\[
\theta_{m_k, n, j_k} < \theta_{m_{k+1}, n, j_{k+1}}, \quad \text{for}~ k = 1, \ldots, K(n) - 1.
\]
Then,
\begin{equation}
\label{eq:theta_h_inequalities}
\theta_{m_k, n, j_k} + h_{m_k, n, j_k} < \theta_{m_{k+1}, n, j_{k+1}} - h_{m_{k+1}, n, j_{k+1}},
\end{equation}
for all $k = 1, \ldots, K(n) - 1$.
\end{enumerate}
\end{defn}

Given the indexing of \eqref{eq:Jn} let $\bv = (k_1, k_2, \ldots, k_N) \in \prod_{n=1}^N \setof{0, \ldots, K(n)}$ and define
\[
D_{\bv}(\theta, h) := \prod_{n=1}^N (\theta_{m_{k_n}, n, j_{k_n}} + h_{m_{k_n}, n, j_{k_n}}, \theta_{m_{k_n + 1}, n, j_{k_n + 1}} - h_{m_{k_n + 1}, n, j_{k_n + 1}}) \subset (0,\infty)^N,
\]
where we set
$\theta_{m_0, n, j_0} = h_{m_0, n, j_0} = 0$,
$\theta_{m_{K(n)+1}, n, j_{K(n)+1}} = \gab_n(\gamma, \nu, \theta, h)$, and $h_{m_{K(n)+1}, n, j_{K(n)+1}} = 0$.
When the parameters are fixed we  denote $D_{\bv}(\theta, h)$ simply by $D_{\bv}$.

\begin{lem}
\label{lem:well_defined_DV}
Consider an $N$-dimensional ramp system $\dot{x} = -\Gamma x + E(x;  \nu, \theta, h)$ where $(\nu, \theta, h) \in \Lambda(E)$ and $(\theta, h)$ is admissible.
By \eqref{eq:num_out_thresholds},  this gives rise to $K(n)$, $n=1,\ldots,N$.
If $\bv \in \prod_{n=1}^N \setof{0, \ldots, K(n)}$, then the set $D_{\bv}(\theta, h)$ is well defined and nonempty. 
Furthermore $E_n$ is constant with respect to $x$, $\theta$, and $h$ on $D_{\bv}(\theta, h)$ for all $n = 1, \ldots, N$.
\end{lem}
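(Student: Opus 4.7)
The plan is to verify the three assertions in the order stated: that the product of intervals $D_{\bv}(\theta,h)$ is well-defined, that it is nonempty, and that each $E_n$ restricted to $D_{\bv}(\theta,h)$ is constant.

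First I would check that for each $n=1,\ldots,N$ the $n$-th factor
\[
I_n(k_n) := (\theta_{m_{k_n}, n, j_{k_n}} + h_{m_{k_n}, n, j_{k_n}},\ \theta_{m_{k_n + 1}, n, j_{k_n + 1}} - h_{m_{k_n + 1}, n, j_{k_n + 1}})
\]
is a nonempty open interval by splitting into three cases. For interior values $1\leq k_n\leq K(n)-1$, nonemptiness is exactly the admissibility inequality \eqref{eq:theta_h_inequalities}. For $k_n=0$, the interval reduces to $(0,\theta_{m_1,n,j_1}-h_{m_1,n,j_1})$ under the convention $\theta_{m_0,n,j_0}=h_{m_0,n,j_0}=0$, which is nonempty by the constraint $\theta_1-h_1>0$ built into $\Lambda_h$ in \eqref{eq:h}. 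For $k_n=K(n)$, the upper endpoint is $\gab_n(\gamma,\nu,\theta,h)$, and one verifies that $\theta_{m_{K(n)},n,j_{K(n)}}=\theta_{\widetilde m_n, n,\widetilde j_n}$ is the maximal output threshold of $x_n$ (since the indexing in \eqref{eq:Jn} is chosen increasing and $(\theta,h)$ is admissible, forcing distinctness and ordering), so the definition \eqref{eq:GAB} gives $\gab_n > \theta_{m_{K(n)},n,j_{K(n)}}+h_{m_{K(n)},n,j_{K(n)}}$. Hence every factor is a nonempty open interval and $D_{\bv}(\theta,h)=\prod_{n=1}^N I_n(k_n)$ is well-defined and nonempty.

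Next I would show that each ramp function in the system is constant on $D_{\bv}(\theta,h)$. Fix $n$ and any $m\in\target(n)$ and any index $j\in J_{m,n}$, so that $\theta_{m,n,j}\in\Theta(n)$ appears as $\theta_{m_k,n,j_k}$ for some $k\in\{1,\ldots,K(n)\}$. Pick $x\in D_{\bv}(\theta,h)$; by construction $x_n\in I_n(k_n)$ lies strictly above $\theta_{m_{k_n},n,j_{k_n}}+h_{m_{k_n},n,j_{k_n}}$ and strictly below $\theta_{m_{k_n+1},n,j_{k_n+1}}-h_{m_{k_n+1},n,j_{k_n+1}}$. Consequently, for every output threshold $\theta_{m,n,j}$ of $x_n$, the point $x_n$ lies in one of the plateau regions of the ramp function $r_{m,n}(\cdot)$ as described in Definition~\ref{defn:rampfunction}; concretely, $r_{m,n}(x_n)=\nu_{m,n,k_n}$ (with the obvious convention at the extremes). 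Thus $r_{m,n}(x_n)$ depends neither on $x_n$ (within $I_n(k_n)$) nor on the precise values of $\theta_{m,n,j}$ and $h_{m,n,j}$, so long as they remain in the admissible regime defining $D_{\bv}$.

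Finally, since each $E_n$ is an interaction function (type~I or~II per Definition~\ref{def:interaction_function}) of the ramp functions $r_{n,m_{\ell}}(x_{m_{\ell}})$ with $m_{\ell}\in\source(n)$, and each such $r_{n,m_{\ell}}(x_{m_{\ell}})$ is constant on $D_{\bv}(\theta,h)$ with respect to $x$, $\theta$, and $h$ by the previous paragraph applied with the roles of indices swapped, $E_n$ itself is constant on $D_{\bv}(\theta,h)$. I do not anticipate any serious obstacle; the only mildly delicate point is carefully handling the boundary conventions at $k_n=0$ and $k_n=K(n)$ so that the nonemptiness of $I_n(k_n)$ follows uniformly from admissibility together with \eqref{eq:h} and the definition \eqref{eq:GAB} of $\gab_n$.
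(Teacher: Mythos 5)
Your proof is correct and follows essentially the same route as the paper's: nonemptiness of each interval factor comes from the admissibility inequality~\eqref{eq:theta_h_inequalities} together with the boundary conventions, and constancy of $E_n$ comes from the fact that each ramp function has a plateau over each factor interval. You are in fact somewhat more careful than the paper in explicitly verifying the extreme cases $k_n=0$ and $k_n=K(n)$ via~\eqref{eq:h} and~\eqref{eq:GAB}, which the paper's one-line appeal to~\eqref{eq:theta_h_inequalities} glosses over; the only minor slip is writing $r_{m,n}(x_n)=\nu_{m,n,k_n}$, where the subscript should be the index of $x_n$ relative to the thresholds of $r_{m,n}$ alone rather than the global index $k_n$, but the surrounding reasoning makes the intended claim clear.
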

\begin{proof}
Given $\bv = (k_1, k_2, \ldots, k_N) \in \prod_{n=1}^N \setof{0, \ldots, K(n)}$, by the definition of $K(n)$ and the indexing of $\mathcal{J}(n)$ introduced in Definition~\ref{defn:admissiblehtheta} it follows that $(m_k, j_k) \in \mathcal{J}(n)$ and hence, by \eqref{eq:theta_h_inequalities}, that
\[
\theta_{m_k, n, j_k} + h_{m_k, n, j_k} < \theta_{m_{k+1}, n, j_{k+1}} - h_{m_{k+1}, n, j_{k+1}}.
\]
Therefore $D_{\bv}(\theta, h)$ is well defined and nonempty. Additionally, notice that $D_{\bv}(\theta, h)$ are the rectangles in the domain of the ramp system where all the ramp functions are constant. Hence $E_n$ depends only on $\nu$ on $D_{\bv}(\theta, h)$.
\end{proof}

In a slight abuse of notation we denote the value of $E_n$ on $D_{\bv}$ by $E_n(D_{\bv})$.

\begin{defn}
\label{defn:admissible-parameters}
We say that $(\gamma, \nu, \theta, h) \in (0,\infty)^N \times \Lambda(\bE)$ is \emph{admissible} if $(\theta,h)$ is admissible, and for all $\bv \in \prod_{n=1}^N \setof{0, \ldots, K(n)}$ and all $n=1, \ldots, N$,
\[
\gamma_n \theta_{m_{k_n}, n, j_{k_n}} \neq E_n(D_{\bv}) \quad \text{and} \quad \gamma_n \theta_{m_{k_n + 1}, n, j_{k_n + 1}} \neq E_n(D_{\bv}).
\]

We denote the set of admissible parameters by 
\[
\Lambda(R) \subset (0,\infty)^N \times \Lambda(\bE).
\]
\end{defn}

We turn to the definition of the wall labeling for which we find the following notation useful.
Given a ramp induced cubical cell complex $\cX$, consider $\mu = [\bv, \bone] \in \cX^{(N)}$.
For clarity (we want to focus on the cell complex as opposed to the phase space) in what follows we set
\[
E_n(\mu) := E_n(D_{\bv}).
\]

\begin{defn}
\label{defn:ramp-wall-labeling}
Let $\dot{x} = -\Gamma x + E(x;  \nu, \theta, h)$ be a ramp system with a fixed parameter $(\gamma, \nu, \theta, h) \in \Lambda(R)$.
Let $\cX$ be the associated ramp induced cell complex.
The \emph{ramp induced} wall labeling $\omega \colon W(\cX) \to \setof{\pm 1}$ is defined as follows.
Given an $n$-wall $(\xi, \mu) \in W(\cX)$, 
with $\xi = \left[\bv, {\bf 1}^{(n)}\right]$ and $\bv = (k_1, k_2, \ldots, k_N)$, define
\[
\omega(\xi, \mu) =
\sgn( -\gamma_n \theta_{m_{k_n}, n, j_{k_n}} + E_n(\mu) ),
\]
where $\theta_{m_0, n, j_0} = 0$ and $\theta_{m_{K(n)+1}, n, j_{K(n)+1}} = \gab_n(\gamma, \nu, \theta, h)$.
\end{defn}

The following theorem proves that a ramp induced wall labeling is a wall labeling.

\begin{thm}
\label{thm:ramp-wall-labeling}
Let $\dot{x} = -\Gamma x + E(x;  \nu, \theta, h)$ be a ramp system with a fixed parameter $(\gamma, \nu, \theta, h) \in \Lambda(R)$.
The ramp induced wall labeling $\omega \colon W(\cX) \to \setof{\pm 1}$ is a wall labeling.
\end{thm}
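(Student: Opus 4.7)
The plan is to exhibit, for every vertex $\sigma\in\cX^{(0)}$, an explicit local inducement map $\tilde{o}_\sigma$ and then verify conditions~(i) and~(ii) of Definition~\ref{def:wall_labeling} by tracking precisely which ramp nonlinearities change as one crosses an $n$-wall. The key observation driving the construction is that $E_k$ depends only on $r_{k,m}(x_m)$ for $m\in\source(k)$. Hence if $\mu=[\bv,\bone]$ and $\mu'=[\bv\pm\bzero^{(n)},\bone]$ are $n$-adjacent top cells, then only the argument $x_n$ moves between rectangles $D_{\bv}$ and $D_{\bv\pm\bzero^{(n)}}$, and so the only ramp function whose value can change is $r_{k,n}(x_n)$. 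By Lemma~\ref{lem:well_defined_DV} and the indexing of Definition~\ref{defn:admissiblehtheta}, the threshold crossed at this $n$-wall is exactly $\theta_{m_{v_n},n,j_{v_n}}$, which is a threshold of $r_{k,n}$ if and only if $k=m_{v_n}$.

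Motivated by this, I will define the local inducement map at $\sigma=[\bv,\bzero]$ by
\[
\tilde{o}_\sigma(n):=\begin{cases} m_{v_n} & \text{if } v_n\in\{1,\dots,K(n)\},\\ n & \text{otherwise,} \end{cases}
\]
using the pair $(m_{v_n},j_{v_n})\in\mathcal{J}(n)$. The boundary cases $v_n\in\{0,K(n)+1\}$ admit neither a pair of $n$-adjacent top cells in $\Top_\cX(\sigma)$ nor two distinct $n$-walls containing $\sigma$, so conditions~(i) and~(ii) are vacuous there and the value of $\tilde{o}_\sigma(n)$ is immaterial.

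To verify condition~(i), I fix $n$-adjacent $\mu,\mu'\in\Top_\cX(\sigma)$ and any $k\notin\{n,\tilde{o}_\sigma(n)\}$. Since $\mu$ and $\mu'$ agree in every coordinate except the $n$-th, the walls $\mu^{\pm}_k$ and $\mu'^{\pm}_k$ sit at identical $k$-positions, so the thresholds in $\omega(\mu^{\pm}_k,\mu)$ and $\omega(\mu'^{\pm}_k,\mu')$ from Definition~\ref{defn:ramp-wall-labeling} coincide. Because $k\neq m_{v_n}$, the preceding observation gives $E_k(\mu)=E_k(\mu')$, hence the wall labels agree. For condition~(ii), if $(\xi,\mu),(\xi,\mu')\in W(\sigma)$ are distinct $n$-walls, then $\mu$ and $\mu'$ are the two top-dimensional cofaces of $\xi$, differing only in the $n$-th coordinate. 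Both labels use the threshold $\theta_{m_{v_n},n,j_{v_n}}$, and the hypothesis $n\neq\tilde{o}_\sigma(n)=m_{v_n}$ forces either $n\notin\source(n)$ or the crossed threshold not to belong to $r_{n,n}$; in either case $E_n(\mu)=E_n(\mu')$ and the labels coincide.

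I expect the main obstacle to be purely notational rather than conceptual: carefully matching the combinatorial indexing of $\cX$ (position vectors $\bv$ and the walls $\mu^{\pm}_k$) with the sorted threshold indexing $\mathcal{J}(n)=\{(m_k,j_k)\}$, and handling the extended boundary thresholds $\theta_{m_0,n,j_0}=0$ and $\theta_{m_{K(n)+1},n,j_{K(n)+1}}=\gab_n$ in such a way that the argument remains uniform across interior and boundary walls. Once the bookkeeping is organized, the verification reduces to the single statement that, crossing an $n$-wall at combinatorial position $v_n$, the only ramp nonlinearity that can change is $E_{m_{v_n}}$.
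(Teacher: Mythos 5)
Your proof is correct and follows essentially the same route as the paper's: define $\tilde{o}_\sigma(n)=m_{k_n}$ (the index of the threshold crossed at position $\bv_n$ of the vertex $\sigma$), then verify conditions (i) and (ii) by noting that crossing an $n$-wall only changes the value of $E_{m_{k_n}}$, so every other component of $E$ is unchanged and the corresponding wall labels agree. The only difference is that you give slightly more explicit attention to the boundary cases $\bv_n\in\{0,K(n)+1\}$ (noting conditions (i)–(ii) are vacuous there), which the paper handles implicitly via the extended threshold conventions $\theta_{m_0,n,j_0}$ and $\theta_{m_{K(n)+1},n,j_{K(n)+1}}$; this is a harmless bookkeeping variant rather than a different argument.
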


\begin{proof}
Let $\dot{x} = -\Gamma x + E(x;  \nu, \theta, h)$ be a ramp system with a fixed parameter $(\gamma, \nu, \theta, h) \in \Lambda(R)$ and let $\omega \colon W(\cX) \to \setof{\pm 1}$ as in Definition~\ref{defn:ramp-wall-labeling} above.
To verify that $\omega$ is a wall labeling we need to show that for each $\sigma \in \cX^{(0)}$ there exists a local inducement map $\tilde{o}_{\sigma}$ (see Definition~\ref{def:wall_labeling}). 

Let $\sigma = [\bv, \bzero] \in \cX^{(0)}$, with $\bv = (k_1, k_2, \ldots, k_N)$. Using the notation in Definition~\ref{defn:ramp-wall-labeling}, define $\tilde{o}_{\sigma} \colon \setof{1, \ldots, N} \to \setof{1, \ldots, N}$ by
\[
\tilde{o}_{\sigma}(n) = m_{k_n}.
\]

To verify Definition~\ref{def:wall_labeling}(i) 
let $\mu, \mu' \in T(\sigma)$ be $n$-adjacent with $\mu = [\bv, \bone]$ and $\mu' = [\bv - \bzero^{(n)}, \bone] = [\bv', \bone]$. First notice that $E_{\ell} (\mu) = E_{\ell} (\mu')$ if $\ell \neq m_{k_n}$, since $m_{k_n}$ is the index of the only component of $E$ which may change when $x_n$ crosses the value $\theta_{m_{k_n}, n, j_{k_n}}$ corresponding to the shared $n$-wall of $\mu$ and $\mu'$. Hence we have that
\[
E_{\ell} (\mu) = E_{\ell} (\mu'), \quad \text{if}~ \ell \neq \tilde{o}_\sigma(n).
\]

Let $(\mu^-_{\ell}, \mu)$ and $(\mu'^-_{\ell}, \mu')$ be the left $\ell$-walls of $\mu$ and $\mu'$, respectively. 
Then we have $\mu^-_{\ell} = [\bv, \bone^{(\ell)}]$ and $\mu'^-_{\ell} = [\bv - \bzero^{(n)}, \bone^{(\ell)}] = [\bv', \bone^{(\ell)}]$. Denoting $\bv = (k_1, k_2, \ldots, k_N)$ and $\bv' = (k_1', k_2', \ldots, k_N')$ we have that
\[
\omega(\mu^-_{\ell}, \mu) = \sgn( -\gamma_{\ell} \theta_{m_{k_{\ell}}, n, j_{k_{\ell}}} + E_{\ell}(\mu) )
\]
and
\[
\omega(\mu'^-_{\ell}, \mu') = \sgn( -\gamma_{\ell} \theta_{m_{k'_{\ell}}, n, j_{k'_{\ell}}} + E_{\ell}(\mu') )
\]
Since $\bv' = \bv - \bzero^{(n)}$, if $\ell \neq n$ we have that $k_{\ell} = k'_{\ell}$, and hence that $\theta_{m_{k_{\ell}}, n, j_{k_{\ell}}} = \theta_{m_{k'_{\ell}}, n, j_{k'_{\ell}}}$. Therefore, if $\ell \neq \tilde{o}_\sigma(n)$ and $\ell \neq n$, it follows that $E_{\ell} (\mu) = E_{\ell} (\mu')$ and $\gamma_{\ell} \theta_{m_{k_{\ell}}, n, j_{k_{\ell}}} = \gamma_{\ell} \theta_{m_{k'_{\ell}}, n, j_{k'_{\ell}}}$, and hence that
\[
\omega(\mu^-_{\ell}, \mu) = \omega(\mu'^-_{\ell}, \mu').
\]

Analogously, if we let $(\mu^+_{\ell}, \mu)$ and $(\mu'^+_{\ell}, \mu')$ be the right $\ell$-walls of $\mu$ and $\mu'$, respectively, then a similar argument shows that
\[
\omega(\mu^+_{\ell}, \mu) = \omega(\mu'^+_{\ell}, \mu')
\]
if $\ell \neq \tilde{o}_\sigma(n)$ and $\ell \neq n$.

To prove Definition~\ref{def:wall_labeling}(ii) 
let $(\xi_n,\mu), (\xi_n, \mu') \in W(\sigma)$ be $n$-walls with $\mu = [\bv, \bone]$ and $\mu' = [\bv - \bzero^{(n)}, \bone] = [\bv', \bone]$, and assume that $n \neq \tilde{o}_\sigma(n)$. As shown above, since $n \neq \tilde{o}_\sigma(n)$, we have that
\[
E_n (\mu) = E_n (\mu').
\]
Since $(\xi_n, \mu)$ and $(\xi_n, \mu')$ represent the $n$-wall shared by $\mu$ and $\mu'$ it follows that
\[
\omega(\xi_n, \mu) = \sgn( -\gamma_n \theta_{m_{k_n}, n, j_{k_n}} + E_n(\mu) )
\]
and
\[
\omega(\xi_n, \mu') = \sgn( -\gamma_n \theta_{m_{k_n}, n, j_{k_n}} + E_n(\mu') ).
\]
Since both signs are evaluated at the same $\theta_{m_{k_n}, n, j_{k_n}}$ we conclude that $\omega(\xi_n, \mu) = \omega(\xi_n, \mu')$ as desired.
\end{proof}

\begin{prop}
\label{prop:ramp-dissipativewall}    
Let $\dot{x} = -\Gamma x + E(x;  \nu, \theta, h)$ be a ramp system with a fixed parameter $(\gamma, \nu, \theta, h) \in \Lambda(R)$.
The ramp induced wall labeling $\omega \colon W(\cX) \to \setof{\pm 1}$ is strongly dissipative.
\end{prop}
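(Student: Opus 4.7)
The plan is a direct case analysis on the boundary walls of $\cX$. Recall that $\xi \in \bbdy(\cX)^{(N-1)}$ iff $\xi = [\bv, \bone^{(n)}]$ for some $n$ with either $\bv_n = 0$ or $\bv_n = K(n)+1$. In either case there is exactly one top cell $\mu \in \cX^{(N)}$ with $\xi \prec \mu$, and I would first record which wall of $\mu$ the cell $\xi$ is: when $\bv_n = 0$ the cell $\xi$ must be the \emph{left} $n$-wall $\mu_n^-$ of $\mu = [\bv, \bone]$, because the right-wall alternative would force $\bv^\mu_n = -1$; symmetrically, when $\bv_n = K(n)+1$ the cell $\xi$ must be the \emph{right} $n$-wall of $\mu = [\bv - \bzero^{(n)}, \bone]$, since $\bv^\mu_n \le K(n)$ for any top cell.

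Second, I would evaluate the position vector. From Definition~\ref{defn:rpvector}, for a left $n$-wall $\bv^\xi = \bv^\mu$ and $\bw^\xi_n - \bw^\mu_n = -1$, so $p_n(\xi,\mu) = -1$; for a right $n$-wall $\bv^\xi_n - \bv^\mu_n = 1$ and again $\bw^\xi_n - \bw^\mu_n = -1$, giving $p_n(\xi,\mu) = +1$. It therefore suffices to show that $\omega(\xi,\mu) = +1$ in the first case and $\omega(\xi,\mu) = -1$ in the second.

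Third, I would evaluate $\omega(\xi,\mu) = \sgn\bigl(-\gamma_n\theta_{m_{k_n},n,j_{k_n}} + E_n(\mu)\bigr)$ using the boundary conventions set in Definition~\ref{defn:ramp-wall-labeling}: $\theta_{m_0,n,j_0} = 0$ and $\theta_{m_{K(n)+1},n,j_{K(n)+1}} = \gab_n(\gamma,\nu,\theta,h)$. When $k_n = 0$ this reduces to $\sgn(E_n(\mu))$, which is strictly positive since ramp functions (Definition~\ref{defn:rampfunction}) take values in $(0,\infty)$ and both type I and type II interaction functions preserve positivity (Definition~\ref{def:interaction_function}); hence $\omega(\xi,\mu) = 1 = -p_n(\xi,\mu)$. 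When $k_n = K(n)+1$ I would invoke the strict inequality $\gab_n > Gb_n = M_n^E/\gamma_n$ from \eqref{eq:GAB}, together with $E_n(\mu) \leq M_n^E$, to conclude $-\gamma_n \gab_n + E_n(\mu) < 0$, so $\omega(\xi,\mu) = -1 = -p_n(\xi,\mu)$.

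The proof is really a bookkeeping exercise — there is no serious obstacle, only the need to make sure the two boundary conventions for $\theta_{m_0,n,j_0}$ and $\theta_{m_{K(n)+1},n,j_{K(n)+1}}$ line up with the sign of $p_n$ in each of the two extremal cases, and that the strict inequality in the definition of $\gab_n$ is used (rather than the non-strict bound from \eqref{eq:dotx<0}) so that $\sgn$ is well defined and equals $-1$ at $k_n = K(n)+1$.
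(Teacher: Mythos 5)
Your argument is correct and follows essentially the same route as the paper: identify the boundary wall as the left or right $n$-wall of the unique incident top cell, compute $p_n(\xi,\mu)$ from Definition~\ref{defn:rpvector}, and evaluate $\omega(\xi,\mu)$ using the boundary conventions $\theta_{m_0,n,j_0}=0$ and $\theta_{m_{K(n)+1},n,j_{K(n)+1}}=\gab_n$. The only thing you add beyond the paper's proof is the explicit justification that the signs are well-defined and strict at both extremes --- $E_n(\mu)>0$ from positivity of ramp parameters and interaction functions, and $\gamma_n\gab_n>M^E_n\geq E_n(\mu)$ from the strict inequality built into \eqref{eq:GAB} --- which the paper leaves implicit but is worth stating.
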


\begin{proof}
Let $(\xi, \mu) \in W(\cX)$ be such that $\xi \in \left( \bbdy(\cX) \right)^{(N-1)}$ with $\xi = [\bv, {\bf 1}^{(n)}]$ and $\bv = (k_1, k_2, \ldots, k_N)$. Since $\xi$ is a boundary $n$-wall of $\mu$ it follows that $k_n = 0$ or $k_n = K(n) + 1$, if $\xi$ is a left or a right wall, respectively. From the definition of $\omega$ it follows that
\[
\omega(\xi, \mu) = \sgn( -\gamma_n 0 + E_n(\mu) ) = \sgn( E_n(\mu) ) = 1 = -p(\xi,\mu)
\]
or
\[
\omega(\xi, \mu) = \sgn( -\gamma_n \gab_n(\gamma, \nu, \theta, h) + E_n(\mu) ) = -1 = -p(\xi,\mu),
\]
if $k_n = 0$ or $k_n = K(n) + 1$, respectively. Therefore (see Definition~\ref{defn:dissipativewall}) we have that $\omega$ is strongly dissipative.
\end{proof}

As indicated above, there is a dichotomy with respect to parameter dependence; the combinatorial/algebraic topological computations are determined by the parameters $(\gamma, \nu, \theta)$, while the validity of these computations with respect to a ramp system depends on $h$.
Thus, we are interested in the set of $h$ that appear in $\Lambda(R)$ for a fixed $(\gamma, \nu, \theta)$, i.e., 
\begin{equation}
\label{eq:H0}
\cH_0(\gamma,\nu,\theta) := \setof{ h \mid (\gamma, \nu, \theta, h) \in \Lambda(R)},    
\end{equation}
and conversely, the set of $(\gamma, \nu, \theta)$ that appear in $\Lambda(R)$ for some $h$, that we denote by
\begin{equation}
\label{eq:lambdaS}
\Lambda(S) := \setdef{(\gamma, \nu, \theta)}{\cH_0(\gamma,\nu,\theta) \neq \emptyset }.
\end{equation}

The following lemma, whose proof is left to the reader, shows that $\Lambda(S)$ is an open set and independent of $h$.

\begin{lem}
Given $(\gamma, \nu, \theta)$ we have that $(\gamma, \nu, \theta) \in \Lambda(S)$ if and only if the following conditions are satisfied:
\begin{enumerate}
\item[(i)] If $(m, j), (m', j') \in \mathcal{J}(n)$ and $(m, j) \neq (m', j')$, then
\[
\theta_{m, n, j} \neq \theta_{m', n, j'}.
\]
\item[(ii)] For all $\bv \in \prod_{n=1}^N \setof{0, \ldots, K(n)}$ and all $n=1, \ldots, N$ we have
\[
\gamma_n \theta_{m_{k_n}, n, j_{k_n}} \neq E_n(D_{\bv}) \quad \text{and} \quad \gamma_n \theta_{m_{k_n + 1}, n, j_{k_n + 1}} \neq E_n(D_{\bv}).
\]
\end{enumerate}
\end{lem}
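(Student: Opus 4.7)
The plan is to prove the two implications separately, with the forward direction reducing to definition-chasing and the backward direction requiring an explicit construction of $h$.

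For the forward direction ($\Rightarrow$), assume $(\gamma,\nu,\theta) \in \Lambda(S)$, so there exists $h$ such that $(\gamma,\nu,\theta,h) \in \Lambda(R)$. By Definition~\ref{defn:admissible-parameters}, this means $(\theta,h)$ is admissible and the non-degeneracy conditions on the $\gamma_n \theta_{\cdot,n,\cdot}$ vs.\ $E_n(D_\bv)$ hold. Condition (i) of the lemma is exactly Definition~\ref{defn:admissiblehtheta}(i), and condition (ii) of the lemma is exactly the additional inequality in Definition~\ref{defn:admissible-parameters}, so both follow immediately.

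For the backward direction ($\Leftarrow$), assume (i) and (ii) hold; I need to produce some $h$ such that $(\gamma,\nu,\theta,h) \in \Lambda(R)$. Fix $n \in \{1,\ldots,N\}$. By (i), the multiset of output thresholds $\Theta(n) = \{\theta_{m,n,j} : (m,j) \in \mathcal{J}(n)\}$ consists of $K(n)$ distinct positive reals, so we can order them as in Definition~\ref{defn:admissiblehtheta}(ii) to obtain the indexing $(m_1,j_1),\ldots,(m_{K(n)},j_{K(n)})$ with strictly increasing threshold values. Set
\[
\delta_n := \tfrac{1}{2}\min\Bigl\{\, \theta_{m_1,n,j_1},\; \min_{1 \le k < K(n)}\bigl( \theta_{m_{k+1},n,j_{k+1}} - \theta_{m_k,n,j_k}\bigr) \Bigr\} > 0,
\]
and define $h_{m,n,j} := \tfrac{1}{2}\delta_n$ for every $(m,j) \in \mathcal{J}(n)$. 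Carrying this construction out for every $n$ produces a candidate $h$.

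I then verify in turn: (a) $h_{m,n} \in \Lambda_h$ for each ramp function $r_{m,n}$, which follows because the within-ramp inequalities are a special case of the across-variable inequalities and $\theta_{m,n,1} - h_{m,n,1} \geq \delta_n > 0$; (b) $(\theta,h)$ is admissible in the sense of Definition~\ref{defn:admissiblehtheta}, since (i) is given and \eqref{eq:theta_h_inequalities} is enforced by the choice of $\delta_n$; and (c) the remaining non-degeneracy condition of Definition~\ref{defn:admissible-parameters} is exactly hypothesis (ii), which is $h$-independent because $E_n(D_\bv)$ depends only on $\nu$ by Lemma~\ref{lem:well_defined_DV}. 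Thus $(\gamma,\nu,\theta,h) \in \Lambda(R)$, so $h \in \cH_0(\gamma,\nu,\theta)$ and $(\gamma,\nu,\theta) \in \Lambda(S)$.

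The main (mild) obstacle is the bookkeeping in step (b): one must observe that the within-ramp ordering constraint for a single ramp function $r_{m,n}$ is subsumed by the across-variable ordering constraint \eqref{eq:theta_h_inequalities} once (i) guarantees that all thresholds $\theta_{\cdot,n,\cdot}$ are distinct, so a single choice of small $h$ simultaneously handles the $\Lambda_h$ constraints and the admissibility constraints. Beyond this, the proof is a straightforward unpacking of definitions.
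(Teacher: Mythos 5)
Your proof is correct and is the natural one: the paper leaves this lemma to the reader, and the forward direction is indeed just unpacking Definitions~\ref{defn:admissiblehtheta} and \ref{defn:admissible-parameters}, while the backward direction is the same small-$h$ construction used later in the proof of Proposition~\ref{prop:smallh}. Your inclusion of the $\theta_{m_1,n,j_1}$ term in $\delta_n$, which guards the $\theta_{m,n,1}-h_{m,n,1}>0$ constraint from $\Lambda_h$, is a careful and correct refinement of the bound the paper writes down in Proposition~\ref{prop:smallh}, and your observation that the $\Lambda_h$ constraints are subsumed by the admissibility inequalities in \eqref{eq:theta_h_inequalities} once (i) holds is exactly the point that makes the construction go through.
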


\begin{rem}
In applications we are often interested in minimizing the slope of the ramps in the ramp system, which corresponds to maximizing the $h_{m, n, j}$ parameters. To that end, notice that given a parameter $(\gamma, \nu, \theta) \in \Lambda(S)$, the only restrictions on $h$ are the inequalities given by \eqref{eq:theta_h_inequalities} and hence we can use those inequalities to maximize the components of $h$.
\end{rem}

\begin{prop}
\label{prop:wall-labeling-const}
Let $(\gamma, \nu, \theta) \in \Lambda(S)$.
The wall labeling $\omega \colon W(\cX) \to \setof{\pm 1}$ is constant with respect to $h \in \cH_0(\gamma,\nu,\theta)$.
\end{prop}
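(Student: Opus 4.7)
The plan is to reduce the claim to a direct inspection of the formula defining $\omega$ and to Lemma~\ref{lem:well_defined_DV}. Fix $(\gamma,\nu,\theta)\in\Lambda(S)$ and take two admissible thickness parameters $h,h'\in\cH_0(\gamma,\nu,\theta)$. I would then pick an arbitrary $n$-wall $(\xi,\mu)\in W(\cX)$, write $\xi=[\bv,\bone^{(n)}]$ with $\bv=(k_1,\ldots,k_N)$, and compare the values
\[
\omega_h(\xi,\mu) = \sgn\!\bigl( -\gamma_n \theta_{m_{k_n},n,j_{k_n}} + E_n(D_\bv(\theta,h)) \bigr)
\quad\text{and}\quad
\omega_{h'}(\xi,\mu) = \sgn\!\bigl( -\gamma_n \theta_{m_{k_n},n,j_{k_n}} + E_n(D_\bv(\theta,h')) \bigr).
\]
Note that the indexing $(m_k,j_k)$ of $\mathcal{J}(n)$ is selected so that the thresholds are increasing in $k$ (see Definition~\ref{defn:admissiblehtheta}(ii)); since this ordering depends only on $\theta$, the indexing and hence $\theta_{m_{k_n},n,j_{k_n}}$ are independent of the particular choice of $h$.

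Next I would invoke Lemma~\ref{lem:well_defined_DV} which asserts that, for any admissible $(\theta,h)$, the function $E_n$ is constant on $D_\bv(\theta,h)$ with respect to $x$, $\theta$, and $h$. Thus the scalar $E_n(D_\bv(\theta,h))$ depends only on $\nu$ and on the combinatorial index $\bv$, not on $h$. In particular $E_n(D_\bv(\theta,h))=E_n(D_\bv(\theta,h'))$, so the two sign expressions above agree. Since the assumption $(\gamma,\nu,\theta)\in\Lambda(S)$ guarantees (via Definition~\ref{defn:admissible-parameters}) that $-\gamma_n\theta_{m_{k_n},n,j_{k_n}}+E_n(D_\bv)\neq 0$, the sign is well-defined and $\omega_h(\xi,\mu)=\omega_{h'}(\xi,\mu)$.

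This argument requires no intricate estimate; the whole content of the proposition is the observation that $h$ enters the construction only through the geometric widths of the plateaus $D_\bv(\theta,h)$, while the combinatorial data of ``which plateau is active in coordinate $n$'' and ``which constant value each ramp function takes on $D_\bv$'' are entirely determined by $\bv$ and $\nu$. The only mild bookkeeping obstacle is to confirm that neither the ordering of output thresholds used to build $\mathcal{J}(n)$ nor the admissibility condition on signs requires $h$, but both follow immediately from the definitions in Section~\ref{sec:ramp2rook}.
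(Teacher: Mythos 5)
Your proof is correct and follows essentially the same route as the paper's: both invoke Lemma~\ref{lem:well_defined_DV} to conclude that $E_n(D_{\bv}(\theta,h))$ is independent of $h$, and both then observe that the sign expression defining $\omega$ depends only on $\theta$ and that constant. Your version just spells out the bookkeeping (independence of the indexing $\mathcal{J}(n)$ from $h$, and well-definedness of the sign via $\Lambda(S)$) that the paper leaves implicit.
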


\begin{proof}
Let $(\gamma, \nu, \theta) \in \Lambda(S)$ and consider $\bv = (k_1, \ldots, k_N$). By Lemma~\ref{lem:well_defined_DV} the value of $E_n(D_{\bv})$ is constant with respect to $h$. Thus, $\sgn( -\gamma_n \theta_* + E_n(D_{\bv}) )$ is constant with respect to $h$, and therefore, the wall labeling is independent of $h$.
\end{proof}

To summarize the results of this section, if $(\gamma, \nu, \theta, h) \in \Lambda(R)$ then the ramp induced wall labeling $\omega\colon \cW(\cX)\to \setof{\pm 1}$ is strongly dissipative and is constant with respect to $h\in\cH_0(\gamma, \nu, \theta)$.
As is demonstrated in Chapter~\ref{sec:R0}, if $h\in\cH_0(\gamma, \nu, \theta)$, then given $\cF_0$, via Theorem~\ref{thm:dynamics}, the associated Morse graph and connection matrix is applicable to the ramp system at parameter value $(\gamma, \nu, \theta, h)$.
To obtain similar results for $\cF_i$, for $i \in \setof{1, 2, 3}$, requires further restrictions on $h$.
In the remainder of this section we define $\cH_i(\gamma, \nu, \theta)$, which provides sufficient restrictions for $\cF_i$. 

\begin{defn}
\label{defn:H1}
Let $(\gamma, \nu, \theta) \in \Lambda(S)$. 
Define $\cH_1(\gamma, \nu, \theta)$ as the set of $h \in \cH_0(\gamma, \nu, \theta)$ satisfying the following conditions:
\begin{equation}
\label{eq:cH1right}
\frac{E_n(D_{\bv})}{\gamma_n} \not\in \left( \theta_{m_{k_n}, n, j_{k_n}}, \theta_{m_{k_n}, n, j_{k_n}} + h_{m_{k_n}, n, j_{k_n}} \right),    
\end{equation}
and
\begin{equation}
\label{eq:cH1left}
\frac{E_n(D_{\bv})}{\gamma_n} \not\in \left( \theta_{m_{k_n + 1}, n, j_{k_n + 1}} - h_{m_{k_n + 1}, n, j_{k_n + 1}}, \theta_{m_{k_n + 1}, n, j_{k_n + 1}} \right)
\end{equation}
for all $n \in \setof{1, \ldots, N}$ and all $\bv = (k_1, k_2, \ldots, k_N) \in \prod_{n=1}^N \setof{1, \ldots, K(n)}$.
\end{defn}

To obtain intuition concerning the constraints in Definition~\ref{defn:H1} observe that the assumption that $(\gamma, \nu, \theta) \in \Lambda(S)$ implies that 
\begin{equation}
\label{eq:cH1neq}
\frac{E_n(D_{\bv})}{\gamma_n}\neq \theta_{m_{k_n}, n, j_{k_n}}
\end{equation}
for all $n \in \setof{1, \ldots, N}$ and all $\bv = (k_1, k_2, \ldots, k_N) \in \prod_{n=1}^N \setof{1, \ldots, K(n)}$, or equivalently, that $\sgn\left(-\gamma_n \theta_{m_{k_n}, n, j_{k_n}} + E_n(D_{\bv})\right)$ is well defined.
Conditions \eqref{eq:cH1right} and \eqref{eq:cH1left} force $\sgn\left(-\gamma_n x_n + E_n(D_{\bv})\right)$ to agree with $\sgn\left(-\gamma_n \theta_{m_{k_n}, n, j_{k_n}} + E_n(D_{\bv})\right)$ for $x_n$ in the intervals given in \eqref{eq:cH1right} and \eqref{eq:cH1left}.

To define $\cH_2$ it is convenient to introduce the following notation.

Let $(\gamma, \nu, \theta) \in \Lambda(S)$.
Given $\xi=[\bv,\bw] \in \cX$ with $\bv=(k_1,\ldots,k_N) \in \prod_{n=1}^N \setof{1,\dots,K(n)}$, define the interval $I_n(\xi)$ by
\begin{equation}
I_n(\xi) = \begin{cases}
    \left[\theta_{m_{k_n},n,j_{k_n}}-h_{m_{k_n},n,j_{k_n}},\theta_{m_{k_n},n,j_{k_n}}+h_{m_{k_n},n,j_{k_n}}\right] & \text{if } n \in J_i(\xi), \\
\left[\theta_{m_{k_n},n,j_{k_n}}+h_{m_{k_n},n,j_{k_n}},\theta_{m_{k_n+1},n,j_{k_n+1}}-h_{m_{k_n+1},n,j_{k_n+1}}\right] & \text{if } n \in J_e(\xi),
    \end{cases}
\end{equation}
and denote the length of the interval by
\begin{equation}
    \label{eq:IntervalLength}
\Xi_n(\xi) := \begin{cases}
    2h_{m_{k_n},n,j_{k_n}} & \text{if $n \in J_i(\xi)$,} \\
\theta_{m_{k_n+1},n,j_{k_n+1}}-h_{m_{k_n+1},n,j_{k_n+1}} -(\theta_{m_{k_n},n,j_{k_n}}+h_{m_{k_n},n,j_{k_n}}), & \text{if $n \in J_e(\xi)$.}
\end{cases}
\end{equation}

We also make use of the upper and lower bounds of the ramp system on the cell $\xi$, which are defined by
\begin{align}
\label{eq:Ln}
        L_n(\xi) & = \min\setdef{\inf_{x_n \in I_n(\xi)} \left| -\gamma_n x_n + E_n(\mu) \right|}{\mu \in T(\xi)}, \\
\label{eq:Un}
U_n(\xi) & = \max\setdef{\sup_{x_n \in I_n(\xi)} \left| -\gamma_n x_n + E_n(\mu) \right|}{\mu \in T(\xi)}.
\end{align}

\begin{defn}
\label{defn:H2}
Let $(\gamma,\nu,\theta)\in\Lambda(S)$. 
Define $\cH_2(\gamma,\nu,\theta)$ to be the set of all $h \in \cH_1(\gamma,\nu,\theta)$ that satisfies the following conditions for each pair $(\xi,\xi') \in \cX^{(N-2)}\times \cX^{(N-1)}$ that exhibits indecisive drift with GO-pair $(n_g,n_o)$.  
\begin{itemize}
    \item[(i)] If $\xi \in \cF_2(\xi')$, then 
    \begin{equation}
        \label{eq:F2-bounds}
        2h_{{n_o},{n_g},j_{k_{n_g}}} < \frac{L_{n_g}(\xi')}{U_{n_o}(\xi')} \frac{\Xi_{n_o}(\xi')}{2}.
    \end{equation}
    \item[(ii)] If $\xi' \in \cF_2(\xi)$, $n_o$ is not cyclic at any $\sigma \prec \xi$ and $n_o\notin \activeset(\xi)$, then
    \begin{equation}
        \label{eq:F2-bounds-internal-not-active}
        2 h_{n_o,n_g,j_{k_{n_g}}} < \frac{L_{n_g}(\xi)}{U_{n_o}(\xi)} 2 h_{\rmap\xi(n_o),n_o,j_{k_{n_o}}}. 
    \end{equation}
    \item[(iii)] If $\xi' \in \cF_2(\xi)$, $n_o$ is not cyclic at any $\alpha \prec \xi$ and $n_o\in \activeset(\xi)$, then let $n_o'=\rmap\xi(n_o)$ so that for all $\alpha \prec \xi$ such that $\sigma \darrow_{\cF_2} \xi'$ and $\Ex(\alpha,\xi)=\setof{n_o'}$
    \begin{equation}
      \label{eq:F2-bounds-internal-not-cyclic}
      \begin{aligned} 
         & 2 h_{n_o,n_g,j_{k_{n_g}}} <  \\ 
         & \frac{L_{n_g}(\xi)}{U_{n_o}(\xi)} 
          \left( E_{n_o'}^{-1}\left( \gamma_{n_o'}\left( \theta_{*,n_o',j_{k_{n_o'}}} 
         + p_{n_o'}(\alpha, \xi)h_{*,n_o',j_{k_{n_o'}}} \right) \right)
        -\left( \theta_{n_o',n_o,j_{k_{n_o}}} + p_{n_o}(\xi, \xi') h_{n_o',n_o,j_{k_{n_o}}}\right)\right),
      \end{aligned}
    \end{equation}
    where $E_{n_o'}$ depends only on $x_{n_o}$. 
    \item[(iv)] If $\xi' \in \cF_2(\xi)$ and $n_o$ is cyclic at some $\alpha \prec \xi$, then let $n_o'=\rmap\xi(n_o)$ and for all $\alpha \prec \xi$ with $\Ex(\alpha,\xi)=\setof{n_o}$
    \begin{equation}
        \label{eq:F2-bounds-cyclic}
        \frac{\gamma_{n_g}(\theta_{*,n_g,*}+r_{n_g}h_{*,n_g,*})-E_{n_g}}{\gamma_{n_g}(\theta_{*,n_g,*}-r_{n_g}h_{*,n_g,*})-E_{n_g}} > \frac{\gamma_{n_o}E_{n_o'}^{-1}(\gamma_{n_o'}(\theta_{*,n_o',*}+p_{n_o'}(\alpha,\xi)h_{*,n_o',*})-E_{n_g}/\gamma_{n_g}}{\gamma_{n_o}(\theta_{*,n_o,*}+p_{n_o}(\xi,\xi')h_{*,n_o',*})-E_{n_g}/\gamma_{n_g}}
    \end{equation}
    where $E_{n_g}=E_{n_g}(\mu)$ for any $\mu \in \Top_{\cX}(\xi)$ and $E_{n_o'}$ depends only on $x_{n_o}$. 
\end{itemize}
\end{defn}

To obtain explicit bounds, notice that given $n_o \in J_e(\xi')$, the inequality~\eqref{eq:F2-bounds} may be rewritten as 
\begin{equation}
\label{eq:F2-bounds-h-explicit}
\begin{aligned}
2h_{{n_o},{n_g},j_{k_{n_g}}} + \frac{L_{n_g}(\xi')}{U_{n_o}(\xi')}\frac{\left(h_{m_{k_{n_g}+1},n_g,j_{k_{n_g}+1}}+h_{n_o,n_g,j_{k_{n_g}}}\right)}{2} & \\
< \frac{L_{n_g}(\xi')}{U_{n_o}(\xi')}\frac{(\theta_{m_{k_{n_g}+1},n_g,j_{k_{n_g}+1}}-\theta_{n_o,n_g,j_{k_{n_g}}})}{2}. &
\end{aligned}
\end{equation}

\begin{defn}
\label{defn:H3}
Let $(\gamma,\nu,\theta)\in\Lambda(S)$.
For $N=2$, define 
\[
\cH_3(\gamma,\nu,\theta)=\cH_1(\gamma,\nu,\theta).
\]
For $N=3$, let $\xi =[\bv,\bzero] \in \cX^{(0)}$ be a cell whose regulation map $\rmap\xi$ is a 3-cycle, i.e., $\rmap\xi=(1\, 2\, 3)$ or $\rmap\xi=(1\, 3\, 2)$. Let $\kappa^+(\xi)=[\bv,\bone]$ and $\kappa^-(\xi)=[\bv-\bone,\bone]$ be top cells of $\xi$. 
Define $\cH_3(\gamma,\nu,\theta)$ to be the set of all $h \in \cH_1(\gamma,\nu,\theta)$ such that 
    \begin{equation}
    \label{eq:F3-bounds}
        8 \prod_{n=1}^{3} h_{\rmap\xi(n),n,j_{k_n}} < \frac{\prod_{n=1}^{3} \left| E_n(\kappa^+(\xi))-E_n(\kappa^-(\xi))\right|}{-\gamma_1\gamma_2\gamma_3 + (\gamma_1+\gamma_2+\gamma_3)(\gamma_1\gamma_2+\gamma_1\gamma_3+\gamma_2\gamma_3)}
    \end{equation}
whenever the regulation map $\rmap\xi$ at $\xi \in \cX^{(0)}$ is a 3-cycle. 
\end{defn}
While we do not provide a proof that there always exist an $h \in \cH_1(\gamma,\nu,\theta)$ that satisfy \eqref{eq:F2-bounds-cyclic}, we verify numerically its existence in all necessary examples of Chapter~\ref{sec:examples}. For now we will provide a proof that $\cH_i(\gamma,\nu,\theta)\neq\emptyset$ for $i=0,1,3$.

\begin{prop}
\label{prop:smallh}
If $(\gamma, \nu, \theta) \in \Lambda(S)$, then for each $i \in \setof{0, 1, 3}$ there exists $\tilde{h}_i(\gamma, \nu, \theta) > 0$ such that $h = (\bar{h}, \bar{h}, \ldots, \bar{h}) \in \cH_i(\gamma, \nu, \theta)$ for any $\bar{h} \in (0, \tilde{h}_i(\gamma, \nu, \theta))$. In particular
\[
\cH_i(\gamma, \nu, \theta) \neq \emptyset\quad \text{for $i = 0, 1, 3$}.
\]
\end{prop}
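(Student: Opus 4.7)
The plan is to handle the three cases $i = 0, 1, 3$ in sequence, exploiting the common observation that each $h$-dependent inequality in the definitions of $\cH_i$ is a strict inequality whose tight side is controlled by a positive quantity determined by $(\gamma,\nu,\theta)$ alone; choosing $\bar{h}$ small relative to these finitely many positive quantities will produce the desired $\tilde{h}_i$.

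For $i = 0$, the only $h$-dependent requirements are the interleaving inequalities \eqref{eq:theta_h_inequalities} together with $\theta_{m_1,n,j_1} - h_{m_1,n,j_1} > 0$, since the remaining conditions in Definitions~\ref{defn:admissiblehtheta} and \ref{defn:admissible-parameters} are $h$-independent and are guaranteed by $(\gamma,\nu,\theta)\in\Lambda(S)$. Since $\Lambda(S)$ forces all output thresholds to be distinct, the finite minimum
\[
\delta_0 := \min_{n,k}\setof{\theta_{m_{k+1},n,j_{k+1}} - \theta_{m_k,n,j_k},\ \theta_{m_1,n,j_1}}
\]
is strictly positive, so any $\bar{h} < \delta_0/2$ works. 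For $i = 1$, Definition~\ref{defn:H1} additionally forbids $E_n(D_{\bv})/\gamma_n$ from lying in the two $h$-sized intervals adjacent to each threshold. Because $\Lambda(S)$ rules out equalities between $E_n(D_{\bv})/\gamma_n$ and the thresholds, the finite minimum
\[
\delta_1 := \min_{n,\bv,k}\left|\frac{E_n(D_{\bv})}{\gamma_n} - \theta_{m_k,n,j_k}\right|
\]
is strictly positive, and setting $\tilde{h}_1 := \min(\delta_0/2,\delta_1)$ places any uniform $h=(\bar{h},\ldots,\bar{h})$ with $\bar{h} < \tilde{h}_1$ into $\cH_1(\gamma,\nu,\theta)$.

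For $i = 3$, the case $N = 2$ is immediate since $\cH_3 = \cH_1$, so I set $\tilde{h}_3 = \tilde{h}_1$. For $N = 3$, the additional inequality \eqref{eq:F3-bounds} must be verified at each of the finitely many $0$-cells $\xi$ whose regulation map $\rmap\xi$ is a $3$-cycle. Under uniform $h$ the LHS is $8\bar{h}^3$, which vanishes as $\bar{h}\to 0^+$, so it suffices to bound the RHS below by a strictly positive constant uniformly in $\xi$. The denominator expands to $\sum_{i\neq j}\gamma_i^2\gamma_j + 2\gamma_1\gamma_2\gamma_3 > 0$. For the numerator, a $3$-cycle $\rmap\xi$ is a bijection on $\setof{1,2,3}$, so each $n$ is actively regulated at $\xi$ by the unique direction $m_n := \rmap\xi^{-1}(n)$; one then argues that this forces $E_n(\kappa^+(\xi)) \neq E_n(\kappa^-(\xi))$, whence the numerator is strictly positive. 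Taking the minimum of the resulting positive RHS over the finite collection of $3$-cycle $0$-cells and letting $\tilde{h}_3$ be the minimum of $\tilde{h}_1$ and the cube root of one-eighth of this value completes the construction.

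The main obstacle is the last step of the $i = 3$ case, namely verifying that $|E_n(\kappa^+(\xi)) - E_n(\kappa^-(\xi))| > 0$ for each $n$. Active regulation at the wall-labeling level controls only the signs of $-\gamma_n x_n + E_n$, not the raw values of $E_n$, so one must descend to the interaction-function structure of the ramp nonlinearity and exploit the bijectivity of a $3$-cycle to conclude that $E_n$ varies non-trivially along the $m_n$-coordinate while the remaining two coordinates do not contribute cancelling variation at $\xi$; a short case analysis on whether $f_n$ is type~I or type~II then rules out exact cancellation between the three contributions at opposite corners of the cube of top cells at $\xi$.
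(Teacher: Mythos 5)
Your treatment of $i=0,1$ is essentially the paper's own proof, with your $\delta_0$ also handling the ambient constraint $\theta_{m_1,n,j_1}-\bar{h}>0$ coming from \eqref{eq:h}, which the published proof leaves implicit. For $i=3$ with $N=3$ you have identified a genuine gap in the paper's argument: the stated sufficient bound $\overline{h}<\tfrac12(\,\cdot\,)^{1/3}$ is vacuous unless the numerator $\prod_{n=1}^{3}\bigl|E_n(\kappa^+(\xi))-E_n(\kappa^-(\xi))\bigr|$ is strictly positive, and the published proof never establishes this.

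Your plan to close the gap is sound, but the ``cancellation'' you anticipate cannot occur, which makes the argument cleaner than a case split on $f_n$. Since $\rmap\xi$ is $\tilde{o}_\sigma$ restricted to $\activeset(\xi)=\setof{1,2,3}$ and is a bijection there, $\tilde{o}_\sigma(k)=n$ holds for exactly one $k$, namely $k=\rmap\xi^{-1}(n)$. Passing from $D_{\bv-\bone}$ to $D_{\bv}$ crosses exactly one output threshold per coordinate, $\theta_{m_{k_k},k,j_{k_k}}$, and this is a threshold of the ramp $r_{n,k}$ only when $m_{k_k}=\tilde{o}_\sigma(k)=n$. Hence for $k\neq\rmap\xi^{-1}(n)$ the input $r_{n,k}$ is constant on the union of the two rectangles, so the entire variation of $E_n$ between $\kappa^-(\xi)$ and $\kappa^+(\xi)$ comes from the single input $r_{n,\rmap\xi^{-1}(n)}$; there is nothing to cancel, and a full type~I versus type~II case analysis is unnecessary. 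Strict monotonicity of interaction functions in each positive argument together with $\nu_j\neq\nu_{j+1}$ (see \eqref{eq:nu}) then gives $E_n(\kappa^+(\xi))\neq E_n(\kappa^-(\xi))$ directly. With positivity secured, your choice of $\tilde{h}_3$ as the minimum of $\tilde{h}_1$ and one-half the cube root of the right-hand side of \eqref{eq:F3-bounds}, taken over the finitely many $3$-cycle $0$-cells, finishes the proof.
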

\begin{proof}
For $i = 0$, notice that given $(\gamma, \nu, \theta) \in \Lambda(S)$, the only restrictions on $h$ for it to be in $\cH_0(\gamma, \nu, \theta)$ are given by equation \eqref{eq:theta_h_inequalities} in Definition~\ref{defn:admissiblehtheta}. Hence, with the notation in Definition~\ref{defn:admissiblehtheta}, if we take
\[
h_0(\gamma, \nu, \theta, 0) \, = \min_{ \substack{n = 1, \ldots, N \\ k = 1, \ldots, K(n) - 1} } { \frac{\theta_{m_{k+1}, n, j_{k+1}} - \theta_{m_k, n, j_k}}{2} }
\]
the result follows for $\cH_0(\gamma, \nu, \theta)$.

For $i = 1$, notice that given $(\gamma, \nu, \theta) \in \Lambda(S)$, the only additional restrictions on $h$ for it to be in $\cH_1(\gamma, \nu, \theta) \subset \cH_0(\gamma, \nu, \theta)$ are the conditions in Definition~\ref{defn:H1}. To define bounds on $h$ satisfying those conditions let $\tilde{h} = \tilde{h}(\gamma, \nu, \theta) > 0$ be such that if $E_n(D_{\bv}) / \gamma_n \in (\theta_{m_{k_n}, n, j_{k_n}}, \theta_{m_{k_n + 1}, n, j_{k_n + 1}})$, then
\[
\tilde{h} \leq \min{ \setof{ \left| \theta_{m_{k_n}, n, j_{k_n}} - \frac{E_n(D_{\bv})}{\gamma_n} \right|, \left| \theta_{m_{k_n + 1}, n, j_{k_n + 1}} - \frac{E_n(D_{\bv})}{\gamma_n} \right| } },
\]
for all $n \in \setof{1, \ldots, N}$ and all $\bv = (k_1, k_2, \ldots, k_N) \in \prod_{n=1}^N \setof{1, \ldots, K(n)}$. Notice that if $h = (\bar{h}, \bar{h}, \ldots, \bar{h}) \in \cH_0(\gamma, \nu, \theta)$ and $\bar{h} < \tilde{h}$, then $h$ satisfy the conditions of Definition~\ref{defn:H1}, which implies that $h \in \cH_i(\gamma, \nu, \theta)$. Therefore taking
\[
h_0(\gamma, \nu, \theta, 1) \, = \min { \setof{ h_0(\gamma, \nu, \theta, 0), \, \tilde{h}(\gamma, \nu, \theta) } }
\]
the result follows for $\cH_1(\gamma, \nu, \theta)$.

To conclude, we show that there exists $h \in \cH_1(\gamma,\nu,\theta)$ that satisfies \eqref{eq:F3-bounds} when $N = 3$. 
Indeed, if $h = (\bar{h}, \bar{h}, \ldots, \bar{h}) \in \cH_1(\gamma, \nu, \theta)$ satisfies
\[
    \overline{h} < \frac{1}{2} \left( \frac{\prod_{n=1}^{3} \left| E_n(\kappa^+(\xi))-E_n(\kappa^-(\xi))\right|}{-\gamma_1\gamma_2\gamma_3 + (\gamma_1+\gamma_2+\gamma_3)(\gamma_1\gamma_2+\gamma_1\gamma_3+\gamma_2\gamma_3)} \right)^{\frac{1}{3}},
\]
then $h$ satisfies the inequality \eqref{eq:F3-bounds}.
\end{proof}

\section{Properties of Wall Labeling induced by Ramp Systems}
\label{sec:wall-ramp-property}
A natural question to ask is  whether any wall labeling can be originated from ramp systems. The answer is negative. In what follows, we present properties of wall labelings that are derived from ramp systems nonlinearities and give an example of wall labeling that does not satisfy it. 

Let $\dot{x}=-\Gamma x + E(x;\nu,\theta,h)$ be a $N$-dimensional ramp system with a fixed admissible parameter $(\gamma,\nu,\theta,h) \in \Lambda(S)$ as in Definition~\ref{defn:admissible-parameters}. Let $\cX=\cX(\I)$ be the cubical complex associated to the ramp system and $\omega : W(\cX) \to \setof{\pm 1}$ the wall labeling given by Definition~\ref{defn:ramp-wall-labeling}. Given a wall labeling, we defined an associated Rook Field as in Definition~\ref{def:rookfield}. 

We remind the reader that for each $\bv \in \prod_{n=1}^N\setof{1,\ldots,K(n)}$, there is an indexing $\mathcal{J}(n)=\setdef{(m_k,j_k)}{k=1,\ldots,K(n)}$ (see \eqref{eq:Jn}) that provides a one-to-one correspondence between $\bv=(k_1,\ldots,k_N)$ and $(\theta_{m_{k_1},1,j_{k_1}},\ldots,\theta_{m_{k_N},N,j_{k_N}})$.

The following proposition set us up to classify locally the wall labeling derived from ramp systems. We say that $f : \mathbb{R}^n \to \mathbb{R}$ is monotone on the first variable if for all $y \in \mathbb{R}^{n-1}$, the function $x \mapsto f(x,y)$ is monotone. Moreover, we say that it has the same type of monotonicity for all $x \in \mathbb{R}$ if $x \leq x'$ implies that either $f(x,y)\leq f(x',y)$ for all $y \in \mathbb{R}^{n-1}$ or $f(x,y)\geq f(x',y)$ for all $y \in \mathbb{R}^{n-1}$. 
\begin{prop}
    \label{prop:local-monotone}
    For any $\bv=(k_1,\ldots,k_N) \in \prod_{n=1}^N\setof{1,\ldots,K(n)}$ and $n \in \setof{1,\ldots,N}$, $E_n(x;\nu,\theta,h)$ is monotone in each variable $x_i$ on
    \[
       S_\bv(\theta,h) \coloneqq \prod_{n=1}^N (\theta_{m_{k_n}-1, n, j_{k_n}-1} + h_{{k_n}-1, n, j_{k_n}-1}, \theta_{m_{k_n + 1}, n, j_{k_n + 1}} - h_{m_{k_n + 1}, n, j_{k_n + 1}}).
    \]
\end{prop}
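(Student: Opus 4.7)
The plan is to reduce monotonicity of $E_n$ to monotonicity of a single ramp function $r_{n,i}(x_i)$ on the relevant interval, using the fact that type I and type II interaction functions are built from sums and products over a \emph{partition} of source coordinates and that all ramp values are strictly positive.

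First I would fix $i \in \setof{1,\ldots,N}$ and observe that if $i \notin \source(n)$, then $E_n$ is independent of $x_i$ and monotonicity is trivial. So we may assume $i \in \source(n)$. The key geometric fact is that for each coordinate, the projection of $S_\bv(\theta,h)$ onto the $i$-th axis is an interval whose interior contains at most one output threshold of the variable $x_i$, namely $\theta_{m_{k_i},i,j_{k_i}}$: this follows directly from the indexing \eqref{eq:Jn} of $\mathcal{J}(i)$, since the endpoints of that interval are the two adjacent ordered thresholds $\theta_{m_{k_i-1},\ldots}+h_{\ldots}$ and $\theta_{m_{k_i+1},\ldots}-h_{\ldots}$, and admissibility of $(\theta,h)$ (Definition~\ref{defn:admissiblehtheta}(ii)) guarantees that these buffer-shifted bounds do not trap any further threshold. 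Since the thresholds of the individual ramp $r_{n,i}$ form a subset of $\Theta(i)$, at most one threshold of $r_{n,i}$ lies in this interval. Consulting Definition~\ref{defn:rampfunction}, $r_{n,i}$ is therefore either constant, or of the form ``constant, linear segment, constant'' on this interval, and in particular it is monotone. Its monotonicity type is determined by the sign of $\nu_{n,i,k_i}-\nu_{n,i,k_i-1}$.

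Next I would propagate this local monotonicity through the interaction function. Write $\source(n) = \bigsqcup_{j=1}^q I_j$ for the partition determining the interaction function $f_n$, and let $j_0$ be the unique index with $i \in I_{j_0}$. Suppose first that $E_n$ is of type II, so that
\[
E_n(x) = \sum_{j=1}^{q}\prod_{\ell \in I_j} r_{n,\ell}(x_\ell).
\]
For $j \neq j_0$, the term $\prod_{\ell \in I_j} r_{n,\ell}(x_\ell)$ is independent of $x_i$; for $j=j_0$, we may factor
\[
\prod_{\ell \in I_{j_0}} r_{n,\ell}(x_\ell) = r_{n,i}(x_i)\cdot\prod_{\ell \in I_{j_0}\setminus\setof{i}} r_{n,\ell}(x_\ell).
\]
The second factor is a strictly positive quantity that is constant in $x_i$ (since the other $x_\ell$ are held fixed), so the whole product is a positive constant multiple of $r_{n,i}(x_i)$ and is therefore monotone on $S_\bv(\theta,h)$ in the same sense as $r_{n,i}$. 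Adding the $x_i$-independent terms preserves monotonicity, so $E_n$ is monotone in $x_i$.

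Now suppose $E_n$ is of type I, so
\[
E_n(x) = \prod_{j=1}^{q}\sum_{\ell \in I_j} r_{n,\ell}(x_\ell).
\]
For $j \neq j_0$, the factor $\sum_{\ell \in I_j} r_{n,\ell}(x_\ell)$ is strictly positive and independent of $x_i$; for $j=j_0$, the factor $r_{n,i}(x_i) + \sum_{\ell \in I_{j_0}\setminus\setof{i}} r_{n,\ell}(x_\ell)$ is a monotone function of $x_i$ (translate of $r_{n,i}$) of the same monotonicity type as $r_{n,i}$. Multiplying by the positive constant $\prod_{j\neq j_0}\sum_{\ell \in I_j} r_{n,\ell}(x_\ell)$ preserves monotonicity, so again $E_n$ is monotone in $x_i$. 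This completes the proof. The main routine obstacle is merely notational bookkeeping in the indexing of $\mathcal{J}(i)$ to certify that exactly one threshold of $r_{n,i}$ lies in the $i$-th slab of $S_\bv(\theta,h)$; no new dynamical input is needed beyond admissibility.
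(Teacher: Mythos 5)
Your proposal is correct and follows essentially the same approach as the paper's proof: reduce monotonicity of $E_n$ in $x_i$ to monotonicity of the single ramp $r_{n,i}$ on the relevant interval, then propagate through the interaction function using nonnegativity of the other ramp values. The one place you are more careful than the paper: you treat types I and II separately, whereas the paper writes a single formula $E_n(y)-E_n(x) = c(x)\bigl(r_{n,i}(y_i)-r_{n,i}(x_i)\bigr)$ with $c(x)=\prod_{j\in I\setminus\{i\}} r_{n,j}(x_j)$, which is literally correct only for type II (a sum of products); for type I (a product of sums) the multiplier should be the product of the \emph{other factor sums}, $\prod_{j\neq j_0}\sum_{\ell\in I_j} r_{n,\ell}(x_\ell)$, exactly as you write. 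The conclusion is unchanged since both multipliers are nonnegative, but your version is more precise.
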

\begin{proof}
    Recall that if $r(x;\nu,\theta,h)$ is a ramp function with thresholds $\theta_1, \ldots, \theta_J$, then $r$ is monotone in $(\theta_{j-1}+h_{j-1},\theta_{j+1}-h_{j+1})$ for each $i \in \setof{1,\ldots,J}$. In particular, if $x,y \in (\theta_{j-1}+h_{j-1},\theta_{j+1}-h_{j+1})$ and $0 \leq y-x$, then 
    \[
    0 \leq (r(y;\nu,\theta,h)-r(x;\nu,\theta,h) )(\nu_{j}-\nu_{j-1}).
    \]
    Let $\bv=(k_1,\ldots,k_N) \in \prod_{n=1}^N\setof{1,\ldots,K(n)}$ so that for each $n$, $E_n$ is given by
    \[
    E_n(x;\nu,\theta,h) = f_n\left(r_{n,m_1}(x_{m_1}),\ldots,r_{n,m_{K(n)}}(x_{m_{K(n)}})\right),
    \]
    where $f_n$ is a type I or II interaction function and $r_{n,m}$ are ramp functions. 
    Let $x,y \in S_\bv(\theta,h)$ be such that $x_n = y_n$ for $n\neq i$ and $x_i \leq y_i$.  
    Since $f_n$ is a sum of products or product of sums of ramp functions in $S_\bv(\theta,h)$, let $I \in \setdef{I_j}{1 \leq j \leq N}$ be the subset that contains $i$ in the partition of $\setof{1,\ldots,K}$ done by $f$. Then
    \[
        E_n(y;\nu,\theta,h) - E_n(x;\nu,\theta,h) = c(x;\nu,\theta,h)\left( r_{n,i}(y_i;\nu,\theta,h)-r_{n,i}(x_i;\nu,\theta,h) \right)
    \]
    where $c(x;\nu,\theta,h) = \prod_{\substack{j \in I\setminus\setof{i}}} r_{n,j}(x_j,\nu,\theta,h) \geq 0$ for all $x \in S_\bv(\theta,h)$. Thus, $E_n$ is monotone on each coordinate $x_i$ on $S_\bv(\theta,h)$ with the same monotonicity as $r_{n,i}$. 
\end{proof}
Observe that Proposition~\ref{prop:local-monotone} implies that $-\gamma_n x_n + E_n(x;\nu,\theta,h)$ is monotone with respect to each input $x_i$ in the region $S_\bv(\theta,h)$ with the same type of monotonicity. We remark, however, that it might be monotonically decreasing with respect to a variable $x_i$ while being monotonically increasing with respect to a variable $x_j$. 

In terms of wall labeling and rook fields, we obtain the following proposition. 
\begin{prop}
    \label{prop:wall-labeling-local-monotone}
    Let $\sigma=[\bv,\bzero]\in \cX^{(0)}\setminus\bbdy(\cX)$ with $\bv=(k_1,\ldots,k_N)$. If $\rook : TP(\cX) \to \setof{0,\pm 1}^N$ is the associated rook field, then $\Psi_n \colon \setof{0,1}^N \to \setof{\pm 1}$ given by
    \[
        \Psi_n(\bv') = \rook_n \left( [\bv,\bzero] , [\bv-\bone+\bv',\bone] \right)
    \]
    is monotone on each entry for all $n \in \setof{1,\ldots,N}$.
\end{prop}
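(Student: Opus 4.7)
The plan is to reduce $\Psi_n(\bv')$ to a sign of a single monotone function, then apply Proposition~\ref{prop:local-monotone} coordinate by coordinate. The key observation is that the threshold appearing in the wall labeling does not depend on $\bv'$, so all of the $\bv'$ dependence is funneled through the value of $E_n$ on a rectangle that shifts monotonically with $\bv'$.

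First I would unpack $\rook_n(\sigma,\mu)$ for $\mu = [\bv - \bone + \bv', \bone]$. Since $n \in J_i(\sigma)$ for every $n$ (because $\sigma$ is a vertex), Corollary~\ref{cor:mu*} gives $\rook_n(\sigma,\mu) = \omega(\mu_n^*(\sigma,\mu), \mu)$. A direct check using the face relation~\eqref{eq:is_a_face} shows that $\mu_n^*(\sigma,\mu) = \mu_n^-$ when $\bv'_n = 1$ and $\mu_n^*(\sigma,\mu) = \mu_n^+$ when $\bv'_n = 0$. In both cases the $n$-th coordinate of the position vector of $\mu_n^*(\sigma,\mu)$ equals $k_n$, so by Definition~\ref{defn:ramp-wall-labeling},
\[
\Psi_n(\bv') \;=\; \sgn\!\bigl(-\gamma_n\,\theta_{m_{k_n},n,j_{k_n}} + E_n([\bv - \bone + \bv', \bone])\bigr).
\]
Thus the $\theta$-term is a constant independent of $\bv'$, and only $E_n$ depends on $\bv'$.

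Next I would identify the domain rectangles $D_{\bv - \bone + \bv'}$. Writing out the product formula, the $i$-th factor of $D_{\bv - \bone + \bv'}$ is an interval whose endpoints lie in $\{\theta_{m_{k_i-1},i,j_{k_i-1}} \pm h_{\cdot}, \theta_{m_{k_i},i,j_{k_i}} \pm h_{\cdot}, \theta_{m_{k_i+1},i,j_{k_i+1}} \pm h_{\cdot}\}$ (since $\sigma \notin \bbdy(\cX)$ these are all well defined), with the $\bv'_i = 0$ case occupying the lower half and the $\bv'_i = 1$ case occupying the upper half of
\[
S_\bv(\theta,h) \;=\; \prod_{i=1}^N \bigl(\theta_{m_{k_i-1},i,j_{k_i-1}} + h_{m_{k_i-1},i,j_{k_i-1}},\; \theta_{m_{k_i+1},i,j_{k_i+1}} - h_{m_{k_i+1},i,j_{k_i+1}}\bigr).
\]
In particular, every $D_{\bv - \bone + \bv'}$ is contained in $S_\bv(\theta,h)$, and flipping $\bv'_i$ from $0$ to $1$ shifts the $i$-th factor strictly to the right while leaving the other factors unchanged.

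The conclusion then falls out from Proposition~\ref{prop:local-monotone}: on $S_\bv(\theta,h)$, $E_n$ is monotone in each $x_i$ with a fixed type (increasing or decreasing) that is independent of the values of the remaining coordinates. Since flipping $\bv'_i$ from $0$ to $1$ moves the rectangle $D_{\bv - \bone + \bv'}$ to one strictly to the right in the $i$-th direction (with all other factors fixed), the constant value $E_n(D_{\bv - \bone + \bv'})$ is monotone in $\bv'_i$ with the same type of monotonicity as $r_{n,i}$. Composing with the order-preserving map $t \mapsto \sgn(-\gamma_n \theta_{m_{k_n},n,j_{k_n}} + t)$ preserves monotonicity in each argument, and therefore $\Psi_n$ is monotone on each entry. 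No real obstacle arises here; the only bookkeeping step that requires care is verifying the identification $\mu_n^*(\sigma,\mu) \in \{\mu_n^-,\mu_n^+\}$ and checking that both choices yield the same threshold $\theta_{m_{k_n},n,j_{k_n}}$, which is what makes the $\theta$-term independent of $\bv'$.
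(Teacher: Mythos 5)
Your argument is correct and mirrors the paper's own proof: both reduce $\Psi_n(\bv')$ to $\sgn\bigl(-\gamma_n\theta_{m_{k_n},n,j_{k_n}}+E_n(\cdot)\bigr)$ via Corollary~\ref{cor:mu*} and Definition~\ref{defn:ramp-wall-labeling}, observe that the threshold index is independent of $\bv'$, and then conclude from Proposition~\ref{prop:local-monotone} together with the fact that flipping $\bv'_i$ shifts the $i$-th factor of the rectangle rightward inside $S_\bv(\theta,h)$. The only cosmetic difference is that the paper normalizes $\bv=\bone$ and evaluates $E_n$ at the barycenter, whereas you keep the general vertex and use the constant value on $D_{\bv-\bone+\bv'}$ directly; your explicit identification of $\mu_n^*(\sigma,\mu)$ as $\mu_n^-$ or $\mu_n^+$ according to $\bv'_n$ is the same bookkeeping the paper compresses into ``it must be the case that $\bv'_n=1$.''
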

\begin{proof}
    Without loss of generality, we assume $\bv=\bone$ and consider the function $\Psi_n \colon \setof{0,1}^N \to \setof{\pm 1}$ given by
    \[
        \Psi_n(\bv) = \rook_n \left( [\bone,\bzero] , [\bv,\bone] \right),
    \]
    for some each $n \in \setof{1,\ldots,N}$. Note that $\Psi_n(\bv)$ evaluates the Rook Field $\rook$ in the $n$-direction at each top dimensional $\mu = [\bv,\bone] \in \Top_\cX(\sigma)$ of $\sigma$. More precisely, 
    \[
        \Psi_n(\bv') = \rook_n(\sigma,\mu) = \omega(\mu^*(\sigma,\mu),\mu)
    \]
    where $\mu^*(\sigma,\mu)$ is the unique $n$-wall such that $\sigma \preceq \mu^*(\sigma,\mu) \prec \mu$ as in Corollary~\ref{cor:mu*}. Since for any $n$-wall $\xi=[\bv',\bone^{(n)}] \in \cX$ such that $\sigma \prec \xi$, it must be the case that $\bv'_n=1$, it follows that 
    \[
        \rook_n(\sigma,\mu) = \sgn(-\gamma_n\theta_{m_{1},n,j_{1}}+E_n(\mu)),
    \]
    where $E_n(\mu)$ corresponds to the constant value that $E_n$ assumes in $D_{\bv}(\theta,h)$. In particular, if one chooses $x(\mu)$ as the barycenter of $D_{\bv}(\theta,h)$, then 
    \[
         \rook_n(\sigma,\mu) =  \sgn(-\gamma_n\theta_{m_{1},n,j_{1}}+E_n(x(\mu);\nu,\theta,h))
    \]
    and the monotonicity on each component of $\Psi_n$ is of the same type as the monotonicity of $E_n$ proven in Proposition~\ref{prop:local-monotone} since $\bv \mapsto x([\bv,\bone])$ preserves the order in each coordinate.

    Indeed, if $\bv,\bv'\in \setof{0,1}^N$ with $\bv_i \leq \bv'_i$ and $\bv_n=\bv'_n$ for $n \neq i$, then $x([\bv,\bone])$ and $x([\bv'',\bone])$ satisfy 
    \[
        x_i([\bv,\bone]) \leq x_i([\bv',\bone]) \quad\text{and}\quad x_n([\bv,\bone]) = x_n([\bv',\bone]),\ \text{for $n \neq i$}. 
    \]
\end{proof}
Finally, we piece together the above results to obtain a characterization of the rook field around a vertex $\sigma \in \cX$ by denoting the top cells $\mu \in \Top_\cX(\sigma)$ using the boolean indexing of Proposition~\ref{prop:wall-labeling-local-monotone}, i.e., $\mu_{\bf i} = \mu_{{\bf i_1}\ldots{\bf i_N}}$. 
\begin{prop}
    \label{prop:subgraph-connected}
    Let $\sigma \in \cX^{(0)}\setminus\bbdy(\cX)$. 
    Consider the $N$-hypercube graph $G=(V,E)$ with $V=\Top_\cX(\sigma)$ and $E = \setdef{(\mu,\mu') \in V\times V}{\mu,\mu' \text{ are } k\text{-adjacent}}$. 
    Fix $n \in \setof{1,\ldots,N}$ and define $W \colon V \to \{\pm 1\}$ by $W(\mu) = \omega(\mu^*(\sigma,\mu),\mu)$. 
    Then both subgraphs with vertices $W^{-1}(+1)$ and $W^{-1}(-1)$ are connected. Moreover, each restriction to a face of $G$ is connected.
\end{prop}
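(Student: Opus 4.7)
The plan is to reduce the statement to a classical fact about monotone Boolean functions on the hypercube. First I would identify the vertex set $V = \Top_\cX(\sigma)$ with $\setof{0,1}^N$ via the bijection $\bv \mapsto \mu_\bv := [\bone,\bzero] + \text{(shift by $\bv-\bone$)}$ used in Proposition~\ref{prop:wall-labeling-local-monotone}, under which $W(\mu_\bv) = \Psi_n(\bv)$. The edges of $G$ are then exactly the edges of the standard $N$-hypercube graph on $\setof{0,1}^N$, since two top cells of $\sigma$ are $k$-adjacent iff their indexing vectors differ in exactly the $k$-th coordinate. By Proposition~\ref{prop:wall-labeling-local-monotone}, for this fixed $n$ the function $\Psi_n\colon \setof{0,1}^N \to \setof{\pm 1}$ is monotone in each coordinate separately, with a direction of monotonicity in coordinate $i$ that depends only on $i$ (inherited from the monotonicity type of $E_n$ in $x_i$ on the region $S_\bv(\theta,h)$ via Proposition~\ref{prop:local-monotone}).

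Next, I would carry out a coordinate-wise relabeling. For each $i$ such that $\Psi_n$ is non-increasing in $\bv_i$, replace $\bv_i$ by $1-\bv_i$. This is a graph automorphism of the hypercube (a reflection), and after this relabeling the new function $\widetilde\Psi_n$ is non-decreasing in every coordinate with respect to the componentwise order $\leq_\N$ on $\setof{0,1}^N$. Consequently $\widetilde\Psi_n^{-1}(+1)$ is an up-set and $\widetilde\Psi_n^{-1}(-1)$ is a down-set of the Boolean lattice. It now suffices to prove that every non-empty up-set $U \subset \setof{0,1}^N$ induces a connected subgraph of the $N$-hypercube (the down-set case being dual).

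The key step is the following standard argument. Given $\bv,\bv' \in U$, consider their join $\bv \vee_\N \bv'$, which lies in $U$ by the up-set property. One can reach $\bv \vee_\N \bv'$ from $\bv$ by flipping, one at a time, exactly those coordinates $i$ with $\bv_i = 0 < \bv'_i$; at each intermediate step the vector has only grown componentwise, so it still belongs to $U$. The analogous path from $\bv'$ to $\bv \vee_\N \bv'$ stays in $U$. Concatenating these paths gives a walk in the subgraph induced by $U$, proving connectivity. The down-set case is symmetric, using the meet $\bv \wedge_\N \bv'$ in place of the join.

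Finally, for the "moreover" clause, observe that a face of $G$ corresponds to fixing the values of some subset $F \subset \setof{1,\ldots,N}$ of coordinates and letting the others range over $\setof{0,1}^{N-|F|}$. The restriction of $\Psi_n$ to this face is still monotone in each of the free coordinates (with the same direction), so the same reduction to an up-set/down-set on the smaller hypercube applies verbatim, and connectivity follows by the same join/meet argument. I do not anticipate any serious obstacle: the only mildly delicate point is confirming that Proposition~\ref{prop:wall-labeling-local-monotone} really delivers a coordinate-wise monotonicity whose direction depends only on $i$ (not on the values of the other coordinates), so that the coordinate-flip reduction is legitimate — but this is exactly what its proof, invoking Proposition~\ref{prop:local-monotone}, provides.
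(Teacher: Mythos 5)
Your proof is correct, and it takes a genuinely different route from the paper's. The paper proceeds by induction on $N$: it handles $N=1$ and $N=2$ directly, and in the inductive step argues that if $W^{-1}(+1)$ were disconnected then, because restrictions to $(N-1)$-faces are connected by the induction hypothesis, the disconnected pair would have to consist of two antipodal vertices (forcing $W^{-1}(+1)$ to have exactly two elements), from which monotonicity of $\Psi_n$ along the first coordinate yields a contradiction. You instead reduce to a clean order-theoretic fact: after reflecting coordinates so that $\Psi_n$ is non-decreasing in each coordinate (a hypercube automorphism), $W^{-1}(+1)$ is an up-set and $W^{-1}(-1)$ a down-set of the Boolean lattice, and a non-empty up-set always induces a connected subgraph via the walk through $\bv \vee_\N \bv'$. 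Both arguments hinge on the same input — that $\Psi_n$ is coordinate-wise monotone with a direction depending only on the coordinate index, which is what Propositions~\ref{prop:local-monotone} and \ref{prop:wall-labeling-local-monotone} actually establish (the paper's ``same type of monotonicity'' notion), even though the bare statement of Proposition~\ref{prop:wall-labeling-local-monotone} doesn't spell it out — and you were right to flag that as the load-bearing point. Your approach avoids the induction entirely and delivers the ``moreover'' clause for free, since restriction to a face preserves the up-set/down-set structure on the smaller Boolean lattice; in the paper's version the claim about faces is somewhat tacitly folded into the inductive structure. Your version is arguably the cleaner one to read.
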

\begin{proof}
    We prove by induction on $N$. The case $N=1$ is trivial since either there are no edges connecting vertices of $W^{-1}(+1)$ or it is the whole graph. 

    For $N=2$, the subgraphs are disconnected if and only if the vertices are given by $\setof{\mu_{00},\mu_{11}}$ and $\setof{\mu_{10},\mu_{01}}$. Without loss of generality, assume that the sets represent $W^{-1}(1)$ and $W^{-1}(-1)$ respectively. By identifying $W(\mu_{\bf i})$ with $\Psi_n({\bf i})$, we obtain
    \[
        \Psi_n(0,0) > \Psi_n(1,0) \text{ and } \Psi_n(0,1) < \Psi_n(1,1),
    \]
    which contradicts the monotonicity of $\Psi_n$ on the first coordinate. 

    For $N \geq 3$, note that if $\mu,\mu'\in W^{-1}(1)$ belong to the same face, then it is connected by the induction hypothesis. Suppose for the sake of contradiction that no two vertices in $W^{-1}(1)$ belong to the same face, so that $W^{-1}(+1)=\setof{\mu,\mu'}$ consists of opposing vertices. Without loss of generality, we may assume that $\mu=\mu_{\bzero}$ and $\mu'=\mu_{\bone}$. Then 
    \begin{equation}
        \label{eq:break-monotone}
        \Psi_n(\bzero) > \Psi_n(\bone^{(1)}) \text{ and } \Psi_n(\bzero^{(1)}) < \Psi_n(\bone),
    \end{equation}
    contradicts the monotonicity of $\Psi_n$ on the first coordinate. 

    Thus, both subgraphs with vertices $W^{-1}(+1)$ and $W^{-1}(-1)$ are connected.

    The additional result follows from a similar argument by observing that \eqref{eq:break-monotone} can be applied to any $\bzero, \bzero^{(n)},\bzero^{(k)}, \bzero^{(n)}+\bzero^{(k)}$ to contradict monotonocity in the $n$-th coordinate within the face that contains the $n$ and $k$ directions. 
\end{proof}
In what follows, we describe an example for $N=2$ that does not satisfy the monotonicity of the ramp systems but is a valid strongly dissipative wall labeling in the general setting. 
\begin{ex}
    \label{ex:7}
    Let $\I=\setof{0,1,2}\times\setof{0,1,2}$ and consider the cubical complex $\cX = \cX(\I)$ generated by $\I$. Define the wall labeling $\omega : W(\cX) \to \setof{-1,1}$ by 
    \begin{align*}
        \omega\left(\defcell{1}{0}{0}{1},\defcell{0}{0}{1}{1}\right) & = 1, & 
        \omega\left(\defcell{1}{0}{0}{1},\defcell{1}{0}{1}{1}\right) & = -1, \\
        \omega\left(\defcell{1}{1}{0}{1},\defcell{0}{1}{1}{1}\right) & = -1, &
        \omega\left(\defcell{1}{0}{0}{1},\defcell{1}{1}{1}{1}\right) & = 1,
    \end{align*}
    and
    \begin{equation*}
        \omega(\xi,\mu) = \begin{cases}
            1 & \text{ if } \xi \notin \bbdy(\cX) \text{ and } J_i(\xi)=\setof{2} \\
            -p_n(\xi,\mu) & \text{ if } \xi \in \bbdy(\cX) \text{ and } J_i(\xi)=\setof{n}
        \end{cases}
    \end{equation*}
    as in Figure~\ref{fig:ex7}. 
    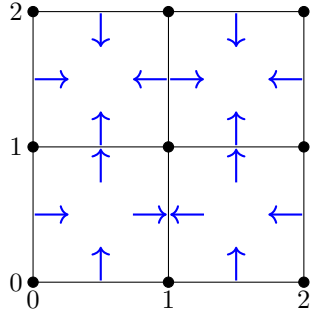
\begin{figure}[H]
        \centering
        \begin{tikzpicture}[scale=0.225]
        \draw[step=8cm,black] (0,0) grid (16,16);
        
        \foreach \i in {0,1,2}{
            \foreach \j in {0,1,2}{
            \draw[black, fill=black] (8*\i,8*\j) circle (2ex);
            }
        }
        
        \foreach \i in {0,1,2}{
            \draw(8*\i,-1) node{$\i$}; 
            \draw(-1,8*\i) node{$\i$}; 
        }
        
        \foreach \i in {0,1}{
            \draw[->, blue, thick] (4+8*\i,0.1) -- (4+8*\i,2.1); 
            \draw[->, blue, thick] (4+8*\i,15.9) -- (4+8*\i,13.9); 
            \draw[->, blue, thick] (0.1, 4+8*\i) -- (2.1,4+8*\i); 
            \draw[->, blue, thick] (15.9,4+8*\i) -- (13.9,4+8*\i); 
        }

        \draw[->, blue, thick] (4,5.9) -- (4,7.9); 
        \draw[->, blue, thick] (4,8.1) -- (4,10.1);
        \draw[->, blue, thick] (12,5.9) -- (12,7.9); 
        \draw[->, blue, thick] (12,8.1) -- (12,10.1);

        \draw[->, blue, thick] (5.9,4) -- (7.9,4); 
        \draw[->, blue, thick] (10.1,4) -- (8.1,4); 
        \draw[->, blue, thick] (7.9,12) -- (5.9,12); 
        \draw[->, blue, thick] (8.1,12) -- (10.1,12); 
        
        \end{tikzpicture}
        \caption{Wall labeling of Example~\ref{ex:7}}
        \label{fig:ex7}
    \end{figure}
    For any $\sigma \in \cX^{(0)}$, define the map $\tilde{o}_\sigma : \setof{1,2} \to \setof{1,2}$ by 
    \[\tilde{o}_\sigma(1)=\tilde{o}_\sigma(2)=1.\]
    Note that $\tilde{o}_\sigma$ is a local inducement map as in Definition~\ref{def:rookfield}. Indeed, if $(\xi,\mu)$ and $(\xi,\mu') \in W(\sigma)$ are $2$-walls, then 
    \[
    \omega(\xi,\mu)=\omega(\xi,\mu')=1.
    \]
    If $(\xi,\mu),(\xi,\mu')$ are $1$-walls, there is nothing to verify. The same is true for any $\mu,\mu' \in \Top_\cX(\sigma)$ that are $2$-adjacent, since there is no $k \in \{1,2\}$ such that $k \neq 2$ and $k\neq \tilde{o}_\sigma(2)=1$. Therefore, $\omega$ is a wall labeling. 
\end{ex}

In what follows, we set up a technical proposition that is  used in Chapter~\ref{sec:R2Dynamics}.
\begin{prop}
    \label{prop:opaque-pairs}
    Assume that $(\xi,\xi') \in \cX^{(N-k)} \times \cX^{(N-(k-1))}$ with $\Ex(\xi,\xi')=\setof{n}$ for some $k \geq 2$. If there exists $\mu,\mu' \in \Top_\cX(\xi')$ such that
    \[
        \rook_n(\xi,\mu) \neq \rook_n(\xi,\mu'),
    \]
    then there exists at least $k-1$ walls $\xi_i \in \cX^{(N-1)}$ with $\xi\preceq \xi_i$ such that $T(\xi_i)=\setof{\mu_i,\mu_i'}$ satisfies 
    \[
        \omega(\xi_i,\mu_i) \neq \omega(\xi_i,\mu_i'). 
    \]
\end{prop}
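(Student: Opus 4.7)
The proposition converts information about a single component $\rook_n$ of the rook field into information about $\omega$ on codimension-one walls above $\xi$. My approach is to combine the monotonicity of ramp wall labelings (Proposition~\ref{prop:wall-labeling-local-monotone}) with the connectedness of their level sets (Proposition~\ref{prop:subgraph-connected}), and then to iterate in order to produce $k-1$ distinct disagreement walls.

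First I would reformulate both the hypothesis and the conclusion on the Boolean cube. Fixing a vertex $\sigma \preceq \xi$, consider the functions $W_j \colon \Top_\cX(\sigma) \to \{\pm 1\}$ defined by $W_j(\mu) := \rook_j(\sigma,\mu)$. By Corollary~\ref{cor:mu*}, $W_j$ restricted to $\Top_\cX(\xi)$ agrees with $\mu \mapsto \rook_j(\xi,\mu)$ for every $j \in J_i(\xi)$, and the hypothesis says that $W_n$ is non-constant on the $(k-1)$-face $\Top_\cX(\xi') \subseteq \Top_\cX(\xi) \cong \{0,1\}^k$. Moreover, for each $j \in J_i(\xi)$ an $\omega$-disagreement at a $j$-wall $\xi_i \in \cX^{(N-1)}$ with $\xi \preceq \xi_i$ is, again by Corollary~\ref{cor:mu*}, equivalent to $W_j$ changing value across the corresponding $j$-edge of $\Top_\cX(\xi)$. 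Hence the task reduces to producing $k-1$ edges (possibly of various coordinate directions $j \in J_i(\xi)$) at which the function $W_j$ corresponding to the edge's direction changes.

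Next I would use the monotonicity and connectedness ingredients. Each transition of $W_n$ along a $j$-edge inside $\Top_\cX(\xi')$ forces, by Lemma~\ref{rem:o_sigmas_agree}, that $j$ actively regulates $n$ at $\xi$, i.e.\ $\tilde{o}_\sigma(j) = n$. In a ramp system the active regulation of $n$ by $j$ is driven by a nontrivial threshold crossing of the ramp function $r_{n,j}$ inside $E_n$; Proposition~\ref{prop:local-monotone} then couples this threshold crossing to the dependence of other ramp nonlinearities near $\xi$, which is what one must exploit to produce an $\omega$-disagreement wall in direction $o_\xi(j)$ (or in direction $j$ itself). Walking along a monotone path in $\Top_\cX(\xi)$ from a point where $W_n = +1$ to one where $W_n = -1$, and using Proposition~\ref{prop:subgraph-connected} to keep both level sets connected under restriction, I would peel off one distinct transition direction at each step, building up $k-1$ walls.

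\textbf{The main obstacle.} The crucial difficulty is that the hypothesis controls $W_n$, whereas $\omega$-disagreement at a $j$-wall is controlled by $W_j$, a \emph{different} component of the rook field; these are governed by different ramp nonlinearities. Bridging this gap for each of the $k-1$ inessential directions of $\xi'$ requires the ramp-system structure in an essential way: the active regulation $\tilde{o}_\sigma(j) = n$ constrains how $E_n$ depends on $x_j$, but translating this into a $\omega$-disagreement statement needs a parallel statement about $E_j$ and its own thresholds. I expect the cleanest write-up proceeds by induction on $k$: at each step isolate a direction where $W_n$ transitions, extract one fresh $\omega$-disagreement wall, then restrict to a complementary face and recurse. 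The most delicate bookkeeping is ensuring that the $k-1$ walls produced across the induction are pairwise distinct, rather than the same wall counted multiple times through different argument branches.
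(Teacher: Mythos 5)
Your plan mirrors the paper's proof in structure: restrict to the $(k-1)$-hypercube $\Top_{\cX}(\xi')$, invoke Propositions~\ref{prop:wall-labeling-local-monotone} and~\ref{prop:subgraph-connected} for monotonicity and connectedness of the level sets of $\rook_n(\xi,\cdot)$, and count the cut edges separating $\rook_n^{-1}(+1)$ from $\rook_n^{-1}(-1)$ by an induction on $k-1$. What you flag as the ``main obstacle'' --- converting a $\rook_n$-transition on a $j$-edge into an $\omega$-disagreement at a wall --- is not something the paper handles by a trick you fail to see: the paper dispatches it in a single unjustified sentence (``Note that for each edge $(\mu_i,\mu_i')\in E$ that connects vertices with opposite signs, there is a wall $\xi_i\in W(\xi)$ such that $\omega(\xi_i,\mu_i)\neq\omega(\xi_i,\mu_i')$'') and then proceeds straight to the edge count.

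Making that sentence precise shows the obstacle you raised is a genuine gap, not merely a hard step. The only $\xi_i\in\cX^{(N-1)}$ with $T(\xi_i)=\{\mu_i,\mu_i'\}$ for a $j$-adjacent cut pair $\mu_i,\mu_i'\in\Top_{\cX}(\xi')$ is their shared $j$-wall, and $j\in J_i(\xi')$ forces $j\neq n$. Definition~\ref{def:wall_labeling}(ii) then says $\omega(\xi_i,\mu_i)\neq\omega(\xi_i,\mu_i')$ requires $\tilde{o}_\sigma(j)=j$, while the contrapositive of Definition~\ref{def:wall_labeling}(i) says that $\rook_n$ changing across that $j$-edge requires $\tilde{o}_\sigma(j)=n$; these are incompatible when $j\neq n$. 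The ramp-system monotonicity (Proposition~\ref{prop:local-monotone}) you hoped would couple $E_n$ to $E_j$ controls each nonlinearity in isolation and does not provide the link. Indeed, take $N=k=2$ at the interior vertex of a two-node ramp system with edges $1\to 2$, $2\to 1$ and no self-loops, with parameters chosen so the rook field rotates around the vertex: then $\rook_n$ is non-constant on $\Top_{\cX}(\xi')$, yet Definition~\ref{def:wall_labeling}(ii) forces $\omega(\xi_i,\cdot)$ constant on the two cofaces of every $\xi_i\succeq\xi$, so no wall of the required form exists. What Corollary~\ref{cor:mu*} does yield for free is that the shared wall carries the $\rook_n$-disagreement, i.e.\ $\rook_n(\xi_i,\mu_i)=\rook_n(\xi,\mu_i)\neq\rook_n(\xi,\mu_i')=\rook_n(\xi_i,\mu_i')$, and the statement \emph{is} true with $\rook_n$ in place of $\omega$. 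Rather than trying to bridge the gap as written, you should flag this apparent discrepancy and the likely intended conclusion.
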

\begin{proof}
    Let $G=(V,E)$ be the $(k-1)$-hypercube graph given by $V=\Top_\cX(\xi')$ and edges connecting adjacent top dimensional cells. By Proposition~\ref{prop:wall-labeling-local-monotone}, the subgraphs obtained from $\rook_n(\xi,\mu)=+1$ and $\rook_n(\xi,\mu)=-1$ are connected. Note that for each edge $(\mu_i,\mu_i') \in E$ that connect vertices with opposite signs, there is a wall $\xi_i \in W(\xi)$ such that $\omega(\xi_i,\mu_i) \neq \omega(\xi_i,\mu_i')$. Thus, it is enough to show that there exists at least $k$ such edges. Indeed, if one assumes that there exists $\mu,\mu' \in T(\xi')$ such that
    \[
        \rook_n(\xi,\mu) \neq \rook_n(\xi,\mu'),
    \]
    then there exists at least one positive and one negative vertex.

    We prove by induction on $K=k-1$. If $K=1$, then there is an unique edge. If $K > 1$, then consider a face that has both positive and negative, which yields at least $K-1$ edges by the induction hypothesis. If a positive (resp. negative) vertex connects to a vertex with opposite sign on the opposing face, then we're done. Otherwise, the opposing face has negative and positive vertices too, which yields another set of $K-1$ edges by the induction hypothesis. Thus, $2(K-1)\geq K$ when $K>1$. 
\end{proof}

\section{DSGRN}
\label{sec:DSGRN}

The results of Chapters~\ref{sec:RookFields} through \ref{sec:lattice} provide an algorithm that takes a wall labeling as input and produces a Morse graph and Conley complex(es) as output.
Widespread applicability of this work depends on being able to translate problems of interest into wall graphs.
We do this using the DSGRN software \cite{DSGRN}. 
Motivation for DSGRN came from systems biology where models take the form of regulatory networks \cite{cummins:gedeon:harker:mischaikow:mok} and more recently from ecology where models take the form of trophic networks \cite{cuello:gameiro:bonachela:mischaikow}.
In both cases, the expectation is that an ordinary differential equation provides an adequate model for the associated dynamics, but explicit nonlinearities can not be derived from first principles and associated parameters are expensive to measure and not necessarily transferable between organisms or ecosystems.

For this manuscript the details of DSGRN are not necessary.
We present high level description of the simplest functional capabilities of DSGRN as this is helpful in understanding how DSGRN can be employed and provide references for more precise information.

A \emph{regulatory network} $RN$ is a finite directed graph with vertices $V=\setof{1,\ldots,N}$ and annotated directed edges $(n,m)\in V\times V$ of the form $n\to m$ (activation) or $n\dashv m$ (repression), such that for each node there is a nonlinear function $\Sigma_n$ representing the interaction of the incoming edges (see \cite{cummins:gedeon:harker:mischaikow:mok, kepley:mischaikow:zhang, cummins:gameiro:gedeon:kepley:mischaikow:zhang, cuello:gameiro:bonachela:mischaikow}
for more precise definitions).
From the biological perspective each node $n$ represents a species (biochemical or organismal) and $x_n\geq 0$ represents the quantity or density of that species.
Thus the associated phase space is $[0,\infty)^N$.

It useful to think that the software DSGRN \cite{DSGRN} takes a regulatory network as input, identifies a parameter space, provides a decomposition of parameter space into a finite number of explicit regions, and for each region produces a
wall labelling.

The parameter space is defined as follows.
To each node $n\in V$, there is a decay rate $\gamma_n >0$.
There are three  parameters $\theta_{n,m}>0$, $\ell_{n,m}>0$, and $\delta_{n,m}>0$ assigned to each edge from node $m$ to node $n$ that define an \emph{activating function}
\[
\sigma_{n,m}^+(x_m) =
\begin{cases}
\ell_{n,m} & \text{if $x_m< \theta_{n,m}$,} \\
\ell_{n,m} + \delta_{n,m} & \text{if $x_m > \theta_{n,m}$,} 
\end{cases}
\]
or
\[
\sigma_{n,m}^-(x_m) =
\begin{cases}
\ell_{n,m}  + \delta_{n,m} & \text{if $x_m< \theta_{n,m}$,} \\
\ell_{n,m} & \text{if $x_m > \theta_{n,m}$,} 
\end{cases}
\]
if $m \to n$ or $m \dashv n$, respectively. Thus given a regulatory network $RN$, the associated parameter space is $(0,\infty)^{N + 3L}$ where $N$ is the number of nodes and $L$ is the number of edges.

Consider a fixed regulatory network $RN$ with $N$ nodes, each node $n$ has a set of in-edges from nodes
$\source(n) = \setof{m_{i_1},\ldots m_{i_{k_n}}}$.
The nonlinearity $\Sigma_n$ is a sum and/or product of the functions $\sigma_{n,m}^\pm(x_{m})$ for $m \in \source(n)$.
More precisely,
\[
\Sigma_n (x) = \Sigma_n \left( x_{m_{i_1}}, \ldots, x_{m_{i_{k_n}}} \right) = f_n \left(\sigma^{\pm}_{n, m_{i_1}}(x_{m_{i_1}}), \ldots, \sigma^{\pm}_{n,m_{i_{k_n}}}(x_{m_{i_{k_n}}})  \right),
\]
where $f_n$ is an interaction function as defined in Definition~\ref{def:interaction_function}, and $\sigma^{\pm}_{n, m_{\ell}}$ is $\sigma^+_{n, m_{\ell}}$ or $\sigma^-_{n, m_{\ell}}$ if $m_{\ell} \to n$ or $m_{\ell} \dashv n$, respectively.

The \emph{rate of change} for node $n$ is given by
\[
-\gamma_n x_n + \Sigma_n(x)
\]
where $\Sigma_n(x)$ is as defined above and is independent of $x_m$ if $m \not\in \source(n)$.

Observe that $\Sigma_n(x)$ can take on $2^{k_n}$ values expressed in terms of the parameters $\ell_{n,m_i}$ and $\delta_{n,m_i}$, for $m_i\in \source(n)$. Restricting to a parameter at which these values are distinct for a fixed set of $\ell_{n,m_i}$ and $\delta_{n,m_i}$ we denote the range of $\Sigma_n(x)$ over all $x$ by
\begin{equation}
\label{eq:pvalues}
p_0, \, p_1, \, \cdots, \, p_{2^{k_n}-1}.
\end{equation}

We summarize the discussion so far by noting that for fixed parameter values $\gamma_n$, $\ell_{n,m_i}$ and $\delta_{n,m_i}$, for $m_i \in \source(n)$, whether the variable $x_n$ is increasing or decreasing when the system is at the state $x$ is determined by
\[
\sgn\left( -\gamma_n x_n + \Sigma_n(x) \right) = \sgn \left( -\gamma_n x_n + p_j \right),
\]
where $p_j = \Sigma_n(x)$.

For node $n$, let $\target(n) = \setof{m_{i_1},\ldots m_{i_{K(n)}}}$ be the set of nodes  which have an incoming edge from $n$. Note that we are using $K(n)$ to denote the number of edges that leave node $n$. We also assume that the threshold values $\theta_{m_i,n}$, for $m_i \in \target(n)$, are distinct.

\begin{defn}
\label{defn:DSRGNparameter}
For a given regulatory network $RN$ with $N$ nodes and parameter values in $(0, \infty)^D$ the \emph{DSGRN parameter space} is denoted by $\P(RN)$ and consist of  parameter values $(\gamma, \ell, \delta, \theta) \in (0, \infty)^D$ that satisfy the following conditions for each $n = 1, \ldots, N$.
\begin{itemize}
\item[(i)] The values $p_0, \, p_1, \, \cdots, \, p_{2^{k_n}-1}$ defined by \eqref{eq:pvalues} are distinct.
\item[(ii)] The threshold values $\theta_{m_i,n}$, for $m_i \in \target(n)$, are distinct.
\item[(iii)] $\gamma_n \theta_{m_i,n} \neq p_j$ for each $m_i \in \target(n)$ and $j = 0, \ldots, 2^{k_n}-1$.
\end{itemize}
\end{defn}

Given a DSGRN parameter $(\gamma, \ell, \delta, \theta) \in \P(RN)$ we have that
\[
\sgn\left( - \gamma_n \theta_{m_i,n} + p_j \right) \neq 0.
\]
The sets of parameter values at which the sign above is constant for each $n$, $m_i$, and $j$ is given as a finite set of explicit open semi-algebraic sets \cite{cummins:gedeon:harker:mischaikow:mok, kepley:mischaikow:zhang, cummins:gameiro:gedeon:kepley:mischaikow:zhang, cuello:gameiro:bonachela:mischaikow}. 
These semi-algebraic sets are disjoint, but the union of their closures covers the entire parameter space $(0, \infty)^D$. We refer to these sets as \emph{parameter regions} and they form a decomposition of the parameter space which is computable and for regulatory networks with limited numbers of in-edges they are precomputed as part of DSGRN. This decomposition of parameter space into parameter regions is represented by DSGRN as a \emph{parameter graph}, where the nodes are the parameter regions and edges represent co-dimension $1$ adjacency \cite{cummins:gedeon:harker:mischaikow:mok, kepley:mischaikow:zhang, cummins:gameiro:gedeon:kepley:mischaikow:zhang, cuello:gameiro:bonachela:mischaikow}. 
We denote the parameter graph of a regulatory network $RN$ by $PG(RN)$ and denote the nodes in the parameter graph by $k \in \setof{0, \ldots, M - 1}$, where $M$ is the number of nodes in the parameter graph. Given a node $k$ of the parameter graph we denote the parameter region associated to node $k$ by $R(k)$.

Let $RN$ be a regulatory network and let $(\gamma, \ell, \delta, \theta) \in \P(RN)$ be a fixed parameter. Associated to $(\gamma, \ell, \delta, \theta)$ we define a family of ramp systems, parameterized by $h$, as follows. For each edge $m \to n$ of $RN$ define
\[
\nu_{n, m, 1} = \ell_{n, m} \quad \text{and} \quad \nu_{n, m, 2} = \ell_{n, m} +\delta_{n, m},
\]
and for each edge $m \dashv n$ of $RN$ define
\[
\nu_{n, m, 1} = \ell_{n, m} +\delta_{n, m} \quad \text{and} \quad \nu_{n, m, 2} = \ell_{n, m}.
\]
Let $0 < h_{n, m} < \theta_{n, m}$ and define the ramp function
\[
r_{n, m}(x_m) =
\begin{cases}
\nu_{n, m, 1}, & \text{if}~ x_m < \theta_{n, m} - h_{n, m} \\
L(x_m), & \text{if}~ \theta_{n, m} - h_{n, m} \leq x_m \leq \theta_{n, m} + h_{n, m} \\
\nu_{n, m, 2}, & \text{if}~ x_m > \theta_{n, m} + h_{n, m},
\end{cases}
\]
where $L(x_m) = \dfrac{\nu_{n, m, 2} - \nu_{n, m, 1}}{2 h_{n, m}}(x_m - \theta_{n, m}) + \dfrac{\nu_{n, m, 1} + \nu_{n, m, 2}}{2}$.

Notice that $r_{n, m}$ is an increasing ramp function if $m \to n$ and it is a decreasing ramp function if $m \dashv n$. We define the ramp system associated to a DSGRN regulatory network $RN$ by replacing the activating functions $\sigma^{\pm}_{n, m}$ in the nonlinearity $\Sigma_n$ by the ramp functions $r_{n, m}$ defined above.

\begin{rem}
    Notice that for ramp systems derived from DSGRN the ramp functions are of type $J=1$.
\end{rem}

\begin{defn}
\label{defn:DSGRN_ramp}
Let $RN$ be a regulatory network with $N$ nodes and rate of change for node $n$ given by $-\gamma_n x_n + \Sigma_n(x)$, where
\[
\Sigma_n (x) = \Sigma_n \left( x_{m_{i_1}}, \ldots, x_{m_{i_{k_n}}} \right) = f_n \left(\sigma^{\pm}_{n, m_{i_1}}(x_{m_{i_1}}), \ldots, \sigma^{\pm}_{n,m_{i_{k_n}}}(x_{m_{i_{k_n}}})  \right)
\]
and $f_n$ is an activating function. The \emph{ramp system generated} by the regulatory network $RN$ is the $N$-dimensional ramp system $\dot{x} = -\Gamma x + E(x)$, where
\[
\dot{x}_n  = -\gamma_n x_n + E_n(x), \quad n = 1, \ldots, N,
\]
and
\[
E_n (x) = E_n \left( x_{m_{i_1}}, \ldots, x_{m_{i_{k_n}}} \right) = f_n \left(r_{n, m_{i_1}}(x_{m_{i_1}}), \ldots, r_{n,m_{i_{k_n}}}(x_{m_{i_{k_n}}})  \right),
\]
with $r_{n, m_{\ell}}$ as defined above.
\end{defn}

\begin{rem}
We denote the parameters in the DSGRN nonlinearity $\Sigma_n(x)$ of a regulatory network by $\ell_{n, m}$ and $u_{n, m} =\ell_{n, m}+\delta_{n,m}$ as this is the standard notation in the DSGRN literature \cite{cummins:gedeon:harker:mischaikow:mok, kepley:mischaikow:zhang, cummins:gameiro:gedeon:kepley:mischaikow:zhang, cuello:gameiro:bonachela:mischaikow} as well as in the DSGRN software \cite{DSGRN}. However when describing the ramp system generated by the regulatory network it is convenient to relabel these parameters as $\nu_{n, m, 1}$ and $\nu_{n, m, 2}$ as done above.
\end{rem}

In a slight abuse of notation we denote the parameters in the parameter space $\P(RN)$ of a regulatory network by $(\gamma, \nu, \theta)$, where $\nu$ is defined in terms of $\ell$ and $u$ as described above. Notice that the parameters in $\P(RN)$ must satisfy the restrictions $\nu_{n, m, 1} < \nu_{n, m, 2}$ if $m \to n$ and $\nu_{n, m, 1} > \nu_{n, m, 2}$ if $m \dashv n$. Hence it follows that the parameter space $\P(RN)$ of a regulatory network $RN$ is a proper subset of the $(\gamma, \nu, \theta)$ parameter space $\Lambda(S)$ of the ramp system generated by $RN$. Given a parameter $(\gamma, \nu, \theta) \in \P(RN)$ it follows by Proposition~\ref{prop:smallh} that $\cH_0(\gamma, \nu, \theta) \neq \emptyset$ and hence it defines a wall labeling $\omega \colon W(\cX) \to \setof{-1,1}$ which is constant with respect to $h \in \cH_0(\gamma, \nu, \theta)$. Furthermore, by the definition of the parameter graph, we have that the the signs
\[
\sgn\left( - \gamma_n \theta_{m_i,n} + p_j \right)
\]
are constant for every parameter in a given parameter region $R(k)$ of the parameter graph. This implies that the wall labeling $\omega \colon W(\cX) \to \setof{-1, 1}$ is constant for all parameter in $R(k)$. Hence we have the following Proposition.

\begin{prop}
\label{prop:DSGRN_wall_constant}
Let $RN$ be a regulatory network with associated DSGRN parameter graph $PG(RN)$ and let $\dot{x} = -\Gamma x + E(x)$ be the ramp system generated by $RN$.
Let $k$ be a node in the parameter graph and let $R(k)$ be the associated parameter region. The wall labeling $\omega \colon W(\cX) \to \setof{-1, 1}$ generated by the ramp system $\dot{x} = -\Gamma x + E(x)$ is constant with respect to $(\gamma, \nu, \theta) \in R(k)$ and $h \in \cH_0(\gamma, \nu, \theta)$.
\end{prop}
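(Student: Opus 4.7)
The plan is to decouple the dependence of $\omega$ on $h$ from its dependence on $(\gamma,\nu,\theta)$, reducing each piece to a statement already available in the excerpt. Fix a node $k$ of $PG(RN)$ and the associated parameter region $R(k)$. Since the ramp system of Definition~\ref{defn:DSGRN_ramp} is built out of ramp functions of type $J=1$ with $\nu_{n,m,1},\nu_{n,m,2}$ determined by the DSGRN triple $(\ell_{n,m},\delta_{n,m})$ and the threshold $\theta_{n,m}$, every parameter $(\gamma,\nu,\theta)\in R(k)$ automatically satisfies the admissibility conditions defining $\Lambda(S)$ in \eqref{eq:lambdaS}: Definition~\ref{defn:DSRGNparameter}(ii) gives distinctness of output thresholds, while (i) and (iii) force $\gamma_n\theta_{m_i,n}\neq p_j$, which is precisely the non-degeneracy needed for $E_n(D_{\bv})/\gamma_n\neq \theta_{m_{k_n},n,j_{k_n}}$. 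Hence $\cH_0(\gamma,\nu,\theta)\neq\emptyset$ for every such parameter.

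First, I would fix $(\gamma,\nu,\theta)\in R(k)$ and invoke Proposition~\ref{prop:wall-labeling-const} directly: the wall labeling $\omega$ is independent of the choice $h\in \cH_0(\gamma,\nu,\theta)$. This handles the $h$-direction once and for all, reducing the problem to showing that the map $(\gamma,\nu,\theta)\mapsto \omega$ is constant on $R(k)$.

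Next, fix a wall $(\xi,\mu)\in W(\cX)$ with $\xi=[\bv,\bone^{(n)}]$, and recall from Definition~\ref{defn:ramp-wall-labeling} that
\[
\omega(\xi,\mu) = \sgn\bigl(-\gamma_n\,\theta_{m_{k_n},n,j_{k_n}} + E_n(\mu)\bigr).
\]
Using Lemma~\ref{lem:well_defined_DV}, the quantity $E_n(\mu)=E_n(D_{\bv})$ is obtained by evaluating the interaction function $f_n$ on a choice of one of the two constant values $\nu_{n,m,1}$ or $\nu_{n,m,2}$ for each source $m\in \source(n)$, the choice being dictated combinatorially by $\bv$. Because $(\nu_{n,m,1},\nu_{n,m,2})$ is a relabeling of $(\ell_{n,m},u_{n,m})$, $E_n(\mu)$ is precisely one of the values $p_0,\ldots,p_{2^{k_n}-1}$ from \eqref{eq:pvalues}, with the index $j=j(\bv,n)$ determined solely by $\bv$ and the edge annotations of $RN$ (not by the numerical values of the parameters). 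Likewise, the threshold $\theta_{m_{k_n},n,j_{k_n}}$ is the threshold $\theta_{m_i,n}$ for some $m_i\in\target(n)$ that is singled out purely combinatorially by the index $k_n$. Thus
\[
\omega(\xi,\mu) = \sgn\bigl(-\gamma_n\,\theta_{m_i,n} + p_{j(\bv,n)}\bigr)
\]
for a pair $(m_i,j(\bv,n))$ that is independent of the particular parameter in $R(k)$.

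The conclusion then follows from the very definition of a DSGRN parameter region: by construction of $PG(RN)$ (as recalled in the text immediately following Definition~\ref{defn:DSRGNparameter}), the sign of $-\gamma_n\theta_{m_i,n}+p_j$ is constant as $(\gamma,\nu,\theta)$ varies in $R(k)$ for every $n$, every $m_i\in\target(n)$, and every $j\in\{0,\ldots,2^{k_n}-1\}$. Applying this with the combinatorially selected $(m_i,j(\bv,n))$ gives that $\omega(\xi,\mu)$ is constant on $R(k)$, and combining with the $h$-independence above finishes the proof. The main obstacle, and the only substantive point, is the identification that the value $E_n(\mu)$ arising in Definition~\ref{defn:ramp-wall-labeling} coincides with one of the values $p_j$ used to partition DSGRN parameter space; this is a direct translation between the ramp-system and DSGRN bookkeeping, and once articulated carefully the rest is a routine appeal to Proposition~\ref{prop:wall-labeling-const} and the definition of $R(k)$.
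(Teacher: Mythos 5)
Your proof is correct and follows essentially the same approach as the paper's (given in the discussion immediately preceding the proposition rather than in a formal proof environment): reduce the $h$-dependence to Proposition~\ref{prop:wall-labeling-const}, then observe that the $(\gamma,\nu,\theta)$-dependence of each wall label is the sign of $-\gamma_n\theta_{m_i,n}+p_j$, which is constant on $R(k)$ by the definition of a DSGRN parameter region. Your explicit identification that $E_n(\mu)=E_n(D_{\bv})$ equals one of the DSGRN input-combination values $p_j$, with the pair $(m_i,j)$ selected combinatorially by $\bv$ independently of the numerical parameters, is precisely the translation step the paper leaves implicit when it asserts ``This implies that the wall labeling $\omega$ is constant for all parameter in $R(k)$''; making it explicit is a genuine improvement in readability but not a different argument.
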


\begin{ex}
\label{ex:RNDSGRN}
Consider the regulatory network shown in Figure~\ref{fig:network_2_intro}(A). The parameter space is $(0,\infty)^{14}$. DSGRN decomposes the parameter space into exactly $1600$ regions. Furthermore, region $R(974)$ as enumerated by DSGRN produces the wall labeling shown in Figure~\ref{fig:wall_labeling}(A).
\end{ex}

\chapter{Rectangular Geometrizations of $\cX_b$}
\label{sec:geometrization01}

Consider a ramp system $\dot{x} = -\Gamma x + E(x; \nu, \theta, h)$ with fixed parameter values $(\gamma, \nu, \theta, h) \in \Lambda(R)$ (see Definition~\ref{defn:admissible-parameters}). Let $\cX$ be the associated ramp induced cubical complex (see Definition~\ref{defn:rampcellcomplex}) and let $\cX_b$ be the associated blow-up cubical complex (see Definition~\ref{defn:Xbcomplex}). The goal of this chapter is to provide a rectangular geometrization of the cell complex $\cX_b$.

Recall that $\cX = \cX(\I)$ is the cubical complex generated by
\[
\I = \prod_{n=1}^N \setof{0, \ldots, K(n) + 1}
\]
where $K(n)$ in the number of output thresholds for the variable $x_n$ (see \eqref{eq:num_out_thresholds}), and $\cX_b = \cX(\bar{\I})$ is the cubical complex generated by
\[
\bar{\I} = \prod_{n=1}^N \setof{0, \ldots, 2(K(n)+1) + 1}.
\]

Using the indexing set $\mathcal{J}(n)$ (see Definition~\ref{defn:admissiblehtheta})
\[
\theta_{m_k, n, j_k} + h_{m_k, n, j_k} < \theta_{m_{k+1}, n, j_{k+1}} - h_{m_{k+1}, n, j_{k+1}},
\]
for all $k = 1, \ldots, K(n) - 1$ and all $n=1,\ldots,N$.
For the purpose of defining a rectangular geometrization of $\cX_b$ set
\begin{equation}
\label{eq:m0nj0}
\theta_{m_0, n, j_0} = h_{m_0, n, j_0} := \frac{\theta_{m_1, n, j_1} - h_{m_1, n, j_1}}{4}
\end{equation}
and
\begin{equation}
\label{eq:mKnjK}
\theta_{m_{K(n) + 1}, n, j_{K(n) + 1}} := \gab_n(\gamma, \nu, \theta, h) - \frac{1}{4}, \quad h_{m_{K(n) + 1}, n, j_{K(n) + 1}} := \frac{1}{4}.
\end{equation}
Notice that
\begin{equation}
\label{eq:rectangular_geo_boundary}
\begin{aligned}
& \theta_{m_0, n, j_0} - h_{m_0, n, j_0} = 0 \\
& \theta_{m_{K(n) + 1}, n, j_{K(n) + 1}} + h_{m_{K(n) + 1}, n, j_{K(n) + 1}} = \gab_n(\gamma, \nu, \theta, h).
\end{aligned}
\end{equation}

\begin{rem}
The actual values of $\theta_{m_0, n, j_0}$, $h_{m_0, n, j_0}$, $\theta_{m_{K(n) + 1}, n, j_{K(n) + 1}}$, and $h_{m_{K(n) + 1}, n, j_{K(n) + 1}}$ above are not particularly important. 
They are set to insure that
\begin{align*}
    \theta_{m_0, n, j_0} + h_{m_0, n, j_0} &< \theta_{m_1, n, j_1} - h_{m_1, n, j_1},   \\
    \theta_{m_{K(n)}, n, j_{K(n)}} + h_{m_{K(n)}, n, j_{K(n)}} &< \theta_{m_{K(n) + 1}, n, j_{K(n) + 1}} - h_{m_{K(n) + 1}, n, j_{K(n) + 1}},
\end{align*}
and condition \eqref{eq:rectangular_geo_boundary} is satisfied.
\end{rem}

The first step towards the goal of defining a geometrization of $\cX_b$ is the identification of vertices in $\cX_b$ with points in the phase space $[0,\infty)^N$ of the ramp system. Given $\bv \in \cX_b^{(0)}$, by the definition of $\bar{\I}$, it follows that
\[
\bv = (2 k_1 + j_1, \ldots, 2 k_N + j_N)
\]
where $(k_1, \ldots, k_N) \in \I$ and $j_1, \ldots, j_N \in \setof{0, 1}$. Define
\[
e \colon \cX_b^{(0)} \to \prod_{n = 1}^N {[0, \gab_n(\gamma, \nu, \theta, h)]} \subset [0,\infty)^N
\]
by $e(\bv) = (e_1(\bv), \ldots, e_N(\bv))$ where
\[
e_n(\bv) :=
\begin{cases}
\theta_{m_k, n, j_k} - h_{m_k, n, j_k}, & \text{if}~ j_n = 0 \\
\theta_{m_k, n, j_k} + h_{m_k, n, j_k}, & \text{if}~ j_n = 1
\end{cases}
\]
If follows from \eqref{eq:rectangular_geo_boundary} that if $k_n = 0$ and $j_n = 0$ (left boundary of $\cX_b$), then
\[
e_n(\bv) = \theta_{m_0, n, j_0} - h_{m_0, n, j_0} = 0
\]
and if $k_n = K(n) + 1$ and $j_n = 1$ (right boundary of $\cX_b$), then
\begin{equation}
\label{eq:evertices}
e_n(\bv) = \theta_{m_{K(n) + 1}, n, j_{K(n) + 1}} + h_{m_{K(n) + 1}, n, j_{K(n) + 1}} = \gab_n(\gamma, \nu, \theta, h).
\end{equation}

Let $\bw \in \setof{0,1}^N$ and $[\bv,\bw] \in \cX_b$. Define
\[
I_n([\bv,\bw]) := [e_n(\bv), e_n(\bv + \bw)] =
\begin{cases}
\setof{e_n(\bv)},     & \text{if}~ w_n = 0 \\
[e_n(\bv), e_n(\bv + \bzero^{(n)})], & \text{if}~ w_n = 1
\end{cases}
\]
and
\[
\ell_n(\bw) :=
\begin{cases}
\setof{0}, & \text{if}~ w_n = 0 \\
[0, 1],    & \text{if}~ w_n = 1.
\end{cases}
\]

Using this notation we define the rectangular regions
\begin{equation}
\label{eq:Rxi}
\Rec([\bv,\bw]) := \prod_{n = 1}^{N} {I_n([\bv,\bw])} \subset \prod_{n = 1}^{N} {[0, \gab_n(\gamma, \nu, \theta, h)]},
\end{equation}
and
\begin{equation}
\label{eq:K(w)}
K(\bw) := \prod_{n = 1}^{N} {\ell_n(\bw)} \subset [0,1]^N.    
\end{equation}
Observe that $K(\bw)$ is a $k$-dimensional face of $[0,1]^N$, where $k = \dim(\bw)$.

We inductively define homeomorphisms
\[
a_\zeta \colon K(\bw) \to \Rec(\zeta),\quad \zeta =[\bv,\bw] \in \cX_b
\]
as follows. Let $\dim(\zeta) = 0$. Then $\bw =\bzero$ and hence $K(\bw) = \setof{(0, \ldots, 0)} \subset \R^N$.
Set
\[
a_\zeta(\bzero) = e(\bv).
\]
Assume that $a_\zeta$ is defined for all $\zeta\in\cX_b$ such that $\dim(\zeta)\leq k-1$.
Let $\dim(\zeta) = k$.
Define $a_{[\bv,\bw]} \colon K(\bw) \to \Rec([\bv,\bw])$ to be the affine linear map from $K(\bw)$ to $\Rec(\zeta)$ such that if $\bu <_\Z \bw$, then
\[
a_{[\bv,\bw]} \mid_{K(\bu)} = a_{[\bv,\bu]}.
\]

Let $\cG$ denote the set of homeomorphisms
\begin{equation}
\label{eq:G}
\Geo \colon \prod_{n = 1}^{N} {[0, \gab_n(\gamma, \nu, \theta, h)]} \to \prod_{n = 1}^{N} {[0, \gab_n(\gamma, \nu, \theta, h)]}
\end{equation}
such that 
\[
\Geo \mid_{\Rec(\zeta)} \colon \Rec(\zeta) \to \Rec(\zeta)
\]
is a homeomorphism for every $\zeta\in\cX_b$.

\begin{defn}
\label{defn:rectGeoXb}
Consider a ramp system $\dot{x} = -\Gamma x + E(x; \nu, \theta, h)$ for fixed parameter values $(\gamma, \nu, \theta, h) \in \Lambda(R)$.
An \emph{associated rectangular geometrization} $ \recG(\gamma,\nu,\theta,h)$ is defined as follows.
Fix $\Geo \in \cG$. Given $\zeta = [\bv,\bw]\in \cX_b$, define $g_\zeta \colon K(\bw) \to \prod_{n = 1}^{N} {[0, \gab_n(\gamma, \nu, \theta, h)]}$  by
\[
g_\zeta =  \Geo\circ a_\zeta
\]
and set 
\[
\recG(\gamma,\nu,\theta,h) := \setof{ g_\zeta \mid \zeta \in \cX_b }.
\]
\end{defn}

For most of this text we work with ramp systems at fixed parameter values and thus for simplicity we let $\recG = \recG(\gamma,\nu,\theta,h)$.
We remind the reader that for fixed $\recG$, given $\xi\in \cX_b^{(n)}$,
\[
\bg(\xi) := g_\xi(B^n)
\]
where $g_\xi\in\recG$.

\begin{rem}
    \label{rem:g(b(xi))}
Fix $\recG$. Given $\xi = [\bv,\bw] \in \cX$,
\[
    \bg(\blup(\xi)) = \bg([2\bv+\bw,\bone]) = \prod_{n=1}^N I_n([2\bv+\bw,\bone]) = \prod_{n=1}^N [e_n(2\bv+\bw),e_n(2\bv+\bw+\bzero^{(n)})].
\]
The expression on the right-hand side can be further simplified by verifying whether $w_n=0$ or $w_n=1$, i.e., if $n \in J_i(\xi)$ or $n \in J_e(\xi)$. For each $n \in \setof{1,\ldots,N}$, one obtains
\[
I_n(\blup(\xi)) = \begin{cases}
    [\theta_{m_k,n,j_k}+h_{m_k,n,j_k},\theta_{m_{k+1},n,j_{k+1}}-h_{m_{k+1},n,j_{k+1}}], & n \in J_e(\xi), \\
    [\theta_{m_k,n,j_k}-h_{m_k,n,j_k},\theta_{m_k,n,j_k}+h_{m_k,n,j_k}], & n \in J_i(\xi),
\end{cases}
\]
where $(m_k,j_k) \in \mathcal{J}(n)$ is the indexing introduced in Definition~\ref{defn:admissiblehtheta}. 
The length of the interval $I_n(b(\xi))$ is denoted by
\begin{equation}
\label{eq:Xi}
\Xi_n(\xi) = \begin{cases}
\left( \theta_{m_{k+1},n,j_{k+1}}-h_{m_{k+1},n,j_{k+1}} \right) - \left(\theta_{m_k,n,j_k}+h_{m_k,n,j_k},\right), & n \in J_e(\xi), \\
2 h_{m_k,n,j_k}, & n \in J_i(\xi),
\end{cases}
\end{equation}
\end{rem}

\chapter{Global Dynamics Derived from $\cF_0$}
\label{sec:R0}

As indicated in the introduction we include this section purely for pedagogical reasons. This is the most trivial application of Corollary~\ref{cor:existenceAttractorLattice} and Theorem~\ref{thm:dynamics} but nevertheless  the essential steps of the process of going from combinatorics to nonlinear dynamics are identified.

Consider a ramp system 
 \begin{equation}
    \label{eq:rampH0}
    \dot{x} = -\Gamma x + E(x; \nu, \theta, h)
    \end{equation}
with fixed parameter values $(\gamma, \nu, \theta) \in \Lambda(S)$ and $h \in \cH_0(\gamma, \nu, \theta)$, where $\Lambda(S)$ and $\cH_0(\gamma, \nu, \theta)$ are defined by \eqref{eq:lambdaS} and \eqref{eq:H0}, respectively. 
Let $\cX$ be the associated ramp induced cubical complex (see Definition~\ref{defn:ramp-wall-labeling}). 
The map $\cF_0 \colon \cX\mvmap \cX$ is independent of the wall labeling. As discussed in Example~\ref{ex:trivialSCC}, $\sInvset^+(\cF_0) = \setof{\emptyset, \cX}$. Thus, $\CRC(\cF_0) = \sMG(\cF_0) = \SCC(\cF_0)$ consists of a single element $p$. Furthermore,
\[
CH_k(p; \F) \cong H_k(\cX_b, \emptyset; \F) \cong
\begin{cases}
\F & \text{if $k=0$} \\
0 & \text{otherwise.}
\end{cases}
\]
Therefore, the chains for the Conley complex are $CH_0(p;\F)\cong\F$ with a trivial boundary map.

Set $\sN = \setof{\emptyset,\cX_b}$ and observe that $\sN$ is an AB-lattice (see Definition~\ref{defn:ABlattice}) with respect to $\cX_b$.
Choose an associated rectangular geometrization $\recG = \recG(\gamma,\nu,\theta,h)$ (see Definition~\ref{defn:rectGeoXb}).
By Proposition~\ref{prop:globalAttractor}, $\recG$ is aligned with $-\Gamma x + E(x;\nu,\theta,h)$ (see Definition~\ref{defn:aligned}).
Therefore, we can apply Theorem~\ref{thm:dynamics} and conclude that the Conley index of the associated global attractor $K$ is $CH_*(p)$.
Furthermore, by \cite{mccord:89} there exists a fixed point in $K$.

The points we wish to emphasize are as follows.
We begin with a specific ODE \eqref{eq:rampH0}.
Chapter~\ref{sec:ramp} prescribes a combinatorial model $\cF_0\colon \cX\mvmap \cX$. 
The results of Chapter~\ref{sec:lattice} identify and justify the combinatorial and homology computations.
An appropriate geometrization is provided by Chapter~\ref{sec:geometrization01}.
Therefore, the D-grading gives rise to a Morse decomposition and the Conley complex is a connection matrix for the dynamics of \eqref{eq:rampH0}.
Finally, we use a classical result involving the Conley index to rigorously identify an invariant set with a particular structure.

\chapter{Global Dynamics Derived from $\cF_1$}
\label{sec:R1Dynamics}

The standing hypothesis throughout this chapter is the following:
\begin{description}
    \item[H1] Consider an $N$-dimensional ramp system given by
    \begin{equation}
    \label{eq:rampH1}
    \dot{x} = -\Gamma x + E(x; \nu, \theta, h)
    \end{equation}
    with parameters $(\gamma, \nu, \theta)\in \Lambda(S)$ (see \eqref{eq:lambdaS}) and $h \in \cH_1(\gamma, \nu, \theta)$ (see Definition~\ref{defn:H1}). Let $\cX=\cX(\I)$ be the ramp induced cubical complex, $\omega \colon W(\cX) \to \setof{\pm 1}$ be the associated wall labeling and $\rook : TP(\cX) \to \setof{0,\pm 1}$ be the associated Rook Field (see Section~\ref{sec:ramp2rook}). Let $\cX_b$ be the associated blow-up complex (see Section~\ref{sec:blowup_complex}). Let $\cF_1 \colon \cX \mvmap \cX$ be the associated combinatorial multivalued map derived from Definition~\ref{def:Rule1} and let $\pi_b\colon (\cX_b,\preceq_b) \to (\SCC(\cF_1),\preceq_{\cF_1})$ be the D-grading as in Section~\ref{sec:CCalgorithm}. 
\end{description}

The goal of this chapter is to prove the following theorem. 
\begin{thm}
    \label{thm:R1ABlattice}
    Given the hypothesis {\bf H1}, any rectangular geometrization $\recG$ of $\cX_b$ is aligned with the vector field \eqref{eq:rampH1} for all $\cN \in \sN(\cF_1)$. 
\end{thm}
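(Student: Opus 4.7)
The plan is to verify the sufficient conditions of Corollary~\ref{cor:inward} for every $\cN \in \sN(\cF_1)$ and every $\zeta \in \bdy(\cN)^{(N-1)}$. Such a $\zeta$ has the form $\zeta = [\bar{\bv}, \bone^{(n)}]$ for a unique coordinate direction $n$, and the rectangular geometrization places $\bg(\zeta)$ inside the hyperplane $x_n = e_n(\bar{\bv})$, on which $e_n(\bar{\bv}) = \theta \pm h$ for the threshold $\theta = \theta_{m_k,n,j_k}$ associated with the index $k$ encoded by $\bar{\bv}_n$. When $\zeta$ has a unique top coface in $\cX_b$ (equivalently $\zeta \in \bbdy(\cX_b)$), that coface must lie in $\cN$ and geometrically $\bg(\zeta)$ is contained in $\partial B$; the strong dissipativity of the ramp-induced wall labeling (Proposition~\ref{prop:ramp-dissipativewall}) together with Lemma~\ref{lem:global-bound} immediately gives $\langle f(x), z_\zeta(x)\rangle > 0$.

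In the interior case, $\zeta$ has exactly two top cofaces, $\sigma_0 \in \cN$ and $\sigma_1 \notin \cN$. Proposition~\ref{lem:boundaryABlattice} yields $\pi_b(\sigma_0) <_{\bar{\cF}_1} \pi_b(\sigma_1)$. Setting $\xi_i = \blup^{-1}(\sigma_i)$, Lemma~\ref{lem:codimension1face} gives $|\dim \xi_0 - \dim \xi_1| = 1$ and a direct computation shows the two are $\preceq_\cX$-comparable with $\Ex = \{n\}$. Because $\xi_0$ and $\xi_1$ lie in distinct strongly connected components but are adjacent with unit dimension gap, the combination of Proposition~\ref{prop:arrowExists} and the $\pi_b$-inequality forces a single $\cF_1$-arrow directed from $\xi_1$ to $\xi_0$. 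Applying the contrapositives of both clauses of Condition~1.2 of Definition~\ref{def:Rule1.2} in each of the two subcases (distinguished by whether $\xi_0 \prec \xi_1$ or $\xi_1 \prec \xi_0$, equivalently by the parity of $\bar{\bv}_n$), one concludes that the lower-dimensional cell is either $E^-$ of or $E^+$ of the higher-dimensional cell. Unpacking Definition~\ref{def:exit_face} through the ramp-induced wall labeling (Definition~\ref{defn:ramp-wall-labeling}), this produces $\sgn(-\gamma_n \theta + E_n(\mu)) = s$ for a common sign $s \in \{\pm 1\}$ over all $\mu$ in $\Top_\cX(\xi_{\mathrm{high}})$, where $s$ is exactly the sign required for the vector field to point from $\sigma_1$ into $\sigma_0$.

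The last step upgrades this threshold-level sign to a uniform inequality on the whole of $\bg(\zeta)$, where $x_n = \theta \pm h$ rather than $\theta$. The hypothesis $h \in \cH_1$ of Definition~\ref{defn:H1} is designed precisely for this purpose: the exclusion of $E_n(D_\bv)/\gamma_n$ from the intervals $(\theta,\theta+h)$ and $(\theta-h,\theta)$ together with admissibility of the parameters forces $|E_n(\mu)/\gamma_n - \theta| \geq h$ whenever $\sgn(-\gamma_n\theta + E_n(\mu)) = s$, so that $\sgn(-\gamma_n x_n + E_n(\mu)) = s$ persists for every $x_n \in [\theta - h, \theta + h]$. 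The monotonicity provided by Proposition~\ref{prop:local-monotone}, applied coordinatewise to the interaction function, implies that on the rectangle $\bg(\sigma_{\mathrm{high}})$ the function $E_n$ attains its extrema at the corners, so $E_n(x) \in [\min_\mu E_n(\mu),\max_\mu E_n(\mu)]$ with $\mu$ ranging over $\Top_\cX(\xi_{\mathrm{high}})$, and this bound descends to the boundary face $\bg(\zeta) \subset \bg(\sigma_{\mathrm{high}})$. Combining these two ingredients yields $\sgn(-\gamma_n x_n + E_n(x)) = s$ throughout $g_\zeta(\Int B^{N-1})$, establishing condition (1) of Corollary~\ref{cor:inward}. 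Condition (2), inward-pointing on $g_\zeta(\bdy(B^{N-1}))$, follows by noting that each point there lies on several $(N-1)$-faces of $\bbdy(\cN)$; applying (1) to each such face places $f(x)$ in the intersection of the corresponding open inward half-spaces, i.e.\ in the open inward cone. The main technical obstacle is the bookkeeping in the case analysis (parity of $\bar{\bv}_n$, which of $\xi_0, \xi_1$ is higher-dimensional, and the corresponding identification of $E^\pm$ and sign of $p_n$), together with tracking exactly which flat region of the ramp functions supplies the constant value of $E_n(\mu)$ that is compared against the threshold via $\cH_1$.
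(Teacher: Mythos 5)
Your proposal follows essentially the same route as the paper's proof: split on whether $\zeta$ lies in $\bbdy(\cX_b)$, use Lemma~\ref{lem:codimension1face} and Proposition~\ref{lem:boundaryABlattice} to identify the ordered pair $(\xi_0,\xi_1)=(\blup^{-1}(\sigma_0),\blup^{-1}(\sigma_1))$, deduce a single directed $\cF_1$-edge, unpack Condition~1.2 and the ramp-induced wall labeling to get a consistent sign of $-\gamma_n x_n + E_n$ on the threshold hyperplane, propagate that sign across the interval $[\theta-h,\theta+h]$ via $\cH_1$, and bound $E_n$ between its top-cell values. The paper packages the interior and boundary cases into Propositions~\ref{prop:transversality-for-entrance-exit-faces} and \ref{prop:transversality-at-the-boundary} respectively and concludes directly, whereas you verify the two conditions of Corollary~\ref{cor:inward} by hand; this is a presentational difference only, and your substitution of Proposition~\ref{prop:local-monotone} for the paper's direct ``$E_n$ is a ramp nonlinearity, hence bounded by its top-cell values'' remark yields the same bound.
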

As a consequence, Theorem~\ref{thm:dynamics} implies that we have a characterization of the global dynamics of the ramp system.

In Section~\ref{sec:F1-transversality} we provide the necessary results to prove Theorem~\ref{thm:R1ABlattice}. In Section~\ref{sec:F1-results} we provide a proof and more refined results about the dynamics of \eqref{eq:rampH1}. 

\section{Transversality of $\cF_1$}
\label{sec:F1-transversality}

Recall that $\cX = \cX(\I)$ is the cubical cell complex (see Definition~\ref{defn:Xcomplex}) generated by
\[
\I = \prod_{n=1}^N \setof{0, \ldots, K(n) + 1}
\]
where $K(n)$ in the number of output thresholds for the variable $x_n$ (see \eqref{eq:num_out_thresholds}), and $\cX_b = \cX(\bar{\I})$ is the cubical cell complex (see Definition~\ref{defn:Xbcomplex}) generated by
\[
\bar{\I} = \prod_{n=1}^N \setof{0, \ldots, 2(K(n)+1) + 1},
\]
and the blowup map is the function $b : \cX \to \cX_b^{(N)}$ given by $\blup([\bv,\bw])=[2\bv+\bw,\bone]$ as in Section~\ref{sec:blowup_complex}.

Using the indexing set $\mathcal{J}(n)$ (see Definition~\ref{defn:admissiblehtheta})
\[
\theta_{m_k, n, j_k} + h_{m_k, n, j_k} < \theta_{m_{k+1}, n, j_{k+1}} - h_{m_{k+1}, n, j_{k+1}},
\]
for all $k = 1, \ldots, K(n) - 1$ and all $n=1,\ldots,N$. 
We assume that $\theta_{m_0,n,j_0}$ and $\theta_{m_{K(n)+1},n,j_{K(n)+1}}$ satisfy \eqref{eq:m0nj0} and \eqref{eq:mKnjK}, respectively.

The first step towards proving that a rectangular geometrization is aligned with the vector field is to identify the region in which the geometrization of two cells $\xi,\xi'\in \cX$ with $\xi \prec \xi'$ intersect.

\begin{prop}
\label{prop:intersection-of-geometrization-of-blup}
Given {\bf H1}, let $\recG$ be a rectangular geometrization of $\cX_b$.  
Let $\xi,\xi'\in\cX$ with 
$\xi\prec\xi'$
and $\xi=[\bv,\bw]$ with $\bv=(k_1,\ldots,k_N)$.  
Let $x = (x_1,\ldots, x_N) \in \bg(\blup(\xi)) \cap \bg(\blup(\xi'))$. 
If $n \in \Ex(\xi,\xi')$, then
    \[
        x_n = \theta_{m_{k_n}, n, j_{k_n}}-p_n(\xi,\xi') h_{m_{k_n}, n, j_{k_n}}.
    \]
\end{prop}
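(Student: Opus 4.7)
The plan is to reduce the claim to a direct computation using the explicit description of the geometric realizations $\bg(\blup(\xi))$ and $\bg(\blup(\xi'))$ as products of intervals given in Remark~\ref{rem:g(b(xi))}, and then to split into two cases according to the face relation. The key observation is that for $n \in \Ex(\xi,\xi') = J_i(\xi) \cap J_e(\xi')$, the interval $I_n(\blup(\xi))$ is a ``threshold interval'' of the form $[\theta_{m_{k_n},n,j_{k_n}} - h_{m_{k_n},n,j_{k_n}}, \theta_{m_{k_n},n,j_{k_n}} + h_{m_{k_n},n,j_{k_n}}]$, while $I_n(\blup(\xi'))$ is a ``between-threshold'' interval whose endpoints are determined by the $n$-th coordinate of $\bv'$.

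First, I would write $\xi' = [\bv',\bw']$ and invoke Definition~\ref{defn:Xcomplex} to conclude that $\bv = \bv' + \bq$ and $\bw + \bq \leq_\N \bw'$ for some $\bq \in \setof{0,1}^N$. Since $n \in \Ex(\xi,\xi')$, we have $\bw_n = 0$ and $\bw'_n = 1$, which forces $\bq_n \in \setof{0,1}$, giving exactly two cases. In each case, I would write $k'_n = k_n - \bq_n$ and read off the endpoints of $I_n(\blup(\xi'))$ from Remark~\ref{rem:g(b(xi))}: when $\bq_n = 0$, the left endpoint of $I_n(\blup(\xi'))$ coincides with the right endpoint of $I_n(\blup(\xi))$, and both equal $\theta_{m_{k_n},n,j_{k_n}} + h_{m_{k_n},n,j_{k_n}}$; when $\bq_n = 1$, the right endpoint of $I_n(\blup(\xi'))$ coincides with the left endpoint of $I_n(\blup(\xi))$, and both equal $\theta_{m_{k_n},n,j_{k_n}} - h_{m_{k_n},n,j_{k_n}}$. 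In either case the intersection of the $n$-th factors is a single point.

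The final step is to verify that the formula $\theta_{m_{k_n},n,j_{k_n}} - p_n(\xi,\xi')\, h_{m_{k_n},n,j_{k_n}}$ reproduces this point in both cases. By Definition~\ref{defn:rpvector},
\[
p_n(\xi,\xi') = (-1)^{\bv_n - \bv'_n}(\bw_n - \bw'_n) = (-1)^{\bq_n}(-1),
\]
so $p_n(\xi,\xi') = -1$ when $\bq_n = 0$ and $p_n(\xi,\xi') = +1$ when $\bq_n = 1$, which matches the two cases above.

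There is essentially no obstacle here; the proposition is just a bookkeeping lemma about the rectangular geometrization. The only thing that requires care is the indexing convention for the $\theta$'s and $h$'s inherited from Definition~\ref{defn:admissiblehtheta} (together with the boundary conventions \eqref{eq:m0nj0} and \eqref{eq:mKnjK}, which ensure the formula is still valid when $k_n$ lies at the extremes $0$ or $K(n)+1$). I would state the two-case analysis cleanly and leave the routine verification to the reader.
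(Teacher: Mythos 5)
Your proposal is correct and follows essentially the same route as the paper's own proof: you write $\xi' = [\bv',\bw']$, observe $\bw_n=0$, $\bw'_n=1$, split into the two cases $\bq_n\in\{0,1\}$ (equivalently $\bv_n=\bv'_n$ versus $\bv_n=\bv'_n+1$), read off the endpoints of $I_n(\blup(\xi))$ and $I_n(\blup(\xi'))$ from the rectangular geometrization, and check the sign of $p_n(\xi,\xi')$ in each case. One minor note in your favor: the paper's proof text states ``$n\in J_e(\xi)$ and $n\in J_i(\xi')$'' for $n\in\Ex(\xi,\xi')$, which appears to be a typo for the reversed statement that you correctly use (and which the paper itself then relies on when asserting $\bw_n=0$, $\bw'_n=1$).
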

\begin{proof}
    Let $\xi = [\bv,\bw]$ and $\xi'=[\bv',\bw']$. If $n \in \Ex(\xi,\xi')$ then $n \in J_e(\xi)$ and $n \in J_i(\xi')$ by Definition~\ref{defn:Ex}. In terms of $\bv,\bv',\bw,\bw'$, Definition~\ref{defn:Xcomplex} implies that $\bw_n=0$ and $\bw'_n=1$ while $\bv_n=\bv_n'$ or $\bv_n=\bv_n'+1$.
    
    Assume that $\bv_n=\bv_n'$ so that $p_n(\xi,\xi')=-1$ by Definition~\ref{defn:rpvector}.
    Then, by Definition~\ref{defn:Xbcomplex}, the blowup map $b:\cX \to \cX^{(N)}_b$ maps $\xi$ and $\xi' $ to 
    \[
        \blup(\xi) = [2\bv+\bw,\bone],\quad\text{and} \quad \blup(\xi') = \blup([\bv,\bw]) = [2\bv'+\bw',\bone],
    \]
    respectively, with $2\bv_n+\bw_n=2\bv_n$ even and $2\bv'_n+\bw'_n=2\bv_n+1$ odd. 
    
    Now, recall \eqref{eq:Rxi},
    \[
    \Rec([2\bv+\bw,\bone]) = \prod_{n = 1}^{N} I_n([2\bv+\bw,\bone]) = \prod_{n=1}^ N [e_n(2\bv+\bw), e_n(2\bv+\bw + \bzero^{(n)})], 
    \]
    so if $x = (x_1,\ldots, x_N)  \in \bg(\blup(\xi))$, then 
\begin{align*}
    x_n  \in & [e_n(2\bv+\bw+\bzero),e_n(2\bv+\bw+\bone^{(n)})]  \\
    &  = [\theta_{m_{k_n}, n, j_{k_n}}-h_{m_{k_n}, n, j_{k_n}},\theta_{m_{k_n}, n, j_{k_n}}+h_{m_{k_n}, n, j_{k_n}}],
\end{align*}
 with indexing given by $\bv=(k_1,\ldots,k_N)$ as in Definition~\ref{defn:admissiblehtheta}. 
Similarly, if $x \in \bg(\blup(\xi'))$, then 
\begin{align*}
    x_n  \in & [e_n(2\bv+\bw),e_n(2\bv+\bw+\bone^{(n)}))] \\
    &  = [\theta_{m_{k_n}, n, j_{k_n}}+h_{m_{k_n}, n, j_{k_n}},\theta_{m_{k_n+1}, n, j_{k_n+1}}-h_{m_{k_n+1}, n, j_{k_n+1}}].
\end{align*}
    Thus, 
    \begin{align*}
        x_n & = \theta_{m_{k_n}, n, j_{k_n}}+h_{m_{k_n}, n, j_{k_n}} \\ 
        & = \theta_{m_{k_n}, n, j_{k_n}}-(-1) h_{m_{k_n}, n, j_{k_n}}\\
        & = \theta_{m_{k_n}, n, j_{k_n}}-p_n(\xi,\xi') h_{m_{k_n}, n, j_{k_n}}.
    \end{align*}
    
If one assumes $\bv_n=\bv_n'+1$, or equivalently that  $\bv'_n=\bv_n-1$,
then $p_n(\xi,\xi')=1$. If $x \in \bg(\blup(\xi'))$, then 
\begin{align*}
    x_n \in & [e_n(2\bv+\bw),e_n(2\bv+\bw+\bone)]   \\
    & = [\theta_{m_{k_n-1}, n, j_{k_n-1}}+h_{m_{k_n-1}, n, j_{k_n-1}},\theta_{m_{k_n}, n, j_{k_n}}-h_{m_{k_n}, n, j_{k_n}}],
\end{align*}
and hence, $x_n = \theta_{m_{k_n}, n, j_{k_n}}-p_n(\xi,\xi')h_{m_{k_n}, n, j_{k_n}}$. 
\end{proof}
The next result provides an identification of the sign of the vector field on $\bg(\blup(\xi))\cap\bg(\blup(\xi'))$ using the multivalued map $\cF_1 \colon \cX \mvmap \cX$. 
\begin{prop}
\label{prop:transversality-for-entrance-exit-faces}
Assume hypothesis \textbf{H1} and consider any rectangular geometrization $\recG$ of $\cX_b$. 
Let $\xi, \xi' \in \cX$ be such that $\xi \prec \xi'$ with $\Ex(\xi,\xi') = \{n\}$. 
For $x = (x_1, \ldots, x_N) \in \bg(\blup(\xi)) \cap \bg(\blup(\xi'))$, the following holds:
\begin{itemize}
    \item[(i)] If $\xi \notin \cF_1(\xi')$, then $\sgn(\dot{x}_n) = -p_n(\xi, \xi')$.
    \item[(ii)] If $\xi' \notin \cF_1(\xi)$, then $\sgn(\dot{x}_n) = p_n(\xi, \xi')$.
\end{itemize}
\end{prop}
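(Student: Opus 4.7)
The plan is to trace the two disjunctive cases back to the only rule in Chapter 5 that can eliminate an edge between cells of consecutive dimension. Since $\Ex(\xi,\xi')=\{n\}$ forces $\dim(\xi')=\dim(\xi)+1$, in particular $\xi\ne\xi'$, so Condition 1.1 of Definition~\ref{def:Rule1.1} is irrelevant. Hence $\xi\notin\cF_1(\xi')$ can only come from Condition 1.2, giving $\xi\in E^+(\xi')$, and $\xi'\notin\cF_1(\xi)$ gives $\xi\in E^-(\xi')$. Unwinding Definition~\ref{def:exit_face}, case (i) yields $\rook_n(\xi,\mu)=-p_n(\xi,\xi')$ for every $\mu\in\Top_\cX(\xi')$, and case (ii) yields $\rook_n(\xi,\mu)=+p_n(\xi,\xi')$ for every such $\mu$.

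Next I would translate the rook field values into inequalities on the vector field. By Corollary~\ref{cor:mu*} applied to the ramp induced wall labeling of Definition~\ref{defn:ramp-wall-labeling}, for each $\mu\in\Top_\cX(\xi')$
\begin{equation*}
\rook_n(\xi,\mu)=\omega(\mu_n^*(\xi,\mu),\mu)=\sgn\bigl(-\gamma_n\theta_{m_{k_n},n,j_{k_n}}+E_n(\mu)\bigr),
\end{equation*}
so the common sign is $\mp p_n(\xi,\xi')$. Proposition~\ref{prop:intersection-of-geometrization-of-blup} supplies $x_n=\theta_{m_{k_n},n,j_{k_n}}-p_n(\xi,\xi')\,h_{m_{k_n},n,j_{k_n}}$ at any $x$ in the intersection, and therefore
\begin{equation*}
\dot x_n=\bigl(E_n(x)-\gamma_n\theta_{m_{k_n},n,j_{k_n}}\bigr)+\gamma_n p_n(\xi,\xi')\,h_{m_{k_n},n,j_{k_n}}.
\end{equation*}

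The third step is to show that $E_n(x)-\gamma_n\theta_{m_{k_n},n,j_{k_n}}$ carries the same sign as the wall labeling, even though $x$ need not coincide with any vertex of a top cell. This uses that an interaction function from Definition~\ref{def:interaction_function} is coordinatewise monotone in non-negative inputs, and that along directions $m\neq n$ in the intersection the component ramps $r_{n,m}(x_m)$ are either constant or linearly interpolate between the two adjacent $\nu$-levels. Hence $E_n(x)$ lies in the convex hull of $\{E_n(\mu):\mu\in\Top_\cX(\xi')\}$, and the common sign from the previous step is inherited by $E_n(x)-\gamma_n\theta_{m_{k_n},n,j_{k_n}}$.

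Finally I would invoke $h\in\cH_1(\gamma,\nu,\theta)$ to show that this sign survives the additive perturbation $\gamma_n p_n(\xi,\xi')\,h_{m_{k_n},n,j_{k_n}}$. Conditions \eqref{eq:cH1right}–\eqref{eq:cH1left}, combined with the admissibility inequality $\gamma_n\theta_{m_{k_n},n,j_{k_n}}\neq E_n(D_{\bv})$ from Definition~\ref{defn:admissible-parameters}, force $|E_n(\mu)/\gamma_n-\theta_{m_{k_n},n,j_{k_n}}|\geq h_{m_{k_n},n,j_{k_n}}$ with the deviation on the side opposite to the shift $-p_n(\xi,\xi')\,h_{m_{k_n},n,j_{k_n}}$, so the first bracket dominates and $\sgn(\dot x_n)=\sgn(E_n(x)-\gamma_n\theta_{m_{k_n},n,j_{k_n}})$. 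This equals $-p_n(\xi,\xi')$ in case (i) and $+p_n(\xi,\xi')$ in case (ii). The main obstacle I anticipate is precisely this last step: turning the weak inequalities coming from the open-interval form of $\cH_1$ into the strict sign required by the conclusion. I expect to resolve it by exploiting the strict monotonicity of the interpolation in the third step, which upgrades the boundary estimate $E_n(\mu)\geq\gamma_n(\theta\pm h)$ to a strict inequality at any $x$ in the (relative) interior of the intersection.
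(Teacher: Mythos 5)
You follow the same chain of reductions as the paper's proof: translate the removed edge into the entrance/exit face statement via Condition~1.2, unwind Definition~\ref{def:exit_face} to a rook-field identity, convert it via Corollary~\ref{cor:mu*} and Definition~\ref{defn:ramp-wall-labeling} to the sign of the wall labeling at the threshold, pin $x_n = \theta_{m_{k_n},n,j_{k_n}} - p_n(\xi,\xi')h_{m_{k_n},n,j_{k_n}}$ via Proposition~\ref{prop:intersection-of-geometrization-of-blup}, bound $E_n(x)$ by multilinearity between the corner values $E_n(\mu)$, and finish with $\cH_1$ to absorb the $\gamma_n p_n h$ shift. This is precisely the paper's argument, modulo ordering of the last two steps. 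One observation in your favor: your sign is the right one. From $\xi\in E^+(\xi')$, Definition~\ref{def:exit_face}(ii) yields $\rook_n(\xi,\mu)=-p_n(\xi,\xi')$; the manuscript's proof writes $\rook_n(\xi,\mu)=+p_n(\xi,\xi')$ at this point, which contradicts the definition it cites and would propagate to the wrong conclusion $\sgn(\dot x_n)=p_n$ --- an internal sign slip in the reference.

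The strictness concern you raise at the end is genuine, and it is not fully resolved by your proposed fix. Definition~\ref{defn:H1} excludes only the open intervals $(\theta,\theta+h)$ and $(\theta'-h',\theta')$, so the boundary value $E_n(D_\bv)/\gamma_n=\theta+h$ is admissible; at the corresponding corner of $\bg(\blup(\xi))\cap\bg(\blup(\xi'))$ this gives $\dot x_n=0$, not the strict sign the proposition asserts. The paper's proof silently claims ``$\neq 0$'' at exactly this point, so the gap is shared. Your repair (strict monotonicity of the multilinear interpolation on the relative interior) does handle any $x$ off the corners when $E_n$ varies over $\Top_\cX(\xi')$, but it fails when $\source(n)\cap J_i(\xi')=\emptyset$, in which case $E_n$ is constant on the whole face and can sit identically at $\gamma_n(\theta+h)$. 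The clean fix is to state $\cH_1$ with closed intervals (or strict inequalities in the complementary form $|E_n(D_\bv)/\gamma_n-\theta|>h$); as written, the proposition should be read as excluding this measure-zero boundary locus of parameters.
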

\begin{proof}
    We prove that if $\xi \notin \cF_1(\xi')$, then $\sgn(\dot{x}_n) = -p_n(\xi,\xi')$.

    Assume that $\xi \notin \cF_1(\xi')$, so by Definitions~\ref{def:Rule1.1} and~\ref{def:Rule1}, we have that $\xi \in E^+(\xi')$. Thus, by Definition~\ref{def:exit_face}, $\rook_n(\xi,\mu) = p_n(\xi,\xi')$ for all $\mu \in \Top_\cX(\xi')$.

    By Proposition~\ref{prop:mu*}, for each $\mu \in \Top_\cX(\xi')$, there exists a unique $\mu_n^* \in \cX^{(N-1)}$, with $\xi \preceq \mu_n^*$, such that
    \[
        \rook_n(\xi,\mu) = \rook_n(\mu_n^*,\mu).
    \]
    From Definition~\ref{def:rookfield} of Rook Fields, we obtain that
    \[
        \omega(\mu_n^*,\mu) = p_n(\xi,\xi') \quad \text{for all $(\mu_n^*,\mu) \in W(\xi)$}.
    \]
    From Definition~\ref{defn:ramp-wall-labeling}, the associated wall labeling is given by
    \[
        \omega(\mu_n^*,\mu) = \sgn(-\gamma_n \theta_{m_{k_n}, n, j_{k_n}} + E_n(\mu)),
    \]
    for appropriate indices $m_{k_n}$ and $j_{k_n}$ as in equation~\eqref{eq:Jn}.    Considering $(\gamma,\nu,\theta) \in \Lambda(S)$ and $h \in \cH_1(\gamma,\nu,\theta)$, Definition~\ref{defn:H1} ensures
    \[
        -\gamma_n (\theta_{m_{k_n}, n, j_{k_n}} - p_n(\xi,\xi') h_{m_{k_n}, n, j_{k_n}}) + E_n(\mu) \neq 0,
    \]
    for all $\mu \in \Top_\cX(\xi')$. Therefore,
    \[
        \sgn(-\gamma_n (\theta_{m_{k_n}, n, j_{k_n}} - p_n(\xi,\xi') h_{m_{k_n}, n, j_{k_n}}) + E_n(\mu)) = -p_n(\xi,\xi').
    \]
    Since $E_n(x;\nu,\theta,h)$ is a ramp nonlinearity, for any $x \in \bg(\blup(\xi)) \cap \bg(\blup(\xi'))$, we have
    \[
        E_n(\mu_{\min}) \leq E_n(x;\nu,\theta,h) \leq E_n(\mu_{\max}),
    \]
    for some $\mu_{\min}, \mu_{\max} \in \Top_\cX(\xi')$. Thus,
    \[
        \sgn(-\gamma_n (\theta_{m_{k_n}, n, j_{k_n}} - p_n(\xi,\xi') h_{m_{k_n}, n, j_{k_n}}) + E_n(x;\nu,\theta,h)) = -p_n(\xi,\xi'),
    \]
    for all such $x$.
    
Finally, by Proposition~\ref{prop:intersection-of-geometrization-of-blup}, if $x = (x_1, \ldots, x_N) \in \bg(\blup(\xi)) \cap \bg(\blup(\xi'))$, then
    \[
        x_n = \theta_{m_{k_n}, n, j_{k_n}} - p_n(\xi,\xi') h_{m_{k_n}, n, j_{k_n}}.
    \]
Therefore, $\sgn(\dot{x}_n) = -p_n(\xi,\xi')$, for all $x \in \bg(\blup(\xi)) \cap \bg(\blup(\xi'))$.

    The proof for the case when $\xi' \notin \cF_1(\xi)$ follows analogously by interchanging the roles of $\xi$ and $\xi'$.
\end{proof}

Proposition~\ref{prop:transversality-for-entrance-exit-faces} identifies the sign of the vector field in the interior of $\recG(\cX_b)$, but not if the cell is in the boundary of $\cX_b$. 
The following proposition addresses this issue. 
\begin{prop}
\label{prop:transversality-at-the-boundary}
Assume {\bf H1} and a rectangular geometrization $\recG$ of $\cX_b$. 
Let $\zeta \in \cX_b^{(N-1)}\cap \bbdy(\cX_b)$  and let $J_i(\zeta)=\setof{n}$.
Then, for any $x \in \bg(\zeta)$, there exists an unique cell $\xi \in \bbdy(\cX)$ such that $\Top_{\cX_b}(\zeta)=\setof{\blup(\xi)}$, and for any $n$-wall $(\mu_n^*,\mu) \in W(\xi)$
    \[
        \sgn(\dot{x}_n) =-p_n(\zeta,\blup(\xi))=-p_n(\mu_n^*,\mu),
    \]
    for all $x = (x_1,\ldots, x_N)  \in \bg(\zeta)$.
\end{prop}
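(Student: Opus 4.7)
The plan is to reduce the statement to a direct computation in two symmetric cases, corresponding to $\bar{\bv}_n = 0$ (left $n$-boundary) and $\bar{\bv}_n = \bar{K}(n)+1$ (right $n$-boundary), and in each case match the sign of $\dot{x}_n$ against the relevant relative position vectors.

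First, I would establish the uniqueness of $\xi$. Since $\zeta \in \bbdy(\cX_b)^{(N-1)}$, Definition~\ref{def:boundary_prime} gives a unique top cell $\sigma \in \cX_b^{(N)}$ with $\zeta \prec_b \sigma$; because the blowup map $\blup \colon \cX \to \cX_b^{(N)}$ is a bijection (see Definition~\ref{defn:Xbcomplex} and Remark~\ref{rem:xihat}), setting $\xi := \blup^{-1}(\sigma)$ yields the unique cell with $\Top_{\cX_b}(\zeta) = \{\blup(\xi)\}$. Writing $\zeta = [\bar{\bv}, \bone^{(n)}]$, the standing property of $\bbdy(\cX_b)$ (the observation after Definition~\ref{defn:Xcomplex}) together with $J_i(\zeta) = \{n\}$ force $\bar{\bv}_n \in \{0, \bar{K}(n)+1\}$, and inverting $\blup$ via Remark~\ref{rem:xihat} shows $\xi \in \cX_n^- \cup \cX_n^+ \subset \bbdy(\cX)$.

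Next, I would extract the geometric content. By Definition~\ref{defn:rectGeoXb} and the boundary identity \eqref{eq:rectangular_geo_boundary}, every $x \in \bg(\zeta)$ satisfies $x_n = 0$ when $\bar{\bv}_n = 0$, and $x_n = \gab_n(\gamma,\nu,\theta,h)$ when $\bar{\bv}_n = \bar{K}(n)+1$. In the former case, $\dot{x}_n = -\gamma_n \cdot 0 + E_n(x) = E_n(x) > 0$, because ramp nonlinearities are strictly positive products/sums of positive ramp functions. In the latter case, Lemma~\ref{lem:global-bound} yields $\dot{x}_n < 0$. Hence $\sgn(\dot{x}_n) = +1$ on the left $n$-boundary and $\sgn(\dot{x}_n) = -1$ on the right $n$-boundary.

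Then I would compute the two relative position vectors and check they agree with $-\sgn(\dot{x}_n)$. In the left case, $\blup(\xi) = [\bar{\bv}, \bone]$, and Definition~\ref{defn:rpvector} gives
\[
p_n(\zeta, \blup(\xi)) = (-1)^{\bar{\bv}_n - \bar{\bv}_n}(\bone^{(n)}_n - \bone_n) = -1,
\]
so $-p_n(\zeta, \blup(\xi)) = +1 = \sgn(\dot{x}_n)$. The right case is analogous with $\blup(\xi) = [\bar{\bv} - \bzero^{(n)}, \bone]$ producing $p_n = +1$. For the final equality $p_n(\zeta, \blup(\xi)) = p_n(\mu_n^*, \mu)$, note that any $n$-wall $(\mu_n^*, \mu) \in W(\xi)$ satisfies $\xi \preceq \mu_n^*$, and the face relation \eqref{eq:is_a_face} forces the $n$-th vertex coordinate of $\mu_n^*$ (and hence of $\mu$) to inherit the extreme value $0$ or $K(n)+1$ of $\xi$. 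A direct evaluation of Definition~\ref{defn:rpvector} on these boundary $n$-walls, or equivalently the strong dissipativity of the induced wall labeling (Proposition~\ref{prop:ramp-dissipativewall}) combined with Definition~\ref{defn:ramp-wall-labeling} at the boundary thresholds $\theta_{m_0,n,j_0} = 0$ and $\theta_{m_{K(n)+1},n,j_{K(n)+1}} = \gab_n$, gives $p_n(\mu_n^*, \mu) = p_n(\zeta, \blup(\xi))$ and closes the chain of equalities.

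The main obstacle is bookkeeping rather than analysis: one must carefully reconcile the blowup coordinates of $\zeta$ and $\blup(\xi)$ in $\cX_b$ with the face-pair $(\mu_n^*, \mu)$ in $\cX$, and confirm that the geometrization's boundary values match the wall-labeling convention. This last compatibility is, however, precisely built into \eqref{eq:rectangular_geo_boundary} via the choices \eqref{eq:m0nj0} and \eqref{eq:mKnjK}, so the verification ultimately reduces to the two case checks described above.
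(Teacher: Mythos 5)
Your proposal is correct and follows essentially the same two-case, direct-computation route as the paper's proof: identify the unique $\xi$ via the boundary definition and the blowup bijection, split on $\bar{\bv}_n \in \{0,\bar{K}(n)+1\}$, extract the boundary value of $x_n$, and match signs. The only minor variation is that you obtain $\sgn(\dot{x}_n)$ on the left boundary directly from strict positivity of $E_n$ and on the right via Lemma~\ref{lem:global-bound}, whereas the paper routes both through strong dissipativity of the wall labeling ($\omega(\mu_n^*,\mu) = -p_n(\mu_n^*,\mu)$) before unwinding to the sign of $E_n$; both chains produce the same conclusion.
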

\begin{proof}
    If $\zeta \in \cX_b^{(N-1)}$ and $\zeta \in \bbdy(\cX_b)$, then by Definition~\ref{def:boundary_prime}, there exists an unique $\zeta' \in \cX_b$ such that $\zeta \prec \zeta'$ with $J_i(\zeta)=\setof{n}$ and $J_e(\zeta)=\setof{1,\ldots,N}\setminus\setof{n}$. By Definition~\ref{defn:Xbcomplex}, the set of top cells of $\cX_b$, $\cX_b^{(N)}$, is in 1-1 correspondence with the cells of $\cX$. Let $\xi=\blup^{-1}(\zeta')$ with $\blup(\xi)=[2\bv+\bw,\bone]$. Given that $\zeta \in \bbdy(\cX_b)$, either $2\bv_n+\bw_n=0$ or $2\bv_n+\bw_n=2(K(n)+1)+1$. 

Assume $2\bv_n+\bw_n=0$, so that $\bv_n=\bw_n=0$ implies $\xi \in \bbdy(\cX)$ and $n \in J_i(\xi)$. Let $\mu \in \Top_\cX(\xi)$. 
By Proposition~\ref{cor:mu*} and Corollary~\ref{cor:mu*}, there exists an unique $n$-wall $(\mu_n^*,\mu) \in W(\cX)$ such that $\rook_n(\xi,\mu) = \omega(\mu_n^*,\mu)$.
By Proposition~\ref{prop:ramp-dissipativewall}, $\omega(\mu_n^*,\mu)=-p_n(\mu_n^*,\mu)$. Moreover, since $\bv_n=0$, $p_n(\mu_n^*,\mu)=-1$. Thus, $\omega(\mu_n^*,\mu)=1$. Hence, by Definition~\ref{defn:ramp-wall-labeling},
\[
    \sgn(-\gamma_n 0 + E_n(\mu) )) = \sgn( E_n(\mu) ) = 1.
\]    
Note that for $x = (x_1,\ldots, x_N)  \in \bg(\zeta)$, $2\bv_n+\bw_n=0$ and $J_i(\zeta)=\setof{n}$ implies that $x_n = 0$. 
Also, $J_e(\xi)=\setof{1,\ldots,N}\setminus\setof{n}$ implies that for $i\neq n$, $x_i \in [\theta_{m_k,i,j_k}+h_{m_k,i,j_k},\theta_{m_k+1,i,j_k+1}-h_{m_k+1,i,j_k+1}]$. 
Thus, by Definition~\ref{defn:rampnonlinearity}, $E_n(x;\nu,\theta,h)$ is constant for $x \in \bg(\zeta)$. 
Therefore, $\dot{x}_n > 0$ for all $x \in \bg(\zeta)$. 
Since $2\bv_n+\bw_n=0$, $p_n(\zeta,\blup(\xi))=-1$, hence
$\sgn(\dot{x}_n) =-p_n(\zeta,\blup(\xi))$. 

 Similarly, if one assumes $2\bv_n+\bw_n+1=2(K(n)+1)+1$, then $2\bv_n+\bw_n=2K(n)$, $\bv_n=K(n)$ and $\bw_n=0$. Now,
    $x_n =\gab_n(\gamma, \nu, \theta, h)$ as in \eqref{eq:evertices}, 
    while $x_i$ does not affect $E_n$ for $i \neq n$. Again, since $\omega$ is strongly dissipative and $h \in \cH_1(\gamma,\nu,\theta)$, one obtains that $\dot{x}_n < 0$ for $x \in \bg(\zeta)$ and since $p_n(\zeta,\blup(\xi))=1$, the result follows.
\end{proof}

\section{Dynamics of $\cF_1$}
\label{sec:F1-results}

We are now in position to prove Theorem~\ref{thm:R1ABlattice}. 

\begin{proof}[Proof of Theorem~\ref{thm:R1ABlattice}]
Fix a rectangular geometrization $\recG$ of $\cX_b$ and let $\cN\in\sN(\cF_1)$.
Let $\zeta\in \bbdy(\cN)^{(N-1)}$.
Since $\recG$ is a rectangular geometrization of $\cX_b$ into $\R^N$, $\bg(\zeta)$ is an $N-1$ dimensional rectangular hyperplane in $B = [0,\gab(\gamma,\nu)^N]$, and therefore, is a smooth $(N-1)$ dimensional manifold with boundary. 

There are two possible cases, either $\zeta \in \bbdy(\cX_b)$ or $\zeta\not\in \bbdy(\cX_b)$. 

If $\zeta \in \bbdy(\cX_b)$, then $z_\zeta(x)=-p(\zeta,\blup(\xi))$, where $\Top_{\cX_b}(\zeta)=\setof{\blup(\xi)}$.
By Proposition~\ref{prop:transversality-at-the-boundary}, $\sgn(\dot{x}_n) = -p_n(\zeta,\blup(\xi))$ for all $x \in \bg(\zeta)$, thus
\[
    \langle \sgn(\dot{x}) , z_\zeta(x) \rangle = \left( p_n(\zeta,\blup(\xi)) \right)^2  > 0,
\]
and $z_\zeta(x) = \sgn(\dot{x})$. 

If $\zeta\not\in \bbdy(\cX_b)$, then there exist $\sigma_0,\sigma_1\in \cX_b^{(N)}$, such that $\zeta\preceq_b \sigma_i$, $\sigma_0\in\cN$, and $\sigma_1\not\in\cN$. Let $\xi_i = \blup^{-1}(\sigma_i)$.
By Lemma~\ref{lem:codimension1face}, $|\dim(\xi_0) - \dim(\xi_1)| = 1$, and by Lemma~\ref{lem:boundaryABlattice}, $\xi_0 \prec_{\bar{\cF_1}} \xi_1$. Since $\sigma_1 \notin \cN$, it follows that $\xi_1 \notin \cF_1(\xi_0)$. Then by Proposition~\ref{prop:transversality-for-entrance-exit-faces}, $\sgn(\dot{x}_n)=p_n(\xi_0,\xi_1)$ for all $x \in \bg(\sigma_0)\cap\bg(\sigma_1)$. Note that $\bg(\sigma_0)\cap\bg(\sigma_1)=\bg(\zeta)$ and that
\[
    z_\zeta(x)=p(\zeta,\sigma_0) = p(\xi_0,\xi_1)=\pm \bzero^{(n)}. 
\]
Thus, 
$\langle \sgn(\dot{x}) , z_\zeta(x) \rangle = (p_n(\xi_0,\xi_1))^2 > 0$.
If instead $\xi_1 \preceq \xi_0$, then by the same arguments 
$\langle \sgn(\dot{x}) , z_\zeta(x) \rangle = (-p_n(\xi_1,\xi_0))^2$. 

In either case, $z_\zeta(x) = \sgn(\dot{x})$ and
\[
    \langle \dot{x} , z_\zeta(x) \rangle = |\dot{x}_n| \geq \inf_{x \in \bg(\zeta)} |\dot{x}_n| > 0,
\]
where the last inequality follows from the choice of $h \in \cH_1(\gamma,\theta,\nu)$ in Definition~\ref{defn:H1}. Therefore, $\recG$ is aligned with the ramp system over $\cN$ for every $(N-1)$-dimensional cell $\zeta\in\bdy'(\cN)$.
\end{proof}

Having established Theorem~\ref{thm:R1ABlattice}, we turn to Corallary~\ref{cor:existenceAttractorLattice}.
Observe that $\sN$ in Corallary~\ref{cor:existenceAttractorLattice} can be replace by $\sN(\cF_1)$ and as a consequence we have Theorem~\ref{thm:dynamics} at our disposal where $\varphi$ is the flow associated with any ramp system \eqref{eq:rampH1} satisfying {\bf H1}.

In particular by Theorem~\ref{thm:dynamics}(2) we have computed a Morse decomposition for $\varphi$.
However, for a fixed parameter value the Morse representation may provide finer information.
Thus, we are interested in understanding general conditions to define a poset $(\sP,\preceq_{\bar{\cF}_1})$ such that $\sQ\subseteq \sP \subseteq \SCC(\cF_1)$ and under which $M(p)= \emptyset$ for all $p\in \SCC(\cF_1)\setminus\sP$.

\begin{lem}
\label{lem:gradient-direction-is-nonzero}
Let $\xi \in \cX$.
If $n \in G(\xi)$, then
\[
\left( -\gamma_n x_n + E_n(x;\nu,\theta,h) \right) r_n > 0, \ \text{for all $x = (x_1, \ldots, x_N)  \in \bg(\blup(\xi))$}.
\]
where $\setof{ r_n } = R_{n}(\xi)$.
\end{lem}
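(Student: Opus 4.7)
The plan is to unpack the hypothesis $n \in G(\xi)$ into a sign condition on $-\gamma_n\theta+E_n(\mu)$ at an appropriate threshold $\theta$ for every top cell $\mu\in\Top_\cX(\xi)$, then promote that sign condition to all of $\bg(\blup(\xi))$ by using the interval bounds imposed by the rectangular geometrization together with the parameter restrictions defining $\cH_1(\gamma,\nu,\theta)$. Writing $\xi=[\bv,\bw]$ with $\bv=(k_1,\ldots,k_N)$, I would split on whether $n\in J_i(\xi)$ or $n\in J_e(\xi)$.

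In the inessential case $n\in J_i(\xi)$, Corollary~\ref{cor:mu*} gives $\rook_n(\xi,\mu)=\omega(\mu_n^*(\xi,\mu),\mu)$, and Definition~\ref{defn:ramp-wall-labeling} then reads $r_n=\sgn(-\gamma_n\theta_{m_{k_n},n,j_{k_n}}+E_n(\mu))$ for every $\mu\in\Top_\cX(\xi)$. Abbreviating $\theta:=\theta_{m_{k_n},n,j_{k_n}}$ and $h:=h_{m_{k_n},n,j_{k_n}}$, the condition $h\in\cH_1(\gamma,\nu,\theta)$ excludes $E_n(\mu)/\gamma_n$ from $(\theta-h,\theta)\cup(\theta,\theta+h)$, so the sign condition upgrades to $r_n E_n(\mu)\ge r_n\gamma_n(\theta+r_n h)$. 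Since $x\in\bg(\blup(\xi))$ forces $x_n\in[\theta-h,\theta+h]$ (cf.\ Remark~\ref{rem:g(b(xi))}), one then has $r_n(-\gamma_n x_n+E_n(\mu))>0$ at each corner.

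In the essential case $n\in J_e(\xi)$, the rook field definition gives $\omega(\mu_n^-,\mu)=\omega(\mu_n^+,\mu)=r_n$, so the sign condition holds at the two consecutive thresholds $\theta_{m_{k_n},n,j_{k_n}}$ and $\theta_{m_{k_n+1},n,j_{k_n+1}}$. Consequently $E_n(\mu)/\gamma_n$ lies either strictly above the larger threshold (if $r_n=+1$) or strictly below the smaller one (if $r_n=-1$), and since $x_n\in[\theta_{m_{k_n},n,j_{k_n}}+h_{m_{k_n},n,j_{k_n}},\theta_{m_{k_n+1},n,j_{k_n+1}}-h_{m_{k_n+1},n,j_{k_n+1}}]$ on $\bg(\blup(\xi))$, the desired inequality $r_n(-\gamma_n x_n+E_n(\mu))>0$ follows at each corner without invoking $\cH_1$ in this case.

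The final step is to extend the corner-wise sign inequality to all $x\in\bg(\blup(\xi))$. For this I would show that $E_n(x)$ always lies in the closed interval $[\min_\mu E_n(\mu),\max_\mu E_n(\mu)]$, by exploiting the multilinear-in-ramps structure of the interaction function: each factor $r_{n,j}(x_j)$ is either constant on $\bg(\blup(\xi))$ (when $j\in J_e(\xi)$) or linearly interpolates between its two strictly positive extremes (when $j\in J_i(\xi)$), so positivity of the ramp values together with the type I or type II form of $f_n$ forces $E_n$ to be bounded at every point by its corner values. The main obstacle will be this last extension step, since the multilinear sandwich argument must cover both interaction types and must exploit positivity of each ramp carefully to rule out sign flips under multiplication; once that is in hand, combining it with the corner-wise inequalities yields the claim.
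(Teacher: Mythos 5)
Your overall strategy is the same as the paper's: establish a sign condition involving the wall labeling at the ``corners'' of $\bg(\blup(\xi))$ and then use the multilinear structure of the ramp nonlinearity to propagate it into the interior. The paper's proof is more compact: it sets $f_n(x) := r_n(-\gamma_n x_n + E_n(x))$, observes that $f_n$ is multilinear (so $\partial^2 f_n/\partial x_i^2 = 0$), shows $f_n>0$ on $\partial\bg(\blup(\xi))$ using $h\in\cH_1$, and concludes by the maximum principle. Your essential/inessential case split is a more explicit version of what the paper sweeps into the single line ``$\sgn(-\gamma_n x_n + E_n(\mu))=r_n$ for all $x_n\in I_n(\xi), \mu\in\Top_\cX(\xi)$.''

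However, there is a genuine gap in the inessential case that the paper's formulation sidesteps by treating $f_n$ as a single multilinear quantity instead of decoupling $x_n$ from $E_n(x)$. You claim that for every $\mu\in\Top_\cX(\xi)$, the $\cH_1$ condition excludes $E_n(\mu)/\gamma_n$ from \emph{both} $(\theta-h,\theta)$ and $(\theta,\theta+h)$, giving the uniform corner bound $r_n E_n(\mu) \ge r_n\gamma_n(\theta + r_n h)$. That is not what Definition~\ref{defn:H1} says. For each box $D_\bv$, conditions \eqref{eq:cH1right}--\eqref{eq:cH1left} only exclude the band that lies \emph{inside} $D_\bv$: a right-side $\mu$ (whose $D_\bv$ has $\theta$ as its left boundary) gets the exclusion of $(\theta,\theta+h)$, while a left-side $\mu'$ gets the exclusion of $(\theta-h,\theta)$. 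So for $r_n=1$ a left-side $\mu'$ is only guaranteed $E_n(\mu')/\gamma_n>\theta$, which does not imply $\ge\theta+h$. When $E_n$ is independent of $x_n$ on the box (no self-loop at node $n$ active at the $\xi$-threshold) this is harmless because $E_n(\mu')=E_n(\mu)$ for the adjacent left/right pair and the right-side exclusion carries over. But when $n\in\source(n)$ and the threshold at $\xi$ in the $n$-direction is the self-loop threshold, $E_n(\mu')$ and $E_n(\mu)$ differ, and your claimed uniform bound genuinely fails.

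This is exactly the situation where your final decoupled step — sandwich $E_n(x)\in[\min_\mu E_n(\mu),\max_\mu E_n(\mu)]$ and then take the worst-case $x_n$ — breaks down. You would then conclude only $E_n(x)>\gamma_n\theta$ rather than $E_n(x)\ge\gamma_n(\theta+h)$, while $x_n$ can be as large as $\theta+h$; no conclusion follows. What saves the day is that the small $E_n$-values occur precisely at the corners where $x_n$ is \emph{also} small ($x_n=\theta-h$), and the large $x_n=\theta+h$ corners carry the large $E_n(\mu_{\text{right}})$ values. This correlation is captured automatically by applying the multilinear-extrema-at-corners argument to $f_n = r_n(-\gamma_n x_n + E_n(x))$ as a whole, not to $E_n$ in isolation. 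So the fix is small: establish the corner positivity of $f_n$ corner-by-corner (matching each $E_n(\mu)$ with the $x_n$-endpoint of the \emph{same} corner), and then invoke multilinearity of $f_n$, rather than sandwiching $E_n$ separately with a uniform bound that doesn't hold.
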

\begin{proof}
Set $f_n(x)=(-\gamma_n x_n + E_n(x;\nu,\theta,h))r_n$. 
Since $h \in \cH_1(\gamma,\nu,\theta)$, 
    \[
        \sgn(-\gamma_{n} x_{n} + E_{n}(\mu)) = r_n, \ \forall x_{n} \in I_n(\xi), \forall \mu \in \Top_\cX(\xi).
    \]
   Thus, $f_n(x) > 0$ for all $x \in \partial \bg(\blup(\xi))$. Note that $\partial^2 f_n/\partial x_i^2 = 0$ for all $i=1,\ldots,N$ as $E_n$ is either a type I or type II  ramp nonlinearity. Hence, by the maximum principle, $f_n$ attains its minimum at the boundary, so
   \[
        f_n( \bg(\blup(\xi)) )  \geq f_n (\partial \bg(\blup(\xi))) > 0.
   \]
\end{proof}

\begin{thm}
\label{thm:regular_cell}
Given {\bf H1} and an associated rectangular geometrization of $\cX_b$, 
let $\cS\subset \cX$ and assume that the elements of $\cS$ share a common gradient direction (see Definition~\ref{defn:commonGradient}).
Then, 
\[
\Inv\left( \bigcup_{\xi\in\cS}\bg(\blup(\xi)),\varphi\right) = \emptyset.
\]
In particular, if  $\xi\in\cX$ is not an equilibrium cell, then
\[
\Inv(\bg(\blup(\xi)),\varphi) = \emptyset.
\]
\end{thm}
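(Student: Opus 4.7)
The plan is to use the coordinate $x_n$, where $n$ is the common gradient direction of $\cS$, as a strict Lyapunov function on $V := \bigcup_{\xi \in \cS}\bg(\blup(\xi))$. The essential input is Lemma~\ref{lem:gradient-direction-is-nonzero}, which gives, for each $\xi \in \cS$ with $R_n(\xi) = \setof{r_n(\xi)}$, the strict inequality $\dot x_n \cdot r_n(\xi) > 0$ on the \emph{closed} rectangle $\bg(\blup(\xi))$.

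The first step will be to verify that the sign $r_n(\xi) \in \setof{\pm 1}$ is constant on each connected component of $V$. I plan to do this by a short continuity argument: if $\xi, \xi' \in \cS$ have overlapping geometric realizations, pick a point $x \in \bg(\blup(\xi)) \cap \bg(\blup(\xi'))$; the ramp vector field assigns a single nonzero value to $\dot x_n$ at $x$, and the two strict inequalities $\dot x_n \cdot r_n(\xi) > 0$ and $\dot x_n \cdot r_n(\xi') > 0$ force $r_n(\xi) = r_n(\xi')$. Transitivity then propagates the common sign along any path within a connected component.

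Once the sign is fixed on a connected component $V_c$, compactness finishes the job: $V_c$ is a finite union of closed rectangles and hence compact, so $\dot x_n \cdot r_n$ is continuous and strictly positive on $V_c$, and is therefore bounded below by some $c > 0$. Any trajectory confined to $V_c$ for $t \geq 0$ would satisfy $r_n(x_n(t) - x_n(0)) \geq ct$, which is incompatible with the boundedness of $x_n$ on $V_c \subset B$ supplied by Proposition~\ref{prop:globalAttractor}. Hence no full orbit can lie in $V_c$, so $\Inv(V_c,\varphi) = \emptyset$, and taking the disjoint union over components gives $\Inv(V,\varphi) = \emptyset$. The ``in particular'' statement then follows by setting $\cS = \setof{\xi}$: if $\xi$ is not an equilibrium cell, then $G(\xi) \neq \emptyset$ by Definition~\ref{def:eqcell}, so any $n \in G(\xi)$ provides a common gradient direction. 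I expect the only delicate point to be the sign-matching claim across neighboring cells, which makes essential use of the strictness in Lemma~\ref{lem:gradient-direction-is-nonzero} on the \emph{closed} rectangles rather than just their interiors.
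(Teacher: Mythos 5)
Your proof is correct and follows the same overall strategy as the paper's: invoke Lemma~\ref{lem:gradient-direction-is-nonzero} on each cell of $\cS$ and turn $x_{n}$ into a strict Lyapunov function on the union, which together with boundedness of $B$ (Proposition~\ref{prop:globalAttractor}) rules out any full trajectory and hence any nonempty invariant set.

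Where you differ is in filling a small gap that the paper's proof leaves implicit. Definition~\ref{defn:commonGradient} guarantees only that $n\in G(\xi)$ for every $\xi\in\cS$; it does \emph{not} guarantee that the singleton value $r_n(\xi)$ in $R_n(\xi)=\setof{r_n(\xi)}$ is the same for all $\xi\in\cS$. The paper simply asserts that $x_{n_g}$ ``acts as a Lyapunov function over $\bigcup_{\xi\in\cS}\bg(\blup(\xi))$'' without addressing this, whereas you handle it explicitly by decomposing $V$ into connected components, using the well-definedness of $\dot x_n$ at overlap points to propagate a single sign across each component, and then applying compactness componentwise. That is a legitimate and cleaner way to close the argument. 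An equally short alternative, staying closer to the paper's phrasing, is to observe that along any full trajectory $\varphi(\R,x)\subset V$ the function $t\mapsto \dot x_n(\varphi(t,x))$ is continuous and nowhere zero, so it has constant sign, and the compactness bound $\inf_{V}\left|\dot x_n\right|>0$ then forces $x_n$ to be unbounded along the trajectory. Both routes are correct; yours buys a slightly more geometric picture of $V$, while the trajectory-wise version avoids discussing components altogether.
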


\begin{proof}
For each $\xi\in \cS$ Lemma~\ref{lem:gradient-direction-is-nonzero} implies that
\[
    \left|\dot{x}_{n_g}\right| 
    \geq \inf_{x \in \bg(\blup(\xi))} \left| -\gamma_{n_g} x_{n_g} + E_{n_g}(x;\nu,\theta,h) \right| > 0.
\]
The inequality is strict and $\cS$ is finite, thus $x_{n_g}$ acts as a Lyapunov function over $\bigcup_{\xi\in\cX}\bg(\blup({\xi}))$.
Therefore, $\Inv\left(\bigcup_{\xi\in\cX}\bg(\blup({\xi})),\varphi\right) = \emptyset$.

By definition if $\xi$ is not an equilibrium cell, then $G(\xi)\neq \emptyset$, which is a special case of this Theorem.
\end{proof}

\begin{defn}
Consider a wall labeling $\omega\colon W(\cX) \to \setof{\pm1}$.
Let $\cF\colon \cX\mvmap \cX$ be a multivalued map with D-grading $\pi\colon \cX \to (\SCC(\cF),\preceq_{\bar{\cF}})$.
An element $p\in \SCC(\cF)$ is \emph{spurious} if $p\in\bar{\cF}(p)$ and the elements of $\pi^{-1}(p)$ share a common gradient direction.
\end{defn} 

\begin{prop}
    \label{prop:F1.3}
Given {\bf H1} and an associated rectangular geometrization of $\cX_b$,
let $\pi\colon \cX\to \SCC(\cF_1)$ denote the D-grading.
If $p\in \SCC(\cF_1)$ and 
$p\not\in \bar{\cF}_1(p)$,
then 
\[
\Inv(\bg(\blup(\pi^{-1}(p))) =\emptyset.
\]
\end{prop}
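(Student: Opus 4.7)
The plan is to unwind the hypothesis $p \notin \bar{\cF}_1(p)$ into the existence of a single cell with a gradient direction, and then invoke Theorem~\ref{thm:regular_cell}. The argument has three steps, and the only subtlety lies in the bookkeeping — no new analysis is needed.

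First I would show that $\pi^{-1}(p)$ is a singleton. If $\pi^{-1}(p)$ contained two distinct cells $\xi_0,\xi_1$, then by the definition of a strongly connected component (Definition~\ref{defn:actualscc}) there are paths from $\xi_0$ to $\xi_1$ and back within $\pi^{-1}(p)$. Each such path contains at least one edge $\zeta_1 \in \cF_1(\zeta_0)$ with $\zeta_0,\zeta_1 \in \pi^{-1}(p)$, which by Definition~\ref{defn:SCC} forces $p \in \bar{\cF}_1(p)$, contradicting the hypothesis. Hence $\pi^{-1}(p)=\setof{\xi}$ for a single cell $\xi$. The same reasoning rules out $\xi \in \cF_1(\xi)$, so the hypothesis also yields $\xi \notin \cF_1(\xi)$.

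Next I would argue that $G(\xi) \neq \emptyset$. Recall that $\cF_1(\xi)=\cF_{1.1}(\xi)\cap\cF_{1.2}(\xi)$, and the exit/entrance conditions defining $\cF_{1.2}$ (Definition~\ref{def:Rule1.2}) apply only to pairs of distinct cells $\xi\prec\xi'$ with $\dim(\xi')-\dim(\xi)=1$. Consequently $\xi \in \cF_{1.2}(\xi)$ always, so $\xi \notin \cF_1(\xi)$ forces $\xi \notin \cF_{1.1}(\xi)$. By Condition~1.1 (Definition~\ref{def:Rule1.1}) this requires $G(\xi)\neq\emptyset$, i.e., $\xi$ is not an equilibrium cell (Definition~\ref{def:eqcell}).

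Finally, since $\xi$ is not an equilibrium cell, Theorem~\ref{thm:regular_cell} gives $\Inv(\bg(\blup(\xi)),\varphi)=\emptyset$, and as $\bg(\blup(\pi^{-1}(p)))=\bg(\blup(\xi))$, the conclusion follows. The proof is essentially a translation exercise: the hard analytic content — the existence of a monotone coordinate (Lyapunov function) along a gradient direction — is already packaged in Lemma~\ref{lem:gradient-direction-is-nonzero} and Theorem~\ref{thm:regular_cell}, so no real obstacle remains. The only minor care point is verifying that $\cF_{1.2}$ never removes self-loops, which justifies reducing $\xi \notin \cF_1(\xi)$ to $\xi \notin \cF_{1.1}(\xi)$.
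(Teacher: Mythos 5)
Your proof is correct and follows essentially the same route as the paper: extract a singleton $\pi^{-1}(p)=\setof{\xi}$ from $p\notin\bar{\cF}_1(p)$, reduce $\xi\notin\cF_1(\xi)$ to Condition~1.1 to conclude $G(\xi)\neq\emptyset$, and invoke Theorem~\ref{thm:regular_cell}. The only difference is that you spell out explicitly why $\cF_{1.2}$ never removes self-loops (a fact the paper records elsewhere, in the proof of Proposition~\ref{prop:F1-welldefined}, and uses implicitly here).
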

\begin{proof}
By assumption 
$p\not\in \bar{\cF}_1(p)$.
Therefore there exists unique cell $\xi$ such that $\pi^{-1}(p) =\setof{\xi}$. 
The assumption 
$p\not\in \bar{\cF}_1(p)$
also implies that $\xi\not\in\cF_1(\xi)$, i.e., Condition 1.1 is satisfied. 
This implies that $G(\xi) \neq \emptyset$.
Therefore the result follows from Theorem~\ref{thm:regular_cell}.
\end{proof}

\begin{prop}
\label{cor:F1.3}
Given {\bf H1} and an associated rectangular geometrization of $\cX_b$, let $\pi\colon \cX\to \SCC(\cF_1)$ denote the D-graph.
Define 
\[
\sP = \SCC(\cF_1)\setminus\left( \setof{p\not\in \bar{\cF}_1(p)}
\cup 
\setof{p\ \text{is spurious}}\right)
\]
Then, $(\sP,\preceq_{\bar{\cF}_1})$ is a Morse decomposition of $\varphi$ restricted to $B$.
\end{prop}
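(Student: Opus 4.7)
The plan is to apply Theorem~\ref{thm:dynamics} to obtain a Morse decomposition indexed by all of $(\SCC(\cF_1),\preceq_{\bar{\cF}_1})$ and then to show that the elements of $\SCC(\cF_1)\setminus\sP$ all correspond to empty Morse sets, so that $\sP$ still contains the Morse representation and therefore remains a valid Morse decomposition.

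First, I would assemble the hypotheses of Theorem~\ref{thm:dynamics}. By Theorem~\ref{thm:NFABlattice}, $\sN(\cF_1)$ is an AB-sublattice of $\sO(\cX_b)$, and by Corollary~\ref{cor:birkhoff}, the map $\sO(\pi_b)\colon \sO(\SCC(\cF_1))\to \sO(\cX_b)$ is a $\bzero\bone$-lattice monomorphism whose image is exactly $\sN(\cF_1)$. Theorem~\ref{thm:R1ABlattice} guarantees that any rectangular geometrization $\recG$ of $\cX_b$ is aligned with the vector field of \eqref{eq:rampH1} over every $\cN\in\sN(\cF_1)$. Thus Theorem~\ref{thm:dynamics} applies with $\sL=\sO(\SCC(\cF_1))$ and $\lambda=\sO(\pi_b)$; by the Birkhoff identification $\sJ^\vee(\sL)\cong (\SCC(\cF_1),\preceq_{\bar{\cF}_1})$, so $(\SCC(\cF_1),\preceq_{\bar{\cF}_1})$ is already a Morse decomposition of $\varphi$ restricted to $B=\lambda(\bone)$, with Morse sets
\[
M(p)=\Inv\!\left(\bg(\lambda(\sO(p)))\setminus \bg(\lambda(\sO(p)^<));\varphi\right), \qquad p\in\SCC(\cF_1).
\]

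Next, I would identify the elements of $\SCC(\cF_1)$ whose Morse set is empty. Observe that $\bg(\lambda(\sO(p)))\setminus \bg(\lambda(\sO(p)^<))\subset \bigcup_{\xi\in\pi^{-1}(p)}\bg(\blup(\xi))$, since the cells of $\cX_b$ whose grading is exactly $p$ are faces of $\blup(\pi^{-1}(p))$. There are two cases. If $p\not\in\bar{\cF}_1(p)$, then Proposition~\ref{prop:F1.3} gives $\Inv(\bg(\blup(\pi^{-1}(p))),\varphi)=\emptyset$ and hence $M(p)=\emptyset$. If $p$ is spurious, then by definition $\pi^{-1}(p)$ admits a common gradient direction, so Theorem~\ref{thm:regular_cell} gives $\Inv(\bigcup_{\xi\in\pi^{-1}(p)}\bg(\blup(\xi)),\varphi)=\emptyset$, and again $M(p)=\emptyset$. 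Therefore the set $\sQ=\setdef{p\in\SCC(\cF_1)}{M(p)\neq\emptyset}$ of Theorem~\ref{thm:dynamics}(3) satisfies $\sQ\subseteq \sP$.

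Finally, by Theorem~\ref{thm:dynamics}(3), $(\sQ,\preceq_{\bar{\cF}_1})$ is a Morse representation of $\varphi$ restricted to $B$, and the inclusion $\sQ\hookrightarrow \sP$ is an order embedding since both posets carry the restriction of $\preceq_{\bar{\cF}_1}$. This exhibits the order-embedding of the Morse representation into $\sP$ required by the definition of a Morse decomposition, and hence $(\sP,\preceq_{\bar{\cF}_1})$ is a Morse decomposition of $\varphi$ restricted to $B$.

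The main obstacle I expect is the bookkeeping in the first paragraph, namely confirming the identification $\bg(\lambda(\sO(p)))\setminus\bg(\lambda(\sO(p)^<))\subseteq \bigcup_{\xi\in\pi^{-1}(p)}\bg(\blup(\xi))$; this requires unwinding the blow-up, the D-grading on $\cX_b$, and Proposition~\ref{prop:lowest_highest} to verify that every lower dimensional face appearing in $\lambda(\sO(p))\setminus\lambda(\sO(p)^<)$ is indeed a face of some $\blup(\xi)$ with $\pi(\xi)=p$. Once this inclusion is secured, the remainder of the argument is a direct application of the previously established results.
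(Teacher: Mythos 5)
Your proposal is correct and follows essentially the same route as the paper: start from the fact that $(\SCC(\cF_1),\preceq_{\bar{\cF}_1})$ is already a Morse decomposition via Theorem~\ref{thm:dynamics}, then discard the nodes in $\SCC(\cF_1)\setminus\sP$ by showing their Morse sets are empty using Proposition~\ref{prop:F1.3} and Theorem~\ref{thm:regular_cell}. The inclusion you flag as the main obstacle does hold and is implicitly used by the paper as well: every $\zeta\in\cX_b$ with $\pi_b(\zeta)=p$ is a face of some top cell in $\blup(\pi^{-1}(p))$ because $\pi_b$ assigns to a lower-dimensional cell the minimal grading over its top-dimensional cofaces, so the Morse tile $\cl(\lambda(\sO(p))\setminus\lambda(\sO(p)^<))$ is contained in $\cl(\blup(\pi^{-1}(p)))$.
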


\begin{proof}
Since $(\SCC(\cF_1),\preceq_{\bar{\cF}_1})$ is a Morse decomposition of $\varphi$ restricted to $B$ it is sufficent to show that if $p\in \SCC(\cF_1)\setminus \sP$, then $\Inv\left(\bigcup_{\xi\in\cX}\bg(\blup({\pi^{-1}(p)})),\varphi\right) = \emptyset$.
The result follows from Proposition~\ref{prop:F1.3} and Theorem~\ref{thm:regular_cell}.     
\end{proof}

We conclude this section with a justification for the equilibrium cell (Definition~\ref{def:eqcell}) nomenclature.
\begin{prop}\label{prop:equilibrium-existence}
Consider {\bf H1} and an associated rectangular geometrization of $\cX_b$. 
Then, $\xi \in \cX$ is an equilibrium cell if and only if \eqref{eq:rampH1} contains an unique equilibrium in $\bg(\blup(\xi))$.
\end{prop}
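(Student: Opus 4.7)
My strategy is to prove each implication separately: the reverse direction is a direct corollary of prior results, while the forward direction requires careful analysis of the affine structure of the ramp nonlinearity on $\bg(\blup(\xi))$.

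The implication ($\Leftarrow$) is immediate from Theorem~\ref{thm:regular_cell}. If $\bg(\blup(\xi))$ contains an equilibrium $x^*$, then $x^* \in \Inv(\bg(\blup(\xi)),\varphi)$ makes this invariant set nonempty; the contrapositive of the second assertion of Theorem~\ref{thm:regular_cell} then forces $G(\xi) = \emptyset$, so $\xi$ is an equilibrium cell.

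For ($\Rightarrow$), assume $G(\xi) = \emptyset$. The key structural observation is that on the rectangle $\bg(\blup(\xi))$ each ramp factor $r_{n',m}(x_m)$ is either constant or affine in $x_m$: it is affine exactly when $m \in J_i(\xi)$ and the threshold of $r_{n',m}$ coincides with the $k_m$-th output threshold of $x_m$, and constant otherwise. Since each interaction function is a sum of products or a product of sums, each $E_n$ is a multilinear polynomial on $\bg(\blup(\xi))$, and the equilibrium system $\gamma_n x_n = E_n(x)$ reduces to an affine system $(\Gamma - B)x = a$ whose matrix $B$ is supported on the graph of the active regulation map $\rmap\xi$: specifically, $B_{n,m} \neq 0$ only when $m \in J_i(\xi)$ and $n = \rmap\xi(m)$. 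The top-cell case $\xi \in \cX^{(N)}$ is immediate since $B = 0$, $a = E(\xi)$ is constant, and $G(\xi) = \emptyset$ combined with $h \in \cH_1(\gamma,\nu,\theta)$ places $x^*_n = E_n(\xi)/\gamma_n$ strictly inside $I_n(\blup(\xi))$. Otherwise Proposition~\ref{prop:eqcell-is-opaque} makes $\xi$ opaque, and by Proposition~\ref{prop:opaque} we have $J_e(\xi) = N_0(\xi)$, $J_i(\xi) = O(\xi)$, and $\rmap\xi$ a bijection on $J_i(\xi)$. Since $\rmap\xi(J_i(\xi)) \subseteq J_i(\xi)$, the rows $n \in J_e(\xi)$ of $B$ vanish, so these coordinates decouple and are each uniquely determined by $x_n = E_n/\gamma_n \in I_n(\blup(\xi))$ via Definition~\ref{defn:H1}.

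Existence for the remaining inessential coordinates follows from Brouwer's fixed point theorem applied to the time-$t$ map of the flow for small $t > 0$, after checking that the flow points strictly inward on every codimension-one face of $\bg(\blup(\xi))$; this transversality is provided by Theorem~\ref{thm:R1ABlattice} applied to the AB-lattice generated by $\setof{\xi}$, together with the opaque condition $R_n(\xi) = \setof{\pm 1}$ which forces a sign change of $-\gamma_n x_n + E_n(x)$ across the relevant threshold faces. The main obstacle is uniqueness: showing $\det\bigl((\Gamma - B)|_{J_i(\xi)}\bigr) \neq 0$. I plan to decompose $\rmap\xi$ into disjoint cycles and factor the determinant accordingly; each cyclic factor has the form $\prod_{n\in\sigma} \gamma_n \pm \prod_{n\in\sigma} B_{n,\rmap\xi^{-1}(n)}$, and the difficult step is showing this quantity is nonzero using only $(\gamma,\nu,\theta) \in \Lambda(S)$ and $h \in \cH_1$. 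The delicate point is that as $h \to 0$ the ramp slopes entering $B$ diverge, so naive dominance estimates fail; I expect to need a refined cycle-wise inequality analogous to those appearing in Definition~\ref{defn:H2} and Definition~\ref{defn:H3}. As a fallback, Theorem~\ref{thm:dynamics} identifies the Morse set carried by $\bg(\blup(\xi))$ with a hyperbolic equilibrium via McCord's theorem \cite{mccord:89}, and the affine structure combined with local hyperbolicity rules out any second equilibrium inside $\bg(\blup(\xi))$.
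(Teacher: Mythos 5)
Your reverse direction is correct and matches the paper's argument in spirit (the paper invokes Lemma~\ref{lem:gradient-direction-is-nonzero} directly, but both rely on the fact that a gradient direction forces $\dot{x}_n \neq 0$ throughout $\bg(\blup(\xi))$). Your structural reduction in the forward direction is also right: opaqueness together with the bijectivity of $\rmap\xi$ makes each $E_n$ affine in the single variable $x_{\rmap\xi^{-1}(n)}$ for $n\in J_i(\xi)$ and constant for $n\in J_e(\xi)$, reducing the equilibrium problem to an affine system $(\Gamma-B)x=a$. But both the existence and uniqueness steps have genuine gaps.

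\emph{Existence.} You claim the flow points strictly inward on every codimension-one face of $\bg(\blup(\xi))$ and invoke Theorem~\ref{thm:R1ABlattice}. This fails for an opaque (non--top) cell: for $n\in J_i(\xi)$, the component $\dot{x}_n$ varies nontrivially across any $n$-face (since it depends on $x_{\rmap\xi^{-1}(n)}$) and changes sign across the $m$-face pair with $m=\rmap\xi^{-1}(n)$ rather than the $n$-face pair. In other words, $\bg(\blup(\xi))$ is generically a saddle region, not an attracting block, and Theorem~\ref{thm:R1ABlattice} only guarantees alignment for elements $\cN\in\sN(\cF_1)$, which the singleton $\{\xi\}$ for an opaque $\xi$ is not. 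The correct tool — the one the paper uses — is Poincar\'{e}--Miranda applied after permuting components: for $m\in J_i(\xi)$ one tests $\dot{x}_{\rmap\xi(m)}$ on the $m$-face pair (where $h\in\cH_1(\gamma,\nu,\theta)$ guarantees constant sign on each face and opaqueness gives a sign change between the two), and for $m\in J_e(\xi)=N_0(\xi)$ one tests $\dot{x}_m$ on its own $m$-face pair.

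\emph{Uniqueness.} You leave the cycle-wise determinant factor $\prod_{n\in\sigma}\gamma_n - \prod_{n\in\sigma}b_n$ unbounded and offer a fallback that is circular: McCord's theorem \cite{mccord:89} gives existence of a fixed point from a nontrivial Conley index, not hyperbolicity, and asserting ``the affine structure combined with local hyperbolicity'' presupposes the very nonsingularity you are trying to prove. Appealing to the inequalities in Definitions~\ref{defn:H2} or~\ref{defn:H3} is also off-target, since the hypothesis of {\bf H1} only provides $h\in\cH_1(\gamma,\nu,\theta)$. The short route, which is what the paper's terse phrase ``linear and $h\in\cH_1(\gamma,\nu,\theta)$'' is standing in for, is topological: if $\ker(\Gamma-B)\neq\{0\}$, the equilibrium set through the interior point would be a nontrivial affine segment whose endpoints lie on the boundary of $\bg(\blup(\xi))$; but the same $h\in\cH_1$ estimates used for Poincar\'{e}--Miranda make $\dot{x}_{\rmap\xi(m)}\neq 0$ on every $m$-face, so no boundary point can be an equilibrium, a contradiction. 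This yields uniqueness without ever computing the determinant.
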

\begin{proof}
Let $\xi=[\bv,\bw] \in \cX$ be an equilibrium cell, i.e., $G(\xi) = \emptyset$.  
Furthermore, set $\bv=(k_1,\ldots,k_N)$.    
    
If $\xi \in \cX^{(N)}$, then \eqref{eq:rampH1} restricted to $\bg(\blup(\xi))$ reduces to 
\begin{equation*}
        \dot{x} = -\Gamma x + C,
\end{equation*}
where $C = E(\xi)$ as in Definition~\ref{defn:ramp-wall-labeling}. 
Note that for any $n \in \setof{1,\ldots,N}$, 
\[
R_n(\xi) = \setof{\rook_n(\xi^-_n,\xi),\rook_n(\xi^+_n,\xi)} = \setof{\omega(\xi^-_n,\xi),\omega(\xi^+_n,\xi)},
\]
where $\rook$ is the rook field (see Definition~\ref{def:rookfield}) for the ramp induced wall labeling (see Definition~\ref{defn:ramp-wall-labeling}).   
If $\omega(\xi^-_n,\xi)=\omega(\xi^+_n,\xi)$, then $G(\xi)\neq\emptyset$. Hence, the contrapositive yields 
\[
\omega(\xi^-_n,\xi) = \sgn( -\gamma_n \theta_{m_{k_n}, n, j_{k_n}} + E_n(\xi) )\neq \sgn( -\gamma_n \theta_{m_{k_n+1}, n, j_{k_n+1}} + E_n(\xi) ) = \omega(\xi^+_n,\xi)
\]
for each $n \in \setof{1,\ldots,N}$. 
Therefore,
\begin{align*}
& \sgn\left( -\gamma_n (\theta_{m_{k_n}, n, j_{k_n}}+h_{m_{k_n}, n, j_{k_n}}) + E_n(\xi) \right) \neq \\
& \sgn\left( -\gamma_n (\theta_{m_{k_n+1}, n, j_{k_n+1}}-h_{m_{k_n+1}, n, j_{k_n+1}}) + E_n(\xi) \right) 
\end{align*}
since $h \in \cH_1(\gamma,\nu,\theta)$. We have concluded that \eqref{eq:rampH1} assumes opposite signs at opposite faces of the cube $\bg(\blup(\xi))$, therefore the Poincaré-Miranda Theorem yields the existence of an equilibrium point in $\bg(\blup(\xi))$. Since \eqref{eq:rampH1} is linear and $h \in \cH_1(\gamma,\nu,\theta)$, that point is unique. 

    The case when $\xi$ is not a top cell follows in a similar manner. Assume that $\xi \notin \cX^{(N)}$. Then by Proposition~\ref{prop:eqcell-is-opaque} $\xi$ is opaque. Therefore, Proposition~\ref{prop:opaque} implies that $\rmap\xi \colon J_i(\xi) \to J_i(\xi)$ is a bijection, so \eqref{eq:rampH1} restricted to $\bg(\blup(\xi))$ is given by 
    \begin{align*}
        \dot{x}_n = \begin{cases}
        -\gamma_n x_n + f_n\left(x_{\rmap\xi^{-1}(n)}\right), & n \in J_i(\xi), \\ 
        -\gamma_n x_n + c_n, & n \in J_e(\xi),
        \end{cases} 
    \end{align*}
    where
    \begin{align*}
        f_n\left(x_{\rmap\xi^{-1}(n)}\right) & = E_n\left(\mdpt_1(\xi),\ldots,x_{\rmap\xi^{-1}(n)},\ldots,\mdpt_n(\xi));\nu,\theta,h\right) \\ 
        c_n & = E_n\left( \mdpt(\xi); \nu,\theta,h\right),
    \end{align*}
    with $\mdpt(\xi)=(\mdpt_n(\xi))_n$. 
    Moreover, Proposition~\ref{defn:opaque} implies that $J_i(\xi) = O(\xi)$ and $J_e(\xi)=N_0(\xi)$. For each $n \in \activeset(\xi)$ (see Definition~\ref{def:active_regulation}), let
    \begin{align*}
        x^-_n & = \theta_{m_{k_n}, n, j_{k_n}}-h_{m_{k_n}, n, j_{k_n}} \\ 
        x^+_n & = \theta_{m_{k_n}, n, j_{k_n}}+h_{m_{k_n}, n, j_{k_n}}
    \end{align*}
    where $\bv=(k_1,\ldots,k_N)$ and $(m_{k_n},j_{k_n})$ as in \eqref{eq:Jn}. Note that 
    \[
        \sgn\left(-\gamma_n x_n + f_n\left(x^-_{\rmap\xi^{-1}(n)}\right)\right) \neq \sgn\left(-\gamma_n x_n + f_n\left(x^+_{\rmap\xi^{-1}(n)}\right)\right),
    \]
    otherwise $\sgn(\dot{x}_n)$ would be constant and $n \in G(\xi)$. If $n \in J_e(\xi)$, then $n \in N_0(\xi)$ and a similar argument to the previous case yields that $\dot{x_n}$ assumes opposite signs at opposing faces of the rectangular region $\bg(\blup(\xi))$. The result follows the Poincaré-Miranda Theorem, the linearity of \eqref{eq:rampH1} and the fact that $h \in \cH_1(\gamma,\nu,\theta)$. 

Conversely, let $\xi \in \cX$ and assume that there exists an unique equilibrium $x^* \in \bg(\blup(\xi))$. 
Suppose for the sake of contradiction that $G(\xi)\neq\emptyset$ and let $n \in G(\xi)$. 
Then, by Lemma~\ref{lem:gradient-direction-is-nonzero}, $\sgn(\dot{x}_n) \neq 0$ for all $x \in \bg(\blup(\xi))$, 
which contradicts the existence of $x^*$.
\end{proof}

\chapter{The Janus Complex}
\label{sec:janusComplex}

The results of Chapter~\ref{sec:R1Dynamics} are obtained using rectangular geometrizations to identify transversality along codimension-one faces of elements of $\sN(\cF_1)$, thus elucidating the fact that the local conditions used to define $\cF_1$ can be associated with regions of the phase space where the vector field is well aligned with the standard coordinate system of $\R^N$.
This is not the case for the quasilocal conditions that give rise to $\cF_2$.
Hence we  are forced to use geometrizations that involve curved hypersurfaces.
Based on the principle that it is easier to obtain explicit analytic estimates locally, our approach is to produce a finer cell complex $\Janus$, call the Janus complex, via subdivision of $\cX_b$, and then build geometrizations of $\Janus$.

\begin{defn}
    \label{defn:JanusComplex}
Given a cubical cell complex $\cX(\I) = (\cX,\prec,\dim,\kappa)$ the \emph{Janus complex} is the cubical complex 
\[
\cX\left(\bbarbI\right) = (\Janus,\precJanus,\dim,\kappa)
\]
generated by $\bbarbI = \prod_{n=1}^N \bbarI_n$ where
\[
\bbarI_n := \setof{k\mid k=0,\ldots, 8\bar{K}(n)},\quad n=1,\ldots, N.
\]
\end{defn}

\begin{ex}
\label{ex:simplexXr}
If $\I = \setof{0,1,2}^2$, then
$\bar{\I} = \setof{0,\ldots, 5}^2$ and $\bbarbI = \setof{k\mid k=0,\ldots, 40}^2$.
See Figure~\ref{fig:Xr}.
\end{ex}

\begin{figure}
\begin{tikzpicture}
[scale=0.3]

\draw[ultra thick, blue] (0,0) -- (16,0);
\draw[ultra thick, blue] (0,8) -- (16,8);
\draw[ultra thick, blue] (0,16) -- (16,16);
\draw[ultra thick, blue] (0,0) -- (0,16);
\draw[ultra thick, blue] (8,0) -- (8,16);
\draw[ultra thick, blue] (16,0) -- (16,16);

\draw[thick, red] (-2,-2) -- (18,-2);
\draw[thick, red] (-2,2) -- (18,2);
\draw[thick, red] (-2,6) -- (18,6);
\draw[thick, red] (-2,10) -- (18,10);
\draw[thick, red] (-2,14) -- (18,14);
\draw[thick, red] (-2,18) -- (18,18);
\draw[thick, red] (-2,-2) -- (-2,18);
\draw[thick, red] (2,-2) -- (2,18);
\draw[thick, red] (6,-2) -- (6,18);
\draw[thick, red] (10,-2) -- (10,18);
\draw[thick, red] (14,-2) -- (14,18);
\draw[thick, red] (18,-2) -- (18,18);

\draw (-2,-1.5) -- (18,-1.5);
\draw (-2,-1.0) -- (18,-1.0);
\draw (-2,-0.5) -- (18,-0.5);
\draw (-2,0) -- (18,0);
\draw (-2,0.5) -- (18,0.5);
\draw (-2,1.0) -- (18,1.0);
\draw (-2,1.5) -- (18,1.5);

\draw (-2,2.5) -- (18,2.5);
\draw (-2,3.0) -- (18,3.0);
\draw (-2,3.5) -- (18,3.5);
\draw (-2,4.0) -- (18,4.0);
\draw (-2,4.5) -- (18,4.5);
\draw (-2,5.0) -- (18,5.0);
\draw (-2,5.5) -- (18,5.5);

\draw (-2,6.5) -- (18,6.5);
\draw (-2,7.0) -- (18,7.0);
\draw (-2,7.5) -- (18,7.5);
\draw (-2,8.0) -- (18,8.0);
\draw (-2,8.5) -- (18,8.5);
\draw (-2,9.0) -- (18,9.0);
\draw (-2,9.5) -- (18,9.5);

\draw (-2,10.5) -- (18,10.5);
\draw (-2,11.0) -- (18,11.0);
\draw (-2,11.5) -- (18,11.5);
\draw (-2,12.0) -- (18,12.0);
\draw (-2,12.5) -- (18,12.5);
\draw (-2,13.0) -- (18,13.0);
\draw (-2,13.5) -- (18,13.5);

\draw (-2,14.5) -- (18,14.5);
\draw (-2,15.0) -- (18,15.0);
\draw (-2,15.5) -- (18,15.5);
\draw (-2,16.0) -- (18,16.0);
\draw (-2,16.5) -- (18,16.5);
\draw (-2,17.0) -- (18,17.0);
\draw (-2,17.5) -- (18,17.5);

\draw (-1.5,-2) -- (-1.5,18);
\draw (-1.0,-2) -- (-1.0,18);
\draw (-0.5,-2) -- (-0.5,18);
\draw (0,-2) -- (0,18);
\draw (0.5,-2) -- (0.5,18);
\draw (1.0,-2) -- (1.0,18);
\draw (1.5,-2) -- (1.5,18);

\draw (2.5,-2) -- (2.5,18);
\draw (3.0,-2) -- (3.0,18);
\draw (3.5,-2) -- (3.5,18);
\draw (4.0,-2) -- (4.0,18);
\draw (4.5,-2) -- (4.5,18);
\draw (5.0,-2) -- (5.0,18);
\draw (5.5,-2) -- (5.5,18);

\draw (6.5,-2) -- (6.5,18);
\draw (7.0,-2) -- (7.0,18);
\draw (7.5,-2) -- (7.5,18);
\draw (8.0,-2) -- (8.0,18);
\draw (8.5,-2) -- (8.5,18);
\draw (9.0,-2) -- (9.0,18);
\draw (9.5,-2) -- (9.5,18);

\draw (10.5,-2) -- (10.5,18);
\draw (11.0,-2) -- (11.0,18);
\draw (11.5,-2) -- (11.5,18);
\draw (12.0,-2) -- (12.0,18);
\draw (12.5,-2) -- (12.5,18);
\draw (13.0,-2) -- (13.0,18);
\draw (13.5,-2) -- (13.5,18);

\draw (14.5,-2) -- (14.5,18);
\draw (15.0,-2) -- (15.0,18);
\draw (15.5,-2) -- (15.5,18);
\draw (16.0,-2) -- (16.0,18);
\draw (16.5,-2) -- (16.5,18);
\draw (17.0,-2) -- (17.0,18);
\draw (17.5,-2) -- (17.5,18);

\draw(0,-3) node{$\color{blue}0$};
\draw(8,-3) node{$\color{blue}1$};
\draw(16,-3) node{$\color{blue}2$};

\draw(-3,0) node{$\color{blue}0$};
\draw(-3,8) node{$\color{blue}1$};
\draw(-3,16) node{$\color{blue}2$};

\draw(-2,-3) node{$\color{red}0$};
\draw(2,-3) node{$\color{red}1$};
\draw(6,-3) node{$\color{red}2$};
\draw(10,-3) node{$\color{red}3$};
\draw(14,-3) node{$\color{red}4$};
\draw(18,-3) node{$\color{red}5$};

\draw(-3,-2) node{$\color{red}0$};
\draw(-3,2) node{$\color{red}1$};
\draw(-3,6) node{$\color{red}2$};
\draw(-3,10) node{$\color{red}3$};
\draw(-3,14) node{$\color{red}4$};
\draw(-3,18) node{$\color{red}5$};

\end{tikzpicture}
\caption{Set $\I = \setof{0,1,2}$. 
Blue indicates $\cX = \cX(\I)$. 
Red indicates $\cX_b$.
Black indicates $\Janus$.
}
\label{fig:Xr}
\end{figure}

We need to relate  the chain complexes $C_*\left(\bar{\I};\F\right)$ and $C_*\left(\bbarbI;\F \right)$ corresponding to $\bar{\I}$ and $\cX\left(\bbarbI\right)$, respectively.
We declare the canonical bases for $C_j\left(\bar{\I};\F\right)$  and $C_j\left(\bbarbI;\F \right)$ to be $\cX_b^{(j)}$ and $\Janus^{(j)}$.
Using these bases we define a chain map $\btor\colon C_*\left(\bar{\I};\F \right) \to C_*\left( \bbarbI ;\F \right)$ as follows.
Let $[\bv,\bw]\in \cX_b$ and $[\bu,\bw]\in \cX_b$ where $\bv = (\bv_1,\ldots,\bv_N)$, $\bu = (\bu_1,\ldots,\bu_N)$, and $\bw \in \setof{0,1}^N$.
We define an acyclic carrier $\bar{S}\colon \cX(\bar{\I}) \to \cX\left(\bbarbI\right)$ by
\begin{equation}
\label{eq:barS}
    \bar{S}([\bv, \bw]) := \setof{ [\bu,\bw] \in \Janus \mid 8{\bv}_n \leq \bu_n < 8({\bv}_n+\bw_n)+1}.
\end{equation}

The subdivision map $\bar{S}$ has an associated chain map $\btor\colon C_*(\bar{\I};\F) \to C_*( \bbarbI ;\F)$ given by
\[
\btor_n([\bv,\bw]) := \sum_{\xi\in {\bar{S}}^{n}([\bv,\bw])}\xi.
\]

We leave it to the reader to check that under this identification $\bbarbI$ is a subdivision of $\bar{\I}$ (see \cite[Definition 23.1]{lefschetz}) and therefore that the associated chain complexes are chain homotopic.

In the next section we are interested in taking combinatorial information expressed by the multivalued maps on $\cX$ and deriving homological information using the Janus complex.
For this we make use of the following function,
\begin{equation}
\label{eq:defn_sb}
\begin{aligned}
\ctoy \colon \cX &\rightarrow  \sSub(\Janus) \\
[\bv, \bw] &\mapsto  \ \sO(\bar{S}(\blup([\bv, \bw]))),
\end{aligned}
\end{equation}
where $\sSub(\Janus)$ is the collection of subcomplexes of $\Janus$.

As is discussed in the introduction to this chapter appropriate geometrizations of $\sInvset^+(\cF_i)$, $i=1,2$, need to be constructed.
However, it is useful to view these constructions as modifications of  rectangular geometrizations.
With this in mind, let $\zeta =[\bu,\bw]\in \Janus$.
Then, for each $n=1,\ldots, N$ we can write $\bu_n = 8\bv_n + k_n$ where $0\leq k_n < 8$ and $\bv  \in \cX_b$.
Recall from \eqref{eq:K(w)} that $K(\bw)$ is defined in terms of $\setof{\bar{\bw}\mid \bar{\bw}\in \setof{0,1}^N,\ \bar{\bw}\leq_\Z \bw}\subset [0,1]^N$.
Define $\bar{a}_\zeta\colon K(\bw)\to K(\bw)$ to be the linear map that takes
\[
\bar{\bw} \to \frac{1}{8}\left(\bar{\bw}+k \right).
\]

We extend the rectangular geometrizations of $\cX_b$ (see Definition~\ref{defn:rectGeoXb}) to the Janus complex $\Janus$.

\begin{defn}
    \label{defn:rectGeoXr}
Fix $\Geo \in \cG$.
Consider $\zeta =[\bu,\bw]\in \Janus$.
Then, there exist $\xi = [\bv,\bw]\in \cX_b$ such that $\bu_n = 8\bv_n + k_n$ for some $0\leq k_n < 8$. 
Let $a_\xi \colon K(\bw)\to \Rec(\xi)$ be as defined in Section~\ref{sec:geometrization01}.
Define 
\[
    g_\zeta = \Geo\circ a_\xi \circ \bar{a}_\zeta.
\]
The collection of homeomorphisms $\recG_\mathtt{J} :=\setof{g_\zeta \mid \zeta \in \Janus}$ defines a \emph{rectangular geometrization} of $\Janus$.
\end{defn}

The notion of separation between cells, defined below, is used the following chapters to identify subsets of the Janus complex.

\begin{defn}
\label{defn:janus-separation}
Let $\cZ$ be an $N$-dimensional cubical complex and let $\zeta = [\bv,\bw], \bar{\zeta} = [\bar{\bv},\bar{\bw}]\in \cZ$.
The \emph{separation} between $\zeta$ and $\bar{\zeta}$ is given by
\[
d(\zeta,\bar{\zeta}) = \min\setdef{\sum_{n=1}^N |\bv_n-\bar{\bv}_n|}{[\bv,\bzero]\preceq\zeta,\ [\bar{\bv},\bzero]\preceq\bar{\zeta}}.
\]
\end{defn}

Given $\zeta\in\cZ$, set
\begin{equation}
\label{eq:Br}
B_r(\zeta):=\cl \left(\setof{\zeta' \mid d(\zeta, \zeta')\leq r}\right)\subset \cZ.
\end{equation}
Observe that by definition $B_r(\zeta)$ is a closed cell complex.

The following notation is defined for a general cubical complex and used in Chapter~\ref{sec:global_pi3}. 
\begin{defn}
\label{defn:cube-line}
Let $\cX$ be an $N$-dimensional cubical complex and let $\Janus$ be the Janus complex. For any $\bv,\bv' \in \bbarbI$, we define the \emph{cube between $\bv$ and $\bv'$} by
\begin{equation}
    \label{eq:cube}
\mathrm{Cube}(\bv,\bv') = \setdef{[\bv'',\bone]\in \Janus^{(N)}}{\bv \leq_\Z \bv'',\  \bv''+\bone \leq_\Z \bv'},
\end{equation}
and the \emph{line between $\bv$ and $\bv'$} by
\begin{equation}
    \label{eq:line}
    \mathrm{Line}(\bv,\bv') \coloneqq \setdef{[\bv'',\bw'']\in \Janus}{\bv \leq_\Z \bv''' \leq_\Z \bv' \text{ for all } \bv'''\in\cl([\bv'',\bw''])\cap \Janus^{(0)}}.
\end{equation}
For $\eta\in\cX$, let 
\begin{equation}\label{eq:T1}
   \cT^{(1)}(\eta) \coloneqq \setdef{\tau\in\cX^{(\dim(\eta)+1)} }{ \eta\prec \tau},
\end{equation}
and for
 any $\xi\in\cX^{(0)}$ define
\begin{equation}
\label{eq:cross}
    \cross(\xi)\coloneqq\ctoy(\xi)\cup \displaystyle\bigcup_{\eta\in\cT^{(1)}(\xi)}\ctoy(\eta).
\end{equation}
\end{defn}

\chapter{$D$-Gradings in the Janus Complex}
\label{sec:P2-grading}

In this chapter, given a  multivalued map $\cF_2 \colon \cX \mvmap \cX$ as in Definition~\ref{def:Rule2} we construct an admissible D-grading $\pi_2 \colon \Janus \to \SCC(\cF_2)$ that is used to  extend Theorem~\ref{thm:LipAttBlock} to the setting of $\cF_2$. 
We use Figure~\ref{fig:indecisive_drift_2_b1} to illustrate the construction.
The initial wall labeling is shown in Figure~\ref{fig:indecisive_drift_2_b1}(A).
The multivalued map $\cF_2 \colon \cX \mvmap \cX$ is obtained by refining $\cF_1$ at pairs that exhibit indecisive drift, e.g., $(\xi,\xi_0')$ and $(\xi,\xi_1')$.
Figure~\ref{fig:indecisive_drift_2_b1}(B) shows elements of $\cX_b$ that share faces with $\blup(\xi)$ along with the edges induced by
$\cF_1 \colon \cX \mvmap \cX$.
The D-grading $\pi\colon \cX \to \SCC(\cF_2)$ is determined by the global action of $\cF_2 \colon \cX \mvmap \cX$.
For purposes of this illustration we assume that
\begin{equation}
\label{eq:Dgrading}
    \pi(\xi'_0) = p_5,\ \pi(\xi) = p_4,\ \pi(\xi'_1) = p_3,\ \pi(\mu_1) = p_2,\ \pi(\tau) = p_1,\ \pi(\mu_0) = p_0, 
\end{equation}
where $p_0 \preceq_{\bar{\cF}_2} p_1 \preceq_{\bar{\cF}_2} \cdots \preceq_{\bar{\cF}_2} p_5$ are elements of $\SCC(\cF_2)$.

\begin{figure}[]
    \centering
    \begin{subfigure}{0.45\textwidth}
    \centering
    \begin{tikzpicture}[scale=0.20]
        \draw[step=8cm,ultra thick, blue] (0,0) grid (16,16);
        \foreach \i in {0,...,2}{
            \foreach \j in {0,...,2}{
            \draw[black, fill=blue] (8*\i,8*\j) circle (2ex);
            }
        }
        \foreach \i in {1,...,3}{
            \draw(-1,8*\i -8) node{\color{blue}$\i$}; 
        }
        \foreach \i in {0,...,2}{
            \draw(8*\i,-1) node{\color{blue}$\i$}; 
        }
        \foreach \i in {0,...,1}{
            \draw[->,  thick] (4+8*\i,0.1) -- (4+8*\i,2.1); 
            \draw[->,  thick] (4+8*\i,15.9) -- (4+8*\i,13.9); 
        }
        \foreach \i in {1,...,2}{
            \draw[->,  thick] (0.1, 4+8*\i -8) -- (2.1,4+8*\i -8); 
        }
        \foreach \i in {0,...,1}{
            \draw[->,  thick] (4+8*\i,5.9) -- (4+8*\i,7.9); 
            \draw[->, thick] (4+8*\i,8.1) -- (4+8*\i,10.1);
        }
            \draw[->,  thick] (8.1,4) -- (10.1,4);
            \draw[->,  thick] (5.9,4) -- (7.9,4);
            \draw[->,  thick] (13.9,4) -- (15.9,4); 
        \foreach \i in {2}{
            \draw[->,  thick] (7.9,4+8*\i -8) -- (5.9,4+8*\i -8); 
             \draw[->,  thick] (10.1,4+8*\i -8) -- (8.1,4+8*\i -8);
             \draw[->,  thick] (15.9,4+8*\i -8) -- (13.9,4+8*\i -8);  
        }
        \draw(4,12) node{$\mu_0$};
        \draw(12,12)  node{$\mu_1$};
        \draw(4,4) node{$\hat\xi_0'$};
        \draw(12,4)  node{$\hat\xi_1'$};
        \draw(5.5,7) node{$\xi'_0$};
        \draw(8.75,13.5)  node{$\tau$};
        \draw(8.75,2.75)  node{$\hat\xi$};
        \draw(13.5,7)  node{$\xi'_1$};
        \draw(9,9)  node{$\xi$};
        
        \end{tikzpicture}
        \caption{}
        \label{subfig:indecisive_drift_a}
        \end{subfigure}
        \hfill 
        \begin{subfigure}{0.45\textwidth}
        \centering
            \begin{tikzpicture}[scale=0.30]
                \draw[step=4cm, red] (0,0) grid (12,12);
            
                \foreach \i in {3,...,6}{
                    \draw(-1,-12+4*\i) node{\color{red}$\i$}; 
                }
                \foreach \i in {1,...,4}{
                    \draw(-4+4*\i,-1) node{\color{red}$\i$}; 
                }
            
                \draw(2,10) node{\tiny$\blup(\mu_0)$};
                \draw(10,10)  node{\tiny$\blup(\mu_1)$};
                \draw(2,2) node{\tiny$\blup(\hat\xi_0')$};
                \draw(10,2)  node{\tiny$\blup(\hat\xi_1')$};
                \draw(2,6) node{\tiny$\blup(\xi'_0)$};
                \draw(6,10)  node{\tiny$\blup(\tau)$};
                \draw(6,2)  node{\tiny$\blup(\hat\xi)$};
                \draw(10,6)  node{\tiny$\blup(\xi'_1)$};
                \draw(6,5.5)  node{\tiny$\blup(\xi)$};
            
                \draw[->,  thick] (5,10) -- (3,10);
                \draw[->,  thick] (9,10) -- (7,10);
                \draw[->,  thick] (3,2) -- (5,2);
                \draw[->,  thick] (7,2) -- (9,2);
            
                \draw[->,  thick] (2,7) -- (2,9);
                \draw[->,  thick] (6,7) -- (6,9);
                \draw[->,  thick] (10,7) -- (10,9);
                \draw[->,  thick] (2,3) -- (2,5);
                \draw[->,  thick] (6,3) -- (6,5);
                \draw[->,  thick] (10,3) -- (10,5);
            
                \draw[dashed,blue] (4,6) -- (8,6);
            \end{tikzpicture}
            \caption{}
            \label{subfig:indecisive_drift_b}
    \end{subfigure}
    
    \begin{subfigure}{0.45\textwidth}
    \centering
    \begin{tikzpicture}
    [scale=0.4]

    \fill[gray] (6,8.5) -- (5.5,8.5) -- (5.5,9.0) -- (5.0,9.0) -- (5.0,9.5) -- (4.5,9.5) -- (4.5,10) -- (6,10) -- (6,8.5);

    \fill[gray] (10,8.5) -- (9.5,8.5) -- (9.5,9.0) -- (9,9.0) -- (9,9.5) -- (8.5,9.5) -- (8.5,10) -- (10,10) -- (10,8.5);
    
    \draw[thick, red] (2,6) -- (14,6);
    \draw[thick, red] (2,10) -- (14,10);
    \draw[thick, red] (2,14) -- (14,14);
    \draw[thick, red] (2,6) -- (2,14);
    \draw[thick, red] (6,6) -- (6,14);
    \draw[thick, red] (10,6) -- (10,14);
    \draw[thick, red] (14,6) -- (14,14);
    
    \draw (2,6.5) -- (14,6.5);
    \draw (2,7.0) -- (14,7.0);
    \draw (2,7.5) -- (14,7.5);
    \draw (2,8.0) -- (14,8.0);
    \draw (2,8.5) -- (14,8.5);
    \draw (2,9.0) -- (14,9.0);
    \draw (2,9.5) -- (14,9.5);
    
    \draw (2,10.5) -- (14,10.5);
    \draw (2,11.0) -- (14,11.0);
    \draw (2,11.5) -- (14,11.5);
    \draw (2,12.0) -- (14,12.0);
    \draw (2,12.5) -- (14,12.5);
    \draw (2,13.0) -- (14,13.0);
    \draw (2,13.5) -- (14,13.5);
    
    \draw (2.5,6) -- (2.5,14);
    \draw (3.0,6) -- (3.0,14);
    \draw (3.5,6) -- (3.5,14);
    \draw (4.0,6) -- (4.0,14);
    \draw (4.5,6) -- (4.5,14);
    \draw (5.0,6) -- (5.0,14);
    \draw (5.5,6) -- (5.5,14);
    
    \draw (6.5,6) -- (6.5,14);
    \draw (7.0,6) -- (7.0,14);
    \draw (7.5,6) -- (7.5,14);
    \draw (8.0,6) -- (8.0,14);
    \draw (8.5,6) -- (8.5,14);
    \draw (9.0,6) -- (9.0,14);
    \draw (9.5,6) -- (9.5,14);
    
    \draw (10.5,6) -- (10.5,14);
    \draw (11.0,6) -- (11.0,14);
    \draw (11.5,6) -- (11.5,14);
    \draw (12.0,6) -- (12.0,14);
    \draw (12.5,6) -- (12.5,14);
    \draw (13.0,6) -- (13.0,14);
    \draw (13.5,6) -- (13.5,14);
    
    \draw(2,5) node{$\color{red}1$};
    \draw(6,5) node{$\color{red}2$};
    \draw(10,5) node{$\color{red}3$};
    \draw(14,5) node{$\color{red}4$};
    
    \draw(1,6) node{$\color{red}4$};
    \draw(1,10) node{$\color{red}5$};
    \draw(1,14) node{$\color{red}6$};
    
    \draw[red, fill=red] (6,10) circle (1.5ex);
    \draw[red, fill=red] (10,10) circle (1.5ex);
    
    \end{tikzpicture}
    \caption{}
    \label{subfig:indecisive_drift_c}
    \end{subfigure}
    \hfill 
    \begin{subfigure}{0.45\textwidth}
    \centering
    \begin{tikzpicture}[scale=0.4]
    
    \fill[grading12] (2,6) -- (2,10) -- (4.5,10) -- (4.5,9.5) -- (5.0,9.5) -- (5.0,9.0) -- (5.5,9.0) --  (5.5,8.5) -- (6,8.5) -- (6,6) -- (2,6);
    
    \fill[grading8] (4.5,10) -- (8.5,10) -- (8.5,9.5) -- (9.0,9.5) -- (9.0,9.0) -- (9.5,9.0) -- (9.5,8.5) -- (10,8.5) -- (10,6) -- (6,6) -- (6,8.5) -- (5.5,8.5) -- (5.5,9.0) -- (5.0,9.0) -- (5.0,9.5) -- (4.5,9.5) -- (4.5,10);
    
    \fill[grading8] (6,8.5) -- (5.5,8.5) -- (5.5,9.0) -- (5.0,9.0) -- (5.0,9.5) -- (4.5,9.5) -- (4.5,10.5) -- (5,10.5) -- (5,11) -- (5.5,11) -- (5.5, 11.5) -- (6,11.5) -- (6,8.5);
    
    \fill[grading7] (10,8.5) -- (9.5,8.5) -- (9.5,9.0) -- (9,9.0) -- (9,9.5) -- (8.5,9.5) -- (8.5,10) -- (14,10) -- (14,6) -- (10,6) -- (10,8.5);
    
    \fill[grading5] (10,10) rectangle (14,14);
    \fill[grading3] (6,10) rectangle (10,14);
    \fill[grading1] (2,10) rectangle (6,14);

    \draw[thick, red] (2,6) -- (14,6);
    \draw[thick, red] (2,10) -- (14,10);
    \draw[thick, red] (2,14) -- (14,14);
    \draw[thick, red] (2,6) -- (2,14);
    \draw[thick, red] (6,6) -- (6,14);
    \draw[thick, red] (10,6) -- (10,14);
    \draw[thick, red] (14,6) -- (14,14);
    
    \draw (2,6.5) -- (14,6.5);
    \draw (2,7.0) -- (14,7.0);
    \draw (2,7.5) -- (14,7.5);
    \draw (2,8.0) -- (14,8.0);
    \draw (2,8.5) -- (14,8.5);
    \draw (2,9.0) -- (14,9.0);
    \draw (2,9.5) -- (14,9.5);
    
    \draw (2,10.5) -- (14,10.5);
    \draw (2,11.0) -- (14,11.0);
    \draw (2,11.5) -- (14,11.5);
    \draw (2,12.0) -- (14,12.0);
    \draw (2,12.5) -- (14,12.5);
    \draw (2,13.0) -- (14,13.0);
    \draw (2,13.5) -- (14,13.5);
    
    \draw (2.5,6) -- (2.5,14);
    \draw (3.0,6) -- (3.0,14);
    \draw (3.5,6) -- (3.5,14);
    \draw (4.0,6) -- (4.0,14);
    \draw (4.5,6) -- (4.5,14);
    \draw (5.0,6) -- (5.0,14);
    \draw (5.5,6) -- (5.5,14);
    
    \draw (6.5,6) -- (6.5,14);
    \draw (7.0,6) -- (7.0,14);
    \draw (7.5,6) -- (7.5,14);
    \draw (8.0,6) -- (8.0,14);
    \draw (8.5,6) -- (8.5,14);
    \draw (9.0,6) -- (9.0,14);
    \draw (9.5,6) -- (9.5,14);
    
    \draw (10.5,6) -- (10.5,14);
    \draw (11.0,6) -- (11.0,14);
    \draw (11.5,6) -- (11.5,14);
    \draw (12.0,6) -- (12.0,14);
    \draw (12.5,6) -- (12.5,14);
    \draw (13.0,6) -- (13.0,14);
    \draw (13.5,6) -- (13.5,14);
    
    \draw(2,5) node{$\color{red}1$};
    \draw(6,5) node{$\color{red}2$};
    \draw(10,5) node{$\color{red}3$};
    \draw(14,5) node{$\color{red}4$};
    
    \draw(1,6) node{$\color{red}4$};
    \draw(1,10) node{$\color{red}5$};
    \draw(1,14) node{$\color{red}6$};
    
    \end{tikzpicture}
    \caption{}
    \label{subfig:indecisive_drift_d}
    \end{subfigure}
\caption{{
(A) $\cX(\I)$ and associated wall labeling. 
(B) $\cX_b$ and visualization of single edges of $\cF_1 \colon \cX \mvmap \cX$.
(C) In black, the subcomplex of $\Janus$ within which modifications to $\pi_{\mathtt{J}}$ will be made. 
Red points are $\Spineb(\xi, \xi'_0)$ and $\Spineb(\xi, \xi'_1)$.
The shaded regions are the modification tiles $\mTile(\xi,\xi_0')$ and $\mTile(\xi,\xi_1')$.
(D) Coloring of fibers of $\pi_2$. 
Cells in $\Janus$ that belong to $\pi_2^{-1}(p_i)$, for $i=0,\ldots,5$ are shown in the same color.}}
\label{fig:indecisive_drift_2_b1}
\end{figure}

We use the subdivision operator $\bar{S}\colon \cX_b \to \Janus$ to define an admissible D-grading $\pi_{\mathtt{J}}\colon \Janus^{(N)}\to \SCC(\cF_2)$ as follows.
Given $\eta\in \bar{S}(\blup(\xi))\cap \Janus^{(N)}$ where $\xi\in\cX$ set  
\begin{equation}
\label{eq:piJ}
\pi_{\mathtt{J}}(\eta) = \pi(\xi).
\end{equation}
Then, we use local modifications to construct an admissible D-grading $\pi_2\colon \Janus \to \SCC(\cF_2)$.
As suggested by Figure~\ref{fig:indecisive_drift_2_b1}(B), this modification will involve elements of $\bar{S}(\blup(\alpha))$ when $\alpha\in\cX$ is a cell that belongs to a pair that exhibits indecisive drift.  

Our analysis makes use of computations involving chain complexes defined in terms of elements of $\cl(\blup(\cN))$ where $\cN\in\sInvset^+(\cF_2)$.
Thus, we need to keep track of cells that are lower than $\alpha$ with respect to the D-grading $\pi_{\mathtt{J}}$.
In the context of the example of Figure~\ref{fig:indecisive_drift_2_b1}, this implies that we can limit our attention to the cells of $\Janus$ shown in Figure~\ref{fig:indecisive_drift_2_b1}(C), i.e., 
\[
   \bar{S}(\blup(\xi'_0)),\ \bar{S}(\blup(\xi)),\ \bar{S}(\blup(\xi'_1)),\ \bar{S}(\blup(\mu_1)),\ \bar{S}(\blup(\tau)),\ \text{and}\ \bar{S}(\blup(\mu_0)). 
\]
As discussed in Section~\ref{sec:ConleyComplex}, the admissible D-grading determines a poset grading of $C_*(\cX;\F)$, which in turn leads to a Conley complex. By deriving $\pi_2$ via local modifications of $\pi_{\mathtt{J}}$, the two associated 
graded chain complexes are chain homotopic. To that end, we only perform modifications on closures of subsets called \emph{modification cells}, the grey $N$-dimensional cells in Figure~\ref{fig:indecisive_drift_2_b1}(C). 
For simplicity, the identification of the modification cells is done using cells in $\cX_b$ that we call \emph{modification spines} (see the red points in Figure~\ref{fig:indecisive_drift_2_b1}(C)).
Under $\pi_2$, the grading of the modification cells are different from those under $\pi_{\mathtt{J}}$ (see Figure~\ref{fig:indecisive_drift_2_b1}(D)). 

The modification spines and cells are defined in terms of pairs that exhibit indecisive drift.
Section~\ref{sec:2notation} outlines main definitions, properties and examples. 
Section~\ref{sec:pi2} defines $\pi_2$ and shows that the poset grading defined by $\pi_2$ is poset chain homotopic to the poset grading $\pi_b$ upon which the Conley complex was computed.

For the remainder of the chapter, recall that given a multivalued map $\cF\colon\cX\mvmap \cX$ the fibers of the D-grading $\pi\colon \cX \to \SCC(\cF)$ form a partition of $\cX$, i.e.,
\[
    \cX = \bigcup_{p\in\SCC(\cF)}\pi^{-1}(p), \text{ and } \pi^{-1}(p)\cap \pi^{-1}(p') = \emptyset \text{ when } p \neq p'.
\]
Furthermore, the composition with the bijection $\blup^{-1}$ defines a map $\pi_b = \pi \circ \blup^{-1} : \cX^{(N)} \to \SCC(\cF)$ whose fibers form a partition of $\cX_b^{(N)}$. 
Using the subdivision operation $\bar{S}\colon \cX_b \to \Janus$ we define a D-grading $\pi_{\mathtt{J}}\colon \Janus^{(N)}\to \SCC(\cF_2)$ as follows.
Given $\eta\in \bar{S}(\blup(\xi))\cap \Janus^{(N)}$ where $\xi\in\cX$ set  
\begin{equation}\label{eq:pi_Janus}
    \pi_{\mathtt{J}}(\eta) = \pi(\xi).
\end{equation}

\section{Subcomplexes derived from indecisive drift}
\label{sec:2notation}

Let $\Phi$ be a rook field derived from a ramp system (see Definition~\ref{defn:ramp-wall-labeling}).
This gives rise to multivalued map $\cF_2\colon \cX\mvmap \cX$ to $\cX_b$ in turn defines an admissible D-grading $\pi$, which via \eqref{eq:pi_Janus} defines an admissible D-grading $\pi_{\mathtt{J}}\colon \Janus^{(N)}\to \SCC(\cF_2)$. 
As indicated in the introduction to this chapter  the modifications to the  poset grading $\pi_{\mathtt{J}}$ that give rise to  $\pi_2$ are local in nature. 
In particular, they are performed on cells that arise from subdivisions of elements of $\cX_b^{(N)}$.
The drift map defined below is introduced inorder to identify these elements.

Recall that $\cD(\rook)$ denotes the set of pairs that exhibit indecisive drift (see Definition~\ref{defn:indecisive}), and given $(\xi,\xi')\in\cD(\rook)$, $\Dec(\xi,\xi')$ denotes the set of back walls for $(\xi,\xi')$ (see Definition~\ref{defn:back-walls}).

By Proposition~\ref{prop:back_wall_well_defined} if $\Ex(\xi,\xi')=\setof{n_o}$, then  $\rook_{n_o}$ is constant over $\Dec(\xi,\xi')$.
As a consequence the following map is well-defined.

\begin{defn}
\label{defn:drift_map}
Assume that $(\xi, \xi')\in\cD(\rook)$ with $\Ex(\xi,\xi')=\setof{n_o}$ and  $(\dec, \dec')\in \Dec(\xi,\xi')$.
The \emph{drift map} at $(\xi, \xi')$ is given by
\[
\begin{aligned}
D_{(\xi, \xi')}\colon \Top_\cX(\xi') & \to \setof{0,\pm 1} \\
\mu & \mapsto \begin{cases}
    \rook_{n_o}(\xi, \mu)  & \text{if  $\rook_{n_o}(\xi, \mu) \neq \rook_{n_o}(\dec, \dec')$,}\\
0 & \text{if $\rook_{n_o}(\xi, \mu) = \rook_{n_o}(\dec, \dec')$.}
\end{cases}
\end{aligned}
\]
For $(\xi,\xi') \in \cD(\rook)$, we define the \emph{support of $D_{(\xi,\xi')}$} by 
\[
\support\left(D_{(\xi,\xi')}\right) \coloneqq \setdef{\mu \in \Top_\cX(\xi')}{D_{(\xi,\xi')}(\mu) \neq 0}. 
\]
\end{defn}

\begin{prop}
    \label{prop:nontrivial-supp}
    If $(\xi,\xi') \in \cD(\rook)$, then $\support\left( D_{(\xi,\xi')} \right) \neq \emptyset$. 
\end{prop}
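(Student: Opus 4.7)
The plan is to unwind the definitions and exploit the fact that the $n_o$-coordinate of the rook field at $\xi$ takes both values $\pm1$ over the top star of $\xi'$. Concretely, because $(\xi,\xi')\in\cD(\rook)$, Definition~\ref{defn:indecisive} guarantees that $(\xi,\xi')$ has a GO-pair $(n_g,n_o)$, and then Definition~\ref{defn:GO-pair} supplies $n_g$-adjacent top cells $\mu_0,\mu_1\in\Top_\cX(\xi')$ with $\rook_{n_o}(\xi,\mu_0)\neq\rook_{n_o}(\xi,\mu_1)$. The goal is to exhibit at least one such $\mu_i$ that disagrees with $\rook_{n_o}(\dec,\dec')$ for some (equivalently, every) back wall $(\dec,\dec')\in\Dec(\xi,\xi')$.

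The first step is to pin down the possible values. Proposition~\ref{prop:GO-pair-properties}(2) gives $n_o\in O(\xi)$, so by Definition~\ref{def:namedirections} $R_{n_o}(\xi)=\{-1,+1\}$. Hence for \emph{any} $\mu\in\Top_\cX(\xi)$, and in particular for $\mu\in\Top_\cX(\xi')\subset\Top_\cX(\xi)$, we have $\rook_{n_o}(\xi,\mu)\in\{\pm1\}$. In particular $\rook_{n_o}(\xi,\mu_0)$ and $\rook_{n_o}(\xi,\mu_1)$ are the two distinct elements of $\{\pm1\}$, so the pair $\{\mu_0,\mu_1\}$ already realizes both signs. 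On the other hand, for any choice of back wall $(\dec,\dec')\in\Dec(\xi,\xi')$ (which is nonempty, as it can be parametrized by the opaque inessential directions of $\xi'$), the pair $(\dec,\dec')$ is itself an $n_o$-wall, so $\rook_{n_o}(\dec,\dec')=\omega(\dec,\dec')\in\{\pm1\}$; note that by Proposition~\ref{prop:back_wall_well_defined} this value is independent of the back-wall representative, which is why Definition~\ref{defn:drift_map} is well posed.

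The second step is the pigeonhole observation. Since $\rook_{n_o}(\dec,\dec')$ is one of the two elements of $\{\pm1\}$, exactly one of $\mu_0,\mu_1$, call it $\bar\mu$, satisfies
\[
\rook_{n_o}(\xi,\bar\mu)\neq\rook_{n_o}(\dec,\dec').
\]
By Definition~\ref{defn:drift_map} this gives $D_{(\xi,\xi')}(\bar\mu)=\rook_{n_o}(\xi,\bar\mu)$, which is a nonzero element of $\{\pm1\}$. Therefore $\bar\mu\in\support(D_{(\xi,\xi')})$, proving that the support is nonempty.

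There is no real obstacle here; the only subtlety worth double-checking is that $\rook_{n_o}(\xi,\mu)\neq0$ for $\mu\in\Top_\cX(\xi')$, which is precisely what $n_o\in O(\xi)$ via Proposition~\ref{prop:GO-pair-properties}(2) buys us, and that the value $\rook_{n_o}(\dec,\dec')$ entering Definition~\ref{defn:drift_map} is unambiguous, which is Proposition~\ref{prop:back_wall_well_defined}. So the proof is short and essentially a pigeonhole argument dressed up in the definitions.
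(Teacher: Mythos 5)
Your proof is correct, and it is essentially the paper's own argument in direct rather than contrapositive form: the proof attached to Proposition~\ref{prop:nontrivial-supp} assumes the support is empty, deduces that $\rook_{n_o}(\xi,\cdot)$ is constant on $\Top_\cX(\xi')$, and contradicts the active-regulation condition of the GO-pair, whereas you go directly from the GO-pair's two $n_g$-adjacent top cells realizing both values $\pm1$ (valid because $n_o\in O(\xi)$) to conclude that one of them must disagree with $\rook_{n_o}(\dec,\dec')$. In fact the direct version you wrote is precisely the proof the paper gives for Proposition~\ref{prop:drift}, which states the same result; both use the same ingredients (Definition~\ref{defn:GO-pair}, Proposition~\ref{prop:GO-pair-properties}, and the well-definedness of $\rook_{n_o}$ on back walls).
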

\begin{proof}
    Let $(\dec,\dec')\in \Dec(\xi,\xi')$ and $\Ex(\xi,\xi')=\setof{n_o}$. 
    Suppose for the sake of contradiction that $\support\left( D_{(\xi,\xi')} \right) = \emptyset$. 
    Then for any $\mu \in \Top_\cX(\xi')$, 
    \[
        \rook_{n_o}(\xi,\mu) = \rook_{n_o}(\dec,\dec').
    \]
    Therefore, for any n-wall $(\xi_n,\mu),(\xi_n,\mu') \in W(\xi')$, 
    \[
        \rook_{n_o}(\xi_n,\mu) = \rook_{n_o}(\xi_n,\mu').
    \]
    Thus, there is no $n_g \in G_i(\xi')$ that actively regulates $n_o$, which contradicts Proposition~\ref{prop:GO-pair-properties}. 
\end{proof}

\begin{rem}
    Note that for any $(\xi,\xi')\in \cD(\rook)$ and \( \mu \in \support(D_{(\xi,\xi')}) \), 
    \[
    D_{(\xi,\xi')}(\mu)=-\rook_{n_o}(\dec,\dec'),
    \]
    where $(\dec,\dec')\in \Dec(\xi,\xi')$ since $n_o \in J_i(\xi) \subset O(\xi) \cup G(\xi)$ by Proposition~\ref{prop:Direction-Decomposition}.  
\end{rem}
\begin{ex}
\label{ex:drift_2d}
We recall Examples~\ref{ex:indecisive_drift} and \ref{ex:backpairs} (or see Figure~\ref{fig:indecisive_drift_2_b1}) in which $(\xi,\xi_0')$ and $(\xi,\xi_1')$ exhibit indecisive drift, and $(\hat\xi,\hat\xi'_0)$ and $(\hat\xi,\hat\xi'_1)$ are the respective back walls.
In both cases $n_o = 1$.

Observe that $\Top_\cX(\xi'_0) = \setof{\hat\xi'_0,\mu_0}$.
Note that that $\rook_1(\hat{\xi},\hat{\xi}'_0) = 1$ and hence
$D_{(\xi, \xi_0')}(\hat\xi'_0)=0$. 
Since $\rook_1(\xi,\mu_0) = -1$ it follows that $D_{(\xi, \xi_0')}(\mu_0)=\rook_{1}(\xi, \mu_0) = -1$.
Similarly, $\Top_\cX(\xi'_1) = \setof{\hat\xi'_1,\mu_1}$ and  $\rook_1(\hat{\xi},\hat{\xi}'_1) = 1$.
Thus, $D_{(\xi, \xi_1')}(\hat\xi'_1)=0$ and $D_{(\xi, \xi_1')}(\mu_1)=-1$.
\end{ex}
\begin{ex}
\label{ex:3Db}
We return to the three dimensional Example~\ref{ex:3D_Dec_pairs}.
There we identified that the pairs $(\xi,\xi_i')$, $i=0,1$, exhibit indecisive drift where
\[
\xi  = \defcellb{v_1}{v_2}{v_3}{0}{0}{0}, \quad
\xi_0'  = \defcellb{v_1}{v_2-1}{v_3}{0}{1}{0}\quad\text{and}
\quad\xi_1'  = \defcellb{v_1}{v_2}{v_3}{0}{1}{0}.
\]
Furthermore, $(n_g,n_o) = (1,2)$ is a GO-pair for both pairs.
Observe that 
\[
\Top_\cX(\xi'_0) = \setof{
\defcellb{v_1}{v_2-1}{v_3}{1}{1}{1},
\defcellb{v_1-1}{v_2-1}{v_3}{1}{1}{1},
\defcellb{v_1}{v_2-1}{v_3-1}{1}{1}{1},
\defcellb{v_1-1}{v_2-1}{v_3-1}{1}{1}{1}
}.
\]
The unique back wall for $(\xi,\xi_0')$ is
\[
\Dec(\xi,\xi_0') = \setof{\left( \defcellb{v_1}{v_2}{v_3}{1}{0}{1} ,\defcellb{v_1}{v_2-1}{v_3}{1}{1}{1} \right)}
\]
and
\[
\rook_2\left( \defcellb{v_1}{v_2}{v_3}{1}{0}{1} ,\defcellb{v_1}{v_2-1}{v_3}{1}{1}{1} \right) = 1.
\]
Thus,
\[
D_{(\xi,\xi_0')}\left( \defcellb{v_1}{v_2-1}{v_3}{1}{1}{1} \right) = \rook_2\left( \xi,\defcellb{v_1}{v_2-1}{v_3}{1}{1}{1} \right) = 0
\]
while $D_{(\xi,\xi_0')}$ is $-1$ for the remaining elements of $\Top_\cX(\xi'_0)$.

We leave it to the reader to check that
\begin{align*}
D_{(\xi, \xi'_1)}\left(\defcellb{v_1-1}{v_2}{v_3-1}{1}{1}{1}\right) &=
D_{(\xi, \xi'_1)} \left( \defcellb{v_1}{v_2}{v_3-1}{1}{1}{1}\right)
\\&=D_{(\xi, \xi'_1)}\left( \defcellb{v_1-1}{v_2}{v_3}{1}{1}{1}\right)= -1
\end{align*}
and
\[
D_{(\xi, \xi'_1)} \left(\defcellb{v_1}{v_2}{v_3}{1}{1}{1}\right) = 0.
\]
\end{ex}
\begin{prop}\label{prop:drift}
For any $(\xi, \xi')\in\cD(\rook)$, $\support\left(D_{(\xi,\xi')}\right)\neq \emptyset$.
\end{prop}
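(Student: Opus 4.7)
The plan is to prove Proposition~\ref{prop:drift} by contradiction, essentially in parallel with the argument already given for Proposition~\ref{prop:nontrivial-supp}. Suppose $(\xi,\xi')\in\cD(\rook)$ with $\Ex(\xi,\xi')=\setof{n_o}$ and $\support(D_{(\xi,\xi')})=\emptyset$. Fix a back wall $(\dec,\dec')\in\Dec(\xi,\xi')$, which is nonempty by Definition~\ref{defn:back-walls}, and note that by Proposition~\ref{prop:back_wall_well_defined} the value $\rook_{n_o}(\dec,\dec')$ is independent of the choice of back wall. By the definition of $D_{(\xi,\xi')}$, the assumption that every $\mu\in\Top_\cX(\xi')$ lies outside the support forces
\[
\rook_{n_o}(\xi,\mu)=\rook_{n_o}(\dec,\dec'),\quad\text{for all }\mu\in\Top_\cX(\xi').
\]

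Next I would translate this constancy of $\rook_{n_o}(\xi,\cdot)$ on $\Top_\cX(\xi')$ into constancy on walls that are cofaces of $\xi$ in $\xi'$. For any direction $n\in J_i(\xi')$ and any pair of $n$-adjacent top cells $\mu,\mu'\in\Top_\cX(\xi')$ sharing an $n$-wall $\xi_n$ with $\xi'\preceq \xi_n$, one has $\xi\preceq\xi_n$ as well; since $n_o\in J_i(\xi_n)$ (because $n_o\in J_e(\xi')\subseteq J_e(\xi_n)$ fails — actually $n_o\in J_i(\xi)$ and $n\neq n_o$, so $n_o\in J_i(\xi_n)$), Corollary~\ref{cor:mu*} yields $\rook_{n_o}(\xi_n,\mu)=\rook_{n_o}(\xi,\mu)$ and likewise for $\mu'$. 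Consequently,
\[
\rook_{n_o}(\xi_n,\mu)=\rook_{n_o}(\xi_n,\mu').
\]
Therefore no direction $n\in J_i(\xi')$ actively regulates $n_o$ at $\xi'$ in the sense of Definition~\ref{def:active_regulation}.

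Finally, I invoke the hypothesis $(\xi,\xi')\in\cD(\rook)$: by Definition~\ref{defn:indecisive} this pair admits a GO-pair $(n_g,n_o)$, and by Proposition~\ref{prop:GO-pair-properties}(5) the direction $n_g$ must actively regulate $n_o$ at $\xi'$. This contradicts the conclusion of the previous paragraph, so $\support(D_{(\xi,\xi')})\neq\emptyset$.

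The argument is almost entirely bookkeeping with the definitions, so there is no genuine obstacle; the only subtle step is the reduction from constancy of $\rook_{n_o}(\xi,\cdot)$ on top cells to constancy on $n_g$-walls via Corollary~\ref{cor:mu*}, and this is essentially the same move used in the proof of Proposition~\ref{prop:nontrivial-supp}.
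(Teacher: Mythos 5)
Your proof proceeds by contradiction via Proposition~\ref{prop:GO-pair-properties}(5), mirroring the strategy used for Proposition~\ref{prop:nontrivial-supp}, whereas the paper's own proof of Proposition~\ref{prop:drift} is a short direct argument. More importantly, your bridging step contains a genuine gap.

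You claim that for an $n$-wall $\xi_n$ with $\xi'\preceq\xi_n$ and $n\neq n_o$, one has $n_o\in J_i(\xi_n)$, and then invoke Corollary~\ref{cor:mu*} to conclude $\rook_{n_o}(\xi_n,\mu)=\rook_{n_o}(\xi,\mu)$. This is false: for a codimension-one $n$-wall, $J_i(\xi_n)=\setof{n}$, so $n\neq n_o$ forces $n_o\in J_e(\xi_n)$. Your first instinct, $n_o\in J_e(\xi')\subseteq J_e(\xi_n)$, was actually correct by Proposition~\ref{prop:JtauJsigma}(i); your ``correction'' reverses it. With $n_o\in J_e(\xi_n)$, Corollary~\ref{cor:mu*} does not apply, and indeed by \eqref{eq:rookwalls} the value $\rook_{n_o}(\xi_n,\mu)$ is $0$ whenever $\omega(\mu^-_{n_o},\mu)\neq\omega(\mu^+_{n_o},\mu)$, while $\rook_{n_o}(\xi,\mu)$ is always $\pm1$ since $n_o\in J_i(\xi)$. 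So constancy of $\rook_{n_o}(\xi,\cdot)$ on $\Top_\cX(\xi')$ does not by itself yield constancy of $\rook_{n_o}(\xi_n,\cdot)$, and the contradiction with active regulation is not established as written.

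The paper's proof sidesteps the issue: Definition~\ref{defn:GO-pair} already supplies $n_g$-adjacent $\mu,\mu'\in\Top_\cX(\xi')$ with $\rook_{n_o}(\xi,\mu)\neq\rook_{n_o}(\xi,\mu')$; since $n_o\in J_i(\xi)\subseteq O(\xi)\cup G(\xi)$ by Proposition~\ref{prop:Direction-Decomposition}, these two values are precisely $\setof{\pm1}$, and whichever of them disagrees with $\rook_{n_o}(\dec,\dec')$ puts the corresponding top cell in $\support(D_{(\xi,\xi')})$. No passage between rook field values at $\xi$ and at a wall $\xi_n$ is needed.
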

\begin{proof}
Let $(\xi,\xi') \in \cD(\rook)$ with $\Ex(\xi,\xi')=\setof{n_o}$. 
By Definition~\ref{defn:indecisive}, $(\xi,\xi')$ has a GO-pair $(n_g,n_o)$. 
Thus, Definition~\ref{defn:GO-pair} yields $n_g$-adjacent top cells $\mu,\mu'\in \Top_\cX(\xi')$ such that $\rook_{n_o}(\xi,\mu) \neq \rook_{n_o}(\xi,\mu')$.
Note that $n_o \in J_i(\xi)\subset O(\xi) \cup G(\xi)$ by Proposition~\ref{prop:Direction-Decomposition}, so $\setof{\rook_{n_o}(\xi,\mu),\rook_{n_o}(\xi,\mu')} = \setof{\pm 1}$. 
Thus, either $\mu$ or $\mu'$ belongs to $\support(D_{(\xi,\xi')})$. 
\end{proof}

\begin{defn}
\label{defn:mcell}
The \emph{modification support} of $(\xi, \xi')\in \cD(\rook)$ is 
\[
    \mSuppX(\xi,\xi') \coloneqq \setdef{\xi'' \in \cX}{\xi' \preceq \xi'', \Top_{\cX}(\xi'') \subseteq \support\left(D_{(\xi,\xi')}\right)}. 
\]
\end{defn}
\begin{ex}
    \label{ex:mcell2}
Returning to Figure~\ref{fig:indecisive_drift_2_b1}(A), consider the  pairs 
\[
(\xi,\xi'_0) = \left( \defcell{1}{2}{0}{0},\defcell{0}{2}{1}{0} \right) \quad\text{and}\quad (\xi,\xi'_1) =\left( \defcell{1}{2}{0}{0},\defcell{1}{2}{1}{0} \right)
\]
that exhibit indecisive drift. 
In Example~\ref{ex:drift_2d}, we concluded that $\support(D_{(\xi,\xi_0')})=\setof{\mu_0}$ and $\support(D_{(\xi,\xi_1')})=\setof{\mu_1}$ where $\mu_0 = \defcell{0}{2}{1}{1}$ and $\mu_1=\defcell{1}{2}{1}{1}$. 
Therefore, 
\[
\mSuppX(\xi,\xi_i') = \setof{\mu_i}, \quad i=0,1.
\]
\end{ex}

We include Example~\ref{ex:mcell2} because Figure~\ref{fig:indecisive_drift_2_b1} provides a useful guide for determining $\mSuppX(\xi,\xi'_0)$. Indeed, since all pairs that exhibit indecisive drift are in $\cX^{(N-2)}\times\cX^{(N-1)}$, and each top cell in $\Top_\cX(\xi'_i)$ has a different value of $D_{(\xi,\xi'_i)}$, the modification support consists of a single top cell.  
As the following example shows, in higher dimensions, the pair that exhibits indecisive drift might have a lower dimension. 
\begin{ex}
\label{ex:mcell3}
Continuing with Example \ref{ex:3Db} , consider the pair
\[\xi = \defcellb{v_1}{v_2}{v_3}{0}{0}{0} \text{ and } \xi_1 = \defcellb{v_1}{v_2}{v_3}{0}{1}{0}\]
that exhibit indecisive drift. 
The goal is to find $\mSuppX(\xi,\xi_1')$. First, one computes the $\support(D_{(\xi,\xi_1')})$ as in Example~\ref{ex:mcell3}, that is, 
\[
    \support(D_{(\xi,\xi_1')}) = \setof{
    \defcellb{v_1-1}{v_2}{v_3-1}{1}{1}{1},
    \defcellb{v_1}{v_2}{v_3-1}{1}{1}{1},
    \defcellb{v_1-1}{v_2}{v_3}{1}{1}{1}
    }. 
\]
Then 
\begin{align*}
\mSuppX(\xi,\xi_1') = & \left\{ \defcellb{v_1-1}{v_2}{v_3}{1}{1}{0}, \defcellb{v_1}{v_2}{v_3-1}{0}{1}{1},
\defcellb{v_1}{v_2}{v_3-1}{1}{1}{1}, \right. \\
& \defcellb{v_1-1}{v_2}{v_3}{1}{1}{1},  
\left. \defcellb{v_1-1}{v_2}{v_3-1}{1}{1}{1}\right\}.
\end{align*}
\end{ex}
The discussion up to this point has made use of $\cX$.
We now turn attention to $\cX_b$.
\begin{defn}
    \label{defn:spine}
    We define the \emph{modification spine} of $(\xi,\xi') \in \cD(\rook)$ to be
    \[
        \Spineb(\xi,\xi') \coloneqq
        \cl(\blup(\xi)) \bigcap 
        \cl(\blup(\mSuppX(\xi,\xi'))) \subset \cX_b.
    \]
\end{defn}

Observe that by definition, if $\zeta\in \Spineb(\xi,\xi)$, then $\cl(\zeta) \subset \Spineb(\xi,\xi)$, and therefore the following lemma holds.

\begin{lem}
\label{lem:spineclosed}
The modification spine of $(\xi, \xi')\in \cD(\rook)$ is a closed subcomplex of $\cX_b$.
\end{lem}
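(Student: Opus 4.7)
The plan is to exploit the fact that $\Spineb(\xi,\xi')$ is defined as the intersection of two sets that are manifestly closed subcomplexes of $\cX_b$, and then appeal to the elementary fact that the intersection of closed subcomplexes is closed. Concretely, recall from Definition~\ref{defn:closure} that for any $\cU \subset \cX_b$, $\cl(\cU) = \bigcup_{\zeta \in \cU}\sO(\zeta) \in \sO(\cX_b)$, so both $\cA := \cl(\blup(\xi))$ and $\cB := \cl(\blup(\mSuppX(\xi,\xi')))$ are elements of the lattice $\sO(\cX_b)$ of downsets, i.e., closed subcomplexes.

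The main step is then to verify that $\Spineb(\xi,\xi') = \cA \cap \cB$ is itself a downset in $(\cX_b,\preceq_b)$. I would argue directly: suppose $\zeta \in \Spineb(\xi,\xi')$ and $\zeta' \preceq_b \zeta$. Since $\zeta \in \cA$, there exists a top cell $\sigma \in \blup(\xi)$ with $\zeta \preceq_b \sigma$; transitivity of $\preceq_b$ yields $\zeta' \preceq_b \sigma$ and hence $\zeta' \in \cA$. The same reasoning applied to $\cB$ gives $\zeta' \in \cB$, so $\zeta' \in \Spineb(\xi,\xi')$. Equivalently, this shows $\cl(\zeta) \subset \Spineb(\xi,\xi')$ for every $\zeta \in \Spineb(\xi,\xi')$, which is exactly the condition in the hint preceding the lemma and is precisely what it means for a subset of $\cX_b$ to be a closed subcomplex in the sense of Definition~\ref{defn:closure}.

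There is no real obstacle here; the statement is essentially a formal consequence of how closure and intersection interact in $\sO(\cX_b)$. The only point worth being explicit about is the interpretation of $\blup(\mSuppX(\xi,\xi'))$ as the set $\setdef{\blup(\eta)}{\eta\in\mSuppX(\xi,\xi')}\subset \cX_b^{(N)}$, so that its closure is unambiguously the union of the downsets $\sO(\blup(\eta))$ and therefore a closed subcomplex. Once that bookkeeping is in place, the proof is a single line invoking that $\sO(\cX_b)$ is closed under the meet operation $\cap$.
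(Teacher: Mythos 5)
Your proof is correct and takes essentially the same route as the paper: the paper simply observes that $\zeta\in\Spineb(\xi,\xi')$ implies $\cl(\zeta)\subset\Spineb(\xi,\xi')$, which is exactly your direct verification that the intersection of the two closed sets is a downset. The only cosmetic slip is writing ``$\sigma\in\blup(\xi)$'' as if $\blup(\xi)$ were a set — since $\blup$ is a bijection onto $\cX_b^{(N)}$ it is a single top cell, so $\sigma=\blup(\xi)$ — but this does not affect the argument.
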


The modification spine in the Janus complex is obtained by applying the subdivision map $\bar S$ from $\cX_b$ to $\Janus$ (see \eqref{eq:barS} and Lemma \ref{lem:spineclosed}).
More precisely it is given by  
\begin{equation}
\label{eq:SpineJ}
\SpineJ(\xi,\xi') \coloneqq \bar S(\Spineb(\xi,\xi')).
\end{equation}
Since $\Spineb(\xi,\xi')$ is closed and $\bar S$ is a subdivision map, $\SpineJ(\xi,\xi')$ is closed. 

\begin{ex}
\label{ex:spine2d}
Returning to Example~\ref{ex:mcell2}, consider  $(\xi,\xi_i')\in \cD(\rook)$ for $i=0,1$.
For each $i$, it follows from Example~\ref{ex:mcell2} that
\[
\mSuppX(\xi,\xi_i') = \setof{\mu_i}. 
\]
Thus, 
\[
\Spineb(\xi, \xi'_i)=\cl(\blup(\xi)) \cap \cl(\blup(\mu_i)),
\]
namely, 
\begin{align*}
\Spineb(\xi, \xi'_0) & =\setof{\defcell{2}{5}{0}{0}}\subset \cX_b \\
\Spineb(\xi, \xi'_1) & =\setof{\defcell{3}{5}{0}{0}}\subset \cX_b
\end{align*}
(see the vertices marked in blue in Figure~\ref{subfig:indecisive_drift_c}).
The corresponding modification spines in $\Janus$ are
\begin{align*}
\SpineJ(\xi, \xi'_0)& =\setof{\defcell{16}{40}{0}{0}}\subset \Janus \\
\SpineJ(\xi, \xi'_1)& =\setof{\defcell{24}{40}{0}{0}}\subset \Janus.
\end{align*}
\end{ex}

\begin{ex}\label{ex:spine3d}
Returning to Example \ref{ex:mcell3}
the support of $D_{(\xi,\xi_1')}$ is 
\[
    \support(D_{(\xi,\xi_1')}) = \setof{
    \defcellb{v_1-1}{v_2}{v_3-1}{1}{1}{1},
    \defcellb{v_1}{v_2}{v_3-1}{1}{1}{1},
    \defcellb{v_1-1}{v_2}{v_3}{1}{1}{1}.
    }. 
\]
Thus, the cells $\xi'' \in \cX$ that satisfy $\xi \preceq \xi \preceq \xi''$ and $\Top_\cX(\xi'') \subseteq \support(D_{(\xi,\xi_1')})$ are within the list of $16$ cells described in Example~\ref{ex:mcell3}. Naturally, each top dimensional cell in $\support(D_{\xi,\xi_1'})$ satisfies the condition. In addition, we have following two $2$-dimensional cells 
\[
    \defcellb{v_1}{v_2}{v_3-1}{0}{1}{1}, \defcellb{v_1-1}{v_2}{v_3}{1}{0}{1}.
\]
Therefore, 
\begin{multline*}
\Spineb(\xi,\xi_1') = \cl\left(\blup\left(\defcellb{v_1}{v_2}{v_3}{0}{0}{0}\right)\right) \bigcap \left( 
\cl\left( \blup\left( \defcellb{v_1-1}{v_2}{v_3-1}{1}{1}{1}\right)\right) \right.
\cup \\ 
\cl\left( \blup\left( \defcellb{v_1}{v_2}{v_3-1}{1}{1}{1}\right)\right) 
\cup  
\cl\left( \blup\left( \defcellb{v_1-1}{v_2}{v_3}{1}{1}{1}\right)\right) 
\cup \\ 
\left.
\cl\left( \blup\left( \defcellb{v_1}{v_2}{v_3-1}{0}{1}{1}\right)\right) 
\cup
\cl\left( \blup\left( \defcellb{v_1-1}{v_2}{v_3}{1}{0}{1}\right)\right) \right) 
= \\
\left\{ \defcellb{2v_1+1}{2v_2+1}{2v_3}{0}{0}{0}, \defcellb{2v_1}{2v_2+1}{2v_3}{0}{0}{0}, \defcellb{2v_1}{2v_2+1}{2v_3+1}{0}{0}{0},\right. \\
\left.
\defcellb{2v_1}{2v_2+1}{2v_3}{1}{0}{0}, \defcellb{2v_1}{2v_2+1}{2v_3}{0}{0}{1}
\right\}
\end{multline*}
and
\begin{multline*}
\SpineJ(\xi,\xi_1') = \bar S(\Spineb(\xi,\xi_1')) = \\
\left\{\defcellb{16v_1}{16v_2+8}{16v_3}{0}{0}{0}, \defcellb{16v_1+1}{16v_2+8}{16v_3}{0}{0}{0}, \ldots, \defcellb{16v_1+8}{16v_2+8}{16v_3}{0}{0}{0}, \right. 
\\
\defcellb{16v_1}{16v_2+8}{16v_3}{0}{0}{0}, \defcellb{16v_1}{16v_2+8}{16v_3+1}{0}{0}{0},\ldots, 
\defcellb{16v_1}{16v_2+8}{16v_3+8}{0}{0}{0},
\\
\defcellb{16v_1}{16v_2+8}{16v_3}{1}{0}{0}, \defcellb{16v_1+1}{16v_2+8}{16v_3}{1}{0}{0}, \ldots, \defcellb{16v_1+7}{16v_2+8}{16v_3}{1}{0}{0},
\\
\left.
\defcellb{16v_1}{16v_2+8}{16v_3}{0}{0}{1}, \defcellb{16v_1}{16v_2+8}{16v_3+1}{0}{0}{1},\ldots, 
\defcellb{16v_1}{16v_2+8}{16v_3+7}{0}{0}{1}
\right\}.
\end{multline*}
\end{ex}

The modification to the D-grading of the Janus complex induced by an indecisive pair is not symmetric. 
To break the symmetry, we begin by defining the notion of direction as follows.

\begin{defn}
\label{defn:signature}
Given  $(\xi,\xi')\in\cD(\rook)$ for which $\Ex(\xi,\xi')=\setof{n_o}$  the \emph{direction} of $(\xi,\xi')$ is defined as
\[
\direc(\xi,\xi') = D_{(\xi,\xi')}(\mu_0)
\]
where $\mu_0\in \support\left(D_{(\xi,\xi')}\right)$.
\end{defn}
To show that $\direc(\xi,\xi')$ is well defined we first note that by Proposition~\ref{prop:drift}, $\support\left(D_{(\xi,\xi')} \right)\neq\emptyset$.
Then, by Proposition~\ref{prop:back_wall_well_defined},
$D_{(\xi,\xi')}(\mu_0)$ is independent of the choice of $\mu_0\in \support\left(D_{(\xi,\xi')}\right)$.
As indicated by the following result, direction is consistent pairs that exhibit indecisive drift are faces of one another.
\begin{prop}
    \label{prop:consistent-direction}
If $(\xi_0,\xi_0'),(\xi_1,\xi_1') \in \cD(\rook)$ with $\xi_0 \preceq \xi_1$, then 
\[
    \direc(\xi_0,\xi_0')=\direc(\xi_1,\xi_1'). 
\]
\end{prop}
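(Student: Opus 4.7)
The plan is to reduce the claim to the comparison of $\rook_{n_o}$ at explicit back walls. By Proposition~\ref{prop:unique_ext_GO_pairs}, the hypothesis $\xi_0 \preceq \xi_1$ forces $\Ex(\xi_0,\xi_0') = \Ex(\xi_1,\xi_1') = \setof{n_o}$, so both pairs drift in the same essential direction. By the remark after Definition~\ref{defn:drift_map}, it is equivalent to exhibit back walls $(\dec_0,\dec_0') \in \Back(\xi_0,\xi_0')$ and $(\dec_1,\dec_1') \in \Back(\xi_1,\xi_1')$ with $\rook_{n_o}(\dec_0,\dec_0') = \rook_{n_o}(\dec_1,\dec_1')$, because Proposition~\ref{prop:back_wall_well_defined} guarantees that the back-wall value is independent of the choice within a single pair.

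First I would exploit the inclusions $J_i(\xi_1') = J_i(\xi_1) \setminus \setof{n_o} \subseteq J_i(\xi_0) \setminus \setof{n_o} = J_i(\xi_0')$ (from Proposition~\ref{prop:JtauJsigma}) together with $R_n(\xi_1) \subseteq R_n(\xi_0)$ (Proposition~\ref{prop:face_R_n}) to select the parameters $r_n$ appearing in Definition~\ref{defn:back-walls} in a compatible way. Fix a back wall $(\dec_1,\dec_1')$ of $(\xi_1,\xi_1')$, determined by some choice $\hat{\bv}_1 = \sum_{n \in J_i(\xi_1')} \tfrac{1+r_n}{2}\bzero^{(n)}$. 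Extend this choice by taking any admissible $r_n \in R_n(\xi_0)$ for $n \in J_i(\xi_0') \setminus J_i(\xi_1')$, giving $\hat{\bv}_0 = \hat{\bv}_1 + \sum_{n \in J_i(\xi_0') \setminus J_i(\xi_1')} \tfrac{1+r_n}{2}\bzero^{(n)}$ and the back wall $\dec_0 = [\bv_0 - \hat{\bv}_0,\bone^{(n_o)}]$, $\dec_0' = [\bv_0' - \hat{\bv}_0,\bone]$ of $(\xi_0,\xi_0')$. Since $n_o \in J_i(\xi_0) \cap J_i(\xi_1)$, the shift $\bv_0 - \bv_1$ has a vanishing $n_o$-component, so the choice of $\hat{\bv}_0$ can be arranged so that $\dec_0 \preceq_\cX \dec_1$ in $\cX$.

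Second, I would translate the wall-labeling values into the setting of a common top cell and apply Corollary~\ref{cor:mu*}. Using the construction above and Remark~\ref{rem:cells_between}, I would find a top cell $\mu \in \Top_\cX(\dec_0')$ with $\dec_1' \preceq_\cX \mu$ (obtained by completing the inessential directions that separate $\dec_0'$ from $\dec_1'$). Because $n_o \in J_i(\dec_0)$, Corollary~\ref{cor:mu*} applied to $\dec_0 \preceq_\cX \dec_0' \preceq \mu$ and to $\dec_0 \preceq_\cX \dec_1 \preceq \dec_1' \preceq \mu$ forces
\[
\rook_{n_o}(\dec_0,\dec_0') = \omega(\mu_{n_o}^\ast(\dec_0,\mu),\mu) = \rook_{n_o}(\dec_1,\dec_1'),
\]
completing the proof.

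The main obstacle is justifying the second step: $\xi_0'$ and $\xi_1'$ are not comparable in general, and the explicit existence of $\mu$ containing both $\dec_0'$ and $\dec_1'$ as faces requires careful bookkeeping of how the shift $\bv_0 - \bv_1$ interacts with the supports of $\hat{\bv}_0, \hat{\bv}_1$ and with the $n_o$-coordinate adjustments $\bq'_{i,n_o}$ used to produce $\xi_i'$ from $\xi_i$. The uniqueness clause in Definition~\ref{defn:indecisive}, which prevents any opaque inessential direction other than $n_o$ from being actively regulated by a different direction at $\xi_0$, should be used to rule out anomalous wall-label flips across the back wall. If the direct construction proves intractable, a fallback is to induct on $\dim(\xi_1) - \dim(\xi_0)$: the base case $\xi_0 = \xi_1$ (with possibly distinct $\xi_0',\xi_1'$) is settled by the argument above with $\bv_0 = \bv_1$ and $\hat{\bv}_0 = \hat{\bv}_1$; the inductive step follows any maximal chain $\xi_0 \prec_\cX \cdots \prec_\cX \xi_1$ of unit-dimension increments, choosing at each intermediate $\eta_i$ an $\eta_i' \in \cX$ with $(\eta_i,\eta_i') \in \cD(\rook)$ via Proposition~\ref{prop:GO-pair-properties} and Proposition~\ref{prop:not_L_GO_pairs}.
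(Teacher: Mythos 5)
Your reduction step is fine: by Proposition~\ref{prop:unique_ext_GO_pairs} both pairs share the extension direction $n_o$, by Proposition~\ref{prop:back_wall_well_defined} the value $\rook_{n_o}$ on a back wall is well defined, and by the remark following Definition~\ref{defn:drift_map} the direction equals $-\rook_{n_o}$ on any back wall, so the claim does reduce to $\rook_{n_o}(\dec_0,\dec_0')=\rook_{n_o}(\dec_1,\dec_1')$. But the argument you build on that reduction has two genuine problems.

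First, the second step is ill-posed. A back wall $(\dec,\dec')$ has $\dec'\in\cX^{(N)}$ a top cell, so asking for a top cell $\mu$ of $\dec_0'$ with $\dec_1'\preceq\mu$ forces $\dec_0'=\dec_1'=\mu$, and Corollary~\ref{cor:mu*} (which presupposes $\dim(\mu)=N$ with $\xi\preceq\xi'\prec\mu$) never applies in the way you describe. Even in your ``base case'' $\xi_0=\xi_1$ with $\xi_0'\ne\xi_1'$, taking $\hat{\bv}_0=\hat{\bv}_1$ gives a shared back wall $\dec_0=\dec_1$ but with $\dec_0'$ and $\dec_1'$ on opposite $n_o$-sides of it; there is no common $\mu$, and the two values $\omega(\dec_0,\dec_0')$, $\omega(\dec_0,\dec_1')$ are exactly what could differ for a generic wall labeling (that difference would mean $n_o$ actively self-regulates at $\xi_0$, which is not excluded by the indecisiveness clause, since that clause only constrains $\rmap\xi^{-1}(n)$ for $n\in O_i(\xi)\setminus\setof{n_o}$, not for $n_o$ itself). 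So the ``anomalous wall-label flips across the back wall'' are not ruled out by the combinatorics; you need another ingredient.

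Second, that ingredient is the \emph{ramp-system} origin of the rook field, which you never invoke. The section containing this proposition opens by assuming $\rook$ is derived from a ramp system, and the paper's proof hinges on Propositions~\ref{prop:local-monotone}/\ref{prop:wall-labeling-local-monotone}: the monotonicity of $E_{n_o}$ in the $n_g$-direction forces $\rook_{n_o}$ to be monotone in the $n_g$-direction around a vertex. The paper then argues by contradiction: $(\dec_i,\dec_i')$ sits at $p_{n_g}(\xi_i,\dec_i)=-r_{n_g}$ (the backward $n_g$-side), and since each pair has $(n_g,n_o)$ as a GO-pair there are $n_o$-walls at $p_{n_g}=r_{n_g}$ where $\rook_{n_o}$ takes the opposite sign; if $\rook_{n_o}(\dec_0,\dec_0')\ne\rook_{n_o}(\dec_1,\dec_1')$, then $\rook_{n_o}$ would be increasing through the common $n_g$-transition for one pair and decreasing for the other, violating local monotonicity. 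Without this step your proposal cannot close, and the inductive fallback inherits the same gap because the base case still requires it.
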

\begin{proof}
Assume that $(\xi_0,\xi_0'),(\xi_1,\xi_1') \in \cD(\rook)$ with $\xi_0 \preceq \xi_1$. 
First, by Proposition~\ref{prop:unique_ext_GO_pairs}, it follows that $\Ex(\xi_0,\xi_0')=\Ex(\xi_1,\xi_1')=\setof{n_o}$. 
Moreover, by Proposition~\ref{prop:GO-pair-properties}, if $(n_g,n_o)$ is a GO-pair for $(\xi_1,\xi_1')$, then it is a GO-pair for $(\xi_0,\xi_0')$.

Let $(\dec_0,\dec_0') \in \Dec(\xi_0,\xi_0')$ and $(\dec_1,\dec_1') \in \Dec(\xi_1,\xi_1')$. 
We show that $\rook_{n_o}(\dec_0,\dec_0')=\rook_{n_o}(\dec_1,\dec_1')$ by contradiction. 
Suppose without loss of generality that 
$\rook_{n_o}(\dec_0,\dec_0') = -1$ and $\rook_{n_o}(\dec_1,\dec_1') = 1$.
Note that $p_{n_g}(\xi_0,\dec_0)=p_{n_g}(\xi_1,\dec_1)=-r_{n_g}$ where $R_{n_g}(\xi_0)=\setof{r_{n_g}}$. 
Additionally, since $n_g$ is a gradient inessential direction for both cells and actively regulates $n_o$, there are $n_o$-walls $(\mu_0^*,\mu_0)$ and $(\mu_1^*,\mu_1)$ of $\xi_0$ and $\xi_1$ such that $p_{n_g}(\xi_0,\mu_0^*)=p_{n_g}(\xi_1,\mu_1^*)=r_{n_g}$, $\rook_{n_o}(\mu_0^*,\mu_0) = 1$, and $\rook_{n_o}(\mu_1^*,\mu_1) = -1$.
Thus, $\rook_{n_o}$ is both increasing and decreasing on the $n_g$ direction, which contradicts Proposition~\ref{prop:local-monotone}. 
\end{proof}

\begin{ex}
\label{ex:direction_2d}
Continuing with Example~\ref{ex:mcell2}, 
\[
\direc(\xi,\xi'_0) = D_{(\xi,\xi'_0)}(\mu_0) = -1
\quad
\text{and} 
\quad
\direc(\xi,\xi'_1) = D_{(\xi,\xi'_1)}(\mu_1) = -1.
\]
\end{ex}

\begin{ex}
\label{ex:direction_3d}
Continuing with Example~\ref{ex:mcell3},
\[
\direc(\xi,\xi'_1) = D_{(\xi,\xi'_1)}\left(\mu\right) = -1, \mu \in \support(D_{(\xi,\xi_1')}).
\]
Note that both pairs $\left( \defcellb{v_1}{v_2}{v_3}{1}{0}{0},\defcellb{v_1}{v_2}{v_3}{1}{1}{0}\right)$ and $\left( \defcellb{v_1}{v_2}{v_3}{0}{0}{1},\defcellb{v_1}{v_2}{v_3}{0}{1}{1}\right)$ exhibit indecisive drift with GO-pairs $(3,2)$ and $(1,2)$ respectively. For each of them,
\[
\direc\left( \defcellb{v_1}{v_2}{v_3}{1}{0}{0},\defcellb{v_1}{v_2}{v_3}{1}{1}{0}\right)= \left( \defcellb{v_1}{v_2}{v_3}{0}{0}{1},\defcellb{v_1}{v_2}{v_3}{0}{1}{1}\right) = -1 = \direc(\xi,\xi'_1),
\]
as in Proposition~\ref{prop:consistent-direction}. 
\end{ex}
We can decompose any pair $(\xi,\xi') \in \cD(\rook)$  into cells $\alpha^+, \alpha^-\in\setof{\xi,\xi'}$, where the local modifications will involve elements of $\bar S(\blup(\alpha^-))\cap B_2(\SpineJ(\xi, \xi'))$, while no modification will involve elements of $\bar S(\blup(\alpha^+)\cap B_2(\SpineJ(\xi, \xi'))$, see Figure~\ref{fig:indecisive_drift_2_b1}(C).
\begin{defn}\label{defn:gamma+-}
The \emph{positive} and \emph{negative drift cells} of  $(\xi,\xi')\in\cD(\rook)$ are
\[
\PosDriftCells(\xi,\xi')=
\begin{cases}
    \xi' & \text{ if $p_{n_o}(\xi,\xi') \direc(\xi,\xi')   = 1$}, \\
    \xi & \text{ if  $p_{n_o}(\xi,\xi') \direc(\xi,\xi')   = - 1$}, 
\end{cases}
\]
and
\[
\NegDriftCells(\xi,\xi')=
\begin{cases}
    \xi' & \text{ if $p_{n_o}(\xi,\xi') \direc(\xi,\xi')   = - 1$} \\
    \xi & \text{ if  $p_{n_o}(\xi,\xi') \direc(\xi,\xi')   =  1$},
\end{cases}
\]
respectively. 
\end{defn}
\begin{ex}
\label{ex:plusminus_2d}
Continuing with Example~\ref{ex:direction_2d}, recall that $n_o=1$.
Observe that $p_1(\xi,\xi'_0) = 1$ and $p_1(\xi,\xi'_1) = -1$ while $\direc(\xi,\xi_i')=-1$ for each $i=1,2$. 
Therefore,
\[
\PosDriftCells(\xi,\xi'_0) = \xi \quad\text{and}\quad
\PosDriftCells(\xi,\xi'_1) = \xi'_1
\]
or equivalently
\[
\NegDriftCells(\xi,\xi'_0) = \xi'_0\quad\text{and}\quad
\NegDriftCells(\xi,\xi'_1) = \xi.
\]
\end{ex}

\begin{ex}
\label{ex:plusminus_3d}
Continuing with Example~\ref{ex:mcell3}, recall that $n_o =2$.
Observe that $p_2(\xi,\xi'_1) = -1$ while $\direc(\xi,\xi_i')=-1$. 
Therefore,
\[
\PosDriftCells(\xi,\xi'_1) =  \defcellb{v_1}{v_2}{v_3}{0}{1}{0}
, \quad
 \NegDriftCells(\xi,\xi'_1) =  \defcellb{v_1}{v_2}{v_3}{0}{0}{0}.
\]
\end{ex}
\begin{rem}
    \label{rem:D+-}
    It is important to note that a cell $\alpha\in\cX$ can be associated with multiple pairs that exhibit indecisive drift via $\PosNegDriftCells(\xi,\xi')$.
However, it is clear that if $(\xi,\xi') \in \cD(\rook)$, then $\PosDriftCells(\xi,\xi') \neq \NegDriftCells(\xi,\xi')$. 
\end{rem}
A summary of the construction to this point is as follows. 
Given $(\xi,\xi')\in \cD(\rook)$, we have defined 
the modification spine $\Spineb(\xi,\xi')\subset \cX_b$. 
We now apply the subdivision operator to produce a subset of $B_2(\SpineJ(\xi, \xi'))\subset \Janus$ (see \eqref{eq:Br}) where
the local modifications to $\pi_{\mathtt{J}}$ will be performed. 
Below we describe how to identify this subset.

\begin{defn}
\label{defn:mTile}  
We define the \emph{modification tiles} for $(\xi,\xi')\in \cD(\rook)$ by
\[
\mTile(\xi,\xi'):=B_2(\SpineJ(\xi, \xi')) \cap \bar{S}(\blup(\NegDriftCells(\xi, \xi'))) \subset \Janus^{(N)}.
\]
\end{defn}
Modifications to the D-grading will involve cells that belong to $\cl(\mTile(\xi,\xi'))$.

\begin{ex}
\label{ex:mtiles2d}
Continuing with Example~\ref{ex:spine2d}.
The two-dimensional gray cells shown in Figure~\ref{fig:indecisive_drift_2_b1}(C) indicate the modification tiles 
\begin{align*}
    \mTile(\xi,\xi'_0) & = B_2\left(\bar{S}(\setof{\defcell{2}{5}{0}{0}}) \right) \cap \bar{S}(b(\setof{\xi_0'})),
\end{align*}
and
\begin{align*}
    \mTile(\xi,\xi'_1) & = B_2\left(\bar{S}(\setof{\defcell{3}{5}{0}{0}}) \right) \cap \bar{S}(b(\setof{\xi}))
\end{align*}
in the Janus complex.
\end{ex}
\begin{ex}
\label{ex:mtiles3d}
Returning to Example~\ref{ex:spine3d}. Since $\NegDriftCells(\xi, \xi_1')=\xi_1'$ then
by definition
\[
    \mTile(\xi,\xi_1'):=B_2(\SpineJ(\xi, \xi_1')) \cap \bar{S}(\blup(\xi_1')) \subset \Janus^{(N)}.
\]
\end{ex}

The following result is used to guarantee that  if  $(\xi_0,\xi_0'),(\xi_1,\xi_1')\in \cD(\rook)$, then modifications of the D-grading associated with $(\xi_0,\xi_0')$ are independent of modifications associated with $(\xi_1,\xi_1')$.

\begin{prop}
\label{prop:Ts}
Let $(\xi_0,\xi'_0), (\xi_1,\xi'_1) \in \cD(\rook)$ be distinct pairs.
Then,
\begin{enumerate}
\item[(i)]  $\mTile(\xi_0,\xi_0') \cap \mTile(\xi_1,\xi_1')=\emptyset$;
\item[(ii)]  if $(\xi_0,\xi'_0)$ and $(\xi_1,\xi'_1)$ share a common element ($\xi_0=\xi_1$ or $\xi'_0=\xi'_1$), then
$
\cl(\mTile(\xi_0,\xi_0')) \cap \cl(\mTile(\xi_1,\xi_1'))=\emptyset$; 
and
\item[(iii)] if 
$
\cl (\mTile(\xi_0,\xi_0')) \cap \cl (\mTile(\xi_1,\xi_1'))\neq\emptyset$,
then $\Ex(\xi_0,\xi_0')=\Ex(\xi_1,\xi_1')$.
\end{enumerate}
\end{prop}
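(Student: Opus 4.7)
The plan is to exploit two structural facts: first, that $\mTile(\xi,\xi') \subseteq \bar S(\blup(\NegDriftCells(\xi,\xi')))$, so every tile lives inside the $\bar S$-block of $\Janus$ attached to a single top cell of $\cX_b$; and second, that the factor-of-$8$ subdivision used to build $\Janus$ from $\cX_b$ is much larger than the radius $2$ appearing in the $B_2$-neighborhood of a spine. Thus, under the separation metric $d$ on $\Janus$, distinct spines of $\cX_b$ always produce $\SpineJ$'s at distance at least $8$, while $B_2$-neighborhoods have ``diameter'' $4$.

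For part (i), I would assume a top cell $\sigma \in \Janus^{(N)}$ lies in both tiles. The block reduction forces $\NegDriftCells(\xi_0,\xi_0') = \NegDriftCells(\xi_1,\xi_1') =: \alpha$. The configuration in which this common value arises from a chain $\xi_0 \prec \alpha \prec \xi_1'$ (or the reversed version) is excluded by Proposition~\ref{prop:not_L_GO_pairs}. The remaining possibilities are $\xi_0 = \xi_1$ with $\xi_0' \ne \xi_1'$, or $\xi_0' = \xi_1'$ with $\xi_0 \ne \xi_1$. In each case the spines $\Spineb(\xi_0,\xi_0')$ and $\Spineb(\xi_1,\xi_1')$ sit on distinct codimension-one faces of $\cl(\blup(\alpha)) \subset \cX_b$ determined by the differing partner cells, so they differ by at least one integer coordinate in $\cX_b$. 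Under $\bar S$ this magnifies to separation at least $8$ in $\Janus$, and the two $B_2$-balls are disjoint, contradicting $\sigma \in \mTile(\xi_0,\xi_0') \cap \mTile(\xi_1,\xi_1') \subseteq B_2(\SpineJ(\xi_0,\xi_0')) \cap B_2(\SpineJ(\xi_1,\xi_1'))$.

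Part (ii) follows from the same spine-separation estimate: since $\cl(\mTile(\xi_i,\xi_i')) \subseteq B_2(\SpineJ(\xi_i,\xi_i'))$, and the common-element hypothesis places us in one of the two non-chain configurations analyzed at the end of part (i), the closed neighborhoods $B_2(\SpineJ(\xi_i,\xi_i'))$ themselves are disjoint in $\Janus$, and so are the closures of the tiles. For part (iii), I argue the contrapositive: assume $\Ex(\xi_0,\xi_0') \neq \Ex(\xi_1,\xi_1')$ and show the tile closures are disjoint. Under this assumption, Proposition~\ref{prop:unique_ext_GO_pairs} forbids $\xi_0 \preceq \xi_1$ and $\xi_0' \preceq \xi_1'$, and Proposition~\ref{prop:not_L_GO_pairs} continues to forbid a chain through a common cell; so $\alpha_0 = \NegDriftCells(\xi_0,\xi_0')$ and $\alpha_1 = \NegDriftCells(\xi_1,\xi_1')$ are either equal or incomparable in $\cX$. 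If incomparable, $\cl(\blup(\alpha_0)) \cap \cl(\blup(\alpha_1))$ has codimension at least $2$ in $\cX_b$, so even the $\bar S$-blocks and their $B_2$-thickenings are well separated. If $\alpha_0 = \alpha_1$, the distinct $\Ex$ directions route the two spines to different codimension-one faces of $\cl(\blup(\alpha_0))$, and part (ii)'s argument applies verbatim.

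The main obstacle is the combinatorial lemma at the heart of part (i): that whenever two distinct pairs share a common negative drift cell $\alpha$, the corresponding spines truly sit on different faces of $\cl(\blup(\alpha))$, hence differ by at least one unit in $\cX_b$. This amounts to unwinding Definition~\ref{defn:spine} in conjunction with Definition~\ref{defn:mcell}, and observing that $\mSuppX(\xi,\xi')$, together with the constancy of $\rook_{n_o}$ over $\Dec(\xi,\xi')$ furnished by Proposition~\ref{prop:back_wall_well_defined} and the direction consistency from Proposition~\ref{prop:consistent-direction}, pins down which face of $\cl(\blup(\alpha))$ the spine occupies. Once that coordinate-level bookkeeping is established in general dimension $N$, the remaining steps are routine applications of the definitions of $\bar S$, $d$, and $B_r$, together with the triangle inequality for $d$.
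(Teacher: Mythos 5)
Your proposal identifies the two structural pillars the paper uses: every modification tile lies inside the $\bar{S}$-block of its negative drift cell, and the factor-of-$8$ subdivision from $\cX_b$ to $\Janus$ so far exceeds the radius-$2$ balls in the tile construction that disjoint spines in $\cX_b$ automatically produce disjoint $B_2$-neighborhoods in $\Janus$. For (i) and (ii), your argument after the block reduction (common negative drift cell $\alpha$; exclude chains via Proposition~\ref{prop:not_L_GO_pairs}; remaining cases $\xi_0=\xi_1$ or $\xi_0'=\xi_1'$) essentially reproduces the paper's treatment of (ii). The paper proves (i) more cheaply: when the negative drift cells differ, the two tiles already lie in disjoint $\bar{S}$-blocks, with no separation estimate needed at all; the spine-separation argument is reserved for (ii), where a shared cell forces a common block. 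Your flagged ``main obstacle''---that with a shared cell and distinct partners the spines sit on different faces and so are separated by at least one unit in $\cX_b$---is exactly the step the paper compresses to a citation of Lemma~\ref{lem:spineclosed} together with equality of extensions from Proposition~\ref{prop:unique_ext_GO_pairs}, so on that point you are neither more nor less complete than the paper.

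Part (iii), however, has a genuine gap. You assert that under $\Ex(\xi_0,\xi_0')\neq\Ex(\xi_1,\xi_1')$ the negative drift cells $\alpha_0,\alpha_1$ must be ``equal or incomparable.'' Proposition~\ref{prop:unique_ext_GO_pairs} rules out $\xi_0\preceq\xi_1$, $\xi_1\preceq\xi_0$, $\xi_0'\preceq\xi_1'$, $\xi_1'\preceq\xi_0'$, and Proposition~\ref{prop:not_L_GO_pairs} rules out chains $\xi_0\prec\xi_0'=\xi_1\prec\xi_1'$; but neither rules out a cross-comparison such as $\xi_0\prec\xi_1'$ with $\xi_0$ and $\xi_1$ incomparable (for example, $\xi_0$ and $\xi_1$ two distinct codimension-one faces of $\xi_1'$ in different directions). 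Then $\alpha_0=\xi_0$ and $\alpha_1=\xi_1'$ yields distinct, \emph{comparable} drift cells, escaping your dichotomy. Further, even in the incomparable case, a codimension-$\geq 2$ intersection of $\cl(\blup(\alpha_0))$ and $\cl(\blup(\alpha_1))$ does not by itself bound the $B_2$-thickenings away from one another: the tiles can still reach the boundary of their blocks unless you already know the spines stay away from it, which is again the undischarged separation lemma. The paper's (iii) avoids this entirely by reducing to spine intersection directly: nonempty intersection of $\Spineb(\xi_0,\xi_0')$ and $\Spineb(\xi_1,\xi_1')$ forces $\xi_0\preceq\xi_1$ or $\xi_0'\preceq\xi_1'$, whence $\Ex(\xi_0,\xi_0')=\Ex(\xi_1,\xi_1')$ by Proposition~\ref{prop:unique_ext_GO_pairs}. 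You should route (iii) through the same criterion rather than a classification of the drift cells.
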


\begin{proof}
Let $(\xi_0,\xi_0'), (\xi_1,\xi_1') \in \cD(\rook)$ and assume without loss of generality that $\dim(\xi_0)\leq\dim(\xi_1)$. Note that $B_2(\SpineJ(\xi_0,\xi_0')) \cap B_2(\SpineJ(\xi_1,\xi_1')) \neq \emptyset$ if and only if $\Spineb(\xi_0,\xi_0')\cap\Spineb(\xi_1,\xi_1') \neq \emptyset$. By Definition~\ref{defn:spine}, that is equivalent to
\[
        \left( \cl(\blup(\xi_0)) \bigcap 
        \cl(\blup(\mSuppX(\xi_0,\xi_0'))) \right)\bigcap \left( \cl(\blup(\xi_1)) \bigcap 
        \cl(\blup(\mSuppX(\xi_1,\xi_1'))) \right)  \neq \emptyset,
\]
which is nontrivial if and only if $\xi_0 \preceq \xi_1$ or $\xi_0' \preceq \xi_1'$. If $\dim(\xi_0)<\dim(\xi_1)$, then $\xi_0 \prec \xi_1$ and $\xi_0'\prec \xi_1'$ and item (i) follows by Definition~\ref{defn:mcell} since $\setof{\xi_0,\xi_0'}$ and $\setof{\xi_1,\xi_1'}$ are disjoint. Indeed, 
\[
    \bar{S}(\blup(\NegDriftCells(\xi_0, \xi'_0))) \cap \bar{S}(\blup(\NegDriftCells(\xi_1, \xi'_1))) \subset \setof{\xi_0,\xi_0'} \cap \setof{\xi_1,\xi_1'} = \emptyset.
\]
To prove (ii), assume that $\dim(\xi_0)=\dim(\xi_1)$ and that $(\xi_0,\xi_0')$ and $(\xi_1,\xi_1')$ share a common element. Note that either $\xi_0=\xi_1$ or $\xi_0'=\xi_1'$ as $\xi_0 = \xi_1'$ or $\xi_1 = \xi_0'$ would contradict Proposition~\ref{prop:not_L_GO_pairs}. Moreover, by Proposition~\ref{prop:unique_ext_GO_pairs}, $\Ex(\xi_0,\xi_0')=\Ex(\xi_1,\xi_1')$. By Lemma \ref{lem:spineclosed}, $\Spineb$ is a closed subcomplex of $\cX_b$, then $\Spineb(\xi_0, \xi_0')$ and $\Spineb(\xi_1, \xi_1')$ are separated by at least 1. Hence, any $\zeta_0 \in \SpineJ(\xi_0,\xi_0')$ and $\zeta_1 \in \SpineJ(\xi_1,\xi_1')$ are separated by at least 8. Therefore, 
\[ 
d(B_2(\SpineJ(\xi_0, \xi_0')),B_2(\SpineJ(\xi_1, \xi_1'))) \geq 2.
\]
Thus, $B_2(\SpineJ(\xi_0, \xi_0')$ and $B_2(\SpineJ(\xi_1, \xi_1'))$ are disjoint, which yields the result. 

To show (iii), note that $B_2(\SpineJ(\xi_0,\xi_0')) \cap B_2(\SpineJ(\xi_1,\xi_1')) \neq \emptyset$ if and only if $\Spineb(\xi_0,\xi_0')\cap\Spineb(\xi_1,\xi_1') \neq \emptyset$. By Definition~\ref{defn:spine}, that is equivalent to
\[
        \left( \cl(\blup(\xi_0)) \bigcap 
        \cl(\blup(\mSuppX(\xi_0,\xi_0'))) \right)\bigcap \left( \cl(\blup(\xi_1)) \bigcap 
        \cl(\blup(\mSuppX(\xi_1,\xi_1'))) \right)  \neq \emptyset,
\]
which is nontrivial if and only if $\xi_0 \preceq \xi_1$ or $\xi_0' \preceq \xi_1'$. In either case, the result follows from Proposition~\ref{prop:unique_ext_GO_pairs}. 
\end{proof}

\begin{defn}
\label{defn:Add&Rem}
Let $\alpha\in\cX$. 
The \emph{addition set} of $\alpha$ is
\[
\Add(\alpha) \coloneqq \bigcup_{\substack{(\xi,\xi')\in\PosIndecisiveDrifts(\alpha)}}  \mTile(\xi,\xi') \cap \bar{S}(b(\NegDriftCells(\xi,\xi'))).
\]
The \emph{removal set} of $\alpha$ is
\[
\Rem(\alpha) \coloneqq \bigcup_{\substack{(\xi,\xi')\in\NegIndecisiveDrifts(\alpha)}} \mTile(\xi,\xi') \cap \bar{S}(b(\NegDriftCells(\xi,\xi'))).
\]

\end{defn}

\begin{ex}\label{ex:Add&Rem_ex}
    Recall from Example~\ref{ex:mtiles2d} that 
    \[
    \xi = \PosDriftCells(\xi,\xi_0') = \NegDriftCells(\xi,\xi_1'),
    \]
    so 
    \[
        \Add(\xi) = \mTile(\xi,\xi'_0) \cap \bar{S}(\blup(\xi'_0)), \quad \Rem(\xi) = \mTile(\xi,\xi'_1) \cap \bar{S}(\blup(\xi)). 
    \]
\end{ex}

Note that if there is no $(\xi,\xi') \in \cD(\rook)$ such that $\alpha = \NegDriftCells(\xi,\xi')$ or $\alpha=\PosDriftCells(\xi,\xi')$, then $\Add(\alpha)=\Rem(\alpha)=\emptyset$. Additionally, from Remark~\ref{rem:D+-} and Proposition~\ref{prop:Ts}, we have the following properties of addition and removal sets.

\begin{cor}
\label{cor:prop_Add_Rem}
If $\alpha_1,\alpha_2\in\cX$ are distinct cells, then 
\[
\Add(\alpha_1) \cap \Add(\alpha_2) = \Rem(\alpha_1) \cap \Rem(\alpha_2) 
    = \Add(\alpha_1) \cap \Rem(\alpha_1)
    = \emptyset.
\]
\end{cor}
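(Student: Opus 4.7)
The plan is to verify each of the three equalities by contradiction, in every case reducing the question to the disjointness of modification tiles associated to distinct indecisive-drift pairs, which is supplied by Proposition~\ref{prop:Ts}(i). Since $\Add$ and $\Rem$ are indexed unions of sets of the form $\mTile(\xi,\xi') \cap \bar{S}(b(\NegDriftCells(\xi,\xi')))$, showing these unions are pairwise disjoint reduces to ruling out that a single cell $\zeta$ can simultaneously lie in $\mTile(\xi_1,\xi_1')$ and $\mTile(\xi_2,\xi_2')$ for two distinct pairs in $\cD(\rook)$.

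First, for $\Add(\alpha_1) \cap \Add(\alpha_2) = \emptyset$ with $\alpha_1 \neq \alpha_2$, I would assume $\zeta$ is in the intersection, unfold Definition~\ref{defn:Add&Rem} to produce pairs $(\xi_i,\xi_i') \in \PosIndecisiveDrifts(\alpha_i)$ with $\zeta \in \mTile(\xi_i,\xi_i')$ for $i=1,2$, and note that $\PosDriftCells(\xi_i,\xi_i') = \alpha_i$ together with $\alpha_1 \neq \alpha_2$ forces the two pairs to be distinct. Proposition~\ref{prop:Ts}(i) then gives $\mTile(\xi_1,\xi_1') \cap \mTile(\xi_2,\xi_2') = \emptyset$, contradicting the existence of $\zeta$. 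The same argument, with $\PosDriftCells$ replaced by $\NegDriftCells$ throughout, handles $\Rem(\alpha_1) \cap \Rem(\alpha_2) = \emptyset$.

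For $\Add(\alpha_1) \cap \Rem(\alpha_1) = \emptyset$, I would proceed analogously, obtaining pairs $(\xi,\xi') \in \PosIndecisiveDrifts(\alpha_1)$ and $(\bar{\xi},\bar{\xi}') \in \NegIndecisiveDrifts(\alpha_1)$ whose modification tiles share the cell $\zeta$. The crucial point here is Remark~\ref{rem:D+-}: if these two pairs coincided, then $\alpha_1$ would have to equal both $\PosDriftCells(\xi,\xi')$ and $\NegDriftCells(\xi,\xi')$, contradicting the remark's assertion that these two drift cells differ for any pair in $\cD(\rook)$. Hence the pairs are distinct, and Proposition~\ref{prop:Ts}(i) once again yields the required contradiction.

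I do not anticipate a substantive obstacle: the corollary is a direct consequence of the structural results already established, and the only subtlety worth flagging is that the third equality uses Remark~\ref{rem:D+-} rather than the hypothesis $\alpha_1 \neq \alpha_2$ to guarantee that the two indecisive-drift pairs one extracts are distinct, so that Proposition~\ref{prop:Ts}(i) becomes applicable.
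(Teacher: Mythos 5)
Your proof is correct and follows exactly the route the paper intends: the corollary is stated without an explicit proof, but the sentence immediately preceding it cites Remark~\ref{rem:D+-} (to ensure distinctness of the indecisive-drift pairs in the $\Add(\alpha_1)\cap\Rem(\alpha_1)$ case) and Proposition~\ref{prop:Ts} (for disjointness of modification tiles), which is precisely the decomposition you carry out. You have simply spelled out the unpacking of Definition~\ref{defn:Add&Rem} and the case analysis that the authors left implicit.
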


\begin{prop}
\label{prop:Add&RemProp}
If $\alpha \in \cX$, then $
\cl(\Add(\alpha)) \cap \cl(\Rem(\alpha)) = \emptyset.$
\end{prop}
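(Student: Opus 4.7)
The goal is to show $\cl(\Add(\alpha))\cap\cl(\Rem(\alpha))=\emptyset$, and the plan is to reduce the claim to Proposition~\ref{prop:Ts}(ii). Since closure commutes with finite unions and $\mTile(\xi,\xi')\cap\bar{S}(\blup(\NegDriftCells(\xi,\xi')))\subseteq\mTile(\xi,\xi')$, the definitions of $\Add(\alpha)$ and $\Rem(\alpha)$ give
\[
\cl(\Add(\alpha))\subseteq\bigcup_{(\xi_0,\xi_0')\in\PosIndecisiveDrifts(\alpha)}\cl(\mTile(\xi_0,\xi_0')),\quad
\cl(\Rem(\alpha))\subseteq\bigcup_{(\xi_1,\xi_1')\in\NegIndecisiveDrifts(\alpha)}\cl(\mTile(\xi_1,\xi_1')).
\]
It therefore suffices to prove that $\cl(\mTile(\xi_0,\xi_0'))\cap\cl(\mTile(\xi_1,\xi_1'))=\emptyset$ for every $(\xi_0,\xi_0')\in\PosIndecisiveDrifts(\alpha)$ and every $(\xi_1,\xi_1')\in\NegIndecisiveDrifts(\alpha)$.

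First I would note that by Remark~\ref{rem:D+-}, $\PosDriftCells(\xi,\xi')\neq\NegDriftCells(\xi,\xi')$ for any single pair, so the two pairs above are automatically distinct. By the definitions of $\PosIndecisiveDrifts$ and $\NegIndecisiveDrifts$, we have $\alpha\in\{\xi_0,\xi_0'\}$ and $\alpha\in\{\xi_1,\xi_1'\}$, so the two distinct pairs share the cell $\alpha$. The next step is a case analysis on the positions of $\alpha$ in the two pairs. In the \emph{aligned} cases, either $\alpha=\xi_0=\xi_1$ or $\alpha=\xi_0'=\xi_1'$; in either case the hypothesis of Proposition~\ref{prop:Ts}(ii) is satisfied and the conclusion follows.

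The \emph{misaligned} cases are $\alpha=\xi_0=\xi_1'$ and $\alpha=\xi_0'=\xi_1$. In the first, the chain $\xi_1\prec\xi_1'=\xi_0\prec\xi_0'$ is a triple in $\cX$ on which two consecutive pairs belong to $\cD(\rook)$, contradicting Proposition~\ref{prop:not_L_GO_pairs}; the second misaligned case is symmetric. Hence only the aligned cases occur, Proposition~\ref{prop:Ts}(ii) applies in every remaining case, and the claim follows.

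I expect the main obstacle to be the verification that the misaligned cases cannot occur; this is exactly where Proposition~\ref{prop:not_L_GO_pairs} does the substantive work. Once that point is dispatched, the rest of the argument is essentially bookkeeping with unions and closures combined with a direct application of Proposition~\ref{prop:Ts}(ii).
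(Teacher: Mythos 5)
Your proof is correct, and it takes a genuinely different route than the one in the paper. After establishing distinctness from Remark~\ref{rem:D+-} (as the paper also does), you make the explicit observation that both pairs contain $\alpha$ --- since $\PosDriftCells(\xi_0,\xi_0')=\alpha$ and $\NegDriftCells(\xi_1,\xi_1')=\alpha$ --- and then split into the four positional cases. You dispatch the two misaligned ones with Proposition~\ref{prop:not_L_GO_pairs} and hand the two aligned ones to Proposition~\ref{prop:Ts}(ii) as a black box. The paper instead deduces $\Ex(\xi_0,\xi_0')=\Ex(\xi_1,\xi_1')$ from the nonempty intersection of closed tiles (the citation to Proposition~\ref{prop:back_subset} there is a typo; the result being used is Proposition~\ref{prop:Ts}(iii)) and then re-runs the spine-separation argument directly, concluding that the $\mTile$ sets and hence the $\Add$/$\Rem$ sets are disjoint. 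Your route is arguably tighter: by reducing to Proposition~\ref{prop:Ts}(ii) you avoid repeating the "spines separated by $1$ in $\cX_b$, hence by $8$ in $\Janus$" argument, and you make explicit the fact --- left implicit in the paper --- that the shared element is exactly $\alpha$ and that the misaligned placements of $\alpha$ are forbidden by Proposition~\ref{prop:not_L_GO_pairs}. The only cosmetic point worth tightening is that your reduction-via-unions step also silently covers the edge case where $\PosIndecisiveDrifts(\alpha)$ or $\NegIndecisiveDrifts(\alpha)$ is empty (then $\Add(\alpha)$ or $\Rem(\alpha)$ is empty and the claim is trivial); you might say so in a word since you quantify over "every" pair.
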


\begin{proof}
Given Remark~\ref{rem:D+-}, it is enough to consider distinct pairs that exhibit indecisive drift. If $(\xi_0,\xi_0'),(\xi_1,\xi_1') \in \cD(\rook)$ are distinct and $\cl(\Add(\alpha)) \cap \cl(\Rem(\alpha)) \neq \emptyset$, then 
\[
\cl (\mTile(\xi_0,\xi_0')) \cap \cl (\mTile(\xi_1,\xi_1'))\neq\emptyset,
\] 
and by Proposition~\ref{prop:back_subset}, $\Ex(\xi_0,\xi_0')=\Ex(\xi_1,\xi_1')$. Therefore, the spines in $\cX_b$ are separated by at least 1, hence $\PosNegDriftCells(\xi_0,\xi_0')$ and $\PosNegDriftCells(\xi_1,\xi_1')$ are separated by at least 8 in $\Janus$, thus $\mTile(\xi_0,\xi_0')$ and $\mTile(\xi_1,\xi_1')$ are disjoint, and so are $\Add(\alpha)$ and $\Rem(\alpha)$. 
\end{proof}

\begin{prop}
\label{prop:Wdiff}
Let $\alpha_1, \alpha_2 \in \cX$. 
If 
$\PosIndecisiveDrifts(\alpha_1)\cap \NegIndecisiveDrifts(\alpha_2)=\emptyset,$ then 
\[
\cl (\Add(\alpha_1)) \cap \cl(\Rem(\alpha_2)) = \emptyset.
\]
\end{prop}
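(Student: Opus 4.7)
The plan is to argue by contradiction, generalizing the proof of Proposition~\ref{prop:Add&RemProp}. Assume that $\cl(\Add(\alpha_1)) \cap \cl(\Rem(\alpha_2)) \neq \emptyset$. Unpacking Definition~\ref{defn:Add&Rem}, there exist pairs $(\xi_0,\xi_0') \in \PosIndecisiveDrifts(\alpha_1)$ and $(\xi_1,\xi_1') \in \NegIndecisiveDrifts(\alpha_2)$ such that $\cl(\mTile(\xi_0,\xi_0')) \cap \cl(\mTile(\xi_1,\xi_1')) \neq \emptyset$. The hypothesis is used precisely at this point: if $(\xi_0,\xi_0')$ were equal to $(\xi_1,\xi_1')$, this common pair would lie in $\PosIndecisiveDrifts(\alpha_1) \cap \NegIndecisiveDrifts(\alpha_2)$, contradicting the assumption. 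Thus $(\xi_0,\xi_0') \neq (\xi_1,\xi_1')$, placing us in the setting where Proposition~\ref{prop:Ts} applies.

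Next I would invoke Proposition~\ref{prop:Ts}(iii) to obtain $\Ex(\xi_0,\xi_0') = \Ex(\xi_1,\xi_1')$ and then split into two cases according to whether the pairs share a common cell of $\cX$. If $\setof{\xi_0,\xi_0'} \cap \setof{\xi_1,\xi_1'} \neq \emptyset$, Proposition~\ref{prop:Ts}(ii) immediately forces $\cl(\mTile(\xi_0,\xi_0')) \cap \cl(\mTile(\xi_1,\xi_1')) = \emptyset$, contradicting the existence of the common cell above. In the remaining case no element is shared, and I would mirror the separation estimate from the proof of Proposition~\ref{prop:Add&RemProp}: the modification spines in $\cX_b$ are at separation at least $1$, the eightfold subdivision carried out by $\bar{S}$ then makes $\SpineJ(\xi_0,\xi_0')$ and $\SpineJ(\xi_1,\xi_1')$ at $\Janus$-separation at least $8$, and since $\cl(\mTile(\xi_i,\xi_i')) \subseteq B_2(\SpineJ(\xi_i,\xi_i'))$ (because both sets defining $\mTile$ are closed subcomplexes), their closures remain at $\Janus$-separation at least $8-2-2=4>0$ and therefore cannot share any cell.

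The main obstacle lies in justifying the $\cX_b$-separation bound in the second case, a step that the proof of Proposition~\ref{prop:Add&RemProp} takes for granted. To address this I would exploit the structure from Definition~\ref{defn:spine}: since $\Spineb(\xi,\xi') \subseteq \cl(\blup(\xi)) \cap \cl(\blup(\mSuppX(\xi,\xi')))$, a shared $\cX_b$-vertex between $\Spineb(\xi_0,\xi_0')$ and $\Spineb(\xi_1,\xi_1')$ would force $\cl(\blup(\xi_0)) \cap \cl(\blup(\xi_1)) \neq \emptyset$. The characterization used at the outset of the proof of Proposition~\ref{prop:Ts}, together with the already established equality of extensions (and hence of the dimension differences $\dim\xi_i' - \dim\xi_i = 1$), would then force either $\xi_0 \preceq \xi_1$ or $\xi_0' \preceq \xi_1'$; combining the face relation with Lemma~\ref{lem:codimension1face} and the equality of extensions would collapse these face relations into equalities, producing a common element and contradicting the case hypothesis. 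This will complete the contradiction and establish the proposition.
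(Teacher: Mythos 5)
Your setup tracks the paper's proof closely: you argue by contradiction, extract intersecting modification tiles from the nonempty intersection of $\cl(\Add(\alpha_1))$ and $\cl(\Rem(\alpha_2))$, use the hypothesis to rule out $(\xi_0,\xi_0')=(\xi_1,\xi_1')$, invoke Proposition~\ref{prop:Ts}(iii) to get equal extensions, and split on whether the two pairs share a cell. Case A (a shared cell) is handled just as in the paper, via Proposition~\ref{prop:Ts}(ii).

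The divergence is in Case B (no shared cell), and this is where the gap lies. The paper does \emph{not} use a separation estimate here; it uses a drift-direction argument. After observing that the intersecting tile closures force a common top cell $\mu\in\Top_\cX(\xi_0)\cap\Top_\cX(\xi_1)$ at which both drift maps are nonzero, it concludes that $\direc(\xi_0,\xi_0')=\direc(\xi_1,\xi_1')$ and that $p_{n_o}(\xi_0,\xi_0')=p_{n_o}(\xi_1,\xi_1')$, hence $\PosDriftCells(\xi_0,\xi_0')=\PosDriftCells(\xi_1,\xi_1')$ and $\NegDriftCells(\xi_0,\xi_0')=\NegDriftCells(\xi_1,\xi_1')$. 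That identity immediately produces a shared element (or exhibits a single pair lying in both $\PosIndecisiveDrifts(\alpha_1)$ and $\NegIndecisiveDrifts(\alpha_2)$), which is the contradiction.

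Your proposed remedy for Case B --- that a shared $\cX_b$ vertex forces $\xi_0\preceq\xi_1$ or $\xi_0'\preceq\xi_1'$, and that these face relations, combined with equal extensions and Lemma~\ref{lem:codimension1face}, ``collapse into equalities'' --- does not hold. A strict relation $\xi_0\prec\xi_1$ with $\Ex(\xi_0,\xi_0')=\Ex(\xi_1,\xi_1')=\setof{n_o}$ is entirely consistent: for instance $\xi_1$ can be obtained from $\xi_0$ by making essential one or more directions $m\neq n_o$, so that $\dim\xi_1>\dim\xi_0$, $\xi_0'\neq\xi_1'$, and no cell is shared, yet $\cl(\blup(\xi_0))\cap\cl(\blup(\xi_1))\neq\emptyset$. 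Equal extensions control the increment from $\xi_i$ to $\xi_i'$ but say nothing about the face relation between $\xi_0$ and $\xi_1$, and Lemma~\ref{lem:codimension1face} only compares the dimensions of $\blup^{-1}$ of adjacent $\cX_b$ top cells --- it does not force equality of the original cells. The missing ingredient is exactly the combinatorial content the paper supplies: consistency of the drift direction across comparable pairs and across a shared top cell (Propositions~\ref{prop:consistent-direction}, \ref{prop:drift}, and equation~\eqref{eq:same_position}), which is what turns the face relation into equality of the positive/negative drift cells and hence a shared element. You need to replace the separation heuristic with this drift-direction argument.
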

\begin{proof}
    By definition, $\Add(\alpha_1) \cap \bar{S}(\blup(\alpha_1)) = \emptyset$ and $\Rem(\alpha_2) \subset \bar{S}(\blup(\alpha_2)) = \emptyset$. If $\alpha_i$ is not a face of $\alpha_j$ for any distinct $i,j\in\setof{1,2}$ then $\cl (\Add(\alpha_i)) \cap \cl(\Rem(\alpha_j)) = \emptyset$.\\
    Assume, with loss of generality, that $\alpha_1\prec \alpha_2$. We will prove by contradiction, thus suppose that  
    \begin{equation}\label{eq:mTile_not_empty}
        \cl (\mTile(\xi_0,\xi_0')) \cap \cl (\mTile(\xi_1,\xi_1'))\neq\emptyset
    \end{equation}
    for some $(\xi_0,\xi_0')\in \PosIndecisiveDrifts(\alpha_1)$ and $(\xi_1,\xi_1')\in \NegIndecisiveDrifts(\alpha_2)$. 
    Then $(\xi_0,\xi_0')\neq (\xi_1,\xi_1')$ since $\alpha_1\prec \alpha_2$. Proposition~\ref{prop:back_subset}, $\Ex(\xi_0,\xi_0')=\Ex(\xi_1,\xi_1') = \setof{n_o}$.\\
    If $(\xi_0,\xi_0')$ and $(\xi_1,\xi_1')$ have a common element, by Proposition \ref{prop:Ts}, 
    \[
    \cl (\mTile(\xi_0,\xi_0')) \cap \cl (\mTile(\xi_1,\xi_1')) = \emptyset,
    \]
    a contradiction. Thus, assume that $(\xi_0,\xi_0')$ and $(\xi_1,\xi_1')$ does not have any element in common, hence, $\xi_0 =\alpha_1$ and $\xi_1=\alpha_2$, since $\Ex(\xi_0,\xi_0')=\Ex(\xi_1,\xi_1') = \setof{n_o}$. As a consequence 
    \begin{equation}\label{eq:same_position}
        p_{n_o}(\xi_0,\xi_0')=p_{n_o}(\xi_1,\xi_1'). 
    \end{equation}\\
    Notice that, Equation \eqref{eq:mTile_not_empty} implies that $\setof{\xi_0,\xi_0'}\cap\setof{\xi_1,\xi_1'}\neq\emptyset$, i.e., there exists $\mu\in\Top_\cX(\xi_1)\cap \Top_\cX(\xi_0)$ such that $D_{(\xi_0,\xi_0')}(\mu) \neq D_{(\xi_1,\xi_1')}(\mu)\neq 0$. By Proposition \ref{prop:drift}, $D_{(\xi_0,\xi_0')}(\mu)=D_{(\xi_1,\xi_1')}(\mu)$, i.e., $\direc(\xi_0,\xi_0')=\direc(\xi_1,\xi_1')$. Hence, by Equation \eqref{eq:same_position}, we have that 
    $\PosDriftCells(\xi_0,\xi_0') = \PosDriftCells(\xi_1,\xi_1')$ and $\NegDriftCells(\xi_0,\xi_0') = \NegDriftCells(\xi_1,\xi_1')$. Therefore
    either $(\xi_0,\xi_0') \in \PosIndecisiveDrifts(\alpha_1)$ and $(\xi_1,\xi_1') \in \PosIndecisiveDrifts(\alpha_2)$, or $(\xi_0,\xi_0') \in \NegIndecisiveDrifts(\alpha_1)$ and $(\xi_1,\xi_1') \in \NegIndecisiveDrifts(\alpha_2)$, a contradiction to the hypotheses.
\end{proof}
\begin{defn}
\label{defn:pi2fibers}   
For each $\alpha\in\cX$ the associated \emph{$2$-fiber} is defined by
\[
\Pl(\alpha) := \bar{S}(\blup(\alpha))\cup \Add(\alpha) - \Rem(\alpha) \subset \Janus^{(N)}.
\]
\end{defn}
\begin{ex}\label{ex:pi2fibers}
    Continuing Example~\ref{ex:Add&Rem_ex}, we have that 
    \[
    \Pl(\xi) := \bar{S}(\blup(\xi))\cup (\mTile(\xi,\xi'_0)\cap \bar{S}(\blup(\xi_0')) - \mTile(\xi,\xi'_1).
    \]
\end{ex}
\begin{prop}
    \label{prop:2FisFibre}  
The $2$-fibers define a partition of $\Janus^{(N)}$, i.e.,
\begin{enumerate}
    \item if $\alpha_1$ and $\alpha_2$ are distinct elements of $\cX$, then $\Pl(\alpha_1)\cap \Pl(\alpha_2)=\emptyset$, and
    \item \(\displaystyle \Janus^{(N)} = \bigcup_{\alpha\in\cX}\Pl(\alpha)\).
\end{enumerate}
\end{prop}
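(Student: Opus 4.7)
The plan is to reduce the partition claim to three simple bookkeeping facts about the two modifications $\Add(\alpha)$ and $\Rem(\alpha)$, and then treat Part (1) (disjointness) and Part (2) (covering) by chasing elements. The starting point is the observation that since $\blup\colon\cX\to\cX_b^{(N)}$ is a bijection and $\bar S$ is the subdivision map from $\cX_b$ to $\Janus$, the collection $\{\bar S(\blup(\alpha))\mid \alpha\in\cX\}$ already partitions $\Janus^{(N)}$. Thus it suffices to verify that the moves $\Pl(\alpha)=(\bar S(\blup(\alpha))\cup \Add(\alpha))\setminus \Rem(\alpha)$ redistribute cells consistently.

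The three bookkeeping facts I will establish first are: (i) $\Rem(\alpha)\subset \bar S(\blup(\alpha))$, which is immediate from Definition~\ref{defn:Add&Rem} because every pair $(\xi,\xi')\in\NegIndecisiveDrifts(\alpha)$ has $\NegDriftCells(\xi,\xi')=\alpha$; (ii) $\Add(\alpha)\cap \bar S(\blup(\alpha))=\emptyset$, since for $(\xi,\xi')\in\PosIndecisiveDrifts(\alpha)$ we have $\NegDriftCells(\xi,\xi')\neq\alpha$ (Remark~\ref{rem:D+-}) and $\bar S$ of distinct elements of $\cX_b^{(N)}$ are disjoint; and (iii) the ``transfer'' identity: if $\eta\in \mTile(\xi,\xi')\cap \bar S(\blup(\NegDriftCells(\xi,\xi')))$ for some $(\xi,\xi')\in\cD(\rook)$, then $\eta\in \Rem(\NegDriftCells(\xi,\xi'))$ and $\eta\in \Add(\PosDriftCells(\xi,\xi'))$. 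Using Proposition~\ref{prop:Add&RemProp} we then write $\Pl(\alpha)=(\bar S(\blup(\alpha))\setminus \Rem(\alpha))\sqcup \Add(\alpha)$ as a disjoint union.

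For Part (1), let $\alpha_1\neq \alpha_2$. I split $\Pl(\alpha_1)\cap\Pl(\alpha_2)$ into four pieces corresponding to the above disjoint decomposition. The piece $(\bar S(\blup(\alpha_1))\setminus\Rem(\alpha_1))\cap(\bar S(\blup(\alpha_2))\setminus\Rem(\alpha_2))$ is empty by the partition property of $\bar S\circ\blup$. The piece $\Add(\alpha_1)\cap\Add(\alpha_2)$ is empty by Corollary~\ref{cor:prop_Add_Rem}. The crucial cross terms, say $(\bar S(\blup(\alpha_1))\setminus \Rem(\alpha_1))\cap \Add(\alpha_2)$, are handled as follows: any $\eta$ in this intersection lies in $\Add(\alpha_2)$, so by definition $\eta\in \mTile(\xi,\xi')\cap \bar S(\blup(\NegDriftCells(\xi,\xi')))$ for some $(\xi,\xi')$ with $\PosDriftCells(\xi,\xi')=\alpha_2$; since $\eta\in\bar S(\blup(\alpha_1))$ and the $\bar S(\blup(\cdot))$ are pairwise disjoint, $\NegDriftCells(\xi,\xi')=\alpha_1$, and fact (iii) then forces $\eta\in\Rem(\alpha_1)$, contradicting membership in the first factor. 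The symmetric cross term is analogous.

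For Part (2), pick any $\eta\in \Janus^{(N)}$. Then $\eta\in \bar S(\blup(\alpha))$ for a unique $\alpha\in\cX$. If $\eta\notin \Rem(\alpha)$ we are done, so assume $\eta\in \Rem(\alpha)$: then there is a witnessing pair $(\xi,\xi')\in\NegIndecisiveDrifts(\alpha)$ with $\eta\in \mTile(\xi,\xi')\cap \bar S(\blup(\alpha))$, and by fact (iii) $\eta\in \Add(\PosDriftCells(\xi,\xi'))\subset \Pl(\PosDriftCells(\xi,\xi'))$. The step that requires care is the verification of fact (iii), i.e.\ that each cell is transferred by \emph{at most one} pair, so that bookkeeping really produces a partition rather than cancelling twice; this is where I expect the only mild subtlety, and it is handled by combining Proposition~\ref{prop:Ts}(i), Proposition~\ref{prop:Add&RemProp}, and Proposition~\ref{prop:Wdiff}, which together prevent overlapping modification tiles from different pairs and prevent a single cell from being simultaneously added and removed.
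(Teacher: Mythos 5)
Your proposal is correct and follows essentially the same route as the paper's proof: decompose $\Pl(\alpha)$ using the disjointness facts from Corollary~\ref{cor:prop_Add_Rem} and Proposition~\ref{prop:Add&RemProp}, and invoke the ``transfer'' identity that a cell removed from $\bar S(\blup(\NegDriftCells(\xi,\xi')))$ is added to $\Pl(\PosDriftCells(\xi,\xi'))$. Your Part~(1) is in fact a bit more careful than the paper's, which only cites the $\Add\cap\Add$ and $\Rem\cap\Rem$ disjointness without spelling out the cross-terms $(\bar S(\blup(\alpha_1))\setminus\Rem(\alpha_1))\cap\Add(\alpha_2)$; your element chase closing that gap is exactly what is needed.
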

\begin{proof}
To prove item (1) consider distinct cells $\alpha_1,\alpha_2\in\cX$. 
Corollary \ref{cor:prop_Add_Rem} implies that $\Add(\alpha_1) \cap \Add(\alpha_2)=\emptyset$ and $\Rem(\alpha_1) \cap \Rem(\alpha_2)=\emptyset$.
Thus, 
\[
\Pl(\alpha_1) \cap \Pl(\alpha_2)=\emptyset.
\]
Now, we prove the item (2). Note that given any $(\xi,\xi') \in \cD(\rook)$, 
\begin{align*}
    \Add(\PosDriftCells(\xi,\xi')) = \Rem(\NegDriftCells(\xi,\xi')),
\end{align*}
so any element removed by $\Rem(\NegDriftCells(\xi,\xi'))$ is accounted for by $\Add(\PosDriftCells(\xi,\xi'))$. Thus, 
\[
\bigcup_{\alpha \in \cX} \Pl(\alpha) = \bigcup_{\alpha \in \cX} \bar{S}(b(\alpha)) = \Janus^{(N)}.
\]
\end{proof}

\section{Construction of the Grading $\pi_2$}
\label{sec:pi2}

We begin this section with the definition of the desired grading
$\pi_2\colon \Janus \to \SCC(\cF_2)$.
The remainder of the section is dedicated to showing that $\pi_2$ is an admissible D-grading.

\begin{defn}
\label{defn:pi2}   
Define $\pi_2 \colon \Janus \rightarrow \SCC(\cF_2)$ as follows. 

If $\zeta\in\Janus^{(N)}$, then
\begin{equation}
\label{eq:pi21}
\pi_2(\zeta) := \pi_b(\blup(\alpha))\quad\text{where $\zeta\in \Pl(\alpha)$}.    
\end{equation}
If $\zeta\in\Janus^{(n)}$, $0\leq n <N$, then
\begin{equation}
\label{eq:pi22}
\pi_2(\zeta) := \min\setdef{\pi_2(\mu)}{\mu\in\Top_{\Janus}(\zeta)}.
\end{equation}
\end{defn}
Notice that $\Pl(\xi)$ is the fiber $\pi_2^{-1}(\xi)$ in Figure~\ref{fig:indecisive_drift_2_b1}(D). 

As indicated above the goal of this section is to prove the following theorem.

\begin{thm}
\label{thm:pi2-welldefined}
The function $\pi_2 : \Janus \to \SCC(\cF_2)$ is 
an admissible D-grading.
\end{thm}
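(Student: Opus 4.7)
The plan is to apply Theorem~\ref{thm:extension} iteratively, starting from the base grading $\pi_{\mathtt{J}}$ defined in \eqref{eq:pi_Janus} and performing one modification for each pair $(\xi,\xi') \in \cD(\rook)$ until arriving at $\pi_2$. Because $\pi_2$ is defined on lower-dimensional cells by \eqref{eq:pi22} as the minimum over top stars, admissibility is equivalent to proving that the restriction of $\pi_2$ to $\Janus^{(N)}$ is an extendable grading in the sense of Definition~\ref{defn:extendable}.

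First I would verify the base case: $\pi_{\mathtt{J}} \colon \Janus^{(N)} \to \SCC(\cF_2)$ is extendable. This reduces to Theorem~\ref{thm:singlevalue} for $\cF_2$, which shows that $\pi_b \colon \cX_b^{(N)} \to \SCC(\cF_2)$ is extendable. Since each cell $\zeta \in \Janus$ lies in $\bar{S}(\zeta')$ for a unique minimal $\zeta' \in \cX_b$, the computation of $\min\setdef{\pi_{\mathtt{J}}(\mu)}{\mu \in \Top_{\Janus}(\zeta)}$ factors through $\min\setdef{\pi_b(\eta)}{\eta \in \Top_{\cX_b}(\zeta')}$, and uniqueness of minima transfers from $\cX_b$ to $\Janus$.

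Next I would enumerate the finitely many pairs $(\xi_1,\xi'_1),\ldots,(\xi_r,\xi'_r) \in \cD(\rook)$ and inductively construct $\pi^{(0)},\pi^{(1)},\ldots,\pi^{(r)}$ on $\Janus^{(N)}$, with $\pi^{(0)} = \pi_{\mathtt{J}}$ and $\pi^{(j)}$ differing from $\pi^{(j-1)}$ only on
\[
\cV_j := \mTile(\xi_j,\xi'_j) \cap \bar{S}(\blup(\NegDriftCells(\xi_j,\xi'_j))),
\]
where the value is reassigned to $\pi_b(\blup(\PosDriftCells(\xi_j,\xi'_j)))$. Proposition~\ref{prop:Ts}(i) guarantees the sets $\cV_j$ are pairwise disjoint, and Proposition~\ref{prop:2FisFibre} combined with Definition~\ref{defn:pi2fibers} yields $\pi^{(r)} = \pi_2$ on $\Janus^{(N)}$. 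At each step I would verify the three hypotheses of Theorem~\ref{thm:extension}. Condition (1) holds by construction. For (2) and (3), the crucial geometric fact is that $\cV_j$ lies in $B_2(\SpineJ(\xi_j,\xi'_j))$ while by Proposition~\ref{prop:Ts}(ii)--(iii) the closures of tiles for distinct pairs are disjoint whenever the pairs share a common element or give distinct extensions; Propositions~\ref{prop:Add&RemProp} and~\ref{prop:Wdiff} then rule out the remaining triple intersections of closures coming from addition and removal sets of different cells. Comparability of the two candidate values $\pi_b(\blup(\PosDriftCells(\xi_j,\xi'_j)))$ and $\pi_b(\blup(\NegDriftCells(\xi_j,\xi'_j)))$ under $\preceq_{\bar{\cF}_2}$ is immediate: Definition~\ref{def:Rule2} removes exactly one direction of the double edge $\xi_j \darrow_{\cF_1} \xi'_j$, forcing $\pi(\PosDriftCells(\xi_j,\xi'_j)) \preceq_{\bar{\cF}_2} \pi(\NegDriftCells(\xi_j,\xi'_j))$.

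The main obstacle is the case analysis underlying condition~(2): one must show that no top cell outside $\bar{S}(\blup(\xi_j)) \cup \bar{S}(\blup(\xi'_j))$ shares a face with a cell of $\cV_j$ unless its $\pi^{(j-1)}$-value is already comparable to the new reassigned value. This requires using the explicit eight-fold subdivision structure of $\bar{S}$ in each coordinate, together with the $B_2$-buffer built into Definition~\ref{defn:mTile}, to enumerate the nearby top cells and check that they either belong to the two fibers involved in the modification or carry a grading value that is $\preceq_{\bar{\cF}_2}$-comparable via the $\cF_2$-ordering inherited from $\pi$. Once this local geometric bookkeeping is completed, the iterated application of Theorem~\ref{thm:extension} yields that $\pi^{(r)} = \pi_2$ is extendable, and its unique admissible extension to $\Janus$ is the map already specified by \eqref{eq:pi22}, completing the proof.
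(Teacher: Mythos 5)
Your approach is genuinely different from the paper's.  You propose to reach $\pi_2$ by iterating Theorem~\ref{thm:extension} once per pair in $\cD(\rook)$, whereas the paper never invokes Theorem~\ref{thm:extension} in this proof: instead, for each cell $\zeta\in\Janus$ it directly produces a cell $\beta\in\cX_b$ for which
\[
\min\setdef{\pi_2(\mu)}{\mu\in\Top_{\Janus}(\zeta)} \;=\; \min\setdef{\pi_b(\mu)}{\mu\in\Top_{\cX_b}(\beta)},
\]
splitting into the three cases of Propositions~\ref{prop:same-grading2_1}--\ref{prop:same-grading2_3}, so that admissibility of $\pi_2$ is inherited pointwise from the already-established admissibility of $\pi_b$.

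There is a genuine gap in your plan, however, and it sits exactly where you wave a hand at ``local geometric bookkeeping.''  To satisfy condition~(2) of Theorem~\ref{thm:extension} at step $j$, you must show that any top cell $\mu\in\Janus^{(N)}$ whose closure meets $\cl(\cV_j)$ carries a $\pi^{(j)}$-value comparable to the reassigned value $\pi\bigl(\PosDriftCells(\xi_j,\xi_j')\bigr)$.  The propositions you cite (Proposition~\ref{prop:Ts} and Propositions~\ref{prop:Add&RemProp}, \ref{prop:Wdiff}) are all about disjointness of closures of modification tiles \emph{for distinct drift pairs}; they say nothing about the top cells of $\Janus$ that border $\cl(\mTile(\xi_j,\xi_j'))$ but live in fibers $\bar{S}(\blup(\tau))$ for a \emph{third} cell $\tau\in\cX$.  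This happens precisely when $\tau$ is a post-drift cell (Definition~\ref{defn:PDcells}); see the bottom edge of the shaded tiles in Figure~\ref{fig:indecisive_drift_2_b1}(C).  The comparability you need there, namely $\pi(\tau)\posetF{2}\pi(\delta)$ for $\delta\in D(\zeta)$, is the content of Lemma~\ref{lem:Dzeta}, whose proof requires building a chain $\xi^-=\xi_0\prec_\cX\cdots\prec_\cX\xi_k=\tau$ of single-direction extensions in gradient directions (via Lemma~\ref{lem:F_2-leq-geq}) together with Propositions~\ref{prop:unique_ext_GO_pairs} and~\ref{prop:not_L_GO_pairs}.  Your proposal never invokes this, and without it conditions~(2) and~(3) of Theorem~\ref{thm:extension} are not verified, so the inductive step cannot close.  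Relatedly, your base-case claim that ``each cell $\zeta\in\Janus$ lies in $\bar{S}(\zeta')$ for a unique minimal $\zeta'\in\cX_b$'' is not literally true (a cell $[\bu,\bw]$ with $\bw_n=0$ and $\bu_n\not\equiv 0\pmod 8$ lies in $\bar{S}(\eta)$ for no $\eta\in\cX_b$); the correct statement is the content of Proposition~\ref{prop:same-grading2_1}, and it takes an argument, not a remark.
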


In what follows we identify which cells influence the minimum value of the grading $\pi_2$ when $\dim(\zeta) < N$. 
\begin{defn}
\label{defn:PDcells}
Given $(\xi,\xi')\in \cD(\rook)$ and let $\Ex(\xi,\xi')=\setof{n_o}$. 
We define the \emph{Post-Drift cells} of $(\xi,\xi')$ by
\begin{multline*}
    \PDcells(\xi,\xi') \coloneqq \{\xi'' \in \cl(\support\left(D_{(\xi,\xi'))}\right) \ \mid\ \NegDriftCells(\xi,\xi') \prec \xi'',\
     \rmap\xi^{-1}(n_o)\cap \rmap{\xi''}^{-1}(n_o) = \emptyset, \\
     \text{ and } 
    \Ex(\NegDriftCells(\xi,\xi'), \xi'') \subseteq \rmap\xi^{-1}(n_o)\backslash \setof{n_o}
    \}.
\end{multline*}
\end{defn}

\begin{prop}
\label{prop:PDcell-indecisive}
Let $(\xi,\xi')\in \cD(\rook)$ with $\Ex(\xi,\xi')=\setof{n_o}$. 
If $\alpha \in \PDcells(\xi,\xi')$, then there is no $\alpha'\in \cX$ with such that $(\alpha,\alpha')$ or $(\alpha',\alpha)$ exhibit indecisive drift with GO-pair $(n_g,n_o)$ for some $n_g$. 
\end{prop}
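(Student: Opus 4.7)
The plan is to derive a contradiction by extracting from the definition of $\PDcells(\xi,\xi')$ that $\rmap\alpha^{-1}(n_o) = \emptyset$ at any Post-Drift cell $\alpha$, while the existence of an indecisive drift pair with GO-pair $(n_g, n_o)$ involving $\alpha$ would require some $n_g$ to actively regulate $n_o$ at $\alpha$.

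First I would unpack what $\alpha \in \PDcells(\xi,\xi')$ means. In particular the definition imposes
\[
\rmap\xi^{-1}(n_o) \cap \rmap\alpha^{-1}(n_o) = \emptyset,
\]
together with $\NegDriftCells(\xi,\xi') \prec \alpha$. Since $\xi \preceq \NegDriftCells(\xi,\xi')$ by Definition~\ref{defn:gamma+-}, we obtain $\xi \prec \alpha$. Applying Proposition~\ref{prop:act_regulation_faces} in the direction $\xi \prec \alpha$, any $n' \in \rmap\alpha^{-1}(n_o)$ also satisfies $n' \in \rmap\xi^{-1}(n_o)$, so $\rmap\alpha^{-1}(n_o) \subseteq \rmap\xi^{-1}(n_o)$. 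Combining this inclusion with the empty intersection above forces $\rmap\alpha^{-1}(n_o) = \emptyset$, i.e.\ no direction actively regulates $n_o$ at $\alpha$.

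Next, suppose for contradiction that there exists $\alpha' \in \cX$ such that either $(\alpha,\alpha')$ or $(\alpha',\alpha)$ exhibits indecisive drift with GO-pair $(n_g, n_o)$ for some $n_g$. In either case Proposition~\ref{prop:GO-pair-properties} parts~(5) and~(6) guarantees that $n_g$ actively regulates $n_o$ at \emph{both} cells in the pair, and in particular at $\alpha$. Hence $n_g \in \rmap\alpha^{-1}(n_o)$, which contradicts the conclusion of the previous step.

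I do not anticipate a substantive obstacle here; the argument essentially threads together the two prior propositions after unpacking the definitions. The only point deserving care is to handle both orderings $(\alpha,\alpha')$ and $(\alpha',\alpha)$ uniformly, and this is precisely why it matters that Proposition~\ref{prop:GO-pair-properties} asserts active regulation of $n_o$ by $n_g$ at \emph{both} cells of a GO-pair, rather than only at one of them.
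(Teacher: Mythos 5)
Your proof is correct and follows essentially the same route as the paper's: it invokes the empty-intersection clause in the definition of $\PDcells$, pushes active regulation down from $\alpha$ to $\xi$ via Proposition~\ref{prop:act_regulation_faces}, and uses parts (5)--(6) of Proposition~\ref{prop:GO-pair-properties} to place $n_g$ in $\rmap{\alpha}^{-1}(n_o)$. The only difference is organizational — you first isolate the observation $\rmap{\alpha}^{-1}(n_o)=\emptyset$ before running the contradiction, whereas the paper threads the same facts together inline.
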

\begin{proof}
We provide a proof by contradiction.
Assume that there exists such an $\alpha' \in \cX$ and suppose that $\alpha \prec \alpha'$. 
Let $n_g\in G_i(\alpha')$ actively regulate the opaque direction $n_o$.
By Proposition~\ref{prop:GO-pair-properties}, $n_g \in \rmap\alpha^{-1}(n_o)$ and $n_g \in \rmap{\alpha'}^{-1}(n_o)$.
Since $\alpha \in \PDcells(\xi,\xi')$, $\NegDriftCells(\xi,\xi') \prec \alpha$, and hence, either $\xi \prec \alpha$ or $\xi' \prec \alpha'$. 
In either case, $n_g \in J_i(\xi)$. 
Since $\rmap\xi(n_g) = n_o$, $n_g \in \rmap\xi^{-1}(n_o)$, which contradicts the fact that $\rmap\xi^{-1}(n_o) \cap \rmap\alpha^{-1}(n_o) = \emptyset$. 

The same argument may be used when $\alpha' \prec \alpha$. 
\end{proof}

Our proof of Theorem~\ref{thm:pi2-welldefined} is based on partitioning $\Janus$ into three cases.
\begin{itemize}
    \item[(i)] \(\displaystyle\zeta\in\Janus \backslash \bigcup_{(\xi,\xi')\in \cD(\rook)}\cl(\mTile(\xi,\xi'));\)
    \item[(ii)] 
    \(\displaystyle\zeta \in \bigcup_{(\xi,\xi')\in \cD(\rook)}\cl(\mTile(\xi,\xi')) \backslash \cl\left( \bar S \left( \blup \left(\PDcells(\xi,\xi')\right)\right)\right)\)
    \item[(iii)]  \(\displaystyle\zeta \in \bigcup_{(\xi,\xi')\in \cD(\rook)}\cl(\mTile(\xi,\xi')) \cap  \cl\left( \bar S \left( \blup \left(\PDcells(\xi,\xi')\right)\right)\right).\)
\end{itemize}
The first case is addressed by the following two propositions. 
\begin{prop}
\label{prop:Pl-trivial}
Let $\alpha\in\cX$. 
If there is no $(\xi,\xi)\in\cD(\Phi)$ such that $\xi=\alpha$ or $\xi'=\alpha$, 
then $\Pl(\alpha) = \bar{S}(\blup(\alpha))$. 
Moreover, $\gradJanus(\mu)=\pi_2(\mu)$, for any $\mu\in\Pl(\alpha)$.
\end{prop}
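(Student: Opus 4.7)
The proof naturally splits into two claims, each of which follows essentially by unpacking definitions once the hypothesis is translated into the right combinatorial statement. The plan is to first show that the hypothesis on $\alpha$ forces both the addition set $\Add(\alpha)$ and the removal set $\Rem(\alpha)$ to be empty, which immediately yields $\Pl(\alpha) = \bar{S}(\blup(\alpha))$ from Definition~\ref{defn:pi2fibers}, and then to verify the equality of gradings by matching values on top cells.

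For the first claim, the key observation is that by Definition~\ref{defn:gamma+-}, for every pair $(\xi,\xi') \in \cD(\rook)$, both $\PosDriftCells(\xi,\xi')$ and $\NegDriftCells(\xi,\xi')$ lie in $\setof{\xi,\xi'}$. Consequently, if $\alpha = \PosDriftCells(\xi,\xi')$ for some pair in $\cD(\rook)$, then $\alpha$ must equal either $\xi$ or $\xi'$, and analogously for $\NegDriftCells$. The hypothesis of the proposition forbids precisely this, so the indexing sets $\PosIndecisiveDrifts(\alpha)$ and $\NegIndecisiveDrifts(\alpha)$ that define $\Add(\alpha)$ and $\Rem(\alpha)$ in Definition~\ref{defn:Add&Rem} are empty. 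Hence $\Add(\alpha) = \Rem(\alpha) = \emptyset$, and plugging this into Definition~\ref{defn:pi2fibers} gives $\Pl(\alpha) = \bar{S}(\blup(\alpha))$.

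For the second claim, note that by Definition~\ref{defn:pi2fibers}, $\Pl(\alpha) \subset \Janus^{(N)}$, so any $\mu \in \Pl(\alpha)$ is a top-dimensional cell of $\bar{S}(\blup(\alpha))$. Applying \eqref{eq:piJ} (equivalently \eqref{eq:pi_Janus}) directly yields $\pi_{\mathtt{J}}(\mu) = \pi(\alpha)$. On the other hand, by the top-cell clause \eqref{eq:pi21} of Definition~\ref{defn:pi2}, we have $\pi_2(\mu) = \pi_b(\blup(\alpha))$, and since $\pi_b = \pi\circ\blup^{-1}$, this equals $\pi(\alpha)$. Thus the two gradings coincide on $\Pl(\alpha)$.

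There is no genuine obstacle here; the proof is a straightforward definition-chase, and the only delicate point is to keep track of the correct interpretation of $\PosIndecisiveDrifts$ and $\NegIndecisiveDrifts$ as preimages under the drift-cell assignments, together with the fact that $\Pl(\alpha)$ consists only of top-dimensional cells so that the top-cell clause of $\pi_2$ applies uniformly.
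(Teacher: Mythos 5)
Your proof is correct and takes essentially the same definition-unwinding approach as the paper. If anything your first step is slightly more direct: the paper notes that Condition~2.1 does not apply so $\cF_2(\alpha)=\cF_1(\alpha)$ and then concludes $\Pl(\alpha)=\bar{S}(\blup(\alpha))$, whereas you bypass the multivalued map entirely by observing that $\PosIndecisiveDrifts(\alpha)=\NegIndecisiveDrifts(\alpha)=\emptyset$ forces $\Add(\alpha)=\Rem(\alpha)=\emptyset$, which is exactly what Definition~\ref{defn:pi2fibers} needs; the second claim matches the paper's verbatim.
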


\begin{proof}
Note that {\bf Condition 2.1} is not satisfied and therefore $\cF_2(\alpha)=\cF_1(\alpha)$ and $\cF_2^{-1}(\alpha)=\cF_1^{-1}(\alpha)$.
Therefore, $\Pl(\alpha) = \bar{S}(\blup(\alpha))$. 
Furthermore, \eqref{eq:pi_Janus} implies that $\gradJanus(\zeta)=\pi_2(\zeta)$, for any $\zeta\in \bar{S}(\blup(\alpha))=\Pl(\alpha)$.
\end{proof}
\begin{prop}\label{prop:same-grading2_1}
    If $\zeta\in\Janus \backslash \bigcup_{(\xi,\xi')\in \cD(\rook)}\cl(\mTile(\xi,\xi'))$, then
    $\pi_2(\zeta)=\gradJanus(\zeta)$.
\end{prop}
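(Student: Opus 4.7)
The plan is to handle the top-dimensional case first and then bootstrap up through the face relation, exploiting that the union $\bigcup_{(\xi,\xi')\in\cD(\rook)} \cl(\mTile(\xi,\xi'))$ is a closed subcomplex of $\Janus$.

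\textbf{Step 1 (top cells).} Suppose $\zeta\in \Janus^{(N)}$ with $\zeta\notin \cl(\mTile(\xi,\xi'))$ for every $(\xi,\xi')\in\cD(\rook)$. By Definition~\ref{defn:Add&Rem} we have $\Add(\alpha),\Rem(\alpha)\subseteq \bigcup_{(\xi,\xi')\in\cD(\rook)}\mTile(\xi,\xi')$ for every $\alpha\in\cX$, so $\zeta\notin \Add(\alpha)$ and $\zeta\notin \Rem(\alpha)$. By Proposition~\ref{prop:2FisFibre} there is a unique $\alpha\in\cX$ with $\zeta\in \Pl(\alpha)$, and from the defining identity $\Pl(\alpha) = \bar{S}(\blup(\alpha))\cup \Add(\alpha) - \Rem(\alpha)$ together with $\zeta\notin \Add(\alpha)\cup \Rem(\alpha)$, we conclude $\zeta\in \bar{S}(\blup(\alpha))$. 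Then by Definition~\ref{defn:pi2} and \eqref{eq:pi_Janus},
\[
\pi_2(\zeta) = \pi_b(\blup(\alpha)) = \pi(\alpha) = \gradJanus(\zeta).
\]

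\textbf{Step 2 (lower-dimensional cells).} Let $\zeta\in\Janus^{(n)}$ with $n<N$ and $\zeta\notin \cl(\mTile(\xi,\xi'))$ for any $(\xi,\xi')\in\cD(\rook)$. The key observation is that each $\cl(\mTile(\xi,\xi'))$ is closed by construction, so if any top coface $\mu\in \Top_\Janus(\zeta)$ belonged to $\mTile(\xi,\xi')$, then $\zeta\in\cl(\mu)\subseteq \cl(\mTile(\xi,\xi'))$, contradicting the hypothesis. Hence every $\mu\in\Top_\Janus(\zeta)$ falls under the hypothesis of Step~1 (with Step~1 applied to $\mu$), so $\pi_2(\mu)=\gradJanus(\mu)$ for every such $\mu$. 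Applying \eqref{eq:pi22} (and the analogous extension of $\gradJanus$ to lower cells, guaranteed to be admissible by Theorem~\ref{thm:singlevalue} transported to $\Janus$ via the subdivision $\bar S$) yields
\[
\pi_2(\zeta) = \min\setdef{\pi_2(\mu)}{\mu\in\Top_\Janus(\zeta)} = \min\setdef{\gradJanus(\mu)}{\mu\in\Top_\Janus(\zeta)} = \gradJanus(\zeta).
\]

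\textbf{Anticipated difficulty.} The only genuinely load-bearing point is that $\cl(\mTile(\xi,\xi'))$ being a subcomplex lets us conclude that a face of a modification tile is automatically in the closure of that tile; this is immediate from how $\cl$ acts on cells in a cubical complex, so no substantive obstacle arises. All of the combinatorial work in identifying $\Add$, $\Rem$, and $\mTile$ has already been done in Section~\ref{sec:2notation} and in Proposition~\ref{prop:2FisFibre}, and the compatibility of $\pi$ with $\pi_b$ under $\blup$ was the content of Theorem~\ref{thm:singlevalue}.
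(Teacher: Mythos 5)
Your proof is correct and follows essentially the same reasoning as the paper's, which argues by contrapositive: the paper observes that a disagreement $\pi_2(\zeta)\neq\gradJanus(\zeta)$ forces a disagreeing top coface $\mu$, which must then lie in some $\mTile(\xi,\xi')$, putting $\zeta\in\cl(\mTile(\xi,\xi'))$. You arrive at the same conclusion via the direct route, first establishing agreement on top cells outside the tiles and then transferring to faces via the min formula.
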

\begin{proof}
    If $\pi_2(\zeta) \neq \gradJanus(\zeta)$, then there exists $\mu \in \Top_{\Janus}(\zeta)$ such that 
    \[
        \pi_2(\mu) \neq \gradJanus(\mu).
    \]
    Therefore, $\mu \in \mTile(\xi,\xi')$ for some $(\xi,\xi') \in \cD(\rook)$, so $\zeta \in \cl(\mTile(\xi,\xi'))$. 
\end{proof}

To address the second case, we make us of the following results.
\begin{prop}
    \label{prop:same-grading2_2}
    Let $(\xi,\xi') \in \cD(\rook)$. If 
    \[\zeta \in \cl(\mTile(\xi,\xi'))\backslash \cl\left( \bar S \left( \blup \left(\PDcells(\xi,\xi')\right)\right)\right),\] then there exists $\beta \in \cX_b$ such that 
    \[
    \setdef{\pi_2(\mu)}{\mu \in \Top_{\Janus}(\zeta)} = \setdef{\pi_b(\mu)}{ \mu \in \Top_{\cX_b}(\beta)} \subset \SCC(\cF_2). 
    \]
\end{prop}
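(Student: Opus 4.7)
The plan is to exhibit an explicit cell $\beta \in \cX_b$ and then show that the modified grading $\pi_2$ on $\Top_{\Janus}(\zeta)$ reduces to the unmodified grading $\pi_b$ on $\Top_{\cX_b}(\beta)$. Since $\pi_b$ is already known to be admissible by Theorem~\ref{thm:singlevalue}, this will be the key to showing that $\pi_2$ inherits admissibility at $\zeta$.

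First, I would describe $\Top_{\Janus}(\zeta)$ concretely. Because $\zeta \in \cl(\mTile(\xi,\xi'))$, there is at least one top cell $\mu^\star \in \mTile(\xi,\xi')$ with $\zeta \preceqJanus \mu^\star$; such a $\mu^\star$ lies in $\bar S(\blup(\NegDriftCells(\xi,\xi')))$ but, by Definition~\ref{defn:pi2fibers}, belongs to $\Pl(\PosDriftCells(\xi,\xi'))$. Every other top cell of $\zeta$ lies either in another part of $\bar S(\blup(\NegDriftCells(\xi,\xi')))$ (where $\pi_2 = \pi\circ\blup^{-1}$ is unchanged by the modification) or in some $\bar S(\blup(\alpha))$ for a cell $\alpha \in \cX$ adjacent to $\NegDriftCells(\xi,\xi')$ in $\cX$. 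The hypothesis $\zeta \notin \cl(\bar S(\blup(\PDcells(\xi,\xi'))))$ together with Proposition~\ref{prop:PDcell-indecisive} rules out the pathological case in which such an $\alpha$ belongs to another indecisive-drift pair whose modification tile could also touch $\zeta$; this is where I would use Proposition~\ref{prop:Ts}(iii) and the length-one restriction on $\rmap{\xi}^{-1}(n_o)$ coming from Definition~\ref{defn:indecisive}.

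Second, I would produce $\beta$. Writing $\zeta = [\bbarbv,\bbarbw] \in \Janus$, I would define $\bv \in \bar I$ by projecting each coordinate of $\bbarbv$ onto the corresponding position in $\bar I$, with the crucial twist that coordinates of $\zeta$ lying within the $B_2$-neighborhood of $\SpineJ(\xi,\xi')$ on the $\NegDriftCells$ side are collapsed onto the spine coordinates. Concretely, $\beta$ will be the face of $\Spineb(\xi,\xi')$ (or an immediate coface sitting in $\cX_b$) whose $\Top_{\cX_b}(\beta)$ contains $\blup(\PosDriftCells(\xi,\xi'))$ in place of $\blup(\NegDriftCells(\xi,\xi'))$, while keeping the neighboring $\blup(\alpha)$'s untouched. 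The detailed formula for $\beta$ will use the indexing machinery of Section~\ref{sec:blowup_complex} (essentially Lemma~\ref{lem:top-cell-formula}) to translate the subdivided coordinates of $\zeta$ into blow-up coordinates of $\beta$.

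Third, with $\beta$ in hand, I would establish the equality of image sets. For each $\mu \in \Top_{\Janus}(\zeta)$, I would identify the unique $\alpha(\mu) \in \cX$ with $\mu \in \Pl(\alpha(\mu))$ (using Proposition~\ref{prop:2FisFibre}), so that $\pi_2(\mu) = \pi(\alpha(\mu))$. Simultaneously, for each $\eta \in \Top_{\cX_b}(\beta)$, we have $\pi_b(\eta) = \pi(\blup^{-1}(\eta))$. The claim is that the map $\mu \mapsto \alpha(\mu)$ has the same image as $\eta \mapsto \blup^{-1}(\eta)$ on $\Top_{\cX_b}(\beta)$; checking this reduces to a combinatorial match between which $\alpha$'s appear after the $\NegDriftCells \to \PosDriftCells$ swap and which $\blup^{-1}(\eta)$'s appear as cofaces of $\beta$ in $\cX_b$. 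This is where the hypothesis $\zeta \notin \cl(\bar S(\blup(\PDcells(\xi,\xi'))))$ is essential: it guarantees that no further addition/removal from a second indecisive-drift pair interferes with this matching, so the correspondence is one-to-one on the level of $\SCC(\cF_2)$ values.

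The main obstacle will be the bookkeeping in the third step, especially to ensure that $\beta$ is really a single cell of $\cX_b$ and that its top-cofaces reproduce \emph{exactly} the set of $\SCC(\cF_2)$-values appearing in $\Top_\Janus(\zeta)$, neither more nor less. Propositions~\ref{prop:Ts} and \ref{prop:Add&RemProp} keep the modification tiles from overlapping or spilling into each other, and Proposition~\ref{prop:consistent-direction} ensures the direction of the swap is globally coherent; combined with the exclusion of $\PDcells(\xi,\xi')$, these make the matching clean. Once the matching is established, admissibility of $\pi_b$ (Theorem~\ref{thm:singlevalue}) transfers uniqueness of the minimum directly from $\Top_{\cX_b}(\beta)$ back to $\Top_\Janus(\zeta)$.
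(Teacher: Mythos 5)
Your overall plan---exhibit a cell $\beta \in \cX_b$ and then match the image sets---is exactly the route the paper takes, and your third step correctly identifies where the substance lies. The difference is in how $\beta$ is produced: you propose a coordinate recipe (project the Janus coordinates of $\zeta$ onto $\bar{\I}$, collapsing onto the spine on the $\NegDriftCells$ side), and you anticipate the bookkeeping will be the hard part. The paper sidesteps all of that: it sets $\mathcal{A} = \setdef{\alpha \in \cX}{\Top_{\Janus}(\zeta)\cap\Pl(\alpha)\neq\emptyset}$ and defines $\beta = \wedge_{\alpha\in\mathcal{A}}\blup(\alpha)$, the highest-dimensional cell in $\bigcap_{\alpha\in\mathcal{A}}\cl(\blup(\alpha))$. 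With this definition the containment $\setdef{\pi_2(\mu)}{\mu\in\Top_\Janus(\zeta)} \subseteq \setdef{\pi_b(\mu)}{\mu\in\Top_{\cX_b}(\beta)}$ is automatic, and the whole proof reduces to the single claim that every top-coface of $\beta$ in $\cX_b$ has its $\blup^{-1}$ lying in $\mathcal{A}$---precisely the place where the exclusion of $\cl\bigl(\bar S(\blup(\PDcells(\xi,\xi')))\bigr)$ earns its keep. One caution if you pursue your coordinate route: your characterization of $\beta$ as ``the face of $\Spineb(\xi,\xi')$ whose top-cofaces have $\blup(\PosDriftCells)$ in place of $\blup(\NegDriftCells)$'' is too rigid. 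Depending on where $\zeta$ sits in $\cl(\mTile(\xi,\xi'))$, the set $\mathcal{A}$ may contain both $\PosDriftCells(\xi,\xi')$ and $\NegDriftCells(\xi,\xi')$ simultaneously (when $\zeta$ lies on the seam between the moved and unmoved parts of $\bar S(\blup(\NegDriftCells(\xi,\xi')))$), along with other adjacent cells, and $\beta$ must then be their common face, not a one-for-one substitution. The meet formulation handles all these cases uniformly and is the cleaner way to write this up; both proofs ultimately hinge on the same geometric fact about $\Top_{\cX_b}(\beta)$ being no larger than $\blup(\mathcal{A})$.
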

\begin{proof}
    Assume that $(\xi,\xi')$ exhibit indecisive drift and let 
    \[ 
    \zeta \in \cl(\mTile(\xi,\xi'))\backslash \cl\left( \bar S \left( \blup \left(\PDcells(\xi,\xi')\right)\right)\right). 
    \]
    Let $\mathcal{A} = \setdef{ \alpha \in \cX }{\Top_{\Janus}(\zeta) \cap \Pl(\alpha) \neq \emptyset}$. Note that $\mathcal{A} \neq \emptyset$ since either $\xi$ or $\xi'$ is in $\mathcal{A}$. Let $\beta = \wedge_{\alpha \in \mathcal{A}} \blup(\alpha)$ be the cell with highest dimension in $\bigcap_{\alpha \in \mathcal{A}} \cl(b(\alpha))$.
    By construction of $\mathcal{A}$,
    \[
        \setdef{\pi_2(\mu)}{\mu \in \Top_{\Janus}(\zeta)} \subseteq \setdef{ \pi_b(\mu)}{\mu \in \Top_{\cX_b}(\beta)}. 
    \]
    Conversely, if $\mu \in \Top_{\cX_b}(\beta)$, then $b^{-1}(\mu) \in \mathcal{A}$. Note that for any $\mu' \in \Top_{\Janus}(\zeta) \cap \Pl(\mu)$, $\pi_2(\mu')=\pi_b(\mu)$, hence 
    \[
        \setdef{\pi_2(\mu)}{\mu \in \Top_{\Janus}(\zeta)} \supseteq \setdef{ \pi_b(\mu)}{\mu \in \Top_{\cX_b}(\beta)}. 
    \]
\end{proof}
As the next proposition shows the following cells 
\begin{equation}
\label{eq:Dzeta}
D(\zeta) := \setdef{\xi}{\zeta\in\cl(\mTile(\tilde\xi,\tilde\xi'))
\text{ or $\zeta\in\cl(\mTile(\tilde\xi',\tilde\xi))$}}.    
\end{equation}
provide an upper bound on post drift cells.  

\begin{lem}\label{lem:Dzeta}
If $(\xi,\xi') \in \cD(\rook)$, $\tau\in \PDcells(\xi,\xi')$ and 
$\zeta \in \cl\left( \bar S \left( \blup \left(\PDcells(\xi,\xi')\right)\right)\right)$, then
\begin{equation}\label{eq:inequalities_rule2}
\pi(\tau) \posetF{2} \pi(\delta),
\end{equation}
for any $\delta\in D(\zeta)$. 
\end{lem}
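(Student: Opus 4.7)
The plan is to produce, for every $\delta \in D(\zeta)$, a directed path in the multivalued map $\cF_2$ terminating at the given $\tau \in \PDcells(\xi,\xi')$; any such path yields $\pi(\tau) \posetF{2} \pi(\delta)$ directly from the definition of the D-grading as the quotient by strongly connected components with the induced order.

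First I would unpack the hypotheses. Because $\zeta \in \cl(\bar S(\blup(\PDcells(\xi,\xi'))))$, there exists a post-drift cell $\tau_0 \in \PDcells(\xi,\xi')$ with $\zeta \preceqJanus \eta$ for some $\eta \in \bar S(\blup(\tau_0))$. From $\delta \in D(\zeta)$, there is a pair $(\tilde\xi,\tilde\xi') \in \cD(\rook)$ with $\delta \in \{\tilde\xi,\tilde\xi'\}$ and $\zeta \in \cl(\mTile(\tilde\xi,\tilde\xi'))$. Because $\mTile(\tilde\xi,\tilde\xi') \subset B_2(\SpineJ(\tilde\xi,\tilde\xi'))$ and the spine sits inside $\cl(\blup(\tilde\xi)) \cap \cl(\blup(\mSuppX(\tilde\xi,\tilde\xi')))$, the proximity constraint $d(\zeta,\SpineJ(\tilde\xi,\tilde\xi')) \leq 2$ combined with $\zeta$ lying within the subdivision closure of $\blup(\tau_0)$ forces a face relation between elements of $\{\tilde\xi,\tilde\xi'\}$ and $\tau_0$ or its immediate faces in $\cX$.

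Next I would use Proposition~\ref{prop:PDcell-indecisive} to rule out that $\tau_0$ itself participates in any indecisive drift pair with opaque direction $n_o := \Ex(\xi,\xi')$; consequently the opaque direction $\tilde n_o$ of $(\tilde\xi,\tilde\xi')$ must differ from $n_o$, or else the pair involves cells strictly above $\tau_0$ in the face order. Combined with Propositions~\ref{prop:unique_ext_GO_pairs} and \ref{prop:Ts}(iii), which control how distinct indecisive drift pairs can share closures, this pins down $(\tilde\xi,\tilde\xi')$ up to a short list of configurations relative to $\tau_0$. In each configuration I would read off from Condition 2.1 and the definitions of $\direc$, $\PosDriftCells$, and $\NegDriftCells$ which of $\tilde\xi, \tilde\xi'$ is the source and which is the target of the single $\cF_2$-arrow replacing the double $\cF_1$-arrow, and verify that the arrow points toward a cell that is $\preceq_\cX$-comparable with $\tau_0$.

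Finally I would assemble the path $\delta \to \cdots \to \tau_0 \to \cdots \to \tau$ in $\cF_2$. The first leg uses the Condition 2.1 orientation just established together with $\cF_0$-level face arrows (which survive in $\cF_2$ whenever Condition 1.2 and Condition 2.1 do not remove them); the second leg uses that both $\tau_0$ and $\tau$ are post-drift cells with $\NegDriftCells(\xi,\xi') \prec \tau_0,\tau$ and with extensions in $\rmap{\xi}^{-1}(n_o) \setminus \{n_o\}$, which puts them in a common face interval accessible by a chain of $\cF_2$-arrows. The main obstacle is this last bookkeeping step: one must verify that every single edge along the proposed chain survives the refinements from $\cF_1$ to $\cF_2$, i.e., that no intermediate pair exhibits indecisive drift with an orientation pointing against the chain. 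The defining condition $\rmap{\xi}^{-1}(n_o) \cap \rmap{\tau}^{-1}(n_o) = \emptyset$ of $\PDcells$ is precisely what prevents such an obstruction from arising along the direction $n_o$, and the case analysis on $\tilde n_o$ handles the remaining directions.
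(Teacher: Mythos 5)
Your strategy of exhibiting an $\cF_2$-directed chain from $\delta$ down to $\tau$ is, at the highest level, the same as the paper's. But the execution has two genuine gaps, and they sit precisely at the points you yourself flag as "the main bookkeeping step."

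The paper's argument hinges on one structural fact you never state: writing $\xi^- = \NegDriftCells(\xi,\xi')$, every direction in $\Ex(\xi^-,\tau)$ is a \emph{gradient} direction of $\xi^-$. This follows because $\Ex(\xi^-,\tau)\subseteq\rmap{\xi}^{-1}(n_o)\setminus\{n_o\}$ is built into Definition~\ref{defn:PDcells}, and Definition~\ref{defn:indecisive} (indecisive drift) forces $\rmap{\xi}^{-1}(n)=\{n\}$ for every $n\in O_i(\xi)\setminus\{n_o\}$, so $\rmap{\xi}^{-1}(n_o)\setminus\{n_o\}$ cannot contain any opaque inessential direction of $\xi^-$. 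That observation is what permits building a face chain $\xi^-=\xi_0\prec\xi_1\prec\dots\prec\xi_k=\tau$, each step extending by one of these gradient directions, and applying Lemma~\ref{lem:F_2-leq-geq} at each step to conclude $\pi(\tau)\posetF{2}\pi(\xi^-)$. You instead invoke Proposition~\ref{prop:PDcell-indecisive}, which only says $\tau$ is not itself in an indecisive-drift pair with opaque direction $n_o$ — a weaker statement that does not prevent a hostile indecisive-drift arrow from appearing at an intermediate face step; the subsequent "case analysis on $\tilde n_o$" that is supposed to handle the remaining directions is left unexecuted, so the chain's monotonicity is not actually established.

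The second gap is the link from $\PosDriftCells(\xi,\xi')$ to $\tau$. The paper picks $\tau'$ with $\Ex(\tau,\tau')=\{n_o\}$, runs the gradient-chain argument from $\PosDriftCells(\xi,\xi')$ up to $\tau'$, and then uses that $\tau\in\cl(\support(D_{(\xi,\xi')}))$ forces $\tau\in\cF_2(\tau')$, i.e., the unique surviving $\cF_2$-edge across the $n_o$-wall points toward $\tau$. Your plan does not produce this $\tau'$ step: you assert that post-drift cells lie in "a common face interval accessible by a chain of $\cF_2$-arrows" without identifying which arrows survive the refinement, which is exactly what must be proved. You also introduce an auxiliary cell $\tau_0$ with $\zeta\in\cl(\bar{S}(\blup(\tau_0)))$ and propose a second leg $\tau_0\to\dots\to\tau$; no inequality relating $\pi(\tau_0)$ and $\pi(\tau)$ is ever established, and the paper avoids the issue by taking $\tau$ to be that $\tau_0$. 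The missing structural insight (gradient-only extensions guaranteed jointly by $\PDcells$ and indecisive drift, plus membership of $\tau$ in the support orienting the $n_o$-edge) is the substance of the lemma.
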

\begin{proof}
    Let $(\xi,\xi') \in \cD(\zeta)$ with $\Ex(\xi,\xi')=\setof{n_o}$. 
    
    For simplicity, we denote $\xi^+ = \PosDriftCells(\xi,\xi')$ and $\xi^- = \NegDriftCells(\xi,\xi')$. 
    Since \[
    \zeta \in \cl\left( \bar S \left( \blup \left(\PDcells(\xi,\xi')\right)\right)\right),
    \] 
    then there exists $\tau\in \PDcells(\xi,\xi')$ such that $\zeta\in\cl(\bar S(\blup(\tau)))$. It follows from Definition~\ref{defn:PDcells} that 
    \[
        \Ex(\xi^-,\tau) \subseteq \rmap\xi^{-1}(n_o).
    \]
    Since $(\xi,\xi')$ exhibits indecisive drift (see Definition~\ref{defn:indecisive}), the set $\rmap\xi^{-1}(n_o)$ doesn't contain any other opaque direction other than $n_o$. However, $n_o \notin \Ex(\xi^-,\tau)$, so $\Ex(\xi^-,\tau)$ only consists of gradient directions in $G_i(\xi^-)$. Since $\tau \in \cl(\support(D_{(\xi,\xi')}))$, $\tau \notin \cl(\Back(\xi,\xi'))$. Thus, $p_n(\xi^-,\tau)=-r_{n}$ for all $n \in \Ex(\xi^-,\tau)$. 
    Let $n_1,\ldots,n_k\in\setof{1,\ldots, N}$ such that $\Ex(\xi^-,\tau)=\setof{n_1,\ldots,n_k}.$ Since $\Ex(\xi^-,\tau)$ only contains gradient directions, then $\Ex(\xi^-,\tau)$ does not have drift direction (see Definition~\ref{defn:adrift}). Hence, by  Proposition~\ref{prop:mu*} and Lemma~\ref{lem:F_2-leq-geq}
    there are elements $\xi_i \in \cX$ such that 
    \[
        \xi^- = \xi_0 \prec_\cX \xi_1 \prec_\cX \ldots \prec_\cX \xi_k = \tau,
    \]
    such that $\Ex(\xi_{i-1},\xi_i) = \setof{n_i}$ and $\pi(\xi_{i-1}) \posetF{2} \pi(\xi_{i})$. Thus, $\pi(\tau) \posetF{2} \pi(\xi^-)$. 
    Now, let $\tau' \in \cX$ be such that $\xi \prec_\cX \tau'$ and $\Ex(\tau,\tau')=\setof{n_o}$. By the same analysis, $\pi(\tau') \posetF{2} \pi(\xi^+)$. Since $\tau \in \cl(\support(D_{(\xi,\xi')}))$, it must be the case that $\tau \in \cF_2(\tau')$. Thus, $\pi(\tau) \posetF{2} \pi(\tau') \posetF{2} \pi(\xi^+)$. 
\end{proof}
Finally, the third case is considered in what follows.
\begin{prop}
\label{prop:same-grading2_3}
Let $(\xi,\xi') \in \cD(\rook)$. 
If 
\[
\zeta \in \cl(\mTile(\xi,\xi'))\cap \cl\left( \bar S \left( \blup \left(\PDcells(\xi,\xi')\right)\right)\right),
\] 
then there exists $\beta_\zeta \in \cX_b$ such that 
\[
\min\setdef{\pi_2(\mu)}{\mu \in \Top_{\Janus}(\zeta)} = \min\setdef{\pi_b(\mu)}{ \mu \in \Top_{\cX_b}(\beta_\zeta)}. 
\]
\end{prop}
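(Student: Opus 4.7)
My plan follows the proof of Proposition~\ref{prop:same-grading2_2} closely, with additional work to control the extra grading values on $\Top_{\Janus}(\zeta)$ produced by the local modification on $\mTile(\xi,\xi')$. I set $\mathcal{A}_\zeta := \setdef{\alpha \in \cX}{\Top_{\Janus}(\zeta) \cap \Pl(\alpha) \neq \emptyset}$ and partition it into $\mathcal{A}_\zeta^{\circ} := \setdef{\alpha \in \mathcal{A}_\zeta}{\Top_{\Janus}(\zeta) \cap \bar{S}(\blup(\alpha)) \neq \emptyset}$ (the ``base'' contributors, whose grading is inherited from $\pi_{\mathtt{J}}$) and its complement (the ``modified'' contributors, which meet $\Top_{\Janus}(\zeta)$ only through an $\Add$-set). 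I define $\beta_\zeta$ to be the highest-dimensional element of $\bigcap_{\alpha \in \mathcal{A}_\zeta^{\circ}} \cl(\blup(\alpha)) \subset \cX_b$. Applying the argument of Proposition~\ref{prop:same-grading2_2} to the base contributors yields
\[
\setdef{\pi_b(\blup(\alpha))}{\alpha \in \mathcal{A}_\zeta^{\circ}} = \setdef{\pi_b(\nu)}{\nu \in \Top_{\cX_b}(\beta_\zeta)}.
\]

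Next I show that each modified contributor $\alpha' \in \mathcal{A}_\zeta \setminus \mathcal{A}_\zeta^{\circ}$ has $\pi_b(\blup(\alpha'))$ bounded below (in $\posetF{2}$) by some value already in the right-hand side. Any such $\alpha'$ must equal $\PosDriftCells(\tilde{\xi},\tilde{\xi}')$ for some $(\tilde{\xi},\tilde{\xi}') \in \cD(\rook)$ with $\zeta \in \cl(\mTile(\tilde{\xi},\tilde{\xi}'))$; Proposition~\ref{prop:Ts}(iii) then forces $\Ex(\tilde{\xi},\tilde{\xi}') = \Ex(\xi,\xi') = \setof{n_o}$, so $\alpha' \in D(\zeta)$. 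Combining the hypothesis $\zeta \in \cl(\bar{S}(\blup(\PDcells(\xi,\xi'))))$ with Lemma~\ref{lem:Dzeta} yields a cell $\tau \in \PDcells(\xi,\xi')$ with $\zeta \in \cl(\bar{S}(\blup(\tau)))$ and $\pi(\tau) \posetF{2} \pi(\alpha')$, so it suffices to verify $\blup(\tau) \in \Top_{\cX_b}(\beta_\zeta)$, i.e., that $\tau \in \mathcal{A}_\zeta^{\circ}$.

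The hardest step is exactly this last verification, and it is where Proposition~\ref{prop:PDcell-indecisive} is essential: it guarantees that $\tau$ participates in no indecisive-drift pair with opaque direction $n_o$. Consequently $\Pl(\tau)$ agrees with $\bar{S}(\blup(\tau))$ in a neighborhood of $\zeta$ of separation at least $2$ in the sense of Definition~\ref{defn:janus-separation}. Since $\zeta \in \cl(\bar{S}(\blup(\tau)))$ and the case $\zeta \in \Janus^{(N)}$ is immediate from Definition~\ref{defn:pi2}, one can produce a top cell of $\zeta$ inside $\Pl(\tau) \cap \bar{S}(\blup(\tau))$ by a direct inspection of the radius in Definition~\ref{defn:mTile} and the definition \eqref{eq:Br} of $B_r$. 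Finally, when several distinct indecisive-drift pairs contribute simultaneously (which can only happen for pairs sharing $\setof{n_o}$ by Proposition~\ref{prop:Ts}(iii)), the compatibility of their addition and removal sets is handled using Corollary~\ref{cor:prop_Add_Rem} and Proposition~\ref{prop:Wdiff}, after which the two minima agree.
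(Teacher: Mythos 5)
Your argument has the same skeleton as the paper's: split the contributors to $\pi_2(\Top_{\Janus}(\zeta))$ into ``base'' and ``modified,'' then dominate the modified ones using a post-drift cell $\tau$ via Lemma~\ref{lem:Dzeta} and Proposition~\ref{prop:PDcell-indecisive}. However, there is a genuine gap in your first display,
\[
\setdef{\pi_b(\blup(\alpha))}{\alpha \in \mathcal{A}_\zeta^\circ} = \setdef{\pi_b(\nu)}{\nu \in \Top_{\cX_b}(\beta_\zeta)},
\]
with your choice $\beta_\zeta = \wedge_\preceq\setdef{\blup(\alpha)}{\alpha\in\mathcal{A}_\zeta^\circ}$. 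Write $G_\zeta := \setdef{\alpha\in\cX}{\Top_{\Janus}(\zeta)\cap\bar{S}(\blup(\alpha))\neq\emptyset}$; then $\mathcal{A}_\zeta^\circ = G_\zeta\cap\mathcal{A}_\zeta$. Under the hypothesis $\zeta\in\cl(\mTile(\xi,\xi'))$, it can happen that some $\alpha_0\in G_\zeta$ has every top cell of $\zeta$ in $\bar{S}(\blup(\alpha_0))$ sitting inside $\Rem(\alpha_0)$, so that $\alpha_0\notin\mathcal{A}_\zeta$ and hence $\alpha_0\notin\mathcal{A}_\zeta^\circ$ while $\blup(\alpha_0)$ is still a coface of $\beta_\zeta$.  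When this occurs $\blup(\mathcal{A}_\zeta^\circ)$ need not be a full coface set $\Top_{\cX_b}(\gamma)$ of any $\gamma$ (e.g.\ it can be a pair of diagonally opposite $N$-cells meeting only along a low-dimensional face), and then $\Top_{\cX_b}(\beta_\zeta)\supsetneq\blup(\mathcal{A}_\zeta^\circ)$. The step of Proposition~\ref{prop:same-grading2_2} that you invoke — ``if $\nu\in\Top_{\cX_b}(\beta)$ then $\blup^{-1}(\nu)\in\mathcal{A}$'' — is exactly the step that can break when $\zeta$ is inside a modification tile, which is why Proposition~\ref{prop:same-grading2_3} is a separate statement in the first place, so it cannot simply be imported.

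The paper avoids this by defining $\beta_\zeta := \wedge_\preceq\setdef{\blup(\alpha)}{\zeta\in\cl(\bar{S}(\blup(\alpha)))}$, i.e.\ taking the meet over all of $G_\zeta$ rather than $\mathcal{A}_\zeta^\circ$.  For that choice the identity $s(\Top_{\Janus}(\zeta)) = \Top_{\cX_b}(\beta_\zeta)$ is a pure subdivision fact independent of the $\pi_2$-modifications, and the discrepancy between $\setdef{\gradJanus(\mu)}{\mu\in\Top_{\Janus}(\zeta)}$ and $\setdef{\pi_2(\mu)}{\mu\in\Top_{\Janus}(\zeta)}$ is then confined to grading values indexed by $D(\zeta)$, all of which Lemma~\ref{lem:Dzeta} shows are dominated by $\pi(\tau)$ with $\tau\in\PDcells(\xi,\xi')$ and $\zeta\in\cl(\bar S(\blup(\tau)))$.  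Your later steps — showing each modified contributor is $\PosDriftCells(\tilde\xi,\tilde\xi')\in D(\zeta)$ via Proposition~\ref{prop:Ts}(iii) and placing $\tau$ among the base contributors via Proposition~\ref{prop:PDcell-indecisive} — mirror the paper and would survive if you used the paper's definition of $\beta_\zeta$ and treated $G_\zeta\setminus\mathcal{A}_\zeta$ as additional dominated cells rather than excluding them from $\beta_\zeta$'s construction.
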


\begin{proof}
Given $\zeta$ as in the hypothesis, set
\[
\beta_\zeta := \wedge_\preceq \setdef{\blup(\alpha)}{\zeta\in \cl(\blup(\alpha))}.
\]
We claim that
\begin{align}
  \setdef{\gradJanus(\mu)}{\mu \in \Top_{\Janus}(\zeta)} &= \setdef{\pi_b(\mu)}{ \mu \in s(\Top_{\Janus}(\zeta))}\label{eq:piJpib} \\
  &= \setdef{\pi_b(\mu)}{ \mu \in \Top_{\cX_b}(\beta_\zeta)}.   
\end{align}
where $s(\mu):= \mu'$ and $\mu \in \bar{S}(\mu')$.

Indeed, assume the second equality follows from the fact that $s(\Top_{\Janus}(\zeta))=\Top_{\cX_b}(\beta_\zeta)$. Now we prove the first equality. Assume
that $\mu\in \Top_{\Janus}(\zeta)$. By \eqref{eq:pi_Janus}, there exists $\alpha\in\cX^{(N)}$ such that $\mu\in\bar S(\blup(\alpha))$ and $\gradJanus(\mu) = \pi(\alpha)$. Since $\mu\in \Top_{\Janus}(\zeta)$ and $\zeta\in \cl(\bar S(\beta_\zeta))$, it follows that $b(\alpha)\in\Top_{\cX_b}(\beta_\zeta)$, thus $s(\mu)=b(\alpha)$. Hence,
    \[
    \gradJanus(\mu) = \pi(\alpha) = \pi_b(\blup(\alpha)) = \pi_b(s(\mu)).
    \]

    Conversely, assume that $\mu'\in s(\Top_{\Janus}(\zeta))$. We denote $\alpha = \blup^{-1}(\mu')$ so that from \eqref{eq:pi_Janus}, we derive
    \[
    \pi_b(\mu') = \pi(\alpha) =  \gradJanus(\mu),
    \]
    for all $\mu\in \bar S(\blup(\alpha))$. Therefore any $\mu\in s^{-1}(\mu')$ is in $\Top_{\Janus}(\zeta)$ since $s^{-1}(\mu') \subseteq \bar S(\blup(\alpha)) = \bar S(\mu')$.
Thus, $\pi_b(\mu')=\gradJanus(\mu)$ and  
\eqref{eq:piJpib} 
follows.

Since $\zeta \in \cl\left( \bar S \left( \blup \left(\PDcells(\xi,\xi')\right)\right)\right)$ there exists $\tau\in \PDcells(\xi,\xi')$ such that $\zeta\in\cl(\bar S(\blup(\tau)))$. 
Hence, \eqref{eq:inequalities_rule2} implies that 
\begin{equation}\label{eq:min_pib}
       \min\setdef{\pi_b(\mu)}{ \mu \in \Top_{\cX_b}(\beta_\zeta)}
    =
    \min\left(\setdef{\pi_b(\mu)}{ \mu \in \Top_{\cX_b}(\beta_\zeta)\setminus \blup(D(\zeta))}\right).  
\end{equation}
and
\begin{equation}\label{eq:min_pi2}
    \min\left(\setdef{\pi_2(\mu)}{ \mu \in \Top_{\Janus}(\zeta)\setminus\Pl(D(\zeta))}\right)
    =
    \min\setdef{\pi_2(\mu)}{ \mu \in \Top_{\Janus}(\zeta)},    
\end{equation}
where $D(\zeta)$ is given by \eqref{eq:Dzeta}.

Observe that
\begin{align*}
    \min\setdef{\pi_b(\mu)}{ \mu \in \Top_{\cX_b}(\beta_\zeta)}
    &=
    \min\left(\setdef{\pi_b(\mu)}{ \mu \in \Top_{\cX_b}(\beta_\zeta)\setminus \blup(D(\zeta))}\right) \\
    &=
    \min\left(\setdef{\gradJanus(\mu)}{ \mu \in \Top_{\Janus}(\zeta)\setminus \bar S(\blup(D(\zeta)))}\right)\\
    &=
    \min\left(\setdef{\pi_2(\mu)}{ \mu \in \Top_{\Janus}(\zeta)\setminus \Pl(D(\zeta))}\right)\\
    &=
    \min\setdef{\pi_2(\mu)}{ \mu \in \Top_{\Janus}(\zeta)},
\end{align*}

where the first and the last equalities follow from \eqref{eq:min_pib} and \eqref{eq:min_pi2}, respectively, the second equality follows from 
\eqref{eq:piJpib}
and the third equality comes from the fact that the modification done in $\left.\gradJanus\right|_{\Top_{\Janus}(\zeta)}$ to obtain $\left.\pi_2\right|_{\Top_{\Janus}(\zeta)}$ consists in changing $\gradJanus(\mu)=\pi(\NegDriftCells(\tilde\xi,\tilde\xi'))$ to 
    $\pi_2(\mu)=\pi(\PosDriftCells(\tilde\xi,\tilde\xi'))$ for some cells $\mu\in\Top_{\Janus}(\zeta)$ with $\PosDriftCells(\tilde\xi,\tilde\xi'), \NegDriftCells(\tilde\xi,\tilde\xi')\in D(\zeta)$.
\end{proof}
\begin{proof}[Proof of Theorem~\ref{thm:pi2-welldefined}]
    Let $\zeta \in \Janus$. If $\dim(\zeta) = N$, Proposition \ref{prop:2FisFibre} implies that there exists a unique $\alpha\in\cX$ such that $\zeta$ in $\Pl(\alpha)$.
    Therefore, \eqref{eq:pi21} is uniquely defined. 

    If $\dim(\zeta) < N$, then by Proposition~\ref{prop:same-grading2_1}, Proposition~\ref{prop:same-grading2_2} and Proposition~\ref{prop:same-grading2_3}, it follows that
    \[
    \min\setdef{\pi_2(\mu)}{\mu \in \Top_{\Janus}(\zeta)} = \min\setdef{\pi_b(\mu)}{ \mu \in \Top_{\cX_b}(\beta)},
    \]
    for some $\beta \in \cX_b$. 
    Notice that Theorem~\ref{thm:singlevalue} implies that $\pi_b$ is an admissible D-grading, i.e., $\setdef{\pi_b(\mu)}{ \mu \in \Top_{\cX_b}(\beta)}$ has a unique minimum element. Therefore $\pi_2$ is also an admissible D-grading.
\end{proof}

\section{Chain Equivalence of $(C_\ast(\Janus), \gradJanus)$ and $(C_\ast(\Janus), \pi_2)$.}
\label{sec:pi2Equivalent}

The goal of this section is to prove the following theorem.
\begin{thm}
\label{thm:gradedchainequiv}
Consider $\cF_2\colon \cX\mvmap \cX$ with D-grading $\pi\colon \cX \to \SCC(\cF_2)$ as in \ref{def:Rule2}. Then, 
$(C_\ast(\Janus), \gradJanus)$ and $(C_\ast(\Janus), \pi_2)$ are $\SCC(\cF_2)$-graded chain equivalent.
\end{thm}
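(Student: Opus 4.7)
The plan is to construct an explicit $\SCC(\cF_2)$-graded chain equivalence using the locality of the modifications that distinguish $\gradJanus$ from $\pi_2$. First I would observe that, by Propositions~\ref{prop:same-grading2_1} and~\ref{prop:Pl-trivial}, the two gradings agree on every cell outside $\bigcup_{(\xi,\xi')\in\cD(\rook)} \cl(\mTile(\xi,\xi'))$, and by Proposition~\ref{prop:Ts} the closures of modification tiles corresponding to distinct pairs in $\cD(\rook)$ are either disjoint or share only cells whose extension directions coincide. Consequently, it suffices to analyze each modification tile in isolation and then assemble the local equivalences.

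Next I would establish the order-theoretic key lemma: for every $(\xi,\xi')\in\cD(\rook)$ one has
\[
\pi\bigl(\PosDriftCells(\xi,\xi')\bigr) \posetF{2} \pi\bigl(\NegDriftCells(\xi,\xi')\bigr)
\]
in $(\SCC(\cF_2), \posetF{2})$. This follows by unpacking Condition 2.1 of Definition~\ref{defn:F2} together with Definition~\ref{defn:gamma+-}: the edge that $\cF_2$ removes from the double edge $\xi \darrow_{\cF_1} \xi'$ is precisely the one whose absence forces the displayed inequality. The upshot is that re-assigning a top cell from the fiber $\Pl(\NegDriftCells(\xi,\xi'))$ to $\Pl(\PosDriftCells(\xi,\xi'))$ moves its $\sP$-grade downward (or holds it fixed), so the modification is ``monotone" in the partial order. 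Combined with Proposition~\ref{prop:same-grading2_3} and Lemma~\ref{lem:Dzeta}, this monotonicity propagates to the lower-dimensional cells that acquire their grade as the minimum over their cofaces.

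With these preparations in place, I would construct, for each $(\xi,\xi')\in\cD(\rook)$, a pair of chain maps and a chain homotopy supported on $B_2(\SpineJ(\xi,\xi'))$ that implement the re-assignment at the chain level: for every top cell $\mu$ that switches fibers, add a ``sliding" correction that transports $\mu$ across the adjacent face into a basis element of $\Pl(\PosDriftCells(\xi,\xi'))$, and compensate on codimension-one faces. Because the supports are confined to the tile closures and the sliding always moves grades downward, these local maps are strictly $\sP$-graded in the sense of Definition~\ref{defn:PgradedLinMap}; by Proposition~\ref{prop:Ts} their supports are pairwise compatible, so summing over all pairs produces global chain maps $\phi: (C_*(\Janus),\gradJanus)\to(C_*(\Janus),\pi_2)$ and $\psi$ in the reverse direction, together with $\sP$-graded homotopies $\phi\circ\psi\sim_\sP\id$ and $\psi\circ\phi\sim_\sP\id$.

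The main obstacle, and the step that requires the most care, is verifying that the local sliding corrections are simultaneously $\sP$-graded with respect to both $\gradJanus$ and $\pi_2$ at the interface between overlapping tile closures. This case, corresponding to Proposition~\ref{prop:Ts}(iii) where the extension directions agree, requires invoking Proposition~\ref{prop:consistent-direction} to ensure the corrections pull in compatible directions, and Proposition~\ref{prop:Wdiff} to rule out an $\Add$-set meeting a $\Rem$-set of a different cell. Once this compatibility is verified, the assembled $\phi$, $\psi$, and homotopies give the desired $\SCC(\cF_2)$-graded chain equivalence.
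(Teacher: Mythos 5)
Your proposal correctly identifies two useful facts: the two gradings agree outside the modification tiles, and the re-assignment of a cell from $\Pl(\NegDriftCells(\xi,\xi'))$ to $\Pl(\PosDriftCells(\xi,\xi'))$ always lowers the grade in $\posetF{2}$. (The second is true, though you assert rather than prove it; it follows from unwinding Condition 2.1 and Definition~\ref{defn:gamma+-} together with the observation that $p_{n_o}(\dec,\dec')=p_{n_o}(\xi,\xi')$.) However, the core of your argument — the ``sliding correction'' chain maps — is not actually constructible as you describe it. A chain map cannot simply ``transport $\mu$ across the adjacent face into a basis element of $\Pl(\PosDriftCells(\xi,\xi'))$ and compensate on codimension-one faces'': if $\phi(\mu)=\mu'$ is a neighboring top cell, then $\phi$ would need to send $\partial\mu$ to $\partial\mu'$, and these differ on every face except the shared one, so no such compensation on codimension-one faces can make $\phi$ commute with $\partial$. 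The monotonicity lemma alone does not produce $\sP$-graded chain maps $\phi$, $\psi$ or the required $\sP$-graded homotopies.

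What is missing is the homotopy-theoretic input that the modifications are chain-level negligible. The paper's proof establishes exactly this: the projection $\proj{(\xi,\xi')}$ onto the shared face subcomplex induces a null homotopy of $C_*(\cl(\mTile(\xi,\xi'))\cap\mFiber(p))$ (Lemma~\ref{lem:basic_homotopy1}); these local null homotopies are compatible across overlapping pairs by Lemmas~\ref{lem:no_overlap}, \ref{lem:basic_homotopy2}, and \ref{lem:W-homotopies}, yielding that $\bigcup_\alpha\mAdd(\alpha,p)$ and $\bigcup_\alpha\mRem(\alpha,p)$ are contractible. Proposition~\ref{prop:Ap_Bp} then uses short exact sequences $C_*(\cup\mAdd)\hookrightarrow C_*(B^p)\twoheadrightarrow C_*(A^p\cap B^p)$ (and the analogous one for $A^p$) to conclude $C_*(A^p)\simeq C_*(B^p)$, and propagates the equivalence to down-sets via the splitting $C_*(A^{\downarrow q})\simeq C_*(A^{\downarrow p})\oplus C_*(A^{\downarrow q},A^{\downarrow p})$ and an induction up the poset. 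This indirect, contractibility-plus-splitting argument is what makes the $\sP$-graded equivalence work; your proposal never supplies an analogue of it, and the monotonicity observation alone cannot substitute for it.
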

To achieve this, we turn our focus to the subcomplex of $\Janus$ containing the local modifications done for a grading value $p\in\SCC(\cF_2)$, denoted by 
\[\mFiber(p) := \left((\pi_2)^{-1}(p) -  (\gradJanus)^{-1}(p) \right) \cup \left( (\gradJanus)^{-1}(p) - (\pi_2)^{-1}(p)
\right)\] 
the symmetric difference of the complexes $(\pi_2)^{-1}(p)$ and  $(\gradJanus)^{-1}(p)$. 
Given $p\in\SCC(\cF_2)$ denote 
\[\cX^p:= \pi^{-1}(p).\] 
Set 
\[\mAdd(\alpha, p):=\ \cl 
\left(\Add(\alpha)\right) \cap \mFiber(p)\]
and
\[\mRem(\alpha, p):=\ \cl 
\left(\Rem(\alpha)\right) \cap \mFiber(p),\]
for $\alpha\in\cX^p$. Observe that, $\mFiber(p)$ can be represented as \[\mFiber(p) = \bigcup_{\alpha\in\cX^p} \mAdd(\alpha, p)\cup \mRem(\alpha, p).\] 
\begin{ex}
\label{ex:mF2}
Recall from Example~\ref{ex:mtiles2d} that $\mTile(\xi,\xi'_0)$ and $\mTile(\xi,\xi'_1)$ are the top cells in the left and right gray region in Figure~\ref{subfig:indecisive_drift_c}, respectively.
From Equation \eqref{eq:Dgrading} and Example \ref{ex:pi2fibers}, we have that 
$\pi(\xi) = p_4$ and 
\[\Pl(\xi)= \bar S(\blup(\xi))\cup \mTile(\xi,\xi'_0) - \mTile(\xi,\xi'_1),\] then 
\[\pi_2^{-1}(p_4)=\Pl(\xi)=\bar S(\blup(\xi))\cup \mTile(\xi,\xi'_0) - \mTile(\xi,\xi'_1)\] 
and $\gradJanus^{-1}(p_4)=\bar S(\blup(\xi))$. 
Using the fibers $\pi_2^{-1}(p_4)$ and $\gradJanus^{-1}(p_4)$ we compute
\begin{align*}
    \mFiber(p_4) &=  \left((\pi_2)^{-1}(p_4) -  (\gradJanus)^{-1}(p_4) \right) \cup \left( (\gradJanus)^{-1}(p_4) - (\pi_2)^{-1}(p_4)
\right)
\\ &= \mTile(\xi,\xi'_0) \cup \mTile(\xi,\xi'_1).
\end{align*}
Furthermore, 
\[\mAdd(\xi,p_4) = \mTile(\xi,\xi'_0) \quad \text{and} \quad \mRem(\xi,p_4) = \mTile(\xi,\xi'_1).\] 
Another way to represent $\mFiber(p_4)$ is
\[\mFiber(p_4) = \mAdd(\xi,p_4) \cup \mRem(\xi,p_4) = \mTile(\xi,\xi'_0) \cup \mTile(\xi,\xi'_1).\]
\end{ex}

\begin{lem}\label{lem:no_overlap}
    Given $p\in\SCC(\cF_2)$, then
    \[ \left( \bigcup_{\alpha\in\cX^p} \mAdd(\alpha, p) \right) \cap \left( \bigcup_{\alpha\in\cX^p} \mRem(\alpha, p) \right)=\emptyset.\]
\end{lem}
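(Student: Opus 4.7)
The plan is to proceed by a case analysis on the pair $\alpha_1,\alpha_2\in\cX^p$, showing that $\mAdd(\alpha_1,p)\cap\mRem(\alpha_2,p)=\emptyset$ in all three possible configurations. First, the diagonal case $\alpha_1=\alpha_2$ reduces to $\cl(\Add(\alpha_1))\cap\cl(\Rem(\alpha_1))=\emptyset$, which is Proposition~\ref{prop:Add&RemProp}. Next, when $\alpha_1\neq\alpha_2$ and the hypothesis $\PosIndecisiveDrifts(\alpha_1)\cap\NegIndecisiveDrifts(\alpha_2)=\emptyset$ holds, Proposition~\ref{prop:Wdiff} gives $\cl(\Add(\alpha_1))\cap\cl(\Rem(\alpha_2))=\emptyset$, which again suffices.

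The substantive case is when $\alpha_1\neq\alpha_2$ and there exists a common indecisive pair $(\xi,\xi')\in\PosIndecisiveDrifts(\alpha_1)\cap\NegIndecisiveDrifts(\alpha_2)$, so that $\alpha_1=\PosDriftCells(\xi,\xi')$, $\alpha_2=\NegDriftCells(\xi,\xi')$, and $\pi(\alpha_1)=\pi(\alpha_2)=p$. Here I would first dispose of the top-dimensional cells: if $\mu\in\Add(\alpha_1)\cap\Rem(\alpha_2)$ is top-dimensional, Proposition~\ref{prop:Ts}~(i) (disjointness of interior tiles of distinct indecisive pairs) forces both the $\Add$- and $\Rem$-contributions to $\mu$ to arise from the same pair $(\xi,\xi')$, so $\mu\in\mTile(\xi,\xi')\cap\bar{S}(\blup(\alpha_2))$. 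On such a $\mu$, by \eqref{eq:pi_Janus} and \eqref{eq:pi21}, $\gradJanus(\mu)=\pi(\alpha_2)=p$ and $\pi_2(\mu)=\pi(\alpha_1)=p$, so the two gradings agree and $\mu\notin\mFiber(p)$, contradicting $\mu\in\mAdd(\alpha_1,p)$.

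The hard part, which I expect to be the main obstacle, is extending this argument to lower-dimensional cells $\eta\in\cl(\Add(\alpha_1))\cap\cl(\Rem(\alpha_2))\cap\mFiber(p)$, since both $\pi_2(\eta)$ and $\gradJanus(\eta)$ are defined via minima over $\Top_\Janus(\eta)$ and that top-star can contain cells drawn from several distinct modification tiles meeting along a boundary. My approach will be to combine Proposition~\ref{prop:Ts}~(ii)--(iii), which constrain closures of distinct tiles to meet only when the pairs share the indecisive direction $\Ex$ without sharing an element, with the observation that any two such colliding tiles both lie in the fiber $p$ (because of $\pi(\alpha_1)=\pi(\alpha_2)=p$ and the ordering $\pi(\PosDriftCells)\leq\pi(\NegDriftCells)$ inherited from $\cF_2$). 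This should propagate the equality $\gradJanus=\pi_2=p$ through the top-star minimum to $\eta$, placing $\eta$ outside $\mFiber(p)$. Verifying this propagation robustly when multiple distinct indecisive pairs contribute to $\Top_\Janus(\eta)$ simultaneously—and handling the boundary cells of $\mTile(\xi,\xi')$ that may abut regions where $\pi_2$ and $\gradJanus$ genuinely differ—is the chief technical difficulty I anticipate.
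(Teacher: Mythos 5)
Your case decomposition is the same as the paper's: diagonal case via Proposition~\ref{prop:Add&RemProp}, disjoint-pairs case via Proposition~\ref{prop:Wdiff}, and a remaining common-pair case. The first two cases are dispatched exactly as the paper does. Your treatment of the common-pair case is correct for top-dimensional cells: Proposition~\ref{prop:Ts}~(i) forces the contributing pair to be unique, and on cells of $\mTile(\xi,\xi')\cap\bar{S}(\blup(\alpha_2))$ both $\gradJanus$ and $\pi_2$ evaluate to $p$, so those cells miss $\mFiber(p)$.

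However, you explicitly flag the lower-dimensional cells as an unresolved obstacle, and the route you sketch for closing it --- combining Proposition~\ref{prop:Ts}~(ii)--(iii) with a "propagation through top-star minima" that must track all distinct tiles meeting along a boundary --- is both unfinished and more complicated than necessary. The paper sidesteps this difficulty entirely with a simple containment: since $\mTile(\alpha_1,\alpha_2)\subset\bar{S}(\blup(\NegDriftCells(\xi,\xi')))=\bar{S}(\blup(\alpha_2))$, one has $\cl(\mTile(\alpha_1,\alpha_2))\subset\ctoy(\alpha_1)\cup\ctoy(\alpha_2)$, and the paper then argues that because $\alpha_1,\alpha_2\in\cX^p$ this closed set sits inside \emph{both} fibers $(\gradJanus)^{-1}(p)$ and $(\pi_2)^{-1}(p)$, so its intersection with the symmetric difference $\mFiber(p)$ is vacuously empty. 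No per-cell reasoning about which tiles meet a given top-star is required: the whole closed modification tile is shown to live inside both fibers at once. Re-orient your argument around that direct containment rather than trying to control the minimum over $\Top_{\Janus}(\eta)$ cell by cell; the latter is precisely the kind of bookkeeping the $\mFiber(p)$ symmetric-difference formulation is designed to avoid.
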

\begin{proof}
    It is enough to prove that $\mAdd(\alpha_1, p) \cap \mRem(\alpha_2, p) = \emptyset$, for any $p\in\SCC(\cF_2)$, $\alpha_1, \alpha_2\in\cX^p$. 
    
    First, assume that $\alpha_1=\alpha_2$. Thus the result follows from Proposition \ref{prop:Add&RemProp}.

Second, assume that $\alpha_1\neq\alpha_2$. Hence
\[\mAdd(\alpha_1, p) \cap \mRem(\alpha_2, p) =\ \cl 
\left(\Add(\alpha_1)\right) \cap \cl 
\left(\Rem(\alpha_2)\right) \cap  \mFiber(p).\]
Notice that, if either $\PosIndecisiveDrifts(\alpha_1)$ or $\NegIndecisiveDrifts(\alpha_2)$ (or both) is empty, then \[\cl 
\left(\Add(\alpha_1)\right) \cap \cl 
\left(\Rem(\alpha_2)\right) \cap  \mFiber(p)=\emptyset,\] 
since $\Add(\alpha_1)=\emptyset$ or $\Rem(\alpha_2)=\emptyset$. Therefore, the result follows. 

Now, assume that there exist $(\xi,\xi')\in \PosIndecisiveDrifts(\alpha_1)$ and $(\sigma,\sigma')\in \NegIndecisiveDrifts(\alpha_2)$. Hence
\begin{equation*}
      \cl 
\left(\Add(\alpha_1)\right) \cap \cl 
\left(\Rem(\alpha_2)\right) \cap  \mFiber(p) = \cl (\mTile(\xi,\xi')) \cap \cl (\mTile(\sigma,\sigma'))\cap \mFiber(p).  
\end{equation*}

Observe that it is enough to prove that 
\[\cl (\mTile(\xi,\xi')) \cap \cl (\mTile(\sigma,\sigma'))\cap \mFiber(p)=\emptyset\] 
to obtain the result.

First assume that $(\xi,\xi')=(\sigma,\sigma')$, then without loss of generality, assume that $(\xi,\xi')=(\alpha_1,\alpha_2)$. Since $\alpha_1, \alpha_2\in\cX^p$ then 
\[\cl(\mTile(\alpha_1,\alpha_2)) \subset \ctoy(\alpha_1)\cup\ctoy(\alpha_2) \subset (\pi_2)^{-1}(p)\] 
and 
\[\cl(\mTile(\alpha_1,\alpha_2)) \subset \ctoy(\alpha_1)\cup\ctoy(\alpha_2) \subset (\gradJanus)^{-1}(p),\] 
thus
\begin{align*}
    \cl (\mTile(\xi,\xi')) \cap \cl (\mTile(\sigma,\sigma'))\cap \mFiber(p) = \cl (\mTile(\alpha_1,\alpha_2))\cap \mFiber(p)
    = \emptyset,
\end{align*}
where the last equality follows from the definition of $ \mFiber(p)$.

Finally, assume that $(\xi,\xi')$ and $(\sigma,\sigma')$ are distinct pairs. We break into two cases: 
\begin{enumerate}
    \item If all elements in $(\xi,\xi')$ and $(\sigma,\sigma')$ are distinct, 
then 
$(\alpha_2,\alpha_1)\not\in\cD(\rook)$ and $(\alpha_1,\alpha_2)\not\in\cD(\rook)$. 
By Proposition~\ref{prop:Wdiff}, we have that $\cl (\mTile(\xi,\xi')) \cap \cl (\mTile(\sigma,\sigma')))=\emptyset.$
    \item If $(\xi,\xi')$ and $(\sigma,\sigma')$ have a common element, then by Proposition~\ref{prop:Ts}, $\cl (\mTile(\xi,\xi')) \cap \cl (\mTile(\sigma,\sigma'))=\emptyset.$
\end{enumerate}
Both cases imply that
\[
 \cl (\mTile(\xi,\xi')) \cap \cl (\mTile(\sigma,\sigma'))\cap \mFiber(p) = \emptyset \cap \mFiber(p) = \emptyset.
\]
\end{proof}
Let $(\xi,\xi')\in\cD(\rook)$, 
$\Ex(\xi,\xi')=\setof{n_o}$, 
$[\bv'',\bone]=\blup(\xi)$ and  $v=\bv''_{n_o}+(1-p(\xi,\xi'))/2$.
We define the projection of $\cl (\mTile(\xi,\xi'))$ to its face subcomplex $\cl(\mTile(\xi,\xi')) \cap \ctoy(\PosDriftCells_{(\xi, \xi')})$ as
the map
    \[\proj{(\xi,\xi')}:\cl \left(\mTile(\xi,\xi')\right) \rightarrow \ctoy(\PosDriftCells_{(\xi,\xi')})\cap\cl \left(\mTile(\xi,\xi')\right)\]
given by $\proj{(\xi,\xi')}([\bv,\bw]):=[\bv',\bw']$, where 
\[\bv'_n=
\begin{cases}
     v, &\text{if } n=n_o\\
    \bv_n, &\text{else}\\
\end{cases}
\quad \text{ and } \quad
\bw'_n=
\begin{cases}
     0, &\text{if } n=n_o,\\
    \bw_n, &\text{else}.
\end{cases}
\]
It is straightforward to check the following lemma.
\begin{lem}
\label{lem:basic_homotopy1}
Let $(\xi,\xi')\in \cD$ and $p\in\SCC(\cF_2)$, then the projection map $\proj{(\xi,\xi')}$  restricted to $\cl \left( \mTile(\xi,\xi') \right)\cap \mFiber(p)$ induces a null homotopy $h_{(\xi,\xi')}$ for $C_\ast(\cl \left(\mTile(\xi,\xi')\right)\cap \mFiber(p))$.
\end{lem}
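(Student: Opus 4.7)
The strategy is to leverage the product structure of $\cl(\mTile(\xi,\xi'))$ along the direction $n_o := \Ex(\xi,\xi')$ and to interpret $\proj{(\xi,\xi')}$ as a cellular collapse onto the face $\ctoy(\PosDriftCells(\xi,\xi')) \cap \cl(\mTile(\xi,\xi'))$. First I would unpack Definitions~\ref{defn:spine} and \ref{defn:mTile}, together with the projection formula, to verify that $\cl(\mTile(\xi,\xi'))$ is isomorphic to a product $F \times I_{n_o}$, where $F$ is the face subcomplex $\ctoy(\PosDriftCells(\xi,\xi')) \cap \cl(\mTile(\xi,\xi'))$ and $I_{n_o}$ is an $n_o$-interval subcomplex of the Janus complex of combinatorial width at most $2$, the separation used in Definition~\ref{defn:mTile}. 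Under this identification $\proj{(\xi,\xi')}$ is the projection onto the $F$-factor at one endpoint of $I_{n_o}$.

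Next I would show that $\proj{(\xi,\xi')}$, restricted to $\cl(\mTile(\xi,\xi'))\cap \mFiber(p)$, has image contained in $F$ and that this image lies outside $\mFiber(p)$. The point is that $F$ is disjoint from every modification tile $\mTile(\tilde{\xi},\tilde{\xi}')$ (by the product structure and Proposition~\ref{prop:Ts}), so cells of $F$ carry the same grading under $\pi_2$ as under $\gradJanus$ and contribute nothing to the symmetric difference. Consequently, viewed as a chain map into $C_*(\cl(\mTile(\xi,\xi'))\cap \mFiber(p))$, the projection $\proj{(\xi,\xi')}$ is zero.

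The null homotopy $h_{(\xi,\xi')}$ is then defined as the cellular prism operator associated with the product structure: for a cell $\sigma=[\bv,\bw]$ in $\cl(\mTile(\xi,\xi'))\cap \mFiber(p)$, $h_{(\xi,\xi')}(\sigma)$ is the formal sum of the cells sweeping $\sigma$ along $I_{n_o}$ toward the projection, oriented so that promoting $n_o$ to an essential direction costs one dimension, with $h_{(\xi,\xi')}(\sigma)=0$ whenever $\sigma$ is already $n_o$-essential and maximal in the slab. The standard cellular prism identity gives $\partial h_{(\xi,\xi')} + h_{(\xi,\xi')}\partial = \id - \iota \circ \proj{(\xi,\xi')}$ on $C_*(\cl(\mTile(\xi,\xi')))$; restricting to $\mFiber(p)$ kills the second term by the previous paragraph, yielding the desired null-homotopy relation.

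The step I expect to be the main obstacle is verifying that $h_{(\xi,\xi')}$ preserves the subcomplex $\cl(\mTile(\xi,\xi'))\cap \mFiber(p)$, i.e., that sweeping a cell $\sigma$ along the $n_o$ direction produces cells that remain in the symmetric difference rather than escaping into $F$ or into a neighboring tile. Escape into $F$ is ruled out by truncating the prism strictly before the projection face, and escape into neighboring tiles is ruled out by Propositions~\ref{prop:Add&RemProp} and \ref{prop:Wdiff}, which guarantee sufficient combinatorial separation between distinct modification tiles of pairs in $\cD(\rook)$. Once this containment is established, the prism identity combined with the vanishing of $\iota \circ \proj{(\xi,\xi')}$ on $\mFiber(p)$ is a routine cellular computation that delivers $\partial h_{(\xi,\xi')} + h_{(\xi,\xi')}\partial = \id$.
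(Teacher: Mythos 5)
The paper does not actually supply a proof of this lemma; it simply declares it ``straightforward to check'' and moves on. Your overall picture---treat $\proj{(\xi,\xi')}$ as a cellular collapse of $\cl(\mTile(\xi,\xi'))$ onto the face $F=\ctoy(\PosDriftCells(\xi,\xi'))\cap\cl(\mTile(\xi,\xi'))$ in the $n_o$ direction, observe that $F$ sits outside $\mFiber(p)$, and use the induced prism homotopy---is the right one, and the observation that $F\cap\mFiber(p)=\emptyset$ is correct (and is better justified not via Proposition~\ref{prop:Ts} but by noting that for $\zeta\in F$ the minimum over $\Top_\Janus(\zeta)$ was already $\pi(\PosDriftCells(\xi,\xi'))$ before the modification, so re-grading the tile cells to that same value does not alter $\pi_2(\zeta)=\gradJanus(\zeta)$).

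The genuine gap is the asserted product decomposition $\cl(\mTile(\xi,\xi'))\cong F\times I_{n_o}$. This is false: $\mTile(\xi,\xi')$ is defined as $B_2(\SpineJ(\xi,\xi'))\cap\bar{S}(\blup(\NegDriftCells(\xi,\xi')))$, and since $B_2$ is a ball in the $\ell^1$-separation metric of Definition~\ref{defn:janus-separation} clipped by the rectangular block $\bar{S}(\blup(\NegDriftCells(\xi,\xi')))$, the result is a staircase-shaped region (visible in the two gray tiles of Figure~\ref{subfig:indecisive_drift_c}), not a product. Consequently the $n_o$-slabs of the tile have varying widths and there is no single $F\times I_{n_o}$ on which to run a global prism. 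The collapse is still a deformation retraction because the tile is star-shaped toward $F$ in the $n_o$ direction, so an appropriately defined prism operator still satisfies $\partial h+h\partial=\id-\iota\circ(\proj{(\xi,\xi')})_*$; but your ``truncate the prism strictly before the projection face'' step should be replaced by the cleaner observation that, since $(\proj{(\xi,\xi')})_*$ lands in $F\subset\cl(\mTile(\xi,\xi'))\setminus\mFiber(p)$, the second term vanishes once one passes to the chain complex of $\cl(\mTile(\xi,\xi'))\cap\mFiber(p)$ (a quotient of $C_*(\cl(\mTile(\xi,\xi')))$ by the closed complement). With that adjustment, and dropping the product claim, your argument becomes correct.
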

The next lemma is a direct consequence of \ref{lem:basic_homotopy1}.
\begin{lem}\label{lem:basic_homotopy2}
    Let $h_{(\xi,\xi')}$ be the null homotopy from Lemma \ref{lem:basic_homotopy1}, for any $(\xi,\xi')\in \cD$. Then, the chain maps 
    \[h_{\alpha}^+=\bigoplus_{(\xi,\xi')\in \PosIndecisiveDrifts(\alpha)} h_{(\xi,\xi')}  \quad \text{ and } \quad  h_{\alpha}^-= \bigoplus_{(\xi,\xi')\in \NegIndecisiveDrifts(\alpha)} h_{(\xi,\xi')}\] are null homotopies for $C_\ast(\mAdd(\alpha, p))$ and $C_\ast(\mRem(\alpha, p))$, respectively.
\end{lem}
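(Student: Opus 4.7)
\medskip

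\noindent\textbf{Proof proposal.}
The strategy is to show that $\mAdd(\alpha, p)$ and $\mRem(\alpha, p)$ each decompose as disjoint unions of the closed subcomplexes on which the individual null homotopies $h_{(\xi,\xi')}$ of Lemma~\ref{lem:basic_homotopy1} are defined, so that the direct sum of chain null homotopies makes sense and is itself a null homotopy. I will carry this out for $h_\alpha^+$; the argument for $h_\alpha^-$ is identical with $\PosIndecisiveDrifts(\alpha)$ replaced by $\NegIndecisiveDrifts(\alpha)$.

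First, I would unwind the definitions. By Definition~\ref{defn:Add&Rem},
\[
\Add(\alpha) \;=\; \bigcup_{(\xi,\xi') \in \PosIndecisiveDrifts(\alpha)} \mTile(\xi,\xi') \cap \bar{S}(\blup(\NegDriftCells(\xi,\xi'))),
\]
and therefore
\[
\mAdd(\alpha, p) \;=\; \bigcup_{(\xi,\xi') \in \PosIndecisiveDrifts(\alpha)} \cl(\mTile(\xi,\xi')) \cap \mFiber(p),
\]
since each $\cl(\mTile(\xi,\xi'))$ already contains the relevant face subcomplex, and intersection with $\mFiber(p)$ distributes over unions.

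The key step is to show that this union is disjoint. Let $(\xi_1,\xi_1'),(\xi_2,\xi_2') \in \PosIndecisiveDrifts(\alpha)$ be distinct pairs. Both have $\PosDriftCells(\xi_i,\xi_i')=\alpha$, so by Remark~\ref{rem:D+-} we have $\NegDriftCells(\xi_i,\xi_i')\neq\alpha$; thus the two pairs share the common element $\alpha$. Proposition~\ref{prop:Ts}(ii) then yields $\cl(\mTile(\xi_1,\xi_1')) \cap \cl(\mTile(\xi_2,\xi_2')) = \emptyset$. Consequently the decomposition
\[
\mAdd(\alpha, p) \;=\; \bigsqcup_{(\xi,\xi') \in \PosIndecisiveDrifts(\alpha)} \bigl(\cl(\mTile(\xi,\xi')) \cap \mFiber(p)\bigr)
\]
is a disjoint union of closed subcomplexes of $\Janus$, and so
\[
C_\ast(\mAdd(\alpha,p)) \;\cong\; \bigoplus_{(\xi,\xi') \in \PosIndecisiveDrifts(\alpha)} C_\ast\!\bigl(\cl(\mTile(\xi,\xi')) \cap \mFiber(p)\bigr).
\]

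With this decomposition in hand, the conclusion follows formally. Each summand carries the null homotopy $h_{(\xi,\xi')}$ supplied by Lemma~\ref{lem:basic_homotopy1}, and for distinct summands the chain complexes have disjoint supports so the direct sum $h_\alpha^+ = \bigoplus_{(\xi,\xi')} h_{(\xi,\xi')}$ is a well-defined chain map of degree $+1$. Checking $\partial h_\alpha^+ + h_\alpha^+ \partial = \id$ reduces summand-by-summand to the identity from Lemma~\ref{lem:basic_homotopy1}. I do not expect any serious obstacle: the entire content is the disjointness of the modification tiles attached to a common drift cell, which is precisely Proposition~\ref{prop:Ts}(ii). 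The only mildly delicate point to verify is that intersecting with $\mFiber(p)$ does not destroy the property that $h_{(\xi,\xi')}$ restricts to a null homotopy of $C_\ast(\cl(\mTile(\xi,\xi')) \cap \mFiber(p))$, but this is exactly the statement of Lemma~\ref{lem:basic_homotopy1}.
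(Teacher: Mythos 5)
Your proposal is correct and fills in, in a clean way, the argument the paper treats as immediate (the paper states the lemma as "a direct consequence" of Lemma~\ref{lem:basic_homotopy1} and gives no proof). The key point — that the tiles indexed by $\PosIndecisiveDrifts(\alpha)$ (resp.\ $\NegIndecisiveDrifts(\alpha)$) have disjoint closures because they all share the common element $\alpha$, so Proposition~\ref{prop:Ts}(ii) applies — is exactly the intended justification, and your reduction of $\mAdd(\alpha,p)$ to the disjoint union of $\cl(\mTile(\xi,\xi'))\cap\mFiber(p)$ is sound.
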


\begin{lem}\label{lem:W-homotopies}
    Let $p\in\SCC(\cF_2)$ and $\alpha_1, \alpha_2\in \cX^p$. Then 
    $C_\ast(\mAdd(\alpha_1, p)\cap \mAdd(\alpha_2, p))$ and $C_\ast(\mRem(\alpha_1, p)\cap \mRem(\alpha_2, p))$ are contractible. Moreover,
\[C\left(\bigcup_{\alpha\in\cX^p} \mAdd(\alpha, p)\right) \quad \text{ and } \quad C\left(\bigcup_{\alpha\in\cX^p} \mRem(\alpha, p)\right)\] are contractible.
\end{lem}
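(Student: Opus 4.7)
The plan is to first analyze intersections of the $\mAdd$ and $\mRem$ sets using the disjointness and matching-$\Ex$ conclusions of Proposition~\ref{prop:Ts}, then to assemble a global null homotopy via an inductive argument on $|\cX^p|$. The $\mRem$ case is strictly parallel to the $\mAdd$ case, so I will describe only the latter.

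For the pairwise-intersection statement, I would expand
\[
\mAdd(\alpha_1, p) \cap \mAdd(\alpha_2, p) = \bigcup \bigl[\cl(\mTile(\xi_1, \xi_1')) \cap \cl(\mTile(\xi_2, \xi_2'))\bigr] \cap \mFiber(p),
\]
the union taken over $(\xi_i, \xi_i') \in \PosIndecisiveDrifts(\alpha_i)$. Every pair in $\PosIndecisiveDrifts(\alpha)$ has $\alpha$ as a common element; combined with Proposition~\ref{prop:not_L_GO_pairs}, which rules out $\xi_0=\xi_1'$ among indecisive-drift pairs, Proposition~\ref{prop:Ts}(ii) forces any two distinct pairs in $\PosIndecisiveDrifts(\alpha_i)$ to have disjoint $\cl(\mTile)$. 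Hence for $\alpha_1=\alpha_2$ the intersection collapses to $\mAdd(\alpha_1, p)$, which is already contractible by Lemma~\ref{lem:basic_homotopy2}. For $\alpha_1\neq \alpha_2$, the surviving contributions come from pairs sharing no element, and Proposition~\ref{prop:Ts}(iii) then forces a common extension direction $\Ex(\xi_1,\xi_1')=\Ex(\xi_2,\xi_2')=\{n_o\}$. On each such nonempty piece, both $\proj{(\xi_1,\xi_1')}$ and $\proj{(\xi_2,\xi_2')}$ collapse the same $n_o$-coordinate to the same common face, so $h_{(\xi_1,\xi_1')}$ restricts to a null homotopy of the piece. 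Direct-summing these restricted null homotopies over the (pairwise $\cl$-disjoint) contributions yields the desired null homotopy of $C_*(\mAdd(\alpha_1,p)\cap \mAdd(\alpha_2,p))$.

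For the contractibility of $X=\bigcup_{\alpha\in \cX^p}\mAdd(\alpha,p)$, I would induct on $|\cX^p|$, with base case Lemma~\ref{lem:basic_homotopy2}. In the inductive step write $X=X'\cup \mAdd(\alpha, p)$. The inductive hypothesis gives a null homotopy of $C_*(X')$, Lemma~\ref{lem:basic_homotopy2} gives one for $C_*(\mAdd(\alpha, p))$, and the pairwise intersection analysis above extends, via the same common-$n_o$-projection argument, to show that
\[
X'\cap \mAdd(\alpha, p) = \bigcup_{\alpha'\neq\alpha}\bigl[\mAdd(\alpha',p)\cap \mAdd(\alpha,p)\bigr]
\]
is again a disjoint sum of tile-closure intersections admitting a common $n_o$-projection, hence has a null homotopy. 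Splicing via the short exact Mayer--Vietoris sequence
\[
0\to C_*(X'\cap \mAdd(\alpha,p))\to C_*(X')\oplus C_*(\mAdd(\alpha,p))\to C_*(X)\to 0
\]
then yields a null homotopy of $C_*(X)$, closing the induction.

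The main obstacle I expect is the geometric bookkeeping in the pairwise step: verifying that when two indecisive-drift pairs with matching $\Ex=\{n_o\}$ and no common element produce intersecting tile closures inside $\mFiber(p)$, the back-wall $n_o$-coordinate $v$ in the definition of $\proj{(\xi_i,\xi_i')}$ actually agrees on the intersection, so that $h_{(\xi_1,\xi_1')}$ and $h_{(\xi_2,\xi_2')}$ induce the same chain homotopy there. I would attack this by combining the constraint $\pi(\alpha_1)=\pi(\alpha_2)=p$ with Proposition~\ref{prop:consistent-direction} and the back-wall well-definedness of Proposition~\ref{prop:back_wall_well_defined} to pin down $v$ uniquely on the intersection.
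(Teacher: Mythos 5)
Your proposal is correct and follows essentially the same strategy as the paper's proof: obtain null homotopies from Lemma~\ref{lem:basic_homotopy2}, show they agree on pairwise intersections, and assemble the global null homotopy via Mayer--Vietoris and induction on $|\cX^p|$. The paper asserts the agreement on $\mAdd(\alpha_1,p)\cap\mAdd(\alpha_2,p)$ with no further justification, whereas you have correctly identified this as the crux and proposed filling it in via Proposition~\ref{prop:Ts}'s common--$\Ex$ constraint together with Propositions~\ref{prop:consistent-direction} and~\ref{prop:back_wall_well_defined}; that is the step the paper leaves implicit.
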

\begin{proof}
 From Lemma \ref{lem:basic_homotopy2}, we have the null homotopies $h_{\alpha_1}^+$ and $h_{\alpha_2}^+$ for $C_\ast(\mAdd(\alpha_1, p))$ and $ C_\ast(\mAdd(\alpha_2, p))$, respectively. Since, $h_{\alpha_1}^+$ and $h_{\alpha_2}^+$ are the same chain map when defined over $\mAdd(\alpha_1, p)\cap \mAdd(\alpha_2, p)$, then $\mAdd(\alpha_1, p)\cap \mAdd(\alpha_2, p)$ is also reduced to the null complex.
 
 It follows from Mayer-Vetoris sequence
 that $C_\ast(\mAdd(\alpha_1, p)) \oplus C_\ast(\mAdd(\alpha_2, p))$ is homotopic equivalent to $C_\ast(\mAdd(\alpha_1, p)\cup \mAdd(\alpha_2, p))$ via the homotopy $h_{\alpha_1}^+\oplus h_{\alpha_2}^+$. 

 By induction, we have that $C\left(\bigcup_{\alpha\in\cX^p} \mAdd(\alpha, p)\right)$ is contractible via \\ $\oplus_{\alpha \in \cX^p} h_\alpha^+$. 
 The same proof follows analogously for $C\left(\bigcup_{\alpha\in\cX^p} \mRem(\alpha, p)\right)$ if we change $\mAdd$ for $\mRem$ and $h^+$ for $h^-$, respectively.
\end{proof}
Given $p\in\SCC(\cF_2)$, let $A^p := (\gradJanus)^{-1}(p)$, $B^p := (\pi_2)^{-1}(p)$, $A^{\downarrow p} := (\gradJanus)^{-1}(\cl p)$ and $B^{\downarrow p} := (\pi_2)^{-1}(\cl (p))$ be subsets of $\Janus$. We have the following technical result.
\begin{prop}\label{prop:Ap_Bp}
    Let $p\in\SCC(\cF_2)$, then $C_\ast(A^p)$ and $C_\ast(B^p)$ are chain equivalent. Moreover, for $q\in\SCC(\cF_2)$ an immediate successor of $p$, then $C_\ast(A^{\downarrow q})$ and $C_\ast(B^{\downarrow q})$ are chain equivalent.
\end{prop}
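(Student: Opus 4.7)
The plan is to prove both chain equivalences by exploiting the fact that $A^p$ and $B^p$ differ only through the contractible modification pieces $\mRem(p)$ and $\mAdd(p)$, which sit over the common core $C^p := A^p\cap B^p$. Throughout, I will treat $A^p,B^p,C^p$ as the cell subcomplexes of $\Janus$ generated by the corresponding sets of top cells (closing downwards), so that the chain complexes make sense and Mayer--Vietoris applies.

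First, I would record the decomposition at the level of top-dimensional cells. By Definition~\ref{defn:pi2fibers} and Lemma~\ref{lem:no_overlap},
\[
(A^p)^{(N)} = (C^p)^{(N)} \sqcup \mRem(p),\qquad (B^p)^{(N)} = (C^p)^{(N)} \sqcup \mAdd(p),
\]
with $\mAdd(p)\cap \mRem(p)=\emptyset$. Passing to closures, one obtains
\[
A^p = C^p\cup \cl(\mRem(p)),\qquad B^p = C^p\cup \cl(\mAdd(p)),
\]
and the next key observation is that $\cl(\mRem(p))\cap C^p$ and $\cl(\mAdd(p))\cap C^p$ are precisely the ``outer faces'' of the modification tiles onto which the projections $\proj{(\xi,\xi')}$ of Lemma~\ref{lem:basic_homotopy1} retract each $\cl(\mTile(\xi,\xi'))$. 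Indeed, those projections land in $\ctoy(\PosDriftCells(\xi,\xi'))\cap \cl(\mTile(\xi,\xi'))$, and for every $(\xi,\xi')\in\NegIndecisiveDrifts(\alpha)$ with $\alpha\in\cX^p$, the grade of $\PosDriftCells(\xi,\xi')$ under both $\gradJanus$ and $\pi_2$ agrees on the image of the projection, so that image lies in $C^p$. I would write this out carefully using Proposition~\ref{prop:Ts} to ensure tiles coming from distinct indecisive pairs touch only in their projection targets.

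Second, assemble the null homotopies from Lemma~\ref{lem:W-homotopies} to produce a chain deformation retract of $\cl(\mRem(p))$ onto $\cl(\mRem(p))\cap C^p$, and of $\cl(\mAdd(p))$ onto $\cl(\mAdd(p))\cap C^p$. Both $\cl(\mRem(p))$ and $\cl(\mAdd(p))$ are contractible (Lemma~\ref{lem:W-homotopies}) via homotopies that factor through those boundary intersections, so the quotient complexes $C_*(\cl(\mRem(p)))/C_*(\cl(\mRem(p))\cap C^p)$ and $C_*(\cl(\mAdd(p)))/C_*(\cl(\mAdd(p))\cap C^p)$ are acyclic. Applying Mayer--Vietoris (or equivalently, a mapping cone argument) to the pushout decompositions of $A^p$ and $B^p$ displayed above yields chain equivalences
\[
C_*(A^p)\simeq C_*(C^p)\simeq C_*(B^p),
\]
which proves the first assertion. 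The same retractions, defined tile by tile, assemble into an explicit chain homotopy equivalence $A^p\to B^p$ via $A^p\to C^p\hookrightarrow B^p$.

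For the second assertion, I would induct on the height of $q$ in $\SCC(\cF_2)$. Write
\[
A^{\downarrow q} = A^{\downarrow p}\cup A^q,\qquad B^{\downarrow q} = B^{\downarrow p}\cup B^q,
\]
where $p$ ranges over the immediate predecessors of $q$. By inductive hypothesis, $C_*(A^{\downarrow p})\simeq C_*(B^{\downarrow p})$, and by the first part applied to $q$, $C_*(A^q)\simeq C_*(B^q)$. I then need a compatible equivalence on the overlaps $A^{\downarrow p}\cap A^q$ and $B^{\downarrow p}\cap B^q$. Proposition~\ref{prop:Ts}, together with the fact that all modification tiles attached to grade $q$ lie in the closed star $\ctoy$ of cells of $\cX^q$ (and their immediate faces), ensures that these overlaps are subcomplexes of $C^q$ on which both gradings agree, so the equivalences constructed at grade $q$ restrict to the identity there. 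A final Mayer--Vietoris argument then glues the equivalences to give $C_*(A^{\downarrow q})\simeq C_*(B^{\downarrow q})$.

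The main obstacle is the gluing in Step~2: while each $\cl(\mTile(\xi,\xi'))$ individually admits a nice projective null homotopy, the tiles $\cl(\mTile(\xi,\xi'))$ and $\cl(\mTile(\sigma,\sigma'))$ for distinct indecisive pairs can share faces inside $C^p$ (this is exactly the content of Proposition~\ref{prop:Ts}(iii)), and the direct-sum homotopies of Lemma~\ref{lem:basic_homotopy2} must be checked to agree on those shared faces. The verification hinges on the consistency of $\direc(\cdot,\cdot)$ across comparable indecisive pairs (Proposition~\ref{prop:consistent-direction}) and on the fact that the projections $\proj{(\xi,\xi')}$ agree whenever two tiles share cells, which I expect to reduce to showing that the target subcomplex depends only on $\Ex(\xi,\xi')$ and $\direc(\xi,\xi')$, both of which are invariants of the common face by Propositions~\ref{prop:unique_ext_GO_pairs} and~\ref{prop:consistent-direction}.
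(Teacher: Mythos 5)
Your first part is essentially the paper's argument in a different wrapper.  Where you split $A^p=C^p\cup\cl(\mRem(p))$ and $B^p=C^p\cup\cl(\mAdd(p))$ and invoke Mayer--Vietoris/pushout, the paper instead exhibits the short exact sequences
\[
C_*\Bigl(\textstyle\bigcup_{\alpha\in\cX^p}\mRem(\alpha,p)\Bigr)\hookrightarrow C_*(A^p)\twoheadrightarrow C_*(A^p\cap B^p),
\qquad
C_*\Bigl(\textstyle\bigcup_{\alpha\in\cX^p}\mAdd(\alpha,p)\Bigr)\hookrightarrow C_*(B^p)\twoheadrightarrow C_*(A^p\cap B^p),
\]
whose kernels are contractible by Lemma~\ref{lem:W-homotopies} and mutually disjoint by Lemma~\ref{lem:no_overlap}, so both surjections are chain equivalences and the claim follows.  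The worry you raise in your last paragraph --- whether the tile-by-tile projective homotopies agree on cells shared by distinct tiles inside a single fiber --- is real, and it is exactly what Lemma~\ref{lem:W-homotopies} certifies: Proposition~\ref{prop:Ts}(iii) forces equal extension directions whenever closures of tiles meet, so the projections $\proj{(\xi,\xi')}$ and $\proj{(\sigma,\sigma')}$ coincide on any shared cells, and the direct-sum null homotopies are consistent there.

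Where you genuinely depart from the paper is the inductive step.  You decompose $A^{\downarrow q}$ as a union $A^{\downarrow p}\cup A^q$ and then propose to glue the inductive equivalence on $A^{\downarrow p}$ to the fiber-level equivalence on $A^q$ via a Mayer--Vietoris argument, which obliges you to verify that both equivalences restrict compatibly on the overlap.  The paper avoids that entirely by filtering $A^{\downarrow p}\subset A^{\downarrow q}$ and looking at the pair: excision gives $C_*(A^{\downarrow q},A^{\downarrow p})\simeq C_*(A^q)$, the resulting short exact sequences of the pairs split over the field $\F$, and the equivalence $C_*(A^{\downarrow q})\simeq C_*(B^{\downarrow q})$ is assembled as $h_1\oplus h_2$ with $h_1$ the inductive equivalence on $A^{\downarrow p}$ and $h_2$ the fiber equivalence at $q$ from the first part.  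Split-exactness is what makes the gluing automatic, so the overlap-compatibility you flag as the main obstacle simply does not arise in the paper's route.  Your version is not wrong, but it leaves real verification work precisely where the split-sequence argument delivers the answer for free.
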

\begin{proof}
    Consider the projections  $B^p \twoheadrightarrow A^p\cap B^p$ and $A^p \twoheadrightarrow A^p\cap B^p$, obtained from the projections $\proj{(\xi,\xi')}:\cl (\mTile(\xi,\xi'))\rightarrow \cl(\mTile(\xi,\xi')) \cap \ctoy(\PosDriftCells_{(\xi, \xi')})$
    for any $(\xi,\xi')\in\cD$ such that $\mTile(\xi,\xi')\subset \mFiber(p)$. We will use this projection to obtain two short exact sequences.
    
    First, notice that, the cells added to the fiber $A^p$ are 
    \[\bigcup_{\alpha\in\cX^p} \mAdd(\alpha, p)\] then $\bigcup_{\alpha\in\cX^p} \mAdd(\alpha, p)\subset B^p-B^p\cap A^p$. Hence, we have the following short exact sequence
    \[
    C\left(\bigcup_{\alpha\in\cX^p} \mAdd(\alpha, p)\right) \hookrightarrow  C_\ast(B^p) \twoheadrightarrow C_\ast(A^p\cap B^p),\]
    where $\hookrightarrow$ is the inclusion.
    
    Similarly, we can obtain another short exact sequence by noticing that the cells removed from the fiber $A^p$ are $\bigcup_{\alpha\in\cX^p} \mRem(\alpha, p)$, and by Lemma \ref{lem:no_overlap}, the cells added to $A^p$ and removed from $A^p$ do not overlap, then $\bigcup_{\alpha\in\cX^p} \mRem(\alpha, p)\subset A^p - B^p\cap A^p$. Thus
    \[ C\left(\bigcup_{\alpha\in\cX^p} \mRem(\alpha, p)\right) \hookrightarrow C_\ast(A^p) \twoheadrightarrow C_\ast(A^p\cap B^p).
    \]
    Given that 
    \[C\left(\bigcup_{\alpha\in\cX^p} \mAdd(\alpha, p)\right) \quad \text{ and }  \quad  C\left(\bigcup_{\alpha\in\cX^p} \mRem(\alpha, p)\right)\] are contractible by Lemma \ref{lem:W-homotopies} then $C_\ast(A^p)\simeq C_\ast(A^p\cap B^p) \simeq C_\ast(B^p)$.
    
    Consider the following diagram 
    \begin{equation}\label{eq:diagram_h1_h2}
    \begin{tikzcd}
    0  \arrow[r] & C_\ast(A^{\downarrow p}) \arrow[r, hookrightarrow] \arrow[d, "h_1"] & 
    C_\ast(A^{\downarrow q}) \arrow[r, twoheadrightarrow] \arrow[d, dotted] & 
    C_\ast(A^{\downarrow q}, A^{\downarrow p}) \arrow[d, "h_2"] \arrow[r]  & 0\\
    0  \arrow[r] & C_\ast(B^{\downarrow p}) \arrow[r, hookrightarrow]  &
    C_\ast(B^{\downarrow q}) \arrow[r, twoheadrightarrow]  &
    C_\ast(B^{\downarrow q}, B^{\downarrow p}) \arrow[r]  & 0,
    \end{tikzcd}
    \end{equation}
     where $h_1$ is the chain homotopy derived from induction and $h_2$ is the chain homotopy obtained from $C_\ast(A^q) \simeq C_\ast(B^q)$ and the excision theorem \[C_\ast(A^{\downarrow q}, A^{\downarrow p})\simeq C_\ast(A^q) \text{ and } C_\ast(B^{\downarrow q}, B^{\downarrow p})\simeq C_\ast(B^q).\] 
    Notice that the top and bottom sequences in diagram \eqref{eq:diagram_h1_h2} are split exact sequences, thus 
    \[C_\ast(A^{\downarrow q})\simeq C_\ast(A^{\downarrow p})\oplus C_\ast(A^{\downarrow q}, A^{\downarrow p}) \quad \text{and} \quad C_\ast(B^{\downarrow q})\simeq C_\ast(B^{\downarrow p})\oplus C_\ast(B^{\downarrow q}, B^{\downarrow p}).\]
    Hence, $ C_\ast(A^{\downarrow q})$ is homotopic equivalent to $ C_\ast(B^{\downarrow q})$ via the chain homotopy         
    $h_1\oplus h_2$.
\end{proof}
\begin{proof}[Proof of Theorem~\ref{thm:gradedchainequiv}]
    By applying Proposition~\ref{prop:Ap_Bp} inductively we obtain a $\SCC(\cF_2)$-graded chain homotopy between $(C_\ast(\Janus), \gradJanus)$ and $(C_\ast(\Janus), \pi_2)$.
\end{proof}

\chapter{Global Dynamics of Ramp Systems via $\cF_2$}
\label{sec:R2Dynamics}

The standing hypothesis throughout this chapter is the following:
\begin{description}
    \item[H2] Consider an $N$-dimensional ramp system given by
    \begin{equation}
    \label{eq:rampH2}
    \dot{x} = -\Gamma x + E(x; \nu, \theta, h)
    \end{equation}
    with parameters $(\gamma, \nu, \theta)\in \Lambda(S)$ (see \eqref{eq:lambdaS}) and $h \in \cH_2(\gamma, \nu, \theta)$ (see Definition~\ref{defn:H2}). Let $\cX=\cX(\I)$ be the ramp induced cubical complex, $\omega \colon W(\cX) \to \setof{\pm 1}$ be the associated wall labeling and $\rook : TP(\cX) \to \setof{0,\pm 1}$ be the associated Rook Field (see Section~\ref{sec:ramp2rook}). Let $\cX_b$ and $\Janus$ be the associated blow-up and Janus complexes respectively (see  Section~\ref{sec:blowup_complex} and Section~\ref{sec:janusComplex}). Let $\recG$ and $\recG_J$ be an associated rectangular geometrization of $\cX_b$ and $\Janus$ respectively. Let $\cF_2 \colon \cX \mvmap \cX$ be the associated combinatorial multivalued map (see Section~\ref{sec:quasi-local}) and let $\pi_2 \colon \Janus \to \SCC(\cF_2)$ be the D-grading derived from $\cF_2$ (see Section~\ref{sec:pi2}). 
\end{description}
The primary goal of this chapter is to prove the following theorem. 

\begin{thm}
\label{thm:R2ABlattice}
Given  hypothesis {\bf H2}, there exists a geometrization $\bG_2$ of $\Janus$ that is aligned with $\eqref{eq:rampH2}$ over all $\cN \in \sN(\cF_2)$. 
\end{thm}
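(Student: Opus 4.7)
The plan is to construct $\bG_2$ by modifying a rectangular geometrization $\recG_\mathtt{J}$ of $\Janus$ only on the closures of modification tiles $\cl(\mTile(\xi,\xi'))$, $(\xi,\xi')\in\cD(\rook)$. Outside these tiles, the D-grading $\pi_2$ coincides with the subdivided grading $\gradJanus$ (Proposition~\ref{prop:same-grading2_1}), and since $h \in \cH_2\subset \cH_1$, the transversality arguments of Chapter~\ref{sec:R1Dynamics} --- in particular Proposition~\ref{prop:transversality-for-entrance-exit-faces} and Theorem~\ref{thm:R1ABlattice} --- apply verbatim to show that the rectangular realization is aligned with \eqref{eq:rampH2} along every face of $\bbdy(\cN)^{(N-1)}$ not meeting any modification tile. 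The problem therefore reduces to the local question of redefining $\bg(\zeta)$ for those $\zeta$ whose geometric realization lies inside some $\cl(\mTile(\xi,\xi'))$. By Proposition~\ref{prop:Ts}, distinct tiles are essentially disjoint, so the construction can be carried out independently on each tile and then glued.

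On a fixed modification tile with GO-pair $(n_g,n_o)$, the boundary of the $\pi_2$-fibers cutting through $\mTile(\xi,\xi')$ takes the combinatorial form of a staircase in the $(n_g,n_o)$-plane (compare Figure~\ref{fig:indecisive_drift_2_b1}(D)). The plan is to replace this staircase by a smooth curve $\Gamma$ defined as an integral curve of the planar flow $(\dot x_{n_g},\dot x_{n_o})$, restricted to $\bg(\blup(\NegDriftCells(\xi,\xi')))$ with the remaining $N-2$ coordinates held fixed inside the appropriate subinterval, and then extended by product in those directions. Because $\Gamma$ is by construction an integral curve of the $(n_g,n_o)$-component of $f$, its inward normal in the $(n_g,n_o)$-plane automatically satisfies $\langle f,z_\Gamma\rangle\neq 0$; in each of the $N-2$ transverse directions alignment follows from Lemma~\ref{lem:gradient-direction-is-nonzero} applied in the common gradient direction $n_g\in G_i$ provided by Proposition~\ref{prop:GO-pair-properties}(3). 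A standard isotopy-extension of $\Gamma$ across the closure of the tile, combined with the rectangular realization outside, will yield a geometrization $\bG_2$ satisfying Definition~\ref{defn:geometrization}. An application of Corollary~\ref{cor:inward} on each $\cN\in\sN(\cF_2)$ then delivers the conclusion.

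The main technical obstacle is showing that the integral curve $\Gamma$ remains inside the geometric realization of $\NegDriftCells(\xi,\xi')$ all the way across the tile, i.e., over the full $n_g$-span $\Xi_{n_g}(\xi')$. This is a global control problem on a planar flow constrained only by local parameter bounds, and the four cases of Definition~\ref{defn:H2} correspond precisely to the four configurations of the regulation map $\rmap{\xi}$ that must be handled: inequality \eqref{eq:F2-bounds} controls the entry of $\Gamma$ from a boundary wall; \eqref{eq:F2-bounds-internal-not-active} handles purely affine propagation when $n_o$ is neither cyclic nor actively regulated; \eqref{eq:F2-bounds-internal-not-cyclic} governs the single composition with the nullcline of $\dot x_{\rmap\xi(n_o)}$ in the active but non-cyclic case; and \eqref{eq:F2-bounds-cyclic} handles the iterated composition along a cycle of $\rmap\xi$. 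In each case Proposition~\ref{prop:local-monotone} together with the envelope estimates $L_{n_g}, U_{n_o}$ from \eqref{eq:Ln}--\eqref{eq:Un} yield an explicit bound on $|dx_{n_o}/dx_{n_g}|$ along $\Gamma$; integrating over $\Xi_{n_g}(\xi')$ and comparing against $2h_{n_o,n_g,j_{k_{n_g}}}$ gives the required containment exactly under the corresponding inequality. A secondary issue is compatibility when $(\xi_0,\xi_0')$ and $(\xi_1,\xi_1')$ share an $\Ex$-direction (Proposition~\ref{prop:Ts}(iii)); here Proposition~\ref{prop:opaque-pairs} and Proposition~\ref{prop:back_subset} will be used to choose the curves consistently in adjacent tiles so that the product structure glues to a global geometrization.
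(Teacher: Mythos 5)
There is a genuine gap at the heart of your construction: you propose to build the modified boundary $\Gamma$ as an integral curve of the planar component $(\dot x_{n_g},\dot x_{n_o})$ of the ramp vector field itself, extended by product in the remaining directions, and then claim that ``its inward normal in the $(n_g,n_o)$-plane automatically satisfies $\langle f,z_\Gamma\rangle\neq 0$.'' The opposite is true. If $\Gamma$ is an integral curve of $(\dot x_{n_g},\dot x_{n_o})$, then that planar vector is \emph{tangent} to $\Gamma$; and if the realization is extended as a product $\Gamma\times\prod_{n\neq n_g,n_o}I_n$, then the tangent space at $x$ spans $(\dot x_{n_g},\dot x_{n_o},0,\dots,0)$ together with all the complementary coordinate directions, so the full vector $f(x)$ lies in it. Consequently $\langle f,z_\Gamma\rangle=0$ everywhere on the modified boundary, which is exactly what you need to avoid. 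None of the $\cH_2$ bounds, monotonicity inputs, or tile-gluing considerations you assemble can rescue a construction whose boundary is flow-invariant by design.

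The paper's construction (Definitions~\ref{defn:GO-manifold} and \ref{defn:R2-Manifold}, Proposition~\ref{prop:transversality-manifold}, and the GO-conditions of Definition~\ref{defn:R2-Geometrization}) avoids this by building the GO-manifold $\cM_{\delta,\varepsilon}(\xi,\xi')$ out of integral curves of a \emph{perturbed} field $F_\varepsilon$, whose $n_g$-component is $(-\gamma_{n_g}-r_{n_g}\varepsilon)x_{n_g}+E_{n_g}(x)$ rather than $-\gamma_{n_g}x_{n_g}+E_{n_g}(x)$. Since $F\neq F_\varepsilon$ pointwise, $F$ is generically not tangent to these curves, and the explicit computation in the proof of Theorem~\ref{thm:R2ABlattice} gives $\langle F(x),z_\zeta(x)\rangle=\varepsilon x_{n_g}\,|F_{n_o}(x)|>0$ off the nullcline --- the sign is supplied entirely by the $\varepsilon$-perturbation. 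The remaining case $x\in\Nul_{n_o}(\xi,\xi')$, where $F$ is genuinely tangent to the boundary, is not transversality but the softer ``inward-pointing'' condition (Definition~\ref{defn:aligned}, Corollary~\ref{cor:inward}), handled separately by observing that $F-F_\varepsilon$ pushes the trajectory off the $F_\varepsilon$-integral surface immediately into the interior. Your proposal does not mention this tangency locus at all. To repair the argument you would need to introduce an analogue of the $\varepsilon$-perturbation before Proposition~\ref{prop:FlowFromExternalTangency} and its corollaries can be invoked, and isolate the nullcline tangency as a separate step.
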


Recall from Definition~\ref{defn:aligned} that a geometrization $\bG_2$ is aligned with \eqref{eq:rampH2} over $\cN\in \sInvset^+(\cF_2)$ if given any $(N-1)$-dimensional cell $\zeta\in \bbdy(\blup(\cN))$, it follows that $\bg(\zeta)$ is a smooth $(N-1)$-dimensional manifold over which the vector field is transverse inwards. We provide a constructive proof of Theorem~\ref{thm:R2ABlattice}. 

In order to provide manifolds for each $\zeta \in \bbdy(\blup(\cN))$, Section~\ref{sec:nullclines} introduces the regions of interest and Section~\ref{sec:F2-bounds} describes analytical bounds that allow one to construct such manifolds. Finally, Section~\ref{sec:Geo2} summarizes the construction and verifies Theorem~\ref{thm:R2ABlattice}. 

In light of Theorem~\ref{thm:R2ABlattice}, Theorem~\ref{thm:dynamics} implies that if a ramp system satisfies the hypothesis {\bf H2}, then its global dynamics is characterized by the combinatorial/homological computations of Part~\ref{part:II}. 

\section{Nullclines of Ramp Systems}
\label{sec:nullclines}
Consider $\xi \prec \xi' \in \cX$. 
As indicated in Remark~\ref{rem:g(b(xi))}, via the geometrization we obtain explicit subsets of phase space $\bg(\blup(\xi)),\bg(\blup(\xi'))\subset [0,\infty)^N$. 
Let
\begin{equation}
\label{eq:nuln}
\Nul_{n}(\xi,\xi') := \setdef{ x \in \bg(\blup(\xi))\cap \bg(\blup(\xi')) }{ -\gamma_{n} x_{n} + E_{n}(x;\nu,\theta,h) = 0}
\end{equation}
denote the intersection of the $n$-nullcline with the geometric realization of the shared faces of $\blup(\xi)$ and $\blup(\xi')$.

\begin{prop}
\label{prop:opaque-implies-nontrivial-nullcline}
If $\xi,\xi'\in\cX$ and $\xi \darrow_{\cF_1}\xi'$, then $\Nul_{n}(\xi,\xi') \neq \emptyset$, where $\Ex(\xi,\xi')=\setof{n}$. 
\end{prop}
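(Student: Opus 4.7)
The plan is to apply the intermediate value theorem to the continuous function $x \mapsto -\gamma_n x_n + E_n(x;\nu,\theta,h)$ on the connected intersection $\bg(\blup(\xi))\cap\bg(\blup(\xi'))$, after exhibiting two points in that intersection at which this function takes opposite signs.

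First I would use Proposition~\ref{prop:Rule1-DArrow-Implies-Opaque} to conclude that $n \in O(\xi)$, i.e. $R_n(\xi) = \setof{\pm 1}$. Then I would unpack the assumption $\xi \darrow_{\cF_1} \xi'$: since $\xi \in \cF_1(\xi')$ and $\xi' \in \cF_1(\xi)$, the contrapositives of parts (i) and (ii) of Condition~1.2 in Definition~\ref{def:Rule1.2} yield $\xi \notin E^+(\xi')$ and $\xi \notin E^-(\xi')$. By Definition~\ref{def:exit_face}, this produces $\mu_+, \mu_- \in \Top_\cX(\xi')$ with $\rook_n(\xi,\mu_+) = 1$ and $\rook_n(\xi,\mu_-) = -1$.

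Next I would translate this combinatorial data into analytic inequalities. By Corollary~\ref{cor:mu*} and Definition~\ref{defn:ramp-wall-labeling}, $\rook_n(\xi,\mu_\pm) = \sgn\bigl(-\gamma_n\theta_{m_{k_n},n,j_{k_n}} + E_n(\mu_\pm)\bigr)$, so writing $\theta := \theta_{m_{k_n},n,j_{k_n}}$ and $h := h_{m_{k_n},n,j_{k_n}}$, we have $E_n(\mu_+) > \gamma_n\theta$ and $E_n(\mu_-) < \gamma_n\theta$. By Proposition~\ref{prop:intersection-of-geometrization-of-blup}, every $x \in \bg(\blup(\xi))\cap \bg(\blup(\xi'))$ satisfies $x_n = \theta - p_n(\xi,\xi')h$. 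Since $\xi,\xi'\preceq \mu_\pm$, I can choose $x^\pm \in \bg(\blup(\mu_\pm))\cap \bg(\blup(\xi))\cap \bg(\blup(\xi'))$; on the top-cell $\bg(\blup(\mu_\pm))$ every ramp function is constant, so $E_n(x^\pm) = E_n(\mu_\pm)$. The hypothesis $h \in \cH_2 \subset \cH_1$, specifically conditions~\eqref{eq:cH1right} and~\eqref{eq:cH1left}, together with the strict inequalities $E_n(\mu_\pm)/\gamma_n \ne \theta$ guaranteed by $(\gamma,\nu,\theta)\in\Lambda(S)$, then force $-\gamma_n x_n^+ + E_n(\mu_+) > 0$ and $-\gamma_n x_n^- + E_n(\mu_-) \leq 0$, with both cases $p_n(\xi,\xi')=\pm 1$ requiring a separate but parallel calculation.

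Finally, since the intersection $\bg(\blup(\xi))\cap \bg(\blup(\xi'))$ is a closed $(N{-}1)$-dimensional rectangle (in particular, path-connected) and $x \mapsto -\gamma_n x_n + E_n(x;\nu,\theta,h)$ is continuous on it, the intermediate value theorem along any path from $x^+$ to $x^-$ yields a point where this function vanishes, proving $\Nul_n(\xi,\xi')\neq\emptyset$. (The edge case where $-\gamma_n x_n^- + E_n(\mu_-)=0$ produces a nullcline point directly, making IVT unnecessary.) The main obstacle is the case-splitting sign bookkeeping over $p_n(\xi,\xi')=\pm 1$ and confirming that the $\cH_1$ bounds deliver the needed strict inequality on the $\mu_+$ side in both cases; everything else reduces to an application of IVT on a rectangle.
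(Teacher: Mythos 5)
Your proposal is correct and follows essentially the same path as the paper's own proof: unpack $\xi \darrow_{\cF_1}\xi'$ into $\xi\notin E^\pm(\xi')$ to obtain top cells $\mu_\pm\in\Top_\cX(\xi')$ where $\rook_n$ takes both values, translate via Corollary~\ref{cor:mu*} and Definition~\ref{defn:ramp-wall-labeling} into opposite signs of $-\gamma_n\theta+E_n(\mu_\pm)$, propagate to $x_n=\theta-p_n(\xi,\xi')h$ using $h\in\cH_1$, pick $x^\pm$ in the shared face inside the constant-$E_n$ regions of $\mu_\pm$, and apply the intermediate value theorem on the path-connected intersection. The only cosmetic differences are that you also invoke Proposition~\ref{prop:Rule1-DArrow-Implies-Opaque} (not needed once $\xi\notin E^\pm(\xi')$ is in hand), and you flag the degenerate case where one signed quantity vanishes exactly --- a point the paper's proof glides past --- which is actually a small improvement in care.
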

\begin{proof}
Let $\xi, \xi'$ be cells in $\cX$ and assume $\xi \darrow_{\cF_1} \xi'$. 
By Definition~\ref{def:Rule1}, $\xi \prec \xi'$ and $\dim(\xi')=\dim(\xi)+1$. 
Since $\xi \darrow_{\cF_1} \xi'$, $\xi \notin E^\pm(\xi')$, so there are top cells $\mu^{+},\mu^{-} \in \Top_\cX(\xi')$ such that $\Phi_{n}(\xi,\mu^{+}) = 1$ and $\Phi_{n}(\xi,\mu^{-}) = -1$.
By Definition~\ref{def:rookfield}, there exist $n$-walls $(\xi_{n}^{+},\mu^{+}),(\xi_{n}^{-},\mu^{-}) \in W(\cX)$ such that $\omega(\xi_{n}^{+},\mu^{+}) = 1$ and $\omega(\xi_{n}^{-},\mu^{-})=-1$, with $\xi \preceq \xi_{n}^{\pm}$ and $p_n(\xi_{n}^{\pm},\mu^{\pm})=p_n(\xi,\xi')$. 
    
Note that for any $x \in \bg(\blup(\xi)) \cap \bg(\blup(\xi'))$, Definition~\ref{defn:rectGeoXb} implies that
    \[
        x_n = \theta_{m_{k_n},n,j_{k_n}} - p_n(\xi,\xi') h_{m_{k_n},n,j_{k_n}}
    \]
    for some $m_{k_n}$ and $j_{k_n}$ (see \eqref{eq:Jn}).
    By Definition~\ref{defn:ramp-wall-labeling}, 
    \[
    \sgn( -\gamma_n \theta_{m_{k_n}, n, j_{k_n}} + E_n(\mu^{+}) ) = 1 \quad\text{and}\quad \sgn( -\gamma_n \theta_{m_{k_n}, n, j_{k_n}} + E_n(\mu^{-}) ) = -1.
    \]
By Definition~\ref{defn:H1}, $h \in \cH_2(\gamma,\nu,\theta) \subset \cH_1(\gamma,\nu,\theta)$ implies that the sign is constant for any value between $\theta_{m_{k_n},n,j_{k_n}}$ and $\theta_{m_{k_n},n,j_{k_n}}- p_n(\xi,\xi')h_{m_{k_n},n,j_{k_n}}$, so
    \begin{align*}
        -\gamma_n \left( \theta_{m_{k_n}, n, j_{k_n}}- p_n(\xi,\xi') h_{m_{k_n},n,j_{k_n}} \right) + E_n(\mu^{+}) & > 0,\ \text{and} \\
        -\gamma_n \left( \theta_{m_{k_n}, n, j_{k_n}}- p_n(\xi,\xi') h_{m_{k_n},n,j_{k_n}} \right) + E_n(\mu^{-}) & < 0.
    \end{align*}
    Define the points $x^{+},x^{-} \in [0,\infty)^N$ by
    \[
        x_i^{\pm} = \theta_{m_{k_i},i,j_{k_i}} - p_i(\xi,\mu^{\pm}) h_{m_{k_i},i,j_{k_i}},
    \]
    and note that 
    \[
    x^{\pm} \in \left( \bg(\blup(\xi))\cap \bg(\blup(\xi')) \right) \cap \left( \bg(\blup(\xi_{n}^{\pm})) \cap \bg(\blup(\mu^{\pm}) \right). 
    \]
    Thus, there exist points $x^{\pm} \in \bg(\blup(\xi))\cap \bg(\blup(\xi'))$ with 
    \[
        \sgn\left( -\gamma_n x_{n}^{\pm} + E_{n}(x^{\pm},\nu,\theta,h) \right) = \pm 1.
    \]
    Therefore, $\Nul_{n}(\xi,\xi') \neq \emptyset$ by the Intermediate Value Theorem. 
\end{proof}

\begin{rem}
Recall by Definition~\ref{defn:rectGeoXr} that $\recG_\mathtt{J}$ is a rectangular geometrization of $\Janus$.
Thus, Theorem~\ref{thm:R1ABlattice} remains valid in the context of the Janus complex when $\recG$ is replaced by $\recG_\mathtt{J}$. 
The same is true for Proposition~\ref{prop:opaque-implies-nontrivial-nullcline}.
\end{rem}

For technical reasons it is more convenient to work on regions in phase space slightly larger or smaller than those defined directly by $\recG$ (see Remark~\ref{rem:g(b(xi))}).
With this in mind, we define the following.
\begin{defn}
\label{defn:admissible-delta}
We say that $\delta \in [0,\infty)$ is
an \emph{admissible spatial perturbation}
if for each $n \in \setof{1,\ldots,N}$ and each top cell $\mu \in \Top_\cX(\cX)$
\begin{equation}
\label{eq:delta-restriction}
\delta < \frac{\Xi_n(\mu)}{2}, 
\end{equation}
where $\Xi_n(\mu)$ is the length of the interval $I_n(b(\mu))$ (see \eqref{eq:Xi}).
    If $\delta$ is admissible, for each $\xi \in \cX$ and $n \in \setof{1,\ldots,N}$, we define the intervals $I_n^\delta(\xi)$ by
    \begin{equation}
    \label{eq:Idelta}
    I_n^\delta(\xi) \coloneqq \begin{cases}
            [\theta_{m_k,n,j_k}-h_{m_k,n,j_k}-\delta,\theta_{m_k,n,j_k}+h_{m_k,n,j_k}+\delta], & n \in J_i(\xi) \\
                [\theta_{m_k,n,j_k}+h_{m_k,n,j_k}+\delta,
            \theta_{m_{k+1},n,j_{k+1}}-h_{m_{k+1},n,j_{k+1}}-\delta], & n \in J_e(\xi)
            \end{cases}    
    \end{equation}
    where $(m_k,j_k),(m_{k+1},j_{k+1}) \in \mathcal{J}(n)$ as in \eqref{eq:Jn}. Given a subset of indices $\cI \subseteq \setof{1,\ldots,N}$, define the rectangular region
    \begin{equation}
        \label{eq:Qdelta}
        Q^\delta(\xi,\cI) \coloneqq \prod_{n \in \cI} I_n(\blup(\xi)) \times \prod_{n \notin \cI} I_n^\delta(\xi) \subset [0,\infty)^N.
    \end{equation}
\end{defn}
\begin{defn}
\label{defn:Ixi}
Assume that $(\xi,\xi') \in \cX \times \cX$ has a GO-pair $(n_g,n_o)$. 
The \emph{immutable indices of $(\xi,\xi')$} are
    \begin{equation}
        \label{eq:Ixi}
        \cI(\xi,\xi') = \begin{cases}
            \setof{n_g,n_o} & \text{if } \dim(\xi) = N-2 \\ 
            \setof{n_o} & \text{if } \dim(\xi) < N-2
        \end{cases}
    \end{equation}
\end{defn}
Observe that if $n \in \cI$, the rectangular region $Q^\delta(\xi,\cI)$ is obtained via the same interval $I_n(\blup(\xi))$ that defined $\bg(\blup(\xi))=\prod I_n(\blup(\xi))$ (see Remark~\ref{rem:g(b(xi))}). However, if $n \notin \cI$, the interval $I_n^\delta(\xi)$ is slightly larger when $n \in J_i(\xi)$ or smaller when $n \in J_e(\xi)$. 
To accommodate that change in \eqref{eq:Idelta}, we obtain the following modification of the intersection of the $n$-nullcline of shared faces of $\bg(\blup(\xi))$ and $\bg(\blup(\xi'))$ for any rectangular geometrization $\recG$ and $\xi\prec\xi'\in \cX$

\begin{equation}
\label{eq:nulndelta}   
 \Nul_{n}^\delta(\xi,\xi') = \setdef{x \in Q^\delta(\xi,\cI(\xi,\xi')) \cap Q^\delta(\xi',\cI(\xi,\xi'))}{-\gamma_n x_n + E_n(x;\nu,\theta,h)=0}. 
\end{equation}

\begin{cor}
    \label{cor:delta-nullcline}
If $\xi,\xi'\in\cX$, $\xi \preceq \xi'$, $\xi \darrow_{\cF_1}\xi'$, and $\Ex(\xi,\xi')=\setof{n_o}$, then for all $\delta >0$
\[
\Nul_{n_o}^\delta(\xi,\xi') \neq \emptyset. 
\]
\end{cor}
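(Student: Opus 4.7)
The plan is to mirror the proof of Proposition~\ref{prop:opaque-implies-nontrivial-nullcline}, but operating on $Q^\delta(\xi,\cI(\xi,\xi'))\cap Q^\delta(\xi',\cI(\xi,\xi'))$ in place of $\bg(\blup(\xi))\cap \bg(\blup(\xi'))$. Since $\xi\darrow_{\cF_1}\xi'$, Condition~1.2 fails at $(\xi,\xi')$, so there exist top cells $\mu^+,\mu^-\in \Top_\cX(\xi')$ with $\rook_{n_o}(\xi,\mu^\pm)=\pm 1$. Combined with the hypothesis $h\in \cH_2(\gamma,\nu,\theta)\subseteq \cH_1(\gamma,\nu,\theta)$, Definition~\ref{defn:ramp-wall-labeling} then gives
\[
\sgn\!\left(-\gamma_{n_o}\bigl(\theta_{m_{k_{n_o}},n_o,j_{k_{n_o}}}-p_{n_o}(\xi,\xi')h_{m_{k_{n_o}},n_o,j_{k_{n_o}}}\bigr)+E_{n_o}(\mu^\pm)\right)=\pm 1.
\]

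Next, I would construct witness points $x^\pm\in Q^\delta(\xi,\cI(\xi,\xi'))\cap Q^\delta(\xi',\cI(\xi,\xi'))$ that also lie in the flat region $\bg(\blup(\mu^\pm))$, coordinate by coordinate. In the $n_o$-direction, $\cI(\xi,\xi')$ forces $x^\pm_{n_o}=\theta_{m_{k_{n_o}},n_o,j_{k_{n_o}}}-p_{n_o}(\xi,\xi')h_{m_{k_{n_o}},n_o,j_{k_{n_o}}}$; this is the unique endpoint shared by $I_{n_o}(\blup(\xi))$, $I_{n_o}(\blup(\xi'))$ and $I_{n_o}(\blup(\mu^\pm))$, by Proposition~\ref{prop:intersection-of-geometrization-of-blup}. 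For each $n\neq n_o$ with $n\notin\cI(\xi,\xi')$, the interval $I_n^\delta(\xi')$ of Definition~\ref{defn:admissible-delta} intersects $I_n(\blup(\mu^\pm))$ in a nondegenerate sub-interval of length at least $\delta$---the admissibility of $\delta$ is exactly what keeps this sub-interval nonempty when $n\in J_e(\xi')$, and the $\delta$-thickening does the same job when $n\in J_i(\xi')$; pick any value for $x^\pm_n$ inside this sub-interval. In the remaining case $n=n_g\in\cI(\xi,\xi')$, only possible when $\dim(\xi)=N-2$, one has $n_g\in J_i(\xi)\cap J_i(\xi')$, and $I_{n_g}(\blup(\xi))\cap I_{n_g}(\blup(\xi'))\cap I_{n_g}(\blup(\mu^\pm))$ collapses to the single endpoint $\theta_{m_{k_{n_g}},n_g,j_{k_{n_g}}}\pm h_{m_{k_{n_g}},n_g,j_{k_{n_g}}}$ determined by the side on which $\mu^\pm$ sits; that value is the forced choice for $x^\pm_{n_g}$ and still lies at the boundary of the flat region of $r_{n_o,n_g}$.

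Having placed $x^\pm$ in the flat region of $\mu^\pm$, one has $E_{n_o}(x^\pm;\nu,\theta,h)=E_{n_o}(\mu^\pm)$, so the function
\[
g(x)\ :=\ -\gamma_{n_o}x_{n_o}+E_{n_o}(x;\nu,\theta,h)
\]
takes opposite signs at $x^+$ and $x^-$ by the sign computation above. Finally, $Q^\delta(\xi,\cI(\xi,\xi'))\cap Q^\delta(\xi',\cI(\xi,\xi'))$ is a product of intervals, hence convex, so the line segment from $x^+$ to $x^-$ stays inside it; continuity of $g$ and the Intermediate Value Theorem produce a zero of $g$ on this segment, i.e., an element of $\Nul_{n_o}^\delta(\xi,\xi')$.

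The main obstacle is the coordinate-wise verification in the second step. In essential directions $n\in J_e(\xi')\setminus\cI(\xi,\xi')$ the set $I_n^\delta(\xi')$ is strictly smaller than $I_n(\blup(\xi'))$, so the nullcline points supplied by Proposition~\ref{prop:opaque-implies-nontrivial-nullcline} need not lie in $Q^\delta(\xi,\cI(\xi,\xi'))\cap Q^\delta(\xi',\cI(\xi,\xi'))$ and cannot simply be reused. The remedy is to rebuild the witnesses inside the $\delta$-wide overlap of the shrunken flat region of $\xi'$ with $\bg(\blup(\mu^\pm))$, which is only possible because $\delta$ is admissible. A parallel subtlety arises in the $n_g$-direction when $\dim(\xi)=N-2$, where the admissible choice is forced to be the single threshold endpoint $\theta_{n_g}\pm h_{n_g}$; this point still lies in the constant plateau of $r_{n_o,n_g}$ because ramp functions attain their constant values precisely on the closed complement of the interior of the transition window.
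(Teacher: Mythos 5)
Your proof is correct, but it takes a more elaborate route than the paper, which disposes of this corollary in one line: it invokes Proposition~\ref{prop:opaque-implies-nontrivial-nullcline} and asserts that $\Ex(\xi,\xi')\subseteq\cI(\xi,\xi')$ yields the set containment $\Nul_{n_o}(\xi,\xi')\subseteq\Nul_{n_o}^\delta(\xi,\xi')$. You instead rebuild the Intermediate Value Theorem witnesses $x^\pm$ directly inside $Q^\delta(\xi,\cI(\xi,\xi'))\cap Q^\delta(\xi',\cI(\xi,\xi'))$, which is a genuinely different and arguably more defensible argument: as your last paragraph correctly identifies, for $n\in J_e(\xi)\setminus\cI(\xi,\xi')$ the interval $I_n^\delta(\xi)$ is a strict $\delta$-shrinking of $I_n(\blup(\xi))$, so the intersection $\bg(\blup(\xi))\cap\bg(\blup(\xi'))$ is \emph{not} automatically contained in $Q^\delta(\xi,\cI(\xi,\xi'))\cap Q^\delta(\xi',\cI(\xi,\xi'))$ once $\dim(\xi)\geq 1$. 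What saves the statement is that a nullcline point can always be produced away from the shrunken boundary, and your coordinate-by-coordinate construction of $x^\pm$ in the overlap $I_n^\delta(\xi')\cap I_n(\blup(\mu^\pm))$ makes that explicit where the paper's one-line deduction leaves it implicit. Two small inaccuracies worth fixing: the claim that this overlap has length ``at least $\delta$'' is not quite right when $n\in J_e(\xi')$ --- there the overlap is $I_n^\delta(\xi')$ itself, of length $\Xi_n(\xi')-2\delta$, which admissibility guarantees to be positive but not $\geq\delta$; only nonemptiness is used, so this is harmless. And your argument genuinely requires $\delta$ to be an admissible spatial perturbation in the sense of Definition~\ref{defn:admissible-delta}, which is implicit in the well-definedness of $\Nul_{n_o}^\delta$ even though the corollary reads ``for all $\delta>0$''; state that hypothesis up front rather than leaning on it silently in the middle of the construction.
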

\begin{proof}
The result follows directly from Proposition~\ref{prop:opaque-implies-nontrivial-nullcline} since $\Ex(\xi,\xi') \subseteq \cI(\xi,\xi')$ implies that
    \(
        \Nul_{n_o}(\xi,\xi') \subseteq \Nul_{n_o}^\delta(\xi,\xi').
    \)
\end{proof}

\section{Quasi-local bounds on $h$}
\label{sec:F2-bounds}

The desired geometrization $\bG_2$ for $\cF_2$ is obtained from $\recG_\mathtt{J}$ via local modifications.
These local modifications will insure alignment of the geometrization in neighborhoods of points where solutions of the ramp system are tangent to the rectangular faces of $\recG$.
This section provides the estimates needed to insure both that local modifications are possible and that the modifications lead to the desired form of transversality.

Recall from Chapter~\ref{sec:geometrization01} that the images of the vertices in $\cX_b$ are mapped to phase space $[0,\infty)^N$ by means of \eqref{eq:evertices}, which modulo essential subscripts is defined in terms of $\theta - h$ and $\theta + h$.

Therefore, the rectangles (as in \eqref{eq:Rxi}) -- over which we want to control the modifications -- are bounded by either intervals of the form $[\theta-h,\theta+h]$, which has midpoint $\theta$,  or $[\theta +h, \theta'-h']$, which has midpoint $(\theta+h + \theta' - h')/2$.
This latter observation leads to the following definition.
\begin{defn}
    \label{defn:midpoint}
    Given $\xi=[\bv,\bw] \in \cX$ with $\bv=(k_1,\ldots,k_N)$, for each $n \in \setof{1,\ldots,N}$ define the \emph{n-midpoint of $\xi$} by
    \begin{equation}
    \label{eq:IntervalMidpoint}
    \mdpt_n(\xi) := \begin{cases}
        \theta_{m_{k_n},n,j_{k_n}} & \text{if $n \in J_i(\xi)$,} \\
    \frac{(\theta_{m_{k_n+1},n,j_{k_n+1}}-h_{m_{k_n+1},n,j_{k_n+1}}) + (\theta_{m_{k_n},n,j_{k_n}}+h_{m_{k_n},n,j_{k_n}})}{2}, & \text{if $n \in J_e(\xi)$.}
\end{cases}
\end{equation}
The \emph{midpoint of $\xi$} is \( \displaystyle\mdpt(\xi) = \left( \mdpt_1(\xi),\ldots,\mdpt_N(\xi) \right).\)
\end{defn}
\begin{prop}
\label{prop:FlowFromExternalTangency}
Let $\varphi$ denote the flow associated with the ramp system given the standing hypothesis {\bf H2}.
Assume $(\xi,\xi')\in \cX^{(N-2)} \times \cX^{(N-1)}$ has a GO-pair $({n_g},{n_o})$.
Consider $x^0 \in \Nul_{n_o}(\xi,\xi')$ such that  $\varphi((0,s),x^0) \subset \Int(\bg(\blup(\xi')))$ for sufficiently small $s>0$.

Then, there exists a finite exit time $T_{x^0} > 0$ such that $\varphi([0,T_{x^0}],x^0)\subseteq \bg(\blup(\xi'))$, but $\varphi([0,T_{x^0}+t],x^0)\not\subset \bg(\blup(\xi'))$ for all $t>0$.

Furthermore,
\begin{enumerate}
\item[(i)] if $p_{n_o}(\xi,\xi')=1$, then for all $t \in [0,T_{x^0}]$
    \[
    \varphi_{n_o}(t,x^0) > \mdpt_{n_o}(\xi'),
    \]
    and
\item[(ii)] if $p_{n_o}(\xi,\xi')=-1$, then for all $t \in [0,T_{x^0}]$
    \[
    \varphi_{n_o}(t,x^0) < \mdpt_{n_o}(\xi').
    \]
\end{enumerate}  
\end{prop}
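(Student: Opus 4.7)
The plan is to leverage the gradient direction $n_g$ (which, by Proposition~\ref{prop:GO-pair-properties}~(3), belongs to $J_i(\xi')$) to force a finite exit time, and then to exploit the relative speed between $\dot x_{n_g}$ and $\dot x_{n_o}$ (controlled by the bound defining $\cH_2$) to confine $\varphi_{n_o}(t,x^0)$ to the half of $I_{n_o}(\blup(\xi'))$ on which it starts. First, I would set $T_{x^0} := \sup\{t>0 \mid \varphi([0,t],x^0) \subseteq \bg(\blup(\xi'))\}$; the hypothesis $\varphi((0,s),x^0) \subset \Int(\bg(\blup(\xi')))$ for small $s>0$ guarantees $T_{x^0}>0$. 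Since $n_g \in G(\xi')$, Lemma~\ref{lem:gradient-direction-is-nonzero} applied on $\bg(\blup(\xi'))$ produces $|\dot x_{n_g}| \ge L_{n_g}(\xi')>0$ with constant sign throughout that region. Monotonicity then forces $x_{n_g}$ to exit the short interval $I_{n_g}(\blup(\xi'))$, of length $\Xi_{n_g}(\xi') = 2h_{m_{k_{n_g}},n_g,j_{k_{n_g}}}$, in time at most $\Xi_{n_g}(\xi')/L_{n_g}(\xi')$, and so
\[
T_{x^0} \le \frac{2\,h_{m_{k_{n_g}},n_g,j_{k_{n_g}}}}{L_{n_g}(\xi')}<\infty.
\]

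Second, the uniform upper bound $|\dot x_{n_o}| \le U_{n_o}(\xi')$ from \eqref{eq:Un} valid on $\bg(\blup(\xi'))$ yields, for every $t\in[0,T_{x^0}]$,
\[
\bigl|\varphi_{n_o}(t,x^0) - x^0_{n_o}\bigr| \;\le\; U_{n_o}(\xi')\cdot T_{x^0} \;\le\; \frac{2\,h_{n_o,n_g,j_{k_{n_g}}}\,U_{n_o}(\xi')}{L_{n_g}(\xi')},
\]
after identifying $m_{k_{n_g}}=n_o$ via the active regulation $\rmap{\xi'}(n_g)=n_o$ (Proposition~\ref{prop:GO-pair-properties}~(5)) and the indexing \eqref{eq:Jn}. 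The hypothesis $h\in\cH_2(\gamma,\nu,\theta)$ and the bound \eqref{eq:F2-bounds} in Definition~\ref{defn:H2}~(i) then give that this last expression is strictly less than $\Xi_{n_o}(\xi')/2$.

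Finally, I would pin down the starting position. Since $x^0\in \bg(\blup(\xi))\cap\bg(\blup(\xi'))$ and $n_o\in J_i(\xi)\cap J_e(\xi')$, the two rectangles share only a single value in the $n_o$-coordinate, and an explicit calculation using Remark~\ref{rem:g(b(xi))} together with \eqref{eq:IntervalMidpoint} yields
\[
x^0_{n_o} - \mdpt_{n_o}(\xi') \;=\; p_{n_o}(\xi,\xi')\cdot \frac{\Xi_{n_o}(\xi')}{2}.
\]
Combining this identity with the strict displacement bound of the previous paragraph yields (i) when $p_{n_o}(\xi,\xi')=1$ and (ii) when $p_{n_o}(\xi,\xi')=-1$. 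The main obstacle I anticipate is the subscript matching $m_{k_{n_g}}=n_o$ needed to align $\Xi_{n_g}(\xi')$, which appears naturally from gradient monotonicity, with the quantity $2h_{n_o,n_g,j_{k_{n_g}}}$ appearing in \eqref{eq:F2-bounds}; once the GO-pair structure is invoked to make this identification rigorous, the remaining analytical content reduces to a signed constant-sign monotonicity argument in $n_g$ and a bounded-speed estimate in $n_o$.
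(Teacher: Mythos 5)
Your proposal follows essentially the same plan as the paper's proof: use the gradient direction $n_g\in G(\xi')$ and Lemma~\ref{lem:gradient-direction-is-nonzero} to cap the exit time, use the speed bound $U_{n_o}(\xi')$ and the $\cH_2$ inequality \eqref{eq:F2-bounds} to bound the $n_o$-displacement, and combine with the fact that $x^0_{n_o}$ sits at signed distance $p_{n_o}(\xi,\xi')\,\Xi_{n_o}(\xi')/2$ from $\mdpt_{n_o}(\xi')$. The only stylistic difference is that the paper introduces an auxiliary time $T_{n_o}(x^0)$ (first time the $n_o$-coordinate reaches the midpoint) and compares $T_{n_g}(x^0)<T_{n_o}(x^0)$, whereas you bound the $n_o$-displacement over $[0,T_{x^0}]$ directly; these are the same estimate organized slightly differently, and your identification $m_{k_{n_g}}=n_o$ via $\rmap{\xi'}(n_g)=n_o$ is the correct way to align $\Xi_{n_g}(\xi')=2h_{m_{k_{n_g}},n_g,j_{k_{n_g}}}$ with the $h_{n_o,n_g,j_{k_{n_g}}}$ of \eqref{eq:F2-bounds}.
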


\begin{proof}
By Definition~\ref{defn:GO-pair} of GO-pair, the direction $n_g$ is a gradient direction of $\xi'$, that is, $n_g \in G(\xi')$.
Thus, by Lemma~\ref{lem:gradient-direction-is-nonzero}, $|\dot{x}_{n_g}| > 0$  on the compact set $\bg(\blup(\xi'))$. 
Define 
\[
T_{x^0} = \inf \setdef{ t\geq 0 }{ \varphi(t,x^0) \notin \bg(\blup(\xi'))}.
\]
The assumption that $\varphi((0,s),x^0) \subset \Int(\bg(\blup(\xi')))$ for sufficiently small $s>0$ implies that $T_{x^0}>0$, and the assumption that $x^0 \in \Nul_{n_o}(\xi,\xi')$ implies that $x^0_{n_g} \neq \theta_{{n_o},{n_g},j} \pm h_{{n_o},{n_g},j}$. Therefore, the same argument as above implies the existence of
\[
T_{{n_g}}(x^0)  \coloneqq \inf \setdef{ t\geq 0 }{ \varphi_{n_g}(t,x^0) \notin \left[\theta_{{n_o},{n_g},j}-h_{{n_o},{n_g},j},\theta_{{n_o},{n_g},j}+h_{{n_o},{n_g},j}\right]} >0.  
\]
Note that $\varphi(T_{{n_g}}(x^0),x^0) = \theta_{{n_o},{n_g},j}+r_{n_g} h_{{n_o},{n_g},j}$, where $R_{n_g}(\xi') = \setof{r_{n_g}}$, and that by definition $T_{x^0} \leq T_{{n_g}}(x^0)$.

Assume $p_{n_o}(\xi,\xi')=1$.
Define
\begin{equation}
\label{eq:midtime}
T_{n_o}(x^0) \coloneqq \inf\setdef{ t \geq 0 }{ \varphi_{n_o}(t,x^0) < \mdpt_{n_o}(\xi')}.    
\end{equation}
To prove (i) it is sufficient to show that $T_{n_g}(x^0) < T_{n_o}(x^0)$.
To this end note that 
\begin{align*}
        2 h_{{n_o},{n_g},j} & \geq |\varphi_{n_g}(T_{n_g}(x^0)-\varphi_{n_g}(0,x^0)| \\
        & \geq \left| \int_0^{T_{n_g}(x^0)} \dot{x}_{{n_g}} dt \right| \\
        & \geq \inf_{x \in \bg(\blup(\xi'))}|-\gamma_{n_g} x_{n_g} + E_{n_g}(x)| \cdot T_{n_g}(x^0)  \\
        & \geq L_{n_g}(\xi') \cdot T_{n_g}(x^0) > 0,
\end{align*}
where $L_{n_g}(\xi')$ is given by $\eqref{eq:Ln}$.
Similarly, 
    \begin{align*}
        \Xi_{n_o}(\xi') & = \varphi_{n_o}(0,x^0)-\varphi_{n_o}(T_{n_o}(x^0),x^0) \\ 
        & \leq \left| \int_0^{T_{n_o}(x^0)} \dot{x}_{n_o} dt \right| \\
        & \leq \sup_{x \in \bg(\blup(\xi'))} |-\gamma_{n_o}x_{n_o}+E_{n_o}(x)| \cdot T_{n_o}(x_{n_o}) \\
        & \leq U_{n_o}(\xi') \cdot T_{n_o}(x^0), 
    \end{align*}
where  $\Xi_{n_o}(\xi')$ is given by \eqref{eq:IntervalLength} and  $U_{n_o}(\xi')$ by \eqref{eq:Un}.  

By Definition~\ref{defn:H2}, i.e., the assumption that $h\in\cH_2(\nu,\gamma,\theta)$, it follows that
\begin{equation*}
        2h_{{n_o},{n_g},j} < \frac{L_{n_g}(\xi')}{U_{n_o}(\xi')} \frac{\Xi_{n_o}(\xi')}{2}.
    \end{equation*}
Therefore,
\[
T_{n_g}(x^0) \leq \frac{2 h_{{n_o},{n_g},j}}{L_{n_g}(\xi')} < \frac{\Xi_{n_o}(\xi')}{U_{n_o}(\xi')} \leq T_{n_o}(x^0),
\]
which is the desired result.

The proof for (ii) is essentially the same, though the inequality in \eqref{eq:midtime} is reversed.
\end{proof}

We now turn to the question of identifying the manifolds $\bg(\zeta)$ described in the introduction to this Chapter. We begin our discussion by focusing on pairs $(\xi,\xi') \in \cX^{(N-2)} \times \cX^{(N-1)}$ that have a GO-pair $(n_g,n_o)$.
By Proposition~\ref{prop:GO-pair-properties}
\[
J_i(\xi)=\setof{n_g,n_o}\quad\text{and}\quad J_i(\xi')=\setof{n_g}.
\]
Therefore, $n \in J_e(\xi)$ for all $n \neq n_g,n_o$.
\begin{defn}
\label{defn:GO-manifold}
Assume that $(\xi,\xi') \in \cX^{(N-2)} \times \cX^{(N-1)}$ has a GO-pair $(n_g,n_o)$.
Given an admissible spatial perturbation $\delta$ and $\varepsilon>0$, we define the associated \emph{GO-manifold} by
\begin{equation}
\label{eq:GO-manifold}
    \mathcal{M}_{\delta,\varepsilon}(\xi,\xi') \coloneqq \bigcup_{\substack{x^0 \in \Nul_{n_o}^\delta(\xi,\xi')}} \varphi_\varepsilon\left( [0,T_{n_g}(x^0)],x^0 \right).
\end{equation}
where $\varphi_\varepsilon(\cdot,x^0)$ are solutions of $\dot{x}=F_\varepsilon(x)$ given by
\begin{equation}
    \label{eq:rampSysPerturbed}
    \dot{x}_n = \begin{cases} 0 & n \neq n_o,n_g \\
        -\gamma_{n_o} x_{n_o} + E_{n_o}(x;\nu,\theta,h) & n = {n_o} \\
        (-\gamma_{n_g}-{r_{n_g}}\varepsilon) x_{n_g} + E_{n_g}(x;\nu,\theta,h) & n = {n_g}
    \end{cases}
\end{equation}
with $R_{n_g}(\xi')=\setof{r_{n_g}}$. 
\end{defn}
\begin{cor}
\label{cor:FlowFromExternalTangency-Perturbation}
Consider the standing hypothesis {\bf H2} and assume that $(\xi,\xi') \in \cX^{(N-2)} \times \cX^{(N-1)}$ has a GO-pair $(n_g,n_o)$. Let $\delta \in [0,\infty)$ be an admissible spatial perturbation.  
If $\direc(\xi,\xi') = -p_{n_o}(\xi,\xi')$, then there exist $\varepsilon > 0$ such that for each $x^0 \in \Nul_{n_o}^\delta(\xi,\xi')$, there is a finite exit time $T_{x^0,\varepsilon} > 0$ satisfying
\[
\varphi_\varepsilon([0,T_{x^0,\varepsilon}],x^0)\subseteq Q^\delta(\xi',\cI(\xi,\xi'))
\]
and
\[
\varphi_\varepsilon([0,T_{x^0,\varepsilon}+t],x^0)\not\subset Q^\delta(\xi',\cI(\xi,\xi')),\, \forall t>0.
\]
Furthermore,
\begin{enumerate}
\item[(i)] if $p_{n_o}(\xi,\xi')=1$, then for all $t \in [0,T_{x^0,\varepsilon}]$
    \[
    \varphi_{n_o}(t,x^0) > \mdpt_{n_o}(\xi'),
    \]
    and
\item[(ii)] if $p_{n_o}(\xi,\xi')=-1$, then for all $t \in [0,T_{x^0,\varepsilon}]$
    \[
    \varphi_{n_o}(t,x^0) < \mdpt_{n_o}(\xi').
    \]
\end{enumerate}
\end{cor}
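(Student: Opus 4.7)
The plan is to adapt the argument of Proposition~\ref{prop:FlowFromExternalTangency} to the perturbed system $\varphi_\varepsilon$ on the region $Q^\delta(\xi',\cI(\xi,\xi'))$, showing that the strict inequality underlying the assumption $h\in\cH_2(\gamma,\nu,\theta)$ remains valid after small perturbations. First I would establish that $T_{x^0,\varepsilon}>0$ for every $x^0\in\Nul_{n_o}^\delta(\xi,\xi')$. The direction hypothesis $\direc(\xi,\xi')=-p_{n_o}(\xi,\xi')$ means that on every top cell $\mu$ of $\support(D_{(\xi,\xi')})$ one has $\rook_{n_o}(\xi,\mu)=-p_{n_o}(\xi,\xi')$. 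Since $\bg(b(\mu))$ abuts the hyperplane carrying $\Nul_{n_o}(\xi,\xi')$ from the interior side of $Q^\delta(\xi',\cI)$, the sign of $\dot x_{n_o}$ in the interior of $\bg(b(\mu))$ agrees with $-p_{n_o}(\xi,\xi')$. Because $n_g\in G(\xi')$ forces $\dot x_{n_g}\neq 0$ (Lemma~\ref{lem:gradient-direction-is-nonzero}), for all sufficiently small $\varepsilon>0$ the perturbed flow moves immediately in the $n_g$ direction and enters the interior of some $\bg(b(\mu))$, yielding $T_{x^0,\varepsilon}>0$.

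Next I would introduce perturbed analogues of the bounding quantities in Definition~\ref{defn:H2}(i): define $L_{n_g}^{\varepsilon,\delta}(\xi')$ as the infimum of $|(-\gamma_{n_g}-r_{n_g}\varepsilon)x_{n_g}+E_{n_g}(x)|$ over $Q^\delta(\xi',\cI)$, and $U_{n_o}^\delta(\xi')$, $\Xi_{n_o}^\delta(\xi')$ analogously. Since $n_o\in\cI$, the interval $I_{n_o}(b(\xi'))$ is untouched by the $\delta$-shrinkage, hence $\Xi_{n_o}^\delta(\xi')=\Xi_{n_o}(\xi')$. The rates depend continuously on $(\varepsilon,\delta)$ on the compact domain, so the strict inequality \eqref{eq:F2-bounds} supplied by $h\in\cH_2$ persists after shrinking $\varepsilon$, giving
\[
2h_{n_o,n_g,j_{k_{n_g}}}<\frac{L_{n_g}^{\varepsilon,\delta}(\xi')}{U_{n_o}^\delta(\xi')}\,\frac{\Xi_{n_o}^\delta(\xi')}{2}.
\]

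I would then mimic the integral estimates of Proposition~\ref{prop:FlowFromExternalTangency} with $T_{n_g}^\varepsilon(x^0)$ and $T_{n_o}^\varepsilon(x^0)$ defined exactly as there, but for $\varphi_\varepsilon$ restricted to $Q^\delta(\xi',\cI)$. The bound $2h_{n_o,n_g,j}\geq L_{n_g}^{\varepsilon,\delta}(\xi')\cdot T_{n_g}^\varepsilon(x^0)$ and the matching bound $\Xi_{n_o}^\delta(\xi')\leq U_{n_o}^\delta(\xi')\cdot T_{n_o}^\varepsilon(x^0)$ combine with the perturbed inequality above to force $T_{n_g}^\varepsilon(x^0)<T_{n_o}^\varepsilon(x^0)$. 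Since the trajectory cannot leave $Q^\delta(\xi',\cI)$ before exiting one of the coordinate intervals in $\cI$, and the essential directions outside $\cI$ are controlled by the $\delta$-shrinkage together with continuity, this yields $T_{x^0,\varepsilon}\leq T_{n_g}^\varepsilon(x^0)<T_{n_o}^\varepsilon(x^0)$, and the midpoint conclusions (i) and (ii) follow verbatim from the proof of the proposition.

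The main obstacle will be justifying $T_{x^0,\varepsilon}>0$. In Proposition~\ref{prop:FlowFromExternalTangency} this was simply assumed; here it must be extracted from the purely combinatorial statement $\direc(\xi,\xi')=-p_{n_o}(\xi,\xi')$. The essential difficulty is that at $x^0$ the $n_o$-component of the vector field vanishes, so one must argue about second-order behavior: moving infinitesimally in the $n_g$ direction, which happens at nonzero rate because of the $\varepsilon$-perturbation and the gradient hypothesis, must cross into a top cell where $\rook_{n_o}$ has the correct sign and hence where $\dot x_{n_o}$ becomes strictly signed with the intended orientation. The monotonicity of ramp nonlinearities recorded in Proposition~\ref{prop:local-monotone} is the bridge that converts the combinatorial direction data into this analytic statement, and careful bookkeeping about which face of $Q^\delta(\xi',\cI)$ the trajectory is leaving from is where most of the attention will need to be focused.
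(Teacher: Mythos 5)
Your proof is correct in spirit, but you are re-deriving a great deal that the paper simply inherits.  The paper's actual proof is a one-liner: it observes that $\Nul_{n_o}^\delta(\xi,\xi')\subset\Nul_{n_o}(\xi,\xi')$ (because $Q^\delta$ only shrinks intervals in directions outside $\cI(\xi,\xi')$, all of which are essential for both $\xi$ and $\xi'$), and that the direction hypothesis $\direc(\xi,\xi')=-p_{n_o}(\xi,\xi')$ is precisely the sign condition forcing $\varphi((0,s),x^0)\subset\Int(\bg(\blup(\xi')))$ for small $s>0$, whereupon Proposition~\ref{prop:FlowFromExternalTangency} hands over the exit-time and midpoint bounds.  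The ``main obstacle'' you flagged --- deducing positivity of the exit time from the combinatorial $\direc$ condition --- is the entire content of the paper's proof, and is dispatched in a single sentence.  Your route of re-running the integral estimates of Proposition~\ref{prop:FlowFromExternalTangency} with $\varepsilon$- and $\delta$-perturbed bounding quantities is a valid alternative and gives a more self-contained argument at the cost of redundancy with the proof of the proposition.

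Two points worth noting.  First, you spend effort ensuring the trajectory cannot escape $Q^\delta(\xi',\cI(\xi,\xi'))$ through essential faces outside $\cI$; this is unnecessary, since in the perturbed system \eqref{eq:rampSysPerturbed} one has $\dot x_n=0$ for all $n\neq n_o,n_g$, so those coordinates are frozen and escape can only occur through the $n_g$ or $n_o$ faces.  That is precisely why the estimate reduces to a two-dimensional race between $T_{n_g}$ and $T_{n_o}$.  Second, the observation that $\Xi_{n_o}^\delta(\xi')=\Xi_{n_o}(\xi')$ because $n_o\in\cI$ is correct and in fact is the reason $\cI(\xi,\xi')$ is defined to contain $n_o$; you identified the mechanism correctly even though the paper leaves it tacit.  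So your proposal is sound and somewhat over-engineered relative to the paper's, rather than containing a gap.
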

\begin{proof}
    Notice that when $\direc(\xi,\xi')=-p_{n_o}(\xi,\xi')$, for any 
    \[
    x^0 \in \Nul_{n_o}^\delta(\xi,\xi')\subset \Nul_{n_o}(\xi,\xi')
    \] 
    there exists $s >0$ such that $\varphi((0,s),x^0) \subset \Int(\bg(\blup(\xi')))$, hence Proposition~\ref{prop:FlowFromExternalTangency} yields the result.
\end{proof}
\begin{cor}
\label{cor:FlowFromExternalTangency-Perturbation-2}
Given the standing hypothesis {\bf H2}, assume that $(\xi,\xi') \in \cX^{(N-2)} \times \cX^{(N-1)}$ has a GO-pair $(n_g,n_o)$. Let $\delta \in [0,\infty)$ be admissible.
If $\direc(\xi,\xi') = p_{n_o}(\xi,\xi')$, then there exist $\varepsilon > 0$ such that for any $x^0 \in \Nul_{n_o}^\delta(\xi,\xi')$, there is a finite exit time $T_{x^0,\varepsilon} > 0$ satisfying
\[
    \varphi_\varepsilon([0,T_{x^0,\varepsilon}],x^0)\subseteq Q^\delta(\xi,\cI(\xi,\xi')),
\]
and for all $t>0$
\[
\varphi_\varepsilon([0,T_{x^0,\varepsilon}+t],x^0)\not\subset Q^\delta(\xi,\cI(\xi,\xi')).
\]
Furthermore, $\varphi_{n_o}(t,x^0) \in \Int(I_{n_o}(\xi))$ for every $t \in (0,T_{x^0,\varepsilon}]$.
\end{cor}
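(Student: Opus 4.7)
The plan is to mirror the argument of Corollary~\ref{cor:FlowFromExternalTangency-Perturbation}, swapping the roles of $\xi$ and $\xi'$, and to exploit the combinatorial/analytic bounds already packaged into $\cH_2(\gamma,\nu,\theta)$. Under the hypothesis $\direc(\xi,\xi')=p_{n_o}(\xi,\xi')$, Definition~\ref{defn:gamma+-} gives $\NegDriftCells(\xi,\xi')=\xi'$ and $\PosDriftCells(\xi,\xi')=\xi$. By the rook-field interpretation of Definition~\ref{def:wall_labeling} and the monotonicity of Proposition~\ref{prop:wall-labeling-local-monotone}, there is a neighborhood $V \subset \bg(\blup(\xi'))\cup\bg(\blup(\xi))$ of $\Nul_{n_o}^\delta(\xi,\xi')$ on which $\sgn(-\gamma_{n_o}x_{n_o}+E_{n_o}(x;\nu,\theta,h))$ equals $p_{n_o}(\xi,\xi')$ precisely on the $\xi$-side of the shared face. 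Hence, at any $x^0\in\Nul_{n_o}^\delta(\xi,\xi')$, the unperturbed vector field is tangent to the shared face in the $n_o$ direction but the first-order flow normal to the face points into $\Int(I_{n_o}(\xi))$.

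First, I would choose $\varepsilon>0$ small and uniform in $x^0$. The perturbation in \eqref{eq:rampSysPerturbed} shifts $\dot{x}_{n_g}$ by $-r_{n_g}\varepsilon\,x_{n_g}$, which, by the choice of sign $r_{n_g}$ such that $R_{n_g}(\xi')=\{r_{n_g}\}$, only \emph{reinforces} the existing sign of $\dot{x}_{n_g}$ guaranteed by Lemma~\ref{lem:gradient-direction-is-nonzero} on $V$. By compactness of $\Nul_{n_o}^\delta(\xi,\xi')$ and continuity of $F_\varepsilon$, a single $\varepsilon$ works uniformly. After time $t>0$ the $n_g$-component leaves the nullcline at a controlled rate, and the $n_o$-component evolves with the sign $p_{n_o}(\xi,\xi')$ just obtained; thus $\varphi_\varepsilon((0,s),x^0)\subset\Int(Q^\delta(\xi,\cI(\xi,\xi')))$ for sufficiently small $s>0$. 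Existence of the finite exit time $T_{x^0,\varepsilon}$ then follows as in Proposition~\ref{prop:FlowFromExternalTangency}: the uniform lower bound on $|\dot{x}_{n_g}|$ forces the $n_g$-coordinate to leave its (bounded) interval in finite time, so the trajectory must leave $Q^\delta(\xi,\cI(\xi,\xi'))$.

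The final assertion, $\varphi_{n_o}(t,x^0)\in\Int(I_{n_o}(\xi))$ for all $t\in(0,T_{x^0,\varepsilon}]$, is the crux. The nullcline-side boundary of $I_{n_o}(\xi)$ cannot be re-entered because, once the flow has left the nullcline in $n_g$, the sign of $\dot{x}_{n_o}$ remains $p_{n_o}(\xi,\xi')$, so $x_{n_o}$ moves monotonically into $\Int(I_{n_o}(\xi))$ until the trajectory exits through an $n_g$-face. The opposite boundary of $I_{n_o}(\xi)$ cannot be reached first: as in the computation proving Proposition~\ref{prop:FlowFromExternalTangency}, the $n_g$-traversal time is bounded above by $2h_{n_o,n_g,j_{k_{n_g}}}/L_{n_g}(\xi)$ (up to an $O(\varepsilon)$ correction from \eqref{eq:rampSysPerturbed}), whereas the $n_o$-crossing time across the remainder of $I_{n_o}(\xi)$ is bounded below by a quantity that, depending on which subcase of $\cH_2$ applies, is controlled by $L_{n_g}/U_{n_o}$ times either $2h_{\rmap\xi(n_o),n_o,j_{k_{n_o}}}$ (cf.\ \eqref{eq:F2-bounds-internal-not-active}) or the expression on the right-hand side of \eqref{eq:F2-bounds-internal-not-cyclic}--\eqref{eq:F2-bounds-cyclic}. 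In every case the inequality built into Definition~\ref{defn:H2} is precisely the condition that makes the $n_g$-exit strictly earlier, so $T_{x^0,\varepsilon}$ occurs before $x_{n_o}$ can reach the far boundary.

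The main obstacle will be the case analysis in this last step. The dichotomy of Definition~\ref{defn:H2}, items (ii)--(iv), reflects whether $n_o$ is active at $\xi$, whether it lies in a cycle of some sub-face, and whether opaque backward regulation is present; each case furnishes a different comparison of timescales and must be matched to the combinatorial data of $(\xi,\xi')$. A secondary, less severe difficulty is making the choice of $\varepsilon$ explicit and uniform: this requires invoking uniform continuity of $F_\varepsilon$ on a compact neighborhood of $\Nul_{n_o}^\delta(\xi,\xi')$ and verifying that the $\varepsilon$-perturbation does not disturb any of the strict inequalities from $\cH_2$, which is straightforward but must be done after the above case analysis is in place.
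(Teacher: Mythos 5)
Your proposal takes essentially the same route as the paper's proof: establish that after the nullcline the flow enters $\Int(I_{n_o}(\xi))$, compare the $n_g$-traversal time to the $n_o$-crossing time via the $\cH_2$ inequalities, and extract a uniform $\varepsilon$ by continuity. Two small remarks. First, you have the drift-cell labels reversed: under $\direc(\xi,\xi')=p_{n_o}(\xi,\xi')$ we have $p_{n_o}\cdot\direc=1$, so Definition~\ref{defn:gamma+-} gives $\PosDriftCells(\xi,\xi')=\xi'$ and $\NegDriftCells(\xi,\xi')=\xi$, not the other way round; your downstream conclusion (flow enters the interior of $I_{n_o}(\xi)$) is nonetheless correct, so this is only a labeling slip, not a gap. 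Second, your observation that the time-scale comparison may, depending on the combinatorics of $n_o$ (not active, active but not cyclic, or cyclic), require invoking whichever of \eqref{eq:F2-bounds-internal-not-active}, \eqref{eq:F2-bounds-internal-not-cyclic}, or \eqref{eq:F2-bounds-cyclic} applies is a genuinely more careful treatment than the paper's own proof, which only cites \eqref{eq:F2-bounds-internal-not-active} explicitly and otherwise points to the argument of Proposition~\ref{prop:FlowFromExternalTangency}. That case analysis is the right thing to flag as the main remaining work.
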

\begin{proof}
    Let $x^0 \in \Nul_{n_o}^\delta(\xi,\xi')$ and define 
    \[
    T_{{n_g}}(x^0)  \coloneqq \inf \setdef{ t\geq 0 }{ \varphi_{n_g}(t,x^0) \notin I_{n_g}(\xi) } >0.  
    \]
    We observe that if $\varphi(t,x^0) \in \partial I_{n_o}(\xi)$ for some $t \in  (0,T_{x^0}]$, then $t = T_{x^0}$. Recall that by the \eqref{eq:F2-bounds-internal-not-active} in Definition~\ref{defn:H2}, $h$ satisfies 
    \[
        2 h_{n_o,n_g,j} < \frac{L_{n_g}(\xi)}{U_{n_o}(\xi)} 2 h_{\rmap\xi(n_o),n_o,j}.
    \]
    Using the same argument as Proposition~\ref{prop:FlowFromExternalTangency}, the time $T_{n_g}(x^0)$ to exit through $x_{n_g}=\theta_{n_o,n_g,j}\pm h_{n_o,n_g,j}$ is less than the time $T_{n_o}(x^0)$ to exit through $x_{n_o}=\theta{*,n_o,j'}\pm h_{*,n_o,j}$. Therefore, $\varphi_{n_o}(t,x^0) \in \Int(I_{n_o}(\xi))$. By continuity, one obtains $\varepsilon>0$. 
\end{proof}
\begin{prop}
    \label{prop:continuous-exit-time}
    Given the standing hypothesis {\bf H2}, assume that $(\xi,\xi') \in \cX^{(N-2)} \times \cX^{(N-1)}$ has a GO-pair $(n_g,n_o)$. 
    If $\varphi_\varepsilon$ denotes the flow of the perturbed ramp system $\dot{x}=F_\varepsilon(x)$ in \eqref{eq:rampSysPerturbed}, then the exit times $T_{x^0,\varepsilon}$ vary continuously as a function of $x^0 \in \Nul_{n_o}^\delta(\xi,\xi')$.
\end{prop}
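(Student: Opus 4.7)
The plan is to reduce the claim to the standard fact that, if a flow crosses a hypersurface transversally at some point, the first-crossing time depends continuously on the initial condition. The surface through which trajectories exit $Q^\delta(\xi',\cI(\xi,\xi'))$ (respectively $Q^\delta(\xi,\cI(\xi,\xi'))$) is, by Corollaries~\ref{cor:FlowFromExternalTangency-Perturbation} and~\ref{cor:FlowFromExternalTangency-Perturbation-2}, the face
\[
\Sigma \;:=\; \setdef{x\in \cl(Q^\delta)}{x_{n_g}=\theta_{n_o,n_g,j}+r_{n_g}h_{n_o,n_g,j}},
\]
where $r_{n_g}$ satisfies $R_{n_g}(\xi')=\setof{r_{n_g}}$. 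Since $n_g\in G(\xi')$, Lemma~\ref{lem:gradient-direction-is-nonzero} gives a uniform lower bound $|\dot{x}_{n_g}|\geq c>0$ on the compact set $\bg(\blup(\xi'))$, and because the perturbation in \eqref{eq:rampSysPerturbed} only adds a term of the form $-r_{n_g}\varepsilon x_{n_g}$ in the $n_g$-direction, for all sufficiently small $\varepsilon$ the same sign and a (slightly smaller) lower bound is preserved on the compact neighborhood $\cl(Q^\delta)$. In particular, $F_\varepsilon$ is transverse to $\Sigma$.

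First I would fix $x^0\in \Nul_{n_o}^\delta(\xi,\xi')$ and its exit time $T_{x^0,\varepsilon}>0$. Transversality of $F_\varepsilon$ to $\Sigma$ at $\varphi_\varepsilon(T_{x^0,\varepsilon},x^0)$ means that
\[
\frac{d}{dt}\Big|_{t=T_{x^0,\varepsilon}}\!\! \bigl(\varphi_{\varepsilon,n_g}(t,x^0)-(\theta_{n_o,n_g,j}+r_{n_g}h_{n_o,n_g,j})\bigr)\neq 0,
\]
so the implicit function theorem applied to the smooth (in $(t,y)$) map $G(t,y):=\varphi_{\varepsilon,n_g}(t,y)-(\theta_{n_o,n_g,j}+r_{n_g}h_{n_o,n_g,j})$ yields a smooth function $y\mapsto T(y)$, defined on a neighborhood of $x^0$ in $\Nul_{n_o}^\delta(\xi,\xi')$, such that $G(T(y),y)=0$ and $T(x^0)=T_{x^0,\varepsilon}$.

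Next I would verify that this local solution $T(y)$ really is the first exit time from $Q^\delta$ for $y$ near $x^0$, so that $T(y)=T_{y,\varepsilon}$. Using continuous dependence of solutions on initial conditions (Gr\"onwall), the trajectory $\varphi_\varepsilon(\cdot,y)$ stays $C^0$-close on $[0,T_{x^0,\varepsilon}+\eta]$ to $\varphi_\varepsilon(\cdot,x^0)$. By Corollaries~\ref{cor:FlowFromExternalTangency-Perturbation} and~\ref{cor:FlowFromExternalTangency-Perturbation-2}, $\varphi_\varepsilon([0,T_{x^0,\varepsilon}],x^0)$ is separated from every face of $Q^\delta$ other than $\Sigma$ by a positive distance in the remaining $N-1$ coordinates (either by the midpoint bound in the $n_o$-coordinate or by the fact that the other coordinates are frozen under $F_\varepsilon$), so the closeness forces $\varphi_\varepsilon(t,y)\in Q^\delta$ for $t\in[0,T(y)]$ and exits $Q^\delta$ exactly at $t=T(y)$ through $\Sigma$. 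This identifies $T(y)$ with $T_{y,\varepsilon}$ locally, giving continuity (in fact $C^1$-regularity) at $x^0$. Since $x^0$ was arbitrary, $T_{\cdot,\varepsilon}$ is continuous on all of $\Nul_{n_o}^\delta(\xi,\xi')$.

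The delicate point, and the only real obstacle, is the second step: ruling out the possibility that a nearby trajectory exits through a different face of $Q^\delta$ before reaching $\Sigma$. This is where the quasi-local inequalities in Definition~\ref{defn:H2} are essential, because they are precisely what guarantee (via the proofs of Proposition~\ref{prop:FlowFromExternalTangency} and its two corollaries) that during the time interval $[0,T_{x^0,\varepsilon}]$ the remaining coordinates stay strictly inside their prescribed intervals with a uniform gap. Once that gap is secured for $x^0$, taking $y$ close enough to $x^0$ preserves it by uniform continuity of the flow, and no other face can be hit before $\Sigma$.
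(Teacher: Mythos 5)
Your argument takes essentially the same route as the paper: both use Corollaries~\ref{cor:FlowFromExternalTangency-Perturbation} and~\ref{cor:FlowFromExternalTangency-Perturbation-2} to confine the exit to the $n_g$-wall, then use transversality of $F_\varepsilon$ there (uniform gradient bound from Lemma~\ref{lem:gradient-direction-is-nonzero}) together with continuous dependence on initial conditions to conclude continuity of the first crossing time. The paper runs an elementary ``slightly before inside, slightly after outside'' argument in place of your implicit function theorem step, which is a cosmetic difference (your version buys $C^1$ dependence, legitimately, since $F_\varepsilon$ is affine inside $Q^\delta$).
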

\begin{proof}
Given that $\dot{x}_n = 0$ when $n \neq n_g, n_o$, it follows that for any $x^0 \in \Nul_{n_o}^\delta(\xi,\xi')$, the exit time $T_{x^0,\varepsilon}$ is either equal to $T_{n_g,\varepsilon}(x^0)$ or equal to $T_{n_o,\varepsilon}(x^0)$. By Corollary~\ref{cor:FlowFromExternalTangency-Perturbation} and Corollary~\ref{cor:FlowFromExternalTangency-Perturbation-2}, it must be the case that $T_{x^0,\varepsilon} = T_{n_g,\varepsilon}(x^0)$. In particular, for every $x^0 \in \Nul_{n_o}^\delta(\xi,\xi')$
\[ 
\varphi_{\varepsilon,n_g}(T_{x^0},x^0) = \theta_{n_o,n_g,j}+r_{n_g} h_{n_o,n_g,j},
\]
for some $j \in \setof{1,\ldots,K(n_g)}$. 
Then for $\varepsilon' >0$ sufficiently small,
\begin{equation*}
    \varphi_{\varepsilon}(T_{x^0}-\varepsilon',x^0) \in \Int(\bg(\blup(\xi'))) \text{ and } \varphi_{\varepsilon}(T_{x^0}+\varepsilon',x^0) \notin \bg(\blup(\xi')).
\end{equation*}
since $|\dot{x}_{n_g}| > 0$ for all $x \in \bg(\blup(\xi'))$. Fix $\varepsilon' > 0$ that verifies the condition above. By continuity with respect to the initial conditions, there are neighborhoods $U_+, U_-$ of $x^0$ such that 
\[
    \varphi_\varepsilon(T_{x^0}-\varepsilon',U_+) \subset \Int(\bg(\blup(\xi'))) \text{ and } \varphi_\varepsilon(T_{x^0}+\varepsilon',U_-) \subset (\bg(\blup(\xi')))^\complement.
\]
Thus, for any neighborhood $U$ of $x^0$ contained in $U_+ \cap U_-$, $y^0 \in U$ implies
\[
    T_{x^0} - \varepsilon' < T_{y^0} < T_{x^0} + \varepsilon',
\]
hence $|T_{y^0}-T_{x^0}|<\varepsilon'$. 
\end{proof}
\begin{prop}
    \label{prop:transversality-manifold}
    Under the assumptions of {\bf H2}, if $(\xi, \xi') \in \cX^{(N-2)} \times \cX^{(N-1)}$ admits a GO-pair $(n_g, n_o)$, then the solutions of the ramp system $\dot{x} = F(x)$ in \eqref{eq:rampH2} are transverse to $\cM_{\delta,\varepsilon}(\xi,\xi')\setminus \Nul_{n_o}(\xi,\xi')$. 
\end{prop}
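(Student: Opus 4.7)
The plan is to show $F(x)\notin T_x\cM_{\delta,\varepsilon}(\xi,\xi')$ at every $x\in\cM_{\delta,\varepsilon}(\xi,\xi')\setminus\Nul_{n_o}(\xi,\xi')$. Because $\cM_{\delta,\varepsilon}(\xi,\xi')$ is swept by the perturbed flow $\varphi_\varepsilon$ starting from $\Nul_{n_o}^{\delta}(\xi,\xi')$, the vector $F_\varepsilon(x)$ is automatically tangent, so the question reduces to whether the difference $F-F_\varepsilon$, which by~\eqref{eq:rampSysPerturbed} has only an $n_g$-component equal to $r_{n_g}\varepsilon x_{n_g}$ and frozen-direction components $F^\ast$ (where $\ast\in\{1,\dots,N\}\setminus\{n_g,n_o\}$), can be absorbed into $T_x\cM_{\delta,\varepsilon}(\xi,\xi')$. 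I will first verify that $\cM_{\delta,\varepsilon}(\xi,\xi')$ is a smooth $(N{-}1)$-dimensional embedded submanifold away from the nullcline: the GO-pair hypothesis forces $\partial_{n_g}E_{n_o}\neq 0$ on the relevant region, so the implicit function theorem renders $\Nul_{n_o}^{\delta}(\xi,\xi')$ a smooth graph $x_{n_g}=f(x_\ast)$, and Propositions~\ref{prop:FlowFromExternalTangency}, \ref{prop:continuous-exit-time} together with smooth dependence of $\varphi_\varepsilon$ on initial data show that $\Phi(x^0,t)\coloneqq\varphi_\varepsilon(t,x^0)$ is a smooth immersion for $t>0$. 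Because $F_\varepsilon^n\equiv 0$ for $n\notin\{n_g,n_o\}$ the flow preserves each $x_\ast$, so $T_x\cM_{\delta,\varepsilon}(\xi,\xi')$ is spanned by $F_\varepsilon(x)$ together with $N{-}2$ vectors $v_k=(a_k^{n_g},a_k^{n_o},e_k)$ in coordinates $(x_{n_g},x_{n_o},x_\ast)$, whose $(n_g,n_o)$-components come from the variational equation.

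\textbf{Normal covector and key identity.} A direct computation shows that the covector
\[
\omega=F_\varepsilon^{n_o}\,dx_{n_g}-F_\varepsilon^{n_g}\,dx_{n_o}+\sum_{k\in\ast}\bigl(F_\varepsilon^{n_g}a_k^{n_o}-F_\varepsilon^{n_o}a_k^{n_g}\bigr)\,dx_k
\]
annihilates $F_\varepsilon(x)$ and each $v_k$, and therefore spans the annihilator of $T_x\cM_{\delta,\varepsilon}(\xi,\xi')$. Substituting $F^{n_o}=F_\varepsilon^{n_o}$ and $F^{n_g}-F_\varepsilon^{n_g}=r_{n_g}\varepsilon x_{n_g}$ yields
\[
\omega(F)=r_{n_g}\,\varepsilon\,x_{n_g}\,F_\varepsilon^{n_o}(x)-\sum_{k\in\ast}F^k(x)\bigl(F_\varepsilon^{n_o}(x)\,a_k^{n_g}-F_\varepsilon^{n_g}(x)\,a_k^{n_o}\bigr),
\]
so transversality at $x$ is equivalent to $\omega(F)\neq 0$.

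\textbf{The main obstacle.} On $\cM_{\delta,\varepsilon}(\xi,\xi')\setminus\Nul_{n_o}(\xi,\xi')$ one has $F_\varepsilon^{n_o}(x)\neq 0$, so the first term in $\omega(F)$ is nonzero; since $a_k^{n_o}(0)=0$ and $F_\varepsilon^{n_o}=0$ at $t=0$, the cross-term sum also vanishes on the nullcline. The hard part is controlling the cross-term uniformly along each trajectory $\varphi_\varepsilon([0,T_{n_g}(x^0)],x^0)$ so that it cannot cancel the principal term. I intend to do this by combining: (i) the analytic estimates~\eqref{eq:F2-bounds}--\eqref{eq:F2-bounds-cyclic} of Definition~\ref{defn:H2}, which were calibrated so that each trajectory exits through the $n_g$-face well before its $n_o$-coordinate can drift appreciably (cf.~Corollaries~\ref{cor:FlowFromExternalTangency-Perturbation}, \ref{cor:FlowFromExternalTangency-Perturbation-2}), and which therefore also constrain the variational coefficients $a_k^{n_g}$ and $a_k^{n_o}$ on this short time window via Gr\"onwall; and (ii) the freedom to shrink $\varepsilon$ within the admissible interval furnished by those corollaries, which permits the $r_{n_g}\varepsilon x_{n_g}F_\varepsilon^{n_o}$ term to be compared directly against the cross-terms. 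The technical core of the proof is turning these two ingredients into a uniform lower bound on $|\omega(F)|$ on $\cM_{\delta,\varepsilon}(\xi,\xi')\setminus\Nul_{n_o}(\xi,\xi')$.
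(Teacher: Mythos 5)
The central gap in your proposal is a missed structural observation that makes most of your machinery unnecessary. You are right that $T_x\cM_{\delta,\varepsilon}(\xi,\xi')$ is $(N-1)$-dimensional, and that merely observing $F(x)\neq\lambda F_\varepsilon(x)$ does not, in general, give transversality to a hypersurface. But the region on which the GO-manifold lives is built specifically so that this worry disappears. On $Q^\delta(\xi,\cI(\xi,\xi'))\cup Q^\delta(\xi',\cI(\xi,\xi'))$ the frozen coordinates $x_\ast$ ($\ast\notin\{n_g,n_o\}$) are confined to $I^\delta_\ast$, which by construction lies strictly between output thresholds of $x_\ast$, so \emph{every} ramp function in an $\ast$-direction is locally constant there. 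Hence $E_{n_g}$ and $E_{n_o}$ (and therefore $F_{\varepsilon,n_g}, F_{\varepsilon,n_o}, F_{n_g}, F_{n_o}$) depend only on $(x_{n_g}, x_{n_o})$; the nullcline equation $-\gamma_{n_o}x_{n_o}+E_{n_o}(x)=0$ involves no $x_\ast$; and the perturbed flow $\varphi_\varepsilon$ keeps each $x_\ast$ frozen. It follows that
\[
\Nul^\delta_{n_o}(\xi,\xi') \;=\; \{x^0_{n_g}\}\times\{x^*_{n_o}\}\times\prod_\ast I^\delta_\ast,
\qquad
\cM_{\delta,\varepsilon}(\xi,\xi') \;=\; C\times\prod_\ast I^\delta_\ast,
\]
where $C$ is an arc in the $(x_{n_g},x_{n_o})$-plane; in particular $T_x\cM = \operatorname{span}\!\big(F_\varepsilon(x)\big)\oplus\bigoplus_\ast\R e_\ast$. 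So your variational coefficients $a_k^{n_g}$ and $a_k^{n_o}$ are identically zero, the cross-term sum in your $\omega(F)$ vanishes, the Grönwall estimates and the ``shrink $\varepsilon$'' step are superfluous, and the normal covector may be taken entirely in the $(n_g,n_o)$-plane. Transversality therefore collapses to checking that $(F_{n_g},F_{n_o})$ is never a scalar multiple of $(F_{\varepsilon,n_g},F_{\varepsilon,n_o})$ away from $\Nul_{n_o}$ --- the $n_o$-components being equal forces the multiplier to be $1$, while the $n_g$-components differ by $r_{n_g}\varepsilon x_{n_g}\neq 0$. That two-line check is exactly the paper's proof; your ``main obstacle'' is a phantom created by not invoking the local constancy of the ramp nonlinearities in the frozen directions.
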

\begin{proof}
Given that $\cM_{\delta,\varepsilon}(\xi,\xi')$ is given by solutions of $\dot{x}=F_\varepsilon(x)$ for
\begin{equation*}
    F_{\varepsilon,n}(x) =  \begin{cases} 0 & n \neq n_o,n_g \\
       F_{n_o}(x) & n = {n_o} \\
        F_{n_g}(x) - r_{n_g}\varepsilon x_{n_g} & n = {n_g},
    \end{cases}
\end{equation*}
it is sufficient to show that $F(x) \neq \lambda F_\varepsilon (x)$ for any $\lambda \neq 0$, that is, $F(x) \notin T_{x} \cM_{\delta,\varepsilon}(\xi,\xi')$. Let $x \in \cM_{\delta,\varepsilon}(\xi,\xi')\setminus \Nul_{n_o}(\xi,\xi')$ and suppose that $F(x)=\lambda F_\varepsilon(x)$. Since $F_{\varepsilon,n_o}(x)=F_{n_o}(x)$, if $F_{n_o}(x)\neq 0$, then $\lambda=1$. However, $F(x)- \lambda F_\varepsilon(x) = \varepsilon r_{n_g} x_{n_g} \neq 0$. Thus, there is no $\lambda \neq 0$ such that $F(x)=\lambda F_\varepsilon(x)$, therefore the flow is transverse to $\cM_{\delta,\varepsilon}(\xi,\xi')\setminus \Nul_{n_o}(\xi,\xi')$. 
\end{proof}
We have so far defined the GO-manifold at $(\xi,\xi')$ when $\dim(\xi)=N-2$, which is the highest possible dimension in which GO-pairs may be exhibited. We now extend the GO-manifold to any pair $(\xi,\xi') \in \cX^{(N-k-1)} \times \cX^{(N-k)}$ that has GO-pairs, where $k\geq 2$.
\begin{defn}
    \label{defn:R2-Manifold}
    Assume that $(\xi,\xi') \in \cX \times \cX$ has a GO-pair $(n_g,n_o)$ with $\Ex(\xi,\xi')=\setof{n_o}$ and $\dim(\xi)<N-2$. Let $(\tilde{\xi}_i,\tilde{\xi}'_i) \in \cX^{(N-2)} \times \cX^{(N-1)}$ be all $k$ pairs that have unique GO-pairs $(n_{g_i},n_o)$ with $\xi \preceq \tilde{\xi}_i \prec \tilde{\xi}_i'$ and $\xi'\preceq \tilde{\xi}_i'$ for $i=1,\ldots,k$ and denote by $F_{\varepsilon_i}(x;\tilde{\xi}_i,\tilde{\xi}'_i)$ their corresponding system as in \eqref{eq:rampSysPerturbed}. 
    
    Let $\Delta^{k-1}\subseteq \mathbb{R}^{k}$ denote the standard $(k-1)$-simplex
    \[
        \Delta^{k-1} = \setdef{ \alpha \in \mathbb{R}^{k}}{\sum_{i=1}^{k} \alpha_i = 1,\ \alpha_i \geq 0}
    \]
    and let $\psi : \Delta^{k-1}\times [0,1]^{N-k} \to \Nul_{n_o}^\delta(\xi,\xi')$ be a parametrization of $\Nul_{n_o}^\delta(\xi,\xi')$ where $\psi_i = \psi|_{\alpha_i = 0}$ is a parametrization of
    \(
    Q^\delta \left(\tilde{\xi}_i,\cI(\tilde{\xi},\tilde{\xi}_i') \right) \cap Q^\delta \left(\tilde{\xi}_i',\cI(\tilde{\xi},\tilde{\xi}_i')\right)
    \)
    for each $i = 1, \ldots, k$. 
    
    Given an admissible $\delta \in [0,\infty)$ and  $\varepsilon=(\varepsilon_1,\ldots,\varepsilon_k)$ with $\varepsilon_i > 0$, define the \emph{GO-manifold at $(\xi,\xi')$} by 
    \[
        \cM_{\delta,\varepsilon}(\xi,\xi') \coloneqq \bigcup_{x^0 \in \Nul_{n_o}^\delta(\xi,\xi')} \varphi_{\varepsilon}([0,T_{n_g}(x^0)],x^0),
    \]
    where $\varphi_{\varepsilon}(\cdot,x^0)$ are solutions of
    \begin{equation}
        \label{eq:rampSysPerturbed-general}
        \dot{x} = \sum_{i=0}^{k-1} \alpha_i F_{\varepsilon_i}(x;\tilde{\xi}_{i},\tilde{\xi}'_{i}).
    \end{equation}
\end{defn}

Corollaries~\ref{cor:FlowFromExternalTangency-Perturbation},~\ref{cor:FlowFromExternalTangency-Perturbation-2} and Proposition~\ref{prop:transversality-manifold} yield the following corollary. 
\begin{cor}
\label{cor:R2Manifold}
Assume the standing hypothesis {\bf H2}. 
If $(\xi,\xi') \in \cD(\rook)$ and $\delta \in [0,\infty)$ is an admissible spatial perturbation, then there exist $\varepsilon>0$ such that solutions of the ramp system \eqref{eq:rampH2} are transverse to $\cM_{\delta,\varepsilon}(\xi,\xi')\setminus \Nul_{n_o}(\xi,\xi')$
where $\dot{x}=F(x)$ is the ramp system \eqref{eq:rampH2} and $\dot{x}=F_\varepsilon(x)$ is the perturbed system \eqref{eq:rampSysPerturbed-general}. 
\end{cor}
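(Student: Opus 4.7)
The plan is to split the argument along the structural dichotomy underlying Definition~\ref{defn:R2-Manifold}: the base case $\dim(\xi) = N-2$, where a single GO-pair $(n_g,n_o)$ determines $\cM_{\delta,\varepsilon}(\xi,\xi')$ via the one-parameter perturbed system \eqref{eq:rampSysPerturbed}, and the general case $\dim(\xi) < N-2$, where the manifold is built from the convex combination \eqref{eq:rampSysPerturbed-general}. In each case I would first produce a threshold $\varepsilon > 0$ small enough that the perturbed flow stays in the prescribed product region with a continuous, finite exit time, then verify that the unperturbed ramp vector field $F$ is nowhere a scalar multiple of the perturbed vector field off the nullcline.

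For the base case, Proposition~\ref{prop:GO-pair-properties} pins down the GO-pair $(n_g,n_o)$, and the sign $\direc(\xi,\xi')$ determines which of Corollary~\ref{cor:FlowFromExternalTangency-Perturbation} (when $\direc(\xi,\xi') = -p_{n_o}(\xi,\xi')$) or Corollary~\ref{cor:FlowFromExternalTangency-Perturbation-2} (when $\direc(\xi,\xi') = p_{n_o}(\xi,\xi')$) supplies the required $\varepsilon_0 > 0$ and the finite exit time $T_{x^0,\varepsilon}$ for each $x^0 \in \Nul_{n_o}^\delta(\xi,\xi')$. Continuity of the exit time is Proposition~\ref{prop:continuous-exit-time}, so $\cM_{\delta,\varepsilon}(\xi,\xi')$ is a topological $(N-1)$-manifold parametrized by $(x^0,t) \in \Nul_{n_o}^\delta(\xi,\xi') \times [0,T_{x^0,\varepsilon}]$, and transversality of $F$ is precisely Proposition~\ref{prop:transversality-manifold}.

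For the general case, let $\{(\tilde\xi_i,\tilde\xi_i')\}_{i=1}^{k}$ be the $(N-2,N-1)$-dimensional pairs from Definition~\ref{defn:R2-Manifold}, sharing the opaque direction $n_o$ but having distinct gradient directions $n_{g_1},\ldots,n_{g_k}$. By Proposition~\ref{prop:consistent-direction} all of these pairs share the direction of $(\xi,\xi')$, so applying the appropriate one of Corollaries~\ref{cor:FlowFromExternalTangency-Perturbation} and \ref{cor:FlowFromExternalTangency-Perturbation-2} to each pair yields thresholds $\varepsilon_i^\star > 0$; I then take $\varepsilon = (\varepsilon_1,\ldots,\varepsilon_k)$ with $\varepsilon_i \in (0,\varepsilon_i^\star)$. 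The combined field $F_\varepsilon^{\mathrm{c}}(x) = \sum_i \alpha_i F_{\varepsilon_i}(x;\tilde\xi_i,\tilde\xi_i')$ has $n_o$-component equal to $F_{n_o}(x)$ (since the $\alpha_i$ sum to $1$), $n_{g_j}$-component equal to $\alpha_j\bigl(F_{n_{g_j}}(x) - r_{n_{g_j}}\varepsilon_j x_{n_{g_j}}\bigr)$, and zero in every remaining direction. For $x \in \cM_{\delta,\varepsilon}(\xi,\xi')\setminus\Nul_{n_o}(\xi,\xi')$ we have $F_{n_o}(x) \neq 0$, so $F(x) = \lambda F_\varepsilon^{\mathrm{c}}(x)$ forces $\lambda = 1$ via the $n_o$-component; the $n_{g_j}$-components then give $(1-\alpha_j)F_{n_{g_j}}(x) + \alpha_j r_{n_{g_j}}\varepsilon_j x_{n_{g_j}} = 0$ for each $j$ with $\alpha_j > 0$, which fails for sufficiently small $\varepsilon_j$ since $x_{n_{g_j}} > 0$ and $F_{n_{g_j}}$ is bounded away from zero by the gradient condition on $\tilde\xi_i'$ via Lemma~\ref{lem:gradient-direction-is-nonzero}.

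The main obstacle I anticipate lies in the general case, specifically in handling the directions $n \notin \{n_o,n_{g_1},\ldots,n_{g_k}\}$ along which $F_\varepsilon^{\mathrm{c}}$ vanishes but $F$ need not. Ruling out a proportionality relation there requires identifying that such directions lie in $J_e(\xi)$ (by Proposition~\ref{prop:unique_ext_GO_pairs} together with Definition~\ref{defn:Ixi}) and exploiting the rectangular structure of $Q^\delta(\xi,\cI(\xi,\xi'))$ so that these coordinates remain in a regime where either $F_n(x) = 0$ on the domain or any nonzero component is incompatible with $\lambda = 1$. A secondary technical point is extending Proposition~\ref{prop:continuous-exit-time} to the convex-combination system: continuity of the exit time must be established jointly in $x^0 \in \Nul_{n_o}^\delta(\xi,\xi')$ and the barycentric coordinates $\alpha \in \Delta^{k-1}$, which should follow from the same transversality argument applied to the gradient coordinates, uniform across compact subsets of $\Delta^{k-1}$.
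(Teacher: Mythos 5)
Your decomposition matches the paper's: the base case $\dim(\xi)=N-2$ is discharged via Proposition~\ref{prop:transversality-manifold}, and the general case proceeds by showing the direction is consistent across the base pairs $(\tilde\xi_i,\tilde\xi_i')$ via Proposition~\ref{prop:consistent-direction} and then checking non-proportionality of $F$ and $F_\varepsilon^{\mathrm{c}} := \sum_i\alpha_i F_{\varepsilon_i}$. Where the paper simply asserts ``$F(x)\notin T_x\cM_{\delta,\varepsilon}$ for sufficiently small $\varepsilon_i$'' together with Propositions~\ref{prop:local-monotone} and \ref{prop:FlowFromExternalTangency}, you unpacked this into an explicit proportionality calculation on the $n_o$- and $n_{g_j}$-components. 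That is a legitimate refinement of the argument, and conceptually it is the same route.

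However, the ``main obstacle'' you anticipate is not actually an obstacle, and you should not list it as a dependency. The directions $n\notin\{n_o,n_{g_1},\ldots,n_{g_k}\}$ lie in $J_e(\xi)$, and there $F_{\varepsilon,n}^{\mathrm{c}}(x)=0$ by construction. If $F_n(x)\neq 0$ for some such $n$, non-proportionality is trivially even stronger (the perturbed field is orthogonal to that axis and $F$ is not); if $F_n(x)=0$ for all such $n$, that is precisely the situation your $n_o$- and $n_{g_j}$-argument handles: either $\lambda\neq 1$ fails on the $n_o$-component (off the nullcline, $F_{n_o}(x)=F^{\mathrm{c}}_{\varepsilon,n_o}(x)\neq 0$), or $\lambda=1$ fails on the $n_{g_j}$-component via Lemma~\ref{lem:gradient-direction-is-nonzero}. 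In other words, your own computation already closes the argument without any additional structure on the $J_e(\xi)$ coordinates; they only matter in that you must verify they never produce a new tangent obstruction, and they cannot, because the parametrized flow $\varphi_\varepsilon$ is constant in those coordinates so they contribute only the identity to the pushforward. Your ``secondary technical point'' (extending Proposition~\ref{prop:continuous-exit-time} to the convex combination) is also not used in the paper's proof of this corollary and is not needed here: the corollary asserts transversality along $\cM_{\delta,\varepsilon}\setminus\Nul_{n_o}$, which follows from the pointwise argument you gave, not from continuity of the exit-time function. That continuity matters for showing $\cM_{\delta,\varepsilon}$ is a manifold in the first place (it is invoked in constructing the geometrization in Section~\ref{sec:Geo2}), but the corollary as stated takes that construction for granted.
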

\begin{proof}
    When $\dim(\xi)=N-2$, Proposition~\ref{prop:transversality-manifold} yields the result directly. Assume that $\dim(\xi)<N-2$.
    Note that if $(\xi,\xi') \in \cD(\rook)$, then $\direc(\tilde{\xi}_i,\tilde{\xi}_i')=\direc(\xi,\xi')$ for any pair $(\tilde{\xi}_i,\tilde{\xi}_i')\in \cX^{(N-2)} \times \cX^{(N-1)}$ that have GO-pairs and $\Ex(\xi,\xi')=\Ex(\tilde{\xi}_i,\tilde{\xi}_i')$, by Proposition~\ref{prop:consistent-direction}. Therefore, by Proposition~\ref{prop:local-monotone}, $\Nul_{n_o}^\delta(\xi,\xi')$ splits $Q^\delta(\xi,\cI(\xi,\xi'))\cap Q^\delta(\xi',\cI(\xi,\xi'))$ into two connected regions that satisfy either $\dot{x}_{n_o}>0$ or $\dot{x}_{n_o}<0$. In either case, either all trajectories starting at $x^0 \in \Nul_{n_o}^\delta(\xi,\xi')$ go into $Q^\delta(\xi,\cI(\xi,\xi'))$ or all trajectories go into $Q^\delta(\xi',\cI(\xi,\xi'))$--that is, for each $x^0 \in \Nul_{n_o}^\delta(\xi,\xi')$ there exists $s>0$ sufficiently small such that
    \[
        \varphi_\varepsilon((0,s),x^0) \subset \Int(Q^\delta(\xi,\cI(\xi,\xi'))), 
    \]
    or $ \varphi_\varepsilon((0,s),x^0) \subset \Int(Q^\delta(\xi',\cI(\xi,\xi')))$. Without loss of generality, assume the former. By Proposition~\ref{prop:FlowFromExternalTangency}, $\cM_{\delta,\varepsilon}(\xi,\xi')$ does not intersect the plane $x_{n_o}=\mdpt_{n_o}(\xi')$, and $F(x) \notin T_x \cM_{\delta,\varepsilon}(\xi,\xi')$ for sufficiently small $\varepsilon_i > 0$. 
\end{proof}
\begin{defn}
We call $\epsilon >0$ an \emph{admissible GO-perturbation} if it satisfies the conclusion of Corollary~\ref{cor:R2Manifold}.
\end{defn}

\section{Geometrization of $\Janus$ for $\cF_2$}
\label{sec:Geo2}

We remind the reader that the primary goal of this chapter is to prove Theorem~\ref{thm:R2ABlattice}. For that purpose, we construct a geometrization $\bG_2$ of the Janus complex $\Janus$ that is aligned with the vector field. The desired geometrization $\bG_2$ is obtained from $\recG_J$ by changing the restrictions in Definition~\ref{defn:rectGeoXr} that $\bg_\zeta$ must satisfy when $\zeta \in \Janus$. Of utmost interest are the cells in the boundary of $\mTile(\xi,\xi')$ when $(\xi,\xi') \in \cD(\rook)$.

First, we define the cells in $\Janus$ that will be associated to $Q^\delta(\xi,I)$ by identifying a similar structure in the cell complex, i.e., given a certain list of indices $\cI \subseteq \setof{1,\ldots,N}$, we add or subtract cells in a given direction $n \in \cI$ whenever $n \in J_i(\xi)$ or $n \in J_e(\xi)$. 
\begin{defn}
    Let $\xi=[\bv,\bw] \in \cX$ and let $\Janus$ be the Janus complex. Let $\cI \subseteq \setof{1,\ldots,N}$. We define the collection of top cells $\cQ(\xi,\cI) \subseteq \Top_{\Janus}(\Janus)$ by $\zeta=[\bv',\bone] \in \cQ(\xi,\cI)$ if and only if
    \begin{align*}
        8(2\bv_n)+2 \leq \bv_n' < 8(2\bv_n+1)-2 & \text{ if } n \notin \cI \text{ and } n \in J_e(\xi) \\ 
        8(2\bv_n+1)-2 \leq \bv_n' < 8((2\bv_n+1)+1)+2 & \text{ if } n \notin \cI \text{ and } n \in J_i(\xi) \\
        8(2\bv_n+\bw_n) \leq \bv_n' < 8((2\bv_n+\bw_n)+1)+1 & \text{ if } n \in \cI
    \end{align*}
    and $\ctoy_2 \colon \cX \times \mathcal{P}(\setof{1,\ldots,N}) \to \sSub(\Janus)$ by
    \[
        \ctoy_2(\xi,\cI) = \sO\left(\setdef{[\bv',\bone] \in\Janus}{[\bv',\bone]\in \cQ(\xi,\cI)} \right). 
    \]
\end{defn}
To simplify the notation on what follows, we identify the cells of the Janus complex that will be mapped to the manifolds $\cM_{\delta,\varepsilon}(\xi,\xi')$. Recall that Definition~\ref{defn:pi2fibers} associates to each $\alpha\in\cX$ the $2$-fiber given by
\[
\Pl(\alpha) = \bar{S}(\blup(\alpha))\cup \Add(\alpha) - \Rem(\alpha) \subset \Janus^{(N)}.
\]
\begin{defn}
    Given $(\xi,\xi') \in \cD(\rook)$ with GO-pair $(n_g,n_o)$, 
    the set of \emph{transfiguration cells of $(\xi,\xi')$} is given by 
    \[
    \trans(\xi,\xi') = \sO(\Pl(\NegDriftCells(\xi,\xi'))\cap \sO(\Pl(\PosDriftCells(\xi,\xi')).
    \]
    The \emph{Q-transfiguration cells of $(\xi,\xi')$} is given by
    \begin{equation}
    \label{eq:transQ}
        \trans_Q(\xi,\xi') = 
        \ctoy_2(\NegDriftCells(\xi,\xi'),\cI(\xi,\xi'))
    \end{equation}
\end{defn}
\begin{defn}
    \label{defn:R2-Geometrization}
    Let $\delta>0$ be an admissible spatial perturbation, and let $\varepsilon>0$ be a GO-perturbation. A rectangular geometrization $\bG_2$ of $\Janus$ in $\mathbb{R}^N$ satisfies the \emph{GO-conditions} if
    \begin{equation}
    \label{eq:transfig-geo}
        \bg \left( \trans_Q(\xi,\xi') \right) = \cM_{\delta,\varepsilon}(\xi,\xi'), \quad \forall (\xi,\xi')\in\cD(\rook).
    \end{equation}
\end{defn}
We show that if a rectangular geometrization $\bG_2$ satisfies the GO-conditions, then the geometrization is aligned with the vector field in \eqref{eq:rampH2}. 

\begin{proof}[Proof of Theorem~\ref{thm:R2ABlattice}]
    Let $\bG_2$ be a rectangular geometrization that satisfies the GO-conditions. Let $\cN \in \sN(\cF_2)$ and let $\zeta \in \bbdy(\cN)^{(N-1)}$. We show that $\varphi$ is inward-pointing at every $x \in \bg(\zeta)$. First, we show that 
    \begin{equation}\label{eq:R2alignment}
        \langle F(x) , z_\zeta(x) \rangle > 0,
    \end{equation}
    for a inward-ponting normal vector whenever the flow is tranverse, and then show that $\varphi$ is inward-pointing at $x$ when the flow is tangent to $\bg(\zeta)$. 
    
    In light of Theorem~\ref{thm:R1ABlattice} and Definition~\ref{def:Rule2}, it is enough to \eqref{eq:R2alignment} when $\zeta \in \Pl(\xi) \cap \Pl(\xi')$ for some pair $(\xi,\xi')\in \cD(\rook)$. Let $(n_g,n_o)$ be a GO-pair for $(\xi,\xi')$ and let $x=(x_1,\ldots,x_N) \in \bg(\zeta)$. 

    Assume that $\xi \in \cF_2(\xi')$, so that $\pi(\xi) < \pi(\xi')$ and $\xi \in \cN$ while $\xi'\notin \cN$. If $\zeta \in \bar{S}(\xi) \cap \bar{S}(\xi')$, then 
    \[
        \sgn(-\gamma_{n_o} x_{n_o} + E_{n_o}(x;\nu,\theta,h)) = p_n(\xi,\xi').
    \]
    If $\zeta \notin \bar{S}(\xi) \cap \bar{S}(\xi')$, then $\zeta \in \trans_Q(\xi,\xi')$ and $x \in \cM_{\delta,\varepsilon}(\xi,\xi')\setminus\Nul_{n_o}(\xi,\xi')$. In that case, Proposition~\ref{prop:transversality-manifold} implies that the flow is transverse to $\cM_{\delta,\varepsilon}(\xi,\xi')$. Note that a inward pointing normal vector at $x$ is given by
    \[
        z_\zeta(x) = r_{n_g} p_{n_o}(\xi,\xi') \left( -F_{\varepsilon,n_g}(x;\xi,\xi')\bzero^{(n_o)}+F_{\varepsilon,n_o}(x;\xi,\xi')\bzero^{(n_g)}\right). 
    \]
    Since $\sgn(F_{n_o}(x)) = -p_{n_o}(\xi,\xi')$,
    \begin{align*}
         \langle F(x) , z_\zeta(x) \rangle & =  r_{n_g} p_{n_o}(\xi,\xi')\left( -r_{n_g}\varepsilon x_{n_g} F_{n_o}(x) \right) = \varepsilon x_{n_g} |F_{n_o}(x)| > 0
    \end{align*}
    It remains to show that $\varphi$ is inward-pointing at $x \in \Nul_{n_o}(\xi,\xi')$. Assume that $x \in \Nul_{n_o}(\xi,\xi')$, we must show that there exists $t_0$ such that $\varphi(t,x) \in \Int(\bg(\cN))$ for all $0<t<t_0$. Note that $F(x)-F_\varepsilon(x) = r_{n_g}\varepsilon x_{n_g}$, so there exists $t_0$ such that $\varphi(t_0,x) \notin \cM_{\delta,\varepsilon}(\xi,\xi')$. By Theorem~\ref{thm:inward}, $\varphi(t_0,x) \in \Int(\bg(\cN))$. Therefore, by Corollary~\ref{cor:inward}, $\bG$ is aligned with $f$ over $\cN$.
    
    The case $\xi' \in \cF_2(\xi)$ is analogous. 
\end{proof}

We showcase the construction when $N=2$ in Figure~\ref{fig:GeometrizationF2}.
\begin{figure}[H]
    \centering
    \begin{subfigure}{0.4\textwidth}
        \centering
        \begin{tikzpicture}[scale=0.25]
    \draw[step=8cm,ultra thick, blue] (0,0) grid (16,16);
    
    \foreach \i in {0,...,2}{
        \foreach \j in {0,...,2}{
        \draw[black, fill=blue] (8*\i,8*\j) circle (2ex);
        }
    }
    
    \foreach \i in {1,...,3}{
        \draw(-1,8*\i -8) node{\color{blue}$\i$}; 
    }
    \foreach \i in {0,...,2}{
        \draw(8*\i,-1) node{\color{blue}$\i$}; 
    }

    \foreach \i in {0,...,1}{
        \draw[->,  thick] (4+8*\i,0.1) -- (4+8*\i,2.1); 
        \draw[->,  thick] (4+8*\i,15.9) -- (4+8*\i,13.9); 
    }
    \foreach \i in {1,...,2}{
        \draw[->,  thick] (0.1, 4+8*\i -8) -- (2.1,4+8*\i -8); 
    }
    \foreach \i in {0,...,1}{
        \draw[->,  thick] (4+8*\i,5.9) -- (4+8*\i,7.9); 
        \draw[->, thick] (4+8*\i,8.1) -- (4+8*\i,10.1);
    }
        \draw[->,  thick] (8.1,4) -- (10.1,4);
        \draw[->,  thick] (5.9,4) -- (7.9,4);
        \draw[->,  thick] (13.9,4) -- (15.9,4); 
    \foreach \i in {2}{
        \draw[->,  thick] (7.9,4+8*\i -8) -- (5.9,4+8*\i -8); 
         \draw[->,  thick] (10.1,4+8*\i -8) -- (8.1,4+8*\i -8);
         \draw[->,  thick] (15.9,4+8*\i -8) -- (13.9,4+8*\i -8);  
    }
    
    \draw(4,12) node{$\mu_0$};
    \draw(12,12)  node{$\mu_1$};
    \draw(4,4) node{$\hat\xi_0'$};
    \draw(12,4)  node{$\hat\xi_1'$};
    \draw(5.5,7) node{$\xi'_0$};
    \draw(8.75,13.5)  node{$\tau$};
    \draw(8.75,2.75)  node{$\hat\xi$};
    \draw(13.5,7)  node{$\xi'_1$};
    \draw(9,9)  node{$\xi$};
    \end{tikzpicture}
        \caption{Wall labeling of Example~\ref{ex:drift_2d} around $\xi=\defcell{1}{2}{0}{0}$.}
    \end{subfigure}%
    \hfill
    \begin{subfigure}{0.5\textwidth}
        \centering
        \begin{tikzpicture}[scale=0.5]
    
    \fill[gray] (6,8.5) -- (5.5,8.5) -- (5.5,9.0) -- (5.0,9.0) -- (5.0,9.5) -- (4.5,9.5) -- (4.5,10.0) -- (6,10) -- (6,8.5);
    
    \fill[gray] (10,8.5) -- (9.5,8.5) -- (9.5,9.0) -- (9,9.0) -- (9,9.5) -- (8.5,9.5) -- (8.5,10) -- (10,10) -- (10,8.5);
    
    \draw[thick, red] (2,6) -- (14,6);
    \draw[thick, red] (2,10) -- (14,10);
    \draw[thick, red] (2,14) -- (14,14);
    \draw[thick, red] (2,6) -- (2,14);
    \draw[thick, red] (6,6) -- (6,14);
    \draw[thick, red] (10,6) -- (10,14);
    \draw[thick, red] (14,6) -- (14,14);
    
    \draw (2,6.5) -- (14,6.5);
    \draw (2,7.0) -- (14,7.0);
    \draw (2,7.5) -- (14,7.5);
    \draw (2,8.0) -- (14,8.0);
    \draw (2,8.5) -- (14,8.5);
    \draw (2,9.0) -- (14,9.0);
    \draw (2,9.5) -- (14,9.5);
    
    \draw (2,10.5) -- (14,10.5);
    \draw (2,11.0) -- (14,11.0);
    \draw (2,11.5) -- (14,11.5);
    \draw (2,12.0) -- (14,12.0);
    \draw (2,12.5) -- (14,12.5);
    \draw (2,13.0) -- (14,13.0);
    \draw (2,13.5) -- (14,13.5);
    
    \draw (2.5,6) -- (2.5,14);
    \draw (3.0,6) -- (3.0,14);
    \draw (3.5,6) -- (3.5,14);
    \draw (4.0,6) -- (4.0,14);
    \draw (4.5,6) -- (4.5,14);
    \draw (5.0,6) -- (5.0,14);
    \draw (5.5,6) -- (5.5,14);
    
    \draw (6.5,6) -- (6.5,14);
    \draw (7.0,6) -- (7.0,14);
    \draw (7.5,6) -- (7.5,14);
    \draw (8.0,6) -- (8.0,14);
    \draw (8.5,6) -- (8.5,14);
    \draw (9.0,6) -- (9.0,14);
    \draw (9.5,6) -- (9.5,14);
    
    \draw (10.5,6) -- (10.5,14);
    \draw (11.0,6) -- (11.0,14);
    \draw (11.5,6) -- (11.5,14);
    \draw (12.0,6) -- (12.0,14);
    \draw (12.5,6) -- (12.5,14);
    \draw (13.0,6) -- (13.0,14);
    \draw (13.5,6) -- (13.5,14);
    
    \draw(2,5) node{$\color{red}1$};
    \draw(6,5) node{$\color{red}2$};
    \draw(10,5) node{$\color{red}3$};
    \draw(14,5) node{$\color{red}4$};
    
    \draw(1,6) node{$\color{red}4$};
    \draw(1,10) node{$\color{red}5$};
    \draw(1,14) node{$\color{red}6$};
    
    \draw[red, fill=red] (6,10) circle (1.5ex);
    \draw[red, fill=red] (10,10) circle (1.5ex);
    
    \end{tikzpicture}
        \caption{Subcomplex of $\Janus$ within modifications to $\pi_J$ are made.}
    \end{subfigure}\\
    
    \begin{subfigure}{0.4\textwidth}
        \centering
        \begin{tikzpicture}[scale=0.2]
            \draw[step=8cm,black,very thick] (0,0) grid (24,24);
            \draw[step=1cm,black,thin] (0,0) grid (24,24);
            \draw[blue,very thick] (5,16) -- (5,15) -- (6,15) -- (6,14) -- (7,14) -- (7,13) -- (8,13);
            \draw[red,very thick] (5+8,16) -- (5+8,15) -- (6+8,15) -- (6+8,14) -- (7+8,14) -- (7+8,13) -- (8+8,13);
        \end{tikzpicture}
        \caption{Highlighted cells in the Janus complex with proposed geometrization.}
    \end{subfigure}%
    \hfill
    \begin{subfigure}{0.5\textwidth}
        \centering
        \includegraphics[scale=0.5]{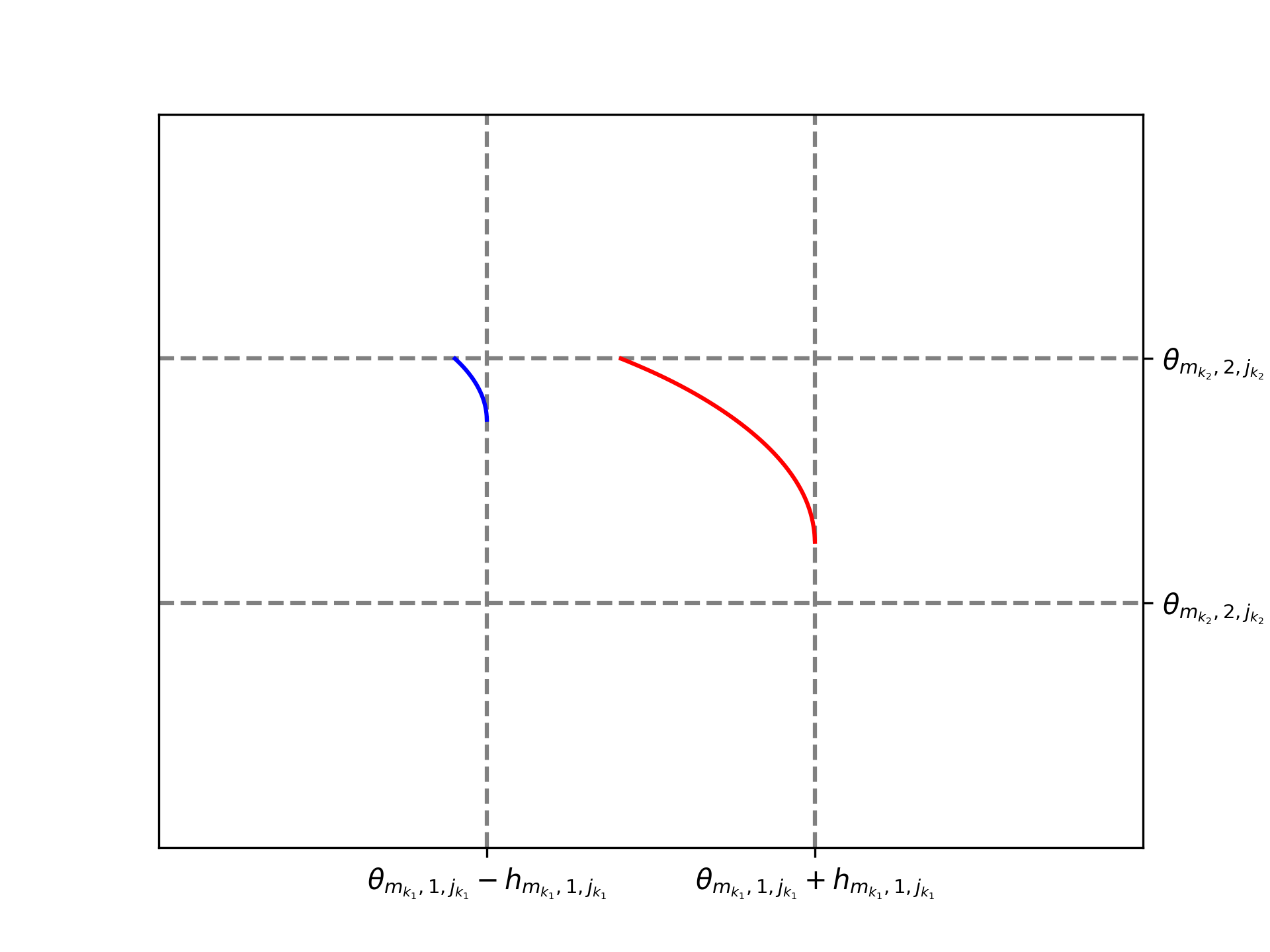}
        \caption{Geometrization in the phase space.}
    \end{subfigure}
    \caption{Geometrization that provides transversality for $\mathcal{F}_2$.}
    \label{fig:GeometrizationF2}
\end{figure}

\chapter{$\cD$-Grading in the Janus Complex for $\cF_3$}\label{sec:P3-Grading}

This chapter follows similar steps to those in Chapter \ref{sec:P2-grading}. Specifically, given  a cubical complex $\cX$ and a multivalued map $\cF_3\colon \cX \mvmap \cX$ as in Definition~\ref{defn:Rule3} 
an admissible
D-grading $\pi_3\colon\Janus \to (\SCC(\cF_3),\posetF{3})$ is constructed over the associated Janus complex $\Janus$. As discussed in Section~\ref{sec:ConleyComplex}, 
an admissible D-grading
determines a poset grading of $C_*(\cX;\F)$, which in turn leads to a Conley complex. As done in Chapter~\ref{sec:P2-grading}, we 
derive $\pi_3$ via local modifications of $\pi_2$ that preserve up to chain homotopy the poset grading of $C_*(\cX;\F)$. Specifically, 
the modifications are performed on closures of subsets called \emph{modification neighborhood} of $\mBn{\xi}$ for any $\xi\in\Xi$, where $\Xi$ is the set of cells in $\cX$ that satisfy Conditions 3.1 and 3.2 in Definitions~\ref{defn:Rule3.1} and \ref{defn:Rule3.2} and $N=2,3$.

In Section \ref{sec:signature_of_cycles}, we identify an analogous to the spine in Definition~\ref{defn:spine}. Then we use this identification to perform the  modifications of $\pi_2$ on the modification neighborhoods of a two dimensional Janus complex, in Section 
\ref{sec:local_2D_Janus}, and a three dimensional Janus complex, in Section \ref{sec:local_3D_Janus}. Finally, in Section \ref{sec:global_pi3}, we merge the local modification of $\pi_2$ to define $\pi_3$ over $\Janus$. Furthermore, we prove that $\pi_3$ is an extendable D-grading such that preserve up to chain homotopy the original poset grading of $C_*(\cX;\F)$.

\section{Constructing D-grading $\pi_3$ via signature of cycles}
\label{sec:signature_of_cycles}

We start by identifying the cells in which Conditions 3.1 and 3.2 are applied.
By Definition~\ref{defn:Rule3}, these are the semi-opaque cells $\xi \in \cX$ whose cycle decomposition of its regulation map $\rmap\xi$ contains cycles of length $k\geq 2$.
When $N=2$ and $N=3$, the possibilities are as follows:
\begin{itemize}
    \item[(i)] if $N=2$, then $\rmap\xi = (n_1 \ n_2)$; 
    \item[(ii)] if $N=3$, then $\rmap\xi = (n_1 \ n_2 )$, $(n_1 \ n_2) (n_3)$, or $(n_1 \ n_2 \ n_3)$.
\end{itemize}
Note that when $N=2$, $\xi$ is necessarily an equilibrium cell, while when $N=3$ it is not. 
The construction of $\pi_3$ is carried out locally, following the cases outlined in items (i) and (ii), which can be further categorized based on the signature of the cycles, as defined below.
\begin{defn}
\label{defn:feedback}
Let $\rook\colon \TP(\cX)\to \setof{0,\pm 1}^N$ be a rook field on an $N$-dimensional cubical complex $\cX$.
Let $\xi=[\bv,\bw] \in \cX$ be a semi-opaque cell (see Definition~\ref{defn:partially_opaque}). 
Let  $\rmap\xi = \sigma_{\xi,1} \sigma_{\xi,2} \ldots \sigma_{\xi,r}$ be the decomposition of the permutation $\rmap\xi$ into disjoint cycles. 
For each cycle $\sigma=(n_1 \ \ldots \ n_k)$ of length $k$ let $S_\sigma \coloneqq \setof{n_1,\ldots,n_k} \subset \activeset(\xi)$ denote the support of the cycle.
Define the \emph{signature} of $\sigma$ to be
\[
\delta_\sigma = \prod_{i\in S_\sigma} \rook_i(\xi,[\bv,\bone]) \in\setof{\pm 1}.
\]
If $\rmap\xi = \sigma_{\xi}$ is a cycle of length $N$, then we say that $\xi$ has a \emph{positive (negative) feedback} if $\delta_\sigma=+1$ ($\delta_\sigma=-1$).
If, in addition, $\xi$ is an equilibrium cell, then we say that $\xi$ is an \emph{equilibrium cell with positive (negative) feedback}.
\end{defn}
Figure \ref{fig:pos_neg_cycles} provides examples of equilibrium cells with positive and negative feedback. 
\begin{figure}
\begin{center}
\begin{tikzpicture}[scale=0.25]
\draw[step=8cm,black] (0,0) grid (16,16);
\foreach \i in {0,...,2}{
    \foreach \j in {0,...,2}{
    \draw[black, fill=black] (8*\i,8*\j) circle (2ex);
    }
}
\foreach \i in {1,...,3}{
    \draw(-1,8*\i -8) node{$\i$}; 
}
\foreach \i in {1,...,3}{
    \draw(8*\i-8,-1.1) node{$\i$}; 
}
\def\i{0}
    \draw[->, blue, thick] (4+8*\i,5.9) -- (4+8*\i,7.9); 
    \draw[->, blue, thick] (4+8*\i,8.1) -- (4+8*\i,10.1);
\def\i{1}
    \draw[->, blue, thick] (4+8*\i,7.9) -- (4+8*\i,5.9); 
    \draw[->, blue, thick] (4+8*\i,10.1) -- (4+8*\i,8.1);
    \draw[->, blue, thick] (8.1,4) -- (10.1,4);
    \draw[->, blue, thick] (5.9,4) -- (7.9,4);
\def\i{2}
\draw[->, blue, thick] (7.9,4+8*\i -8) -- (5.9,4+8*\i -8); 
\draw[->, blue, thick] (10.1,4+8*\i -8) -- (8.1,4+8*\i -8);  
\end{tikzpicture}
\begin{tikzpicture}[scale=0.25]
\draw[step=8cm,black] (0,0) grid (16,16);
\foreach \i in {0,...,2}{
    \foreach \j in {0,...,2}{
    \draw[black, fill=black] (8*\i,8*\j) circle (2ex);
    }
}
\foreach \i in {1,...,3}{
    \draw(-1,8*\i -8) node{$\i$}; 
}
\foreach \i in {1,...,3}{
    \draw(8*\i-8,-1.1) node{$\i$}; 
}
\def\i{0}
    \draw[->, blue, thick] (4+8*\i,7.9) -- (4+8*\i,5.9); 
    \draw[->, blue, thick] (4+8*\i,10.1) -- (4+8*\i,8.1);
\def\i{1}
    \draw[->, blue, thick] (4+8*\i,5.9) -- (4+8*\i,7.9); 
    \draw[->, blue, thick] (4+8*\i,8.1) -- (4+8*\i,10.1);
    \draw[->, blue, thick] (8.1,4) -- (10.1,4);
    \draw[->, blue, thick] (5.9,4) -- (7.9,4);
\def\i{2}
\draw[->, blue, thick] (7.9,4+8*\i -8) -- (5.9,4+8*\i -8); 
\draw[->, blue, thick] (10.1,4+8*\i -8) -- (8.1,4+8*\i -8);  
\end{tikzpicture}
\end{center}
\caption{Equilibrium cell $\xi=\defcell{2}{2}{0}{0}$ with positive and negative feedback on the left and right of the figure, respectively.
}
\label{fig:pos_neg_cycles}
\end{figure}
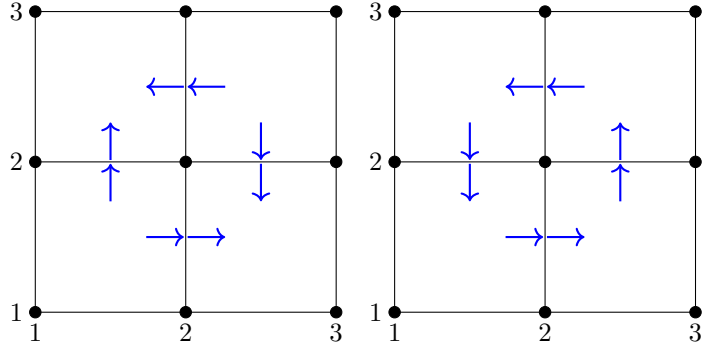

We extend the definition of spine, modification cells, and tiles to the setting of semi-opaque cells.
\begin{defn}
\label{defn:semi-opaqueMod}
Let $\xi \in \cX$ be a semi-opaque cell and let $\rmap\xi = \sigma_{\xi,1}\ldots \sigma_{\xi,r} $ be the decomposition of the permutation $\rmap\xi$ into disjoint cycles. 
For each $\sigma = (n_1 \ \ldots \ n_k)$ of length $k\geq 2$, define the \emph{modification spine of $\xi$ with respect to $\sigma$} to be 
\[
\Spineb(\xi,\sigma) \coloneqq \cl(b(\xi)) \bigcap \left( \bigcup_{\mu \in \cU(\xi,\sigma)} \cl(b(\mu)) \right)
\]
where $\cU(\xi,\sigma)$ is defined in \eqref{defn:set_cells_Uns&Stable}.
The \emph{modification spine of $\xi$ with respect to $\sigma$ in the Janus complex} is given by 
\[
\SpineJ(\xi,\sigma) \coloneqq \bar{S}(\Spineb(\xi,\sigma)).
\]
\end{defn}
\begin{ex}\label{ex:3Dpos_1}
Consider $N=3$ and $\xi=[\bv,\bzero]\in\cX^{(0)}$ a semi-opaque cell. Assume that $o_\xi=\sigma=(1 \ 2 \ 3)$ and \[w(\kappa_{1}^-,\kappa)=w(\kappa_{2}^-,\kappa)=w(\kappa_{3}^-,\kappa)=1\] 
where $\kappa^{-1}_n=[\bv,\bone^{(n)}]$ and $\kappa=[\bv,\bone]$, see Definition~\ref{defn:lap-number}.

Observe that the signature of $\sigma$ is 
\[
\delta_\sigma = \prod_{i\in \setof{1,2,3}} \rook_i(\xi,[\bv,\bone]) = \prod_{i\in \setof{1,2,3}} w(\kappa_{n_i}^-,\kappa) = 1.
\]
Hence, $\xi$ is an equilibrium cell with positive feedback.

From Definition~\ref{defn:lap-number}, it follows that
\[\lap_{\xi,\sigma}(\kappa)=\#\setdef{i\in\setof{1,\ldots,3}}{w(\kappa_{i}^-,\kappa) = -1}=0 \leq \frac{3}{2},\] 
hence $\kappa\in \cU(\xi)$,
see Figure~\ref{fig:wall_labelling_3Dpos}.
     
Define $\mu_\bu\coloneq [\bv -\bu, \bone]$, where $\bu\in\setof{0,1}^3$. With the notation above,  
\[\Top_\cX(\xi)=\setdef{\mu_\bu\in\cX^{(3)}}{\bu\in\setof{0,1}^3},\] 
and $p(\xi,\mu_\bu)=2\bu-\bone$, see Figure~\ref{fig:wall_labelling_3Dpos}.

Recall that 
$\lap_{\xi,\sigma}(\mu_\bu)$ (see Definition~\ref{defn:Rule3}) counts how many times $-1$'s are in
     \[
     \setof{(2\bu_1-1)(2\bu_2-1),(2\bu_2-1)(2\bu_3-1),(2\bu_3-1)(2\bu_1-1)}.
     \]
     Hence, 
     $\lap_{\xi,\sigma}(\mu_\bu)=3>\frac{3}{2}$ for \[\bu\in\setof{\bzero^{(1)},\bzero^{(2)},\bzero^{(3)},\bone^{(1)},\bone^{(2)},\bone^{(3)}}\] 
     and 
    $\lap_{\xi,\sigma}(\mu_\bu)=1<\frac{3}{2}$ for $\bu\in\setof{\bzero,\bone}$. As a consequence, the opposite top cells $[\bv-\bone, \bone]$ and $\kappa=[\bv-\bzero,\bone]$ are the only top cells in $\cU(\xi)$. Hence, the modification spine of $\xi$ is the set
    \begin{align*}
        \Spineb(\xi,\sigma) &= \cl(b(\xi)) \bigcap \left( \cl(b(\mu_\bzero)) \cup \cl(b(\mu_\bone)) \right) \\
        &=  \cl(b(\xi))\cap \cl(b(\mu_\bzero)) \bigcup \cl(b(\xi)) \cap\cl(b(\mu_\bone))= \setof{[\bar\bv,\bzero], [\bar\bv+\bone,\bzero]},
    \end{align*}
    where $\bar\bv\in\N^3$ such that $\blup(\xi)=[\bar\bv,\bone]$.
\end{ex}
\begin{figure}
    \centering
\begin{tikzpicture}[scale=3.5]
    \coordinate (A) at (0,0,0);
    \coordinate (B) at (1,0,0);
    \coordinate (C) at (1,1,0);
    \coordinate (D) at (0,1,0);
    \coordinate (E) at (0,0,1);
    \coordinate (F) at (1,0,1);
    \coordinate (G) at (1,1,1);
    \coordinate (H) at (0,1,1);

    \draw[thick, dashed, blue] (A) -- (B);
    \draw[thick] (B) -- (C);
    \draw[thick] (C) -- (D);
    \draw[thick, dashed, blue] (D) -- (A);

    \draw[thick] (E) -- (F);
    \draw[thick, blue] (F) -- (G);
    \draw[thick, blue] (G) -- (H);
    \draw[thick] (H) -- (E);

    \draw[thick, dashed] (A) -- (E);
    \draw[thick, blue] (B) -- (F);
    \draw[thick] (C) -- (G);
    \draw[thick, blue] (D) -- (H);
    
    \node at (A) [below right] {$\mu_{\bone^{(1)}}$};
    \node at (B) [below right] {$\mu_{\bzero^{(3)}}$};
    \node at (C) [below right] {$\mu_{\bzero}$};
    \node at (D) [above left] {$\mu_{\bzero^{(2)}}$};
    \node at (E) [below right] {$\mu_{\bone}$};
    \node at (F) [below right] {$\mu_{\bone^{(2)}}$};
    \node at (G) [below right] {$\mu_{\bzero^{(1)}}$};
    \node at (H) [above left] {$\mu_{\bone^{(3)}}$};

    \draw[->, thick, red] (A) -- ($(A) + (0.1,0,0)$); 
    \draw[->, thick, red] (A) -- ($(A) + (0,-0.1,0)$); 
    \draw[->, thick, red] (A) -- ($(A) + (0,0,0.2)$); 
    
    \draw[->, thick, red] (B) -- ($(B) + (0.1,0,0)$); 
    \draw[->, thick, red] (B) -- ($(B) + (0,0.1,0)$); 
    \draw[->, thick, red] (B) -- ($(B) + (0,0,0.2)$); 
    
    \draw[->, thick, red] (C) -- ($(C) + (0.1,0,0)$); 
    \draw[->, thick, red] (C) -- ($(C) + (0,0.1,0)$); 
    \draw[->, thick, red] (C) -- ($(C) + (0,0,-0.2)$); 
    
    \draw[->, thick, red] (D) -- ($(D) + (0.1,0,0)$); 
    \draw[->, thick, red] (D) -- ($(D) + (0,-0.1,0)$); 
    \draw[->, thick, red] (D) -- ($(D) + (0,0,-0.2)$); 
    
    \draw[->, thick, red] (E) -- ($(E) + (-0.1,0,0)$); 
    \draw[->, thick, red] (E) -- ($(E) + (0,-0.1,0)$); 
    \draw[->, thick, red] (E) -- ($(E) + (0,0,0.2)$); 
    
    \draw[->, thick, red] (F) -- ($(F) + (-0.1,0,0)$); 
    \draw[->, thick, red] (F) -- ($(F) + (0,0.1,0)$); 
    \draw[->, thick, red] (F) -- ($(F) + (0,0,0.2)$); 
    
    \draw[->, thick, red] (G) -- ($(G) + (-0.1,0,0)$); 
    \draw[->, thick, red] (G) -- ($(G) + (0,0.1,0)$); 
    \draw[->, thick, red] (G) -- ($(G) + (0,0,-0.2)$); 
    
    \draw[->, thick, red] (H) -- ($(H) + (-0.1,0,0)$); 
    \draw[->, thick, red] (H) -- ($(H) + (0,-0.1,0)$); 
    \draw[->, thick, red] (H) -- ($(H) + (0,0,-0.2)$); 

    \coordinate (AB_mid) at ($(A)!0.5!(B)$);
    \coordinate (BC_mid) at ($(B)!0.5!(C)$);
    \coordinate (CD_mid) at ($(C)!0.5!(D)$);
    \coordinate (DA_mid) at ($(D)!0.5!(A)$);
    \coordinate (EF_mid) at ($(E)!0.5!(F)$);
    \coordinate (FG_mid) at ($(F)!0.5!(G)$);
    \coordinate (GH_mid) at ($(G)!0.5!(H)$);
    \coordinate (HE_mid) at ($(H)!0.5!(E)$);
    \coordinate (AE_mid) at ($(A)!0.5!(E)$);
    \coordinate (BF_mid) at ($(B)!0.5!(F)$);
    \coordinate (CG_mid) at ($(C)!0.5!(G)$);
    \coordinate (DH_mid) at ($(D)!0.5!(H)$);
\end{tikzpicture}
    \caption{The cube represents the zero dimensional cell $\xi$ in the dual complex of $\cX$. The top dimensional cell of $\xi$, $\Top_\cX(\xi)=\setdef{\mu_\bu\in\cX^{(3)}}{\bu\in\setof{0,1}^3}$, are represented by vertices in the dual complex of $\cX$. The wall labelling of each top cell in $\Top_\cX(\xi)$ are represented by vectors in red. The edges of the cube are the dual of the two dimensional cells and the edges 
    in blue are the dual of two dimensional cells in the stable cell of $\xi$,  $\Stable(\xi)$.}
    \label{fig:wall_labelling_3Dpos}
\end{figure}
\begin{defn}
    \label{defn:modification-equilibrium}
    The set of \emph{modification cells of $\xi$ with respect to $\sigma$} is given by
    \[
        \mCellX(\xi,\sigma) = \setdef{ \xi' \in \cX}{ \xi \prec \xi',\ \dim(\xi')=\dim(\xi)+1}\cap O(\cU_\sigma(\xi)),
    \]
    and the set of \emph{modification tiles for $\xi$ with respect to $\sigma$} is given by 
    \[
        \mTile(\xi,\sigma) = B_2(\SpineJ(\xi,\sigma)) \cap \bar{S}(\blup(\mCellX(\xi,\sigma))).
    \]
\end{defn}
\begin{ex}\label{ex:3Dpos_2}
    Continuing with Example~\ref{ex:3Dpos_1}, the modification cells of $\xi=[\bv, \bzero]$ with respect $\sigma=(1\ 2\ 3)$ is the set
    \[
        \mCellX(\xi,\sigma) = \setdef{[\bv -a\bzero^{(k)}, \bzero^{(k)}]}{k=0, 1, 2 \text{ and } a=0, 1},
    \]
    i.e., all one-dimensional cells whose faces contain $\xi=[\bv, \bzero]$.
\end{ex}

\section{Local D-grading $\pi_3$ for two dimensional Janus Complex}
\label{sec:local_2D_Janus}
As discussed at the beginning of Section~\ref{sec:signature_of_cycles} we restrict our attention to cells $\xi$ whose regulation map $\rmap\xi$ contains cycles of length greater than or equal to two.
For $N=2$, this implies that $\xi\in\cX^{(0)}$.
To define $\pi_3\colon \Janus \to \SCC(\cF_3)$ we modify $\pi_2$ on the cells $\mBn{\xi}$, see Figure~\ref{fig:star_xi}.

\begin{figure}
\begin{center}
    \begin{tikzpicture}[scale=0.25]
        \fill[lightgray] (4,4) rectangle (20,20);
        \foreach \i in {24,32,40,48}{
            \draw(\i-24,-1) node{$\i$}; 
        }
        \foreach \i in {24,32,40,48}{
            \draw(-1,\i-24+0.3) node{$\i$}; 
        }        
        \draw[step=1cm,black] (0,0) grid (24,24);
        \draw[step=8cm,red] (0,0) grid (24,24);
    \end{tikzpicture}
\end{center}
\caption{The gray two dimensional cells represent the modification neighborhood $\mBn{\xi}$ of the equilibrium cell $\xi=\defcell{2}{2}{0}{0}$ in Figure~\ref{fig:pos_neg_cycles}.
}
\label{fig:star_xi}
\end{figure}

\subsection{Negative Feedback}
\label{sec:2Dneg_fb}

We begin by considering the case that $\xi=[\bv', \bzero]$ is an equilibrium cell with negative feedback. Consider the D-grading $\pi \colon (\cX,\preceq_\cX) \rightarrow (\SCC(\cF_3),\posetF{3})$. 
We leave it to the reader to check that if $\xi\prec_\cX \tau_j$, for $j=0,1$, then 
\[
\pi(\xi)\posetF{3} \pi(\tau_0) =_{\bar{\cF_3}}\pi(\tau_1).
\]
Consider $[\bv,\bzero]\in\Janus$ where $\bv = 16\bv'$.
Define $\pi_3^\xi\colon \mBn{\xi} \rightarrow \SCC(\cF_3)$ by 
\[
\pi_3^\xi(\eta) \coloneqq\begin{cases}
\pi(\xi), & \text{if } \eta\in\mathrm{Cube}(\bv-\mathbf{3}, \bv+\mathbf{5}); \\
\pi(\tau_0), & \text{otherwise},
\end{cases}
\]
where $\mathrm{Cube}(\cdot,\cdot)$ is defined in \eqref{eq:cube}. 
Figure \ref{fig:2DnegJanus} shows the impact of this modification.

The following result can be obtained by visual inspection of Figure \ref{fig:2DnegJanus}. 
\begin{prop}\label{prop:2Dneg_fb_extendable}
    Let $\xi$ be an equilibrium cell with negative feedback in a two dimensional Janus complex $\Janus$. Then $\pi_3^\xi\colon \mBn{\xi} \rightarrow (\SCC(\cF_3),\posetF{3})$ is extendable.
\end{prop}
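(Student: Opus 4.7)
The plan is to verify the extendability of $\pi_3^\xi$ directly from Definition~\ref{defn:extendable}. Because $\pi_3^\xi$ takes only two values on the top cells of $\mBn{\xi}$, namely $\pi(\xi)$ and $\pi(\tau_0)$, and these are comparable in the condensation poset $(\SCC(\cF_3), \posetF{3})$, the argument reduces to a short order-theoretic observation together with a routine minimum computation.

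First I would establish the strict inequality $\pi(\xi) <_{\bar{\cF_3}} \pi(\tau_0)$ that underlies the comparability statement recalled just before the proposition. For a negative-feedback equilibrium cell $\xi$ in two dimensions one has $\rmap\xi = (1\ 2)$, so the support $S_\sigma = \{1,2\}$ of the unique cycle of length at least two contains every direction. Consequently $\Ex(\xi, \tau_j) \subseteq S_\sigma$ for every edge $\tau_j$ with $\xi \prec \tau_j$, and Condition~3.1 of Definition~\ref{defn:Rule3.1} removes the edge $\xi \to \tau_j$ from $\cF_2$ while the reverse edge $\tau_j \to \xi$ is retained. Hence $\{\xi\}$ is a singleton strongly connected component of $\cF_3$ lying strictly below each $\tau_j$ in the condensation, and the equality $\pi(\tau_0) =_{\bar{\cF_3}} \pi(\tau_1)$ reflects the symmetry between the neighbouring cells.

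Second, I would define the candidate extension $\widehat{\pi}_3^\xi$ on every cell $\zeta$ of the closure of $\mBn{\xi}$ in $\Janus$ by the admissibility formula
\[
\widehat{\pi}_3^\xi(\zeta) := \min\setdef{\pi_3^\xi(\mu)}{\mu \in \Top_{\Janus}(\zeta) \cap \mBn{\xi}},
\]
and check that the minimum is uniquely attained. Since the image of $\pi_3^\xi$ is the two-element chain $\{\pi(\xi), \pi(\tau_0)\}$, the minimum over any non-empty finite subset is $\pi(\xi)$ when some top coface of $\zeta$ lies in $\mathrm{Cube}(\bv-\mathbf{3}, \bv+\mathbf{5})$ and $\pi(\tau_0)$ otherwise; in either case the minimum is unique. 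Hence $\widehat{\pi}_3^\xi$ is admissible and $\pi_3^\xi$ is extendable. I do not expect any serious obstacle: the full content of the proposition is the strict inequality $\pi(\xi) <_{\bar{\cF_3}} \pi(\tau_j)$, which is precisely where the negative-feedback hypothesis enters and which fails for positive feedback, thereby forcing the different construction used in the next subsection.
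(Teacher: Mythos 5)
Your proof is correct, and it is worth pointing out that it supplies an argument where the paper offers only ``can be obtained by visual inspection of Figure~\ref{fig:2DnegJanus}.'' You also depart from the route the paper takes for the other cases in this chapter. For 2D positive feedback (Proposition~\ref{prop:2Dpos_fb_extendable}) and the 3D cases, the paper views $\pi_3^\xi$ as a modification of the already-extendable $\pi_2$ on a set $\cV$ and invokes Theorem~\ref{thm:extension}, which requires checking incomparability and triple-closure conditions. You instead build the extension directly by the defining minimum formula $\widehat{\pi}_3^\xi(\zeta) := \min\setdef{\pi_3^\xi(\mu)}{\mu \in \Top_{\Janus}(\zeta) \cap \mBn{\xi}}$ and observe that the minimum is automatically unique because the image of $\pi_3^\xi$ is the two-element chain $\{\pi(\xi), \pi(\tau_0)\}$. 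This short-circuit is possible precisely because the negative-feedback case has the simplest possible image; your approach buys a self-contained argument that does not need $\pi_2$ or Theorem~\ref{thm:extension} at all, while the paper's modification-set template scales to the harder cases where the image is not a chain.

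Two small points. First, your derivation of the strict inequality is essentially right, but to fully justify $\cF_3(\xi) = \{\xi\}$ you should also note that $\cU(\xi) = \emptyset$ under negative feedback: from Example~\ref{ex:lap_number_2and3D} with $\delta_\sigma = -1$ one has $\lap_{\xi,\sigma}(\mu) = 1 = |\sigma|/2$ for every $\mu \in \Top_\cX(\xi)$, so the strict inequality in Definition~\ref{defn:Rule3.2} is never met and no edges are added back by $\cF_{3.2}$. Without this, one only knows that Condition~3.1 removed the codimension-one outgoing edges; the absence of new outgoing edges to the top cells is what actually confines $\xi$ to a singleton strongly connected component. Second, the equality $\pi(\tau_0) =_{\bar{\cF_3}} \pi(\tau_1)$ that you attribute to ``symmetry'' is never used in your argument, since you run the minimum computation over the two-element chain $\{\pi(\xi), \pi(\tau_0)\}$ directly from the piecewise definition of $\pi_3^\xi$; it can be suppressed.
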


We now consider the case that $\xi=[\bv', \bzero]$ is an equilibrium cell with positive feedback.

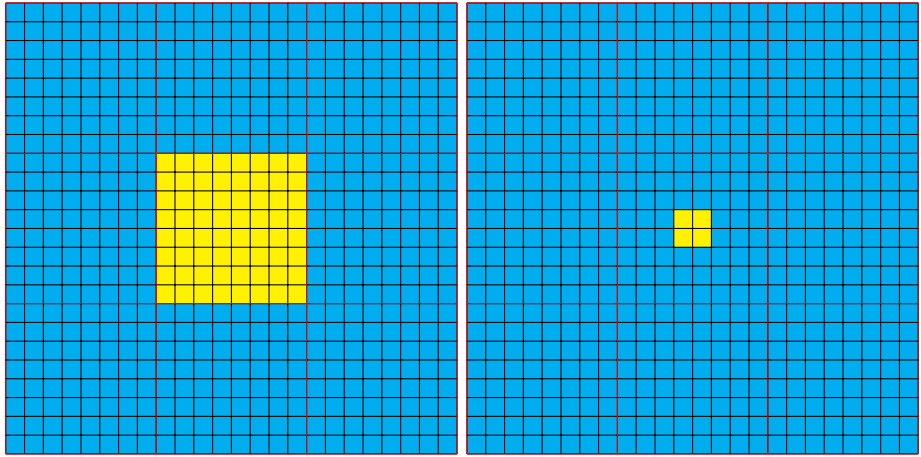
\begin{figure}[H]
    \centering
    \begin{tikzpicture}[scale=0.25]
    \fill[cyan] (0,0) rectangle (24,24); 
    \fill[yellow] (8,8) rectangle (16,16);
    \draw[step=1cm,black] (0,0) grid (24,24);
    \draw[step=8cm,red] (0,0) grid (24,24);
    \end{tikzpicture}
    \begin{tikzpicture}[scale=0.25]
    \fill[cyan] (0,0) rectangle (24,24); 
    \fill[yellow] (11,11) rectangle (13,13);
    \draw[step=1cm,black] (0,0) grid (24,24);
    \draw[step=8cm,red] (0,0) grid (24,24);
    \end{tikzpicture}

    \caption{On the right and on the left are the $\gradJanus$ and $\pi_3^\xi$ gradings for the subcomplex $\bar S(\blup(\sstarr(\xi)))\subset \Janus$ of an equilibrium cell with positive feedback, respectively. And the colors cyan and yellow represents the grading $\pi(\tau_0)$ and $\pi(\xi)$.}
    \label{fig:2DnegJanus}
\end{figure}

\subsection{Positive Feedback.}\label{sec:2Dpos_fb}

Let $\xi \in \cX$ be an equilibrium cell with positive feedback, that is, $\rmap\xi=\sigma$ with length equal to $2$ and $\delta_{\sigma} = 1$.

Set
\[
\FibTri(\xi)\coloneqq 
\bar{S}(\blup(\xi)) \cup \mTile(\xi,\sigma)
\]
and for any $\tau\in\cT^{(1)}(\xi)$ set
\[
\FibTri(\tau;\xi)\coloneqq 
\bar{S}(\blup(\tau)) - \FibTri(\xi),
\] 
where $\cT^{(1)}$ is defined in \eqref{eq:T1}. 

Define the grading $\pi_3^\xi\colon \mBn{\xi} \rightarrow (\SCC(\cF_3),\posetF{3})$ by
\[
\pi_3^\xi(\eta) \coloneqq\begin{cases}
\pi(\xi) & \text{if } \eta\in\FibTri(\xi); \\
\pi(\tau) &\text{if } \eta\in\FibTri(\tau;\xi), \\
    \pi_2(\eta),& \text{ elsewhere}.
\end{cases}
\]
Figure \ref{fig:2DposJanus} depicts the local grading assignment for an equilibrium cell $\xi$ with positive feedback. 
\begin{figure}[H]
    \centering
    \begin{tikzpicture}[scale=2]
    \draw[step=1cm,black] (0,0) grid (3,3);

    \fill[cyan] (1,0) rectangle (2,1); 
    \fill[purple] (0,1) rectangle (1,2); 
    \fill[red] (2,1) rectangle (3,2); 
    \fill[blue] (1,2) rectangle (2,3); 
    \fill[yellow] (1,1) rectangle (2,2); 
    \fill[lightgray] (1,2) rectangle (0,3); 
    \fill[gray] (2,1) rectangle (3,0);

    \fill[red!20!white] (0,0) rectangle (1,1);
    \fill[red!10!white!90!blue] (2,2) rectangle (3,3);
    
    \draw[step=0.125cm,black] (0,0) grid (3,3);
    \draw[step=1cm,red] (0,0) grid (3,3);

    \end{tikzpicture} \quad
    \begin{tikzpicture}[scale=2]
    \draw[step=1cm,black] (0,0) grid (3,3);

    \fill[cyan] (1,0) rectangle (2,1); 
    \fill[purple] (0,1) rectangle (1,2); 
    \fill[yellow] (1,1) rectangle (2,2); 
    \fill[red] (2,1) rectangle (3,2); 
    \fill[blue] (1,2) rectangle (2,3); 
    \fill[lightgray] (1,2) rectangle (0,3); 
    \fill[gray] (2,1) rectangle (3,0);

    \fill[red!20!white] (0,0) rectangle (1,1);
    \fill[red!10!white!90!blue] (2,2) rectangle (3,3);

    \fill[yellow] (2,1) -- ++(-0.375,0)
        -- ++(0,-0.125) -- ++(0.125,0)
        -- ++(0,-0.125) -- ++(0.125,0)
        -- ++(0,-0.125) -- ++(0.125,0) 
        -- cycle;
    \fill[yellow] (1,2) -- ++(0,-0.375)
        -- ++(-0.125,0) -- ++(0,0.125)
        -- ++(-0.125,0) -- ++(0,0.125)
        -- ++(-0.125,0) -- ++(0,0.125)
        -- cycle;
    \fill[yellow] (2,1) -- ++(0,0.375)
        -- ++(0.125,0) -- ++(0,-0.125)
        -- ++(0.125,0) -- ++(0,-0.125)
        -- ++(0.125,0) -- ++(0,-0.125)
        -- cycle;
    \fill[yellow] (1,2) -- ++(0.375,0)
        -- ++(0,0.125) -- ++(-0.125,0)
        -- ++(0,0.125) -- ++(-0.125,0)
        -- ++(0,0.125) -- ++(-0.125,0) 
        -- cycle;
    \draw[step=0.125cm,black] (0,0) grid (3,3);
    \draw[step=1cm,red] (0,0) grid (3,3);
    \end{tikzpicture}
    \caption{On the right and on the left are the $\gradJanus$ and $\pi_3^\xi$ gradings for the subcomplex $\bar S(\blup(\sstarr(\xi)))\subset \Janus$ of an equilibrium cell with positive feedback, respectively. Moreover, the colors represent the grading values where: gray $\posetF{3}$ yellow $\posetF{3}$ dark red $\posetF{3}$ pink; 
    dark gray $\posetF{3}$ yellow $\posetF{3}$ red $\posetF{3}$ purple;
    yellow $\posetF{3}$ light blue $\posetF{3}$ pink; and yellow $\posetF{3}$ dark blue $\posetF{3}$ purple.}
    \label{fig:2DposJanus}
\end{figure}

We leave to the reader to check that $\pi_3^\xi$ satisfies the three conditions in Theorem~\ref{thm:extension}, where $\pi=\pi_2$ and $\pi'=\pi_3^ \xi$.
As a consequence we obtain the following result.

\begin{prop}\label{prop:2Dpos_fb_extendable}
Consider $\cF_3\colon \cX\mvmap \cX$, where $\cX$  is a two-dimensional cubical complex.
Let $\xi$ be an equilibrium cell with positive feedback.
Then, 
\[
\pi_3^\xi\colon \mBn{\xi} \rightarrow (\SCC(\cF_3),\posetF{3})
\]
is extendable.
\end{prop}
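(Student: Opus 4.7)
The plan is to invoke Theorem~\ref{thm:extension} with $\pi = \pi_2$ and $\pi' = \pi_3^\xi$, both viewed as gradings of top cells of $\mBn{\xi}$, taking $\cV \subset \mBn{\xi}^{(N)}$ to be the set of top cells on which the two gradings disagree. By inspection of the three-case definition of $\pi_3^\xi$ immediately above the statement, this disagreement set is exactly $\mTile(\xi,\sigma)$, i.e.\ the short ``arms'' of top cells extending from $\bar{S}(\blup(\xi))$ along each edge $\tau \in \mCellX(\xi,\sigma)$ (the yellow protrusions in Figure~\ref{fig:2DposJanus}). On every cell of $\cV$ the new value is $\pi(\xi)$, whereas the old $\pi_2$-value is $\pi(\tau)$ for the edge $\tau$ containing that arm.

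Condition~(1) of Theorem~\ref{thm:extension} is immediate: off $\mTile(\xi,\sigma)$, the grading $\pi_3^\xi$ is either specified to equal $\pi_2$ (the ``elsewhere'' branch), or lies in $\bar{S}(\blup(\xi))$ where $\pi_2 = \pi(\xi)$ by the definition of $\pi_\mathtt{J}$ and the fact that $\xi$ is not itself involved in any indecisive-drift pair, or lies in $\FibTri(\tau;\xi) = \bar{S}(\blup(\tau)) - \FibTri(\xi)$ where, by Definition~\ref{defn:pi2fibers} and the absence of an indecisive-drift modification affecting these cells, $\pi_2$ also returns $\pi(\tau)$. So equality of $\pi_2$ and $\pi_3^\xi$ on the complement of $\cV$ is a direct bookkeeping check.

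Conditions~(2) and~(3) I would verify by a case analysis over the local poset structure around $\xi$. The key structural inputs are: (a) $\pi_3^\xi(\mu') = \pi(\xi)$ for every $\mu' \in \cV$; (b) by Lemma~\ref{lem:F_3-minimal}, $\pi(\xi)$ is strictly below $\pi(\tau')$ for every $\tau' \in \Stable(\xi)$ and strictly above $\pi(\tau)$ for every $\tau \in \cU(\xi,\sigma)$ (the two edges in the cyclic support), with the incomparability of the two unstable-direction gradings handled by Lemma~\ref{lem:F_3-q-same_grading}; and (c) each arm $\mu' \in \cV$ sits only in the closure of $\xi$, the edge $\tau$ it extends along, and a single stable corner top cell. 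Thus for condition~(2), any $\mu$ incomparable with $\pi(\xi)$ must have its grading lying outside the downset/upset of $\pi(\xi)$, which by (b) forces $\mu$ to be a stable corner not adjacent to that particular arm, hence $\cl(\mu) \cap \cl(\mu') = \emptyset$. Condition~(3) is the most delicate step and the main obstacle: one must show that no triple $(\mu,\mu',\mu'')$ with $\mu'' \in \cV$ and with $\pi(\mu), \pi(\mu') \leq \pi(\xi)$ incomparable can meet at a common face. Since $\xi$ is an equilibrium cell, the only gradings below $\pi(\xi)$ come from the two unstable-direction edges $\tau_1, \tau_2 \in \cU(\xi,\sigma)$; these are pairwise incomparable, but by the explicit geometry of the arms (each arm sits inside a single edge's fiber and meets only the closure of that edge together with one stable corner), they cannot both lie in $\cl(\mu'')$ simultaneously. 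The symmetry between the two cyclic directions of $\sigma$, together with the positive-feedback signature $\delta_\sigma = +1$ which fixes the orientation of the unstable arms, allows the check to be reduced to a single representative configuration, after which condition~(3) follows by direct inspection of Figure~\ref{fig:2DposJanus}.
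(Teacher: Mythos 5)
Your plan — apply Theorem~\ref{thm:extension} with $\pi=\pi_2$, $\pi'=\pi_3^\xi$, taking $\cV$ to be the modification tiles $\mTile(\xi,\sigma)$ sticking from $\bar S(\blup(\xi))$ into the edge fibers — is precisely what the paper does; it simply leaves the verification of the three conditions to the reader. Your check of condition~(1) is fine (and your observation that no indecisive-drift modification from a neighboring vertex $\sigma'$ can reach $\mBn{\xi}$, so $\pi_2 = \gradJanus$ there, is the right thing to notice).

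However, a few of the structural details in your handling of conditions~(2)--(3) are off, and it is worth fixing them because they make the argument look harder than it is. First, for $N=2$ the set $\cU(\xi,\sigma)$ is not ``two edges'': by Definition~\ref{defn:Rule3.2} its elements satisfy $\dim(\xi')-\dim(\xi) \geq 2$, so for a vertex $\xi$ in dimension two they are the two diagonally opposite \emph{top cells} with $\lap_{\xi,\sigma}=0$. Second, by Lemma~\ref{lem:F_3-minimal} every coface of $\xi$ lies in $\cU(\xi)$ or $\Stable(\xi)$, and $\pi(\alpha)\posetF{3}\pi(\xi)\posetF{3}\pi(\beta)$ for $\alpha\in\cU(\xi)$, $\beta\in\Stable(\xi)$; hence every $\pi_3^\xi$-value attained on $\mBn{\xi}$ is comparable with $\pi(\xi)$, and condition~(2) is vacuous — the ``stable corner not adjacent to the arm'' you invoke is not an issue because stable corners lie strictly above $\pi(\xi)$. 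Third, for condition~(3) the only $\pi_2$-values $\leq\pi(\xi)$ on $\mBn{\xi}$ come from $\bar S(\blup(\xi))$ itself and from the two unstable corner blocks $\bar S(\blup(\alpha_1))$, $\bar S(\blup(\alpha_2))$; you should not cite Lemma~\ref{lem:F_3-q-same_grading} here, since $\alpha_1,\alpha_2$ are diagonally opposite and do not lie in a common $\blup^{-1}(\Top_{\cX_b}(\zeta))$, so that lemma gives nothing (and in particular $\pi(\alpha_1),\pi(\alpha_2)$ may well be incomparable). What actually closes condition~(3) is that $\blup(\alpha_1)$ and $\blup(\alpha_2)$ share no face in $\cX_b$, so $\cl(\bar S(\blup(\alpha_1)))\cap\cl(\bar S(\blup(\alpha_2)))=\emptyset$ in $\Janus$, and each arm (sitting in $B_2$ of a single unstable corner of $\blup(\xi)$, by the definition of $\Spineb(\xi,\sigma)$) has closure meeting at most one of the two unstable corner blocks — not a stable one, as you wrote. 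With these corrections your verification is sound and matches the paper's intent.
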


\section{Local D-grading $\pi_3$ for the three dimensional Janus Complex}\label{sec:local_3D_Janus}
In this section, we define a new grading $\pi_3$ for the Janus complex $\Janus$ when $N=3$. In the following subsections we address each local modifications for the possible cases: Positive 3-Cycle, Negative 3-Cycle and 2-Cycle. 
\subsection{Positive 3-Cycle}

Let $\xi \in \cX^{(0)}$ be an equilibrium cell with positive feedback.
This implies that $\rmap\xi=(n_1 \ n_2 \ n_3)$ and $\delta_{(n_1 \, n_2 \, n_3)}=1$.
Recall from Example~\ref{ex:3Dpos_1} that $\Spineb(\xi,\rmap\xi)$ has two zero-dimensional cells such that $p_n(\zeta_1,b(\xi))=-p_n(\zeta_2,b(\xi))$, where $\blup(\xi)=[\bar\bv,\bone]$, $\zeta_1=[\bar\bv,0]$ and $\zeta_2=[\bar\bv+\bone,\bzero]$. In general, due to symmetry, the same holds for $\rmap\xi=(n_1 \ n_2 \ n_3)$, i.e.,
\[
    \Spineb(\xi,\rmap\xi) = \setof{\zeta_1,\zeta_2} \subset \cX_b^{(0)},
\]
with $\zeta_1,\zeta_2\in\cX_b^{(0)}$,
$p_n(\zeta_1,b(\xi))=-p_n(\zeta_2,b(\xi))$ for each $n=1,2,3$. 
Let $\bar{S}(\zeta_i) = [\bv^i,\bzero] \in \Janus$ and define the set
\[
\FibTri(\xi) \coloneqq \mathrm{Line}(\bv^1, \bv^2) \cup \left(  \bigcup_{i=1}^2  B_2([\bv^i,\bzero]) \setminus  \left(\bar{S}(\blup(\xi)) \cup \bigcup_{\beta\in\Stable(\xi)} \bar{S}(\blup(\beta)) \right)  \right),
\]
where $\mathrm{Line}(\cdot,\cdot)$ is defined in \eqref{eq:line}.
We leave it to the reader to check that if $\tau_j\in \Stable(\xi)$, for $j=0,1$, then \[
\pi(\xi)\posetF{3} \pi(\tau_0) =_{\bar{\cF_3}}\pi(\tau_1).
\]
Define the grading $\pi_3^\xi\colon \mBn{\xi} \rightarrow (\SCC(\cF_3),\posetF{3})$ by:
\[ 
\pi_3^\xi(\eta)\coloneqq
\begin{cases}
    \pi(\xi)& \text{ if } \eta \in \FibTri(\xi),\\
    \pi(\tau_0)& \text{ if } \eta \in \bar{S}(\blup(\xi)) - \FibTri(\xi), \\
    \pi_2(\eta),& \text{ elsewhere}.
\end{cases}
\] 
Figure \ref{fig:pos3cycle} illustrates the change in grading when $\rmap{\xi}=(1\ 2\ 3)$, $\delta_{(1\ 2\ 3)}$ and $\kappa\in\Top_\cX(\xi)$ from Definition~\ref{defn:lap-number}, such that $\omega(\kappa_n^{-},\kappa) =1,$ for all $n=1,2,3.$
\begin{figure}[H]
    \centering
    \begin{tikzpicture}[scale=0.25]
    \fill[cyan] (0,0) rectangle (24,24); 
    \fill[lightgray] (0,0) rectangle (8,8);
    \fill[gray] (16,16) rectangle (24,24);
    \fill[yellow] (8,8) rectangle (16,16);
    \draw[step=1cm,black] (0,0) grid (24,24);
    \draw[step=8cm,red] (0,0) grid (24,24);
    \end{tikzpicture}
    \quad
    \begin{tikzpicture}[scale=.25]
        \fill[cyan] (0,0) rectangle (24,24); 
        \fill[lightgray] (0,0) rectangle (8,8);
        \fill[gray] (16,16) rectangle (24,24);
        \foreach \x in {8,...,14} {
            \fill[yellow] (\x,\x) rectangle (\x+2,\x+2);
        }        
        \fill[yellow] (5,8) -- (5,9) -- (6,9) -- (6,10) -- (7,10) -- (7,11) -- (8,11) -- (8,8) -- (5,8);
        \fill[yellow] (8,5) -- (9,5) -- (9,6) -- (10,6) -- (10,7) -- (11,7) -- (11,8) -- (8,8) -- (8,5);
        \fill[yellow] (16,16) -- (19,16) -- (19,15) -- (18,15) -- (18,14) -- (17,14) -- (17,13) -- (16,13) -- (16,16);
        \fill[yellow] (16,16) -- (16,19) -- (15,19) -- (15,18) -- (14,18) -- (14,17) -- (13,17) -- (13,16) -- (16,16);
        
        \fill[lightgray] (7,7) rectangle (8,8);
        \fill[gray] (16,16) rectangle (17,17);
        \draw[step=1cm,black] (0,0) grid (24,24);
        \draw[step=8cm,red] (0,0) grid (24,24);
\end{tikzpicture}
    
    \caption{Fixing the third coordinate of $\bv + \mathbf{4}$, on the right and on the left are the slices of the $\gradJanus$ and $\pi_3^\xi$ gradings for the subcomplex $\bar S(\blup(\sstarr(\xi)))\subset \Janus$ of an equilibrium cell with positive feedback, respectively. Moreover, the colors represent the grading where: gray $\posetF{3}$ yellow $\posetF{3}$ light blue; yellow $\posetF{3}$ light gray.}
    \label{fig:pos3cycle}
\end{figure}
\begin{prop}\label{prop:3Dpos_fb_extendable}
    Let $\xi$ be an equilibrium cell with positive feedback in a three dimensional Janus complex $\Janus$. Then 
    \[\pi_3^\xi\colon \mBn{\xi} \rightarrow (\SCC(\cF_3),\posetF{3})\] 
    is extendable.
\end{prop}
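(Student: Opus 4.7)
The plan is to deduce Proposition~\ref{prop:3Dpos_fb_extendable} from Theorem~\ref{thm:extension} by taking $\pi = \pi_2$ and $\pi' = \pi_3^\xi$. The grading $\pi_2$ restricted to $\mBn{\xi}$ is already extendable (Theorem~\ref{thm:pi2-welldefined}), so it suffices to identify the modification set $\cV \subset \mBn{\xi}^{(N)}$ on which $\pi_3^\xi$ and $\pi_2$ disagree and then verify the three conditions of Theorem~\ref{thm:extension}. By the definition of $\pi_3^\xi$, the modification set decomposes as $\cV = \cV_1 \cup \cV_2$, where $\cV_1$ consists of the top cells of $\FibTri(\xi) \setminus \bar{S}(\blup(\xi))$ (re-graded from stable/unstable values down to $\pi(\xi)$) and $\cV_2$ consists of the top cells of $\bar{S}(\blup(\xi)) \setminus \FibTri(\xi)$ (re-graded from $\pi(\xi)$ up to $\pi(\tau_0)$, a well-defined value since all stable cells share the same grading by Lemma~\ref{lem:F_3-minimal}). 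Condition~(1) of Theorem~\ref{thm:extension} is immediate from this description.

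For conditions (2) and (3), I will exploit the order information encoded in Lemma~\ref{lem:F_3-minimal}: inside $\mBn{\xi}$ the grading value $\pi(\xi)$ is the minimum in $\posetF{3}$, lying strictly below $\pi(\tau)$ for every stable or unstable cell $\tau$ of $\xi$, while all stable cells have a common value $\pi(\tau_0)$ that sits above $\pi(\xi)$ and is reached by $\pi_3^\xi$ on $\cV_2$. Consequently, for condition (2), the only way a top cell $\mu \in \mBn{\xi}$ can carry a $\pi_3^\xi$-value incomparable with a $\pi_3^\xi$-value on some $\mu' \in \cV$ is if that incomparable value belongs to the image of an unstable cell $\tau \in \cU(\xi)$ (since the $\pi_2$-values attached to non-stable, non-unstable faces of $\xi$ are either $\pi(\xi)$ itself or values comparable to $\pi(\tau_0)$). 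The explicit construction of $\FibTri(\xi)$ as a $\mathrm{Line}$ between the two spine vertices $\bv^1, \bv^2$ together with the two $B_2$-neighborhoods around them is precisely designed so that the cells of $\cV_1$ receiving $\pi(\xi)$ only share faces with cells whose grading is comparable to $\pi(\xi)$, while the cells of $\cV_2$ receiving $\pi(\tau_0)$ only share faces with cells whose grading is comparable to $\pi(\tau_0)$.

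The main obstacle is verifying condition (3) at the interface between $\cV_2$ and the unstable cells of $\xi$. Explicitly, for $\mu'' \in \cV_2$ with $\pi_3^\xi(\mu'') = \pi(\tau_0)$, one must show that no pair of mutually incomparable values $\pi_2(\mu), \pi_2(\mu') \posetF{3} \pi(\tau_0)$ can be realized on cells whose closures meet at $\mu''$. The collection of such incomparable candidates is exactly the set of values $\setdef{\pi(\tau)}{\tau \in \cU(\xi)} \cup \setof{\pi(\xi)}$, and some of the unstable cells of $\xi$ do carry pairwise incomparable values. The geometric argument will use the fact that $\cV_2$ is confined to the interior portion of $\bar{S}(\blup(\xi))$ complementary to both the $B_2$-balls about the spine cells and the $\mathrm{Line}$ between them, so that each element of $\cV_2$ meets at most one unstable cell on its closure. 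This separation can be established by a finite case analysis over the eight top cells of $\Top_\cX(\xi)$ and their positions relative to $\zeta_1, \zeta_2$, with the positive-feedback assumption $\delta_\sigma = 1$ ensuring that the unstable cells are precisely the diagonal pair computed in Example~\ref{ex:3Dpos_1}.

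Once conditions (1)-(3) are verified, Theorem~\ref{thm:extension} yields that $\pi_3^\xi$ is extendable on $\mBn{\xi}$, completing the proof. I expect the combinatorial bookkeeping in the last step, tracking which unstable cells touch which top cells of $\cV_2$ across the subdivision $\bar{S}$, to be the most tedious part; the intervening order-theoretic reductions should be routine given Lemma~\ref{lem:F_3-minimal}.
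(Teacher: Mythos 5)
Your proposal has the right skeleton---take $\cV$ as the locus where $\pi_2$ and $\pi_3^\xi$ disagree and feed it into Theorem~\ref{thm:extension}---but you misread Lemma~\ref{lem:F_3-minimal} in a way that derails the order-theoretic part. You assert that ``inside $\mBn{\xi}$ the grading value $\pi(\xi)$ is the minimum in $\posetF{3}$, lying strictly below $\pi(\tau)$ for every stable or unstable cell $\tau$ of $\xi$.'' Lemma~\ref{lem:F_3-minimal} states the opposite for unstable cells: $\pi(\alpha)\posetF{3}\pi(\xi)\posetF{3}\pi(\beta)$ for $\alpha\in\cU(\xi)$ and $\beta\in\Stable(\xi)$, so the minima of $\pi_3^\xi(\mBn{\xi})$ are the two values $\pi(\mu_\bzero),\pi(\mu_\bone)$ attached to the diagonal unstable top cells, not $\pi(\xi)$. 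With the order inverted, your reason for why incomparability only arises through unstable cells no longer follows, and the claim that ``cells of $\cV_1$ only share faces with cells whose grading is comparable to $\pi(\xi)$'' is a red herring: comparability of $\pi(\xi)$ to everything present has nothing to do with which faces are shared --- it is a direct consequence of $\pi(\xi)$ sitting strictly between the unstable and stable levels in $\posetF{3}$.

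The argument you are building toward for condition~(3)---the case analysis over the eight top cells and tracking which unstable cells touch which elements of $\cV_2$---is also unnecessary because you are missing the key geometric fact the paper relies on. All cells in $\cV$ carry grading $\pi(\xi)$ or $\pi(\tau_0)$ under $\pi_3^\xi$, and $\pi_3^\xi$ equals $\pi_2$ on $\bar S(\blup(\mu))$ for every $\mu\in\cU(\xi)$; hence no cell of $\cV$ ever carries a minimal grading value, so condition~(2) holds with no geometry at all. For condition~(3) the only potentially incomparable pair is $\{\pi(\mu_\bzero),\pi(\mu_\bone)\}$, and the two unstable top cells are opposite corners whose images $\blup(\mu_\bzero)$ and $\blup(\mu_\bone)$ do not even share a vertex in $\cX_b$ (they are separated by the blowup cell $\blup(\xi)$). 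Consequently $\bar S(\blup(\mu_\bzero))$ and $\bar S(\blup(\mu_\bone))$ are separated by at least $8$ in $\Janus$, their closures are disjoint, and the triple intersection $\cl(\mu)\cap\cl(\mu')\cap\cl(\mu'')$ is empty for every $\mu''$, not just at interfaces with $\cV_2$. Correcting the direction of Lemma~\ref{lem:F_3-minimal} and substituting this separation observation gives the short argument intended; your finite case analysis and interface bookkeeping are never needed.
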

\begin{proof}
    Let 
    \[\cV=\setdef{\eta\in \mBn{\xi}}{\pi_2(\eta)\neq\pi_3^ \xi(\eta)}.\] We will show that $\pi=\pi_2$ and $\pi'=\pi_3^ \xi$ as in Theorem~\ref{thm:extension} satisfy the conditions (1)-(3). Consequently, $\pi_3^\xi$ is extendable. 
    
    Clearly $\cV$ is defined to satisfy condition (1) of Theorem~\ref{thm:extension}.
    
    Now, observe that the minimal elements of the set  $\pi_3^\xi\left(\mBn{\xi}\right)$ are precisely the grading values $\pi(\mu)$ for $\mu\in\cU(\xi)$. From the definition of $\pi_3^\xi$, it follows that 
    \[\pi_2(\eta)=\pi_3^\xi(\eta), \text{ for all } \eta\in \displaystyle\bigcup_{\mu\in\cU(\xi)} \bar S(\blup(\mu)).\] 
    As a consequence, condition (2) in Theorem~\ref{thm:extension} is satisfied.
    
    Finally, notice that the elements that are not comparable in \[\pi_3^\xi\left(\mBn{\xi}\right)\] 
    are the minimal elements, i.e., the values $\pi(\mu)$ such that $\mu\in\cU(\xi)$. Furthermore, for any distinct $\mu,\mu'\in\cU(\xi)$, the sets $\bar S(\blup(\mu))$ and $\bar S(\blup(\mu'))$ are separated by at least 8. Consequently,  $\cl(\mu)\cap\cl(\mu') = \emptyset$. Thus $\cl(\mu)\cap\cl(\mu')\cap\cl(\mu'') = \emptyset$ for any $\mu''\in\cV$. Therefore, the condition (3) in Theorem~\ref{thm:extension} is satisfied. 
\end{proof}

\subsection{Negative Feedback}
Let $\xi = [\bv', \bzero]\in \cX^{(0)}$ an equilibrium cell with negative feedback. This implies that $\rmap{\xi}=(n_1\ n_2\ n_3)$ and $\delta_{(n_1\ n_2\ n_3)}=-1$.

Without loss of generality, assume that $\rmap{\xi}=(1\ 2\ 3)$ and there exist $\mu_\bzero,\mu_\bone\in \cU(\xi)$ such that  $p(\xi,\mu_\bzero) = (-1,-1,-1)$ and $p(\xi,\mu_\bone) = (1,1,1)$, note that this is precisely the case described in Figure~\ref{fig:wall_labelling_repressilator}. Let $\bv = 16\bv'$, then set
\[
\FibTri(\xi) \coloneqq \mathrm{Line}(\bv^1, \bv^2),
\]
and
\begin{multline*}
\mathrm{Uns} \coloneqq \mathrm{Cube}({\bf 8}, {\bf 16})\cup \\
\setof{[(v_1,v_2,v_3), \bone]\in \Janus \mid 12 \leq v_1+v_2+v_3 < 24 \text{ and at least one } v_i\in \setof{7,16}} \\
- \FibTri(\xi),
\end{multline*}
where $\mathrm{Cube}(\cdot,\cdot)$ is defined in \eqref{eq:cube}.

We leave to the reader to check that Lemma~\ref{lem:F_3-q-same_grading} and Lemma~\ref{lem:F_3-minimal} ensure that if $\tau_j\in \cU(\xi)$, for $j=0,1$, then 
\[
\pi(\tau_0) =_{\bar{\cF_3}}\pi(\tau_1) \posetF{3} \pi(\xi).
\]
Define the grading $\pi_3^\xi\colon \mBn{\xi} \rightarrow (\SCC(\cF_3),\posetF{3})$ by
\[ 
\pi_3^\xi(\eta)\coloneqq 
\begin{cases}
    \pi(\xi),& \text{ if } \eta \in \FibTri(\xi);\\
    \pi(\tau_0),& \text{ if } \eta \in \mathrm{Uns};\\
    \pi_2(\eta),& \text{ elsewhere}.
\end{cases}
\] 
Figures \ref{fig:neg3cycle} and \ref{fig:neg3cycle2}  illustrate the change.

\begin{figure}[H]
\label{fig:geometrization-rule3-2D-negative}
    \centering    
    \begin{tikzpicture}[scale=0.25]

        \fill[cyan] (8,0) rectangle (16,8); 
        \fill[purple] (0,8) rectangle (8,16); 
        \fill[red] (16,8) rectangle (24,16); 
        \fill[blue] (8,16) rectangle (16,24); 
        \fill[yellow] (8,8) rectangle (16,16); 
        \fill[lightgray] (8,16) rectangle (0,24); 
        \fill[lightgray] (16,8) rectangle (24,0);

        \fill[red!20!white] (0,0) rectangle (8,8);
        \fill[red!10!white!90!blue] (16,16) rectangle (24,24);
        
        \draw[step=1cm,black] (0,0) grid (24,24);
        \draw[step=8cm,red] (0,0) grid (24,24);
    
    \end{tikzpicture} 
    \quad
    \begin{tikzpicture}[scale=0.25]
        \fill[cyan] (8,0) rectangle (16,8); 
        \fill[purple] (0,8) rectangle (8,16); 
        \fill[red] (16,8) rectangle (24,16); 
        \fill[blue] (8,16) rectangle (16,24); 
        
        \fill[red!20!white] (0,0) rectangle (8,8);
        \fill[red!10!white!90!blue] (16,16) rectangle (24,24);
        
        \fill[lightgray] (8,8) rectangle (16,16);
        \fill[lightgray] (8,16) rectangle (0,24); 
        \fill[lightgray] (16,8) rectangle (24,0);
        \fill[lightgray] (12,12) rectangle (7,17);
        \fill[lightgray] (12,12) rectangle (17,7);
        
        \fill[yellow] (11,11) rectangle (13,13);
        
        \draw[step=1cm,black] (0,0) grid (24,24);
        \draw[step=8cm,red] (0,0) grid (24,24);
    \end{tikzpicture}
    \caption{ Fixing the third coordinate of $\bv + \mathbf{4}$, on the right and on the left are the slices of the $\gradJanus$ and $\pi_3$ gradings for the subcomplex $\bar S(\blup(\sstarr(\xi)))\subset \Janus$ of an equilibrium cell with negative feedback, respectively. 
    Moreover, the colors represent the grading where: 
    gray $\posetF{3}$ yellow $\posetF{3}$ dark red $\posetF{3}$ pink;
    yellow $\posetF{3}$ light blue $\posetF{3}$ pink;
    yellow $\posetF{3}$ dark blue $\posetF{3}$ purple; and 
    yellow $\posetF{3}$ red $\posetF{3}$ purple.
    }
    \label{fig:neg3cycle}
\end{figure}

\begin{figure}[H]
\label{fig:geometrization-rule3-2D-negative}
    \centering    
    \begin{tikzpicture}[scale=0.25]
        \fill[cyan] (8,0) rectangle (16,8); 
        \fill[purple] (0,8) rectangle (8,16); 
        \fill[red] (16,8) rectangle (24,16); 
        \fill[blue] (8,16) rectangle (16,24); 

        \fill[red!20!white] (0,0) rectangle (8,8);
        \fill[red!10!white!90!blue] (16,16) rectangle (24,24);
        
        \fill[lightgray] (8,8) rectangle (16,16);
        \fill[lightgray] (8,16) rectangle (0,24); 
        \fill[lightgray] (16,8) rectangle (24,0);
        \fill[lightgray] (12,12) rectangle (7,17);
        \fill[lightgray] (12,12) rectangle (17,7);

        \foreach \x in {8,...,14} {
            \fill[yellow] (\x,\x) rectangle (\x+2,\x+2);
        }

        \draw[step=1cm,black] (0,0) grid (24,24);
        \draw[step=8cm,red] (0,0) grid (24,24);
        
    \end{tikzpicture}
    \caption{Fixing the third coordinate of $\bv +\mathbf{4}$, the figure is a slice of  $\pi_3$ grading for the subcomplex $\bar S(\blup(\sstarr(\xi)))\subset \Janus$ of an equilibrium cell with negative feedback, where the stable cells $\Stable(\xi)$ are projected to this slice.}
    \label{fig:neg3cycle2}
\end{figure}
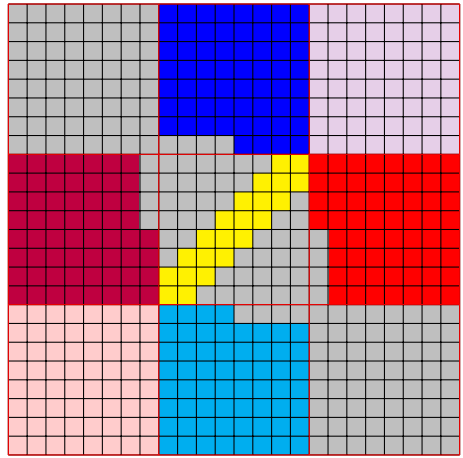

\begin{prop}
\label{prop:3Dneg_fb_extendable}
    Let $\xi$ be an equilibrium cell with positive feedback in a three dimensional Janus complex $\Janus$. Then \[\pi_3^\xi\colon \mBn{\xi} \rightarrow (\SCC(\cF_3),\posetF{3})\] is extendable.
\end{prop}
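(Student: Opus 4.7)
The plan is to apply Theorem~\ref{thm:extension} to $\pi = \pi_2$ and $\pi' = \pi_3^\xi$, taking
\[
\cV = \setdef{\eta \in \mBn{\xi}}{\pi_2(\eta) \neq \pi_3^\xi(\eta)}.
\]
Condition (1) of Theorem~\ref{thm:extension} is then immediate from the definition of $\cV$, so the task is reduced to verifying conditions (2) and (3). The strategy parallels the proof of Proposition~\ref{prop:3Dpos_fb_extendable}, but with the roles of unstable and spine cells adapted to the negative feedback setting: here $\cU(\xi)$ consists of the two opposite top cells $\mu_\bzero$ and $\mu_\bone$, Lemma~\ref{lem:F_3-q-same_grading} gives $\pi(\mu_\bzero) = \pi(\mu_\bone) = \pi(\tau_0)$, and Lemma~\ref{lem:F_3-minimal} gives $\pi(\tau_0) \posetF{3} \pi(\xi)$.

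First I would catalog the image of $\pi_3^\xi$ over $\mBn{\xi}$. The set $\cV$ splits cleanly into two pieces: the cells of $\FibTri(\xi) = \mathrm{Line}(\bv^1,\bv^2)$, on which $\pi_3^\xi = \pi(\xi)$, and the cells of $\mathrm{Uns}$, on which $\pi_3^\xi = \pi(\tau_0)$. Off of $\cV$, the grading agrees with $\pi_2$ and the image consists of $\pi(\xi)$, $\pi(\tau_0)$, together with the gradings of the six 1-dimensional cells $\tau \in \cT^{(1)}(\xi)$ (the ``lateral'' gradings in Figure~\ref{fig:neg3cycle}) and of the twelve 2-dimensional cofaces of $\xi$ (the ``corner'' gradings). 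By $\pi$ being order-preserving and by the comparability relations within $\Stable(\xi)$, every lateral or corner grading is comparable to $\pi(\xi)$ from below and to $\pi(\tau_0)$ from above; the only incomparable pairs arise among the lateral gradings themselves, and between a lateral grading and a corner grading sitting on a different face.

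For condition (2), the critical observation is that both $\FibTri(\xi)$ and $\mathrm{Uns}$ are contained in the interior of $\bar{S}(\blup(\sstarr(\xi)))$, and neither reaches any face of $\mBn{\xi}$ whose $\pi_2$-grading is one of the incomparable lateral or corner gradings. Concretely, given $\mu' \in \cV$ and $\mu \in \mBn{\xi}$ with $\pi_3^\xi(\mu)$ and $\pi_3^\xi(\mu')$ incomparable, the cell $\mu$ must be a lateral or corner cell sitting in some $\bar{S}(\blup(\tau))$ for $\tau \in \cT^{(1)}(\xi) \cup \cT^{(2)}(\xi)$. The definitions of $\mathrm{Line}(\bv^1,\bv^2)$ and of $\mathrm{Uns}$ keep $\cV$ at positive separation from such cells in the Janus grid, so $\cl(\mu) \cap \cl(\mu') = \emptyset$. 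For condition (3), the same separation argument applies: if $\mu'' \in \cV$ and $\mu,\mu'$ have $\pi_2$-gradings below $\pi_3^\xi(\mu'')$ and incomparable to each other, then $\mu$ and $\mu'$ belong to distinct faces of $\mBn{\xi}$ whose closures meet only at boundary strata well away from $\mathrm{Line}(\bv^1,\bv^2) \cup \mathrm{Uns}$, so $\cl(\mu) \cap \cl(\mu') \cap \cl(\mu'') = \emptyset$.

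The main obstacle I expect is the bookkeeping in the second and third paragraphs: unlike the positive feedback case, where $\cU(\xi)$ consists of two isolated top cells separated by the full diagonal distance of $\bar{S}(\blup(\xi))$, here $\mathrm{Uns}$ is a geometrically intricate neighborhood of the diagonal (see Figure~\ref{fig:neg3cycle2}) that wraps around the spine vertices and extends along $\mathrm{Cube}(\mathbf{8},\mathbf{16})$. Verifying that this set, together with $\mathrm{Line}(\bv^1,\bv^2)$, keeps the required distance from every lateral and corner cell on $\bbdy(\mBn{\xi})$ requires a careful case analysis indexed by the $2^3 = 8$ orthants emanating from $\xi$. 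Once that separation is established for each orthant, the conclusion $\cl(\mu)\cap\cl(\mu')=\emptyset$ (resp.\ the triple intersection being empty) is a direct consequence of the Janus grid structure, and extendability of $\pi_3^\xi$ follows from Theorem~\ref{thm:extension}.
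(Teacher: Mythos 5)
Your approach is genuinely different from the paper's, and it is in fact the same strategy the paper uses for the \emph{positive}-feedback case (Proposition~\ref{prop:3Dpos_fb_extendable}): apply Theorem~\ref{thm:extension} once, directly to $\pi = \pi_2$ and $\pi' = \pi_3^\xi$ with $\cV$ the set of changed cells, and check conditions (2) and (3). The paper's own proof of Proposition~\ref{prop:3Dneg_fb_extendable} instead decomposes $\mBn{\xi}$ into $\cB = (\pi_3^\xi)^{-1}(\pi(\mu_\bzero)) \cup (\pi_3^\xi)^{-1}(\pi(\mu_\bone))$ (the preimage of the common unstable grading) and its complement, establishes extendability separately on each piece — via Proposition~\ref{prop:min_extendable} for $\mBn{\xi}\setminus\cB$ and Theorem~\ref{thm:extension} for $\cB$ — and then verifies that the two extensions agree on $\cl(\mBn{\xi}\setminus\cB)\cap\cl(\cB)$. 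Your single-shot route works, but you substantially over-anticipate the difficulty: you expect "a careful case analysis indexed by the $2^3 = 8$ orthants" to control separation between $\mathrm{Uns} \cup \mathrm{Line}(\bv^1,\bv^2)$ and the lateral/corner cells. This is unnecessary here, and is precisely where negative feedback is \emph{easier} than positive feedback. By Lemma~\ref{lem:F_3-q-same_grading} (case (1)), all cells of $\cU(\xi)$ carry a single common grading $\pi(\tau_0)$, so $\pi_3^\xi$ restricted to $\cV$ takes only the two values $\pi(\xi)$ and $\pi(\tau_0)$; and by Lemma~\ref{lem:F_3-minimal} these satisfy $\pi(\tau_0) \posetF{3} \pi(\xi) \posetF{3} \pi(\beta)$ for every $\beta \in \Stable(\xi)$, so both are comparable to every grading appearing in $\mBn{\xi}$. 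Hence conditions (2) and (3) of Theorem~\ref{thm:extension} hold vacuously, with no geometric separation argument at all. (In the positive case the two isolated unstable top cells can carry distinct, incomparable gradings, which is why that proof really does need the separation-by-$8$ observation; the obstruction you are bracing for simply vanishes for negative feedback.) Also note the statement in the paper contains a typo — it should read ``negative feedback'' — and you have correctly interpreted it that way, since you invoke $\mathrm{Line}(\bv^1,\bv^2)$ and $\mathrm{Uns}$ from the negative-feedback construction.
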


\begin{proof}
    Let $\mu_\bzero,\mu_\bone\in \cU(\xi)$ be distinct unstable cells. To simplify the notation denote 
    \[\cB \coloneqq \left((\pi_3^\xi)^{-1}(\pi(\mu_\bone))\right)\bigcup
    \left((\pi_3^\xi)^{-1}(\pi(\mu_\bzero))\right).\]
    The strategy is to prove that $\pi_3^\xi$ is extendable over both $\mBn{\xi} \setminus \cB$ and $\cB$. Furthermore, we use them to define 
    \[
    \pi_3^\xi(\zeta)\coloneqq 
    \begin{cases}
         \left.\pi_3^\xi\right|_{\mBn{\xi} \setminus \cB}(\zeta), & \text{if } \zeta\in \cl\left(\mBn{\xi}\setminus \cB\right)\\
         \left.\pi_3^\xi\right|_{\cB}(\zeta), & \text{if } \zeta\in \cl(\cB) \setminus \cl\left(\mBn{\xi}\setminus \cB\right) \\
    \end{cases}
    \] for any $\zeta\in \cl\left(\mBn{\xi}\right)$, and verify that $\pi_3^\xi(\zeta)$ is admissible grading on
    \[
        \cl\left(\mBn{\xi} \setminus \cB\right) 
        \bigcap \cl\left(\cB\right).
    \]

    First, observe that $\pi(\xi)^<$ is the unique minimum of $\pi_3^\xi\left(\mBn{\xi}\right)$. By Proposition~\ref{prop:min_extendable},  $\pi_3^\xi$ is extendable over \[\mBn{\xi} \setminus \cB.\]
    
    Second, since
    $(\pi_3^\xi)^{-1}(\pi(\mu_\bone))$ and $(\pi_3^\xi)^{-1}(\pi(\mu_\bone))$ are separated by at least one, by Theorem~\ref{thm:extension}, $\pi_3^\xi$ is extendable over $\cB$.

    It remains to show that $\pi_3^\xi$ is admissible grading on
    \[
        \cl\left(\mBn{\xi} \setminus \cB\right) 
        \bigcap \cl\left(\cB\right).
    \]
    In fact, given that $\pi(\mu_\bone)$ and $\pi(\mu_\bone)$ are the maximal elements of \[\pi_3^\xi(\mBn{\xi}),\] then 
    $
        \pi_3^\xi(\mu')\posetF{3}\pi_3^\xi(\mu), \text{ for all } \mu'\in\mBn{\xi} \setminus \cB \text{ and } \mu\in\cB.
    $ 
    Hence, for all 
    \[
        \zeta \in \cl\left(\mBn{\xi} \setminus \cB\right) 
        \bigcap \cl\left(\cB\right)
    \]
    it follows that
    \begin{align*}
        \pi_3^\xi(\zeta) &= \min\setdef{\pi_3^\xi(\mu)}{\mu\in\Top_{\mBn{\xi}}(\zeta)}\\
        &= \min\setdef{\pi_3^\xi(\mu)}{\mu\in\Top_{\mBn{\xi} \setminus \cB}(\zeta)}\\
        &= \left.\pi_3^\xi\right|_{\mBn{\xi} \setminus \cB}(\zeta),
    \end{align*}
    where the last equality is well defined since $\pi_3^\xi$ is extendable over \[\mBn{\xi} \setminus \cB.\]
\end{proof}

\subsection{2-Cycles}
The strategy to define locally a D-grading for a cell $\xi$, whose regulation map $o_\xi$ is a bijection and has a 2-cycle $\alpha_\xi$, consists in using the D-grading already defined in Subsections \ref{sec:2Dneg_fb} and \ref{sec:2Dpos_fb} for a two dimensional Janus complex. To accomplish this, we introduce the notion of projection of a $N$ dimensional cubical complex $\cX\left(\prod_{n=1}^N I_n\right)$ to a low dimensional cubical complex $\cX\left(\prod_{n\in \setof{i_1, \ldots, i_\ell}} I_n\right)$, where $\setof{i_1, \ldots, i_\ell} \subset \setof{1, \ldots, N}$.

Let $\xi\in\cX\left(\prod_{n=1}^N I_n\right)$ and $\alpha_\xi$ a cycle of the regulation map $o_\xi$, then we define the projection map of $\xi$ onto its cycle $\alpha_\xi$ by
\begin{align*}
   \mathrm{proj}_{\alpha_\xi}: \cX\left(\prod_{n=1}^N I_n\right) &\to \cX\left(\prod_{n\in\alpha_\xi} I_n\right)  \\
    \defcellb{v_1}{\vdots}{v_N}{w_1}{\vdots}{w_N} &\mapsto  \defcellb{v_{i_1}}{\vdots}{v_{i_\ell}}{w_{i_1}}{\vdots}{w_{i_\ell}},
\end{align*}
where $\alpha=(i_1\ldots i_\ell)$. Naturally, $\mathrm{proj}_{\alpha_\xi}$ does not have an inverse map; however, we can define a right inverse map of $\mathrm{proj}_{\alpha_\xi}$, i.e., a map $\mathrm{proj}_{\alpha_\xi}^{-1}$ such that $\mathrm{proj}_{\alpha_\xi} \circ \mathrm{proj}_{\alpha_\xi}^{-1} = \id_{\cX\left(\prod_{n\in\alpha_\xi} I_n\right)}$. The right inverse map of interesting is defined as follows
\begin{align*}
   \mathrm{proj}_{\alpha_\xi}^{-1}\coloneqq \cX\left(\prod_{n\in\alpha_\xi} I_n\right) &\to \cX\left(\prod_{n=1}^N I_n\right)   \\
    \defcellb{v_{i_1}}{\vdots}{v_{i_\ell}}{w_{i_1}}{\vdots}{w_{i_\ell}} &\mapsto  \defcellb{v_0'}{\vdots}{v_N'}{w_0'}{\vdots}{w_N'},
\end{align*}
where $\xi=[\bv,\bw]$, 
\[ 
v'_i = 
\begin{cases}
    v_i, &\text{if } i \in \alpha_\xi;\\
    \bv_i,& \text{elsewhere},
\end{cases}
\quad \text{and} \quad
w'_i = 
\begin{cases}
    w_i, &\text{if } i \in \alpha_\xi;\\
    0,& \text{elsewhere}.
\end{cases}
\]

Let $\cX_{\alpha_\xi}=\mathrm{proj}_{\alpha_\xi}(\sstarr(\xi))$ and 
$\pi_{\alpha_\xi}\colon \cX_{\alpha_\xi} \rightarrow (\SCC(\cF_3),\posetF{3})$ be defined by 
\[
     \pi_{\alpha_\xi}(\sigma)\coloneqq \pi\left(\mathrm{proj}_{\alpha_\xi}^{-1}(\sigma)\right),
\]
where $\pi\colon \cX \rightarrow (\SCC(\cF_3),\posetF{3})$.

Let $\xi=[\bv,\bw]$ be a cell whose regulation map $o_\xi$ is a bijection and has a 2-cycle $\alpha_\xi$. Notice that $\mathrm{proj}_{\alpha_\xi}(\xi)$ is an equilibrium cell with either positive feedback or negative feedback on a two dimensional complex $\Bar{S}(\blup(\cX_{\alpha_\xi}))$. Furthermore, $\cX_{\alpha_\xi}$ is a two dimensional complex, hence, we can use the D-grading $\pi_{\alpha_\xi}\colon \cX_{\alpha_\xi} \rightarrow (\SCC(\cF_3),\posetF{3})$ to obtain  
\[
\pi_3^{\mathrm{proj}_{\alpha_\xi}(\xi)}\colon \mBn{\mathrm{proj}_{\alpha_\xi}(\xi)} \to (\SCC(\cF_3),\posetF{3})
\]
by following the same procedure done in Subsections \ref{sec:2Dneg_fb} and \ref{sec:2Dpos_fb}. 

Now, we use $\pi_3^{\mathrm{proj}_{\alpha_\xi}(\xi)}$
to define the D-grading 
\[
\pi_3^\xi\colon \mBn{\xi} \rightarrow (\SCC(\cF_3),\posetF{3})
\]
by setting
\[ 
\pi_3^\xi(\eta)\coloneqq 
\begin{cases}
    \pi_3^{\mathrm{proj}_{\alpha_\xi}(\xi)}(\mathrm{proj}_{\alpha_\xi}(\eta)), &\text{if } \eta \in \Bar{S}(\blup(\mathrm{proj}_{\alpha_\xi}^{-1}(\cX_{\alpha_\xi})));\\
    \pi_2(\eta),& \text{ elsewhere}.
\end{cases}
\] 

\begin{prop}
\label{prop:2cycle_extendable}
    Let $\xi$ be a cell whose regulation map has a 2-cycle in a three dimensional Janus complex $\Janus$. Then $\pi_3^\xi\colon \mBn{\xi} \rightarrow (\SCC(\cF_3),\posetF{3})$ is extendable.
\end{prop}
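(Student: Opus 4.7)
The plan is to apply Theorem~\ref{thm:extension} with $\pi = \pi_2$ and $\pi' = \pi_3^{\xi}$, taking the modification set $\cV \subset \Janus^{(N)}$ to be exactly the collection of top cells on which $\pi_2$ and $\pi_3^{\xi}$ disagree. By the defining formula for $\pi_3^{\xi}$, every element of $\cV$ lies in $\bar{S}(\blup(\mathrm{proj}_{\alpha_\xi}^{-1}(\cX_{\alpha_\xi})))$, and on this subcomplex the modification agrees with the 2D grading $\pi_3^{\mathrm{proj}_{\alpha_\xi}(\xi)}$ pulled back along $\mathrm{proj}_{\alpha_\xi}$. Thus condition (1) of Theorem~\ref{thm:extension} holds by construction.

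First I would make precise the product-like structure of the modification region. Since $\alpha_\xi = (i_1\,i_2)$ is a 2-cycle of $o_\xi$, the regulation map $o_\xi$ is the identity on the remaining direction $i_3$, so this third direction is either a gradient direction or fixed under $o_\xi$. Consequently, $\bar{S}(\blup(\mathrm{proj}_{\alpha_\xi}^{-1}(\cX_{\alpha_\xi})))$ is a union of ``slabs'' in $\Janus$ in which the $i_3$-coordinate of vertices is fixed to a specific value determined by $\xi$, and the grading $\pi_3^{\xi}$ depends only on the $(i_1,i_2)$-coordinates within each slab. Away from these slabs, $\pi_3^{\xi}$ equals $\pi_2$, and inside them, $\pi_3^{\xi}$ is simply the pullback of the extendable 2D grading $\pi_3^{\mathrm{proj}_{\alpha_\xi}(\xi)}$ (whose extendability was established in Propositions~\ref{prop:2Dneg_fb_extendable} and \ref{prop:2Dpos_fb_extendable}).

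Second, I would verify condition (2) of Theorem~\ref{thm:extension}: for any $\mu \in \Janus^{(N)}$ and $\mu' \in \cV$ with $\pi_3^{\xi}(\mu)$ and $\pi_3^{\xi}(\mu')$ incomparable, one must check that $\cl(\mu) \cap \cl(\mu') = \emptyset$. When both $\mu, \mu'$ lie in the same slab, incomparability transfers to incomparability under $\pi_3^{\mathrm{proj}_{\alpha_\xi}(\xi)}$, and disjointness of closures is inherited from the 2D case via the projection. When $\mu$ lies outside the slab containing $\mu'$, the grading $\pi_3^{\xi}(\mu) = \pi_2(\mu)$ is by Theorem~\ref{thm:pi2-welldefined} already comparable to or equal to values achieved by $\pi_2$ on the boundary of the slab, so incomparability forces separation in the $i_3$-direction, again giving disjoint closures. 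Condition (3) will be handled analogously, reducing a 3D incomparability to a 2D incomparability via $\mathrm{proj}_{\alpha_\xi}$ and invoking the corresponding property already verified for $\pi_3^{\mathrm{proj}_{\alpha_\xi}(\xi)}$.

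The main obstacle will be the bookkeeping at the interface between a slab and its complement, specifically verifying condition (3) when $\mu''\in \cV$ sits in a slab while $\mu,\mu'$ are drawn from both inside and outside that slab. Here one must confirm that if the grading values produced by the lifted 2D modification dominate the values of two incomparable $\pi_2$-grades at a common closure point, then the geometry of the slab together with Theorem~\ref{thm:pi2-welldefined} forces the three closures to be disjoint. If a direct verification proves cumbersome, an alternative (modeled on the proof of Proposition~\ref{prop:3Dneg_fb_extendable}) is to split $\cl(\mBn{\xi})$ into the closed slab region and its closed complement, show $\pi_3^{\xi}$ is extendable on each piece separately (using the 2D result on the slab and Proposition~\ref{prop:min_extendable} on the complement, since outside the slab $\pi_3^{\xi}=\pi_2$ already is), and then verify admissibility on the overlap by showing that on shared cells the minimum over top-stars coming from both pieces coincides, which again follows from the fact that the modifications only shift certain grading values downward in a way consistent with $\pi_2$ on the boundary.
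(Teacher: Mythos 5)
Your fallback route is essentially the paper's own proof; your primary route (directly invoking Theorem~\ref{thm:extension} with $\cV$ the disagreement set) is a different tactic the paper does not pursue. The paper does exactly what you sketch in your last paragraph: it decomposes $\cl(\mBn{\xi})$ into the closed slab $\cl\bigl(\Bar{S}(\blup(\mathrm{proj}_{\alpha_\xi}^{-1}(\cX_{\alpha_\xi})))\bigr)$ and its closed complement, observes that on the complement $\pi_3^\xi=\pi_2$ and hence is extendable there, shows that on the slab the restricted $\pi_3^\xi$ is the pullback of the already-extendable 2D grading $\pi_3^{\mathrm{proj}_{\alpha_\xi}(\xi)}$ (the argument being a chain of equalities showing $\min$ over top stars commutes with $\mathrm{proj}_{\alpha_\xi}$), and then verifies admissibility on the overlap by noting that for any $\zeta$ on the interface, every pair $(\mu,\mu')$ with $\mu$ outside the slab and $\mu'$ inside has $\gradJanus(\mu)$ and $\gradJanus(\mu')$ comparable, so the minimum over $\Top_{\Janus}(\zeta)$ is realized unambiguously on one side or the other. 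Your primary route is not wrong, but the condition-(3) bookkeeping at the slab interface you flag as a potential obstacle is precisely what the split-and-glue argument avoids; the paper sidesteps it by never needing to compare incomparable pairs across the slab boundary, only comparable ones, which the glue step supplies for free. One small imprecision in your discussion: you say the remaining direction $i_3$ ``is either a gradient direction or fixed under $o_\xi$''; in the paper's taxonomy ($\rmap\xi=(n_1\,n_2)$ or $(n_1\,n_2)(n_3)$) $n_3$ may also be an opaque direction not in $\activeset(\xi)$. This does not affect the argument, since the slab structure and the projection identity do not depend on the classification of $n_3$, but it is worth stating carefully if you pursue the direct Theorem~\ref{thm:extension} route.
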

\begin{proof}
Since, $\pi_3^\xi=\pi_2$ over $\mBn{\xi}\setminus \Bar{S}(\blup(\mathrm{proj}_{\alpha_\xi}^{-1}(\cX_{\alpha_\xi})))$ then \[\left.\pi_3^\xi\right|_{\mBn{\xi}\setminus \Bar{S}(\blup(\mathrm{proj}_{\alpha_\xi}^{-1}(\cX_{\alpha_\xi})))}\] is extendable.
    
    Now, we prove that $\left.\pi_3^\xi\right|_{\Bar{S}(\blup(\mathrm{proj}_{\alpha_\xi}^{-1}(\cX_{\alpha_\xi})))}$ is extendable. In fact, for all $\zeta\in\cl(\Bar{S}(\blup(\mathrm{proj}_{\alpha_\xi}^{-1}(\cX_{\alpha_\xi}))))$, it follows that 
        \begin{align*}
        \left.\pi_3^\xi\right|_{\Bar{S}(\blup(\mathrm{proj}_{\alpha_\xi}^{-1}(\cX_{\alpha_\xi})))}(\zeta) &= \min\setdef{\pi_3^\xi(\mu)}{\mu\in\Top_{\Bar{S}(\blup(\mathrm{proj}_{\alpha_\xi}^{-1}(\cX_{\alpha_\xi})))}(\zeta)}\\
        &= \min\setdef{\pi_3^{\mathrm{proj}_{\alpha_\xi}(\xi)}(\mathrm{proj}_{\alpha}(\mu))}{\mu\in\Top_{\Bar{S}(\blup(\mathrm{proj}_{\alpha_\xi}^{-1}(\cX_{\alpha_\xi})))}(\zeta)}\\
        &= \min\setdef{\pi_3^{\mathrm{proj}_{\alpha_\xi}(\xi)}(\eta)}{\eta\in \mathrm{proj}_{\alpha}\left(\Top_{\Bar{S}(\blup(\mathrm{proj}_{\alpha_\xi}^{-1}(\cX_{\alpha_\xi})))}(\zeta)\right)}\\
        &= \min\setdef{\pi_3^{\mathrm{proj}_{\alpha_\xi}(\xi)}(\eta)}{\eta\in \Top_{\mathrm{proj}_{\alpha}(\Bar{S}(\blup(\mathrm{proj}_{\alpha_\xi}^{-1}(\cX_{\alpha_\xi}))))}(\mathrm{proj}_{\alpha}\left(\zeta\right))}\\
        &= \min\setdef{\pi_3^{\mathrm{proj}_{\alpha_\xi}(\xi)}(\eta)}{\eta\in \Top_{\Bar{S}(\blup(\cX_{\alpha_\xi}))}(\mathrm{proj}_{\alpha}\left(\zeta\right))}\\
        &= \pi_3^{\alpha_\xi}(\zeta),
    \end{align*}
    the last equality implies that $\left.\pi_3^\xi\right|_{\Bar{S}(\blup(\mathrm{proj}_{\alpha_\xi}^{-1}(\cX_{\alpha_\xi})))}(\zeta)$ has a unique element, i.e., $\left.\pi_3^\xi\right|_{\Bar{S}(\blup(\mathrm{proj}_{\alpha_\xi}^{-1}(\cX_{\alpha_\xi})))}$ is extendable. 

    It remains to show that $\pi_3^\xi(\zeta)$ is admissible grading for any 
    \[
    \zeta\in\cl\left(\mBn{\xi}\setminus \Bar{S}(\blup(\mathrm{proj}_{\alpha_\xi}^{-1}(\cX_{\alpha_\xi})))\right)
    \bigcap
    \cl\left(\Bar{S}(\blup(\mathrm{proj}_{\alpha_\xi}^{-1}(\cX_{\alpha_\xi})))\right).
    \]
    Indeed, either
    \[
    \pi_3^\xi(\zeta) = 
    \left.\pi_3^\xi\right|_{\mBn{\xi}\setminus \Bar{S}(\blup(\mathrm{proj}_{\alpha_\xi}^{-1}(\cX_{\alpha_\xi})))}(\zeta)
    \]
    or
    \[
    \pi_3^\xi(\zeta) = 
    \left.\pi_3^\xi\right|_{\Bar{S}(\blup(\mathrm{proj}_{\alpha_\xi}^{-1}(\cX_{\alpha_\xi})))}(\zeta),
    \]
    since 
    either 
    $\gradJanus(\mu) \leq \gradJanus(\mu')$ or 
    $\gradJanus(\mu') \leq \gradJanus(\mu)$ for any 
    \[\mu\in\Top_{\mBn{\xi}\setminus\Bar{S}(\blup(\mathrm{proj}_{\alpha_\xi}^{-1}(\cX_{\alpha_\xi})))}(\zeta)\] and $\mu'\in \Top_{\Bar{S}(\blup(\mathrm{proj}_{\alpha_\xi}^{-1}(\cX_{\alpha_\xi})))}(\zeta)$. As a consequence, either 
    $\pi_3^\xi(\mu) \leq \pi_3^\xi(\mu')$
    or 
    $\pi_3^\xi(\mu') \leq \pi_3^\xi(\mu)$. Hence, $\pi_3^\xi(\zeta)$ is the unique minimal element in 
    \[
    \min\setdef{\pi_3^\xi(\mu)}{\mu\in\Top_{\Janus}(\zeta)},
    \]
    i.e., $\pi_3^\xi(\zeta)$ is admissible grading.
\end{proof}

\section{Global D-grading $\pi_3$}\label{sec:global_pi3}

In this section, we prove that $\pi_3\colon \Janus^{(N)} \rightarrow (\SCC(\cF_3),\posetF{3})$ defined by
\[ 
\pi_3(\eta)\coloneqq 
\begin{cases}
    \pi_3^ \xi(\eta), &\text{if } \exists~\xi\in\Xi \text{ such that }\eta\in\mBn{\xi};\\
    \pi_2(\eta),& \text{ elsewhere}.
\end{cases}
\] 
is an extendable D-grading over $\Janus$. Furthermore, we conclude this section by demonstrating that $(C_\ast(\Janus), \gradJanus)$ and $(C_\ast(\Janus), \pi_3)$ are $\SCC(\cF_3)$-graded chain equivalent.
\begin{thm}
\label{thm:gradedchainequiv3}
Consider $\cF_3\colon \cX\mvmap \cX$ with D-grading $\pi\colon \cX \to \SCC(\cF_3)$. Then, 
$(C_\ast(\Janus), \gradJanus)$ and $(C_\ast(\Janus), \pi_3)$ are $\SCC(\cF_3)$-graded chain equivalent.
\end{thm}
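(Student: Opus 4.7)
The plan is to factor the proof through $\pi_2$: by Theorem~\ref{thm:gradedchainequiv} we already know $(C_*(\Janus),\gradJanus)$ and $(C_*(\Janus),\pi_2)$ are $\SCC(\cF_2)$-graded chain equivalent, and since the partial order on $\SCC(\cF_3)$ refines/agrees with that of $\SCC(\cF_2)$ on the cells where $\pi_3$ agrees with $\pi_2$, the corresponding graded chain equivalence with respect to $\SCC(\cF_3)$ follows by re-indexing. Thus it suffices to show that $(C_*(\Janus),\pi_2)$ and $(C_*(\Janus),\pi_3)$ are $\SCC(\cF_3)$-graded chain equivalent. By construction, $\pi_3$ differs from $\pi_2$ only on the modification neighborhoods $\mBn{\xi}$ for $\xi\in\Xi$, and the modifications are carried out independently in each case (positive/negative feedback and $2$-cycles) as detailed in Section~\ref{sec:local_3D_Janus} and Section~\ref{sec:local_2D_Janus}.

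First I would establish a disjointness/locality lemma in the spirit of Proposition~\ref{prop:Ts}, namely that for distinct $\xi_0,\xi_1\in\Xi$ the regions on which $\pi_3^{\xi_0}\neq \pi_2$ and $\pi_3^{\xi_1}\neq \pi_2$ have disjoint closures in $\Janus$. This is plausible because each $\mBn{\xi}$ is supported within $B_2(\bar{S}(\blup(\xi)))$, and two distinct zero-cells of $\cX$ blow up to top cells of $\cX_b$ that are separated by at least distance $2$ in $\Janus$. With this locality in hand, the global chain equivalence reduces to proving, for each $\xi \in \Xi$, that $(C_*(\mBn{\xi}),\pi_2|_{\mBn{\xi}})$ and $(C_*(\mBn{\xi}),\pi_3^\xi)$ are graded chain equivalent, and then patching via the excision/Mayer--Vietoris machinery used in the proof of Proposition~\ref{prop:Ap_Bp}.

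Next I would handle the three local cases separately, mimicking the strategy of Lemma~\ref{lem:basic_homotopy1}, Lemma~\ref{lem:basic_homotopy2}, and Lemma~\ref{lem:W-homotopies}. In each case define for $p\in\SCC(\cF_3)$ the symmetric difference
\[
\mFiber_3(\xi,p) := \bigl((\pi_3^\xi)^{-1}(p)\setminus (\pi_2)^{-1}(p)\bigr)\cup \bigl((\pi_2)^{-1}(p)\setminus (\pi_3^\xi)^{-1}(p)\bigr),
\]
decompose it into an ``added'' piece $\mAdd_3(\xi,p)$ and a ``removed'' piece $\mRem_3(\xi,p)$, and construct an explicit deformation retraction of each piece onto its common face subcomplex with $\bar S(\blup(\xi))$. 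For the positive feedback and $2$-cycle cases, the added/removed subcomplexes are exactly the transfiguration-style tiles in $\FibTri(\xi)$, $\FibTri(\tau;\xi)$, and $\mTile(\xi,\sigma)$; in each case one defines the projection that collapses along the non-immutable direction, exactly as in the proof of Lemma~\ref{lem:basic_homotopy1}, and verifies that it is a null homotopy. For the $2$-cycle case, Proposition~\ref{prop:2cycle_extendable} already reduces us to the two-dimensional picture via $\mathrm{proj}_{\alpha_\xi}$, so the contractibility is inherited from the $N=2$ analysis.

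The main obstacle will be the negative feedback case in dimension three (Figure~\ref{fig:neg3cycle}/\ref{fig:neg3cycle2}), where the modification is the most intricate: the set $\mathrm{Uns}$ is a non-convex, annulus-shaped region surrounding the central cube $\mathrm{Cube}(\bone 8,\bone 16)$ with the line $\mathrm{Line}(\bv^1,\bv^2)$ carved out, and the two unstable top cells $\mu_\bzero,\mu_\bone$ both receive the same grading value. Here one cannot directly project to a single face, and instead I would build the deformation retraction in two stages: first collapse each of the two ``cones'' around $\bv^1$ and $\bv^2$ onto its apex using a sequence of elementary collapses along the diagonal directions indicated by the signature $\delta_\sigma=-1$; then collapse the connecting line $\mathrm{Line}(\bv^1,\bv^2)\cup\FibTri(\xi)$ onto a single point, exploiting the fact that on this line $\pi_3^\xi$ is constant equal to $\pi(\xi)$. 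A Mayer--Vietoris argument together with the null homotopies built in each half will then yield contractibility of the intersection and hence of the union, analogous to Lemma~\ref{lem:W-homotopies}. Once the local contractibility of added and removed pieces is established in all cases, the argument of Proposition~\ref{prop:Ap_Bp}, applied inductively down the poset $(\SCC(\cF_3),\posetF{3})$ with the split exact sequences displayed in \eqref{eq:diagram_h1_h2}, produces the required $\SCC(\cF_3)$-graded chain equivalence between $(C_*(\Janus),\pi_2)$ and $(C_*(\Janus),\pi_3)$. Composing with the equivalence from Theorem~\ref{thm:gradedchainequiv} concludes the proof.
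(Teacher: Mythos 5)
Your proposal follows the same factoring as the paper — $(C_*(\Janus),\gradJanus)\sim(C_*(\Janus),\pi_2)\sim(C_*(\Janus),\pi_3)$, with the first equivalence from Theorem~\ref{thm:gradedchainequiv} and the second from a disjointness lemma plus local null homotopies built case by case — so at the architectural level you and the paper agree. The body of your argument is really a fleshed-out version of Proposition~\ref{prop:chain32}, which the paper states with only a one-line remark that the homotopies ``can be visualized or implemented computationally on a case-by-case basis'' analogously to Chapter~\ref{sec:P2-grading}; your reduction to $\mBn{\xi}$, your use of $\mAdd_3/\mRem_3$, the inheritance of the $2$-cycle case from $N=2$ via $\mathrm{proj}_{\alpha_\xi}$, and the inductive patching via the split exact sequences of Proposition~\ref{prop:Ap_Bp} are all in the intended spirit.

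There is, however, a genuine misstep in your opening paragraph that you should fix. You claim that the $\SCC(\cF_3)$-graded equivalence ``follows by re-indexing'' from the $\SCC(\cF_2)$-graded Theorem~\ref{thm:gradedchainequiv} because ``the partial order on $\SCC(\cF_3)$ refines/agrees with that of $\SCC(\cF_2)$.'' That premise is false in general: by Definition~\ref{defn:Rule3}, $\cF_3(\xi) = (\cF_2(\xi)\cap\cF_{3.1}(\xi))\cup\cU(\xi)$ both removes edges \emph{and adds} edges (through $\cU(\xi)$), so $\cF_3$ is not a refinement of $\cF_2$ and there is no canonical order-preserving comparison between $\SCC(\cF_2)$ and $\SCC(\cF_3)$ against which to re-index. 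What actually closes the gap is simpler and does not require any comparison of the two posets: the statement of Theorem~\ref{thm:gradedchainequiv3} already fixes the D-grading $\pi\colon\cX\to\SCC(\cF_3)$, and in Chapter~\ref{sec:P3-Grading} both $\gradJanus$ and ``$\pi_2$'' are built from this $\pi$ (i.e., the Chapter~\ref{sec:P2-grading} construction run with the $\cF_3$ target poset). Since the proof of Theorem~\ref{thm:gradedchainequiv} — the modification tiles, the back-wall projections, Lemmas~\ref{lem:basic_homotopy1}--\ref{lem:W-homotopies}, and Proposition~\ref{prop:Ap_Bp} — depends only on the combinatorics of $\cD(\rook)$ and the abstract poset structure, not on which multivalued map that poset came from, the theorem applies verbatim to give an $\SCC(\cF_3)$-graded equivalence $(C_*(\Janus),\gradJanus)\sim(C_*(\Janus),\pi_2)$. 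Replace the re-indexing claim with this observation and your proof outline is sound.
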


First, notice that $\left(\mBn{\xi}\right) \cap \left(\mBn{\xi'}\right) = \emptyset$ for different $\xi,\xi'\in\Xi$, then $\pi_3$ is a well defined map. 
Second, we state the following proposition which guarantees that $\pi_3$ is extendable.

\begin{prop}
    The map $\pi_3 \colon \Janus^{(N)} \rightarrow (\SCC(\cF_3),\posetF{3})$ is an extendable D-grading over $\Janus$.
\end{prop}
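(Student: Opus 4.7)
The plan is to apply Theorem~\ref{thm:extension} with $\pi = \pi_2$ (which is known to be an extendable admissible D-grading by Theorem~\ref{thm:pi2-welldefined}) and $\pi' = \pi_3$, taking the modification set to be
\[
\cV = \bigcup_{\xi \in \Xi} \setdef{\eta \in \mBn{\xi} \cap \Janus^{(N)}}{\pi_2(\eta) \neq \pi_3^\xi(\eta)}.
\]
The first step is to verify that $\pi_3$ is unambiguously defined on $\Janus^{(N)}$. For distinct $\xi, \xi' \in \Xi$, the cells $\xi$ and $\xi'$ are separated by at least one unit in $\cX$, so after applying $\bar{S} \circ \blup$ (which scales by a factor of $16$) their respective modification neighborhoods $\mBn{\xi}$ and $\mBn{\xi'}$, being contained in radius-$2$ neighborhoods of the spines $\SpineJ$ inside $\bar{S}(\blup(\sstarr(\xi)))$ and $\bar{S}(\blup(\sstarr(\xi')))$, remain disjoint with a buffer of unmodified cells between them.

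Next, I would verify the three hypotheses of Theorem~\ref{thm:extension}. Condition~(1) is immediate by construction, since $\pi_3$ agrees with $\pi_2$ on $\Janus^{(N)} \setminus \cV$. For conditions~(2) and~(3), the crucial observation is that the local extendability of each $\pi_3^\xi$ on $\mBn{\xi}$ (established in Propositions~\ref{prop:2Dneg_fb_extendable}, \ref{prop:2Dpos_fb_extendable}, \ref{prop:3Dpos_fb_extendable}, \ref{prop:3Dneg_fb_extendable}, and~\ref{prop:2cycle_extendable}) was itself obtained by verifying conditions (1)--(3) of Theorem~\ref{thm:extension} with respect to $\pi_2$ on the local neighborhood. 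Because the neighborhoods $\mBn{\xi}$ are pairwise separated, any pair $(\mu, \mu')$ with $\mu \in \cV$ and $\mu' \in \Janus^{(N)}$ satisfying $\cl(\mu) \cap \cl(\mu') \neq \emptyset$ must have $\mu'$ lying in $\cl(\mBn{\xi})$ for the same $\xi \in \Xi$ for which $\mu \in \mBn{\xi}$, so conditions~(2) and~(3) reduce entirely to the local statements already proved. An identical buffer argument handles triples $(\mu, \mu', \mu'')$ in condition~(3).

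The main obstacle will be rigorously confirming the buffer condition at the outer shell of each $\mBn{\xi}$: one must check that every cell of $\Janus^{(N)}$ lying in $\mBn{\xi}$ but outside the cells actually altered by $\pi_3^\xi$ already carries the value $\pi_2$, so that the transition from the modified interior to the unmodified exterior is seamless. Inspecting the explicit constructions in Sections~\ref{sec:local_2D_Janus} and~\ref{sec:local_3D_Janus} (for instance, the outermost layer of $\mBn{\xi}$ in Figures~\ref{fig:2DnegJanus}, \ref{fig:2DposJanus}, \ref{fig:pos3cycle}, and~\ref{fig:neg3cycle2} always reproduces the $\pi_2$ values) confirms this, so no additional work is needed at the interfaces. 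Once the buffer property is in hand, Theorem~\ref{thm:extension} applies directly and yields the extendability of $\pi_3$ on $\Janus$, completing the proof.
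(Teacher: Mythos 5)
Your overall strategy --- patch local extendability on each $\mBn{\xi}$ together with the unmodified region, using the observation that the outer shell of each neighborhood already carries the $\pi_2$ values --- is the correct one and is essentially what the paper does. However, there is a gap in how you try to formalize the gluing. You invoke Theorem~\ref{thm:extension} globally and then claim that conditions~(2) and~(3) ``reduce entirely to the local statements already proved'' because the local propositions were ``themselves obtained by verifying conditions~(1)--(3) of Theorem~\ref{thm:extension}.'' This last assertion is not true uniformly: Proposition~\ref{prop:3Dpos_fb_extendable} does check those conditions, but Proposition~\ref{prop:3Dneg_fb_extendable} proceeds by decomposing $\mBn{\xi}$ into pieces and using Proposition~\ref{prop:min_extendable} in tandem with a boundary-splitting argument, while Proposition~\ref{prop:2cycle_extendable} works via the projection $\proj{\alpha_\xi}$ and never touches conditions~(2), (3) at all. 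Since Theorem~\ref{thm:extension} gives only \emph{sufficient} conditions for extendability --- extendability of $\pi_3^\xi$ does not imply that conditions~(2) and~(3) hold --- you cannot deduce the hypotheses of the global application of Theorem~\ref{thm:extension} from the bare conclusion ``$\pi_3^\xi$ is extendable.''

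The fix, which is what the paper does, is to drop the global invocation of Theorem~\ref{thm:extension} and instead check admissibility of the extension of $\pi_3$ directly, case by case over cells $\zeta \in \Janus$: if $\Top_{\Janus}(\zeta)$ lies entirely outside $\bigcup_{\xi\in\Xi}\mBn{\xi}$ then $\pi_3 = \pi_2$ there and extendability of $\pi_2$ applies; if $\Top_{\Janus}(\zeta)$ lies entirely inside some $\mBn{\xi}$ then the local proposition gives a unique minimum; and if $\zeta \in \bdy\bigl(\cl(\mBn{\xi})\bigr)$ --- the only remaining case, given your (correct) separation argument --- then the buffer property ensures $\pi_3(\mu) = \pi_2(\mu)$ for every $\mu \in \Top_{\Janus}(\zeta)$, so again admissibility follows from that of $\pi_2$. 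This direct argument uses exactly the three ingredients in your plan (disjointness, local extendability, buffer) without requiring you to re-derive Theorem~\ref{thm:extension}'s hypotheses across disparate local proofs. You should also double-check that $\Xi$ can include one-dimensional cells when $N=3$ (the $2$-cycle case of Proposition~\ref{prop:2cycle_extendable}), so the separation argument must be stated for cells of $\cX$ in general, not just vertices.
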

\begin{proof}
    Since $\pi_2$ is extendable and \[\left.\pi_3\right|_{\Janus^ {N}\setminus\cup_{\xi\in\Xi}\mBn{\xi}} = \left.\pi_2\right|_{\Janus^ {N}\setminus\cup_{\xi\in\Xi}\mBn{\xi}}\]
    then 
    \[
    \left.\pi_3\right|_{\Janus^ {N}\setminus\cup_{\xi\in\Xi}\mBn{\xi}}
    \]
    is extendable.
    By Propositions~\ref{prop:2Dneg_fb_extendable}, \ref{prop:2Dpos_fb_extendable}, \ref{prop:3Dneg_fb_extendable} and \ref{prop:3Dpos_fb_extendable}, it follows that $\left.\pi_3\right|_{\mBn{\xi}}$ is extendable for any $\xi\in\Xi$.

    It remains to show that $\pi_3(\zeta)$ is admissible grading for any 
    \[
    \zeta\in
    \cl\left( \Janus^ {N}\setminus\cup_{\xi\in\Xi}\mBn{\xi} \right) 
    \cap 
    \cl\left( \mBn{\xi} \right)
    \]
    and any
    \[
    \zeta\in
    \cl\left( \mBn{\xi} \right) 
    \cap 
    \cl\left( \mBn{\xi'} \right),
    \]
    where $\xi,\xi'\in\Xi$ and $\xi\neq\xi'$. In fact, $\pi_3(\zeta)$ is admissible grading for both cases, since for any 
    \[\zeta\in \bdy\left( \cl\left( \mBn{\xi} \right) \right)\] it follows that $\pi_3(\mu)=\pi_2(\mu)$ for all $\mu\in\Top_{\Janus}(\zeta)$, i.e., $\pi_3(\zeta)=\pi_2(\zeta)$. Hence $\pi_3(\zeta)$ is admissible grading.
\end{proof}

Observe that the $\SCC(\cF_3)$-graded chain homotopies can be visualized or implemented computationally on a case-by-case basis. This procedure is analogous to the homotopies detailed in Chapter~\ref{sec:P2-grading}, leading in the following proposition.

\begin{prop}
\label{prop:chain32}
    $(C_\ast(\Janus), \pi_2)$ and $(C_\ast(\Janus), \pi_3)$ are $\sP$-graded chain equivalent.
\end{prop}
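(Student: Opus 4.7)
The plan is to mirror the structure of the proof of Theorem~\ref{thm:gradedchainequiv} from Section~\ref{sec:pi2Equivalent}, exploiting the fact that the grading $\pi_3$ differs from $\pi_2$ only on the pairwise disjoint modification neighborhoods $\mBn{\xi}$ for $\xi \in \Xi$. Since these neighborhoods are disjoint and since $\pi_3 = \pi_2$ on their complement, the problem reduces to constructing a local $\SCC(\cF_3)$-graded chain homotopy on each $\cl(\mBn{\xi})$ and then assembling these into a global chain equivalence, just as the symmetric-difference analysis with $\mAdd$ and $\mRem$ was performed in Section~\ref{sec:pi2Equivalent}.

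First, for each $p \in \SCC(\cF_3)$, I would define the modification fiber
\[
\mFiber(p) := \left((\pi_3)^{-1}(p) - (\pi_2)^{-1}(p)\right) \cup \left((\pi_2)^{-1}(p) - (\pi_3)^{-1}(p)\right)
\]
and, by appealing to Propositions~\ref{prop:2Dneg_fb_extendable}, \ref{prop:2Dpos_fb_extendable}, \ref{prop:3Dpos_fb_extendable}, \ref{prop:3Dneg_fb_extendable}, and \ref{prop:2cycle_extendable}, decompose $\mFiber(p)$ as a disjoint union indexed by the finite collection $\Xi$. Then I would work case by case according to the classification in Sections~\ref{sec:local_2D_Janus} and \ref{sec:local_3D_Janus}: for each geometric type (2D negative feedback, 2D positive feedback, 3D positive 3-cycle, 3D negative 3-cycle, and 3D $2$-cycle), the relevant modification region admits an explicit deformation retract onto a subcomplex where $\pi_2$ and $\pi_3$ already agree. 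For instance, in the 2D negative-feedback case the inclusion $\mathrm{Cube}(\bv-\mathbf{3},\bv+\mathbf{5}) \hookrightarrow \mBn{\xi}$ provides the retract, while in the 3D positive 3-cycle case the line $\mathrm{Line}(\bv^1,\bv^2)$ together with the two balls $B_2([\bv^i,\bzero])$ plays the same role, analogous to the projection $\proj{(\xi,\xi')}$ used in Lemma~\ref{lem:basic_homotopy1}.

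With these retractions in hand, the local chain homotopy $h_\xi$ on $C_\ast(\cl(\mBn{\xi}))$ is obtained from the null-homotopy argument underlying Lemma~\ref{lem:basic_homotopy2}, and disjointness of the $\mBn{\xi}$ (plus the fact that $\pi_3 = \pi_2$ on their boundaries) ensures that $\bigoplus_{\xi \in \Xi} h_\xi$ extends trivially to $\Janus$. An inductive Mayer--Vietoris / short-exact-sequence argument identical in spirit to the diagram chase in the proof of Proposition~\ref{prop:Ap_Bp} then promotes the pointwise equivalence $C_\ast((\pi_2)^{-1}(p)) \simeq C_\ast((\pi_3)^{-1}(p))$ to an equivalence of the filtered complexes $C_\ast((\pi_i)^{-1}(\cl p))$ for each $p \in \SCC(\cF_3)$, yielding the required $\SCC(\cF_3)$-graded chain equivalence.

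The main obstacle will be verifying contractibility of the local modification regions in the 3D cases, particularly the positive 3-cycle (where $\FibTri(\xi)$ is built from a line together with two codimension-zero balls minus the stable cells) and the negative 3-cycle (where the region $\mathrm{Uns}$ is geometrically intricate). Unlike the relatively uniform modification tiles arising in the $\pi_2$ construction, here the modified regions have combinatorially distinct shapes, so the null-homotopies must be produced case by case rather than from a single uniform formula. Once each local region is shown to admit a deformation retract onto a subcomplex on which $\pi_2 = \pi_3$, the remainder of the argument is a direct transcription of Section~\ref{sec:pi2Equivalent}.
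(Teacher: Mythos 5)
Your proposal is correct and tracks exactly the approach the paper intends: the paper itself gives essentially no proof of Proposition~\ref{prop:chain32}, asserting only that the required $\SCC(\cF_3)$-graded chain homotopies ``can be visualized or implemented computationally on a case-by-case basis'' and are ``analogous to the homotopies detailed in Chapter~\ref{sec:P2-grading}.'' Your write-up supplies precisely the missing content --- the $\mFiber(p)$ decomposition, the disjointness of the $\mBn{\xi}$, the case-by-case deformation retracts for the feedback types, and the Mayer--Vietoris assembly mirroring Proposition~\ref{prop:Ap_Bp} --- and correctly flags that the genuine technical burden is verifying contractibility of the $3$D modification regions (particularly $\FibTri(\xi)$ and $\mathrm{Uns}$), which the paper leaves implicit.
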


\begin{proof}[Proof of Theorem~\ref{thm:gradedchainequiv3}]
    The result follows directly from Theorem~\ref{thm:gradedchainequiv} and Proposition~\ref{prop:chain32}.
\end{proof}

\chapter{Global Dynamics of Ramp Systems via $\cF_3$}
\label{sec:R3Dynamics}

Unless otherwise stated the standing hypothesis throughout this chapter is the following:
\begin{description}
\item[H3] Consider an $N$-dimensional ramp system given by
\begin{equation}
    \label{eq:rampH3}
    \dot{x} = -\Gamma x + E(x; \nu, \theta, h)
\end{equation}
with parameters $(\gamma, \nu, \theta)\in \Lambda(S)$ (see \eqref{eq:lambdaS}) and $h \in \cH_3(\gamma, \nu, \theta)$ (see Definition~\ref{defn:H3}). 
Let $\cX=\cX(\I)$ be the ramp induced cubical complex, $\omega \colon W(\cX) \to \setof{\pm 1}$ be the associated wall labeling and $\rook : TP(\cX) \to \setof{0,\pm 1}$ be the associated Rook Field (see Section~\ref{sec:ramp2rook}).  
Let $\cX_b$ and $\Janus$ be the associated blow-up and Janus complexes respectively (see Section~\ref{sec:blowup_complex} and Section~\ref{sec:janusComplex}).
Let $\recG$ be an associated rectangular geometrization of $\cX_b$.  
Let $\cF_3 \colon \cX \mvmap \cX$ be the associated combinatorial multivalued map derived from Definition~\ref{defn:Rule3} and let $\pi_3 \colon \Janus \to \SCC(\cF_3)$ be the D-grading as in Section~\ref{sec:global_pi3}. 
\end{description}
The primary goal of this chapter is to prove the following theorem. 
\begin{thm}
\label{thm:R3ABlattice}
Given  hypothesis {\bf H3} and assuming $N=2$ or $3$, there exists a geometrization $\bG_3$ of $\Janus$ that is aligned with $\eqref{eq:rampH3}$ over all $\cN \in \sN(\cF_3)$. 
\end{thm}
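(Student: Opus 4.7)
The plan is to build $\bG_3$ as a local modification of the geometrization $\bG_2$ produced in Theorem~\ref{thm:R2ABlattice}. Since the D-grading $\pi_3$ differs from $\pi_2$ only on the pairwise-disjoint neighborhoods $\mBn{\xi}$ with $\xi$ ranging over the set $\Xi$ of cells whose regulation map $\rmap{\xi}$ has a cycle of length at least $2$, on $\Janus\setminus\bigcup_{\xi\in\Xi}\mBn{\xi}$ we simply take $\bG_3=\bG_2$ and inherit alignment from the previous chapter. On each $\mBn{\xi}$ the construction then splits, following Chapter~\ref{sec:P3-Grading}, into three cases: equilibrium cells with positive feedback, equilibrium cells with negative feedback, and semi-opaque cells with a single $2$-cycle (only possible when $N=3$).

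I would start with the $2$-cycle case, because Proposition~\ref{prop:2cycle_extendable} reduces it to the two-dimensional problem via the projection $\proj{\alpha_\xi}$. In the directions complementary to the cycle support the vector field is gradient-like by Lemma~\ref{lem:gradient-direction-is-nonzero} and the rectangular geometrization inherited from $\recG_\mathtt{J}$ is already transverse, so it suffices to build the two-dimensional piece of $\bG_3$ on $\bar S(\blup(\proj{\alpha_\xi}^{-1}(\cX_{\alpha_\xi})))$ and take a product. The $2$D positive-feedback case is handled by reassigning a small square tile near the equilibrium to grade $\pi(\xi)$: Proposition~\ref{prop:equilibrium-existence} furnishes a unique equilibrium in $\bg(\blup(\xi))$, and the local stable/unstable manifolds of this saddle can be approximated by piecewise-linear curves that are transverse to the subdivided cell boundaries, with transversality on the complement following from the ramp monotonicity established in Proposition~\ref{prop:local-monotone}. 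The $2$D negative-feedback case is easier: here $\xi$ is an attracting spiral, Condition~3.1 removes arrows into the enclosing cells, and the geometrization can be taken to be rectangular because the vector field points strictly inward along the boundary of $\bg(\blup(\sstarr(\xi)))$ via the bounds recorded in $\cH_1(\gamma,\nu,\theta)=\cH_3(\gamma,\nu,\theta)$ when $N=2$.

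The core analytical content is the three-dimensional negative-feedback case. Here $\xi$ is an equilibrium of spiral-saddle type, and the Conley index computed by the Conley complex over $\pi_3$ is that of a stable periodic orbit (as illustrated in Example~\ref{ex:introPeriodic}). Geometrically we must build a topological annular tube around the equilibrium on which the vector field is transverse inward, and match this tube to the combinatorial fiber $\FibTri(\xi)$ defined in Chapter~\ref{sec:P3-Grading}. My approach will be to linearize \eqref{eq:rampH3} at the midpoint $\mdpt(\xi)$, writing $\dot{x}=-\Gamma x+E(\mdpt(\xi))+L(x-\mdpt(\xi))+\text{o}(\|x-\mdpt(\xi)\|)$ where $L$ encodes the slopes of the ramp functions on the $h$-intervals. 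The bound \eqref{eq:F3-bounds} in Definition~\ref{defn:H3} is precisely the product $8\prod_n h_{\rmap{\xi}(n),n,j_{k_n}}$ bounded by $\prod_n |E_n(\kappa^+(\xi))-E_n(\kappa^-(\xi))|$ divided by an expression involving $\gamma$; this is exactly the condition guaranteeing that the Jacobian of the linearized system has one negative real eigenvalue and a complex conjugate pair with positive real part, so that the unstable spiral and one-dimensional stable manifold behave as expected. Using a quadratic Lyapunov function derived from this spectral picture (or equivalently, a perturbation $\dot{x}=F_\varepsilon(x)$ in the spirit of \eqref{eq:rampSysPerturbed-general}) I will produce a cylindrical neighborhood of the stable manifold together with its boundary $2$-sphere on which the flow is inward; the resulting surface is then piecewise-linearized along the subdivision of $\Janus$ to give the explicit $\bg_\zeta$ on each $\zeta\in\FibTri(\xi)$. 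The $3$D positive-feedback case is treated by a dual construction: now $\xi$ is a full source, $\FibTri(\xi)$ realizes a segment of the one-dimensional stable manifold joining the two cells of $\SpineJ(\xi,\rmap{\xi})$, and inward transversality on its boundary follows from the same linearization argument with the role of stable and unstable directions exchanged.

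The main obstacle is the matching step in the negative-feedback $3$D case: showing that the analytic tube constructed from the linearization actually lies inside the combinatorial neighborhood $\mBn{\xi}$ and that its piecewise-linear approximation is compatible with the cell-by-cell assignment of $\pi_3^\xi$. This is where \eqref{eq:F3-bounds} must be used quantitatively rather than qualitatively, and where care is needed to control the $\text{o}(\|x-\mdpt(\xi)\|)$ error of the linearization on the $h$-scale. Once alignment on each $\mBn{\xi}$ is verified, Corollary~\ref{cor:inward} and Theorem~\ref{thm:inward} produce the global alignment statement exactly as in the proof of Theorem~\ref{thm:R2ABlattice}, completing the proof; Theorem~\ref{thm:dynamics} then transfers the combinatorial/homological content encoded by $\pi_3$ and the associated Conley complex to the flow of \eqref{eq:rampH3}.
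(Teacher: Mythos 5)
Your high-level strategy — take $\bG_3 = \bG_2$ outside the pairwise-disjoint balls $\mBn{\xi}$, reduce the $2$-cycle case to two dimensions via $\proj{\alpha_\xi}$, and use Lyapunov functions to build transversal manifolds near equilibrium cells — is exactly the paper's outline (Definitions~\ref{defn:compatible-recG-F3N2}--\ref{defn:F3N3-Geo}). But several case-by-case claims are wrong in ways that would prevent the construction from closing.

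First, the $2$D negative-feedback claim that ``the geometrization can be taken to be rectangular'' is false: by Definition~\ref{defn:compatible-recG-F3N2}(1) the inner boundary $\bbdy(\mathrm{Cube}(16\bv-{\bf 3},16\bv+{\bf 5}))$ must be mapped to $\cM_\varepsilon^i(\xi)=L^{-1}(\varepsilon)$, the elliptical level set of the quadratic Lyapunov function $L(x)=(P^{-1}(x-x^*))^T(P^{-1}(x-x^*))$ (Theorem~\ref{thm:F3N2-trans}). Near a spiral sink no rectangle is uniformly inward-transverse; the grading $\pi_3^\xi$ places $\pi(\xi)$ on a central cube whose geometric image must be this ellipse. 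Second, you have the $N=3$ feedback signs backwards: for $\delta_{\rmap\xi}=+1$ the eigenvalues are $-\alpha\pm i\beta$ with $\lambda>0$ (stable spiral plus a one-dimensional \emph{unstable} axis --- $\cU(\xi)$ is the two corner top-cells in Example~\ref{ex:3Dpos_1}), so $\xi$ is a saddle and $\FibTri(\xi)$ realizes a segment of the \emph{unstable}, not stable, manifold; it is not a full source. Third, in the $\delta_{\rmap\xi}=-1$ case your ``cylindrical neighborhood of the stable manifold together with its boundary $2$-sphere on which the flow is inward'' does not exist: the lateral surface of such a tube would see the unstable-spiral flow pointing \emph{outward}, while only the caps transverse to the stable axis admit inward flow. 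The paper avoids this by keeping the internal cylinder $L_u^{-1}(\varepsilon)$ and the external trajectory-swept surfaces $\cM_\varepsilon^e(\xi,\xi_{n_i})$ (parametrized by flowing from $BC(\xi)$ under the perturbed field) as separate pieces, with transversality on the latter coming from Theorem~\ref{thm:F3N3-external-3cycle}, not from a single isolating sphere. Finally, the error term $\mathrm{o}(\|x-\mdpt(\xi)\|)$ you propose to control is identically zero for a ramp system inside $\bg(\blup(\xi))$ --- the system is affine there --- so the ``main obstacle'' you identify is an artifact; the actual quantitative work in $\cH_3(\gamma,\nu,\theta)$ (the bound~\eqref{eq:F3-bounds}) is used only to ensure the eigenvalue signature of the exact linear system, not to bound a Taylor remainder.
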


As in Chapter~\ref{sec:R2Dynamics}, we construct a geometrization $\bG_3$ and show that given any $(N-1)$-dimensional cell $\zeta \in \bbdy(\blup(\cN))$,
$\bg(\zeta)$ is a smooth $(N-1)$-dimensional manifold over which the vector field is transverse.

We recall the identification of the cells in which Conditions 3.1 and 3.2 were applied. By Definition~\ref{defn:Rule3}, those are the semi-opaque cells $\xi \in \cX$ whose cycle decomposition of its regulation map $\rmap\xi$ contains cycles of length $k\geq 2$. When $N=2$ and $N=3$, the possibilities are as follows.
\begin{itemize}
    \item[(i)] $N=2$ and $\rmap\xi = (1 \ 2)$,
    \item[(ii)] $N=3$ and $\rmap\xi = (n_1 \ n_2 )$, $(n_1 \ n_2) (n_3)$, or $(n_1 \ n_2 \ n_3)$.
\end{itemize}
Note that when $N=2$, $\xi$ is necessarily an equilibrium cell. While $N=3$ it might be the case that $3$ is either a gradient or opaque direction. We recall that the identification of equilibrium cells is preserved by $\cF_3$. Since $\cH_3(\nu,\theta,h) \subset \cH_1(\nu,\theta,h)$, one obtains the following corollary of Proposition~\ref{prop:equilibrium-existence}. 
\begin{cor}
    Consider {\bf H3} and let $\recG$ be a rectangular geometrization of $\cX_b$. If $\xi \in \cX$ is an equilibrium cell, then the ramp system \eqref{eq:rampH1} contains an unique equilibrium in $\bg(\blup(\xi))$.
\end{cor}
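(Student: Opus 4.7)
The plan is straightforward: this statement is designed to be a direct consequence of Proposition~\ref{prop:equilibrium-existence}, which already provides the analogous biconditional characterization of equilibrium cells under hypothesis \textbf{H1}. The only real task is to verify that the analytic hypotheses used in the earlier result remain valid in the \textbf{H3} setting, so that the earlier proof transfers verbatim.

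First I would observe that hypotheses \textbf{H1} and \textbf{H3} agree on all data except for the constraint imposed on $h$: namely, \textbf{H1} requires $h \in \cH_1(\gamma,\nu,\theta)$ while \textbf{H3} requires $h \in \cH_3(\gamma,\nu,\theta)$. According to Definition~\ref{defn:H3}, the set $\cH_3(\gamma,\nu,\theta)$ is defined as the subset of $\cH_1(\gamma,\nu,\theta)$ whose elements additionally satisfy inequality~\eqref{eq:F3-bounds} at every vertex $\xi\in\cX^{(0)}$ whose regulation map is a $3$-cycle (with $\cH_3=\cH_1$ when $N=2$). In particular, $\cH_3(\gamma,\nu,\theta)\subseteq \cH_1(\gamma,\nu,\theta)$, so hypothesis \textbf{H3} implies hypothesis \textbf{H1}, and the rectangular geometrization $\recG$ of $\cX_b$ is also a rectangular geometrization in the sense required by \textbf{H1}.

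Next I would note that the notion of an \emph{equilibrium cell} depends only on the combinatorial/order-theoretic data encoded by the wall labeling and the rook field (via the condition $G(\xi)=\emptyset$ in Definition~\ref{def:eqcell}), and hence is unchanged when passing from the \textbf{H1} to the \textbf{H3} setting. Therefore the hypothesis that $\xi$ is an equilibrium cell in the \textbf{H3} setting is the same statement as the hypothesis that $\xi$ is an equilibrium cell in the \textbf{H1} setting.

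With these two observations in place, the conclusion follows immediately by applying the $(\Longrightarrow)$ direction of Proposition~\ref{prop:equilibrium-existence}: the existence portion of that proposition's proof uses the Poincar\'e--Miranda theorem together with the sign control along the faces of $\bg(\blup(\xi))$ guaranteed by $h\in\cH_1(\gamma,\nu,\theta)$, and the uniqueness portion uses the linearity of \eqref{eq:rampH1} on $\bg(\blup(\xi))$ together with the same sign control. Since $h\in\cH_3(\gamma,\nu,\theta)\subseteq\cH_1(\gamma,\nu,\theta)$, both ingredients remain available without modification. There is no real obstacle here; the only subtlety worth a line of commentary is to make the inclusion $\cH_3\subseteq\cH_1$ explicit so that the reader sees why invoking Proposition~\ref{prop:equilibrium-existence} is legitimate in the present context.
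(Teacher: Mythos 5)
Your proposal is correct and matches the paper's argument exactly: the paper justifies this corollary by noting $\cH_3(\gamma,\nu,\theta)\subset\cH_1(\gamma,\nu,\theta)$ and then invoking Proposition~\ref{prop:equilibrium-existence}. You simply spell out the same inclusion and transfer in a bit more detail.
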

\section{Geometrization for two dimensional Janus complex}
\label{sec:R3N2}
Throughout this section, we fix $N=2$ and consider the hypothesis {\bf H3}.
\begin{defn}
    \label{defn:F3N2-manifold}
    Let $\xi \in \cX$ be an equilibrium cell and let $x^* \in \bg(\blup(\xi))$ be the unique equilibrium point. Let $\rmap\xi=(1 \ 2)$ be its regulation map and let $\delta_{\rmap\xi}$ be the signature of $\rmap\xi$. Let $\varepsilon>0$.
    \\ 
    If $\delta_{\rmap\xi}=-1$, then we define $\cM_\varepsilon^i(\xi)$, the \emph{internal transversal manifold at $\xi$}, as follows.
    \begin{equation}
        \label{eq:F3N2-neg}
        \cM_\varepsilon^i(\xi)\coloneqq L^{-1}(\varepsilon)
    \end{equation}
    where $L(x)=\left(P^{-1}(x-x^*)\right)^T \left(P^{-1}(x-x^*)\right)$, $J$ is the linearization of \eqref{eq:rampH3} at $x^*$, $\alpha\pm i\beta$ the eigenvalues of $J$, and $P^-1 J P = D$ for 
    \[
    D = \begin{bmatrix}
        -\alpha & \beta \\ 
        -\beta & -\alpha
    \end{bmatrix}.
    \]
    If $\delta_{\rmap\xi}=1$, then we define $\cM_\varepsilon^e(\xi)$, the \emph{external transversal manifold at $\xi$}, as follows.
    \begin{equation}
        \label{eq:F3N2-pos}
        \cM_\varepsilon^e(\xi) \coloneqq \bigcup_{\substack{\xi \prec \xi'\\\dim(\xi')=1}} \cM_{0,\varepsilon}(\xi,\xi'),
    \end{equation}
    where $\cM_{\delta,\varepsilon}(\xi,\xi')$ is the GO-manifold (see Definition~\ref{defn:GO-manifold}). 
\end{defn}
\begin{thm}
    \label{thm:F3N2-trans}
    Let $N=2$ and consider {\bf H3}. Let $\xi \in \cX$ be an equilibrium cell with $\delta_{\rmap\xi}=-1$ (resp. $\delta_{\rmap\xi}=1$). Then, there exists $\varepsilon_0 > 0$ such that solutions of \eqref{eq:rampH3} are transverse to $\cM_\varepsilon^i(\xi)$ (resp. $\cM_\varepsilon^e(\xi)$) for any $0 < \varepsilon < \varepsilon_0$. 
\end{thm}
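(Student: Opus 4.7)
The plan is to treat the two cases in Theorem~\ref{thm:F3N2-trans} separately, using very different geometric constructions, but relying on one common structural fact. Since $\xi \in \cX^{(0)}$ is an equilibrium cell, $J_e(\xi) = \emptyset$, so every coordinate of $\bg(\blup(\xi))$ lies inside a linear-interpolation window $[\theta_{m,n,j}-h_{m,n,j},\theta_{m,n,j}+h_{m,n,j}]$ of the corresponding ramp function. Consequently, the restriction of \eqref{eq:rampH3} to $\bg(\blup(\xi))$ is an affine vector field whose Jacobian is $J$, and the unique equilibrium $x^* \in \Int(\bg(\blup(\xi)))$ guaranteed by Proposition~\ref{prop:equilibrium-existence} is where $J$ is computed.

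\emph{Negative-feedback case ($\delta_{\rmap\xi}=-1$).} The trace of $J$ is $-(\gamma_1+\gamma_2)<0$ and its off-diagonal entries have opposite signs, so $\det J>0$. When the off-diagonal product is sufficiently negative the eigenvalues are complex $-\alpha\pm i\beta$ with $\alpha=(\gamma_1+\gamma_2)/2>0$, putting us in the setting of Definition~\ref{defn:F3N2-manifold}. Writing $y=P^{-1}(x-x^*)$, the exact linearity on $\bg(\blup(\xi))$ gives $\dot y = Dy$, and a direct computation yields
\[
\dot L(x) = 2y^T D y = -2\alpha\,\|y\|^2 = -2\alpha L(x).
\]
Since $L$ is proper and $x^*\in\Int(\bg(\blup(\xi)))$, one can choose $\varepsilon_0>0$ so that $L^{-1}([0,\varepsilon_0])\subset\Int(\bg(\blup(\xi)))$. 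For every $0<\varepsilon<\varepsilon_0$, the level set $\cM_\varepsilon^i(\xi)=L^{-1}(\varepsilon)$ is a smooth embedded ellipse on which $\dot L = -2\alpha\varepsilon\ne0$, which is exactly the required transversality.

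\emph{Positive-feedback case ($\delta_{\rmap\xi}=+1$).} The plan is to build $\cM_\varepsilon^e(\xi)$ by reusing the GO-manifolds produced in Chapter~\ref{sec:R2Dynamics}. For each of the four $\xi'\in\cT^{(1)}(\xi)$, I would first verify that $(\xi,\xi')$ carries a GO-pair: the equilibrium-cell condition forces $\activeset(\xi)=\{1,2\}$ with $\rmap\xi$ a transposition, and the positive-feedback sign pattern on $\omega$ propagates so that the unique inessential direction of $\xi'$ becomes a gradient direction that actively regulates the opaque essential direction in the sense of Definition~\ref{defn:GO-pair}. With this in hand, Corollary~\ref{cor:R2Manifold} applied to each pair yields a threshold $\varepsilon_{\xi'}>0$ below which the ramp-system flow is transverse to $\cM_{0,\varepsilon}(\xi,\xi')\setminus\Nul_{n_o}(\xi,\xi')$; taking $\varepsilon_0$ to be the minimum of the four thresholds then gives the desired transversality on a dense open subset of $\cM_\varepsilon^e(\xi)$.

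\emph{Main obstacles.} The hardest part will be the positive-feedback case at the nullcline intersections $\Nul_{n_o}(\xi,\xi')$, where $\cM_\varepsilon^e(\xi)$ is instantaneously tangent to the flow. As in the proof of Theorem~\ref{thm:R2ABlattice}, this can be resolved by exploiting that the difference $F(x)-F_\varepsilon(x)$ between the true and perturbed ramp systems has a strictly nonzero component $-r_{n_g}\varepsilon\, x_{n_g}$ along the gradient direction, so trajectories immediately leave the nullcline into the correct side of $\cM_\varepsilon^e(\xi)$ and the inward-pointing condition of Definition~\ref{defn:aligned} is met via Corollary~\ref{cor:inward}. A secondary technical point is the compatible gluing of the four GO-manifold pieces at the corners of $\bg(\blup(\xi))$, which should follow from the consistent-direction statement of Proposition~\ref{prop:consistent-direction} applied across the four pairs sharing $\xi$. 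Finally, I would sanity-check the non-generic subcase of the negative-feedback argument where $\beta=0$ (real equal eigenvalues) is compatible with the $D$ of Definition~\ref{defn:F3N2-manifold}; since $\alpha=(\gamma_1+\gamma_2)/2>0$ is automatic, the Lyapunov identity $\dot L=-2\alpha L$ still delivers transversality regardless of whether $\beta$ is zero.
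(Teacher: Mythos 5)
Your treatment of the negative-feedback case is essentially the paper's own proof, line for line: on $\bg(\blup(\xi))$ the vector field is affine with Jacobian $J$ having $\mathrm{tr}\,J=-(\gamma_1+\gamma_2)<0$ and $\det J = \gamma_1\gamma_2-\delta_{(1\,2)}|d_{1,2}||d_{2,1}|>0$, and the quadratic form $L(x)=(P^{-1}(x-x^*))^T(P^{-1}(x-x^*))$ gives $\dot L = y^T(D+D^T)y = -2\alpha\|y\|^2<0$ off the equilibrium, so a small enough level set is a smooth closed curve transverse to the flow. Your sanity check at $\beta=0$ is worth pressing further: the paper simply asserts $\beta\neq0$, but the discriminant is $(\gamma_1-\gamma_2)^2-4|d_{1,2}||d_{2,1}|$, which is not automatically negative for every $h\in\cH_3=\cH_1$ when $\gamma_1\neq\gamma_2$. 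Your patch that ``$\dot L=-2\alpha L$ still delivers transversality'' needs one adjustment: if the eigenvalues are a pair of distinct negative reals $\lambda_1,\lambda_2$ then $D$ is not of the rotation--decay form in Definition~\ref{defn:F3N2-manifold}; with $D=\mathrm{diag}(\lambda_1,\lambda_2)$ one instead gets $\dot L=2\lambda_1y_1^2+2\lambda_2y_2^2$, which is still negative definite since $\mathrm{tr}\,J<0$ and $\det J>0$ force $\lambda_1,\lambda_2<0$.

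The genuine gap is in the positive-feedback case, and it is a wrong key lemma rather than a missing idea. Corollary~\ref{cor:R2Manifold} cannot be invoked: its hypothesis is $(\xi,\xi')\in\cD(\rook)$, i.e.\ $(\xi,\xi')$ exhibits indecisive drift, and by Definition~\ref{defn:indecisive} this requires $\rmap{\xi}^{-1}(n)=\{n\}$ for every $n\in O_i(\xi)\setminus\{n_o\}$. For a two-dimensional equilibrium cell one has $O_i(\xi)=\{1,2\}$ and $\rmap{\xi}=(n_g\,n_o)$ a transposition, so $\rmap{\xi}^{-1}(n_g)=\{n_o\}\neq\{n_g\}$ and the indecisive-drift condition fails for every $\xi'\in\cT^{(1)}(\xi)$. (This is precisely why these double edges are left unresolved by $\cF_2$ and are resolved only by Condition~3.1.) The result the paper actually invokes is Proposition~\ref{prop:transversality-manifold}, whose only hypothesis on $(\xi,\xi')$ is the existence of a GO-pair---which is exactly what you verify in your preliminary step. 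Swapping the citation from Corollary~\ref{cor:R2Manifold} to Proposition~\ref{prop:transversality-manifold} repairs the argument and brings it into agreement with the paper's one-line dispatch of this case. For the same reason, your proposed use of Proposition~\ref{prop:consistent-direction} for gluing the four pieces at the corners is also blocked (it, too, is stated for pairs in $\cD(\rook)$); the paper does not take up the gluing at the level of this theorem.
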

\begin{proof}
    The case for $\delta_{\rmap\xi}=1$ follows directly from Proposition~\ref{prop:transversality-manifold}. 
    
    Let $\delta_{\rmap\xi}=-1$. Note that since $\rmap\xi(1)=2$ and $\rmap\xi(2)=1$, the associated ramp system within $\bg(\blup(\xi))$ is given by
    \begin{align*}
        \dot{x}_1 & = -\gamma_1 x_1 + f_1(x_2) \\
        \dot{x}_2 & = -\gamma_2 x_2 + f_2(x_1)
    \end{align*}
    where
    \begin{align*}
        f_1\left( x_2 \right) & = E_1 \left( (\mdpt_1(\xi),x_2),\nu,\theta,h \right), \\ 
        f_2\left( x_1 \right) & = E_2 \left( (x_1,\mdpt_2(\xi)),\nu,\theta,h \right).
    \end{align*}
    The Jacobian at the equilibrium point $x^* \in \bg(\blup(\xi))$ is given by 
    \[
    J = \begin{bmatrix} 
    -\gamma_1 & d_{1,2} \\
    d_{2,1} & -\gamma_2,
    \end{bmatrix} 
    \]
    where $d_{i,j} = \partial f_i / \partial x_j$. The trace of $J$ is negative 
    \[
    \mathrm{tr}(J) = - (\gamma_1 +\gamma_2),
    \]
    and the determinant depends on $\delta_{(1 2)}$, namely, \[
    \det(J)=\gamma_1\gamma_2 -d_{1,2}d_{2,1}=\gamma_1\gamma_2-\delta_{(1 2)}|d_{1,2}||d_{2,1}|.
    \]
    Since $\delta_{\rmap\xi}=\delta_{(1 2)} = -1$, $\det(J)>0$ and the eigenvalues of $J$ are $-\alpha\pm\beta$ with $\alpha>0$ and $\beta \neq 0$. Let $P$ be an invertible matrix such that $P^{-1}JP=D$ for 
    \[
    D = \begin{bmatrix}
        -\alpha & \beta \\ 
        -\beta & -\alpha
    \end{bmatrix}. 
    \]
    We claim that $L(x) = \left(P^{-1}(x-x^*)\right)^T \left(P^{-1}(x-x^*)\right)$
    is a Lyapunov function. If that is true, then for any $\varepsilon_0 >0$ sufficiently small such that $L^{-1}(\varepsilon_0)$ is contained in the interior of $\bg(\blup(\xi))$, the result follows. To prove the claim, consider the change of variables $y=P^{-1}(x-x^*)$ so that $\dot{y}=Dy$. It follows that
    \begin{align*}
        \dot{L}(y) & = \dot{y}^T y + y^T\dot{y} \\ 
        & = (Dy)^T y + y^T(Dy) \\ 
        & = y^T D^T y + y^T D y \\
        & = y^T(D^T+D)y \\
        & = y^T( -2\alpha I) y \\
        & = -2\alpha y^T y
    \end{align*}
    which is zero at $y=0$, i.e., when $x=x^*$ and strictly negative for any other $x \in \bg(\blup(\xi))$, $x\neq x^*$. 
\end{proof}

We illustrate in Figure~\ref{fig:2D-Janus-embedding} the role of $\cM_\varepsilon^i(\xi)$ and $\cM_\varepsilon^e(\xi)$ in the geometric realization $\bG_3$ that verifies Theorem~\ref{thm:R3ABlattice}. 
\begin{figure}
    \centering
    \begin{subfigure}[t]{0.45\textwidth}
        \begin{tikzpicture}[scale=0.25]
            \fill[cyan] (0,0) rectangle (24,24);
            \fill[yellow] (11,11) rectangle (13,13);
            \draw[step=1cm,black] (0,0) grid (24,24);
            \draw[step=8cm,red] (0,0) grid (24,24);
            \end{tikzpicture}
        \subcaption{$\Janus$ with $\pi_3$-grading for $\delta_{\rmap\xi}=-1$.}
    \end{subfigure}
    \hfill 
    \begin{subfigure}[t]{0.45\textwidth}
    \begin{tikzpicture}[scale=0.25]
    \fill[cyan] (0,0) rectangle (24,24);
    \draw[step=8cm,red] (0,0) grid (24,24);
    \fill[yellow] (12,12) ellipse [x radius=3.2, y radius=2.4];
    \draw (12,12) ellipse [x radius=3.2, y radius=2.4];
    \end{tikzpicture}
    \subcaption{Geometrization of $\Janus$ for $\delta_{\rmap\xi}=-1$.}
    \end{subfigure}
    
    \vspace{0.5cm}
    
    \begin{subfigure}[t]{0.45\textwidth}
        \begin{tikzpicture}[scale=2]
            \fill[cyan] (1,0) rectangle (2,1);
            \fill[purple] (0,1) rectangle (1,2); 
            \fill[yellow] (1,1) rectangle (2,2);
            \fill[red] (2,1) rectangle (3,2);
            \fill[blue] (1,2) rectangle (2,3);
            \fill[lightgray] (1,2) rectangle (0,3); 
            \fill[gray] (2,1) rectangle (3,0);
            \fill[red!20!white] (0,0) rectangle (1,1);
            \fill[red!10!white!90!blue] (2,2) rectangle (3,3);
            \fill[yellow] (2,1) -- ++(-0.250,0)
                -- ++(0,-0.125) -- ++(0.125,0)
                -- ++(0,-0.125) -- ++(0.125,0) 
                -- cycle;
            \fill[yellow] (1,2) -- ++(0,-0.250)
                -- ++(-0.125,0) -- ++(0,0.125)
                -- ++(-0.125,0) -- ++(0,0.125)
                -- cycle;
            \fill[yellow] (2,1) -- ++(0,0.250)
                -- ++(0.125,0) -- ++(0,-0.125)
                -- ++(0.125,0) -- ++(0,-0.125)
                -- cycle;
            \fill[yellow] (1,2) -- ++(0.250,0)
                -- ++(0,0.125) -- ++(-0.125,0)
                -- ++(0,0.125) -- ++(-0.125,0) 
                -- cycle;
            \draw[step=0.125cm,black] (0,0) grid (3,3);
            \draw[step=1cm,red] (0,0) grid (3,3);
        \end{tikzpicture}
    \subcaption{$\Janus$ with $\pi_3$-grading for $\delta_{\rmap\xi}=1$.}
    \end{subfigure}
    \hfill 
    \begin{subfigure}[t]{0.45\textwidth}
        \begin{tikzpicture}[scale=2]
            
            \fill[cyan] (1,0) rectangle (2,1); 
            \fill[purple] (0,1) rectangle (1,2); 
            \fill[yellow] (1,1) rectangle (2,2); 
            \fill[red] (2,1) rectangle (3,2); 
            \fill[blue] (1,2) rectangle (2,3); 
            \fill[lightgray] (1,2) rectangle (0,3);
            \fill[gray] (2,1) rectangle (3,0);
            \fill[red!20!white] (0,0) rectangle (1,1);
            \fill[red!10!white!90!blue] (2,2) rectangle (3,3);
            \def\SouthCurve {(1.625,1) .. controls (1.9,0.9) .. (2,0.625)};
            \def\WestCurve {(1,1.625) .. controls (0.9,1.9) .. (0.625,2)};
            \def\EastCurve {(2,1.375) .. controls (2.1,1.1) .. (2.375,1)};
            \def\NorthCurve {(1.375,2) .. controls (1.1,2.1) .. (1,2.375)};
            \fill[yellow] (1,2) -- (1,1.625) -- \WestCurve -- cycle;
            \fill[yellow] (2,1) -- (1.625,1) -- \SouthCurve -- cycle; 
            \fill[yellow] (2,1) -- (2,1.375) -- \EastCurve -- cycle; 
            \fill[yellow] (1,2) -- (1.375,2) -- \NorthCurve -- cycle; 
            \draw \WestCurve;
            \draw \EastCurve;
            \draw \SouthCurve;
            \draw \NorthCurve;
            \draw[step=1cm,red] (0,0) grid (3,3);
        \end{tikzpicture}
        \subcaption{Geometrization of $\Janus$ for $\delta_{\rmap\xi}=1$.}
    \end{subfigure}
    \caption{On the left, the Janus complex $\Janus$ with $\pi_3$-grading. On the right, a geometric realization of $\Janus$ that verifies transversality of the vector field along the boundary.}
    \label{fig:2D-Janus-embedding}
\end{figure}
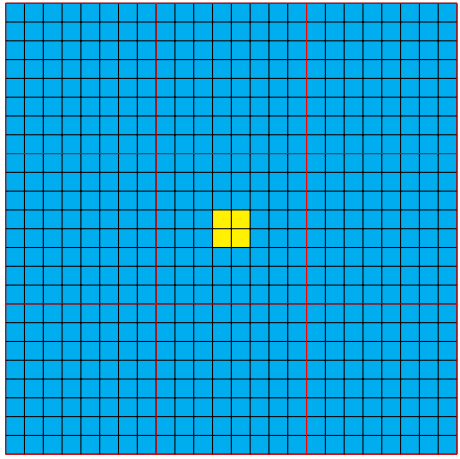
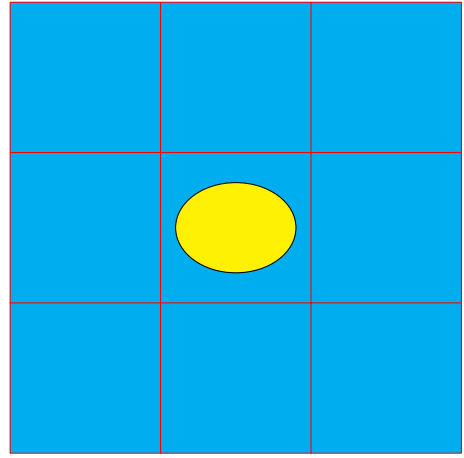
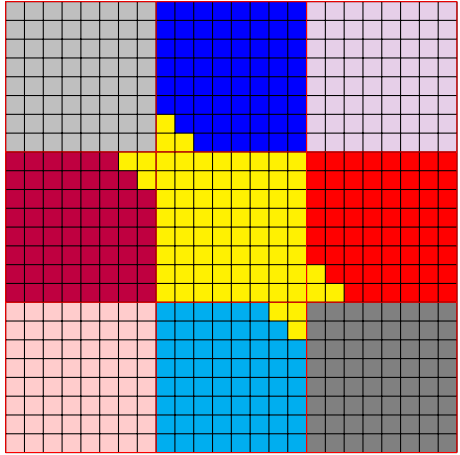
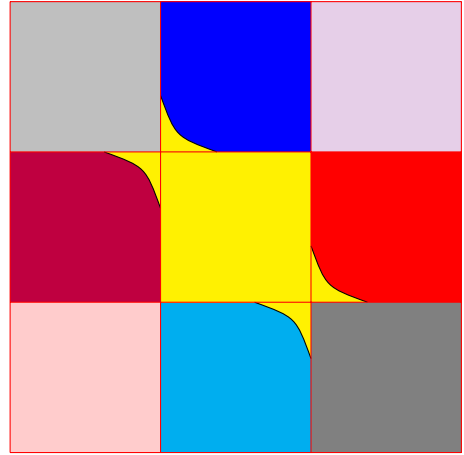

\section{Geometrization of three dimensional Janus Complex for $\cF_3$}
\label{sec:F3N3}

Throughout this section, we fix $N=3$ and consider the hypothesis {\bf H3}. 
\begin{defn}
    Let $\xi \in \cX$ be an equilibrium cell with $\rmap\xi=(n_1 \ n_2 \ n_3 )$.  Let $x^* \in \bg(\blup(\xi))$ denote the unique equilibrium point. Let $\delta,\varepsilon > 0$. We define $\cM_\varepsilon^i(\xi)$, the \emph{internal transversal manifold at $\xi$}, as follows. 

    If $\delta_{\rmap\xi}=1$, then
    \[
    \cM_\varepsilon^i(\xi) \coloneqq L_s^{-1}(\varepsilon) \cap \bg(\blup(\xi)),
    \]
    where $L_s(x) = y_1^2+y_2^2$ for $y= P^{-1}(x-x^*)$, $J$ is the linearization of $\eqref{eq:rampH3}$ at $x^*$, $-\alpha\pm \beta$ and $\lambda>0$ are the eigenvalues of $J$, and $P^{-1}JP=D$ for 
    \begin{equation}
    \label{eq:J-neg}
    D = \begin{bmatrix}
        -\alpha & \beta & 0 \\ 
        -\beta & -\alpha & 0 \\ 
        0 & 0 & \lambda
    \end{bmatrix}.
    \end{equation} 
    If $\delta_{\rmap\xi}=-1$, then
    \[
        \cM_\varepsilon^i(\xi) \coloneqq L_u^{-1}(\varepsilon) \cap \bg(\blup(\xi)),
    \]
    where $L_s(x) = y_1^2+y_2^2$ for $y= P^{-1}(x-x^*)$, $J$ is the linearization of $\eqref{eq:rampH3}$ at $x^*$, $\alpha\pm \beta$ and $\lambda<0$ are the eigenvalues of $J$, and $P^{-1}JP=D$ for 
    \[
    D = \begin{bmatrix}
        \alpha & \beta & 0 \\ 
        -\beta & \alpha & 0 \\ 
        0 & 0 & \lambda
    \end{bmatrix}.
    \]
    To define $\cM_\varepsilon^e(\xi)$, the \emph{external transversal manifold at $\xi$}, we first define 
    \[
        Cone(\xi) \coloneqq \setdef{x \in \bg(\blup(\xi))}{\delta_{\rmap\xi}(-\gamma_n x_n + E_n(x;\nu,\theta,h))(x_n-x^*_n) \geq 0, \forall n=1,2,3.},
    \]
    and let 
    \[
        BC(\xi) = Cone(\xi) \bigcap \left( \bigcup_{n=1}^3 \bigcup_{\substack{\xi\prec\xi'\\\dim(\xi')=1}} \Nul_n(\xi,\xi') \right). 
    \]
    If $\delta_{\rmap\xi} = -1$, then 
    let $\mu \in \Stable(\xi)$ a stable cell (see \eqref{eq:stable_cells}).
    
    Denote by $\xi_{n_1},\xi_{n_2},\xi_{n_3}$ the cells of dimension $1$ with $J_e(\xi_{n_i})=\setof{n_i}$ such that $\xi \prec \xi_{n_i} \prec \mu$ and let $\xi_{n_i}'$ be the corresponding $2$-cells such that $(\xi_{n_i},\xi_{n_i}')$ has a GO-pair. For each $i$, let $\psi_{n_i} : [0,1] \to \bg(\blup(\xi)) \cap \bg(\blup(\xi_{n_i}))$ be a parametrization of $BC(\xi)\cap\bg(\blup(\xi))\cap\bg(\blup(\xi_{n_i'}))$ with 
    \begin{align*}
         \psi_{n_i}(0) & \in \bg(\blup(\xi_{n_i})) \cap \bg(\blup(\xi_{n_i}')), \\
         \psi_{n_i}(1/2) & \in \Nul_{n_i}(\xi,\xi_{n_i})\cap\Nul_{\rmap\xi^{-1}(n_{i})}(\xi,\xi_{n_i}), \\ 
         \psi_{n_i}(1) & \in \bg(\blup(\xi))\cap\bg(\blup(\xi_{\rmap\xi(n_{i})})).
    \end{align*}
    Define the external manifold at $\xi$ relative to $\xi_{n_i}$ by
    \[
        \cM_\varepsilon^e(\xi,\xi_{n_i}) = \bigcup_{\alpha \in [0,1]} \varphi_{\varepsilon,n_i}([0,T_\xi(\psi_{n_i}(\alpha)),\psi_{n_i}(\alpha)]
    \]
    where $\varphi_{\varepsilon,n_i}$ are solutions of 
    \[
        \dot{x} = (1-\alpha) F_{\varepsilon}(x;\xi_{n_i},\xi_{n_i}') + \alpha F_{\varepsilon}(x;\xi_{{\rmap\xi(n_{i})}},\xi_{{\rmap\xi(n_{i})}}')
    \]
    and the exit time $T$ is given by 
    \[
        T_\xi(x) = \inf\setdef{t > 0}{\varphi_{\varepsilon,n_i}(t,x) \in \bg(\blup(\xi))}. 
    \]
    Finally, define the external manifold at $\xi$ relative to $\mu$ by
    \[
        \cM_\varepsilon^e(\xi,\mu) = \bigcup_{i=1}^3 \cM_\varepsilon^e(\xi,\xi_{n_i})
    \]
    and the external transversal manifold at $\xi$ is given by 
    \[
        \cM_\varepsilon^e(\xi) = \bigcup_{\mu \in \Stable(\xi)} \cM_\varepsilon^e(\xi,\mu).
    \]
    If $\delta_{\rmap\xi} = 1$, then 
    let $\mu \in \cU(\xi)$ a unstable cell (see Definition~\ref{defn:unstable_cells}).
    
    Denote by $\xi_{n_1},\xi_{n_2},\xi_{n_3}$ the cells of dimension $1$ with $J_e(\xi_{n_i})=\setof{n_i}$ such that $\xi \prec \xi_{n_i} \prec \mu$ and let $\xi_{n_i}'$ be the corresponding $2$-cells such that $(\xi_{n_i},\xi_{n_i}')$ has a GO-pair. For each $i$, let $\psi_{n_i} : [0,1] \to \bg(\blup(\xi)) \cap \bg(\blup(\xi_{n_i}))$ be a parametrization of $BC(\xi)\cap\bg(\blup(\xi))\cap\bg(\blup(\xi_{n_i'}))$ with 
    \begin{align*}
         \psi_{n_i}(0) & \in \bg(\blup(\xi_{n_i})) \cap \bg(\blup(\xi_{n_i}')), \\
         \psi_{n_i}(1/2) & \in \Nul_{n_i}(\xi,\xi_{n_i})\cap\Nul_{\rmap\xi^{-1}(n_{i})}(\xi,\xi_{n_i}), \\ 
         \psi_{n_i}(1) & \in \bg(\blup(\xi))\cap\bg(\blup(\xi_{\rmap\xi(n_{i})})).
    \end{align*}
    Define the external manifold at $\xi$ relative to $\xi_{n_i}$ by
    \[
        \cM_\varepsilon^e(\xi,\xi_{n_i}) = \bigcup_{\alpha \in [0,1]} \varphi_{\varepsilon,n_i}([0,T_\mu(\psi_{n_i}(\alpha)),\psi_{n_i}(\alpha)]
    \]
    where $\varphi_{\varepsilon,n_i}$ are solutions of 
    \[
        \dot{x} = (1-\alpha) F_{\varepsilon}(x;\xi_{n_i},\xi_{n_i}') + \alpha F_{\varepsilon}(x;\xi_{{\rmap\xi(n_{i})}},\xi_{{\rmap\xi(n_{i})}}')
    \]
    and the exit time $T$ is given by 
    \[
        T_\mu(x) = \inf\setdef{t > 0}{\varphi_{\varepsilon,n_i}(t,x) \in \bg(\blup(\mu))}. 
    \]
    Finally, define the external manifold at $\xi$ relative to $\mu$ by
    \[
        \cM_\varepsilon^e(\xi,\mu) = \bigcup_{i=1}^3 \cM_\varepsilon^e(\xi,\xi_{n_i})
    \]
    and the external transversal manifold at $\xi$ is given by 
    \[
        \cM_\varepsilon^e(\xi) = \bigcup_{\mu \in \cU(\xi)} \cM_\varepsilon^e(\xi,\mu).
    \]
\end{defn}
We observe that transversality at the internal transversal manifolds at $\xi$ follows from the proof of Theorem~\ref{thm:F3N2-trans}. 
\begin{cor}
    \label{cor:F3N3-internal-3cycle}
    Let $\xi \in \cX$ be an equilibrium cell with $\rmap\xi=(n_1 \ n_2 \ n_3)$. Then there exists $\varepsilon_0 > 0$ such that solutions of \ref{eq:rampH3} are transverse to $\cM_\varepsilon^i(\xi)$ for any $0<\varepsilon<\varepsilon_0$. 
\end{cor}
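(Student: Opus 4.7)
The plan is to adapt the Lyapunov-function argument from the $\delta_{\rmap\xi}=-1$ branch of Theorem~\ref{thm:F3N2-trans} to three dimensions, exploiting the fact that within $\bg(\blup(\xi))$ the ramp system is exactly affine. Since $\rmap\xi=(n_1\ n_2\ n_3)$ is a 3-cycle we have $J_i(\xi)=\setof{1,2,3}$, and Proposition~\ref{prop:equilibrium-existence} then reduces the system restricted to $\bg(\blup(\xi))$ to the decoupled form
\[
\dot{x}_n = -\gamma_n x_n + f_n(x_{\rmap\xi^{-1}(n)}), \qquad n=1,2,3,
\]
with each $f_n$ linear on its transition interval (guaranteed by $h\in\cH_3(\gamma,\nu,\theta)\subseteq\cH_1(\gamma,\nu,\theta)$) and admitting the unique equilibrium $x^*$. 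Thus the vector field on $\bg(\blup(\xi))$ coincides exactly with $J(x-x^*)$, where $J$ is the Jacobian at $x^*$; the bound \eqref{eq:F3-bounds} in Definition~\ref{defn:H3} forces the eigenvalues of $J$ into the claimed form $-\alpha\pm i\beta$, $\lambda>0$ when $\delta_{\rmap\xi}=1$, and $\alpha\pm i\beta$, $\lambda<0$ when $\delta_{\rmap\xi}=-1$.

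Next I would pass to the canonical coordinates $y=P^{-1}(x-x^*)$ used in the definition of $\cM_\varepsilon^i(\xi)$, under which the system becomes $\dot y=Dy$ for the matrix $D$ in \eqref{eq:J-neg} (or its sign-flipped analogue). A direct calculation, identical to the one in Theorem~\ref{thm:F3N2-trans} because the extra coordinate $y_3$ does not appear in $L_s$, yields
\[
\dot{L}_s(y) \;=\; 2y_1(-\alpha y_1 + \beta y_2) + 2y_2(-\beta y_1 - \alpha y_2) \;=\; -2\alpha L_s(y)
\]
in the $\delta_{\rmap\xi}=1$ case, and the analogous $\dot{L}_u = 2\alpha L_u$ when $\delta_{\rmap\xi}=-1$. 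On the level set $L_s^{-1}(\varepsilon)$ (respectively $L_u^{-1}(\varepsilon)$) this derivative equals $\mp 2\alpha\varepsilon\neq 0$, and the gradient $\nabla L_s=(2y_1,2y_2,0)$ is nonzero whenever $L_s>0$. Consequently the ramp vector field has a nonvanishing normal component to the cylindrical manifold $\cM_\varepsilon^i(\xi)$, which is the desired transversality.

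To finish I would choose $\varepsilon_0>0$ small enough that $L_s^{-1}(\varepsilon)\cap\bg(\blup(\xi))$ is a nonempty cylindrical piece contained in $\bg(\blup(\xi))$ for all $0<\varepsilon<\varepsilon_0$; this is a routine compactness step using only that $x^*\in\Int(\bg(\blup(\xi)))$. The main obstacle I anticipate is not the Lyapunov computation but the opening reduction: one must verify that the interaction function $E_n$ truly collapses, within $\bg(\blup(\xi))$, to a linear function of the single variable $x_{\rmap\xi^{-1}(n)}$. This uses both the active-regulation structure of a 3-cycle (which forces the ramp factors in $E_n$ corresponding to non-cycling source directions to be constant on $\bg(\blup(\xi))$) and the linearity of each ramp function inside its transition window. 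Once this collapse is established, the rest of the argument is a transcription of Theorem~\ref{thm:F3N2-trans} with an inert additional $y_3$-coordinate, and no small-perturbation or higher-order-term bookkeeping is needed.
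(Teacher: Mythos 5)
Your proposal is correct and follows essentially the same path as the paper: the paper's proof of this corollary simply invokes the eigenvalue structure guaranteed by $h\in\cH_3(\gamma,\nu,\theta)$, notes that $L_s$ and $L_u$ are therefore well-defined, shrinks $\varepsilon_0$ so the level set lies inside $\bg(\blup(\xi))$, and then declares that transversality "follows from the linearization" — i.e. it defers to the Lyapunov calculation already carried out in Theorem~\ref{thm:F3N2-trans}. You have made that deferral explicit by writing out the $\dot L_s=-2\alpha L_s$ (resp.\ $\dot L_u=2\alpha L_u$) computation and observing that $\nabla L_s=(2y_1,2y_2,0)$ is nonvanishing on the level set, which is exactly what the reference carries. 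The one step you flag as a potential obstacle — that each $E_n$ collapses on $\bg(\blup(\xi))$ to an affine function of $x_{\rmap\xi^{-1}(n)}$ — is indeed required, and it is already established in the proof of Proposition~\ref{prop:equilibrium-existence} (opaqueness of the equilibrium cell plus constancy of the non-cycling ramp factors), so your argument closes.
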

\begin{proof}
    First, we observe that since $h \in \cH_3(\gamma,\nu,\theta)$, if $\delta_{\rmap\xi}=-1$, then the eigenvalues of the linearization at the equilibrium $x^* \in \bg(\blup(\xi))$ are $\alpha\pm i\beta$ and $\lambda < 0$ with $\alpha>0,\beta\neq0$.

    Similarly, $h \in \cH_3(\gamma,\nu,\theta)$ implies that $-\alpha\pm i\beta$ and $\lambda >0$ with $\alpha>0$, $\beta \neq 0$ when $\delta_{\rmap\xi}=1$.

    Therefore, $L_u$ and $L_s$ are well-defined. Thus, it is enough to take $\varepsilon_0>0$ such that $L_s^{-1}(\varepsilon_0)$ and $L_u^{-1}(\varepsilon_0)$ are contained in $\bg(\blup(\xi))$. Transversality follows the linearization in $\bg(\blup(\xi))$. 
\end{proof}
We now prove the transversality result for the external manifolds.
\begin{thm}
    \label{thm:F3N3-external-3cycle}
     Let $\xi \in \cX$ be an equilibrium cell with $\rmap\xi=(n_1 \ n_2 \ n_3)$. Then there exists $\varepsilon_0 > 0$ such that solutions of \ref{eq:rampH3} are transverse to $\cM_\varepsilon^e(\xi)$ for any $0<\varepsilon<\varepsilon_0$. 
\end{thm}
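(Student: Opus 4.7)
The proof will parallel the strategy used for Corollary~\ref{cor:R2Manifold} and Proposition~\ref{prop:transversality-manifold}, leveraging the fact that each local piece of $\cM_\varepsilon^e(\xi)$ is constructed as the flow-out of a one-parameter family of perturbed ramp systems that differ from the true vector field \eqref{eq:rampH3} only by $O(\varepsilon)$ terms acting exclusively in gradient directions. Thus $F(x) - F_\varepsilon(x)$ has an explicit, non-vanishing component that is not tangent to the constructed surface away from a measure-zero set, and the remaining issue is to propagate this to genuine transversality along the whole manifold, including its seams.

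My plan is to work locally on each piece $\cM_\varepsilon^e(\xi,\xi_{n_i})$ first, then glue. Fix $\mu$ (a stable cell if $\delta_{\rmap\xi}=-1$, an unstable cell if $\delta_{\rmap\xi}=1$) and a direction $n_i$. The piece $\cM_\varepsilon^e(\xi,\xi_{n_i})$ is parametrized by $(\alpha,t)\in[0,1]\times[0,T]$ via $\varphi_{\varepsilon,n_i}(t,\psi_{n_i}(\alpha))$, so its tangent plane at an interior point $x$ is spanned by $F_{\varepsilon,\alpha}(x)$ (the convex-combined perturbed vector field) and a transverse variation $v(\alpha,t)$ coming from differentiating along $\alpha$. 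As in Proposition~\ref{prop:transversality-manifold}, suppose for contradiction that $F(x)\in T_x\cM_\varepsilon^e(\xi,\xi_{n_i})$; write $F(x)=\lambda F_{\varepsilon,\alpha}(x)+\beta v(\alpha,t)$. The key structural fact is that $F_{\varepsilon,\alpha}-F$ is a linear combination of terms of the form $r_{n_g}\varepsilon x_{n_g}\bzero^{(n_g)}$ supported in gradient coordinates, while $v(\alpha,t)$ is controlled (for small $\varepsilon$) by the tangent of $\psi_{n_i}$ at $\alpha$, which lies in the nullcline intersection $\Nul_{n_i}\cap\Nul_{\rmap\xi^{-1}(n_i)}$ at $\alpha=1/2$ and slides along boundaries of $\bg(b(\xi_{n_i}'))$ at $\alpha\in\{0,1\}$. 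Pairing with the $n_g$-component forces $\lambda=1$ (when the relevant $F_{n_g}(x)\neq 0$) and then the leftover equation $\varepsilon r_{n_g}x_{n_g}\bzero^{(n_g)}=\beta v(\alpha,t)$ contradicts the orientation/sign structure of $v$, provided $\varepsilon$ is small enough that the perturbed flow does not wrap around. Where $F_{n_g}(x)=0$ (i.e., $x\in \Nul_{n_g}$) I will invoke Corollary~\ref{cor:inward} and an inward-pointing argument identical to the one closing the proof of Theorem~\ref{thm:R2ABlattice}: small perturbations of the initial condition restore transversality, so $\varphi$ is still inward pointing in the sense of Definition~\ref{defn:aligned}.

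Next I will treat the seams where the three pieces $\cM_\varepsilon^e(\xi,\xi_{n_1})$, $\cM_\varepsilon^e(\xi,\xi_{n_2})$, $\cM_\varepsilon^e(\xi,\xi_{n_3})$ meet (at $\alpha=0$ and $\alpha=1$, which correspond to shared edges on $\bdy\bg(b(\xi))$). At these seams, the interpolated perturbed vector field at one side agrees with a pure $F_\varepsilon(\cdot;\xi_{n_j},\xi_{n_j}')$ and with the neighboring one on the other side; by construction both endpoints $\psi_{n_i}(1)$ and $\psi_{n_{i+1}}(0)$ lie on $\bg(b(\xi))\cap\bg(b(\xi_{\rmap\xi(n_i)}))$. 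The transversality statement at the seam then reduces to the case handled by Proposition~\ref{prop:transversality-manifold} for the single GO-pair $(\xi_{n_j},\xi_{n_j}')$, which has already been verified, so $F(x)$ has a uniform positive-component projection onto an inward normal obtained by patching the two one-sided normals together (these agree in the limit $\varepsilon\to 0$ since both reduce to the coordinate normal). This also takes care of assembling $\cM_\varepsilon^e(\xi,\mu)$ from its three boundary pieces.

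Finally, I will union over $\mu$. Distinct elements of $\Stable(\xi)$ (or $\cU(\xi)$) correspond to disjoint octants near $x^\ast$, so the unions $\cM_\varepsilon^e(\xi,\mu)$ and $\cM_\varepsilon^e(\xi,\mu')$ meet, if at all, only along shared one-dimensional edges in $\bdy\bg(b(\xi))$, which are handled exactly as the seams above. Choosing $\varepsilon_0$ as the minimum of the finitely many admissible GO-perturbations produced above (one for each pair $(\xi_{n_i},\xi_{n_i}')$ and each $\mu$) yields a single uniform threshold. The main obstacle I anticipate is the seam analysis: showing that the patched inward normal varies continuously across the interpolation parameter $\alpha$ and that the positivity of $\langle F(x),z(x)\rangle$ established piecewise survives the patching. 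The Lipschitz dependence of $F_{\varepsilon,\alpha}$ on $\alpha$ and the fact that $\psi_{n_i}$ is chosen precisely to thread the nullcline intersections should resolve this, but the bookkeeping will be the delicate part, and I expect to need the quantitative bound \eqref{eq:F3-bounds} in Definition~\ref{defn:H3} to guarantee that the exit times $T_\xi$ or $T_\mu$ are realized before any secondary tangency with $\bdy\bg(b(\xi))$ or $\bdy\bg(b(\mu))$ occurs.
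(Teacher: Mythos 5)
Your plan essentially replays the GO-manifold transversality argument piecewise and then glues, which in the paper covers only half of the theorem. The paper splits on the sign of $\delta_{\rmap\xi}$. When $\delta_{\rmap\xi}=1$ it does exactly what you propose: it cites Corollary~\ref{cor:FlowFromExternalTangency-Perturbation-2} and concludes that the interpolated family of perturbed flow-outs is transverse for small $\varepsilon$, so on that side your route and the paper's agree (the paper just leaves the $\alpha$-seam gluing implicit). When $\delta_{\rmap\xi}=-1$, however, the paper abandons the piece-by-piece contradiction argument entirely and instead makes a single global estimate anchored at the equilibrium $x^* \in \bg(\blup(\xi))$: after the linear change of variables $y = P^{-1}(x^*-x)$ coming from the Jacobian, it verifies directly that $\langle F(x),\, F(x) - \varepsilon(x - x^*) \rangle = \|F(x)\|^2 - \varepsilon_0 (x-x^*) F(x) > 0$ for sufficiently small $\varepsilon_0$, a Lyapunov-type inner-product bound that yields transversality to the radially perturbed level sets without any discussion of nullclines, seams between $\xi_{n_1},\xi_{n_2},\xi_{n_3}$, or the quantitative bound in Definition~\ref{defn:H3}. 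Your proposal overlooks this case-split and the shorter route it affords; as a consequence you anticipate a delicate seam/continuity analysis that the paper avoids entirely in that case. What the paper's route buys is brevity and a proof that manifestly survives the patching issues you flag; what your uniform route would buy, if the bookkeeping could be completed, is a single mechanism for both signs of $\delta_{\rmap\xi}$ that does not need a global Lyapunov function. One further caution: the fallback you propose at points where $F_{n_g}(x)=0$ (switching to an inward-pointing argument via Corollary~\ref{cor:inward}) establishes alignment, not transversality, and the statement of the theorem asserts the latter, so that substitution would need to be justified separately or shown to be vacuous by construction of $\cM_\varepsilon^e$.
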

\begin{proof}
    First, we observe that since $h \in \cH_2(\gamma,\nu,\theta)$, Proposition~\ref{prop:FlowFromExternalTangency} implies that any solution with initial condition $x^0 \in BC(\xi)$ is bounded by the hyperplanes $x_n = \mdpt_n(\xi')$ for $\xi \prec \xi'$, $\dim(\xi')=1$, so $\cM_\varepsilon^e(\xi)$ is bounded the same hyperplanes. 

    Assume that $\delta_{\rmap\xi}=1$. Observe that by Proposition~\ref{cor:FlowFromExternalTangency-Perturbation-2}, the manifold of parametrized solutions between GO-manifolds is transverse to the vector field for sufficiently small $\varepsilon_0>0$. 
    
    Assume that $\delta_{\rmap\xi}=-1$. Let $x \in \cM_\varepsilon^e(\xi)$. Let $x^*\in\bg(\blup(\xi))$ be the unique equilibrium in $\bg(\blup(\xi))$ and consider the change of variables $y = P^{-1}(x^*-x)$. If $x \in \bg(\blup(\xi))$, then 
    \[
        \langle F(x) , F(x)-\varepsilon(x-x^*) \rangle =  \|F(x)\|^2 - \varepsilon_0(x-x^*)F(x) > 0,
    \]
    for sufficiently small $\varepsilon_0>0$.
\end{proof}

\begin{defn}
    Let $\xi \in \cX$ be a pseudo-opaque cell with $\rmap\xi=(n_1 \ n_2 )$ or $(n_1 \ n_2 ) (n_3)$.  Fix $x_{n_3} \in I_{n_3}(\xi)$ and let $x^*(x_{n_3})=(x_{n_1}^*,x_{n_2}^*,x_{n_3}) \in \bg(\blup(\xi))$ be such that $(x_1^*,x_2^*) \in I_{n_1}(\xi) \times I_{n_2}(\xi)$ is the unique equilibrium point of the restricted system
    \begin{align*}
        \dot{x}_{n_1} & = -\gamma_{n_1}x_{n_1} + E_{n_1}(x;\nu,\theta,h) \\ 
        \dot{x}_{n_2} & = -\gamma_{n_2}x_{n_2} + E_{n_2}(x;\nu,\theta,h) 
    \end{align*}
    Let $\varepsilon > 0$. We define the \emph{transversal manifold at $\xi$} by
    \[
    \cM_\varepsilon(\xi) \coloneqq \bigcup_{x_{n_3} \in I_{n_3}(\xi)} \cM_{\varepsilon}(\proj{(n_1 n_2)}(\xi),x_{n_3})\times \setof{x_{n_3}}.
    \]
    where $\cM_{\varepsilon}(\proj{(n_1 n_2),x_{n_3}}(\xi))$ is the internal/external transversal manifold at $\proj{(n_1 n_2)}(\xi)$ at a fixed $x_{n_3} \in I_{n_3}(\xi)$.  
\end{defn}
\begin{cor}
    \label{prop:F3-projmanifold}
    Let $\xi \in \cX$ be a pseudo-opaque cell with $\rmap\xi=(n_1 \ n_2 )$ or $(n_1 \ n_2 ) (n_3)$. Then there exists $\varepsilon_0>0$ such that solutions of \eqref{eq:rampH3} are transverse to $\cM_\varepsilon(\xi)$ for $0<\varepsilon<\varepsilon_0$. 
\end{cor}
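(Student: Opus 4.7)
The plan is to leverage the product structure of $\cM_\varepsilon(\xi)$. By construction, this manifold is assembled by sweeping the two-dimensional transversal manifolds $\cM_\varepsilon(\proj{(n_1 n_2)}(\xi),x_{n_3})$ along the $n_3$ coordinate as $x_{n_3}$ ranges over $I_{n_3}(\xi)$. Consequently, at every $x=(x_{n_1},x_{n_2},x_{n_3}) \in \cM_\varepsilon(\xi)$, the tangent space $T_x \cM_\varepsilon(\xi)$ contains the unit vector $\bzero^{(n_3)}$, and therefore any normal vector $z(x)$ to $\cM_\varepsilon(\xi)$ has zero $n_3$-component and lies entirely in the $(x_{n_1},x_{n_2})$-plane.

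First I would verify that for each fixed $x_{n_3}\in I_{n_3}(\xi)$, the restricted two-dimensional system
\begin{align*}
\dot{x}_{n_1} & = -\gamma_{n_1}x_{n_1}+E_{n_1}(x_{n_1},x_{n_2},x_{n_3};\nu,\theta,h), \\
\dot{x}_{n_2} & = -\gamma_{n_2}x_{n_2}+E_{n_2}(x_{n_1},x_{n_2},x_{n_3};\nu,\theta,h),
\end{align*}
satisfies the hypotheses of Theorem~\ref{thm:F3N2-trans} on the projected cell $\proj{(n_1 n_2)}(\xi)$. Since $\rmap\xi$ restricted to $\setof{n_1,n_2}$ is the two-cycle $(n_1\ n_2)$ in both cases of the hypothesis, $\proj{(n_1 n_2)}(\xi)$ is an equilibrium cell of the restricted system whose feedback signature is inherited from $\xi$. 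Applying Theorem~\ref{thm:F3N2-trans} produces, for each $x_{n_3}$, a slicewise threshold $\varepsilon(x_{n_3})>0$ below which the projected vector field $(F_{n_1},F_{n_2})(\cdot,\cdot,x_{n_3})$ is transverse to $\cM_\varepsilon(\proj{(n_1 n_2)}(\xi),x_{n_3})$.

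Because the normal $z(x)$ is planar, the three-dimensional transversality condition reduces to its two-dimensional counterpart:
\[
\langle F(x),z(x)\rangle = F_{n_1}(x)z_{n_1}(x)+F_{n_2}(x)z_{n_2}(x),
\]
which is precisely the inner product computed in the slice at height $x_{n_3}$. Hence slicewise 2D transversality yields pointwise 3D transversality on all of $\cM_\varepsilon(\xi)$. To finish, I would set $\varepsilon_0 := \inf_{x_{n_3}\in I_{n_3}(\xi)} \varepsilon(x_{n_3})$ and argue that $\varepsilon_0>0$ by compactness of $I_{n_3}(\xi)$ together with the continuous dependence on $x_{n_3}$ of the ramp nonlinearities and, when $\rmap\xi=(n_1\ n_2)$, of the slice equilibrium $x^\ast(x_{n_3})$, the associated Jacobian spectra, and the data used to assemble $\cM_\varepsilon^i$ and $\cM_\varepsilon^e$.

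The main obstacle will be the case $\rmap\xi=(n_1\ n_2)(n_3)$, where $n_3$ actively regulates itself. There the slice equilibria $x^\ast(x_{n_3})$ and the spectra of the slice linearizations may vary in a continuous but not smooth way as $x_{n_3}$ traverses the ramp region of $r_{*,n_3}$, and the families of internal Lyapunov manifolds $L_s^{-1},L_u^{-1}$ or GO-manifolds used in $\cM_\varepsilon^e(\proj{(n_1 n_2)}(\xi),x_{n_3})$ must glue together into a $C^0$ (piecewise $C^1$) submanifold of phase space. Verifying that this gluing preserves both the product-normal structure and a uniform positive lower bound $\varepsilon_0$ is the crux of the argument.
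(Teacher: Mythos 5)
Your strategy --- project onto the $(n_1,n_2)$-plane, apply Theorem~\ref{thm:F3N2-trans} slice by slice, and take a compactness infimum over $x_{n_3}$ --- is exactly the route the paper takes (its own two-sentence proof invokes the projection and the two-dimensional result in just this way). The issue is that your pivotal assertion, that every tangent space $T_x\cM_\varepsilon(\xi)$ contains $\bzero^{(n_3)}$ so the normal is planar, is the lemma that actually needs proving. It holds if and only if the slice manifolds $\cM_\varepsilon(\proj{(n_1 n_2)}(\xi),x_{n_3})$ do not move as $x_{n_3}$ varies; otherwise the cylinder leans, the normal acquires a nonzero $n_3$-component, and the inner product $\langle F(x),z(x)\rangle$ picks up an $F_{n_3}z_{n_3}$ term that your slicewise 2D argument never controls.

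You have also located the difficulty in the wrong case. When $\rmap\xi=(n_1\ n_2)(n_3)$, the identity $\rmap\xi(n_3)=n_3$ forces $\tilde{o}_\sigma(n_3)=n_3$, so by admissibility of $(\theta,h)$ the only ramp of $x_{n_3}$ in its linear regime over $I_{n_3}(\xi)$ is the self-loop $r_{n_3,n_3}$; hence $E_{n_1}$, $E_{n_2}$, the slice equilibrium $x^*(x_{n_3})$, and the slice spectrum are all \emph{independent} of $x_{n_3}$ inside $\bg(\blup(\xi))$, the slices coincide, and the gluing you worry about is simply not there. The genuine subtlety belongs to $\rmap\xi=(n_1\ n_2)$ when $n_3\in J_i(\xi)\setminus\activeset(\xi)$ and the threshold crossed at the $n_3$-wall belongs to an edge $n_3\to n_1$ or $n_3\to n_2$: then $r_{n_1,n_3}$ or $r_{n_2,n_3}$ \emph{is} linear across $I_{n_3}(\xi)$ even though $n_3$ does not actively regulate anything there, the slice equilibrium drifts with $x_{n_3}$, and the product-normal structure is exactly what fails. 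That subcase, not $(n_1\ n_2)(n_3)$, is where an additional estimate is required. (The paper's proof also records transversality to the $n_3$-wall hyperplanes $x_{n_3}=\theta_{*,n_3,j}\pm h_{*,n_3,j}$ via Proposition~\ref{prop:transversality-for-entrance-exit-faces}, which you omit.)
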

\begin{proof}
    Let $x \in \cM_\varepsilon(\xi)$. Note that solutions of \eqref{eq:rampH3} are transverse to $x_{n_3} = \theta_{n_3}\pm h_{*,n_3,j}$ by Theorem~\ref{prop:transversality-for-entrance-exit-faces}. Otherwise, there exists $\varepsilon_0>0$ by Theorem~\ref{thm:F3N2-trans} such that transversality follows from by the projection into the $n_1,n_2$ coordinates. 
\end{proof}

\section{Geometrization $\bG_3$}

We remind the reader that the primary goal of this chapter is to prove Theorem~\ref{thm:R3ABlattice} for $N=2$ and $N=3$. For that purpose, we construct a geometrization $\bG_3$ of the Janus complex $\Janus$ that is aligned with the vector field for each $N$. The desired geometrization $\bG_3$ is obtained from $\recG_J$ by changing the restrictions in Definition~\ref{defn:rectGeoXr} that $\bg_\zeta$ must satisfy when $\zeta \in \Janus$.

First, we choose an appropriate rectangular geometrization. Then we proceed to modify it in such a way to obtain the manifolds defined in Sections~\ref{sec:R3N2}. 
\begin{defn}
    \label{defn:compatible-recG-F3N2}
    Let $N=2$. We say that a rectangular geometrization $\recG_J$ of $\Janus$ is compatible with $\cF_3 \colon \cX \mvmap \cX$ if it satisfies the GO-conditions and at each equilibrium cell $\xi = [\bv,\bzero] \in \cX$ the following is satisfied. 
    \begin{enumerate}
        \item If $\delta_{\rmap\xi} = -1$, then
        \( \displaystyle 
            \bg(\bbdy(\mathrm{Cube}(16\bv-{\bf 3}, 16\bv+{\bf 5}))) = \cM_\varepsilon^i(\xi). 
        \)
        \item If $\delta_{\rmap\xi} = 1$, then 
        \( \displaystyle 
            \bg\left( \bbdy(\mTile(\xi,\rmap\xi))\setminus\bigcup_{\xi'\in\cT^{(1)}(\xi)}\bbdy(\bar{S}(\xi'))\right)= \cM_\varepsilon^e(\xi).
        \)
    \end{enumerate}
    We denote it by $\bG_3$. 
\end{defn}

\begin{theorem}
    \label{thm:F3N2-setup-ABlattice}
    Consider the standing hypothesis, fix $N=2$ and let $\bG_3$ be a rectangular geometrization that is compatible with $\cF_3$. Then $\bG_3$ is aligned with any $\cN \in \sN(\cF_3)$ such that $\zeta \in \bbdy(\cN)^{(N-1)} \cap \bar{S}(\xi)$ for some equilibrium cell $\xi \in \cX$.
\end{theorem}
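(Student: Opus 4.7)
The plan is to reduce to the two feedback types and exploit the fact that by compatibility of $\bG_3$ with $\cF_3$, each cell $\zeta\in\bbdy(\cN)^{(N-1)}\cap\bar{S}(\xi)$ in the statement is mapped into one of the transversal manifolds $\cM^i_\varepsilon(\xi)$ or $\cM^e_\varepsilon(\xi)$ (for grading changes between the equilibrium part and its complement inside $\bar S(\xi)$), or is mapped into a rectangular face coming from $\recG_\mathtt{J}$ (for grading changes that remain in the $\cF_2$ skeleton of the modification). In either case Theorem~\ref{thm:F3N2-trans} and Proposition~\ref{prop:GradPsiIndependent}/Lemma~\ref{lem:gradient-direction-is-nonzero} reduce the verification to a sign check on the inward normal, after which Corollary~\ref{cor:inward} gives alignment.

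For the case $\delta_{\rmap\xi}=-1$, compatibility forces the unique grading change inside $\bar S(\xi)$ to occur along $\partial\,\mathrm{Cube}(16\bv-\mathbf{3},16\bv+\mathbf{5})$, and $\bg$ of this boundary is the Lyapunov level set $\cM^i_\varepsilon(\xi)=L^{-1}(\varepsilon)$. The grading $\pi_3^\xi$ assigns the smaller value $\pi(\xi)$ to the interior of the cube and the larger value $\pi(\tau_0)$ outside, so the inward-pointing normal $z_\zeta(x)$ points toward $x^*$, i.e.\ antiparallel to $\nabla L(x)$. From the proof of Theorem~\ref{thm:F3N2-trans} we have $\dot L(x)=-2\alpha\,\|P^{-1}(x-x^*)\|^{2}<0$ on $\bg(\blup(\xi))\setminus\{x^*\}$, hence $\langle F(x),\nabla L(x)\rangle<0$ and therefore $\langle F(x),z_\zeta(x)\rangle>0$ as required; choosing $\varepsilon<\varepsilon_0$ as in Theorem~\ref{thm:F3N2-trans} keeps the level set strictly inside $\bg(\blup(\xi))$, so no tangency from the boundary of the cube intervenes and Corollary~\ref{cor:inward} applies.

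For the case $\delta_{\rmap\xi}=+1$, compatibility identifies $\bg(\zeta)$ with a piece of $\cM^e_\varepsilon(\xi)=\bigcup_{\xi\prec\xi',\dim\xi'=1}\cM_{0,\varepsilon}(\xi,\xi')$, or with a rectangular face of the finger tiles $\mTile(\xi,\rmap\xi)$. On the GO-manifold pieces Proposition~\ref{prop:transversality-manifold} gives that $F$ is transverse to $\cM_{0,\varepsilon}(\xi,\xi')\setminus\Nul_{n_o}(\xi,\xi')$, and the proof of Theorem~\ref{thm:R2ABlattice} computes the sign: with $r_{n_g}$ and $p_{n_o}(\xi,\xi')$ determined by the GO-pair of $(\xi,\xi')$ the inward normal into $\bg(\cN)$ satisfies $\langle F(x),z_\zeta(x)\rangle=\varepsilon\,x_{n_g}|F_{n_o}(x)|>0$. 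At the remaining points, where $x\in\Nul_{n_o}(\xi,\xi')$, one applies the same $\varphi_\varepsilon$-vs-$\varphi$ comparison used in the proof of Theorem~\ref{thm:R2ABlattice}: the unperturbed trajectory leaves $\cM^e_\varepsilon(\xi)$ immediately into $\Int(\bg(\cN))$ because $F(x)-F_\varepsilon(x)=r_{n_g}\varepsilon x_{n_g}\bzero^{(n_g)}\neq 0$, so Theorem~\ref{thm:inward} provides inward-pointing behaviour.

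Finally, for cells $\zeta\subset\bar S(\xi)$ whose image under $\bg$ still lies on a rectangular face (those boundaries of the modification region that are not moved by $\bG_3$), the alignment is already known: the inequality $\cH_3(\gamma,\nu,\theta)\subset\cH_1(\gamma,\nu,\theta)$ puts $h\in\cH_1$, so Proposition~\ref{prop:transversality-for-entrance-exit-faces} applied to the adjacent pair of $N$-cells in $\cX$ whose $\pi_3^\xi$-values straddle $\pi(\xi)$ yields $\sgn(\dot x_n)=\pm p_n(\cdot,\cdot)$ matching the inward normal prescribed by Lemma~\ref{lem:boundaryABlattice}. Assembling the three cases and applying Corollary~\ref{cor:inward} concludes the proof. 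The main technical obstacle is bookkeeping the sign of the inward normal across all grading transitions in Figures~\ref{fig:2DnegJanus}--\ref{fig:2DposJanus}; the essential analytic inputs (the Lyapunov estimate for $L$ and the perturbation argument for $\cM_{0,\varepsilon}$) are already in hand from Theorems~\ref{thm:F3N2-trans} and \ref{thm:R2ABlattice}.
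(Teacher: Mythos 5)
Your proposal follows the same overall strategy as the paper's proof: split on the sign of $\delta_{\rmap\xi}$, handle $\delta_{\rmap\xi}=-1$ via the internal Lyapunov-level manifold $\cM_\varepsilon^i(\xi)$ and Theorem~\ref{thm:F3N2-trans}, and handle $\delta_{\rmap\xi}=1$ by further splitting boundary cells into those carried to hyperplane faces (the $\cF_1$-type rectangular argument, Proposition~\ref{prop:transversality-for-entrance-exit-faces}/Theorem~\ref{thm:R1ABlattice}) versus those carried to pieces of the external GO-manifold $\cM_\varepsilon^e(\xi)$ (the $\cF_2$-type argument inherited from Theorem~\ref{thm:R2ABlattice}). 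The paper's proof is terse — it simply cites Theorems~\ref{thm:F3N2-trans}, \ref{thm:R1ABlattice}, and \ref{thm:R2ABlattice} and the monotonicity of the grading — whereas you unpack the sign bookkeeping (the antiparallel relation $z_\zeta \propto -\nabla L$, the explicit inner product $\varepsilon x_{n_g}|F_{n_o}(x)|>0$ on the GO-manifold, and the $\varphi_\varepsilon$-versus-$\varphi$ comparison at nullcline points). This is a faithful elaboration of what the cited theorems already supply rather than a different route, and it is correct.
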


\begin{proof}
    Let $\xi=[\bv,\bzero] \in \cX$ be an equilibrium cell such that $\zeta \in \bbdy(\cN)^{(N-1)} \cap \bar{S}(\xi)$. If $\delta_{\rmap\xi}(\xi)=-1$, then 
    \[
        \zeta \in \mathrm{Cube}(16\bv-{\bf 3},16\bv+{\bf 5}), 
    \]
    so $\bg(\zeta) \subset \cM_\varepsilon^i(\xi)$, to which, by Proposition~\ref{thm:F3N2-trans}, the flow is transverse inwards. Since $\pi_3(\xi) < \pi_3(\xi')$ for any $\xi \preceq \xi'$, it follows that $\langle F(x) , z_{\zeta}(x) \rangle > 0$. 

    If $\delta_{\rmap\xi}=1$ and $\bg(\zeta)$ is contained in a hyperplane $x_{n}=\theta_{k_n,n,j_n}\pm h_{k_n,n,j_n}$, then Theorem~\ref{thm:R1ABlattice} yields the result. Otherwise, $\zeta \in \bbdy(\mTile(\xi,\rmap\xi))$ and Theorem~\ref{thm:R2ABlattice} yields the result. 
\end{proof}

Note that when $N=2$, Theorem~\ref{thm:R3ABlattice} is a corollary of Theorem~\ref{thm:F3N2-setup-ABlattice} by combining the restrictions of $\bG_2$ and $\bG_3$. 

We now turn our attention to the case when $N=3$. 
\begin{defn}
\label{defn:F3N3-Geo}
Let $N=3$. 
We say that a rectangular geometrization $\bG_3$ of $\Janus$ is compatible with $\cF_3 \colon \cX \mvmap \cX$ if it satisfies the following conditions. 
    \begin{enumerate}
        \item $\bG_3$ satisfies the GO-conditions (see Definition~\ref{defn:R2-Geometrization}),
        \item if $\xi$ is an equilibrium cell with $\rmap\xi=(n_1 \ n_2 \ n_3)$, then
        \begin{align*}
            \bg(\bbdy(\FibTri(\xi))\cap\bar{S}(\xi)) & = \cM_\varepsilon^i(\xi) \\ 
            \bg(\bbdy(\FibTri(\xi)\cap \bar{S}(\cT^{(1)}(\xi)))) & = \cM_\varepsilon^e(\xi).
        \end{align*}
        \item if $\xi$ is an pseudo-opaque whose regulation map contains a 2-cycle $(n_1 \ n_2)$, then $\proj{(n_1 n_2)}(\bG_3)$ is compatible with $\cF_2 \colon \cX_{(n_1 n_2)}\mvmap \cX_{(n_1 n_2)}$. 
    \end{enumerate}
\end{defn}

\begin{proof}[Proof of Theorem~\ref{thm:R3ABlattice} for $N=3$]
    Fix $N=3$ and let $\bG_3$ be a rectangular geometrization of $\Janus$ that is compatible with $\cF_3$.  
    
    We verify the alignment along the boundary of $\cN \in \sN(\cF_3)$ by analyzing each case. 
    
    Obviously, if $\zeta \in \bbdy(\cN)^{(N-1)}$ and $\zeta \in \bbdy(\FibTri(\xi))$ when $\xi$ is a pseudo-opaque cell with a $2$-cycle in the decomposition of $\rmap\xi$, then either $J_e(\zeta)=\setof{n}$ is part of the cycle decomposition of $\rmap\xi$ or not. If $n \in G(\xi)$, then $\bg(\zeta)$ is contained in the hyperplanes defined by $x_n = \theta_{*,n,*}\pm h_{*,n,*}$ and the alignment follows by Theorem~\ref{thm:R1ABlattice}. Otherwise, $n \in S_{\rmap\xi}$ and the result follows from Theorem~\ref{thm:R2ABlattice} by the compatibility with $\cF_2$.  

    If $\zeta \in \bar{S}(\xi)$, then $\bg(\zeta) \subset L_s^{-1}(\varepsilon)$ or $\bg(\zeta) \subset L_u^{-1}(\varepsilon)$. In either case, the flow is transverse inwards to $\bg(\blup(\cN))$ by construction, and $\langle F(x), z_{\zeta}(x) \rangle > 0$. 

    If $\zeta \notin \bar{S}(\xi)$, then either $\bg(\zeta)$ is contained in a hyperplane $x_{n} = \theta_{m_{k_n},n,j_{k_n}}\pm h_{m_{k_n},n,j_{k_n}}$ or $\bg_\zeta(\Int(B^{N-1})) \subset \Int(\bg(\xi'))$. In the former case, Theorem~\ref{thm:R1ABlattice} yields the alignment. In the latter, note that Theorem~\ref{thm:F3N3-external-3cycle} implies that the flow is transverse inwards-- that is, $\langle F(x) , z_{\zeta}(x) \rangle > 0$.  
\end{proof}

\part{Applications and Prospectives}
\label{part:IV}
\chapter{Examples}
\label{sec:examples}

The constructions and results of Parts~\ref{part:II} and \ref{part:III} are the focus of this manuscript.
However, as discussed in Part~\ref{part:I} we developed these tools with the goal of being able to efficiently understand the global dynamics of systems of ODEs.
In this chapter we return to the discussion of the applicability of our techniques by systematically describing how information about continuous dynamics can be extracted from the order theoretic information codified by Morse graphs and the homological information codified by Conley complexes.

We begin in Section~\ref{sec:saddlesaddlebif} by considering two examples where each node in the Morse graph has a nontrivial Conley index.
As a consequence the Morse decompositions for the associate ramp systems are Morse representations, i.e., each node is associated with a nontrivial recurrent set.
This allows us to easily identify the existence of nontrivial global nonrecurrent dynamics.
In both examples the Conley complex is not unique, which provides us with the  opportunity to  explore the existence of potential global bifurcations.

In Section~\ref{sec:3d_example_periodic_orbit} we turn our attention to the identification of nontrivial recurrent dynamics.
In particular, we demonstrate how the multivalued map $\cF$, the  Conley index information, and the analytic estimates of Part~\ref{part:III} can be used to identify stable periodic orbits and discuss the potential for using these ideas to identify more general examples of recurrent dynamics.
To emphasize the need to strengthen our techniques we include an example for which a Morse set has the Conley index of a fixed point, but for which we conjecture there that the associated invariant set contains a fixed point and a periodic orbit.

As is shown in Sections~\ref{sec:R1Dynamics} and ~\ref{sec:R2Dynamics} the results obtained using the combinatorial models $\cF_1$ and $\cF_2$ are dimension independent.
However, as is well known, the dynamics on the global attractor of a high dimensional ODE can be low dimensional.
In Section~\ref{sec:semiconjugacy} we describe this lower dimensional dynamics by using the Morse graph and the Conley complex to construct a semi-conjugacy to lower dimensional system.

Our calculations show that it is quite common to identify Morse graphs where some of the nodes have trivial Conley indices.  
As discussed in Section~\ref{sec:CCalgorithm} while a nontrivial Conley index implies the existence of a nontrivial invariant set, the converse is not true, and thus, while the techniques of Part~\ref{part:II} allow us to identify a Morse decomposition for the ramp system we cannot conclude that we have found a Morse representation.
With this in mind, in Section~\ref{sec:notMorseRep} we discuss  why our multivalued maps  that are defined on cubical complexes can produce strongly connected components with trivial Conley indices even if in the ramp system the associated invariant set is empty.
We also argue for caution in dismissing the importance of nodes with trivial index.

In the introduction and at other points in this manuscript we highlight the DSGRN software.
However, as is discussed in Section~\ref{sec:NotDSGRN} our theoretical results extend beyond the current capabilities of DSGRN.
Nevertheless, we demonstrate that the desired computations can be carried out.

Finally, in Section~\ref{sec:beyondRamp} we briefly touch upon the question of extending our results beyond ramp systems.

An implementation of the algorithms described in this paper is available as part of the DSGRN software \cite{DSGRN_Rook_Field}. A Jupyter notebook with code to perform the computations in this monograph, with a description on how to use our software to perform the computations, is available at \cite{Rook_Field_Paper_Repo}. All the computations presented in this monograph were performed on a laptop using a single core.

\section{Identifying Global Dynamics and Global Bifurcations}
\label{sec:saddlesaddlebif}

\begin{ex}
\label{ex:saddlesaddlebif}
Example~\ref{ex:multiple_connection_matrices_ex}  begins with the wall labeling of Figure~\ref{fig:ex2sec6}(A) from which $\cF_3$ is determined.
The D-gradings $\pi\colon \cX\to \SCC(\cF_3)$ and $\pi_b\colon \cX_b\to \SCC(\cF_3)$ can be determined from Figure~\ref{fig:ex2sec6}(B).
The associated Morse graph $\sMG(\cF_3)$ is shown in Figure~\ref{fig:ex2sec6}(C) as are the Conley indices of the nodes.
The associated pair of connection matrices is given by \eqref{eq:pairDelta}.

To link these combinatorial and algebraic topological computations to the dynamics of differential equations, we take a step back and consider the regulatory network $RN$ shown in Figure~\ref{fig:example_2_2d_network}.
The parameter graph obtained by applying DSGRN to this regulatory network has $1600$ nodes. 
To compute the Morse graphs and a Conley complex for all $1600$ nodes of the parameter graph takes about $8$ seconds.

At node $752$ the DSGRN derived wall labeling is that of Figure~\ref{fig:ex2sec6}(A).
The region of parameter space associated with parameter node $R(752)$ is given by the following inequalities,
\begin{equation}
\label{eq:752}
\begin{aligned}
& x_1 : p_0 < \gamma_1 \theta_{2,1} < p_2 < \gamma_1 \theta_{1,1} < p_1, p_3 \\
& x_2 : p_0 < \gamma_2 \theta_{1,2} < \gamma_2 \theta_{2,2} < p_1, p_2, p_3,
\end{aligned}
\end{equation}
where for node $x_1$: $p_0 = \ell_{1,1} + \ell_{1,2}$, $p_1 = u_{1,1} + \ell_{1,2}$, $p_2 = \ell_{1,1} + u_{1,2}$, and $p_3 = u_{1,1} + u_{1,2}$, and for node $x_2$: $p_0 = \ell_{2,1} \ell_{2,2}$, $p_1 = u_{2,1} \ell_{2,2}$, $p_2 = \ell_{2,1} u_{2,2}$, and $p_3 = u_{2,1} u_{2,2}$.

\begin{figure*}[!htb]
\centering
\begin{tikzpicture}
    [main node/.style={circle,fill=white!20,draw},scale=2.5]
    \node[main node] (1) at (0,0.75) {1};
    \node[main node] (2) at (0,0) {2};
        
    \path[thick]
        (1) edge[-|,shorten <= 2pt, shorten >= 2pt, bend left] (2)
        (2) edge[loop left, distance=10pt, shorten >= 2pt, thick, in=300, out=240, ->] (2)
        (2) edge[->,shorten <= 2pt, shorten >= 2pt, bend left] (1)
        (1) edge[loop right, distance=10pt, shorten >= 2pt, thick, in=120, out=60, ->] (1); 
\end{tikzpicture}
\caption{Regulatory network.}
\label{fig:example_2_2d_network}
\end{figure*}
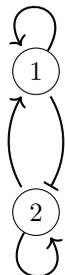

Consider the ramp system
\begin{equation}
\label{eq:ramp_system_SSbif}
\begin{aligned}
\dot{x}_1 & = -\gamma_1 x_1 + r_{1,1}(x_1) + r_{1,2}(x_2) \\
\dot{x}_2 & = -\gamma_2 x_2 + r_{2,1}(x_1)  r_{2,2}(x_2) 
\end{aligned}
\end{equation}
where the $r_{n,m}$ are ramp functions of type $J=1$ (see Definition~\ref{defn:rampfunction}) and the parameters $(\gamma,\nu,\theta)$ are chosen from $R(752)$. See Definition~\ref{defn:DSGRN_ramp} for the relationship between $\ell$, $u$ and $\nu$.
To obtain results concerning the dynamics of \eqref{eq:ramp_system_SSbif} we need to insure that the parameters $(\gamma,\nu,\theta,h)$ are admissible and hence assume that $h\in \cH_3(\gamma,\nu,\theta)$.

We repeat, with more detail, the sequence of arguments used in Example~\ref{ex:saddle_saddle_3D_intro}.
Let $\bG(\gamma,\nu,\theta,h)$ be a geometrization associated with $\cF_3$.
Let $\varphi$ denote the flow associated with \eqref{eq:ramp_system_SSbif} at $(\gamma,\nu,\theta,h)$.
Proposition~\ref{prop:globalAttractor} implies that $\varphi$ possesses a compact global attractor $K \subset [0,\gab_1(\gamma, \nu, \theta, h)] \times [0,\gab_2(\gamma, \nu, \theta, h)]$.
Theorem~\ref{thm:R3ABlattice} guarantees that the Morse graph $\sMG(\cF_3)$ is a Morse decomposition for $\varphi$ restricted to $K$.
The Conley index of each node in the Morse graph is nonzero and thus for every $p\in\sMG(\cF_3)$,
\[
M(p) := \Inv\left(
\bg\left(\left(\cl(\sO(\pi_b^{-1}(p)))\right)\right)
\setminus \bg\left(\left(\cl(\sO(\pi_b^{-1}(p))^<) \right) \right),\varphi \right) \neq \emptyset
\]
Therefore, $\sMG(\cF_3)$ is a Morse representation for $\varphi$ restricted to $K$.

The Conley index of each node is the index of a hyperbolic fixed point.
In particular, nodes labeled $(1,0,0)$, $(0,1,0)$, and $(0,0,1)$ have Conley indices of hyperbolic fixed points with unstable manifolds of dimension $0$, $1$, and $2$, respectively.
By \cite{mccord:89} we know that each invariant set $M(p)$ contains a fixed point.
However, based on this homological information we cannot conclude that $M(p)$ contains a unique fixed point, nor that it contains a fixed point with an unstable manifold of dimension corresponding to the nontrivial Betti number of the Conley index. 
We return to a more detailed discussion concerning fixed points below.

The fact that the Morse graph describes a Morse decomposition, or more specifically for this example a Morse representation, leads to the following result.
Let $x\in [0,\infty)^2$.
Then $\omega(x;\varphi) \subset M(p)$ for some $p\in \setof{0, 1, \ldots, 8}$.
Since the global attractor $K$ is a subset of $[0,\infty)^2$, this results applies to all $x\in K$.
However, in addition, if $x\in K$, then $\alpha(x;\varphi) \subset M(q)$ and $p\leq q$ under the partial order described by the Morse graph.
Furthermore, $p=q$ if and only if $x\in M(p)$.
Thus, the partial order of the Morse graph provides considerable restrictions on the existence of possible connecting orbits and hence on the global structure of the dynamics.

Existence results with respect to connecting orbits are obtained using the connection matrices.
Let $p, q \in \sMG(\cF_3)$ and assume $p<q$.
By \cite[Proposition 5.3]{franzosa:89} if $p$ is an immediate predecessor of $q$ and for all connection matrices the submatrix
\[
\Delta(p,q) \colon CH_*(q) \to CH_*(p)
\]
is nontrivial, then there exists $x\in K$ such that
\[
\alpha(x;\varphi) \subset M(q)\quad\text{and}\quad \omega(x;\varphi) \subset M(p),
\]
i.e., there is a connecting orbit from $M(q)$ to $M(p)$.
For our example, we can conclude the existence of connecting orbits from $M(q)$ to $M(p)$ for the following edges
\begin{equation}
\label{eq:sadtoatt}
4 \to 1,\ 4 \to 2,\ 5 \to 0,\ 5 \to 3,\ 6 \to 2,\ 6 \to 3,\ 7 \to 0
\end{equation}
and
\begin{equation}
\label{eq:sourcetosad}
8 \to 5,\ 8 \to 6,\ 8 \to 7
\end{equation}
since for the respective $q$ and $p$ for both connection matrices  $\Delta(p,q)\neq 0$. 
The nodes $0$, $1$, $2$, and $3$ are minimal nodes of $\sMG(\cF_3)$.
Thus, the existence of the connecting orbits of \eqref{eq:sadtoatt} indicates that $M(4)$ lies on the separatrix between $M(1)$ and $M(2)$, $M(5)$ lies on the separatrix between $M(0)$ and $M(3)$, and $M(6)$ lies on the separatrix between $M(2)$ and $M(3)$. Similarly, we can extend our understanding of these separatrices by noting the existence of the connections from $M(8)$ to $M(5)$ and $M(8)$ to $M(6)$.

As in Example~\ref{ex:saddle_saddle_3D_intro}, 
 we explain the implications of the edge $7 \to 4$ in $\sMG(\cF_3)$ in detail.
Assume each Morse set consists of a unique hyperbolic fixed point. 
The $7 \to 4$ edge suggests the existence of a heteroclinic connection between saddles in $M(7)$ and $M(4)$.
However, such a connecting orbit is not expected to exist at a typical parameter value.
We remind the reader that the results described above are valid for all admissible parameter values for which $(\gamma,\nu,\theta)\in R(752)$.
Thus, a better interpretation of the $7 \to 4$ edge is that it does not exclude the existence of a connecting orbit from $M(7)$ to $M(4)$.
This raises the question: \emph{are there  parameter values $(\gamma,\nu,\theta,h)$ with  $(\gamma,\nu,\theta)\in R(752)$ for which there exists a connecting orbit from $M(7)$ to $M(4)$?}

To answer this question we again turn to the connection matrices \eqref{eq:pairDelta}.
The difference between $\Delta_1$ and $\Delta'_1$ is in the first column that records 
\[
CH_1(7) \to \bigoplus_{p=0}^3 CH_0(p),
\]
and more precisely in the $CH_1(7) \to CH_0(1)\oplus CH_0(2)$ entries.
The connection matrix $\Delta$ suggests a connecting orbit from $M(7)$ to $M(1)$, while the connection matrix $\Delta'$ suggests a connecting orbit from $M(7)$ to $M(2)$.

Let us assume that $M(7)$ is a hyperbolic fixed point, then its Conley index implies that it has a one-dimensional unstable manifold.
We have already established that a connecting orbit exists from $M(7)$ to $M(0)$.
Thus, at most one more connecting orbit can emanate from $M(7)$.
Hence, if there is a connecting orbit from $M(7)$ to $M(1)$, then there is no connecting orbit from $M(7)$ to $M(2)$, and vice versa.
Furthermore, in either case there is no connecting orbit from $M(7)$ to $M(4)$ and thus the ordering of the Morse representation changes to that of Figure~\ref{fig:newMG}.
The connection matrix associated with the Morse representation in Figure~\ref{fig:newMG}(A) is $\Delta$, while the Morse representation in Figure~\ref{fig:newMG}(B) leads to $\Delta'$.

\begin{figure}[!htpb]
\begin{subfigure}{0.47\textwidth}
\centering
\includegraphics[width=\linewidth]{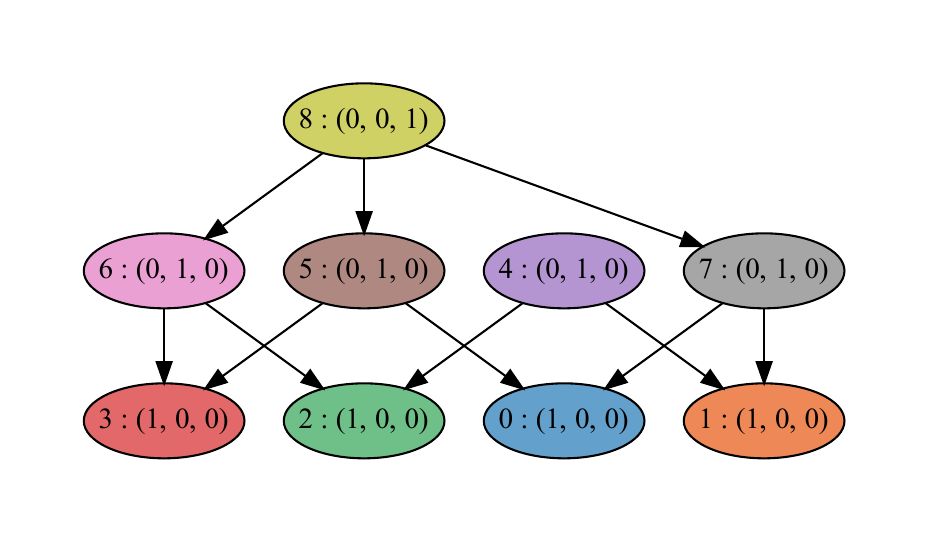}
\caption{Connecting orbit from $M(7)$ to $M(1)$.}
\label{fig:newMG_1}
\end{subfigure}
\begin{subfigure}{0.47\textwidth}
\centering
\includegraphics[width=\linewidth]{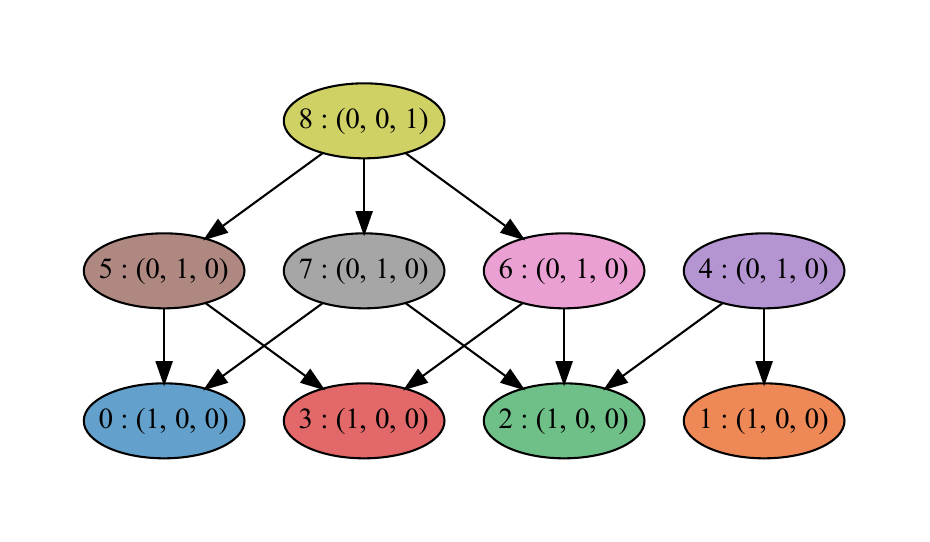}
\caption{Connecting orbit from $M(7)$ to $M(2)$.}
\label{fig:newMG_2}
\end{subfigure}
\caption{(A) Morse graph associated with existence of $M(7)$ to $M(1)$ connecting orbit. (B) Morse graph associated with existence of $M(7)$ to $M(2)$ connecting orbit.}
\label{fig:newMG}
\end{figure}

Though validated numerics \cite{vandenberg:sheombarsing} can rigorously identify connections of this form -- from saddles to attractors -- this is beyond the scope of this manuscript.  Thus we only  remark that at the parameter values
\begin{equation}
\label{eq:params_connection_1}
\begin{aligned}
& \nu_{1, 1, 1} = 0.03, \ \nu_{1, 1, 2} = 2.10, \ \nu_{1, 2, 1} = 0.33, \ \nu_{1, 2, 2} = 0.94 \\
& \nu_{2, 1, 1} = 0.44, \ \nu_{2, 1, 2} = 3.15, \ \nu_{2, 2, 1} = 4.01, \ \nu_{2, 2, 2} = 0.51 \\
& \theta_{1, 1} = 1.68, \ \theta_{1, 2} = 0.75, \ \theta_{2, 1} = 0.32, \ \theta_{2, 2} = 0.74 \\
& h_{1, 1} = h_{1, 2} = h_{2, 1} = h_{2, 2} = 0.1, \ \gamma_1 = \gamma_2 = 1
\end{aligned}
\end{equation}
standard numerical calculations suggest the existence of  a connecting orbit from $M(7)$ to $M(1)$, and at the parameter values
\begin{equation}
\label{eq:params_connection_2}
\begin{aligned}
& \nu_{1, 1, 1} = 0.37, \ \nu_{1, 1, 2} = 3.32, \ \nu_{1, 2, 1} = 0.37, \ \nu_{1, 2, 2} = 1.19 \\
& \nu_{2, 1, 1} = 1.14, \ \nu_{2, 1, 2} = 0.26, \ \nu_{2, 2, 1} = 1.29, \ \nu_{2, 2, 2} = 4.76 \\
& \theta_{1, 1} = 3.37, \ \theta_{1, 2} = 0.87, \ \theta_{2, 1} = 0.87, \ \theta_{2, 2} = 1.02 \\
& h_{1, 1} = h_{1, 2} = h_{2, 1} = h_{2, 2} = 0.1, \ \gamma_1 = \gamma_2 = 1
\end{aligned}
\end{equation}
we observe the existence of a  connecting orbit from $M(7)$ to $M(2)$.

We return to the question of the existence of a connecting orbit from $M(7)$ to $M(4)$. The parameter region $R(752)$ is path connected \cite{cummins:gedeon:harker:mischaikow:mok}. Therefore, there exists a path of admissible parameters from \eqref{eq:params_connection_1} to \eqref{eq:params_connection_2} that lies within $R(752)$. Using continuity (see \cite{reineck}) it can be shown that at some point on this path there must exist a connecting orbit from $M(7)$ to $M(4)$.

The DSGRN parameter region $R(752)$ is explicitly defined by the inequalities of \eqref{eq:752}.
Given $(\gamma,\nu,\theta) \in R(752)$ we have explicit bounds for $h\in \cH_3(\gamma,\nu,\theta)$.
We discuss the importance of this knowledge.

As indicated in the introduction the development of DSGRN was motivated by problems from systems biology.
In this setting the ramp functions serve as heuristic models for nonlinear chemical reactions \cite{POLYNIKIS2009511,MESTL1995291} and  a biochemical perspective suggest that the slopes of the ramps should not be too steep. 
Consider the following parameters that belong to $R(752)$,
\[
\begin{aligned}
& \nu_{1, 1, 1} = 1.21, \ \nu_{1, 1, 2} = 14.95, \ \nu_{1, 2, 1} = 1.47, \ \nu_{1, 2, 2} = 3.73 \\
& \nu_{2, 1, 1} = 16.30, \ \nu_{2, 1, 2} = 6.36, \ \nu_{2, 2, 1} = 6.36, \ \nu_{2, 2, 2} = 19.73 \\
& \theta_{1, 1} = 15.34, \ \theta_{1, 2} = 55.59, \ \theta_{2, 1} = 2.85, \ \theta_{2, 2} = 60.02 \\
& h_{1, 1} = 2, \ h_{1, 2} = 5, \ h_{2, 1} = 2, \ h_{2, 2} = 5, \ \gamma_1 = \gamma_2 = 1.
\end{aligned}
\]
These are admissible parameter values for which the slopes of the ramp function are between $2$ and $4$.
This suggests that the methods of this manuscript can produce meaningful results in the setting of systems biology.
\end{ex}

The discussion of the previous example does not require that the Morse sets be equilibria. This is  an important point since in general it is easier to identify the Conley indices of Morse sets than to determine the exact structure of the Morse set.
However, by Proposition~\ref{prop:equilibrium-existence} for the ramp system \eqref{eq:ramp_system_SSbif} we can conclude that the Morse sets consist of unique hyperbolic equilibria.

Turning to a different direction of applications there is extensive literature on the study of piecewise-smooth systems \cite{piecewisesmooth} and it is worth noting that our methods can be viewed from this perspective.

Observe that under the conditions of Proposition~\ref{prop:smallh} if $h=(\bar{h},\ldots, \bar{h})$ and $\bar{h}\in (0,\tilde{h}_i(\gamma,\nu,\theta))$, then the Morse graph $\sMG(\cF_i)$, the associated Conley indices, and the collection of connection matrices is fixed and computable. 
Observe that in the limit when $h_{n,m}=0$, a ramp function is locally piecewise constant and thus the associated ramp system is a piecewise-smooth system. 
Thus, we can view the analysis of the dynamics of \eqref{eq:ramp_system_SSbif} as a singular bifurcation from a piecewise-smooth system.

\begin{ex}
\label{ex:saddle_saddle_3D_example}
We return to Example~\ref{ex:saddle_saddle_3D_intro} that is presented in the context of the  system of ODEs \eqref{eq:ramp_system_intro_1} at the parameter values indicated in Table~\ref{tab:parameters_ramp_system_intro_2}.
As discussed there, rather than performing computations on the ODEs directly, we represent the system via the regulatory network of Figure~\ref{fig:3d_example_1RN}.
As discussed in Section~\ref{sec:ramp2rook} given the regulatory network and the parameter values we determine the chain complex $\cX$ and the appropriate wall labeling; in this case wall labeling 52,718,681,992 out of 87,280,405,632 potential wall labelings.
Since this is a three dimensional system we can construct the multivalued map $\cF_3 \colon \cX \mvmap \cX$ from which we extract the Morse graph of Figure~\ref{fig:mg_3d_example_intro_1} and an associated connection matrix.
This computation takes $0.19$ seconds.

Theorem~\ref{thm:R3ABlattice} guarantees that the Morse graph  is a Morse decomposition for the flow on the global attractor.
As indicated in Figure~\ref{fig:mg_3d_example_intro_1} each of the Morse nodes has a nontrivial index, and therefore, the Morse graph is a Morse representation.
Furthermore, for each node in the Morse graph the associated recurrent set consists of a single cell and thus by \cite[Theorem 1]{duncan21:ramp} we can conclude that the Morse representation consists of 25 equilibria, i.e., there is no nontrivial recurrent dynamics.
It is hard to imagine that more tradition computational methods can obtain such results as efficiently.

Recall from Example~\ref{ex:saddlesaddlebif} that the $7\to 4$ edge  suggested the possibility of a saddle to saddle connecting orbit.
This example is awash in such edges: $15 \to 9$, $16 \to 9$, $16 \to 10$, $17 \to 9$, $18 \to 9$, $18 \to 10$, $21 \to 16$, and $22 \to 16$. 
As a consequence there are 4,096 admissible connection matrices (the computation of all 4,096 admissible connection matrices took $4.5$ seconds).
This implies that we cannot immediately identify the heteroclinic orbits that occur for the parameter values in Table~\ref{tab:parameters_ramp_system_intro_2}.
However, this can be established by numerically following the one-dimensional unstable manifolds of the equilibria associated with Morse nodes 15 through 18, 21, and 22.
Doing this provides us with the Morse graph of Figure~\ref{fig:mg_3d_example_intro_new}.

\begin{figure*}[!htb]
\centering
\includegraphics[width=1.0\textwidth]{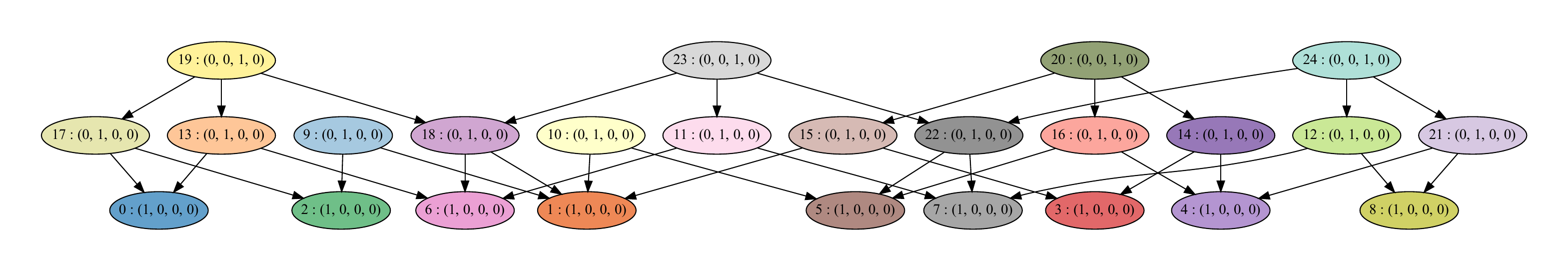}
\caption{Morse graph for the ODE system \eqref{eq:ramp_system_intro_1} with the parameter values in Table~\ref{tab:parameters_ramp_system_intro_2}, obtained by numerically following the one-dimensional unstable manifolds of the equilibria associated with Morse nodes $15$, $16$, $17$, $18$, $21$, and $22$ in Figure~\ref{fig:mg_3d_example_intro_1}.}
\label{fig:mg_3d_example_intro_new}
\end{figure*}

We hasten to add that for many applications, especially those associated with multiscale systems such as biology, there is tremendous parameter uncertainty.
Having identified dynamics at one parameter value, e.g., Table~\ref{tab:parameters_ramp_system_intro_2}, does not guarantee similar dynamics for nearby parameter values.
We include this example precisely because of the multitude of connection matrices suggests that $R(\text{52,718,681,992})$ is a range of parameter values at which the structure of the global dynamics can undergo a variety of significant changes.
It remains an open problem how to effectively identify which of the admissible connection matrices are actually realized for a specific family of ODEs, e.g., ramp systems.
Our example involves 39 parameters and so it is not inconceivable that there are hypersurfaces of saddle-saddle connections $15 \to 9$, $16 \to 9$, $16 \to 10$, $17 \to 9$, $18 \to 9$, $18 \to 10$, $21 \to 16$, and $22 \to 16$. The number of Morse graphs that will be realized depends on the intersections of these hypersurfaces. There are theoretical results that the structure of connecting orbits can be even more complicated \cite{mccord:mischaikow:92}. 
At the moment we do not have efficient mechanisms for completely identifying the set of heteroclinic connections that occur.
However, we know of no other techniques that provide such a clear identification of the potential set of bifurcations.
\end{ex}

\section{Identifying Periodic Orbits}
\label{sec:3d_example_periodic_orbit}

The focus of much of this text has been on identifying Morse graphs and the associated connection matrices.
Obviously recurrent dynamics, which is associated with nodes in the Morse graph, are of equal interest.
In this section we argue that our techniques are capable of identifying nontrivial recurrent dynamics by identifying periodic orbits for a three-dimensional example (see Example~\ref{ex:periodicOrbit3}) and a five-dimensional example  (see Example~\ref{ex:periodicOrbit5}).
We conclude with a discussion about the challenge of finding more complicated recurrent dynamics.

\begin{ex}
\label{ex:periodicOrbit3}
We return to Example~\ref{ex:introPeriodic}.
The Morse graph  presented in Figure~\ref{fig:mg_3d_example_intro_periodic} is obtained using the combinatorial multivalued map $\cF_3\colon \cX(\I) \mvmap \cX(\I)$ where 
\[
\I = \setof{0,1,2,3,4} \times \setof{0,1,2,3,4}\times \setof{0,1,2,3}.
\]
The computation of the Morse graph and connection matrix for this example takes about $0.12$ seconds.
In particular, this gives rise to D-gradings $\pi\colon \cX\to \SCC(\cF_3)$ and $\pi_b\colon \cX_b\to \SCC(\cF_3)$.
The associated Conley index data is computed using $\Z_2$ coefficients.
Let $\bG = \bG(\gamma,\nu,\theta,h)$ be a geometrization associated with $\cF_3$ and let $\varphi$ denote the flow generated by \eqref{eq:ramp_system_intro_periodic}.

Theorem~\ref{thm:R3ABlattice} and the fact that each of the Conley indices is nontrivial implies that the Morse graph defines a Morse representation for $\varphi$.
In particular, given $\bg\in \bG$
\[
M(p) = \Inv(\bg(\pi_b(\sO(p)))\setminus\bg(\pi_b(\sO(p)^<));\varphi) \neq \emptyset
\]
for all nodes $p$ of the Morse graph.

We focus on the Morse set $M(1)$ with Conley index 
\[
CH_k(1; \Z_2) \cong \begin{cases}
    \Z_2 & \text{if $k=0,1$}\\
    0 & \text{otherwise.}
\end{cases}
\]
This allows for the possibility that $M(1)$ consists of a stable periodic orbit.
Our goal is to prove that $M(1)$ contains a periodic orbit using \cite[Theorem 1.3]{mccord:mischaikow:mrozek}.

We begin by collecting the necessary information.
Globally, since $1$ is a minimal node in the Morse graph, $\bg(\pi_b(1))$ is an attracting block, i.e., if $x \in \bg(\sO(\pi_b(1)))$, then $\varphi((0,\infty),x)\subset \bg(\sO(\pi_b(1)))$.
More locally, let $\xi\in \pi^{-1}(1)$, then $\xi\not\in\cF_3(\xi)$.
This implies that $\Inv(\bg(\blup(\xi)),\varphi) = \emptyset$ and hence if $x\in \bg(\blup(\xi))$, then there exists $t>0$ such that $\varphi(t,x)\not\in \bg(\blup(\xi))$.

Figure~\ref{fig:SCC2} shows the edges of $\cF_3$ restricted to $\pi^{-1}(1)$. 
Consider the sequence of cells and edges
\[ 
\begin{tikzpicture}
            [main node/.style={rectangle,fill=white!20,
            font=\sffamily\tiny\bfseries},scale=2.5]
            \node[main node] (1) at (0,0) {$\defcellb{2}{2}{1}{1}{1}{1}$};
            \node[main node] (2) at (1,0) {$\defcellb{2}{2}{2}{1}{1}{0}$};
            \node[main node] (3) at (2,0) {$\defcellb{2}{2}{2}{1}{1}{1}$};

            \path[thick]
            (1) edge[->,shorten <= 2pt, shorten >= 2pt] (2)
            (2) edge[->,shorten <= 2pt, shorten >= 2pt] (3)
            ;
\end{tikzpicture} 
\]
corresponding to a 3-dimensional cell mapping to a 2-dimensional cell mapping to a 3-dimensional cell. 
Both edges arises from \textbf{Condition 1.1}. 
Observe that it is the third coordinate that is changing and increasing.
It is left to the reader to check that 
\[
\blup\left(\defcellb{2}{2}{1}{1}{1}{1} \right) = \defcellb{5}{5}{3}{1}{1}{1}.
\]
By Lemma~\ref{lem:gradient-direction-is-nonzero}, $\dot{x}_3 >0$ on $\bg\left(\blup\left( \defcellb{2}{2}{1}{1}{1}{1} \right)\right)$, and furthermore, the flow $\varphi$ is transverse across $\bg\left( \defcellb{5}{5}{4}{1}{1}{0} \right)$.
This leads to two fundamental observations.
First, $\bg\left( \defcellb{5}{5}{4}{1}{1}{0} \right)$ is a local section  $\varphi$.
Second, if $x\in \blup\left(\defcellb{2}{2}{1}{1}{1}{1} \right)$, then there exist $t_1 > t_0 \geq 0$ such that 
\[
\varphi([0,t_1],x)\subset \blup\left(\defcellb{2}{2}{1}{1}{1}{1} \right) \cup \blup\left(\defcellb{2}{2}{2}{1}{1}{0} \right)
\]
and
\[
\varphi([0,t_1],x)\cap \blup\left(\defcellb{2}{2}{1}{1}{1}{1} \right)  = \varphi([0,t_0)],x). 
\]
In words, the trajectory starting at $x$ leaves $\blup\left(\defcellb{2}{2}{1}{1}{1}{1} \right)$ and enters $\blup\left(\defcellb{2}{2}{2}{1}{1}{0} \right)$.
A similar argument  shows that the trajectory starting at $\varphi(t_1,x)$ leaves $\blup\left(\defcellb{2}{2}{2}{1}{1}{0} \right)$ and enters $\blup\left(\defcellb{2}{2}{2}{1}{1}{1} \right)$.

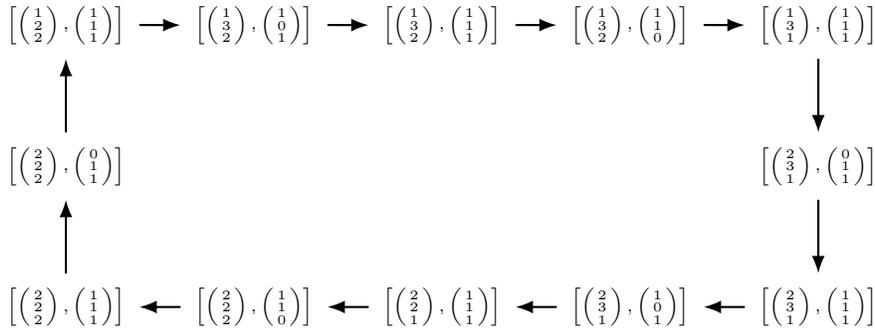
\begin{figure}[!htb]
\begin{tikzpicture}
            [main node/.style={rectangle,fill=white!20,
            font=\sffamily\tiny\bfseries},scale=2.5]
            \node[main node] (1) at (2,2) {$\defcellb{2}{2}{1}{1}{1}{1}$};
            \node[main node] (2) at (1,2) {$\defcellb{2}{2}{2}{1}{1}{0}$};
            \node[main node] (3) at (0,2) {$\defcellb{2}{2}{2}{1}{1}{1}$};
            \node[main node] (4) at (0,2.75) {$\defcellb{2}{2}{2}{0}{1}{1}$};
            \node[main node] (5) at (0,3.5) {$\defcellb{1}{2}{2}{1}{1}{1}$};
        \node[main node] (6) at (1,3.5) {$\defcellb{1}{3}{2}{1}{0}{1}$};
        \node[main node] (7) at (2,3.5) {$\defcellb{1}{3}{2}{1}{1}{1}$};
        \node[main node] (8) at (3,3.5) {$\defcellb{1}{3}{2}{1}{1}{0}$};
            \node[main node] (9) at (4,3.5) {$\defcellb{1}{3}{1}{1}{1}{1}$};
            \node[main node] (14) at (4,2.75) {$\defcellb{2}{3}{1}{0}{1}{1}$};
            \node[main node] (15) at (4,2) {$\defcellb{2}{3}{1}{1}{1}{1}$};
            \node[main node] (16) at (3,2) {$\defcellb{2}{3}{1}{1}{0}{1}$};

            \path[thick]
            (1) edge[-Latex,shorten <= 2pt, shorten >= 2pt] (2)
            (2) edge[-Latex,shorten <= 2pt, shorten >= 2pt] (3)
            (3) edge[-Latex,shorten <= 2pt, shorten >= 2pt] (4)
            (4) edge[-Latex,shorten <= 2pt, shorten >= 2pt] (5)
            (5) edge[-Latex,shorten <= 2pt, shorten >= 2pt] (6)
            (6) edge[-Latex,shorten <= 2pt, shorten >= 2pt] (7)
            (7) edge[-Latex,shorten <= 2pt, shorten >= 2pt] (8)
            (8) edge[-Latex,shorten <= 2pt, shorten >= 2pt] (9)
            (9) edge[-Latex,shorten <= 2pt, shorten >= 2pt] (14)

            (14) edge[-Latex,shorten <= 2pt, shorten >= 2pt] (15)
            (15) edge[-Latex,shorten <= 2pt, shorten >= 2pt] (16)
            (16) edge[-Latex,shorten <= 2pt, shorten >= 2pt] (1)
;
\end{tikzpicture}
\caption{$\cF_3$ restricted to cells in $\cX$ associated with strongly connected component $1$.}
\label{fig:SCC2}
\end{figure}

Repeating this argument shows that given $x\in M(1)$, its forward trajectory passes through the cells associated with $M(1)$ in the order shown in Figure~\ref{fig:SCC2}.
In particular, for any $x\in M(1)$, there exists $t_x >0$, such that $\varphi(t_x,x)\in \bg\left( \defcellb{5}{5}{4}{1}{1}{0} \right)$.
Therefore, we have identified a section for $M(1)$, and hence by \cite[Theorem 1.3]{mccord:mischaikow:mrozek} $M(1)$ contains a periodic orbit.
\end{ex}

\begin{ex}
\label{ex:periodicOrbit5}

Consider the regulatory network shown in Figure~\ref{fig:network_5D}.
The associated abstract cubical complex is $\cX(\I)$ where
\[
\I = \setof{0,1,2,3} \times \setof{0,1,2,3,4,5}\times \setof{0,1,2,3,4}\times \setof{0,1,2}\times \setof{0,1,2}.
\]
The associated parameter graph has 13,608,000,000 vertices and for the purpose of this example we focus on parameter node 5,103,162,287.
The associated rook field $\Phi\colon TP(\cX) \to \setof{0,\pm1}^5$ has seven equilibrium cells (see Definition~\ref{def:eqcell}).
Since $N=5$, we are restricted to working with $\cF_2\colon\cX\mvmap \cX$.

The associated Morse graph is shown in Figure~\ref{fig:example_5d_periodic_mg}. The computation of the Morse graph and associated connection matrix for this example took $9.15$ seconds.
Each of the nodes has a nontrivial Conley index and therefore by Theorem~\ref{thm:R2ABlattice} the Morse graph defines a Morse representation of the associated ramp system.

\begin{figure*}[!htb]
\centering
\begin{tikzpicture}
[main node/.style={circle,fill=white!20,draw},scale=2.5]
\node[main node] (5) at (0.3, 0) {$5$};
\node[main node] (3) at (0.0, 0.6) {$3$};
\node[main node] (4) at (0.6, 0.6) {$4$};
\node[main node] (2) at (0.3, 1.2) {$2$};
\node[main node] (1) at (0.3, 1.8) {$1$};
\path[thick]
(1) edge[-|, shorten <= 2pt, shorten >= 2pt, bend right] (2)
(2) edge[-|, shorten <= 2pt, shorten >= 2pt, bend right] (1)
(2) edge[-|, shorten <= 2pt, shorten >= 2pt] (3)
(2) edge[->, shorten <= 2pt, shorten >= 2pt] (4)
(3) edge[-|, shorten <= 2pt, shorten >= 2pt, bend left, in=140] (1)
(3) edge[->, shorten <= 2pt, shorten >= 2pt] (5)
(4) edge[->, shorten <= 2pt, shorten >= 2pt] (5)
(1) edge[->, loop, shorten <= 2pt, shorten >= 2pt, distance=8pt, thick, out=-35, in=35] (1)
(2) edge[->, loop, shorten <= 2pt, shorten >= 2pt, distance=8pt, thick, out=-35, in=35] (2)
(3) edge[->, loop, shorten <= 2pt, shorten >= 2pt, distance=8pt, thick, out=-35, in=35] (3)
(5) edge[->, loop, shorten <= 2pt, shorten >= 2pt, distance=8pt, thick, out=-35, in=30] (5);
\end{tikzpicture}
\caption{Five node network.}
\label{fig:network_5D}
\end{figure*}
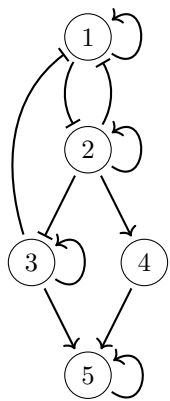

\begin{figure*}[!htb]
\centering
\includegraphics[width=0.95\textwidth]{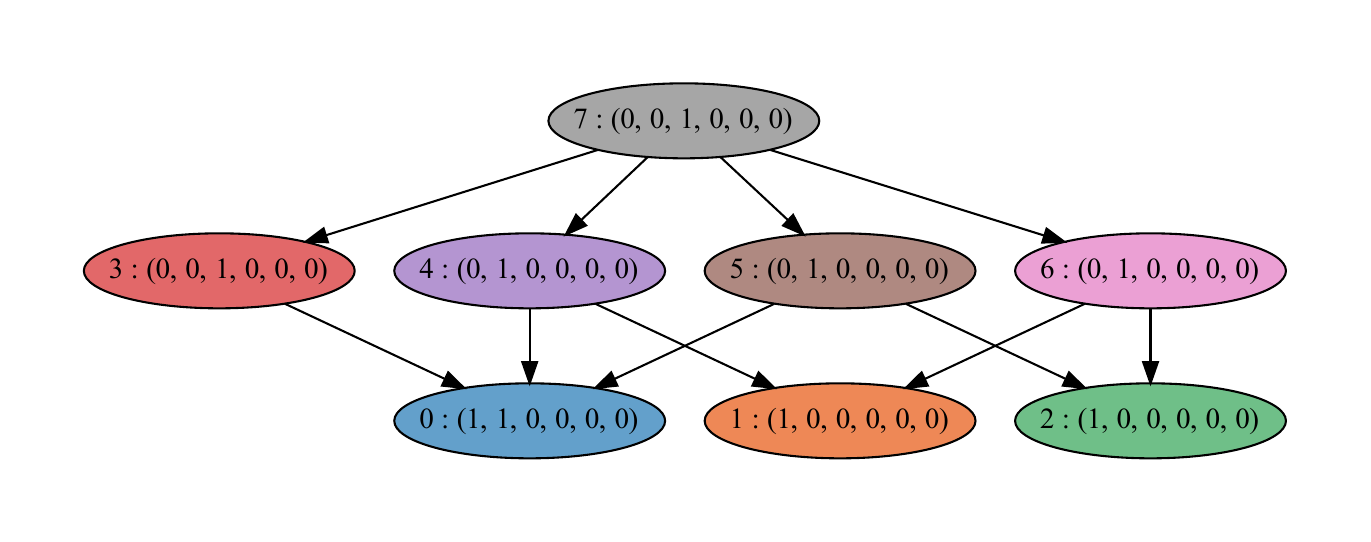}
\caption{Morse graph for the map $\cF_2$ generated by the wall labeling associated with parameter node $5,103,162,287$ of the parameter graph of the network in Figure~\ref{fig:network_5D}.}
\label{fig:example_5d_periodic_mg}
\end{figure*}

The above mentioned seven equilibrium cells are associated with the nodes $1$ through $7$ and have Conley indices associated with hyperbolic fixed points with unstable manifolds of dimension $0$ through $2$.
We draw attention to this fact to emphasize that even though it is natural to associate {\bf Condition 3.1} (and hence $\cF_3$) with identification of equilibria, in many cases   $\cF_2$ provides enough resolution to separate equilibrium cells into distinct Morse nodes. 

The Conley index of node $0$ is that of a stable periodic orbit.
Figure~\ref{fig:example_5d_periodic_scc} shows the edges of $\cF_2$ restricted to $\pi^{-1}(0)$.
The same argument presented in Example~\ref{ex:periodicOrbit3} implies the existence of a periodic orbit for any associated ramp system.

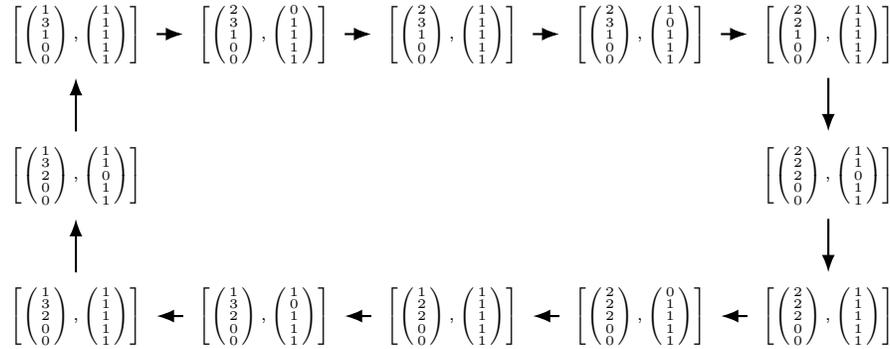
\begin{figure}[!htb]
\begin{tikzpicture}
    [main node/.style={rectangle,fill=white!20,
            font=\sffamily\tiny\bfseries},scale=2.5]
    \node[main node] (1) at (2,2) {$\defcellc{1}{2}{2}{1}{1}{1}{1}{1}$};
    \node[main node] (2) at (1,2) {$\defcellc{1}{3}{2}{1}{0}{1}{1}{1}$};
    \node[main node] (3) at (0,2) {$\defcellc{1}{3}{2}{1}{1}{1}{1}{1}$};
    \node[main node] (4) at (0,2.75) {$\defcellc{1}{3}{2}{1}{1}{0}{1}{1}$};
    \node[main node] (5) at (0,3.5) {$\defcellc{1}{3}{1}{1}{1}{1}{1}{1}$};
    \node[main node] (6) at (1,3.5) {$\defcellc{2}{3}{1}{0}{1}{1}{1}{1}$};
    \node[main node] (7) at (2,3.5) {$\defcellc{2}{3}{1}{1}{1}{1}{1}{1}$};
    \node[main node] (8) at (3,3.5) {$\defcellc{2}{3}{1}{1}{0}{1}{1}{1}$};
    \node[main node] (9) at (4,3.5) {$\defcellc{2}{2}{1}{1}{1}{1}{1}{1}$};
    \node[main node] (14) at (4,2.75) {$\defcellc{2}{2}{2}{1}{1}{0}{1}{1}$};
    \node[main node] (15) at (4,2) {$\defcellc{2}{2}{2}{1}{1}{1}{1}{1}$};
    \node[main node] (16) at (3,2) {$\defcellc{2}{2}{2}{0}{1}{1}{1}{1}$};

            \path[thick]
            (1) edge[-Latex,shorten <= 2pt, shorten >= 2pt] (2)
            (2) edge[-Latex,shorten <= 2pt, shorten >= 2pt] (3)
            (3) edge[-Latex,shorten <= 2pt, shorten >= 2pt] (4)
            (4) edge[-Latex,shorten <= 2pt, shorten >= 2pt] (5)
            (5) edge[-Latex,shorten <= 2pt, shorten >= 2pt] (6)

            (6) edge[-Latex,shorten <= 2pt, shorten >= 2pt] (7)

            (7) edge[-Latex,shorten <= 2pt, shorten >= 2pt] (8)
            (8) edge[-Latex,shorten <= 2pt, shorten >= 2pt] (9)
            (9) edge[-Latex,shorten <= 2pt, shorten >= 2pt] (14)
 
            (14) edge[-Latex,shorten <= 2pt, shorten >= 2pt] (15)
            (15) edge[-Latex,shorten <= 2pt, shorten >= 2pt] (16)
            (16) edge[-Latex,shorten <= 2pt, shorten >= 2pt] (1);
\end{tikzpicture}
\caption{Strongly connected component corresponding to node $0$ of the Morse graph in Figure~\ref{fig:example_5d_periodic_mg}.}
\label{fig:example_5d_periodic_scc}
\end{figure}
\end{ex}

\begin{ex}
\label{ex:trivial_index_3D_F2}
Let $\cF_3$ be the map generated by the parameter node 2,023,186 of the parameter graph of the network in Figure~\ref{fig:example_1_3d_network}. 
The corresponding Morse graph is shown in  Figure~\ref{fig:missing_periodic_3D}. 
The recurrent component corresponding to Morse node $3$ has $29$ cells and $4$ pairs of double edges. 
For all the other Morse nodes the corresponding recurrent components consist of a single equilibrium cell. 
For Morse node $3$ the restrictive condition on the definition of $\cF_2$ 
prevents the double edges from being resolved and hence it is not possible to separate the equilibrium cell from the remaining cells in the recurrent component. 

For this example we conjecture that $M(3)$ contains  a fixed point and a periodic orbit.

\begin{figure*}[!htb]
\centering
\begin{tikzpicture}
[main node/.style={circle,fill=white!20,draw,font=\sffamily\tiny\bfseries},scale=2.5]
\node[main node] (1) at (0,0.5) {1};
\node[main node] (2) at (0,0) {2};
\node[main node] (3) at (0.5,0) {3};
\path[thick]
(1) edge[loop, shorten <= 2pt, shorten >= 2pt, distance=10pt, thick, out=155, in=205, ->] (1)
(1) edge[-|, shorten <= 2pt, shorten >= 2pt, bend right] (2)
(2) edge[-|, shorten <= 2pt, shorten >= 2pt, bend right] (1)
(2) edge[loop, shorten <= 2pt, shorten >= 2pt, distance=10pt, thick, out=155, in=205, ->] (2)
(2) edge[-|, shorten <= 2pt, shorten >= 2pt] (3)
(3) edge[-|, shorten <= 2pt, shorten >= 2pt, bend right] (1)
(3) edge[loop, shorten <= 2pt, shorten >= 2pt, distance=10pt, thick, out=25, in=-25, ->]  (3);
\end{tikzpicture}
\caption{Three node regulatory network whose parameter graph has $3,600,000$ parameter nodes.}
\label{fig:example_1_3d_network}
\end{figure*}
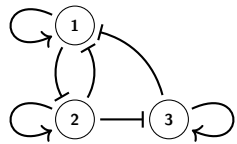

\begin{figure*}[!htb]
\centering
\includegraphics[width=0.7\textwidth]{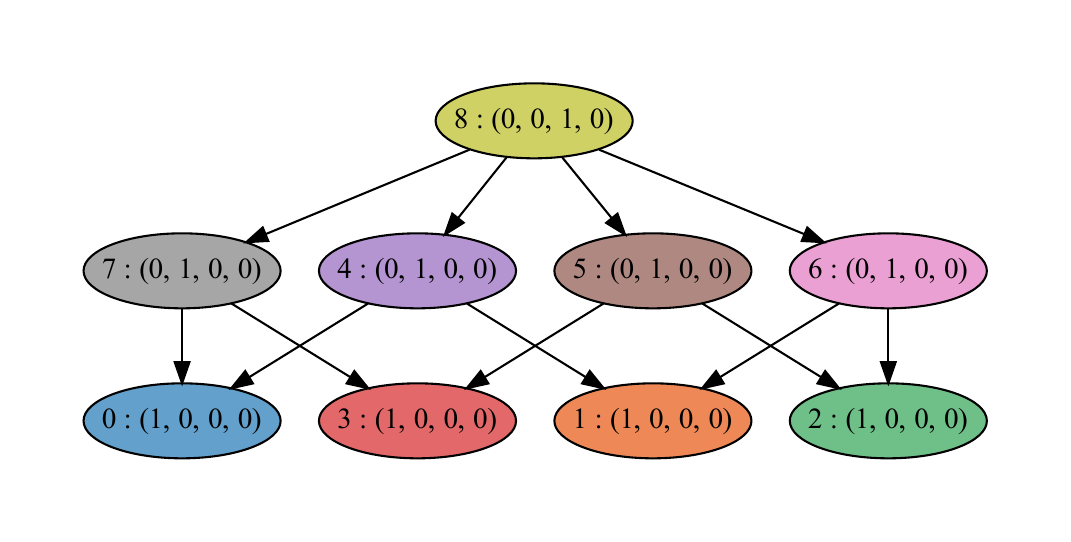}
\caption{Morse graph for node $2,023,186$ of the parameter graph of the network in Figure~\ref{fig:example_1_3d_network}.}
\label{fig:missing_periodic_3D}
\end{figure*}
\end{ex}

The  discussion in this section focuses on the identification of a periodic orbit, the simplest example of nontrivial recurrent dynamics.
Observe that the essential input to the process is a wall labeling and the results of this monograph provide a means by which combinatorial computations and analytic estimates allow conclusions about dynamics to be drawn concerning a specific family of ODEs.
We note that the work of \cite{mrozek:srzednicki:thorpe:wanner} is similar in spirit.
The starting point is the concept of a multivector field upon which similar combinatorial and homological computations are performed and for which similar conclusions can be obtained.
The setting of \cite{mrozek:srzednicki:thorpe:wanner} is more general in nature, but at the cost of weaker ties to explicit differential equations.

There are at least two challenges associated with identifying recurrent dynamics.
The first is to develop explicit algorithms that take strongly connected components of a combinatorial multivalued map $\cF$ and identify that a periodic orbit exists. 
A primary obstacle appears to be that of identifying appropriate local sections.
 
The second is to develop explicit algorithms that identify more complex dynamics.
It is well established that the machinery of the Conley index is capable of identifying chaotic dynamics \cite{mischaikow:mrozek:95a,szymczak:96,day:frongillo}.
Direct computational results in the context of flows are less well developed, but do exist \cite{mrozek:srzednicki,mrozek:srzednicki:thorpe:wanner}.

\section{Global Semi-conjugacies}
\label{sec:semiconjugacy}

The discussion of the previous two sections focused on the combinatorial and homological information can be used to prove the existence of particular trajectories; connecting orbits in Section~\ref{sec:saddlesaddlebif} and periodic orbits in Section~\ref{sec:3d_example_periodic_orbit}.
In this section we show how this information can be used to obtain statements about the global structure of the dynamics.
We begin with a particularly simple example that we use to motivate the use of semiconjugacies and conclude with a nontrivial example.

\begin{ex}
\label{ex:mccord}
Returning to Example~\ref{ex:connection_matrix_running_ex}, 
consider the regulatory network of Figure~\ref{fig:network_parameter_regions_intro} (A).
This network has a parameter graph with $1600$ nodes in the DSGRN parameter graph. 
For the parameter node $974$ DSGRN produces the wall labeling shown in Figure~\ref{fig:wall_labeling} (A).
As a consequence the map $\cF_3$ and the corresponding Morse graph and connection matrix computed in Example~\ref{ex:connection_matrix_running_ex} are valid for all parameter values $(\gamma,\nu,\theta)$ in this DSGRN parameter node.
Furthermore, Theorem~\ref{thm:R1ABlattice} implies that we have computed a Morse representation and the unique connection matrix for the global attractor $K$ of the ramp system $\dot{x} = -\Gamma x + E(x; \gamma,\nu,\theta,h)$ for all $h\in \cH_1(\gamma,\nu,\theta)$.
As a consequence a Morse representation on $K$ is
\begin{equation}
    \label{eq:MR6.4.7}
    \sMR = \setdef{M(0),M(1),M(2)}{0<2,\ 1<2}
\end{equation}
for which the associated Conley indices are
\begin{equation}
    \label{eq:CIMR6.4.7}
CH_k(0)\cong CH_k(1) \cong\begin{cases}
    \Z & \text{if $k=0$,}\\
    0 & \text{otherwise}
\end{cases} \quad\text{and}\quad
CH_k(2) \cong\begin{cases}
    \Z & \text{if $k=1$,}\\
    0 & \text{otherwise.}
\end{cases}
\end{equation}

By \cite[Theorem 1]{duncan21:ramp}, we can conclude that $M(0)$ and $M(1)$ are stable hyperbolic fixed points, and $M(2)$ is a hyperbolic fixed point with a one-dimensional unstable manifold.
The unique connection matrix implies that there exists a connecting orbit from $M(2)$ to $M(0)$ and from $M(2)$ to $M(1)$.
Since $M(2)$ has a one-dimensional unstable manifold this implies that these are the unique connecting orbits.
Furthermore, since $\sMR$ is a Morse representation we have obtained a complete description of the dynamics on $K$.

To put this discussion into a broader context let $\bar{K}$ be the geometric simplicial complex consisting of the intervals $\setof{[-1,0],[0,1]}$ and vertices ${-1,0,1}$.
Let $\psi\colon \R\times\bar{K} \to \bar{K}$ defined by $\dot{y} = y(1-y^2)$.
What we have argued above is that for any $(\gamma,\nu,\theta)\in R(974)$ and $h\in \cH_3(\gamma,\nu,\theta)$ the flow $\varphi\colon \R\times K \to K$, obtained by restricting the associated ramp system to its global attractor, is conjugate to $\psi$.
Motivated by {\bf Goal 1} we want to address the following question:
\begin{quote}
\emph{Are these results valid for a larger class of differential equations?}
\end{quote}

If we replace the ramp functions by Hill functions (see \eqref{eq:Hillrepressor}), then for $s_{i,j}$ sufficiently large each Morse set consists of hyperbolic fixed point \cite{duncan21:ramp}, and therefore, the same argument applies.
However, without additional arguments we cannot assume that this is the case for the specific values of  $s_{i,j}$ used for system \eqref{eq:Hill_2D_example_ODE}.
The perspective of this paper is that the direct identification of invariant sets, in this case Morse sets, is not in general computationally tractable.
Instead, one should focus on verifying the existence of attracting blocks as is discussed in Chapter~\ref{sec:prelude} and illustrated in Figure~\ref{fig:phase_portrait_Hill_intro}.

With this in mind we turn to \cite[Theorem 1.2]{mccord}.
In particular, given $\varphi\colon \R\times K \to K$, the Morse representation \eqref{eq:MR6.4.7}, the Conley indices \eqref{eq:CIMR6.4.7}, the uniqueness of the connection matrix and the flow $\psi\colon \R\times\bar{K} \to \bar{K}$, the hypotheses {\bf H0}-{\bf H4} of \cite[Theorem 1.2]{mccord} are satisfied and therefore, there exists a surjective semi-conjugacy $f\colon K\to \bar{K}$ from the flow $\varphi$ to the flow $\psi$ such that $f(M(0))=-1$, $f(M(1))=1$, and $f(M(2))=0$.
Note that it is not necessary to understand the structure of the Morse sets, just their homological Conley indices.
\end{ex}

\begin{ex}
\label{ex:3d_example_1}
The results of Example~\ref{ex:mccord} were included to provide intuition. 
We now repeat the same application for a nontrivial example.
Consider the network in Figure~\ref{fig:example_1_3d_network}. 
The parameter graph for this network has $3,600,000$ parameter nodes. 
Consider the map $\cF_3$ generated by the parameter node $2,472,286$. 
The Morse graph $\sMG(\cF_3)$ is presented in Figure~\ref{example_single_CM_3d_mg}. The computation of the Morse graph and the associated connection matrix for this example took $0.14$ seconds.
Observe that the Conley index associated with each node is nontrivial, 
and therefore, we have a Morse representation
\[
\sMR = \setof{M(p) \mid p=0,\ldots,12}
\]
where the partial order is that of the Morse graph.
Furthermore, each Morse set has the index of a hyperbolic fixed point and the connection matrix is unique.

\begin{figure*}[!htb]
\centering
\includegraphics[width=0.85\textwidth]{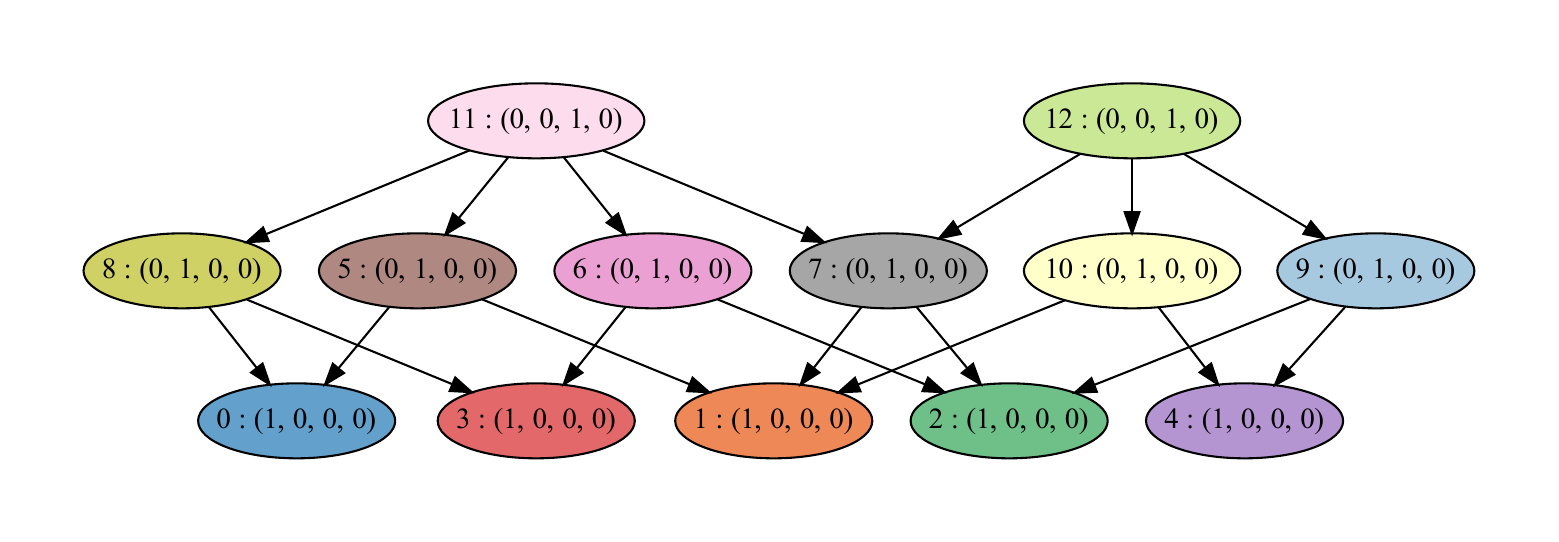}
\caption{Morse graph for the map $\cF_3$ generated by the parameter node $2,472,286$ of the parameter graph of the network in Figure~\ref{fig:example_1_3d_network}.}
\label{example_single_CM_3d_mg}
\end{figure*}

Following the prescription of \cite[Section 3]{mccord} we construct the simplicial complex and flow shown in Figure~\ref{fig:mccordflow}.
The complex $\bar{K}$ has 13 vertices, one for each Morse set.
The faces of $\bar{K}$ have the form $\langle p_i,p_j \rangle$ and $\langle p_i,p_j,p_k \rangle$ where $p_i<p_j$ and $p_i<p_j<p_k$ in the Morse graph, respectively.
The fixed points of $\psi \colon \R\times \bar{K}\to \bar{K}$ are given by the vertices.
The direction of the flow $\psi \colon \R\times \bar{K}\to \bar{K}$ along the edges is indicated by the arrows.
The flow on each two-dimensional cell is non-recurrent and compatible with the arrows on its edges. 

\begin{figure}[!htb]
\centering
\begin{tikzpicture}[scale=0.25]
\fill[gray!10!white] (0,0) -- (0,6) -- (12,12) -- (24,12) -- (24,0) -- (0,0);

\draw[step=6cm,black] (0,0) grid (24,6);
\draw (12,6) -- (12,12);
\draw (12,12) -- (18,12);
\draw (18,6) -- (18,12);
\draw (18,12) -- (24,12);
\draw (24,6) -- (24,12);

\draw (0,0) -- (12,12);
\draw (12,0) -- (24,12);
\draw (12,12) -- (24,0);
\draw (6,6) -- (12,0);
\draw (0,6) -- (12,12);

\foreach \i in {0,...,1}{
\draw [-{Latex[length=3mm]}] (6,6*\i) -- (2.5,6*\i);
}
\foreach \i in {0,...,1}{
\draw [-{Latex[length=3mm]}] (6,6*\i) -- (9.5,6*\i);
}
\foreach \i in {0,...,4}{
\draw [-{Latex[length=3mm]}] (6*\i,6) -- (6*\i,2.5);
}
\foreach \i in {0,...,2}{
\draw [-{Latex[length=3mm]}] (12+6*\i,6) -- (12+6*\i,9.5);
}
\foreach \i in {0,...,2}{
\draw [-{Latex[length=3mm]}] (18,6*\i) -- (14.5,6*\i);
}
\foreach \i in {0,...,2}{
\draw [-{Latex[length=3mm]}] (18,6*\i) -- (21.5,6*\i);
}

\draw [-{Latex[length=3mm]}] (0,6) -- (6,9);
\draw [-{Latex[length=3mm]}] (6,6) -- (2.5,2.5);
\draw [-{Latex[length=3mm]}] (6,6) -- (9.5,2.5);
\draw [-{Latex[length=3mm]}] (6,6) -- (9.5,9.5);

\draw [-{Latex[length=3mm]}] (18,6) -- (14.5,2.5);
\draw [-{Latex[length=3mm]}] (18,6) -- (14.5,9.5);
\draw [-{Latex[length=3mm]}] (18,6) -- (21.5,2.5);
\draw [-{Latex[length=3mm]}] (18,6) -- (21.5,9.5);

\foreach \i in {0,...,4}{
  \foreach \j in {0,...,1}{
    \draw[black, fill=black] (6*\i,6*\j) circle (2ex);
  }
}
\draw[black, fill=black] (12,12) circle (2ex);
\draw[black, fill=black] (18,12) circle (2ex);
\draw[black, fill=black] (24,12) circle (2ex);

\draw[blue] (-1,0) node{$4$};
\draw[blue] (25,0) node{$0$};
\draw[blue] (25,6) node{$8$};
\draw[blue] (12,-1) node{$1$};
\draw[blue] (18,-1) node{$5$};
\draw[blue] (-1,6) node{$9$};
\draw[blue] (18,13) node{$6$};
\draw[blue] (11,5) node{$7$};
\draw[blue] (6,-1) node{$10$};
\draw[blue] (25,12) node{$3$};
\draw[blue] (11,13) node{$2$};
\draw[blue] (20,7) node{$11$};
\draw[blue] (5,7) node{$12$};
\end{tikzpicture}
\caption{Two-dimensional complex $\bar{K}$ with flow $\psi \colon \R\times \bar{K}\to \bar{K}$ where labeled vertices correspond to fixed points and arrows indicated the direction of the nonrecurrent dynamics.}
\label{fig:mccordflow}
\end{figure}
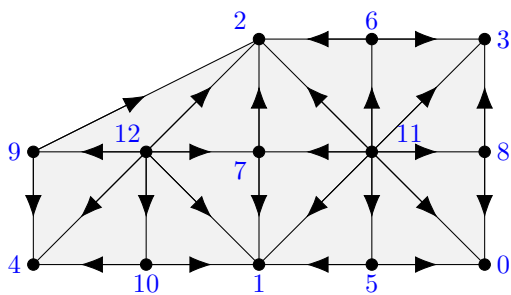

Consider a ramp system $\dot{x} = -\Gamma x + E(x;\nu,\theta,h)$ for any $h\in \cH_3(\gamma,\nu,\theta)$ where $(\gamma,\nu,\theta)\in R(\text{2,472,286})$.
Observe that this is a three-dimensional system with a $31$ dimensional parameter space.
Let $\varphi\colon \R\times K\to K$ denote the flow obtained by restricting the dynamics to its global attractor. Then \cite[Theorem 1.2]{mccord} implies that there exists a semi-conjugacy from $\varphi$ to $\psi$ for which the Morse set $M(p)$ is sent to vertex $p$.

Figure~\ref{fig:mccordflow} provides a qualitative understanding of the dynamics of the above mentioned ramp system for an unbounded set of parameter values.
It is possible that the above mentioned semi-conjugacy is in fact a conjugacy.
We can identifying equilibria for ramp systems \cite{duncan21,duncan21:ramp}.
However, beyond the existence of connecting orbits guaranteed by the semi-conjugacy, we do not know how to identify the fine structure of the connections, e.g., the topology of the set of connecting orbits from Morse set $12$ to Morse set $2$ is homeomorphic to a two-dimensional disk. 
It is worth noting that more quantitative information can be extracted from the geometric realization $\bG$, but how to employ this information effectively to prove the above mentioned fine structure remains an open question.
\end{ex}

\begin{ex}
To emphasize that the application of \cite[Theorem 1.2]{mccord} does not depend on the extrinsic dimension of the differential we consider of the six-dimensional network in Figure~\ref{fig:RN}. 
As indicated in the introduction the network in Figure~\ref{fig:RN} is a proposed regulatory network for the epithelial-to-mesenchymal transition and thus the labeling of the nodes corresponds to the associate genes and micro RNA.
The translation to the notation of this paper is as follows
\begin{align*}
\text{node 1} &\leftrightarrow \text{TGFb},\quad
\text{node 2} \leftrightarrow \text{miR200},\quad
\text{node 3} \leftrightarrow \text{Snail1}, \\
\text{node 4} &\leftrightarrow \text{Ovol2},\quad
\text{node 5} \leftrightarrow \text{Zeb1},\quad
\text{node 6} \leftrightarrow \text{miR34a}.
\end{align*}

Because this system is six-dimensional we cannot make use of $\cF_3$.
Let $\cF_2$ be the map generated by the parameter node 2,684,686,006 out of the 4,429,771,960,320 nodes in the parameter graph.
The Morse graph computed from $\cF_2$ is presented in Figure~\ref{fig:example_6d_mg} (Left). The computation of the Morse graph and the associated connection matrix for this example took $2.2$ minutes.

Consider a ramp system $\dot{x} = -\Gamma x + E(x;\nu,\theta,h)$ for any $h\in \cH_2(\gamma,\nu,\theta)$ where $(\gamma,\nu,\theta)\in R(\text{2,684,686,006})$.
This is a six-dimensional system with a 58 dimensional parameter space.
Let $\varphi\colon \R\times K\to K$ denote the flow obtained by restricting the dynamics to its global attractor.
We leave it to the reader to check that the hypotheses of \cite[Theorem 1.2]{mccord} are satisfied and therefore there exists a semi-conjugacy from $\varphi$ to $\psi$ where the latter is described in Figure~\ref{fig:example_6d_mg} (Right).

\begin{figure*}[!htb]
\centering
\includegraphics[width=0.7\textwidth]{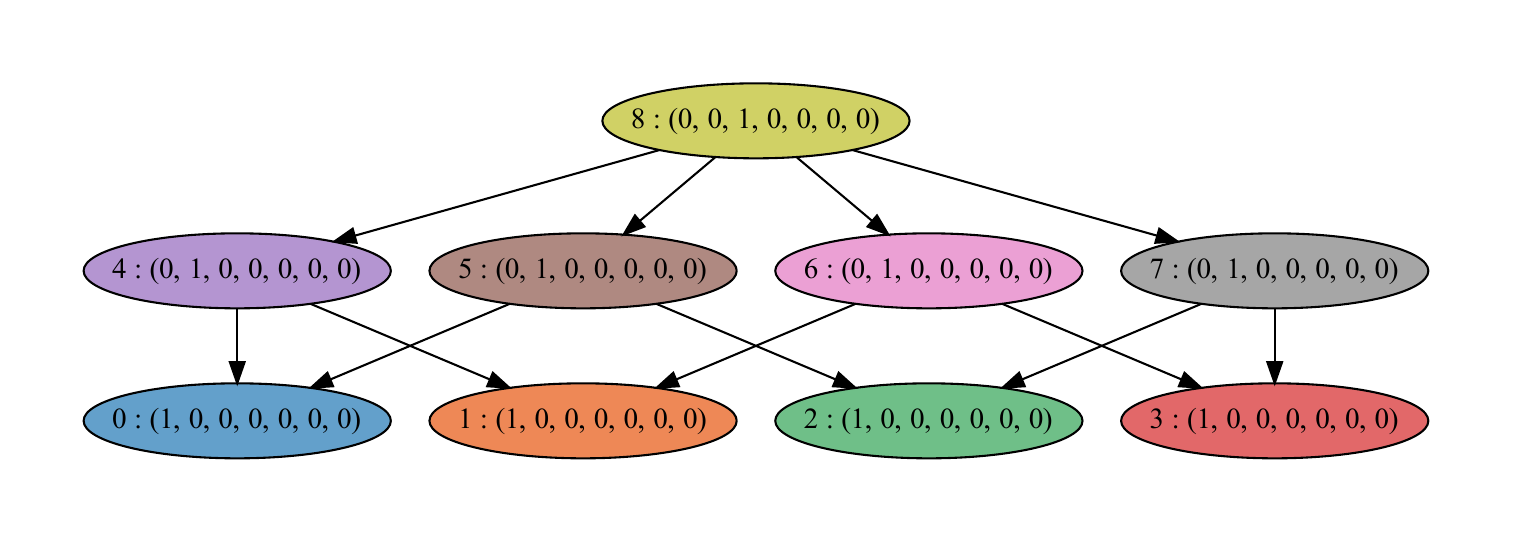}
\begin{tikzpicture}[scale=0.2]
\fill[gray!10!white] (12,0) -- (12,12) -- (24,12) -- (24,0) -- (12,0);

\draw[step=6cm,black] (12,0) grid (24,12);

\draw (12,0) -- (24,12);
\draw (12,12) -- (24,0);

\foreach \i in {2,...,4}{
\draw [-{Latex[length=3mm]}] (6*\i,6) -- (6*\i,2.5);
}
\foreach \i in {0,...,2}{
\draw [-{Latex[length=3mm]}] (12+6*\i,6) -- (12+6*\i,9.5);
}
\foreach \i in {0,...,2}{
\draw [-{Latex[length=3mm]}] (18,6*\i) -- (14.5,6*\i);
}
\foreach \i in {0,...,2}{
\draw [-{Latex[length=3mm]}] (18,6*\i) -- (21.5,6*\i);
}

\draw [-{Latex[length=3mm]}] (18,6) -- (14.5,2.5);
\draw [-{Latex[length=3mm]}] (18,6) -- (14.5,9.5);
\draw [-{Latex[length=3mm]}] (18,6) -- (21.5,2.5);
\draw [-{Latex[length=3mm]}] (18,6) -- (21.5,9.5);

\foreach \i in {2,...,4}{
  \foreach \j in {0,...,1}{
    \draw[black, fill=black] (6*\i,6*\j) circle (2ex);
  }
}
\draw[black, fill=black] (12,12) circle (2ex);
\draw[black, fill=black] (18,12) circle (2ex);
\draw[black, fill=black] (24,12) circle (2ex);

\draw[blue] (25,0) node{$0$};
\draw[blue] (25,6) node{$5$};
\draw[blue] (12,-1) node{$1$};
\draw[blue] (18,-1) node{$4$};
\draw[blue] (18,13) node{$7$};
\draw[blue] (11,5) node{$6$};
\draw[blue] (25,12) node{$2$};
\draw[blue] (11,13) node{$3$};
\draw[blue] (20,7) node{$8$};
\end{tikzpicture}
\caption{(Left) Morse graph for the map $\cF_2$ generated by the parameter node 2,684,686,006 of the parameter graph of the network in Figure~\ref{fig:RN}.
(Right) Two-dimensional complex $\bar{K}$ with flow $\psi \colon \R\times \bar{K}\to \bar{K}$ where labeled vertices correspond to fixed points and arrows indicated the direction of the nonrecurrent dynamics.}
\label{fig:example_6d_mg}
\end{figure*}

To emphasize the power of \cite[Theorem 1.2]{mccord} we note that thought the Conley indices of the Morse sets are those of hyperbolic fixed points, some of the Morse nodes consist of multiple cells.
In particular, Morse nodes 4, 5, 6, and 7 each consist of 5 cells, while  Morse node 8 consists of 35 cells.
This suggests the possibility that the associated Morse sets contain more complicated dynamics than just that of a fixed point, and  that  potentially the dynamics within the Morse set is sensitive to specific choices of parameter values for the ramp system.
Nevertheless, the semi-conjugacy provides us with a rigorous  understanding of the global dynamics.
\end{ex}

\section{Morse graphs that are not Morse representations}
\label{sec:notMorseRep}

In general we cannot expect to have as succinct an understanding of the global dynamics as presented in Section~\ref{sec:semiconjugacy}.
There are two challenges.
The first is that representation of the dynamics of \cite[Theorem 1.2]{mccord} is restricted a gradient-like system consisting of fixed points and connecting orbits.
Perhaps the work of this manuscript will provide motivation to extend \cite[Theorem 1.2]{mccord}.
The second is more fundamental and is associated with nodes in the Morse graph that have trivial Conley index; the fact that a node has been identified via the combinatorial computations suggests that recurrent dynamics may occur, but the fact that the Conley index is trivial does not provide us with any guarantee that such recurrence occurs.
Stated using the language of this monograph, the Morse graph computed using the techniques developed in Part~\ref{part:II} defines a Morse decomposition but not a Morse representation for the associated ramp system.

The examples in this section are meant to provide insight with respect to this second challenge.

\begin{ex}
\label{ex:trivial_index_2D}

We consider again the network in Figure~\ref{fig:example_2_2d_network}. Let $\cF_3$ be the map generated by the parameter node $47$. 
The associated Morse graph $\sMG(\cF_3)$ with Conley indices is given in Figure~\ref{fig:example_trivial_index_2D}. 
Morse nodes 0, 1, and 3 have nontrivial Conley indices and thus correspond to nontrivial invariant sets. 
In fact, by Theorem~\ref{prop:equilibrium-existence} the associate Morse sets $M(i)$, $i=0,1,3$ are fixed points.
One can check that for a particular choice of parameter values the equilibrium at $M(0)$ has complex eigenvalues and thus trajectories that converge to $M(0)$ do so by spiralling inward.
This oscillation is captured by $\cF_3 \colon \cX\mvmap \cX$ and results in Morse node 2, which lies below node 3 and above node 0.
By Poincar\'{e}-Bendixson Theorem, if there is a connecting orbit from $M(3)$ to $M(0)$ then $M(2) = \emptyset$ (this happens for ramp systems at tested parameter values), but without further analysis we cannot conclude that $M(2) = \emptyset$ for all parameter values in $R(47)$.

\begin{figure*}[!htb]
\centering
\includegraphics[width=0.45\textwidth]{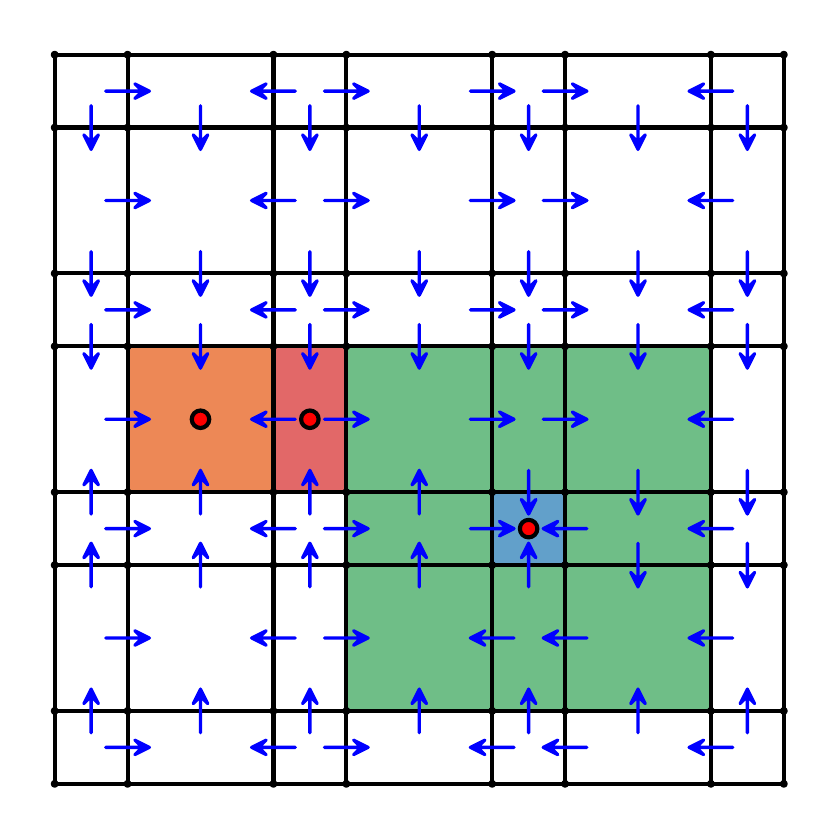}
\includegraphics[width=0.45\textwidth]{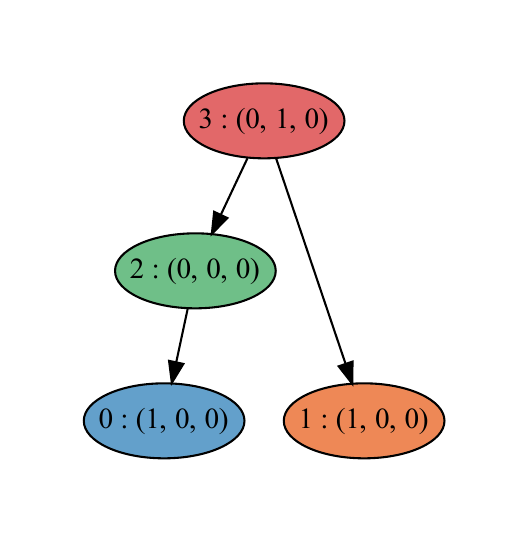}
\caption{Directed graph $\cF_3$ superimposed on $\cX_b$ and the corresponding Morse graph for node $47$ of the parameter graph of the network in Figure~\ref{fig:example_2_2d_network}.}
\label{fig:example_trivial_index_2D}
\end{figure*}
\end{ex}

\begin{ex}
\label{ex:trivial_index_3D_F2}
Let $\cF_3$ be the map generated by node 65,571,607,721 in the parameter graph of the network in Figure~\ref{fig:3d_example_1RN}. The corresponding Morse graph is shown in Figure~\ref{fig:trivial_index_3D_F2}. The map $\cF_3$ has three equilibrium cells. Each of the recurrent components corresponding to nodes $0$, $1$, and $2$ consists of a single equilibrium cell of $\cF_3$. The recurrent component corresponding to Morse node $3$ has $61$ cells and $12$ pairs of double edges.
For this example we conjecture that $M(3) = \emptyset$.
However, as with the previous example, we cannot conclude that $M(3) = \emptyset$ without further analysis.

\begin{figure*}[!htb]
\centering
\includegraphics[width=0.45\textwidth]{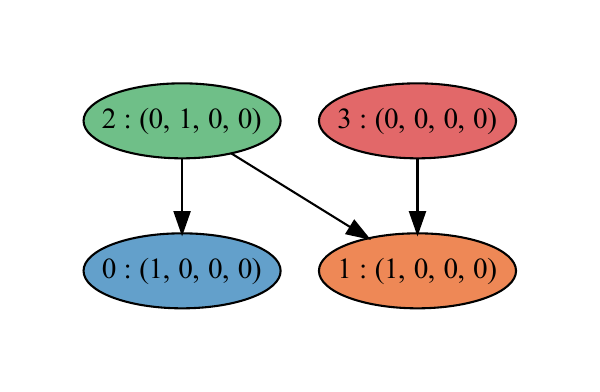}
\caption{Morse graph for node 65,571,607,721 of the parameter graph of the network in Figure~\ref{fig:3d_example_1RN}.}
\label{fig:trivial_index_3D_F2}
\end{figure*}
\end{ex}

\begin{ex}
\label{ex:trivial_indices}
We return to the three-node network illustrated in Figure~\ref{fig:3d_example_1RN}. As indicated in Chapter~\ref{sec:intro} the associated parameter graph has 87,280,405,632 nodes. Let $\cF_3$ be the map generated by the parameter node 52,717,613,010. 
The Morse graph computed from $\cF_3$ is presented in Figure~\ref{fig:example_2_3d_mg}. The computation of the Morse graph and the associated connection matrix for this example took $0.2$ seconds. For this system there are $8$ possible connection matrices.

Consider a ramp system $\dot{x} = -\Gamma x + E(x;\nu,\theta,h)$ for some $h \in \cH_3(\gamma,\nu,\theta)$ where $(\gamma,\nu,\theta)\in R(\text{52,717,613,010})$.
Since the Conley indices for nodes $p \in \setof{10, 11, 13, 18, 27}$ are trivial, we cannot conclude that the associated invariant set $M(p) \neq \emptyset$.
It is possible that the answer depends on the choice of parameter $(\gamma, \nu, \theta)$ within $R(\text{52,717,613,010})$ and $h \in \cH_3(\gamma, \nu, \theta)$.

The recurrent components corresponding to Morse nodes $10$, $11$, $13$, $18$, and $27$ contain $44$, $42$, $42$, $14$, and $14$ cells, respectively. The recurrent components corresponding to nodes $10$, $11$, and $13$ contain double edges that were not resolved by $\cF_2$ as in Example~\ref{ex:trivial_index_3D_F2}. The recurrent components corresponding to nodes $18$ and $27$ do not contain double edges as in Example~\ref{ex:trivial_index_2D}.

\begin{figure*}[!htb]
\centering
\includegraphics[width=1.0\textwidth]{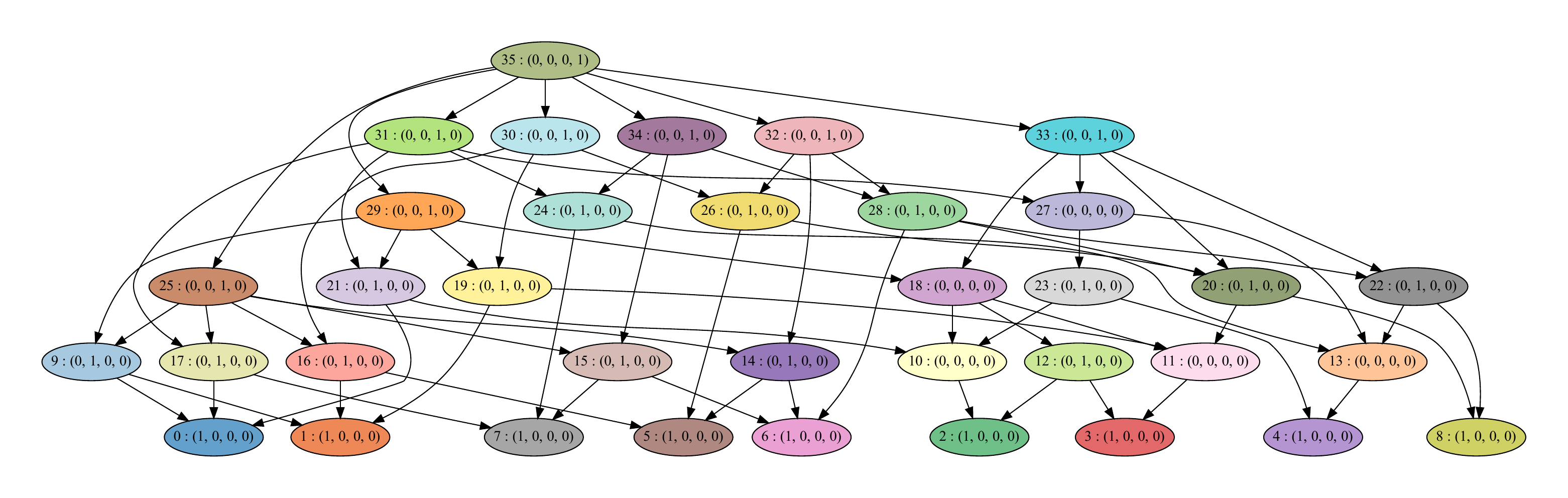}
\caption{Morse graph for the map $\cF_3$ generated by the parameter node 52,717,613,010 of the parameter graph of the network in Figure~\ref{fig:3d_example_1RN}.}
\label{fig:example_2_3d_mg}
\end{figure*}
\end{ex}

\begin{ex}
We again consider the network in Figure~\ref{fig:RN}. 
Let $\cF_2$ be the map generated by the parameter node 1,739,757,491,101.
The Morse graph computed from $\cF_2$ is presented in Figure~\ref{fig:example_6d_mg_2}. The computation of the Morse graph and the associated connection matrix for this example took $2.5$ minutes.

Consider a ramp system $\dot{x} = -\Gamma x + E(x;\nu,\theta,h)$ for some $h \in \cH_2(\gamma,\nu,\theta)$ where $(\gamma,\nu,\theta)\in R(\text{1,739,757,491,101})$.
Since the Conley indices for nodes $p \in \setof{15, 16, 17, 18, 21, 22, 23, 24}$ are trivial, we cannot conclude that the associated invariant set $M(p) \neq \emptyset$. For the remaining nodes $p \not\in \setof{15, 16, 17, 18, 21, 22, 23, 24}$, the Conley indices imply that $M(p)$ contains a fixed point.

\begin{figure*}[!htb]
\centering
\includegraphics[width=1.0\textwidth]{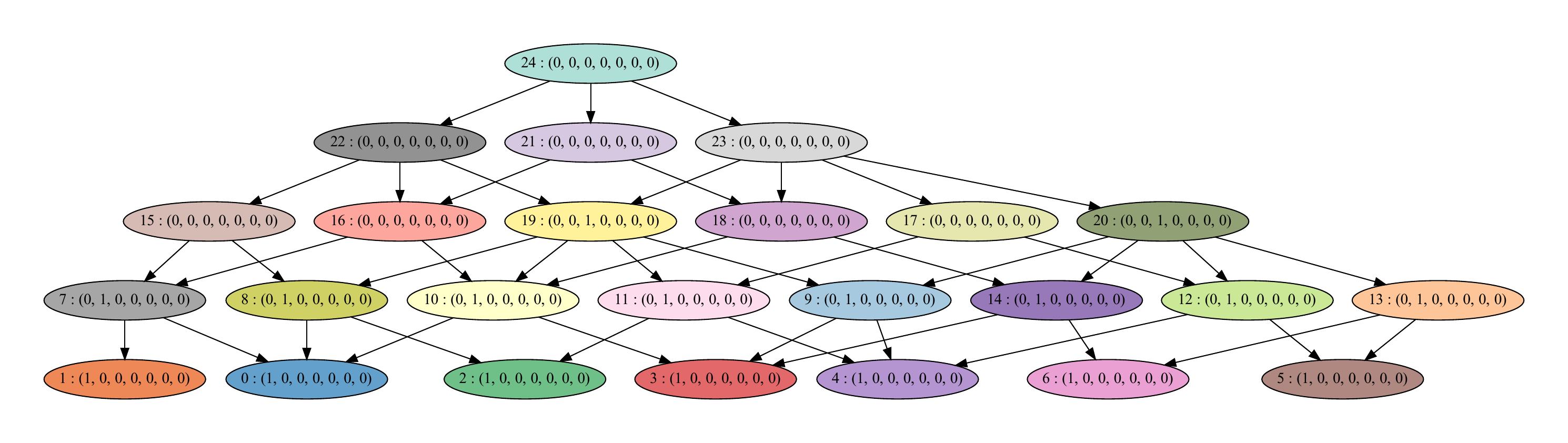}
\caption{Morse graph for the map $\cF_2$ generated by the parameter node 1,739,757,491,101 of the parameter graph of the network in Figure~\ref{fig:RN}.}
\label{fig:example_6d_mg_2}
\end{figure*}
\end{ex}

\section{Systems which are not DSGRN derived ramp systems}
\label{sec:NotDSGRN}

All the computations performed in the previous sections were done using DSGRN.
Recall that DSGRN takes a regulatory network and generates a parameter graph with the property that each node in the parameter graph gives rise to a unique wall labeling. 
Section~\ref{sec:ramp2rook} provides a mapping from  ramp systems to wall labelings.
However, the only ramp systems that provide wall labelings that are compatible with the current implementation of DSGRN are ramp systems for which each ramp function is to type $J=1$ (see Definition~\ref{defn:rampfunction}).
In this section we demonstrate that it is possible to perform computations for more general ramp systems.

\begin{ex}
\label{ex:ramp_van_der_pol}
To illustrate the fact that our method can be applied to ramp systems not derived from DSGRN (ramp functions with more than one ramp), we consider the ramp system
\begin{equation}
\label{eq:van_der_pol_ramp_system}
\begin{aligned}
\dot{x_1} & = - \gamma_1 x_1 + r_{1,1}(x_1) + r_{1,2}(x_2) \\
\dot{x_2} & = - \gamma_2 x_2 + r_{2,1}(x_1) + r_{2,2}(x_2)
\end{aligned}
\end{equation}
where $\gamma_1 = \gamma_2 = 2$ and $r_{i,j}$ are ramp functions with parameter values given in Table~\ref{tab:parameters_van_der_pol_ramp_system}. Plots of $r_{i, j}$ are shown in Figure~\ref{fig:van_der_pol_plots}. Notice that $r_{1,1}$ is cubic like and the other ramp functions are monotone with multiple ramps. The multivalued map $\cF_3$ and the corresponding Morse graph for \eqref{eq:van_der_pol_ramp_system} are shown in Figure~\ref{fig:stg_mg_van_der_pol}. The computation of the Morse graph and the associated connection matrix for this example took $0.03$ seconds.

\begin{table}[!htpb]
\centering
\renewcommand{\arraystretch}{1.2}
\begin{tabular}{@{}lll@{}}
\toprule
$\nu_{1,1} = (7, 3, 7, 3)$      & $\theta_{1,1} = (3, 5.1, 7.17)$  & $h_{1,1} = (0.15, 0.15, 0.15)$ \\
$\nu_{1,2} = (3.1, 4.9, 7.3)$   & $\theta_{1,2} = (5.3, 6.1)$      & $h_{1,2} = (0.11, 0.11)$       \\
$\nu_{2,1} = (7.5, 5.5, 4)$     & $\theta_{2,1} = (4, 5.5)$        & $h_{2,1} = (0.15, 0.15)$       \\
$\nu_{2,2} = (1.4, 5, 7.55, 9)$ & $\theta_{2,2} = (4, 5.85, 7.55)$ & $h_{2,2} = (0.11, 0.11, 0.11)$ \\
$\gamma_1 = \gamma_2 = 2$ \\
\bottomrule
\end{tabular}
\caption{Parameters values for \eqref{eq:van_der_pol_ramp_system}.}
\label{tab:parameters_van_der_pol_ramp_system}
\end{table}

\begin{figure*}[!htpb]
\centering
\includegraphics[width=0.45\textwidth]{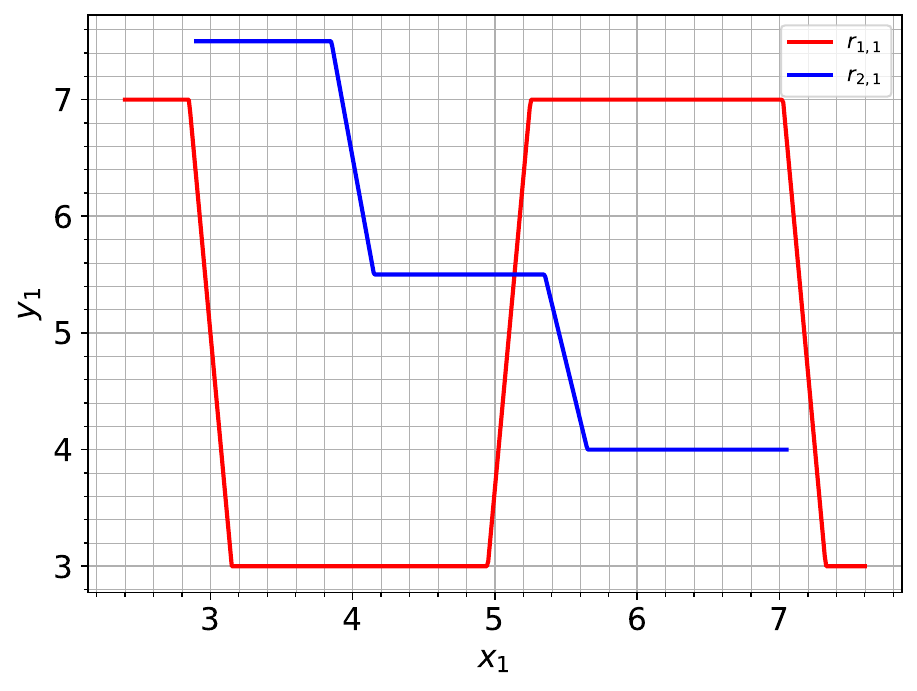}
\includegraphics[width=0.45\textwidth]{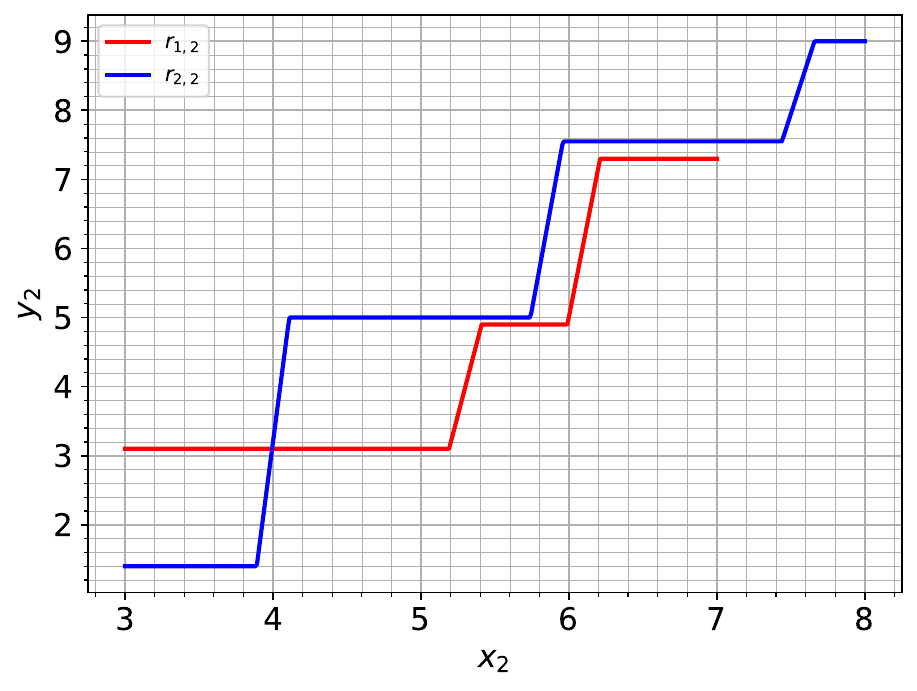}
\caption{Plots of $r_{i, j}$ from \eqref{eq:van_der_pol_ramp_system} with parameter values given in Table~\ref{tab:parameters_van_der_pol_ramp_system}.}
\label{fig:van_der_pol_plots}
\end{figure*}

\begin{figure*}[!htb]
\centering
\includegraphics[width=0.65\textwidth]{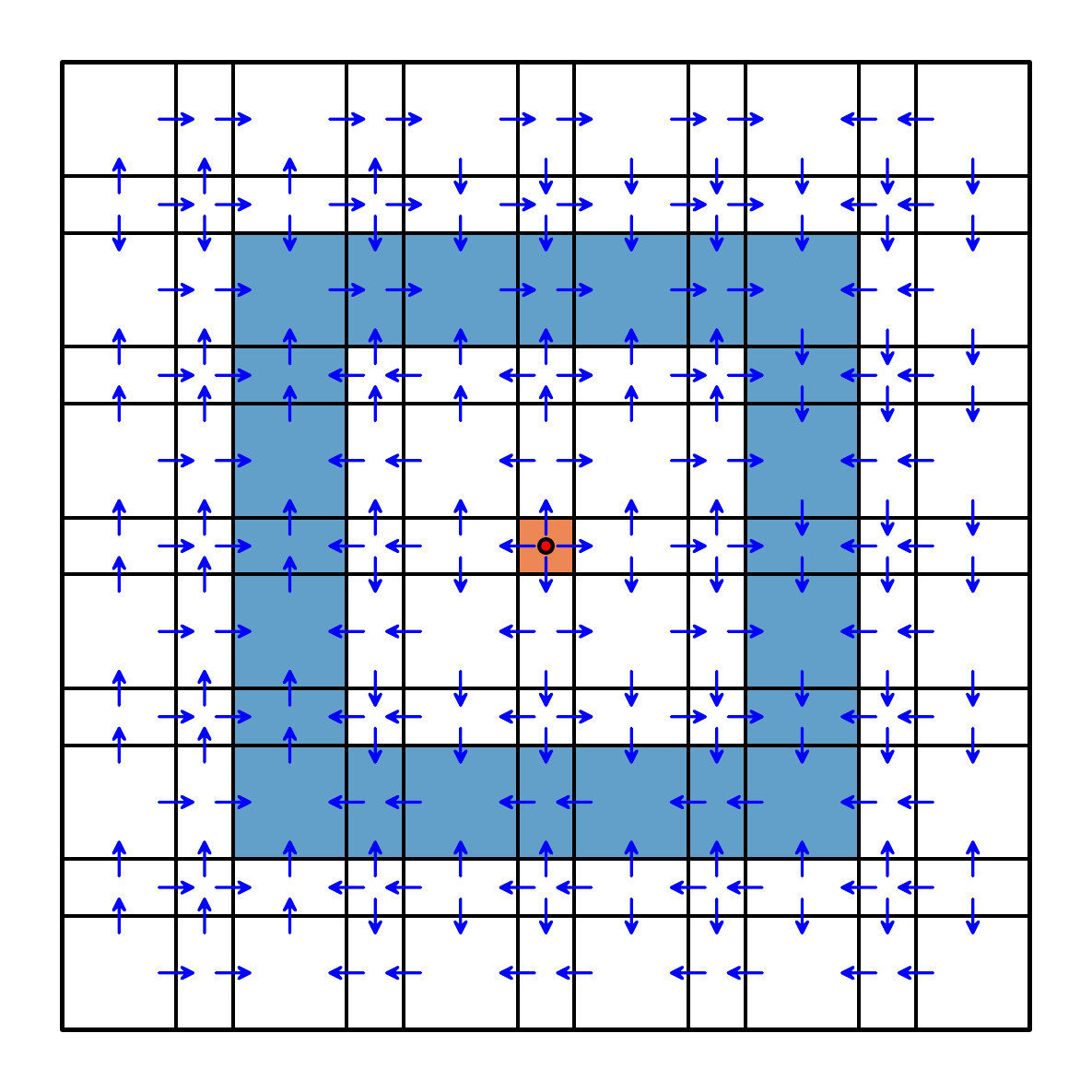}
\includegraphics[width=0.3\textwidth]{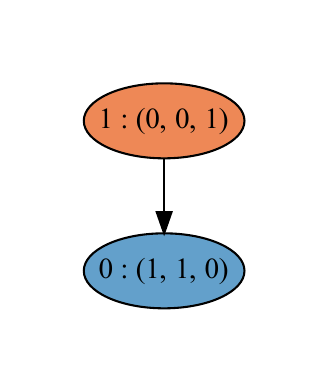}
\caption{Multivalued map $\cF_3$ and Morse sets (left) and the corresponding Morse graph (right) for \eqref{eq:van_der_pol_ramp_system}.}
\label{fig:stg_mg_van_der_pol}
\end{figure*}
\end{ex}

\section{Dynamics beyond ramp systems}
\label{sec:beyondRamp}

We chose ramp systems for this monograph in order to simplify the necessary analysis for Part~\ref{part:III}.
As shown in the previous examples of this Chapter, the results of Part~\ref{part:III} allow us to move directly from the combinatorial/homological computations described in Part~\ref{part:II} to conclusions about the dynamics of ramp system differential equations.
However, we claim that the computations provide insight to a much broader class of ODEs, e.g., systems where the nonlinearities $r_{i,j}$ are replaced by piecewise monotone functions.
Any formal justification for this claim greatly exceeds the scope of this monograph.
Nevertheless as the following simple example shows, we believe that  the constructive proofs of geometrizations of Chapters~\ref{sec:R0}, \ref{sec:R1Dynamics}, \ref{sec:R2Dynamics}, and \ref{sec:R3Dynamics} can be used to suggest geometrizations that are aligned with the vector fields of other ODEs.

\begin{ex}
\label{ex:basic2DODE}
The nonlinearities for the system of ODEs given by \eqref{eq:Hill_2D_example_ODE} are based on Hill functions that are a typical modeling choice for systems biology \cite{Somvanshi2013}.
The $H^-_{i,j}$ are monotone nonlinearities and furthermore there is a easy identification between the parameter values of the Hill system and the ramp system.
Thus to characterize the dynamics of \eqref{eq:Hill_2D_example_ODE} we choose a ramp system based on the regulatory network of Figure~\ref{fig:network_parameter_regions_intro}(A) with parameter values motivated by those of Table~\ref{tab:parameters_intro1}.
Since this is a two-dimensional system we can use the combinatorial model given by $\cF_3$ to identify the Morse graph and the connection matrix.
We choose a particularly simple geometrization for the \eqref{eq:Hill_2D_example_ODE} that is shown explicitly in Figure~\ref{fig:phase_portrait_Hill_intro}:
the rectangular cells come from the cells of the rectangular geometrization used for $\cF_3$; and the non-rectangular cells  are built by using the simplest piecewise linear approximations to the manifolds identified in Chapter~\ref{sec:R3Dynamics}.
Numerically, this geometrization is aligned with the vector field \eqref{eq:Hill_2D_example_ODE} for all $\cN\in \sN(\cF_3)$.
This in turn implies that we have produced a lattice of attracting blocks for \eqref{eq:Hill_2D_example_ODE}, and thus the Morse graph and connection matrix are valid for \eqref{eq:Hill_2D_example_ODE}.
\end{ex}

\chapter{Future Directions and Concluding Remarks}
\label{sec:futurework}

As is emphasized in Chapter~\ref{sec:intro} we view this monograph as providing a novel, computationally feasible approach to the analysis of the global dynamics of ODEs. 
Furthermore, we believe that the examples presented in Chapter~\ref{sec:examples} affirm the potential of this approach.
However, there are limitations to the results presented here and we take this final chapter to discuss the opportunities and challenges  to extending this work.

The most obvious restriction is that the multivalued map $\cF_3$, which addresses the dynamics near equilibria, is only developed in dimensions two and three.
As discussed in Section~\ref{sec:equilibria} we believe that the function $\lap_{\xi,\sigma}\colon \Top_\cX(\xi) \to \mathbb{N}$ (see Definition~\ref{defn:lap-number}) provides the necessary information to define $\cF_3$ for general dimensions.
However, at the time of this writing the proofs of transversality have not yet been obtained.

Because of their subtlety we leave discussion and investigation of  double edges (even in dimension $N=3$) that remain for all $\cF_i$, $i=1,2,3$ to future work.

On a more positive note, Section~\ref{sec:alternativeNonlinearities} discusses the opportunity to use the constructions of this paper to address systems of ODEs involving nonlinearities different from the ramp systems, e.g., the example of presented in Chapter~\ref{sec:intro} involving  Hill functions.
However, there are important caveats and we use the classical van der Pol equation to demonstrate the associated challenges and discuss potential solutions.

The focus of this monograph is identifying dynamics of an ODE at a fixed parameter value.
However, as is discussed in Chapter~\ref{sec:DSGRN} the combinatorial and homological computations are constant over open regions of parameter space that are explicitly defined and organized via the parameter graph.
This raises the possibility of extending the machinery of this paper to understand the dynamics over the entire high dimensional parameter space.

\section{Equilibria and Lap Number}
\label{sec:equilibria}

Motivation for construction of the map $\cF_3$ comes from theory of linear cyclic feedback systems~\cite{gedeon:mischaikow:95,gedeon:memoirs}. 
The dynamics of the ramp system in equilibrium cells $\xi \in \cX$  with $o_\xi = \sigma$ consisting of a single cycle, is governed, after a change of variables, by a linear system 
\begin{equation}\label{cfs}
    \dot{x}_j = -\gamma_j x_j + b_{jk} x_k, \qquad j=\sigma(k),
\end{equation} 
where $0$ corresponds to the equilibrium $p \in \bg(b(\xi))$ of the ramp system and $j \in S_\sigma$. For a fixed set of parameters $(\gamma,\nu,\theta) \in \cH_3(\gamma,\nu,\theta)$, one can show~\cite{gedeon:memoirs} that the eigenvalues of this linear system have stereotypical structure when the total gain
\[ \Pi_{i\in S} b_{o_\sigma(j)j}\]
is sufficiently large, that is, when $h \in \cH_3(\gamma,\nu,\theta)$ since the total gain is given by our choice of $h$. 

In this framework, the position of stable and unstable linear subspaces can be characterized by an integer-valued lap number which is assigned to orthants of $\mathbb{R}^N$ and extended, whenever possible, by continuity to hyperplanes in between them. 
In our context, that characterization is captured by $\lap_{\xi,\sigma}(\mu)$ (see Definition~\ref{defn:lap-number}), which assigns to each top cell $\mu\in \Top_\cX(\xi)$ the corresponding value that it would associate to the orthant. 

Importantly, the unstable (linear) manifold of the fixed point $0$ in (\ref{cfs}) is, for sufficiently large gain, contained in a set of orthants of $\mathbb{R}^N$ characterized by a low lap number. 
In terms of our ramp systems, the local unstable manifold is contained in the geometrization of a subset of cells
\[ W^u(p) \subseteq \bg(\blup(\xi)) \cup \bigcup_{\zeta \in U(\xi)} \bg(b(\zeta)) .\] 
This observation is used to define the map $\cF_3$ in dimensions $N=2,3$. 

While the cyclic feedback systems~\cite{gedeon:mischaikow:95,gedeon:memoirs}
describe in detail the behavior of the systems around an equilibrium when $o_\xi=\sigma$ is a single cycle, the lap number $\lap_{\xi,\sigma}(\mu)$ was defined in Definition~\ref{defn:lap-number} for any cycle $\sigma$ in the decomposition $o_\xi =\sigma_{\xi,1} \ldots \sigma_{\xi,r}$ of an equilibrium cell. 
For such equilibrium cells $\xi$, a top cell $\mu \in \Top_\cX(\xi)$ will be assigned $r$ different lap numbers. This collection of lap numbers still contains the information about the linear dynamics of the ramp system in the neighborhood of $\xi$. If all lap numbers $\lap_{\xi,\sigma_{\xi,j}}(\mu) <\frac{k_j}{2}$ assigned to $\mu \in T(\xi)$ are below the cycle-length dependent threshold $\frac{k_j}{2}$, then this indicates that the unstable manifold $W^u(0)$ of equilibrium $0$ in (\ref{cfs}) intersects the orthant with same relative position as $\mu$ and the linear flow of (\ref{cfs}) between cells corresponding to $\xi$ and $\mu$ generally goes from $\bg(b(\xi))$ to $\bg(b(\mu))$. If all lap numbers satisfy $\lap_{\xi,\sigma_{\xi,j}}(\mu) \geq \frac{k_j}{2}$, then the linear flow of (\ref{cfs}) between cells corresponding to $\xi$ and $\mu$ generally goes from $\bg(b(\mu))$ to $\bg(\b(\xi))$. For all the other cells $\mu \in T(\xi)$ for which lap numbers associated to some cycles are less than the threshold, and some that are larger than the threshold, the dynamics is "saddle-like". For such cells some choices of geometrizations of $\Janus$ will lead to sections where the flow leaves the neighborhood of $\xi$ and other choices to sections where flow is entering the neighborhood of $\xi$. 
While we do not expect a conceptual difficulty in extension of $\cF_3$ to dimension $N>4$, since  $o_\xi =\sigma_{\xi,1} \ldots \sigma_{\xi,r}$ may  have two or more cycles $\sigma_{\xi,j} $ of length $k_j \geq 2$, the construction of a geometrization $\bG_3$ aligned with the flow is technically challenging and thus left as future work.

\section{Alternative Nonlinear Systems}
\label{sec:alternativeNonlinearities}

This monograph grew out of the need to give mathematical justification that the computations of the DSGRN software was accurately representing the dynamics of nonlinear ODEs that are reasonable models for gene regulatory networks that arise is applications associated with systems biology.
As described in Section~\ref{sec:DSGRN} the input for DSGRN is a regulatory network and the output is a parameter graph and to each node the assignment of a Conley complex.
The process of arriving at the Conley complex is the content of Part~\ref{part:II}, none of which is dependent on the choice of an ODE.

There are two reasons for the choice of ramp systems in Part~\ref{part:III}.
The first, which we return to below, is to match the form of models associated with gene regulatory networks:
\begin{equation}
\label{eq:geneRE}
    \dot{x}_n = -\gamma x_n + F_n(x)
\end{equation}
where $x_n$ can be thought of the quantity of  protein produced by gene $n$, $\gamma >0$ is the decay rate of this protein, and $F_n$ is a nonlinear function indicating how the proteins associated with other genes affects the rate of production of $x_n$.
The second is that the piecewise linear nature of the ramp functions allows for an exact description of the dynamics near equilibria.
 
The most important feature of Conley theory is the following (see \cite{conley:cbms,franzosa:89}).
Let $N$ be an  isolating neighborhood for a Lipschitz continuous differential equation $\dot{x} = f(x)$ on $R^N$. 
Let $\sJ^\vee(\sK)$ be a Morse decomposition of $\sInvset(N,f)$ with connection matrix 
\[
\Delta \colon \bigoplus_{K\in\sJ^\vee(\sK)} CH_*(K;\F) \to \bigoplus_{K\in\sJ^\vee(\sK)} CH_*(K;\F).
\]
Then, there exists $\epsilon >0$ such that if $g$ is Lipschitz continuous and $\sup_{x\in S}\| f(x)-g(x)\| <\epsilon$, then $\sJ^\vee(\sK)$ is a Morse decomposition of $\sInvset(N,g)$ with connection matrix 
\[
\Delta \colon \bigoplus_{K\in\sJ^\vee(\sK)} CH_*(K;\F) \to \bigoplus_{K\in\sJ^\vee(\sK)} CH_*(K;\F).
\]
In other words, the gradient-like structure and the homological information is preserved for sufficiently small perturbations.

The implication for this work is as follows.
Given a ramp system $\dot{x} = -\Gamma x + E(x;\nu,\theta,h)$ we can compute a Morse decomposition and connection matrix.
Assume that $F$ is a nonlinearity that captures the nonlinear interactions associated with gene regulation (in particular we are assuming that we have identified the appropriate regulatory network -- this is not a given in most biological settings).
Since gene regulation is a multiscale phenomenon it is impossible to derive $F_n$ from first principles. 
In applications explicit choices of the nonlinearity are typical made based on heuristics and thus it is reasonable to assume that $F = (F_1,\ldots, F_n)$ is not known.
However, if $\|F-E\|$ is sufficiently small, then we have characterized the dynamics of the associated biology.

Typically the statement ``if $\|F-E\|$ is sufficiently small'' appears  to be a rather strong assumption.
However, via the DSGRN parameter graph (see \cite{cummins:gedeon:harker:mischaikow:mok,kepley:mischaikow:zhang} for more detail) we can determine the complete set of wall labelings.\footnote{For sparse regulatory networks, e.g., less than five incoming edges per node, this is pre-computed and stored in DSGRN.} 
Thus, we know what finite set of computations identify  Morse graphs and connection matrices of the ramp systems over all of parameter space.
Thus, if the nonlinearity of \eqref{eq:geneRE} can be approximated by type I or type II ramp functions $E$ (see Definition~\ref{defn:rampnonlinearity}) then our approach provides a characterization of the global dynamics.

We hasten to remind the reader that if $F$ is specified, then there are at least two alternative directions that could be pursued.
First, as described in Section~\ref{sec:beyondRamp} we believe that this framework provides a potentially new approach to approximating and characterizing the dynamics of \eqref{eq:geneRE}.
Second, assume for the sake of simplicity that $F$ is constructed using Hill functions.
In principle one could attempt to repeat the estimates and constructions of Part~\ref{part:III} where $r^\pm$ is replaced by $H^\pm$. 
This appears to be a challenging project since even the  identification of optimal rectangular geometrizations as an explicit function of parameters is not clear. 

Having emphasized the flexibility and potential for wider applicability of our framework it is important to point out that we need to expand its capabilities.
Consider the $N$-dimensional generalized Lotka-Volterra equations that take the form
\begin{equation}
\label{eq:genLV}
\dot{x} = -\Gamma x +  (Ax)^T \cdot x
\end{equation}
where the \emph{interaction matrix} $A$ is a real $N\times N$ matrix.
As in this monograph the phase space of interest is $[0,\infty)^N$, the matrix $\Gamma$ is diagonal, and the interaction matrix takes the place of the regulatory network.
However, there are three significant differences:
(i) the diagonal elements of $\Gamma$ are assumed to be non-zero as opposed to positive; (ii) the nonlinearities are unbounded; and (iii) the nonlinearities can be quadratic with respect to any given variable.

These differences have consequences. 
On the level of the dynamics of \eqref{eq:genLV}  there need not be a global compact attractor and it 
is not clear how (or if) ramp systems provide optimal approximations to \eqref{eq:genLV}.
On the combinatorial level under appropriate modification the computational machinery of DSGRN is still applicable (see \cite{cuello:gameiro:bonachela:mischaikow}) and with respect to these modifications it makes sense to define wall labelings.
However, we cannot assume that the wall labelings are strongly dissipative and it is not clear that the conditions that lead to $\cF_2$ and $\cF_3$ are optimal in this context.
We do not believe that these challenges are insurmountable, but they have not been addressed. 

Consider the van der Pol equation \cite{hirsch:smale}
\[
\begin{aligned}
\dot{u} & = v - u^3 + u \\
\dot{v} & = - u
\end{aligned}
\]
where the phase space is $\R^2$ and the global dynamics consists of a unique unstable fixed point at the origin whose two-dimensional unstable manifold connects to the unique stable periodic orbit. Making the change of variables $x_1 = u + 5$ and $x_2 = v + 5$ we can write this system as
\begin{equation}
\label{eq:van_der_pol}
\begin{aligned}
\dot{x_1} & = - \gamma_1 x_1 + f_1(x_1) + g_1(x_2) \\
\dot{x_2} & = - \gamma_2 x_2 + f_2(x_1) + g_2(x_2)
\end{aligned}
\end{equation}
where
\[
f_1(x_1) = - (x_1 - 5)^3 + x_1 - 10 + \gamma_1 x_1, \quad g_1(x_2) = x_2,
\]
and
\[
f_2(x_1) =  - x_1 + 10, \quad g_2(x_2) = \gamma_2 x_2 - 5.
\]
Taking $\gamma_1 = \gamma_2 = 2$ we can try to compute the dynamics of this system by approximate \eqref{eq:van_der_pol} by a ramp system of the form
\begin{equation}
\label{eq:van_der_pol_ramp_system_new}
\begin{aligned}
\dot{x_1} & = - \gamma_1 x_1 + r_{1,1}(x_1) + r_{1,2}(x_2) \\
\dot{x_2} & = - \gamma_2 x_2 + r_{2,1}(x_1) + r_{2,2}(x_2)
\end{aligned}
\end{equation}
where $r_{i,j}$ are ramp functions, and then compute the map $\cF_3$ and the corresponding Morse graph for this ramp system. This is in fact what is done in Example~\ref{ex:ramp_van_der_pol}, where \eqref{eq:van_der_pol_ramp_system} is a ramp system approximation to \eqref{eq:van_der_pol}. Plots of $f_i$, $g_i$, and $r_{i, j}$ from \eqref{eq:van_der_pol_ramp_system} are shown in Figure~\ref{fig:van_der_pol_f_ramp_plots}. The multi-valued map $\cF_3$ and the corresponding Morse graph for \eqref{eq:van_der_pol_ramp_system} are shown in Figure~\ref{fig:stg_mg_van_der_pol}.

\begin{figure*}[!htpb]
\centering
\includegraphics[width=0.45\textwidth]{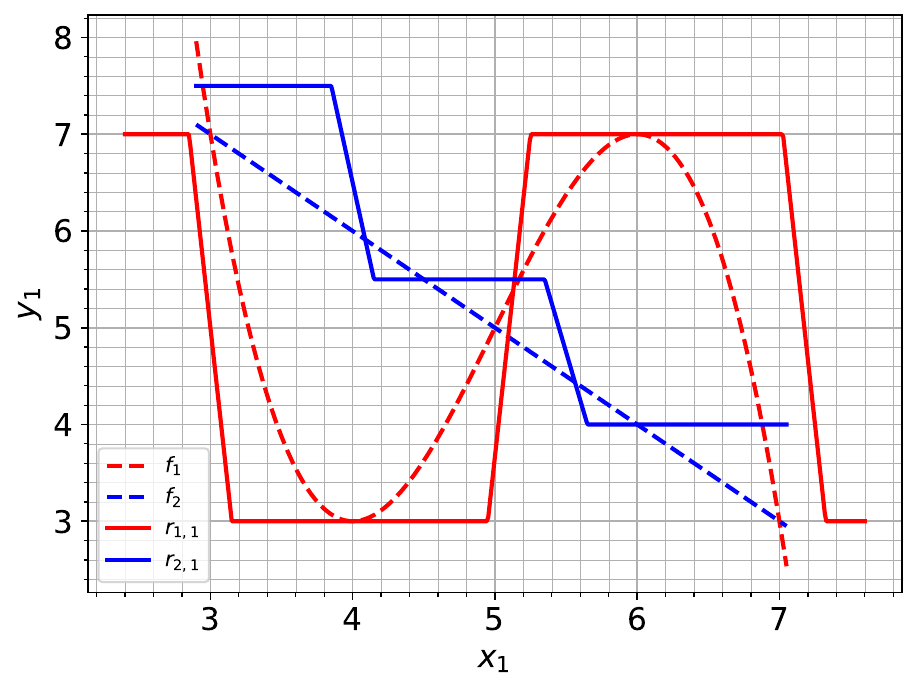}
\includegraphics[width=0.45\textwidth]{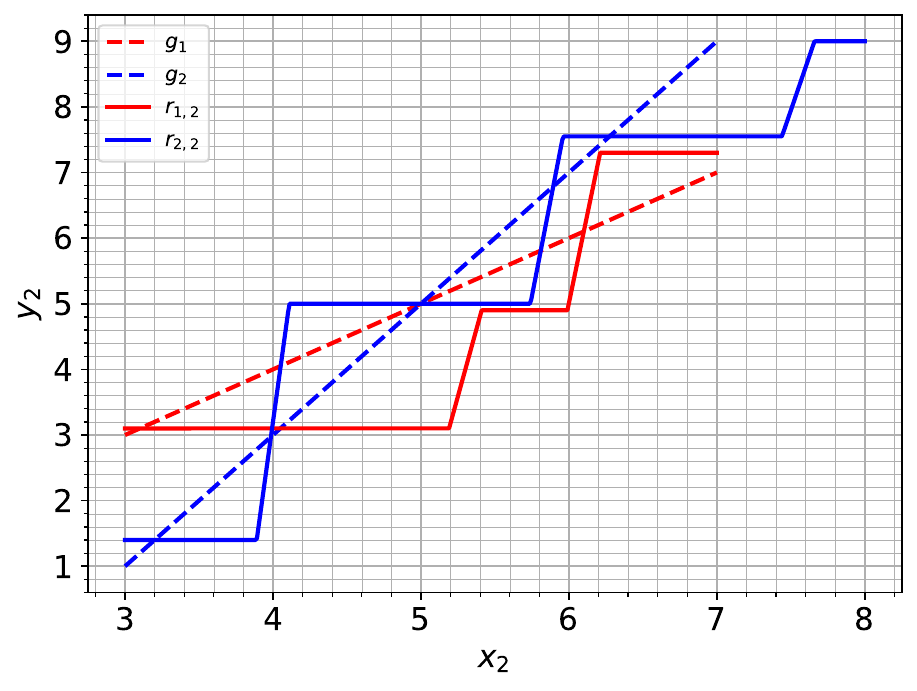}
\caption{Plots of $f_i$ and $g_i$ from \eqref{eq:van_der_pol} and $r_{i, j}$ from \eqref{eq:van_der_pol_ramp_system} with parameter values given in Table~\ref{tab:parameters_van_der_pol_ramp_system}.}
\label{fig:van_der_pol_f_ramp_plots}
\end{figure*}

As one can see in Figure~\ref{fig:stg_mg_van_der_pol} we can indeed capture the dynamics of \eqref{eq:van_der_pol} by a ramp system. However, a fair question is how can we justify that the dynamics in Figure~\ref{fig:stg_mg_van_der_pol}, computed for the ramp system \eqref{eq:van_der_pol_ramp_system}, is valid for the original system \eqref{eq:van_der_pol}. As we can see from Figure~\ref{fig:van_der_pol_f_ramp_plots} the ramp system \eqref{eq:van_der_pol_ramp_system} does not provide a good approximation to \eqref{eq:van_der_pol}. The obvious possible solution to this question is to try to use a better approximation, which can be achieved by using ramp functions with more ramps. Ramp functions with $10$ ramps each approximating \eqref{eq:van_der_pol} are shown in Figure~\ref{fig:van_der_pol_f_10_ramps_plots}.

\begin{figure*}[!htpb]
\centering
\includegraphics[width=0.45\textwidth]{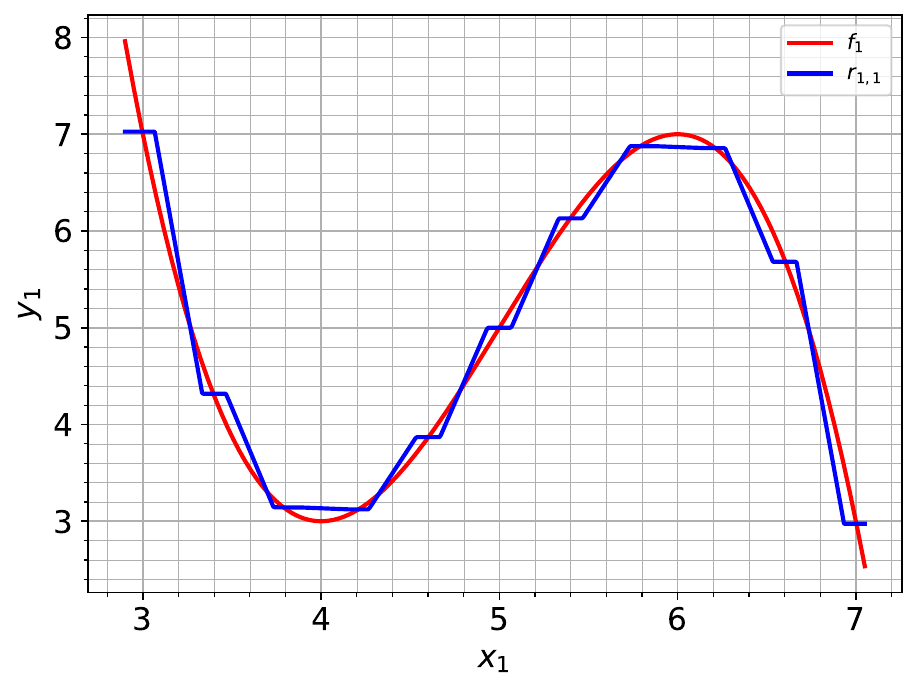}
\includegraphics[width=0.45\textwidth]{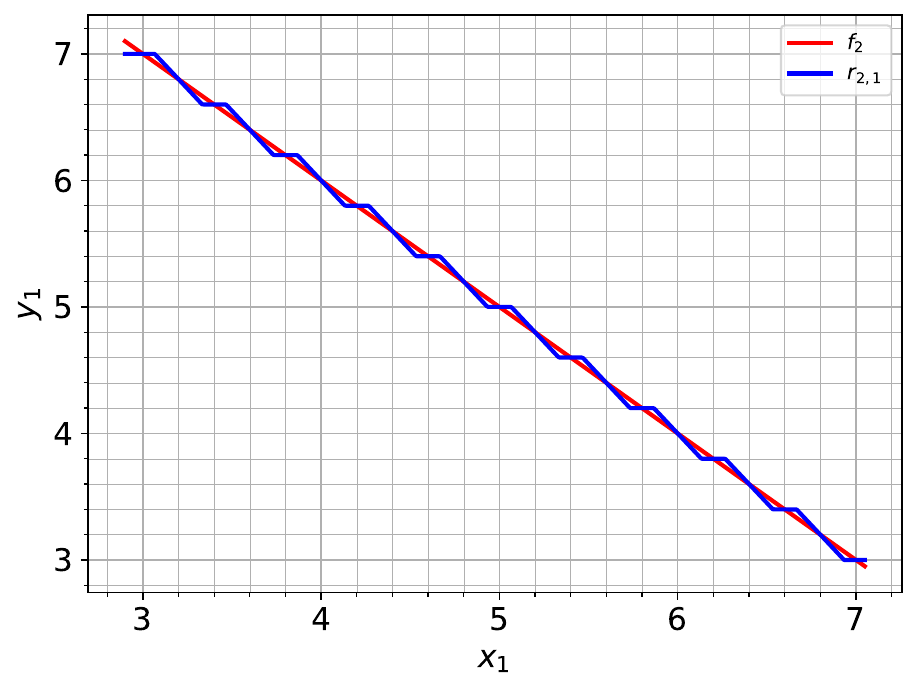}
\includegraphics[width=0.45\textwidth]{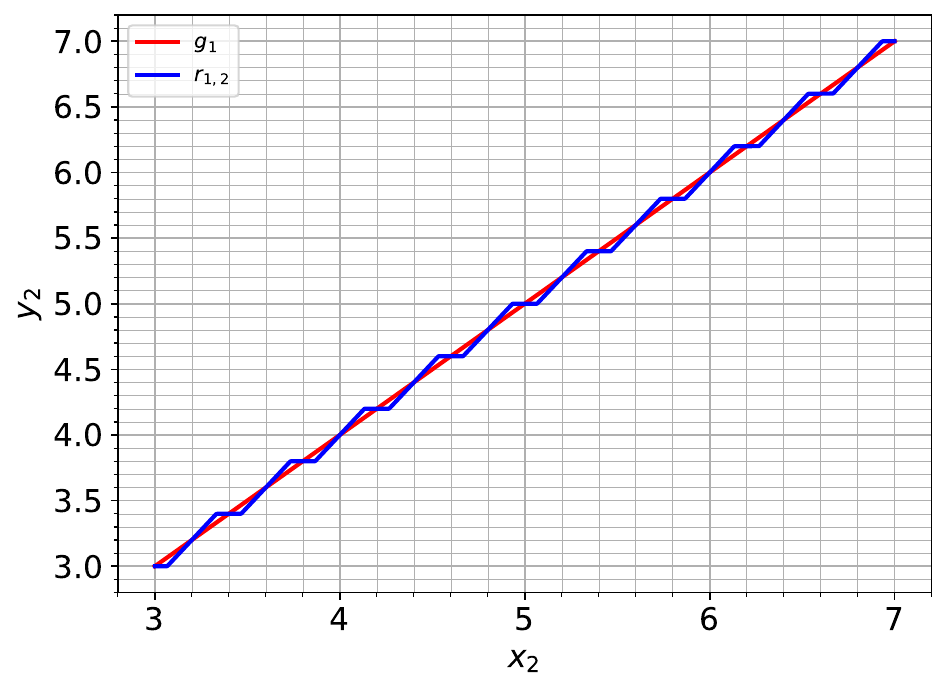}
\includegraphics[width=0.45\textwidth]{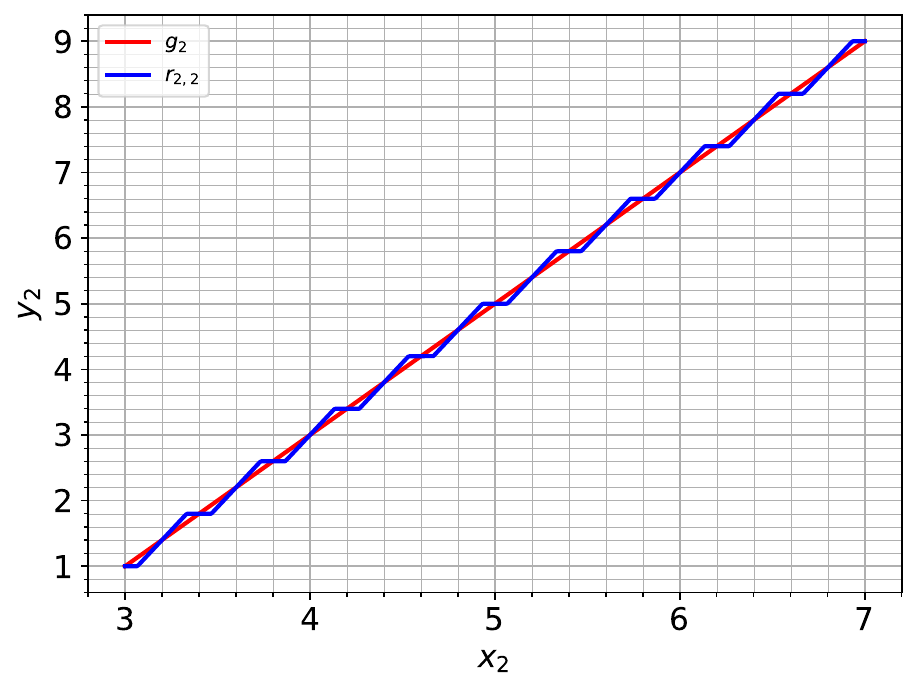}
\caption{Plots of $f_i$ and $g_i$ from \eqref{eq:van_der_pol} and $r_{i, j}$ from \eqref{eq:van_der_pol_ramp_system_new}, where the $r_{i, j}$ are ramp functions with $10$ ramps each.}
\label{fig:van_der_pol_f_10_ramps_plots}
\end{figure*}

The Morse graph for $\cF_3$ computed from the ramp functions in Figure~\ref{fig:van_der_pol_f_10_ramps_plots} is presented in Figure~\ref{fig:morse_graph_van_der_pol_10_ramps}. This is not the expected Morse graph for \eqref{eq:van_der_pol}. Using better approximations with more ramps do not solve this problem. This example indicates that to obtain the dynamics of a given ODE it is not enough to approximate the ODE by ramp functions. Additional restrictions need to be imposed in the ramp system approximation.

\begin{figure*}[!htb]
\centering
\includegraphics[width=0.8\textwidth]{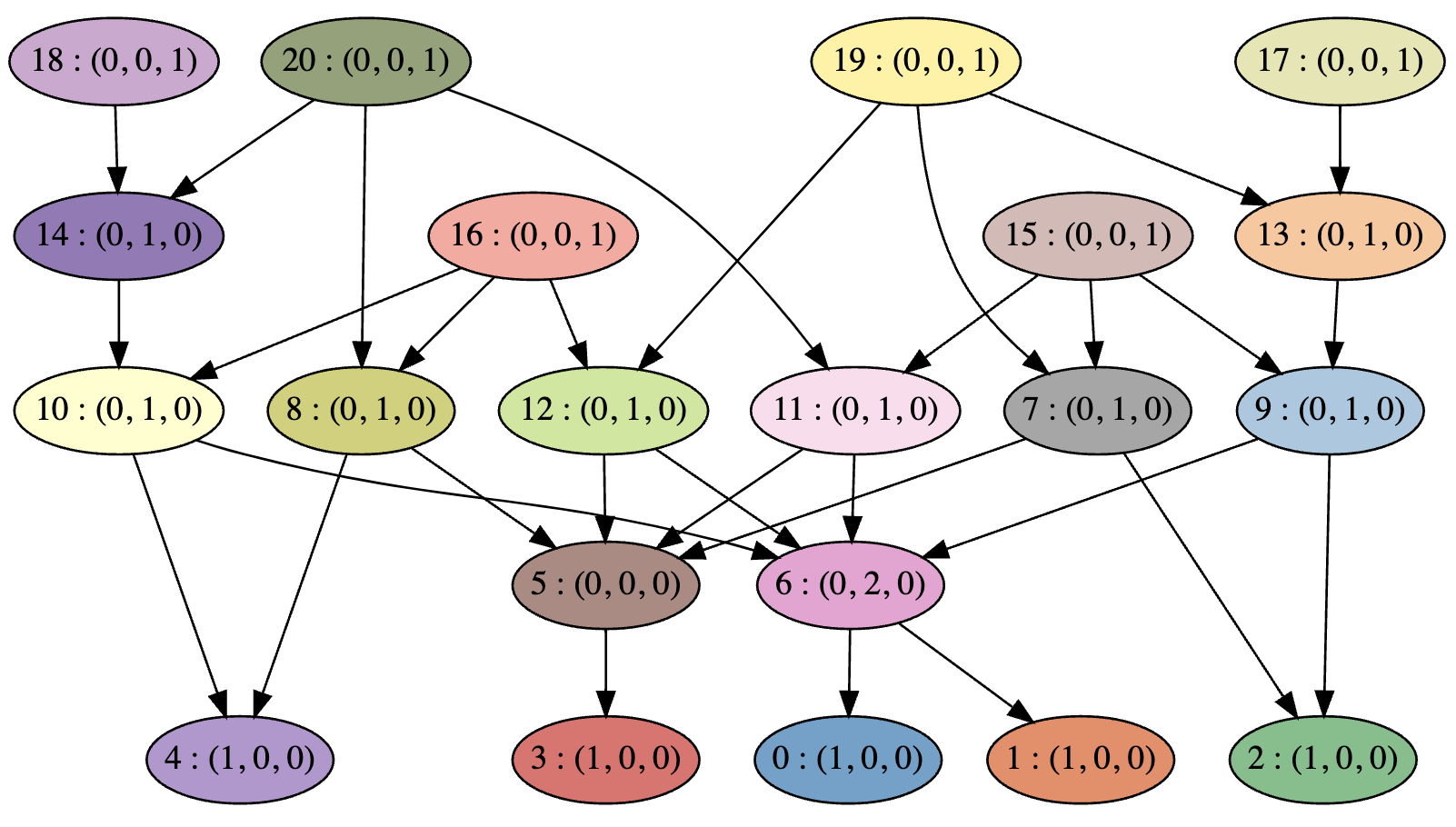}
\caption{Morse graph for $\cF_3$ computed using the ramp functions in Figure~\ref{fig:van_der_pol_f_10_ramps_plots}.}
\label{fig:morse_graph_van_der_pol_10_ramps}
\end{figure*}

\section{Concluding Remarks}

The monograph provides a novel approach to understanding the global dynamics of ODEs.
The optimistic perspective is that the techniques presented here could be generalized to dissipative systems whose nonlinearities can be decomposed into finitely many monotone functions, and furthermore, that the most fundamental information to apply these techniques takes the form of a network that describes the interactions between variables. 

While we are far from attaining this optimistic dream, we have shown that it can be at least partially realized in the context of ramp systems.
We have provided an algorithm that takes a ramp system, identifies an appropriate wall labeling, produces combinatorial models of the dynamics, and from this rigorously characterizes the global dynamics via a Morse graph and Conley complexes. 
Furthermore, to the best of our knowledge our approach is computationally much more efficient than  any other numerical method.

The fact that the Morse graph and Conley complex information is robust with respect to perturbation, leads us to believe that computations done with the DSGRN technology can at the very least rapidly provide intuition about the dynamics of systems of moderate dimension. 
This is a new capability that we hope finds broad applications.

We have argued (see Section~\ref{sec:alternativeNonlinearities}) that the constructive proofs provided in Part~\ref{part:III} have the potential to be extended either computationally or analytically to obtain rigorous results for systems with a broader class of nonlinearities.
We also argued that  \cite{cuello:gameiro:bonachela:mischaikow} suggests that it might be possible to consider systems with a broader class of linear terms.
We hope we have inspired readers to consider pursuing these possibilities.

\chapter*{Acknowledgments}

M.G. and K.M. were partially supported by DARPA contract HR0011-16-2-0033,  National Institutes of Health award R01 GM126555, and Air Force Office of Scientific Research under award number FA9550-23-1-0011 and FA9550-23-1-0400. M.G. was partially supported by FAPESP grant 2019/06249-7 and CNPq grant 309073/2019-7. K.M. was partially support by the National Science Foundation under awards DMS-1839294 and a grant from the Simons Foundation. K.M. and E.V. were partially supported by the National Science Foundation under awards HDR TRIPODS award CCF-1934924. T.G. was partially supported by NSF grant DMS-1839299,  DARPA FA8750-17-C-0054 and NIH 5R01GM126555-01. H.K. was partially supported by JSPS KAKENHI Grant Numbers 18H03671 and 20K20520. H.O. was partially supported by JSPS KAKENHI Grant Numbers 19K03644 and 23K03240.

\appendix
\chapter{DSGRN Visualization Tool}
\label{sec:appendix_visualization}

\begin{center}
By Daniel Gameiro
\end{center}

\bigskip
\bigskip

The DSGRN visualization tool is an interactive website that can be used to visualize the DSGRN results for 2D and 3D networks. It is available at

\medskip

\begin{center}
\url{https://chomp.rutgers.edu/projects/dsgrn_viz/index.html}
\end{center}

\medskip

The DSGRN visualization tool requires a JSON database with the DSGRN results. Instructions for creating a JSON database are available on the DSGRN repository \cite{DSGRN}. The user can upload a JSON database generated by DSGRN or select one of the available sample databases to view. Once a database is uploaded or selected, the first parameter node in the JSON file is automatically selected and the results are displayed. The top panel shows the network on the left with information about the network and the selected parameter node under it, the Morse graph in the middle, and the cell complex on the right with the Morse sets highlighted, as shown in Figure~\ref{fig:vis_top}. The bottom panel shows the parameter graph, with the selected node and its neighbors highlighted, as shown in Figure~\ref{fig:vis_bottom}. To select a different parameter node to visualize, click on the node in the parameter graph or select from the drop-down menu at the top. To view a specific Morse set, click on a node in the Morse graph. The user can also select more than one Morse set by changing the \emph{selection method} option to \emph{multiple Morse sets}, or select an interval of Morse sets by changing it to \emph{interval}. To return to viewing all Morse sets after a selection, simply click on one of the the selected Morse graph nodes. There are sliders available to change the line width, face opacity, and arrow size in the cell complex. It is also possible to select whether to show the wireframe of the cell complex or not, and which arrows to show (all, single only, double only, self only, single and double, or none). Finally, the user can change the colormap used for the Morse sets from the drop-down menu at the top of the page.

\begin{figure}[!htbp]
\centering
\includegraphics[width=1.0\textwidth]{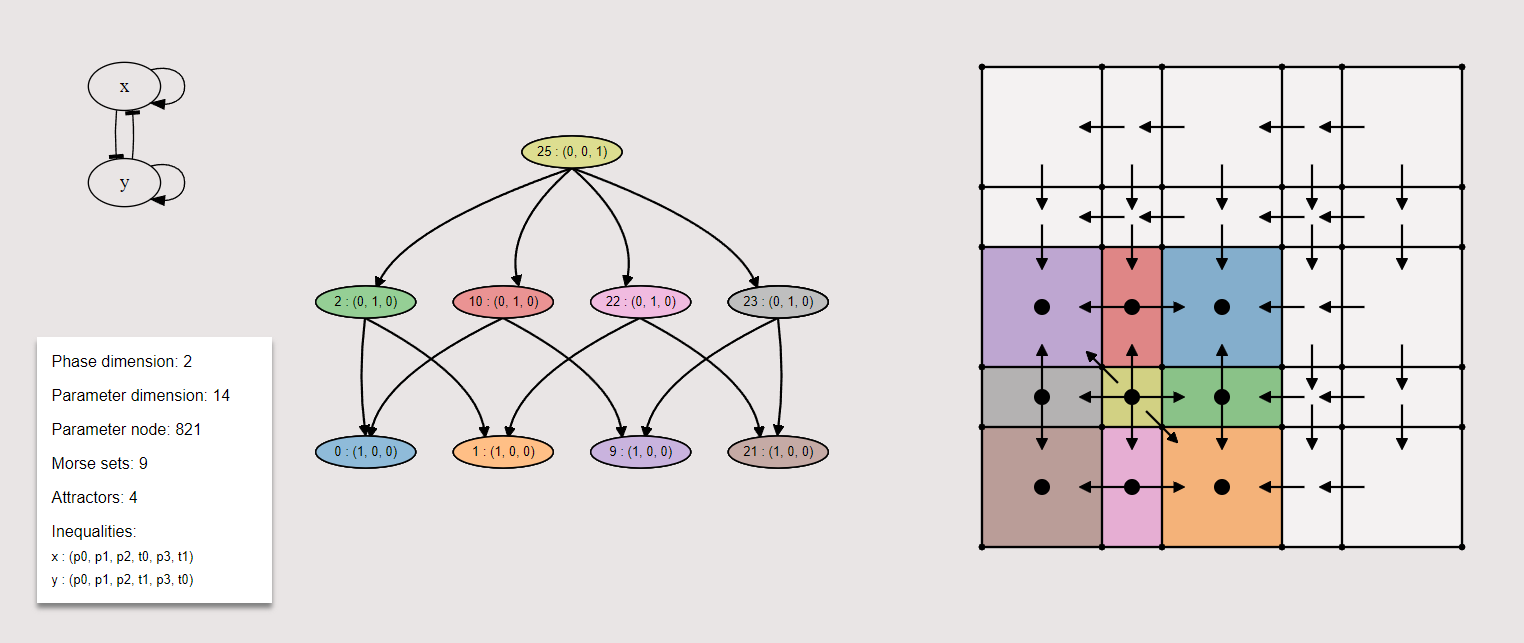}
\caption{Top panel of the visualization tool.}
\label{fig:vis_top}
\end{figure}

\begin{figure}[!htbp]
\centering
\includegraphics[width=0.7\textwidth]{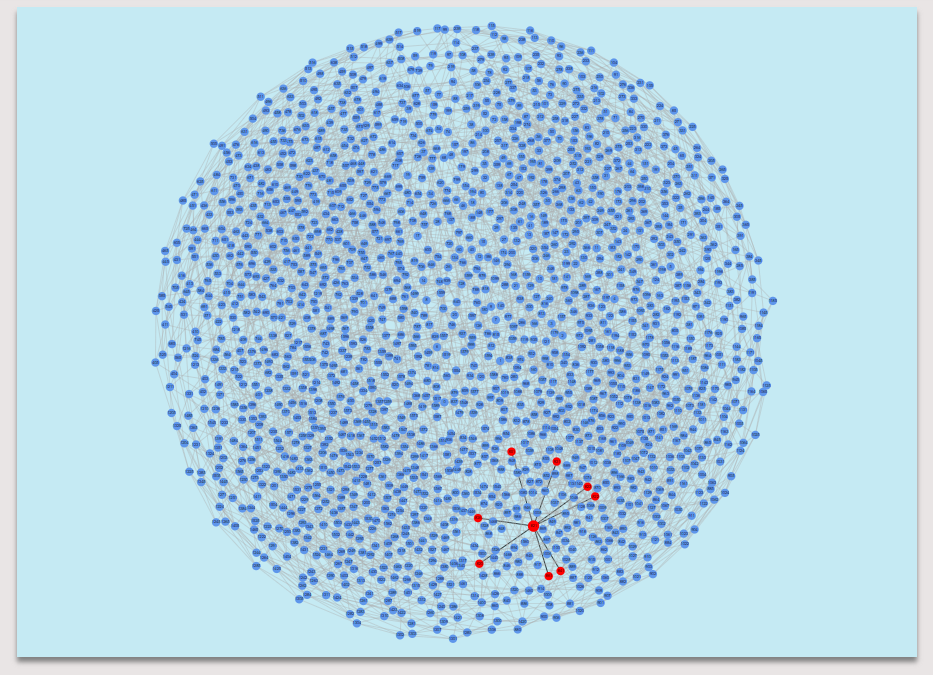}
\caption{Bottom panel of the visualization tool.}
\label{fig:vis_bottom}
\end{figure}

\backmatter
\bibliographystyle{amsplain}
\bibliography{rookbib}

\end{document}